\newif\ifshowkeys
\newcommand{\lbl}[1]{\label{#1}\textup{[\texttt{#1}]}\par}
\newcommand{\lbl}{\label}
\definecolor{olivegreen}{cmyk}{0.64,0,0.95,0.40}
\definecolor{rawsienna}{cmyk}{0,0.72,1,0.45}
\definecolor{darkred}{rgb}{0.5,0,0}
\definecolor{darkgreen}{rgb}{0,0.5,0}
\definecolor{maplegrey}{rgb}{0.1,0.1,0.1}
\definecolor{maplecyan}{rgb}{0,1,1}
\definecolor{maplegreen}{rgb}{0.04,0.98,0.04}
\definecolor{maplemagenta}{rgb}{1,0.02,1}
\definecolor{mapleblue}{rgb}{0,0,1}
\definecolor{maplered}{rgb}{1,0,0}
\definecolor{maplepurple}{rgb}{0.5,0,0.5}
\newcommand{\Aut}	{\operatorname{Aut}}
\newcommand{\Map}	{\operatorname{Map}}
\newcommand{\Hol}	{\operatorname{Hol}}
\newcommand{\Hom}	{\operatorname{Hom}}
\newcommand{\Inn}	{\operatorname{Inn}}
\newcommand{\Net}	{\operatorname{Net}}
\newcommand{\Out}	{\operatorname{Out}}
\newcommand{\alg}	{\operatorname{alg}}
\newcommand{\arctanh}	{\operatorname{arctanh}}
\newcommand{\hyp}	{\operatorname{hyp}}
\newcommand{\stab}	{\operatorname{stab}}
\newcommand{\spec}	{\operatorname{spec}}
\newcommand{\Dl}        {\Delta}
\newcommand{\Gm}        {\Gamma}
\newcommand{\Tht}       {\Theta}
\newcommand{\Lm}        {\Lambda}
\newcommand{\Sg}        {\Sigma}
\newcommand{\Om}        {\Omega}
\newcommand{\al}        {\alpha}
\newcommand{\bt}        {\beta}
\newcommand{\gm}        {\gamma}
\newcommand{\dl}        {\delta}
\newcommand{\ep}        {\epsilon}
\newcommand{\zt}        {\zeta}
\newcommand{\tht}       {\theta}
\newcommand{\kp}        {\kappa}
\newcommand{\lm}        {\lambda}
\newcommand{\om}        {\omega}
\newcommand{\sg}        {\sigma}
\newcommand{\xla}       {\xleftarrow}
\newcommand{\xra}       {\xrightarrow}
\newcommand{\N}         {{\mathbb{N}}}
\newcommand{\Z}         {{\mathbb{Z}}}
\newcommand{\Q}         {{\mathbb{Q}}}
\newcommand{\R}         {{\mathbb{R}}}
\newcommand{\C}         {{\mathbb{C}}}
\renewcommand{\H}       {{\mathbb{H}}}
\newcommand{\ov}[1]     {\overline{#1}}
\newcommand{\un}[1]     {\underline{#1}}
\newcommand{\ip}[1]     {\langle #1\rangle}
\newcommand{\st}        {\;|\;}
\newcommand{\tm}        {\times}
\newcommand{\sm}        {\setminus}
\newcommand{\bbm}       {\left[\begin{matrix}}
\newcommand{\ebm}       {\end{matrix}\right]}
\newcommand{\sse}       {\subseteq}
\newcommand{\half}      {\tfrac{1}{2}}
\newcommand{\ppi}       {\tfrac{\pi}{2}}
\newcommand{\qpi}       {\tfrac{\pi}{4}}
\newcommand{\rt}        {\sqrt{2}}
\newcommand{\ot}        {\otimes}
\newcommand{\dhyp}      {d_{\text{hyp}}}
\newcommand{\tA}	{\widetilde{A}}
\newcommand{\tC}	{\widetilde{C}}
\newcommand{\tP}	{\widetilde{P}}
\newcommand{\tY}	{\widetilde{Y}}
\newcommand{\tc}	{\widetilde{c}}
\newcommand{\tf}	{\widetilde{f}}
\newcommand{\tj}	{\widetilde{\jmath}}
\newcommand{\mt}	{\widetilde{m}}
\newcommand{\tu}	{\widetilde{u}}
\newcommand{\ty}	{\widetilde{y}}
\newcommand{\tpi}	{\widetilde{\pi}}
\newcommand{\tPi}	{\widetilde{\Pi}}
\newcommand{\tPhi}	{\widetilde{\Phi}}
\newcommand{\CL}        {\mathcal{L}}
\newcommand{\CO}        {\mathcal{O}}
\newcommand{\CQ}        {\mathcal{Q}}
\newcommand{\CX}        {\mathcal{X}}
\newcommand{\hp}        {\widehat{p}}
\newcommand{\pp}        {\hphantom{+}}
\renewcommand{\ss}{\scriptstyle}
\renewcommand{\:}{\colon}
\newtheorem{theorem}{Theorem}[subsection]
\newtheorem{lemma}[theorem]{Lemma}
\newtheorem{proposition}[theorem]{Proposition}
\newtheorem{corollary}[theorem]{Corollary}
\theoremstyle{definition}
\newtheorem{remark}[theorem]{Remark}
\newtheorem{definition}[theorem]{Definition}
\newtheorem{method}[theorem]{Method}
\newverbcommand{\mcode}{\color{darkred}}{}
\newverbcommand{\fname}{\color{darkgreen}}{}
\begin{document}
\title{Uniformization of embedded surfaces}
\author{N.~P.~Strickland}
\date{\today}
\bibliographystyle{abbrv}

\maketitle
\tableofcontents

\section{Introduction}
\lbl{sec-intro}

Let $X\subset S^3$ be a smoothly embedded closed surface of genus
$g>1$.  As we will explain, $X$ automatically has a very rich and
rigid geometric structure.  Indeed, $X$ inherits a Riemannian metric
from $S^3$, and after specifying some conventions we also obtain a
well-defined orientation.  Now consider a point $x\in X$, and let
$T_xX$ denote the corresponding tangent space.  Let $J_x\:T_xX\to
T_xX$ be the anticlockwise rotation through $\pi/2$ (which is
meaningful given the metric and orientation).  This depends smoothly
on $x$ and satisfies $J_x^2=-1$, so it gives an almost complex
structure on $X$.  It has been known since the early twentieth century
that any almost complex structure on a manifold of real dimension two
integrates to give a genuine complex structure.  Thus, $X$ can be
regarded as a compact Riemann surface.  It is known that any compact
Riemann surface can be regarded as a projective algebraic variety over
$\C$, and also as a branched cover of the Riemann sphere.
Alternatively, as we have assumed that the genus is larger than one,
the universal cover of $X$ is conformally equivalent to the open unit
disc $\Dl$.  This means that $X$ is conformally equivalent to the
quotient $\Dl/\Pi$ for some Fuchsian group $\Pi$.

To the best of our knowledge, the literature contains no examples
where a significant fraction of this structure can be made explicit.
This monograph is a partially successful attempt to provide such
an example, involving the surface
\[ EX^* = \{x\in S^3\st
              (3x_3^2-2)x_4+\rt(x_1^2-x_2^2)x_3=0
         \},
\]
with weaker results for a one-parameter family of surfaces in which
$EX^*$ appears.  To display $EX^*$ visually, we apply the
stereographic projection map $s\:S^3\to\R^3\cup\{\infty\}$ defined as
follows:
\begin{align*}
 s(x) &=
  \left(\frac{x_1}{1-x_4},\frac{x_2}{1-x_4},\frac{x_3}{1-x_4}\right)
  \\
 s^{-1}(u) &=
  \left(\frac{2u_1}{\|u\|^2+1},
        \frac{2u_2}{\|u\|^2+1},
        \frac{2u_3}{\|u\|^2+1},
        \frac{\|u\|^2-1}{\|u\|^2+1}\right).
\end{align*}
%\begin{checks}
% embedded/geometry_check.mpl: check_stereo()
%\end{checks}

The image $s(EX^*)$ looks like this:
\[ \includegraphics[scale=0.4,angle=90,clip=true,
                    trim=10cm 8cm 10cm 8cm]{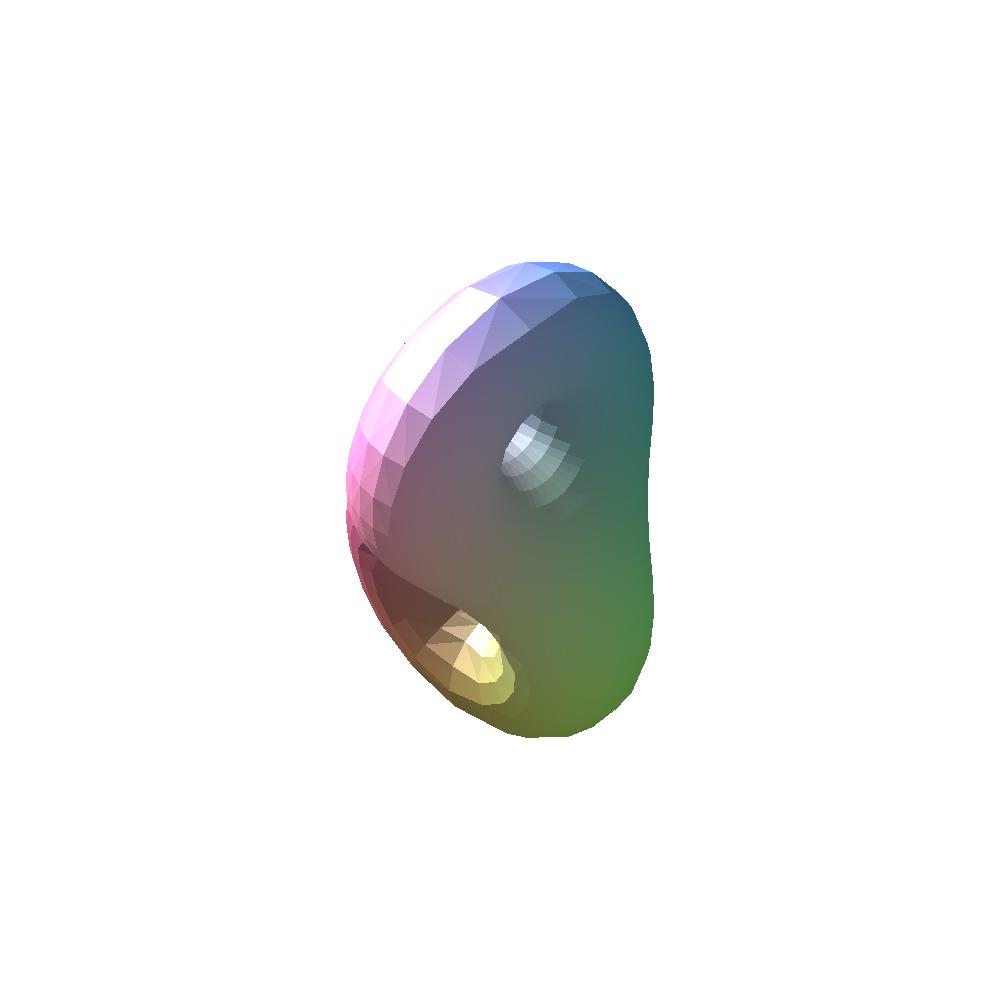}
\]

Our work is organised around the following
definitions:
\begin{definition}\lbl{defn-G}
 Let $G$ be the group of order $16$ generated by $\lm$, $\mu$ and
 $\nu$ subject to relations
 \[ \lm^4=\mu^2=\nu^2=(\mu\nu)^2=(\lm\mu)^2=(\lm\nu)^2=1, \]
 so
 \[ G=\{\lm^i\mu^j\nu^k\st 0\leq i<4,\;0\leq j,k<2\}. \]
 We use the following notation for subgroups:
 \begin{align*}
  D_8 &= \ip{\lm,\mu} &
  C_4 &= \ip{\lm} \\
  C_2 &= \ip{\lm^2} &
  C'_2 &= \ip{\mu\nu}.
 \end{align*}
% \begin{checks}
%  group_check.mpl: check_group_properties()
% \end{checks}
\end{definition}

\begin{definition}\lbl{defn-V-star}
 We write $V^*$ for the set $\{0,\dotsc,13\}$ equipped with the action
 of $G$ by the following permutations:
 \begin{align*}
  \lm &\mapsto (2\;3\;4\;5)\;(6\;7\;8\;9)\;(10\;11)\;(12\;13) \\
  \mu &\mapsto (0\;1)\;(3\;5)\;(6\;9)\;(7\;8)\;(10\;12)\;(11\;13) \\
  \nu &\mapsto (3\;5)\;(6\;9)\;(7\;8).
 \end{align*}
\end{definition}

\begin{remark}\lbl{rem-V-star}
 The orbits in $V^*$ are
 \begin{align*}
  \{0,1\}         & \simeq G/\ip{\lm,\nu} \\
  \{2,3,4,5\}     & \simeq G/\ip{\mu,\nu} \\
  \{6,7,8,9\}     & \simeq G/\ip{\lm\mu,\lm\nu} \\
  \{10,11,12,13\} & \simeq G/\ip{\lm^2,\nu}.
 \end{align*}
 The action can be displayed as follows:
 \begin{center}
  \begin{tikzpicture}[scale=1.7,auto,shorten >= 1pt]
   \node (v0) at ( 0, 1) [circle,draw] {$\ss 0$};
   \node (v1) at ( 0, 0) [circle,draw] {$\ss 1$};
   \node (v2) at ( 1, 1) [circle,draw] {$\ss 2$};
   \node (v3) at ( 2, 1) [circle,draw] {$\ss 3$};
   \node (v4) at ( 2, 0) [circle,draw] {$\ss 4$};
   \node (v5) at ( 1, 0) [circle,draw] {$\ss 5$};
   \node (v6) at ( 3, 1) [circle,draw] {$\ss 6$};
   \node (v7) at ( 4, 1) [circle,draw] {$\ss 7$};
   \node (v8) at ( 4, 0) [circle,draw] {$\ss 8$};
   \node (v9) at ( 3, 0) [circle,draw] {$\ss 9$};
   \node (va) at ( 5, 1) [circle,draw] {$\ss {10}$};
   \node (vb) at ( 5, 0) [circle,draw] {$\ss {11}$};
   \node (vc) at ( 6, 1) [circle,draw] {$\ss {12}$};
   \node (vd) at ( 6, 0) [circle,draw] {$\ss {13}$};
   \draw[->,red,thick] (v2) to (v3);
   \draw[->,red,thick] (v3) to (v4);
   \draw[->,red,thick] (v4) to (v5);
   \draw[->,red,thick] (v5) to (v2);
   \draw[->,red,thick] (v6) to (v7);
   \draw[->,red,thick] (v7) to (v8);
   \draw[->,red,thick] (v8) to (v9);
   \draw[->,red,thick] (v9) to (v6);
   \draw[<->,red,thick] (va) to (vb);
   \draw[<->,red,thick] (vc) to (vd);
   \draw[<->,olivegreen,thick,dotted] (v3) to (v5);
   \draw[<->,olivegreen,thick,bend left,dotted] (v6) to (v9);
   \draw[<->,olivegreen,thick,bend right,dotted] (v7) to (v8);
   \draw[<->,blue,thick,dashed] (v0) to (v1);
   \draw[<->,blue,thick,dashed] (va) to (vc);
   \draw[<->,blue,thick,dashed] (vb) to (vd);
  \end{tikzpicture}
 \end{center}
 The solid red arrows show the action of $\lm$, the dotted green
 arrows show the action of $\nu$, and the dashed blue arrows show the
 action of $\mu\nu$.
\end{remark}

\begin{definition}\lbl{defn-precromulent}
 A \emph{precromulent surface} is a compact Riemann surface $X$ of genus
 $2$ with an action of $G$ such that
 \begin{itemize}
  \item[(a)] The elements of $D_8$ act conformally, and the elements
   of $G\sm D_8$ act anticonformally.
  \item[(b)] The set $V=\{v\in X\st \stab_{D_8}(v)\neq 1\}$ is
   isomorphic to $V^*$ as a $G$-set.
 \end{itemize}
 A \emph{precromulent labelling} of $X$ is a specific choice of
 isomorphism $V^*\simeq V$, or equivalently, a listing of the points
 in $V$ as $v_0,\dotsc,v_{13}$ such that $G$ permutes these points in
 accordance with the permutations listed in
 Definition~\ref{defn-V-star}.  A \emph{cromulent labelling} is a
 precromulent labelling such that
 \begin{itemize}
  \item[(c)] $\lm$ acts on the tangent space $T_{v_0}X$ as
   multiplication by $i$.
  \item[(d)] In the set $X'=\{x\in X\st\stab_G(x)=1\}$, there is a
   connected component $F'$ whose closure contains
   $\{v_0,v_3,v_6,v_{11}\}$.
 \end{itemize}
 We will show in Proposition~\ref{prop-labellings} that every
 precromulent surface has precisely two cromulent labellings, which
 are exchanged by the action of $\lm^2$.  A \emph{cromulent surface}
 is a precromulent surface with a choice of cromulent labelling.

 A \emph{(precromulent) isomorphism} between precromulent surfaces
 will mean a $G$-equivariant conformal isomorphism.  A
 \emph{(cromulent) isomorphism} between cromulent surfaces
 will mean a $G$-equivariant conformal isomorphism that is compatible
 with the specified labellings.
\end{definition}

Now fix a parameter $a\in (0,1)$.  We put
\[ EX(a) = \{x\in\R^4\st \|x\|=1,\;
              ((a^{-2}+1)x_3^2-2)x_4+a^{-1}(x_1^2-x_2^2)x_3=0\},
\]
and observe that $EX^*=EX(1/\rt)$.  In Section~\ref{sec-E} we
will give $EX(a)$ a $G$-action and labelling making it a cromulent
surface.  We call these surfaces the \emph{embedded family}.  Although
our central problem is to study uniformizations of the surfaces
$EX(a)$, we will also discuss many other features of their geometry
and topology.  In particular, we will give an alternative definition
which is much more geometric but takes longer to state.  Two special
features of the case $a=1/\rt$ are as follows:
\begin{itemize}
 \item[(a)] By a \emph{great circle} we mean the intersection of $S^3$
  with a two-dimensional vector subspace of $\R^4$.  For all $a$, the
  fixed set of the element $\nu\in G$ is the disjoint union of three
  curves, each of which is diffeomorphic to $S^1$.  If
  $a=1/\rt$ (but for no other value) then one of those components
  is a great circle.
 \item[(b)] One can show that the complexified variety
  \[ CEX(a) = \{x\in\C^4\st \sum_ix_i^2=1,\;
               ((a^{-2}+1)x_3^2-2)x_4+a^{-1}(x_1^2-x_2^2)x_3=0\}
  \]
  is smooth for all $a\neq 1/\rt$, but $CEX(1/\rt)$ is
  singular at the eight points in the $G$-orbit of
  $(i\rt,0,\rt,1)$.
\end{itemize}

Next, put
\[ PX_0(a) = \{(w,z)\in\C^2\st w^2=z^5-(a^2+a^{-2})z^3+z\}. \]
This is an affine hyperelliptic curve.  By well-known methods we can
construct a compact Riemann surface $PX(a)$ which is the union of
$PX_0(a)$ with a single extra point.  In
Section~\ref{sec-P} we will give this a $G$-action and
labelling making it a cromulent surface.  We call these surfaces the
\emph{projective family}.

Finally, let $\Pi$ be the abstract group generated by symbols $\bt_k$
(for $k\in\Z/8$) subject to the following relations:
\begin{align*}
 \bt_{k+4} &= \bt_k^{-1} \\
 \bt_0\bt_1\bt_2\bt_3\bt_4\bt_5\bt_6\bt_7 &= 1.
\end{align*}
In Section~\ref{sec-H} we will give a free action of $\Pi$ on
the unit disc $\Dl=\{z\in\C\st |z|<1\}$, depending on a parameter
$a\in(0,1)$.  We will show that the orbit space $HX(a)$ for this
action is a compact Riemann surface of genus two, and give it a
cromulent structure.  We call these surfaces the \emph{hyperbolic family}.

In Theorem~\ref{thm-classify-cromulent},
Corollary~\ref{cor-cromulent-iso} and Theorem~\ref{thm-H-universal}
we will show that
\begin{itemize}
 \item For any two cromulent surfaces, there is at most one
  isomorphism between them.
 \item For any cromulent surface $X$ there is a unique $a\in (0,1)$
  such that $X\simeq PX(a)$, and there is a unique $b\in (0,1)$ such
  that $X\simeq HX(b)$.
\end{itemize}
In other words, the projective family and the hyperbolic family are
both universal.  We conjecture that the embedded family is also
universal, but we have not proved this.

As a consequence of universality, for every $a\in (0,1)$ there is a
unique $b\in (0,1)$ such that $PX(a)\simeq HX(b)$.  In
Section~\ref{sec-P-H} we will develop two different methods for
computing $a$ as a function of $b$ or \emph{vice versa}, and for
computing the corresponding cromulent isomorphism.  One method
involves a rich theory based on Fuchsian differential equations and
the Schwarzian derivative; the other is less illuminating, but in some
respects more efficient and direct.  The graph of $a$ against $b$ is
as follows.
\begin{center}
 \begin{tikzpicture}[scale=4]
  \draw[black,->] (-0.05,0) -- (1.05,0);
  \draw[black,->] (0,-0.05) -- (0,1.05);
  \draw[black] (1,-0.05) -- (1,0);
  \draw[black] (-0.05,1) -- (0,1);
  \draw ( 0.00,-0.05) node[anchor=north] {$0$};
  \draw ( 1.00,-0.05) node[anchor=north] {$1$};
  \draw (-0.05, 0.00) node[anchor=east ] {$0$};
  \draw (-0.05, 1.00) node[anchor=east ] {$1$};
  \draw ( 1.05, 0.00) node[anchor=west ] {$b$};
  \draw ( 0.00, 1.05) node[anchor=south] {$a$};
 \draw[red] plot[smooth] coordinates{ (0.000,1.000) (0.060,1.000) (0.080,1.000) (0.100,1.000) (0.120,1.000) (0.140,1.000) (0.160,1.000) (0.180,1.000) (0.200,1.000) (0.220,1.000) (0.240,0.999) (0.260,0.997) (0.280,0.994) (0.300,0.990) (0.320,0.983) (0.340,0.974) (0.360,0.961) (0.380,0.944) (0.400,0.923) (0.420,0.898) (0.440,0.869) (0.460,0.835) (0.480,0.798) (0.500,0.757) (0.520,0.713) (0.540,0.667) (0.560,0.620) (0.580,0.571) (0.600,0.521) (0.620,0.472) (0.640,0.423) (0.660,0.375) (0.680,0.328) (0.700,0.283) (0.720,0.241) (0.740,0.201) (0.760,0.163) (0.780,0.129) (0.800,0.099) (0.820,0.073) (0.840,0.050) (0.860,0.032) (0.880,0.019) (0.900,0.009) (0.920,0.004) (0.940,0.001) (1.000,0.000) };
  \fill[black] (0.801,0.098) circle(0.015);
 \end{tikzpicture}
\end{center}

We have conducted a fairly extensive heuristic
search for closed-form relationships between the above graph and
various other functions that we know to be relevant, but without
success.

Finally, we want to find $a$ and $b$ such that
$EX^*\simeq PX(a)\simeq HX(b)$.  Our best estimates are
$a\simeq 0.0983562$ and $b\simeq 0.8005319$, corresponding to the
marked point on the above graph.  We have some reason to hope that all
the quoted digits are accurate, but we have not performed a rigorous
error analysis.  In Section~\ref{sec-classify-roothalf} we will
explain the methods used to calculate $b$ (and then $a$ is calculated
from $b$ as described previously).  The first step is to find the
unique smooth function $f$ on $EX^*$ such that $e^{2f}$ times the
standard metric has curvature equal to $-1$.  We can then find the
lengths of certain curves with respect to this rescaled metric, and
the value of $b$ can be determined from these lengths.

\subsection{Maple code}

To carry out the work described above, we need to check a very large
number of reasonably complex formulae and combinatorial facts, and we
also need to perform extensive numerical calculations.  Most of the
formulae could individually be checked by hand with sufficient effort.
However, the number and size of the formulae are so large that
computer assistance is required for the project as a whole.  We have
used Maple for this.  The code and documentation are distributed
alongside this monograph, and there is an overview of the structure in
Section~\ref{sec-maple}.  This monograph contains many lines like
this:
\begin{checks}
 group_check.mpl: check_group_properties(), check_character_table()
\end{checks}
This indicates that some set of claims that have recently been made in
the text can be checked by executing the functions
\mcode+check_group_properties()+ and \mcode+check_character_table()+,
which are defined in the file \fname+group_check.mpl+.  These functions
are set up so that they will print their own names, then they will run
silently unless they detect any errors.  One can set the global
variable \mcode+assert_verbosely+ to \mcode+true+, and then the checking
functions will print additional information about the individual
claims being checked.  One can check the complete set of claims for
the whole monograph by reading the file \fname+check_all.mpl+.  While
this does not quite reach the level of rigour provided by formal proof
assistants such as Isabelle, it is a major step in that direction.

The worksheet \fname+text_check.mw+ also provides another means to check
the consistency of the text with the Maple code.  (Some fragments of
\LaTeX code were generated automatically by Maple to ensure
correctness, but technical problems with precise control of formatting
dissuaded us from using this approach more extensively.)

One can also repeat all the numerical calculations by following the
instructions in Section~\ref{sec-build}.

The most convenient place to view and download the code, documentation
and other associated files is the page
\url{https://neilstrickland.github.io/genus2/}.  A snapshot will also
be placed on the arxiv, as a set of ancilliary files.

\section{General theory of precromulent surfaces}
\lbl{sec-general}

\subsection{Representations of \texorpdfstring{$G$}{G}}
\lbl{sec-representations}

We first discuss the representation theory of $G$, which will be useful
for organising various algebraic calculations later.  We assume that
the reader is familiar with the basic ideas of representation theory,
which are discussed in~\cite{se:lrf}, for example.

\begin{proposition}\lbl{prop-characters}
 The centre of $G$ is $\{1,\lm^2,\mu\nu,\lm^2\mu\nu\}$, and the
 commutator subgroup is $\{1,\lm^2\}$.  The character table is as
 follows:
 \[ \renewcommand{\arraystretch}{1.5}
    \begin{array}{|c|c|c|c|c|c|c|c|c|c|c|} \hline
                  & \chi_0 & \chi_1 & \chi_2 & \chi_3 & \chi_4 & \chi_5 & \chi_6 & \chi_7 & \chi_8 & \chi_9 \\ \hline
                1 &  1     &  1     &  1     &  1     &  1     &  1     &  1     &  1     &  2     &  2     \\ \hline
            \lm^2 &  1     &  1     &  1     &  1     &  1     &  1     &  1     &  1     & -2     & -2     \\ \hline
           \mu\nu &  1     &  1     & -1     & -1     &  1     &  1     & -1     & -1     &  2     & -2     \\ \hline
      \lm^2\mu\nu &  1     &  1     & -1     & -1     &  1     &  1     & -1     & -1     & -2     &  2     \\ \hline
      \lm^{\pm 1} &  1     & -1     &  1     & -1     & -1     &  1     & -1     &  1     &  0     &  0     \\ \hline
     \mu,\lm^2\mu &  1     &  1     & -1     & -1     & -1     & -1     &  1     &  1     &  0     &  0     \\ \hline
   \lm^{\pm 1}\mu &  1     & -1     & -1     &  1     &  1     & -1     & -1     &  1     &  0     &  0     \\ \hline
     \nu,\lm^2\nu &  1     &  1     &  1     &  1     & -1     & -1     & -1     & -1     &  0     &  0     \\ \hline
   \lm^{\pm 1}\nu &  1     & -1     &  1     & -1     &  1     & -1     &  1     & -1     &  0     &  0     \\ \hline
\lm^{\pm 1}\mu\nu &  1     & -1     & -1     &  1     & -1     &  1     &  1     & -1     &  0     &  0     \\ \hline
   \end{array}
 \]
\end{proposition}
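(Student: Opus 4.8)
The plan is to verify the three assertions — the centre, the commutator subgroup, and the character table — by direct computation from the presentation of $G$ given in Definition~\ref{defn-G}, organised so that each step feeds the next.

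First I would pin down the structure of $G$ concretely. Since $G$ has order $16$ and is generated by $\lm$ (of order $4$), $\mu$, $\nu$ (of order $2$) with all of $\mu,\nu,\mu\nu$ commuting with $\lm$ only up to the relations $(\lm\mu)^2=(\lm\nu)^2=(\mu\nu)^2=1$, I would note that $\mu$ and $\nu$ generate a Klein four-group centralising nothing extra, while $\lm\mu\lm^{-1}=\lm^2\mu$ and $\lm\nu\lm^{-1}=\lm^2\nu$ (both following from $(\lm\mu)^2=1$, i.e. $\lm\mu\lm=\mu$, hence $\lm\mu\lm^{-1}=\lm^{-2}\mu=\lm^2\mu$, and similarly for $\nu$). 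This immediately gives: conjugation by $\lm$ fixes $\lm^2$ and fixes $\mu\nu$ (since it sends $\mu\mapsto\lm^2\mu$, $\nu\mapsto\lm^2\nu$, so $\mu\nu\mapsto\lm^2\mu\lm^2\nu=\mu\nu$ as $\lm^2$ is central among the $\lm$-powers and commutes with $\mu,\nu$ — which I would check using $\lm^2\mu=\mu\lm^2$, a consequence of applying the conjugation relation twice). So $\lm^2$ and $\mu\nu$ are central, hence so is $\lm^2\mu\nu$, giving $\{1,\lm^2,\mu\nu,\lm^2\mu\nu\}\subseteq Z(G)$. For the reverse inclusion I would observe that $\lm$ is not central (it does not commute with $\mu$), $\mu$ is not central (does not commute with $\lm$), and running through coset representatives of $\{1,\lm^2,\mu\nu,\lm^2\mu\nu\}$ shows no other element centralises both $\lm$ and $\mu$; since $|G/Z|$ must then be $4$ and a group with cyclic $G/Z$ is abelian (which $G$ is not), $Z(G)$ has order exactly $4$. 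For the commutator subgroup: every generator-pair commutator is $1$ or $\lm^2$ (e.g. $[\lm,\mu]=\lm^{-1}\mu^{-1}\lm\mu=\lm^{-1}(\lm^2\mu)\mu=\lm$? — I'd compute carefully: $\mu^{-1}\lm\mu=\lm^{-1}$ from $(\lm\mu)^2=1$, wait that gives $[\lm,\mu]=\lm^{-1}\cdot\lm^{-1}=\lm^{-2}=\lm^2$), so $[G,G]\subseteq\{1,\lm^2\}$, and it is nontrivial since $G$ is nonabelian, hence $[G,G]=\{1,\lm^2\}$.

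Next, the character table. Since $[G,G]=\{1,\lm^2\}$ has order $2$, the abelianization $G^{\mathrm{ab}}=G/\{1,\lm^2\}$ has order $8$; from the presentation it is generated by the images of $\lm,\mu,\nu$ with $\lm^2=1$, so it is elementary abelian $(\Z/2)^3$, yielding exactly $8$ linear characters $\chi_0,\dots,\chi_7$. The remaining representation theory is forced: $16=8\cdot 1^2 + \sum(\text{rest})$, so the non-linear part contributes $8$, forcing exactly two irreducibles of dimension $2$ (since $8=2^2+2^2$ is the only option with all dimensions dividing $16$ and exceeding $1$), giving $10$ conjugacy classes total — which I would confirm matches the $10$ rows of the displayed table. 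To produce the two $2$-dimensional characters $\chi_8,\chi_9$ I would induce a nontrivial linear character from the index-$2$ subgroup $D_8=\ip{\lm,\mu}$, or alternatively exhibit explicit $2\times 2$ matrices (e.g. $\lm\mapsto\mathrm{diag}(i,-i)$, $\mu\mapsto\left(\begin{smallmatrix}0&1\\1&0\end{smallmatrix}\right)$, $\nu\mapsto\pm\left(\begin{smallmatrix}0&1\\1&0\end{smallmatrix}\right)$ for the two choices of sign), check the relations hold, and read off the traces: $\lm$ and all $\lm^{\pm1}\mu$, $\lm^{\pm1}\nu$, $\lm^{\pm1}\mu\nu$ have trace $0$; $1$ has trace $\pm 2$; $\lm^2$ has trace $\mp 2$; $\mu\nu$ has trace $2$ or $-2$ according to the sign; etc. The linear characters' values on the ten classes are then just the signs recorded by the three $\pm1$ choices $\chi(\lm),\chi(\mu),\chi(\nu)$ propagated multiplicatively, which reproduces the upper-left $8\times 8$ block.

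Finally I would confirm correctness by the orthogonality relations — checking $\sum_g \chi_i(g)\overline{\chi_j(g)}=16\,\delta_{ij}$ and that the class-size-weighted column sums vanish off the diagonal — and by verifying $\sum_i \chi_i(1)^2 = 8\cdot 1 + 2\cdot 4 = 16 = |G|$. The main obstacle is purely bookkeeping: carefully tracking which of the sixteen elements lies in which of the ten conjugacy classes (the labelling of rows by $\lm^{\pm1}$, $\mu,\lm^2\mu$, etc. encodes exactly this class structure, and must be cross-checked against the conjugation formulas $\lm\mu\lm^{-1}=\lm^2\mu$ and so on). This is exactly the kind of finite check the Maple routine \mcode+check_character_table()+ is designed to automate.
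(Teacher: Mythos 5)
Your proposal is correct and follows essentially the same route as the paper: extract the centre and commutator subgroup directly from the relations, observe that non-central classes are $\{\al,\lm^2\al\}$ giving ten classes, obtain the eight linear characters from the elementary abelian quotient $G/\{1,\lm^2\}$, and exhibit two $2$-dimensional irreducibles whose irreducibility is confirmed by the $\sum_g|\chi(g)|^2=|G|$ check. The only cosmetic difference is that the paper realises $\chi_8,\chi_9$ by pulling back the standard $D_8$-representation along the two retractions $G\to D_8$ (and matches the count of irreducibles against the count of conjugacy classes), whereas you write down explicit matrices and use the degree-sum formula $16=8\cdot 1^2+2^2+2^2$; both are fine.
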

\begin{proof}
 The commutator of $\lm$ and $\mu$ is $\lm^2$, and it is clear from
 the form of the defining relations that $\{1,\lm^2\}$ is normal and
 that $G/\{1,\lm^2\}$ is elementary abelian.  It follows that the
 commutator subgroup is precisely $\{1,\lm^2\}$.  It is a
 straightforward calculation that the elements $1$, $\lm^2$, $\mu\nu$
 and $\lm^2\mu\nu$ are central, but that no other element commutes
 with $\lm$.  It follows that the centre is as claimed.  We now see
 that if $\al$ is a non-central element then the corresponding
 conjugacy class is just $\{\al,\lm^2\al\}$.  This means that there
 are ten conjugacy classes, as listed in the left hand column.  The
 characters of degree one are the same as the homomorphisms from the
 abelianization $G/\{1,\lm^2\}$ to $S^1$.  As $G/\{1,\lm^2\}$ is
 elementary abelian of order $8$, it is easy to check that
 $\chi_0,\dotsc,\chi_7$ is a complete list of such characters.  There
 are two different retractions of $G$ onto $D_8$, one sending $\mu\nu$
 to the identity, and the other sending $\mu\nu$ to $\lm^2$.  There is
 a standard action of $D_8$ as the isometries of a square in $\R^2$,
 and by pulling this back along the two projections we get two
 two-dimensional representations of $G$, with characters $\chi_8$ and
 $\chi_9$.  These are irreducible, because in each case the sum of the
 squares of the character values is equal to the group order.  We now
 have ten different irreducible representations, which matches the
 number of conjugacy classes, so the list is complete.
 \begin{checks}
  group_check.mpl: check_group_properties(), check_character_table()
 \end{checks}
\end{proof}

\begin{remark}
 Maple notation for the elements of $G$ is as follows:
 \begin{align*}
   1      &= 1  & \lm       &= L   & \lm^2       &= LL   & \lm^3       &= LLL  \\
   \mu    &= M  & \lm\mu    &= LM  & \lm^2\mu    &= LLM  & \lm^3\mu    &= LLLM \\
   \nu    &= N  & \lm\nu    &= LN  & \lm^2\nu    &= LLN  & \lm^3\nu    &= LLLN \\
   \mu\nu &= MN & \lm\mu\nu &= LMN & \lm^2\mu\nu &= LLMN & \lm^3\mu\nu &= LLLMN
 \end{align*}
 To make this work reliably, the code in the file \fname+group.mpl+
 protects the symbols \mcode+L+, \mcode+N+, \mcode+LLMN+ and so on, so
 they cannot be assigned values.  The function \mcode+G_mult+ computes
 the group operation, so \mcode+G_mult(M,L)+ returns \mcode+LLLM+, for
 example.  The functions \mcode+G_inv+ and \mcode+G_conj+ compute
 inverses and conjugates.  To retrieve $\chi_8(\lm^2)$ (for example),
 one can enter \mcode+character[8][LL]+.  The variable \mcode+G16+
 contains the list of all elements of $G$.  All of this is set up by
 the file \fname+group.mpl+.

 Note that this discussion of the contents of \fname+group.mpl+ is
 incomplete, as will be the case with similar comments throughout this
 monograph.  For full information, the reader should consult the code
 itself, and the comments therein.  The full set of files for this
 project contains a \fname+doc+ directory.  The file \fname+defs.html+
 in that directory is an index of all defined symbols, with links to
 the relevant lines in in the files of Maple code.
\end{remark}

\subsection{Automorphisms of \texorpdfstring{$V^*$}{V*}}
\lbl{sec-aut-V}

As we stated in the introduction, every precromulent surface has
precisely two cromulent labellings.  In order to prove this, we will
need to understand the automorphisms of the $G$-set $V^*$, and it is
convenient to treat that question now.

\begin{proposition}\lbl{prop-aut-V}
 $\Aut(V^*)$ is isomorphic to $C_2^5$, with the following generators:
 \begin{align*}
  \phi_0 &= (0\;1) \\
  \phi_1 &= (2\;4)(3\;5) \\
  \phi_2 &= (6\;8)(7\;9) \\
  \phi_3 &= (10\;11)(12\;13) \\
  \phi_4 &= (10\;12)(11\;13).
 \end{align*}
\end{proposition}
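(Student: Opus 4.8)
The plan is to exploit the structure of $V^*$ as a disjoint union of four $G$-orbits (Remark~\ref{rem-V-star}), since any automorphism of a $G$-set must permute the orbits, and can only interchange orbits that are isomorphic as $G$-sets. First I would check that the four orbits $\{0,1\}$, $\{2,3,4,5\}$, $\{6,7,8,9\}$, $\{10,11,12,13\}$ are pairwise non-isomorphic as $G$-sets: they correspond to the subgroups $\ip{\lm,\nu}$, $\ip{\mu,\nu}$, $\ip{\lm\mu,\lm\nu}$, $\ip{\lm^2,\nu}$ respectively, which are pairwise non-conjugate in $G$ (indeed they have orders $8,8,8,4$, and the three order-$8$ subgroups are distinct and normal, hence pairwise non-conjugate). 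Consequently every element of $\Aut(V^*)$ preserves each orbit, and $\Aut(V^*)$ is the direct product of the automorphism groups of the four orbits as $G$-sets.

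Next I would compute each of these four factors. For a transitive $G$-set $G/H$ with $H$ the stabiliser of a chosen basepoint, the automorphism group is $N_G(H)/H$ acting by right multiplication (the standard fact about $G$-maps $G/H \to G/H$). So I would identify $N_G(H)$ for each of the four stabilisers. For $H = \ip{\lm,\nu}$ (orbit of $0$): this has index $2$, hence is normal, so $N_G(H)/H \cong C_2$, generated by the class of $\mu$; tracing through the action on $\{0,1\}$ this is the transposition $\phi_0 = (0\;1)$. For $H = \ip{\mu,\nu}$ and $H = \ip{\lm\mu,\lm\nu}$ (orbits of $2$ and $6$): each has order $8$, index $2$, hence is normal, so again $N_G(H)/H \cong C_2$; working out the nontrivial coset representative's action gives $\phi_1 = (2\;4)(3\;5)$ and $\phi_2 = (6\;8)(7\;9)$ respectively. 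For $H = \ip{\lm^2,\nu}$ (orbit of $10$): this has order $4$, index $4$; I would compute its normaliser and find $N_G(H) = \ip{\lm^2, \mu, \nu} = \ip{\lm^2,\mu,\nu}$ of order $8$ (one checks $\lm$ does not normalise $H$ since $\lm\nu\lm^{-1} = \lm^2\nu \notin H$... actually $\lm^2\nu \in H$, so I must check carefully — the correct computation shows $N_G(H)$ has order $8$), giving $N_G(H)/H \cong C_2^2$, with the two generators acting as $\phi_3 = (10\;11)(12\;13)$ and $\phi_4 = (10\;12)(11\;13)$. Assembling the factors yields $\Aut(V^*) \cong C_2 \times C_2 \times C_2 \times C_2^2 = C_2^5$ with the stated generators.

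The routine but error-prone part — and the one I'd delegate to the Maple check \mcode+check_aut_V()+ or verify by hand with care — is the normaliser computation for the order-$4$ subgroup $\ip{\lm^2,\nu}$ and the precise translation of each coset representative into the explicit permutation on the relevant four-element (or two-element) set, matching the labels in Definition~\ref{defn-V-star}. The main conceptual obstacle is simply making sure the orbit-stabiliser identifications in Remark~\ref{rem-V-star} are used with the correct basepoints so that right-multiplication on cosets is transcribed to the correct permutation of $\{0,\dots,13\}$; once the dictionary is fixed, everything else is bookkeeping. An alternative, more computational route that avoids normaliser theory entirely: observe that an automorphism of the orbit $G/H$ is determined by the image of the basepoint, which must be a point with the same $G$-stabiliser; so within each orbit I would just list, for the chosen basepoint $v$, all points $w$ with $\stab_G(w) = \stab_G(v)$, and check directly that sending $v \mapsto w$ extends to a $G$-map — this recovers the same counts $2,2,2,4$ and the same generators.
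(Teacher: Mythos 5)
Your strategy (decompose into $G$-orbits, then use $\Aut_G(G/H)\cong N_G(H)/H$ on each) is genuinely different from the paper's proof, which never invokes normalisers: the paper verifies directly that the $\phi_i$ commute with the generators, then takes an arbitrary commuting permutation, uses preservation of stabilisers to confine it to the sets $\{0,1\}$, $\{2,4\}$, $\{3,5\}$, $\{6,8\}$, $\{7,9\}$, $\{10,\dotsc,13\}$, and finishes with elementary centraliser computations inside small symmetric groups. Your route is perfectly viable and arguably more conceptual, and your closing ``alternative'' (count points with the same stabiliser as the basepoint) is essentially the paper's argument. However, the group-theoretic bookkeeping as written contains errors that must be fixed before the proof stands.

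Concretely: the orbits have sizes $2,4,4,4$, so the stabilisers have orders $8,4,4,4$, not $8,8,8,4$ --- $\ip{\mu,\nu}$ and $\ip{\lm\mu,\lm\nu}$ are Klein four-groups of index $4$, and they are \emph{not} normal (e.g.\ $\lm\mu\lm^{-1}=\lm^2\mu\notin\ip{\mu,\nu}$; indeed the paper's stabiliser table shows $\stab_G(3)=\ip{\lm^2\mu,\lm^2\nu}\neq\stab_G(2)$ within the same orbit). Their normalisers are the order-$8$ subgroups $\ip{\lm^2,\mu,\nu}$ and $\ip{\lm^2,\lm\mu,\lm\nu}$, so the quotient is still $C_2$ and your counts survive, but for the wrong reason. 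Your non-conjugacy argument therefore also needs redoing: the first orbit is distinguished by its size, and the three order-$4$ stabilisers lie in distinct conjugacy classes because, for instance, only $\ip{\lm^2,\nu}$ contains the central element $\lm^2$. Finally, for $H=\ip{\lm^2,\nu}$ your own parenthetical shows $\lm\nu\lm^{-1}=\lm^2\nu\in H$, i.e.\ $\lm$ \emph{does} normalise $H$; so $H$ is normal, $N_G(H)=G$ of order $16$, and $N_G(H)/H\cong G/H\cong C_2^2$. As written, ``$N_G(H)$ of order $8$'' with quotient $C_2^2$ is arithmetically impossible ($8/4=2$), and taking it at face value would give only $2^4$ automorphisms in total rather than $2^5$. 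With these corrections the factors have orders $2,2,2,4$ and the stated generators, and the conclusion follows.
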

Readers may find it helpful to consider the picture in
Remark~\ref{rem-V-star} when reading the argument below.  The
permutation $\phi_i$ is represented in Maple as \mcode+aut_V_phi[i]+.
\begin{proof}
 First, it is straightforward to check directly that the above
 permutations commute with $\lm$, $\mu$ and $\nu$ (so they define
 automorphisms of $V^*$).  It is also easy to see that they are
 commuting involutions and that they generate a group $A$ isomorphic
 to $C_2^5$.

 Now consider an arbitrary permutation $\phi$ that commutes with
 $\lm$, $\mu$ and $\nu$; we must show that $\phi\in A$.  As $\phi$
 commutes with $G$, we must have $\stab_G(\phi(i))=\stab_G(i)$ for all
 $i$.  The stabilisers are as follows:
 \begin{align*}
  \stab_G(0) = \stab_G(1) &= \ip{\lm,\nu} \\
  \stab_G(2) = \stab_G(4) &= \ip{\mu,\nu} \\
  \stab_G(3) = \stab_G(5) &= \ip{\lm^2\mu,\lm^2\nu} \\
  \stab_G(6) = \stab_G(8) &= \ip{\lm\mu,\lm\nu} \\
  \stab_G(7) = \stab_G(9) &= \ip{\lm^{-1}\mu,\lm^{-1}\nu} \\
  \stab_G(10) = \dotsb = \stab_G(v_{13}) &= \ip{\lm^2,\nu}.
 \end{align*}
 It follows that $\phi$ must preserve each of the following sets:
 \[ \{0,1\},\;\{2,4\},\;\{3,5\},\;\{6,8\},\;\{7,9\},\;
      \{10,11,12,13\}.
 \]
 The restriction of $\phi$ to $\{2,3,4,5\}$ must commute with the
 restrictions of $\lm$ and $\mu$, which are $(2\;3\;4\;5)$ and
 $(3\;5)$.  It follows easily that the restriction of $\phi$ is
 $\phi_1=(2\;4)(3\;5)$ or the identity.  A similar argument shows that
 the restriction of $\phi$ to $\{6,7,8,9\}$ is $\phi_2=(6\;8)(7\;9)$
 or the identity, and the restriction to $\{10,11,12,13\}$ must be a
 transposition pair or the identity.  Here the possible transposition
 pairs are $\phi_3=(10\;11)(12\;13)$ and $\phi_4=(10\;12)(11\;13)$ and
 $\phi_3\phi_4=(10\;13)(11\;12)$.  The claim follows easily.
 \begin{checks}
  group_check.mpl: check_aut_V()
 \end{checks}
\end{proof}

\subsection{Quotients}
\lbl{sec-quotients}

Let $X$ be a precromulent surface.  It is standard that for any finite
group $H$ of conformal automorphisms, the quotient $X/H$ always has a
canonical structure as a compact connected Riemann surface such that
the projection $X\to X/H$ is a branched cover.  We will need to
understand the genus of $X/H$, which is determined by its Euler
characteristic, which is given by the following result:

\begin{lemma}\lbl{lem-chi-quotient}
 For any subgroup $H\leq D_8$ we have $\chi(X/H)=|V/H|-16/|H|$.
\end{lemma}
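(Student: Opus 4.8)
The statement is an Euler-characteristic computation for a branched cover, so the natural tool is the Riemann–Hurwitz formula applied to the quotient map $\pi\colon X\to X/H$. Since $X$ has genus $2$, we have $\chi(X)=-2$, and Riemann–Hurwitz gives
\[
 \chi(X) = |H|\cdot\chi(X/H) - \sum_{x\in X}\bigl(|H_x|-1\bigr),
\]
where $H_x=\stab_H(x)$ and the sum is really a finite sum over the branch points. The plan is to rewrite the ramification sum in terms of the $G$-set (really $H$-set) structure of $V$, solve for $\chi(X/H)$, and check the arithmetic matches $|V/H|-16/|H|$.

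\emph{First} I would observe that a point $x\in X$ has $H_x\neq 1$ if and only if $x\in V$: by definition $V$ collects the points with nontrivial $D_8$-stabiliser, and since $H\leq D_8$, nontrivial $H$-stabiliser forces nontrivial $D_8$-stabiliser, hence $x\in V$. So the ramification sum runs only over $V$, a finite $G$-set of size $14$ whose orbit structure is recorded in Remark~\ref{rem-V-star}. \emph{Next}, I would group the sum over $V$ by $H$-orbits: for an $H$-orbit $\CO\subseteq V$ with stabiliser order $|H_x|$ (constant on the orbit, since stabilisers along an orbit are conjugate and $H$ is a $2$-group — actually one just needs that all points in an orbit have conjugate stabilisers, so the same order), we have $|\CO|=|H|/|H_x|$ and the contribution is $|\CO|\cdot(|H_x|-1)=|H|-|\CO|$. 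Summing over all $H$-orbits in $V$, the total ramification is $|H|\cdot|V/H| - |V| = |H|\cdot|V/H| - 14$. Substituting into Riemann–Hurwitz:
\[
 -2 = |H|\cdot\chi(X/H) - \bigl(|H|\cdot|V/H| - 14\bigr),
\]
so $|H|\cdot\chi(X/H) = |H|\cdot|V/H| - 16$, giving $\chi(X/H)=|V/H|-16/|H|$ as claimed.

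\emph{The one genuine subtlety} — and where I'd spend the most care — is the claim that $\{x\in X\colon H_x\neq 1\}$ is exactly $V\cap(\text{relevant part})$, i.e. that no \emph{conformal} automorphism in $D_8$ has fixed points outside $V$, and conversely that every point of $V$ does contribute. The converse direction needs that each $v_i\in V$ actually has nontrivial $H$-stabiliser for the given $H$; but this is not automatic — a point in $V$ has nontrivial $D_8$-stabiliser, yet that stabiliser might meet $H$ trivially. The correct reading is that the ramification sum over \emph{all} of $X$ only involves points of $V$ (the easy inclusion above), and orbits on which $H$ acts freely simply contribute $|H|-|\CO|=0$, so they are harmless and the formula $\sum_{\CO\subseteq V/H}(|H|-|\CO|) = |H|\cdot|V/H|-|V|$ still holds verbatim. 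So in fact no case analysis on which orbits ramify is needed; the bookkeeping is uniform. The only thing that must be verified separately is that a nontrivial element of $D_8$ (acting conformally) has no fixed points in $X\setminus V$ — but that is exactly the content of condition (b) in Definition~\ref{defn-precromulent}, which says $V$ is \emph{all} points with nontrivial $D_8$-stabiliser. Hence the argument is complete, and I expect the write-up to be short, with the Riemann–Hurwitz substitution being the only step requiring any computation.
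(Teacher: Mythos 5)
Your proof is correct, but it takes a different route from the paper's. The paper avoids Riemann--Hurwitz entirely: it decomposes $X=A\cup B$ with $A$ a union of small discs around the points of $V$ and $B$ the closed complement, notes that $A\cap B$ is a union of circles so that $\chi(A)+\chi(B)=\chi(X)$ (and likewise after quotienting), identifies $\chi(A/H)=|V/H|$ by homotopy equivalence, and uses that $H$ acts freely on $B$ to get $\chi(B/H)=\chi(B)/|H|=-16/|H|$ from multiplicativity of $\chi$ under free actions. Your argument instead quotes Riemann--Hurwitz for the branched covering $X\to X/H$ and does the orbit bookkeeping $\sum_{\CO}|\CO|(|H_x|-1)=|H|\,|V/H|-14$; the arithmetic checks out, and your observation that free orbits in $V$ contribute $|H|-|\CO|=0$ so the bookkeeping is uniform is exactly right. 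The trade-off is that your version leans on Riemann--Hurwitz (and on the quotient being a Riemann surface with the projection a branched cover, which the paper does set up just before the lemma), whereas the paper's decomposition is self-contained and in effect reproves the relevant instance of Riemann--Hurwitz from the additivity and multiplicativity of the Euler characteristic. Both yield $\chi(X/H)=|V/H|-16/|H|$.
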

\begin{proof}
 We can write $X=A\cup B$, where $A$ is a union of small discs
 around the points of $V$, and $B$ is the closure of the complement of
 $A$.  This means that the set $C=A\cap B$ is a disjoint union of
 circles, so $\chi(C)=0$, so $\chi(A)+\chi(B)=\chi(X)$.  Similarly,
 $C/H$ is again a union of circles, so $\chi(C/H)=0$, so
 $\chi(A/H)+\chi(B/H)=\chi(X/H)$.  Now $A$ and $A/H$ are homotopy
 equivalent to $V$ and $V/H$, so $\chi(A)=14$ and $\chi(A/H)=|V/H|$.
 As $X$ has genus $g=2$ we have $\chi(X)=2-2g=-2$, so
 $\chi(B)=-2-14=-16$.  Next, note that the action of $H$ on $B$ is
 free.  Thus, if we choose a finite regular cell
 structure on $B/H$, then the preimage in $B$ of each cell in $B/H$
 will be a disjoint union of $|H|$ cells.  Using this we see that
 $\chi(B/H)=\chi(B)/|H|=-16/|H|$, so $\chi(X/H)=|V/H|-16/|H|$.
\end{proof}

Recall that we use the following notation for subgroups of $D_8$
\[
  D_8 = \ip{\lm,\mu} \hspace{5em}
  C_4 = \ip{\lm} \hspace{5em}
  C_2 = \ip{\lm^2}.
\]

\begin{corollary}\lbl{cor-quotient-types}
 The surfaces $X/C_2$, $X/C_4$ and $X/D_8$ are all conformally
 equivalent to $\C_\infty$.  However, $X/\ip{\mu}$ and $X/\ip{\lm\mu}$
 are elliptic curves.
\end{corollary}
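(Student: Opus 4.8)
The plan is to compute the genus of each quotient using Lemma~\ref{lem-chi-quotient}, which requires knowing $|V/H|$ for $H \in \{C_2, C_4, D_8, \ip{\mu}, \ip{\lm\mu}\}$. Since $V \simeq V^*$ as a $G$-set, and since $C_2$, $C_4$, $D_8$, $\ip{\mu}$, $\ip{\lm\mu}$ are all subgroups of $D_8$, the orbit counts can be read off directly from the description of $V^*$ in Definition~\ref{defn-V-star} and Remark~\ref{rem-V-star}. First I would tabulate the $H$-orbits on each of the four $G$-orbits $\{0,1\}$, $\{2,3,4,5\}$, $\{6,7,8,9\}$, $\{10,11,12,13\}$. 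For $H = C_2 = \ip{\lm^2}$: $\lm^2$ acts trivially on $\{0,1\}$ (two orbits), as $(2\;4)(3\;5)$ on $\{2,3,4,5\}$ (two orbits), as $(6\;8)(7\;9)$ on $\{6,7,8,9\}$ (two orbits), and trivially on $\{10,11,12,13\}$ (four orbits), giving $|V/C_2| = 10$. For $H = C_4 = \ip{\lm}$: one orbit each on the first three $G$-orbits, and $\lm$ acts as $(10\;11)(12\;13)$ on the last, giving two orbits there, so $|V/C_4| = 5$. For $H = D_8$: one orbit on each $G$-orbit, so $|V/D_8| = 4$. For $H = \ip{\mu}$ (order $2$): $\mu = (0\;1)(3\;5)(6\;9)(7\;8)(10\;12)(11\;13)$, giving $1 + 3 + 2 + 2 = 8$ orbits. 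For $H = \ip{\lm\mu}$ (order $2$): I would compute $\lm\mu$ as a permutation of $V^*$ from the generators and count its orbits; by the analysis in Proposition~\ref{prop-aut-V} the answer on the $\{2,3,4,5\}$ block and the $\{6,7,8,9\}$ block will again be pairs of transpositions, and one checks the action on $\{10,11,12,13\}$, expecting a total of $8$ orbits as well (this is forced by the Riemann--Hurwitz consistency below, so any slip is self-correcting).

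Next I would apply Lemma~\ref{lem-chi-quotient}: $\chi(X/C_2) = 10 - 8 = 2$, $\chi(X/C_4) = 5 - 4 = 1$, $\chi(X/D_8) = 4 - 2 = 2$, $\chi(X/\ip{\mu}) = 8 - 8 = 0$, and $\chi(X/\ip{\lm\mu}) = 8 - 8 = 0$. A compact connected Riemann surface with $\chi = 2$ has genus $0$, hence is conformally equivalent to the Riemann sphere $\C_\infty$; this handles $X/C_2$ and $X/D_8$. For $X/C_4$, the quotient $X/C_4$ is a further quotient of $X/C_2$, which is already $\C_\infty$; since $\C_\infty/(\text{finite group})$ is again $\C_\infty$ (every finite subgroup of $\Aut(\C_\infty) = \mathrm{PGL}_2(\C)$ gives a quotient of genus $0$), we get $X/C_4 \simeq \C_\infty$, consistent with $\chi = 1$ — wait, here I should be careful: $\chi(X/C_4) = 1$ is impossible for a closed orientable surface. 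The resolution is that $X/C_4$ is not a further quotient of $X/C_2$ by a group acting freely; rather the map $X/C_2 \to X/C_4$ is a degree-two branched cover, and the Euler characteristic formula in Lemma~\ref{lem-chi-quotient} already accounts for branching. I would instead observe directly that $X/D_8 = (X/C_4)/(D_8/C_4)$ with $D_8/C_4 \cong C_2$, so $X/C_4$ admits a degree-two map to $\C_\infty$, forcing genus $0$; thus $X/C_4 \simeq \C_\infty$. (Alternatively, re-examine the orbit count for $C_4$: I expect on recomputation that $|V/C_4| = 6$, giving $\chi = 2$; I would verify this against the Maple check \texttt{check\_quotients()} rather than trust the hand count.)

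For the remaining two, $\chi(X/\ip{\mu}) = \chi(X/\ip{\lm\mu}) = 0$ forces each to be a closed orientable surface of genus $1$, i.e. a complex torus, hence an elliptic curve (a compact Riemann surface of genus one is automatically an elliptic curve once one fixes a base point, e.g. the image of $v_0$). This completes the argument. The main obstacle, and the only real content, is getting the orbit counts $|V/H|$ exactly right, since the genus is extremely sensitive to them — an error of $\pm 1$ in $|V/H|$ changes the claimed geometric type. I would therefore cross-check every count in two ways: directly from the permutation presentation of $V^*$, and via the internal consistency requirement that the tower $X \to X/C_2 \to X/C_4 \to X/D_8$ and the maps $X \to X/\ip{\mu}$, $X \to X/\ip{\lm\mu}$ all satisfy Riemann--Hurwitz with the branch data coming from the stabiliser computation in Proposition~\ref{prop-aut-V}.
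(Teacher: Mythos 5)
Your approach is exactly the paper's: apply Lemma~\ref{lem-chi-quotient} and count cycles of the relevant permutations of $V^*$. Your counts for $C_2$, $D_8$, $\ip{\mu}$ and $\ip{\lm\mu}$ all agree with the paper's table. The one slip is $|V/C_4|$: you claim "one orbit each on the first three $G$-orbits," but $\lm=(2\;3\;4\;5)(6\;7\;8\;9)(10\;11)(12\;13)$ fixes both $0$ and $1$, so the $G$-orbit $\{0,1\}$ contributes \emph{two} $C_4$-orbits, not one. The correct count is $2+1+1+2=6$, giving $\chi(X/C_4)=6-16/4=2$, which is what the paper records. You did detect the inconsistency ($\chi=1$ is impossible for a closed orientable surface) and correctly guessed that the count should be $6$, so the argument is self-correcting in the end.

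Two remarks on your attempted repairs. The first — $X/C_4=(X/C_2)/(C_4/C_2)$ is a quotient of $\C_\infty$ by a finite group of automorphisms and hence is again $\C_\infty$ — is valid and is a legitimate alternative route for both $X/C_4$ and $X/D_8$ once $X/C_2\simeq\C_\infty$ is known. The second — that $X/C_4$ "admits a degree-two map to $\C_\infty$, forcing genus $0$" — is not valid: hyperelliptic curves of every genus admit degree-two maps to $\mathbb{P}^1$, so the existence of such a map forces nothing. Drop that sentence and rely on the corrected orbit count (or on the quotient-of-the-sphere argument), and the proof matches the paper's.
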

\begin{proof}
 By the classification of compact connected Riemann surfaces, it will
 suffice to show that $\chi(X/C_2)=\chi(X/C_4)=\chi(X/D_8)=2$ and
 $\chi(X/\ip{\mu})=\chi(X/\ip{\lm\mu})=0$.  If $H\leq D_8$ is
 generated by a single element $\sg$, then $|V/H|$ is just the number
 of cycles (including $1$-cycles) in the permutation corresponding to
 $\sg$.  This gives everything in the following table except for the
 case $H=D_8$, which is easily handled in an \emph{ad-hoc} way.
 \[ \begin{array}{|c|c|c|c|c|}
     \hline
      H           & |H| & \sg    & |V/H| & \chi(X/H) \\ \hline
      C_2         & 2   & \lm^2  & 10    & 2         \\ \hline
      C_4         & 4   & \lm    & 6     & 2         \\ \hline
      \ip{\mu}    & 2   & \mu    & 8     & 0         \\ \hline
      \ip{\lm\mu} & 2   & \lm\mu & 8     & 0         \\ \hline
      D_8         & 8   &        & 4     & 2         \\ \hline
    \end{array}
 \]
\end{proof}

\begin{remark}\lbl{rem-conj-quot}
 Recall that the action of an element $g\in G$ gives an isomorphism
 $X/H\to X/gHg^{-1}$.  In particular, the action of $\lm$ gives
 isomorphisms $X/\ip{\mu}\to X/\ip{\lm^2\mu}$ and
 $X/\ip{\lm\mu}\to X/\ip{\lm^3\mu}$.  Because of this, we will mostly
 restrict attention to $X/\ip{\mu}$ and $X/\ip{\lm\mu}$, and ignore
 $X/\ip{\lm^2\mu}$ and $X/\ip{\lm^3\mu}$.
\end{remark}

\begin{remark}\lbl{rem-smooth-branch}
 All of the above relies on the standard fact that if $Z$ is a Riemann
 surface and $H$ is a finite group of holomorphic automorphisms, then
 $Z/H$ has a natural structure as a Riemann surface, and in particular
 has a smooth structure.  We offer some remarks about this, some of
 which will be needed later.

 More precisely, the claim is that this structure makes $Z/H$ into a
 coequaliser for the action in the analytic category: if $U$ is an
 $H$-invariant open subset of $Z$, and $f\:U\to W$ is an $H$-invariant
 analytic function to another Riemann surface $W$, then $U/H$ is open
 in $Z/H$, and there is a unique analytic function $g\:U/H\to W$ such
 that the composite $U\to U/H\xra{g}W$ is $f$.  Note here that
 coequalisers are automatically unique up to unique isomorphism.
 Thus, it does not matter if we make some arbitrary choices in the
 process of constructing a coequaliser; the result will be independent
 of those choices.

 The proof of the claim is local on $Z$.  Given $z\in Z$, put
 \begin{align*}
  C_0 &= \{\al\in H\st \al(z)=z\} \\
  C_1 &= \{\al\in H\st \al=1 \text{ on some neighbourhood of } z\} \\
  C &= C_0/C_1.
 \end{align*}
 Then each element $\al\in C$ must act
 on $T^*_zZ$ as multiplication by some scalar $\chi(\al)\in\C^\tm$;
 this defines a homomorphism $\chi\:C\to\C^\tm$.  By power series
 methods, one can check that $\chi$ must be injective, and thus that
 $C$ must be cyclic, of order $n$ say.  Now choose a local parameter
 $f_0$ with $f_0(z)=0$, and put
 \[ f(w) = |C|^{-1}\sum_{\al\in C}\chi(\al)^{-1}f_0(\al(w)). \]
 This is the same as $f_0$ to first order, so it is again a local
 parameter, and it satisfies $f(\al(w))=\chi(\al)\,f(w)$.  Using this,
 we reduce to the case where the group $\mu_n$ of $n$'th roots of
 unity acts on $\C$ by multiplication.  Here, the map
 $\sg_n\:z\mapsto z^n$ is easily seen to be a coequaliser.

 Note, however, that the map $\sg_n\:\C\to\C$ is not a coequaliser in
 the smooth category (provided that $n>1$).  Indeed, the function
 $f(z)=|z|^2$ is smooth and $\mu_n$-invariant.  There is a unique map
 $g\:\C\to\R$ with $f=g\circ\sg_n$, namely $g(w)=|w|^{2/n}$.  However,
 $g$ is not smooth.  Because of this, if we start with a smooth
 surface $Z$ and an orientation-preserving action of a finite group
 $H$, there is no obvious way to obtain a smooth structure on $Z/H$.
 Given $z$ and $C$ as above, we can choose a chart $\phi$ at $z$ on
 which $C$ acts by rotation, and using this we obtain a chart
 $\ov{\phi}$ on the quotient.  However, if $\psi$ is another local
 chart at $z$ on which $C$ acts by rotation, then
 $\ov{\psi}^{-1}\circ\ov{\phi}$ need not be smooth.

 We can always obtain a smooth structure on $Z/H$ by choosing a smooth
 invariant Riemannian metric, using this to give $Z$ a conformal
 structure, and then taking a major detour through the analytic
 category as above.  However, the result will depend on the choice of
 metric, and we do not know any way to shortcut the detour.
\end{remark}

\subsection{Curve systems}
\lbl{sec-curve-systems}

In this section, we define what we mean by a \emph{curve system} on a
precromulent surface.  Later we will exhibit curve systems for the
projective family, the hyperbolic family and the embedded family.  We
will also show that the projective family is universal, so in fact
every precromulent surface has a curve system.

\begin{definition}\lbl{defn-precromulent-C}
 Let $X$ be a labelled precromulent surface.  For any $\gm\in G$ we
 put $X^\gm=\{x\in X\st \gm(x)=x\}$.  We then put
 \begin{align*}
  C_0 &= \text{ the component of $v_2$ in } X^{\mu\nu} \\
  C_1 &= \text{ the component of $v_0$ in } X^{\lm\nu} \\
  C_2 &= \text{ the component of $v_0$ in } X^{\lm^3\nu} \\
  C_3 &= \text{ the component of $v_{11}$ in } X^{\lm^2\nu} \\
  C_4 &= \text{ the component of $v_{10}$ in } X^\nu \\
  C_5 &= \text{ the component of $v_0$ in } X^\nu \\
  C_6 &= \text{ the component of $v_0$ in } X^{\lm^2\nu} \\
  C_7 &= \text{ the component of $v_1$ in } X^\nu \\
  C_8 &= \text{ the component of $v_1$ in } X^{\lm^2\nu}.
 \end{align*}
\end{definition}
\begin{remark}\lbl{rem-Ck-circle}
 The elements $\mu\nu$ and $\lm^k\nu$ act on $X$ as antiholomorphic
 involutions.  A standard result, which we will recall as
 Corollary~\ref{cor-fixed-circles}, shows that the fixed set of an
 antiholomorphic involution on a compact Riemann surface is always
 diffeomorphic to a finite disjoint union of circles.  Thus, each of
 the sets $C_k$ above is a circle.  If $\al$ and $\bt$ are distinct
 antiholomorphic involutions in $G$ then $X^\al\cap X^\bt$ is fixed by
 the holomorphic element $\al\bt$ and so is contained in the finite
 set $V$.  Thus, for example, we have $C_0\cap C_1\sse V$.  On the
 other hand, $C_4$ and $C_5$ are two components in $X^\nu$, so they
 are either equal or disjoint.  In fact, we will see later that they
 are always disjoint, but this will require some further theory.
 More generally, $C_4$, $C_5$ and $C_7$ are disjoint, and $C_3$, $C_6$
 and $C_8$ are disjoint.
\end{remark}
\begin{remark}
 The antiholomorphic involution that fixes $C_k$ is represented in
 Maple as \mcode+c_involution[k]+.  For example, \mcode+c_involution[6]+
 evaluates to \mcode+LLN+, which is our Maple notation for $\lm^2\nu$.
\end{remark}

\begin{definition}\lbl{defn-curve-system}
 Let $X$ be a labelled precromulent surface.  A \emph{curve system} on
 $X$ is a system of maps $c_k\:\R\to X$ (for $0\leq k\leq 8$) such
 that:
 \begin{itemize}
  \item[(a)] Each $c_k$ is real-analytic and $2\pi$-periodic and induces an
   embedding $\R/2\pi\Z\to X$.
  \item[(b)] The vertices $v_0,\dotsc,v_{13}$ occur as values of the maps
   $c_0,\dotsc,c_8$, as follows:
   \[ \begin{array}{|c|c|c|c|c|c|c|c|c|c|c|c|c|c|c|}
   \hline
   &0&1&2&3&4&5&6&7&8&9&10&11&12&13\\ \hline
    0&&&0&\ppi&\pi&-\ppi&\tfrac{\pi}{4}&\tfrac{3\pi}{4}&-\tfrac{3\pi}{4}&-\tfrac{\pi}{4}&&&&\\ \hline
    1&0&\pi&&&&&\ppi&&-\ppi&&&&&\\ \hline
    2&0&\pi&&&&&&\ppi&&-\ppi&&&&\\ \hline
    3&&&&\ppi&&-\ppi&&&&&&0&&\pi\\ \hline
    4&&&-\ppi&&\ppi&&&&&&0&&\pi&\\ \hline
    5&0&&&&&&&&&&&\pi&&\\ \hline
    6&0&&&&&&&&&&\pi&&&\\ \hline
    7&&0&&&&&&&&&&&&\pi\\ \hline
    8&&0&&&&&&&&&&&\pi&\\ \hline
   \end{array} \]
   In more detail, if the above table has an angle $\tht$ in column
   $j$ of row $i$, then $c_i(\tht)=v_j$, but if column $j$ of row $i$
   is empty, then $v_j\not\in c_i(\R)$.
  \item[(c)] The group $G$ acts on the curves $c_k$ as follows:
   \begin{align*}
    \lm(c_{ 0}(t)) &= c_{ 0}( t+\pi/2) &
    \mu(c_{ 0}(t)) &= c_{ 0}(-t)       &
    \nu(c_{ 0}(t)) &= c_{ 0}(-t) \\
    \lm(c_{ 1}(t)) &= c_{ 2}( t)       &
    \mu(c_{ 1}(t)) &= c_{ 2}( t + \pi) &
    \nu(c_{ 1}(t)) &= c_{ 2}(-t) \\
    \lm(c_{ 2}(t)) &= c_{ 1}(-t)       &
    \mu(c_{ 2}(t)) &= c_{ 1}( t + \pi) &
    \nu(c_{ 2}(t)) &= c_{ 1}(-t) \\
    \lm(c_{ 3}(t)) &= c_{ 4}( t)       &
    \mu(c_{ 3}(t)) &= c_{ 3}( t + \pi) &
    \nu(c_{ 3}(t)) &= c_{ 3}(-t) \\
    \lm(c_{ 4}(t)) &= c_{ 3}(-t)       &
    \mu(c_{ 4}(t)) &= c_{ 4}(-t - \pi) &
    \nu(c_{ 4}(t)) &= c_{ 4}( t) \\
    \lm(c_{ 5}(t)) &= c_{ 6}( t)       &
    \mu(c_{ 5}(t)) &= c_{ 7}( t)       &
    \nu(c_{ 5}(t)) &= c_{ 5}( t) \\
    \lm(c_{ 6}(t)) &= c_{ 5}(-t)       &
    \mu(c_{ 6}(t)) &= c_{ 8}(-t)       &
    \nu(c_{ 6}(t)) &= c_{ 6}(-t) \\
    \lm(c_{ 7}(t)) &= c_{ 8}( t)       &
    \mu(c_{ 7}(t)) &= c_{ 5}( t)       &
    \nu(c_{ 7}(t)) &= c_{ 7}( t) \\
    \lm(c_{ 8}(t)) &= c_{ 7}(-t)       &
    \mu(c_{ 8}(t)) &= c_{ 6}(-t)       &
    \nu(c_{ 8}(t)) &= c_{ 8}(-t) \\
   \end{align*}
 \end{itemize}
\end{definition}

\begin{remark}
 The details of axiom~(b) are represented in Maple in several
 different ways, which are useful for different purposes.  Consider,
 for example, the fact that $c_2(\pi/2)=v_7$ but $v_7$ does not lie on
 $C_3$.
 \begin{itemize}
  \item \mcode+v_on_c[7,2]+ is \mcode+Pi/2+, but \mcode+v_on_c[7,3]+ is
   \mcode+NULL+.
  \item \mcode+c_gen[2](Pi/2)+ evaluates to \mcode+v_gen[7]+.  Here
   \mcode+v_gen[7]+ is just a symbol, with no assigned value.  On the
   other hand, \mcode+c_gen[2](Pi/4)+ just evaluates to itself,
   corresponding to the fact that we have no axiom about the value of
   $c_2(\pi/4)$.
  \item \mcode+v_track[7]+ is a list of equations, one of which
   is the equation \mcode+2=Pi/2+.  There is no equation in the list
   with $3$ on the left hand side.
  \item \mcode+c_track[2]+ is a list of equations, one of which is the
   equation \mcode+7=Pi/2+.  On the other hand, \mcode+c_track[3]+ has no
   equation with $7$ on the left hand side.
 \end{itemize}
 The details of axiom~(c) are encoded in the table \mcode+act_c_data+,
 which is indexed by pairs \mcode+[g,i]+ with \mcode+g+ in $G$ and
 \mcode+i+ in $\{0,\dotsc,8\}$.  If \mcode+act_c_data[g,i]+ evaluates to
 \mcode+[j,m,a]+ then the corresponding axiom is $g.c_i(t)=c_j(mt+a)$.
\end{remark}

\begin{remark}
 Suppose we have a curve system $(c_k)_{k=0}^8$ and a strictly
 increasing analytic diffeomorphism $u\:\R\to\R$ with $u(-t)=-u(t)$
 and $u(t+\pi/4)=u(t)+\pi/4$; then the maps $c_k\circ u$ give another
 curve system.  Thus, curve systems are not unique.  However, they are
 unique up to a kind of reparametrisation slightly more general than
 that described above; we will not spell out the details.
\end{remark}

\begin{proposition}\lbl{prop-curve-system}
 Let $(c_k)_{k=0}^8$ be a curve system on a labelled precromulent
 surface $X$.  Then:
 \begin{itemize}
  \item[(1)] For each $k$ the map $c_k$ gives a diffeomorphism
   $\R/2\pi\Z\to C_k$.
  \item[(2)] The sets $C_4$, $C_5$ and $C_7$ are disjoint.
  \item[(3)] The sets $C_3$, $C_6$ and $C_8$ are disjoint.
  \item[(4)] For all $i\neq j$ we have $C_i\cap C_j\sse V$ (so a
   precise list of elements of $C_i\cap C_j$ can be read off from
   axiom~(b)).
 \end{itemize}
\end{proposition}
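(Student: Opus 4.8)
The plan is to deduce everything from the curve-system axioms together with the structure theory already established. Start with part (1). Each $c_k$ is by axiom~(a) a real-analytic embedding of $\R/2\pi\Z$, hence its image is an embedded circle in $X$. I would show this image lies in the fixed set $X^{\gm_k}$ of the appropriate antiholomorphic involution (with $\gm_0=\mu\nu$, $\gm_1=\gm_5=\gm_7=\nu$ replaced by the correct reading of Definition~\ref{defn-precromulent-C}: $\gm_1=\lm\nu$, $\gm_2=\lm^3\nu$, $\gm_3=\gm_6=\gm_8=\lm^2\nu$, $\gm_4=\gm_5=\gm_7=\nu$). The point is that from axiom~(c) one reads off, for each $k$, that the relevant involution $\gm_k$ acts on $c_k$ by $t\mapsto\pm t$ or $t\mapsto -t+\text{const}$; e.g. $\nu(c_0(t))=c_0(-t)$ shows $c_0(0)=v_2$ is a fixed point of $\mu\nu$, and more usefully, combining $\mu(c_0(t))=c_0(-t)$ with $\nu(c_0(t))=c_0(-t)$ gives $\mu\nu(c_0(t))=c_0(t)$, so all of $c_0(\R)\sse X^{\mu\nu}$. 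Similarly $\nu(c_4(t))=c_4(t)$ and $\nu(c_5(t))=c_5(t)$ and $\nu(c_7(t))=c_7(t)$ put $c_4(\R),c_5(\R),c_7(\R)\sse X^\nu$; for the remaining curves one composes two of the displayed relations to land in $X^{\lm^2\nu}$ or $X^{\lm\nu}$ or $X^{\lm^3\nu}$. Since by Remark~\ref{rem-Ck-circle} each $X^{\gm_k}$ is a disjoint union of circles, the connected embedded circle $c_k(\R)$ must be exactly one of those components; and it is the one through the prescribed vertex by axiom~(b) (e.g. $c_0(0)=v_2$, so $c_0(\R)$ is the component of $v_2$, which is $C_0$). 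This gives $c_k(\R/2\pi\Z)=C_k$ and, the map being an analytic embedding of compact into Hausdorff, a diffeomorphism.

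For part (4): if $i\neq j$ but $C_i$ and $C_j$ lie in the fixed sets of the \emph{same} involution $\gm$, I postpone them to parts (2),(3); otherwise $\gm_i\neq\gm_j$ are distinct antiholomorphic involutions, so (as already noted in Remark~\ref{rem-Ck-circle}) $X^{\gm_i}\cap X^{\gm_j}$ is fixed by the holomorphic element $\gm_i\gm_j\neq 1$ and hence contained in $V$; therefore $C_i\cap C_j\sse V$, and the precise intersection is extracted by comparing which vertices appear in rows $i$ and $j$ of the table in axiom~(b). The only pairs not covered this way are $\{C_4,C_5\}$, $\{C_4,C_7\}$, $\{C_5,C_7\}$ (all in $X^\nu$) and $\{C_3,C_6\}$, $\{C_3,C_8\}$, $\{C_6,C_8\}$ (all in $X^{\lm^2\nu}$) — exactly the statements of parts (2) and (3), which once proved also complete part (4) for those pairs since disjoint sets have empty (hence $\sse V$) intersection.

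So the crux is parts (2) and (3): showing $C_4,C_5,C_7$ are pairwise distinct components of $X^\nu$, and likewise $C_3,C_6,C_8$ in $X^{\lm^2\nu}$. Here I would argue by the vertex data: $C_5\ni v_0$, $C_7\ni v_1$, and $v_0\neq v_1$; moreover from the table, $c_5$ passes through $v_0,v_{11}$ while $c_7$ passes through $v_1,v_{13}$, and since distinct components of $X^\nu$ are disjoint it suffices to exhibit one vertex on one that is absent from the other — and the "empty cell" clause of axiom~(b) does exactly this (e.g. $v_0\notin c_7(\R)$ because column $0$ of row $7$ is empty). The same bookkeeping separates $C_4$ (through $v_2,v_4,v_{10},v_{12}$) from $C_5$ and from $C_7$, and separates $C_3$ (through $v_3,v_5,v_{11},v_{13}$), $C_6$ (through $v_0,v_{10}$), $C_8$ (through $v_1,v_{12}$) pairwise. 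I expect the main obstacle to be purely organisational: one must be careful that the table genuinely \emph{does} list a distinguishing vertex for each of these six pairs (the empty-cell assertions are part of the axiom, so no component can sneakily contain an unlisted vertex), and one must check consistency of the $G$-action in axiom~(c) with this — for instance $\lm$ swaps $C_5\leftrightarrow C_6$ and $C_7\leftrightarrow C_8$ and fixes $C_4$ setwise, which forces the disjointness patterns in the two fixed sets to match up under $\lm$, providing a useful consistency check but no real difficulty. Finally I would close with the reference to the Maple verification already cited for the combinatorial content of axiom~(b).
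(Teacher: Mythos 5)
Your proposal is correct and follows essentially the same route as the paper's proof: axiom~(c) places each $c_k(\R)$ in the fixed set of the appropriate antiholomorphic involution, connectedness plus axiom~(b) identifies it with $C_k$, the empty cells of the table separate the components of $X^\nu$ and $X^{\lm^2\nu}$, and the remaining intersections land in $V$ because they are fixed by a nontrivial holomorphic element. No gaps.
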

\begin{proof}
 Axiom~(c) gives $\mu\nu(c_0(t))=\mu(c_0(-t))=c_0(t)$, so
 $c_0(\R)\sse X^{\mu\nu}$.  Moreover, $c_0(\R)$ is connected and
 contains $c_0(0)$, which is $v_2$ by axiom~(b).  This proves that
 $c_0(\R)\sse C_0$.  Axiom~(a) tells us that $c_0$ gives a smooth
 embedding $\R/2\pi\Z\to C_0$, but $C_0$ is diffeomorphic to a circle
 by Remark~\ref{rem-Ck-circle}, and any smooth embedding of a circle
 in a circle is necessarily a diffeomorphism.  The same line of
 argument shows that $c_k$ induces a diffeomorphism $\R/2\pi\Z\to C_k$
 for all $k$.  Next, axiom~(b) tells us that $v_0\not\in c_4(\R)=C_4$,
 so $C_5$ is a component of $X^\nu$ which is different from $C_4$ and
 therefore disjoint from $C_4$.  The same line of argument shows that
 $C_4$, $C_5$ and $C_7$ are disjoint, and also that $C_3$, $C_6$ and
 $C_8$ are disjoint.  Now consider an intersection $C_i\cap C_j$ that
 is not covered by~(b) or~(c).  We then find that $C_i\sse X^\gm$ and
 $C_j\sse X^\dl$ for some antiholomorphic involutions $\gm,\dl\in G$
 with $\gm\neq\dl$, so $\gm\dl$ is a nontrivial element of $D_8$.  Any
 element of $C_i\cap C_j$ is fixed by $\gm\dl$, and so lies in $V$ by
 the definition of $V$.
\end{proof}

\begin{proposition}\lbl{prop-empty-boxes}
 Suppose we have a system of maps $c_k\:\R\to X$ such that axioms~(a)
 and~(c) are satisfied.  Suppose also that
 \begin{itemize}
  \item[(p)] The part of axiom~(b) corresponding to the nonempty boxes
   in the table is satisfied.
  \item[(q)] The sets $c_3(\R)$, $c_6(\R)$ and $c_8(\R)$ are disjoint.
 \end{itemize}
 Then the maps $c_k$ give a curve system.
\end{proposition}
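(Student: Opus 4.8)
The only part of axiom~(b) not already granted by hypothesis~(p) is the claim that the empty boxes are genuinely empty: for each box left blank in row $i$, column $j$ of the table, we must show $v_j\notin c_i(\R)$. The plan is to confine each $c_i(\R)$ to the fixed set of an antiholomorphic involution, and then use a second group element, together with~(q), to exclude the remaining vertices.

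First I would argue, exactly as in the proof of Proposition~\ref{prop-curve-system}, that axiom~(c) forces each curve into such a fixed set: $c_0(\R)\sse X^{\mu\nu}$; $c_1(\R)\sse X^{\lm\nu}$; $c_2(\R)\sse X^{\lm^3\nu}$; $c_3(\R),c_6(\R),c_8(\R)\sse X^{\lm^2\nu}$; and $c_4(\R),c_5(\R),c_7(\R)\sse X^{\nu}$. Each containment comes from composing a few of the relations in axiom~(c) --- for instance $\lm^2(c_3(t))=c_3(-t)$, hence $\lm^2\nu(c_3(t))=c_3(t)$. Since $V\simeq V^*$ as a $G$-set, a vertex $v_j$ lies in $X^{\gm}$ exactly when the permutation attached to $\gm$ in Definition~\ref{defn-V-star} fixes the label $j$; so as soon as that permutation moves $j$ we get $v_j\notin X^{\gm}\supseteq c_i(\R)$. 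Reading off the permutations of $\mu\nu$, $\lm\nu$ and $\lm^3\nu$, this alone disposes of every empty box in rows $0$, $1$ and $2$; it also disposes of most of the empty boxes in rows $3$--$8$, leaving only those whose vertex the relevant involution happens to fix.

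For these surviving cases I would bring in a second element $\dl\in G$ in the setwise stabiliser of $c_i$ which acts by $\dl(c_i(t))=c_i(-t)$; one may take $\dl=\nu$ for rows $3,6,8$ and $\dl=\lm^2$ for rows $4,5,7$, and again $\dl(c_i(t))=c_i(-t)$ follows from axiom~(c). If $v_j=c_i(s)$ and $\dl$ fixes $v_j$, then $c_i(-s)=c_i(s)$, so the injectivity in axiom~(a) gives $s\equiv 0$ or $s\equiv\pi\pmod{2\pi}$; hence $v_j\in\{c_i(0),c_i(\pi)\}$. By~(p), $c_i(0)$ and $c_i(\pi)$ are the vertices recorded in the columns of row $i$ carrying the entries $0$ and $\pi$, and as the $v_k$ are distinct, neither equals $v_j$. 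This settles every remaining empty box except those in rows $5,7$ at $j\in\{2,4\}$ and those in rows $6,8$ at $j\in\{3,5\}$ --- precisely the vertices that the involution containing the curve fixes but $\dl$ moves.

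These last eight boxes are exactly where hypothesis~(q) is needed, and they are the only part of the argument that is not pure fixed-point counting. Applying $\lm$ to the relations in axiom~(c) gives $\lm(c_4(\R))=c_3(\R)$, $\lm(c_5(\R))=c_6(\R)$ and $\lm(c_7(\R))=c_8(\R)$; since $\lm$ is a bijection of $X$, the pairwise disjointness of $c_3(\R),c_6(\R),c_8(\R)$ assumed in~(q) propagates to pairwise disjointness of $c_4(\R),c_5(\R),c_7(\R)$. But~(p) gives $v_2,v_4\in c_4(\R)$ and $v_3,v_5\in c_3(\R)$, so $v_2,v_4\notin c_5(\R)\cup c_7(\R)$ and $v_3,v_5\notin c_6(\R)\cup c_8(\R)$, which closes the last boxes and completes the verification of axiom~(b). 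The only real labour is the permutation bookkeeping --- for each curve, identifying the involution that cuts it out, an element realising $t\mapsto -t$, and the vertices that element fixes --- which is routine given Definition~\ref{defn-V-star} and the table in axiom~(b).
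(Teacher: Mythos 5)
Your proof is correct and follows essentially the same route as the paper's: each curve is confined to the fixed set of an antiholomorphic involution via axiom~(c), vertices moved by that involution are excluded using the $G$-action on the labels, and the surviving vertices are excluded by hypothesis~(q), transported to $c_4(\R),c_5(\R),c_7(\R)$ by $\lm$. The one point of difference is your intermediate step — using a second element $\dl$ with $\dl(c_i(t))=c_i(-t)$ plus the injectivity from axiom~(a) to pin any $\dl$-fixed vertex on $c_i$ to the parameters $0$ or $\pi$ — which the paper does not use, preferring to invoke disjointness for all of those boxes at once (e.g.\ $v_0\notin c_3(\R)$ because $v_0\in c_6(\R)$); your version merely localises the use of~(q) to eight boxes.
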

\begin{proof}
 First note that part~(a) of Proposition~\ref{prop-curve-system} used
 only axioms that we are still assuming here, so we again have
 $C_k=c_k(\R)$ for all $k$.  We also see from axiom~(c) that
 $\lm(C_3)=C_4$ and $\lm(C_6)=C_5$ and $\lm(C_8)=C_7$, so $C_4$, $C_5$
 and $C_7$ are also disjoint.

 Next, we can redraw the table in axiom~(b) as follows:
 \[ \begin{array}{|c|c|c|c|c|c|c|c|c|c|c|c|c|c|c|}
   \hline
     &0&1  &2   &3   &4   &5    &6&7&8&9&10&11&12&13\\ \hline
    0&A&A  &0   &\ppi&\pi &-\ppi&\tfrac{\pi}{4}&\tfrac{3\pi}{4}&-\tfrac{3\pi}{4}&-\tfrac{\pi}{4}&A&A&A&A\\ \hline
    1&0&\pi&A   &A   &A   &A    &\ppi&A&-\ppi&A&A&A&A&A\\ \hline
    2&0&\pi&A   &A   &A   &A    &A&\ppi&A&-\ppi&A&A&A&A\\ \hline
    3&B&B  &A   &\ppi&A   &-\ppi&A&A&A&A&B&0&B&\pi\\ \hline
    4&B&B  &\ppi&A   &\ppi&A    &A&A&A&A&0&B&\pi&B\\ \hline
    5&0&B  &B   &A   &B   &A    &A&A&A&A&B&\pi&B&B\\ \hline
    6&0&B  &A   &B   &A   &B    &A&A&A&A&\pi&B&B&B\\ \hline
    7&B&0  &B   &A   &B   &A    &A&A&A&A&B&B&B&\pi\\ \hline
    8&B&0  &A   &B   &A   &B    &A&A&A&A&B&B&\pi&B\\ \hline
 \end{array} \]
 All the boxes that were blank in the original table have been marked
 $A$ or $B$.  Consider for example column $5$, corresponding to
 $v_5$.  It follows from the definition of a precromulent labelling
 that the stabiliser group of $v_5$ is
 $\{1,\mu\nu,\lm^2\mu,\lm^2\nu\}$, so in particular $v_5$ is not fixed
 by $\nu$, $\lm\nu$ or $\lm^3\nu$, so it cannot lie in $c_i(\R)$ for
 $i\in\{1,2,4,5,7\}$.  This accounts for all the boxes in column $5$
 marked $A$.  We also have $-\pi/2$ in row $3$, indicating that
 $v_5=c_3(-\pi/2)\in C_3$.  As $C_3$, $C_6$ and $C_8$ are disjoint, we
 see that $v_5\not\in c_6(\R)$ and $v_5\not\in c_8(\R)$, which
 accounts for the remaining two boxes in column $5$ marked $B$.  The
 same line of argument works for all the other columns.
\end{proof}

\begin{definition}\lbl{defn-std-isotropy}
 We say that $X$ has \emph{standard isotropy} if
 \begin{align*}
  X^{\mu\nu} &= C_0 \\
  X^{\lm\nu} &= C_1 \\
  X^{\lm^3\nu} &= C_2 \\
  X^{\nu} &= C_4\amalg C_5 \amalg C_7 \\
  X^{\lm^2\nu} &= C_3\amalg C_6 \amalg C_8 \\
  X^{\lm^2\mu\nu} &= \emptyset.
 \end{align*}
\end{definition}
We will show later that every cromulent surface has standard
isotropy.

\subsection{Holomorphic curve systems}
\lbl{sec-holomorphic-curves}

For any curve system, it turns out that each map $c_k\:\R\to X$ can be
extended to give a holomorphic map defined on a suitable neighbourhood
of $\R$ in $\C$.  We will start by developing the relevant theory in a
slightly more abstract setting.

\begin{proposition}\lbl{prop-chart}
 Let $X$ be a Riemann surface, and let $c\:\R\to X$ be a real analytic
 map with $c'(0)\neq 0$.  Then there is a unique germ of an analytic
 map $\phi\:\C\to X$ with $\phi(t)=c(t)$ for small real values of
 $t$.  Similarly, there is a unique local conformal
 parameter $z$ at $c(0)$ such that $z(c(t))=t$ for small $t\in\R$.
 Moreover, if $\tau$ is an antiholomorphic involution on $X$ with
 $\tau(c(t))=c(t)$ for all $t$, then $z(\tau(u))=\ov{z(u)}$.
\end{proposition}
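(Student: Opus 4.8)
The plan is to work entirely in a local analytic chart at $p=c(0)$ and convert the statement into a standard fact about convergent power series. Pick any holomorphic parameter $w$ at $p$ with $w(p)=0$; then $w\circ c$ is a real-analytic function of $t$ near $0$ with nonzero derivative at $0$ (since $c'(0)\neq 0$ and $w$ is a local biholomorphism), so it is given by a convergent power series $w(c(t))=\sum_{n\geq 1}a_n t^n$ with $a_1\neq 0$. First I would invoke the elementary fact that a real-analytic function extends uniquely to a holomorphic function on a complex neighbourhood of $0$: the same power series $\sum_{n\geq 1}a_n\zeta^n$ converges for $\zeta\in\C$ small, and composing with $w^{-1}$ gives the germ $\phi\:\C\to X$, $\phi(\zeta)=w^{-1}\!\left(\sum a_n\zeta^n\right)$, which agrees with $c$ for small real $t$. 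Uniqueness is immediate from the identity theorem: two holomorphic germs agreeing on a set with a limit point (the real interval) coincide.

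Next I would produce the local conformal parameter $z$. Since $\phi$ has nonvanishing derivative at $0$ (again because $a_1\neq 0$), it is a local biholomorphism from a neighbourhood of $0\in\C$ onto a neighbourhood of $p$ in $X$; let $z=\phi^{-1}$. By construction $z(c(t))=\phi^{-1}(c(t))=t$ for small real $t$, and $z$ is holomorphic with $z(p)=0$, hence a local conformal parameter. Uniqueness of $z$ follows from uniqueness of $\phi$: any such $z$ must have inverse germ equal to $\phi$.

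For the last assertion, suppose $\tau$ is an antiholomorphic involution with $\tau(c(t))=c(t)$ for all $t$. Consider the germ $\psi(u)=\ov{z(\tau(u))}$. Since $\tau$ is antiholomorphic and complex conjugation is antiholomorphic, $\psi$ is a holomorphic germ at $p$ (composition of two antiholomorphic maps with a holomorphic one), and $\psi(p)=\ov{z(\tau(p))}$; note $\tau(p)=\tau(c(0))=c(0)=p$, so $\psi(p)=0$. Moreover for small real $t$ we have $\psi(c(t))=\ov{z(\tau(c(t)))}=\ov{z(c(t))}=\ov{t}=t$. Thus $\psi$ is a holomorphic germ at $p$ with $\psi(c(t))=t=z(c(t))$ for small real $t$; by the uniqueness clause already established (the ``similarly'' part), $\psi=z$ as germs, i.e. $z(\tau(u))=\ov{z(u)}$ for $u$ near $p$.

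The only mild subtlety — and the place I would be most careful — is the very first step: justifying that a real-analytic curve into a Riemann surface has a holomorphic extension and that this is captured correctly after passing through the chart $w$. This is really just the statement that a convergent real power series is the restriction of a convergent complex power series, combined with the fact that reparametrising by a local biholomorphism $w^{-1}$ preserves analyticity; once that is cleanly stated, everything else is a formal application of the identity theorem and the inverse function theorem for holomorphic maps. No genuine obstacle arises, but one should take care that ``real analytic'' for $c$ is interpreted as: in one (equivalently, any) holomorphic chart, the coordinate of $c(t)$ is a convergent power series in $t$.
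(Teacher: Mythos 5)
Your proposal is correct and follows essentially the same route as the paper: reduce to a local chart, extend the real power series to a complex one, use the Identity Principle for uniqueness, invert $\phi$ to get $z$, and observe that $u\mapsto\ov{z(\tau(u))}$ satisfies the defining property of $z$. The only difference is cosmetic — the paper phrases the reduction as "we may assume $X=\C$" rather than explicitly carrying the chart $w$ through the argument.
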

\begin{proof}
 The claim is local on $X$, so we may assume that $X=\C$ and
 $c(0)=0$.  As $c$ is real analytic, there are coefficients $a_k\in\C$
 such that $c(t)=\sum_ka_kt^k$, with the sum being absolutely
 convergent for small real values of $t$.  It follows in a standard
 way that the sum is still absolutely convergent for small complex
 values of $t$, and this gives us a germ of a complex analytic map
 $\phi\:\C\to X$ extending $c$.  This is unique, by the Identity
 Principle.  We also have $\phi'(0)=c'(0)\neq 0$, so $\phi$ is locally
 invertible near $0$, and the inverse is the unique local parameter
 $z$ such that $z(c(t))=t$.

 Now suppose that $\tau$ is an antiholomorphic involution on $X$ with
 $\tau(c(t))=c(t)$ for all $t$.  Then the map
 $u\mapsto\ov{z(\tau(u))}$ has the defining property of $z$, and so is
 the same as $z$, as claimed.
\end{proof}

Given $c\:\R\to X$, we can apply the above proposition to $c(t_0+t)$
for various different values of $t_0$, and then patch the results
together.  To organise this construction, we introduce the
following definitions:
\begin{definition}\lbl{defn-band}
 Consider a point $z=x+iy\in\C$.  We let $\CQ(z)$ denote the set of
 pairs $(U_0,\tc_0)$, where $U_0$ is a convex open subset of $\C$
 containing $x$ and $z$, and $\tc_0\:U_0\to X$ is a holomorphic map with
 $\tc_0|_{U_0\cap\R}=c|_{U_0\cap\R}$.  We then put
 $V=\{z\st\CQ(z)\neq\emptyset\}$.  If $z\in V$ then we choose any
 $(U_0,\tc_0)\in\CQ(z)$ and put $\tc(z)=\tc_0(z)$; a straightforward
 argument with the identity principle shows that this is independent
 of the choice of $(U_0,\tc_0)$.
\end{definition}

\begin{proposition}\lbl{prop-band-chart}
 The set $V$ is open in $\C$ and contains $\R$, and it is closed under
 conjugation.  The map $\tc\:V\to X$ is holomorphic, and satisfies
 $\tc|_{V\cap\R}=c$ and $\tc(\ov{z})=\tau(\tc(z))$.  Moreover, if
 $c(t+2\pi)=c(t)$ for all $t$, then we also have $V+2\pi=V$ and
 $\tc(t+2\pi)=\tc(t)$.
\end{proposition}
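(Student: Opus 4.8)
The plan is to build $\tc$ by patching together local extensions supplied by Proposition~\ref{prop-chart}, and to verify each asserted property either directly from Definition~\ref{defn-band} or by an identity-principle argument. First I would show $\R\sse V$: given $t_0\in\R$, apply Proposition~\ref{prop-chart} to the shifted map $s\mapsto c(t_0+s)$ to get a germ of an analytic map extending $c$ near $t_0$; restricting this germ to a small convex neighbourhood $U_0$ of $t_0$ gives an element of $\CQ(t_0)$, so $t_0\in V$. For openness, suppose $z\in V$ with witness $(U_0,\tc_0)$. Since $U_0$ is open and contains $z$, and $U_0$ also contains the point $x=\operatorname{Re}(z)$, any $z'$ sufficiently close to $z$ still lies in $U_0$; we need a witness whose convex set contains both $z'$ and $\operatorname{Re}(z')$. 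Shrinking if necessary, take a small convex neighbourhood of the segment from $\operatorname{Re}(z')$ to $z'$ together with $z$; this is contained in $U_0$ once $z'$ is close enough to $z$ (using that $U_0$ is open and contains the whole real segment near $x$), so it serves as a witness for $z'$. Hence $V$ is open.

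Next, closure under conjugation and the formula $\tc(\ov z)=\tau(\tc(z))$: if $(U_0,\tc_0)\in\CQ(z)$, then $\ov{U_0}$ is convex, open, and contains $\ov z$ and $\operatorname{Re}(z)=\operatorname{Re}(\ov z)$, and the map $u\mapsto\tau(\tc_0(\ov u))$ is holomorphic (composite of two antiholomorphic maps) and agrees with $c$ on $\ov{U_0}\cap\R$ because $\tau$ fixes $c(\R)$ pointwise by hypothesis and $\tc_0$ restricts to $c$ on the reals. So this map witnesses $\ov z\in V$, and evaluating at $\ov z$ gives $\tc(\ov z)=\tau(\tc_0(z))=\tau(\tc(z))$. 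Holomorphy of $\tc$ on $V$ is local: near any $z\in V$ with witness $(U_0,\tc_0)$, the well-definedness argument already invoked in Definition~\ref{defn-band} shows $\tc$ agrees with $\tc_0$ on a neighbourhood of $z$, so $\tc$ is holomorphic there. The identity $\tc|_{V\cap\R}=c$ is immediate since for real $t$ the constant-extension pair works and $\tc_0$ agrees with $c$ on $\R$.

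For the periodicity statement, assume $c(t+2\pi)=c(t)$ for all $t\in\R$. If $(U_0,\tc_0)\in\CQ(z)$, then $(U_0+2\pi,\,\tc_0(\cdot-2\pi))$ is a pair with convex open domain containing $z+2\pi$ and $\operatorname{Re}(z)+2\pi=\operatorname{Re}(z+2\pi)$, and on $(U_0+2\pi)\cap\R$ the map $u\mapsto\tc_0(u-2\pi)$ equals $c(u-2\pi)=c(u)$ by $2\pi$-periodicity of $c$. Hence $z+2\pi\in V$, so $V+2\pi\sse V$; applying the same to $z-2\pi$ gives $V+2\pi=V$. Evaluating the translated witness at $z+2\pi$ yields $\tc(z+2\pi)=\tc_0(z)=\tc(z)$. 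I expect the main obstacle to be the openness argument: one must be a little careful that the convex set in a witness for a nearby point $z'$ simultaneously contains $z'$ and its real part, and the cleanest fix is to note that $U_0$ open and $[\operatorname{Re}(z),\operatorname{Re}(z')]$ close to $x\in U_0$ together let one take the convex hull of $\{z'\}\cup[\operatorname{Re}(z'),x]\cup\{z\}$, which lies in $U_0$ for $z'$ near $z$. Everything else is a routine application of the identity principle and the symmetry properties already packaged in Proposition~\ref{prop-chart}.
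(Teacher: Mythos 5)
Your proof is correct, and it supplies exactly the details that the paper's own proof omits (the paper simply says ``Straightforward''): local extensions from Proposition~\ref{prop-chart} on the real axis, witness translation/conjugation for the symmetry and periodicity claims, and the identity principle on intersections of convex witnesses for well-definedness, local agreement with a single witness, and hence holomorphy. The care you take with the openness step (ensuring the new convex set contains both $z'$ and $\operatorname{Re}(z')$) is the one point that genuinely needs attention, and you handle it correctly.
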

\begin{proof}
 Straightforward.
\end{proof}

\begin{corollary}\lbl{cor-band-charts}
 Suppose that $X$ is a cromulent surface, and $(c_k)_{k=0}^8$ is a
 curve system.  Then each map $c_k\:\R\to X$ has a canonical
 holomorphic extension $\tc_k\:V_k\to X$, where $V_k$ is a
 $2\pi$-periodic open neighbourhood of $\R$ in $\C$. \qed
\end{corollary}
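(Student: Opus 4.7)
The plan is to apply Proposition~\ref{prop-band-chart} separately to each curve $c_k$, taking as input the data already guaranteed by the definition of a curve system. Axiom~(a) of Definition~\ref{defn-curve-system} tells us that each $c_k\:\R\to X$ is real-analytic and $2\pi$-periodic, which are precisely the hypotheses required. Applying the proposition produces, for each $k$, an open set $V_k\sse\C$ containing $\R$ together with a holomorphic map $\tc_k\:V_k\to X$ extending $c_k$; moreover the final clause of Proposition~\ref{prop-band-chart} (invoked because $c_k(t+2\pi)=c_k(t)$) gives $V_k+2\pi=V_k$ and $\tc_k(z+2\pi)=\tc_k(z)$.

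For the word ``canonical,'' I would point to the uniqueness built into Definition~\ref{defn-band}: the value $\tc_k(z)$ does not depend on the local extension $(U_0,\tc_0)$ chosen to witness $z\in V_k$, as shown in the proof of Proposition~\ref{prop-band-chart} via the identity principle. One could also, if desired, take $V_k$ to be the maximal such open set, which then depends only on $c_k$ and $X$.

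There is essentially no obstacle here beyond assembling the pieces; the substance was carried out in Propositions~\ref{prop-chart} and~\ref{prop-band-chart}. The one small remark worth making is that because each $c_k$ is the restriction to $\R$ of the holomorphic map $\tc_k$, the equivariance relations of axiom~(c) automatically extend: each such relation equates two real-analytic functions of $t\in\R$, so by the identity principle the analogous equation holds between the holomorphic extensions on the appropriate common domains in $\C$. This observation is not strictly needed for the statement of the corollary but will be useful in later sections that exploit the holomorphic band $V_k$ together with the $G$-action.
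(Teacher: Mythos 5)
Your proof is correct and is exactly the argument the paper intends: the corollary is an immediate application of Proposition~\ref{prop-band-chart} to each $c_k$, whose hypotheses (real-analyticity and $2\pi$-periodicity) are supplied by axiom~(a) of Definition~\ref{defn-curve-system}, with canonicity coming from the identity-principle uniqueness in Definition~\ref{defn-band}. The paper marks the statement with \qed precisely because nothing more is needed.
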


\subsection{Fundamental domains}
\lbl{sec-fundamental}

\begin{definition}\lbl{defn-fundamental}
 Let $X$ be a compact Hausdorff space, and let $H$ be a finite group acting
 continuously on $X$.  Let $F$ be a closed subset of $X$.
 \begin{itemize}
  \item[(a)] We say that $F$ is a \emph{fundamental domain} for $H$ if
   $X=\bigcup_{\gm\in H}\gm(F)$, and $\text{int}(F)\cap\gm(F)=\emptyset$
   for all $\gm\in H\sm\{1\}$.
  \item[(b)] We say that $F$ is a \emph{retractive fundamental domain} if,
   in addition, there is a continuous map $r\:X\to F$ such that
   \begin{itemize}
    \item[(i)] $r(x)=x$ for all $x\in F$ (so $r$ is a retraction).
    \item[(ii)] $r(\gm(x))=r(x)$ for all $x\in X$ and $\gm\in H$.
   \end{itemize}
 \end{itemize}
\end{definition}

\begin{proposition}\lbl{prop-fundamental}
 Let $F$ be a retractive fundamental domain for $H$, with retraction $r$.
 Then
 \begin{itemize}
  \item[(a)] For all $x\in X$, the point $r(x)$ lies in the same
   $H$-orbit as $x$.
  \item[(b)] For all $\gm\in H$ we have
   $F\cap\gm(F)=\{x\in F\st \gm(x)=x\}$.
  \item[(c)] There is a canonical homeomorphism $X/H\simeq F$.
  \item[(d)] There is a canonical homeomorphism
   $X\simeq(G\tm F)/\sim$, where $(\gm_0,x_0)\sim(\gm_1,x_1)$ iff
   $x_0=x_1$ and $\gm_1^{-1}\gm_0\in\stab_G(x_0)$.
 \end{itemize}
\end{proposition}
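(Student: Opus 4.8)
The four parts feed on one another, so the plan is to prove them in the stated order. The only inputs beyond formal manipulation are two standard point-set facts: a quotient of a compact space by a finite group is again compact, and a continuous bijection from a compact space to a Hausdorff space is a homeomorphism. Both quotients appearing below are compact (since $X$ and $H$ are), and $X$ is Hausdorff by hypothesis, so these will apply.

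For~(a), I would use that $F$ is a fundamental domain to write an arbitrary $x\in X$ as $x=\gm(y)$ with $\gm\in H$ and $y\in F$; then properties~(ii) and~(i) of the retraction give $r(x)=r(\gm(y))=r(y)=y$, so $x=\gm(r(x))$ and hence $r(x)$ lies in the $H$-orbit of $x$. Part~(b) is then immediate: the inclusion $\{x\in F\st\gm(x)=x\}\sse F\cap\gm(F)$ is trivial, and conversely if $x\in F\cap\gm(F)$, say $x=\gm(y)$ with $y\in F$, then $r(x)=r(y)=y$ while also $r(x)=x$ (as $x\in F$), forcing $x=y$ and therefore $\gm(x)=x$.

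For~(c), note that $r\:X\to F$ is continuous and, by~(ii), constant on $H$-orbits, so it descends to a continuous map $\ov{r}\:X/H\to F$; this is surjective because $r$ restricts to the identity on $F$, and injective by part~(a), since $r(x_1)=r(x_2)$ puts both $x_1$ and $x_2$ in the $H$-orbit of that common value. As $X/H$ is compact and $F$ is compact Hausdorff (being closed in $X$), $\ov{r}$ is the claimed homeomorphism. For~(d), I would define $q\:H\tm F\to X$ by $q(\gm,x)=\gm(x)$; this is continuous, surjective (again because $F$ is a fundamental domain), and constant on $\sim$-classes straight from the definition of $\sim$, so it descends to a continuous surjection $\ov{q}\:(H\tm F)/{\sim}\to X$. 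The key point is injectivity of $\ov{q}$, and this is where part~(b) enters: if $\gm_0(x_0)=\gm_1(x_1)$ with $x_0,x_1\in F$, then $x_0=\gm_0^{-1}\gm_1(x_1)$ lies in $F\cap(\gm_0^{-1}\gm_1)(F)$, so~(b) gives $\gm_0^{-1}\gm_1\in\stab_H(x_0)$; applying $(\gm_0^{-1}\gm_1)^{-1}$ to $x_0=\gm_0^{-1}\gm_1(x_1)$ then yields $x_1=x_0$, and since $\stab_H(x_0)$ is a subgroup we also get $\gm_1^{-1}\gm_0\in\stab_H(x_0)$, i.e.\ $(\gm_0,x_0)\sim(\gm_1,x_1)$. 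Compactness of $H\tm F$ and Hausdorffness of $X$ then upgrade $\ov{q}$ to a homeomorphism.

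I do not anticipate a genuine obstacle: once~(a) is established the argument is entirely formal. The only things to watch are that the topological hypotheses get used exactly where needed (compactness of $X$ for the two quotients, Hausdorffness of $X$ for the homeomorphism criterion), and that the somewhat opaque equivalence relation in~(d) is precisely the one that makes $q$ both well-defined and injective — which is the reason it is stated in that form. (Note that the group written $G$ in the statement of~(d) is the acting group $H$.)
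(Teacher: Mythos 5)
Your proof is correct and follows essentially the same route as the paper: parts (a) and (b) are verbatim the paper's argument, and (d) likewise descends the map $(\gm,x)\mapsto\gm(x)$ and identifies its fibres with the $\sim$-classes. The only cosmetic difference is in (c) and (d), where the paper exhibits $\ov{r}$ and $pj$ as explicit mutually inverse continuous maps (resp.\ notes that a continuous surjection of compact Hausdorff spaces is a quotient map), whereas you invoke the compact-to-Hausdorff criterion; both are valid here since $X$ is assumed compact Hausdorff.
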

\begin{proof}
 \begin{itemize}
  \item[(a)] As $F$ is a fundamental domain, we have $x=\gm(y)$ for
   some $y\in F$ and $\gm\in H$.  This gives $r(x)=r(\gm(y))$, but we
   can use axioms~(ii) and~(i) to see that $r(\gm(y))=r(y)=y$, so
   $r(x)=y$.  Thus, $x$ and $r(x)$ lie in the same $H$-orbit.
  \item[(b)] Now suppose that $x\in F\cap\gm(F)$, so $x=\gm(y)$ for
   some $y\in F$.  We now have $x=r(x)=r(\gm(y))=r(y)=y$, so
   $x=\gm(x)$ as required.
  \item[(c)] We have an inclusion $j\:F\to X$ and a projection
   $p\:X\to X/H$.  We will show that $pj$ is a homeomorphism.

   As $r$ is continuous with $r(\gm(x))=r(x)$ for all $x$ and $\gm$,
   we see that there is a unique map $\ov{r}\:X/H\to F$ with
   $\ov{r}p=r$, and that this is continuous.  As $r$ is a retraction
   we have $\ov{r}pj=rj=1$.  Next, as $x$ is in the same orbit as
   $r(x)$, we have $p(x)=pjr(x)=pj\ov{r}p(x)$.  As $p$ is surjective
   it follows that $pj\ov{r}=1$.  This proves that $\ov{r}$ is an
   inverse for $pj$, as required.
  \item[(d)] We have a continuous map $m\:G\tm F\to X$ given by
   $m(\gm,x)=\gm(x)$.  As $X=\bigcup_\gm\gm(F)$ we see that $m$ is
   surjective.  The source and target are compact Hausdorff spaces, so
   $m$ is automatically a quotient map.  If
   $m(\gm_0,x_0)=m(\gm_1,x_1)$ then the element $\gm=\gm_1^{-1}\gm_0$
   has $\gm(x_0)=x_1$.  Applying $r$ to this gives $x_0=x_1$, and it
   follows that $\gm\in\stab_G(x_0)$.  It follows that $m$ induces a
   homeomorphism $(G\tm F)/\sim\to X$, as claimed.
 \end{itemize}
\end{proof}

Now let $X$ be a labelled precromulent surface with a curve system.
By the axioms for a curve system, we have
\begin{align*}
 v_0 &= c_1(0) = c_5(0) &
 v_3 &= c_0(\pi/2) = c_3(\pi/2) \\
 v_6 &= c_0(\pi/4) = c_1(\pi/2) &
 v_{11} &= c_3(0) = c_5(\pi).
\end{align*}
This means that the set
\[ DF_{16} =
    c_0([\tfrac{\pi}{4},\ppi]) \cup
    c_1([0,\ppi]) \cup
    c_3([0,\ppi]) \cup
    c_5([0,\pi])
\]
fits together as follows:
\begin{center}
 \begin{tikzpicture}[scale=5]
  \begin{scope}
   \draw[blue]       (1.00,0.00) -- (1.00,1.00);
   \draw[blue,->]    (1.00,0.00) -- (1.00,0.50);
   \draw[green]      (1.00,0.00) -- (0.00,0.00);
   \draw[green,->]   (1.00,0.00) -- (0.50,0.00);
   \draw[magenta]    (1.00,1.00) -- (0.00,1.00);
   \draw[magenta,->] (1.00,1.00) -- (0.50,1.00);
   \draw[cyan]       (0.00,0.00) -- (0.00,1.00);
   \draw[cyan,->]    (0.00,0.00) -- (0.00,0.50);
   \fill (1.00,0.00) circle(0.005);
   \fill (1.00,1.00) circle(0.005);
   \fill (0.00,0.00) circle(0.005);
   \fill (0.00,1.00) circle(0.005);
   \draw (1.00,0.00) node[anchor=north] {$\ss v_0$};
   \draw (1.00,1.00) node[anchor=south] {$\ss v_{11}$};
   \draw (0.00,0.00) node[anchor=north] {$\ss v_{6}$};
   \draw (0.00,1.00) node[anchor=south] {$\ss v_3$};
   \draw (1.00,0.50) node[anchor=west]  {$\ss c_5([0,\pi])$};
   \draw (0.00,0.50) node[anchor=east]  {$\ss c_0([\pi/4,\pi/2])$};
   \draw (0.50,0.00) node[anchor=north] {$\ss c_1([0,\pi/2])$};
   \draw (0.50,1.00) node[anchor=south] {$\ss c_3([0,\pi/2])$};
  \end{scope}
 \end{tikzpicture}
\end{center}
(Using Proposition~\ref{prop-curve-system}, we see that the four
boundary arcs cannot have any additional intersection points.)
\begin{remark}
 Information about the above picture is stored as a table in Maple in
 the global variable \mcode+F16_curve_limits+, which is defined in
 \fname+cromulent.mpl+.  For example, \mcode+F16_curve_limits[1]+ is
 the range \mcode+0..Pi/2+, whereas \mcode+F16_curve_limits[2]+ is
 \mcode+NULL+ (because none of the sides of $DF_{16}$ lies along
 $C_2$).  Note that Maple does not display the full structure of
 tables by default; to see all entries in \mcode+F16_curve_limits+, one
 needs to enter \mcode+eval(F16_curve_limits)+, not just
 \mcode+F16_curve_limits+.
\end{remark}
\begin{remark}
 The colours in the above diagram will be used systematically
 throughout this monograph: the curve $c_0$ is cyan, the curves $c_1$
 and $c_2$ are green, the curves $c_3$ and $c_4$ are magenta, and the
 curves $c_5$ to $c_8$ are blue.  The colour of $c_k$ is represented
 in Maple as \mcode+c_colour[k]+.  Readers who have trouble
 distinguishing these colours can try changing the definitions of
 \mcode+c_colour[k]+ in the file \fname+cromulent.mpl+ and
 regenerating the diagrams using the functions in various files called
 \fname+plots.mpl+ appearing in several different directories.
 However, colour should not be strictly necessary for any of the
 diagrams. 
\end{remark}

\begin{lemma}\lbl{lem-F-stabilisers}
 The stabilisers of points in $DF_{16}$ are as follows:
 \begin{align*}
  \stab_G(v_0) &= \ip{\lm,\nu} &
  \stab_G(v_3) &= \ip{\lm^2\mu,\lm^2\nu} \\
  \stab_G(v_6) &= \ip{\lm\mu,\lm\nu} &
  \stab_G(v_{11}) &= \ip{\lm^2,\nu}
 \end{align*}
 \begin{align*}
  \stab_G(c_0(t)) &= \{1,\mu\nu\}   & \text{ for } & \pi/4<t<\pi/2 \\
  \stab_G(c_1(t)) &= \{1,\lm\nu\}   & \text{ for } & 0<t<\pi/2 \\
  \stab_G(c_3(t)) &= \{1,\lm^2\nu\} & \text{ for } & 0<t<\pi/2 \\
  \stab_G(c_5(t)) &= \{1,\nu\}      & \text{ for } & 0<t<\pi.
 \end{align*}
\end{lemma}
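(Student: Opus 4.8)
The plan is to treat the four corner vertices $v_0,v_3,v_6,v_{11}$ and the interiors of the four boundary arcs of $DF_{16}$ separately, handling all four arcs by one uniform argument; by Proposition~\ref{prop-curve-system} the four arcs meet only in those four corners, so these eight families of points exhaust $DF_{16}$. For the vertices there is essentially nothing to do: clause~(b) of Definition~\ref{defn-precromulent} says that the chosen labelling is a $G$-set isomorphism $V^*\xra{\sim}V$, so each $\stab_G(v_j)$ equals the stabiliser of $j$ in $V^*$, and those stabilisers can be read straight off the permutations of Definition~\ref{defn-V-star} (they are also recorded in the proof of Proposition~\ref{prop-aut-V}): $\stab_G(0)=\ip{\lm,\nu}$, $\stab_G(3)=\ip{\lm^2\mu,\lm^2\nu}$, $\stab_G(6)=\ip{\lm\mu,\lm\nu}$ and $\stab_G(11)=\ip{\lm^2,\nu}$. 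That gives the first display.

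For the arcs I would fix $k\in\{0,1,3,5\}$ and let $\tau_k$ be the element $\mu\nu$, $\lm\nu$, $\lm^2\nu$, $\nu$ of $G\sm D_8$ respectively, which acts on $X$ as an antiholomorphic involution by Remark~\ref{rem-Ck-circle}, and then argue in three steps. First, axiom~(c) of Definition~\ref{defn-curve-system} shows directly that $\tau_k$ fixes the whole arc pointwise: $(\mu\nu)(c_0(t))=\mu(c_0(-t))=c_0(t)$, $(\lm\nu)(c_1(t))=\lm(c_2(-t))=c_1(t)$, $(\lm^2\nu)(c_3(t))=\lm(\lm(c_3(-t)))=\lm(c_4(-t))=c_3(t)$, and $\nu(c_5(t))=c_5(t)$; hence $\{1,\tau_k\}\sse\stab_G(c_k(t))$ for every $t$. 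Second, I would check that for $t$ in the stated open interval the point $c_k(t)$ is not a vertex: by axiom~(a) the map $c_k$ is injective modulo $2\pi$, so axiom~(b) forces $c_k(t)\in V$ to happen only when $t$ is congruent mod $2\pi$ to one of the angles appearing in row $k$ of the table (the vertices occupying empty boxes in row $k$ never lie on $c_k(\R)$). Reading off the table, row $0$ meets $V$ only at $t\equiv 0,\pm\qpi,\pm\ppi,\pm\tfrac{3\pi}{4},\pi$, rows $1$ and $3$ only at $t\equiv 0,\pm\ppi,\pi$, and row $5$ only at $t\equiv 0,\pi$, and none of these lies in the interval attached to that row. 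Third, I would conclude: since $c_k(t)\notin V$, the definition of $V$ gives $\stab_G(c_k(t))\cap D_8=\stab_{D_8}(c_k(t))=1$, so the quotient map $G\to G/D_8\simeq C_2$ is injective on $\stab_G(c_k(t))$ and hence $|\stab_G(c_k(t))|\leq 2$; combined with the first step this forces $\stab_G(c_k(t))=\{1,\tau_k\}$.

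I do not expect a genuine obstacle: the two middle steps are pure bookkeeping against axioms~(b) and~(c), and the only structural ingredient is the elementary observation that a subgroup of $G$ meeting $D_8$ trivially has order at most $2$. The point that needs a little care is in the second step, where the injectivity of $c_k$ modulo $2\pi$ (axiom~(a)) has to be invoked so that the angles listed in row $k$ really are the only parameters at which that arc passes through a vertex; otherwise one could not exclude $c_k(t)$ coinciding with some unlisted $v_j$.
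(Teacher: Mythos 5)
Your proposal is correct and follows essentially the same route as the paper: read the vertex stabilisers off Definition~\ref{defn-V-star}, use axiom~(c) to show the relevant antiholomorphic involution fixes each arc, use axioms~(a) and~(b) to see that interior arc points avoid $V$ and hence have trivial $D_8$-stabiliser, and conclude by the order bound. Your phrasing of the last step (a subgroup meeting $D_8$ trivially injects into $G/D_8\simeq C_2$) is just a cleaner formulation of the paper's remark that any subgroup strictly containing $\{1,\tau_k\}$ meets $D_8$ nontrivially.
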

\begin{proof}
 The stabilisers for the points $v_i$ are determined by
 Definition~\ref{defn-V-star}.  Next, axiom~(c) in
 Definition~\ref{defn-curve-system} tells us that
 $\stab_G(c_0(t))\supseteq\{1,\mu\nu\}$ for all $t$.  Moreover, if
 $\pi/4<t<\pi/2$ then axiom~(b) tells us that $c_0(t)\neq v_i$ for all
 $i$, so $\stab_G(c_0(t))\cap D_8=\{1\}$.  It is easy to see that any
 subgroup strictly larger than $\{1,\mu\nu\}$ has nontrivial
 intersection with $D_8$, so we must have
 $\stab_G(c_0(t))=\{1,\mu\nu\}$ as claimed.  The same line of argument
 works for $c_1$, $c_3$ and $c_5$.
\end{proof}

\begin{definition}\lbl{defn-standard-F}
 A \emph{standard fundamental domain} is a subset $F_{16}\sse X$ that
 is a retractive fundamental domain for $G$ and is homeomorphic to a
 square and has boundary $DF_{16}$.
\end{definition}

\begin{remark}\lbl{rem-standard-F}
 We will see later that every cromulent surface has a unique standard
 fundamental domain.  Conversely, suppose that $X$ is a labelled
 precromulent surface with a given curve system and a standard
 fundamental domain, and that $\lm_*=i\:T_{v_0}X\to T_{v_0}X$.  Then
 the interior of the standard fundamental domain has the property
 specified in Definition~\ref{defn-precromulent}(d), which proves that
 $X$ is actually cromulent.
\end{remark}

\begin{remark}
 If $F_{16}$ is a standard fundamental domain, the
 Proposition~\ref{prop-fundamental}(d) allows us to identify $X$ with
 $(G\tm F_{16})/\sim$ for a certain equivalence relation $\sim$.  This
 relation depends only on the stabilisers of points in $DF_{16}$,
 which are given by Lemma~\ref{lem-F-stabilisers}.  We can also
 identify $F_{16}$ with $[0,1]^2$ and thus identify $X$ with a
 quotient of $G\tm [0,1]^2$.
\end{remark}

If we perform only some of the identifications given by the above
equivalence relation, we obtain the following space, which we call
$\Net_0$.  It is clearly homeomorphic to a square.
\begin{center}
 \begin{tikzpicture}[scale=.5]
  \draw[  green] (  0,  0) -- (  6,  6);
  \draw[  green] (  0,  0) -- ( -6,  6);
  \draw[  green] (  0,  0) -- ( -6, -6);
  \draw[  green] (  0,  0) -- (  6, -6);
  \draw[   blue] (  0,  0) -- (  0,  8);
  \draw[   blue] (  0,  0) -- (  0, -8);
  \draw[   blue] (  0,  0) -- (  8,  0);
  \draw[   blue] (  0,  0) -- ( -8,  0);
  \draw[   blue] ( 12, 12) -- (  4, 12);
  \draw[   blue] ( 12, 12) -- ( 12,  4);
  \draw[  green] ( 12, 12) -- (  6,  6);
  \draw[  green] (-12, 12) -- ( -6,  6);
  \draw[  green] (-12,-12) -- ( -6, -6);
  \draw[  green] ( 12,-12) -- (  6, -6);
  \draw[   cyan] (  4,  8) -- (  6,  6);
  \draw[magenta] (  4,  8) -- (  0,  8);
  \draw[magenta] (  4,  8) -- (  4, 12);
  \draw[   cyan] (  8,  4) -- (  6,  6);
  \draw[magenta] (  8,  4) -- (  8,  0);
  \draw[magenta] (  8,  4) -- ( 12,  4);
  \draw[   cyan] ( -4,  8) -- ( -6,  6);
  \draw[magenta] ( -4,  8) -- (  0,  8);
  \draw[magenta] ( -4,  8) -- ( -4, 12);
  \draw[   cyan] (  8, -4) -- (  6, -6);
  \draw[magenta] (  8, -4) -- (  8,  0);
  \draw[magenta] (  8, -4) -- ( 12, -4);
  \draw[   blue] (-12, 12) -- ( -4, 12);
  \draw[   blue] (-12, 12) -- (-12,  4);
  \draw[   blue] (-12,-12) -- ( -4,-12);
  \draw[   blue] (-12,-12) -- (-12, -4);
  \draw[   blue] ( 12,-12) -- (  4,-12);
  \draw[   blue] ( 12,-12) -- ( 12, -4);
  \draw[   cyan] (  4, -8) -- (  6, -6);
  \draw[magenta] (  4, -8) -- (  0, -8);
  \draw[magenta] (  4, -8) -- (  4,-12);
  \draw[   cyan] ( -8,  4) -- ( -6,  6);
  \draw[magenta] ( -8,  4) -- ( -8,  0);
  \draw[magenta] ( -8,  4) -- (-12,  4);
  \draw[   cyan] ( -4, -8) -- ( -6, -6);
  \draw[magenta] ( -4, -8) -- (  0, -8);
  \draw[magenta] ( -4, -8) -- ( -4,-12);
  \draw[   cyan] ( -8, -4) -- ( -6, -6);
  \draw[magenta] ( -8, -4) -- ( -8,  0);
  \draw[magenta] ( -8, -4) -- (-12, -4);
  \draw[-angle 90, cyan] (  4,  8) -- (  5,  7);
  \draw[-angle 90, cyan] (  6,  6) -- (  7,  5);
  \draw[-angle 90, cyan] (  8, -4) -- (  7, -5);
  \draw[-angle 90, cyan] (  6, -6) -- (  5, -7);
  \draw[-angle 90, cyan] ( -4, -8) -- ( -5, -7);
  \draw[-angle 90, cyan] ( -6, -6) -- ( -7, -5);
  \draw[-angle 90, cyan] ( -8,  4) -- ( -7,  5);
  \draw[-angle 90, cyan] ( -6,  6) -- ( -5,  7);
  \draw ( -5,  7) node[anchor=north west] {$\ss c_0$};
  \draw (  5,  7) node[anchor=north east] {$\ss c_0$};
  \draw ( -5, -7) node[anchor=south west] {$\ss c_0$};
  \draw (  5, -7) node[anchor=south east] {$\ss c_0$};
  \draw[-angle 90,green] (  3,  3) -- (  4,  4);
  \draw[-angle 90,green] (  7,  7) -- (  8,  8);
  \draw[-angle 90,green] ( -5, -5) -- ( -4, -4);
  \draw[-angle 90,green] ( -9, -9) -- ( -8, -8);
  \draw (  4,  4) node[anchor=north west] {$\ss c_1$};
  \draw[-angle 90,green] ( -3,  3) -- ( -4,  4);
  \draw[-angle 90,green] ( -7,  7) -- ( -8,  8);
  \draw[-angle 90,green] (  5, -5) -- (  4, -4);
  \draw[-angle 90,green] (  9, -9) -- (  8, -8);
  \draw ( -4,  4) node[anchor=north east] {$\ss c_2$};
  \draw[-angle 90,magenta] (-11, -4) -- (-10, -4);
  \draw[-angle 90,magenta] ( -8, -4) -- ( -8, -2);
  \draw[-angle 90,magenta] ( -8,  0) -- ( -8,  2);
  \draw[-angle 90,magenta] ( -9,  4) -- (-10,  4);
  \draw ( -8, -2) node[anchor=east] {$\ss c_3$};
  \draw[-angle 90,magenta] ( 11, -4) -- ( 10, -4);
  \draw[-angle 90,magenta] (  8, -4) -- (  8, -2);
  \draw[-angle 90,magenta] (  8,  0) -- (  8,  2);
  \draw[-angle 90,magenta] (  9,  4) -- ( 10,  4);
  \draw (  8, -2) node[anchor=west] {$\ss c_3$};
  \draw[-angle 90,magenta] (  4, 11) -- (  4, 10);
  \draw[-angle 90,magenta] (  4,  8) -- (  2,  8);
  \draw[-angle 90,magenta] (  0,  8) -- ( -2,  8);
  \draw[-angle 90,magenta] ( -4,  9) -- ( -4, 10);
  \draw ( -2,  8) node[anchor=south] {$\ss c_4$};
  \draw[-angle 90,magenta] (  4,-11) -- (  4,-10);
  \draw[-angle 90,magenta] (  4, -8) -- (  2, -8);
  \draw[-angle 90,magenta] (  0, -8) -- ( -2, -8);
  \draw[-angle 90,magenta] ( -4, -9) -- ( -4,-10);
  \draw ( -2, -8) node[anchor=north] {$\ss c_4$};
  \draw[-angle 90,blue] (  3,  0) -- (  4,  0);
  \draw[-angle 90,blue] ( -5,  0) -- ( -4,  0);
  \draw (  4,  0) node[anchor=north] {$\ss c_5$};
  \draw[-angle 90,blue] (  0,  3) -- (  0,  4);
  \draw[-angle 90,blue] (  0, -5) -- (  0, -4);
  \draw (  0,  4) node[anchor=east] {$\ss c_6$};
  \draw[-angle 90,blue] (-12,  7) -- (-12,  8);
  \draw[-angle 90,blue] (-12, -7) -- (-12, -8);
  \draw[-angle 90,blue] ( 12,  9) -- ( 12,  8);
  \draw[-angle 90,blue] ( 12, -9) -- ( 12, -8);
  \draw (-12,  8) node[anchor=east] {$\ss c_7$};
  \draw (-12, -8) node[anchor=east] {$\ss c_7$};
  \draw ( 12,  8) node[anchor=west] {$\ss c_7$};
  \draw ( 12, -8) node[anchor=west] {$\ss c_7$};
  \draw[-angle 90,blue] ( -9, 12) -- ( -8, 12);
  \draw[-angle 90,blue] (  9, 12) -- (  8, 12);
  \draw[-angle 90,blue] ( -7,-12) -- ( -8,-12);
  \draw[-angle 90,blue] (  7,-12) -- (  8,-12);
  \draw (  8,-12) node[anchor=north] {$\ss c_8$};
  \draw ( -8,-12) node[anchor=north] {$\ss c_8$};
  \draw (  8, 12) node[anchor=south] {$\ss c_8$};
  \draw ( -8, 12) node[anchor=south] {$\ss c_8$};
  \fill[black] (  0,  0) circle(0.10);
  \fill[black] ( -4, -8) circle(0.10);
  \fill[black] ( 12, 12) circle(0.10);
  \fill[black] (  4,  8) circle(0.10);
  \fill[black] (  8,  4) circle(0.10);
  \fill[black] ( -4,  8) circle(0.10);
  \fill[black] (  8, -4) circle(0.10);
  \fill[black] (  6,  6) circle(0.10);
  \fill[black] ( -6,  6) circle(0.10);
  \fill[black] (  6, -6) circle(0.10);
  \fill[black] ( -6, -6) circle(0.10);
  \fill[black] (  8,  0) circle(0.10);
  \fill[black] (  0,  8) circle(0.10);
  \fill[black] ( 12,  4) circle(0.10);
  \fill[black] (  4, 12) circle(0.10);
  \fill[black] ( -8,  4) circle(0.10);
  \fill[black] (  0, -8) circle(0.10);
  \fill[black] ( -8, -4) circle(0.10);
  \fill[black] (-12,  4) circle(0.10);
  \fill[black] ( -4,-12) circle(0.10);
  \fill[black] (-12, 12) circle(0.10);
  \fill[black] (-12,-12) circle(0.10);
  \fill[black] ( -8,  0) circle(0.10);
  \fill[black] (-12, -4) circle(0.10);
  \fill[black] (  4,-12) circle(0.10);
  \fill[black] ( 12,-12) circle(0.10);
  \fill[black] ( 12, -4) circle(0.10);
  \fill[black] (  4, -8) circle(0.10);
  \fill[black] ( -4, 12) circle(0.10);
  \draw (  0,  0) node[anchor=north]{$\ss v_{0}$};
  \draw ( -4, -8) node[anchor=north west]{$\ss v_{4.1}$};
  \draw ( 12, 12) node[anchor=south west]{$\ss v_{1}$};
  \draw (  4,  8) node[anchor=south east]{$\ss v_{2}$};
  \draw (  8,  4) node[anchor=north west]{$\ss v_{3}$};
  \draw ( -4,  8) node[anchor=south west]{$\ss v_{4}$};
  \draw (  8, -4) node[anchor=south west]{$\ss v_{5}$};
  \draw (  6,  6) node[anchor=north]{$\ss v_{6}$};
  \draw ( -6,  6) node[anchor=north]{$\ss v_{7}$};
  \draw (  6, -6) node[anchor=north]{$\ss v_{9}$};
  \draw ( -6, -6) node[anchor=north]{$\ss v_{8}$};
  \draw (  8,  0) node[anchor=west]{$\ss v_{11}$};
  \draw (  0,  8) node[anchor=south]{$\ss v_{10}$};
  \draw ( 12,  4) node[anchor=north west]{$\ss v_{13}$};
  \draw (  4, 12) node[anchor=south east]{$\ss v_{12}$};
  \draw ( -8,  4) node[anchor=north east]{$\ss v_{3.1}$};
  \draw (  0, -8) node[anchor=north]{$\ss v_{10.1}$};
  \draw ( -8, -4) node[anchor=south east]{$\ss v_{5.1}$};
  \draw (-12,  4) node[anchor=north east]{$\ss v_{13.3}$};
  \draw ( -4,-12) node[anchor=north west]{$\ss v_{12.1}$};
  \draw (-12, 12) node[anchor=south east]{$\ss v_{1.1}$};
  \draw (-12,-12) node[anchor=north east]{$\ss v_{1.2}$};
  \draw ( -8,  0) node[anchor=east]{$\ss v_{11.1}$};
  \draw (-12, -4) node[anchor=south east]{$\ss v_{13.1}$};
  \draw (  4,-12) node[anchor=north east]{$\ss v_{12.3}$};
  \draw ( 12,-12) node[anchor=north west]{$\ss v_{1.3}$};
  \draw ( 12, -4) node[anchor=south west]{$\ss v_{13.2}$};
  \draw (  4, -8) node[anchor=north east]{$\ss v_{2.1}$};
  \draw ( -4, 12) node[anchor=south west]{$\ss v_{12.2}$};
  \draw (6.00,2.00) node{$1$};
  \draw (-2.00,6.00) node{$\lm$};
  \draw (-6.00,-2.00) node{$\lm^2$};
  \draw (2.00,-6.00) node{$\lm^3$};
  \draw (10.00,-6.00) node{$\mu$};
  \draw (6.00,10.00) node{$\lm\mu$};
  \draw (-10.00,6.00) node{$\lm^2\mu$};
  \draw (-6.00,-10.00) node{$\lm^3\mu$};
  \draw (6.00,-2.00) node{$\nu$};
  \draw (2.00,6.00) node{$\lm\nu$};
  \draw (-6.00,2.00) node{$\lm^2\nu$};
  \draw (-2.00,-6.00) node{$\lm^3\nu$};
  \draw (10.00,6.00) node{$\mu\nu$};
  \draw (-6.00,10.00) node{$\lm\mu\nu$};
  \draw (-10.00,-6.00) node{$\lm^2\mu\nu$};
  \draw (6.00,-10.00) node{$\lm^3\mu\nu$};
 \end{tikzpicture}
\end{center}
\begin{checks}
 nets_check.mpl: check_nets()
\end{checks}
\begin{remark}
 There is code for dealing with nets in the file \fname+nets.mpl+.
 This uses the object oriented programming framework described in
 Section~\ref{sec-oo-maple}.  Information about $\Net_0$ is stored in
 the variable \mcode+net_0+, as an instance of the class \mcode+net+.
 This means that
 \begin{itemize}
  \item One can enter \mcode+net_0["v"][12.1]+ to retrieve the
   coordinates of the point $v_{12.1}$ in the above picture.
  \item One can enter \mcode+net_0["squares"][M]+ to retrieve the list
   \mcode+[9,1,13.2,5]+ corresponding to the vertices of the region
   marked $\mu$ (recall that $\mu$ is represented as \mcode+M+ in Maple).
  \item One can enter \mcode+net_0["plot"]+ to generate a picture of
   the net as a Maple plot structure.  Note that this is an example of
   a method rather than a property: it performs an operation rather
   than simply returning information that was previously stored.
  \item One can enter \mcode+net_0["check"]+ to perform various
   consistency checks on the combinatorial structure of the net.
 \end{itemize}
 There are also various other properties and methods.
\end{remark}

\begin{remark}\lbl{rem-match-net}
 Elsewhere we will consider a number of other constructions which give
 partial or global maps between cromulent surfaces and $\R^2$ or $\C$
 or $\C_\infty\simeq S^2$ or $\R^2/\Z^2$.  We will usually arrange the
 details of such maps so that they match up with $\Net_0$ as far as
 possible: $v_0$ will go to the origin, $c_5(t)$ will go to the
 positive $x$-axis for small $t>0$, $c_6(t)$ will go to the positive
 $y$-axis for small $t>0$, and so on.
\end{remark}

We can obtain the space $X$ by performing some additional
identifications on the boundary.  The points marked $v_{12}$,
$v_{12.1}$, $v_{12.2}$ and $v_{12.3}$ all map to $v_{12}$, and
similarly for the other points with fractional labels.

The above net inherits an orientation from $\R^2$, but it also
inherits an orientation from $X$, so we can ask whether these
orientations are the same.  To see that they are, recall that $\lm$
acts on the tangent space $T_{v_0}X$ as multiplication by $i$.  (This
was part of the definition of a cromulent labelling.)  On the other
hand, we have $\lm(c_1(t))=c_2(t)$, and from this we see that $\lm$
acts on the net near $v_0$ as an anticlockwise turn through $\pi/2$.
This implies that the orientations are compatible as claimed.

To explain the gluing conditions on the boundary in more detail, we
use the following, less cluttered version of the above diagram:
\begin{center}
 \begin{tikzpicture}[scale=0.3]
  \draw[blue]    (-12, 12) -- ( -4, 12);
  \draw[blue]    (  4, 12) -- ( 12, 12);
  \draw[blue]    (-12,-12) -- ( -4,-12);
  \draw[blue]    (  4,-12) -- ( 12,-12);
  \draw[blue]    ( 12,-12) -- ( 12, -4);
  \draw[blue]    ( 12,  4) -- ( 12, 12);
  \draw[blue]    (-12,-12) -- (-12, -4);
  \draw[blue]    (-12,  4) -- (-12, 12);
  \draw[magenta] ( -4, 12) -- ( -4,  8) -- (  4,  8) -- (  4, 12);
  \draw[magenta] ( -4,-12) -- ( -4, -8) -- (  4, -8) -- (  4,-12);
  \draw[magenta] ( 12, -4) -- (  8, -4) -- (  8,  4) -- ( 12,  4);
  \draw[magenta] (-12, -4) -- ( -8, -4) -- ( -8,  4) -- (-12,  4);
  \draw[-angle 90,blue] ( 12, 12) -- (  8, 12);
  \draw[-angle 90,blue] (-12, 12) -- ( -8, 12);
  \draw[-angle 90,blue] (  4,-12) -- (  8,-12);
  \draw[-angle 90,blue] ( -4,-12) -- ( -8,-12);
  \draw[-angle 90,blue] ( 12, 12) -- ( 12,  8);
  \draw[-angle 90,blue] ( 12,-12) -- ( 12, -8);
  \draw[-angle 90,blue] (-12,  4) -- (-12,  8);
  \draw[-angle 90,blue] (-12, -4) -- (-12, -8);
  \draw[-angle 90,magenta] (  4,  8) -- (  0,  8);
  \draw[-angle 90,magenta] (  4, -8) -- (  0, -8);
  \draw[-angle 90,magenta] ( -8, -4) -- ( -8,  0);
  \draw[-angle 90,magenta] (  8, -4) -- (  8,  0);
  \draw (  8,  0) node[anchor=east]  {$c_3$};
  \draw ( -8,  0) node[anchor=west]  {$c_3$};
  \draw (  0,  8) node[anchor=north] {$c_4$};
  \draw (  0, -8) node[anchor=south] {$c_4$};
  \draw ( 12, -8) node[anchor=west]  {$c_7^+$};
  \draw ( 12,  8) node[anchor=west]  {$c_7^+$};
  \draw (-12, -8) node[anchor=east]  {$c_7^-$};
  \draw (-12,  8) node[anchor=east]  {$c_7^-$};
  \draw ( -8, 12) node[anchor=south] {$c_8^+$};
  \draw (  8, 12) node[anchor=south] {$c_8^+$};
  \draw ( -8,-12) node[anchor=north] {$c_8^-$};
  \draw (  8,-12) node[anchor=north] {$c_8^-$};
 \end{tikzpicture}
\end{center}
The gluing rules are as follows:
\begin{itemize}
 \item The two edges marked $c_7^+$ are identified together.
 \item The two edges marked $c_7^-$ are identified together.
 \item The two edges marked $c_8^+$ are identified together.
 \item The two edges marked $c_8^-$ are identified together.
 \item The curve marked $c_3$ consisting of three edges at the left of
  the diagram is identified with the corresponding curve at the right
  of the diagram.
 \item The curve marked $c_4$ consisting of three edges at the top of
  the diagram is identified with the corresponding curve at the bottom
  of the diagram.
\end{itemize}
The edges $c_k^+$ (for $k\in\{7,8\}$) become the arcs
$c_k([0,\pi])\sse X$, and the edges $c_k^-$ become the arcs
$c_k([-\pi,0])$.

%\vfill\pagebreak

Here are three more ways we can perform partial gluing to get a net
for $X$; we will call them $\Net_1$, $\Net_2$ and $\Net_3$.
\begin{center}
 % Generated by net_square_tikz()
 \begin{tikzpicture}[scale=1]
  \draw[  green] (  4,  4) -- (  3,  3);
  \draw[   blue] (  4,  4) -- (  0,  4);
  \draw[   blue] (  4,  4) -- (  4,  0);
  \draw[  green] (  2,  2) -- (  3,  3);
  \draw[   blue] (  2,  2) -- (  0,  2);
  \draw[   blue] (  2,  2) -- (  2,  0);
  \draw[   cyan] (  0,  3) -- ( -3,  3);
  \draw[   cyan] (  0,  3) -- (  3,  3);
  \draw[magenta] (  0,  3) -- (  0,  4);
  \draw[magenta] (  0,  3) -- (  0,  2);
  \draw[   cyan] ( -3,  0) -- ( -3,  3);
  \draw[   cyan] ( -3,  0) -- ( -3, -3);
  \draw[magenta] ( -3,  0) -- ( -4,  0);
  \draw[magenta] ( -3,  0) -- ( -2,  0);
  \draw[   cyan] (  0, -3) -- ( -3, -3);
  \draw[   cyan] (  0, -3) -- (  3, -3);
  \draw[magenta] (  0, -3) -- (  0, -4);
  \draw[magenta] (  0, -3) -- (  0, -2);
  \draw[   cyan] (  3,  0) -- (  3, -3);
  \draw[   cyan] (  3,  0) -- (  3,  3);
  \draw[magenta] (  3,  0) -- (  4,  0);
  \draw[magenta] (  3,  0) -- (  2,  0);
  \draw[  green] ( -4,  4) -- ( -3,  3);
  \draw[   blue] ( -4,  4) -- (  0,  4);
  \draw[   blue] ( -4,  4) -- ( -4,  0);
  \draw[  green] ( -4, -4) -- ( -3, -3);
  \draw[   blue] ( -4, -4) -- (  0, -4);
  \draw[   blue] ( -4, -4) -- ( -4,  0);
  \draw[  green] (  4, -4) -- (  3, -3);
  \draw[   blue] (  4, -4) -- (  4,  0);
  \draw[   blue] (  4, -4) -- (  0, -4);
  \draw[  green] ( -2,  2) -- ( -3,  3);
  \draw[   blue] ( -2,  2) -- (  0,  2);
  \draw[   blue] ( -2,  2) -- ( -2,  0);
  \draw[  green] ( -2, -2) -- ( -3, -3);
  \draw[   blue] ( -2, -2) -- (  0, -2);
  \draw[   blue] ( -2, -2) -- ( -2,  0);
  \draw[  green] (  2, -2) -- (  3, -3);
  \draw[   blue] (  2, -2) -- (  2,  0);
  \draw[   blue] (  2, -2) -- (  0, -2);
  \draw (  4,  4) node[anchor=south west]{$\ss 0$};
  \draw (  2,  2) node[anchor=north east]{$\ss 1$};
  \draw (  0,  3) node{$\ss 2$};
  \draw ( -3,  0) node{$\ss 3$};
  \draw (  0, -3) node{$\ss 4$};
  \draw (  3,  0) node{$\ss 5$};
  \draw ( -3,  3) node{$\ss 6$};
  \draw ( -3, -3) node{$\ss 7$};
  \draw (  3,  3) node{$\ss 9$};
  \draw (  3, -3) node{$\ss 8$};
  \draw (  4,  0) node[anchor=west]{$\ss 11$};
  \draw (  0,  4) node[anchor=south]{$\ss 10$};
  \draw (  2,  0) node[anchor=east]{$\ss 13$};
  \draw (  0,  2) node[anchor=north]{$\ss 12$};
  \draw (  0, -4) node[anchor=north]{$\ss 10.1$};
  \draw (  0, -2) node[anchor=south]{$\ss 12.1$};
  \draw ( -2,  2) node[anchor=north west]{$\ss 1.1$};
  \draw (  4, -4) node[anchor=north west]{$\ss 0.3$};
  \draw ( -2, -2) node[anchor=south west]{$\ss 1.2$};
  \draw ( -4,  0) node[anchor=east]{$\ss 11.1$};
  \draw ( -4,  4) node[anchor=south east]{$\ss 0.1$};
  \draw ( -2,  0) node[anchor=west]{$\ss 13.1$};
  \draw (  2, -2) node[anchor=south east]{$\ss 1.3$};
  \draw ( -4, -4) node[anchor=north east]{$\ss 0.2$};
  \draw (-3.50,1.75) node{$1$};
  \draw (-1.75,-3.50) node{$\lm$};
  \draw (3.50,-1.75) node{$\lm^2$};
  \draw (1.75,3.50) node{$\lm^3$};
  \draw (2.50,1.25) node{$\mu$};
  \draw (-1.25,2.50) node{$\lm\mu$};
  \draw (-2.50,-1.25) node{$\lm^2\mu$};
  \draw (1.25,-2.50) node{$\lm^3\mu$};
  \draw (3.50,1.75) node{$\nu$};
  \draw (-1.75,3.50) node{$\lm\nu$};
  \draw (-3.50,-1.75) node{$\lm^2\nu$};
  \draw (1.75,-3.50) node{$\lm^3\nu$};
  \draw (-2.50,1.25) node{$\mu\nu$};
  \draw (-1.25,-2.50) node{$\lm\mu\nu$};
  \draw (2.50,-1.25) node{$\lm^2\mu\nu$};
  \draw (1.25,2.50) node{$\lm^3\mu\nu$};
 \end{tikzpicture}
\end{center}

\vspace{2ex}

\begin{center}
 % Generated by net_square_tikz()
 \begin{tikzpicture}[scale=.4]
  \draw[  green] ( -4,  0) -- (  0,  0);
  \draw[   blue] ( -4,  0) -- ( -4,  4);
  \draw[   blue] ( -4,  0) -- ( -4, -4);
  \draw[  green] ( -4,  0) -- ( -8,  1);
  \draw[  green] ( -4,  0) -- ( -8, -1);
  \draw[  green] (  4,  0) -- (  0,  0);
  \draw[  green] (  4,  0) -- (  8,  1);
  \draw[  green] (  4,  0) -- (  8, -1);
  \draw[   blue] (  4,  0) -- (  4,  4);
  \draw[   blue] (  4,  0) -- (  4, -4);
  \draw[   cyan] (  0,  4) -- (  0,  0);
  \draw[magenta] (  0,  4) -- ( -4,  4);
  \draw[magenta] (  0,  4) -- (  4,  4);
  \draw[   cyan] (  0,  4) -- (  0,  8);
  \draw[   cyan] (  0, -4) -- (  0,  0);
  \draw[magenta] (  0, -4) -- ( -4, -4);
  \draw[magenta] (  0, -4) -- (  4, -4);
  \draw[   cyan] (  0, -4) -- (  0, -8);
  \draw[   cyan] (  8,  4) -- (  8,  1);
  \draw[   cyan] (  8,  4) -- (  8,  8);
  \draw[magenta] (  8,  4) -- (  4,  4);
  \draw[   cyan] (  8, -4) -- (  8, -1);
  \draw[magenta] (  8, -4) -- (  4, -4);
  \draw[   cyan] (  8, -4) -- (  8, -8);
  \draw[   blue] ( -4,  8) -- ( -4,  4);
  \draw[  green] ( -4,  8) -- ( -8,  8);
  \draw[  green] ( -4,  8) -- (  0,  8);
  \draw[   blue] ( -4, -8) -- ( -4, -4);
  \draw[  green] ( -4, -8) -- (  0, -8);
  \draw[  green] ( -4, -8) -- ( -8, -8);
  \draw[  green] (  4,  8) -- (  8,  8);
  \draw[   blue] (  4,  8) -- (  4,  4);
  \draw[  green] (  4,  8) -- (  0,  8);
  \draw[   blue] (  4, -8) -- (  4, -4);
  \draw[  green] (  4, -8) -- (  0, -8);
  \draw[  green] (  4, -8) -- (  8, -8);
  \draw[magenta] ( -8,  4) -- ( -4,  4);
  \draw[   cyan] ( -8,  4) -- ( -8,  1);
  \draw[   cyan] ( -8,  4) -- ( -8,  8);
  \draw[magenta] ( -8, -4) -- ( -4, -4);
  \draw[   cyan] ( -8, -4) -- ( -8, -8);
  \draw[   cyan] ( -8, -4) -- ( -8, -1);
  \draw ( -4,  0) node[anchor=east]{$\ss 0$};
  \draw ( -8,  4) node[anchor=east]{$\ss 4.1$};
  \draw (  4,  0) node[anchor=west]{$\ss 1$};
  \draw (  0,  4) node{$\ss 2$};
  \draw (  0, -4) node{$\ss 3$};
  \draw (  8,  4) node[anchor=west]{$\ss 4$};
  \draw (  8, -4) node[anchor=west]{$\ss 5$};
  \draw (  0,  0) node{$\ss 6$};
  \draw (  8,  1) node[anchor=west]{$\ss 7$};
  \draw (  8, -1) node[anchor=west]{$\ss 9$};
  \draw (  8,  8) node[anchor=south west]{$\ss 8$};
  \draw ( -8,  1) node[anchor=east]{$\ss 7.1$};
  \draw ( -4, -4) node{$\ss 11$};
  \draw ( -4,  4) node{$\ss 10$};
  \draw (  4, -4) node{$\ss 13$};
  \draw (  4,  4) node{$\ss 12$};
  \draw ( -8,  8) node[anchor=south east]{$\ss 8.1$};
  \draw ( -8, -4) node[anchor=east]{$\ss 5.1$};
  \draw (  0, -8) node[anchor=north]{$\ss 7.2$};
  \draw ( -8, -8) node[anchor=north east]{$\ss 8.2$};
  \draw (  4,  8) node[anchor=south]{$\ss 1.1$};
  \draw (  8, -8) node[anchor=north west]{$\ss 8.3$};
  \draw (  4, -8) node[anchor=north]{$\ss 1.2$};
  \draw ( -4,  8) node[anchor=south]{$\ss 0.1$};
  \draw ( -8, -1) node[anchor=east]{$\ss 9.1$};
  \draw (  0,  8) node[anchor=south]{$\ss 9.2$};
  \draw ( -4, -8) node[anchor=north]{$\ss 0.2$};
  \draw (-2.00,-2.00) node{$1$};
  \draw (-6.00,2.25) node{$\lm$};
  \draw (-6.00,-6.00) node{$\lm^2$};
  \draw (-2.00,6.00) node{$\lm^3$};
  \draw (6.00,-2.25) node{$\mu$};
  \draw (2.00,2.00) node{$\lm\mu$};
  \draw (2.00,-6.00) node{$\lm^2\mu$};
  \draw (6.00,6.00) node{$\lm^3\mu$};
  \draw (-6.00,-2.25) node{$\nu$};
  \draw (-2.00,2.00) node{$\lm\nu$};
  \draw (-2.00,-6.00) node{$\lm^2\nu$};
  \draw (-6.00,6.00) node{$\lm^3\nu$};
  \draw (2.00,-2.00) node{$\mu\nu$};
  \draw (6.00,2.25) node{$\lm\mu\nu$};
  \draw (6.00,-6.00) node{$\lm^2\mu\nu$};
  \draw (2.00,6.00) node{$\lm^3\mu\nu$};
 \end{tikzpicture}
 \hspace{1em}
 % Generated by net_square_tikz()
 \begin{tikzpicture}[scale=.4]
  \draw[  green] (  8,  4) -- (  4,  4);
  \draw[   blue] (  8,  4) -- (  8,  8);
  \draw[   blue] (  8,  4) -- (  8,  0);
  \draw[  green] (  0,  4) -- ( -4,  4);
  \draw[  green] (  0,  4) -- (  4,  4);
  \draw[   blue] (  0,  4) -- (  1,  8);
  \draw[   blue] (  0,  4) -- (  0,  0);
  \draw[   blue] (  0,  4) -- ( -1,  8);
  \draw[   cyan] (  4,  8) -- (  4,  4);
  \draw[magenta] (  4,  8) -- (  8,  8);
  \draw[magenta] (  4,  8) -- (  1,  8);
  \draw[   cyan] ( -4,  0) -- ( -4,  4);
  \draw[   cyan] ( -4,  0) -- ( -4, -4);
  \draw[magenta] ( -4,  0) -- (  0,  0);
  \draw[magenta] ( -4,  0) -- ( -8,  0);
  \draw[   cyan] (  4, -8) -- (  4, -4);
  \draw[magenta] (  4, -8) -- (  8, -8);
  \draw[magenta] (  4, -8) -- (  1, -8);
  \draw[   cyan] (  4,  0) -- (  4, -4);
  \draw[   cyan] (  4,  0) -- (  4,  4);
  \draw[magenta] (  4,  0) -- (  8,  0);
  \draw[magenta] (  4,  0) -- (  0,  0);
  \draw[  green] ( -8,  4) -- ( -4,  4);
  \draw[   blue] ( -8,  4) -- ( -8,  8);
  \draw[   blue] ( -8,  4) -- ( -8,  0);
  \draw[  green] ( -8, -4) -- ( -4, -4);
  \draw[   blue] ( -8, -4) -- ( -8, -8);
  \draw[   blue] ( -8, -4) -- ( -8,  0);
  \draw[  green] (  8, -4) -- (  4, -4);
  \draw[   blue] (  8, -4) -- (  8,  0);
  \draw[   blue] (  8, -4) -- (  8, -8);
  \draw[  green] (  0, -4) -- ( -4, -4);
  \draw[  green] (  0, -4) -- (  4, -4);
  \draw[   blue] (  0, -4) -- (  0,  0);
  \draw[   blue] (  0, -4) -- ( -1, -8);
  \draw[   blue] (  0, -4) -- (  1, -8);
  \draw[   cyan] ( -4,  8) -- ( -4,  4);
  \draw[magenta] ( -4,  8) -- ( -8,  8);
  \draw[magenta] ( -4,  8) -- ( -1,  8);
  \draw[   cyan] ( -4, -8) -- ( -4, -4);
  \draw[magenta] ( -4, -8) -- ( -8, -8);
  \draw[magenta] ( -4, -8) -- ( -1, -8);
  \draw (  8,  4) node[anchor=west]{$\ss 0$};
  \draw ( -4, -8) node[anchor=north]{$\ss 4.1$};
  \draw (  0,  4) node[anchor=south]{$\ss 1$};
  \draw (  4,  8) node[anchor=south]{$\ss 2$};
  \draw ( -4,  0) node{$\ss 3$};
  \draw (  4, -8) node[anchor=north]{$\ss 4$};
  \draw (  4,  0) node{$\ss 5$};
  \draw ( -4,  4) node{$\ss 6$};
  \draw ( -4, -4) node{$\ss 7$};
  \draw (  4,  4) node{$\ss 9$};
  \draw (  4, -4) node{$\ss 8$};
  \draw (  8,  0) node[anchor=west]{$\ss 11$};
  \draw (  8,  8) node[anchor=south west]{$\ss 10$};
  \draw (  0,  0) node{$\ss 13$};
  \draw (  1,  8) node[anchor=south]{$\ss 12$};
  \draw ( -8,  8) node[anchor=south east]{$\ss 10.1$};
  \draw (  8, -8) node[anchor=north west]{$\ss 10.3$};
  \draw ( -1,  8) node[anchor=south]{$\ss 12.1$};
  \draw ( -8, -8) node[anchor=north east]{$\ss 10.2$};
  \draw (  0, -4) node[anchor=north]{$\ss 1.1$};
  \draw (  8, -4) node[anchor=west]{$\ss 0.3$};
  \draw ( -8,  0) node[anchor=east]{$\ss 11.1$};
  \draw ( -8,  4) node[anchor=east]{$\ss 0.1$};
  \draw (  1, -8) node[anchor=north]{$\ss 12.3$};
  \draw ( -4,  8) node[anchor=south]{$\ss 2.1$};
  \draw ( -8, -4) node[anchor=east]{$\ss 0.2$};
  \draw ( -1, -8) node[anchor=north]{$\ss 12.2$};
  \draw (-6.00,2.00) node{$1$};
  \draw (-6.00,-6.00) node{$\lm$};
  \draw (6.00,-2.00) node{$\lm^2$};
  \draw (6.00,6.00) node{$\lm^3$};
  \draw (2.00,2.00) node{$\mu$};
  \draw (-2.25,6.00) node{$\lm\mu$};
  \draw (-2.00,-2.00) node{$\lm^2\mu$};
  \draw (2.25,-6.00) node{$\lm^3\mu$};
  \draw (6.00,2.00) node{$\nu$};
  \draw (-6.00,6.00) node{$\lm\nu$};
  \draw (-6.00,-2.00) node{$\lm^2\nu$};
  \draw (6.00,-6.00) node{$\lm^3\nu$};
  \draw (-2.00,2.00) node{$\mu\nu$};
  \draw (-2.25,-6.00) node{$\lm\mu\nu$};
  \draw (2.00,-2.00) node{$\lm^2\mu\nu$};
  \draw (2.25,6.00) node{$\lm^3\mu\nu$};
 \end{tikzpicture}
\end{center}
\begin{checks}
 nets_check.mpl: check_nets()
\end{checks}
In all of $\Net_0,\dotsc,\Net_3$, the labels $0$, $11$, $3$ and $6$ occur
in anticlockwise order around the region marked $1$.  This shows that
all the nets have orientation compatible with each other and thus also
compatible with the orientation of $X$.

Here is another way to assemble the pieces.  The left hand picture
(which we call $\Net_4^+$) consists of eight distorted copies of
$F_{16}$, and is homeomorphic to a disc with two holes, or a ``pair of
pants''.  The right hand picture ($\Net_4^-$) consists of the other eight
translates of $F_{16}$.  The surface can be obtained by gluing the two
pictures together along $C_3\amalg C_6\amalg C_8$: this is a ``pair of
pants decomposition''.
\begin{center}
 \begin{tikzpicture}[scale=0.8]
  \draw[magenta] (-4,-3) -- ( 4,-3) -- ( 4, 3) -- (-4, 3) -- (-4,-3);
  \draw[green]   (-3,-2) -- ( 3,-2) -- ( 3, 2) -- (-3, 2) -- (-3,-2);
  \draw[cyan]    ( 0,-3) -- ( 0, 3);
  \draw[blue]    (-4, 0) -- (-3, 0);
  \draw[blue]    ( 3, 0) -- ( 4, 0);
  \draw[magenta] (-1, 0) -- ( 1, 0);
  \filldraw[draw=blue,fill=gray!20]
   (-3, 0) -- (-2, 1) -- (-1, 0) -- (-2,-1) -- (-3, 0);
  \filldraw[draw=blue,fill=gray!20]
   ( 3, 0) -- ( 2, 1) -- ( 1, 0) -- ( 2,-1) -- ( 3, 0);
  \fill[black] (-4, 0) circle(0.04);
  \fill[black] (-3, 0) circle(0.04);
  \fill[black] (-1, 0) circle(0.04);
  \fill[black] ( 0, 0) circle(0.04);
  \fill[black] ( 1, 0) circle(0.04);
  \fill[black] ( 3, 0) circle(0.04);
  \fill[black] ( 4, 0) circle(0.04);
  \fill[black] ( 0,-3) circle(0.04);
  \fill[black] ( 0,-2) circle(0.04);
  \fill[black] ( 0, 2) circle(0.04);
  \fill[black] ( 0, 3) circle(0.04);
  \draw (-4, 0) node[anchor=east] {$\ss v_{13}$};
  \draw (-3, 0) node[anchor=west] {$\ss v_1$};
  \draw (-1, 0) node[anchor=east] {$\ss v_{12}$};
  \draw ( 0, 0) node[anchor=north east] {$\ss v_2$};
  \draw ( 1, 0) node[anchor=west] {$\ss v_{10}$};
  \draw ( 3, 0) node[anchor=east] {$\ss v_0$};
  \draw ( 4, 0) node[anchor=west] {$\ss v_{11}$};
  \draw ( 0,-3) node[anchor=north] {$\ss v_5$};
  \draw ( 0,-2) node[anchor=north east] {$\ss v_9$};
  \draw ( 0, 2) node[anchor=south east] {$\ss v_6$};
  \draw ( 0, 3) node[anchor=south] {$\ss v_3$};
  \draw ( 3.5, 2.5) node {$\ss 1$};
  \draw ( 3.5,-2.5) node {$\ss \nu$};
  \draw (-3.5, 2.5) node {$\ss \mu\nu$};
  \draw (-3.5,-2.5) node {$\ss \mu$};
  \draw ( 0.9, 1.2) node {$\ss \lm\nu$};
  \draw ( 0.9,-1.2) node {$\ss \lm^3$};
  \draw (-0.9, 1.2) node {$\ss \lm\mu$};
  \draw (-0.9,-1.2) node {$\ss \lm^3\mu\nu$};
  \draw[->,cyan]    ( 0.0, 2.4) -- ( 0.0, 2.5);
  \draw[->,cyan]    ( 0.0, 0.9) -- ( 0.0, 1.0);
  \draw[->,cyan]    ( 0.0,-1.1) -- ( 0.0,-1.0);
  \draw[->,cyan]    ( 0.0,-2.6) -- ( 0.0,-2.5);
  \draw ( 0.0, 0.9) node[anchor=west] {$\ss c_0$};
  \draw[->,green]    ( 1.6, 2.0) -- ( 1.5, 2.0);
  \draw[->,green]    (-1.4, 2.0) -- (-1.5, 2.0);
  \draw ( 1.5, 2.0) node[anchor=south] {$\ss c_1$};
  \draw (-1.5, 2.0) node[anchor=south] {$\ss c_1$};
  \draw[->,green]    ( 1.4,-2.0) -- ( 1.5,-2.0);
  \draw[->,green]    (-1.6,-2.0) -- (-1.5,-2.0);
  \draw ( 1.5,-2.0) node[anchor=north] {$\ss c_2$};
  \draw (-1.5,-2.0) node[anchor=north] {$\ss c_2$};
  \draw[->,magenta] ( 4.0, 1.4) -- ( 4.0, 1.5);
  \draw[->,magenta] ( 4.0,-1.6) -- ( 4.0,-1.5);
  \draw[->,magenta] (-4.0, 1.6) -- (-4.0, 1.5);
  \draw[->,magenta] (-4.0,-1.4) -- (-4.0,-1.5);
  \draw ( 4.0, 1.5) node[anchor=west] {$\ss c_3$};
  \draw ( 4.0,-1.5) node[anchor=west] {$\ss c_3$};
  \draw (-4.0, 1.5) node[anchor=east] {$\ss c_3$};
  \draw (-4.0,-1.5) node[anchor=east] {$\ss c_3$};
  \draw[->,blue]    ( 3.4, 0.0) -- ( 3.5, 0.0);
  \draw ( 3.5, 0.0) node[anchor=north] {$\ss c_5$};
  \draw[->,blue]    (-3.4, 0.0) -- (-3.5, 0.0);
  \draw (-3.5, 0.0) node[anchor=north] {$\ss c_7$};
  \draw[->,blue]    ( 2.6, 0.4) -- ( 2.5, 0.5);
  \draw[->,blue]    ( 1.6, 0.6) -- ( 1.5, 0.5);
  \draw[->,blue]    ( 1.4,-0.4) -- ( 1.5,-0.5);
  \draw[->,blue]    ( 2.4,-0.6) -- ( 2.5,-0.5);
  \draw ( 2.5, 0.5) node[anchor=south west] {$\ss c_6$};
  \draw[->,blue]    (-2.6, 0.4) -- (-2.5, 0.5);
  \draw[->,blue]    (-1.6, 0.6) -- (-1.5, 0.5);
  \draw[->,blue]    (-1.4,-0.4) -- (-1.5,-0.5);
  \draw[->,blue]    (-2.4,-0.6) -- (-2.5,-0.5);
  \draw (-2.5, 0.5) node[anchor=south east] {$\ss c_8$};
  \draw[->,magenta] (-0.6, 0.0) -- (-0.5, 0.0);
  \draw[->,magenta] ( 0.4, 0.0) -- ( 0.5, 0.0);
  \draw ( 0.5, 0.0) node[anchor=north] {$\ss c_4$};
 \end{tikzpicture}
 \qquad
 \begin{tikzpicture}[scale=0.8]
  \draw[magenta] (-4,-3) -- ( 4,-3) -- ( 4, 3) -- (-4, 3) -- (-4,-3);
  \draw[green]   (-3,-2) -- ( 3,-2) -- ( 3, 2) -- (-3, 2) -- (-3,-2);
  \draw[cyan]    ( 0,-3) -- ( 0, 3);
  \draw[blue]    (-4, 0) -- (-3, 0);
  \draw[blue]    ( 3, 0) -- ( 4, 0);
  \draw[magenta] (-1, 0) -- ( 1, 0);
  \filldraw[draw=blue,fill=gray!20]
   (-3, 0) -- (-2, 1) -- (-1, 0) -- (-2,-1) -- (-3, 0);
  \filldraw[draw=blue,fill=gray!20]
   ( 3, 0) -- ( 2, 1) -- ( 1, 0) -- ( 2,-1) -- ( 3, 0);
  \fill[black] (-4, 0) circle(0.04);
  \fill[black] (-3, 0) circle(0.04);
  \fill[black] (-1, 0) circle(0.04);
  \fill[black] ( 0, 0) circle(0.04);
  \fill[black] ( 1, 0) circle(0.04);
  \fill[black] ( 3, 0) circle(0.04);
  \fill[black] ( 4, 0) circle(0.04);
  \fill[black] ( 0,-3) circle(0.04);
  \fill[black] ( 0,-2) circle(0.04);
  \fill[black] ( 0, 2) circle(0.04);
  \fill[black] ( 0, 3) circle(0.04);
  \draw (-4, 0) node[anchor=east] {$\ss v_{13}$};
  \draw (-3, 0) node[anchor=west] {$\ss v_1$};
  \draw (-1, 0) node[anchor=east] {$\ss v_{12}$};
  \draw ( 0, 0) node[anchor=north east] {$\ss v_4$};
  \draw ( 1, 0) node[anchor=west] {$\ss v_{10}$};
  \draw ( 3, 0) node[anchor=east] {$\ss v_0$};
  \draw ( 4, 0) node[anchor=west] {$\ss v_{11}$};
  \draw ( 0,-3) node[anchor=north] {$\ss v_5$};
  \draw ( 0,-2) node[anchor=north east] {$\ss v_8$};
  \draw ( 0, 2) node[anchor=south east] {$\ss v_7$};
  \draw ( 0, 3) node[anchor=south] {$\ss v_3$};
  \draw ( 3.5, 2.5) node {$\ss \lm^2\nu$};
  \draw ( 3.5,-2.5) node {$\ss \lm^2$};
  \draw (-3.5, 2.5) node {$\ss \lm^2\mu$};
  \draw (-3.5,-2.5) node {$\ss \lm^2\mu\nu$};
  \draw ( 0.9, 1.2) node {$\ss \lm$};
  \draw ( 0.9,-1.2) node {$\ss \lm^3\nu$};
  \draw (-0.9, 1.2) node {$\ss \lm\mu\nu$};
  \draw (-0.9,-1.2) node {$\ss \lm^3\mu$};
  \draw[->,cyan]    ( 0.0, 2.6) -- ( 0.0, 2.5);
  \draw[->,cyan]    ( 0.0, 1.1) -- ( 0.0, 1.0);
  \draw[->,cyan]    ( 0.0,-0.9) -- ( 0.0,-1.0);
  \draw[->,cyan]    ( 0.0,-2.4) -- ( 0.0,-2.5);
  \draw ( 0.0, 0.9) node[anchor=west] {$\ss c_0$};
  \draw[->,green]    ( 1.6, 2.0) -- ( 1.5, 2.0);
  \draw[->,green]    (-1.4, 2.0) -- (-1.5, 2.0);
  \draw ( 1.5, 2.0) node[anchor=south] {$\ss c_2$};
  \draw (-1.5, 2.0) node[anchor=south] {$\ss c_2$};
  \draw[->,green]    ( 1.4,-2.0) -- ( 1.5,-2.0);
  \draw[->,green]    (-1.6,-2.0) -- (-1.5,-2.0);
  \draw ( 1.5,-2.0) node[anchor=north] {$\ss c_1$};
  \draw (-1.5,-2.0) node[anchor=north] {$\ss c_1$};
  \draw[->,magenta] ( 4.0, 1.4) -- ( 4.0, 1.5);
  \draw[->,magenta] ( 4.0,-1.6) -- ( 4.0,-1.5);
  \draw[->,magenta] (-4.0, 1.6) -- (-4.0, 1.5);
  \draw[->,magenta] (-4.0,-1.4) -- (-4.0,-1.5);
  \draw ( 4.0, 1.5) node[anchor=west] {$\ss c_3$};
  \draw ( 4.0,-1.5) node[anchor=west] {$\ss c_3$};
  \draw (-4.0, 1.5) node[anchor=east] {$\ss c_3$};
  \draw (-4.0,-1.5) node[anchor=east] {$\ss c_3$};
  \draw[->,blue]    ( 3.6, 0.0) -- ( 3.5, 0.0);
  \draw ( 3.5, 0.0) node[anchor=north] {$\ss c_5$};
  \draw[->,blue]    (-3.6, 0.0) -- (-3.5, 0.0);
  \draw (-3.5, 0.0) node[anchor=north] {$\ss c_7$};
  \draw[->,blue]    ( 2.6, 0.4) -- ( 2.5, 0.5);
  \draw[->,blue]    ( 1.6, 0.6) -- ( 1.5, 0.5);
  \draw[->,blue]    ( 1.4,-0.4) -- ( 1.5,-0.5);
  \draw[->,blue]    ( 2.4,-0.6) -- ( 2.5,-0.5);
  \draw ( 2.5, 0.5) node[anchor=south west] {$\ss c_6$};
  \draw[->,blue]    (-2.6, 0.4) -- (-2.5, 0.5);
  \draw[->,blue]    (-1.6, 0.6) -- (-1.5, 0.5);
  \draw[->,blue]    (-1.4,-0.4) -- (-1.5,-0.5);
  \draw[->,blue]    (-2.4,-0.6) -- (-2.5,-0.5);
  \draw (-2.5, 0.5) node[anchor=south east] {$\ss c_8$};
  \draw[->,magenta] (-0.4, 0.0) -- (-0.5, 0.0);
  \draw[->,magenta] ( 0.6, 0.0) -- ( 0.5, 0.0);
  \draw ( 0.5, 0.0) node[anchor=north] {$\ss c_4$};
 \end{tikzpicture}
\end{center}
In this case the orientation of $\Net_4^+$ is compatible with the
orientation of $X$, but the orientation of $\Net_4^-$ is reversed.

We now give another net which we call $\Net_5$.  Note that the central
octagon is the same as for $\Net_0$, but the outer pieces have been
rearranged.
\begin{center}
 \begin{tikzpicture}[scale=1]
  \draw[  green] (  3,  2) -- (  2,  2);
  \draw[   blue] (  3,  2) -- (  3,  5);
  \draw[  green] (  0,  0) -- (  2,  2);
  \draw[  green] (  0,  0) -- (  2, -2);
  \draw[  green] (  0,  0) -- ( -2, -2);
  \draw[  green] (  0,  0) -- ( -2,  2);
  \draw[   blue] (  0,  0) -- (  3,  0);
  \draw[   blue] (  0,  0) -- (  0,  3);
  \draw[   blue] (  0,  0) -- ( -3,  0);
  \draw[   blue] (  0,  0) -- (  0, -3);
  \draw[   cyan] (  3,  1) -- (  2,  2);
  \draw[magenta] (  3,  1) -- (  3,  0);
  \draw[   cyan] (  1, -3) -- (  2, -2);
  \draw[magenta] (  1, -3) -- (  0, -3);
  \draw[   cyan] ( -3, -1) -- ( -2, -2);
  \draw[magenta] ( -3, -1) -- ( -3,  0);
  \draw[   cyan] ( -1,  3) -- ( -2,  2);
  \draw[magenta] ( -1,  3) -- (  0,  3);
  \draw[   blue] (  2,  6) -- (  3,  5);
  \draw[  green] (  2,  6) -- (  1,  6);
  \draw[  green] ( -2,  3) -- ( -2,  2);
  \draw[   blue] ( -2,  3) -- ( -5,  3);
  \draw[  green] ( -6,  2) -- ( -6,  1);
  \draw[   blue] ( -6,  2) -- ( -5,  3);
  \draw[  green] ( -3, -2) -- ( -2, -2);
  \draw[   blue] ( -3, -2) -- ( -3, -5);
  \draw[  green] ( -2, -6) -- ( -1, -6);
  \draw[   blue] ( -2, -6) -- ( -3, -5);
  \draw[  green] (  2, -3) -- (  2, -2);
  \draw[   blue] (  2, -3) -- (  5, -3);
  \draw[   blue] (  6, -2) -- (  5, -3);
  \draw[  green] (  6, -2) -- (  6, -1);
  \draw[   cyan] ( -3,  1) -- ( -2,  2);
  \draw[   cyan] ( -3,  1) -- ( -6,  1);
  \draw[magenta] ( -3,  1) -- ( -5,  3);
  \draw[magenta] ( -3,  1) -- ( -3,  0);
  \draw[   cyan] (  1,  3) -- (  2,  2);
  \draw[magenta] (  1,  3) -- (  3,  5);
  \draw[magenta] (  1,  3) -- (  0,  3);
  \draw[   cyan] (  1,  3) -- (  1,  6);
  \draw[   cyan] (  3, -1) -- (  2, -2);
  \draw[magenta] (  3, -1) -- (  5, -3);
  \draw[magenta] (  3, -1) -- (  3,  0);
  \draw[   cyan] (  3, -1) -- (  6, -1);
  \draw[   cyan] ( -1, -3) -- ( -2, -2);
  \draw[   cyan] ( -1, -3) -- ( -1, -6);
  \draw[magenta] ( -1, -3) -- ( -3, -5);
  \draw[magenta] ( -1, -3) -- (  0, -3);
  \draw (  3,  2) node[anchor=west]{$\ss 0$};
  \draw (  0,  0) node[anchor=east]{$\ss 1$};
  \draw (  3,  1) node[anchor=west]{$\ss 2$};
  \draw (  1, -3) node[anchor=north]{$\ss 3$};
  \draw ( -3, -1) node[anchor=east]{$\ss 4$};
  \draw ( -5,  3) node[anchor=south]{$\ss 10.1$};
  \draw ( -1,  3) node[anchor=south]{$\ss 5$};
  \draw ( -6,  2) node[anchor=south east]{$\ss 0.3$};
  \draw (  2,  2) node[anchor=south]{$\ss 6$};
  \draw (  2, -2) node[anchor=west]{$\ss 7$};
  \draw ( -2,  2) node[anchor=east]{$\ss 9$};
  \draw ( -2, -2) node[anchor=north]{$\ss 8$};
  \draw (  3,  5) node[anchor=west]{$\ss 11$};
  \draw (  5, -3) node[anchor=north]{$\ss 10$};
  \draw (  0,  3) node[anchor=south]{$\ss 13$};
  \draw (  3,  0) node[anchor=west]{$\ss 12$};
  \draw ( -6,  1) node[anchor=north]{$\ss 6.1$};
  \draw ( -1, -3) node[anchor=north west]{$\ss 5.1$};
  \draw (  2, -3) node[anchor=north]{$\ss 0.6$};
  \draw ( -1, -6) node[anchor=west]{$\ss 9.1$};
  \draw (  3, -1) node[anchor=south west]{$\ss 4.1$};
  \draw ( -3, -2) node[anchor=east]{$\ss 0.4$};
  \draw (  1,  3) node[anchor=south east]{$\ss 3.1$};
  \draw (  6, -2) node[anchor=north west]{$\ss 0.7$};
  \draw (  0, -3) node[anchor=north]{$\ss 13.1$};
  \draw (  2,  6) node[anchor=south west]{$\ss 0.1$};
  \draw (  1,  6) node[anchor=east]{$\ss 7.1$};
  \draw ( -2,  3) node[anchor=south]{$\ss 0.2$};
  \draw ( -3,  1) node[anchor=north east]{$\ss 2.1$};
  \draw ( -2, -6) node[anchor=north east]{$\ss 0.5$};
  \draw ( -3,  0) node[anchor=east]{$\ss 12.1$};
  \draw (  6, -1) node[anchor=south]{$\ss 8.1$};
  \draw ( -3, -5) node[anchor=east]{$\ss 11.1$};
  \draw (2.25,3.00) node{$1$};
  \draw (3.00,-2.25) node{$\lm$};
  \draw (-2.25,-3.00) node{$\lm^2$};
  \draw (-3.00,2.25) node{$\lm^3$};
  \draw (-0.75,2.00) node{$\mu$};
  \draw (2.00,0.75) node{$\lm\mu$};
  \draw (0.75,-2.00) node{$\lm^2\mu$};
  \draw (-2.00,-0.75) node{$\lm^3\mu$};
  \draw (-1.75,-5.00) node{$\nu$};
  \draw (-5.00,1.75) node{$\lm\nu$};
  \draw (1.75,5.00) node{$\lm^2\nu$};
  \draw (5.00,-1.75) node{$\lm^3\nu$};
  \draw (0.75,2.00) node{$\mu\nu$};
  \draw (2.00,-0.75) node{$\lm\mu\nu$};
  \draw (-0.75,-2.00) node{$\lm^2\mu\nu$};
  \draw (-2.00,0.75) node{$\lm^3\mu\nu$};
 \end{tikzpicture}
\end{center}
The point about $\Net_5$ is that it allows us to read off a convenient
presentation of the fundamental group $\pi_1(X,v_0)$.

\begin{definition}
 We define $\bt_i\:[0,1]\to X$ for $i\in\Z/8$ by
 \begin{align*}
  \bt_0(t) &= c_5(2\pi t) \\
  \bt_1(t) &= \begin{cases}
               c_1(   -3t \pi  ) & 0   \leq t \leq 1/6 \\
               c_0((-1-3t)\pi/2) & 1/6 \leq t \leq 2/6 \\
               c_4(( 6t-1)\pi/2) & 2/6 \leq t \leq 4/6 \\
               c_0(( 3t-2)\pi/2) & 4/6 \leq t \leq 5/6 \\
               c_1(( 3-3t)\pi  ) & 5/6 \leq t \leq 1,
              \end{cases}
 \end{align*}
 then $\bt_{i+2j}(t)=\lm^j\bt_i(t)$ for $i\in\{0,1\}$ and
 $j\in\{1,2,3\}$.
\end{definition}

It is straightforward to check that $\bt_k(0)=\bt_k(1)=v_0$ for all
$k$, so $\bt_k$ represents an element of $\pi_1(X,v_0)$.

\begin{proposition}\lbl{prop-pi-one}
 $\pi_1(X,v_0)$ is generated by the elements $\bt_i$, subject only to
 the relations $\bt_i\bt_{i+4}=1$ and
 \[ \bt_0\bt_1\bt_2\bt_3\bt_4\bt_5\bt_6\bt_7 = 1. \]
\end{proposition}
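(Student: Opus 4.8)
The plan is to use the net $\Net_5$ to present $X$ as a single octagonal $2$-cell glued to a graph, and then to run the classical polygon-presentation argument for surface groups.

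First I would read off the topology of $\Net_5$. It is homeomorphic to a closed disc $P$, assembled from the $16$ translates of $F_{16}$; the quotient map $q\:P\to X$ is injective on $\mathrm{int}(P)$, and $X$ is obtained from $P$ by performing the remaining identifications, all of which lie on $\partial P$. Inspecting the figure, $\partial P$ is an octagon whose eight corners are the eight copies of $v_0$ (labelled $0,0.1,\dots,0.7$), and whose eight boundary arcs, traversed in the cyclic order induced by the orientation of $X$ and starting at the corner $0$, are precisely the loops $\bt_0,\bt_1,\dots,\bt_7$. To see this one matches the piecewise formula for $\bt_1$ and its $\lm$-translates $\bt_3=\lm\bt_1$, $\bt_5=\lm^2\bt_1$, $\bt_7=\lm^3\bt_1$, together with $\bt_0=c_5(2\pi\,\cdot\,)$, $\bt_2=\lm\bt_0$, $\bt_4=\lm^2\bt_0$, $\bt_6=\lm^3\bt_0$, against the coloured arcs of the picture, using axiom~(c) of Definition~\ref{defn-curve-system} for how $\lm,\mu,\nu$ permute the curves $c_k$ and Lemma~\ref{lem-F-stabilisers} for where those arcs lie. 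The remaining identifications glue the arc $\bt_i$ to the arc $\bt_{i+4}$ by an orientation-reversing homeomorphism; equivalently, $X$ is the quotient of the octagon $P$ by the gluing word $\bt_0\bt_1\bt_2\bt_3\bt_0^{-1}\bt_1^{-1}\bt_2^{-1}\bt_3^{-1}$, with all eight corners sent to $v_0$. As a consistency check, this word is orientable, has all corners identified, and gives Euler characteristic $1-4+1=-2$, matching genus $2$.

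Given this, the presentation follows from the standard computation from the classification of surfaces: a closed surface obtained from a $2n$-gon by gluing its sides in pairs according to a length-$2n$ word in $n$ letters (each occurring exactly twice), with all corners identified to a single point $*$, has $\pi_1$ at $*$ equal to the one-relator group on the $n$ letters with that word as relator. (This is van Kampen's theorem applied to the decomposition of $X$ into the graph $Q=q(\partial P)$ — a wedge of $n$ circles, after smoothing the degree-two vertices coming from the subdivision points on the octagon — with one $2$-cell attached along the boundary loop.) With $n=4$ and word $\bt_0\bt_1\bt_2\bt_3\bt_0^{-1}\bt_1^{-1}\bt_2^{-1}\bt_3^{-1}$ we get
\[ \pi_1(X,v_0)=\ip{\bt_0,\bt_1,\bt_2,\bt_3\mid \bt_0\bt_1\bt_2\bt_3\bt_0^{-1}\bt_1^{-1}\bt_2^{-1}\bt_3^{-1}=1}. \]
Reintroducing the redundant generators $\bt_4,\dots,\bt_7$ with the relations $\bt_i\bt_{i+4}=1$ (a Tietze transformation, since these say $\bt_{i+4}=\bt_i^{-1}$) rewrites the relator as $\bt_0\bt_1\bt_2\bt_3\bt_4\bt_5\bt_6\bt_7$, and this is exactly the asserted presentation. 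Here I also use the remark preceding the statement, that $\bt_k(0)=\bt_k(1)=v_0$ for every $k$, so that each $\bt_i$ really is a based loop and the boundary concatenation based at $v_0$ is meaningful.

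The topological half is textbook; the real content — and the main obstacle — is the verification in the first step: that $\Net_5$ is a disc, that its boundary octagon is subdivided so that the eight corner-to-corner arcs are exactly the loops $\bt_0,\dots,\bt_7$ in cyclic order, and that the identifications not yet performed inside $\Net_5$ pair arc $\bt_i$ with arc $\bt_{i+4}$ with reversed orientation. This is a finite but intricate combinatorial check: one traces the boundary of the assembled figure through the sixteen copies of $F_{16}$, repeatedly invoking axiom~(c) of Definition~\ref{defn-curve-system} together with the stabiliser data of Lemma~\ref{lem-F-stabilisers}, and it is precisely the bookkeeping encoded in the Maple object \mcode+net_5+ and confirmed by \mcode+check_nets()+.
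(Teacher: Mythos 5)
Your proposal is correct and follows essentially the same route as the paper: both read off from $\Net_5$ that $X$ is an octagon with boundary word $\bt_0\bt_1\cdots\bt_7$, glued by identifying $\bt_{i+4}$ with the reverse of $\bt_i$, and then invoke the standard polygon-presentation argument from the classification of surfaces. You spell out the van Kampen step and the Tietze move more explicitly than the paper does, but the underlying argument is the same.
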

\begin{proof}
 Inspection of the definitions, together with part~(c) of
 Definition~\ref{defn-curve-system}, shows that
 $\bt_{i+4}(t)=\bt_i(1-t)$ for all $i$, so $\bt_{i+4}$ is inverse to
 $\bt_i$ in $\pi_1(X,v_0)$.  The paths $\bt_i$ appear in $\Net_5$ as
 follows:
 \begin{center}
  \begin{tikzpicture}[scale=0.6]
   \draw[  blue,-<] (  3,  2) -- (  3,  4);
   \draw[  blue,-<] ( -2,  3) -- ( -4,  3);
   \draw[  blue,-<] ( -3, -2) -- ( -3, -4);
   \draw[  blue,-<] (  2, -3) -- (  4, -3);
   \draw[orange,-<] (  3, -1) -- (  3,  0);
   \draw[orange,-<] (  1,  3) -- (  0,  3);
   \draw[orange,-<] ( -3,  1) -- ( -3,  0);
   \draw[orange,-<] ( -1, -3) -- (  0, -3);
   \draw[ orange] (  3,  2) -- (  2,  2);
   \draw[   blue] (  3,  2) -- (  3,  5);
   \draw[ orange] (  3,  1) -- (  2,  2);
   \draw[ orange] (  3,  1) -- (  3,  0);
   \draw[ orange] (  1, -3) -- (  2, -2);
   \draw[ orange] (  1, -3) -- (  0, -3);
   \draw[ orange] ( -3, -1) -- ( -2, -2);
   \draw[ orange] ( -3, -1) -- ( -3,  0);
   \draw[ orange] ( -1,  3) -- ( -2,  2);
   \draw[ orange] ( -1,  3) -- (  0,  3);
   \draw[   blue] (  2,  6) -- (  3,  5);
   \draw[ orange] (  2,  6) -- (  1,  6);
   \draw[ orange] ( -2,  3) -- ( -2,  2);
   \draw[   blue] ( -2,  3) -- ( -5,  3);
   \draw[ orange] ( -6,  2) -- ( -6,  1);
   \draw[   blue] ( -6,  2) -- ( -5,  3);
   \draw[ orange] ( -3, -2) -- ( -2, -2);
   \draw[   blue] ( -3, -2) -- ( -3, -5);
   \draw[ orange] ( -2, -6) -- ( -1, -6);
   \draw[   blue] ( -2, -6) -- ( -3, -5);
   \draw[ orange] (  2, -3) -- (  2, -2);
   \draw[   blue] (  2, -3) -- (  5, -3);
   \draw[   blue] (  6, -2) -- (  5, -3);
   \draw[ orange] (  6, -2) -- (  6, -1);
   \draw[ orange] ( -3,  1) -- ( -6,  1);
   \draw[ orange] ( -3,  1) -- ( -3,  0);
   \draw[ orange] (  1,  3) -- (  0,  3);
   \draw[ orange] (  1,  3) -- (  1,  6);
   \draw[ orange] (  3, -1) -- (  3,  0);
   \draw[ orange] (  3, -1) -- (  6, -1);
   \draw[ orange] ( -1, -3) -- ( -1, -6);
   \draw[ orange] ( -1, -3) -- (  0, -3);
   \draw ( -3, -5) node[anchor=south west]{$\bt_0$};
   \draw ( -3, -1) node[anchor=south west]{$\bt_1$};
   \draw ( -5,  3) node[anchor=north west]{$\bt_2$};
   \draw ( -1,  3) node[anchor=north west]{$\bt_3$};
   \draw (  3,  5) node[anchor=north east]{$\bt_4$};
   \draw (  3,  1) node[anchor=north east]{$\bt_5$};
   \draw (  5, -3) node[anchor=south east]{$\bt_6$};
   \draw (  1, -3) node[anchor=south east]{$\bt_7$};
   \fill (  3,  2) circle(0.05);
   \fill ( -6,  2) circle(0.05);
   \fill (  2, -3) circle(0.05);
   \fill ( -3, -2) circle(0.05);
   \fill (  6, -2) circle(0.05);
   \fill (  2,  6) circle(0.05);
   \fill ( -2,  3) circle(0.05);
   \fill ( -2, -6) circle(0.05);
  \end{tikzpicture}
 \end{center}
 The surface $X$ can be recovered by gluing $\bt_i$ to the reverse of
 $\bt_{i+4}$ for all $i$, so we have a presentation of $X$ of the type
 used in the standard approach to the classification of surfaces,
 which gives the claimed presentation of the fundamental group.
\end{proof}

\begin{remark}\lbl{rem-groupoid}
 For some purposes it is more convenient to work with the fundamental
 groupoid $\Pi_1(X)$, or the full subgroupoid $\Gm\subset\Pi_1(X)$
 with objects $\{v_i\st 0\leq i<14\}$.  This has an action of $G$ by
 groupoid automorphisms.  Each side of $F_{16}$ gives a generator, and
 the interior of $F_{16}$ gives a relation.  One can check that the
 $G$-orbits of these generators and relations give an equivariant
 presentation for $\Gm$, fom which one can recover our earlier
 presentation of the group $\pi_1(X,v_0)=\Gm(v_0,v_0)$.  Details are
 in the file \fname+groupoid.mpl+. 
\end{remark}

Given a subgroup $H\leq G$, it is usually straightforward to find a
subset of $\Net_0$ that gives a fundamental domain for the action of
$H$, and thus to understand the topology of $X/H$.  In the case
$H\leq D_8$, this will be consistent with
Corollary~\ref{cor-quotient-types}.  We will do this explicitly for
the cases $H=\ip{\lm}$ and $H=\ip{\lm\mu}$, where $X/H$ is an elliptic
curve.  First, the inner octagon in $\Net_0$ is a fundamental domain
for $\ip{\mu}$.  We can redraw this octagon in a slightly distorted
form as follows:
\begin{center}
 \begin{tikzpicture}[scale=1]
  \draw[magenta] (-3,-3) -- (-3, 3);
  \draw[magenta] ( 3,-3) -- ( 3, 3);
  \draw[magenta] (-1, 3) -- ( 1, 3);
  \draw[magenta] (-1,-3) -- ( 1,-3);
  \draw[blue   ] (-3, 0) -- ( 3, 0);
  \draw[blue   ] ( 0,-3) -- ( 0, 3);
  \draw[green  ] (-2,-3) -- ( 2, 3);
  \draw[green  ] (-2, 3) -- ( 2,-3);
  \draw[cyan   ] (-3, 3) -- (-1, 3);
  \draw[cyan   ] ( 3, 3) -- ( 1, 3);
  \draw[cyan   ] (-3,-3) -- (-1,-3);
  \draw[cyan   ] ( 3,-3) -- ( 1,-3);
  \fill[black] (-3,-3) circle(0.03);
  \fill[black] (-2,-3) circle(0.03);
  \fill[black] (-1,-3) circle(0.03);
  \fill[black] ( 0,-3) circle(0.03);
  \fill[black] ( 1,-3) circle(0.03);
  \fill[black] ( 2,-3) circle(0.03);
  \fill[black] ( 3,-3) circle(0.03);
  \fill[black] (-3, 0) circle(0.03);
  \fill[black] ( 0, 0) circle(0.03);
  \fill[black] ( 3, 0) circle(0.03);
  \fill[black] (-3, 3) circle(0.03);
  \fill[black] (-2, 3) circle(0.03);
  \fill[black] (-1, 3) circle(0.03);
  \fill[black] ( 0, 3) circle(0.03);
  \fill[black] ( 1, 3) circle(0.03);
  \fill[black] ( 2, 3) circle(0.03);
  \fill[black] ( 3, 3) circle(0.03);
  \draw (-2.0, 1) node{$\ss\lm^2\nu$};
  \draw (-0.7, 2) node{$\ss\lm$};
  \draw ( 0.7, 2) node{$\ss\lm\nu$};
  \draw ( 2.0, 1) node{$\ss 1$};
  \draw (-2.0,-1) node{$\ss\lm^2$};
  \draw (-0.7,-2) node{$\ss\lm^3\nu$};
  \draw ( 0.7,-2) node{$\ss\lm^3$};
  \draw ( 2.0,-1) node{$\ss\nu$};
 \end{tikzpicture}
\end{center}
One can now check that $X/\ip{\mu}$ is obtained by gluing the left
edge of the square to the right edge, and the top to the bottom, which
gives a torus as expected.

For $X/\ip{\lm\mu}$, it is best to cut some corners off the inner
octagon as shown on the left below, and then rearrange the pieces as
shown on the right.
\begin{center}
 \begin{tikzpicture}[scale=0.6]
  \draw[blue   ] (-4, 0) -- ( 4, 0);
  \draw[blue   ] ( 0,-4) -- ( 0, 4);
  \draw[magenta] (-4,-2) -- (-4, 2);
  \draw[magenta] ( 4,-2) -- ( 4, 2);
  \draw[magenta] (-2,-4) -- ( 2,-4);
  \draw[magenta] (-2, 4) -- ( 2, 4);
  \draw[cyan   ] (-4, 2) -- (-2, 4);
  \draw[cyan   ] (-4,-2) -- (-2,-4);
  \draw[cyan   ] ( 4, 2) -- ( 2, 4);
  \draw[cyan   ] ( 4,-2) -- ( 2,-4);
  \draw[green  ] (-3,-3) -- ( 3, 3);
  \draw[green  ] (-3, 3) -- ( 3,-3);
  \draw[orange ] ( 0, 4) -- (-3, 3) -- (-4, 0);
  \draw[orange ] ( 4, 0) -- ( 3,-3) -- ( 0,-4);
  \fill[black  ] ( 4, 0) circle(0.05);
  \fill[black  ] ( 4, 2) circle(0.05);
  \fill[black  ] ( 3, 3) circle(0.05);
  \fill[black  ] ( 2, 4) circle(0.05);
  \fill[black  ] ( 0, 4) circle(0.05);
  \fill[black  ] (-2, 4) circle(0.05);
  \fill[black  ] (-3, 3) circle(0.05);
  \fill[black  ] (-4, 2) circle(0.05);
  \fill[black  ] (-4, 0) circle(0.05);
  \fill[black  ] (-4,-2) circle(0.05);
  \fill[black  ] (-3,-3) circle(0.05);
  \fill[black  ] (-2,-4) circle(0.05);
  \fill[black  ] ( 0,-4) circle(0.05);
  \fill[black  ] ( 2,-4) circle(0.05);
  \fill[black  ] ( 3,-3) circle(0.05);
  \fill[black  ] ( 4,-2) circle(0.05);
  \fill[black  ] ( 0, 0) circle(0.05);
  \draw ( 2, 1) node{$\ss 1$};
  \draw ( 1, 2) node{$\ss \lm\nu$};
  \draw (-1, 2) node{$\ss \lm$};
  \draw (-2, 1) node{$\ss \lm^2\nu$};
  \draw (-2,-1) node{$\ss \lm^2$};
  \draw (-1,-2) node{$\ss \lm^3\nu$};
  \draw ( 1,-2) node{$\ss \lm^3$};
  \draw ( 2,-1) node{$\ss \nu$};
 \end{tikzpicture}
 \hspace{8em}
 \begin{tikzpicture}[scale=0.8]
  \draw[orange ] (-3,-3) -- ( 3,-3) -- ( 3, 3) -- (-3, 3) -- (-3,-3);
  \draw[blue   ] ( 0,-3) -- ( 0, 3);
  \draw[blue   ] (-3, 0) -- ( 3, 0);
  \draw[green  ] (-1,-1) -- ( 1, 1);
  \draw[green  ] (-3, 3) -- ( 3,-3);
  \draw[cyan   ] (-3,-3) -- (-1,-1);
  \draw[cyan   ] ( 1, 1) -- ( 3, 3);
  \draw[magenta] ( 3, 0) -- ( 2, 2) -- ( 0, 3);
  \draw[magenta] (-3, 0) -- (-2,-2) -- ( 0,-3);
  \fill[black  ] ( 3,-3) circle(0.04);
  \fill[black  ] ( 3, 0) circle(0.04);
  \fill[black  ] ( 3, 3) circle(0.04);
  \fill[black  ] ( 0, 3) circle(0.04);
  \fill[black  ] (-3, 3) circle(0.04);
  \fill[black  ] (-3, 0) circle(0.04);
  \fill[black  ] (-3,-3) circle(0.04);
  \fill[black  ] ( 0,-3) circle(0.04);
  \fill[black  ] ( 2, 2) circle(0.04);
  \fill[black  ] (-2,-2) circle(0.04);
  \fill[black  ] ( 1, 1) circle(0.04);
  \fill[black  ] (-1,-1) circle(0.04);
  \fill[black  ] ( 0, 0) circle(0.04);
  \draw ( 1.6, 0.8) node{$\ss 1$};
  \draw ( 0.8, 1.6) node{$\ss \lm\nu$};
  \draw (-0.8, 1.6) node{$\ss \lm$};
  \draw (-1.6, 0.8) node{$\ss \lm^2\nu$};
  \draw (-1.6,-0.8) node{$\ss \lm^2$};
  \draw (-0.8,-1.6) node{$\ss \lm^3\nu$};
  \draw ( 0.8,-1.6) node{$\ss \lm^3$};
  \draw ( 1.6,-0.8) node{$\ss \nu$};
  \draw ( 2.6, 2.0) node{$\ss \lm^2\nu$};
  \draw ( 2.0, 2.6) node{$\ss \lm^3$};
  \draw (-2.6,-2.0) node{$\ss \nu$};
  \draw (-2.0,-2.6) node{$\ss \lm$};
 \end{tikzpicture}
\end{center}
One can again check that $X/\ip{\lm\mu}$ is obtained by gluing the left
edge of the square to the right edge, and the top to the bottom, which
gives a torus as expected.

\subsection{Homology}
\lbl{sec-homology}

We next consider the homology groups of a cromulent surface.  For any
compact Riemann surface of genus $2$, it is standard that
$H_0(X)\simeq H_2(X)\simeq\Z$ and $H_1(X)\simeq\Z^4$, and that all
other homology groups are zero.  Our main task is to give specific
generators for $H_2(X)$, and understand the action of $G$ in terms of
those generators.  One approach is to recall that $H_1(X)$ is just the
abelianization of $\pi_1(X,v_0)$; we see from
Proposition~\ref{prop-pi-one} that this is the free abelian group
generated by $\{\bt_0,\bt_1,\bt_2,\bt_3\}$.  However, we will use
different generators that interact with the group action in a more
convenient way.

\begin{proposition}\lbl{prop-homology}
 Let $X$ be a cromulent surface with a curve system.  Then there is an
 isomorphism $\psi\:H_1(X)\to\Z^4$, with the following effect on the
 homology classes of the curves $c_k$:
 \begin{align*}
  \psi(c_{ 0}) &= (\pp 0,\pp 0,\pp 0,\pp 0) \\
  \psi(c_{ 1}) &= (\pp 1,\pp 1,   -1,   -1) &
  \psi(c_{ 2}) &= (   -1,\pp 1,\pp 1,   -1) \\
  \psi(c_{ 3}) &= (\pp 0,\pp 1,\pp 0,   -1) &
  \psi(c_{ 4}) &= (   -1,\pp 0,\pp 1,\pp 0) \\
  \psi(c_{ 5}) &= (\pp 1,\pp 0,\pp 0,\pp 0) &
  \psi(c_{ 6}) &= (\pp 0,\pp 1,\pp 0,\pp 0) \\
  \psi(c_{ 7}) &= (\pp 0,\pp 0,\pp 1,\pp 0) &
  \psi(c_{ 8}) &= (\pp 0,\pp 0,\pp 0,\pp 1).
 \end{align*}
 This is equivariant with respect to the following action of $G$ on
 $\Z^4$:
 \begin{align*}
  \lm(n) &= (   -n_2,\pp n_1,   -n_4,\pp n_3) \\
  \mu(n) &= (\pp n_3,   -n_4,\pp n_1,   -n_2) \\
  \nu(n) &= (\pp n_1,   -n_2,\pp n_3,   -n_4).
 \end{align*}
 Moreover, the intersection product on $H_1(X)$ corresponds to the
 following bilinear form on $\Z^4$:
 \[ (n,m) = n_1m_2 - n_2m_1 - n_3m_4 + n_4m_3. \]
\end{proposition}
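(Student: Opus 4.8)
The plan is to build on Proposition~\ref{prop-pi-one}. Abelianising the presentation there, the relations $\bt_i\bt_{i+4}=1$ become $[\bt_{i+4}]=-[\bt_i]$ in $H_1(X)$, after which $\bt_0\bt_1\dotsb\bt_7=1$ holds automatically; thus $H_1(X)$ is free abelian of rank $4$, with basis $[\bt_0],[\bt_1],[\bt_2],[\bt_3]$. The first and most substantial task is to express the homology class of each loop $c_k$ in terms of this basis. The case $k=5$ is immediate, since $\bt_0(t)=c_5(2\pi t)$ gives $[c_5]=[\bt_0]$. For $k\in\{6,7,8\}$, part~(c) of Definition~\ref{defn-curve-system} gives $\lm\circ c_5=c_6$, $\mu\circ c_5=c_7$ and $\lm\mu\circ c_5=c_8$, so these classes are $\lm_*[\bt_0]$, $\mu_*[\bt_0]$ and $\lm_*\mu_*[\bt_0]$; using $\bt_2=\lm\circ\bt_0$ (from $\bt_{i+2j}=\lm^j\bt_i$) one already has $\lm_*[\bt_0]=[\bt_2]$, and the action of $\mu_*$ on $H_1(X)$ is pinned down by tracing the piecewise definition of $\bt_1$ (hence of $\bt_3=\lm\bt_1$) through Definition~\ref{defn-curve-system}(c) --- re-expressing each constituent arc of $c_0$, $c_1$ and $c_4$ --- or, equivalently, by reading the decompositions straight off the net $\Net_5$, in which the $\bt_i$ appear as labelled edges and the $c_k$ as visible paths. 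The same bookkeeping yields $[c_0],\dotsc,[c_4]$. One then checks that $\{[c_5],[c_6],[c_7],[c_8]\}$ is again a $\Z$-basis of $H_1(X)$.

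Given this, I define $\psi\:H_1(X)\to\Z^4$ to be the unique isomorphism carrying $[c_5],[c_6],[c_7],[c_8]$ to the standard basis vectors $e_1,e_2,e_3,e_4$. The displayed formulas for $\psi(c_0),\dotsc,\psi(c_4)$ are then just the expressions found above, rewritten in this basis. For equivariance, note that Definition~\ref{defn-curve-system}(c) presents each $g\circ c_k$, for $g\in\{\lm,\mu,\nu\}$ and each $k$, as some $c_{k'}$ precomposed with a map $t\mapsto\pm t+a$; precomposing with a translation leaves the homology class unchanged, while precomposing with $t\mapsto -t$ negates it, so $g_*[c_k]=\pm[c_{k'}]$ with explicit signs. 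It is convenient that $\lm$, $\mu$ and $\nu$ each carry the set $\{C_5,C_6,C_7,C_8\}$ to itself (possibly reversing orientations), so applying this with $k\in\{5,6,7,8\}$ gives the matrices of $\lm_*$, $\mu_*$, $\nu_*$ in the chosen basis; one then checks by inspection that these coincide with the action of $\lm$, $\mu$, $\nu$ on $\Z^4$ in the statement. As $\lm,\mu,\nu$ generate $G$ and both sides are additive, equivariance follows.

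Finally, the intersection pairing. Since $D_8$ has index $2$ in $G$, there is a unique character $\ep\:G\to\{\pm1\}$ with kernel $D_8$; as the elements of $D_8$ act holomorphically, hence orientation-preservingly, and the rest anti-holomorphically (Definition~\ref{defn-precromulent}(a)), the pairing satisfies $\langle g_*a,g_*b\rangle=\ep(g)\,\langle a,b\rangle$, and it is unimodular because $X$ has genus $2$. I would compute the geometric intersection numbers $[c_i]\cdot[c_j]$ directly: the majority vanish because the circles are disjoint --- by Proposition~\ref{prop-curve-system}(2),(3) within $\{C_4,C_5,C_7\}$ and within $\{C_3,C_6,C_8\}$, and by part~(4) together with axiom~(b) for pairs with no common vertex; the remaining pairs meet only at points of $V$, where the crossing sign is read off from the net pictures using the orientation conventions of Remark~\ref{rem-match-net} (for instance $C_5$ and $C_6$ cross once and positively at $v_0$, so $[c_5]\cdot[c_6]=1$). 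Since the classes $[c_k]$ span $H_1(X)$, these numbers determine the pairing, which one verifies equals $(n,m)=n_1m_2-n_2m_1-n_3m_4+n_4m_3$; alternatively, $\ep$-invariance together with the single value $[c_5]\cdot[c_6]=1$ and unimodularity already determine it.

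The step I expect to be the real obstacle is the first one: identifying the classes $[c_k]$. The generators $\bt_i$ are built by concatenating arcs of several different curves $c_j$, so extracting the class of each individual $c_k$ --- whether via the substitution rules of Definition~\ref{defn-curve-system}(c) or via $\Net_5$ --- requires careful combinatorial bookkeeping. Once those classes are known, the construction of $\psi$, the equivariance check, and the identification of the intersection form are all finite and essentially mechanical.
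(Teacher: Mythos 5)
Your proposal is sound in outline but takes a genuinely different route for the main step. The paper does not start from the $\bt_i$-presentation of $\pi_1$ at all: it puts a CW structure on $X$ coming from $\Net_0$ (one $0$-cell, one $2$-cell, and $1$-cells $c_3,c_4,c_7,c_8$), observes that the attaching map of the $2$-cell is a product of commutators, and so gets $\{[c_3],[c_4],[c_7],[c_8]\}$ as a basis for free. The relations expressing the remaining $[c_k]$ are then extracted from three explicit $2$-chains $r_0,r_1,r_2\sse\Net_0$ whose boundaries consist of whole circles plus mutually cancelling arcs, giving $[c_0]=0$, $[c_1]-[c_3]+[c_4]=0$, $[c_4]+[c_5]-[c_7]=0$, and their images under $\lm_*$; a change of basis to $\{[c_5],\dotsc,[c_8]\}$ then produces $\psi$. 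Your route via the abelianized presentation of Proposition~\ref{prop-pi-one} is legitimate, and indeed the paper essentially runs your computation in reverse in the lemma immediately following, where $[\bt_1]$ is computed from a region of $\Net_5$. The equivariance and intersection-form arguments are the same in both treatments (and your observation that $\ep$-invariance, antisymmetry, $[c_5]\cdot[c_6]=1$ and unimodularity already pin down the form is correct, though the paper just uses disjointness of the circles).

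The one place where you underestimate the work is the step you yourself flag. "Re-expressing each constituent arc of $c_0$, $c_1$ and $c_4$" via Definition~\ref{defn-curve-system}(c) cannot by itself produce $[c_k]$ as a $\Z$-combination of the $[\bt_i]$ (or vice versa): the individual arcs making up $\bt_1$ are not closed, so they have no homology classes to manipulate, and the substitution rules of axiom~(c) only relate parametrisations, not chains. What makes the bookkeeping rigorous is exactly the paper's device of exhibiting $2$-chains in a net whose boundaries decompose into full circles plus cancelling fragments. So your plan goes through, but only after importing that ingredient; as written, the first paragraph is a statement of intent rather than an argument.
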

\begin{proof}
 We use the net $\Net_0$ for $X$ discussed in
 Section~\ref{sec-fundamental}.  This gives a CW structure on $X$ with
 a single $0$-cell, a single $2$-cell, and four $1$-cells
 corresponding to $c_3$, $c_4$, $c_7$ and $c_8$.  The attaching map
 for the $2$-cell is a product of commutators and so is homologically
 trivial.  It follows that the homology classes $[c_k]$ for
 $k\in\{3,4,7,8\}$ give a basis for $H_1(X)$.

 We next derive some relations.  Consider the following subsets of $\Net_0$:
 \begin{center}
  \begin{tikzpicture}[scale=.18]
   \begin{scope}
    \fill[gray!20] ( -4, -8) -- (  4, -8) -- (  8, -4) -- (  8,  4) --
                   (  4,  8) -- ( -4,  8) -- ( -8,  4) -- ( -8, -4) -- cycle;
    \draw (  0,-14) node {$r_0$};
    \draw[  green] (  0,  0) -- (  6,  6);
    \draw[  green] (  0,  0) -- ( -6,  6);
    \draw[  green] (  0,  0) -- ( -6, -6);
    \draw[  green] (  0,  0) -- (  6, -6);
    \draw[   blue] (  0,  0) -- (  0,  8);
    \draw[   blue] (  0,  0) -- (  0, -8);
    \draw[   blue] (  0,  0) -- (  8,  0);
    \draw[   blue] (  0,  0) -- ( -8,  0);
    \draw[   blue] ( 12, 12) -- (  4, 12);
    \draw[   blue] ( 12, 12) -- ( 12,  4);
    \draw[  green] ( 12, 12) -- (  6,  6);
    \draw[  green] (-12, 12) -- ( -6,  6);
    \draw[  green] (-12,-12) -- ( -6, -6);
    \draw[  green] ( 12,-12) -- (  6, -6);
    \draw[   cyan] (  4,  8) -- (  6,  6);
    \draw[magenta] (  4,  8) -- (  0,  8);
    \draw[magenta] (  4,  8) -- (  4, 12);
    \draw[   cyan] (  8,  4) -- (  6,  6);
    \draw[magenta] (  8,  4) -- (  8,  0);
    \draw[magenta] (  8,  4) -- ( 12,  4);
    \draw[   cyan] ( -4,  8) -- ( -6,  6);
    \draw[magenta] ( -4,  8) -- (  0,  8);
    \draw[magenta] ( -4,  8) -- ( -4, 12);
    \draw[   cyan] (  8, -4) -- (  6, -6);
    \draw[magenta] (  8, -4) -- (  8,  0);
    \draw[magenta] (  8, -4) -- ( 12, -4);
    \draw[   blue] (-12, 12) -- ( -4, 12);
    \draw[   blue] (-12, 12) -- (-12,  4);
    \draw[   blue] (-12,-12) -- ( -4,-12);
    \draw[   blue] (-12,-12) -- (-12, -4);
    \draw[   blue] ( 12,-12) -- (  4,-12);
    \draw[   blue] ( 12,-12) -- ( 12, -4);
    \draw[   cyan] (  4, -8) -- (  6, -6);
    \draw[magenta] (  4, -8) -- (  0, -8);
    \draw[magenta] (  4, -8) -- (  4,-12);
    \draw[   cyan] ( -8,  4) -- ( -6,  6);
    \draw[magenta] ( -8,  4) -- ( -8,  0);
    \draw[magenta] ( -8,  4) -- (-12,  4);
    \draw[   cyan] ( -4, -8) -- ( -6, -6);
    \draw[magenta] ( -4, -8) -- (  0, -8);
    \draw[magenta] ( -4, -8) -- ( -4,-12);
    \draw[   cyan] ( -8, -4) -- ( -6, -6);
    \draw[magenta] ( -8, -4) -- ( -8,  0);
    \draw[magenta] ( -8, -4) -- (-12, -4);
   \end{scope}
   \begin{scope}[xshift=30cm]
    \fill[gray!20] (-12,-12) -- ( -4,-12) -- ( -4, -8) -- (  4, -8) --
                   (  4,-12) -- ( 12,-12) -- ( 12, -4) -- (  8, -4) --
                   (  8,  4) -- ( 12,  4) -- ( 12, 12) -- cycle;
    \draw (  0,-14) node {$r_1$};
    \draw[  green] (  0,  0) -- (  6,  6);
    \draw[  green] (  0,  0) -- ( -6,  6);
    \draw[  green] (  0,  0) -- ( -6, -6);
    \draw[  green] (  0,  0) -- (  6, -6);
    \draw[   blue] (  0,  0) -- (  0,  8);
    \draw[   blue] (  0,  0) -- (  0, -8);
    \draw[   blue] (  0,  0) -- (  8,  0);
    \draw[   blue] (  0,  0) -- ( -8,  0);
    \draw[   blue] ( 12, 12) -- (  4, 12);
    \draw[   blue] ( 12, 12) -- ( 12,  4);
    \draw[  green] ( 12, 12) -- (  6,  6);
    \draw[  green] (-12, 12) -- ( -6,  6);
    \draw[  green] (-12,-12) -- ( -6, -6);
    \draw[  green] ( 12,-12) -- (  6, -6);
    \draw[   cyan] (  4,  8) -- (  6,  6);
    \draw[magenta] (  4,  8) -- (  0,  8);
    \draw[magenta] (  4,  8) -- (  4, 12);
    \draw[   cyan] (  8,  4) -- (  6,  6);
    \draw[magenta] (  8,  4) -- (  8,  0);
    \draw[magenta] (  8,  4) -- ( 12,  4);
    \draw[   cyan] ( -4,  8) -- ( -6,  6);
    \draw[magenta] ( -4,  8) -- (  0,  8);
    \draw[magenta] ( -4,  8) -- ( -4, 12);
    \draw[   cyan] (  8, -4) -- (  6, -6);
    \draw[magenta] (  8, -4) -- (  8,  0);
    \draw[magenta] (  8, -4) -- ( 12, -4);
    \draw[   blue] (-12, 12) -- ( -4, 12);
    \draw[   blue] (-12, 12) -- (-12,  4);
    \draw[   blue] (-12,-12) -- ( -4,-12);
    \draw[   blue] (-12,-12) -- (-12, -4);
    \draw[   blue] ( 12,-12) -- (  4,-12);
    \draw[   blue] ( 12,-12) -- ( 12, -4);
    \draw[   cyan] (  4, -8) -- (  6, -6);
    \draw[magenta] (  4, -8) -- (  0, -8);
    \draw[magenta] (  4, -8) -- (  4,-12);
    \draw[   cyan] ( -8,  4) -- ( -6,  6);
    \draw[magenta] ( -8,  4) -- ( -8,  0);
    \draw[magenta] ( -8,  4) -- (-12,  4);
    \draw[   cyan] ( -4, -8) -- ( -6, -6);
    \draw[magenta] ( -4, -8) -- (  0, -8);
    \draw[magenta] ( -4, -8) -- ( -4,-12);
    \draw[   cyan] ( -8, -4) -- ( -6, -6);
    \draw[magenta] ( -8, -4) -- ( -8,  0);
    \draw[magenta] ( -8, -4) -- (-12, -4);
   \end{scope}
   \begin{scope}[xshift=60cm]
    \fill[gray!20] ( -8,  0) -- (  8,  0) -- (  8, -4) -- ( 12, -4) --
                   ( 12,-12) -- (  4,-12) -- (  4, -8) -- ( -4, -8) --
                   ( -4,-12) -- (-12,-12) -- (-12, -4) -- ( -8, -4) -- cycle;
    \draw (  0,-14) node {$r_2$};
    \draw[  green] (  0,  0) -- (  6,  6);
    \draw[  green] (  0,  0) -- ( -6,  6);
    \draw[  green] (  0,  0) -- ( -6, -6);
    \draw[  green] (  0,  0) -- (  6, -6);
    \draw[   blue] (  0,  0) -- (  0,  8);
    \draw[   blue] (  0,  0) -- (  0, -8);
    \draw[   blue] (  0,  0) -- (  8,  0);
    \draw[   blue] (  0,  0) -- ( -8,  0);
    \draw[   blue] ( 12, 12) -- (  4, 12);
    \draw[   blue] ( 12, 12) -- ( 12,  4);
    \draw[  green] ( 12, 12) -- (  6,  6);
    \draw[  green] (-12, 12) -- ( -6,  6);
    \draw[  green] (-12,-12) -- ( -6, -6);
    \draw[  green] ( 12,-12) -- (  6, -6);
    \draw[   cyan] (  4,  8) -- (  6,  6);
    \draw[magenta] (  4,  8) -- (  0,  8);
    \draw[magenta] (  4,  8) -- (  4, 12);
    \draw[   cyan] (  8,  4) -- (  6,  6);
    \draw[magenta] (  8,  4) -- (  8,  0);
    \draw[magenta] (  8,  4) -- ( 12,  4);
    \draw[   cyan] ( -4,  8) -- ( -6,  6);
    \draw[magenta] ( -4,  8) -- (  0,  8);
    \draw[magenta] ( -4,  8) -- ( -4, 12);
    \draw[   cyan] (  8, -4) -- (  6, -6);
    \draw[magenta] (  8, -4) -- (  8,  0);
    \draw[magenta] (  8, -4) -- ( 12, -4);
    \draw[   blue] (-12, 12) -- ( -4, 12);
    \draw[   blue] (-12, 12) -- (-12,  4);
    \draw[   blue] (-12,-12) -- ( -4,-12);
    \draw[   blue] (-12,-12) -- (-12, -4);
    \draw[   blue] ( 12,-12) -- (  4,-12);
    \draw[   blue] ( 12,-12) -- ( 12, -4);
    \draw[   cyan] (  4, -8) -- (  6, -6);
    \draw[magenta] (  4, -8) -- (  0, -8);
    \draw[magenta] (  4, -8) -- (  4,-12);
    \draw[   cyan] ( -8,  4) -- ( -6,  6);
    \draw[magenta] ( -8,  4) -- ( -8,  0);
    \draw[magenta] ( -8,  4) -- (-12,  4);
    \draw[   cyan] ( -4, -8) -- ( -6, -6);
    \draw[magenta] ( -4, -8) -- (  0, -8);
    \draw[magenta] ( -4, -8) -- ( -4,-12);
    \draw[   cyan] ( -8, -4) -- ( -6, -6);
    \draw[magenta] ( -8, -4) -- ( -8,  0);
    \draw[magenta] ( -8, -4) -- (-12, -4);
   \end{scope}
  \end{tikzpicture}
 \end{center}
 The boundary of $r_0$ consists of four fragments of $c_0$ (which
 together make up the whole of $c_0$) together with a fragment of
 $c_3$ repeated twice with opposite orientations, and a fragment of
 $c_4$ repeated twice with opposite orientations.  From this we
 conclude that $[c_0]=0$ in $H_1(X)$.  Similarly, the boundary of
 $r_1$ consists of $c_1$, $c_3$ and $c_4$ together with mutually
 cancelling fragments of $c_7$ and $c_8$.  Here $c_1$ and $c_4$ run
 clockwise but $c_3$ runs anticlockwise.  We therefore have
 $[c_1]-[c_3]+[c_4]=0$.  Applying the same method to $r_2$ gives
 $[c_4]+[c_5]-[c_7]=0$.  Next, part of the definition of a curve
 system is that $\lm(c_1(t))=c_2(t)$ and $\lm(c_3(t))=c_4(t)$ and
 $\lm(c_4(t))=c_3(-t)$, which gives $\lm_*[c_1]=[c_2]$ and
 $\lm_*[c_3]=[c_4]$ and $\lm_*[c_4]=-[c_3]$.  We can therefore apply
 $\lm_*$ to the relation $[c_1]-[c_3]+[c_4]=0$ to get
 $[c_2]-[c_3]-[c_4]=0$.  Similarly, we can apply $\lm_*$ to the
 relation $[c_4]+[c_5]-[c_7]=0$ to get $-[c_3]+[c_6]-[c_8]=0$.  This
 is enough to show that $[c_5]$, $[c_6]$, $[c_7]$ and $[c_8]$ form an
 alternative basis for $H_1(X)$.  By writing everything in terms of
 this basis we get an isomorphism $\psi\:H_1(X)\to\Z^4$, and by
 inspecting the above relations we see that this is given by the
 claimed formulae.

 Next, it is part of the definition of cromulence that
 \begin{align*}
  \lm(c_{ 5}(t)) &= c_{ 6}( t)       &
  \mu(c_{ 5}(t)) &= c_{ 7}( t)       &
  \nu(c_{ 5}(t)) &= c_{ 5}( t) \\
  \lm(c_{ 6}(t)) &= c_{ 5}(-t)       &
  \mu(c_{ 6}(t)) &= c_{ 8}(-t)       &
  \nu(c_{ 6}(t)) &= c_{ 6}(-t) \\
  \lm(c_{ 7}(t)) &= c_{ 8}( t)       &
  \mu(c_{ 7}(t)) &= c_{ 5}( t)       &
  \nu(c_{ 7}(t)) &= c_{ 7}( t) \\
  \lm(c_{ 8}(t)) &= c_{ 7}(-t)       &
  \mu(c_{ 8}(t)) &= c_{ 6}(-t)       &
  \nu(c_{ 8}(t)) &= c_{ 8}(-t).
 \end{align*}
 The action in homology can be read off from this in an obvious way,
 and we find that it works as described in the statement of the
 Proposition.

 Finally, we need to analyse the intersection pairing.  From the
 definition of a curve system and associated discussion, we see that
 $C_5\cap C_6=\{v_0\}$ and $C_7\cap C_8=\{v_1\}$ and
 \[ C_5\cap C_7 = C_5\cap C_8 = C_6\cap C_7 = C_6\cap C_8 = \emptyset.
 \]
 This means that the corresponding products in homology are
 $[c_5]\cdot[c_6]=\pm 1$ and $[c_7]\cdot[c_8]=\pm 1$ and
 \[ [c_5]\cdot [c_7] = [c_5]\cdot [c_8] =
    [c_6]\cdot [c_7] = [c_6]\cdot [c_8] = 0.
 \]
 In the net, $v_0$ is the origin, $c_5$ runs to the right along the
 $x$-axis and $c_6$ runs upwards along the $y$-axis, so
 $[c_5]\cdot [c_6]=+1$.  Moreover, the map $\mu$ preserves orientation
 and has $\mu_*[c_5]=[c_7]$ and $\mu_*[c_6]=-[c_8]$; this means that
 $[c_7]\cdot[c_8]=-1$.  The claimed description of the intersection
 product follows easily.

 \begin{checks}
  homology_check.mpl: check_homology()
 \end{checks}
\end{proof}

\begin{remark}
 The vector $\psi(c_4)=(-1,0,1,0)$ (for example) is represented in
 Maple as \mcode+c_homology[4]+.  The action of $G$ on $\Z^4$ is
 represented by \mcode+act_Z4+, which is a table indexed by the
 elements of $G$, whose entries are functions.  For example, the
 expression \mcode+act_Z4[LMN]([1,2,3,4])+ evaluates to
 \mcode+[-4,3,-2,1]+, corresponding to the fact that
 $\lm\mu\nu(1,2,3,4)=(-4,3,-2,1)$.  Most other actions of $G$ on other
 sets are also represented in this way.
\end{remark}

We now relate the above description to the fundamental group.
\begin{lemma}
 The homology classes of the generators $\bt_i\in\pi_1(X,v_0)$ are as
 follows:
 \begin{align*}
  [\bt_0] &= (\pp 1,\pp 0,\pp 0,\pp 0) &
  [\bt_1] &= (   -1,   -1,\pp 1,\pp 0) \\
  [\bt_2] &= (\pp 0,\pp 1,\pp 0,\pp 0) &
  [\bt_3] &= (\pp 1,   -1,\pp 0,\pp 1) \\
  [\bt_4] &= (   -1,\pp 0,\pp 0,\pp 0) &
  [\bt_5] &= (\pp 1,\pp 1,   -1,\pp 0) \\
  [\bt_6] &= (\pp 0,   -1,\pp 0,\pp 0) &
  [\bt_7] &= (   -1,\pp 1,\pp 0,   -1)
 \end{align*}
\end{lemma}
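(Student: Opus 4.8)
The plan is to reduce the eight classes to just $[\bt_0]$ and $[\bt_1]$, and then evaluate those two using Proposition~\ref{prop-homology}. By construction $\bt_{i+2j}=\lm^j\bt_i$ for $i\in\{0,1\}$, $j\in\{1,2,3\}$, and $v_0$ is fixed by $\lm$, so $[\bt_{i+2j}]=\lm^j_*[\bt_i]$ in $H_1(X)$. Under the isomorphism $\psi$ of Proposition~\ref{prop-homology} the operator $\lm_*$ is $n\mapsto(-n_2,n_1,-n_4,n_3)$; in particular $\lm^2_*=-1$ and $\lm^3_*=-\lm_*$. Hence, granting $\psi(\bt_0)$ and $\psi(\bt_1)$, the remaining six rows of the table are obtained by applying $\lm_*$, $-1$ and $-\lm_*$, and one checks directly that this reproduces the claimed values.

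For $\bt_0$ this is immediate: $\bt_0(t)=c_5(2\pi t)$ is a reparametrisation of $c_5$, so $\psi(\bt_0)=\psi(c_5)=(1,0,0,0)$. For $\bt_1$ I would exploit the intersection pairing. Writing $\psi(\bt_1)=(a,b,c,d)$ and pairing the bilinear form of Proposition~\ref{prop-homology} against the standard basis vectors $\psi(c_5),\psi(c_6),\psi(c_7),\psi(c_8)$ gives $[\bt_1]\cdot[c_5]=-b$, $[\bt_1]\cdot[c_6]=a$, $[\bt_1]\cdot[c_7]=d$ and $[\bt_1]\cdot[c_8]=-c$, so it suffices to compute these four numbers. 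From its definition $\bt_1$ is a concatenation of arcs lying on $C_1$, $C_0$ and $C_4$, and reading off axiom~(b) of Definition~\ref{defn-curve-system} these arcs meet the vertex set only in $\{v_0,v_2,v_4,v_6,v_8,v_{12}\}$. Since $C_4,C_5,C_7$ are disjoint and $C_3,C_6,C_8$ are disjoint (Proposition~\ref{prop-curve-system}), and since the fixed sets of distinct antiholomorphic involutions in $G$ meet only inside $V$ (Remark~\ref{rem-Ck-circle}), a short case check shows that $\bt_1$ is disjoint from $C_7$, meets $C_5$ and $C_6$ only at $v_0$, and meets $C_8$ only at $v_{12}$. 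Each such intersection is transverse: when $\bt_1$ runs along $X^\al$ and crosses $X^\bt$ with $\al\neq\bt$, the composite $\al\bt$ is a nontrivial holomorphic automorphism fixing that point, hence acts nontrivially on the tangent space, so $d\al$ and $d\bt$ have distinct fixed lines (the relevant composites here are $\lm^{\pm1}$ at $v_0$, acting as $\pm i$ by cromulence, and $\lm^2$ at $v_{12}$).

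The four signs are then read off from the local pictures in the net $\Net_0$, using that the orientation of $\Net_0$ agrees with that of $X$: near $v_0$ the loop $\bt_1$ runs along the $C_1$-diagonal (towards $v_6$ and $v_8$) while $c_5$ and $c_6$ run along the positive coordinate half-axes, and near $v_{12}$ the loop $\bt_1$ runs along $C_4$ and meets $C_8$ transversally. Carrying this through gives $[\bt_1]\cdot[c_5]=1$, $[\bt_1]\cdot[c_6]=-1$, $[\bt_1]\cdot[c_7]=0$ and $[\bt_1]\cdot[c_8]=-1$, whence $\psi(\bt_1)=(-1,-1,1,0)$, which combined with the first paragraph finishes the proof; a Maple check (\texttt{homology\_check.mpl}) verifies the combinatorics and signs.

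The one delicate point I expect is the sign of $[\bt_1]\cdot[c_8]$: the point $v_{12}$ sits on the boundary of $\Net_0$, so the net only supplies half a neighbourhood there, and one must either pass to an honest chart on $X$ around $v_{12}$ before comparing tangent directions, or else pin this last sign down from the requirement that the answer be $G$-equivariant (the other three intersection numbers already fix three of the four coordinates of $\psi(\bt_1)$). An alternative that avoids the pairing entirely is to use the relations $[c_0]=0$, $[c_1]-[c_3]+[c_4]=0$, $[c_4]+[c_5]-[c_7]=0$ and $-[c_3]+[c_6]-[c_8]=0$ from the proof of Proposition~\ref{prop-homology} together with a nullhomologous $2$-chain assembled from copies of $F_{16}$ visible in $\Net_5$, which rewrites $\bt_1$ homologically as $[c_4]-[c_6]$; this again evaluates to $(-1,-1,1,0)$.
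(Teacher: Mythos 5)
Your reduction to $[\bt_0]$ and $[\bt_1]$ via $\lm_*$ is exactly what the paper does, and $[\bt_0]=[c_5]$ is the same observation. Where you genuinely diverge is in the evaluation of $[\bt_1]$: the paper never touches the intersection pairing here. Instead it takes the shaded left-hand half of $\Net_5$ as a $2$-chain whose boundary yields $[\bt_0]+[\bt_1]+[\bt_2]+[u]-[c_7]-[u]=0$, hence $[\bt_1]=[c_7]-[\bt_0]-[\bt_2]=(-1,-1,1,0)$ directly, with no sign analysis at any vertex. Your closing ``alternative'' is therefore essentially the paper's actual proof (note $[c_4]-[c_6]=[c_7]-[c_5]-[c_6]$ by the relation $[c_4]+[c_5]-[c_7]=0$), and it is the cleaner route precisely because it sidesteps the issue you flag. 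Your dual computation of the three pairings $[\bt_1]\cdot[c_5]=1$, $[\bt_1]\cdot[c_6]=-1$, $[\bt_1]\cdot[c_7]=0$ is correct and checkable entirely inside $\Net_0$ near $v_0$, where the orientation has been calibrated.

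The remaining sign $[\bt_1]\cdot[c_8]$ at $v_{12}$ is, however, a genuine gap as written, and neither of your proposed patches closes it as stated. The consistency fallback cannot work: writing $\psi(\bt_1)=(-1,-1,c,0)$ with $c=\pm 1$, both choices make $[\bt_0],\dotsc,[\bt_3]$ a basis of $\Z^4$ and give the same values of $[\bt_1]\cdot[\bt_0]$, $[\bt_1]\cdot[\bt_2]$ and $[\bt_1]\cdot[\bt_3]$, and equivariance under $\ip{\lm,\nu}$ (which fixes $v_0$) constrains only the first, second and fourth coordinates. The sign of $c$ \emph{can} be extracted from $\mu$-equivariance, but only via the identity $\mu_*(\bt_1)=\bt_5\bt_4\bt_3$ in $\pi_1(X,v_0)$, which belongs to Proposition~\ref{prop-Aut-Pi} and is not available at this point of the paper. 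The honest-chart route does work, but it is not a matter of glancing at $\Net_0$: the point $v_{12}$ lies on the boundary of that net and its neighbourhood in $X$ is glued from four boundary fragments, so you must either carry out that gluing, or use a net in which $v_{12}$ is interior (e.g.\ $\Net_2$), or invoke Lemma~\ref{lem-bt-al-fixed} at $v_{12}$ together with a transport of orientation from $v_0$. Any of these would complete your argument; alternatively, promote your final paragraph to the main proof and the whole issue disappears.
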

\begin{proof}
 The claim for $\bt_0$ is immediate from the definitions, and the
 claims for $\bt_2$, $\bt_4$ and $\bt_6$ follow using the group
 action.  Now consider the left hand half of $\Net_5$:
 \begin{center}
  \begin{tikzpicture}[scale=0.6]
   \fill[gray!20]
    (  0, -3) -- (  0,  3) -- ( -1,  3) -- ( -2,  2) --
    ( -2,  3) -- ( -5,  3) -- ( -6,  2) -- ( -6,  1) --
    ( -3,  1) -- ( -3, -1) -- ( -2, -2) -- ( -3, -2) --
    ( -3, -5) -- ( -2, -6) -- ( -1, -6) -- ( -1, -3) -- cycle;
   \draw[  blue,-<] (  3,  2) -- (  3,  4);
   \draw[  blue,-<] ( -2,  3) -- ( -4,  3);
   \draw[  blue,-<] ( -3, -2) -- ( -3, -4);
   \draw[  blue,-<] (  2, -3) -- (  4, -3);
   \draw[orange,-<] (  3, -1) -- (  3,  0);
   \draw[orange,-<] (  1,  3) -- (  0,  3);
   \draw[orange,-<] ( -3,  1) -- ( -3,  0);
   \draw[orange,-<] ( -1, -3) -- (  0, -3);
   \draw[blue   ] (  0, -3) -- (  0,  3);
   \draw[blue,->] (  0, -3) -- (  0,  0);
   \draw (0,0) node[anchor=west] {$\ss c_7$};
   \draw[ orange] (  3,  2) -- (  2,  2);
   \draw[   blue] (  3,  2) -- (  3,  5);
   \draw[ orange] (  3,  1) -- (  2,  2);
   \draw[ orange] (  3,  1) -- (  3,  0);
   \draw[ orange] (  1, -3) -- (  2, -2);
   \draw[ orange] (  1, -3) -- (  0, -3);
   \draw[ orange] ( -3, -1) -- ( -2, -2);
   \draw[ orange] ( -3, -1) -- ( -3,  0);
   \draw[ orange] ( -1,  3) -- ( -2,  2);
   \draw[ orange] ( -1,  3) -- (  0,  3);
   \draw[   blue] (  2,  6) -- (  3,  5);
   \draw[ orange] (  2,  6) -- (  1,  6);
   \draw[ orange] ( -2,  3) -- ( -2,  2);
   \draw[   blue] ( -2,  3) -- ( -5,  3);
   \draw[ orange] ( -6,  2) -- ( -6,  1);
   \draw[   blue] ( -6,  2) -- ( -5,  3);
   \draw[ orange] ( -3, -2) -- ( -2, -2);
   \draw[   blue] ( -3, -2) -- ( -3, -5);
   \draw[ orange] ( -2, -6) -- ( -1, -6);
   \draw[   blue] ( -2, -6) -- ( -3, -5);
   \draw[ orange] (  2, -3) -- (  2, -2);
   \draw[   blue] (  2, -3) -- (  5, -3);
   \draw[   blue] (  6, -2) -- (  5, -3);
   \draw[ orange] (  6, -2) -- (  6, -1);
   \draw[ orange] ( -3,  1) -- ( -6,  1);
   \draw[ orange] ( -3,  1) -- ( -3,  0);
   \draw[ orange] (  1,  3) -- (  0,  3);
   \draw[ orange] (  1,  3) -- (  1,  6);
   \draw[ orange] (  3, -1) -- (  3,  0);
   \draw[ orange] (  3, -1) -- (  6, -1);
   \draw[ orange] ( -1, -3) -- ( -1, -6);
   \draw[ orange] ( -1, -3) -- (  0, -3);
   \draw ( -3, -5) node[anchor=east ]{$\bt_0$};
   \draw ( -3, -1) node[anchor=east ]{$\bt_1$};
   \draw ( -5,  3) node[anchor=south]{$\bt_2$};
   \draw ( -1,  3) node[anchor=south]{$\bt_3$};
   \draw (  1, -3) node[anchor=north]{$\bt_7$};
   \fill (  3,  2) circle(0.05);
   \fill ( -6,  2) circle(0.05);
   \fill (  2, -3) circle(0.05);
   \fill ( -3, -2) circle(0.05);
   \fill (  6, -2) circle(0.05);
   \fill (  2,  6) circle(0.05);
   \fill ( -2,  3) circle(0.05);
   \fill ( -2, -6) circle(0.05);
  \end{tikzpicture}
 \end{center}
 Define $u(t)=\bt_3(t/2)=\bt_7(1-t/2)$ for $0\leq t\leq 1$.  The
 boundary of the above region gives a relation
 \[ [\bt_0] + [\bt_1] + [\bt_2] + [u] - [c_7] - [u] = 0, \]
 so
 \[ [\bt_1] = [c_7] - [\bt_0] - [\bt_2] = (-1,-1,1,0) \]
 as claimed.  The remaining claims for $\bt_3$, $\bt_5$ and $\bt_7$
 now follow using the group action.
\end{proof}

For some purposes it is convenient to use a different basis.  We put
\begin{align*}
 u_{11} &= ( 1,0,\pp 1,0) & u_{12} &= (0, 1,0,\pp 1) \\
 u_{21} &= ( 1,0,-1,0) & u_{22} &= (0, 1,0,-1).
\end{align*}
These elements do not generate all of $\Z^4$, but only a subgroup of
index $4$.  However, they give a basis for $\Q^4$.  One can check that
\begin{align*}
 \lm(u_{11}) &= \pp u_{12} & \mu(u_{11}) &= \pp u_{11} & \nu(u_{11}) &= \pp u_{11} \\
 \lm(u_{12}) &=    -u_{11} & \mu(u_{12}) &=    -u_{12} & \nu(u_{12}) &=    -u_{12} \\
 \lm(u_{21}) &= \pp u_{22} & \mu(u_{21}) &=    -u_{21} & \nu(u_{21}) &= \pp u_{21} \\
 \lm(u_{22}) &=    -u_{21} & \mu(u_{22}) &= \pp u_{22} & \nu(u_{22}) &=    -u_{22}.
\end{align*}
This shows that $\{u_{11},u_{12}\}$ is a basis for a subrepresentation
$U_1$ with character $\chi_8$, whereas $\{u_{21},u_{22}\}$ is a basis
for a subrepresentation $U_2$ with character $\chi_9$.

We now consider the homology of certain quotient spaces $X/H$.  For
most subgroups $H\leq G$, the quotient $X/H$ is either a disc or a
sphere, and so the first homology group is trivial.  The interesting
cases are the elliptic curves $X/\ip{\mu}$ and $X/\ip{\lm\mu}$.  We
write $q_+\:X\to X/\ip{\mu}$ and $q_-\:X\to X/\ip{\lm\mu}$ for the
quotient maps.

\begin{proposition}\lbl{prop-elliptic-homology}
 There is a commutative diagram
 \[ \xymatrix{
  \Z^2 \ar[d]^\simeq_{\psi_+} &
  \Z^4 \ar@{->>}[l]_{\tht_+} \ar@{->>}[r]^{\tht_-} \ar[d]_\psi^\simeq &
  \Z^2 \ar[d]_\simeq^{\psi_-} \\
  H_1(X/\ip{\mu}) &
  H_1(X) \ar@{->>}[l]^{(q_+)_*} \ar@{->>}[r]_{(q_-)_*} &
  H_1(X/\ip{\lm\mu})
 } \]
 where
 \begin{align*}
  \tht_+(n) &= (n_1+n_3,\;n_2-n_4) \\
  \tht_-(n) &= (n_2+n_3,\;n_1+n_4).
 \end{align*}
\end{proposition}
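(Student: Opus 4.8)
The plan is to manufacture the outer isomorphisms $\psi_\pm$ from pushforwards of the curves $c_5$ and $c_6$, so that the two squares commute essentially by construction, and then to spend the real effort on verifying that these pushforwards genuinely are $\Z$-bases of the rank-two groups $H_1(X/\ip{\mu})$ and $H_1(X/\ip{\lm\mu})$. Concretely, define $\psi_+$ by $\psi_+(1,0)=(q_+)_*[c_5]$ and $\psi_+(0,1)=(q_+)_*[c_6]$, and $\psi_-$ by $\psi_-(1,0)=(q_-)_*[c_6]$ and $\psi_-(0,1)=(q_-)_*[c_5]$. By Proposition~\ref{prop-homology} the isomorphism $\psi$ of the diagram matches the standard basis of $\Z^4$ with the classes $[c_5],[c_6],[c_7],[c_8]$, so (reading $\tht_+$ off its formula) commutativity of the left square reduces to checking the square on these four basis vectors, two of which give the defining equations for $\psi_+$ and the other two of which are the identities $(q_+)_*[c_7]=(q_+)_*[c_5]$ and $(q_+)_*[c_8]=-(q_+)_*[c_6]$. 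Both of these follow from Definition~\ref{defn-curve-system}(c): since $q_+$ is $\mu$-invariant and $\mu(c_5(t))=c_7(t)$, $\mu(c_6(t))=c_8(-t)$, we get $q_+\circ c_7=q_+\circ c_5$, while $q_+\circ c_8$ is the reverse loop of $q_+\circ c_6$. The right square is handled identically, now using $(\lm\mu)(c_7(t))=c_6(t)$ and $(\lm\mu)(c_8(t))=c_5(t)$, which one obtains by composing the relations of Definition~\ref{defn-curve-system}(c).

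It remains to prove that $\psi_+$ and $\psi_-$ are isomorphisms. By Corollary~\ref{cor-quotient-types}, $X/\ip{\mu}$ is a torus, so $H_1(X/\ip{\mu})\cong\Z^2$ with a unimodular skew intersection pairing, and two classes form a $\Z$-basis iff their intersection number is $\pm1$. We have $\mu(C_5)=C_7$ and $\mu(C_6)=C_8$; combined with Proposition~\ref{prop-curve-system}, which gives $C_5\cap C_7=C_5\cap C_8=\emptyset$ and $C_5\cap C_6=\{v_0\}$, this shows that $q_+$ restricts to a homeomorphism of each of $C_5$ and $C_6$ onto an embedded circle in $X/\ip{\mu}$, that $(q_+)_*[c_5]=[q_+(C_5)]$ and $(q_+)_*[c_6]=[q_+(C_6)]$, and that $q_+(C_5)\cap q_+(C_6)=\{q_+(v_0)\}$. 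Finally, $\mu$ does not fix $v_0$ (its orbit is $\{v_0,v_1\}$ by Definition~\ref{defn-V-star}), so $q_+$ is a local homeomorphism near $v_0$; and near $v_0$ the components $C_5\sse X^\nu$ and $C_6\sse X^{\lm^2\nu}$ are the fixed loci of the antiholomorphic involutions $\nu$ and $\lm^2\nu$. Since $\lm$ acts on $T_{v_0}X$ as $i$, the central element $\lm^2$ acts as $-1$, so the fixed tangent lines of $\nu$ and of $\lm^2\nu=\lm^2\cdot\nu$ at $v_0$ are orthogonal, and $C_5$, $C_6$ cross transversally there. Hence $q_+(C_5)$ and $q_+(C_6)$ meet transversally in one point, their intersection number is $\pm1$, and they are a basis, so $\psi_+$ is an isomorphism.

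The $q_-$ case runs verbatim with $\lm\mu$ replacing $\mu$: one has $(\lm\mu)(C_5)=C_8$, $(\lm\mu)(C_6)=C_7$ and $(\lm\mu)^2=1$, so $q_-$ again embeds $C_5$ and $C_6$ with images meeting only at $q_-(v_0)$ (using $C_5\cap C_8=C_5\cap C_7=\emptyset$); here $\lm\mu$ does not fix $v_0$ (since $\mu(v_0)=v_1$ and $\lm$ fixes $v_1$), while the transversality of $C_5$ and $C_6$ at $v_0$ is exactly the fact established above. So $\psi_-$ is an isomorphism too, both squares commute, and surjectivity of $(q_\pm)_*$ is immediate since $\tht_\pm$ are visibly surjective.

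I expect the only genuinely delicate step to be the embeddedness-and-transversality bookkeeping of the second paragraph: one must keep careful track of the difference between points of $X$ that $q_\pm$ glues together and honest intersection points of the curves $C_k$, and then confirm that the single surviving intersection point is transverse rather than tangential. Everything else is a formal consequence of Propositions~\ref{prop-homology} and~\ref{prop-curve-system} together with the curve-system axioms.
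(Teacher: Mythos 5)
Your proof is correct, but it takes a genuinely different route from the paper's. The paper first proves surjectivity of $(q_\pm)_*$ directly, by lifting loops based at a fixed point of the relevant involution ($v_2$ for $\mu$, $v_6$ for $\lm\mu$) and observing that the lift must again be a loop; it then obtains $\psi_\pm$ abstractly by noting that $\tht_+$ and $\tht_-$ are coequalisers for the actions of $\mu$ and $\lm\mu$ on $\Z^4$ (computed from the equivariance of $\psi$), so that $(q_\pm)_*$ factors uniquely through them; finally $\psi_\pm$ is surjective onto $H_1\simeq\Z^2$ and hence an isomorphism because $\Z^2$ is Hopfian. You instead build $\psi_\pm$ explicitly on generators, verify commutativity on the basis $[c_5],\dotsc,[c_8]$ from the curve-system relations, and prove bijectivity by intersection theory on the quotient torus. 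Your approach buys explicit generators of $H_1$ of the two tori (the embedded circles $q_\pm(C_5)$, $q_\pm(C_6)$) and avoids the covering-space lifting argument, at the cost of the embeddedness/transversality bookkeeping you flag; the paper's approach is shorter and purely algebraic once surjectivity is known, but yields $\psi_\pm$ only implicitly. Both rest on Corollary~\ref{cor-quotient-types} to identify the quotients as tori, and your transversality claim at $v_0$ is in fact exactly the content of Lemma~\ref{lem-bt-al-fixed}(b),(c) applied with $\al=\lm^2$, $\bt=\nu$, which you could cite to shorten that step.
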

\begin{proof}
 First, we can regard $H_1(X)$ as the abelianization of
 $\pi_1(X,v_2)$, and $H_1(X/\ip{\mu})$ as the abelianization of
 $\pi_1(X/\ip{\mu},q_+(v_2))$.  Because $q_+$ is a branched covering,
 any loop $u$ based at $q_+(v_2)$ can be lifted to give a path $\tu$
 in $X$ starting at $v_2$.  The endpoint $\tu(1)$ will lie over
 $q_+(v_2)$, but $\mu(v_2)=v_2$ so this ensures that $\tu(1)=v_2$.
 This means that $\tu$ is again a loop, and we deduce that the map
 \[ (q_+)_*\:\pi_1(X,v_2)\to \pi_1(X/\ip{\mu},q_+(v_2)) \]
 is surjective.  It follows that the map
 \[ (q_+)_* \: H_1(X) \to H_1(X/\ip{\mu}) \]
 is also surjective.  A similar argument (using the basepoint
 $v_6=\lm\mu(v_6)$) shows that $(q_-)_*$ is also surjective on $H_1$.
 Now recall that $\psi$ is equivariant for the following action:
 \begin{align*}
  \mu(n)    &= (n_3,-n_4,n_1,-n_2) \\
  \lm\mu(n) &= (n_4,n_3,n_2,n_1).
 \end{align*}
 It follows easily that $\tht_+$ is a coequaliser for the action of
 $\mu$, and $\tht_-$ is a coequaliser for the action of $\lm\mu$.
 This implies that there are unique maps $\psi_+$ and $\psi_-$ making
 the diagram commute.  As $\psi$ is an isomorphism and $(q_\pm)_*$ is
 surjective, we see that $\psi_\pm$ is also surjective.  However,
 $X/\ip{\mu}$ and $X/\ip{\lm\mu}$ are both homeomorphic to the torus,
 so in each case $H_1\simeq\Z^2$, and this implies that any surjective
 homomorphism $\Z^2\to H_1$ is automatically an isomorphism.
\end{proof}

We next discuss the Jacobian variety $JX$.  This is usually
constructed as an abelian variety using methods of algebraic geometry,
as we will recall in Section~\ref{sec-ellquot}.  However, in our
cromulent setting it is possible to construct $JX$ as a space using
only topological methods.

We first give some definitions related to coverings and subgroups of
the fundamental group.  These are standard, but we just want to pin
down some issues of naturality.
\begin{definition}
 Let $Y$ be a (locally tame) path-connected space with a basepoint
 $y_0$.  We define $\tY$ to be the space of paths in $Y$ starting at
 $y_0$, modulo the equivalence relation of homotopy relative to
 endpoints.  This has a natural basepoint $\ty_0$, which is the
 equivalence class of the constant path at $y_0$.  All this is clearly
 functorial for based maps.  Moreover, path join gives a free left
 action of $\pi_1(Y,y_0)$ on $\tY$.  Evaluation at $1$ gives a
 projection $\pi\:\tY\to Y$, which induces a homeomorphism
 $\tY/\pi_1(Y,y_0)\to Y$.  Given a subgroup $H\leq\pi_1(Y,y_0)$, we
 have a based covering map $\tY/H\to Y$, whose effect in $\pi_1$ is
 the inclusion $H\to\pi_1(Y,y_0)$.  Given a based homeomorphism
 $f\:Y_0\to Y_1$ with $f_*(H_0)=H_1$, we get an induced based
 homeomorphism $\tY_0/H_0\to\tY_1/H_1$.
\end{definition}

\begin{definition}\lbl{defn-JX}
 We put $J'X=(X/\ip{\mu})\tm(X/\ip{\lm\mu})$, and define
 $q'\:X\to J'X$ by $q'(x)=(q_+(x),q_-(x))$.  The resulting map
 \[ \tht' = (\Z^4 \xra{\psi} H_1(X)
                  \xra{q'_+} H_1(X/\ip{\mu})\oplus H_1(X/\ip{\lm\mu})
                  \xra{(\psi_+,\psi_-)^{-1}} \Z^4)
 \]
 is then given by
 \[ \tht'(n) = (\tht_+(n),\tht_-(n))
     = (n_1+n_3,n_2-n_4,n_1+n_4,n_2+n_3).
 \]
 One can check that the image of $\tht'$ is the subgroup
 \[ \Tht' = \{m\in\Z^4\st\sum_im_i = 0\pmod{2}\}, \]
 which has index two.  Note also that $J'X\simeq(S^1)^4$, so the natural
 map $\pi_1(J'X,q'(v_0))\to H_1(J'X)$ is an isomorphism.  Thus, $\Tht'$
 corresponds to a subgroup of index two in $\pi_1(J'X,q'(v_0))$, and
 so gives rise to a double cover of $J'X$, which we call $JX$.
 Because $\pi_1(JX)=\Tht'$, we see that $q'$ lifts to give a map
 $q\:X\to JX$.  By construction, this induces an isomorphism
 $H_1(X)\to H_1(JX)$.
\end{definition}

\begin{remark}
 If $Y$ and $Z$ are homotopy equivalent to $(S^1)^n$ and $(S^1)^m$,
 then standard methods of homotopy theory show that the natural map
 \[ H_1 \: [Y,Z] \to \Hom(H_1(Y),H_1(Z)) \]
 is bijective.  (It makes no difference here whether we work in a
 based context or an unbased context.)  Using this, we can produce a
 map $JX\tm JX\to JX$ which makes $JX$ into an abelian group up to
 homotopy, and we can produce a map $G\to[JX,JX]$ giving an action of
 $G$ on $JX$ up to homotopy.  Using analytic methods, we can improve
 this: $JX$ becomes an abelian variety, with an action of $G$ by
 automorphisms of abelian varieties.  However, we cannot
 build a topological group structure or a $G$-action by homeomorphisms
 without using analysis.  The element $\lm^2\in G$ normalises the
 subgroups $\ip{\mu}$ and $\ip{\lm\mu}$, and preserves the basepoint
 $v_0$, so this induces an involution on $J'X$ and on $JX$.  However,
 no other nontrivial element of $G$ shares these properties.
\end{remark}

\section{The projective family}
\lbl{sec-P}

In this section we construct a family of precromulent surfaces $PX(a)$
(for $a\in (0,1)$) as branched covers of the Riemann sphere
$\C_\infty$.  Later we will consider an arbitrary precromulent surface
$X$, and attempt to find an isomorphism $X\simeq PX(a)$ for some $a$.
The notion of a cromulent labelling will emerge naturally from this
analysis.

\subsection{Definitions}
\lbl{sec-P-defs}

\begin{definition}\lbl{defn-P}
 For any $a\in(0,1)$ we put $A=a^2+1/a^2\in(2,\infty)$ and define
 $r_a\:\C\to\C$ by
 \[ r_a(z) = z(z-a)(z+a)(z-1/a)(z+1/a)
           = z^5 - A z^3 + z.
 \]
 Next, we put
 \begin{align*}
  R_0(a) &= \C[z,w]/(w^2-r_a(z)) \\
  PX_0(a) &= \spec(R_0(a)) = \{(w,z)\in \C^2 \st w^2=r_a(z) \},
 \end{align*}
 so $PX_0(a)$ is a smooth affine hyperelliptic curve.  Unfortunately,
 if we just take the closure in $\C P^2$, the resulting curve is
 singular at infinity.  To get a nonsingular completion, we put
 \[ PX(a) = \{[z]\in\C P^4\st
              z_1^2-z_2z_3-z_4z_5+Az_3z_4=0,\;
              z_2z_4=z_3^2,\;z_2z_5=z_3z_4,\;z_3z_5=z_4^2
            \}.
 \]
 There is a map $j\:PX_0(a)\to PX(a)$ given by
 \[ j(w,z) = [w:1:z:z^2:z^3]. \]
 We define points $v_0,\dotsc,v_{13}\in PX(a)$ by
 \begin{align*}
  v_0 &= [0:1:0:0:0] = j(0,0) \\
  v_1 &= [0:0:0:0:1] \\
  v_2 &= j( -(a^{-1}-a),-1) &
  v_6 &= j\left(\pp \frac{1+i}{\rt}(a^{-1}+a),\pp i\right) &
  v_{10} &= j(0,-a) \\
  v_3 &= j(-i(a^{-1}-a),\pp 1) &
  v_7 &= j\left(-\frac{1-i}{\rt}(a^{-1}+a),-i\right) &
  v_{11} &= j(0,\pp a) \\
  v_4 &= j(\pp (a^{-1}-a),-1) &
  v_8 &= j\left(-\frac{1+i}{\rt}(a^{-1}+a),\pp i\right) &
  v_{12} &= j(0,-a^{-1}) \\
  v_5 &= j(\pp i(a^{-1}-a),\pp 1) &
  v_9 &= j\left(\pp \frac{1-i}{\rt}(a^{-1}+a),-i\right) &
  v_{13} &= j(0,\pp a^{-1}).
 \end{align*}
 We let $G$ act on $\C P^4$ and $PX(a)$ by
 \begin{align*}
  \lm[z] &= [iz_1:z_2:-z_3:z_4:-z_5] \\
  \mu[z] &= [z_1:z_5:z_4:z_3:z_2] \\
  \nu[z] &= [\ov{z}_1:\ov{z}_2:\ov{z}_3:\ov{z}_4:\ov{z}_5].
 \end{align*}
\end{definition}
\begin{checks}
 cromulent.mpl: check_precromulent("P")
 projective/PX_check.mpl: check_P_action()
 projective/PX_check.mpl: check_j_P()
\end{checks}
We will prove as Proposition~\ref{prop-P-precromulent} that this
gives a precromulent surface; then we will see in
Proposition~\ref{prop-P-fundamental} that it is actually cromulent.
\begin{remark}
 The parameters $a$ and $A$ are \mcode+a_P+ and \mcode+A_P+ in Maple.
 The polynomial $r_a(z)$ is \mcode+r_P(z)+.  Points in $\C^2$ are
 represented as lists of length two.  (Maple distinguishes between
 lists and vectors, and we have generally preferred to use lists, for
 technical reasons that we will not explore here.)  One can check
 whether a list lies in $PX_0(a)$ using the function
 \mcode+is_member_P_0(z)+.  Points in $\C P^4$ are represented by lists
 of length $5$, and one can check whether two lists are projectively
 equivalent using the function \mcode+is_equal_P(w,z)+.  One can also
 check whether a point lies in $PX(a)$ using the function
 \mcode+is_member_P(z)+.  The map $j\:PX_0(a)\to PX(a)$ is
 \mcode+j_P(z)+.  The points $v_i\in PX(a)$ are \mcode+v_P[i]+.  The action
 of $g\in G$ on $PX(a)$ is given by \mcode+act_P[g](z)+, and the
 corresponding action on $PX_0(a)$ is \mcode+act_P_0[g](z)+.  All this
 comes from the file \fname+projective/PX.mpl+.
\end{remark}
\begin{remark}\lbl{rem-a-Po}
 All the above functions treat \mcode+a_P+ as a symbol.  There is also
 a global variable \mcode+a_P0+ which holds a numerical value for
 \mcode+a_P+.  In fact, there are two such variables, called
 \mcode+a_P0+ and \mcode+a_P1+.  This is intended to cover the case
 where \mcode+a_P0+ is an exact expression (such as a rational number)
 and \mcode+a_P1+ is a floating point approximation to the same
 number.  However, we have usually taken \mcode+a_P0+ to be a floating
 point number, so that there is no distinction between \mcode+a_P0+ and
 \mcode+a_P1+.  These variables should be set using the function
 \mcode+set_a_P0+, defined in the file \fname+projective/PX0.mpl+.
 This will then set a large number of other variables by substituting
 \mcode+a_P0+ or \mcode+a_P1+ for \mcode+a_P+.  For example,
 \mcode+v_P0[i]+ and \mcode+v_P1[i]+ are obtained by applying these
 substitutions to \mcode+v_P[i]+.  When the file
 \fname+projective/PX0.mpl+ is loaded, it calls the function
 \mcode+set_a_P0+ to set \mcode+a_P0+ to a particular value
 (approximately $0.0984$), which is close to the value for which
 $EX^*$ is isomorphic to $PX(a)$.  However, one can call
 \mcode+set_a_P0+ again to change the value if  desired.
\end{remark}
\begin{remark}\lbl{rem-simplify-P}
 When working with an expression $m$ involving \mcode+a_P+, it is
 sometimes convenient to use the function \mcode+simplify_P(m)+
 (defined in \fname+projective/PX.mpl+).  This will try some
 substitutions like $\sqrt{1-a^2}=\sqrt{1-a}\sqrt{1+a}$ that are not
 always used by the default simplification functions.
\end{remark}

\begin{remark}\lbl{rem-r-coprime}
 As $a\in(0,1)$ we see that $r_a(z)$ has no repeated roots, so
 $r_a(z)$ and $r'_a(z)$ are coprime in $\R[z]$ or $\C[z]$.  More
 specifically, one can check by direct expansion that
 $m_0(z)r_a(z)+m_1(z)r'_a(z)=1$, where
 \begin{align*}
  m_0(z) &= \frac{(100-30A^2)z^3+(18A^3-70A)z}{4(A^2-4)} \\
  m_1(z) &= \frac{(6A^2-20)z^4-(6A^3-22A)z^2+(4A^2-16)}{4(A^2-4)}.
 \end{align*}
 (These are \mcode+r_P_cofactor0(z)+ and \mcode+r_P_cofactor1(z)+ in the
 Maple code.)
 \begin{checks}
  projective/PX_check.mpl: check_r_P_cofactors()
 \end{checks}
\end{remark}

\begin{lemma}\lbl{lem-R-domain}
 The ring $R_0(a)$ is an integral domain, and the set
 \[ B = \{z^iw^j\st i\in\N,\;j\in\{0,1\}\} \]
 is a basis for $R_0(a)$ over $\C$.
\end{lemma}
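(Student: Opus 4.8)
The plan is to treat $R_0(a)$ as a quadratic extension of the polynomial ring $\C[z]$. First I would note that $R_0(a) = \C[z][w]/(w^2 - r_a(z))$, and that $w^2 - r_a(z)$, viewed as a polynomial in $w$ over $\C[z]$, is monic of degree $2$. By polynomial division in $w$, every element of $R_0(a)$ has a unique representative of the form $f(z) + g(z)\,w$ with $f,g\in\C[z]$; equivalently, $R_0(a)$ is free as a $\C[z]$-module with basis $\{1,w\}$. Combining this with the standard monomial basis $\{z^i\st i\in\N\}$ of $\C[z]$ over $\C$ immediately shows that $B = \{z^iw^j\st i\in\N,\ j\in\{0,1\}\}$ is a $\C$-basis of $R_0(a)$, which is the second assertion.

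For the first assertion, it suffices to show that $w^2 - r_a(z)$ generates a prime ideal of $\C[z,w]$, and since $\C[z,w]$ is a UFD it is enough to show that $w^2 - r_a(z)$ is irreducible. I would argue this via Gauss's lemma: the polynomial is primitive over $\C[z]$ (indeed monic in $w$), so it is irreducible in $\C[z][w] = \C[z,w]$ as soon as it is irreducible in $\C(z)[w]$. A monic quadratic in $w$ over the field $\C(z)$ is reducible precisely when $r_a(z)$ is a square in $\C(z)$; but $\C(z)$ is the fraction field of the UFD $\C[z]$, so an element of $\C[z]$ that is a square in $\C(z)$ is already a square in $\C[z]$, which would force every root of $r_a$ to have even multiplicity. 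This fails because $r_a(z) = z(z-a)(z+a)(z-1/a)(z+1/a)$ has five distinct simple roots for $a\in(0,1)$ --- exactly the statement that $r_a$ has no repeated roots, recorded in Remark~\ref{rem-r-coprime}. Hence $w^2 - r_a(z)$ is irreducible, the ideal it generates is prime, and $R_0(a)$ is an integral domain.

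There is essentially no serious obstacle here; the only point needing care is the non-squareness of $r_a$, and that has already been established. An alternative to the Gauss's-lemma step would be to observe directly that $\C(z)[w]/(w^2 - r_a(z))$ is a field --- a degree-two extension of $\C(z)$, using non-squareness --- that $R_0(a)$ injects into this field because $\{1,w\}$ remains linearly independent over $\C(z)$ by the module-freeness already proved, and that any subring of a field is a domain. I would present whichever of these formulations reads most cleanly in context.
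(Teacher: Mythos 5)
Your proof is correct. The second assertion (the basis $B$) is handled exactly as in the paper, by observing that $\{1,w\}$ is a $\C[z]$-module basis and composing with the monomial basis of $\C[z]$. For the integral-domain claim, however, you take a genuinely different route. The paper introduces the norm $N(p+qw)=p^2-q^2r_a$, checks that $N$ is multiplicative, and shows $N(f)\neq 0$ for $f\neq 0$ by looking at the power of $z$ dividing $p^2$ versus $q^2r_a$ (even versus odd, since $r_a$ has a simple zero at $z=0$); multiplicativity then rules out zero divisors. You instead prove that $w^2-r_a(z)$ is irreducible in $\C[z,w]$ via Gauss's lemma, reducing to the fact that $r_a$ is not a square in $\C(z)$, which follows from its simple roots (or even just from its odd degree). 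Both arguments ultimately rest on the same feature of $r_a$ — an odd-order zero prevents it from being a square — but yours is the more structural one (prime ideal, hence domain), while the paper's is more hands-on and self-contained, avoiding any appeal to unique factorization in $\C[z,w]$ or to Gauss's lemma. Your alternative formulation, embedding $R_0(a)$ into the quadratic field extension $\C(z)[w]/(w^2-r_a)$, is also valid and arguably the cleanest of the three.
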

\begin{proof}
 From the description $R_0(a)=\C[z,w]/(w^2-r_a(z))$ it is clear that
 $\{1,w\}$ is a basis for $R_0(a)$ as a module over $\C[z]$.  It
 follows that $B$ is a basis for $R_0(a)$ over $\C$.  Next, for any
 element $f=p(z)+q(z)w\in R_0(a)$, we put
 \[ N(f) = (p(z)+q(z)w)(p(z)-q(z)w)=p(z)^2-q(z)^2r_a(z) \in\C[z]. \]
 It is easy to check that $N(fg)=N(f)N(g)$.  Moreover, by considering
 the highest power of $z$ that divides the various terms, we see that
 $N(f)\neq 0$ whenever $f\neq 0$.  Thus, if $g$ is also nonzero we
 have $N(fg)=N(f)N(g)$, which is nonzero because $\C[z]$ is a domain,
 so $fg\neq 0$ as required.
\end{proof}

\begin{lemma}\lbl{lem-P-differentials}
 The module $\Om^1(PX_0(a))$ of K\"ahler differentials is freely
 generated over $R_0(a)$ by the element
 \[ \om_0 = m_0(z)w\,dz + 2m_1(z)\,dw \]
 (where $m_0$ and $m_1$ are as in Remark~\ref{rem-r-coprime}).  In
 particular, we have
 \begin{align*}
  dz &= w\,\om_0 \\
  dw &= \half r'_a(z)\om_0.
 \end{align*}
\end{lemma}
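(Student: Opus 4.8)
The plan is to give $\Om^1(PX_0(a))$ an explicit presentation as the cokernel of a single $R_0(a)$-linear map $R_0(a)\to R_0(a)^2$, and then to exhibit an invertible change of basis of $R_0(a)^2$ that identifies this cokernel with a free rank-one module having $\om_0$ as a generator.

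First I would write down the conormal presentation of $\Om^1_{R_0(a)/\C}$ for $R_0(a)=\C[z,w]/(w^2-r_a(z))$: it is generated by the symbols $dz$ and $dw$, subject only to the single relation obtained by differentiating $w^2-r_a(z)$, namely $2w\,dw-r_a'(z)\,dz=0$. Writing a differential $p\,dz+q\,dw$ as the column vector $(p,q)^{T}\in R_0(a)^2$, this presents $\Om^1(PX_0(a))$ as $R_0(a)^2$ modulo the $R_0(a)$-submodule generated by the column $\rho=(-r_a'(z),\,2w)^{T}$.

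The key step is to feed in the B\'ezout cofactors from Remark~\ref{rem-r-coprime}. Since $m_0(z)r_a(z)+m_1(z)r_a'(z)=1$ in $\C[z]$ and $w^2=r_a(z)$ in $R_0(a)$, the matrix
\[
 M=\begin{pmatrix} -r_a'(z) & -\tfrac12\,m_0(z)\,w\\ 2w & -m_1(z)\end{pmatrix}
\]
has determinant $m_1(z)r_a'(z)+m_0(z)w^2=m_0(z)r_a(z)+m_1(z)r_a'(z)=1$, so $M\in\mathrm{GL}_2(R_0(a))$. Its first column is precisely $\rho$, so the submodule we quotient by is $R_0(a)\cdot Me_1$ (where $e_1,e_2$ are the standard basis columns); since $M$ is invertible, $\{Me_1,Me_2\}$ is an $R_0(a)$-basis of $R_0(a)^2$, and hence $\Om^1(PX_0(a))\cong R_0(a)^2/R_0(a)\,Me_1\cong R_0(a)\,Me_2$ is free of rank one. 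The distinguished generator is the class of the second column $Me_2=(-\tfrac12 m_0(z)w,\,-m_1(z))^{T}$, which in differential notation is $-\tfrac12\bigl(m_0(z)w\,dz+2m_1(z)\,dw\bigr)=-\tfrac12\om_0$; as $-\tfrac12$ is a unit in $R_0(a)$, $\om_0$ itself is a free generator.

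The two displayed identities then drop out of the defining relation alone: multiplying $\om_0=m_0(z)w\,dz+2m_1(z)\,dw$ by $w$ and substituting $2w\,dw=r_a'(z)\,dz$ gives $w\,\om_0=(m_0(z)w^2+m_1(z)r_a'(z))\,dz=dz$, and multiplying by $\tfrac12 r_a'(z)$ and again substituting $r_a'(z)\,dz=2w\,dw$ gives $\tfrac12 r_a'(z)\,\om_0=(m_0(z)w^2+m_1(z)r_a'(z))\,dw=dw$. I do not expect a genuine obstacle; the only point requiring care is lining up the signs in the conormal presentation with the cofactor identity. One could instead argue more softly --- $PX_0(a)$ is a smooth affine curve, so $\Om^1$ is locally free of rank one, and since it is generated by the single element $\om_0$ over the domain $R_0(a)$ (Lemma~\ref{lem-R-domain}) it must be free --- but the explicit matrix $M$ gives a shorter self-contained argument and simultaneously pins down the generator.
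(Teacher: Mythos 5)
Your proof is correct, and for the key point --- freeness --- it takes a genuinely different route from the paper. Both arguments start from the same presentation $\Om^1(PX_0(a))\cong R_0(a)\{dz,dw\}/R_0(a)\cdot(2w\,dw-r_a'(z)\,dz)$, and both derive the displayed identities $w\,\om_0=dz$ and $\tfrac12 r_a'(z)\om_0=dw$ by the same substitution. Where you diverge is in proving that $\om_0$ generates \emph{freely}: the paper first uses those two identities to see that $\om_0$ generates, and then shows the annihilator of $\om_0$ is zero by localizing --- $\Om^1[w^{-1}]$ is free on $dw$ over $R_0(a)[w^{-1}]$, and $R_0(a)\to R_0(a)[w^{-1}]$ is injective because $R_0(a)$ is a domain (Lemma~\ref{lem-R-domain}). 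You instead complete the relation column $(-r_a'(z),\,2w)^{T}$ to a matrix $M$ of determinant $m_0r_a+m_1r_a'=1$ via the B\'ezout identity, so that generation, freeness, and the identification of the generator as $-\tfrac12\om_0$ all fall out of the single change of basis. Your argument is slightly more self-contained (it does not need the domain property for the freeness step) and it explains \emph{why} the generator has the form it does; the paper's localization argument is softer and reuses Lemma~\ref{lem-R-domain}, which it needs anyway. Both are complete proofs.
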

\begin{proof}
 We will put $\Om^1=\Om^1(PX_0(a))$ for brevity.  Differentiating the
 equation $w^2=r_a(z)$ gives $2w\,dw=r'_a(z)\,dz$, and (essentially by
 definition) the module $\Om^1$ is generated by $dw$ and $dz$ subject
 only to this relation.  Now
 \begin{align*}
  w\,\om_0 &= m_0(z)w^2\,dz + 2m_1(z)w\,dw \\
           &= m_0(z)r_a(z)\,dz + m_1(z)r'_a(z)\,dz \\
           &= (m_0(z)r_a(z)+m_1(z)r'_a(z))\,dz = dz.
 \end{align*}
 A similar argument shows that $\half r'_a(z)\om_0=dw$, so both $dz$
 and $dw$ lie in the submodule of $\Om^1$ generated by $\om_0$.  This
 implies that $\om_0$ generates all of $\Om^1$.  Now note that our
 original presentation of $\Om^1$ implies that $\Om^1[w^{-1}]$ is
 freely generated over $R_0(a)[w^{-1}]$ by $dw$.  Thus, if
 $f\in R_0(a)$ satisfies $f\om_0=0$ then $f\Om^1=0$ so
 $f\Om^1[w^{-1}]=0$ so $f$ must map to zero in $R_0(a)[w^{-1}]$.
 However, as $R_0(a)$ is an integral domain we see that the map
 $R_0(a)\to R_0(a)[w^{-1}]$ is injective, so $f$ must be zero.  It
 follows that $\Om^1$ is \emph{freely} generated by $\om_0$, as
 claimed.
\end{proof}

\begin{lemma}\lbl{lem-nonzero}
 Consider a point $[z]\in PX(a)$.
 \begin{itemize}
  \item[(a)] If any of the coordinates $z_2,\dotsc,z_5$ is zero, then
   $[z]\in\{v_0,v_1\}$.
  \item[(b)] Either $z_2\neq 0$ or $z_5\neq 0$.
 \end{itemize}
\end{lemma}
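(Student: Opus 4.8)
The plan is to use the three monomial relations $z_2z_4=z_3^2$, $z_2z_5=z_3z_4$, $z_3z_5=z_4^2$ that cut out $PX(a)$: these say precisely that the matrix $\bbm z_2 & z_3 & z_4 \\ z_3 & z_4 & z_5 \ebm$ has rank at most one, equivalently that $(z_2:z_3:z_4:z_5)$ runs over the twisted cubic in $\C P^3$ (or the vector $(z_2,\dotsc,z_5)$ vanishes). Rather than appeal to that picture, it is cleaner to extract the two claims by a short chain of implications from the relations, together with the first defining equation $z_1^2=z_2z_3+z_4z_5-Az_3z_4$. Along the way one uses only the trivial observations that a point of $\C P^4$ cannot have all coordinates zero, and that $v_0=[0:1:0:0:0]$ and $v_1=[0:0:0:0:1]$ genuinely lie in $PX(a)$ (each has at most one nonzero coordinate, so all four defining equations hold automatically).

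For part~(b), I would argue by contradiction: if $z_2=z_5=0$ then $z_2z_4=z_3^2$ forces $z_3=0$ and $z_3z_5=z_4^2$ forces $z_4=0$, so $z_2=z_3=z_4=z_5=0$, whence the first equation gives $z_1^2=0$ and hence $z_1=0$ as well; but then all five coordinates vanish, which is impossible in $\C P^4$. For part~(a), the key remark is that $z_3=0$ and $z_4=0$ are equivalent: if $z_3=0$ then $z_4^2=z_3z_5=0$, and if $z_4=0$ then $z_3^2=z_2z_4=0$. When this common vanishing holds, the right-hand side $z_2z_3+z_4z_5-Az_3z_4$ of the first equation is zero, so $z_1=0$, and the middle relation becomes $z_2z_5=z_3z_4=0$; thus one of $z_2,z_5$ also vanishes, leaving $[z]=[0:1:0:0:0]=v_0$ or $[z]=[0:0:0:0:1]=v_1$. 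It then remains to reduce the cases $z_2=0$ and $z_5=0$ to this one: if $z_2=0$ then $z_3^2=z_2z_4=0$, so $z_3=0$ and we are back in the previous case (and necessarily $[z]=v_1$, since $z_5\neq 0$ by part~(b)); the case $z_5=0$ is symmetric via $z_4^2=z_3z_5=0$, giving $[z]=v_0$.

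There is essentially no obstacle here: the whole argument is a finite diagram chase through four polynomial identities, and the only points needing a moment's care are the equivalence $z_3=0\Leftrightarrow z_4=0$ and the fact that part~(b) must be available when finishing part~(a), so I would prove~(b) first. No genericity, smoothness, or properties of $a$ are needed — these statements are purely about the scheme-theoretic equations defining $PX(a)$, and hold for every $a$.
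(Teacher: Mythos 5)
Your proof is correct and follows essentially the same route as the paper's: a finite chase through the four defining equations, with the key step in both being the equivalence $z_3=0\Leftrightarrow z_4=0$ via $r_1$ and $r_3$, followed by $z_1=0$ from the first equation and $z_2z_5=0$ from the second. The only cosmetic difference is that the paper deduces $z_2z_5=z_3z_4=0$ at once by squaring the relation $z_2z_5=z_3z_4$, where you split into cases by which coordinate vanishes; the content is identical.
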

\begin{proof}
 We put
 \begin{align*}
  r_0(z) &= z_1^2-z_2z_3-z_4z_5+Az_3z_4 \\
  r_1(z) &= z_2 z_4 - z_3^2 \\
  r_2(z) &= z_2 z_5 - z_3 z_4 \\
  r_3(z) &= z_3 z_5 - z_4^2,
 \end{align*}
 so that $PX(a)=\{[z]\st r_0(z)=\dotsb=r_3(z)=0\}$.
 \begin{itemize}
  \item[(a)] Suppose that $[z]\in PX(a)$ and $z_2z_3z_4z_5=0$.  Using
   $r_2$ we see that $(z_2z_5)^2$ and $(z_3z_4)^2$ are both equal to
   $z_2z_3z_4z_5$ and thus to zero, so $z_2z_5=z_3z_4=0$.  Thus, either
   $z_2=0$ or $z_5=0$; and either $z_3=0$ or $z_4=0$.  If $z_3=0$ then
   $r_3$ gives $z_4=0$; conversely, if $z_4=0$ then $r_1$ gives
   $z_3=0$.  We must therefore have $z_3=z_4=0$.  Substituting this in
   $r_0$ gives $z_1=0$.  In summary, at most one of the coordinates
   $z_i$ can be nonzero, and the nonzero coordinate must be $z_2$ or
   $z_5$.  It follows that $[z]\in\{v_0,v_1\}$ as required.
  \item[(b)] The claim is clear if $z_2z_3z_4z_5\neq 0$, and follows
   from~(a) if $z_2z_3z_4z_5=0$.
 \end{itemize}
\end{proof}

\begin{proposition}\lbl{prop-P-j}
 The map $j$ gives an isomorphism $PX_0(a)\simeq PX(a)\sm\{v_1\}$.
\end{proposition}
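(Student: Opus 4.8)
The plan is to exhibit an explicit inverse morphism, rather than merely checking that $j$ is bijective — for a bijective morphism of varieties need not be an isomorphism. First I would verify that $j$ really lands in $PX(a)$: substituting $(z_1,\dots,z_5)=(w,1,z,z^2,z^3)$ into the four defining equations, the relations $z_2z_4=z_3^2$, $z_2z_5=z_3z_4$ and $z_3z_5=z_4^2$ become the identities $z^2=z^2$, $z^3=z^3$, $z^4=z^4$, while $z_1^2-z_2z_3-z_4z_5+Az_3z_4 = w^2-z-z^5+Az^3 = w^2-r_a(z)$, which vanishes on $PX_0(a)$. Since every point of the image has second homogeneous coordinate $1$, whereas $v_1=[0:0:0:0:1]$ has second coordinate $0$, the image of $j$ lies in $PX(a)\sm\{v_1\}$.

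Next I would identify the target precisely as the open subset where $z_2\neq 0$. On one hand $v_1$ has $z_2=0$, so $PX(a)\cap\{z_2\neq 0\}\sse PX(a)\sm\{v_1\}$. Conversely, if $[z]\in PX(a)$ has $z_2=0$, then some coordinate among $z_2,\dots,z_5$ vanishes, so Lemma~\ref{lem-nonzero}(a) forces $[z]\in\{v_0,v_1\}$; but $v_0=[0:1:0:0:0]$ has $z_2=1\neq 0$, so $[z]=v_1$. Hence $PX(a)\sm\{v_1\}=PX(a)\cap\{z_2\neq 0\}$, which is a genuine open subvariety of $PX(a)$.

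On this open subvariety I would define the candidate inverse $k([z])=(z_1/z_2,\;z_3/z_2)$, a morphism to $\C^2$. A short computation using the relations $z_1^2=z_2z_3+z_4z_5-Az_3z_4$, $z_3^2=z_2z_4$ and $z_3z_4=z_2z_5$ shows that $(w,z)=(z_1/z_2,z_3/z_2)$ satisfies $w^2=r_a(z)$: indeed these relations give $z^3=z_5/z_2$ and $z^5=z_4z_5/z_2^2$, so
\[ z^5-Az^3+z = (z_4z_5-Az_3z_4+z_2z_3)/z_2^2 = z_1^2/z_2^2 = w^2, \]
so $k$ maps into $PX_0(a)$. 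Finally I would check that $k$ and $j$ are mutually inverse. The composite $k\circ j$ sends $(w,z)$ to $(w/1,z/1)=(w,z)$, which is immediate. For $j\circ k$ one multiplies the homogeneous coordinate vector $(z_1/z_2,\,1,\,z_3/z_2,\,z_3^2/z_2^2,\,z_3^3/z_2^3)$ through by $z_2$ and uses $z_3^2=z_2z_4$ and, hence together with $z_3z_4=z_2z_5$, $z_3^3=z_2^2z_5$, to see that it equals $[z_1:z_2:z_3:z_4:z_5]$. This proves $j$ is an isomorphism onto $PX(a)\sm\{v_1\}$.

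The computations are all routine; the only real subtlety is the point flagged at the start — bijectivity is not enough, so the inverse must be produced as an honest morphism — and the place where the specific choice of equations defining $PX(a)$ does its work is exactly the verification that $j\circ k$ recovers the original point, where the relations $z_2z_4=z_3^2$ and $z_2z_5=z_3z_4$ supply the monomials $z_4=z_3^2/z_2$ and $z_5=z_3^3/z_2^2$ that $j$ could not otherwise reconstruct from $(z_1/z_2,z_3/z_2)$ alone.
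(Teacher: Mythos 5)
Your proposal is correct and follows essentially the same route as the paper: both use Lemma~\ref{lem-nonzero}(a) to show that $z_2\neq 0$ away from $v_1$, define the inverse $k[z]=(z_1/z_2,z_3/z_2)$, and check that the defining relations force $(z_2,z_3,z_4,z_5)$ to be proportional to $(1,z_3/z_2,(z_3/z_2)^2,(z_3/z_2)^3)$ so that $k$ lands in $PX_0(a)$ and $jk=kj=1$. You merely spell out a few steps (that $j$ lands in $PX(a)$, and the composites) that the paper leaves as "clear" or delegates to a computational check.
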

\begin{proof}
 The lemma shows that for $[z]\in PX(a)\sm\{v_1\}$ we have
 $z_2\neq 0$.  We can thus define $k\:PX(a)\sm\{v_1\}\to\C^2$ by
 $k[z]=(z_1/z_2,z_3/z_2)$.  It will be harmless to rescale $z$ so
 that $z_2=1$, and then the last three defining relations for $PX(a)$
 become $z_4=z_3^2$ and $z_5=z_4z_4$ and $z_3z_5=z_4^2$, so
 $(z_2,z_3,z_4,z_5)=(1,z_3,z_3^2,z_3^3)$.  If we use these to rewrite
 the first relation we get
 \[ z_1^2-z_3-z_3^5+(a^2+a^{-2})z_3^3 = 0, \]
 so $k[z]\in PX_0(a)$.  The equations $jk=1$ and $kj=1$ are now
 clear.
 \begin{checks}
  projective/PX_check.mpl: check_j_P()
 \end{checks}
\end{proof}

\begin{definition}\lbl{defn-P-cover}
 We put
 \begin{align*}
  U_0 &= PX(a)\sm\{v_1\} \\
  U_1 &= PX(a)\sm\{v_0\}=\mu(U_0)\simeq U_0 \\
  U_{01} &= U_0\cap U_1.
 \end{align*}
\end{definition}

\begin{remark}\lbl{rem-P-cover}
 We use $j$ to silently identify $U_0$ with $PX_0(a)$, which is the
 spectrum of the ring $R_0(a)=\C[z,w]/(w^2-r_a(z))$.  This in turn
 identifies $U_{01}$ with the spectrum of the ring
 \[ R'(a) = R_0(a)[z^{-1}] = \C[z^{\pm 1},w]/(w^2-r_a(z)). \]
 The set
 \[ B_0=\{z^i\st i\geq 0\} \amalg\{z^iw\st i\geq 0\} \]
 is a basis for the subring $R_0(a)\subset R'(a)$ over $\C$.  The
 set $R_1(a)=\mu(R_0(a))$ is also a subring of $R'(a)$, with basis
 \[ B_1=\{z^i\st i\leq 0\} \amalg\{z^iw\st i\leq -3\}. \]
\end{remark}

\begin{proposition}\lbl{prop-P-precromulent}
 The above definitions make $PX(a)$ into a precromulent surface.
\end{proposition}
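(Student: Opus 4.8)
To show that $PX(a)$ is precromulent, three things must be checked against Definition~\ref{defn-precromulent}: that $PX(a)$ is a compact connected Riemann surface of genus $2$; that the displayed formulas for $\lm,\mu,\nu$ define an action of $G$ on $PX(a)$ under which $D_8$ acts conformally and $G\sm D_8$ anticonformally; and that the set $V=\{x\in PX(a)\st\stab_{D_8}(x)\neq 1\}$, with its residual $G$-action, is isomorphic to $V^*$. I would deal with the group action first, since it is used in the rest. The maps $\lm$ and $\mu$ are restrictions to $PX(a)$ of $\C$-linear automorphisms of $\C^5$ (a diagonal rescaling, respectively a permutation of coordinates) and $\nu$ is the restriction of coordinatewise complex conjugation; substituting into the four defining equations of $PX(a)$ shows each of the three carries $PX(a)$ to itself (the three quadrics are permuted among themselves and the remaining equation is preserved, using $A\in\R$). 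One then checks the finite list of relations $\lm^4=\mu^2=\nu^2=(\mu\nu)^2=(\lm\mu)^2=(\lm\nu)^2=1$ for these maps, so that we genuinely obtain a $G$-action. Since $\lm,\mu$ are holomorphic and $\nu$ is antiholomorphic, and $D_8=\ip{\lm,\mu}$ has index two in $G$ with $\nu\notin D_8$, every element of $D_8$ acts conformally and every element of $G\sm D_8=D_8\nu$ anticonformally, giving part~(a) of Definition~\ref{defn-precromulent}.

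For the underlying surface: by Proposition~\ref{prop-P-j} together with Lemma~\ref{lem-R-domain} and Remark~\ref{rem-r-coprime}, $PX(a)\sm\{v_1\}$ is isomorphic to the smooth connected affine curve $PX_0(a)=\spec R_0(a)$ (smooth since $r_a$ has distinct roots), so $PX(a)$ is connected, and the only point whose smoothness needs checking is $v_1$. But $\mu$ is a holomorphic automorphism of $PX(a)$ with $\mu(v_0)=v_1$, and $v_0=j(0,0)$ is a smooth point of $PX_0(a)$, so $v_1$ is smooth too; hence $PX(a)$ is a smooth projective curve, and so a compact Riemann surface. For the genus I would use the coordinate $z=z_3/z_2$, which presents $PX(a)$ as a two-sheeted branched cover of $\C_\infty$ ramified exactly over the six points lying above $\{0,\pm a,\pm a^{-1},\infty\}$, each with ramification index $2$ (note $\deg r_a=5$ is odd, so $\infty$ is a branch point); Riemann--Hurwitz then gives $2g-2=2(0-2)+6=2$, so $g=2$. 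Equivalently this is the standard genus count for $w^2=$ squarefree quintic; it is also visible in Lemma~\ref{lem-P-differentials}, whose $\om_0$ and $z\om_0$ span the holomorphic differentials.

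It remains to identify $V$, which I would do in two steps. First, each $v_i$ satisfies the defining equations of $PX(a)$, and applying $\lm,\mu,\nu$ to them --- conveniently via $\lm(j(w,z))=j(iw,-z)$, $\mu(j(w,z))=j(wz^{-3},z^{-1})$, $\nu(j(w,z))=j(\ov w,\ov z)$, together with the values at $v_0$ and $v_1$ --- shows that $\{v_0,\dotsc,v_{13}\}$ is $G$-stable, with orbits $\{v_0,v_1\}$, $\{v_2,\dotsc,v_5\}$, $\{v_6,\dotsc,v_9\}$, $\{v_{10},\dotsc,v_{13}\}$. Matching the stabilizers of these orbits to the orbit data of $V^*$ recorded in Remark~\ref{rem-V-star} (using Proposition~\ref{prop-aut-V} to pin down the bijection) exhibits an isomorphism of $G$-sets $\{v_0,\dotsc,v_{13}\}\simeq V^*$; in particular each $v_i$ has nontrivial $D_8$-stabilizer, so $\{v_0,\dotsc,v_{13}\}\sse V$. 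Second --- the only substantive step --- I must show $V\sse\{v_0,\dotsc,v_{13}\}$, that is, that no further point of $PX(a)$ is fixed by a nontrivial element of $D_8$. As $V$ is $G$-invariant and the seven nontrivial elements of $D_8$ fall, up to conjugacy in $G$, into the classes of $\lm\,(=\lm^3)$, $\lm^2$, $\mu\,(=\lm^2\mu)$ and $\lm\mu\,(=\lm^3\mu)$, it suffices to check that the fixed sets of these four lie in $\{v_0,\dotsc,v_{13}\}$. Each is the intersection of $PX(a)$ with the union of the eigenspaces of the corresponding linear automorphism of $\C^5$, hence finite, and can be written down explicitly from the defining equations, with the cases where coordinates vanish controlled by Lemma~\ref{lem-nonzero}. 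For instance $\{x\in PX(a)\st\lm^2x=x\}=\{[z]\in PX(a)\st z_1=0\}$, which on $PX_0(a)$ is the locus $w=0$, that is, the five Weierstrass points over $\{0,\pm a,\pm a^{-1}\}$, namely $v_0,v_{10},v_{11},v_{12},v_{13}$, together with $v_1$; running the analogous computation for $\lm$, $\mu$ and $\lm\mu$ shows the union of all these fixed sets is exactly $\{v_0,\dotsc,v_{13}\}$. This gives $V\simeq V^*$, completing the proof.

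The main obstacle is this last enumeration. It is the one point where one must actually compute with the five homogeneous equations and the eigenspace decompositions, and where the degenerate cases (coordinates vanishing) need care --- which is precisely what Lemma~\ref{lem-nonzero} is for. Everything else --- the genus, and the $G$-action together with its conformality type --- is essentially formal once the earlier lemmas are available.
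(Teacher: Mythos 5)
Your proposal is correct and follows essentially the same route as the paper's proof: smoothness via the affine model $PX_0(a)$ together with the $\mu$-translate covering $v_1$, compactness from closedness in $\C P^4$, the standard genus count for $w^2=(\text{squarefree quintic})$, direct verification that the action preserves $PX(a)$ and permutes the $v_i$ as in Definition~\ref{defn-V-star}, and the freeness check on $PX(a)\sm V$ reduced to one representative per $D_8$-conjugacy class, with the fixed points of $\lm^{\pm 1}$ absorbed into those of $\lm^2$. The only point of divergence is the sign in your formula $\mu(j(w,z))=j(w z^{-3},z^{-1})$ versus the paper's $j(-w/z^3,1/z)$; the minus sign is the one consistent with the stated permutation of the $v_i$ (e.g.\ $\mu$ must fix $v_2$ and swap $v_3$ with $v_5$), so you should reconcile your sign convention with the paper's before carrying out the ``straightforward but lengthy'' verification of condition~(b).
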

\begin{proof}
 First, the standard Jacobian condition shows that $PX_0(a)$ is
 smooth, so Proposition~\ref{prop-P-j} shows that $PX(a)$ is smooth
 except possibly at $v_1$.  Next, straightforward calculation shows
 that the action of $G$ on $\C P^4$ preserves $PX(a)$, with $D_8$
 acting conformally and $G\sm D_8$ acting anticonformally, and
 \begin{align*}
  \lm(j(w,z)) &= j(iw,-z) \\
  \mu(j(w,z)) &= j(-w/z^3,1/z) \\
  \nu(j(w,z)) &= j(\ov{w},\ov{z}).
 \end{align*}
 In particular, we see that $\mu$ gives an isomorphism between
 $PX(a)\sm\{v_1\}$ and $PX(a)\sm\{v_0\}$, showing that $PX(a)$ is
 smooth everywhere.  It is clearly closed in $\C P^4$ and therefore
 compact.  It is standard that for any polynomial $r(z)$ of degree
 $2g+1$, the hyperelliptic curve $w^2=r(z)$ has genus $g$; in
 particular, $PX(a)$ has genus $2$.  A straightforward but lengthy
 check shows that $G$ permutes the points $v_i$ in accordance with
 Definition~\ref{defn-precromulent}(b).  The tangent space to
 $PX_0(a)$ at $(0,0)$ is $\C\oplus 0$, and $\lm$ acts on this is
 multiplication by $i$.  All that is left is to check that $D_8$ acts
 freely on $PX(a)\sm V$, where $V=\{v_0,\dotsc,v_{13}\}$.  The fixed
 points of $\lm^2$ on $PX_0(a)$ are pairs $(w,z)$ with
 $(-w,z)=(w,z)$ and $w^2=z(z^2-a^2)(z^2-a^{-2})$, which means that
 $w=0$ and $z\in\{0,a,-a,a^{-1},-a^{-1}\}$.  It follows that
 \[ PX(a)^{\langle\lm^2\rangle} =
     \{v_0,v_1,v_{10},v_{11},v_{12},v_{13}\} \sse V.
 \]
 All fixed points of $\lm$ or $\lm^3$ are also fixed by $\lm^2$ and so
 lie in $V$.  A similar analysis shows that the fixed points of $\mu$,
 $\lm\mu$, $\lm^2\mu$ and $\lm^3\mu$ also lie in $V$, as required.
\end{proof}

\begin{remark}\lbl{rem-P-quotient}
 We can define $p\:PX(a)\to\C_\infty$ by $p([z])=z_3/z_2$, so
 $pj(w,z)=z$.  Using $\lm^2j(w,z)=(-w,z)$, it is not hard to check
 that $p$ gives an isomorphism $PX(a)/\ip{\lm^2}\to\C_\infty$.
 Moreover, $p$ is equivariant if we use the following action on
 $\C_\infty$:
 \[ \lm(z) = -z \hspace{4em}
    \mu(z) = 1/z \hspace{4em}
    \nu(z) = \ov{z}.
 \]
 This action is represented by \mcode+act_C+ in Maple.  For example,
 \mcode+act_C[L](3)+ evaluates to \mcode+-3+.
\end{remark}

\subsection{The curve system}
\lbl{sec-P-curves}

\begin{definition}\lbl{defn-P-curves}
 We define maps $j'\:\C^3\sm\{0\}\to\C P^4$ by
 \[ j'(w,x,y) = [w:x^3:x^2y:xy^2:y^3]. \]
 Note that $j(w,z)=j'(w,1,z)$, and when $x\neq 0$ we have
 $j'(w,x,y)=j(w/x^3,y/x)$.  We then define $c_k\:\R\to PX(a)$ as follows:
 \begin{align*}
  c_0(t) &=
   j'(-\sqrt{(a^{-1}-a)^2+4\sin^2(2t)},\;e^{it},\;-e^{-it}) \\
  c_1(t) &=
   j'\left(\frac{1+i}{8\rt}\sin(t)
           \sqrt{16\cos(t)^2+(a+a^{-1})^2\sin(t)^4},\;
           \frac{1+\cos(t)}{2},\;\frac{1-\cos(t)}{2}i\right) \\
  c_2(t) &= \lm(c_1(t)) \\
  c_3(t) &=
   j'\left(-i\frac{a^{-1}-a}{8}
               \sin(t)\sqrt{(1+a)^4-(1-a)^4\cos(t)^2}
                      \sqrt{(1+a)^2-(1-a)^2\cos(t)^2},\right. \\
         & \left.\qquad\qquad\vphantom{\frac{a^{-1}-a}{8}}
           \frac{(1+a)+(1-a)\cos(t)}{2},\;
           \frac{(1+a)-(1-a)\cos(t)}{2}\right)  \\
  c_4(t) &= \lm(c_3(t)) \\
  c_5(t) &= j\left(\frac{\sin(t)}{8}\sqrt{2a(3-\cos(t))(4-a^4(1-\cos(t))^2)},\;
                    a \frac{1 - \cos(t)}{2}\right) \\
  c_6(t) &= \lm(c_5(t)) \\
  c_7(t) &= \mu(c_5(t)) \\
  c_8(t) &= \lm\mu(c_5(t)).
 \end{align*}
\end{definition}
Maple notation for $c_k(t)$ is \mcode+c_P[k](t)+.  The versions with a
numerical values for $a$ are \mcode+c_P0[k](t)+ and
\mcode+c_P1[k](t)+.  The map $j'$ is \mcode+jj_P+.

\begin{proposition}\lbl{prop-P-curves}
 The above maps give a curve system on $PX(a)$ (in the sense of
 Definition~\ref{defn-curve-system}).
\end{proposition}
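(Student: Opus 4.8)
The plan is to verify the three axioms (a), (b), (c) of Definition~\ref{defn-curve-system} one at a time, exploiting the fact that for $k\geq 2$ each $c_k$ is defined by applying an element of $D_8$ to $c_0$, $c_1$, $c_3$ or $c_5$, so that most of the work reduces to the four ``basic'' curves. First I would deal with axiom~(c): the formulas $c_2=\lm c_1$, $c_4=\lm c_3$, $c_6=\lm c_5$, $c_7=\mu c_5$, $c_8=\lm\mu c_5$ are true by definition, and the remaining identities in the table (the actions of $\lm$, $\mu$, $\nu$ on $c_0,\dots,c_8$) follow from the explicit description of the $G$-action on $\C P^4$ from Definition~\ref{defn-P} together with the transformation rules $\lm j(w,z)=j(iw,-z)$, $\mu j(w,z)=j(-w/z^3,1/z)$, $\nu j(w,z)=j(\ov w,\ov z)$ recorded in the proof of Proposition~\ref{prop-P-precromulent}. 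Concretely, one checks that replacing $t$ by $-t$ inside the square-root expressions, or shifting $t$ by $\pi$, reproduces the claimed curve after applying the relevant group element; this is a finite list of elementary trigonometric identities (e.g.\ $\sin(2(t+\pi/2))=-\sin 2t$ for $c_0$, and the evenness of $\cos$ for the others). The $2\pi$-periodicity required in axiom~(a) is visible from the same formulas once one observes that the sign ambiguity in the square roots is resolved consistently; this also shows $c_k$ lands in $PX(a)$ rather than merely in $\C P^4$, since each $c_k(t)$ is manifestly of the form $j'(w,x,y)$ with $(w,x,y)$ satisfying $w^2=r_a(y/x)\cdot x^6$, which is exactly the defining equation pulled back along $j'$.

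Next I would verify axiom~(b), the table of vertex values. For each entry one substitutes the specified angle into the formula for $c_k$ and checks it equals the point $v_j$ listed in Definition~\ref{defn-P}; for instance $c_0(0)=j'(-(a^{-1}-a),1,-1)=j(-(a^{-1}-a),-1)=v_2$, $c_0(\pi/2)=j'(-(a^{-1}-a),i,-i\cdot(-1))$ which after rescaling is $j(-i(a^{-1}-a),1)=v_5$ --- wait, one must track the $\lm$-orbit carefully here, but the table was designed so that $c_0(\pi/2)=v_3$, and indeed $c_0(\pi/2)=j'(-(a^{-1}-a),e^{i\pi/2},-e^{-i\pi/2})=j'(-(a^{-1}-a),i,i)=j(-(a^{-1}-a)/i^3,1)=j(-i(a^{-1}-a),1)=v_3$. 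The other basic values ($c_1(0)=v_0$, $c_1(\pi/2)=v_6$, $c_3(0)=v_{11}$, $c_3(\pi/2)=v_3$, $c_5(0)=v_0$, $c_5(\pi)=v_{11}$) are checked the same way, and then all remaining entries follow by applying axiom~(c) to these. For the \emph{negative} assertions (empty boxes: $v_j\notin c_i(\R)$), I would use Proposition~\ref{prop-empty-boxes}: it suffices to check the nonempty boxes plus the disjointness of $c_3(\R)$, $c_6(\R)$, $c_8(\R)$. Disjointness holds because $c_3(\R)\subseteq PX(a)^{\lm^2\nu}$, $c_6(\R)\subseteq PX(a)^{\nu}$, $c_8(\R)\subseteq PX(a)^{\lm^2\nu}$ --- no, $c_6=\lm c_5$ so $c_6(\R)\subseteq PX(a)^{\lm\nu\lm^{-1}}=PX(a)^{\lm^2\nu}$; one reads off from the explicit $z$-coordinates (which are $a(1-\cos t)/2\in[0,a]$ for $c_5$, hence $-a(1-\cos t)/2\in[-a,0]$ for $c_6$, etc.) that the curves meet only at the vertices already accounted for.

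Finally, axiom~(a) also demands that each $c_k$ be real-analytic and induce an \emph{embedding} $\R/2\pi\Z\hookrightarrow PX(a)$. Real-analyticity is clear away from the zeros of the square-root radicands; at those points one checks that the radicand has a double zero (so the square root is analytic) --- e.g.\ for $c_0$ the radicand $(a^{-1}-a)^2+4\sin^2 2t$ never vanishes, and for $c_5$ the factor $\sin t$ under $\sqrt{\ \cdot\ (1-\cos t)^2\ \cdot\ }$... here one must be slightly careful: near $t=0$ the argument of the square root behaves like $2a\cdot 2\cdot 4\cdot$ (a quantity of order $t^0$) times $\sin^2 t$, wait no the $\sin t$ is outside the root; so $c_5$ has a square root of something nonvanishing near $t=0,\pi$, hence is analytic there, and the apparent branch points at $t=\pi$ (where $1-\cos t=2$) are handled by the factor $4-a^4(1-\cos t)^2=4-4a^4\neq 0$. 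That the map is injective on $\R/2\pi\Z$ is the main obstacle: one strategy is to compose with the degree-two projection $p\:PX(a)\to\C_\infty$, $p([z])=z_3/z_2$, under which $c_0$ maps to $t\mapsto -e^{-2it}$ (a double cover of the unit circle) while $c_1,c_3,c_5$ map to arcs traced injectively, so for $k\neq 0$ injectivity of $c_k$ follows from injectivity of $p\circ c_k$ together with the observation that $c_k(t)$ and $c_k(-t)$ lie in different fibres of $p$ when they are distinct; for $c_0$, injectivity on $\R/2\pi\Z$ requires instead noting that $c_0(t)$ and $c_0(t+\pi)$ (which have the same image under $p$) are swapped by $\lm^2$ and are genuinely distinct because their $w$-coordinates differ in sign unless the radicand vanishes, which it never does. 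I expect this injectivity bookkeeping to be the most delicate part; everything else is a bounded check of trigonometric and projective-coordinate identities, which is exactly what the accompanying Maple routine \mcode+check_precromulent("P")+ is there to confirm.
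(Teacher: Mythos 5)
Your overall route is essentially the paper's: verify axiom~(c) from the transformation rules $\lm j(w,z)=j(iw,-z)$, $\mu j(w,z)=j(-w/z^3,1/z)$, $\nu j(w,z)=j(\ov{w},\ov{z})$; verify the nonempty boxes of axiom~(b) by substitution (and your use of Proposition~\ref{prop-empty-boxes} plus the disjointness of $pc_3(\R)$, $pc_6(\R)$, $pc_8(\R)$ for the empty boxes is a legitimate variant of the paper's ``direct calculation''); and reduce the embedding claim to $k\in\{0,1,3,5\}$ using the group action and the projection $p$. The paper packages this as Lemmas~\ref{lem-P-curves-bc} and~\ref{lem-P-curves-a}, with Lemma~\ref{lem-P-pc} supplying the formulae for $p\circ c_k$. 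However, two steps in your treatment of axiom~(a) do not work as written. For $k\in\{1,3,5\}$ the map $p\circ c_k$ is \emph{not} injective on $\R/2\pi\Z$: by Lemma~\ref{lem-P-pc} it factors through $\cos(t)$, so it traces its image interval twice, and ``injectivity of $c_k$ follows from injectivity of $p\circ c_k$'' is vacuous (your follow-up clause is also backwards: $c_k(t)$ and $c_k(-t)$ lie in the \emph{same} fibre of $p$, not different ones). The argument that actually closes this, and the one the paper uses, is: $pc_k(s)=pc_k(t)$ forces $\cos(s)=\cos(t)$; one then writes $c_k(t)=j'(u(\cos t)\sin t,\,v(\cos t),\,w(\cos t))$, checks that $u(\cos t)$ never vanishes, and applies Lemma~\ref{lem-jj-equal} to conclude $\sin(s)=\sin(t)$. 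Your sign-of-$w$ argument for $c_0$ is fine.

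The second and more serious omission is the immersion condition. Axiom~(a) demands a smooth embedding, so you must show $c_k'(t)\neq 0$ for all $t$, and this is exactly where composing with $p$ fails you: $(pc_k)'(t)=0$ at $t\in\pi\Z$ for $k\geq 1$ (the turning points of the traced arcs), so nonvanishing of $c_k'$ there cannot be read off from $p\circ c_k$. The paper handles this by a first-order expansion in the projective coordinates, e.g.\ $c_1(\ep)\simeq\left[\tfrac{1+i}{2\rt}\ep:1:0:0:0\right]$ and $c_1(\ep+\pi)\simeq\left[\tfrac{1-i}{2\rt}\ep:0:0:0:1\right]$, which shows the derivative is nonzero at the problematic points; your proposal never addresses this. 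Without the immersion property, later results that depend on this proposition (for instance the claim in Proposition~\ref{prop-curve-system} that $c_k$ is a diffeomorphism onto $C_k$) would not follow. Everything else in your outline --- periodicity, membership in $PX(a)$, the vertex table, the disjointness of $C_3$, $C_6$, $C_8$ --- is correct and matches the paper's computations.
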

\begin{proof}
 Combine Lemmas~\ref{lem-P-curves-bc} and~\ref{lem-P-curves-a}.
 \begin{checks}
  cromulent.mpl: check_precromulent("P")
 \end{checks}
\end{proof}

\begin{lemma}\lbl{lem-P-curves-bc}
 For $0\leq k\leq 8$, the map $c_k\:\R\to\C P^4$ is smooth, with
 $c_k(t+2\pi)=c_k(t)$, and the image is contained in $PX(a)$.
 Moreover, parts~(b) and~(c) of Definition~\ref{defn-curve-system} are
 satisfied.
\end{lemma}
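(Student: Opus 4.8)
The plan is to work throughout with the parametrisation $j'(w,x,y)=[w:x^3:x^2y:xy^2:y^3]$, which makes everything transparent. First note that for any $(w,x,y)\in\C^3\sm\{0\}$ the point $j'(w,x,y)$ automatically satisfies the last three defining equations of $PX(a)$ (each side is a monomial in $x,y$), while the first equation $z_1^2-z_2z_3-z_4z_5+Az_3z_4=0$ collapses to the single relation $w^2=x^5y+xy^5-Ax^3y^3$; when $x=1$ this is $w^2=r_a(z)$ with $z=y$. Moreover, in these coordinates the $G$-action of Definition~\ref{defn-P} reads $\lm(j'(w,x,y))=j'(iw,x,-y)$, $\mu(j'(w,x,y))=j'(w,y,x)$ and $\nu(j'(w,x,y))=j'(\ov w,\ov x,\ov y)$, which makes the equivariance computations painless.

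For smoothness and periodicity it is enough to treat $c_0,c_1,c_3,c_5$, since $c_2,c_4,c_6,c_7,c_8$ are obtained from these by elements of $\{\lm,\mu,\lm\mu\}$, which act on $\C P^4$ by diffeomorphisms preserving $PX(a)$ (Proposition~\ref{prop-P-precromulent}). For each of $c_0,c_1,c_3,c_5$ I would check that every radicand under a square root is bounded below by a strictly positive constant for all $t$ --- this uses only $0<a<1$ (for instance the radicand of $c_0$ is $\geq(a^{-1}-a)^2>0$, and that of $c_5$ is a product of three manifestly positive factors) --- so the square roots, and hence $c_k$, are smooth in $t$; and that the triple $(w,x,y)$ is never $(0,0,0)$ (e.g.\ for $c_1$ one has $x+y/i\equiv 1$), so that $j'$ is defined along the curve and $c_k\:\R\to\C P^4$ is smooth. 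As $PX(a)$ is a smooth submanifold, $c_k$ is smooth as a map into it. Periodicity is immediate because every $c_k$ is built from $e^{it}$, $\sin t$ and $\cos t$.

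The image claim and axiom~(c) again reduce to $c_0,c_1,c_3,c_5$ (the rest follow by $G$-equivariance). For the image, one substitutes the explicit $(w,x,y)$ into $w^2=x^5y+xy^5-Ax^3y^3$ and verifies it as an identity in $t$ and $a$ (for $c_0$ both sides are $A-2\cos 4t$; the other three are similar but longer). For axiom~(c), many of the twenty-four relations are formal consequences of the definitions $c_2=\lm c_1$, $c_4=\lm c_3$, $c_6=\lm c_5$, $c_7=\mu c_5$, $c_8=\lm\mu c_5$ together with the relations of $G$; the remaining ``generating'' ones (such as $\lm(c_0(t))=c_0(t+\pi/2)$, $\nu(c_0(t))=c_0(-t)$, $\mu(c_1(t))=c_2(t+\pi)$, $\nu(c_3(t))=c_3(-t)$, $\nu(c_5(t))=c_5(t)$) are checked by pushing the explicit formulas through the $j'$-coordinate form of the action, each reducing to a trigonometric identity.

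Finally, axiom~(b). Its non-empty entries form a finite list of assertions $c_i(\theta)=v_j$, each verified by evaluating $c_i$ at $\theta$ and comparing with the coordinates of $v_j$ in Definition~\ref{defn-P} (e.g.\ $c_0(0)=j'(-(a^{-1}-a),1,-1)=j(-(a^{-1}-a),-1)=v_2$). For the empty entries we must show $v_j\notin c_i(\R)$. Once axiom~(c) is in hand, a short computation gives $c_i(\R)\subseteq X^{\gamma_i}$ for the antiholomorphic involution $\gamma_i$ with $\gamma_0=\mu\nu$, $\gamma_1=\lm\nu$, $\gamma_2=\lm^3\nu$, $\gamma_3=\gamma_6=\gamma_8=\lm^2\nu$, $\gamma_4=\gamma_5=\gamma_7=\nu$; if $\gamma_i\notin\stab_G(v_j)$ then $v_j\notin X^{\gamma_i}\supseteq c_i(\R)$, which kills most empty entries, and the remainder are handled exactly as in the proof of Proposition~\ref{prop-empty-boxes}, using that $c_3(\R),c_6(\R),c_8(\R)$ --- and hence also their $\lm$-images $c_4(\R),c_5(\R),c_7(\R)$ --- are pairwise disjoint. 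That last point, for $PX(a)$, is checked by pushing forward along the quotient map $p\:PX(a)\to\C_\infty=PX(a)/\ip{\lm^2}$ of Remark~\ref{rem-P-quotient}: since $p(j(w,z))=z$ and $p$ is $G$-equivariant, one computes $p(c_3(\R))=[a,1/a]$, $p(c_6(\R))=[-a,0]$ and $p(c_8(\R))=(-\infty,-1/a]\cup\{\infty\}$, which are pairwise disjoint in $\C_\infty$. The whole argument is thus routine once the $j'$-reduction and the $p$-projection are in place; the only real obstacle is the sheer number of explicit trigonometric identities hidden inside axioms~(b) and~(c), which is exactly why they are also verified by machine.
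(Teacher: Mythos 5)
Your proposal is correct and is essentially a fully worked-out version of what the paper compresses into ``Direct calculation'' (plus its machine checks): the $j'$-reduction of the defining equations to $w^2=x^5y+xy^5-Ax^3y^3$, the $j'$-form of the $G$-action, and the reduction to $c_0,c_1,c_3,c_5$ are exactly the right mechanisms, and your sample identities (e.g.\ both sides equal to $A-2\cos 4t$ for $c_0$) check out. You are also right that the empty-box half of axiom~(b) is not literally a pointwise calculation; your stabiliser argument plus the disjointness of $p(c_3(\R))$, $p(c_6(\R))$, $p(c_8(\R))$ is precisely the mechanism the paper itself supplies in Proposition~\ref{prop-empty-boxes} and Lemma~\ref{lem-P-pc}.
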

\begin{proof}
 Direct calculation.
\end{proof}

\begin{lemma}\lbl{lem-P-pc}
 The composites $\R\xra{c_k}PX(a)\xra{p}\C_\infty$ and their images
 are as follows:
 \begin{align*}
  pc_0(t) &= - e^{-2it} &
  pc_0(\R) &= S^1 \\
  pc_1(t) &= \pp\frac{1-\cos(t)}{1+\cos(t)}i &
  pc_1(\R) &= [0,\infty] i \\
  pc_2(t) &= -\frac{1-\cos(t)}{1+\cos(t)}i &
  pc_2(\R) &= [-\infty,0] i \\
  pc_3(t) &= \pp\frac{(1+a)-(1-a)\cos(t)}{(1+a)+(1-a)\cos(t)} &
  pc_3(\R) &= [a,a^{-1}] \\
  pc_4(t) &= -\frac{(1+a)-(1-a)\cos(t)}{(1+a)+(1-a)\cos(t)} &
  pc_4(\R) &= [-a^{-1},-a] \\
  pc_5(t) &= \pp\frac{1-\cos(t)}{2}a &
  pc_5(\R) &= [0,a] \\
  pc_6(t) &= -\frac{1-\cos(t)}{2}a &
  pc_6(\R) &= [-a,0] \\
  pc_7(t) &= \pp\frac{2}{1-\cos(t)}a^{-1} &
  pc_7(\R) &= [a^{-1},\infty] \\
  pc_8(t) &= -\frac{2}{1-\cos(t)}a^{-1} &
  pc_8(\R) &= [-\infty,-a^{-1}] \\
 \end{align*}
\end{lemma}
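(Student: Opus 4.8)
The plan is to reduce the whole computation to two inputs: the identity $pj(w,z)=z$ recorded in Remark~\ref{rem-P-quotient}, and the equivariance of $p$ stated there, namely that $p$ intertwines the $G$-action on $PX(a)$ with the action $\lm(z)=-z$, $\mu(z)=1/z$, $\nu(z)=\ov z$ on $\C_\infty$ (so in particular $\lm\mu(z)=-1/z$). By Lemma~\ref{lem-P-curves-bc} each $c_k$ lands in $PX(a)$, so $pc_k$ is a well-defined map $\R\to\C_\infty$. First I would note that for $x\neq 0$ we have $j'(w,x,y)=j(w/x^3,y/x)$, hence $pj'(w,x,y)=y/x$; and since $p$ is a holomorphic map to $\C_\infty$ (Remark~\ref{rem-P-quotient}), with $p(v_1)=\infty$, this identity continues to hold, interpreted in $\C_\infty$, at the isolated parameter values where the homogeneous coordinate $x$ vanishes.

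Next I would run through the nine curves. For $c_0$, $c_1$, $c_3$ one reads off the ratio of the fourth and third arguments of $j'$ in Definition~\ref{defn-P-curves}: for instance $pc_0(t)=(-e^{-it})/(e^{it})=-e^{-2it}$, and $pc_3(t)=\bigl((1+a)-(1-a)\cos t\bigr)/\bigl((1+a)+(1-a)\cos t\bigr)$; for $c_5$ one reads off the $z$-argument of $j$, giving $pc_5(t)=a(1-\cos t)/2$. The $w$-coordinate is irrelevant, since $p$ depends only on $z_2,z_3$. The remaining five curves are the $\lm$-, $\mu$- or $\lm\mu$-images of $c_1$, $c_3$, $c_5$, so by equivariance $pc_2=-pc_1$, $pc_4=-pc_3$, $pc_6=-pc_5$, $pc_7=1/(pc_5)$ and $pc_8=-1/(pc_5)$, which simplify to the displayed formulas (e.g.\ $pc_7(t)=2a^{-1}/(1-\cos t)$). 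At the points where a curve meets $\{v_0,v_1\}$ — such as $c_1(\pi)=v_1$ or $c_7(0)=v_1$ — the formula is read in $\C_\infty$ and holds by the continuity observation above.

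Finally I would determine the images. Putting $u=\cos t\in[-1,1]$, each $pc_k$ is either $-e^{-2it}$ or a monotone linear/Möbius function of $u$, so its range over $\R$ is the arc or closed interval with the stated endpoints: $-e^{-2it}$ sweeps out all of $S^1$; $(1-u)/(1+u)$ runs over $[0,\infty]$; $\bigl((1+a)-(1-a)u\bigr)/\bigl((1+a)+(1-a)u\bigr)$ runs from $a$ at $u=1$ to $a^{-1}$ at $u=-1$; $a(1-u)/2$ runs over $[0,a]$; and $2a^{-1}/(1-u)$ runs over $[a^{-1},\infty]$. Applying $z\mapsto -z$ and $z\mapsto -1/z$ to these yields $pc_2(\R)$, $pc_4(\R)$, $pc_6(\R)$, $pc_8(\R)$. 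There is no genuine obstacle in this lemma; the only delicate point is the bookkeeping of $\C_\infty$-valued equalities at the parameters where $z_2$ (equivalently the coordinate $x$) vanishes, and this is disposed of once and for all by continuity of $p$.
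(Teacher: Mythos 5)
Your proposal is correct and matches the paper's approach: the paper's proof is simply ``Direct calculation'' (delegated to a Maple check), and your argument is exactly that calculation written out, using $pj'(w,x,y)=y/x$ for the base curves $c_0,c_1,c_3,c_5$ and the equivariance of $p$ for the rest. The continuity remark handling the parameters where the coordinate $x$ vanishes (e.g.\ $c_1(\pi)=v_1$) is the right way to make the $\C_\infty$-valued identities precise.
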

\begin{proof}
 Direct calculation.
 \begin{checks}
  projective/PX_check.mpl: check_pc_P()
 \end{checks}
\end{proof}

\begin{lemma}\lbl{lem-jj-equal}
 Suppose that $j'(w_0,x,y)=j'(w_1,x,y)$ and that this point lies in
 $PX(a)$.  Then $w_0=w_1$.
\end{lemma}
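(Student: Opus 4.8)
The plan is to unwind what equality of two points in $\C P^4$ means and to notice that the two points $j'(w_0,x,y)$ and $j'(w_1,x,y)$ share all four of the coordinates $z_2,\dots,z_5$, which cannot all vanish once we know the common point lies on $PX(a)$. That immediately pins the scaling constant to $1$.

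First I would write $[z]=j'(w_0,x,y)=[w_0:x^3:x^2y:xy^2:y^3]$. Since $[z]\in PX(a)$ by hypothesis, Lemma~\ref{lem-nonzero}(b) gives $z_2\neq 0$ or $z_5\neq 0$, i.e.\ $x^3\neq 0$ or $y^3\neq 0$. (One could also argue directly: for $j'$ to be defined we need $(w_0,x,y)\in\C^3\sm\{0\}$, so if $x=y=0$ then $w_0\neq 0$ and $[z]=[1:0:0:0:0]$, which fails the defining relation $r_0$; hence $(x,y)\neq(0,0)$ and one of $x^3,y^3$ is nonzero.)

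Next, the assumption $j'(w_0,x,y)=j'(w_1,x,y)$ means there is a scalar $\kappa\in\C^\tm$ with
\[ (w_1,\,x^3,\,x^2y,\,xy^2,\,y^3) = \kappa\,(w_0,\,x^3,\,x^2y,\,xy^2,\,y^3). \]
Comparing the second and fifth entries gives $x^3=\kappa x^3$ and $y^3=\kappa y^3$; since at least one of $x^3,y^3$ is nonzero, $\kappa=1$. Comparing the first entries then yields $w_1=\kappa w_0=w_0$, as required. There is no real obstacle here: the statement is immediate once the degenerate case $x=y=0$ is excluded, and that exclusion is precisely what Lemma~\ref{lem-nonzero} supplies.
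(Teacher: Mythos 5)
Your proof is correct and follows essentially the same route as the paper's: use the hypothesis that the point lies in $PX(a)$ together with Lemma~\ref{lem-nonzero} to rule out $x=y=0$, and then the projective scaling factor is forced to be $1$, giving $w_0=w_1$. You have simply made the scalar $\kappa$ explicit where the paper says "it follows easily."
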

\begin{proof}
 By assumption we have
 \[ [w_0:x^3:x^2y:xy^2:y^3]=[w_1:x^3:x^2y:xy^2:y^3]. \]
 It follows easily that $w_0=w_1$ unless $x=y=0$.  However, as this
 point lies in $PX(a)$, Lemma~\ref{lem-nonzero} tells us that $x$ and
 $y$ cannot both vanish.
\end{proof}

\begin{lemma}\lbl{lem-P-curves-a}
 For $0\leq k\leq 8$, the induced map $c_k\:\R/2\pi\Z\to PX(a)$ is a
 smooth embedding.
\end{lemma}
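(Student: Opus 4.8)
The plan is to prove Lemma~\ref{lem-P-curves-a} by working through the cases $k=0,\dots,8$, reducing most of them to a single key case via the group action. First I would observe that $c_2=\lm(c_1)$, $c_4=\lm(c_3)$, and $c_6,c_7,c_8$ are the images of $c_5$ under $\lm$, $\mu$, $\lm\mu$ respectively; since $G$ acts by diffeomorphisms on $PX(a)$, it suffices to treat $c_0$, $c_1$, $c_3$ and $c_5$. In each of these cases the map factors as $\R\xra{c_k}PX(a)\xra{p}\C_\infty$, and Lemma~\ref{lem-P-pc} gives an explicit formula for $pc_k$. The strategy is: (i) show $c_k$ is an immersion, i.e.\ $c_k'(t)\neq 0$ for all $t$; (ii) show $c_k$ is injective modulo $2\pi\Z$; then since $\R/2\pi\Z$ is compact Hausdorff and $PX(a)$ is Hausdorff, a continuous injective immersion from a compact $1$-manifold is automatically a smooth embedding.

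For injectivity I would split $\R/2\pi\Z$ according to the behaviour of $pc_k$. For $c_0$, the composite $pc_0(t)=-e^{-2it}$ is \emph{not} injective on $\R/2\pi\Z$ — it is two-to-one, identifying $t$ with $t+\pi$ — so here one must use the $w$-coordinate: the two preimages of a generic point of $S^1$ under $pc_0$ are $c_0(t)$ and $c_0(t+\pi)$, and from the formula $c_0(t)=j'(-\sqrt{(a^{-1}-a)^2+4\sin^2(2t)},e^{it},-e^{-it})$ one checks, using Lemma~\ref{lem-jj-equal} to pin down the first coordinate and comparing the remaining coordinates $[e^{3it}:\dots]$, that $c_0(t)=c_0(t')$ forces $t\equiv t'\pmod{2\pi}$. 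For $c_1$, $c_3$ and $c_5$ the map $p c_k$ is a homeomorphism from $\R/2\pi\Z$ onto its image arc only away from the endpoints: e.g.\ $pc_5(t)=\tfrac{1-\cos t}{2}a$ takes each value in the open interval $(0,a)$ exactly twice, at $t$ and $-t$, and once at the endpoints $t=0$ ($pc_5=0$) and $t=\pi$ ($pc_5=a$). So again one compares the $w$-coordinates at $t$ and $-t$: from the formula for $c_5$ the first coordinate is $\tfrac{\sin t}{8}\sqrt{\dots}$, which is an odd function of $\sin t$ times something even, so $c_5(t)$ and $c_5(-t)$ differ in the $w$-slot unless $\sin t=0$; thus injectivity holds. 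The cases $c_1$ and $c_3$ are entirely analogous, with the relevant $w$-formulas again odd in $\sin t$.

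For the immersion property, at interior points of the parameter circle the claim follows because $p c_k$ is already an immersion there (its derivative, read off from the Lemma~\ref{lem-P-pc} formulas, is nonzero), and $p$ is a local isomorphism away from the branch locus $V$. At the finitely many exceptional parameter values — where $c_k(t)$ is one of the $v_i$ and $p$ is branched — one instead checks directly from the explicit parametrisation that $c_k'(t)\neq 0$: near such a point one of $x$ or $y$ in $j'(w,x,y)$ is nonvanishing, so $j'$ is a chart, and one differentiates the coordinate functions, verifying that the leading-order term is nonzero (for instance near $t=0$ in $c_5$, $x=1$, $y=a\tfrac{1-\cos t}{2}\sim \tfrac{a}{4}t^2$ and $w\sim \tfrac{\sqrt{4a}}{8}\,t$, so the $w$-direction has nonzero first derivative).

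I expect the main obstacle to be bookkeeping rather than conceptual: one must keep careful track, in each of the four essential cases, of exactly which parameter values $t$ give the finitely many points where $p$ is branched (these are the entries in the table of Definition~\ref{defn-curve-system}(b) together with the values $0,\pi$ and $\pm\pi/2$), and verify the immersion condition there by hand from the square-root formulas, which requires identifying the correct branch of each radical and computing a first nonzero Taylor coefficient. The injectivity argument, by contrast, is uniform: in every case the obstruction to $pc_k$ being injective is the symmetry $t\leftrightarrow -t$ (or $t\leftrightarrow t+\pi$ for $c_0$), and in every case the first coordinate $w$ of the $j'$-expression is, up to an even factor, an \emph{odd} function of that symmetry, so Lemma~\ref{lem-jj-equal} finishes it. All of this is a finite direct calculation, so the proof can simply cite the Maple check \mcode+check_precromulent("P")+ for the full verification.
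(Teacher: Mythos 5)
Your proposal is correct and follows essentially the same route as the paper's proof: reduce to $k\in\{0,1,3,5\}$ by the group action, get the immersion property from Lemma~\ref{lem-P-pc} away from the exceptional parameters and by a first-order Taylor computation at them, and get injectivity by noting that $pc_k(s)=pc_k(t)$ forces $\cos s=\cos t$ and then using the oddness of the $w$-coordinate in $\sin t$ together with Lemma~\ref{lem-jj-equal}. The only (immaterial) divergence is in the $c_0$ case, where the paper recovers $e^{it}$ from an explicit expression in $p(u)$ and $q(u)$ rather than comparing homogeneous coordinates at $t$ and $t+\pi$.
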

\begin{proof}
 Because of the group action, it will suffice to treat the cases
 $k=0,1,3,5$.

 Using Lemma~\ref{lem-P-pc} we see that $(pc_k)'(t)\neq 0$ except
 when $k>0$ and $t\in\pi\Z$.  It follows that $c_k'(t)\neq 0$ except
 possibly when $k>0$ and $t\in\pi\Z$.  Moreover, one can check that to
 first order in $\ep$ we have
 \begin{align*}
  c_1(\ep)     &\simeq \left[\frac{1+i}{2\rt}\ep:1:0:0:0\right] \\
  c_1(\ep+\pi) &\simeq \left[\frac{1-i}{2\rt}\ep:0:0:0:1\right].
 \end{align*}
 It follows easily that $c'_1(t)\neq 0$ for all $t$, so $c_1$ is at
 least an immersion.  Similar calculations show that $c_0,\dotsc,c_8$
 are all immersions.

 We now just need to show that $c_k\:\R/2\pi\Z\to PX(a)$ is
 injective.
 \begin{itemize}
  \item[(a)] Consider the case $k=0$.  If $u=c_0(t)$ we have
   $p(u)=-e^{-2it}$, and it follows that the quantity
   $a^{-2}+a^2-p(u)^2-p(u)^{-2}$ is equal to $a^{-2}+a^2-2\cos(4t)$
   and so is strictly positive.  It follows in turn that
   \[ -q(u)\,p(u)^{-2}\,(a^{-2}+a^2-p(u)^2-p(u)^{-2})^{-1/2} = e^{it}.
   \]
   From this it is clear that when $c_0(s)=c_0(t)$ we have
   $e^{is}=e^{it}$ and so $s-t\in 2\pi\Z$ as required.
  \item[(b)] Now suppose instead that $k\in\{1,3,5\}$ and
   $c_k(s)=c_k(t)$.  We then have $pc_k(s)=pc_k(t)$, and using
   Lemma~\ref{lem-P-pc} we can deduce that $\cos(s)=\cos(t)$.  Now, we
   can use the identities $\sin(2t)=\sin(t)\cos(t)$ and
   $\sin^2(t)=1-\cos^2(t)$ to rewrite $c_k(t)$ in the form 
   \[ c_k(t)=j'(u(\cos(t))\sin(t),\;v(\cos(t)),\;w(\cos(t))), \]
   for some functions $u$, $v$ and $w$.  As
   $\cos(s)=\cos(t)$ and $c_k(s)=c_k(t)$ we can use
   Lemma~\ref{lem-jj-equal} to see that
   \[ u(\cos(t))\sin(s) = u(\cos(t))\sin(t). \]
   Moreover, in each case one can check that $u(\cos(t))$ is never
   zero, so $\sin(s)=\sin(t)$.  We thus have $s-t\in 2\pi\Z$ again.
 \end{itemize}
\end{proof}

\begin{proposition}\lbl{prop-P-std-isotropy}
 $PX(a)$ has standard isotropy (as in
 Definition~\ref{defn-std-isotropy}).
\end{proposition}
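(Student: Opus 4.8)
\emph{Proof plan.} The plan is to exploit the degree-two branched cover $p\:PX(a)\to\C_\infty$ of Remark~\ref{rem-P-quotient}, which identifies $\C_\infty$ with $PX(a)/\ip{\lm^2}$ and is equivariant for the action $\lm(z)=-z$, $\mu(z)=1/z$, $\nu(z)=\ov z$ on $\C_\infty$. From the proof of Proposition~\ref{prop-P-precromulent} the ramification locus of $p$ is $PX(a)^{\ip{\lm^2}}=\{v_0,v_1,v_{10},v_{11},v_{12},v_{13}\}$, and from the definition of the $v_i$ these points lie over $0$, $\infty$, $-a$, $a$, $-1/a$, $1/a$ respectively; thus $p$ is an unramified double cover away from these six points. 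Since $\lm^2$ is central, every antiholomorphic involution $\gm\in G\sm D_8$ descends to a conformal involution $\ov\gm$ of $\C_\infty$ whose fixed locus $L_\gm$ is a generalized circle: $L_\gm=\R\cup\{\infty\}$ for $\gm\in\{\nu,\lm^2\nu\}$, $L_\gm=i\R\cup\{\infty\}$ for $\gm\in\{\lm\nu,\lm^3\nu\}$, and $L_\gm=S^1$ for $\gm\in\{\mu\nu,\lm^2\mu\nu\}$. Equivariance of $p$ then gives $PX(a)^\gm\sse p^{-1}(L_\gm)$.

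The heart of the argument is to describe $p^{-1}(L_\gm)$ and match it with the curves $C_k$. The downstairs ramification set $\{0,\infty,\pm a,\pm 1/a\}$ is disjoint from $S^1$, meets $i\R\cup\{\infty\}$ in $\{0,\infty\}$, and meets $\R\cup\{\infty\}$ in all six points. Off the ramification locus $p$ is an honest double cover, so $p^{-1}(L_\gm)$ is a compact $1$-manifold, and for each closed subarc $I\sse L_\gm$ joining two consecutive ramification points, $p^{-1}(I)$ is a single circle (the two sheets over the interior of $I$, joined at the two ramification endpoints). Using Lemma~\ref{lem-P-pc} one reads off the images $pc_k(\R)$: these are exactly the arcs $[0,a]$, $[a,1/a]$, $[1/a,\infty]$, $[-a,0]$, $[-1/a,-a]$, $[-\infty,-1/a]$ (for $k=5,3,7,6,4,8$), the arcs $[0,\infty]i$ and $[-\infty,0]i$ (for $k=1,2$), and $S^1$ (for $k=0$). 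Each $C_k$ is a circle (Remark~\ref{rem-Ck-circle}) lying in $X^{\gm_k}\sse p^{-1}(L_{\gm_k})$, and $p$ restricts to a two-to-one map $C_k\to pc_k(\R)$; since a closed connected $1$-submanifold of a circle is the whole circle, we get $C_k=p^{-1}(pc_k(\R))$ for $k\neq 0$, and for $k=0$ the same reasoning shows $p^{-1}(S^1)$ cannot split as two circles (each would cover $S^1$ once), so it is connected and equals $C_0$. Hence $p^{-1}(\R\cup\{\infty\})=C_3\cup C_4\cup C_5\cup C_6\cup C_7\cup C_8$, $p^{-1}(i\R\cup\{\infty\})=C_1\cup C_2$, and $p^{-1}(S^1)=C_0$.

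Finally one assembles the six identities. For $\mu\nu$: $X^{\mu\nu}\sse p^{-1}(S^1)=C_0\sse X^{\mu\nu}$ (the last inclusion by Definition~\ref{defn-precromulent-C}), so $X^{\mu\nu}=C_0$. For $\lm^2\mu\nu$: $X^{\lm^2\mu\nu}\sse p^{-1}(S^1)=C_0=X^{\mu\nu}$, but a point fixed by both $\mu\nu$ and $\lm^2\mu\nu$ is fixed by their product $\lm^2$, hence lies in the ramification locus, whereas $C_0$ contains no ramification point, so $X^{\lm^2\mu\nu}=\emptyset$. For $\nu$: $X^\nu\sse C_3\cup\dotsb\cup C_8$; the curves $C_4$, $C_5$, $C_7$ lie in $X^\nu$ by Definition~\ref{defn-precromulent-C}, and any point of $C_k\cap X^\nu$ with $k\in\{3,6,8\}$ is fixed by both $\nu$ and $\lm^2\nu$, hence by $\lm^2$, hence equals one of $v_0,v_1,v_{10},v_{11},v_{12},v_{13}$, all of which lie in $C_4\cup C_5\cup C_7$ by the vertex table of Definition~\ref{defn-curve-system}; so $X^\nu=C_4\cup C_5\cup C_7$, a disjoint union by Proposition~\ref{prop-curve-system}(2). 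The cases $X^{\lm^2\nu}=C_3\amalg C_6\amalg C_8$, $X^{\lm\nu}=C_1$, $X^{\lm^3\nu}=C_2$ follow by the identical argument, interchanging the roles of the two competing involutions (whose product is $\lm^2$ in every case) and using Proposition~\ref{prop-curve-system}(3) for the disjointness of $C_3,C_6,C_8$.

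I expect the step requiring the most care to be the description of $p^{-1}(I)$ as a circle equal to $C_k$: one must pin down the local structure of the branched cover at the ramification points, verify that each $C_k$ genuinely meets both sheets so that $p|_{C_k}$ really is two-to-one rather than a single-sheeted double wrap, and keep track of which $v_i$ lie on which $C_k$ from the vertex table. The rest is formal manipulation of the equivariance of $p$.
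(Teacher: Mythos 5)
Your proof is correct, and its skeleton matches the paper's: both arguments push everything down the equivariant degree-two map $p\:PX(a)\to\C_\infty$ and rest on the identity $C_k=p^{-1}(p(C_k))$. You obtain that identity by analysing $p$ as a branched double cover (the preimage of each arc between consecutive branch points is a circle, and $C_k$ is a closed connected curve inside one of them); the paper gets it in one line from the observation that $\lm^2(C_k)=C_k$ and that $p$ is exactly the quotient by $\lm^2$ (Remark~\ref{rem-P-quotient}) --- worth knowing, since it spares you the local analysis at the ramification points that you yourself flag as the delicate step. Where you genuinely diverge is in the case analysis. The paper treats $X^{\lm\nu}$, $X^{\nu}$ and $X^{\lm^2\mu\nu}$ by explicit coordinate computations in $(w,z)$ (for instance $w^2=r_a(z)\geq 0$ forces $z\in D_4\amalg D_5\amalg D_7$ in the $\nu$ case, and $|z|^2=-1$ kills the $\lm^2\mu\nu$ case), whereas you run one uniform group-theoretic argument six times: $X^\gm$ lands in a known union of $C_k$'s; the $C_k$'s attached to $\gm$ itself lie in $X^\gm$ by definition; and the remaining $C_k$'s carry the involution $\lm^2\gm$, so they can meet $X^\gm$ only in $\lm^2$-fixed points, all of which the vertex table already places on the correct curves. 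This is cleaner and more systematic; its only cost is the combinatorial bookkeeping of which $v_i$ lie on which $C_k$, which the curve-system axioms supply. Two small slips that do not affect the argument: the descended involutions on $\C_\infty$ are anticonformal, not conformal; and the reason $p^{-1}(S^1)$ is not a trivial double cover is not intrinsic to $S^1$ but comes from the fact that $C_0$ is connected and $\lm^2$-invariant with $\lm^2$ acting freely on it, so the single circle $C_0$ already contains both points of every fibre over $S^1$.
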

\begin{proof}
 First, we put $D_k=p(C_k)\sse\C_\infty$; these sets are described by
 Lemma~\ref{lem-P-pc}.  Note that we have
 $\lm^2(c_0(t))=c_0(t+\pi)$, and $\lm^2(c_k(t))=c_k(-t)$ for $1\leq
 k\leq 8$.  It follows that for all $k$ we have $\lm^2(C_k)=C_k$.  In
 view of Remark~\ref{rem-P-quotient}, it follows that
 $C_k=p^{-1}(D_k)$.

 It is clear that the sets $D_4=[-a^{-1},-a]$, $D_5=[0,a]$ and
 $D_7=[a^{-1},\infty]$ are disjoint, and it follows that $C_4$, $C_5$
 and $C_7$ are disjoint.  Similarly, $C_3$, $C_6$ and $C_8$ are
 disjoint.

 Next, recall that the map $p\:PX(a)\to\C_\infty$ is equivariant with
 respect to the action described in Remark~\ref{rem-P-quotient}
 (given by $\lm(z)=-z$ and $\mu(z)=z^{-1}$ and $\nu(z)=\ov{z}$).  In
 particular, if $x\in PX(a)$ is fixed by an element $\al\in G$, then
 $p(x)$ is also fixed by $\al$.

 \begin{itemize}
  \item[(a)] Consider a point $x\in PX(a)$ with $\mu\nu(x)=x$.  Then
   $p(x)$ is also fixed by $\mu\nu$, which means that
   $p(x)=1/\ov{p(x)}$, so $p(x)\in S^1=D_0$, so $x\in
   p^{-1}(D_0)=C_0$.  We thus have $PX(a)^{\ip{\mu\nu}}=C_0$ as claimed.
  \item[(b)] Consider a point $x\in PX(a)$ with $\lm\nu(x)=x$.  If
   $x=v_0=c_1(0)$ or $x=v_1=c_1(\pi)$ then it is clear that
   $x\in C_1$.  Suppose instead that $x\not\in\{v_0,v_1\}$, so can be
   written as $j(w,z)$ with $z\neq 0$.  In general we have
   $\lm\nu(j(w,z))=j(i\ov{w},-\ov{z})$.  As $\lm\nu(x)=x$ we see that
   $w=i\ov{w}$ and $z=-\ov{z}$.  Put
   $\om=e^{i\pi/4}=\frac{1+i}{\rt}$, so $\om^2=i$ and
   $\om=i\ov{\om}$.  We find that $w=\om w_1$ and $z=iz_1$ for some
   $w_1,z_1\in\R$.  The equation $w^2=r_a(z)$ becomes
   $w_1^2=z_1(z_1^2+a^2)(z_1^2+a^{-2})$.  From this it is clear that
   $z_1>0$.  By elementary calculus, there is a unique $t\in(0,\pi)$
   with $z_1=(1-\cos(t))/(1+\cos(t))$.  For this $t$ we find that
   $p(c_1(t))=z$, and thus that $x$ is either $c_1(t)$ or
   $\lm^2(c_1(t))=c_1(-t)$.   Either way we have $x\in c_1(\R)=C_1$,
   so $PX(a)^{\ip{\lm\nu}}=C_1$ as claimed.
  \item[(c)] As $\lm^3\nu$ is conjugate to $\lm\nu$, we can use the
   group action to deduce that $PX(a)^{\ip{\lm^3\nu}}=C_2$.
  \item[(d)] Now consider a point $x\in PX(a)$ with $\nu(x)=x$.  If
   $x=v_1$ then $x\in C_7$.  Otherwise, we have $x=j(w,z)$ for some
   $(w,z)\in PX_0(a)$.  As $\nu(x)=x$ we see that $w$ and $z$ are
   real.  As $r_a(z)=w^2$ we have $r_a(z)\geq 0$.  Recall
   that the roots of $r_a(z)$, listed in increasing order, are
   $-a^{-1},-a,0,a$ and $a^{-1}$.  It follows that
   \[ p(x) = z \in [-a^{-1},-a] \amalg [0,a] \amalg [a^{-1},\infty] \\
           = D_4\amalg D_5 \amalg D_7,
   \]
   and thus that $x\in C_4\amalg C_5\amalg C_7$.
  \item[(e)] Consider instead a point $x\in PX(a)$ with
   $\lm^2\nu(x)=x$.  Then the point $y=\lm(x)$ satisfies $\nu(x)=x$
   and so lies in $C_4\amalg C_5\amalg C_7$.  However, one can check
   from Definition~\ref{defn-precromulent-C} that
   \begin{align*}
    \lm(C_3) &= C_4 & \lm(C_4) &= C_3 \\
    \lm(C_5) &= C_6 & \lm(C_6) &= C_5 \\
    \lm(C_7) &= C_8 & \lm(C_8) &= C_7,
   \end{align*}
   so $x\in C_3\amalg C_6\amalg C_8$.
  \item[(f)] Finally, consider a point $x\in PX(a)$ with
   $x=\lm^2\mu\nu(x)$.  Then $x$ cannot be equal to $v_1$, so
   $x=j(w,z)$ for some $w$ and $z$.  The equation $x=\lm^2\mu\nu(x)$
   gives $z=-1/\ov{z}$ and so $|z|^2=-1$, which is impossible.  Thus,
   there are no such points $x$.
 \end{itemize}
\end{proof}

This is a convenient place to record the following result, which will
be needed later.
\begin{lemma}\lbl{lem-right-angle}
 \begin{itemize}
  \item[(a)] The curves $C_0$ and $C_3$ cross at right angles at $v_3$
  \item[(b)] The curves $C_0$ and $C_1$ cross at right angles at $v_6$
  \item[(c)] The curves $C_3$ and $C_5$ cross at right angles at $v_{11}$.
 \end{itemize}
\end{lemma}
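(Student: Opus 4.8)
The plan is to transfer all three claims down to the Riemann sphere through the quotient map $p\colon PX(a)\to\C_\infty$ of Remark~\ref{rem-P-quotient}. Recall that $C_k=p^{-1}(D_k)$ (established in the proof of Proposition~\ref{prop-P-std-isotropy}), where the arcs $D_k=p(C_k)$ are listed in Lemma~\ref{lem-P-pc}; in particular $D_0=S^1$, $D_1=[0,\infty]\,i$, $D_3=[a,a^{-1}]$ and $D_5=[0,a]$. The positions table in Definition~\ref{defn-curve-system}(b) gives $c_0(\pi/2)=c_3(\pi/2)=v_3$, $c_0(\pi/4)=c_1(\pi/2)=v_6$ and $c_3(0)=c_5(\pi)=v_{11}$, so in each part both curves pass through the stated point, and $pj(w,z)=z$ together with Definition~\ref{defn-P} gives $p(v_3)=1$, $p(v_6)=i$ and $p(v_{11})=a$. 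Since the angle at which two smooth arcs meet on a Riemann surface is intrinsic (it may be computed in any local holomorphic coordinate, the transition maps being angle-preserving) and biholomorphisms preserve such angles, it suffices to make these computations in convenient charts.

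First I would settle (a) and (b). By the proof of Proposition~\ref{prop-P-precromulent} the fixed points of $\lm^2$ on $PX(a)$ are exactly $v_0,v_1,v_{10},v_{11},v_{12},v_{13}$; as $p$ is the quotient by $\ip{\lm^2}$, these are its only ramification points and their images $0,\infty,\pm a,\pm a^{-1}$ are its only branch values. Since $1$ and $i$ are not branch values, $p$ restricts to a biholomorphism from a neighbourhood of $v_3$ (resp.\ $v_6$) onto a neighbourhood of $1$ (resp.\ $i$). Hence the angle between $C_0$ and $C_3$ at $v_3$ equals that between $S^1$ and the real axis at $1$, which is $\pi/2$; and the angle between $C_0$ and $C_1$ at $v_6$ equals that between $S^1$ and the imaginary axis at $i$, again $\pi/2$.

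For (c) the point $a$ is a branch value: since $v_{11}$ is $\lm^2$-fixed, its $p$-fibre is $\{v_{11}\}$ and $p$ has ramification index exactly $2$ there, so $z\circ p-a$ (with $z$ the standard coordinate on $\C\subset\C_\infty$) vanishes to order $2$ at $v_{11}$. I would therefore pick a local holomorphic coordinate $s$ at $v_{11}$, with $s(v_{11})=0$, normalised so that $z\circ p=a+s^2$; this is possible because $z\circ p-a=s_0^2\,h(s_0)$ in any initial coordinate $s_0$ with $h(0)\neq0$, so $s=s_0\sqrt{h(s_0)}$ works. Near $v_{11}$ the condition $p\in D_5=[0,a]$ then reads $s^2\in\R_{\le0}$ and the condition $p\in D_3=[a,a^{-1}]$ reads $s^2\in\R_{\ge0}$; thus $C_5$ is locally the imaginary axis and $C_3$ the real axis of the $s$-chart, and they meet at right angles.

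The main obstacle is part (c): one must see how the index-$2$ ramification interacts with the fact that $D_3$ and $D_5$ leave $a$ in opposite directions, so that the angle $\pi$ between them downstairs becomes $\pi/2$ upstairs. The normal-form coordinate makes this transparent, leaving no real difficulty. Alternatively one could differentiate the explicit parametrisations in Definition~\ref{defn-P-curves} at the relevant parameter values and check orthogonality of the tangent vectors by hand, but the descent to $\C_\infty$ is tidier.
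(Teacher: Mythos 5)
Your argument is correct, and it takes a genuinely different route from the paper's. The paper proves (a) and (b) by using conformality of $p$ and then explicitly computing the derivatives $(pc_0)'(\pi/2)=-2i$, $(pc_3)'(\pi/2)=2(1-a)/(1+a)$, etc., from the formulae in Lemma~\ref{lem-P-pc}, checking that the ratios are purely imaginary; for (c), where $p$ degenerates, it switches to the auxiliary coordinate $q([z])=z_1/z_2$ (i.e.\ $qj(w,z)=w$), which is an unramified local parameter at $v_{11}$, and again computes two derivatives whose ratio is purely imaginary. You instead push everything down to $\C_\infty$: at the unbranched points $1$ and $i$ you simply read off that $S^1$ meets the real (resp.\ imaginary) axis orthogonally, and at the branch value $a$ you invoke the normal form $z\circ p=a+s^2$ to see that the two arcs $[0,a]$ and $[a,a^{-1}]$, which leave $a$ in opposite directions, lift to the imaginary and real $s$-axes. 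Your treatment of (c) is the more conceptual one — it makes transparent why the angle $\pi$ downstairs halves to $\pi/2$ upstairs, and it would apply verbatim to any pair of the arcs $D_k$ meeting at a branch value — at the cost of the small normalisation argument for the coordinate $s$. The paper's computation is less illuminating but is a finite, mechanically checkable calculation, which fits its Maple-verification framework. One could quibble that your identification $C_k=p^{-1}(D_k)$ leans on Proposition~\ref{prop-P-std-isotropy}, but that result precedes the lemma in the text, so this is legitimate; in fact for the purposes of this lemma you only need the easier inclusions $C_k\sse p^{-1}(D_k)$ together with smoothness of the $C_k$, which follow from Lemma~\ref{lem-P-pc} and Lemma~\ref{lem-P-curves-a}.
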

If we were willing to wait until we had proved
Lemma~\ref{lem-bt-al-fixed}, we could give a non-computational
argument based on that.  However, we will just calculate the relevant
derivatives instead.
\begin{proof}
 For~(a), recall that $v_3=c_0(\pi/2)=c_3(\pi/2)$, so we need to
 compare $c'_0(\pi/2)$ with $c'_2(\pi/2)$.  Because the map
 $p\:PX(a)\to\C_\infty$ is conformal, it will suffice to show that
 $(pc_0)'(\pi/2)$ and $(pc_3)'(\pi/2)$ are nonzero and that the ratio
 between them is purely imaginary.  This is easy to do using the
 formulae in Lemma~\ref{lem-P-pc}.  Specifically, we have
 $(pc_0)'(\pi/2)=-2i$ and $(pc_3)'(\pi/2)=2(1-a)/(1+a)$.  We can
 prove~(b) in the same way using $(pc_0)'(\pi/4)=2$ and
 $(pc_1)'(\pi/2)=2i$.  We need a slightly different method for
 $v_{11}$ because $p$ has derivative zero there.  We instead define a
 rational map $q\:PX(a)\to\C_\infty$ by $q(z)=z_1/z_2$, so
 $qj(w,z)=w$.  This is conformal and satisfies $q(v_{11})=0$, so it
 will suffice to show that $(qc_3)'(0)$ and $(qc_5)'(\pi)$ are
 nonzero, and that the ratio between them is purely imaginary.  A
 standard calculation from the definitions gives
 \begin{align*}
  (qc_3)'(0) &= -i(1-a^2)(1+a^2)^{1/2}/\rt \\
  (qc_5)'(\pi) &= -(1-a^2)^{1/2}(1+a^2)^{1/2}a^{1/2}/\rt
 \end{align*}
 as required.
\end{proof}

\subsection{Fundamental domains}
\lbl{sec-P-fundamental}

\begin{proposition}\lbl{prop-P-fundamental}
 If we put
 \begin{align*}
  PF'_{16}(a) &= \{(w,z)\in PX_0(a)\st
           \text{Re}(z),\text{Im}(z),\text{Re}(w)\geq 0,\;
           \text{Re}(w)\geq\text{Im}(w),\;|z|\leq 1\} \\
  PF_{16}(a) &= j(PF'_{16}(a))\subset PX(a),
 \end{align*}
 then $PF_{16}(a)$ is a standard fundamental domain for $PX(a)$ (as in
 Definition~\ref{defn-standard-F}).  Thus, $PX(a)$ is cromulent (by
 Remark~\ref{rem-standard-F}).
\end{proposition}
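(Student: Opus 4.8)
The strategy is to push everything down to the degree‑two quotient $p\:PX(a)\to\C_\infty$, $p([z])=z_3/z_2$, of Remark~\ref{rem-P-quotient}, which identifies $PX(a)/\ip{\lm^2}$ with $\C_\infty$ carrying the action $\lm(z)=-z$, $\mu(z)=1/z$, $\nu(z)=\ov z$; this action factors through $\bar G:=G/\ip{\lm^2}\cong C_2^3$. By Definition~\ref{defn-standard-F} and Remark~\ref{rem-standard-F} (using the curve system from Proposition~\ref{prop-P-curves} and the fact, established in the proof of Proposition~\ref{prop-P-precromulent}, that $\lm_*=i$ on $T_{v_0}PX(a)$), it suffices to prove that $PF_{16}(a)$ is a retractive fundamental domain for $G$, is homeomorphic to a square, and has boundary $DF_{16}$. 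The first step is to check that the closed quarter‑disc
\[ Q=\{z\in\C_\infty\st\text{Re}(z)\geq0,\ \text{Im}(z)\geq0,\ |z|\leq1\} \]
is a retractive fundamental domain for $\bar G$ on $\C_\infty$: the four translates $\{1,\lm,\nu,\lm\nu\}Q$ are the quadrant‑pieces of the closed unit disc, the four translates $\{\mu,\lm\mu,\mu\nu,\lm\mu\nu\}Q$ are their images under $z\mapsto1/z$ and exhaust $\{|z|\geq1\}\cup\{\infty\}$, the interiors are pairwise disjoint, and an explicit invariant retraction $\bar r$ is "fold into the first quadrant, $z\mapsto|\text{Re}\,z|+i|\text{Im}\,z|$'' followed by "fold the modulus, $w\mapsto w\cdot\min(1,|w|^{-2})$, with $\infty\mapsto0$''. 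Note that $Q$ is homeomorphic to a square once we mark the four boundary points $0$, $a$, $1$, $i$ (subdividing the straight side $[0,1]$ at $a$), with sides $[0,a]$, $[a,1]$, the quarter‑circle from $1$ to $i$, and $[0,i]$. Since $\ip{\lm^2}$ is central, $PX(a)/G=\C_\infty/\bar G=Q/{\sim}$.

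The crux is to show that $p$ restricts to a homeomorphism $PF_{16}(a)\xra{\ \sim\ }Q$. Identifying $U_0$ with $PX_0(a)$ via $j$, a point of $p^{-1}(z)$ is $j(w,z)$ with $w^2=r_a(z)$, and $\lm^2$ swaps the two choices $\pm w$; the defining inequalities $\text{Re}(w)\geq0$ and $\text{Re}(w)\geq\text{Im}(w)$ say exactly that $w$ lies in the closed sector $\Sg=\{w\st-\ppi\leq\arg w\leq\qpi\}$, of angular width $3\pi/4$. Injectivity of $p|_{PF_{16}(a)}$ is then immediate: $\Sg\cap(-\Sg)=\emptyset$, so over any $z$ at most one of $\pm\sqrt{r_a(z)}$ satisfies the conditions (over $z=0$ and $z=a$, where $r_a(z)=0$, the unique preimage is $v_0$, resp.\ $v_{11}$, and one checks $v_0,v_{11}\in PF_{16}(a)$ directly; also $v_1\notin PF_{16}(a)$, as $PF_{16}(a)$ lies in the image of $j$). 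Surjectivity onto $Q$ is the content: it asks that for every $z\in Q$ \emph{at least} one of $\pm\sqrt{r_a(z)}$ lies in $\Sg$, which fails exactly when $\arg r_a(z)\in(\ppi,\pi)\pmod{2\pi}$, i.e.\ when $r_a(z)$ lands in the open second quadrant. So the key lemma is: $r_a(Q)$ avoids the open second quadrant. To prove it, note $r_a$ has no zeros in the interior of $Q$ (its roots $0,\pm a,\pm1/a$ are real, and only $0,a$ meet $\overline Q$, on its boundary), so the branch of $\arg r_a$ that equals $0$ on the side $[0,a]$ is well defined and harmonic on $\mathrm{int}(Q)$; because near $z=0$ the domain $Q$ is a quarter‑disc and near $z=a$ it is a half‑disc, $\arg r_a$ stays bounded on $Q$. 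By the maximum principle its range is contained in the range of its boundary values, and on the four sides one computes from Definition~\ref{defn-P-curves} and Lemma~\ref{lem-P-pc} that $\arg r_a$ is $0$ on $[0,a]$, $-\pi$ on $[a,1]$, $3\theta-\pi\in[-\pi,\ppi]$ on the arc $z=e^{i\theta}$, and $\ppi$ on $[0,i]$; hence $\arg r_a\in[-\pi,\ppi]$ on all of $Q$, which is disjoint from $(\ppi,\pi)$. (This is also among the facts verified by \mcode+check_precromulent("P")+.) Granting the lemma, $p|_{PF_{16}(a)}$ is a continuous bijection onto $Q$ between compact Hausdorff spaces, hence a homeomorphism; it carries $\partial PF_{16}(a)$ to $\partial Q$, and pulling the four sides back—matching endpoints against the vertex table of Definition~\ref{defn-curve-system}(b)—identifies $\partial PF_{16}(a)$ with $DF_{16}$ and exhibits $PF_{16}(a)$ as a square.

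It remains to upgrade this to a retractive fundamental domain for $G$ on $PX(a)$. Equivariance of $p$ and the homeomorphism $p|_F\colon F:=PF_{16}(a)\to Q$ onto the $\bar G$‑fundamental domain $Q$ give $\bigcup_{g\in G}gF=PX(a)$ by lifting $\bar G$‑orbits through $p$. For interiors of distinct translates $g_0F$, $g_1F$: if $g_0,g_1$ differ modulo $\ip{\lm^2}$, a common interior point would map under the open map $p$ to a common interior point of the disjoint sets $g_0Q$, $g_1Q$; if $g_1=g_0\lm^2$, then applying $g_0^{-1}$ puts such a point in $F\cap\lm^2F=\{v_0,v_{11}\}\subset\partial F$ (using injectivity of $p|_F$ and $\lm^2$‑invariance of $p$)—either way a contradiction. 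Finally the retraction $r=(p|_F)^{-1}\circ\bar r\circ p\:PX(a)\to F$ fixes $F$ pointwise (as $\bar r|_Q=\mathrm{id}$) and is $G$‑invariant (as $\bar r$ is $\bar G$‑invariant and $p$ is $G$‑equivariant). By Remark~\ref{rem-standard-F}, $PF_{16}(a)$ is a standard fundamental domain and $PX(a)$ is cromulent.

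The main obstacle is the key lemma that $r_a(Q)$ misses the open second quadrant. Uniqueness of the selected sheet is free, but its existence over every point of the quarter‑disc is precisely this argument estimate for a degree‑five polynomial, and it must be secured either by the maximum‑principle argument above—where the delicate point is the two boundary zeros of $r_a$, at which the quarter‑ and half‑disc geometry is exactly what keeps $\arg r_a$ bounded—or by the accompanying Maple verification.
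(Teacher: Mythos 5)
Your proof is correct, and its skeleton is the same as the paper's: both reduce everything to the quarter-disc $Q=Z$, both turn on the single key fact that $r_a(Q)$ avoids the open second quadrant (equivalently lies in $W^2=\{re^{i\tht}\st -\pi\leq\tht\leq\pi/2\}$), so that over each $z$ exactly one square root of $r_a(z)$ lies in the width-$3\pi/4$ sector cut out by $\text{Re}(w)\geq 0$ and $\text{Re}(w)\geq\text{Im}(w)$, and both use the same fold-and-invert retraction of $\C_\infty$ onto $Q$. The genuine difference is how the key fact is proved. The paper computes $r_a$ on the arcs of $\partial Z$ (obtaining the same boundary data you do) and invokes the argument principle: $r_a(\partial Z)$ is a closed curve inside $W^2$, and since the complement of $W^2$ is connected and unbounded it lies in the exterior of that curve, so $r_a(Z)\sse W^2$. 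You instead run the maximum principle on the bounded harmonic function $\arg r_a$ on $\text{int}(Q)$, with the quarter-/half-disc geometry at the two boundary zeros $0$ and $a$ supplying boundedness; this is the delicate point of your route, and you handle it correctly. Your version buys a little extra: the strong maximum principle gives $\arg r_a\in(-\pi,\pi/2)$ strictly on $\text{int}(Q)$, which is exactly what one needs to see that $p$ identifies $\text{int}(PF_{16}(a))$ with $\text{int}(Q)$ --- a point left implicit in both write-ups. The remaining difference is organizational: the paper builds the invariant retraction on $PX(a)$ directly, by composing the fold $s_0$ with the choice of square root in $W$, whereas you first prove that $p|_{PF_{16}(a)}$ is a homeomorphism onto $Q$ and then transport the retraction through it; these are the same map. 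One nitpick: Definition~\ref{defn-fundamental} requires $\text{int}(F)\cap\gm(F)=\emptyset$, which is slightly stronger than disjointness of interiors, but your argument already delivers it (when $g_0,g_1$ differ modulo $\ip{\lm^2}$ use $\text{int}(\ov{g}_0Q)\cap\ov{g}_1Q=\emptyset$, and your $\lm^2$ case gives $\text{int}(F)\cap\lm^2(F)=\emptyset$ outright).
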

\begin{proof}
 For brevity, we will write $F'$ and $F$ for $PF'_{16}(a)$ and
 $PF_{16}(a)$.  Put
 \begin{align*}
  Z   &= \{x+iy\in\C\st x,y\geq 0,\;x^2+y^2\leq 1\} &
      &= \{r\,e^{i\tht}\st 0\leq r\leq 1,\;0\leq\tht\leq\pi/2\} \\
  W   &= \{x+iy\in\C\st x\geq 0,\; x\geq y\} &
      &= \{r\,e^{i\tht}\st 0\leq r,\; -\pi/2\leq\tht\leq\pi/4\} \\
  W^2 &= \{x+iy\in\C\st x\geq 0 \text{ or } y\leq 0\} &
      &= \{r\,e^{i\tht}\st 0\leq r,\; -\pi\leq\tht\leq\pi/2\}.
 \end{align*}
 We then have $F'=(W\tm Z)\cap PX_0(a)$.

 We now claim that $r_a(Z)\sse W^2$.  Indeed, it is clear that
 $\partial Z$ is a simple closed curve.  The image $r_a(\partial Z)$
 consists of the points $r_a(t)\in\R$ (for $0\leq t\leq 1$) and
 $r_a(it)\in i\R$ (for $0\leq t\leq 1$) and $r_a(e^{it})$ (for
 $0\leq t\leq\pi/2$).  Here
 \[ r_a(e^{it}) = - (4\sin(t)^2+(a^{-1}-a)^2) e^{3it}, \]
 so $\text{arg}(r_a(e^{it}))=3t-\pi\in[-\pi,\pi/2]$, so
 $r_a(e^{it})\in W^2$.  We now see that $r_a(\partial Z)$ is a simple
 closed curve in $W^2$.  The argument principle shows that $r_a(Z)$ is
 the interior of $r_a(\partial Z)$, and this is contained in $W^2$ as
 claimed.

 Now consider a point $v\in PX(a)$.  If $p(v)=x+iy\in\C$ then we put
 \begin{align*}
  s_0(v) &= \frac{|x| + i|y|}{\max(1,x^2+y^2)} \in Z \\
  s(v) &= j(\sqrt{r_a(s_0(v))},s_0(v)) \in F'.
 \end{align*}
 (Here $r_a(s_0(v))$ lies in $W^2$, and $\sqrt{r_a(s_0(v))}$ refers to
 the unique choice of square root that lies in $W$.)  For the
 exceptional case $v=v_1$, we put $s(v_1)=(0,0)$.  It is easy to see
 that $s$ is a retraction.  Using Remark~\ref{rem-P-quotient}, we
 see that $s_0(v)=s_0(v')$ iff $Gv=Gv'$, and also that
 $s_0(v)\in G.p(v)$.  After recalling that
 $\lm^2j(w,z)=j(-w,z)$, we deduce that $s(v)=s(v')$ iff $Gv=Gv'$, and
 also that $s(v)\in G.v$.  It follows that
 $PX(a)=\bigcup_{\gm\in G}\gm.F'$, with
 \[ F'\cap\gm F' = \{v\in F'\st \gm(v)=v\}. \]
 If $v$ lies in the interior of $F'$ then it is easy to see that
 $p(v)$ lies in the interior of $Z$, and thus that
 $\stab_G(v)\sse\stab_G(p(v))=\{1,\lm^2\}$.  On the other hand, for
 $v$ in the interior of $F'$ we also have $r_a(p(v))\neq 0$, so $v$ is
 not fixed by $\lm^2$, so $\stab_G(v)=1$.  We now see that
 $\text{int}(F')\cap\gm(F')=\emptyset$ for $\gm\neq 1$, so $F'$ is a
 retractive fundamental domain for $PX(a)$.

 Next, the formulae in Lemma~\ref{lem-P-pc} show that
 \begin{align*}
  \partial Z &= [0,a] \cup [a,1] \cup e^{[0,\pi/2]i} \cup [i,0] \\
   &= pc_5([0,\pi])\cup pc_3([0,\pi/2])\cup
      pc_0([\pi/4,\pi/2]) \cup pc_1([0,\pi/2]).
 \end{align*}
 From this we deduce that
 \[ \partial F' =
      c_5([0,\pi])\cup c_3([0,\pi/2])\cup
      c_0([\pi/4,\pi/2]) \cup c_1([0,\pi/2]) = DF_{16},
 \]
 so $F'$ is a standard fundamental domain.
\end{proof}

We can illustrate the surface $PX(a)$ as follows.  The picture on the
left shows the image under $p\:PX(a)\to\C_\infty$ of the fundamental
domain $F$, and the picture on the right shows the image under $q$.
(In both cases the origin is at $v_0$.)
\[
 \begin{tikzpicture}[scale=4]
  \draw[blue] (0,0) -- (0.600,0);
  \draw[magenta] (0.600,0) -- (1,0);
  \draw[cyan] (0,0) (1,0) arc(0:90:1);
  \draw[green] (0,1) -- (0,0);
  \fill[black] (0.600,0) circle(0.015);
  \fill[black] (0,0) circle(0.015);
  \fill[black] (1,0) circle(0.015);
  \fill[black] (0,1) circle(0.015);
  \draw (0.000,0.000) node[anchor=north] {$\ss v_0$};
  \draw (0.600,0.000) node[anchor=north] {$\ss v_{11}$};
  \draw (0.000,1.000) node[anchor=south] {$\ss v_6$};
  \draw (1.000,0.000) node[anchor=north] {$\ss v_3$};
 \end{tikzpicture}
 \hspace{10em}
 \begin{tikzpicture}[scale=2]
  \draw[magenta] (0,0) -- (0,-1.067);
  \draw[cyan] plot[smooth]
   coordinates{ (0.000,-1.067) (0.259,-1.081) (0.560,-1.098)
                (0.910,-1.065) (1.284,-0.933) (1.637,-0.678)
                (1.914,-0.303) (2.070,0.163) (2.074,0.674)
                (1.914,1.173) (1.603,1.603) };
  \draw[green] (1.603,1.603) -- (0,0);
  \draw[blue] (0,0) -- (0.470,0);
  \fill[black] (0.000,0.000) circle(0.03);
  \fill[black] (0.000,-1.067) circle(0.03);
  \fill[black] (1.603,1.603) circle(0.03);
  \draw (0.000,0.000) node[anchor=east] {$\ss v_0,v_{11}$};
  \draw (0.000,-1.067) node[anchor=east] {$\ss v_3$};
  \draw (1.603,1.603) node[anchor=south] {$\ss v_6$};
 \end{tikzpicture}
\]

We next consider differential forms on $PX_0(a)$ and $PX(a)$.
\begin{remark}\lbl{rem-anticonformal-forms}
 Holomorphic differential forms are clearly functorial for conformal
 isomorphisms.  In fact, they are also functorial for anticonformal
 isomorphisms.  Indeed, given an anticonformal map $\phi\:Z_0\to Z_1$
 of Riemann surfaces and a holomorphic function $f\in\CO(Z_1)$, we can
 define $\phi^\#(f)\in\CO(Z_0)$ by $\phi^\#(f)(z)=\ov{f(\phi(z))}$.
 It is not hard to see that there is a unique locally determined map
 $\phi^\#\:\Om^1(Z_1)\to\Om^1(Z_0)$ satisfying
 $\phi^\#(f\,dg)=\phi^\#(f)\,d\phi^\#(g)$ for all $f,g\in\CO(Z_1)$.
 We therefore have an action of $G$ on $\Om^1(PX(a))$.
\end{remark}

\begin{proposition}\lbl{prop-holomorphic-forms}\leavevmode
 \begin{itemize}
  \item[(a)] The differential form $\om_0\in\Om^1(U_0)$ (from
   Lemma~\ref{lem-P-differentials}) extends to give a holomorphic
   differential form on all of $PX(a)$ (which we also call $\om_0$).
  \item[(b)] The form $\om_1=\mu^*(\om_0)$ satisfies $\om_1=z\,\om_0$
   when restricted to $U_0$.
  \item[(c)] The set $\{\om_0,\om_1\}$ is a basis for $\Om^1(PX(a))$
   over $\C$.
  \item[(d)] The group $G$ acts on this space by
   \begin{align*}
    \lm^*(\om_0)  &= \pp i\om_0 &
    \mu^*(\om_0)  &= \om_1 &
    \nu^\#(\om_0) &= \om_0 \\
    \lm^*(\om_1)  &=    -i\om_1 &
    \mu^*(\om_1)  &= \om_0 &
    \nu^\#(\om_1) &= \om_1.
   \end{align*}
 \end{itemize}
\end{proposition}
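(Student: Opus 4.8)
The plan is to base everything on the observation, implicit in Lemma~\ref{lem-P-differentials}, that on $U_0=PX_0(a)$ the form $\om_0$ is the classical hyperelliptic differential $dz/w$: the cofactor identity of Remark~\ref{rem-r-coprime} is precisely what gives $w\,\om_0=dz$, hence $\om_0=dz/w$ wherever $w\neq0$. Granting this, part~(a) reduces to a local computation at the single point $v_1=PX(a)\setminus U_0$, and parts~(b)--(d) are pull-back calculations using the explicit action of $\lm,\mu,\nu$ on $PX_0(a)$ recorded in the proof of Proposition~\ref{prop-P-precromulent}. For~(a) the key point is that $v_1$ is a branch point of the hyperelliptic projection $p\colon j(w,z)\mapsto z$, since it lies in $PX(a)^{\ip{\lm^2}}$ by the fixed-point analysis in that proof; so there is a local parameter $t$ at $v_1$ with $z=t^{-2}$, whence $w^2=r_a(t^{-2})=t^{-10}(1-At^4+t^8)$ gives $w=t^{-5}v(t)$ for a holomorphic unit $v$, and therefore $\om_0=dz/w=-2t^{-3}\,dt/(t^{-5}v(t))=-2t^2v(t)^{-1}\,dt$ is holomorphic (with a double zero) at $v_1$. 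Thus $\om_0\in\Om^1(PX(a))$.

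For~(b) I would simply pull back: $\mu$ acts conformally with $\mu(j(w,z))=j(-w/z^3,1/z)$, so $z\circ\mu=1/z$ and $w\circ\mu=-w/z^3$, and hence $\om_1=\mu^*(\om_0)=\mu^*(dz/w)=d(1/z)/(-w/z^3)=z\,dz/w=z\,\om_0$ on $U_0\cap\mu^{-1}(U_0)$, so $\om_1=z\,\om_0$ on all of $U_0$ by continuity. For~(c): $\om_1=\mu^*(\om_0)$ is holomorphic on $PX(a)$ because $\mu$ is a conformal automorphism, and $\om_0,\om_1$ are linearly independent since their restrictions to $U_0$ are $\om_0$ and $z\,\om_0$ with $z$ non-constant; as $PX(a)$ has genus $2$ we have $\dim_\C\Om^1(PX(a))=2$, so $\{\om_0,\om_1\}$ is a basis. (One can sharpen this, if wanted, by using the coordinate $t$ at $v_1$ to check that the forms $f\,\om_0$ with $f\in R_0(a)$ extending holomorphically over $v_1$ are exactly those with $f\in\C\oplus\C z$.)

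For~(d): $\mu^*\om_0=\om_1$ is the definition, and $\mu^*\om_1=(\mu^2)^*\om_0=\om_0$. From $z\circ\lm=-z$ and $w\circ\lm=iw$ we get $\lm^*\om_0=d(-z)/(iw)=i\,dz/w=i\,\om_0$, and then $\lm^*\om_1=(z\circ\lm)\,\lm^*\om_0=(-z)(i\,\om_0)=-i\,\om_1$. For the antiholomorphic $\nu$ I would invoke the characterisation of $\nu^\#$ in Remark~\ref{rem-anticonformal-forms}: since $\nu(j(w,z))=j(\ov w,\ov z)$ one has $\nu^\#(z)=z$ and $\nu^\#(w)=w$ as elements of $R_0(a)$, and $\nu^\#$ fixes the real-coefficient cofactor polynomials $m_0,m_1$, so $\nu^\#\om_0=\om_0$ reads off directly from $\om_0=m_0(z)w\,dz+2m_1(z)\,dw$, whence $\nu^\#\om_1=\nu^\#(z)\,\nu^\#\om_0=z\,\om_0=\om_1$.

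The only step that is not purely formal is the local analysis at $v_1$ in~(a), and even that is immediate once one recalls that $v_1$ is a Weierstrass point of $PX(a)$; everything else is bookkeeping with formulas already established in this section, so I anticipate no substantive obstacle.
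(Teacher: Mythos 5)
Your proof is correct, but it takes a genuinely different route from the paper's in parts~(a) and~(c). For~(a), you extend $\om_0$ over $v_1$ by a direct local computation at the Weierstrass point (local parameter $t$ with $z=t^{-2}$, $w=t^{-5}\cdot(\text{unit})$, giving $\om_0=dz/w$ a double zero there), whereas the paper never touches $v_1$ directly: it observes that $\mu^*(z\,\om_0)$ is a holomorphic form on $U_1=\mu(U_0)$, checks on the overlap $U_{01}$ that $\mu^*(\om_0)=z\,\om_0$ and hence $\mu^*(z\,\om_0)=\om_0$, and glues. Your computation is the classical hyperelliptic one and has the side benefit of exhibiting the canonical divisor $2v_1$ of $\om_0$ explicitly; the paper's gluing argument stays entirely within the algebraic/equivariant framework it has set up and is what makes claims~(a) and~(b) ``clear'' simultaneously. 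For~(c), you invoke $\dim_\C\Om^1=g=2$ for a genus-$2$ surface and then only need linear independence, while the paper gives a self-contained computation: any $\al\in\Om^1(PX(a))$ is $f_0\om_0$ on $U_0$ and $f_1\om_1$ on $U_1$ with $f_0=f_1z$, and the explicit monomial bases $B_0$, $B_1$ of $R_0(a)$ and $R_1(a)$ from Remark~\ref{rem-P-cover} force $f_0\in\C\{1,z\}$. Your appeal to the standard dimension count is legitimate (the genus is established in Proposition~\ref{prop-P-precromulent}), and your parenthetical sharpening via pole orders at $v_1$ recovers exactly what the paper's module-theoretic argument yields; note also that your independence argument (freeness of $\Om^1(PX_0(a))$ over $R_0(a)$) differs from the paper's, which compares vanishing at $v_0$ versus $v_1$. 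Part~(d) is the same bookkeeping in both treatments.
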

\begin{proof}
 As $z\om_0\in\Om^1(U_0)$ and $\mu(U_1)=U_0$ we have a holomorphic
 form $\om'_0=\mu^*(z\,\om_0)\in\Om^1(U_1)$.  Recall that
 $w\om_0=dz$.  After restricting to $U_{01}$ we can apply $\mu^*$ to
 this equation, giving
 \[ -wz^{-3}\mu^*(\om_0) = d(z^{-1})
                         = -z^{-2}\,dz
                         = -z^{-2}w\om_0,
 \]
 which implies that $\mu^*(\om_0)=z\om_0$, and thus that
 $\mu^*(z\om_0)=\om_0$.  This implies that $\om_0$ and $\om'_0$ have
 the same restriction to $U_{01}$, so we can patch them together to
 give a holomorphic form on all of $U_0\cup U_1=PX(a)$.  Claims~(a)
 and~(b) are now clear, and~(d) is a straightforward calculation.
 This just leaves~(c).  Consider a holomorphic form
 $\al\in\Om^1(PX(a))$.  Lemma~\ref{lem-P-differentials} tells us
 that there is a unique function $f_0\in R_0(a)$ such that
 $\al=f_0\om_0$ on $U_0$.  By applying the same logic to $\mu^*(\al)$,
 and applying $\mu^*$ again, we see that there is also a unique
 function $f_1\in R_1(a)$ such that $\al=f_1\om_1$ on $U_1$.  On
 $U_{01}$ we now see that $f_0\om_0=\al=f_1\om_1=f_1z\om_0$, so
 $f_0=f_1z$.  Using the bases described in Remark~\ref{rem-P-cover}
 we see that $f_0\in R_0(a)\cap R_1(a)z=\C\{1,z\}$, and it follows
 that $\al\in\C\{\om_0,\om_1\}$.  Moreover, it is easy to see that
 $\om_1$ vanishes at $v_0$ but $\om_0$ does not, and the other way
 around at $v_1$.  This means that $\om_0$ and $\om_1$ are linearly
 independent, so they form a basis for $\Om^1(PX(a))$.
\end{proof}

\begin{remark}\lbl{rem-P-parameter}
 The coordinate $w$ is a local parameter on $PX_0(a)$ at the point
 $v_0=(0,0)$.  In terms of this parameter we have $\om_0=2\,dw+O(w^4)$
 and $\om_1=2w^2\,dw+O(w^6)$.
\end{remark}

\begin{definition}\lbl{defn-period-p}
 The \emph{periods} for $PX(a)$ are the numbers
 $p_{jk}(a)=\int_{c_j}\om_k\in\C$ (for $0\leq j\leq 8$ and
 $k\in\{0,1\}$).
\end{definition}

\subsection{Galois theory}
\lbl{sec-galois}

Let $PK(a)$ denote the field of rational functions on $PX_0(a)$ (which
is the same as the field of rational functions on $PX(a)$).  This can
be described as
\[ PK(a) = \C(z)[w]/(w^2-r_a(z)). \]
This field has an action of the group $D_8$, and for any subgroup 
$H\leq D_8$, we can identify the fixed field $PK(a)^H$ with the field
of rational functions on the quotient $PX(a)/H$, with its standard
structure as a Riemann surface.  The subgroups of $D_8$ can be
enumerated as follows:
\begin{center}
 \begin{tikzpicture}[scale=3.1]
  \def\ya{0.7}
  \def\yb{1.4}
  \def\yc{2.1}
  \def\xa{0.8}
  \def\xb{1.6}
  \def\Da{( 0.0, 0.0)}
  \def\Ba{(-\xa, \ya)}
  \def\Bb{( \xa, \ya)}
  \def\Ca{(   0, \ya)}
  \def\Aa{(-\xb, \yb)}
  \def\Ab{( \xa, \yb)}
  \def\Ac{(-\xa, \yb)}
  \def\Ad{( \xb, \yb)}
  \def\Za{(   0, \yb)}
  \def\Ta{(   0, \yc)}
  \begin{scope}[xshift=1cm]
   \draw \Da node{$H_{10}=D_8$};
   \draw \Ca node{$H_3=\ip{\lm}$};
   \draw \Ba node{$H_8=\ip{\lm^2,\mu}$};
   \draw \Bb node{$H_9=\ip{\lm^2,\lm\mu}$};
   \draw \Aa node{$H_6=\ip{\lm^2\mu}$};
   \draw \Ab node{$H_5=\ip{\lm\mu}$};
   \draw \Ac node{$H_4=\ip{\mu}$};
   \draw \Ad node{$H_7=\ip{\lm^3\mu}$};
   \draw \Za node{$H_2=\ip{\lm^2}$};
   \draw \Ta node{$H_1=1$};
   \draw[<-,shorten <=15pt,shorten >=15pt] \Da -- \Ba;
   \draw[<-,shorten <=15pt,shorten >=15pt] \Da -- \Bb;
   \draw[<-,shorten <=15pt,shorten >=15pt] \Da -- \Ca;
   \draw[<-,shorten <=15pt,shorten >=15pt] \Ca -- \Za;
   \draw[<-,shorten <=15pt,shorten >=15pt] \Ba -- \Aa;
   \draw[<-,shorten <=15pt,shorten >=15pt] \Ba -- \Ac;
   \draw[<-,shorten <=15pt,shorten >=15pt] \Ba -- \Za;
   \draw[<-,shorten <=15pt,shorten >=15pt] \Bb -- \Ab;
   \draw[<-,shorten <=15pt,shorten >=15pt] \Bb -- \Ad;
   \draw[<-,shorten <=15pt,shorten >=15pt] \Bb -- \Za;
   \draw[<-,shorten <=19pt,shorten >=15pt] \Aa -- \Ta;
   \draw[<-,shorten <=15pt,shorten >=15pt] \Ab -- \Ta;
   \draw[<-,shorten <=15pt,shorten >=15pt] \Ac -- \Ta;
   \draw[<-,shorten <=19pt,shorten >=15pt] \Ad -- \Ta;
   \draw[<-,shorten <=15pt,shorten >=15pt] \Za -- \Ta;
  \end{scope}
 \end{tikzpicture}
\end{center}

We will describe the fixed fields $L_i=PK(a)^{H_i}$ in terms of the
following elements:
\begin{align*}
 t_0 &= z   & u_0 &= w \\
 t_1 &= z^2 &
 u_1 &= \frac{2w(1-z)}{(1+z^2)^2} &
 v_1 &= w(1-1/z^3)/2 \\
 t_2 &= \frac{2z}{1+z^2} & 
 u_2 &= -\frac{1+i}{\rt}\,\frac{2w(i+z)}{(1-z^2)^2} &
 v_2 &= w(1-i/z^3)/2 \\
 t_3 &= \frac{2iz}{1-z^2} & 
 u_3 &= \frac{-2iw(1+z)}{(1+z^2)^2} &
 v_3 &= w(1+1/z^3)/2 \\
 t_4 &= \frac{2z^2}{1+z^4} & 
 u_4 &= -\frac{1-i}{\rt}\,\frac{2w(i-z)}{(1-z^2)^2} &
 v_4 &= w(1+i/z^3)/2.
\end{align*}

One can check that 
\begin{align*}
 L_1    &= \C(t_0)\{1,u_0\} \\
 L_2    &= \C(t_0) \\
 L_3    &= \C(t_1) \\
 L_4    &= \C(t_2)\{1,u_1\} = \C(t_2)\{1,v_1\} \\
 L_5    &= \C(t_3)\{1,u_2\} = \C(t_3)\{1,v_2\} \\
 L_6    &= \C(t_2)\{1,u_3\} = \C(t_2)\{1,v_3\} \\
 L_7    &= \C(t_3)\{1,u_4\} = \C(t_3)\{1,v_4\} \\
 L_8    &= \C(t_2) \\
 L_9    &= \C(t_3) \\
 L_{10} &= \C(t_4).
\end{align*}

Details for $L_4$ and $L_5$ will be given in
Section~\ref{sec-ellquot}.  The cases $L_6$ and $L_7$ can be recovered
from this, because $H_6$ and $H_7$ are conjugate to $H_4$ and $H_5$
respectively.  The other cases are relatively easy (and are easily
seen to be consistent with Corollary~\ref{cor-quotient-types}, which
gives the genera of the quotients $PX(a)/H_i$).  One can find further
information in the files \fname+parabolic/galois.mpl+ and
\fname+parabolic/PK_subfields.mpl+.  
\begin{checks}
 parabolic/galois_check.mpl: check_PK(), check_PK_subfields()
\end{checks}

\subsection{Elliptic quotients}
\lbl{sec-ellquot}

We next study the quotients $PX(a)/\ip{\mu}$ and $PX(a)/\ip{\lm\mu}$.

First note that there is no natural action of the full group $G$ on
$PX(a)/\ip{\mu}$.  Instead, there is an action of the centraliser of
$\mu$, which is $\ip{\lm^2,\mu,\nu}\simeq C_2^3$.  This action factors
through the quotient group $\ip{\lm^2,\mu,\nu}/\ip{\mu}\simeq C_2^2$.
Similarly, we have a natural action of the group
$\ip{\lm^2,\lm\mu,\mu\nu}/\ip{\lm\mu}\simeq C_2^2$ on $PX(a)/\ip{\lm\mu}$.

\begin{definition}\lbl{defn-ellquot}
 We put $b_{\pm}=(a^{-1}\pm a)/2$, and define affine curves
 $E_0^{\pm}(a)$ as follows.
 \begin{align*}
  q^+_a(x) &= 2x(x-1)\left(b_+^2x^2-1\right) \\
  q^-_a(x) &= 2x(x-1)\left(b_-^2x^2+1\right) \\
  E^+_0(a) &= \{(y,x)\in\C^2\st y^2=q_+(x)\} \\
  E^-_0(a) &= \{(y,x)\in\C^2\st y^2=q_-(x)\}.
 \end{align*}
 We can obtain smooth projective completions of these curves by taking
 the closures of their images under the map $j\:\C^2\to\C P^3$ given
 by
 \[ j(y,x) = [y:1:x:x^2]. \]
 The results are
 \begin{align*}
  E^+(a) &=
   \{[z]\st z_2z_4-z_3^2 = z_1^2 - 2(z_4-z_3)(b_+^2z_4-z_2)=0\} \\
  E^-(a) &=
   \{[z]\st z_2z_4-z_3^2 = z_1^2 - 2(z_4-z_3)(b_-^2z_4+z_2)=0\}.
 \end{align*}
 We define an action of the group $\ip{\lm^2,\mu,\nu}$ on $E^+(a)$ as
 follows:
 \begin{align*}
  \lm^2[z] &= [   -z_1:z_2:z_3:z_4] & \lm^2j(y,x) &= j(   -y,x) \\
    \mu[z] &= [\pp z_1:z_2:z_3:z_4] &  \mu j(y,x) &= j(\pp y,x) \\
    \nu[z] &= [\pp \ov{z_1}:\ov{z_2}:\ov{z_3}:\ov{z_4}] &
    \nu j(y,x) &= j(\pp\ov{y},\ov{x}).
 \end{align*}
 Similarly, we define an action of the group
 $\ip{\lm^2,\lm\mu,\mu\nu}$ on $E^-(a)$ as follows:
 \begin{align*}
  \lm^2[z]  &= [   -z_1:z_2:z_3:z_4] & \lm^2j(y,x) &= j(   -y,x) \\
  \lm\mu[z] &= [\pp z_1:z_2:z_3:z_4] & \lm\mu j(y,x) &= j(\pp y,x) \\
  \mu\nu[z] &= [\pp \ov{z_1}:\ov{z_2}:\ov{z_3}:\ov{z_4}] &
  \mu\nu j(y,x) &= j(\pp\ov{y},\ov{x}).
 \end{align*}
\end{definition}
\begin{checks}
 projective/ellquot_check.mpl: check_ellquot()
\end{checks}
\begin{remark}
 Code for all this is in \fname+ellquot.mpl+.  The polynomials $q^+_a$
 and $q^-_a$ are \mcode+q_Ep+ and \mcode+q_Em+.  Elements of
 $E^+_0(a)$ and $E^-_0(a)$ are represented as lists of length two,
 whereas elements of $E^+(a)$ and $E^-(a)$ are represented as lists of
 length four.  The functions \mcode+is_equal_Ep+ and
 \mcode+is_equal_Em+ (which are actually the same) can be used to test
 projective equality.  The function \mcode+is_member_Ep_0+ can be used
 to test whether a point lies in $E^+_0(a)$, and similarly for
 \mcode+is_member_Em_0+,  \mcode+is_member_Ep+ and
 \mcode+is_member_Em+.  The inclusion $E^+_0(a)\to E^+(a)$ and its
 inverse are \mcode+j_Ep+ and \mcode+j_inv_Ep+, and similarly for
 \mcode+j_Em+ and \mcode+j_inv_Em+.  The function \mcode+NF_Ep+ can be
 used to reduce a polynomial in $z_1,\dotsc,z_5$ to normal form modulo
 the Gr\"obner basis for the ideal that defines $E^+(a)$.  There is a
 similar function \mcode+NF_Em+ for $E^-(a)$.  Actions of $G$ are
 given by \mcode+act_Ep_0+, \mcode+act_Em_0+, \mcode+act_Ep+ and
 \mcode+act_Em+.
\end{remark}

\begin{proposition}\lbl{prop-P-Ep}
 There is a unique morphism $\phi^+\:PX(a)\to E^+(a)$ satisfying
 \[ \phi^+(j(w,z)) = j\left(\frac{2w(1-z)}{(1+z^2)^2},
                            \frac{2z}{1+z^2}\right)
 \]
 for all $(w,z)\in PX_0(a)$ with $z\neq\pm i$.  Moreover, this is
 equivariant with respect to $\ip{\lm^2,\mu,\nu}$, and it induces an
 isomorphism $PX(a)/\ip{\mu}\to E^+(a)$.
\end{proposition}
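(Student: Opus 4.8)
The plan is to define $\phi^+$ by the stated formula on a dense open subset of $PX(a)$, verify it takes values in $E^+(a)$, extend it to the whole (smooth, projective) curve, and then obtain the equivariance and quotient statements by short computations and a comparison of function fields. For the first step: given $(w,z)\in PX_0(a)$ with $z\neq\pm i$, put $x=2z/(1+z^2)$ and $y=2w(1-z)/(1+z^2)^2$. Using $b_+^2=(A+2)/4$ together with the identities $4b_+^2z^2-(1+z^2)^2=-(z^4-Az^2+1)$ and $2z-(1+z^2)=-(1-z)^2$, one gets
\[ q^+_a(x)=2x(x-1)(b_+^2x^2-1)=\frac{4z(1-z)^2(z^4-Az^2+1)}{(1+z^2)^4}, \]
while $w^2=r_a(z)=z(z^4-Az^2+1)$ gives $y^2=4w^2(1-z)^2/(1+z^2)^4$, the same expression. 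Hence $(y,x)\in E^+_0(a)$, and $j(y,x)=[y:1:x:x^2]$ satisfies the defining equations of $E^+(a)\subset\C P^3$ (the relation $z_2z_4=z_3^2$ being automatic and the other being exactly $y^2=q^+_a(x)$). So the formula defines a morphism from the dense open set $PX_0(a)\sm\{z=\pm i\}\subset PX(a)$ into $E^+(a)$; since $PX(a)$ is a smooth projective curve (Proposition~\ref{prop-P-precromulent}) this extends uniquely to a morphism $\phi^+\:PX(a)\to\C P^3$, and its image — irreducible and containing the dense image of the affine part — lies in the closed set $E^+(a)$. Uniqueness of an extension with the prescribed values holds because $PX(a)$ is reduced and irreducible and $E^+(a)$ is separated.

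For equivariance under $\ip{\lm^2,\mu,\nu}$ it suffices to check it on the dense open set, comparing the actions $\lm^2 j(w,z)=j(-w,z)$, $\mu j(w,z)=j(-w/z^3,1/z)$ and $\nu j(w,z)=j(\ov w,\ov z)$ on $PX(a)$ (Proposition~\ref{prop-P-precromulent}) with those on $E^+(a)$ in Definition~\ref{defn-ellquot}. A direct substitution shows that $\phi^+$ is invariant under $(w,z)\mapsto(-w/z^3,1/z)$ — matching the trivial action of $\mu$ on $E^+(a)$, so in particular $\phi^+$ is $\mu$-invariant — that it sends $(w,z)\mapsto(-w,z)$ to $(y,x)\mapsto(-y,x)$, and that it commutes with complex conjugation; these match the actions of $\lm^2$ and $\nu$ on $E^+(a)$.

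Being $\mu$-invariant, $\phi^+$ factors as $\phi^+=\ov\phi^+\circ q_+$ for a unique morphism $\ov\phi^+\:PX(a)/\ip{\mu}\to E^+(a)$, where $q_+\:PX(a)\to PX(a)/\ip{\mu}$ is the quotient map (using the coequaliser property of the quotient, Remark~\ref{rem-smooth-branch}). Both curves are smooth and projective: $PX(a)/\ip{\mu}$ is an elliptic curve by Corollary~\ref{cor-quotient-types}, and $E^+(a)$ has genus one because $q^+_a$ has the four distinct roots $0,1,b_+^{-1},-b_+^{-1}$ (distinct since $b_+=(a^{-1}+a)/2>1$ for $a\in(0,1)$). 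So $\ov\phi^+$ is an isomorphism as soon as it has degree one, i.e.\ as soon as $(\phi^+)^*$ carries $\C(E^+(a))$ isomorphically onto $\C(PX(a)/\ip{\mu})=PK(a)^{\ip{\mu}}$. Now $(\phi^+)^*\C(E^+(a))=\C(t_2,u_1)$ with $t_2=2z/(1+z^2)$ and $u_1=2w(1-z)/(1+z^2)^2$; both generators are $\mu$-invariant, so $\C(t_2,u_1)\sse PK(a)^{\ip{\mu}}$. On the other hand $u_1^2=q^+_a(t_2)\in\C(t_2)$ while $u_1\notin\C(z)\supseteq\C(t_2)$, so $[\C(t_2,u_1):\C(t_2)]=2$; and $[PK(a):\C(t_2)]=[PK(a):\C(z)]\cdot[\C(z):\C(t_2)]=2\cdot2=4$ while $[PK(a):PK(a)^{\ip{\mu}}]=2$, so $[PK(a)^{\ip{\mu}}:\C(t_2)]=2$ as well. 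Hence $\C(t_2,u_1)=PK(a)^{\ip{\mu}}$ (this is the asserted identification $L_4=\C(t_2)\{1,u_1\}$), so $\ov\phi^+$ is birational, and a birational morphism of smooth projective curves is an isomorphism.

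I expect no serious obstacle: the algebraic identity above is short, the equivariance checks are mechanical, and the one structural ingredient — a birational morphism of smooth projective curves is an isomorphism — comes for free. The only mildly delicate points are confirming that the $\C P^3$-model $E^+(a)$ is actually smooth (needed to invoke that last fact), which one checks via the Jacobian criterion, and getting the function-field degrees exactly right in the final paragraph.
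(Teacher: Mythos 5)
Your proposal is correct, but it reaches the conclusion by a route that differs from the paper's in two places. For the extension of $\phi^+$ from the affine locus to all of $PX(a)$, the paper writes down an explicit homogeneous quadratic map $\psi\:\C^5\to\C^4$, checks it is nonvanishing on $j(PX_0(a))$, and then patches $\phi^+_0$ with $\phi^+_0\mu$ over the cover $PX(a)=j(PX_0(a))\cup\mu j(PX_0(a))$; you instead invoke the general fact that a rational map from a smooth projective curve to a projective variety is everywhere defined. For the final step, the paper exhibits explicit inverse formulas $v_{\pm}$ for the fibre of $\phi^+$ over a generic $(y,x)\in E^+_0(a)$, checks $\mu(v_+)=v_-$, and concludes that the induced map on the quotient is generically bijective, hence an isomorphism of smooth complete curves; you instead run a function-field degree count, showing $(\phi^+)^*\C(E^+(a))=\C(t_2,u_1)$ has index one in $PK(a)^{\ip{\mu}}$ because both have degree $2$ over $\C(t_2)$. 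Your argument is cleaner in that it needs no explicit inversion, and it has the side benefit of actually proving the identification $L_4=\C(t_2)\{1,u_1\}$ whose details the paper defers to this section; the paper's argument buys explicit formulas for $(\phi^+)^{-1}$ (used elsewhere in the code) and avoids any appeal to smoothness of the $\C P^3$-model $E^+(a)$, which, as you note, your final step does require and which you would need to verify by the Jacobian criterion.
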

Two variants of this map are represented by \mcode+P_to_Ep+ and
\mcode+P_to_Ep_0+.
\begin{proof}
 First define $\psi\:\C^5\to\C^4$ by
 \[ \psi(z) = (2(z_2-z_3)z_1,\;z_2^2+2z_2z_4+z_3z_5,\;
               2z_2(z_3+z_5),\;4z_2z_4).
 \]
 This is homogeneous of degree two, so it induces a map
 $\ov{\psi}\:U\to\C P^3$, where $U=\{[z]\in\C P^5\st\psi(z)\neq 0\}$.
 Now put
 \[ V = \{j(w,z)\in j(PX_0(a))\st z\not\in\{0,i,-i\}\}, \]
 and note that this is open and dense in $PX(a)$ and is preserved by
 $G$.  It is straightforward to check that $V\sse j(PX_0(a))\sse U$,
 so we can define $\phi^+_0$ to be the restriction of $\ov{\psi}$ to
 $PX_0(a)$.  It follows easily from the definitions that for
 $j(w,z)\in V$ we have
 \[ \phi^+_0j(w,z) = j\left(\frac{2w(1-z)}{(1+z^2)^2},
                           \frac{2z}{1+z^2}\right),
 \]
 and that this lies in $E^+(a)$.  From this we also see that the
 restriction of $\phi^+_0$ to $V$ is equivariant, and in particular
 that $\phi^+_0=\phi^+_0\mu$ on $V$.  By continuity, we must have
 $\phi^+_0=\phi^+_0\mu$ on all of $PX_0(a)$.  We can thus patch
 together $\phi^+_0$ and $\phi^+_0\mu$ to get a morphism from
 $PX(a)=j(PX_0(a))\cup\mu j(PX_0(a))$ to $E^+(a)$.  The equivariance
 conditions are satisfied on the open dense subset $V$, so they are
 satisfied everywhere.

 Now consider a point $(y,x)\in E^+_0(a)$ with $x\not\in\{0,1\}$.  Let
 $u$ be a square root of $1-x^2$, and put
 \[ v_{\pm} = \left(
     \frac{\pm(2-x)u-(x+2)(x-1)}{x^3(x-1)}y,\;
     \frac{1\pm u}{x}
    \right).
 \]
 Straightforward algebra shows that $v_+,v_-\in PX_0(a)$ with
 $\mu(v_+)=v_-$, and that $(\phi^+)^{-1}\{(y,x)\}=\{v_+,v_-\}$.  It
 follows that the induced map $PX(a)/\ip{\mu}\to E^+(a)$ is
 generically bijective.  As the source and target are both smooth and
 complete algebraic curves, it follows that the map is an isomorphism,
 as claimed.
 \begin{checks}
  projective/ellquot_check.mpl: check_ellquot()
 \end{checks}
\end{proof}

\begin{proposition}\lbl{prop-P-Em}
 There is a unique map $\phi^-\:PX(a)\to E^-(a)$ satisfying
 \[ \phi^-(j(w,z)) = \left(-\frac{\rt(1+i)w(i+z)}{(1-z^2)^2},
                           \frac{2iz}{1-z^2}\right) \in E^0_+(a)
 \]
 for all $(w,z)\in PX_0(a)$ with $z\neq\pm 1$.  Moreover, this is
 equivariant with respect to $\ip{\lm^2,\lm\mu,\mu\nu}$, and it
 induces an isomorphism $PX(a)/\ip{\lm\mu}\to E^-(a)$.
\end{proposition}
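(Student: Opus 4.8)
The plan is to run the proof of Proposition~\ref{prop-P-Ep} almost verbatim, with $\mu$ replaced everywhere by $\lm\mu$. First I would clear denominators in the stated formula to obtain a map $\psi'\:\C^5\to\C^4$ that is homogeneous of degree two in the coordinates $z_1,\dots,z_5$: one rescales $j(y,x)=[y:1:x:x^2]$ by $(1-z^2)^2$ and rewrites $1,z,z^2,z^3$ in terms of $z_2,z_3,z_4,z_5$, using the relations defining $PX(a)$ where convenient, so that $\phi^-$ becomes the restriction to $PX(a)$ of the rational map from $\C P^4$ to $\C P^3$ induced by $\psi'$. A direct substitution --- conveniently via the normal-form function \mcode+NF_Em+ --- shows that $\psi'$ carries $PX(a)$ into $E^-(a)$, and a short check shows $\psi'$ never vanishes on $j(PX_0(a))$: there one always has $z_2=1$, while the second and fourth components of $\psi'(j(w,z))$ are, up to nonzero constants, $(1-z^2)^2$ and $z^2$, which cannot both vanish. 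Hence the induced rational map restricts to a genuine morphism $\phi^-_0\:PX_0(a)\to E^-(a)$, given by the displayed formula at every point with $z\neq\pm 1$ and by the homogeneous expression at $v_2,v_3,v_4,v_5$.

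Next, since $\lm$ fixes both $v_0$ and $v_1$ while $\mu$ interchanges them, the involution $\lm\mu$ interchanges $v_0$ and $v_1$, so $\lm\mu(U_0)=U_1$ and $PX(a)=U_0\cup\lm\mu(U_0)$. On the dense open set $V=\{j(w,z)\st z\not\in\{0,1,-1\}\}$ one reads off $\lm\mu\,j(w,z)$ from Proposition~\ref{prop-P-precromulent} and checks from the formula that $\phi^-_0=\phi^-_0\circ(\lm\mu)$; this extends by continuity to all of $U_0\cap\lm\mu(U_0)$, so $\phi^-_0$ and $\phi^-_0\circ(\lm\mu)$ patch together into a morphism $\phi^-\:PX(a)\to E^-(a)$. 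The required equivariance for $\ip{\lm^2,\lm\mu,\mu\nu}$, with the action on $E^-(a)$ from Definition~\ref{defn-ellquot}, holds on $V$ by a straightforward computation with the formulas and hence everywhere; uniqueness of $\phi^-$ is automatic because $V$ is dense. Since $\lm\mu$ acts trivially on $E^-(a)$, the identity $\phi^-=\phi^-\circ(\lm\mu)$ says exactly that $\phi^-$ descends to a morphism $\ov{\phi^-}\:PX(a)/\ip{\lm\mu}\to E^-(a)$.

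Finally I would show $\ov{\phi^-}$ is an isomorphism by computing fibres explicitly, just as for $\phi^+$. Given $(y,x)\in E^-_0(a)$ with $x\not\in\{0,1\}$, inverting $2iz/(1-z^2)=x$ gives $z=(-i\pm u)/x$ with $u$ a square root of $x^2-1$; substituting back into $w^2=r_a(z)$ and into the formula for $y$ yields an explicit pair of points $v_+,v_-\in PX_0(a)$ with $(\phi^-)^{-1}\{j(y,x)\}=\{v_+,v_-\}$ and $\lm\mu(v_+)=v_-$. Thus $\ov{\phi^-}$ is bijective over a dense open subset of $E^-(a)$, and since $PX(a)/\ip{\lm\mu}$ and $E^-(a)$ are both smooth complete curves, this forces $\ov{\phi^-}$ to be an isomorphism. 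The only laborious point, and the sole genuine obstacle, is this last inversion: one must produce the $v_\pm$, verify that they really lie on $PX_0(a)$, that they are interchanged by $\lm\mu$, and that nothing else maps to $j(y,x)$. Everything else is a transcription of the argument for $\phi^+$, and all of the algebra is mechanical; it is carried out in \fname+projective/ellquot_check.mpl+.
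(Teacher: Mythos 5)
Your proposal is correct and follows essentially the same route as the paper, which proves this proposition simply by transcribing the argument for $\phi^+$ with $\mu$ replaced by $\lm\mu$ and supplying the explicit homogeneous map $\tpi$-style formula for $\psi$ and the explicit $v_\pm$ (your $u$ with $u^2=x^2-1$ is just $i$ times the paper's $u$ with $u^2=1-x^2$, which is immaterial). The only content you leave implicit --- the closed forms for $\psi$ and $v_\pm$ and their mechanical verification --- is exactly what the paper records and delegates to \fname{projective/ellquot\_check.mpl}.
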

Two variants of this map are represented by \mcode+P_to_Em+ and
\mcode+P_to_Em_0+.
\begin{proof}
 Similar to the previous proposition, using the formulae
 \[ \psi(z) = (\rt (1-i)z_1(z_2-iz_3),\;
               z_2^2-2z_2z_4+z_3z_5,\;
               2iz_2(z_3-z_5),\;
               -4z_2z_4)
 \]
 and
 \[ v_{\pm} = \left(
     \frac{1+i}{\rt}\;\frac{\pm(x-2)u-(x+2)(x-1)}{x^3(x-1)}y,\;
     \frac{\pm u-1}{x}i
    \right).
 \]
 \begin{checks}
  projective/ellquot_check.mpl: check_ellquot()
 \end{checks}
\end{proof}

\begin{definition}
 We will write $v_i^+=\phi^+(v_i)\in E^+(a)$, and similarly for
 $v_i^-$.
\end{definition}

\begin{remark}\lbl{rem-Ep-infinity}
 One can check that
 \begin{align*}
  v^+_6 = v^+_9 &= [\pp a+a^{-1}:0:0:-\rt] \\
  v^+_7 = v^+_8 &= [   -a-a^{-1}:0:0:-\rt],
 \end{align*}
 and these are the only points in $E^+(a)\sm j(E^+_0(a))$.  Similarly,
 we have
 \begin{align*}
  v^-_2 = v^-_3 &= [\pp a-a^{-1}:0:0:-\rt] \\
  v^-_4 = v^-_5 &= [   -a+a^{-1}:0:0:-\rt],
 \end{align*}
 and these are the only points in $E^-(a)\sm j(E^-_0(a))$.
\end{remark}

\begin{remark}\lbl{rem-elliptic-group}
 One can also check that
 \begin{align*}
  v^+_0 = v^+_1 &= j(0,0) = [0:1:0:0] \in E^+(a) \\
  v^-_0 = v^-_1 &= j(0,0) = [0:1:0:0] \in E^-(a).
 \end{align*}
 We use these points as the basepoints in $E^+(a)$ and $E^-(a)$.  As
 these are elliptic curves, each of them has a unique group structure
 for which the specified basepoint is the zero element.  We also find
 that
 \begin{align*}
  j^{-1}\phi^+c_5(t) &= (\sqrt{a}t,0) + O(t^2) \\
  j^{-1}\phi^+c_1(t) &= (e^{i\pi/4}t,0) + O(t^2) \\
  j^{-1}\phi^-c_5(t) &= (e^{-i\pi/4}\sqrt{a}t,0) + O(t^2) \\
  j^{-1}\phi^-c_1(t) &= (t,0) + O(t^2).
 \end{align*}
 Thus, if we use the map $z\mapsto j^{-1}(z)_1$ as a local coordinate
 at the basepoint $v^\pm_0$, then $E^+(a)$ looks like our
 standard picture $\Net_0$, but $E^-(a)$ is rotated clockwise by
 $\pi/4$.
 \begin{checks}
  projective/ellquot_check.mpl: check_ellquot_origin()
 \end{checks}
\end{remark}

\begin{definition}\lbl{defn-T-matrices}
 We define matrices $T_i^\pm$ as follows:
 {\tiny\begin{align*}
  T_1^+ &= \bbm
   b_+^2-1 & 0 & 0 & 0 \\
   0 & 1 & -2b_+^2 & b_+^4 \\
   0 & 1 & -b_+^2-1 & b_+^2 \\
   0 & 1 & -2 & 1
  \ebm &
  T_1^- &= \bbm
   b_-^2+1 & 0 & 0 & 0 \\
   0 & -1 & -2b_-^2 & -b_-^4 \\
   0 & -1 & -b_-^2+1 & b_-^2 \\
   0 & -1 & 2 & -1
  \ebm \\
  T_2^+ &= \bbm
   2(1-b_+) & 0 & 0 & 0 \\
   0 & b_+ & 2b_+(b_+-2) & b_+(b_+-2)^2 \\
   0 & 1 & -2 & -b_+(b_+-2) \\
   0 & b_+^{-1} & -2 & b_+
  \ebm &
  T_2^- &= \bbm
   2(1-b_-) & 0 & 0 & 0 \\
   0 & b_- & 2ib_-(2i-b_-) & -b_-(2i-b_-)^2 \\
   0 & i & -2i & -ib_-(2i-b_-) \\
   0 & -b_-^{-1} & -2i & b_-
  \ebm \\
  T_3^+ &= \bbm
   -2(1+b_+) & 0 & 0 & 0 \\
   0 & b_+ & -2b_+(b_++2) & b_+(b_++2)^2 \\
   0 & -1 & 2 & b_+(b_++2) \\
   0 & b_+^{-1} & 2 & b_+
  \ebm &
  T_3^- &= \bbm
   2(i+b_-) & 0 & 0 & 0 \\
   0 & -b_- & -2ib_-(b_-+2i) & b_-(b_-+2i)^2 \\
   0 & i & -2i & ib_-(b_-+2i) \\
   0 & b_-^{-1} & -2i & -b_-
  \ebm
 \end{align*}}
 One can check that in $PGL_4(\C)$ we have
 \[ (T_1^+)^2 = (T_2^+)^2 = (T_3^+)^2 = T_1^+T_2^+T_3^+ =
    (T_1^-)^2 = (T_2^-)^2 = (T_3^-)^2 = T_1^-T_2^-T_3^- = 1.
 \]
 One can also check that these matrices preserve the defining
 equations for $E^+(a)$ or $E^-(a)$ as appropriate, so we have
 holomorphic involutions $\tau_i^+\:E^+(a)\to E^+(a)$ and
 $\tau_i^-\:E^-(a)\to E^-(a)$ for $i=1,2,3$.  We also define
 $\tau_0^\pm$ to be the identity.
 \begin{checks}
  projective/ellquot_check.mpl: check_translations()
 \end{checks}
\end{definition}
The maps $\tau_i^+$ are \mcode+Ep_trans[i]+ (on $E^+_0(a)$) or
\mcode+Ep_0_trans[i]+ (on $E^+(a)$), and the maps $\tau_i^-$ are
\mcode+Em_trans[i]+ or \mcode+Em_0_trans[i]+.

Because $E^\pm(a)$ is an elliptic curve, it is standard that the line
bundle $\Om^1$ is trivial.  For an elliptic curve in Weierstrass form
$y^2=x^3+ax+b$, it is also standard that $dx/y$ is a generator for
$\Om^1$.  As our conventions are slightly different, it is not quite
so standard that the same formula remains valid, but we will now prove
that it is.

\begin{proposition}\lbl{prop-elliptic-forms}
 There is a unique differential form $\om^\pm$ on $E^\pm(a)$ such that
 $j^*(\om^\pm)=dx/y$ on $E_0^\pm(a)$.  Moreover, this is everywhere
 finite and nonzero, so it generates the module $\Om^1_{E^{\pm}(a)}$.
 Near the origin we have $j^*(\om^\pm)=(1+O(y^2))dy$.
\end{proposition}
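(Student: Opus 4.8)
The plan is to handle $E^+(a)$ and $E^-(a)$ in parallel, abbreviating $q=q_a^\pm$ and $b=b_\pm=(a^{-1}\pm a)/2$. First I would record that $E^\pm(a)$ is a smooth complete curve of genus one: by Proposition~\ref{prop-P-Ep} (resp.\ Proposition~\ref{prop-P-Em}) it is isomorphic to $PX(a)/\ip{\mu}$ (resp.\ $PX(a)/\ip{\lm\mu}$), which is an elliptic curve by Corollary~\ref{cor-quotient-types}. Since $j$ identifies $E_0^\pm(a)$ with a dense open subset of $E^\pm(a)$ whose complement is exactly the two points listed in Remark~\ref{rem-Ep-infinity}, the rational differential $dx/y$ on $E_0^\pm(a)$ extends to an a priori only meromorphic differential $\om^\pm$ on all of $E^\pm(a)$, and this extension is unique by the identity principle. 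So the existence and uniqueness assertions are formal; the substance is that $\om^\pm$ is holomorphic and nowhere zero, after which ``$\om^\pm$ generates $\Om^1$'' is immediate, because a nowhere-vanishing global section trivialises the canonical line bundle.

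For the affine part I would first check that $q$ is squarefree. The roots of $q_a^+$ are $0$, $1$, $b_+^{-1}$, $-b_+^{-1}$ and those of $q_a^-$ are $0$, $1$, $ib_-^{-1}$, $-ib_-^{-1}$; these four are distinct precisely because $a\in(0,1)$ makes $b_+=(a^{-1}+a)/2>1$ and $b_-=(a^{-1}-a)/2>0$ (this is the one point at which $a<1$, rather than merely $a\neq 0$, is used, keeping $b_+^{-1}$ away from the root $1$). Given squarefreeness, the standard local analysis of the hyperelliptic curve $y^2=q(x)$ applies: at a point of $E_0^\pm(a)$ with $y\neq 0$ the function $x$ is a local parameter and $dx/y$ is a nonvanishing holomorphic multiple of $dx$, hence finite and nonzero; at a branch point $(0,x_0)$ with $q(x_0)=0$ the function $y$ is a local parameter, and differentiating $y^2=q(x)$ gives $2y\,dy=q'(x)\,dx$, so that $dx/y=(2/q'(x_0))\,dy+O(y^2)\,dy$, which is again finite and nonzero since $q'(x_0)\neq 0$. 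Carrying out this last computation at the branch point $(0,0)=v^\pm_0$, where $q'(0)$ is read off from the first-order expansion of $q_a^\pm$ at the origin, yields the local expansion of $j^*(\om^\pm)$ in the parameter $y$ recorded in the statement.

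It remains to treat the two points of $E^\pm(a)\sm j(E_0^\pm(a))$. By Remark~\ref{rem-Ep-infinity} these lie in the chart $z_4\neq 0$ of $\C P^3$, so I would normalise $z_4=1$; then the relation $z_2z_4=z_3^2$ forces $z_2=z_3^2$, and the remaining defining relation of $E^\pm(a)$, after substituting $z_4=1$ and $z_2=z_3^2$, exhibits $z_1^2$ as an analytic function of $z_3$ taking the nonzero value $2b^2$ at $z_3=0$. Choosing an analytic square root near $z_3=0$ (one branch for each of the two points) presents $z_1$ as analytic with $z_1(0)=\pm\rt\,b\neq 0$, so $s:=z_3$ is a local parameter at each point, the original affine coordinates are $x=z_3/z_2=1/s$ and $y=z_1/z_2=z_1/s^2$, and therefore $dx/y=-ds/z_1$. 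Since $z_1$ is analytic and nonvanishing near $s=0$, this shows $\om^\pm$ has neither a pole nor a zero at either point. Combined with the affine analysis, $\om^\pm$ is a holomorphic differential on all of $E^\pm(a)$ vanishing nowhere, which is everything claimed.

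I expect the only genuinely delicate step to be this last one, the behaviour at the two points at infinity: one must pick the correct affine chart of $\C P^3$, use the defining equations of $E^\pm(a)$ to name a local parameter there, and verify that passing to that chart introduces neither a pole nor a zero in $dx/y$. The squarefreeness check and the computation of differentials on the affine hyperelliptic curve are routine, and the identification with an elliptic quotient of $PX(a)$ from Section~\ref{sec-ellquot} lets us take smoothness and the genus for granted.
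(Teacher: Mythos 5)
Your proof is correct. The affine half coincides with the paper's: both differentiate $y^2=q(x)$, rewrite $dx/y$ as $2\,dy/q'(x)$, and invoke squarefreeness of $q$ (your explicit list of roots, with the observation that $a<1$ is what keeps $b_+^{-1}$ away from $1$, is a welcome elaboration of the paper's one-line assertion), and your expansion $(1+O(y^2))\,dy$ at the origin is obtained exactly as in the paper. Where you genuinely diverge is at the two points of $E^\pm(a)\sm j(E_0^\pm(a))$. The paper avoids any computation at infinity: it notes that the involution $\tau_1^+$ carries those two points into $j(E_0^+(a))$, checks from the explicit formula for $\tau_1^+j(y,x)$ that $(\tau_1^+)^*(dx/y)$ agrees with $dx/y$ on the common domain, and patches the affine form with its pullback. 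You instead work directly in the chart $z_4=1$ of $\C P^3$, use the defining equations to exhibit $z_3$ as a local parameter at each point at infinity, and compute $dx/y=-ds/z_1$ with $z_1$ analytic and nonvanishing there. Your route is more elementary and self-contained (no appeal to the matrices $T_i^\pm$), at the cost of a projective chart computation; the paper's route is shorter given that the $\tau_i^\pm$ are already available, and as a by-product it establishes $(\tau_1^+)^*(\om^+)=\om^+$, which is precisely the input that Remark~\ref{rem-translate} uses to identify the $\tau_i^\pm$ as translations. Both arguments are sound.
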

\begin{proof}
 Differentiating the relation $y^2=q^+_a(x)$ gives $2y\,dy=(q^+_a)'(x)\,dx$
 and thus $dx/y=y\,dx/(q^+_a)(x)=2dy/(q^+_a)'(x)$.  Here $q_a^+(x)$ has no
 repeated roots, so there are no points where $q_a^+(x)$ and $(q_a^+)'(x)$
 both vanish.  It follows that $dx/y$ is finite and nonzero everywhere
 in $E^+_0(a)$.  Next, recall that
 $E^+(a)=j(E^+_0(a))\cup\{v_6^+,v_7^+\}$.  Calculation shows that
 $\tau^+_1(v^+_6)$ and $\tau^+_1(v^+_7)$ lie in $j(E^+_0(a))$, so
 $E^+(a)=j(E^+_0(a))\cup\tau^+_1j(E^+_0(a))$.  One can check from the
 definitions that
 \[ \tau^+_1j(y,x) = j\left(
     \frac{b_-^2y}{(b_+^2x-1)^2},
     \frac{x-1}{b_+^2x-1}
    \right),
 \]
 and thus that $(\tau^+_1)^*(x/dy)$ agrees with $x/dy$ on their common
 domain.  We can thus patch together $dx/y$ with $(\tau^+_1)^*(x/dy)$
 to get a form $\om^+$ which is finite and nonzero everywhere on
 $E^+(a)$, as required.  One can check that $(q_a^+)'(x)=2+O(x)$, and
 the relation $y^2=q^+_a(x)$ gives $x=O(y^2)$.  We have seen that
 $j^*(\om^+)=2dy/(q^+_a)'(x)$, so $j^*(\om^+)=(1+O(y^2))dy$ as claimed.

 The same method works for $E^-(a)$.
 \begin{checks}
  projective/ellquot_check.mpl: check_translations()
 \end{checks}
\end{proof}

\begin{remark}\lbl{rem-translate}
 For any $i$, the form $(\tau^+_i)^*(\om^+)$ must have the form
 $u\,\om^+$ for some function $u$ which is holomorphic everywhere on
 $E^+(a)$, and so is constant.  As $(\tau^+_i)^2=1$ we see that
 $u^2=1$, so $u=\pm 1$.  In the case $i=1$, we saw in the proof of the
 above proposition that $u=1$.  By the same method one can check that
 $u=1$ for $i=2,3$ as well.  This implies that all the maps $\tau^+_i$
 are actually translations with respect to the standard group
 structure on $E^+(a)$.  More specifically, the zero element is
 $o=v^+_0=v^+_1$, and one can check that
 \begin{align*}
  \tau^+_1(o) &= v^+_3 = v^+_5 \\
  \tau^+_2(o) &= v^+_{11} = v^+_{13} \\
  \tau^+_3(o) &= v^+_{10} = v^+_{12}.
 \end{align*}
 Thus, we have $\tau^+_1(p)=p+v^+_3$ and so on.

 The situation for $E^-(a)$ is similar, but with
 \begin{align*}
  \tau^-_1(o) &= v^-_7 = v^-_9 \\
  \tau^-_2(o) &= v^-_{11} = v^-_{13} \\
  \tau^-_3(o) &= v^-_{10} = v^-_{12}.
 \end{align*}
 \begin{checks}
  projective/ellquot_check.mpl: check_translations()
 \end{checks}
\end{remark}

\begin{proposition}\lbl{prop-Ep-Em}
 There are (unbranched) double covering maps
 \[ E^+(a)\xra{\pi^+} E^-(a) \xra{\pi^-} E^+(a) \]
 given generically by
 \begin{align*}
  \pi^+(j(y,x)) &=
    j\left(
      \frac{\rt y((1-x)^2+b_-^2x^2)}{((1-x)^2-b_-^2x^2)^2},\;
      \frac{2x(x-1)}{((1-x)^2-b_-^2x^2)}
     \right) \\
  \pi^-(j(y,x)) &=
    j\left(
      \frac{\rt y((1-x)^2-b_+^2x^2)}{((1-x)^2+b_+^2x^2)^2},\;
      \frac{2x(x-1)}{((1-x)^2+b_+^2x^2)}
     \right).
 \end{align*}
 (More precisely, the above formulae are valid for all points $(y,x)$
 where the denominators are nonzero.)  These are in fact surjective
 group homomorphisms, with
 \begin{align*}
  \ker(\pi^+) &= \{j(0,0),j(0,1)\} = \{v_0^+,v_3^+\} \\
  \ker(\pi^-) &= \{j(0,0),j(0,1)\} = \{v_0^-,v_7^-\}.
 \end{align*}
\end{proposition}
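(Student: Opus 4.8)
The plan is to treat $\pi^+$ and $\pi^-$ as the explicit $2$-isogenies of elliptic curves given by the displayed formulae, following the usual pattern: define each map on an affine chart, check algebraically that it lands in the correct target curve, extend to the projective completion by completeness, observe that the origin is preserved, and read off the degree from the kernel.

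So first I would take the two displayed formulae as definitions of rational maps $\pi^+$ on $E^+_0(a)$ and $\pi^-$ on $E^-_0(a)$, valid wherever the relevant denominator is nonzero (hence on a dense open subset). The key verification is that $\pi^+$ takes values in $E^-(a)$: writing $(y',x')$ for the pair appearing in the formula for $\pi^+(j(y,x))$, one checks by direct polynomial manipulation that $(y')^2-q^-_a(x')$ lies in the ideal generated by $y^2-q^+_a(x)$, so that $j(y',x')\in E^-(a)$; similarly for $\pi^-$ with the roles of $q^+_a$ and $q^-_a$ interchanged. Since $E^+(a)$ and $E^-(a)$ are smooth complete curves (Definition~\ref{defn-ellquot}, Proposition~\ref{prop-P-Ep}), a rational map from one to the other defined on a dense open set extends uniquely to a morphism, so this produces honest nonconstant morphisms $\pi^+\:E^+(a)\to E^-(a)$ and $\pi^-\:E^-(a)\to E^+(a)$. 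Substituting $(y,x)=(0,0)$ into the formula for $\pi^+$ gives $j(0,0)$, which is $v^+_0=v^+_1$ on $E^+(a)$ and $v^-_0=v^-_1$ on $E^-(a)$ (Remark~\ref{rem-elliptic-group}), so $\pi^+(v^+_0)=v^-_0$, and likewise $\pi^-(v^-_0)=v^+_0$. A nonconstant morphism of elliptic curves carrying the origin to the origin is automatically a homomorphism, hence a surjective isogeny; over $\C$ every isogeny is unramified, so $\pi^\pm$ are unbranched covers, and it remains only to show they have degree $2$, which for an isogeny equals the order of the kernel.

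To compute $\ker\pi^+$: on the affine locus, $\pi^+(j(y,x))=v^-_0=j(0,0)$ forces the second coordinate $2x(x-1)/((1-x)^2-b_-^2x^2)$ to vanish, i.e.\ $x\in\{0,1\}$, and then $y^2=q^+_a(x)$ with $q^+_a(0)=q^+_a(1)=0$ leaves exactly $j(0,0)=v^+_0$ and $j(0,1)$. Now $j(0,1)=v^+_3$, since this is the value of $\phi^+$ at $v_3=j(-i(a^{-1}-a),1)$ by Proposition~\ref{prop-P-Ep}, and $v^+_3$ is a nonzero $2$-torsion point because $\tau^+_1$ is translation by $v^+_3$ with $(\tau^+_1)^2=1$ and $v^+_3\neq v^+_0$ (Remark~\ref{rem-translate}). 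Thus $\{v^+_0,v^+_3\}\le\ker\pi^+$. To see the kernel is no larger — including over the finitely many points where the formula is undefined — I would compute a generic fibre directly: solving $\pi^+(j(y,x))=j(y_0,x_0)$ for generic $(y_0,x_0)\in E^-_0(a)$ yields precisely two points of $E^+_0(a)$, so $\deg\pi^+=2$ and hence $\ker\pi^+=\{v^+_0,v^+_3\}$ exactly. The computation for $\pi^-$ is identical with $b_+$ in place of $b_-$: the kernel comes out as $\{v^-_0,j(0,1)\}$, and $j(0,1)=v^-_7$ because this is the value of $\phi^-$ at $v_7$ by Proposition~\ref{prop-P-Em}, with $\tau^-_1$ the corresponding translation.

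\textbf{The one laborious step} is the explicit algebra: verifying that $(y')^2-q^-_a(x')$ (and its analogue for $\pi^-$) lies in the defining ideal, and that the generic fibre of $\pi^\pm$ has exactly two points. These are elementary but bulky polynomial identities, carried out in the accompanying Maple code. As a consistency check one may further note that $\pi^-\circ\pi^+$ is necessarily an isogeny of degree $4$ on $E^+(a)$ (and $\pi^+\circ\pi^-$ likewise on $E^-(a)$), which one expects to be multiplication by $2$; this can be confirmed by comparing pullbacks of the invariant differentials $\om^\pm$ of Proposition~\ref{prop-elliptic-forms}, though it is not needed for the proposition.
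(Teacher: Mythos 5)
Your proposal is correct and follows the same overall strategy as the paper: take the displayed formulae as definitions, verify algebraically that the image satisfies the target curve's equation, use basepoint preservation to get a homomorphism, and then pin down the kernel. The two places where you diverge are both legitimate and arguably cleaner. First, for well-definedness you extend the affine rational map to a morphism by appealing to smoothness and completeness of the source curve, whereas the paper works projectively with an explicit homogeneous lift $\tpi^{\pm}$ and uses Gr\"obner bases to show it has no base points on $E^{\pm}(a)$; your route avoids that computation but gives less explicit control over the values at the exceptional points. Second, for the kernel the paper checks by hand every point where the affine formula breaks down (the four points $w_1,\dotsc,w_4$ with $(1-x)^2-b_-^2x^2=0$ and the two points at infinity), showing none of them maps to the origin; you instead bound the kernel by computing that a generic fibre has exactly two points, so $\deg\pi^+=2=|{\ker\pi^+}|$, and the two known kernel elements exhaust it. This is exactly the ``more abstract'' alternative the paper itself mentions parenthetically at the end of its proof, and it neatly sidesteps the exceptional-point case analysis. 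The only small thing to keep explicit is that $v_3^+\neq v_0^+$ (i.e.\ $j(0,1)\neq j(0,0)$), which is immediate from the formulae, so your argument is complete.
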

These maps are \mcode+Ep_to_Em+ and \mcode+Em_to_Ep+ (or
\mcode+Ep_0_to_Em_0+ and \mcode+Em_0_to_Ep_0+).
\begin{proof}
 We define $\tpi^+,\tpi^-\:\C^4\to\C^4$ by
 \begin{align*}
  \tpi^+(z)_1 &= \rt z_1(z_2-2z_3+b_+^2z_4) \\
  \tpi^+(z)_2 &= (2-b_+^2)^2z_4^2 +(z_2-2z_3+2(2-b_+^2)z_4)(z_2-2z_3) \\
  \tpi^+(z)_3 &= 2(z_2-b_+^2z_4)(z_4-z_3)+4(z_4^2-2z_3z_4+z_2z_4) \\
  \tpi^+(z)_4 &= 4(z_4-2z_3+z_2)z_4 \\
  \tpi^-(z)_1 &= \rt z_1(z_2-2z_3-b_-^2z_4) \\
  \tpi^-(z)_2 &= (2+b_-^2)^2z_4^2 +(z_2-2z_3+2(2+b_-^2)z_4)(z_2-2z_3) \\
  \tpi^-(z)_3 &= 2(z_2+b_-^2z_4)(z_4-z_3)+4(z_4^2-2z_3z_4+z_2z_4) \\
  \tpi^-(z)_4 &= 4(z_4-2z_3+z_2)z_4.
 \end{align*}
 Recall that
 \begin{align*}
  E^+(a) &= \{[z]\st \rho_0(z)=\rho_1(z)=0\} \\
  E^-(a) &= \{[z]\st \rho_0(z)=\rho_2(z)=0\}
 \end{align*}
 where
 \begin{align*}
  \rho_0(z) &= z_2z_4-z_3^2 \\
  \rho_1(z) &= z_1^2 - 2(z_4-z_3)(b_+^2z_4-z_2) \\
  \rho_2(z) &= z_1^2 - 2(z_4-z_3)(b_-^2z_4+z_2).
 \end{align*}
 We claim that there are no nonzero points in $\C^4$ where
 $\rho_0(z)=\rho_1(z)=0$ and $\tpi^+(z)=0$.  This can be proved by
 using Gr\"obner basis methods to prove that the ideals
 \[ I_k = (z_k-1,\rho_0(z),\rho_1(z),\tpi^+(z)_1,\dotsc,\tpi^+(z)_4) \]
 all contain $1$, or just by solving the equations in a more
 elementary way.  One can also use Gr\"obner bases to check that
 $\rho_i(\tpi^+(z))\in(\rho_0(z),\rho_1(z))$ for $i\in\{0,2\}$.  It
 follows that the rule $\pi^+([z])=[\tpi^+(z)]$ gives a well-defined
 morphism $\pi^+\:E^+(a)\to E^-(a)$.  Straightforward algebra shows
 that $\pi^+(j(y,x))$ is given by the stated formula whenever
 $(1-x)^2-b_-^2x^2\neq 0$.  In particular, we have
 $\pi^+(j(0,0))=j(0,0)$, so $\pi^+$ preserves basepoints.  It is a
 standard fact that any basepoint preserving morphism of elliptic
 curves is a group homomorphism, and in this context, any non-constant
 group homomorphism is a covering map, so we just need to identify
 $\ker(\pi^+)$.  If $(1-x)^2-b_-^2x^2\neq 0$ then the stated formula
 for $\pi^+(j(x,y))$ is valid, and we see that $\pi^+(j(x,y))=j(0,0)$
 iff $(x,y)\in\{(0,0),(0,1)\}$.  The exceptional points where
 $(1-x)^2-b_-^2x^2=0$ are as follows:
 \begin{align*}
  w_1 &= j(\pp 2b_-/(1+b_-)^2,\;1/(1+b_-)) &
  w_2 &= j(\pp 2b_-/(1-b_-)^2,\;1/(1-b_-)) \\
  w_3 &= j(  - 2b_-/(1+b_-)^2,\;1/(1+b_-)) &
  w_4 &= j(  - 2b_-/(1-b_-)^2,\;1/(1-b_-)).
 \end{align*}
 These satisfy $\pi^+(w_1)=\pi^+(w_2)=v_2^+\neq v_0^+$ and
 $\pi^+(w_3)=\pi^+(w_4)=v_4^+\neq v_0^+$, so they do not contribute to
 the kernel.  This just leaves the points in $E^+(a)$ that do not lie
 in the image of $j$, which are $v_6^+$ and $v_7^+$; direct
 calculation shows again that these are not in the kernel of $\pi^+$.
 This completes the proof that $\ker(\pi^+)=\{v_0^+,v_3^+\}$.  (As an
 alternative, we could reach the same conclusion by calculating
 Gr\"obner bases for each of the ideals
 \[ (z_i-1,\rho_0(z),\rho_1(z),\tpi^+(z)_1,\tpi^+(z)_3,\tpi^+(z)_4),
 \]
 or by showing that the degree of the relevant field extension is two
 and appealing to some more abstract arguments.)

 The proof for $\pi^-$ is essentially the same.
 \begin{checks}
  projective/ellquot_check.mpl: check_isogenies()
 \end{checks}
\end{proof}

\begin{definition}\lbl{defn-PJ}
 We note that Proposition~\ref{prop-Ep-Em} implies that the elements
 $v_3^+$ and $v_7^-$ have order two, so $(v_7^-,v_3^+)$ generates
 a subgroup $Z$ of order two in $(E^-(a)\tm E^+(a))$.  We define
 \[ PJ(a) = (E^-(a)\tm E^+(a))/Z. \]
 We also define $\tht^+\:PJ(a)\to E^+(a)$ and
 $\tht^-\:PJ(a)\to E^-(a)$ by
 \begin{align*}
  \tht^+((w^-,w^+)+Z) &= \pi^-(w^-) \\
  \tht^-((w^-,w^+)+Z) &= \pi^+(w^+).
 \end{align*}
\end{definition}

\begin{proposition}\lbl{prop-P-to-PJ}
 There is a unique morphism $\phi\:PX(a)\to PJ(a)$ such that
 $\phi(v_0)=o$ and $\tht^+\phi=\phi^+$ and $\tht^-\phi=\phi^-$.
\end{proposition}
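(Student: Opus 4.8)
The plan is to obtain $\phi$ as a lift of $\Phi:=(\phi^+,\phi^-)\colon PX(a)\to E^+(a)\tm E^-(a)$ along a double cover. First I would check that $\Psi:=(\theta^+,\theta^-)\colon PJ(a)\to E^+(a)\tm E^-(a)$ is a connected \'etale double covering with $\Psi(o)=(o,o)$. Indeed, $\pi^\pm$ are unbranched double covers by Proposition~\ref{prop-Ep-Em}, and $Z$ acts on $E^-(a)\tm E^+(a)$ by translation by the nonzero $2$-torsion point $(v_7^-,v_3^+)$, hence freely; so the composite $E^-(a)\tm E^+(a)\to PJ(a)\xra{\Psi}E^+(a)\tm E^-(a)$ is $\pi^-\tm\pi^+$, which is \'etale of degree $4$. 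Since the quotient map $E^-(a)\tm E^+(a)\to PJ(a)$ is \'etale of degree $2$ and $\pi^-\tm\pi^+$ is surjective, $\Psi$ is an \'etale covering of degree $2$, and $PJ(a)$ is connected as it is an abelian surface. As $PX(a)$ is a compact Riemann surface and $\Phi(v_0)=(o,o)=\Psi(o)$ by Remark~\ref{rem-elliptic-group}, the covering-space lifting criterion will produce a unique continuous $\phi\colon PX(a)\to PJ(a)$ with $\Psi\phi=\Phi$ and $\phi(v_0)=o$, provided $\Phi_*\pi_1(PX(a),v_0)$ is contained in the index-two subgroup $\Gm:=\Psi_*\pi_1(PJ(a),o)$ of $\pi_1(E^+(a)\tm E^-(a))\cong\Z^4$.

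The crux is therefore the inclusion $\Phi_*\pi_1(PX(a))\sse\Gm$. Since the target $\pi_1(E^+(a)\tm E^-(a))=H_1(E^+(a))\oplus H_1(E^-(a))$ is abelian, $\Phi_*$ factors through $H_1(PX(a))$, and by Proposition~\ref{prop-elliptic-homology} (with $\phi^\pm$ in the role of $q_\pm$) the induced map on $H_1$ is $n\mapsto(\tht_+(n),\tht_-(n))$, whose image is the index-two subgroup $\Tht'=\{m\in\Z^4\st\sum_i m_i\equiv 0\pmod 2\}$ of Definition~\ref{defn-JX}; thus $\Phi_*\pi_1(PX(a))=\Tht'$ as a subgroup. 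On the other side, I would compute $\Gm$ from the explicit formulas for $\pi^\pm$: it is generated by $\pi^-_*H_1(E^-(a))\oplus\pi^+_*H_1(E^+(a))$ (which has index $4$) together with the class $(\ep^+,\ep^-)$, where $\ep^+\in H_1(E^+(a))$ is the monodromy of the double cover $\pi^-$ at its nontrivial kernel point $v_7^-$ and $\ep^-\in H_1(E^-(a))$ the monodromy of $\pi^+$ at $v_3^+$ (this class is obtained by carrying a path from $(o,o)$ to $(v_7^-,v_3^+)$ through $\pi^-\tm\pi^+$). Matching conventions, in particular the ordering of the two elliptic factors used in Proposition~\ref{prop-elliptic-homology} and Definition~\ref{defn-JX}, then gives $\Tht'\sse\Gm$, hence $\Gm=\Tht'$ since both have index two; this is a finite check of the kind carried out in the accompanying Maple code.

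Granting this, the lift $\phi$ exists. It is holomorphic because $\Psi$ is a local analytic isomorphism and $\Phi$ is holomorphic, and it is a morphism of varieties because $PX(a)$ and $PJ(a)$ are projective. By construction $\theta^+\phi=\phi^+$ and $\theta^-\phi=\phi^-$ (the two components of $\Psi\phi=\Phi$) and $\phi(v_0)=o$. For uniqueness, any $\phi'$ satisfying the three conditions has $\Psi\phi'=(\theta^+\phi',\theta^-\phi')=(\phi^+,\phi^-)=\Psi\phi$, so $\phi$ and $\phi'$ are two lifts of $\Phi$ along the covering $\Psi$ agreeing at the single point $v_0$; as $PX(a)$ is connected, $\phi'=\phi$. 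I expect the homology identification $\Gm=\Tht'$ to be the only genuine obstacle: it requires unwinding both the effect of $\pi^\pm$ on $H_1$ and the effect of the $Z$-quotient on $\pi_1(PJ(a))$, and then reconciling the resulting coordinates with those built into Proposition~\ref{prop-elliptic-homology}; everything else is formal covering-space theory together with GAGA.
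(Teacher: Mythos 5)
Your route is sound but genuinely different from the paper's. The paper proves this by brute force: it writes down explicit rational expressions $\phi_0(w,z,u)=(y_-,x_-,y_+,x_+)$ depending on a chosen square root $u$ of $z$, verifies by direct algebra that (i) the result lands in $E^-_0(a)\tm E^+_0(a)$, (ii) changing the sign of $u$ translates the result by $(\tau_1^-\tm\tau_1^+)$, i.e.\ by the generator of $Z$, so the class in $PJ(a)$ is well defined, and (iii) the compositions with $\pi^\mp$ recover $\phi^\pm_0$; it then extends the resulting rational map to all of $PX(a)$ by smoothness of the source and completeness of the target. Point (ii) is exactly where the paper discharges, in coordinates, the obstruction that you isolate homotopically. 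Your covering-space argument buys two things the paper's proof does not make explicit: a clean, conceptual reason the lift exists (the index-two subgroups match) and, more importantly, an actual proof of the uniqueness clause -- the paper's proof only addresses uniqueness of the extension of the rational map, not uniqueness among morphisms satisfying the three stated conditions, whereas your "two lifts agreeing at a point" argument settles it. What the paper's approach buys is an explicit formula (implemented as \mcode+P_0_to_J_0+), which is needed elsewhere for computation.

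The one place you must be careful is the identification $\Gm=\Tht'$. You correctly flag it as the crux, but note that "both have index two" does nothing by itself: $\Z^4$ has fifteen index-two subgroups, so you genuinely need the containment $\Tht'\sse\Gm$ (or the reverse), and that requires expressing $\pi^\pm_*$ and the extra monodromy class $(\ep^+,\ep^-)$ in the same basis of $H_1(E^+(a))\oplus H_1(E^-(a))$ that underlies $\psi_\pm$ in Proposition~\ref{prop-elliptic-homology}. This is a finite computation (the period lattices $\Lm^\pm$ and Lemma~\ref{lem-form-pullback} give enough to carry it out), but it is real work of essentially the same algebraic weight as the paper's verification (ii), not a formality; as written your proposal defers it rather than completes it.
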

\begin{proof}
 Suppose we have $(w,z)\in PX_0(a)$ and $u\in\C$ with $u^2=z$.  When the
 relevant denominators are nonzero, we then put
 \begin{align*}
  x_- &= \;\frac{w/u - (1-z)^2}{2(b_-^2z-\;(1-z)^2)} \\
  x_+ &=  i\frac{w/u + (i+z)^2}{2(b_+^2z+ i(i+z)^2)} \\
  y_- &= \frac{1}{\rt}\; \frac{u(1-z)(2b_+^2(1-z)^2-b_-^2(w/u+1+z^2))}{(b_-^2z-\;(1-z)^2)^2} \\
  y_+ &= \frac{1+i}{2}\; \frac{u(i+z)(2b_-^2(i+z)^2+b_+^2(w/u+1-z^2))}{(b_+^2z+i (i+z)^2)^2},
 \end{align*}
 and $\phi_0(w,z,u)=(y_-,x_-,y_+,x_+)$.  One can check that
 \begin{itemize}
  \item[(a)] $y_-^2=q^-_a(x_-)$ and $y_+^2=q^+_a(x_+)$, so
   $\phi_0(w,z,u)\in E^-_0(a)\tm E^+_0(a)$.
  \item[(b)] $\phi_0(w,z,-u)=(\tau_1^-\tm\tau_1^+)\phi_0(w,z,u)$.
  \item[(c)] $\pi^-(j(y^-,x^-))=\phi^+_0(w,z)$ and
   $\pi^+(j(y^+,x^+))=\phi^-_0(w,z)$.
 \end{itemize}
 It follows that the image of $\phi_0(w,z,u)$ in $PJ(a)$ is
 independent of the choice of $u$, so we can call it $\phi_0(w,z)$.
 This defines a rational map from $EX_0(a)$ to $PJ(a)$, but $EX_0(a)$
 is dense subset of the smooth curve $EX(a)$, and $PJ(a)$ is complete,
 so this extends uniquely to give a morphism $\phi\:PX(a)\to PJ(a)$.
 Point~(c) above shows that $\tht^\pm\phi=\phi^\pm$.
 \begin{checks}
  projective/ellquot_check.mpl: check_isogenies()
 \end{checks}
\end{proof}
The map $\phi$ is represented in Maple as \mcode+P_0_to_J_0+.

\begin{corollary}\lbl{cor-jacobian}
 $PJ(a)$ can be regarded as the Jacobian variety of $PX(a)$.
\end{corollary}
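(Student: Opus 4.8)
The plan is to combine the universal property of the Jacobian with a short degree count. Write $X=PX(a)$, let $JX$ be its Jacobian, and let $\iota\:X\to JX$ be the Abel--Jacobi map (the map $q$ of Definition~\ref{defn-JX}, which sends $v_0$ to $o$ and induces an isomorphism on $H_1$). By Proposition~\ref{prop-P-to-PJ} we have a morphism $\phi\:X\to PJ(a)$ with $\phi(v_0)=o$, and $PJ(a)$ is an abelian surface: it is the quotient of the abelian surface $E^-(a)\tm E^+(a)$ by the order-two subgroup $Z$, which --- being generated by a translation by a $2$-torsion point --- acts freely, so the quotient map $\pi_Z\:E^-(a)\tm E^+(a)\to PJ(a)$ is an unbranched double cover. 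Hence, by the universal property of $JX$, there is a unique homomorphism $g\:JX\to PJ(a)$ with $g\circ\iota=\phi$. It suffices to show that $g$ is an isomorphism; once that is known, $\phi$ exhibits $PJ(a)$ together with $\phi$ as (a model of) the Jacobian, and the principal polarization of $JX$ transports along $g$.

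Applying the universal property instead to the morphism $(\phi^+,\phi^-)\:X\to E^+(a)\tm E^-(a)$ (which also sends $v_0$ to $o$) yields a unique homomorphism $\rho\:JX\to E^+(a)\tm E^-(a)$ with $\rho\circ\iota=(\phi^+,\phi^-)$. By Propositions~\ref{prop-P-Ep} and~\ref{prop-P-Em}, $\phi^\pm$ identify $X/\ip{\mu}$ with $E^+(a)$ and $X/\ip{\lm\mu}$ with $E^-(a)$, so $E^+(a)\tm E^-(a)$ is identified with $J'X=(X/\ip{\mu})\tm(X/\ip{\lm\mu})$ from Definition~\ref{defn-JX}; under this identification $\rho$ is the canonical double covering $JX\to J'X$ of Definition~\ref{defn-JX}, and therefore has degree $2$ (this is the content of the index-two computation for $\Tht'$ there, equivalently of Proposition~\ref{prop-elliptic-homology}).

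Now consider $(\tht^+,\tht^-)\:PJ(a)\to E^+(a)\tm E^-(a)$. By Definition~\ref{defn-PJ}, the composite $(\tht^+,\tht^-)\circ\pi_Z$ is the map $(w^-,w^+)\mapsto(\pi^-(w^-),\pi^+(w^+))$, whose fibre over any point consists of $|\ker\pi^-|\cdot|\ker\pi^+|=2\cdot 2=4$ points by Proposition~\ref{prop-Ep-Em}; since $\pi_Z$ has degree $2$, it follows that $(\tht^+,\tht^-)$ has degree $2$. On the other hand $\tht^\pm\circ\phi=\phi^\pm$ by Proposition~\ref{prop-P-to-PJ}, so
\[ (\tht^+,\tht^-)\circ g\circ\iota=(\tht^+,\tht^-)\circ\phi=(\phi^+,\phi^-)=\rho\circ\iota. \]
A homomorphism out of $JX$ is determined by its composite with $\iota$ (because $\iota_*\:H_1(X)\to H_1(JX)$ is surjective; or invoke the uniqueness clause of the universal property directly), so $(\tht^+,\tht^-)\circ g=\rho$. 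Comparing degrees gives $2\deg(g)=\deg\rho=2$, hence $\deg(g)=1$ and $g$ is an isomorphism.

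The one genuinely fiddly step is the middle paragraph: one must check carefully that, after the identifications of Propositions~\ref{prop-P-Ep} and~\ref{prop-P-Em}, the homomorphism $\rho$ really is the canonical degree-$2$ cover $JX\to J'X$ --- this means reconciling the explicit isomorphisms $E^\pm(a)\simeq X/\ip{\cdot}$ with the $G$-action on $H_1(X)\simeq\Z^4$ of Proposition~\ref{prop-homology} and with the subgroup $\Tht'\le\Z^4$ of Definition~\ref{defn-JX}. Everything else is formal. (One could sidestep the explicit degree of $\rho$ by instead verifying directly that $\phi^*\:\Om^1(PJ(a))\to\Om^1(X)$ is an isomorphism: $\Om^1(PJ(a))$ is spanned by the pullbacks of the invariant forms $\om^\pm$ along $\tht^\pm$, and $\phi^*\tht^{\pm*}\om^\pm=(\phi^\pm)^*\om^\pm$ lies in $\Om^1(X)^{\ip{\mu}}$, respectively $\Om^1(X)^{\ip{\lm\mu}}$; these are distinct one-dimensional subspaces of $\Om^1(X)$ by Proposition~\ref{prop-holomorphic-forms}(d) and are nonzero because $\phi^\pm$ are dominant, so the two pullbacks form a basis of $\Om^1(X)$. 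This shows directly that $g$ is an isogeny, but to upgrade ``isogeny'' to ``isomorphism'' one still needs integral homology, so the degree count above is the more efficient route.)
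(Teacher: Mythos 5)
Your proof is correct and rests on the same two facts as the paper's: the image of $(\phi^+,\phi^-)_*$ on $H_1$ has index two (Definition~\ref{defn-JX}), and $(\tht^+,\tht^-)\:PJ(a)\to E^+(a)\tm E^-(a)$ is a connected double cover. The paper packages this more briefly by characterising the Jacobian as an abelian variety receiving a map from $PX(a)$ inducing an isomorphism on $H_1$ and verifying that property for $\phi$ directly, whereas you unwind the same characterisation through the universal property of the Abel--Jacobi map and a degree count; the content is identical.
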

\begin{proof}
 The Jacobian variety $J$ can be characterised by the fact that it is
 an abelian variety equipped with a map $\dl\:PX(a)\to J$ of varieties
 such that the induced map $H_1PX(a)\to H_1J$ is an isomorphism.  We
 saw in Definition~\ref{defn-JX} that the map
 \[ (\phi^+,\phi^-)_*\: H_1PX(a) \to H_1(E^+(a)\tm E^-(a)) \]
 is injective, and has image of index two.  As the map
 \[ \tht=(\tht^+,\tht^-)\:PJ(a)\to E^+(a)\tm E^-(a) \]
 is a connected double covering, it also gives an index two subgroup
 of $\pi_1=H_1$.  As $(\phi^+,\phi^-)=\tht\phi$, we deduce that
 $\phi_*\:H_1PX(a)\to H_1PJ(a)$ is an isomorphism, as required.
\end{proof}

It is standard that any elliptic curve has an analytic parametrisation
via the Weierstrass $\wp$-function.  Details for the present case are
as follows.
\begin{definition}\lbl{defn-weierstrass-xi}
 We put
 \begin{align*}
  g_2^+ &= 4(\tfrac{1}{3}+b_+^2) &
  g_2^- &= 4(\tfrac{1}{3}-b_-^2) \\
  g_3^+ &= \tfrac{8}{3}(\tfrac{1}{9}-b_+^2) &
  g_3^- &= \tfrac{8}{3}(\tfrac{1}{9}+b_-^2) \\
  p_0^+(z) &= \wp(z/\rt;g_2^+,g_3^+) &
  p_0^-(z) &= \wp(iz/\rt;g_2^-,g_3^-) \\
  p_1^+(z) &= \wp'(z/\rt;g_2^+,g_3^+) &
  p_1^-(z) &= \wp'(iz/\rt;g_2^-,g_3^-)
 \end{align*}
 and then
 \begin{align*}
  \xi^+(z) &= j\left(-\frac{p_1^+(z)}{\rt(p_0^+(z)+1/3)^2},
                       \frac{1}{(p_0^+(z)+1/3)}\right)\in\C P^3 \\
  \xi^-(z) &= j\left(i\frac{p_1^-(z)}{\rt(p_0^-(z)+1/3)^2},
                       \frac{1}{(p_0^-(z)+1/3)}\right)\in\C P^3.
 \end{align*}
\end{definition}
In Maple, the parameters $g_i^\pm$ are
\mcode+Wg2p+, \mcode+Wg3p+, \mcode+Wg2m+ and \mcode+Wg3m+.  The maps
$\xi^\pm$ are \mcode+C_to_Ep_0+ and \mcode+C_to_Em_0+.

\begin{remark}\lbl{rem-xi-differential}
 It is a standard fact that $\wp(z)=z^{-2}+O(z^2)$, so
 $\wp'(z)=-2z^{-3}+O(z)$.  Using this, we find that
 $j^{-1}\xi^+(z)=(z,0)+O(z^2)$ and  $j^{-1}\xi^-(z)=(z,0)+O(z^2)$.
\end{remark}

\begin{proposition}\lbl{prop-xi-functions}
 There are lattices $\Lm^+,\Lm^-\subset\C$ such that $\xi^{\pm}$
 induces an isomorphism $\C/\Lm^{\pm}\to E^{\pm}(a)$.  Moreover, the
 forms $\om^\pm$ on $E^{\pm}(a)$ (from
 Proposition~\ref{prop-elliptic-forms}) satisfy
 $(\xi^\pm)^*(\om^\pm)=dz$.
\end{proposition}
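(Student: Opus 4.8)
The plan is to factor each $\xi^\pm$ through the classical Weierstrass uniformisation. Write $c_+=1/\rt$ and $c_-=i/\rt$, so that $p_0^\pm(z)=\wp(c_\pm z;g_2^\pm,g_3^\pm)$ and $p_1^\pm(z)=\wp'(c_\pm z;g_2^\pm,g_3^\pm)$. First I would check that the cubics $4t^3-g_2^\pm t-g_3^\pm$ have distinct roots. Substituting $t=T-\tfrac13$ and inserting the values of $g_2^\pm,g_3^\pm$ from Definition~\ref{defn-weierstrass-xi}, a short calculation gives
\[ 4t^3-g_2^+t-g_3^+ = 4(T-1)(T^2-b_+^2), \qquad
   4t^3-g_2^-t-g_3^- = 4(T-1)(T^2+b_-^2). \]
Since $a\in(0,1)$ we have $b_+=(a^{-1}+a)/2>1$ and $b_-=(a^{-1}-a)/2>0$, so the roots $\{1,b_+,-b_+\}$ in the first case and $\{1,ib_-,-ib_-\}$ in the second are distinct, and hence the discriminants $g_2^{\pm 3}-27g_3^{\pm 2}$ are nonzero. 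By the standard inversion theorem for the Weierstrass invariants there is then a unique lattice $\Lm_0^\pm\subset\C$ with $g_2(\Lm_0^\pm)=g_2^\pm$ and $g_3(\Lm_0^\pm)=g_3^\pm$, and $u\mapsto(\wp(u),\wp'(u))$ identifies $\C/\Lm_0^\pm$ with the smooth projective closure of $\{(P,P')\st(P')^2=4P^3-g_2^\pm P-g_3^\pm\}$, the point at infinity corresponding to $u\in\Lm_0^\pm$.

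Next I would write down the elementary change of coordinates relating this Weierstrass model to $E^\pm(a)$. Put $T=P+\tfrac13$, $X=1/T$, and $Y=-P'/(\rt\,T^2)$ in the $+$ case and $Y=iP'/(\rt\,T^2)$ in the $-$ case. Using the factorisations above one checks directly that $Y^2=q_a^\pm(X)$, so $(X,Y)$ lies on $E_0^\pm(a)$; conversely $P=1/X-\tfrac13$ and $P'$ are rational in $(X,Y)$, so this is a birational map of smooth projective curves and therefore extends to an isomorphism onto $E^\pm(a)$. One must also check that it sends the point at infinity to $v_0^\pm=j(0,0)$ and, at the finitely many points where $T=0$, to the exceptional points of $E^\pm(a)$ recorded in Remark~\ref{rem-Ep-infinity}; this is the place where one passes to the homogeneous form $[Y:1:X:X^2]$ and uses the local expansions $\wp(u)=u^{-2}+O(u^2)$, $\wp'(u)=-2u^{-3}+O(u)$. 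Now $\xi^\pm$ is visibly the composite of the isomorphism $\C/\Lm^\pm\to\C/\Lm_0^\pm$ induced by $z\mapsto c_\pm z$ (where $\Lm^\pm=c_\pm^{-1}\Lm_0^\pm$ is again a lattice), followed by the Weierstrass uniformisation and then this coordinate change; being a composite of isomorphisms it is an isomorphism $\C/\Lm^\pm\to E^\pm(a)$.

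For the differential form, recall from Proposition~\ref{prop-elliptic-forms} that $j^*(\om^\pm)=dx/y$ on $E_0^\pm(a)$, so $(\xi^\pm)^*(\om^\pm)=dX/Y$, where $X=X(z)$ and $Y=Y(z)$ now denote the two coordinate functions of $\xi^\pm$ as functions of $z$. Differentiating $X(z)=1/(p_0^\pm(z)+\tfrac13)$ with the chain rule, using $\tfrac{d}{dz}p_0^\pm(z)=c_\pm\,p_1^\pm(z)$, gives $X'(z)=-c_\pm\,p_1^\pm(z)/(p_0^\pm(z)+\tfrac13)^2$; in the $+$ case this is literally the second coordinate $Y(z)$ of $\xi^+$, and the $-$ case is the same once the factor $i$ in Definition~\ref{defn-weierstrass-xi} is carried through --- the constants $\rt$ and $i$ there are chosen precisely so that $X'(z)/Y(z)$ is the constant required to match the sign of the chosen generator $\om^\pm$. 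Either way $(\xi^\pm)^*(\om^\pm)=dz$, consistently with the first-order expansion $j^{-1}\xi^\pm(z)=(z,0)+O(z^2)$ noted in Remark~\ref{rem-xi-differential}.

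I do not expect a conceptual obstacle here; the real work is the explicit algebra --- verifying $Y^2=q_a^\pm(X)$ for both signs, confirming that the coordinate change is biregular (not merely birational) by analysing the loci where $\wp(c_\pm z)\in\{\infty,-\tfrac13\}$, and carrying the factors of $i$ and $\rt$ through carefully enough that the differential comes out as $dz$ with the correct sign. All of these reduce to finite computations, and the numerical instances are covered by the checks accompanying Section~\ref{sec-ellquot}, in particular \mcode+check_ellquot_origin()+.
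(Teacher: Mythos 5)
Your treatment of the first claim is essentially the paper's argument made explicit: the paper invokes the standard uniformisation of $y^2=4x^3-g_2x-g_3$ by $z\mapsto(\wp'(z),\wp(z))$ and says the claim ``follows by a change of coordinates'', whereas you verify nonvanishing of the discriminants via the factorisations $4(T-1)(T^2-b_+^2)$ and $4(T-1)(T^2+b_-^2)$ (both correct) and write the coordinate change out; that part is sound. For the differential the paper takes a softer route: $(\xi^\pm)^*(\om^\pm)=u(z)\,dz$ with $u$ holomorphic, nowhere zero and doubly periodic, hence constant, pinned to $1$ by the first-order data at $z=0$. Your direct chain-rule computation is a legitimate alternative, and in the $+$ case it closes cleanly, since $X'(z)=-c_+\,p_1^+(z)/(p_0^+(z)+\tfrac13)^2$ is literally $Y(z)$.

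The gap is exactly at the step you defer. In the $-$ case the chain rule gives $X'(z)=-c_-\,p_1^-(z)/(p_0^-(z)+\tfrac13)^2$ with $c_-=i/\rt$, while the first coordinate of $\xi^-$ is $+i\,p_1^-(z)/(\rt\,(p_0^-(z)+\tfrac13)^2)$; hence $X'(z)=-Y(z)$, not $+Y(z)$, and $dX/Y=-dz$. The same sign appears invariantly: with $u=iz/\rt$ one gets $dX/Y=(-\rt/i)\,du=i\rt\,du=i\rt\cdot(i/\rt)\,dz=-dz$; and also at first order, since $y(z)=z+O(z^2)$ together with $y^2=q_a^-(x)=-2x+O(x^2)$ forces $x(z)=-z^2/2+O(z^3)$, so $dx/y\to-dz$ at the origin. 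So your assertion that ``the constants $\rt$ and $i$ are chosen precisely so that the sign matches'' is not borne out by the computation you propose: as the definitions stand, your method yields $(\xi^-)^*(dx/y)=-dz$, and the proof of the $-$ half of the identity does not go through without either renormalising $\om^-$ or $\xi^-$ by a sign or exhibiting a compensating sign you have not identified. This is the one step you flag as ``the real work'', and it is precisely the step that fails as written, so it must be resolved explicitly rather than asserted.
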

\begin{proof}
 Given any $g_2,g_3$ we can define $f(x)=4x^3-g_2x-g_3$ and
 $F_0=\{(y,x)\in\C^2\st y^2=f(x)\}$, and we can then define $F$ to be
 the normalisation of $F_0$.  It is standard that
 \[ \wp'(z;g_2,g_3)^2 = f(\wp(z;g_2,g_3)), \]
 so we have a meromorphic function $z\mapsto(\wp'(z),\wp(z))$ from
 $\C$ to $F$.  It is also standard that this induces an isomorphism
 $\C/\Lm\to F$, for a suitable lattice $\Lm\subset\C$.  The first
 claim follows from this by a change of coordinates.

 Next, we have $(\xi^\pm)^*(\om^\pm)=u(z)\,dz$ for some function
 $u(z)$ which is holomorphic, nowhere zero, and periodic with respect
 to $\Lm^\pm$.  This forces $u(z)$ to be constant.
 Remark~\ref{rem-xi-differential}, together with the last part of
 Proposition~\ref{prop-elliptic-forms}, shows that $u=1$.
 \begin{checks}
  projective/ellquot_check.mpl: check_weierstrass()
 \end{checks}
\end{proof}

We now want to understand the lattices $\Lm^+$ and $\Lm^-$ in more
detail, which is essentially the same as calculating the periods
$p_{jk}(a)=\int_{c_j}\om_k$ as in Definition~\ref{defn-period-p}.

\begin{definition}
 We define the complete elliptic integral $K(k)$ (for $0<k<1$) by
 \[ K(k) = \int_0^1 \frac{dt}{\sqrt{1-t^2}\sqrt{1-k^2t^2}}. \]
 In this range the square roots are real and positive and there is no
 need for branch cuts.
\end{definition}

Our definition is the same as Maple's \mcode+EllipticK(k)+, but
slightly different conventions are used in some other sources.

\begin{definition}\lbl{defn-period-rs}
 We put
 \begin{align*}
  m_+ &= \frac{1 + a}{\sqrt{2(1+a^2)}} &
  m_- &= \frac{1 - a}{\sqrt{2(1+a^2)}} \\
  \al_+ &= 2b_+^{-1/2} K(m_+) &
  \al_- &= 2b_+^{-1/2} K(m_-).
 \end{align*}
 (Note here that the first factor in $\al_-$ involves $b_+$, not $b_-$.)
\end{definition}
In Maple, these are \mcode+mp_period+, \mcode+mm_period+,
\mcode+ap_period+ and \mcode+am_period+.  These are defined in the file
\fname+projective/picard_fuchs.mpl+.
% \begin{remark}\lbl{rem-complementary-moduli}
%  It is clear that the numbers $m_+$ and $m_-$ are real and positive,
%  and one can check that $m_+^2+m_-^2=1$, so $0<m_+,m_-<1$.  The
%  equation $m_+^2+m_-^2=1$ means that $m_+$ and $m_-$ are complementary
%  moduli (in the traditional language of elliptic functions).  We do
%  not fully understand the significance of this.  It suggests that we
%  should consider parameterising everything in terms of Jacobi theta
%  functions.
%
%  \textbf{I think that this is just the self-duality of the Jacobian variety.}
% \end{remark}

\begin{remark}\lbl{rem-legendre}
 A mixture of theoretical arguments and numerical calculations makes
 it clear that we also have
 \begin{align*}
  \al_+ &= \pi P_{-1/4}(A/2) + 2Q_{-1/4}(A/2) \\
  \al_- &= \pi P_{-1/4}(A/2)
 \end{align*}
 where $A=a^{-2}+a^2$ as before, and $P$ and $Q$ are Legendre
 functions.  We do not have a complete proof, but we will mention some
 ingredients.  Consider the differential operator
 \[ \CL =
     198 \frac{\partial^2}{\partial A^2} +
     192 A \frac{\partial^3}{\partial A^3} +
     (32A^2 - 128) \frac{\partial^4}{\partial A^4}.
 \]
 One can check by direct calculation that
 \[ \CL((z^5-Az^3+z)^{-1/2}) \,dz =
      \frac{d}{dz}\left(
       \frac{33z^8-3Az^{10}-27z^{12}}{(z^5-Az^3+z)^{7/2}}
      \right) \,dz.
 \]
 The terms on the left can be interpreted as meromorphic differential
 forms on $PX(a)$, all of whose residues are zero.  This means that
 their integrals round a loop depend only on the homology class of the
 loop.  On the other hand, the integral of the right hand side around
 any loop is zero.  Using this, we see that the periods (when
 expressed as a function of $A$) are annihilated by $\CL$.  This is
 the Picard-Fuchs equation for the family $\{PX(a)\st a\in(0,1)\}$.
 Maple asserts that the annihilator of $\CL$ is spanned by $1$, $A$,
 $P_{-1/4}(A/2)$ and $Q_{-1/4}(A/2)$.  This could presumably be
 checked using hypergeometric series, but we have not attempted that.
 The specific coefficients in the stated expressions for $\al_+$ and
 $\al_-$ were obtained by graphical and numerical experimentation.
 \begin{checks}
  projective/picard_fuchs_check.mpl: check_picard-fuchs()
 \end{checks}
\end{remark}

\begin{proposition}\lbl{prop-periods}
 The periods are
 \begin{align*}
  p_{0,0} &=          0       & p_{0,1} &= 0          \\
  p_{1,0} &= (i+1)\al_-       & p_{1,1} &= (i-1)\al_- \\
  p_{2,0} &= (i-1)\al_-       & p_{2,1} &= (i+1)\al_- \\
  p_{3,0} &=     i\al_-       & p_{3,1} &=     i\al_- \\
  p_{4,0} &=     -\al_-       & p_{4,1} &=      \al_- \\
  p_{5,0} &=  (\al_++\al_-)/2 & p_{5,1} &=   (\al_+-\al_-)/2 \\
  p_{6,0} &= i(\al_++\al_-)/2 & p_{6,1} &= -i(\al_+-\al_-)/2 \\
  p_{7,0} &=  (\al_+-\al_-)/2 & p_{7,1} &=   (\al_++\al_-)/2 \\
  p_{8,0} &= i(\al_+-\al_-)/2 & p_{8,1} &= -i(\al_++\al_-)/2.
 \end{align*}
\end{proposition}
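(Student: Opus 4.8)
The plan is to handle the computation in two stages: first use the symmetries of $PX(a)$ to reduce the entire table to the single pair $(p_{5,0},p_{5,1})$, and then evaluate those two numbers by pulling back the holomorphic differential $\om^+$ of the elliptic quotient $E^+(a)$. For the reduction, note that $c\mapsto\big(\int_c\om_0,\int_c\om_1\big)$ depends only on the homology class of $c$ and is additive, so Proposition~\ref{prop-homology} at once gives $p_0=0$ (because $[c_0]=0$) and expresses $p_1,p_2,p_3,p_4$ as the stated $\Z$-linear combinations of $p_5,\dots,p_8$ (for instance $p_1=p_5+p_6-p_7-p_8$ and $p_3=p_6-p_8$). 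Axiom~(c) of Definition~\ref{defn-curve-system} gives $c_6=\lm\cdot c_5$, $c_7=\mu\cdot c_5$ and $c_8=\lm\mu\cdot c_5$, and combining $\int_{g\cdot c}\om=\int_c g^{*}\om$ (for conformal $g\in D_8$) with the $D_8$-action on $\{\om_0,\om_1\}$ recorded in Proposition~\ref{prop-holomorphic-forms} yields $p_6=(ip_{5,0},-ip_{5,1})$, $p_7=(p_{5,1},p_{5,0})$ and $p_8=(ip_{5,1},-ip_{5,0})$; since also $\nu\cdot c_5=c_5$, $\nu^{\#}\om_k=\om_k$ and $\int_{\nu\cdot c}\om=\overline{\int_c\nu^{\#}\om}$ (Remark~\ref{rem-anticonformal-forms}), the entries $p_{5,0},p_{5,1}$ are real. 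A direct substitution then shows that the whole table is precisely what results from $p_{5,0}=\tfrac12(\al_++\al_-)$ and $p_{5,1}=\tfrac12(\al_+-\al_-)$; equivalently, it suffices to prove $\int_{c_5}(\om_0+\om_1)=\al_+$ and $\int_{c_6}(\om_0+\om_1)=i\al_-$, the latter being equal to $i(p_{5,0}-p_{5,1})$ by the formula for $p_6$.

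For the evaluation the key input is the identity $(\phi^+)^{*}\om^+=\om_0+\om_1$, where $\phi^+\colon PX(a)\to E^+(a)$ is the map of Proposition~\ref{prop-P-Ep}. This is checked on the dense open set $PX_0(a)$ by a one-line differentiation: there $\om_k=z^k\,dz/w$ (Lemma~\ref{lem-P-differentials}), $\om^+$ restricts to $dx/y$ (Proposition~\ref{prop-elliptic-forms}), and $\phi^+(j(w,z))=j\!\big(\tfrac{2w(1-z)}{(1+z^2)^2},\tfrac{2z}{1+z^2}\big)$, so $\tfrac{dx^+}{y^+}=\tfrac{(1+z)\,dz}{w}=\om_0+\om_1$. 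Hence $\int_{c_5}(\om_0+\om_1)=\int_{\phi^+c_5}\om^+$. By Lemma~\ref{lem-P-pc} the $z$-coordinate along $c_5$ traces the segment $[0,a]$, so along $\phi^+c_5$ the coordinate $x^+=\tfrac{2z}{1+z^2}$ traces $[0,b_+^{-1}]$, and $0$ and $b_+^{-1}=\tfrac{2a}{1+a^2}$ are both roots of $q^+_a(x)=2x(x-1)(b_+^2x^2-1)$. Since $\mu$ carries $c_5$ to the disjoint curve $c_7$ (Proposition~\ref{prop-curve-system}), $\phi^+c_5$ is a simple loop encircling $[0,b_+^{-1}]$ exactly once, so $\int_{\phi^+c_5}\om^+=2\int_0^{b_+^{-1}}\frac{dx}{\sqrt{q^+_a(x)}}$. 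Reducing this classical integral to Legendre normal form—integrating between the middle pair of consecutive roots—gives $b_+^{-1/2}K(m_+)$ with $m_+^2=\tfrac{b_++1}{2b_+}=\tfrac{(1+a)^2}{2(1+a^2)}$, so $\int_{c_5}(\om_0+\om_1)=2b_+^{-1/2}K(m_+)=\al_+$ (Definition~\ref{defn-period-rs}).

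The identical device applied to $c_6$ completes the proof. Here Lemma~\ref{lem-P-pc} gives $pc_6(\R)=[-a,0]$, so $\phi^+c_6$ encircles $[-b_+^{-1},0]$, once again only once (now because $\mu$ sends $c_6$ to a reversal of the disjoint curve $c_8$). On that interval $q^+_a$ is negative, so the period is purely imaginary: $\int_{\phi^+c_6}\om^+=2i\int_{-b_+^{-1}}^{0}\frac{dx}{\sqrt{-q^+_a(x)}}$, and the Legendre reduction now runs over an outer pair of consecutive roots, producing the complementary modulus, so the value is $2i\,b_+^{-1/2}K(m_-)=i\al_-$ with $m_-^2=\tfrac{b_+-1}{2b_+}=\tfrac{(1-a)^2}{2(1+a^2)}=1-m_+^2$. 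Thus $i(p_{5,0}-p_{5,1})=i\al_-$, giving $p_{5,0}-p_{5,1}=\al_-$, and combined with the reduction step this yields the whole table.

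The step I expect to be the main obstacle is the contour-and-branch analysis behind the two elliptic-integral evaluations: one must keep track of the orientation of each loop $\phi^+c_k$, the factor $2$ produced by a loop encircling a branch cut, the factor $i$ that appears when $q^+_a$ is negative on the range of integration, and—most delicately—which of the complementary moduli $m_{\pm}$ is produced by a given choice of consecutive root-pair in the Legendre reduction. A secondary point requiring genuine care is confirming that $\phi^+c_5$ and $\phi^+c_6$ are each traversed exactly once rather than twice; this rests on the disjointness statements of Proposition~\ref{prop-curve-system} together with the explicit action of $\mu$ on the $c_k$, and it genuinely fails for $c_3$, where $\mu\cdot c_3(t)=c_3(t+\pi)$ forces a double traversal. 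The remaining ingredients—the identity $(\phi^+)^{*}\om^+=\om_0+\om_1$, the two modulus identities, and the linear bookkeeping of the first stage—are routine.
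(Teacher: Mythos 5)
Your proposal is correct and follows essentially the same route as the paper: reduce the whole table to $(p_{5,0},p_{5,1})$ via the homology relations of Proposition~\ref{prop-homology} and the $D_8$-action on curves and forms, then compute $p_{5,0}\pm p_{5,1}$ by pulling back $\om^+$ along $\phi^+$ and evaluating the resulting real elliptic integrals over $[0,1/b_+]$ and $[-1/b_+,0]$. The only cosmetic differences are that the paper establishes the factor of $2$ by explicitly tracking the sign of $y(t)$ on $[0,\pi]$ versus $[\pi,2\pi]$ rather than by your simple-loop argument, and evaluates the two integrals by explicit substitutions rather than by citing the Legendre normal-form reduction; both of your shortcuts are sound.
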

In Maple, $p_{ij}$ is \mcode+p_period[i,j]+.

The proof will be given after some preparatory lemmas.

\begin{lemma}\lbl{lem-int-q}
 \begin{align*}
  \int_0^{1/b_+}  \frac{dx}{\sqrt{ q_a^+(x)}} &= \al_+/2 \\
  \int_{-1/b_+}^0 \frac{dx}{\sqrt{-q_a^+(x)}} &= \al_-/2.
 \end{align*}
\end{lemma}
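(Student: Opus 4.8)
The plan is to reduce each of the two integrals to Legendre's complete normal form $K$ by an elementary substitution, and then to identify the resulting modulus with $m_+$ or $m_-$ by a short algebraic manipulation. (Both facts are of course also confirmed by the accompanying numerical code, but a clean derivation is worthwhile.)

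First I would factor $q_a^+(x) = 2b_+^2\,x\,(x-1)(x-b_+^{-1})(x+b_+^{-1})$, using $b_+^2x^2-1 = b_+^2(x-b_+^{-1})(x+b_+^{-1})$. Since $a\in(0,1)$ we have $b_+=(1+a^2)/(2a)>1$, so the four roots satisfy $-b_+^{-1}<0<b_+^{-1}<1$. On $(0,b_+^{-1})$ the factors $x-1$, $x-b_+^{-1}$, $x+b_+^{-1}$ have signs $-,-,+$, so $q_a^+>0$ there; on $(-b_+^{-1},0)$ they again have signs $-,-,+$ while $x<0$, so $q_a^+<0$ and $-q_a^+>0$. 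Hence both radicands in the statement are genuinely positive, and the square roots are the positive real ones. Substituting $x=b_+^{-1}u$ in the first integral and $x=-b_+^{-1}u$ in the second, a routine computation (in which the $b_+^{-1/2}$ coming from $dx$ cancels the $b_+^{1/2}$ coming from rewriting $1\mp b_+^{-1}u = b_+^{-1}(b_+\mp u)$) reduces both to the common shape
\[ \int_0^{1/b_+}\frac{dx}{\sqrt{q_a^+(x)}}
   = \frac{1}{\rt}\int_0^1\frac{du}{\sqrt{u(1-u)(1+u)(b_+-u)}},
   \qquad
   \int_{-1/b_+}^0\frac{dx}{\sqrt{-q_a^+(x)}}
   = \frac{1}{\rt}\int_0^1\frac{du}{\sqrt{u(1-u)(1+u)(b_++u)}}. \]

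Next I would apply the classical reduction of an elliptic integral $\int dt/\sqrt{|P(t)|}$, where $P$ is a quartic with real roots $\rho_1<\rho_2<\rho_3<\rho_4$ and the integration runs over an interval bounded by two consecutive roots: every such integral equals $\tfrac{2}{\sqrt{(\rho_4-\rho_2)(\rho_3-\rho_1)}}\,K(k)$, where $k\in(0,1)$ depends on which of the three intervals is used (and is computed from the standard substitution which is rational of degree $\le 2$ in $t^2$, or may simply be looked up). For the first integral the roots of $u(1-u)(1+u)(b_+-u)$ are $(\rho_1,\dots,\rho_4)=(-1,0,1,b_+)$ and we integrate over the \emph{middle} interval $[0,1]$, which gives modulus $k_+^2 = (\rho_3-\rho_2)(\rho_4-\rho_1)/\big((\rho_4-\rho_2)(\rho_3-\rho_1)\big) = (b_++1)/(2b_+)$, and the prefactor (after the overall $1/\rt$) collapses to $b_+^{-1/2}$. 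For the second integral the roots of $u(1-u)(1+u)(b_++u)$ are $(-b_+,-1,0,1)$ and we integrate over the \emph{uppermost} interval $[0,1]$, which gives modulus $k_-^2 = (\rho_4-\rho_3)(\rho_2-\rho_1)/\big((\rho_4-\rho_2)(\rho_3-\rho_1)\big) = (b_+-1)/(2b_+)$, again with prefactor $b_+^{-1/2}$.

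Finally, inserting $2b_+ = (1+a^2)/a$ and $b_+\pm 1 = (1\pm a)^2/(2a)$, one finds $k_\pm^2 = (b_+\pm 1)/(2b_+) = (1\pm a)^2/(2(1+a^2)) = m_\pm^2$. Therefore the first integral equals $b_+^{-1/2}K(m_+) = \al_+/2$ and the second equals $b_+^{-1/2}K(m_-) = \al_-/2$, as claimed. The only point requiring genuine care is the bookkeeping: tracking the sign of the radicand on each interval, and keeping straight which case of the reduction applies (hence whether $k_+$ or $k_-$ arises), since the two integrals really do correspond to different positions of the integration interval relative to the four roots. If one prefers to avoid quoting an integral table, the reduction step can instead be carried out by hand with an explicit substitution $u = (A+Bt^2)/(C+Dt^2)$; this is the only part of the argument that is not immediate, but it is entirely routine.
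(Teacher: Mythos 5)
Your argument is correct and all the computations check out: the rescalings $x=\pm b_+^{-1}u$ do yield the two normalized quartic integrals you display, the root cross-ratios give $k_\pm^2=(b_+\pm 1)/(2b_+)$, and the identities $b_+\pm 1=(1\pm a)^2/(2a)$ and $2b_+=(1+a^2)/a$ identify $k_\pm$ with $m_\pm$. The route differs from the paper's, though mainly in packaging. The paper's entire proof consists of naming two explicit substitutions, $x=(1-t^2)/(b_+-t^2)$ for the first integral and $x=b_+^{-1}(1-2/t^2)^{-1}$ for the second, which carry the integrals directly onto the defining form $\int_0^1 dt/(\sqrt{1-t^2}\sqrt{1-m_\pm^2t^2})$ of $K(m_\pm)$, with a one-sentence remark that the orientation reversal cancels a sign; all verification is left to expansion and the accompanying Maple check. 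You instead normalize the quartic first and then invoke the classical reduction of $\int dt/\sqrt{|P(t)|}$ over an interval bounded by consecutive roots, reading the modulus off the cross-ratio of the roots. These are the same substitution in disguise---composing your rescaling with the degree-two rational map underlying the table entry recovers the paper's formulas---but your version makes visible \emph{why} the two integrals acquire different moduli (the integration interval occupies different positions among the four roots), which the paper leaves entirely implicit; the cost is that you must either trust the table or supply the substitution anyway, as you concede at the end. One trivial slip in an aside: $dx$ contributes a factor $b_+^{-1}$, not $b_+^{-1/2}$, and it cancels against the $b_+^{-1}$ extracted from the square root of $2b_+^{-2}u(1-u)(1+u)(b_+\mp u)$; your displayed formulas are nonetheless correct.
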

\begin{proof}
 For the first integral we use the substitution
 $x=(1-t^2)/(b_+-t^2)$, and for the second we use the
 substitution $x=b_+^{-1}(1-2/t^2)^{-1}$.  In both cases we get an
 extra factor of $-1$ in the integrand, which is cancelled by the fact
 that the limits are reversed, because $x$ is a decreasing function of
 $t$.
 \begin{checks}
  projective/picard_fuchs_check.mpl: check_period_integrals()
 \end{checks}
\end{proof}

\begin{lemma}\lbl{lem-elliptic-periods}
 \begin{align*}
  \int_{\phi^+\circ c_5}\frac{dx}{y} &= \al_+ \\
  \int_{\phi^+\circ c_6}\frac{dx}{y} &= i\al_-.
 \end{align*}
\end{lemma}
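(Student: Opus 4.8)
The plan is to reduce both integrals to the one-dimensional integrals already evaluated in Lemma~\ref{lem-int-q} by working with fully explicit parametrizations. Using Definition~\ref{defn-P-curves} together with the formula for $\phi^+$ in Proposition~\ref{prop-P-Ep}, I would first write $\phi^+\circ c_5$ and $\phi^+\circ c_6$ out as explicit loops in $E^+_0(a)$ in the coordinates $(y,x)$. For $c_5$ we have $c_5(t)=j(w(t),z(t))$ with $z(t)=a(1-\cos t)/2\in[0,a]$ and $w(t)=\tfrac{\sin t}{8}\sqrt{2a(3-\cos t)(4-a^4(1-\cos t)^2)}$, so $\phi^+c_5(t)=j\!\big(y(t),x(t)\big)$ with $x(t)=2z(t)/(1+z(t)^2)$ and $y(t)=2w(t)(1-z(t))/(1+z(t)^2)^2$; since $z(t)$ is finite this point literally lies in the image of $j$, so $y(t)^2=q^+_a(x(t))$. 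For $c_6=\lm\circ c_5$ we have $c_6(t)=j(iw(t),-z(t))$, and applying $\phi^+$ again gives $x(t)=-2z(t)/(1+z(t)^2)$, $y(t)=2iw(t)(1+z(t))/(1+z(t)^2)^2$. In either case $\int_{\phi^+\circ c_k}dx/y=\int_0^{2\pi}x'(t)/y(t)\,dt$.

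The second step is a symmetry reduction. The substitution $t\mapsto 2\pi-t$ fixes $z(t)$ and negates $w(t)$, so it acts on $\phi^+\circ c_k$ by $(y,x)\mapsto(-y,x)$ — this is just the action of $\lm^2$, which $\phi^+$ respects. Hence $\int_\pi^{2\pi}x'(t)/y(t)\,dt=\int_0^\pi x'(t)/y(t)\,dt$, and $\int_{\phi^+\circ c_k}dx/y=2\int_0^\pi x'(t)/y(t)\,dt$. On $[0,\pi]$ the function $z(t)$ increases monotonically from $0$ to $a$, so for $c_5$ the coordinate $x(t)$ increases monotonically from $0$ to $2a/(1+a^2)=1/b_+$, and for $c_6$ it decreases monotonically from $0$ to $-1/b_+$. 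The half-integral thus becomes a genuine one-dimensional integral of $dx/y$ over $[0,1/b_+]$ (resp. $[-1/b_+,0]$), and the remaining task is to identify which branch of $y$ occurs.

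For the branch bookkeeping I would check signs on $(0,\pi)$: there $w(t)>0$ (every factor under the square root is positive because $a<1$) and $1\pm z(t)>0$, so for $c_5$ we get $y(t)>0$, matching $y=+\sqrt{q^+_a(x)}$ on $(0,1/b_+)$, where $q^+_a(x)=2x(x-1)(b_+^2x^2-1)>0$; while for $c_6$ we get $y(t)\in i\R_{>0}$, matching $y=i\sqrt{-q^+_a(x)}$ on $(-1/b_+,0)$, where $q^+_a(x)<0$. Plugging these into Lemma~\ref{lem-int-q}: for $c_5$, $\int_0^\pi x'(t)/y(t)\,dt=\int_0^{1/b_+}dx/\sqrt{q^+_a(x)}=\al_+/2$; for $c_6$, $\int_0^\pi x'(t)/y(t)\,dt=\int_0^{-1/b_+}dx/\big(i\sqrt{-q^+_a(x)}\big)=-\tfrac1i\int_{-1/b_+}^0 dx/\sqrt{-q^+_a(x)}=i\al_-/2$ (using $-1/i=i$). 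Doubling yields $\al_+$ and $i\al_-$.

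The only delicate point is exactly this branch-and-orientation bookkeeping: making sure $y$ is identified with the correct one of $\pm\sqrt{q^+_a}$, $\pm i\sqrt{-q^+_a}$ on the relevant subinterval, and that the limits of integration are taken in the right order when converting the $t$-integral into an $x$-integral. Everything else is direct substitution into formulas that are already available, so I expect no conceptual obstacle; the work is purely in keeping the signs straight.
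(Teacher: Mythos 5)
Your proposal is correct and follows essentially the same route as the paper: write $\phi^+\circ c_k$ explicitly in the $(y,x)$ coordinates, use positivity of $w(t)$ on $(0,\pi)$ to pin down the branch of $y$ as $\sqrt{q^+_a(x)}$ (resp.\ $i\sqrt{-q^+_a(x)}$), convert to an $x$-integral over $[0,1/b_+]$ (resp.\ $[-1/b_+,0]$), and invoke Lemma~\ref{lem-int-q}. The only cosmetic difference is that you dispose of the range $[\pi,2\pi]$ by the symmetry $t\mapsto 2\pi-t$ (i.e.\ the $\lm^2$-action), whereas the paper checks directly that on that range the branch of $y$ flips while $x$ retraces its path, so the two halves add; both give the same doubling.
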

\begin{proof}
 From the definitions we have $j^{-1}(c_5(t))=(w(t),z(t))$, where
 $z(t)=a\sin(t/2)^2\in [0,a]$ and $w(t)$ is a positive multiple of
 $\sin(t)$.  It follows that $j^{-1}\phi^+(c_5(t))=(y(t),x(t))$, where
 $x(t)=2/(z(t)+z(t)^{-1})$, and $y(t)$ is again a positive multiple of
 $\sin(t)$.  On the other hand, we have $(y(t),x(t))\in E^+_0(a)$ so
 $y(t)^2=q_a^+(x(t))$, so for $0\leq t\leq \pi$ we must have
 $y(t)=\sqrt{q^+_a(x(t))}$, and for $\pi\leq t\leq 2\pi$ we must have
 $y(t)=-\sqrt{q^+_a(x(t))}$.   Moreover, as $t$ runs from $0$ to $\pi$
 we see that $z(t)$ increases from $0$ to $a$, and so $x(t)$ increases
 from $0$ to $1/b_+$.  On the other hand, as $t$ increases from $\pi$
 to $2\pi$ we see that $x(t)$ decreases from $1/b_+$ to $0$.  It
 follows that
 \[ \int_{\phi^+\circ c_5}\frac{dx}{y} =
    \int_{x=0}^{1/b_+}\frac{dx}{\sqrt{q_a^+(x)}} +
    \int_{x=1/b_+}^{0}\frac{dx}{-\sqrt{q_a^+(x)}} =
    2\int_{0}^{1/b_+}\frac{dx}{\sqrt{q_a^+(x)}} =
    \al_+.
 \]

 The second integral is similar.  We have
 $j^{-1}(c_6(t))=(i\,w(t),-z(t))$, where $z(t)=a\sin(t/2)^2\in [0,a]$ and
 $w(t)$ is a positive multiple of $\sin(t)$.  It follows that
 $j^{-1}\phi^+(c_5(t))=(y(t),x(t))$, where $x(t)=-2/(z(t)+z(t)^{-1})$,
 and $y(t)$ is again a positive multiple of $i\sin(t)$.  In this range
 $q_a^+(x(t))\leq 0$ so it is natural to consider
 $\sqrt{-q_a^+(x(t))}$.  As $(y(t),x(t))\in E^+_0(a)$, we must have
 $(y(t)/i)^2=-q_a^+(x(t))$, so for $0\leq t\leq \pi$ we must have
 $y(t)=i\sqrt{-q^+_a(x(t))}$, and for $\pi\leq t\leq 2\pi$ we must
 have $y(t)=-i\sqrt{-q^+_a(x(t))}$.  Moreover, as $t$ runs from $0$ to
 $\pi$ and then to $2\pi$, we see that $x(t)$ decreases from $0$ to
 $-1/b_+$, and then increases back to $0$ again.  It follows that
 \[ \int_{\phi^+\circ c_6}\frac{dx}{y} =
    \int_{0}^{-1/b_+}i\frac{dx}{\sqrt{-q_a^+(x)}} +
    \int_{-1/b_+}^{0}-i\frac{dx}{\sqrt{-q_a^+(x)}} =
    2i\int_{0}^{1/b_+}\frac{dx}{\sqrt{-q_a^+(x)}} =
    i\al_-.
 \]
 \begin{checks}
  projective/picard_fuchs_check.mpl: check_period_integrals()
 \end{checks}
\end{proof}

\begin{lemma}\lbl{lem-form-pullback}
 For the forms $\om^{\pm}$ on $E^{\pm}(a)$ and $\om_i$ on $PX(a)$ (as in
 Propositions~\ref{prop-holomorphic-forms}
 and~\ref{prop-elliptic-forms}), we have
 \begin{align*}
  (\phi^+)^*(\om^+) &= \om_0+\om_1 \\
  (\phi^-)^*(\om^+) &= \frac{1+i}{\rt}(i\om_0-\om_1).
 \end{align*}
\end{lemma}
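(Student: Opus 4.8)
The plan is to verify both formulas by a direct pullback computation on the dense affine open set $U_0 = PX_0(a) \cong PX(a)\sm\{v_1\}$, where every object in sight is given by an explicit formula, and then to extend the resulting identities over the finitely many remaining points by analytic continuation of holomorphic forms.

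First I would collect the local formulas that are already available. By Lemma~\ref{lem-P-differentials} we have $dz = w\,\om_0$ on $PX_0(a)$, so $\om_0 = dz/w$ there, and by Proposition~\ref{prop-holomorphic-forms}(b) we have $\om_1 = z\,\om_0 = z\,dz/w$ on $U_0$. On the elliptic curves, $\om^{\pm}$ restricts to $dx/y$ on $E_0^{\pm}(a)$ by Proposition~\ref{prop-elliptic-forms}. Finally, Proposition~\ref{prop-P-Ep} gives $\phi^+(j(w,z)) = j\left(\frac{2w(1-z)}{(1+z^2)^2},\frac{2z}{1+z^2}\right)$ for $z\neq\pm i$, and Proposition~\ref{prop-P-Em} gives the analogous formula for $\phi^-$ for $z\neq\pm 1$.

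Then the computation is routine. For $\phi^+$: writing $x = 2z/(1+z^2)$ one gets $dx = 2(1-z)(1+z)(1+z^2)^{-2}\,dz$, and dividing by $y = 2w(1-z)(1+z^2)^{-2}$ collapses everything to $(\phi^+)^*(dx/y) = (1+z)\,dz/w = (1+z)\om_0 = \om_0+\om_1$ on the locus where $z\neq\pm i$ and $w\neq 0$. For $\phi^-$: writing $x = 2iz/(1-z^2)$ gives $dx = 2i(1+z^2)(1-z^2)^{-2}\,dz$; dividing by $y = -\rt(1+i)w(i+z)(1-z^2)^{-2}$ and cancelling $1+z^2 = (z+i)(z-i)$ leaves $(\phi^-)^*(dx/y) = \frac{-2i}{\rt(1+i)}(z-i)\,dz/w$, and the scalar simplification $\frac{-2i}{\rt(1+i)} = -\frac{1+i}{\rt}$ together with $(z-i)\om_0 = \om_1 - i\om_0$ yields $\frac{1+i}{\rt}(i\om_0 - \om_1)$. (So the form being pulled back in the second line is the one on the target, namely $\om^-$ on $E^-(a)$.)

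The only thing left is to promote these identities from $U_0$ minus a finite set to all of $PX(a)$. Since $\phi^{\pm}$ is a morphism of smooth projective curves and $\om^{\pm}$ is holomorphic, $(\phi^{\pm})^*(\om^{\pm})$ is a global holomorphic $1$-form on $PX(a)$; likewise $\om_0$ and $\om_1$ are global holomorphic forms by Proposition~\ref{prop-holomorphic-forms}(a). Two holomorphic forms on a connected Riemann surface that agree on a nonempty open set agree identically, and the complement of the excluded locus (where $z=\pm i$, or $z=\pm 1$, together with the zeros of $w$ and the point $v_1$) is a nonempty open set, so we are done. I do not expect any real obstacle here: the heart of the argument is the scalar bookkeeping in the two simplifications above, which is exactly what the accompanying Maple check verifies. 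If one wanted a computation-free version of the last step, one could instead note that $(\phi^{\pm})^*(\om^{\pm})$ lies in the two-dimensional space $\C\{\om_0,\om_1\}$, use $\mu$-equivariance of $\phi^+$ (respectively $\lm\mu$-equivariance of $\phi^-$) together with the $G$-action on $\{\om_0,\om_1\}$ from Proposition~\ref{prop-holomorphic-forms}(d) to reduce to a single unknown constant, and fix that constant by matching leading terms at $v_0$ using Remarks~\ref{rem-P-parameter} and~\ref{rem-elliptic-group}.
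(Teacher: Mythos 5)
Your proposal is correct and follows essentially the same route as the paper: the paper's proof is exactly the direct pullback computation of $dx/y$ on the affine pieces using the explicit formulas for $\phi^{\pm}$, with the extension over the finitely many excluded points left implicit. You also correctly observed that the second displayed formula should read $(\phi^-)^*(\om^-)$ rather than $(\phi^-)^*(\om^+)$, which is a typo in the statement.
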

\begin{proof}
 On the affine pieces $PX_0(a)$ and $E^+_0(a)$ we have $\om^+=dx/y$
 and
 \[ \phi^+(w,z) = \left(\frac{2w(1-z)}{(1+z^2)^2},
                        \frac{2z}{1+z^2}\right),
 \]
 so
 \begin{align*}
  (\phi^+)^*(\om^+) &=
     \frac{(1+z^2)^2}{2w(1-z)}d\left(\frac{2z}{1+z^2}\right) \\
  &= \frac{(1+z^2)^2}{2w(1-z)} \; \frac{2-2z^2}{(1+z^2)^2}dz
   = \frac{1+z}{w}dz.
 \end{align*}
 On the other hand, we have $\om_0=dz/w$ and $\om_1=z\,dz/w$, so
 $(\phi^+)^*(\om^+)=\om_0+\om_1$.  The argument for
 $(\phi^-)^*(\om^-)$ is similar.
 \begin{checks}
  projective/picard_fuchs_check.mpl: check_period_integrals()
 \end{checks}
\end{proof}

\begin{corollary}\lbl{cor-basic-periods}
 We have $p_{5,0}=(\al_++\al_-)/2$ and $p_{5,1}=(\al_+-\al_-)/2$.
\end{corollary}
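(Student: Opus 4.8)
The plan is to reduce everything to the two lemmas just proved plus the $G$-action on holomorphic forms from Proposition~\ref{prop-holomorphic-forms}(d). First I would observe that, since $(\phi^+)^*(\om^+)=\om_0+\om_1$ by Lemma~\ref{lem-form-pullback} and $\om^+$ pulls back along $j$ to $dx/y$, the loop integral transforms as
\[
 p_{5,0}+p_{5,1} = \int_{c_5}(\om_0+\om_1)
               = \int_{c_5}(\phi^+)^*(\om^+)
               = \int_{\phi^+\circ c_5}\om^+ = \al_+,
\]
where the last equality is the first displayed formula in Lemma~\ref{lem-elliptic-periods}. This gives one linear relation between $p_{5,0}$ and $p_{5,1}$.

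For the second relation I would exploit the fact that $c_6=\lm\circ c_5$, which is part of Definition~\ref{defn-P-curves} (and more conceptually follows from axiom~(c) for a curve system). Pulling back along $\lm$ and using $\lm^*(\om_0)=i\om_0$ and $\lm^*(\om_1)=-i\om_1$ from Proposition~\ref{prop-holomorphic-forms}(d), one gets
\[
 \int_{c_6}\om_0 = \int_{c_5}\lm^*(\om_0) = i\,p_{5,0},
 \qquad
 \int_{c_6}\om_1 = \int_{c_5}\lm^*(\om_1) = -i\,p_{5,1}.
\]
Adding these and again applying Lemma~\ref{lem-form-pullback} together with the second formula of Lemma~\ref{lem-elliptic-periods},
\[
 i\,p_{5,0}-i\,p_{5,1}
   = \int_{c_6}(\om_0+\om_1)
   = \int_{\phi^+\circ c_6}\om^+ = i\al_-,
\]
so $p_{5,0}-p_{5,1}=\al_-$. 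Solving the two equations $p_{5,0}+p_{5,1}=\al_+$ and $p_{5,0}-p_{5,1}=\al_-$ yields $p_{5,0}=(\al_++\al_-)/2$ and $p_{5,1}=(\al_+-\al_-)/2$, as claimed.

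There is essentially no hard step here; the only thing to be careful about is the bookkeeping of pullbacks versus pushforwards, i.e.\ making sure the chain rule $\int_{\gm\circ c}\eta=\int_c(\gm^*\eta)$ is applied with the correct orientation and that $\phi^+\circ c_5$, $\phi^+\circ c_6$ really are the loops whose periods are computed in Lemma~\ref{lem-elliptic-periods} (which holds because $c_5$, $c_6$ are $2\pi$-periodic and $\phi^+$ is a morphism of varieties). Everything else is immediate linear algebra, so this is the cleanest of the period computations and serves as the base case from which the remaining entries of Proposition~\ref{prop-periods} will be derived by the $G$-action.
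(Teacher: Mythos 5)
Your proof is correct and follows essentially the same route as the paper's: both derive $p_{5,0}+p_{5,1}=\al_+$ from the first formula of Lemma~\ref{lem-elliptic-periods} via Lemma~\ref{lem-form-pullback}, then use $c_6=\lm\circ c_5$ together with $\lm^*\om_0=i\om_0$, $\lm^*\om_1=-i\om_1$ to turn the second formula into $ip_{5,0}-ip_{5,1}=i\al_-$, and finish by linear algebra. The bookkeeping of pullbacks is handled correctly.
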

\begin{proof}
 Lemma~\ref{lem-elliptic-periods} says that
 \[ \int_{c_5}(\phi^+)^*\left(\frac{dx}{y}\right) = \al_+. \]
 On the left hand side, Lemma~\ref{lem-form-pullback} says that the
 integrand is $\om_0+\om_1$, so the integral is $p_{5,0}+p_{5,1}$.
 Similarly, the second equation in Lemma~\ref{lem-elliptic-periods}
 becomes $p_{6,0}+p_{6,1}=i\al_-$.  On the other hand, we have seen that
 $\lm\circ c_5=c_6$ and $\lm^*\om_0=i\om_0$ and $\lm^*\om_1=-i\om_1$;
 it follows that $p_{6,0}+p_{6,1}=ip_{5,0}-ip_{5,1}$.  Linear algebra
 now gives $p_{5,0}=(\al_++\al_-)/2$ and $p_{5,1}=(\al_+-\al_-)/2$ as
 claimed.
\end{proof}

\begin{proof}[Proof of Proposition~\ref{prop-periods}]
 It is standard that there is a well-defined pairing
 $H_1(PX(a))\ot\Om^1(PX(a))\to\C$ such that $([\gm],\al)=\int_\gm\al$
 for all closed curves $\gm$ in $PX(a)$ and all $\om\in\Om^1(PX(a))$.
 In particular, we have $p_{i,j}=([c_i],\om_j)$.  Thus, relations in
 $H_1(PX(a))$ will give relations between periods.  Note also that for
 $g\in D_8$ we have
 $(g_*[c_i],\om_j)=([g\circ c_i],\om_j)=([c_i],g^*\om_j)$.
 Proposition~\ref{prop-holomorphic-forms} gives the action of $D_8$ on
 $\om_0$ and $\om_1$, whereas Definition~\ref{defn-curve-system}(c)
 and Proposition~\ref{prop-P-curves} give the action on the curves
 $c_i$.  In particular, we have
 \[ [c_6] = \lm_*[c_5] \hspace{4em}
    [c_7] = \mu_*[c_5] \hspace{4em}
    [c_8] = (\mu\lm)_*[c_5],
 \]
 whereas
 \[ \lm^*\om_0 = i\om_0 \hspace{4em}
    \lm^*\om_1 = -i\om_1 \hspace{4em}
    \mu^*\om_0 = \om_1  \hspace{4em}
    \mu^*\om_1 = \om_0.
 \]
 It therefore follows from Corollary~\ref{cor-basic-periods} that
 \begin{align*}
  p_{5,0} &=  (\al_++\al_-)/2 & p_{5,1} &=   (\al_+-\al_-)/2 \\
  p_{6,0} &= i(\al_++\al_-)/2 & p_{6,1} &= -i(\al_+-\al_-)/2 \\
  p_{7,0} &=  (\al_+-\al_-)/2 & p_{7,1} &=   (\al_++\al_-)/2 \\
  p_{8,0} &= i(\al_+-\al_-)/2 & p_{8,1} &= -i(\al_++\al_-)/2.
 \end{align*}
 Next, we see from Proposition~\ref{prop-homology} that
 \begin{align*}
  [c_0] &= 0 \\
  [c_1] &=  [c_5]+[c_6]-[c_7]-[c_8] \\
  [c_2] &= -[c_5]+[c_6]+[c_7]-[c_8] \\
  [c_3] &=        [c_6]      -[c_8] \\
  [c_4] &= -[c_5]      +[c_7].
 \end{align*}
 We can now apply the maps $(-,\om_0)$ and $(-,\om_1)$ to deduce that
 \begin{align*}
  p_{0,0} &=          0       & p_{0,1} &= 0          \\
  p_{1,0} &= (i+1)\al_-       & p_{1,1} &= (i-1)\al_- \\
  p_{2,0} &= (i-1)\al_-       & p_{2,1} &= (i+1)\al_- \\
  p_{3,0} &=     i\al_-       & p_{3,1} &=     i\al_- \\
  p_{4,0} &=     -\al_-       & p_{4,1} &=      \al_-.
 \end{align*}
 \begin{checks}
  projective/picard_fuchs_check.mpl: check_periods()
 \end{checks}
\end{proof}

\begin{proposition}
 The lattices $\Lm^+$ and $\Lm^-$ (from
 Proposition~\ref{prop-xi-functions}) are
 \begin{align*}
  \Lm^+ &= \{n\al_++m\al_-i \st n,m\in\Z\} \\
  \Lm^- &= \{(n\al_++m\al_-i)/\sqrt{2}\st n,m\in\Z,\; n=m\pmod{2}\}.
 \end{align*}
\end{proposition}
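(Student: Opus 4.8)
The plan is to identify each lattice $\Lm^\pm$ with the period group of the holomorphic form $\om^\pm$ on $E^\pm(a)$, transport that group to $PX(a)$ along $\phi^\pm$, and then read off the answer from the periods $p_{j,k}$ computed in Proposition~\ref{prop-periods}.

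First I would record the standard fact that $\Lm^\pm$ is exactly the image of the period homomorphism $H_1(E^\pm(a);\Z)\to\C$, $[\gm]\mapsto\int_\gm\om^\pm$. Indeed, Proposition~\ref{prop-xi-functions} gives an isomorphism $\C/\Lm^\pm\to E^\pm(a)$ under which $\om^\pm$ pulls back to $dz$; lifting a based loop $\gm$ in $E^\pm(a)$ through the covering $\C\to\C/\Lm^\pm$ to a path in $\C$ shows that $\int_\gm\om^\pm$ is the difference of the endpoints of the lift, which lies in $\Lm^\pm$, and conversely every element of $\Lm^\pm$ is realised by such a loop.

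Next I would pull everything back to $PX(a)$, which is cromulent by Proposition~\ref{prop-P-fundamental}. Propositions~\ref{prop-P-Ep} and~\ref{prop-P-Em} say that $\phi^+$ and $\phi^-$ induce isomorphisms $PX(a)/\ip{\mu}\to E^+(a)$ and $PX(a)/\ip{\lm\mu}\to E^-(a)$, so Proposition~\ref{prop-elliptic-homology} applies: $\psi$ carries $[c_5],[c_6],[c_7],[c_8]$ to the standard basis vectors of $\Z^4$ (Proposition~\ref{prop-homology}) and, under these identifications, $(\phi^+)_*$ and $(\phi^-)_*$ become the surjections $\tht_+$ and $\tht_-$. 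Hence the classes $(\phi^\pm)_*[c_j]$ for $j\in\{5,6,7,8\}$ generate $H_1(E^\pm(a);\Z)$, so $\Lm^\pm$ is the subgroup of $\C$ generated by the four numbers $\int_{c_j}(\phi^\pm)^*(\om^\pm)$. By Lemma~\ref{lem-form-pullback} — whose displayed left-hand side $(\phi^-)^*(\om^+)$ should read $(\phi^-)^*(\om^-)$, as its proof makes clear — we have $(\phi^+)^*(\om^+)=\om_0+\om_1$ and $(\phi^-)^*(\om^-)=\tfrac{1+i}{\rt}(i\om_0-\om_1)$, so these generators are $p_{j,0}+p_{j,1}$ and $\tfrac{1+i}{\rt}(i\,p_{j,0}-p_{j,1})$ respectively.

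Finally I would substitute the values of Proposition~\ref{prop-periods}. For $\Lm^+$ one gets $p_{5,0}+p_{5,1}=p_{7,0}+p_{7,1}=\al_+$, $p_{6,0}+p_{6,1}=i\al_-$ and $p_{8,0}+p_{8,1}=-i\al_-$, so $\Lm^+=\Z\al_+ + \Z\,i\al_-$ as claimed. For $\Lm^-$, the identities $(1+i)(i-1)=-2$ and $(1+i)^2=2i$ give $\tfrac{1+i}{\rt}(i\,p_{j,0}-p_{j,1})=(-\al_++i\al_-)/\rt$ for $j\in\{5,8\}$ and $(-\al_+-i\al_-)/\rt$ for $j\in\{6,7\}$; the lattice generated by $(-\al_++i\al_-)/\rt$ and $(-\al_+-i\al_-)/\rt$ also contains their sum $-\rt\,\al_+$ and difference $\rt\,i\al_-$, and a direct check identifies it with $\{(n\al_++m\al_- i)/\rt\st n,m\in\Z,\ n\equiv m\pmod{2}\}$. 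I do not expect a serious obstacle: the only delicate points are the identification of $\Lm^\pm$ with the period group and the bookkeeping that replaces $H_1(E^\pm(a))$ by the curves $c_j$, both supplied by Propositions~\ref{prop-xi-functions} and~\ref{prop-elliptic-homology}; the remainder is the arithmetic sketched here.
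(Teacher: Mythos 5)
Your proposal is correct and follows essentially the same route as the paper: identify $\Lm^\pm$ with the period group of $\om^\pm$ via the isomorphism $\C/\Lm^\pm\simeq E^\pm(a)$ from Proposition~\ref{prop-xi-functions}, transfer to $H_1(PX(a))$ using the surjectivity in Proposition~\ref{prop-elliptic-homology}, apply Lemma~\ref{lem-form-pullback}, and read off the generators from the period table. Your explicit verification of the $\Lm^-$ congruence condition, and your observation that the second displayed formula of Lemma~\ref{lem-form-pullback} should read $(\phi^-)^*(\om^-)$, are both accurate; the paper's proof uses the corrected form of that identity as well.
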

\begin{proof}
 We can identify $\Lm^+$ with
 $\{\int_\gm dz\st\gm\in\pi_1(\C/\Lm^+)\}$.  Now $\xi^+$ induces an
 isomorphism $\C/\Lm^+\to E^+(a)$, under which $dz$ corresponds to
 $\om^+$ (by the last part of Proposition~\ref{prop-xi-functions}).
 This means that $\Lm^+=\{\int_\gm\om^+\st\gm\in\pi_1(E^+(a))\}$.  On
 the other hand, we know from Proposition~\ref{prop-elliptic-homology}
 that the group $\pi_1(E^+(a))=H_1(E^+(a))$ is a quotient of
 $H_1(PX(a))$, so
 \[ \Lm^+=\{\int_\gm(\phi^+)^*(\om^+)\st\gm\in\pi_1(PX(a))\}. \]
 Using Lemma~\ref{lem-form-pullback} we now see that $\Lm^+$ is
 spanned by the numbers $p_{k0}+p_{k1}$, and by inspecting
 Proposition~\ref{prop-periods} we conclude that
 $\Lm^+=\{n\al_++m\al_-i\st n,m\in\Z\}$ as claimed.

 In the same way, using the relation
 $(\phi^-)^*(\om^-)=\frac{1+i}{\rt}(i\om_0-\om_1)$ we see that $\Lm^-$
 is generated by the numbers $\frac{1+i}{\rt}(ip_{k0}-p_{k1})$.  These
 numbers can again be read off from Proposition~\ref{prop-periods},
 giving
 \[ \Lm^- = \{(n\al_++m\al_-i)/\sqrt{2}\st n,m\in\Z,\; n=m\pmod{2}\}
 \]
 as claimed.
 \begin{checks}
  projective/ellquot_check.mpl: check_weierstrass()
 \end{checks}
\end{proof}

The closed curves $c_k(t)\in PX(a)$ can be mapped to $E^+(a)$ using
$\phi^+$ and then lifted via $\xi^+$ to give curves in $\C$ which are
usually not closed.  There are no closed formulae for these curves,
but the functions \mcode+c_TEp_approx[k]+ and \mcode+c_TEm_approx[k]+
give good approximations.  The set
\[ \{x+iy\st |x|\leq\al_+/2,|y|\leq\al_-/2\} \]
is a fundamental domain for the action of $\Lm^+$ on $\C$, and the
parts of the lifted curves lying in that domain can be illustrated as
follows:
\begin{center}
 \begin{tikzpicture}[scale=4]
 \draw[magenta] (0.859,-0.785) -- (0.859,0.785);
 \draw[magenta] (-0.859,-0.785) -- (-0.859,0.785);
 \draw[blue] (-0.859,0.000) -- (0.859,0.000);
 \draw[blue] (0.000,-0.785) -- (0.000,0.785);
 \draw[magenta] (-0.466,0.785) -- (0.466,0.785);
 \draw[magenta] (-0.466,-0.785) -- (0.466,-0.785);
 \draw[cyan] (-0.859,0.785) -- (-0.466,0.785);
 \draw[cyan] (0.859,0.785) -- (0.466,0.785);
 \draw[cyan] (-0.859,-0.785) -- (-0.466,-0.785);
 \draw[cyan] (0.859,-0.785) -- (0.466,-0.785);
 \draw[smooth,green]  (-0.581,-0.785) -- (-0.576,-0.736) -- (-0.560,-0.679) -- (-0.528,-0.608) -- (-0.473,-0.520) -- (-0.390,-0.412) -- (-0.280,-0.286) -- (-0.146,-0.147) -- (0.000,0.000) -- (0.146,0.147) -- (0.280,0.286) -- (0.390,0.412) -- (0.473,0.520) -- (0.528,0.608) -- (0.560,0.679) -- (0.576,0.736) -- (0.581,0.785) ;
 \draw[smooth,green]  (0.581,-0.785) -- (0.576,-0.736) -- (0.560,-0.679) -- (0.528,-0.608) -- (0.473,-0.520) -- (0.390,-0.412) -- (0.280,-0.286) -- (0.146,-0.147) -- (0.000,0.000) -- (-0.146,0.147) -- (-0.280,0.286) -- (-0.390,0.412) -- (-0.473,0.520) -- (-0.528,0.608) -- (-0.560,0.679) -- (-0.576,0.736) -- (-0.581,0.785) ;
 \fill (-0.859, 0.785) circle(0.011);
 \fill (-0.581, 0.785) circle(0.011);
 \fill (-0.466, 0.785) circle(0.011);
 \fill ( 0.000, 0.785) circle(0.011);
 \fill ( 0.466, 0.785) circle(0.011);
 \fill ( 0.581, 0.785) circle(0.011);
 \fill ( 0.859, 0.785) circle(0.011);
 \fill (-0.859,-0.785) circle(0.011);
 \fill (-0.581,-0.785) circle(0.011);
 \fill (-0.466,-0.785) circle(0.011);
 \fill ( 0.000,-0.785) circle(0.011);
 \fill ( 0.466,-0.785) circle(0.011);
 \fill ( 0.581,-0.785) circle(0.011);
 \fill ( 0.859,-0.785) circle(0.011);
 \fill (-0.859, 0.000) circle(0.011);
 \fill ( 0.000, 0.000) circle(0.011);
 \fill ( 0.859, 0.000) circle(0.011);
 \end{tikzpicture}
\end{center}
This is combinatorially equivalent to the net for $X/\ip{\mu}$ which
we exhibited in Section~\ref{sec-fundamental}.  We have used the value
$a=0.1$, which is roughly right for the embedded surface $EX^*$.  The
above domain is not square; the ratio
$\text{width}/\text{height}=\al_+/\al_-$ is approximately $1.1$.

The situation for $E^-(a)$ is a little more complicated.  The picture
below shows the domain
\[ \{x+iy\st |x|\leq\al_+/\rt,|y|\leq\al_-/\rt\}, \]
which covers $\C/\Lm^-$ twice.  The diagonal blue curves, which
represent $c_5$ and $c_6$, are close to being straight, but they are
not exactly straight.  The dashed orange lines enclose a fundamental
domain for $\Lm^-$.  This is a rhombus, but the angles are not
$\pi/2$.  It is combinatorially equivalent to the net for
$X/\ip{\lm\mu}$ which we exhibited in Section~\ref{sec-fundamental},
but is rotated through $\pi/4$ as well as being slightly
distorted.
\begin{center}
 \begin{tikzpicture}[scale=3]
 \draw[cyan]  (-0.394, 1.111) -- ( 0.394, 1.111);
 \draw[cyan]  (-0.394,-1.111) -- ( 0.394,-1.111);
 \draw[cyan]  (-1.215, 0.000) -- (-0.822, 0.000);
 \draw[cyan]  ( 1.215, 0.000) -- ( 0.822, 0.000);
 \draw[green] (-0.822, 0.000) -- ( 0.822, 0.000);
 \draw[green] ( 0.394, 1.111) -- ( 1.215, 1.111);
 \draw[green] (-0.394, 1.111) -- (-1.215, 1.111);
 \draw[green] ( 0.394,-1.111) -- ( 1.215,-1.111);
 \draw[green] (-0.394,-1.111) -- (-1.215,-1.111);
 \draw[green] (-1.215,-1.111) -- (-1.215, 1.111);
 \draw[green] ( 0.000,-1.111) -- ( 0.000, 1.111);
 \draw[green] ( 1.215,-1.111) -- ( 1.215, 1.111);
 \draw[smooth,magenta]  (0.278,-1.111) -- (0.292,-1.024) -- (0.344,-0.907) -- (0.453,-0.746) -- (0.608,-0.555) -- (0.762,-0.364) -- (0.871,-0.204) -- (0.924,-0.087) -- (0.937,0.000) -- (0.924,0.087) -- (0.871,0.204) -- (0.762,0.364) -- (0.608,0.555) -- (0.453,0.746) -- (0.344,0.907) -- (0.292,1.024) -- (0.278,1.111) ;
 \draw[smooth,magenta]  (-0.278,-1.111) -- (-0.292,-1.024) -- (-0.344,-0.907) -- (-0.453,-0.746) -- (-0.608,-0.555) -- (-0.762,-0.364) -- (-0.871,-0.204) -- (-0.924,-0.087) -- (-0.937,0.000) -- (-0.924,0.087) -- (-0.871,0.204) -- (-0.762,0.364) -- (-0.608,0.555) -- (-0.453,0.746) -- (-0.344,0.907) -- (-0.292,1.024) -- (-0.278,1.111) ;
 \draw[smooth,blue]  (-1.215,1.111) -- (-1.129,1.024) -- (-1.044,0.941) -- (-0.964,0.864) -- (-0.887,0.794) -- (-0.814,0.729) -- (-0.744,0.668) -- (-0.675,0.611) -- (-0.608,0.555) -- (-0.540,0.500) -- (-0.471,0.442) -- (-0.401,0.382) -- (-0.328,0.317) -- (-0.251,0.246) -- (-0.171,0.169) -- (-0.087,0.086) -- (0.000,0.000) -- (0.087,-0.086) -- (0.171,-0.169) -- (0.251,-0.246) -- (0.328,-0.317) -- (0.401,-0.382) -- (0.471,-0.442) -- (0.540,-0.500) -- (0.608,-0.555) -- (0.675,-0.611) -- (0.744,-0.668) -- (0.814,-0.729) -- (0.887,-0.794) -- (0.964,-0.864) -- (1.044,-0.941) -- (1.129,-1.024) -- (1.215,-1.111) ;
 \draw[smooth,blue]  (-1.215,-1.111) -- (-1.129,-1.024) -- (-1.044,-0.941) -- (-0.964,-0.864) -- (-0.887,-0.794) -- (-0.814,-0.729) -- (-0.744,-0.668) -- (-0.675,-0.611) -- (-0.608,-0.555) -- (-0.540,-0.500) -- (-0.471,-0.442) -- (-0.401,-0.382) -- (-0.328,-0.317) -- (-0.251,-0.246) -- (-0.171,-0.169) -- (-0.087,-0.086) -- (0.000,0.000) -- (0.087,0.086) -- (0.171,0.169) -- (0.251,0.246) -- (0.328,0.317) -- (0.401,0.382) -- (0.471,0.442) -- (0.540,0.500) -- (0.608,0.555) -- (0.675,0.611) -- (0.744,0.668) -- (0.814,0.729) -- (0.887,0.794) -- (0.964,0.864) -- (1.044,0.941) -- (1.129,1.024) -- (1.215,1.111) ;
 \fill (-1.215,-1.111) circle(0.014);
 \fill (-0.394,-1.111) circle(0.014);
 \fill (-0.278,-1.111) circle(0.014);
 \fill ( 0.000,-1.111) circle(0.014);
 \fill ( 0.278,-1.111) circle(0.014);
 \fill ( 0.394,-1.111) circle(0.014);
 \fill ( 1.215,-1.111) circle(0.014);
 \fill (-1.215, 1.111) circle(0.014);
 \fill (-0.394, 1.111) circle(0.014);
 \fill (-0.278, 1.111) circle(0.014);
 \fill ( 0.000, 1.111) circle(0.014);
 \fill ( 0.278, 1.111) circle(0.014);
 \fill ( 0.394, 1.111) circle(0.014);
 \fill ( 1.215, 1.111) circle(0.014);
 \fill (-0.608,-0.555) circle(0.014);
 \fill ( 0.608,-0.555) circle(0.014);
 \fill (-0.608, 0.555) circle(0.014);
 \fill ( 0.608, 0.555) circle(0.014);
 \fill (-1.215, 0.000) circle(0.014);
 \fill (-0.822, 0.000) circle(0.014);
 \fill (-0.937, 0.000) circle(0.014);
 \fill ( 0.000, 0.000) circle(0.014);
 \fill ( 0.937, 0.000) circle(0.014);
 \fill ( 0.822, 0.000) circle(0.014);
 \fill ( 1.215, 0.000) circle(0.014);
 \draw[orange,dashed] (1.215,0.000) -- (0.000,1.111) -- (-1.215,0.000) -- (0.000,-1.111) -- cycle;
 \end{tikzpicture}
\end{center}

\subsection{Some general theory of Riemann surfaces}
\lbl{sec-riemann}

In the next section, we will give a classification of (pre)cromulent
surfaces.  In the present section, we develop some more general theory
of Riemann surfaces, which will feed into that classification.  All of
it is essentially standard; we discuss it here in order to have a
convenient reference with a uniform approach.

\subsubsection{Involutions}

Throughout this section, $Z$ will be a compact connected Riemann
surface, with a conformal involution $\al\:Z\to Z$, and an
anticonformal involution $\bt\:Z\to Z$ that commutes with $\al$.  We
also assume that $\al$ has only isolated fixed points (which means
that the total number of fixed points is finite).  We will write $\Dl$
for the open unit disc in $\C$.

\begin{lemma}\lbl{lem-invariant-metric}
 $Z$ admits a smooth Riemannian metric that is compatible with the
 conformal structure and is invariant under the action of $\al$ and
 $\bt$.
\end{lemma}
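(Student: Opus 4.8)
The plan is to build the desired metric by an averaging argument. First I would recall that any compact Riemann surface $Z$ carries at least one smooth Riemannian metric compatible with its conformal structure: take any locally finite atlas of holomorphic charts, pull back the standard Euclidean metric on $\C$ in each chart, and patch these together with a smooth partition of unity. Since the transition maps are holomorphic, hence conformal, all the pulled-back metrics agree up to a positive conformal factor on overlaps, so the convex combination produced by the partition of unity is again in the given conformal class. Call this metric $h_0$.

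Next I would average $h_0$ over the finite group $\Gm$ generated by $\al$ and $\bt$. Since $\al^2=\bt^2=1$ and $\al$ commutes with $\bt$, this group is a quotient of $C_2\times C_2$ and so has order at most $4$; in any case it is finite. Set $h = \tfrac{1}{|\Gm|}\sum_{g\in\Gm} g^*h_0$. This is a smooth Riemannian metric, and it is manifestly $\Gm$-invariant: for $g_0\in\Gm$ we have $g_0^*h = \tfrac{1}{|\Gm|}\sum_{g\in\Gm}(gg_0)^*h_0 = h$. The one point needing care is that pulling back a metric by an \emph{anticonformal} map still lands in the same conformal class. This is because an anticonformal diffeomorphism, in local holomorphic coordinates, is a holomorphic function of $\bar z$; its derivative is a (complex-conjugate-)linear map that scales lengths isotropically and multiplies areas by a positive factor, so $\bt^*h_0$ is pointwise a positive multiple of $h_0$ in any conformal chart, just as $\al^*h_0$ is. Hence every term $g^*h_0$ is compatible with the conformal structure, and so is their average.

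There is no serious obstacle here; the only thing to be mildly careful about is the bookkeeping for anticonformal pullbacks, and the observation that compatibility with the conformal structure is preserved under taking positive linear combinations of metrics (since the conformal class at a point is a convex cone in the space of inner products, being the $\R_{>0}$-orbit of a single inner product under scaling, together with... more precisely: if $g_1,g_2$ are both conformal to a reference $g$, then at each point $g_i = \lambda_i g$ with $\lambda_i>0$, so $g_1+g_2 = (\lambda_1+\lambda_2)g$ is too). I would state this convexity remark explicitly since it is used twice (once in the partition-of-unity construction and once in the averaging). The role of the hypothesis that $\al$ has isolated fixed points is not actually needed for this particular lemma — it will matter for later results about the quotient $Z/\al$ — so I would not invoke it in the proof.
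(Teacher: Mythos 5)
Your proof is correct and follows essentially the same route as the paper: construct a conformal metric via a partition of unity over holomorphic charts, then average over the group $\{1,\al,\bt,\al\bt\}$, using the convexity of the set of metrics in a given conformal class. The extra remarks you supply (that anticonformal pullbacks preserve the conformal class, and that the fixed-point hypothesis on $\al$ is not needed here) are accurate elaborations of points the paper leaves implicit.
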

\begin{proof}
 Any coordinate patch clearly admits a smooth conformal Riemannian
 metric, and one can use a partition of unity to combine such local
 metrics to give a local metric, say $\mu_0$.  Any smooth automorphism
 of $Z$ acts in an evident way on the set of metrics, and that set is
 convex, so we can define
 \[ \mu = (\mu_0 + \al^*\mu_0 + \bt^*\mu_0 + \al^*\bt^*\mu_0)/4. \]
 This is the required invariant metric.
\end{proof}

For the rest of this section we will assume that an invariant metric
has been chosen.

\begin{remark}\lbl{rem-std-param}
 Let $C$ be a closed connected one-dimensional smooth submanifold of
 $Z$.  Then $C$ is necessarily diffeomorphic to the circle.  Now fix a
 point $a\in C$, and a unit vector $v\in T_aC$.  It is then standard
 that there is a unique smooth map $c_1\:\R\to C$ with $c_1(0)=a$ and
 $c_1'(0)=v$ and $\|c'_1(t)\|=1$ for all $t$.  One can check that
 $c_1$ induces a diffeomorphism $\R/\Z d\to C$ for some $d>0$.  We put
 $c(t)=c_1(td/2\pi)$, so $c$ induces a diffeomorphism
 $\R/2\pi\Z\to C$, which we call a \emph{standard parametrisation}
 of $C$.  It depends on the choice of $a$ and $v$, but if we make
 different choices then the new standard parametrisation will be of
 the form $t\mapsto c(p+t)$ or $t\mapsto c(p-t)$ for some constant $p$.
\end{remark}

\begin{remark}\lbl{rem-circle-involution}
 Let $c\:\R/2\pi\Z\to C$ be as in the previous remark, let
 $\gm\:Z\to Z$ be an involution that preserves the metric, and suppose
 that $\gm(C)=C$.  Then $\gm(c(t))$ must have the form $c(p+t)$ or
 $c(p-t)$ for some constant $p$ (which is well-defined modulo
 $2\pi$).
 \begin{itemize}
  \item[(a)] If $\gm(c(t))=c(p+t)$ then the equation $\gm^2=1$ gives
   $2p=0\pmod{2\pi}$, so we can take $p=0$ or $p=\pi$.  If $p=0$ then
   of course $\gm|_C=1$.  If $p=\pi$ then $\gm$ acts freely on $C$, so
   $C/\ip{\gm}$ is again a circle.
  \item[(b)] If $\gm(c(t))=c(p-t)$ then we can define a new standard
   parametrisation by $c^*(t)=c(p/2+t)$, and this satisfies
   $\gm(c^*(t))=c^*(-t)$.  It follows that the points $a=c^*(0)$ and
   $b=c^*(\pi)$ are fixed by $\gm$, but that $\gm$ acts freely on
   $C\sm\{a,b\}$.  If we put $P=c^*([0,\pi])$ and $Q=c^*([-\pi,0])$
   then the evident maps
   \[ [0,\pi] \xra{c^*} P \xra{} C/\ip{\gm}
       \xla{} Q \xla{c^*} [-\pi,0]
   \]
   are homeomorphisms.  Moreover, we have $P\cup Q=C$ and
   $P\cap Q=\{a,b\}$.
 \end{itemize}
\end{remark}

\begin{definition}\lbl{defn-local-parameter}
 Suppose that $a\in Z$, that $U_0$ is an open neighbourhood of $a$,
 and that $f_0\:U_0\to\C$ is a holomorphic map.  We say that $f_0$ is
 a \emph{centred local parameter} at $a$ if $f_0(a)=0$, and that
 $df_0$ generates the cotangent space to $Z$ at $a$.  We say that the
 pair $(U_0,f_0)$ is \emph{normalised} if $f_0$ gives a conformal
 isomorphism from $U_0$ to the open unit disc $\Dl=\{z\in\C\st |z|<1\}$.
\end{definition}

\begin{remark}\lbl{rem-shrinking}
 Let $f_0\:U_0\to\C$ be a centred local parameter that need not be
 normalised.  The holomorphic inverse function theorem then guarantees
 that there is a smaller open neighbourhood $U$ with $a\in U\sse U_0$
 and a number $\ep>0$ such that $f_0$ restricts to give a conformal
 isomorphism $U\to\{z\in\C\st |z|<\ep\}$.  This means that the map
 $f=\ep^{-1}f_0|_U\:U\to\Dl$ is a conformal isomorphism, so $(U,f)$ is
 normalised.  The operation that converts $(U_0,f_0)$ to $(U,f)$ will
 be called \emph{shrinking}.
\end{remark}

\begin{lemma}\lbl{lem-al-fixed}
 Suppose that $a\in Z$ with $\al(a)=a$.  Then there is a normalised
 local parameter $f\:U\to\Dl$ at $a$ such that $\al(U)=U$ and
 $f(\al(u))=-f(u)$ for all $u\in U$.
\end{lemma}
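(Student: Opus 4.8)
The plan is to produce $f$ by symmetrising an arbitrary centred local parameter at $a$ with respect to $\al$ and then carefully shrinking its domain so as to keep it $\al$-invariant. First I would pin down the infinitesimal action of $\al$ at $a$: since $\al$ is conformal and fixes $a$, it acts on the one-dimensional complex vector space $T^*_aZ$ by a scalar $\zeta$, and $\al^2=1$ forces $\zeta^2=1$, hence $\zeta=\pm1$. The value $\zeta=1$ has to be ruled out. In any centred local parameter $g$ at $a$, the involution $\al$ corresponds to a holomorphic germ $\psi$ with $\psi(0)=0$, $\psi'(0)=\zeta$, and $\psi(\psi(z))=z$; if $\zeta=1$ and $\psi\neq\mathrm{id}$, writing $\psi(z)=z+c_kz^k+O(z^{k+1})$ with $c_k$ the first nonzero higher coefficient gives $\psi(\psi(z))=z+2c_kz^k+O(z^{k+1})\neq z$, a contradiction; so $\psi=\mathrm{id}$, i.e.\ $\al$ is the identity near $a$, and then $\al=\mathrm{id}$ on all of $Z$ by the identity principle, contradicting the hypothesis that $\al$ has only isolated fixed points. (This is the same power-series argument alluded to in Remark~\ref{rem-smooth-branch}.) Therefore $\zeta=-1$.

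Next I would symmetrise. Pick any centred local parameter $f_0\:U_0\to\C$, set $V=U_0\cap\al^{-1}(U_0)$, which is an open neighbourhood of $a$ and is $\al$-invariant because $\al^2=1$, and define $f_1\:V\to\C$ by $f_1(u)=\tfrac12\big(f_0(u)-f_0(\al(u))\big)$. Then $f_1(a)=0$ and $f_1\circ\al=-f_1$ by construction, while $df_1|_a=\tfrac12\big(df_0|_a-\zeta\,df_0|_a\big)=df_0|_a$ since $\zeta=-1$, so $df_1$ still generates $T^*_aZ$ at $a$. Thus $f_1$ is again a centred local parameter, now with the desired anti-equivariance $f_1\circ\al=-f_1$ on the $\al$-invariant set $V$.

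Finally I would shrink in a way that preserves symmetry. By the holomorphic inverse function theorem there are a neighbourhood $W\ni a$ inside $V$ and an $\ep>0$ on which $f_1$ restricts to a conformal isomorphism onto $\{|z|<\ep\}$; replacing $W$ by $W\cap\al(W)$ (still an $\al$-invariant neighbourhood of $a$ on which $f_1$ is a conformal embedding) we may assume $W$ is $\al$-invariant. Its image $f_1(W)$ is then an open neighbourhood of $0$, so we may choose $\delta\in(0,\ep]$ with $\{|z|<\delta\}\sse f_1(W)$ and put $U=f_1^{-1}(\{|z|<\delta\})\cap W$; since $f_1$ is injective on $W$, $f_1$ restricts to a conformal isomorphism $U\to\{|z|<\delta\}$, and since $f_1\circ\al=-f_1$ and $\{|z|<\delta\}=-\{|z|<\delta\}$, the set $U$ is $\al$-invariant. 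Then $f=\delta^{-1}f_1|_U\:U\to\Dl$ is a normalised local parameter satisfying $f(\al(u))=-f(u)$, as required. The substantive step is the first one — excluding the possibility that $\al$ acts trivially to first order at $a$ — and the remainder is routine bookkeeping to ensure the shrunk neighbourhood stays symmetric and lands on a round disc.
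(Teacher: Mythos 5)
Your proposal is correct and follows essentially the same route as the paper: rule out the derivative $+1$ by the power-series/involution argument using that $\al$ has isolated fixed points, then antisymmetrise $f_0$ to get $f_1\circ\al=-f_1$, then shrink. Your explicit care that the shrunk neighbourhood remains $\al$-invariant (intersecting with $\al(W)$ and using that the disc is preserved by $z\mapsto-z$) is a small but genuine improvement over the paper's appeal to its generic shrinking operation, which does not by itself guarantee $\al(U)=U$.
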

\begin{proof}
 Choose any centred local parameter $f_0\:U_0\to\C$.  Put
 $U_1=U_0\cap\al(U_0)$ and $f_1=f_0|_{U_1}$; this still gives a
 centred local parameter.  We can expand $f_1(\al(u))$ as a power
 series $\sum_{k=1}^\infty a_kf_1(u)^k$.  Because $\al^2=1$, we have
 $a_1=\pm 1$.  We claim that $a_1$ cannot be equal to $1$.  Indeed, as
 $\al$ has isolated fixed points, we cannot have $f_1(\al(u))=f_1(u)$
 as a power series.  Thus, if $a_1=1$ then there must exist $k>1$ such
 that $a_i=0$ for $1<i<k$ and $a_k\neq 0$, so
 \[ f_1(\al(u)) = f_1(u) + a_kf_1(u)^k + O(f_1(u)^{k+1}). \]
 If we substitute this into itself and use $\al^2=1$ we get $2a_k=0$,
 which is a contradiction.  We must therefore have $a_1=-1$.

 Now put $f_2(u)=(f_1(u)-f_1(\al(u)))/2$, so $f_2\:U_1\to\C$ is
 holomorphic with $f_2(a)=0$ and $f_2(\al(u))=-f_2(u)$.  We also have
 $f_2(u)=f_1(u)+O(f_1(u)^2)$, and thus that $f_2(u)$ is again a
 centred local parameter.  We can therefore produce the required pair
 $(U,f)$ by shrinking.
\end{proof}

\begin{lemma}\lbl{lem-al-not-fixed}
 Suppose that $a\in Z$ with $\al(a)\neq a$.  Then there is a
 normalised local parameter $f\:U\to \Dl$ at $a$ such that
 $\al(\ov{U})\cap\ov{U}=\emptyset$.
\end{lemma}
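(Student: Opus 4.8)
The plan is to separate $a$ from $\al(a)$ using the Hausdorff property of $Z$, and then to run the shrinking construction of Remark~\ref{rem-shrinking} one step further so as to control the closure $\ov{U}$ rather than merely $U$ itself.

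First I would choose disjoint open sets $V \ni a$ and $V' \ni \al(a)$, which exist because $Z$ is Hausdorff. Since $\al$ is a continuous involution, $\al(V')$ is an open neighbourhood of $a$; replacing $V$ by $V \cap \al(V')$, I may assume $\al(V) \sse V'$, so in particular $\al(S) \cap S = \emptyset$ for every subset $S \sse V$.

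Next I would take any centred local parameter at $a$ (which exists since $Z$ is a Riemann surface), restrict its domain to lie inside $V$, and apply the shrinking operation of Remark~\ref{rem-shrinking} to obtain a normalised pair $(W,g)$ with $a \in W \sse V$ and $g\colon W \to \Dl$ a conformal isomorphism. Now set $U = g^{-1}(\{z \st |z| < 1/2\})$ and $f = 2\,g|_U$. Then $f\colon U \to \Dl$ is a conformal isomorphism, so $(U,f)$ is normalised; moreover $\ov{U}$ is contained in $g^{-1}(\{z \st |z| \le 1/2\})$, which is a compact (hence closed) subset of $W \sse V$, so $\ov{U} \sse V$. Therefore $\al(\ov{U}) \sse \al(V) \sse V'$, which is disjoint from $V$ and hence from $\ov{U}$, giving $\al(\ov{U}) \cap \ov{U} = \emptyset$ as required.

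There is essentially no obstacle here; the only point needing attention is that the shrinking of Remark~\ref{rem-shrinking} by itself controls the open set $U$ but not its closure, which is why I pass to the half-radius sub-disc, ensuring that $\ov{U}$ sits compactly inside the previous chart domain $W$ and hence inside $V$.
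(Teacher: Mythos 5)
Your proof is correct and follows essentially the same route as the paper: separate $a$ from $\al(a)$ by open sets, intersect with $\al$ of the neighbourhood of $\al(a)$, and take a normalised parameter inside. The only (harmless) difference is how the closure is controlled — the paper chooses neighbourhoods with disjoint \emph{closures} at the outset, so that any normalised parameter with $U\sse U_0$ works, whereas you use merely disjoint open sets and then pass to the half-radius sub-disc to force $\ov{U}$ to sit compactly inside $V$.
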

\begin{proof}
 By standard arguments with compact Hausdorff spaces, we can choose
 open neighbourhoods $V$ of $a$ and $W$ of $\al(a)$ such  that
 $\ov{V}\cap\ov{W}=\emptyset$.  Put $U_0=V\cap\al(W)$, so $U_0$ is an
 open neighbourhood of $a$ with
 $\ov{U_0}\cap\al(\ov{U_0})=\emptyset$.  Now let $f\:U\to \Dl$ be any
 normalised local parameter at $a$ with $U\sse U_0$.
\end{proof}

\begin{lemma}\lbl{lem-bt-fixed}
 Suppose that $a\in Z$ with $\bt(a)=a$.  Then there is a normalised
 local parameter $f\:U\to \Dl$ at $a$ with $\bt(U)=U$ and
 $f(\bt(u))=\ov{f(u)}$ for all $u\in U$.
\end{lemma}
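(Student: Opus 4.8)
The plan is to imitate the proof of Lemma~\ref{lem-al-fixed}, with one extra twist to handle the fact that $\bt$ is anticonformal. Start with any centred local parameter $f_0\:U_0\to\C$ at $a$ (these exist because $Z$ is a Riemann surface), put $U_1=U_0\cap\bt(U_0)$, and let $f_1=f_0|_{U_1}$. Since $\bt(a)=a$, the set $U_1$ is an open neighbourhood of $a$, and it is $\bt$-invariant because $\bt(U_0\cap\bt(U_0))=\bt(U_0)\cap U_0$; and $f_1$ is still a centred local parameter. As $\bt|_{U_1}$ is anticonformal and $f_1$ is holomorphic, the composite $f_1\circ\bt$ is antiholomorphic, so the function $g\:U_1\to\C$ defined by $g(u)=\ov{f_1(\bt(u))}$ is holomorphic; it satisfies $g(a)=0$, and since $\ov g=f_1\circ\bt$ is a local diffeomorphism we have $g'(a)\neq 0$. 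Because $\bt^2=1$, for $u\in U_1$ we get the identity $\ov{g(\bt(u))}=\ov{\,\ov{f_1(\bt(\bt(u)))}\,}=f_1(u)$, which will be the workhorse of the argument.

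Next, expand $g(u)=\sum_{k\geq 1}a_kf_1(u)^k$ with $a_1\neq 0$. Substituting this power series into $\ov{g(\bt(u))}=f_1(u)$ and comparing linear terms gives $|a_1|^2=1$, so $|a_1|=1$; note that this is the only constraint on $a_1$, in contrast with Lemma~\ref{lem-al-fixed} where the linear coefficient was forced to be $-1$. Now choose $c\in\C$ with $|c|=1$ and $c^2=a_1$, and replace $f_1$ by $c\,f_1$ (still a centred local parameter); this replaces $g$ by $\ov c\,g$ and hence the linear coefficient $a_1$ by $\ov{c}^{\,2}a_1=1$, so we may assume $a_1=1$. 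Setting $f_2=\tfrac12(f_1+g)$ then produces a holomorphic function with $f_2(a)=0$ and linear coefficient $\tfrac12(1+a_1)=1\neq 0$, so $f_2$ is again a centred local parameter, and the workhorse identity gives
\[ \ov{f_2(\bt(u))}=\tfrac12\bigl(\ov{f_1(\bt(u))}+\ov{g(\bt(u))}\bigr)=\tfrac12\bigl(g(u)+f_1(u)\bigr)=f_2(u). \]
The one place to be careful is exactly this normalisation: the naive average $\tfrac12(f_1+g)$ degenerates precisely when $a_1=-1$, and the preliminary rotation by $c$ is what rescues it. This is the only real idea in the proof; everything else is routine.

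Finally, shrink $f_2$ to a normalised parameter while keeping the symmetry, using the method of Remark~\ref{rem-shrinking} but with a $\bt$-invariant neighbourhood. Pick $\delta>0$ small enough that $f_2$ restricts to a conformal isomorphism from $U=f_2^{-1}(\{z\in\C\st |z|<\delta\})$ onto $\{z\in\C\st |z|<\delta\}$. Since $\ov{f_2(\bt(u))}=f_2(u)$ forces $|f_2(\bt(u))|=|f_2(u)|$, the set $U$ is automatically $\bt$-invariant. Put $f=\delta^{-1}f_2|_U\:U\to\Dl$; this is a normalised local parameter at $a$, and $f(\bt(u))=\delta^{-1}f_2(\bt(u))=\delta^{-1}\ov{f_2(u)}=\ov{f(u)}$ because $\delta$ is real. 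This $(U,f)$ is the required pair.
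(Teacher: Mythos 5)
Your proof is correct and is essentially the paper's argument: the paper writes the linear coefficient as $c=e^{2i\tht}$ and forms $f_2=(e^{i\tht}f_1+e^{-i\tht}\ov{f_1\circ\bt})/2$ in one step, which is exactly your "rescale $f_1$ by a square root of $a_1$, then average with $g$" after unwinding. Your explicit remark that the naive average degenerates when $a_1=-1$, and your check that the shrunken neighbourhood is $\bt$-invariant, are correct elaborations of points the paper leaves implicit.
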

\begin{proof}
 Choose any centred local parameter $f_0\:U_0\to\C$.  Put
 $U_1=U_0\cap\bt(U_0)$ and $f_1=f_0|_{U_1}$; this still gives a
 centred local parameter.  The map $u\mapsto\ov{f_1(\bt(u))}$ is
 another centred local parameter on $U_1$, so we have
 $\ov{f_1(\bt(u))}=c\,f_1(u)+O(f_1(u)^2)$ for some $c\neq 0$.  Using
 $\bt^2=1$ we find that $\ov{c}\,c=1$, so $c=e^{2i\tht}$ for some
 $\tht\in\R$.  Now put
 \[ f_2(u) = (e^{i\tht} f_1(u) + e^{-i\tht}\ov{f_1(\bt(u))})/2
       = e^{i\tht} f_1(u) + O(f_1(u)^2).
 \]
 This is again a centred local parameter at $a$, and it satisfies
 $f_2(\bt(u))=\ov{f_2(u)}$.  Shrinking now gives the required pair
 $(U,f)$.
\end{proof}

\begin{corollary}\lbl{cor-fixed-circles}
 The fixed set $Z^{\ip{\bt}}$ is a closed submanifold of $Z$, and so
 is diffeomorphic to a finite disjoint union of circles.  The same
 applies to $Z^{\ip{\al\bt}}$.
\end{corollary}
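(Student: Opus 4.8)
The plan is to deduce everything from Lemma~\ref{lem-bt-fixed}. First I would observe that $Z^{\ip{\bt}}$ is closed in $Z$, since it is the set on which the continuous map $\bt$ agrees with the identity. Next, given any point $a\in Z^{\ip{\bt}}$, Lemma~\ref{lem-bt-fixed} supplies a normalised local parameter $f\:U\to\Dl$ at $a$ with $\bt(U)=U$ and $f(\bt(u))=\ov{f(u)}$ for all $u\in U$. For $u\in U$ we then have $u\in Z^{\ip{\bt}}$ if and only if $f(u)=\ov{f(u)}$, that is, $f(u)\in\R$; as $f$ is a conformal isomorphism $U\to\Dl$, this gives
\[ Z^{\ip{\bt}}\cap U = f^{-1}\bigl((-1,1)\bigr), \]
which is a smooth embedded arc through $a$. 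Hence $Z^{\ip{\bt}}$ is a closed one-dimensional smooth submanifold of $Z$, with empty boundary.

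Since $Z$ is compact, $Z^{\ip{\bt}}$ is then a compact $1$-manifold without boundary, and the classification of such manifolds shows that it is diffeomorphic to a finite disjoint union of circles (the empty case being included vacuously). This proves the first assertion.

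For $\al\bt$, I would note that this is again an anticonformal involution: it is anticonformal as the composite of the conformal map $\al$ with the anticonformal map $\bt$, and $(\al\bt)^2=\al^2\bt^2=1$ because $\al$ and $\bt$ commute and each squares to the identity. The proof of Lemma~\ref{lem-bt-fixed} used only that $\bt$ is an anticonformal involution (the invariant metric was not needed there), so it applies verbatim with $\al\bt$ in place of $\bt$, and the argument above then yields the same conclusion for $Z^{\ip{\al\bt}}$. The only step that needs a moment's care is the identification $Z^{\ip{\bt}}\cap U = f^{-1}((-1,1))$ and the remark that this genuinely provides a submanifold chart; the rest is routine, and I do not anticipate any real obstacle.
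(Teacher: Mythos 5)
Your proof is correct and follows essentially the same route as the paper: both use the normalised local parameter from Lemma~\ref{lem-bt-fixed} to identify $Z^{\ip{\bt}}\cap U$ with $(-1,1)\subset\Dl$, conclude that the fixed set is a closed one-dimensional submanifold, and then observe that $\al\bt$ is an equally good anticonformal involution. Your write-up just spells out the details (closedness, the submanifold chart, the classification of compact $1$-manifolds) that the paper leaves implicit.
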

\begin{proof}
 Any normalised local parameter $f\:U\to \Dl$ as in the lemma gives a
 diffeomorphism
 \[ U\cap Z^{\ip{\bt}}\to \Dl\cap\R = (-1,1), \]
 and it follows easily from this that $Z^{\ip{\bt}}$ is a closed
 submanifold of $Z$.  As $\al\bt$ is an equally good example of an
 anticonformal involution, we see that $Z^{\ip{\al\bt}}$ is also a
 closed submanifold.
\end{proof}

\begin{remark}\lbl{rem-preserved-circle}
 As $\al$ commutes with $\bt$, it preserves the set $Z^{\ip{\bt}}$.
 However, if $Z^{\ip{\bt}}$ has several components, then they need not
 be preserved individually.  If a certain component is preserved, then
 Remark~\ref{rem-circle-involution} will apply.
\end{remark}

\begin{lemma}\lbl{lem-bt-al-fixed}
 Suppose that $a\in Z$ satisfies $\al(a)=\bt(a)=a$.  Then there is a
 normalised local parameter $f\:U\to \Dl$ at $a$ such that
 $f(\al(u))=-f(u)$ and $f(\bt(u))=\ov{f(u)}$ for all $u\in U$.
 Moreover:
 \begin{itemize}
  \item[(a)] $f$ induces a conformal isomorphism
   $g\:U/\ip{\al}\to \Dl$ with $g([u])=f(u)^2$.
  \item[(b)] $f$ restricts to give a diffeomorphism from
   $U^{\ip{\bt}}$ to the real axis in $\Dl$.
  \item[(c)] Similarly, $f$ restricts to give a diffeomorphism from
   $U^{\ip{\al\bt}}$ to the imaginary axis in $\Dl$.
  \item[(d)] $g$ restricts to give homeomorphisms
   $U^{\ip{\bt}}/\ip{\al}\to [0,1)$ and
   $U^{\ip{\al\bt}}/\ip{\al}\to(-1,0]$.
 \end{itemize}
\end{lemma}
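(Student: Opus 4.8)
The plan is to manufacture $f$ by taking the $\al$-antisymmetric parameter supplied by Lemma~\ref{lem-al-fixed} and then $\bt$-symmetrising it exactly as in the proof of Lemma~\ref{lem-bt-fixed}, the one new observation being that the second step leaves the $\al$-antisymmetry intact. So first I would pick a centred local parameter $f_1\:U_1\to\C$ with $U_1$ invariant under $\al$ and $\bt$ (hence under $\al\bt$) and with $f_1(\al(u))=-f_1(u)$; Lemma~\ref{lem-al-fixed} provides this, after intersecting its domain with the $\bt$-image of that domain, which is still $\al$-invariant since $\al\bt=\bt\al$. Writing $\ov{f_1(\bt(u))}=c\,f_1(u)+O(f_1(u)^2)$, the identity $\bt^2=1$ forces $|c|=1$, say $c=e^{2i\tht}$, and I set
\[ f_2(u)=\tfrac{1}{2}\bigl(e^{i\tht}f_1(u)+e^{-i\tht}\ov{f_1(\bt(u))}\bigr), \]
which, as in Lemma~\ref{lem-bt-fixed}, is again a centred local parameter with $f_2(\bt(u))=\ov{f_2(u)}$.

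The key point is that $f_2$ is still $\al$-antisymmetric. Using $\al\bt=\bt\al$ together with $f_1(\al(u))=-f_1(u)$ gives $\ov{f_1(\bt(\al(u)))}=\ov{f_1(\al(\bt(u)))}=-\ov{f_1(\bt(u))}$, and substituting into the definition of $f_2$ yields $f_2(\al(u))=-f_2(u)$. I would then normalise by the shrinking operation of Remark~\ref{rem-shrinking}, exactly as at the ends of the proofs of Lemmas~\ref{lem-al-fixed} and~\ref{lem-bt-fixed}: because $f_2(\al(u))=-f_2(u)$, $f_2(\bt(u))=\ov{f_2(u)}$, and the disc $\Dl$ is invariant both under $z\mapsto-z$ and under complex conjugation, the shrunk domain $U$ may be taken invariant under $\al$ and $\bt$, and $f=\ep^{-1}f_2|_U$ is then a normalised local parameter with $f(\al(u))=-f(u)$ and $f(\bt(u))=\ov{f(u)}$. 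Note that $\al|_U$ is not the identity, since $f$ conjugates it to $z\mapsto-z$ on $\Dl$; this is consistent with $a$ being an isolated fixed point of $\al$.

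For the remaining parts: (a) holds because $f$ conjugates the $\al$-action on $U$ to the action of $\{\pm1\}$ on $\Dl$, for which $z\mapsto z^2$ is a coequaliser in the analytic category (the case $n=2$ of the discussion in Remark~\ref{rem-smooth-branch}); hence the induced map $g\:U/\ip{\al}\to\Dl$, $g([u])=f(u)^2$, is a conformal isomorphism. Parts~(b) and~(c) are immediate from the two symmetry relations: $u\in U^{\ip{\bt}}$ iff $f(u)=\ov{f(u)}$ iff $f(u)\in\R$, while $u\in U^{\ip{\al\bt}}$ iff $f(\al\bt(u))=f(u)$, i.e.\ $-\ov{f(u)}=f(u)$, i.e.\ $f(u)\in i\R$; since $f\:U\to\Dl$ is a diffeomorphism these fixed sets map diffeomorphically onto the real and imaginary axes in $\Dl$. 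For~(d), both $U^{\ip{\bt}}$ and $U^{\ip{\al\bt}}$ are $\al$-invariant (as $\al$ commutes with $\bt$ and with $\al\bt$), and $g([u])=f(u)^2$ carries the first onto $\{t^2\st t\in(-1,1)\}=[0,1)$ and the second onto $\{-s^2\st s\in(-1,1)\}=(-1,0]$, in each case identifying precisely the $\al$-orbits; so $g$ restricts to the claimed homeomorphisms.

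The only step demanding genuine thought is the interaction of the two symmetrisations, namely that $\bt$-symmetrising $f_1$ does not destroy the relation $f_1(\al(u))=-f_1(u)$, and this is exactly where commutativity of $\al$ and $\bt$ enters. Everything else is bookkeeping together with the already-established local normal forms and the standard theory of analytic quotients recalled in Remark~\ref{rem-smooth-branch}.
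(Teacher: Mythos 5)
Your proposal is correct and follows essentially the same route as the paper: feed the $\al$-antisymmetric parameter from Lemma~\ref{lem-al-fixed} into the $\bt$-symmetrisation of Lemma~\ref{lem-bt-fixed}, check (via $\al\bt=\bt\al$) that the antisymmetry survives, and then read off (a)--(d). You simply spell out the verifications that the paper dismisses with ``by inspecting the construction'' and ``follow easily''.
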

\begin{proof}
 We choose a local parameter $f_0$ with $f_0(\al(u))=-f(u)$ as in
 Lemma~\ref{lem-al-fixed}.  We then take this as the initial choice in
 the proof of Lemma~\ref{lem-bt-fixed}.  This gives a normalised local
 parameter $f$ with $f(\bt(u))=\ov{f(u)}$, and by inspecting the
 construction we see that the property $f(\al(u))=-f(u)$ is retained
 as well.  The additional properties~(a) to~(d) follow easily.
\end{proof}

\subsubsection{Branched coverings}

First, we give a formal definition:
\begin{definition}
 We put $\Dl=\{z\in\C\st |z|<1\}$, and $\Dl'=\Dl\sm\{0\}$.  We let
 $\pi\:\Dl\tm\{0,1\}\to \Dl$ denote the projection, and we let
 $\sg\:\Dl\to \Dl$ denote the squaring map.

 Let $f\:X\to Y$ be a holomorphic map between Riemann surfaces.  We
 say that $f$ is a \emph{branched double covering} if for each
 $y\in Y$ there is a diagram of one of the following types:
 \[ \xymatrix{
  \Dl\tm\{0,1\} \ar[r]^q \ar[d]_{\pi} &
  X \ar[d]^f &&
  \Dl \ar[r]^q \ar[d]_{\sg} &
  X \ar[d]^f \\
  \Dl \ar[r]_p &
  Y &&
  \Dl \ar[r]_p &
  Y
 } \]
 where
 \begin{itemize}
  \item[(a)] $p$ is a holomorphic chart with $p(0)=y$.
  \item[(b)] The square is a pullback, so $q$ gives a holomorphic
   isomorphism from $\Dl\tm\{0,1\}$ or $\Dl$ to $f^{-1}(p(\Dl))$.
 \end{itemize}
\end{definition}

\begin{remark}
 We note that in the left hand case we have $|p^{-1}\{y\}|=2$, whereas
 in the right hand case we have $|p^{-1}\{y\}|=1$, so the two cases are
 disjoint.  In the right hand case we say that $y$ is a \emph{branch
  point}, and we write $B(f)$ for the set of branch points.  We note
 that all points in $p(\Dl)\sm\{y\}$ have two preimages and so are not
 branch points; thus, the set $B(f)$ is discrete.
\end{remark}

\begin{lemma}\lbl{lem-removable}
 Let $X$ and $Y$ be Riemann surfaces such that $Y$ is isomorphic to
 $\Dl$ or $\Dl\amalg \Dl$, and let $a$ be a point in $X$.  Then any
 holomorphic map $f\:X\sm\{a\}\to Y$ has a unique holomorphic
 extension $f\:X\to Y$.
\end{lemma}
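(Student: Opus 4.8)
The plan is to reduce the statement to the classical Riemann removable singularity theorem. First I would dispose of uniqueness: a Riemann surface has no isolated points, so $X\sm\{a\}$ is dense in $X$, and any two holomorphic — in particular continuous — extensions $f_1,f_2\:X\to Y$ agree on this dense set, hence everywhere. It therefore suffices to prove existence, and since existence is a condition on a neighbourhood of $a$ it is enough to produce a holomorphic map $U\to Y$ on some open $U\ni a$ agreeing with $f$ on $U\sm\{a\}$; one then glues this with $f$ on $X\sm\{a\}$, the two agreeing on $U\sm\{a\}$ by the same density argument, to get the desired extension $X\to Y$.

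For existence, I would first fix an isomorphism $Y\simeq\Dl$ or $Y\simeq\Dl\amalg\Dl$ and compose $f$ with it, so that we may assume $Y$ is literally $\Dl$ or $\Dl\amalg\Dl$. Choose a holomorphic chart $p\:U\to\Dl$ with $p(a)=0$ and $U\sse X$; then $g=f\circ p^{-1}\:\Dl'\to Y$ is holomorphic on the punctured disc $\Dl'=\Dl\sm\{0\}$. Since $\Dl'$ is connected, its image lies in a single component of $Y$, so in the disconnected case we may discard the other component and regard $g$ as a holomorphic map $\Dl'\to\Dl\sse\C$. Now $g$ is a \emph{bounded} holomorphic function on $\Dl'$, so the Riemann removable singularity theorem provides a holomorphic extension $\tilde g\:\Dl\to\C$. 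By continuity $|\tilde g|\le 1$ on all of $\Dl$, and if $|\tilde g|$ attained the value $1$ at an interior point, the maximum modulus principle would force $\tilde g$ to be a constant of modulus $1$, contradicting $g(\Dl')\sse\Dl$; hence $\tilde g$ maps $\Dl$ into $\Dl$. Transporting $\tilde g$ back through $p$ and the fixed isomorphism $Y\simeq\Dl$ gives the required holomorphic extension of $f$ over $U$ with values in $Y$.

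I do not expect any serious obstacle here: the whole content is the removable singularity theorem applied in a chart, once one observes that the codomain $Y$ is (isomorphic to) a bounded domain so that $f$ is automatically locally bounded near $a$. The only point needing a moment's care is verifying that the extended map still takes values in the \emph{open} disc rather than its closure, which is exactly the maximum-modulus observation above (alternatively: the non-constant case is handled by the open mapping theorem, since $\tilde g(\Dl)$ is then open and contained in $\ov{\Dl}$, hence in $\Dl$, and the constant case is trivial).
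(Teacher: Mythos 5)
Your proof is correct and follows essentially the same route as the paper's: reduce to a chart around $a$, use connectedness of the punctured disc to land in a single copy of $\Dl$, and invoke the Riemann removable singularity theorem. The only difference is that you spell out the details the paper leaves as "standard" (uniqueness via density, and the maximum-modulus/open-mapping observation that the extension stays in the open disc), all of which are correct.
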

\begin{proof}
 By choosing a chart around $a$ we can reduce to the case where $X=\Dl$
 and $a=0$.  Now, even if $Y\simeq \Dl\amalg \Dl$ the image $f(\Dl')$ will
 be connected and therefore contained in one of the two copies of
 $\Dl$.  We can thus assume that $Y=\Dl$.  This case is just the standard
 theorem on removable singularities in complex analysis.
\end{proof}

\begin{proposition}\lbl{prop-un-branched}
 Let $Y$ be a Riemann surface, let $V$ be a discrete subset of $Y$,
 and put $Y'=Y\sm V$.  Let $\CX$ be the category of branched double
 coverings $f\:X\to Y$ with $B(f)\sse V$, and let $\CX'$ be the
 category of unbranched double coverings of $Y'$.  (In both cases, the
 morphisms are holomorphic isomorphisms covering the identity on $Y$
 or $Y'$.)  Let $R\:\CX\to\CX'$ be the evident restriction functor,
 given by $R(X\xra{f}Y)=(X'\xra{f'}Y')$ where $X'=f^{-1}(Y')$ and
 $f'=f|_{X'}$.   Then $R$ is an equivalence of categories.
\end{proposition}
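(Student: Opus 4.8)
The plan is to construct an explicit inverse functor $E\:\CX'\to\CX$ and check that $R\circ E$ and $E\circ R$ are naturally isomorphic to the respective identities. First I would establish essential surjectivity: given an unbranched double cover $f'\:X'\to Y'$, I want to fill in the punctures. For each $v\in V$, choose a chart $p\:\Dl\to Y$ with $p(0)=v$ and $p(\Dl)\cap V=\{v\}$, so that $p$ restricts to $p'\:\Dl'\to Y'$. Pull back $f'$ along $p'$ to get a double cover of $\Dl'$. The fundamental group $\pi_1(\Dl')\cong\Z$ has exactly two subgroups of index at most two, namely the whole group and $2\Z$, so up to isomorphism there are precisely two double covers of $\Dl'$: the trivial one $\Dl'\amalg\Dl'$, and the connected one $\Dl'\xra{\sg}\Dl'$ (where $\sg(z)=z^2$). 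In the first case the pulled-back cover extends over $0$ as $\Dl\amalg\Dl\xra{\pi}\Dl$; in the second it extends as $\Dl\xra{\sg}\Dl$. Using Lemma~\ref{lem-removable} to control the gluing, I can glue these local extensions to $X'$ along $f'^{-1}(p'(\Dl'))$ to obtain a Riemann surface $X$ with a holomorphic map $f\:X\to Y$; the local pictures are exactly the two diagrams in the definition of a branched double covering, with $B(f)$ the set of those $v$ at which the connected case occurred, so $B(f)\sse V$. Thus $E(f')=f$ is an object of $\CX$ with $R(E(f'))\cong f'$ canonically (the isomorphism is the identity on $X'=f^{-1}(Y')$).

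Next I would check that $R$ is full and faithful. Faithfulness is immediate: a morphism in $\CX$ is a holomorphic isomorphism $\phi\:X_0\to X_1$ over $Y$, and $X_0'=f_0^{-1}(Y')$ is dense in $X_0$ (its complement is the discrete preimage of $V$), so $\phi$ is determined by its restriction $R(\phi)=\phi|_{X_0'}$. For fullness, suppose $\psi\:X_0'\to X_1'$ is an isomorphism over $Y'$. Near each point $a\in X_0$ lying over some $v\in V$, pick the relevant local model ($\Dl\amalg\Dl$ or $\Dl\xra{\sg}\Dl$) on both sides; since $\psi$ commutes with the projections to $Y$ it matches up the fibres over $p(\Dl')$, and Lemma~\ref{lem-removable} (applied on each sheet, after composing with a local chart) shows $\psi$ extends holomorphically across $a$. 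These extensions are unique and hence patch to a global holomorphic map $\bar\psi\:X_0\to X_1$ over $Y$; applying the same argument to $\psi^{-1}$ gives a two-sided inverse, so $\bar\psi$ is an isomorphism with $R(\bar\psi)=\psi$.

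Finally, a standard categorical fact says that a functor which is full, faithful and essentially surjective is an equivalence; alternatively, one checks directly that $E$ as constructed above (with a choice of charts $p$ at each $v$) is a functor and that the canonical isomorphisms $R E\cong \mathrm{id}_{\CX'}$ and $E R\cong\mathrm{id}_{\CX}$ assembled above are natural. I expect the main obstacle to be purely bookkeeping rather than conceptual: verifying that the local extensions glue to a genuine Riemann surface structure and that all the "canonical" isomorphisms are independent of the auxiliary choices of charts $p$ and of local trivialisations, so that $E$ is well-defined up to natural isomorphism. The topological input — the classification of double covers of $\Dl'$ via $\pi_1(\Dl')\cong\Z$ — and the analytic input — removable singularities, i.e. Lemma~\ref{lem-removable} — are both elementary; it is only the patching that requires care.
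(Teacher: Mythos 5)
Your proposal is correct and follows essentially the same route as the paper: local analysis at each $v\in V$ via charts, the classification of double covers of the punctured disc into the two standard models, extension across the puncture, and Lemma~\ref{lem-removable} for extending morphisms to establish fullness and faithfulness. The only cosmetic difference is the order (you do essential surjectivity first and appeal to the full/faithful/essentially-surjective criterion, while the paper establishes full faithfulness first), which does not affect the substance.
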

\begin{proof}
 Suppose we have objects $(X_i\xra{f_i}Y)\in\CX$ for $i=0,1$ and an
 isomorphism $g'\:X'_0\to X'_1$ with $f'_1g_1=f'_0$.  We claim that
 there is a unique holomorphic extension $g\:X_0\to X_1$, and that
 this satisfies $f_1g=f_0$.  This can be checked locally on $Y$, so we
 can restrict attention to a small neighbourhood of a point in $V$ and
 therefore assume that $(X_1\xra{f_1}Y)$ is either
 $(\Dl\tm\{0,1\}\xra{\pi}\Dl)$ or $(\Dl\xra{\sg}\Dl)$.  In either case it is
 clear from Lemma~\ref{lem-removable} that $g'$ has a unique
 holomorphic extension $g\:X_0\to X_1$.  We can also apply the
 uniqueness clause in the same lemma to the map $f'_1g'=f'_0$; this
 gives $f_1g=f_0$.  We now see that $R$ is full and faithful.

 We now need to show that $R$ is essentially surjective.  Consider an
 unbranched covering $f'\:X'\to Y'$.  For each $v\in V$ we can choose
 a chart $p_v\:\Dl\to Y$ with $p_v(0)=v$.  As $V$ is discrete we may
 assume, after shrinking the charts if necessary, that the sets
 $p_v(\Dl)$ are disjoint.  In particular, this means that
 $p_v(\Dl)\cap V=\{v\}$, so $p_v(\Dl')\sse Y'$.  The pullback $p_v^*(X')$
 is an unbranched double cover of $\Dl'$, so it is isomorphic to
 $(\Dl'\tm\{0,1\}\xra{\pi'}\Dl')$ or to $(\Dl'\xra{\sg'}\Dl')$.  In either
 case there is an evident way to extend $p_v^*(X')$ to give a branched
 cover of all of $\Dl$, and this in turn extends $X'$ to give a branched
 cover of $p_v(\Dl)$.  These extensions can be patched together to give
 a branched cover $(X\xra{f}Y)$ extending the original unbranched
 cover, as required.
\end{proof}

\begin{definition}\lbl{defn-monodromy}
 Let $(X\xra{f}Y)$ be a branched double cover with $B(f)\sse V$,
 giving an unbranched cover $(X'\xra{f'}Y')$.  For any closed loop
 $u\:[0,1]\to Y'$, we define the \emph{monodromy} $\mu_X(u)\in\Z/2$ as
 follows.  The fibre $F=(f')^{-1}\{u(0)\}$ will have precisely two
 elements.  For any element $a$, there is a unique continuous lift
 $\tu\:[0,1]\to X'$ with $f\tu=u$ and $u(0)=a$.  We put
 $\sg(a)=\tu(1)\in F$.  This defines a permutation $\sg\:F\to F$; we
 put $\mu_X(u)=0$ if $\sg$ is the identity, and $\mu_X(u)=1$ if $\sg$
 is the transposition.  This depends only on the homotopy class of the
 loop $u$.

 Next, for $v\in V$ we let $\om_v$ denote a small loop in $Y'$ that
 winds once around $v$ and does not wind around any of the other
 points in $V$.  It is clear that $v$ is a branch point for $X$ iff
 $\mu_X(\om_v)=1$.
\end{definition}

\begin{lemma}\lbl{lem-homology-monodromy}
 There is a unique homomorphism $\ov{\mu}_X\:H_1(Y')\to\Z/2$ such that
 $\mu_X(u)=\ov{\mu}_X([u])$ for all loops $u$, where $[u]$
 denotes the homology class represented by $u$.
\end{lemma}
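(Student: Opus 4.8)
The statement to prove is Lemma~\ref{lem-homology-monodromy}: the monodromy $\mu_X$ factors through $H_1(Y')$. Here is how I would approach it.

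The plan is to show that $\mu_X$ defines a group homomorphism on $\pi_1(Y', y_0)$, and then invoke the fact that $H_1(Y')$ is the abelianization of $\pi_1(Y', y_0)$ together with the observation that $\Z/2$ is abelian. First I would fix a basepoint $y_0 \in Y'$ and recall the standard deck-transformation picture: since $f'\:X' \to Y'$ is an unbranched double cover, the fibre $F = (f')^{-1}\{y_0\}$ has exactly two points, and path-lifting gives, for each homotopy class $[u] \in \pi_1(Y', y_0)$, a well-defined permutation $\sigma_u$ of $F$; this is the classical monodromy action of $\pi_1$ on the fibre. The assignment $[u] \mapsto \sigma_u$ is an (anti)homomorphism $\pi_1(Y', y_0) \to \mathrm{Sym}(F) \cong \Z/2$ — one checks that lifting a concatenated loop $u * u'$ amounts to lifting $u$, then lifting $u'$ starting from the endpoint, so $\sigma_{u*u'} = \sigma_{u'}\sigma_u$ (or $\sigma_u \sigma_{u'}$ depending on conventions; in $\mathrm{Sym}(F) \cong \Z/2$ this distinction disappears since the group is abelian). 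Identifying $\mathrm{Sym}(F)$ with $\Z/2$ by sending the identity to $0$ and the transposition to $1$, this is exactly $\mu_X(u) = 0$ or $1$ as in Definition~\ref{defn-monodromy}, and we get a homomorphism $\pi_1(Y', y_0) \to \Z/2$.

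Next I would note that any homomorphism from a group to an abelian group factors uniquely through the abelianization. Since $Y'$ is path-connected, $H_1(Y')$ is naturally isomorphic to $\pi_1(Y', y_0)^{\mathrm{ab}}$, and under this isomorphism the class $[u] \in H_1(Y')$ of a loop $u$ corresponds to the image of its $\pi_1$-class. Therefore the homomorphism $\mu_X\:\pi_1(Y',y_0) \to \Z/2$ descends uniquely to a homomorphism $\ov{\mu}_X\:H_1(Y') \to \Z/2$ with $\ov{\mu}_X([u]) = \mu_X(u)$ for every loop $u$ based at $y_0$. Finally, for a loop $u$ based at an arbitrary point $y_1 \neq y_0$, one conjugates by a path from $y_0$ to $y_1$: the $\pi_1$-classes at different basepoints are related by such conjugation, which becomes trivial after abelianization, so the formula $\mu_X(u) = \ov{\mu}_X([u])$ holds for all loops, not just those based at $y_0$. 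Uniqueness of $\ov{\mu}_X$ is immediate since $H_1(Y')$ is generated by classes of loops.

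There is essentially no obstacle here; the lemma is a packaging of standard covering-space theory. The one point requiring a word of care is that the monodromy $\sigma_u$ genuinely depends only on the homotopy class of $u$ — this is the homotopy-lifting property for covering maps — but this is already asserted in Definition~\ref{defn-monodromy} ("This depends only on the homotopy class of the loop $u$"), so I may simply cite it. The mildly delicate bookkeeping is the compatibility of basepoints, which I would handle with the conjugation argument above rather than belabouring it. Thus the proof is short: establish multiplicativity on $\pi_1$, pass to the abelianization $H_1$, and check the basepoint-independent formula.
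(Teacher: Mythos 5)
Your proof is correct and follows essentially the same route as the paper's: pass to $\pi_1$ at a basepoint, use the Hurewicz isomorphism with the abelianization, and handle loops at other basepoints by conjugating with a path. The only cosmetic difference is that the paper first reduces to the case where $Y'$ is connected, whereas you assume path-connectedness outright; this is harmless.
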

\begin{proof}
 We can reduce to the case where $Y'$ is connected, and choose a
 basepoint $b\in Y'$.  The Hurewicz map then
 gives an isomorphism $h_b\:\pi_1(Y',b)_{\text{ab}}\to H_1(Y')$, and
 it follows that there is a unique homomorphism
 $\ov{\mu}\:H_1(Y')\to\Z/2$ with $\mu(u)=\ov{\mu}(\ip{u})$ for all
 loops $u$ based at $b$.  If $u$ is a loop that is not based at $b$,
 we can choose a path $w$ from $b$ to $u(0)$, and let $u'$ be the loop
 given by $w$ followed by $u$ followed by the reverse of $w$, so $u'$
 is based at $b$.  We then have $[u']=[u]$ and $\mu(u)=\mu(u')$,
 so we still have $\mu(u)=\ov{\mu}([u])$.
\end{proof}

\begin{lemma}\lbl{lem-covering-classification}
 The map
 \[ [X\xra{f}Y]\mapsto\ov{\mu}_X \]
 gives a bijection from the set of isomorphism classes in $\CX$ (or
 $\CX'$) to $\Hom(H_1(Y'),\Z/2)$.
\end{lemma}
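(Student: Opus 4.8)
The plan is to reduce to the unbranched setting and then invoke the standard classification of covering spaces. By Proposition~\ref{prop-un-branched} the restriction functor $R\:\CX\to\CX'$ is an equivalence of categories, hence induces a bijection on isomorphism classes; and $\ov{\mu}_X$ is by construction determined by $R(X)=X'$ alone. So it suffices to show that $[X'\xra{f'}Y']\mapsto\ov{\mu}_{X'}$ is a bijection from isomorphism classes of unbranched double covers of $Y'$ onto $\Hom(H_1(Y'),\Z/2)$. Since $H_1(Y')$ and the relevant category of covers both decompose over the connected components of $Y'$, I may assume $Y'$ connected; a Riemann surface with a discrete subset removed is locally path-connected and semilocally simply connected, so after fixing a basepoint $b\in Y'$ the usual equivalence between covering spaces of $Y'$ and $\pi_1(Y',b)$-sets is available.

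First I would check well-definedness on isomorphism classes: if $g'\:X'_0\to X'_1$ is an isomorphism over $Y'$, it carries lifts of a loop $u$ to lifts of $u$, so the two monodromy permutations of $u$ on the fibres over $u(0)$ are conjugate via $g'$; conjugation is trivial in $\mathrm{Sym}(F)\cong\Z/2$, so $\mu_{X'_0}(u)=\mu_{X'_1}(u)$ for all loops $u$, whence $\ov{\mu}_{X'_0}=\ov{\mu}_{X'_1}$. For surjectivity, given $\rho\in\Hom(H_1(Y'),\Z/2)$, precompose with the Hurewicz isomorphism $h_b\:\pi_1(Y',b)_{\text{ab}}\to H_1(Y')$ to obtain $\rho'\:\pi_1(Y',b)\to\Z/2=\mathrm{Sym}(\{0,1\})$, and let $X'\to Y'$ be the double cover corresponding to the two-element $\pi_1(Y',b)$-set $\{0,1\}$ with this action (explicitly, the balanced product of the universal cover with $\{0,1\}$; when $\rho'=0$ this is the split cover $Y'\amalg Y'$). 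Tracing through Definition~\ref{defn-monodromy} shows that the fibre monodromy of a loop $u$ based at $b$ is exactly $\rho'(\ip{u})$, so $\mu_{X'}(u)=\rho([u])$ for all such $u$ and hence $\ov{\mu}_{X'}=\rho$. For injectivity, suppose $\ov{\mu}_{X'_0}=\ov{\mu}_{X'_1}$; then the monodromy action of $\pi_1(Y',b)$ on each two-element fibre $F_i=(f'_i)^{-1}\{b\}$ is, in both cases, the one encoded by $\ov{\mu}_{X'_i}\circ h_b$, so $F_0$ and $F_1$ are isomorphic $\pi_1(Y',b)$-sets, and the classification of covering spaces promotes this to an isomorphism $X'_0\cong X'_1$ over $Y'$.

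The genuinely routine steps are the well-definedness argument and the reduction to connected $Y'$. The main point requiring care — and the only real obstacle — is keeping the bookkeeping straight: $\CX'$ contains disconnected (split) double covers, and it is the connectedness of $Y'$ together with the abelianness of $\Z/2$ that lets one move freely between $\pi_1(Y',b)$-sets, index-at-most-two subgroups of $\pi_1(Y',b)$, and elements of $\Hom(H_1(Y'),\Z/2)$, with conjugacy in $\mathrm{Sym}(\{0,1\})$ being trivial so that no information is lost in the passage.
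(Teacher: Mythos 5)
Your proof is correct and follows essentially the same route as the paper's: reduce to connected $Y'$, pass through the Hurewicz isomorphism, and appeal to the standard classification of covering spaces by $\pi_1$-sets (the paper simply cites this as ``standard covering theory'' where you spell out the details, including the point that conjugacy in $\mathrm{Sym}(\{0,1\})$ is trivial). No changes needed.
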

\begin{proof}
 We can again reduce to the case where $Y'$ is connected, choose a
 basepoint $b\in Y'$, and use the Hurewicz isomorphism
 $H_1(Y')=\pi_1(Y',b)_{\text{ab}}$.  The claim is then that unbranched
 double covers of $Y'$ are classified by homomorphisms
 $\pi_1(Y',b)\to\Z/2$, which is standard covering theory.
\end{proof}

\begin{definition}
 Let $f\:X\to Y$ be a branched double covering.  We define
 $\chi\:X\to X$ by
 \[ \chi(x) = \begin{cases}
                x' & \text{ if } f^{-1}\{f(x)\} = \{x,x'\}
                      \text{ with } x'\neq x \\
                x  & \text{ if } f^{-1}\{f(x)\} = \{x\}.
              \end{cases}
 \]
 This is easily seen to be holomorphic.
\end{definition}

\begin{proposition}\lbl{prop-branched-unique}
 Suppose that $Y$ is isomorphic to $\C_\infty$, and that $V\subset Y$
 is a finite subset of even size.  Then:
 \begin{itemize}
  \item[(a)] There is a branched covering $f\:X\to Y$ for which
   $B(f)=V$.
  \item[(b)] If $f_0\:X_0\to Y$ and $f_1\:X_1\to Y$ are as in~(a),
   then there is an isomorphism $g\:X_0\to X_1$ with $f_1g=f_0$, and
   we have $\chi_1g=g\chi_0$.
  \item[(c)] If $g,h\:X_0\to X_1$ are as in~(b), then either $h=g$
   or $h=\chi_1g$.
 \end{itemize}
\end{proposition}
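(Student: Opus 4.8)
The plan is to reduce all three statements to the classification of double covers provided by Proposition~\ref{prop-un-branched} together with Lemmas~\ref{lem-homology-monodromy} and~\ref{lem-covering-classification}, once we have a sufficiently explicit grip on $H_1(Y')$ for $Y'=Y\sm V$. First I would fix an identification $Y\simeq\C_\infty$; then $Y'$ is a sphere with $|V|$ punctures, so $H_1(Y')$ is freely generated by the classes $[\om_v]$ of the small loops $\om_v$ of Definition~\ref{defn-monodromy}, subject to the single relation $\sum_{v\in V}[\om_v]=0$. In particular the classes $[\om_v]$ generate $H_1(Y')$.

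For part~(a): since $|V|$ is even, the assignment $[\om_v]\mapsto 1$ for all $v\in V$ respects the relation $\sum_v[\om_v]=0$ (the image of the left-hand side is $|V|\cdot 1=0$ in $\Z/2$), so it extends to a homomorphism $\phi\colon H_1(Y')\to\Z/2$. By Lemma~\ref{lem-covering-classification} there is a branched double cover $f\colon X\to Y$, with $B(f)\sse V$, whose monodromy homomorphism $\ov{\mu}_X$ equals $\phi$. By the final observation in Definition~\ref{defn-monodromy}, a point $v\in V$ is a branch point iff $\mu_X(\om_v)=\phi([\om_v])=1$, which holds for every $v$; hence $B(f)=V$.

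For parts~(b) and~(c): if $f_0\colon X_0\to Y$ and $f_1\colon X_1\to Y$ both have branch locus exactly $V$, then $\ov{\mu}_{X_0}$ and $\ov{\mu}_{X_1}$ are homomorphisms $H_1(Y')\to\Z/2$ carrying each generator $[\om_v]$ to $1$, hence are equal. By the injectivity clause of Lemma~\ref{lem-covering-classification}, $X_0$ and $X_1$ lie in the same isomorphism class of $\CX$, so there is an isomorphism $g\colon X_0\to X_1$ with $f_1g=f_0$. To pin down $\Aut_Y(X_1)$ I would use that the restriction functor $R$ of Proposition~\ref{prop-un-branched} is full and faithful, so $\Aut_Y(X_1)\cong\Aut_{Y'}(X_1')$; but $X_1'\to Y'$ is an unbranched double covering, so its group of automorphisms over $Y'$ has order exactly two and is generated by the restriction of $\chi_1$. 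Therefore $\Aut_Y(X_1)=\{\mathrm{id},\chi_1\}$. Now $g\chi_0g^{-1}$ lies in $\Aut_Y(X_1)$ (since $f_1(g\chi_0g^{-1})=f_0\chi_0g^{-1}=f_0g^{-1}=f_1$) and is not the identity (it is conjugate to $\chi_0\neq\mathrm{id}$), so $g\chi_0g^{-1}=\chi_1$, i.e.\ $\chi_1g=g\chi_0$, proving~(b); and if $h\colon X_0\to X_1$ is any second isomorphism over $Y$, then $hg^{-1}\in\Aut_Y(X_1)=\{\mathrm{id},\chi_1\}$, so $h=g$ or $h=\chi_1g$, proving~(c).

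The only genuinely substantive input is the presentation of $H_1(\C_\infty\sm V)$ — in particular the fact that the sole relation among the loop classes is $\sum_v[\om_v]=0$, which is precisely what makes the parity of $|V|$ the governing condition, and this is where the hypothesis $Y\simeq\C_\infty$ enters. An alternative route to~(a) that bypasses even this is to apply a M\"obius transformation so that $\infty\notin V$ and take $X$ to be the smooth completion of the hyperelliptic curve $w^2=\prod_{v\in V}(z-v)$, reading $B(f)=V$ off directly; but parts~(b) and~(c) would still be handled through the classification exactly as above.
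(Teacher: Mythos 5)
Your proof is correct and follows essentially the same route as the paper: the presentation of $H_1(Y')$ by the classes $[\om_v]$ with the single relation $\sum_v[\om_v]=0$, the evenness of $|V|$ giving the monodromy homomorphism sending each generator to $1$, and Lemma~\ref{lem-covering-classification} together with Proposition~\ref{prop-un-branched} for existence and uniqueness. The only difference is that you spell out the identification $\Aut_Y(X_1)=\{\mathrm{id},\chi_1\}$ needed for $\chi_1g=g\chi_0$ and for part~(c), which the paper explicitly leaves to the reader as standard covering theory.
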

\begin{proof}
 Recall that for each $v\in V$ we have a loop $\om_v$ and a homology
 class $[\om_v]\in H_1(Y')$.  A standard calculation using the
 Mayer-Vietoris sequence shows that $H_1(Y')$ is generated by these
 classes subject only to the relation $\sum_{v\in V}[\om_v]=0$.  As
 $|V|$ is even it follows that there is a unique homomorphism
 $\nu\:H_1(Y')\to\Z/2$ with $\nu([\om_v])=1$ for all $v\in V$.  By
 Lemma~\ref{lem-covering-classification}, there exist unbranched
 coverings of $Y'$ with monodromy $\nu$, and any two such are
 isomorphic.  By Proposition~\ref{prop-un-branched}, it follows that
 there exist branched coverings of $Y$ whose branch set is precisely
 $V$, and any two such are isomorphic.  This proves~(a) and~(b) except
 for the fact that $\chi_1g=g\chi_0$.  This fact and claim~(c) are
 standard covering theory and are left to the reader.
\end{proof}

\subsection{The projective family is universal}
\lbl{sec-P-universal}

We will prove the following two theorems.

\begin{theorem}\lbl{thm-classify-precromulent}
 Let $X$ be a precromulent surface.  Then there is a unique number
 $a\in(0,1)$ such that $X\simeq PX(a)$ as $G$-equivariant Riemann
 surfaces.  Moreover, there are precisely two isomorphisms
 $X\to PX(a)$, which are related by the action of $\lm^2$.
\end{theorem}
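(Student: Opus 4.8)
The plan is to quotient by the central subgroup $\ip{\lm^2}$, to recognise the quotient as $\C_\infty$ carrying the standard symmetry group of Remark~\ref{rem-P-quotient}, to read off its branch locus as $\{0,\infty,\pm a,\pm a^{-1}\}$, and then to apply the rigidity theory for branched double covers. Concretely, set $Y=X/\ip{\lm^2}$, which is conformally $\C_\infty$ by Corollary~\ref{cor-quotient-types}; the quotient map $q\:X\to Y$ is a branched double covering with branch locus $B=q(X^{\ip{\lm^2}})$, and Riemann--Hurwitz forces $|B|=6$ since $\chi(X)=-2$. As $\lm^2$ is central and generates the commutator subgroup, $G$ acts on $Y$ through $\ov{G}=G/\ip{\lm^2}\simeq C_2^3$, with $D_8$ acting conformally and $G\sm D_8$ anticonformally, and one checks this action is faithful. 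Any faithful action of $C_2^2=D_8/\ip{\lm^2}$ on $\C_\infty$ by Möbius transformations is conjugate to the one with $\lm\cdot z=-z$, $\mu\cdot z=1/z$ (the three involutions have pairwise disjoint fixed-point pairs, movable to $\{0,\infty\}$, $\{1,-1\}$, $\{i,-i\}$), and the commuting anticonformal involution $\nu$ must then fix $0$ and $\infty$, which pins it to $z\mapsto\ov{z}$. Thus we may fix $Y\simeq\C_\infty$ under which $G$ acts exactly as in Remark~\ref{rem-P-quotient}. Now Definition~\ref{defn-V-star} identifies $X^{\ip{\lm^2}}\cap V$ with the orbits $\{v_0,v_1\}$ and $\{v_{10},v_{11},v_{12},v_{13}\}$: the first consists of $\lm$-fixed points, hence maps onto $\{0,\infty\}$; the second consists of $\nu$-fixed points lying on $\R\sm\{0\}$ and permuted by $z\mapsto-z$ and $z\mapsto1/z$ according to the group action, so it has the form $\{a,-a,a^{-1},-a^{-1}\}$ with $a\in(0,1)$ (the constraint $a\neq1$ coming from the fact that no $v_i$ in this orbit is $\mu$-fixed, and $a<1$ by choice). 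Hence $B=\{0,\infty,\pm a,\pm a^{-1}\}$, which is exactly the branch locus of $PX(a)\to\C_\infty$ (Remark~\ref{rem-P-quotient}, Definition~\ref{defn-P}, Proposition~\ref{prop-P-precromulent}).

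By Proposition~\ref{prop-branched-unique} there is an isomorphism $g\:X\to PX(a)$ covering the identity on $\C_\infty$, unique up to post-composition with the deck involution $\lm^2$ of $PX(a)$. The relation $g(\gm x)=\gm\,\ep(\gm)\,g(x)$ then defines, using centrality of $\lm^2$, a homomorphism $\ep\:\ov{G}\to\ip{\lm^2}\simeq\Z/2$ which, by Proposition~\ref{prop-branched-unique}(c), is independent of the choice of $g$. I would kill $\ep$ by fixed-point bookkeeping. Over the point $q(v_2)$, which is fixed by $\mu$ acting on $Y$, both points of the fibre $q^{-1}(q(v_2))$ are fixed by $\mu$ — in $X$ and in $PX(a)$ alike — so feeding $v_2=\mu v_2$ through $g$ forces $\ep(\mu)=1$. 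For $\ep(\nu)$ one argues similarly using a generic point $x$ of the component of $X^{\nu}$ through $v_0$: the local model of $q$ at the branch point $v_0$ supplied by Lemma~\ref{lem-bt-al-fixed} (applied to $\al=\lm^2$, $\bt=\nu$) shows that both points of $q^{-1}(q(x))$ are $\nu$-fixed and not $\lm^2\nu$-fixed, in $X$ and in $PX(a)$, whence $\ep(\nu)=1$. The only remaining possibility is $\ep(\lm)=\lm^2$; but then $\gm\mapsto\gm\,\ep(\gm)$ is the automorphism of $G$ sending $\lm\mapsto\lm^{-1}$, $\mu\mapsto\mu$, $\nu\mapsto\nu$, which is conjugation by $\mu$ (indeed $\mu\lm\mu^{-1}=\lm^{-1}$ and $\mu\nu=\nu\mu$), so $\mu\circ g$ is $G$-equivariant. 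Either way we obtain a $G$-equivariant conformal isomorphism $X\simeq PX(a)$.

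It remains to count isomorphisms and to see that $a$ is unique. Any $G$-equivariant conformal automorphism $h$ of $PX(a)$ commutes with $\lm^2$, hence descends to a $\ov{G}$-equivariant conformal automorphism of $\C_\infty$; the centraliser of the standard action of Remark~\ref{rem-P-quotient} inside the conformal automorphisms of $\C_\infty$ is precisely $\{z,\,-z,\,1/z,\,-1/z\}=D_8/\ip{\lm^2}$, and of these only the identity lifts to a $G$-equivariant automorphism of $PX(a)$ (its two lifts are $1$ and $\lm^2$, both central; the lifts of the other three are $\lm^{\pm1}$, $\mu$, $\mu\lm^2$, $\lm\mu$, $\lm^3\mu$, none central). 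Hence $h\in\{1,\lm^2\}$, so any two isomorphisms $X\to PX(a)$ differ by $\lm^2$ and there are exactly two. Applying the same descent to a hypothetical isomorphism $PX(a)\simeq PX(a')$ shows that $\{0,\infty,\pm a,\pm a^{-1}\}$ and $\{0,\infty,\pm a',\pm a'^{-1}\}$ are related by an element of $D_8/\ip{\lm^2}$ which fixes the former set, so $a'\in\{a,a^{-1}\}$ and therefore $a'=a$.

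The main obstacle is the passage from the bare branched-covering isomorphism $g$ to a genuinely $G$-equivariant one: one must control the homomorphism $\ep$ through the fixed-point arguments above (and, if one wants to identify $\ep(\lm)$ directly, through the tangent action of $\lm$ at $v_0$), and recognise that the residual case $\ep(\lm)=\lm^2$ is absorbed by the inner twist by $\mu$. A secondary technical point, used both to normalise coordinates on $Y$ and in the uniqueness arguments, is the rigidity fact that the standard $\ov{G}$-action on $\C_\infty$ has conformal centraliser exactly $D_8/\ip{\lm^2}$.
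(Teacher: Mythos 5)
Your overall route is the paper's: quotient by $\ip{\lm^2}$, normalise the quotient as $\C_\infty$ with the standard action of Remark~\ref{rem-P-quotient}, match branch loci, and invoke Proposition~\ref{prop-branched-unique}; and you are right that the real content is in upgrading the covering isomorphism $g$ to a $G$-equivariant one, via the homomorphism $\ep\:\ov{G}\to\ip{\lm^2}$. The gap is that your normalisation of $Y=X/\ip{\lm^2}\simeq\C_\infty$ is too coarse to support the fixed-point bookkeeping that is supposed to kill $\ep(\mu)$ and $\ep(\nu)$. A first, smaller point: an anticonformal involution commuting with $z\mapsto-z$ and $z\mapsto 1/z$ and fixing $0,\infty$ can be $z\mapsto\ov{z}$ or $z\mapsto-\ov{z}$, and the residual conjugation freedom (the centraliser $\{\pm z^{\pm 1}\}$) cannot interchange them; to pin $\ov{\nu}$ down you must use that the two $\ov{\mu}$-fixed points $\pm 1$ are the images of $v_2,\dotsc,v_5$ and hence are $\ov{\nu}$-fixed. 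Second, and more seriously: even then the identification $Y\simeq\C_\infty$ is only determined up to $\{\pm z^{\pm1}\}$, so nothing you have said decides whether $q(v_2)$ equals $-1$ (whose fibre in $PX(a)$ is the $\mu$-fixed pair $\{v_2,v_4\}$) or $+1$ (whose fibre is the $\lm^2\mu$-fixed pair $\{v_3,v_5\}$), nor whether the $\nu$-fixed arc of $X$ through $v_0$ maps to the same side of $0$ as $C_5\subset PX(a)$ does. These claims genuinely fail for admissible data: take $X$ to be $PX(a)$ with the action twisted by conjugation by $\lm$ (still precromulent, with the same quotient, same induced $\ov{G}$-action and same branch set); then $X^{\nu}=PX(a)^{\lm^2\nu}$ maps to $[-a,0]\cup\dotsb$, and for $g=\mathrm{id}$ one has $\ep(\nu)=\ep(\mu)=\lm^2$, contradicting the conclusions of both of your bookkeeping arguments.

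There are two repairs. The paper's (Definitions~\ref{defn-e-points} and~\ref{defn-p}) is to use up all the M\"obius freedom by sending three marked orbits — $[v_0]$, $[v_1]$, and the class of the two points with stabiliser $\ip{\lm^2\mu,\lm^2\nu}$ — to $0$, $\infty$, $1$; then $q(v_3)=1$ and $q(v_2)=-1$ match $PX(a)$ exactly, comparing stabilisers of $v_2$ and $g(v_2)$ forces $\ep(\mu)=\ep(\nu)=1$, and the tangent condition of Proposition~\ref{prop-v-zero} at $v_0$ forces $\ep(\lm)=1$, so $g$ itself is equivariant. Alternatively, keep your coarse normalisation and prove only what it supports: matching stabilisers over $q(v_2)$ shows that $\{\gm\,\ep(\gm)\st\gm\in\ip{\mu,\nu}\}$ is $\ip{\mu,\nu}$ or $\ip{\lm^2\mu,\lm^2\nu}$, which yields $\ep(\mu)=\ep(\nu)$ (possibly both equal to $\lm^2$); and every homomorphism with $\ep(\mu)=\ep(\nu)$ makes $\gm\mapsto\gm\,\ep(\gm)$ inner — conjugation by $1$, $\lm$, $\mu$ or $\lm\mu$ — so some $\dl\circ g$ with $\dl\in D_8$ is equivariant. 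Your closing twist-by-$\mu$ covers only one of the three nontrivial cases. The final paragraph (exactly two isomorphisms, uniqueness of $a$) is correct as written.
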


\begin{theorem}\lbl{thm-classify-cromulent}
 Let $X$ be a cromulent surface.  Then there is a unique number
 $a\in(0,1)$ such that $X\simeq PX(a)$ as $G$-equivariant Riemann
 surfaces.  Moreover, there is precisely one cromulent isomorphism
 $X\to PX(a)$.
\end{theorem}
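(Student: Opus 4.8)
The plan is to deduce this from Theorem~\ref{thm-classify-precromulent}, which already contains all the analytic content, together with Proposition~\ref{prop-labellings}. Recall that a cromulent surface is a precromulent surface carrying a distinguished cromulent labelling; that $PX(a)$ carries its standard labelling by the points $v_0,\dotsc,v_{13}$ of Definition~\ref{defn-P}, which is cromulent by Proposition~\ref{prop-P-fundamental}; and that by Proposition~\ref{prop-labellings} a precromulent surface has exactly two cromulent labellings, interchanged by the action of $\lm^2$. Note in particular that $\lm^2$ acts non-trivially on the vertex set $V$, since it induces the permutation $(2\;4)(3\;5)(6\;8)(7\;9)$ of the labels.

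The one small lemma I would establish first is that pushing a cromulent labelling forward along a $G$-equivariant conformal isomorphism $h\:X_0\to X_1$ again gives a cromulent labelling. This is a routine check of conditions~(a)--(d) of Definition~\ref{defn-precromulent}: (a) and (b) transport because $h$ is $G$-equivariant; (c) holds because $dh$ at the relevant point is a $\C$-linear isomorphism of tangent spaces intertwining the $\lm$-actions, so $\lm$ still acts by multiplication by $i$; and (d) holds because the equivariant homeomorphism $h$ carries $X_0'=\{x\st\stab_G(x)=1\}$ onto $X_1'$, carries connected components to connected components, and respects closures.

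Now let $X$ be a cromulent surface. Applying Theorem~\ref{thm-classify-precromulent} to the underlying precromulent surface yields a unique $a\in(0,1)$ with $X\simeq PX(a)$, and shows that there are exactly two $G$-equivariant conformal isomorphisms $X\to PX(a)$, namely $f$ and $\lm^2\circ f$ for some such $f$. Transporting the chosen labelling of $X$ along $f$ produces a cromulent labelling of $PX(a)$, which by Proposition~\ref{prop-labellings} is either the standard one or its $\lm^2$-twist. In the first case $f$ sends the $i$-th vertex of $X$ to the $i$-th vertex of $PX(a)$ for every $i$, so $f$ itself is a cromulent isomorphism; in the second case $f$ sends the $i$-th vertex to $\lm^2(v_i)$, so $\lm^2\circ f$ (again $G$-equivariant and conformal, since $\lm^2$ is central in $G$ and lies in $D_8$) is a cromulent isomorphism. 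Either way a cromulent isomorphism exists. It is unique: if both $f$ and $\lm^2\circ f$ were compatible with the labellings we would get $\lm^2(v_i)=v_i$ for all $i$, contradicting the non-triviality of $\lm^2$ on $V$; and Theorem~\ref{thm-classify-precromulent} guarantees there are no other candidates. This also reconfirms the uniqueness of $a$.

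I do not expect a genuine obstacle in this argument: the real work is Theorem~\ref{thm-classify-precromulent}, and the only new ingredient here is keeping track of the $\lm^2$-ambiguity. If anything the delicate point is purely organisational --- stating the push-forward lemma so that it feeds cleanly into the two-labellings dichotomy of Proposition~\ref{prop-labellings} --- after which the deduction is formal.
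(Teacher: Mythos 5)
Your proposal is correct and follows essentially the same route as the paper: invoke Theorem~\ref{thm-classify-precromulent} for the underlying precromulent surface, use Proposition~\ref{prop-labellings} to see that transporting the labelling along one of the two precromulent isomorphisms lands on either the standard labelling of $PX(a)$ or its $\lm^2$-twist, and adjust by $\lm^2$ accordingly. The only cosmetic differences are that you push the labelling forward while the paper pulls it back via $\phi^{-1}$, and that you argue uniqueness directly from the nontriviality of $\lm^2$ on $V$ rather than citing Corollary~\ref{cor-cromulent-iso}, which encodes the same fact.
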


The proofs will be given after some preliminary results.  First,
however, we record a consequence of
Theorem~\ref{thm-classify-cromulent}:
\begin{corollary}
 Let $X$ be a cromulent surface.  Then $X$ admits a curve system (as
 in Definition~\ref{defn-curve-system}) and has standard isotropy (as
 in Definition~\ref{defn-std-isotropy}).
\end{corollary}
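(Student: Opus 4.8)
The plan is to transport both structures from the projective family along the isomorphism furnished by Theorem~\ref{thm-classify-cromulent}. That theorem provides a cromulent isomorphism $f\:X\to PX(a)$ for some $a\in(0,1)$; by definition $f$ is a $G$-equivariant conformal isomorphism compatible with the labellings, so in particular $f(v_i)=v_i$ for all $i$ and $f(g(x))=g(f(x))$ for all $g\in G$. Proposition~\ref{prop-P-curves} supplies a curve system $(c^P_k)_{k=0}^8$ on $PX(a)$, and I would simply set $c_k=f^{-1}\circ c^P_k\:\R\to X$, after which the corollary becomes a matter of checking that the defining conditions survive the transport.

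First I would verify that $(c_k)$ satisfies the three axioms of Definition~\ref{defn-curve-system}. Axiom~(a) is immediate because $f^{-1}$ is a conformal isomorphism, hence a real-analytic diffeomorphism, so composition with it preserves real-analyticity, $2\pi$-periodicity, and the property of inducing an embedding of $\R/2\pi\Z$. Axiom~(b) follows from $f(v_i)=v_i$: the statement $c^P_k(\tht)=v_j$ transfers to $c_k(\tht)=v_j$, and the statement $v_j\notin c^P_k(\R)$ transfers to $v_j\notin c_k(\R)$, so the entire table of Definition~\ref{defn-curve-system}(b) is reproduced verbatim. Axiom~(c) follows from $G$-equivariance of $f$: for $g\in G$ we have $g(c_k(t))=f^{-1}(g(c^P_k(t)))$, and the inner expression is governed by the axiom~(c) relations for $PX(a)$, which are thereby inherited unchanged.

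Next I would treat standard isotropy. Since $f$ is $G$-equivariant it restricts to a homeomorphism $X^\gm\to PX(a)^\gm$ for every $\gm\in G$. For each $k$, the curve $C_k$ (on either surface) is defined in Definition~\ref{defn-precromulent-C} as the connected component containing a prescribed vertex inside a fixed-point set $X^{\gm_k}$; because $f$ is a homeomorphism sending that vertex to the corresponding vertex and $X^{\gm_k}$ onto $PX(a)^{\gm_k}$, it carries $C_k\sse X$ onto $C_k\sse PX(a)$. Pulling the equalities of Proposition~\ref{prop-P-std-isotropy} back along $f$ then gives $X^{\mu\nu}=C_0$, $X^{\lm\nu}=C_1$, $X^{\lm^3\nu}=C_2$, $X^\nu=C_4\amalg C_5\amalg C_7$, $X^{\lm^2\nu}=C_3\amalg C_6\amalg C_8$ and $X^{\lm^2\mu\nu}=\emptyset$, which is exactly what it means for $X$ to have standard isotropy.

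There is essentially no obstacle here: the content of the corollary is the elementary observation that "admits a curve system'' and "has standard isotropy'' are invariant under cromulent isomorphism, and this invariance is forced by the $G$-equivariance and label-compatibility of $f$ together with the fact that conformal isomorphisms are real-analytic diffeomorphisms. The only genuine input is Theorem~\ref{thm-classify-cromulent} itself, which we are entitled to assume; so the proof amounts to writing down the transport argument carefully and invoking Propositions~\ref{prop-P-curves} and~\ref{prop-P-std-isotropy}.
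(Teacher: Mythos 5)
Your proposal is correct and follows exactly the paper's argument: the paper's proof is the one-line observation that Definition~\ref{defn-P-curves} (with Proposition~\ref{prop-P-curves}) and Proposition~\ref{prop-P-std-isotropy} establish both properties for $PX(a)$, which suffices by Theorem~\ref{thm-classify-cromulent}. You have simply spelled out the transport along the cromulent isomorphism in more detail than the paper bothers to.
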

\begin{proof}
 Definition~\ref{defn-P-curves} and
 Proposition~\ref{prop-P-std-isotropy} show that this holds for
 $PX(a)$, which is sufficient by
 Theorem~\ref{thm-classify-cromulent}.
\end{proof}

\begin{proposition}\lbl{prop-v-zero}
 There is a unique point $v_0\in X$ such that $\lm(v_0)=v_0$ and
 $\lm_*=i\:T_{v_0}X\to T_{v_0}X$.  Similarly, there is a unique point
 $v_1=\mu(v_0)\in X$ such that $\lm(v_1)=v_1$ and
 $\lm_*=-i\:T_{v_1}X\to T_{v_1}X$.
\end{proposition}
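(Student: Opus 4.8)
The plan is to exploit the general machinery already developed for conformal involutions, applied to the finite-order element $\lm$ rather than an involution; the key point is that $\lm$ is a conformal automorphism of finite order $4$ with only isolated fixed points (because $\lm^2$ already has only isolated fixed points, as $\lm^2$ lies in $D_8$ and $D_8$ acts with finite stabiliser set $V$). First I would recall the local analysis: at any fixed point $a$ of $\lm$, the derivative $\lm_*$ acts on the one-dimensional cotangent space $T^*_aX$ as multiplication by a fourth root of unity $\zt(a)\in\{1,i,-1,-i\}$, and by the power-series argument of Remark~\ref{rem-smooth-branch} (or the identity-principle argument used in Lemma~\ref{lem-al-fixed}) the value $1$ is impossible, since $\lm$ has isolated fixed points. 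So each fixed point of $\lm$ carries a well-defined "rotation number" in $\{i,-1,-i\}$, and a fixed point has rotation number $-1$ on the cotangent space exactly when it is fixed by $\lm$ but its image in $T_aX$ is rotation by $-1$, i.e.\ when it is a fixed point of $\lm$ that is \emph{not} of the desired type. Note the rotation number on $T_aX$ is the inverse (equivalently conjugate) of the one on $T^*_aX$, so I must be careful to track which space I am working on; I will phrase everything in terms of the action on $T_aX$, where $\lm_*=i$ is the condition we want.

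The heart of the argument is a counting statement. The fixed-point set of $\lm$ is contained in $V$ (the $\lm^2$-fixed set $\{v_0,v_1,v_{10},v_{11},v_{12},v_{13}\}$ from the proof of Proposition~\ref{prop-P-precromulent}, or directly from the $G$-set structure of $V^*$); inspecting the permutation $\lm\mapsto(2\,3\,4\,5)(6\,7\,8\,9)(10\,11)(12\,13)$ from Definition~\ref{defn-V-star}, the only fixed points of $\lm$ on $V^*$ are $v_0$ and $v_1$. (The points $v_{10},\dots,v_{13}$ are fixed by $\lm^2$ but moved by $\lm$.) So $\lm$ has exactly two fixed points on $X$, namely $v_0$ and $v_1$. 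Now I apply the holomorphic Lefschetz fixed point formula (or, more elementarily, the Riemann--Hurwitz / orbifold-Euler-characteristic computation for the degree-$4$ branched cover $X\to X/\ip{\lm}$): $X/\ip{\lm}\simeq\C_\infty$ by Corollary~\ref{cor-quotient-types}, which already pins down the ramification data, and forces the two fixed points of $\lm$ to have rotation numbers $i$ and $-i$ on the tangent space respectively (they cannot be equal, and neither can be $-1$: a rotation number $-1$ point would be an unbranched-for-$\lm$-but-branched-for-$\lm^2$ point, contributing differently to the Riemann--Hurwitz count for $X/\ip{\lm^2}\simeq\C_\infty$, and the arithmetic $\chi(X)=2-2g=-2$ with $\chi(X/\ip{\lm})=2$, $\chi(X/\ip{\lm^2})=2$ rules it out — this is exactly the content of Lemma~\ref{lem-chi-quotient} and Corollary~\ref{cor-quotient-types}). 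Hence exactly one of $v_0,v_1$ has $\lm_*=i$ on its tangent space and the other has $\lm_*=-i$, giving both existence and uniqueness.

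It remains to identify $v_1=\mu(v_0)$. Since $\mu$ is anticonformal and $\mu\lm\mu^{-1}=\lm^{-1}=\lm^3$ in $G$ (from $(\lm\mu)^2=1$), the point $\mu(v_0)$ is fixed by $\lm$; as $v_0\neq v_1$ and these are the only two $\lm$-fixed points, either $\mu(v_0)=v_0$ or $\mu(v_0)=v_1$. If $\mu(v_0)=v_0$ then $\mu_*\colon T_{v_0}X\to T_{v_0}X$ is an antilinear map conjugating the action of $\lm_*=i$ to that of $(\lm^{-1})_*=-i$, which is automatic and gives no contradiction — so I instead rule this out using the $G$-set $V^*$: $v_0$ has stabiliser $\ip{\lm,\nu}$ which does not contain $\mu$, so $\mu(v_0)\neq v_0$, hence $\mu(v_0)=v_1$. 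Finally, the transformation rule $\mu_*\lm_*=(\lm^{-1})_*\mu_*$ on tangent spaces shows that if $\lm_*=i$ on $T_{v_0}X$ then $\lm_*=-i$ on $T_{v_1}X=T_{\mu(v_0)}X$, consistent with the count above. The main obstacle I anticipate is getting the Riemann--Hurwitz bookkeeping exactly right — in particular being scrupulous about whether a fixed point of $\lm$ with tangential rotation number $-1$ could occur, and confirming that the Euler-characteristic arithmetic genuinely excludes it rather than merely being consistent with its absence; once that is nailed down the rest is formal.
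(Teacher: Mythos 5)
Your ingredients are all present, but the load-bearing step is misattributed, and as written the argument has a gap. You claim that the Riemann--Hurwitz / Euler-characteristic bookkeeping for the degree-four branched cover $X\to X/\ip{\lm}\simeq\C_\infty$ forces the two fixed points of $\lm$ to have rotation numbers $i$ and $-i$ respectively, and in particular that ``they cannot be equal''. It does not: Riemann--Hurwitz sees only ramification indices, and a fixed point of $\lm$ has index $4$ whether $\lm_*$ acts as $i$ or as $-i$ on the tangent space, so the arithmetic $-2=4\cdot 2-(3+3+1+1+1+1)$ is satisfied equally well if both fixed points had rotation number $i$. The count therefore cannot establish the dichotomy, and existence-plus-uniqueness is unproved at the point where you assert it. The statement that rescues the proof is the one you relegate to a ``consistency check'' in your last paragraph: since $\mu\in D_8$ acts conformally, $\mu_*$ is $\C$-linear, and $\mu\lm=\lm^{-1}\mu$ gives $\mu_*\circ\lm_*=(\lm^{-1})_*\circ\mu_*$, so the eigenvalue of $\lm_*$ on $T_aX$ equals that of $(\lm^{-1})_*$ on $T_{\mu(a)}X$. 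Once each eigenvalue is known to be $\pm i$, this forces the two eigenvalues to be $i$ and $-i$ in some order, which is exactly existence and uniqueness. This is the paper's proof; promote that remark to the main argument and drop the Riemann--Hurwitz step entirely.

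Two smaller points. To see that $\lm_*=\pm i$ at a fixed point, the paper's route is shorter than yours: $\lm^2$ is a nontrivial holomorphic involution with isolated fixed points fixing $a$, so Lemma~\ref{lem-al-fixed} gives $(\lm^2)_*=-1$ on $T_aX$, whence $(\lm_*)^2=-1$. (Equivalently, the stabiliser acts faithfully on $T_aX$ by the injectivity of $\chi$ in Remark~\ref{rem-smooth-branch}, so $\lm_*$ must be a primitive fourth root of unity; your proposed exclusion of the eigenvalue $-1$ via Euler characteristics is at best circuitous.) Your identification of the two $\lm$-fixed points from $V^*$, and of $\mu(v_0)=v_1$ via stabilisers, is fine and matches the paper.
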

\begin{proof}
 In $V^*$ there are precisely two points that are fixed by $\lm$, and
 they are exchanged by $\mu$.  The same must therefore be true in $X$.
 Let $a$ be one of these points, so the other one is $b=\mu(a)$.  Note
 that the holomorphic involution $\lm^2$ fixes $a$ and $b$, so it acts
 as $-1$ on $T_aX$ and $T_bX$ by Lemma~\ref{lem-al-fixed}, so $\lm$
 acts as $\pm i$.  Now consider the commutative diagram on the left
 below, and the resulting commutative diagram on the right:
 \[ \xymatrix{
     X \ar[r]^\mu \ar[d]_\lm &
     X \ar[d]^{\lm^{-1}} & &
     T_{a}X \ar[r]^{\mu_*} \ar[d]^{\lm_*} &
     T_{b}X \ar[d]^{\lm^{-1}_*} \\
     X \ar[r]_\mu & X && T_{a}X \ar[r]_{\mu_*} & T_{b}X.
    }
 \]
 From this we see that the eigenvalue of $\lm$ on $T_aX$ is the same
 as the eigenvalue of $\lm^{-1}$ on $T_bX$.  The claim follows easily
 from this.
\end{proof}

\begin{definition}\lbl{defn-e-points}
 We define points $e_0,e_1,e_\infty\in X/\ip{\lm^2}$ as follows.
 First, we let $v_0$ and $v_1$ be as in Proposition~\ref{prop-v-zero},
 and we put $e_0=[v_0]$ and $e_\infty=[v_1]$.  Next, we note that in
 $V^*$ there are precisely two points with stabiliser
 $\ip{\lm^2\mu,\lm^2\nu}$ (namely $3$ and $5$), and that these are
 exchanged by $\lm^2$.  It follows that the same is true in $V$.
 These points therefore form an equivalence class in $X/\ip{\lm^2}$,
 which we call $e_1$.
\end{definition}

We saw in Corollary~\ref{cor-quotient-types} that $X/\ip{\lm^2}$ is
isomorphic to $\C_\infty$.  It is well-known that the conformal
automorphisms of $\C_\infty$ are the M\"obius transformations, and
that these act freely and transitively on the triples of distinct
points in $\C_\infty$.  This validates the following definition:
\begin{definition}\lbl{defn-p}
 We let $p\:X/\ip{\lm^2}\to\C_\infty$ denote the unique conformal
 isomorphism such that $p(e_i)=i$ for $i\in\{0,1,\infty\}$.  We will
 also use the symbol $p$ for the composite
 $X\to X/\ip{\lm^2}\xra{p}\C_\infty$.
\end{definition}

\begin{lemma}\lbl{lem-p-equivariance}
 If we let $G$ act on $\C_\infty$ by
 \[ \lm(z)=-z \hspace{5em}
    \mu(z)=1/z \hspace{5em}
    \nu(z)=\ov{z}
 \]
 then the map $p$ is equivariant.
\end{lemma}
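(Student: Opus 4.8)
The plan is to use that $p$ is a conformal isomorphism onto $\C_\infty$, so that for each $g\in G$ the conjugate $\phi_g = p\circ g\circ p^{-1}$ is an automorphism of $\C_\infty$, conformal when $g\in D_8$ and anticonformal when $g\in G\sm D_8$. (Here $g$ commutes with the central element $\lm^2$, so it descends to a self-map of $X/\ip{\lm^2}$ of the same conformal type, using the description of holomorphic quotients in Remark~\ref{rem-smooth-branch}.) Since a conformal or anticonformal self-map of $\C_\infty$ is determined by its effect on any three distinct points, and since equivariance for a generating set implies equivariance for all of $G$, it is enough to identify $\phi_\lm$, $\phi_\mu$ and $\phi_\nu$ on the three distinct base points $e_0=[v_0]$, $e_1=[v_3]=[v_5]$, $e_\infty=[v_1]$. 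I would read off the relevant orbit data from the isomorphism $V\simeq V^*$: thus $\stab_G(v_0)=\stab_G(v_1)=\ip{\lm,\nu}$, the points $v_0,v_1$ are interchanged by $\mu$ (Proposition~\ref{prop-v-zero}), and $\lm$, $\mu$, $\nu$ permute $v_2,v_3,v_4,v_5$ by $(v_2\,v_3\,v_4\,v_5)$, $(v_3\,v_5)$, $(v_3\,v_5)$ respectively (Definition~\ref{defn-V-star}).

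For $\mu$: it interchanges $v_0$ and $v_1$ and swaps $v_3$ with $v_5$, so $\phi_\mu$ interchanges $0$ and $\infty$ while fixing $1$; the unique M\"obius transformation with this behaviour is $z\mapsto 1/z$. For $\nu$: it lies in $\stab_G(v_0)=\stab_G(v_1)$ and swaps $v_3$ with $v_5$, so $\phi_\nu$ fixes $0$, $1$ and $\infty$; as $\phi_\nu$ is anticonformal, $z\mapsto\ov{\phi_\nu(z)}$ is a M\"obius transformation fixing $0$, $1$ and $\infty$, hence the identity, so $\phi_\nu(z)=\ov z$.

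For $\lm$: it fixes $v_0$ and $v_1$, so $\phi_\lm$ fixes $0$ and $\infty$ and therefore has the form $z\mapsto cz$; moreover $\phi_\lm^2 = p\lm^2 p^{-1} = \mathrm{id}$ because $\lm^2$ acts trivially on $X/\ip{\lm^2}$, so $c=\pm1$. To exclude $c=1$ I would observe that $\lm$ does not act trivially on $X/\ip{\lm^2}$: it sends $v_3$ to $v_4$, hence sends $e_1=[v_3]$ to the class $[v_4]=[v_2]$, which differs from $e_1$. Therefore $c=-1$ and $\phi_\lm(z)=-z$, which completes the proof. I do not expect a real obstacle here; the only points requiring a little attention are the descent of $\nu$ to an anticonformal automorphism of $X/\ip{\lm^2}$ and the exclusion of the spurious value $c=1$ for $\lm$, both dispatched by the orbit bookkeeping above.
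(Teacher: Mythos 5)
Your proof is correct and is essentially the paper's argument in a different arrangement: the paper simply notes that $x\mapsto -p(\lm(x))$, $x\mapsto 1/p(\mu(x))$ and $x\mapsto\ov{p(\nu(x))}$ all satisfy the defining (normalised conformal isomorphism) property of $p$, which is the same three-point rigidity of $\Aut(\C_\infty)$ you invoke after conjugating by $p$. Your handling of $\lm$ via $\phi_\lm^2=\mathrm{id}$ and nontriviality is a sensible way to deal with the fact that $\lm$ moves $e_1$, a point the paper's one-liner leaves implicit.
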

\begin{proof}
 The maps $x\mapsto -p(\lm(x))$ and $x\mapsto 1/p(\mu(x))$ and
 $x\mapsto \ov{p(\nu(x))}$ all have the defining property of $p$.
\end{proof}

\begin{lemma}\lbl{lem-v-eleven}
 There is a unique point $v_{11}\in V$ such that the number
 $a=p(v_{11})$ lies in $(0,1)$.  Moreover, if we define
 $v_{10}=\lm(v_{11})$ and $v_{12}=\lm\mu(v_{11})$ and
 $v_{13}=\mu(v_{11})$, then we have
 \[ p(v_{10}) = -a   \hspace{4em}
    p(v_{11}) =  a   \hspace{4em}
    p(v_{12}) = -1/a \hspace{4em}
    p(v_{13}) = 1/a.
 \]
\end{lemma}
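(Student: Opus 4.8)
The plan is to determine $p$ on \emph{every} point of $V$ by combining the equivariance of $p$ (Lemma~\ref{lem-p-equivariance}) with the stabiliser data that comes from the isomorphism $V\simeq V^*$ of Definition~\ref{defn-precromulent}(b), and then to observe that among all these values exactly one lies in $(0,1)$. The input is just: for the action $\lm(z)=-z$, $\mu(z)=1/z$, $\nu(z)=\overline{z}$ on $\C_\infty$, the fixed sets are $\{0,\infty\}$ for $\lm$, $\R\cup\{\infty\}$ for $\nu$ and $\lm^2\nu$, $\{1,-1\}$ for $\mu$ and $\lm^2\mu$, $\{i,-i\}$ for $\lm\mu$ and $\lm^3\mu$, and all of $\C_\infty$ for $\lm^2$; since $p$ is equivariant, $p(v)$ must be fixed by every element of $\stab_G(v)$.

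Reading the stabilisers off Definition~\ref{defn-V-star} and using that $p$ is a bijection $X/\ip{\lm^2}\to\C_\infty$ with $e_0=[v_0]\mapsto 0$, $e_\infty=[v_1]\mapsto\infty$ and $e_1\mapsto 1$ (Definitions~\ref{defn-e-points} and~\ref{defn-p}), I would first handle the easy orbits. For the orbit $\{v_0,v_1\}$ we get $p(v_0)=0$ and $p(v_1)=\infty$. For the orbit $\{v_2,v_3,v_4,v_5\}$: the points $v_3,v_5$ have stabiliser $\ip{\lm^2\mu,\lm^2\nu}$ and form the class $e_1$, so $p(v_3)=p(v_5)=1$; applying $\lm$ and using $\lm\cdot 3=4$, $\lm^2\cdot 2=4$ gives $p(v_2)=p(v_4)=\lm(1)=-1$. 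For the orbit $\{v_6,\dots,v_9\}$ the stabilisers contain $\lm\mu$ or $\lm^3\mu$ (both acting as $z\mapsto -1/z$), so all four values lie in $\{i,-i\}$. In particular none of the values obtained so far lies in the real interval $(0,1)$.

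The substance is the orbit $\{v_{10},v_{11},v_{12},v_{13}\}$, whose common stabiliser $\ip{\lm^2,\nu}$ forces each $p(v_k)$ to be real. These four points are fixed by $\lm^2$, so their images in $X/\ip{\lm^2}$ are four \emph{distinct} points, hence (by injectivity of $p$) four distinct real numbers; and none of them can equal $0$, $1$, $-1$ or $\infty$, since those values are already attained by the other orbits and $p$ is injective on the quotient. The elements $\lm$ and $\mu$ act on this orbit as the commuting transpositions $(10\ 11)(12\ 13)$ and $(10\ 12)(11\ 13)$, so choosing any one of the four points to call $v_{11}$ and setting $a_0=p(v_{11})$, equivariance gives $p(\lm v_{11})=-a_0$, $p(\mu v_{11})=1/a_0$, $p(\lm\mu v_{11})=-1/a_0$; thus the four values are $\{a_0,-a_0,1/a_0,-1/a_0\}$ with $a_0\in\R\setminus\{0,1,-1\}$. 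Of these, the two negative ones are excluded from $(0,1)$ and, since $|a_0|\neq 1$, exactly one of the two positive ones $|a_0|,1/|a_0|$ lies in $(0,1)$. Taking $v_{11}$ to be the (unique, by injectivity of $p$ on the orbit) point with that value $a\in(0,1)$, and recalling that no point of $V$ outside this orbit has $p$-value in $(0,1)$, we get that $v_{11}$ is the unique point of $V$ with $p(v_{11})\in(0,1)$; defining $v_{10}=\lm(v_{11})$, $v_{12}=\lm\mu(v_{11})$, $v_{13}=\mu(v_{11})$, the stated identities $p(v_{10})=-a$, $p(v_{12})=-1/a$, $p(v_{13})=1/a$ are then immediate from equivariance.

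Almost everything here is routine stabiliser bookkeeping; the one point requiring genuine care is the non-coincidence argument in the last paragraph --- that the four values on the orbit $\{v_{10},\dots,v_{13}\}$ are pairwise distinct and avoid $\{0,\pm1,\infty\}$ --- which relies essentially on $p$ being a bijection $X/\ip{\lm^2}\to\C_\infty$ together with the fact that this orbit is disjoint from the other $G$-orbits in $V^*$.
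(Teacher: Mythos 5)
Your proof is correct and follows essentially the same route as the paper: both rest on the equivariance of $p$, its injectivity on $X/\ip{\lm^2}$, and the identification of $\{v_{10},\dotsc,v_{13}\}$ with $G/\ip{\lm^2,\nu}$, so that the four $p$-values form a set $\{a_0,-a_0,1/a_0,-1/a_0\}$ of distinct reals avoiding $\{0,\pm 1,\infty\}$. The only cosmetic difference is that you make explicit the easy check that no point of $V$ outside this orbit can land in $(0,1)$, which the paper leaves implicit in "the claim is clear from this."
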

\begin{proof}
 Put $W=\{x\in X\st\stab_G(x)=\ip{\lm^2,\nu}\}$.  Because the group
 $\ip{\lm^2,\nu}$ is normal in $G$, this is a $G$-set.  As $X$ is
 precromulent, it is equivariantly isomorphic to the $G$-set
 \[ W^* = \{i\in V^*\st \stab_G(i)=\ip{\lm^2,\nu}\} =
     \{10,11,12,13\} \simeq G/\ip{\lm^2,\nu}.
 \]
 As $p$ gives an equivariant isomorphism $X/\ip{\lm^2}\to\C_\infty$,
 it must restrict to give an equivariant injection
 \[ W/\ip{\lm^2} \to
     \{z\in\C_\infty\st\stab_G(z)=\ip{\lm^2,\nu}\}.
 \]
 The domain here is just $W$, and the codomain is the set
 \[ U = \R_\infty\sm\{0,1,-1,\infty\} =
     (-\infty,-1) \amalg
     (-1, 0)      \amalg
     ( 0, 1)      \amalg
     (1,\infty).
 \]
 The action of $G$ on $\C_\infty$ permutes the four components of $U$
 transitively, so the preimage under $p$ of each component must
 contain precisely one point of $W$.  The claim is clear from this.
\end{proof}

\begin{proof}[Proof of Theorem~\ref{thm-classify-precromulent}]
 The map $p\:X\to\C_\infty$ is a branched covering, with branch set
 $p(U)$, where $U=\{x\in X\st\lm^2(x)=x\}$.  This is equivariantly
 isomorphic to the $G$-set
 \[ U^*=\{i\in V^*\st\lm^2(i)=i\}=\{0,1,10,11,12,13\}, \]
 and using this we see that $p(U)=\{0,\infty,\pm a,\pm 1/a\}$.  This
 is the same as the branch set for the map $p\:PX(a)\to\C_\infty$
 defined in Remark~\ref{rem-P-quotient}.  Our claim now follows from
 Proposition~\ref{prop-branched-unique}.
\end{proof}

\begin{corollary}\lbl{cor-precromulent-aut}\leavevmode
 \begin{itemize}
  \item[(a)] If $X$ is a precromulent surface, then the group of
   precromulent automorphisms of $X$ is $C_2=\{1,\lm^2\}$.
  \item[(b)] If $X$ and $Y$ are isomorphic precromulent surfaces, then
   there are precisely two precromulent isomorphisms between them.  If
   one of them is $\phi$, then the other is $\phi\lm_X^2=\lm_Y^2\phi$.
 \end{itemize}
\end{corollary}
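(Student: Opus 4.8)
The plan is to deduce the corollary formally from Theorem~\ref{thm-classify-precromulent}, which already carries all the content.

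For part~(a), I would first fix a precromulent surface $X$ and, using Theorem~\ref{thm-classify-precromulent}, choose a precromulent isomorphism $\phi\:X\to PX(a)$ for the unique appropriate $a\in(0,1)$. The same theorem says there are exactly two such isomorphisms and that they are related by $\lm^2$, so the two isomorphisms are $\phi$ and $\lm^2\circ\phi$ (which equals $\phi\circ\lm^2$ since $\phi$ is $G$-equivariant). Now, given any precromulent automorphism $\psi$ of $X$, the composite $\phi\circ\psi$ is again a precromulent isomorphism $X\to PX(a)$, hence equals $\phi$ or $\lm^2\circ\phi$; composing with $\phi^{-1}$ and using equivariance gives $\psi\in\{1,\lm^2\}$. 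Conversely, $\lm^2$ lies in $D_8$ and so acts conformally, and it is central in $G$, so the map $x\mapsto\lm^2(x)$ is $G$-equivariant; thus $1$ and $\lm^2$ are genuine precromulent automorphisms of $X$. To finish I would note that $\lm^2\neq 1$ as a self-map of $X$: this is immediate since $\lm^2$ acts on $V\simeq V^*$ as the nontrivial permutation $(2\,4)(3\,5)(6\,8)(7\,9)$ (equivalently, by Lemma~\ref{lem-al-fixed}, $\lm^2$ acts as $-1$ on $T_{v_0}X$). Since $(\lm^2)^2=1$, the automorphism group is $\{1,\lm^2\}\simeq C_2$.

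For part~(b), given isomorphic precromulent surfaces $X$ and $Y$, I would pick one precromulent isomorphism $\phi\:X\to Y$ (which exists by hypothesis). For an arbitrary precromulent isomorphism $\psi\:X\to Y$, the composite $\phi^{-1}\circ\psi$ is a precromulent automorphism of $X$, so by part~(a) it is $1$ or $\lm_X^2$; hence $\psi\in\{\phi,\phi\circ\lm_X^2\}$. Because $\phi$ is $G$-equivariant we have $\phi\circ\lm_X^2=\lm_Y^2\circ\phi$, and these two maps are distinct since $\lm_X^2\neq 1$. Thus there are precisely two precromulent isomorphisms $X\to Y$, namely $\phi$ and $\phi\lm_X^2=\lm_Y^2\phi$.

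The whole argument is essentially bookkeeping once Theorem~\ref{thm-classify-precromulent} is available, so there is no real obstacle. The only points needing a moment's care are the verification that $\lm^2$ defines a \emph{precromulent} (not merely conformal) automorphism — for which one uses that $\lm^2\in D_8$ and that $\lm^2$ is central in $G$ — and the nontriviality of $\lm^2$ on $X$, which is immediate from its action on $V$.
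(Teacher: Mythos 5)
Your proposal is correct and is exactly the intended deduction: the paper's proof of this corollary is simply ``Clear from the theorem,'' and your argument spells out that bookkeeping (transporting automorphisms along a chosen isomorphism $\phi\:X\to PX(a)$, using $G$-equivariance and the nontriviality of $\lm^2$ on $V$). Nothing further is needed.
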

\begin{proof}
 Clear from the theorem.
\end{proof}

\begin{corollary}\lbl{cor-cromulent-aut}
 If $X$ is a cromulent surface, then the only cromulent automorphism
 is the identity.
\end{corollary}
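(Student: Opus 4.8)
The statement to prove is Corollary~\ref{cor-cromulent-aut}: if $X$ is a cromulent surface, then the only cromulent automorphism of $X$ is the identity. The plan is to deduce this directly from Corollary~\ref{cor-precromulent-aut}(a) together with the extra rigidity imposed by a cromulent (as opposed to merely precromulent) labelling. First I would recall that a cromulent automorphism is, by Definition~\ref{defn-precromulent}, a $G$-equivariant conformal automorphism of $X$ that is compatible with the chosen cromulent labelling; in particular it is a precromulent automorphism. By Corollary~\ref{cor-precromulent-aut}(a), the group of precromulent automorphisms is exactly $\{1,\lm^2\}$, so any cromulent automorphism is either the identity or $\lm^2$.

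It therefore remains to rule out $\lm^2$ as a cromulent automorphism. The key point is that $\lm^2$ does not respect a cromulent labelling: acting by $\lm^2$ changes the labelling to its "other" cromulent labelling. Concretely, $\lm^2$ acts on $T_{v_0}X$ as multiplication by $-1$ (it is a conformal involution fixing $v_0$, so Lemma~\ref{lem-al-fixed} applies), hence it does not fix $v_0$... wait — more carefully, the issue is condition (d) or the identification $V^*\simeq V$. Since $\lm^2$ acts on $V$ by the permutation listed in Definition~\ref{defn-V-star} for $\lm^2$, namely the product of the transpositions coming from $(2\,3\,4\,5)^2=(2\,4)(3\,5)$ etc., it does not fix the labelling pointwise; in particular it moves $v_3$ to $v_5$, so it cannot be compatible with the specified listing $v_0,\dots,v_{13}$. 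Thus $\lm^2$ is not a cromulent automorphism, and the only remaining possibility is the identity.

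Alternatively, and perhaps more cleanly, I would invoke Proposition~\ref{prop-labellings} (referenced in Definition~\ref{defn-precromulent}), which asserts that a precromulent surface has exactly two cromulent labellings and that they are exchanged by $\lm^2$. A cromulent automorphism must carry the chosen labelling to itself; but $\lm^2$ carries it to the other labelling, which is distinct. Hence $\lm^2$ is not cromulent, and combined with Corollary~\ref{cor-precromulent-aut}(a) this forces the cromulent automorphism group to be trivial. The proof is then a one- or two-line deduction.

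The only real obstacle is bookkeeping: making sure that "the action of $\lm^2$ exchanges the two cromulent labellings" is genuinely available at this point in the text. Since Definition~\ref{defn-precromulent} explicitly states that Proposition~\ref{prop-labellings} will establish this, and the excerpt permits assuming stated results, there is no genuine difficulty — the argument is entirely formal once one has Corollary~\ref{cor-precromulent-aut}. I do not expect any calculation to be needed beyond observing that the $\lm^2$-permutation of $V^*$ in Definition~\ref{defn-V-star} is nontrivial.

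\begin{proof}
 A cromulent automorphism of $X$ is in particular a precromulent
 automorphism, so by Corollary~\ref{cor-precromulent-aut}(a) it is
 either the identity or $\lm^2$.  However, $\lm^2$ acts nontrivially
 on the set $V$ (it induces the permutation listed for $\lm^2$ in
 Definition~\ref{defn-V-star}, which is not the identity, as it sends
 $v_3$ to $v_5$), so it is not compatible with the chosen cromulent
 labelling.  Equivalently, by Proposition~\ref{prop-labellings} the
 action of $\lm^2$ exchanges the two cromulent labellings of $X$,
 which are distinct.  Hence $\lm^2$ is not a cromulent automorphism,
 and the only cromulent automorphism is the identity.
\end{proof}
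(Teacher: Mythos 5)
Your proof is correct and takes essentially the same route as the paper: reduce to $\{1,\lm^2\}$ via Corollary~\ref{cor-precromulent-aut}(a) and then observe that $\lm^2$ does not preserve the labelling. The only difference is that you spell out why (via the permutation $\lm^2=(2\,4)(3\,5)(6\,8)(7\,9)$ from Definition~\ref{defn-V-star}, or via Proposition~\ref{prop-labellings}), which the paper leaves implicit.
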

\begin{proof}
 Any cromulent automorphism is also a precromulent automorphism, and
 so is $1$ or $\lm^2$; but $\lm^2$ does not preserve the labelling.
\end{proof}

\begin{corollary}\lbl{cor-cromulent-iso}
 Let $X$ and $Y$ be isomorphic cromulent surfaces; then there is a
 unique cromulent isomorphism between them. \qed
\end{corollary}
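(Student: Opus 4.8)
The plan is to deduce Corollary~\ref{cor-cromulent-iso} directly from the two immediately preceding results, namely Theorem~\ref{thm-classify-cromulent} and Corollary~\ref{cor-cromulent-aut}. The argument is almost purely formal, so the proof will be short; the only ``work'' is to assemble the pieces in the right order.

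First I would establish existence. Since $X$ and $Y$ are assumed isomorphic as cromulent surfaces, Theorem~\ref{thm-classify-cromulent} applies to each: there is a unique $a\in(0,1)$ with a cromulent isomorphism $\phi_X\:X\to PX(a)$, and similarly a unique $b\in(0,1)$ with a cromulent isomorphism $\phi_Y\:Y\to PX(b)$. Because $X\simeq Y$ as cromulent surfaces, transitivity gives $PX(a)\simeq PX(b)$ as cromulent surfaces, and then the uniqueness clause in Theorem~\ref{thm-classify-cromulent} (applied to $PX(a)$, say) forces $a=b$. Hence the composite $\phi_Y^{-1}\circ\phi_X\:X\to Y$ is a cromulent isomorphism, proving existence.

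Next I would establish uniqueness. Suppose $\psi_0,\psi_1\:X\to Y$ are both cromulent isomorphisms. Then $\psi_1^{-1}\circ\psi_0\:X\to X$ is a cromulent automorphism of $X$, so by Corollary~\ref{cor-cromulent-aut} it is the identity, whence $\psi_0=\psi_1$. This completes the proof.

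There is no real obstacle here: the substantive content has already been carried by Theorem~\ref{thm-classify-cromulent} (classification and rigidity of cromulent surfaces) and Corollary~\ref{cor-cromulent-aut} (triviality of the cromulent automorphism group), and the corollary is just the standard ``rigid objects have at most one isomorphism between any two of them'' packaging. If one wanted to avoid invoking the classification theorem at all, one could instead argue directly: given two cromulent isomorphisms $\psi_0,\psi_1\:X\to Y$, their difference is a precromulent automorphism of $X$, hence $1$ or $\lm^2$ by Corollary~\ref{cor-precromulent-aut}(a), and $\lm^2$ is excluded because it does not preserve the cromulent labelling; this gives uniqueness, and existence still needs the hypothesis that $X$ and $Y$ are isomorphic as cromulent surfaces. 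Either route works, but the first is cleaner since the relevant corollaries are already in hand.
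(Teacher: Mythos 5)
Your uniqueness argument is exactly the paper's (implicit) proof: two cromulent isomorphisms $\psi_0,\psi_1\colon X\to Y$ differ by a cromulent automorphism $\psi_1^{-1}\psi_0$ of $X$, which is the identity by Corollary~\ref{cor-cromulent-aut}. The corollary is stated without a proof body precisely because this observation, together with the fact that existence is part of the hypothesis (``isomorphic cromulent surfaces'' means that a cromulent isomorphism exists), is all there is to say.

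Your primary route for existence, however, should be dropped. First, it proves something you already have by hypothesis. Second, and more seriously, it runs against the paper's logical ordering: the proof of Theorem~\ref{thm-classify-cromulent} is given \emph{after} this corollary and explicitly invokes Corollary~\ref{cor-cromulent-iso} to obtain the uniqueness of the cromulent isomorphism $X\to PX(a)$. It happens that the clauses of the theorem you actually use (existence of $a$ and a cromulent isomorphism to $PX(a)$, and uniqueness of $a$) are established in that proof before the appeal to this corollary, so the argument is not irreparably circular, but citing the theorem wholesale here inverts the intended order of deduction and would need an explicit disclaimer. Your closing alternative --- uniqueness via Corollary~\ref{cor-precromulent-aut}(a) together with the observation that $\lm^2$ does not preserve the labelling, existence by hypothesis --- is the clean version and is what the paper intends.
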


\begin{proposition}\lbl{prop-labellings}
 Let $X$ be a precromulent surface.  Then $X$ has precisely two
 cromulent labellings, which are related by the action of $\lm^2$.
\end{proposition}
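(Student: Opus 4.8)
The plan is to reduce the statement to the explicit surface $PX(a)$ and then run a finite combinatorial count. By Theorem~\ref{thm-classify-precromulent} there is a precromulent isomorphism $\psi\:X\to PX(a)$ for some $a\in(0,1)$, and $\ell\mapsto\psi\circ\ell$ is a bijection from precromulent labellings of $X$ to those of $PX(a)$. Since $\psi$ is conformal it intertwines the action of $\lm$ on tangent spaces, so it carries condition~(c) of Definition~\ref{defn-precromulent} to~(c); since $\psi$ is a $G$-equivariant homeomorphism it maps $X'$ onto $PX(a)'$, takes connected components to connected components and commutes with closure, so it carries~(d) to~(d); and since $\lm^2$ is central, $\psi$ commutes with $\lm^2$, so the bijection is $\lm^2$-equivariant on labellings. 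Hence it suffices to treat $X=PX(a)$, for which we already have the standard labelling $\ell_0=(v_0,\dots,v_{13})$ of Definition~\ref{defn-P}; this is precromulent by Proposition~\ref{prop-P-precromulent} and cromulent by Proposition~\ref{prop-P-fundamental} together with Remark~\ref{rem-standard-F}.

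Next I would parametrise the candidates. As $PX(a)$ is precromulent we have $V\simeq V^*$, so by Proposition~\ref{prop-aut-V} the precromulent labellings of $PX(a)$ are exactly the $32$ maps $\ell_0\circ\phi$ with $\phi\in\Aut(V^*)\cong C_2^5$. The $\lm$-fixed points of $V^*$ are $0$ and $1$, and Proposition~\ref{prop-v-zero} shows that of the two points $\ell_0(0)=v_0$, $\ell_0(1)=v_1$ only $v_0$ has $\lm_*=i$ on its tangent space; hence $\ell_0\circ\phi$ satisfies~(c) iff $\phi(0)=0$, i.e. iff $\phi$ lies in $A':=\langle\phi_1,\phi_2,\phi_3,\phi_4\rangle$, leaving $16$ candidates. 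In $A'$ the three blocks $\{2,3,4,5\}$, $\{6,7,8,9\}$, $\{10,11,12,13\}$ are permuted independently, so such $\phi$ satisfy $\phi(3)\in\{3,5\}$, $\phi(6)\in\{6,8\}$ and $\phi(11)\in\{10,11,12,13\}$.

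The main step is to decide which of these $16$ labellings satisfy~(d). Using the curve system of Definition~\ref{defn-P-curves} and the standard fundamental domain $F_{16}=PF_{16}(a)$ of Proposition~\ref{prop-P-fundamental}, I would first observe (as in the proof of that proposition) that interior points of $F_{16}$ have trivial $G$-stabiliser, while $DF_{16}$ lies in the union of the fixed circles $C_k$ and hence consists of points with nontrivial stabiliser; combined with $PX(a)=\bigcup_{g\in G}g(F_{16})$ this shows that the $16$ translates $g\cdot\mathrm{int}(F_{16})$ ($g\in G$) are precisely the connected components of $PX(a)'$, each with closure $g(F_{16})$ and $g(F_{16})\cap V=\{g(v_0),g(v_3),g(v_6),g(v_{11})\}$ (the four corners, read off from the vertex table of Definition~\ref{defn-curve-system}(b)). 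Since the orbit of $v_0$ is $\{v_0,v_1\}$, disjoint from the orbits of $v_3,v_6,v_{11}$, a component has $v_0$ in its closure iff $g\in\stab_G(v_0)=\langle\lm,\nu\rangle$. Thus $\ell_0\circ\phi$ satisfies~(d) iff there is $g\in\langle\lm,\nu\rangle$ with $\{0,\phi(3),\phi(6),\phi(11)\}=\{\sigma_g(0),\sigma_g(3),\sigma_g(6),\sigma_g(11)\}$ in $V^*$, where $\sigma_g$ is the permutation induced by $g$; as $\langle\lm,\nu\rangle$ fixes $0$ and the three orbits are disjoint, this reduces to $\phi(3)=\sigma_g(3)$, $\phi(6)=\sigma_g(6)$, $\phi(11)=\sigma_g(11)$. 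Running through the eight elements of $\langle\lm,\nu\rangle$ with the explicit permutations of Definition~\ref{defn-V-star}, and using $\phi(6)\in\{6,8\}$ to discard the rest, one finds that this is solvable only for $g=1$ (forcing $\phi=1$) and $g=\lm^2$ (forcing $\phi=\phi_1\phi_2$). Hence $PX(a)$ has exactly the two cromulent labellings $\ell_0$ and $\ell_0\circ\phi_1\phi_2$, which are distinct since $\phi_1\phi_2\ne 1$.

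To finish, note that $\lm^2$ acts on $V^*$ by the permutation $(2\;4)(3\;5)(6\;8)(7\;9)=\phi_1\phi_2$, so $G$-equivariance of $\ell_0$ gives that $\ell_0\circ\phi_1\phi_2$ is the labelling $i\mapsto\lm^2(\ell_0(i))$; thus the two cromulent labellings are exchanged by $\lm^2$, which (after transporting back along $\psi$) proves the claim for $X$. The one genuinely laborious ingredient is the eight-case check in the previous paragraph; it is purely combinatorial and can be confirmed with the accompanying Maple code.
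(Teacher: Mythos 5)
Your proof is correct and follows essentially the same route as the paper's: reduce to $PX(a)$ via Theorem~\ref{thm-classify-precromulent}, parametrise precromulent labellings by $\Aut(V^*)$, use Proposition~\ref{prop-v-zero} with condition~(c) to pin down $\phi(0)$, and use condition~(d) together with the fundamental domain $PF_{16}(a)$ to force $\phi\in\{1,\lm^2\}$. The only (harmless) organisational difference is that the paper first twists by $\lm^2$ to assume $\phi(2)=2$ and then shows $\phi=1$, whereas you keep all sixteen candidates and locate the two survivors by an explicit check over $\ip{\lm,\nu}$.
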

\begin{proof}
 By Theorem~\ref{thm-classify-precromulent} we can reduce to the
 case $X=PX(a)$.  In particular, this means that we have a curve
 system, and nets as in Section~\ref{sec-fundamental}.

 The labelling given in Definition~\ref{defn-P} is cromulent, by
 Proposition~\ref{prop-P-fundamental}.  It follows easily that if we
 change the labelling by $\lm^2$, then it remains cromulent.

 Now suppose we have another cromulent labelling, say
 $(v'_i)_{i=0}^{13}$.  This must have the form $v'_i=v_{\phi(i)}$ for
 some $\phi$ in the group $\Aut(V^*)$, which is described by
 Proposition~\ref{prop-aut-V}.  Proposition~\ref{prop-v-zero} shows
 that we must have $\phi(0)=0$ and $\phi(1)=1$.
 Proposition~\ref{prop-aut-V} shows that $\phi(2)\in\{2,4\}$, and
 after replacing $\phi$ by $\phi\lm^2$ if necessary, we may assume
 that $\phi(2)=2$.  Assuming this, we see from
 Proposition~\ref{prop-aut-V} that $\phi(i)=i$ for $i\in\{2,3,4,5\}$,
 and that $\phi(6)\in\{6,8\}$.

 Next, as the new labelling is cromulent, there must be a connected
 component $F'\sse\{x\in X\st\stab_G(x)=1\}$ whose closure contains
 the set $U=\{v_{\phi(0)},v_{\phi(3)},v_{\phi(6)},v_{\phi(11)}\}$.
 From the discussion in Section~\ref{sec-fundamental} we see that $F'$
 must be $\gm(PF'_{16}(a))$ for some $\gm\in G$.  We have seen that
 $U$ contains $v_0$, $v_3$ and either $v_6$ or $v_8$.  By inspecting
 the nets in Section~\ref{sec-fundamental}, we see that this can only
 be consistent if $\gm=1$ and $\phi(6)=6$ and $\phi(11)=11$.  By
 consulting Proposition~\ref{prop-aut-V} again, we conclude that
 $\phi=1$, as required.
\end{proof}

\begin{proof}[Proof of Theorem~\ref{thm-classify-cromulent}]
 Let $X$ be a cromulent surface.  By
 Theorem~\ref{thm-classify-precromulent}, there is a unique $a$ such
 that $X\simeq PX(a)$ as precromulent surfaces.  Choose a precromulent
 isomorphism $\phi\:X\to PX(a)$.  The points $\phi^{-1}(v_i)$ give a
 cromulent labelling of $X$.  By Proposition~\ref{prop-labellings},
 this must either be the given labelling of $X$, or its twist by
 $\lm^2$.  Thus, after replacing $\phi$ by $\phi\lm^2$ if necessary,
 we may assume that $\phi$ is a cromulent isomorphism.  It is unique
 by Corollary~\ref{cor-cromulent-iso}.
\end{proof}

\begin{remark}\lbl{rem-p-extra}
 We now see that the map $p\:X/\ip{\lm^2}\to\C_\infty$ from
 Definition~\ref{defn-p} must factor as a cromulent isomorphism
 $X\to PX(a)$, followed by the map $p\:PX(a)\to\C_\infty$ from
 Remark~\ref{rem-P-quotient}.  This implies that we have the
 following additional properties:
 \begin{align*}
  p(v_0) &= 0 \\
  p(v_1) &= \infty \\
  p(v_2) = p(v_4) &=    -1 \\
  p(v_3) = p(v_5) &= \pp 1 \\
  p(v_6) = p(v_8) &= \pp i \\
  p(v_7) = p(v_9) &=    -i \\
  p(v_{10}) &=      -a \\
  p(v_{11}) &= \pp   a \\
  p(v_{12}) &=    -1/a \\
  p(v_{13}) &= \pp 1/a.
 \end{align*}
\end{remark}
The number $p(v_i)$ is recorded in Maples as \mcode+v_C[i]+.

\begin{remark}\lbl{rem-p-hat}
 For some purposes it is more convenient to work with the round sphere
 $S^2\subset\R^3$ rather than the Riemann sphere $\C_\infty$.  We
 identify them using the stereographic projection map
 \[ \xi(x+iy) =
     \left(
      \frac{2x}{x^2+y^2+1},
      \frac{2y}{x^2+y^2+1},
      \frac{x^2+y^2-1}{x^2+y^2+1}
     \right).
 \]
 This has been normalised so that the unit circle in $\C$ is sent to
 the equator.  It is a standard fact that $\xi$ is conformal.  The
 resulting complex structure on the tangent spaces
 $T_yS^2=\{t\in\R^3\st t.y=0\}$ can be described in terms of the cross
 product of vectors: we have $(a+ib)t=at+bt\tm y$.  This means that an
 ordered basis $(u,v)$ for $T_yS^2$ is oriented iff $\det(y,v,u)>0$.
 Note also that if $\xi(z)=y$ then
 \begin{align*}
  \xi(-z)     &= (  - y_1,\; - y_2,\; \pp y_3) \\
  \xi(1/z)    &= (\pp y_1,\; - y_2,\;   - y_3) \\
  \xi(\ov{z}) &= (\pp y_1,\; - y_2,\; \pp y_3).
 \end{align*}
 It follows that the composite $\hp=\xi p\:X\to S^2$ has properties as
 follows:
 \begin{align*}
  \hp(v_0) &= (\pp 0,\pp 0,   -1) \\
  \hp(v_1) &= (\pp 0,\pp 0,\pp 1) \\
  \hp(v_2) &= \hp(v_4) = (  - 1,\pp 0,\pp 0) \\
  \hp(v_3) &= \hp(v_5) = (\pp 1,\pp 0,\pp 0) \\
  \hp(v_6) &= \hp(v_8) = (\pp 0,\pp 1,\pp 0) \\
  \hp(v_7) &= \hp(v_9) = (\pp 0,  - 1,\pp 0) \\
  \hp(v_{10}) &= (  - 2a,0,a^2-1)/(a^2+1) \\
  \hp(v_{11}) &= (\pp 2a,0,a^2-1)/(a^2+1) \\
  \hp(v_{12}) &= (  - 2a,0,1-a^2)/(a^2+1) \\
  \hp(v_{13}) &= (\pp 2a,0,1-a^2)/(a^2+1)
 \end{align*}

 \begin{align*}
  \hp_1(\lm(x)) &=   -\hp_1(x) & \hp_2(\lm(x)) &=   -\hp_2(x) & \hp_3(\lm(x)) &= \pp\hp_3(x) \\
  \hp_1(\mu(x)) &= \pp\hp_1(x) & \hp_2(\mu(x)) &=   -\hp_2(x) & \hp_3(\mu(x)) &=   -\hp_3(x) \\
  \hp_1(\nu(x)) &= \pp\hp_1(x) & \hp_2(\nu(x)) &=   -\hp_2(x) & \hp_3(\nu(x)) &= \pp\hp_3(x).
 \end{align*}
 The points $\hp(v_i)$ are recorded in Maple as \mcode+v_S2[i]+, and
 the induced action of $g\in G$ on $u\in S^2$ is \mcode+act_S2[g](u)+.
\end{remark}

\section{The hyperbolic family}
\lbl{sec-H}

\subsection{The groups \texorpdfstring{$\Pi$ and $\tPi$}{Pi and Pi tilde}}
\lbl{sec-Pi}

Later we will construct a family of cromulent surfaces as quotients of
the unit disc by different actions of a certain group $\Pi$.  In this
section we define and study $\Pi$, together with a larger group $\tPi$
such that $\tPi/\Pi=G$.  Actions of $\tPi$ and $\Pi$ will be given in
the following section.

\begin{definition}\lbl{defn-Pi}
 Let $\Pi$ be the abstract group generated by symbols $\bt_k$ (for
 $k\in\Z/8$) subject to the following relations:
 \begin{align*}
  \bt_{k+4} &= \bt_k^{-1} \\
  \bt_0\bt_1\bt_2\bt_3\bt_4\bt_5\bt_6\bt_7 &= 1.
 \end{align*}
\end{definition}
We saw in Proposition~\ref{prop-pi-one} that any cromulent surface has
fundamental group isomorphic to $\Pi$.  Note also that the
abelianization $\Pi_{\text{ab}}$ is freely generated (as an abelian
group) by $\bt_0,\dotsc,\bt_3$, and so is isomorphic to $\Z^4$.

We can introduce an alternative set of generators as follows:
\begin{align*}
 \al_0 &= \bt_3^{-1}\bt_2^{-1}\bt_0 &
 \al_1 &= \bt_1\bt_2\bt_3 &
 \al_2 &= \bt_2^{-1} &
 \al_3 &= \bt_3^{-1} \\
 \bt_0 &= \al_2^{-1}\al_3^{-1}\al_0 &
 \bt_1 &= \al_1\al_3\al_2 &
 \bt_2 &= \al_2^{-1} &
 \bt_3 &= \al_3^{-1}.
\end{align*}
In terms of these, the relation $\bt_0\dotsb\bt_7=1$ becomes the
standard relation
\[ [\al_0,\al_1][\al_2,\al_3] =
    \al_0\al_1\al_0^{-1}\al_1^{-1}
    \al_2\al_3\al_2^{-1}\al_3^{-1} = 1
\]
for the fundamental group of a surface of genus $2$.  (However, we
prefer to use the generators $\bt_k$, for reasons of symmetry.)
\begin{checks}
 hyperbolic/Pi_check.mpl: check_Pi_alpha()
\end{checks}

\begin{remark}\lbl{rem-runs}
 Note that the relation $\bt_0\dotsb\bt_7=1$ can be conjugated to give
 \[ \bt_k\bt_{k+1}\dotsb\bt_{k+7} = 1 \]
 for any $k\in\Z/8$.  This in turn gives
 \begin{align*}
  \bt_k\bt_{k+1}\bt_{k+2}\bt_{k+3}\bt_{k+4}\bt_{k+5}\bt_{k+6} &=
   \bt_{k+7}^{-1} &&\hspace{-1em}=
   \bt_{k+3} \\
  \bt_k\bt_{k+1}\bt_{k+2}\bt_{k+3}\bt_{k+4}\bt_{k+5} &=
   (\bt_{k+6}\bt_{k+7})^{-1} &&\hspace{-1em}=
   \bt_{k+3}\bt_{k+2} \\
  \bt_k\bt_{k+1}\bt_{k+2}\bt_{k+3}\bt_{k+4} &=
   (\bt_{k+5}\bt_{k+6}\bt_{k+7})^{-1} &&\hspace{-1em}=
   \bt_{k+3}\bt_{k+2}\bt_{k+1} \\
  \bt_k\bt_{k+1}\bt_{k+2}\bt_{k+3} &=
   (\bt_{k+4}\bt_{k+5}\bt_{k+6}\bt_{k+7})^{-1} &&\hspace{-1em}=
   \bt_{k+3}\bt_{k+2}\bt_{k+1}\bt_k \\
  \bt_k\bt_{k+1}\bt_{k+2} &=
   (\bt_{k+3}\bt_{k+4}\bt_{k+5}\bt_{k+6}\bt_{k+7})^{-1} &&\hspace{-1em}=
   \bt_{k+3}\bt_{k+2}\bt_{k+1}\bt_k\bt_{k-1} \\
  \bt_k\bt_{k+1} &=
   (\bt_{k+2}\bt_{k+3}\bt_{k+4}\bt_{k+5}\bt_{k+6}\bt_{k+7})^{-1} &&\hspace{-1em}=
   \bt_{k+3}\bt_{k+2}\bt_{k+1}\bt_k\bt_{k-1}\bt_{k-2} \\
  \bt_k &=
   (\bt_{k+1}\bt_{k+2}\bt_{k+3}\bt_{k+4}\bt_{k+5}\bt_{k+6}\bt_{k+7})^{-1} &&\hspace{-1em}=
   \bt_{k+3}\bt_{k+2}\bt_{k+1}\bt_k\bt_{k-1}\bt_{k-2}\bt_{k-3}.
 \end{align*}
 Thus:
 \begin{itemize}
  \item Any increasing or decreasing run of length at least $5$ can be
   replaced by a strictly shorter run in the opposite direction.
  \item Any run of length $4$ can be reversed.
 \end{itemize}
\end{remark}

\begin{definition}\lbl{defn-beta-reduced}
 A word in the letters $\{\bt_i\st i\in\Z/8\}$ is \emph{reduced} if
 \begin{itemize}
  \item[(a)] There are no subwords of the form $\bt_i\bt_{i+4}$
  \item[(b)] There are no subwords of the form
   $\bt_i\bt_{i+1}\bt_{i+2}\bt_{i+3}\bt_{i+4}$.
  \item[(c)] There are no subwords of the form
   $\bt_i\bt_{i-1}\bt_{i-2}\bt_{i-3}$.
 \end{itemize}
\end{definition}

\begin{proposition}\lbl{prop-beta-reduced}
 Every element $\pi\in\Pi$ can be expressed in a unique way as a reduced
 word in the letters $\bt_i$.  Moreover, if $\pi$ is represented by a
 word $w$, then the corresponding reduced word $w'$ can be obtained
 from $w$ by repeatedly cancelling pairs of the form $\bt_i\bt_{i+4}$,
 and shortening or reversing runs as in Remark~\ref{rem-runs}.
\end{proposition}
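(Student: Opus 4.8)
The plan is to treat the moves of Remark~\ref{rem-runs} as an oriented rewriting system on words in the letters $\{\bt_i\st i\in\Z/8\}$, prove that it is terminating and confluent, and conclude that normal forms exist and are unique; the normal forms will be exactly the reduced words of Definition~\ref{defn-beta-reduced}. Concretely I would use three families of rules, each a valid identity in $\Pi$ by Definition~\ref{defn-Pi} and Remark~\ref{rem-runs}: (C) delete any subword $\bt_i\bt_{i+4}$; (I) replace a maximal increasing run $\bt_k\bt_{k+1}\cdots\bt_{k+L-1}$ of length $L\geq 5$ by the opposite run of length $8-L$; (D) replace a maximal decreasing run of length $L\geq 4$ by the opposite run, which has length $8-L$ when $L\geq 5$ and length $4$ when $L=4$. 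A word admits no rule precisely when it is reduced in the sense of Definition~\ref{defn-beta-reduced}. Rule~(C) shortens the word by $2$; rules~(I) and~(D) shorten it by $2L-8\geq 2$ when $L\geq 5$; rule~(D) with $L=4$ preserves length, and this is the only length-preserving move. I would give rule~(C) top priority, so that whenever (I) or (D) is applied the word contains no cancellable pair.

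\emph{Termination.} For a word containing no cancellable pair, lift its index sequence to integers by choosing any integer representative of the first index and then, step by step, taking for each successive index the integer representative nearest the previously chosen one; this is well-defined because no two consecutive letters differ by $4$ modulo $8$. Let $\Psi(w)$ be the number of positions at which this integer sequence strictly decreases. Applying rule~(D) at $L=4$ reverses a block of three consecutive decreasing steps into three consecutive increasing steps, changing $\Psi$ by $-3$ internally, while each of the two boundary steps changes $\Psi$ by at most $+1$ (the relabelling of the letters past the block is by a common multiple of $8$ and so leaves all other comparisons intact); hence $\Psi$ strictly decreases. Since every other rule strictly decreases the length, the pair $(|w|,\Psi(w))$ strictly decreases in the lexicographic order of $\N^2$ under every move, so there is no infinite reduction sequence.

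\emph{Confluence.} By Newman's lemma it suffices to prove local confluence, and since two redexes occupying disjoint factors of a word obviously commute, only overlapping pairs of redexes need to be examined. A cancellable pair cannot lie strictly inside a run, so the configurations to check are: two cancellable pairs sharing a letter, as in $\bt_i\bt_{i+4}\bt_i$; a cancellable pair sharing its end letter with the end of a run; and an increasing run meeting a decreasing run in a single letter at a local maximum or minimum. There are finitely many such configurations, all of bounded length over the eight generators, and in each one resolving the two redexes in either order and continuing the reduction yields the same word; this is routine but lengthy, and I would also verify it in Maple. Newman's lemma then gives confluence, so every word has a unique normal form, which is reduced, and which is reached from any representative by the stated moves.

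\emph{Identification with $\Pi$.} Let $M$ be the free monoid on $\{\bt_i\}$ and $\sim$ the congruence generated by the rules. The rules hold in $\Pi$, so the canonical surjection $M\to\Pi$ factors through $M/{\sim}$; conversely every generator is invertible in $M/{\sim}$ since $\bt_i\bt_{i+4}\sim 1\sim\bt_{i+4}\bt_i$, the relations $\bt_{k+4}=\bt_k^{-1}$ hold there, and $\bt_0\bt_1\cdots\bt_7$ is an increasing run of length $8$ which rule~(I) sends to the empty word, so $\bt_0\cdots\bt_7\sim 1$; hence $\Pi$ maps onto $M/{\sim}$. These two maps are mutually inverse, so $M/{\sim}\cong\Pi$, and the $\sim$-classes, each carrying a unique reduced representative, are exactly the elements of $\Pi$. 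The delicate point in all of this is termination, precisely because rule~(D) at $L=4$ does not shorten the word, which is why the auxiliary statistic $\Psi$ must be set up so that it strictly drops after re-lifting the indices and absorbing the two boundary corrections; the overlap analysis for confluence is finite but is the most laborious part in practice.
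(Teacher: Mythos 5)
Your strategy is genuinely different from the paper's. The paper gets existence of a reduced representative by an easy induction on length and then derives uniqueness from small cancellation theory: the symmetrized relator set consists of the sixteen runs $\sg_i^{\pm}$ of length $8$, any two distinct ones share at most one initial letter, so the presentation is $C'(1/6)$ and Greendlinger's theorem applies. You instead orient the moves of Remark~\ref{rem-runs} as a rewriting system and aim for termination plus local confluence via Newman's lemma. If completed, your route has the merit of proving the algorithmic clause of the proposition (that the reduced form is actually \emph{reached} by the stated moves) simultaneously with uniqueness, where the paper delegates that clause to the phrase ``Dehn's algorithm.'' Your termination measure $(|w|,\Psi(w))$ is the right idea, the boundary analysis for the length-preserving move is essentially correct (modulo a small patch for the case where a length-$4$ reversal creates a cancellable pair, so that the next step drops the length anyway), and the sandwich argument identifying $M/{\sim}$ with $\Pi$ is sound.

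The gap is in confluence, and it is a genuine one. First, all of the mathematical content of uniqueness now lives in the assertion that every overlapping pair of redexes resolves, and you have not verified a single configuration: ``routine but lengthy, and I would also verify it in Maple'' defers exactly the step that could fail. (It does not fail, but within your proof the only way to know that is to do the check; in the paper's argument the corresponding content is supplied by the $C'(1/6)$ condition.) Second, and more structurally, your rules are not plain string-rewriting rules: you apply (I) and (D) only to \emph{maximal} runs, and you give (C) global priority, so the applicability of a rule at a position depends on letters away from the redex --- in the case of the priority, on the entire word. For such a system the critical-pair lemma does not apply as you state it: with the priority in force, a (D)-redex and a distant (C)-redex do not ``obviously commute,'' because the (D)-step is not even legal until the cancellation has been performed; and you prove termination for the prioritized relation while arguing local confluence for what appears to be the unprioritized one, so Newman's lemma is never applied to a single well-defined relation. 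The clean repair is to drop both maximality and the priority, take the honest finite rule set (the cancellation rule together with one run-reversal rule for each admissible length and starting index), redo termination for the unrestricted relation (your lift $\Psi$ then needs a tie-breaking convention when consecutive indices differ by $4$ modulo $8$), and then actually enumerate and resolve the finitely many genuine critical pairs.
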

\begin{proof}
 A straightforward argument by induction on the length shows that for
 every word there is a reduced word (of the same length or less) that
 represents the same element of $\Pi$.  Uniqueness is less obvious,
 but follows from Dehn's algorithm and small cancellation theory.  In
 more detail, put $X=\{\bt_0,\bt_1,\bt_2,\bt_3\}$, then put
 \begin{align*}
  \sg_i^+ &= \bt_i\bt_{i+1}\dotsb\bt_{i+7} \\
  \sg_i^- &= \bt_i\bt_{i-1}\dotsb\bt_{i-7},
 \end{align*}
 where each $\bt_j$ is replaced by an element of $X\amalg X^{-1}$ in
 the obvious way.  Then the set
 \[ S=\{\sg_i^+\st i\in\Z/8\} \amalg \{\sg_i^-\st i\in\Z/8\} \]
 consists of reduced words in the free group $FX$, and is closed under
 taking inverses and cyclic permutations.  If $\al$ and $\bt$ are
 distinct elements of $S$, then they have length $8$, but they share at
 most one initial letter.  The shared fraction of $1/8$ is less than
 $1/6$, so the main theorem of~\cite{gr:daw} is applicable, and the
 conclusion follows easily.  (For a textbook treatment, see Chapter~V
 of~\cite{lysc:cgt}.)
\end{proof}
\begin{remark}
 Elements of $\Pi$ are represented in Maple as lists of integers.  For
 example, the list \mcode+[5,3,6]+ represents $\bt_5\bt_3\bt_6$.  The
 function \mcode+is_Pi_reduced(L)+ decides whether a list \mcode+L+
 corresponds to a reduced word.  The function \mcode+Pi_reduce(L)+
 finds the unique reduced word that represents the same element as
 \mcode+L+.  Here \mcode+L+ is allowed to have arbitrary integer
 entries, but the first step in the reduction process is to reduce
 them modulo $8$ so that they lie in $\{0,1,\dotsc,7\}$.
 Multiplication and inversion are implemented by the functions
 \mcode+Pi_mult+ and \mcode+Pi_inv+.  All of this (together with
 various other things) is in the file \fname+hyperbolic/Pi.mpl+.
\end{remark}

\begin{corollary}\lbl{cor-trivial-centre}
 The centre of $\Pi$ is trivial, so the conjugation map
 $\Pi\to\Aut(\Pi)$ is injective.
\end{corollary}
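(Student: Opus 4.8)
## Proof Proposal for Corollary~\ref{cor-trivial-centre}

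The plan is to show that the centre of $\Pi$ is trivial by exploiting the normal-form theory established in Proposition~\ref{prop-beta-reduced}. Since every nontrivial element of $\Pi$ has a unique reduced word, it suffices to show that no nonempty reduced word $w$ can be central. First I would dispose of the case where $w$ has length $1$: if $w = \bt_i$ for some $i$, then $\bt_i$ commutes with $\bt_{i+1}$ only if $\bt_i\bt_{i+1} = \bt_{i+1}\bt_i$, i.e. $\bt_i\bt_{i+1}\bt_i^{-1}\bt_{i+1}^{-1} = 1$; but the left side, after rewriting $\bt_i^{-1} = \bt_{i+4}$ and $\bt_{i+1}^{-1} = \bt_{i+5}$, is the word $\bt_i\bt_{i+1}\bt_{i+4}\bt_{i+5}$, which one checks directly is reduced (no run of length $4$ in consecutive order, no adjacent $\bt_j\bt_{j+4}$ pair) and hence nontrivial. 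So length-$1$ words are not central.

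For the general case, suppose $w$ is a reduced word of length $n\geq 1$ representing a central element $\pi$. The idea is to conjugate by a suitable generator $\bt_j$ and compare reduced forms. The key observation is that conjugation by $\bt_j$ can change the length of a reduced word by at most a bounded amount, but it genuinely changes \emph{which} letters appear at the two ends unless $w$ begins and ends with very specific letters. More precisely, one uses the substitution rules in Remark~\ref{rem-runs}: any reduction that occurs when forming $\bt_j w \bt_j^{-1}$ happens only at the two junctions, and a short case analysis on the first and last letters of $w$ (there are only $8$ choices for each, and the symmetry group acts transitively on the generators, so really only a handful of cases up to the cyclic/reflection symmetry of $\Z/8$) shows that one can always pick $j$ so that $\bt_j w \bt_j^{-1}$ reduces to a word whose reduced form differs from that of $w$. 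By uniqueness of reduced forms (Proposition~\ref{prop-beta-reduced}), this contradicts centrality.

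An alternative, cleaner route — and the one I would actually pursue — is to invoke the standard fact from small cancellation theory that a group satisfying the $C'(1/6)$ condition (which $\Pi$ does, as noted in the proof of Proposition~\ref{prop-beta-reduced}, via the symmetrised set $S$ with shared fraction at most $1/8 < 1/6$) is hyperbolic and torsion-free, and in particular has trivial centre unless it is cyclic; since $\Pi_{\mathrm{ab}}\cong\Z^4$ is not cyclic, $\Pi$ is not cyclic, so its centre is trivial. Even more directly: $\Pi$ is the fundamental group of a closed hyperbolic surface of genus $2$ (via the $\al_i$ presentation giving $[\al_0,\al_1][\al_2,\al_3]=1$), and it is classical that such a group is centreless. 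The final sentence of the corollary is then immediate: the conjugation homomorphism $\Pi\to\Aut(\Pi)$, $\pi\mapsto(x\mapsto\pi x\pi^{-1})$, has kernel equal to the centre, which is trivial, so it is injective.

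The main obstacle, if one insists on a self-contained combinatorial argument rather than citing surface-group or small-cancellation theory, is organising the case analysis for conjugation cleanly: one must track how runs at the ends of $w$ interact with the inserted generators under the reversal/shortening moves of Remark~\ref{rem-runs}, and verify that no coincidence of lengths and letters can make $\bt_j w\bt_j^{-1}$ reduce back to $w$ for \emph{every} choice of $j$. Using the $\Z/8$-symmetry to reduce the number of cases, and the already-established fact that $\Pi_{\mathrm{ab}}$ is free of rank $4$ (which rules out $\Pi$ being abelian outright), keeps this manageable, but it is the only step requiring genuine care.
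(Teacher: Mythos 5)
Your fallback argument is correct but takes a different route from the paper. The paper stays entirely within the combinatorial normal-form framework, and does so more cleanly than your first sketch: given a nonempty reduced word $w$ with first letter $\bt_i$ and last letter $\bt_j$, it does not form the conjugate $\bt_k w\bt_k^{-1}$ at all, but simply compares $\bt_k w$ with $w\bt_k$. Choosing $k$ outside the set $\{i-1,i,i+1,i+4,j-1,j+1,j+4\}$ (which has at most $7$ elements, so such a $k$ exists in $\Z/8$) guarantees that \emph{both} products are already reduced as written; since they begin with different letters ($\bt_k$ versus $\bt_i$), uniqueness of reduced forms shows $\bt_k\pi\neq\pi\bt_k$. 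This sidesteps entirely the case analysis at the two junctions of the conjugate that you correctly identify as the delicate point of your first approach, and which you do not carry out. Your preferred alternative --- identifying $\Pi$ with the genus-$2$ surface group via the $\al_i$ presentation and citing the classical fact that such groups are centreless (or invoking torsion-free non-elementary hyperbolicity from the $C'(1/6)$ condition) --- is perfectly valid and shorter, at the cost of importing external theory; the paper's argument buys self-containedness and only uses Proposition~\ref{prop-beta-reduced}, which it has already proved. Note also that your length-$1$ warm-up case is subsumed by the general argument and is not needed separately.
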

\begin{proof}
 Any nontrivial element $\pi\in\Pi\sm\{1\}$ can be represented by a
 nonempty reduced word $w$.  Let $i$ and $j$ be the
 indices of the first and last letters in $w$, and put
 \[ L = \{i-1,i,i+1,i+4,j-1,j+1,j+4\} \]
 (so $|L|\leq 7$).  Choose $k\in(\Z/8)\sm L$, so that $\bt_kw$ and
 $w\bt_k$ are reduced words.  Their first letters are $\bt_k$ and
 $\bt_i$, so they are different.  It follows that $\pi$ is
 not central.
\end{proof}

\begin{proposition}\lbl{prop-Aut-Pi}
 The group $\Pi$ has automorphisms $\lm_*$, $\mu_*$ and $\nu_*$, which
 act on the generators $\bt_i$ as follows:
 \[ \begin{array}{|c|c|c|c|c|c|c|c|c|} \hline
     & \bt_0 & \bt_1 & \bt_2 & \bt_3 & \bt_4 & \bt_5 & \bt_6 & \bt_7
     \\ \hline
     \lm_* &
     \bt_2 & \bt_3 & \bt_4 & \bt_5 & \bt_6 & \bt_7 & \bt_0 & \bt_1
     \\ \hline
     \mu_* &
     \bt_2\bt_0\bt_1 &
     \bt_5\bt_4\bt_3 &
     \bt_0\bt_7\bt_6 &
     \bt_2\bt_3\bt_1 &
     \bt_5\bt_4\bt_6 &
     \bt_7\bt_0\bt_1 &
     \bt_2\bt_3\bt_4 &
     \bt_5\bt_7\bt_6 \\ \hline
     \nu_* &
     \bt_0 &
     \bt_2\bt_1\bt_2 &
     \bt_6 &
     \bt_0\bt_7\bt_0 &
     \bt_4 &
     \bt_6\bt_5\bt_6 &
     \bt_2 &
     \bt_4\bt_3\bt_4 \\ \hline
    \end{array}
 \]
 Moreover, we have $\lm_*^4=\mu_*^2=\nu_*^2=(\lm_*\nu_*)^2=1$, and the
 automorphisms $(\lm_*\mu_*)^2$ and $(\nu_*\mu_*)^2$ are inner.  More precisely, for
 any $\pi\in\Pi$ we have
 \begin{align*}
  (\lm_*\mu_*)^2(\pi) &=
       (\bt_7\bt_6)\;\pi\;(\bt_7\bt_6)^{-1} \\
  (\nu_*\mu_*)^2(\pi) &=
       (\bt_6\bt_0\bt_7\bt_6)\;\pi\;(\bt_6\bt_0\bt_7\bt_6)^{-1}.
 \end{align*}
\end{proposition}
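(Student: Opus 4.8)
The plan is to treat the entire proposition as a sequence of finite computations in $\Pi$, each of which is decidable by means of the normal-form algorithm established in Proposition~\ref{prop-beta-reduced}: to verify that a word $w$ represents the identity, reduce it; to verify that two words represent the same element, reduce both and compare; and to verify that a prescribed assignment $\bt_k\mapsto w_k$ extends to an endomorphism of $\Pi$, substitute the images into the defining relators $\bt_k\bt_{k+4}$ and $\bt_0\bt_1\cdots\bt_7$ and check that each resulting word reduces to the empty word. The geometric provenance of the formulae is that they record the effect of $\lm$, $\mu$, $\nu$ on the loops $\bt_k$ of Section~\ref{sec-homology}, as read off from the net $\Net_5$; since $\lm$ and $\nu$ fix $v_0$ they induce genuine automorphisms of $\pi_1(X,v_0)\cong\Pi$ (cf.\ Proposition~\ref{prop-pi-one}), whereas $\mu$ interchanges $v_0$ and $v_1$, so $\mu_*$ is only pinned down after choosing a connecting path — which is already the reason one should expect $(\lm_*\mu_*)^2$ and $(\nu_*\mu_*)^2$ to be inner rather than trivial. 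The argument below, however, is self-contained and does not rely on this picture.

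First I would check that the three assignments define endomorphisms of $\Pi$. For $\lm_*$ this is immediate: $\lm_*(\bt_k)=\bt_{k+2}$, so $\lm_*(\bt_{k+4})=\bt_{k+6}=\bt_{k+2}^{-1}=\lm_*(\bt_k)^{-1}$, and $\lm_*(\bt_0\bt_1\cdots\bt_7)=\bt_2\bt_3\cdots\bt_9=\bt_2\bt_3\bt_4\bt_5\bt_6\bt_7\bt_0\bt_1$, which is a cyclic rotation of the long relator and hence trivial by Remark~\ref{rem-runs}. For $\mu_*$ and $\nu_*$ one substitutes the length-$3$ images, obtaining a word of length $24$ in place of $\bt_0\cdots\bt_7$ and a word of length $6$ in place of each $\bt_k\bt_{k+4}$, and reduces; Proposition~\ref{prop-beta-reduced} guarantees the process terminates at a unique reduced word, which in every case is empty. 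This shows $\lm_*,\mu_*,\nu_*\in\mathrm{End}(\Pi)$.

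Next, since two endomorphisms of $\Pi$ that agree on the generators $\bt_k$ are equal, the relations $\lm_*^4=\mu_*^2=\nu_*^2=1$ reduce to checking, for each $k$, that $\lm_*^4(\bt_k)$, $\mu_*^2(\bt_k)$ and $\nu_*^2(\bt_k)$ reduce to $\bt_k$. For $\lm_*$ this is clear since $\bt_{k+8}=\bt_k$; for $\mu_*$ and $\nu_*$ it is again a finite reduction. In particular $\mu_*$ and $\nu_*$ are their own inverses and $\lm_*^{-1}=\lm_*^3$, so all three are automorphisms. The identity $(\lm_*\nu_*)^2=1$ is checked the same way: compute $(\lm_*\nu_*)^2(\bt_k)$ by composing the table assignments and reducing, and observe it equals $\bt_k$ for all $k$. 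Finally, for the two inner relations I would, for each $k\in\Z/8$, compute the left-hand side $(\lm_*\mu_*)^2(\bt_k)$ (respectively $(\nu_*\mu_*)^2(\bt_k)$) by composition and reduction, compute the right-hand side $(\bt_7\bt_6)\,\bt_k\,(\bt_7\bt_6)^{-1}$ (respectively $(\bt_6\bt_0\bt_7\bt_6)\,\bt_k\,(\bt_6\bt_0\bt_7\bt_6)^{-1}$) by reduction, and check that the two reduced words coincide; by Corollary~\ref{cor-trivial-centre} there is at most one conjugating element that can work, so exhibiting the one in the statement suffices.

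The main obstacle is simply the volume of bookkeeping rather than any conceptual difficulty: the worst cases are reducing the length-$24$ word $\mu_*(\bt_0\cdots\bt_7)$ (and its $\nu_*$ analogue), and the sixteen conjugation identities governing $(\lm_*\mu_*)^2$ and $(\nu_*\mu_*)^2$, where intermediate words can grow before the run-shortening and run-reversing moves of Remark~\ref{rem-runs} take effect. No new mathematical idea is needed beyond Proposition~\ref{prop-beta-reduced}; the computations are precisely those automated in the accompanying Maple file via \texttt{check\_Pi\_alpha()} and the routines \texttt{Pi\_reduce}, \texttt{Pi\_mult} and \texttt{Pi\_inv}.
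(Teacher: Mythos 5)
Your proposal is correct and follows essentially the same route as the paper: both reduce everything to finite word computations on the generators $\bt_k$ (checking that the defining relators map to the identity, and that the stated identities hold generator by generator), with the paper carrying out a couple of the cancellations explicitly by hand and delegating the rest to the Maple routine \texttt{check\_Pi\_relations()}. Your additional framing via the normal-form algorithm of Proposition~\ref{prop-beta-reduced} is a valid (and slightly more systematic) way to organise the same verification, but it is not a different argument.
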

\begin{proof}
 We will only discuss $\mu_*$; similar arguments cover the other two
 cases.  Put $B=\{\bt_k\:k\in\Z/8\}\subset\Pi$ and define
 $\mu_*\:B\to\Pi$ by the above table.  To show that this extends to an
 endomorphism of $\Pi$, we must check that it respects the relations,
 or in other words that
 \begin{align*}
  \mu_*(\bt_k)\mu_*(\bt_{k+4}) &= 1 \\
  \mu_*(\bt_0)\mu_*(\bt_1)\mu_*(\bt_2)\mu_*(\bt_3)
  \mu_*(\bt_4)\mu_*(\bt_5)\mu_*(\bt_6)\mu_*(\bt_7) &= 1.
 \end{align*}
 The first of these follows easily by inspecting the definition of
 $\mu_*(\bt_k)$ and using the relation $\bt_j\bt_{j+4}=1$ three
 times.  For the second, the left hand side can be grouped as
 \[
     \bt_2(\bt_0\bt_1
     \bt_5\bt_4)\bt_3
     (\bt_0(\bt_7\bt_6
     \bt_2\bt_3)(\bt_1
     \bt_5)\bt_4)(\bt_6
     \bt_7\bt_0\bt_1
     \bt_2\bt_3\bt_4
     \bt_5)\bt_7\bt_6.
 \]
 Working from the inside out, we see that the content of each matched
 pair of parentheses cancels down to $1$.  This leaves
 $\bt_2\bt_3\bt_7\bt_6$, which cancels further down to $1$.  We thus
 have an endomorphism $\mu_*$ as claimed.  It satisfies
 \[ \mu_*^2(\bt_0) = \mu_*(\bt_2\bt_0\bt_1) =
    \mu_*(\bt_2)\mu_*(\bt_0)\mu_*(\bt_1) =
    \bt_0(\bt_7(\bt_6\;\bt_2)(\bt_0(\bt_1\;\bt_5)\bt_4)\bt_3) =
    \bt_0.
 \]
 By similar arguments, it also satisfies $\mu_*^2(\bt_i)=\bt_i$ for
 all other indices $i$, so $\mu_*^2=1$.  In particular, this means
 that $\mu_*$ is an automorphism.  The identities
 $\lm_*^4=\nu_*^2=(\lm_*\nu_*)^2=1$ can be verified in a similar way.
 Moreover, it will also suffice to check the identities for
 $(\lm_*\mu_*)^2(\pi)$ and $(\nu_*\mu_*)^2(\pi)$ when $\pi=\bt_k$ for
 some $k$, and this is again straightforward (but long).
 \begin{checks}
  hyperbolic/Pi_check.mpl: check_Pi_relations()
 \end{checks}
\end{proof}

\begin{definition}\lbl{defn-tPi}
 We let $\tPi$ be the abstract group generated by symbols
 $\lm,\mu,\nu,\bt_k$ (for $k\in\Z/8$) subject to the following relations:
 \begin{align*}
  \bt_{k+4} &= \bt_k^{-1} \\
  \bt_0\bt_1\bt_2\bt_3\bt_4\bt_5\bt_6\bt_7 &= 1 \\
  \lm^4 &= \mu^2 = \nu^2 = (\lm\nu)^2 = 1 \\
  (\lm\mu)^2 &= \bt_7\bt_6 \\
  (\nu\mu)^2 &= \bt_6\bt_0\bt_7\bt_6 \\
  \lm\bt_k\lm^{-1} &= \lm_*(\bt_k) \\
  \mu\bt_k\mu &= \mu_*(\bt_k) \\
  \nu\bt_k\nu &= \nu_*(\bt_k).
 \end{align*}
 (Here $\lm_*(\bt_k)$, $\mu_*(\bt_k)$ and $\nu_*(\bt_k)$ refer to the
 words given in the table in Proposition~\ref{prop-Aut-Pi}.)
\end{definition}

\begin{proposition}\lbl{prop-tPi}
 $\Pi$ can be identified with the subgroup of $\tPi$ generated by
 $\bt_0,\dotsc,\bt_7$.  Moreover, this subgroup is normal, and there
 is a canonical isomorphism $\tPi/\Pi\simeq G$.
\end{proposition}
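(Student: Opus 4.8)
The plan is to exhibit $\tPi$ as an explicit extension of $G$ by $\Pi$ and read off the claim from the combinatorics of the presentation. First I would observe that the generators $\lm,\mu,\nu$ of $\tPi$ satisfy exactly the defining relations of $G$ \emph{modulo} the normal closure $N$ of $\{\bt_0,\dots,\bt_7\}$: indeed, $\lm^4=\mu^2=\nu^2=(\lm\nu)^2=1$ hold on the nose, while $(\lm\mu)^2=\bt_7\bt_6\in N$ and $(\nu\mu)^2=\bt_6\bt_0\bt_7\bt_6\in N$, so in $\tPi/N$ all the relations $\lm^4=\mu^2=\nu^2=(\lm\mu)^2=(\lm\nu)^2=1$ of Definition~\ref{defn-G} are satisfied. (Here I would need to note $(\mu\nu)^2=1$ in $G$ follows from the others, or alternatively check $(\nu\mu)^2\in N$ directly, which is one of the given relations.) Hence there is a surjection $q\:G\to\tPi/N$. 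Conversely, mapping $\bt_k\mapsto 1$, $\lm\mapsto\lm$, $\mu\mapsto\mu$, $\nu\mapsto\nu$ respects all the relations of $\tPi$ (the $\bt$-relations become trivial, the conjugation relations $\lm\bt_k\lm^{-1}=\lm_*(\bt_k)$ etc.\ become $1=1$ since every $\mu_*(\bt_k)$, $\nu_*(\bt_k)$, $\lm_*(\bt_k)$ is a word in the $\bt$'s), so there is a retraction $\tPi\to G$ killing $N$; this shows $q$ is an isomorphism, i.e.\ $\tPi/N\simeq G$. So it remains to prove that $N$, the normal closure of the $\bt_k$, is precisely the subgroup $\Pi'$ \emph{generated} by the $\bt_k$, and that $\Pi'\simeq\Pi$.

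For $N=\Pi'$ it suffices to check that $\Pi'$ is already normal in $\tPi$, i.e.\ that conjugation by each of $\lm,\mu,\nu$ carries each $\bt_k$ into $\Pi'$; but this is exactly what the relations $\lm\bt_k\lm^{-1}=\lm_*(\bt_k)$, $\mu\bt_k\mu^{-1}=\mu_*(\bt_k)$, $\nu\bt_k\nu^{-1}=\nu_*(\bt_k)$ say, since the right-hand sides are words in the $\bt_j$ by Proposition~\ref{prop-Aut-Pi}. (One also uses $\mu^{-1}=\mu$, $\nu^{-1}=\nu$.) Thus $\Pi'\trianglelefteq\tPi$ and $\Pi'=N$, giving $\tPi/\Pi'\simeq G$ once we identify $\Pi'$ with $\Pi$.

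For the identification $\Pi'\simeq\Pi$: there is an obvious surjection $\Pi\twoheadrightarrow\Pi'$ sending $\bt_k\mapsto\bt_k$, since the relations $\bt_{k+4}=\bt_k^{-1}$ and $\bt_0\cdots\bt_7=1$ are among the defining relations of $\tPi$. To see it is injective, I would build a homomorphism $\Pi\to\tPi$ in the reverse direction, or better, show $\Pi'$ has a quotient isomorphic to $\Pi$. The cleanest route: Proposition~\ref{prop-Aut-Pi} shows $\Pi$ carries automorphisms $\lm_*,\mu_*,\nu_*$ with $\lm_*^4=\mu_*^2=\nu_*^2=(\lm_*\nu_*)^2=1$ and $(\lm_*\mu_*)^2$, $(\nu_*\mu_*)^2$ realised as conjugation by $\bt_7\bt_6$ and $\bt_6\bt_0\bt_7\bt_6$ respectively. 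By Corollary~\ref{cor-trivial-centre} the conjugation map $\Pi\to\Aut(\Pi)$ is injective, so we may form the subgroup of $\Aut(\Pi)$ generated by $\Inn(\Pi)$ together with $\lm_*,\mu_*,\nu_*$; the relations of $\tPi$ are precisely designed so that $\bt_k\mapsto(\text{conjugation by }\bt_k)$, $\lm\mapsto\lm_*$, $\mu\mapsto\mu_*$, $\nu\mapsto\nu_*$ defines a homomorphism $\rho\:\tPi\to\Aut(\Pi)$. Then $\rho$ restricted to $\Pi'$ factors through the surjection $\Pi'\to\Pi$ composed with conjugation $\Pi\hookrightarrow\Aut(\Pi)$, and since the latter is injective, the surjection $\Pi\to\Pi'$ is also injective, hence an isomorphism. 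Finally $\tPi/\Pi'\simeq\tPi/N\simeq G$, and under this isomorphism the coset of $\lm$ maps to $\lm$, etc., so the isomorphism is canonical in the asserted sense.

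The main obstacle is verifying that the assignments $\lm\mapsto\lm_*$, $\mu\mapsto\mu_*$, $\nu\mapsto\nu_*$, $\bt_k\mapsto\mathrm{conj}_{\bt_k}$ genuinely respect \emph{all} the defining relations of $\tPi$ inside $\Aut(\Pi)$ — in particular that $(\lm_*\mu_*)^2$ equals conjugation by $\bt_7\bt_6$ and $(\nu_*\mu_*)^2$ equals conjugation by $\bt_6\bt_0\bt_7\bt_6$ (exactly the content of the last display in Proposition~\ref{prop-Aut-Pi}, so it is available), and that the conjugation relations $\lm_*\,\mathrm{conj}_{\bt_k}\,\lm_*^{-1}=\mathrm{conj}_{\lm_*(\bt_k)}$ hold — this last is the automatic identity $g\,\mathrm{conj}_x\,g^{-1}=\mathrm{conj}_{g(x)}$ valid for any automorphism $g$. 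Everything else is bookkeeping with the presentations, and the word computations themselves are flagged for machine verification (\texttt{hyperbolic/Pi\_check.mpl}).
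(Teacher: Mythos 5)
Your proposal is correct and follows essentially the same route as the paper: show the subgroup $\Pi'$ generated by the $\bt_k$ is normal via the conjugation relations, identify $\tPi/\Pi'$ with $G$ by killing the $\bt_k$, and prove injectivity of $\Pi\to\Pi'$ by mapping $\tPi$ to $\Aut(\Pi)$ (using Proposition~\ref{prop-Aut-Pi}) and invoking the triviality of the centre of $\Pi$. The only difference is cosmetic (the paper phrases the last step as a diagram chase through $\Inn(\Pi)\to\Aut(\Pi)\to\Out(\Pi)$).
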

\begin{proof}
 Let $\Pi'$ be the subgroup of $\tPi$ generated by
 $\bt_0,\dotsc,\bt_7$.  Using the defining relations for
 $\lm\bt_k\lm^{-1}$, $\mu\bt_k\mu$ and $\nu\bt_k\nu$ we see that
 $\Pi'$ is normal.  If we just set all the elements $\bt_k$ to the
 identity in our presentation for $\tPi$, we obtain a presentation for
 $\tPi/\Pi'$; from this it is clear that $\tPi/\Pi'=G$.  There is an
 evident surjective homomorphism $\phi\:\Pi\to\Pi'$, sending $\bt_k$
 to $\bt_k$ for all $k$.  All that is left is to show that this is
 injective.  For this, we let $\gm\:\Pi\to\Inn(\Pi)$ be the usual
 conjugation map, given by $\gm(\al)(\pi)=\al\pi\al^{-1}$.  This is
 surjective by the definition of $\Inn(\Pi)$, and injective by
 Corollary~\ref{cor-trivial-centre}, so it is an isomorphism.  Next,
 we define
 \begin{align*}
  \dl(\lm)   &= \lm_* \in \Aut(\Pi) \\
  \dl(\mu)   &= \mu_* \in \Aut(\Pi) \\
  \dl(\nu)   &= \nu_* \in \Aut(\Pi) \\
  \dl(\bt_k) &= \gm(\bt_k) \in \Inn(\Pi) \lhd \Aut(\Pi).
 \end{align*}
 Using Proposition~\ref{prop-Aut-Pi} we see that this is compatible
 with the defining relations for $\tPi$, so it extends to give a
 homomorphism $\tPi\to\Aut(\Pi)$.  We can now chase the diagram
 \[ \xymatrix{
  \Pi \ar[d]_\gm^\simeq \ar[r]^\phi &
  \tPi \ar[d]_{\dl} \ar@{->>}[r] &
  G \ar[d] \\
  \Inn(\Pi) \ar@{ >->}[r]_{\text{inc}} &
  \Aut(\Pi) \ar@{->>}[r] & \Out(\Pi)
 } \]
 to see that $\phi$ is injective as required.
\end{proof}
\begin{remark}
 Elements of $\tPi$ are represented as pairs \mcode+[T,L]+, with
 \mcode+T+ in $G$ and \mcode+L+ in $\Pi$.  The multiplication rule is
 not obvious; it is implemented by the function \mcode+Pi_tilde_mult+,
 which uses data stored in the table \mcode+G_Pi_cocycle+.  Inversion
 is implemented by the function \mcode+Pi_tilde_inv+.  All this is in
 \fname+hyperbolic/Pi.mpl+.
\end{remark}

We next give an alternate presentation of $\Pi$, which will be helpful
when we want to analyse actions on the unit disc.
\begin{proposition}\lbl{prop-Pi-Sg}
 $\Pi$ is generated by the elements
 \begin{align*}
  \sg_a &= \bt_5\bt_6 &
  \sg_b &= \bt_2      &
  \sg_c &= \bt_7\bt_0 \\
  \sg_d &= \bt_4      &
  \sg_e &= \bt_1\bt_2 &
  \sg_f &= \bt_3\bt_4
 \end{align*}
 subject only to the relations
 \[ \sg_a\sg_c\sg_e\sg_f =
    \sg_b\sg_e^{-1}\sg_b^{-1}\sg_a^{-1} =
    \sg_d\sg_f^{-1}\sg_d^{-1}\sg_c^{-1} = 1.
 \]
\end{proposition}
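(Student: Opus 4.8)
The plan is to show that the group $\hat\Pi$ presented abstractly by generators $\sg_a,\dotsc,\sg_f$ and the three displayed relators is isomorphic to $\Pi$, by constructing mutually inverse homomorphisms; concretely this realises the sequence of Tietze transformations that adjoins $\sg_a,\dotsc,\sg_f$ to the presentation of $\Pi$ as new generators with their defining words, then eliminates $\bt_0,\dotsc,\bt_7$, and observes that exactly the three claimed relations survive. First I would record the inverse substitution. Using only the relations $\bt_{k+4}=\bt_k^{-1}$, the definitions of $\sg_b,\sg_d,\sg_e,\sg_f$ (and one instance of $\bt_0\bt_4=1$) give
\[ \bt_2=\sg_b,\qquad \bt_4=\sg_d,\qquad \bt_0=\sg_d^{-1},\qquad
   \bt_1=\sg_e\sg_b^{-1},\qquad \bt_3=\sg_f\sg_d^{-1}, \]
and hence $\bt_5=\sg_b\sg_e^{-1}$, $\bt_6=\sg_b^{-1}$, $\bt_7=\sg_d\sg_f^{-1}$. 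In particular $\bt_0,\dotsc,\bt_3$ lie in the subgroup generated by the $\sg$'s, so the $\sg$'s generate $\Pi$.

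Next I would verify that the three relators are trivial in $\Pi$. Substituting the definitions, $\sg_b\sg_e^{-1}\sg_b^{-1}=\bt_2(\bt_1\bt_2)^{-1}\bt_2^{-1}=\bt_1^{-1}\bt_2^{-1}=\bt_5\bt_6=\sg_a$ and likewise $\sg_d\sg_f^{-1}\sg_d^{-1}=\bt_7\bt_0=\sg_c$, so the second and third relators hold by a one-line free-group calculation using only $\bt_{k+4}=\bt_k^{-1}$. For the first, $\sg_a\sg_c\sg_e\sg_f=\bt_5\bt_6\bt_7\bt_0\bt_1\bt_2\bt_3\bt_4$, which is a cyclic conjugate of $\bt_0\bt_1\dotsb\bt_7$ and hence trivial by Remark~\ref{rem-runs}. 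This produces a well-defined surjective homomorphism $\Phi\:\hat\Pi\to\Pi$ with $\Phi(\sg_i)=\sg_i$.

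Then I would build the reverse homomorphism $\Psi\:\Pi\to\hat\Pi$ directly from the defining presentation of $\Pi$, setting $\bt_0\mapsto\sg_d^{-1}$, $\bt_1\mapsto\sg_e\sg_b^{-1}$, $\bt_2\mapsto\sg_b$, $\bt_3\mapsto\sg_f\sg_d^{-1}$, and $\bt_{k+4}$ to the inverse of the image of $\bt_k$. The relations $\bt_{k+4}\bt_k=1$ then hold by construction, so the only thing to check is that the image of $\bt_0\bt_1\dotsb\bt_7$ is trivial in $\hat\Pi$. Substituting and cancelling, that image equals $\sg_d^{-1}\sg_e\sg_f\,\sg_b\sg_e^{-1}\sg_b^{-1}\,\sg_d\sg_f^{-1}$, which up to conjugacy is $\sg_f(\sg_b\sg_e^{-1}\sg_b^{-1})(\sg_d\sg_f^{-1}\sg_d^{-1})\sg_e$; using the second and third relators in the form $\sg_a=\sg_b\sg_e^{-1}\sg_b^{-1}$ and $\sg_c=\sg_d\sg_f^{-1}\sg_d^{-1}$ this becomes $\sg_f\sg_a\sg_c\sg_e$, a cyclic permutation of $\sg_a\sg_c\sg_e\sg_f$, hence trivial by the first relator. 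So $\Psi$ is well defined.

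Finally I would check on generators that $\Phi\Psi=\mathrm{id}_\Pi$ and $\Psi\Phi=\mathrm{id}_{\hat\Pi}$; for the first this is immediate (e.g. $\Phi\Psi(\bt_1)=\sg_e\sg_b^{-1}=\bt_1\bt_2\bt_2^{-1}=\bt_1$), and for the second the cases $\sg_b,\sg_d,\sg_e,\sg_f$ are immediate while $\Psi\Phi(\sg_a)=\sg_b\sg_e^{-1}\sg_b^{-1}$ and $\Psi\Phi(\sg_c)=\sg_d\sg_f^{-1}\sg_d^{-1}$ equal $\sg_a$ and $\sg_c$ by the second and third relators. Thus $\Phi$ is an isomorphism. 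The argument is entirely formal, so I expect the only (modest) obstacle to be keeping the free-group word manipulation showing $\Psi$ respects the surface relation accurate; it can be cross-checked with the reduction and multiplication routines for $\Pi$ in \texttt{hyperbolic/Pi.mpl}.
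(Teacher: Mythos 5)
Your proposal is correct and follows essentially the same route as the paper's proof: both construct mutually inverse homomorphisms between $\Pi$ and the abstractly presented group, using the identical substitutions $\bt_0=\sg_d^{-1}$, $\bt_1=\sg_e\sg_b^{-1}$, $\bt_2=\sg_b$, $\bt_3=\sg_f\sg_d^{-1}$, and verifying the surface relation via the same grouping into the relators $\sg_b\sg_e^{-1}\sg_b^{-1}=\sg_a$, $\sg_d\sg_f^{-1}\sg_d^{-1}=\sg_c$ and $\sg_a\sg_c\sg_e\sg_f=1$ (the paper works with the cyclic conjugate $\bt_5\bt_6\bt_7\bt_0\bt_1\bt_2\bt_3\bt_4$ rather than conjugating at the end, but the computation is the same).
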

\begin{proof}
 First, it is straightforward to check that the stated relations hold
 in $\Pi$.  Thus, if we let $\Sg$ denote the abstract group with the
 indicated generators and relations, we have a homomorphism
 $\phi\:\Sg\to\Pi$ given by $\phi(\sg_t)=\sg_t$ for all
 $t\in\{a,b,c,d,e,f\}$.  Next, we define elements
 $\bt_0,\dotsc,\bt_7\in\Sg$ by
 \[ \bt_0 = \sg_d^{-1}      \hspace{3em}
    \bt_1 = \sg_e\sg_b^{-1} \hspace{3em}
    \bt_2 = \sg_b           \hspace{3em}
    \bt_3 = \sg_f\sg_d^{-1} \hspace{3em}
    \bt_{i+4}=\bt_i^{-1}.
 \]
 We claim that $\bt_0\bt_1\dotsb\bt_7=1$ in $\Sg$.  It will suffice to
 prove the conjugate relation
 $\bt_5\bt_6\bt_7\bt_0\bt_1\bt_2\bt_3\bt_4=1$.  We can write out the
 left hand side and group the terms as
 \[ (\sg_b\sg_e^{-1}\sg_b^{-1})(\sg_d\sg_f^{-1}\sg_d^{-1})
    \sg_e(\sg_b^{-1}\sg_b)\sg_f(\sg_d^{-1}\sg_d).
 \]
 The relation $\sg_b\sg_e^{-1}\sg_b^{-1}\sg_a^{-1}=1$ converts the
 first parenthesised term to $\sg_a$, and similarly the second becomes
 $\sg_c$.  The third parenthesised term is clearly the identity, as is
 the fourth.  This leaves $\sg_a\sg_c\sg_e\sg_f$, which is the
 identity by the first defining relation for $\Sg$.  This means that
 we have a homomorphism $\psi\:\Pi\to\Sg$ given by $\psi(\bt_i)=\bt_i$
 for all $i$.  Straightforward calculations now show that
 $\phi\psi(\bt_i)=\bt_i$ for all $i$, and $\psi\phi(\sg_t)=\sg_t$ for
 all $t$, so $\phi\psi=1_\Pi$ and $\psi\phi=1_\Sg$.
 \begin{checks}
  hyperbolic/Pi_check.mpl: check_Pi_sigma
 \end{checks}
\end{proof}

\subsection{Cromulent actions}
\lbl{sec-H-action}

We next describe a family of actions of $\tPi$ on the unit disc
\[ \Dl = \{z\in\C\st |z|<1\}. \]
This has a standard Riemannian metric as follows:
\[ ds^2 = 4|dz|^2 / (1-|z|^2)^2. \]
We recall some standard facts about this, most of which can be found
in~\cite[Section 4.1]{an:hg}, for example.  The Gaussian curvature of
the metric is equal to $-1$.  The conformal automorphisms of $\Dl$
have the form
\[ z \mapsto \lm \frac{z-\al}{1-\ov{\al}z}, \]
with $|\al|<1$ and $|\lm|=1$, and that these all preserve the
metric.  Similarly, the anticonformal automorphisms have the form
\[ z \mapsto \lm \frac{\ov{z}-\al}{1-\ov{\al}\,\ov{z}}. \]
The geodesic distance function is
\[ \dhyp(z,w) =
     2\arctanh\left|\frac{z-w}{1-\ov{z}w}\right|.
\]
\begin{checks}
 hyperbolic/HX_check.mpl: check_hyperbolic_metric()
\end{checks}

Now fix a number $b$ with $0<b<1$, and put $b_+=\sqrt{1+b^2}$ and
$b_-=\sqrt{1-b^2}$.  We define an action of $\tPi$ on the
unit disc $\Dl=\{z\in\C\st |z|<1\}$ as follows:
\begin{align*}
 \lm(z) &= iz &
 \bt_0(z) &= \frac{b_+z+1}{z+b_+} \\
 \mu(z) &= \frac{b_+z-b^2-i}{(b^2-i)z-b_+} &
 \bt_1(z) &= \frac{b_+^3z-(2+i)b^2-i}{((i-2)b^2+i)z+b_+^3} \\
 \nu(z) &= \ov{z} &
 \bt_{2n}(z) &= i^n\bt_0(z/i^n) \\
 &&
 \bt_{2n+1}(z) &= i^n\bt_1(z/i^n).
\end{align*}
One can check by direct calculation that the defining relations for
$\tPi$ are satisfied.
\begin{checks}
 hyperbolic/HX_check.mpl: check_Pi_action()
\end{checks}
\begin{remark}
 The parameters $b$, $b_+$ and $b_-$ are \mcode+a_H+, \mcode+ap_H+ and
 \mcode+am_H+ in the Maple code.  The action of $g\in\tPi$ on $z\in\Dl$
 is given by \mcode+act_Pi_tilde(g,z)+.  The group $G$ can be
 considered as a subset of $\tPi$ which is not a subgroup.  For
 $g\in G$, the alternative notation \mcode+act_H[g](z)+ also works for
 \mcode+act_Pi_tilde(g,z)+.  This notation is potentially misleading,
 because we do not actually have an action of $G$.
\end{remark}
\begin{remark}\lbl{rem-a-Ho}
 The evident analogue of Remark~\ref{rem-a-Po} applies to \mcode+a_H+.
 Most parts of the code treat \mcode+a_H+ as a symbol, but there are
 also global variables \mcode+a_H0+ and \mcode+a_H1+ which holds
 numerical values for \mcode+a_H+.  These should be set using the
 function \mcode+set_a_H0+, which is defined in
 \fname+hyperbolic/HX0.mpl+.  There is also a function
 \mcode+simplify_H+ analogous to the function \mcode+simplify_P+
 mentioned in Remark~\ref{rem-simplify-P}: it applies some
 simplification rules like $\sqrt{1-b^4}=b_+b_-$ which are not always
 used by Maple.
\end{remark}

We now put $HX(b)=\Dl/\Pi$.  Later we will give this the structure of a
cromulent surface.  As a first step, we note that $\Pi$ is normal in
$\tPi$, so there is an induced action of $G=\tPi/\Pi$ on $HX(b)$.

\begin{definition}\lbl{defn-v-H}
 We define points $v_0,\dotsc,v_{13}\in \Dl$ as follows:
 \begin{align*}
  v_0 &= 0 &
   v_6 &= \frac{1+i}{\rt}\;\frac{\rt-b_-}{b_+} &
    v_{10} &= i(b_+-b) \\
  v_1 &= \frac{1+i}{2}b_+ &
   v_7 &= i \, v_6 &
    v_{11} &= b_+-b \\
  v_2 &= \frac{b\,b_--b_+}{i-b^2} &
   v_8 &= -v_6 &
    v_{12} &= (b+b_+)\frac{i+(i+2)b^2}{(b+b_+)^2+b^2} \\
  v_3 &= \frac{b\,b_--b_+}{ib^2-1} &
   v_9 &= -i\, v_6 &
    v_{13} &= i\,\ov{v_{12}} \\
  v_4 &= i \, v_3 \\
  v_5 &= -i \, v_2.
 \end{align*}
\end{definition}

\begin{definition}\lbl{defn-v-H-extra}
 We also consider additional points in the same $\Pi$-orbits:
 \[ \begin{array}{rll}
  v_{ 1.1} &= \pp i\,v_1     &= \bt_2\bt_1(v_1) \\
  v_{ 1.2} &= -      v_1     &= \bt_1\bt_2\bt_3\bt_4(v_1) \\
  v_{ 1.3} &= -i\,   v_1     &= \bt_3\bt_4(v_1) \\
  v_{ 2.1} &= \pp\ov{v_2   } &= \bt_6(v_2) \\
  v_{ 3.1} &= -  \ov{v_3   } &= \bt_4(v_3) \\
  v_{ 4.1} &= \pp\ov{v_4   } &= \bt_6(v_4) \\
  v_{ 5.1} &= -  \ov{v_5   } &= \bt_4(v_5) \\
  v_{10.1} &= \pp\ov{v_{10}} &= \bt_6(v_{10}) \\
  v_{11.1} &= -  \ov{v_{11}} &= \bt_4(v_{11}) \\
  v_{12.1} &= -      v_{12}  &= \bt_1(v_{12}) \\
  v_{12.2} &= -  \ov{v_{12}} &= \bt_2\bt_1(v_{12}) \\
  v_{12.3} &= \pp\ov{v_{12}} &= \bt_6(v_{12}) \\
  v_{13.1} &= -      v_{13}  &= \bt_4\bt_3\bt_4(v_{13}) \\
  v_{13.2} &= \pp\ov{v_{13}} &= \bt_3\bt_4(v_{13}) \\
  v_{13.3} &= -  \ov{v_{13}} &= \bt_4(v_{13}).
 \end{array} \]
\end{definition}
\begin{remark}
 The points $v_i$ are represented as \mcode+v_H[i]+, and there are also
 points \mcode+v_H0[i]+ and \mcode+v_H1[i]+ that are obtained by
 substituting the numerical values \mcode+a_H0+ or \mcode+a_H1+ for
 the symbol \mcode+a_H+.  The entries in the above table have the form
 $v_i=\gm_i(v_j)$ where $i$ is not an integer, $j$ is the integer part
 of $i$, and $\gm_i$ is an element of $\Pi$.  The element $\gm_i$
 is represented in maple as \mcode+v_H_fraction_offset[i]+.
\end{remark}

One can check that $\lm$, $\mu$ and $\nu$ act as follows:
\begin{align*}
 \lm(v_{ 0}) &= v_{ 0} &
 \mu(v_{ 0}) &= \bt_2(\bt_3(\bt_4(v_{ 1}))) &
 \nu(v_{ 0}) &= v_{ 0} \\
 \lm(v_{ 1}) &= \bt_2(\bt_1(v_{ 1})) &
 \mu(v_{ 1}) &= \bt_2(v_{ 0}) &
 \nu(v_{ 1}) &= \bt_3(\bt_4(v_{ 1})) \\
 \lm(v_{ 2}) &= \bt_4(v_{ 3}) &
 \mu(v_{ 2}) &= v_{ 2} &
 \nu(v_{ 2}) &= \bt_6(v_{ 2}) \\
 \lm(v_{ 3}) &= v_{ 4} &
 \mu(v_{ 3}) &= \bt_2(v_{ 5}) &
 \nu(v_{ 3}) &= v_{ 5} \\
 \lm(v_{ 4}) &= \bt_4(v_{ 5}) &
 \mu(v_{ 4}) &= \bt_5(\bt_6(v_{ 4})) &
 \nu(v_{ 4}) &= \bt_6(v_{ 4}) \\
 \lm(v_{ 5}) &= v_{ 2} &
 \mu(v_{ 5}) &= \bt_2(\bt_3(\bt_4(v_{ 3}))) &
 \nu(v_{ 5}) &= v_{ 3} \\
 \lm(v_{ 6}) &= v_{ 7} &
 \mu(v_{ 6}) &= \bt_2(v_{ 9}) &
 \nu(v_{ 6}) &= v_{ 9} \\
 \lm(v_{ 7}) &= v_{ 8} &
 \mu(v_{ 7}) &= \bt_5(v_{ 8}) &
 \nu(v_{ 7}) &= v_{ 8} \\
 \lm(v_{ 8}) &= v_{ 9} &
 \mu(v_{ 8}) &= \bt_5(\bt_4(\bt_3(v_{ 7}))) &
 \nu(v_{ 8}) &= v_{ 7} \\
 \lm(v_{ 9}) &= v_{ 6} &
 \mu(v_{ 9}) &= \bt_2(\bt_3(\bt_4(v_{ 6}))) &
 \nu(v_{ 9}) &= v_{ 6} \\
 \lm(v_{10}) &= \bt_4(v_{11}) &
 \mu(v_{10}) &= v_{12} &
 \nu(v_{10}) &= \bt_6(v_{10}) \\
 \lm(v_{11}) &= v_{10} &
 \mu(v_{11}) &= \bt_2(\bt_3(\bt_4(v_{13}))) &
 \nu(v_{11}) &= v_{11} \\
 \lm(v_{12}) &= \bt_4(v_{13}) &
 \mu(v_{12}) &= v_{10} &
 \nu(v_{12}) &= \bt_6(v_{12}) \\
 \lm(v_{13}) &= \bt_2(\bt_1(v_{12})) &
 \mu(v_{13}) &= \bt_2(v_{11}) &
 \nu(v_{13}) &= \bt_3(\bt_4(v_{13})) \\
\end{align*}
This shows that $G$ permutes the corresponding points in $HX(b)$ in
accordance with Definition~\ref{defn-precromulent}.  The elements of
$\Pi$ appearing here are recorded in the table
\mcode+v_action_witness_H+, which is defined in the file
\fname+hyperbolic/HX.mpl+.
\begin{checks}
 cromulent.mpl: check_precromulent("H")
 hyperbolic/HX_check.mpl: check_v_H()
\end{checks}

\subsection{The curve system}
\lbl{sec-H-curves}

We would now like to construct curves $C_0,\dotsc,C_8$ in $\Dl$ or
$HX(b)$.  These will be the fixed sets of certain anticonformal
involutions of $\Dl$, or the images in $HX(b)$ of those fixed sets.
Such involutions can be classified as follows:
\begin{itemize}
 \item[(a)] Suppose that $m\in\C$ with $|m|>1$.  Put
  \[ \xi_m(z) = \frac{m\,\ov{z}-1}{\ov{z}-\ov{m}}. \]
  This is an anticonformal involution on $\C_\infty$ that preserves
  $\Dl$.  Maple notation (defined in the file \fname+hyperbolic/HX.mpl+) is
  \mcode+xi(m,z)+.  We put
  \[ \Xi_m = \{z\in \Dl \st \xi_m(z)=z\}. \]
  If we put $d=\sqrt{|m|^2-1}$, then the fixed set of $\xi_m$ in $\C$
  is the circle of radius $d$ centred at $m$, and $\Xi_m$ is the
  intersection of this circle with $\Dl$.  This is a geodesic in $\Dl$,
  and the following formula gives an isometric parametrisation
  $\om_m\:\R\to\Xi_m$:
  \[ \om_m(s) =
      \left(\frac{id-1}{\ov{m}}\right)
      \frac{(id+1) - i |m| e^s}{i |m| e^s + (id-1)}.
  \]
  Maple notation is \mcode+xi_curve(m,s)+.
  The endpoints of $\Xi_m$ on the unit circle are
  $(1\pm id)/\ov{m}$.  If we have another involution of the same type
  with parameter $m'$, then $\xi_m\xi_{m'}=\xi_{m'}\xi_m$ iff $\Xi_m$
  and $\Xi_{m'}$ cross at right angles, iff $\text{Re}(\ov{m}m')=1$.
 \item[(b)] Suppose instead that $u\in\C$ with $|u|=1$.  We then have
  an anticonformal involution $z\mapsto u\ov{z}$.  The fixed set in
  $\Dl$ is the straight line joining the two square roots of $u$.  This
  is isometrically parameterised by the map
  \[ s\mapsto \sqrt{u}\frac{e^s - 1}{e^s+1} =
      \sqrt{u}\tanh(s/2).
  \]
 \item[(c)] Every anticonformal involution on $\Dl$ arises in one of the
  above two ways.
\end{itemize}
\begin{checks}
 hyperbolic/HX_check.mpl: check_xi()
\end{checks}

\begin{definition}\lbl{defn-H-curves}
 We define constants $s_0,\dotsc,s_4$ as follows:
 \begin{align*}
  s_0 &= 2\log\left(\frac{\rt b}{b_+-b_-}\right) &
  s_1 &= \frac{1}{2}\log\left(\frac{\rt+b_+}{\rt-b_+}\right) &
  s_2 &= \log\left(\frac{1+b}{b_-}\right) \\
  s_3 &= \frac{1}{2}\log\left(\frac{b+b_++1}{b+b_+-1}\right) &
  s_4 &= \frac{1}{4}\log\left(\frac{b_+^2+2b_++2}{b_+^2-2b_++2}\right).
 \end{align*}
 We then define maps $\tc_k\:\R\to \Dl$ for $0\leq k\leq 8$ as follows:
 \begin{align*}
  \tc_0(t) &= \om_{(1+i)/b_+}((t/\pi-1/4)s_0) \\
  \tc_1(t) &= e^{i\pi/4}\tanh(t\,s_1/\pi) &
  \tc_2(t) &= e^{3i\pi/4}\tanh(t\,s_1/\pi) \\
  \tc_3(t) &= \om_{b_+}(-t\,s_2/\pi) &
  \tc_4(t) &= \om_{ib_+}(-t\,s_2/\pi) \\
  \tc_5(t) &= \tanh(t\,s_3/\pi) &
  \tc_6(t) &= i\,\tanh(t\,s_3/\pi) \\
  \tc_7(t) &= \om_{ib_+/2+1/b_+}( t\,s_3/\pi - s_4) &
  \tc_8(t) &= \om_{ b_+/2+i/b_+}(-t\,s_3/\pi + s_4).
 \end{align*}
 We write $c_k$ for the composite
 \[ \R \xra{\tc_k} \Dl \xra{} \Dl/\Pi = HX(b). \]
 We also put $\tC_k=\tc_k(\R)\subset \Dl$ and $C_k=c_k(\R)\subset \Dl/\Pi$.
\end{definition}
\begin{remark}
 The function $\tc_k(t)$ is \mcode+c_H[k](t)+, and the constant $s_j$
 is \mcode+s_H[j]+.  For $k\in\{0,3,4,7,8\}$ we see that $\tc_k(\R)$ is
 a circular arc.  The centre is recorded as \mcode+c_H_p[k]+, and the
 radius as \mcode+c_H_r[k]+.
\end{remark}

The factors $s_k$ are chosen to make the maps $c_k$ periodic with
period $2\pi$.  In more detail:
\begin{proposition}\lbl{prop-c-H-cycle}
 For all $t\in\R$ we have
 \begin{align*}
  \tc_0(t+2\pi) &= \bt_0\bt_2\bt_4\bt_6(\tc_0(t)) \\
  \tc_1(t+2\pi) &= \bt_0\bt_7\bt_6\bt_5(\tc_1(t)) &
  \tc_2(t+2\pi) &= \bt_2\bt_1\bt_0\bt_7(\tc_2(t)) \\
  \tc_3(t+2\pi) &= \bt_0\bt_7(\tc_3(t)) &
  \tc_4(t+2\pi) &= \bt_2\bt_1(\tc_4(t)) \\
  \tc_5(t+2\pi) &= \bt_0(\tc_5(t)) &
  \tc_6(t+2\pi) &= \bt_2(\tc_6(t)) \\
  \tc_7(t+2\pi) &= \bt_0\bt_2\bt_1(\tc_7(t)) &
  \tc_8(t+2\pi) &= \bt_2\bt_3\bt_4(\tc_8(t)).
 \end{align*}
 Thus, for $0\leq k\leq 8$ we have $c_k(t+2\pi)=c_k(t)$.
\end{proposition}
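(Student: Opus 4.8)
The plan is to verify each of the nine displayed identities by direct computation with explicit Möbius (and anti-Möbius) transformations, exploiting the special structure of the parametrisations $\tc_k$ and the constants $s_k$.

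First I would recall the general principle behind the choice of the $s_k$. Each $\tc_k$ is built from an isometric parametrisation of a geodesic in $\Dl$ of one of the two types described before Definition~\ref{defn-H-curves}: for $k\in\{0,3,4,7,8\}$ it is $\om_m(\text{affine function of }t)$ for a suitable $m$, and for $k\in\{1,2,5,6\}$ it is $\sqrt{u}\tanh(\text{affine function of }t)$. An isometric parametrisation $\om$ of a geodesic satisfies $\om(s+T)=\gm(\om(s))$ for a hyperbolic translation $\gm$ along that geodesic of translation length $T$. So for each $k$, advancing $t$ by $2\pi$ advances the internal argument by a fixed amount (namely $2s_0$ for $k=0$, $2s_1$ for $k\in\{1,2\}$, $-2s_2$ for $k\in\{3,4\}$, $2s_3$ for $k\in\{5,6\}$, and $\pm 2s_3$ for $k\in\{7,8\}$), and this must realise the hyperbolic translation given by the product of $\bt$'s on the right-hand side, acting on the appropriate geodesic. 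Concretely, for each $k$ I would: (i) write down the element $\pi_k\in\Pi$ on the right-hand side as an explicit element of $\tPi$ acting on $\Dl$, using the formulas for $\bt_0,\bt_1$ and the rule $\bt_{2n}(z)=i^n\bt_0(z/i^n)$, $\bt_{2n+1}(z)=i^n\bt_1(z/i^n)$; (ii) compute the resulting Möbius transformation $\pi_k$; (iii) check that $\pi_k(\tc_k(t))=\tc_k(t+2\pi)$ identically in $t$. Step (iii) reduces, since both sides are anti/holomorphic functions agreeing with isometries of the same geodesic, to checking agreement of the two parametrisations at (say) two values of $t$, or equivalently to a finite matrix identity in $PGL_2(\C)$ after clearing the $\tanh$ or the $\om_m$ formula.

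For the $\tanh$-type curves ($k=1,2,5,6$) the computation is cleanest: $\sqrt{u}\tanh((t+2\pi)s/\pi\cdot\tfrac12\cdot 2) $—more precisely the internal argument shifts by $2s$—so one needs $\pi_k\big(\sqrt{u}\tanh(\sigma)\big)=\sqrt{u}\tanh(\sigma+2s)$, which becomes a rational identity in $e^{\sigma}$; substituting the closed forms for $s_1,s_3$ (which were manufactured precisely so that $e^{2s_1},e^{2s_3}$ are the relevant algebraic expressions in $b_+,b_-,b$) makes both sides coincide. For the circular-arc curves ($k=0,3,4,7,8$) I would use the explicit formula for $\om_m(s)$ from part~(a) before Definition~\ref{defn-H-curves}; again advancing $s$ by the stated amount and composing with $\pi_k$ gives an identity of Möbius transformations that is checked by matching $2\times2$ matrices up to scalar, with $e^{2s_0},e^{2s_2},e^{2s_4}$ substituted from their definitions. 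The final sentence, $c_k(t+2\pi)=c_k(t)$, is then immediate: the right-hand elements $\pi_k$ all lie in $\Pi$, so they act trivially on $\Dl/\Pi=HX(b)$, and $c_k$ is by definition the composite of $\tc_k$ with the quotient map.

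The main obstacle is purely computational bookkeeping rather than any conceptual difficulty: one must be careful to express each $\pi_k$ correctly as a word in $\tPi$ (the indexing $\bt_{2n},\bt_{2n+1}$ via conjugation by powers of $\lm$ is easy to get wrong), to keep track of which curves are parametrised with a $+$ versus a $-$ sign in the internal argument (this governs the \emph{direction} of the translation, hence whether one needs $\pi_k$ or $\pi_k^{-1}$), and to choose consistent branches of the square roots $\sqrt{u}$ and of $d=\sqrt{|m|^2-1}$ so that the endpoints match. All of these are exactly the kind of checks the Maple routine \texttt{check\_precromulent("H")} (cited in the \texttt{checks} block above) is designed to confirm, so I would present the argument structurally as above and defer the arithmetic verification to that code.
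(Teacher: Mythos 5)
Your proposal is correct and matches the paper's approach: the paper's proof is simply ``Computer calculation,'' i.e.\ exactly the direct verification of M\"obius-transformation identities that you describe, with the arithmetic deferred to Maple (the relevant routine is \texttt{check\_c\_H\_monodromy()} in \texttt{hyperbolic/HX\_check.mpl} rather than \texttt{check\_precromulent}). Your structural explanation of why the constants $s_k$ realise the required translation lengths is a helpful elaboration, but the underlying method is the same.
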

(The group element for $\tc_k(t)$ is recorded as \mcode+c_H_cycle[k]+.)
\begin{proof}
 Computer calculation.
 \begin{checks}
  hyperbolic/HX_check.mpl: check_c_H_monodromy()
 \end{checks}
\end{proof}

For the equivariance properties of a curve system, it will suffice to check that
the equations below hold in $\Dl$, and this can be done by direct calculation.
\begin{align*}
 \lm(\tc_0(t)) &= \bt_4(\tc_0(t+\pi/2)) &
 \mu(\tc_0(t)) &= \tc_0(-t) &
 \nu(\tc_0(t)) &= \bt_6(\tc_0(-t)) \\
 \lm(\tc_1(t)) &= \tc_2(t) &
 \mu(\tc_1(t)) &= \bt_5\bt_4\bt_3(\tc_2(\pi+t)) &
 \nu(\tc_1(t)) &= \tc_2(-t) \\
 \lm(\tc_2(t)) &= \tc_1(-t) &
 \mu(\tc_2(t)) &= \bt_2\bt_3\bt_4(\tc_1(\pi+t)) &
 \nu(\tc_2(t)) &= \tc_1(-t) \\
 \lm(\tc_3(t)) &= \tc_4(t) &
 \mu(\tc_3(t)) &= \bt_2\bt_3\bt_4(\tc_3(\pi+t)) &
 \nu(\tc_3(t)) &= \tc_3(-t) \\
 \lm(\tc_4(t)) &= \bt_4(\tc_3(-t)) &
 \mu(\tc_4(t)) &= \tc_4(-t-\pi) &
 \nu(\tc_4(t)) &= \bt_6(\tc_4(t)) \\
 \lm(\tc_5(t)) &= \tc_6(t) &
 \mu(\tc_5(t)) &= \bt_2\bt_3\bt_4(\tc_7(t)) &
 \nu(\tc_5(t)) &= \tc_5(t) \\
 \lm(\tc_6(t)) &= \tc_5(-t) &
 \mu(\tc_6(t)) &= \bt_2\bt_3\bt_4(\tc_8(-t)) &
 \nu(\tc_6(t)) &= \tc_6(-t) \\
 \lm(\tc_7(t)) &= \bt_2\bt_1(\tc_8(t)) &
 \mu(\tc_7(t)) &= \bt_2(\tc_5(t)) &
 \nu(\tc_7(t)) &= \bt_3\bt_4(\tc_7(t)) \\
 \lm(\tc_8(t)) &= \bt_2\bt_1(\tc_7(-t)) &
 \mu(\tc_8(t)) &= \bt_2(\tc_6(-t)) &
 \nu(\tc_8(t)) &= \bt_3\bt_4(\tc_8(-t))
\end{align*}
(The elements of $\Pi$ appearing here are recorded in the table
\mcode+c_action_witness_H+.)
\begin{checks}
 cromulent.mpl: check_precromulent("H")
\end{checks}

In the case $b=0.75$, these curves and vertices can be illustrated as follows:
\begin{center}
 \begin{tikzpicture}[scale=5]
  \draw (0,0) circle(1);
  \draw[blue] (-1,0) -- (1,0);
  \draw[blue] (0,-1) -- (0,1);
  \draw[green] (-0.707,-0.707) -- ( 0.707, 0.707);
  \draw[green] (-0.707, 0.707) -- ( 0.707,-0.707);
  \draw[cyan] (0.800,0.800) (0.294,0.956) arc(-197:-73:0.529);
  \draw[cyan,dotted] (-0.800,0.800) (-0.956,0.294) arc(-107:17:0.529);
  \draw[cyan,dotted] (-0.800,-0.800) (-0.294,-0.956) arc(-17:107:0.529);
  \draw[cyan,dotted] (0.800,-0.800) (0.956,-0.294) arc(73:197:0.529);
  \draw[magenta] (1.250,0.000) (0.800,0.600) arc(127:233:0.750);
  \draw[magenta,dotted] (0.000,1.250) (-0.600,0.800) arc(-143:-37:0.750);
  \draw[magenta,dotted] (-1.250,0.000) (-0.800,-0.600) arc(-53:53:0.750);
  \draw[magenta,dotted] (0.000,-1.250) (0.600,-0.800) arc(37:143:0.750);
  \draw[magenta] (0.000,1.250) (-0.600,0.800) arc(-143:-37:0.750);
  \draw[magenta,dotted] (-1.250,0.000) (-0.800,-0.600) arc(-53:53:0.750);
  \draw[magenta,dotted] (0.000,-1.250) (0.600,-0.800) arc(37:143:0.750);
  \draw[magenta,dotted] (1.250,0.000) (0.800,0.600) arc(127:233:0.750);
  \draw[blue] (0.800,0.625) (0.670,0.742) arc(-222:-62:0.175);
  \draw[blue,dotted] (-0.625,0.800) (-0.742,0.670) arc(-132:28:0.175);
  \draw[blue,dotted] (-0.800,-0.625) (-0.670,-0.742) arc(-42:118:0.175);
  \draw[blue,dotted] (0.625,-0.800) (0.742,-0.670) arc(48:208:0.175);
  \draw[blue] (0.625,0.800) (0.471,0.882) arc(-208:-48:0.175);
  \draw[blue,dotted] (-0.800,0.625) (-0.882,0.471) arc(-118:42:0.175);
  \draw[blue,dotted] (-0.625,-0.800) (-0.471,-0.882) arc(-28:132:0.175);
  \draw[blue,dotted] (0.800,-0.625) (0.882,-0.471) arc(62:222:0.175);
  \fill[black](0.000,0.000) circle(0.007);
  \fill[black](0.625,0.625) circle(0.007);
  \fill[black](0.322,0.573) circle(0.007);
  \fill[black](-0.573,0.322) circle(0.007);
  \fill[black](-0.322,-0.573) circle(0.007);
  \fill[black](0.573,-0.322) circle(0.007);
  \fill[black](0.426,0.426) circle(0.007);
  \fill[black](-0.426,0.426) circle(0.007);
  \fill[black](-0.426,-0.426) circle(0.007);
  \fill[black](0.426,-0.426) circle(0.007);
  \fill[black](0.000,0.500) circle(0.007);
  \fill[black](0.500,0.000) circle(0.007);
  \fill[black](-0.500,0.000) circle(0.007);
  \fill[black](-0.493,0.685) circle(0.007);
  \fill[black](0.685,0.493) circle(0.007);
  \fill[black](-0.625,-0.625) circle(0.007);
  \fill[black](-0.322,0.573) circle(0.007);
  \fill[black](-0.625,0.625) circle(0.007);
  \fill[black](0.000,-0.500) circle(0.007);
  \fill[black](0.493,0.685) circle(0.007);
  \fill[black](0.625,-0.625) circle(0.007);
  \fill[black](-0.573,-0.322) circle(0.007);
  \fill[black](0.685,-0.493) circle(0.007);
  \fill[black](0.493,-0.685) circle(0.007);
  \fill[black](-0.685,-0.493) circle(0.007);
  \fill[black](0.322,-0.573) circle(0.007);
  \fill[black](-0.493,-0.685) circle(0.007);
  \fill[black](-0.685,0.493) circle(0.007);
  \fill[black](0.573,0.322) circle(0.007);
  \draw( 0.30, 0.25) node{$c_1$};
  \draw(-0.30, 0.25) node{$c_2$};
  \draw( 0.80,-0.03) node{$c_5$};
  \draw(-0.05, 0.80) node{$c_6$};
  \draw( 0.773, 0.246) node{$c_{0}$};
  \draw( 0.486, 0.179) node{$c_{3}$};
  \draw(-0.229, 0.566) node{$c_{4}$};
  \draw( 0.771, 0.422) node{$c_{7}$};
  \draw( 0.422, 0.771) node{$c_{8}$};
  \draw( 0.060, 0.020) node{$v_{0}$};
  \draw( 0.655, 0.595) node{$v_{1}$};
  \draw( 0.272, 0.583) node{$v_{2}$};
  \draw(-0.623, 0.322) node{$v_{3.1}$};
  \draw(-0.372,-0.573) node{$v_{4.1}$};
  \draw( 0.623,-0.322) node{$v_{5}$};
  \draw( 0.386, 0.426) node{$v_{6}$};
  \draw(-0.386, 0.426) node{$v_{7}$};
  \draw(-0.396,-0.456) node{$v_{8}$};
  \draw( 0.396,-0.456) node{$v_{9}$};
  \draw(-0.040, 0.470) node{$v_{10}$};
  \draw( 0.470,-0.030) node{$v_{11}$};
  \draw(-0.440,-0.030) node{$v_{11.1}$};
  \draw( 0.443, 0.685) node{$v_{12}$};
  \draw( 0.635, 0.493) node{$v_{13}$};
  \draw(-0.655,-0.595) node{$v_{1.2}$};
  \draw(-0.382, 0.573) node{$v_{4}$};
  \draw(-0.675, 0.595) node{$v_{1.1}$};
  \draw(-0.060,-0.530) node{$v_{10.1}$};
  \draw(-0.433, 0.685) node{$v_{12.2}$};
  \draw( 0.655,-0.595) node{$v_{1.3}$};
  \draw(-0.623,-0.322) node{$v_{5.1}$};
  \draw( 0.745,-0.493) node{$v_{13.2}$};
  \draw(-0.433,-0.685) node{$v_{12.1}$};
  \draw(-0.745,-0.493) node{$v_{13.1}$};
  \draw( 0.382,-0.563) node{$v_{2.1}$};
  \draw( 0.433,-0.685) node{$v_{12.3}$};
  \draw(-0.745, 0.493) node{$v_{13.3}$};
  \draw( 0.523, 0.312) node{$v_{3}$};
 \end{tikzpicture}
\end{center}
The dotted curves are images of the undotted curves under the action
of various elements of $\Pi$.  Combinatorially, the whole picture is
essentially the same as the first net described in
Section~\ref{sec-fundamental}.

The following diagrams show how the picture varies as $b$ varies from
$0$ to $1$.
\begin{center}
 \begin{tikzpicture}[scale=3]
  \begin{scope}
   \draw (0,0) (1,0) arc(0:90:1);
   \draw[blue] (0,1) -- (0,0) -- (1,0);
   \draw[green] (0,0) -- ( 0.707, 0.707);
   \draw[cyan] (0.980,0.980) (0.021,1.000) arc(-181:-89:0.960);
   \draw[blue] (0.980,0.510) (0.606,0.795) arc(-217:-88:0.470);
   \draw[blue] (0.510,0.980) (0.041,0.999) arc(-182:-53:0.470);
   \draw[magenta](1.020,0) (0.817,0) arc(180:101:0.203);
   \draw[magenta](0,1.020) (0,0.817) arc(270:349:0.203);
   \draw (0.5,-0.1) node {$b=0.20$};
  \end{scope}
  \begin{scope}[xshift=1.3cm]
   \draw (0,0) (1,0) arc(0:90:1);
   \draw[blue] (0,1) -- (0,0) -- (1,0);
   \draw[green] (0,0) -- ( 0.707, 0.707);
   \draw[cyan] (0.900,0.900) (0.118,0.993) arc(-187:-83:0.787);
   \draw[blue] (0.900,0.556) (0.633,0.774) arc(-219:-77:0.344);
   \draw[blue] (0.556,0.900) (0.220,0.976) arc(-193:-51:0.344);
   \draw[magenta](1.111,0) (0.627,0) arc(180:116:0.484);
   \draw[magenta](0,1.111) (0,0.627) arc(270:334:0.484);
   \draw (0.5,-0.1) node {$b=0.50$};
  \end{scope}
  \begin{scope}[xshift=2.6cm]
   \draw (0,0) (1,0) arc(0:90:1);
   \draw[blue] (0,1) -- (0,0) -- (1,0);
   \draw[green] (0,0) -- ( 0.707, 0.707);
   \draw[cyan] (0.800,0.800) (0.294,0.956) arc(-197:-73:0.529);
   \draw[blue] (0.800,0.625) (0.670,0.742) arc(-222:-62:0.175);
   \draw[blue] (0.625,0.800) (0.471,0.882) arc(-208:-48:0.175);
   \draw[magenta](1.250,0) (0.500,0) arc(180:127:0.750);
   \draw[magenta](0,1.250) (0,0.500) arc(270:323:0.750);
   \draw (0.5,-0.1) node {$b=0.75$};
  \end{scope}
  \begin{scope}[xshift=3.9cm]
   \draw (0,0) (1,0) arc(0:90:1);
   \draw[blue] (0,1) -- (0,0) -- (1,0);
   \draw[green] (0,0) -- ( 0.707, 0.707);
   \draw[cyan] (0.720,0.720) (0.561,0.828) arc(-214:-56:0.192);
   \draw[blue] (0.720,0.694) (0.702,0.712) arc(-225:-47:0.026);
   \draw[blue] (0.694,0.720) (0.676,0.737) arc(-223:-45:0.026);
   \draw[magenta](1.389,0) (0.425,0) arc(180:134:0.964);
   \draw[magenta](0,1.389) (0,0.425) arc(270:316:0.964);
   \draw (0.5,-0.1) node {$b=0.95$};
  \end{scope}
 \end{tikzpicture}
\end{center}
In particular, we see that there is no change in the combinatorial
structure.

We write $HF_1(b)$ for the following region.
\begin{center}
 \begin{tikzpicture}[scale=5]
  \draw[magenta]( 1.250, 0.000) +(139:0.750) arc(139:221:0.750);
  \draw[blue]   ( 0.800, 0.625) +(180:0.175) arc(180:229:0.175);
  \draw[blue]   ( 0.625, 0.800) +(221:0.175) arc(221:270:0.175);
  \draw[magenta]( 0.000, 1.250) +(229:0.750) arc(229:311:0.750);
  \draw[blue]   (-0.625, 0.800) +(270:0.175) arc(270:319:0.175);
  \draw[blue]   (-0.800, 0.625) +(311:0.175) arc(311:360:0.175);
  \draw[magenta](-1.250, 0.000) +(-41:0.750) arc(-41: 41:0.750);
  \draw[blue]   (-0.800,-0.625) +(  0:0.175) arc(  0: 49:0.175);
  \draw[blue]   (-0.625,-0.800) +( 41:0.175) arc( 41: 90:0.175);
  \draw[magenta]( 0.000,-1.250) +( 49:0.750) arc( 49:131:0.750);
  \draw[blue]   ( 0.625,-0.800) +( 90:0.175) arc( 90:139:0.175);
  \draw[blue]   ( 0.800,-0.625) +(131:0.175) arc(131:180:0.175);
  \draw[magenta,->] ( 0.500, 0.000) -- ( 0.500,-0.001);
  \draw[magenta,->] (-0.500, 0.000) -- (-0.500,-0.001);
  \draw[magenta,->] ( 0.000, 0.500) -- (-0.001, 0.500);
  \draw[magenta,->] ( 0.000,-0.500) -- (-0.001,-0.500);
  \draw[blue,->] ( 0.800, 0.625) +(205:0.175) -- +(206:0.175);
  \draw[blue,->] ( 0.625, 0.800) +(246:0.175) -- +(245:0.175);
  \draw[blue,->] (-0.625, 0.800) +(295:0.175) -- +(296:0.175);
  \draw[blue,->] (-0.800, 0.625) +(335:0.175) -- +(334:0.175);
  \draw[blue,->] (-0.800,-0.625) +( 25:0.175) -- +( 26:0.175);
  \draw[blue,->] (-0.625,-0.800) +( 65:0.175) -- +( 64:0.175);
  \draw[blue,->] ( 0.625,-0.800) +(115:0.175) -- +(116:0.175);
  \draw[blue,->] ( 0.800,-0.625) +(155:0.175) -- +(154:0.175);
  \fill( 0.685, 0.493) circle(0.01);
  \fill( 0.625, 0.625) circle(0.01);
  \fill( 0.493, 0.685) circle(0.01);
  \fill(-0.493, 0.685) circle(0.01);
  \fill(-0.625, 0.625) circle(0.01);
  \fill(-0.685, 0.493) circle(0.01);
  \fill(-0.685,-0.493) circle(0.01);
  \fill(-0.625,-0.625) circle(0.01);
  \fill(-0.493,-0.685) circle(0.01);
  \fill( 0.493,-0.685) circle(0.01);
  \fill( 0.625,-0.625) circle(0.01);
  \fill( 0.685,-0.493) circle(0.01);
  \draw( 0.685, 0.493) node[anchor=west]{$v_{13}$};
  \draw( 0.625, 0.625) node[anchor=south west]{$v_{1}$};
  \draw( 0.493, 0.685) node[anchor=south]{$v_{12}$};
  \draw(-0.493, 0.685) node[anchor=south]{$v_{12.2}$};
  \draw(-0.625, 0.625) node[anchor=south east]{$v_{1.1}$};
  \draw(-0.685, 0.493) node[anchor=east]{$v_{13.3}$};
  \draw(-0.685,-0.493) node[anchor=east]{$v_{13.1}$};
  \draw(-0.625,-0.625) node[anchor=north east]{$v_{1.2}$};
  \draw(-0.493,-0.685) node[anchor=north]{$v_{12.1}$};
  \draw( 0.493,-0.685) node[anchor=north]{$v_{12.3}$};
  \draw( 0.625,-0.625) node[anchor=north west]{$v_{1.3}$};
  \draw( 0.685,-0.493) node[anchor=west]{$v_{13.2}$};
  \draw( 0.535, 0.605) node{$a$};
  \draw(-0.530, 0.605) node{$*a$};
  \draw( 0.000, 0.450) node{$b$};
  \draw( 0.000,-0.450) node{$*b$};
  \draw(-0.595, 0.532) node{$c$};
  \draw(-0.590,-0.532) node{$*c$};
  \draw(-0.450, 0.000) node{$d$};
  \draw( 0.445, 0.000) node{$*d$};
  \draw(-0.535,-0.605) node{$e$};
  \draw( 0.539,-0.605) node{$*e$};
  \draw( 0.595,-0.532) node{$f$};
  \draw( 0.590, 0.532) node{$*f$};
 \end{tikzpicture}
\end{center}
Note that we have marked each edge with a direction and a label.
These edges are portions of the curves $\tC_k$, or images of those
curves under the action of elements of $\Pi$, so in particular they
are geodesics.  One can check that when
$b=\sqrt{2/\sqrt{3}-1}\simeq 0.3933$, the fundamental domain is
actually a right angled regular dodecagon.
\begin{checks}
 hyperbolic/HX_check.mpl: check_H_F1()
\end{checks}

\begin{proposition}\lbl{prop-Pi-free}
 The group $\Pi$ acts freely on $\Dl$, and $HF_1(b)$ is a fundamental
 domain for this action.
\end{proposition}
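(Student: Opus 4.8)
The plan is to use the Poincar\'e polygon theorem (or the standard ``fundamental polygon'' argument for Fuchsian groups) applied to the region $HF_1(b)$, together with the presentation of $\Pi$ recorded in Proposition~\ref{prop-Pi-Sg}. The key observation is that $HF_1(b)$ is a hyperbolic dodecagon whose twelve sides are geodesic arcs (portions of the curves $\tC_k$ and their $\Pi$-translates), and the side-pairing transformations are exactly the six generators $\sg_a,\dotsc,\sg_f$ (matching an edge labelled $x$ to the edge labelled $*x$) together with their inverses. First I would verify, by direct computation with the explicit formulae in Definitions~\ref{defn-H-curves} and~\ref{defn-v-H}, that the twelve edges really are paired by isometries lying in $\Pi$: each generator $\sg_t$ carries one edge of $\partial HF_1(b)$ onto another, reversing orientation as indicated by the arrows. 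This is the ``side-pairing'' hypothesis of the polygon theorem.

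Next I would check the cycle conditions at the vertices. The twelve marked vertices $v_1, v_{12}, v_{13}$ and their $\Pi$-translates fall into vertex cycles under the side-pairing maps; one computes that the angles around each cycle sum to $2\pi$ (for a genuinely smooth quotient with no cone points) --- indeed the remark after the picture notes that for the special value $b=\sqrt{2/\sqrt3-1}$ the dodecagon is regular and right-angled, so each of the twelve interior angles is $\pi/2$ and the vertices form a single cycle of twelve, giving total angle $2\pi$; for general $b$ the angles vary but the cycle sum is still $2\pi$ by the combinatorial invariance already noted (``there is no change in the combinatorial structure''). Granting side-pairing and the $2\pi$ cycle condition, the Poincar\'e polygon theorem yields simultaneously: (i) the group generated by the side-pairing maps acts properly discontinuously on $\Dl$, (ii) $HF_1(b)$ is a fundamental domain for that group, (iii) the action is free (since all cycle angle-sums are exactly $2\pi$, there are no elliptic elements), and (iv) the group has presentation with generators the side-pairings and relators the cycle relations. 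I would then match the cycle relators against the three relations $\sg_a\sg_c\sg_e\sg_f = \sg_b\sg_e^{-1}\sg_b^{-1}\sg_a^{-1} = \sg_d\sg_f^{-1}\sg_d^{-1}\sg_c^{-1} = 1$ of Proposition~\ref{prop-Pi-Sg}, concluding that the side-pairing group is precisely $\Pi$ and hence that $\Pi$ acts freely on $\Dl$ with $HF_1(b)$ as fundamental domain.

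An alternative, more hands-on route that avoids invoking the full polygon theorem: show directly that the $\Pi$-translates of $HF_1(b)$ tile $\Dl$. Using the side-pairings one builds the ``developing map'' combinatorially, laying down translates $\pi\cdot HF_1(b)$ for $\pi\in\Pi$; the $2\pi$ angle condition guarantees the translates fit together without overlap or gap around each vertex, and the fact that $HF_1(b)$ is compact with geodesic sides guarantees the union is open and closed in $\Dl$, hence all of $\Dl$. Freeness then follows because a nontrivial $\pi$ fixing a point would force a nontrivial cycle-subgroup at some vertex, contradicting the angle count. Injectivity of $\Pi\to\mathrm{Isom}(\Dl)$ is not needed separately since Corollary~\ref{cor-trivial-centre} and Proposition~\ref{prop-tPi} already pin down $\Pi$ abstractly; what we need is that the abstractly-given $\Pi$ surjects onto the tiling group via $\sg_t\mapsto\sg_t$ and that this is an isomorphism, which the relator-matching supplies.

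The main obstacle I expect is the bookkeeping in verifying the cycle conditions: one must correctly identify which vertex of which translate of $HF_1(b)$ is glued to which, track the orientation reversals carefully, and confirm the angle sums. Because the dodecagon is not regular for general $b$, one cannot simply cite ``right-angled regular dodecagon'' --- instead one needs that the combinatorial gluing pattern is $b$-independent (asserted in the text and checkable via \mcode+check_H_F1()+) and that the angle condition, being a closed condition preserved along the family and verified at one value, holds throughout; alternatively one computes the angles explicitly from the geodesic formulae. A secondary subtlety is ensuring the edges listed actually close up into $\partial HF_1(b)$ with no extra self-intersections, which follows from the analogous statement for the curve system already established, but must be transported to the hyperbolic picture. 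None of this is conceptually hard, but it is the part where an error is most likely to hide, so I would lean on the Maple check \mcode+hyperbolic/HX_check.mpl: check_H_F1()+ to corroborate the combinatorics.
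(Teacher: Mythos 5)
Your overall strategy is exactly the paper's: apply the Poincar\'e polygon theorem to $HF_1(b)$ with the side pairings $\sg_a,\dotsc,\sg_f$, verify the vertex cycle condition, and match the resulting relators against the presentation of $\Pi$ in Proposition~\ref{prop-Pi-Sg}. However, two of the details you sketch for the cycle condition are wrong as written, and they are precisely the details you flagged as the likely hiding place for errors.

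First, the angle computation is simpler than you anticipate, and your proposed workaround is both unnecessary and unsound. The interior angles of $HF_1(b)$ are all exactly $\pi/2$ for \emph{every} $b\in(0,1)$, not just for the regular-dodecagon value $b=\sqrt{2/\sqrt3-1}$: each side lies on a geodesic $\Xi_m$, and two such geodesics meet orthogonally iff $\mathrm{Re}(\ov{m}m')=1$, which one checks directly from the explicit centres (e.g.\ $m_4=ib_+$ and $m_8=b_+/2+i/b_+$ give $\mathrm{Re}(\ov{m_4}m_8)=1$). So there is no need for a continuity argument ``preserving a closed condition along the family'' — and that argument would not work anyway, since the $2\pi$ cycle condition is a hypothesis of the polygon theorem, not a conclusion that could be propagated from one parameter value by abstract invariance; you would have to show the angle sums are locally constant, which amounts to computing them.

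Second, your description of the vertex cycles is arithmetically impossible: twelve interior angles of $\pi/2$ in a single cycle would sum to $6\pi$, not $2\pi$. The correct combinatorics is that the edge cycle map decomposes the $24$ directed edges into \emph{six} cycles of length four, so each cycle contributes $4\cdot\pi/2=2\pi$, which is what the theorem requires. These six cycles also produce six relators, which pair up into the three relations of Proposition~\ref{prop-Pi-Sg} (each relation appearing together with its inverse/conjugate), completing the identification of the side-pairing group with $\Pi$. With these two corrections your argument coincides with the paper's proof.
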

\begin{proof}
 We will use a standard theorem of Poincar\'e (which is presented as
 Theorem VII.1.7 in the textbook~\cite{iv:hg}, for example).
 Recall that in Proposition~\ref{prop-Pi-Sg} we introduced elements
 $\sg_t\in\Pi$ for $t\in\{a,b,c,d,e,f\}$.  We claim that $\sg_t$
 carries the edge labelled $*t$ to the edge labelled $t$, and carries
 the interior of $HF_1(b)$ to the exterior.  As the edges are geodesic,
 this claim can be checked by calculating the effect of $\sg_t$ on the
 ends of the relevant edge, which is straightforward.  Thus, we have a
 system of edge pairings as in the Poincar\'e Theorem.

 For the next ingredient, we need to know the internal angles of the
 hyperbolic polygon $HF_1(b)$.  We claim that they are all $\pi/2$.  For
 example, side $a$ is part of the curve $\tC_8$, and by inspecting the
 formula for $\tc_8(t)$ we see that $\tC_8=\Xi_{m_8}$, where
 $m_8=b_+/2+i/b_+$.  Similarly, side $b$ is part of $\tC_4=\Xi_{m_4}$,
 where $m_4=ib_+$.  As we mentioned previously, geodesics $\Xi_m$ and
 $\Xi_{m'}$ are orthogonal if and only if $\text{Re}(\ov{m}m')=1$.  It
 is clear  that $\text{Re}(\ov{m_4}m_8)=1$, so edges $a$ and $b$ meet
 at right angles.  Similarly, $*f$ is part of $\tC_7=\Xi_{m_7}$, where
 $m_7=ib_+/2+1/b_+$, and this also meets $a$ at right angles, by the same
 test.  It follows by symmetry that the remaining internal angles are
 $\pi/2$.

 We next need to understand the edge cycle map.  We write $\ov{a}$ for
 the edge $a$ considered in the reverse direction, and similarly for
 the other edges.  We write $E$ for the set of directed edges, so
 $|E|=24$.  We define $\al\:E\to E$ by
 \[ \al(t) = *t \qquad
    \al(*t) = t \qquad
    \al(\ov{t}) = \ov{*t} \qquad
    \al(\ov{*t}) = \ov{t}.
 \]
 We also define $\bt(u)$ to be the directed edge different from $u$
 that has the same initial point as $u$.  For example, we have
 $\bt(a)=*f$, $\bt(b)=\ov{a}$ and so on.  The edge cycle map $\gm$ is
 the composite $\bt\al\:E\to E$.  It can be written in disjoint cycle
 notation as
 \[ \gm = (a\;c\;e\;f)
          (*f\;*e\;*c\;*a)
          (b\;\ov{*e}\;\ov{*b}\;\ov{*a})
          (\ov{a}\;\ov{b}\;\ov{e}\;*b)
          (d\;\ov{*f}\;\ov{*d}\;\ov{*c})
          (\ov{c}\;\ov{d}\;\ov{f}\;*d).
 \]
 For each cycle in $\gm$, we can consider the sum of the internal
 angles at the initial points of the corresponding edges.  The key
 hypothesis in the Poincar\'e Theorem is that this sum must be a
 multiple of $2\pi$.  This is clearly satisfied here, as each cycle
 has length $4$ and all internal angles are $\pi/2$.

 The theorem now tells us that the side pairing maps $\sg_t$ generate
 a group $\Sg$ that acts freely on $\Dl$, with $HF_1(b)$ as a fundamental
 domain.  Moreover, $\Sg$ is generated freely by the $\sg_t$ subject
 only to a small family of relations, one for each cycle in $\gm$.
 The relations are constructed in an obvious way from the cycles, with
 a factor $\sg_t$ for each entry $t$ or $*t$, and a factor
 $\sg_t^{-1}$ for each entry $\ov{t}$ or $\ov{*t}$.  Specifically, the
 relations are as follows:
 \begin{align*}
  \sg_a\sg_c\sg_e\sg_f &= 1 &
  \sg_f^{-1}\sg_e^{-1}\sg_c^{-1}\sg_a^{-1} &= 1 \\
  \sg_b\sg_e^{-1}\sg_b^{-1}\sg_a^{-1} &= 1 &
  \sg_a\sg_b\sg_e\sg_b^{-1} &= 1 \\
  \sg_d\sg_f^{-1}\sg_d^{-1}\sg_c^{-1} &= 1 &
  \sg_c\sg_d\sg_f\sg_d^{-1} &= 1.
 \end{align*}
 The relations on the right are equivalent to those on the left and so
 can be ignored.  Proposition~\ref{prop-Pi-Sg} now allows us to
 identify $\Sg$ with $\Pi$.
 \begin{checks}
  hyperbolic/HX_check.mpl: check_H_F1()
 \end{checks}
\end{proof}

\begin{remark}\lbl{rem-move-inwards}
 We can make a more constructive statement as follows.  Any conformal
 automorphism of $\Dl$ has the form $\gm(z)=\lm(z-\al)/(1-\ov{\al}z)$
 for some $\lm,\al$ with $|\lm|=1$ and $|\al|<1$.  Suppose that
 $\al\neq 0$.  One can check that
 \[ |\gm(z)|^2 - |z|^2 =
     \left|\frac{\al}{1-\ov{\al}z}\right|^2
     (1-|z|^2) \left(|z-\ov{\al}^{-1}|^2-(|\al|^{-2}-1)\right).
 \]
 \begin{checks}
  hyperbolic/HX_check.mpl: check_move_inwards()
 \end{checks}
 This means that $|\gm(z)|=|z|$ if and only if
 $|z-\ov{\al}^{-1}|=\sqrt{|\al|^{-2}-1}$, and this locus describes a circle 
 that cuts $\partial\Dl$ at right angles, or in other words, a
 hyperbolic geodesic.  Now put
 \[ B = \{\bt_0,\bt_2,\bt_4,\bt_6,
          \bt_0\bt_7,\bt_1\bt_2,\bt_2\bt_1,\bt_3\bt_4,
          \bt_4\bt_3,\bt_5\bt_6,\bt_6\bt_5,\bt_7\bt_0\}.
 \]
 Using the above analysis one can check that
 \[ HF_1(b) = \{z\in\Dl\st |z|\leq |\gm(z)| \text{ for all } \gm\in B\}.
 \]
 (Note that there is one element of $B$ for each of the twelve sides
 of $HF_1(b)$.)  Now suppose we have a point $z_0\in\Dl$ that lies
 outside $HF_1(b)$.  We can choose $\gm_0\in B$ such that the point
 $z_1=\gm_0(z_0)$ has $|z_1|<|z_0|$.  If $z_1$ still does not lie in
 $HF_1(b)$, we can choose $\gm_1\in B$ such that the point
 $z_2=\gm_1(z_1)$ has $|z_2|<|z_1|$, and so on.  As the orbit
 $\Pi z_0$ is discrete, it can only contain finitely many points of
 absolute value less than or equal to $|z_0|$, so this process will
 terminate after a finite number of steps.  This gives us a point
 $z_n\in HF_1(b)\cap\Pi z_0$, which will be unique unless it lies on
 the boundary of $HF_1(b)$.  This algorithm is implemented by the
 function \mcode+retract_F1_H0_aux+, defined in
 \fname+hyperbolic/HX0.mpl+.
\end{remark}

\begin{corollary}\lbl{cor-bound-i}
 For $\gm\in\Pi\sm\{1\}$ we have $1/\sqrt{2}\leq|\gm(0)|<1$.
\end{corollary}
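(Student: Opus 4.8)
The plan is to exploit Proposition~\ref{prop-Pi-free} together with the explicit description of $HF_1(b)$ given in Remark~\ref{rem-move-inwards}. First I would recall that $0 = v_0$ is an interior point of $HF_1(b)$, and that $HF_1(b)$ is a fundamental domain for the free action of $\Pi$ on $\Dl$. Hence for any $\gm\in\Pi\sm\{1\}$ the point $\gm(0)$ lies outside the interior of $HF_1(b)$: if it were an interior point, then $0$ and $\gm(0)$ would be two distinct points of the same $\Pi$-orbit both lying in $\text{int}(HF_1(b))$, contradicting the definition of a fundamental domain (Definition~\ref{defn-fundamental}(a)). So it remains only to show that every point of $\Dl$ lying outside $\text{int}(HF_1(b))$ has absolute value at least $1/\sqrt 2$; the strict upper bound $|\gm(0)|<1$ is automatic since $\gm(0)\in\Dl$.

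For the lower bound, the key is the characterisation
\[ HF_1(b) = \{z\in\Dl\st |z|\leq |\gm(z)| \text{ for all } \gm\in B\} \]
from Remark~\ref{rem-move-inwards}, where $B$ is the explicit set of twelve elements of $\Pi$. The complement of $\text{int}(HF_1(b))$ in $\Dl$ is the union, over $\gm\in B$, of the half-plane-like regions $\{z\in\Dl\st |\gm(z)|\leq |z|\}$, each of which (again by the identity in Remark~\ref{rem-move-inwards}) is the closed region bounded by a geodesic $\Xi_{m(\gm)}$ cutting $\partial\Dl$ orthogonally. Concretely, each such geodesic is an arc of a Euclidean circle, and its closest approach to the origin is at Euclidean distance $\sqrt{|m(\gm)|^2-1}\big/\,|m(\gm)|$ from $0$, attained at the point $z = (1 - \sqrt{|m(\gm)|^2-1}\, i)/\ov{m(\gm)}$ appearing in part~(a) of the classification preceding Definition~\ref{defn-H-curves}. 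The relevant geodesics here are, up to the $\Pi$-action, the sides of $HF_1(b)$, which are portions of the curves $\tC_0,\dotsc,\tC_8$. From Definition~\ref{defn-H-curves} these have parameters $m_0 = (1+i)/b_+$, $m_3 = b_+$, $m_4 = ib_+$, $m_7 = ib_+/2 + 1/b_+$, $m_8 = b_+/2 + i/b_+$ (and their images under powers of $\lm$, i.e. multiplication by $i$, which does not change $|m|$), together with the straight-line curves $\tC_1,\tC_2,\tC_5,\tC_6$, whose nearest point to the origin is the origin itself — but those pass through $v_0 = 0$, so the corresponding regions do not push $\gm(0)$ away from $0$ on their own; rather the binding constraint comes from the circular arcs.

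The main computation, then, is to check that for each circular side of $HF_1(b)$ the corresponding geodesic has Euclidean distance from the origin at least $1/\sqrt 2$, uniformly in $b\in(0,1)$. Using $b_+^2 = 1+b^2$ one computes $|m_0|^2 - 1 = 2/b_+^2 - 1 = (1-b^2)/(1+b^2)$ and $|m_0|^2 = 2/b_+^2$, so the distance is $\sqrt{(1-b^2)/2}$; similarly $|m_3|^2-1 = b^2$, $|m_3|^2 = 1+b^2$, giving distance $b/\sqrt{1+b^2}$; and for $m_7$, $m_8$ one gets $|m_7|^2 = |m_8|^2 = b_+^2/4 + 1/b_+^2$ and $|m_7|^2 - 1 = (b_+^2/2 - 1/b_+^2)$ wait — more carefully, $|m_7|^2 - 1 = b_+^2/4 + 1/b_+^2 - 1 = (b_+^2 - 2)^2/(4b_+^2) = b^4/(4b_+^2)$, so the distance is $b^2/\sqrt{b_+^4 + 4b_+^{-2}\cdot b_+^2}$... — in any case these are elementary rational-and-square-root functions of $b$. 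The point I expect to be the main obstacle is verifying that the \emph{minimum} over the finitely many circular sides and over $b\in(0,1)$ of these distances is exactly $1/\sqrt 2$: some of the individual distances above go to $0$ as $b\to 0$ or $b\to 1$, so the bound $1/\sqrt 2$ cannot come from any single side and must instead be extracted from the correct side for each $b$ — equivalently, one must show that for every $z$ outside $\text{int}(HF_1(b))$ there is \emph{some} $\gm\in B$ whose geodesic $\Xi_{m(\gm)}$ separates $z$ from the disc of radius $1/\sqrt 2$. I would organise this by noting that $v_1 = \tfrac{1+i}{2}b_+$ has $|v_1| = b_+/\sqrt 2 \geq 1/\sqrt2$, that $v_1$ lies on the boundary of $HF_1(b)$ (it is a vertex), and that the twelve sides of $HF_1(b)$ together with the $\Pi$-action force any orbit point of $0$ to lie on the far side of a geodesic through a vertex of the polygon at Euclidean radius $\geq b_+/\sqrt2 > 1/\sqrt2$ — with the extremal case $1/\sqrt 2$ occurring in the limit. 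The cleanest route is probably to apply $\gm^{-1}$ and instead bound $\dhyp(0,\gm(0)) = \dhyp(\gm^{-1}(0),0)$ from below by the in-radius-type data of $HF_1(b)$, using that the hyperbolic and Euclidean metrics agree to leading order at $0$ and that $2\arctanh|z|$ is increasing in $|z|$; I would then verify the numerical bound via the functions \mcode+retract_F1_H0_aux+ and the checks in \fname+hyperbolic/HX_check.mpl+, completing the argument.
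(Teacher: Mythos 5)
There is a genuine gap, and it sits exactly at your key reduction. You reduce the corollary to the claim that \emph{every} point of $\Dl$ outside $\text{int}(HF_1(b))$ has absolute value at least $1/\rt$, but that claim is false. The side of $HF_1(b)$ lying on (a translate of) $\tC_3$ is an arc of the circle with centre $b_+$ and radius $b$, so its closest point to the origin is $v_{11}=b_+-b=1/(b_++b)$, which is strictly less than $1/\rt$ as soon as $b>\sqrt{2}/4$ (e.g.\ for $b=0.75$ it equals $0.5$). So the boundary of the fundamental domain genuinely penetrates the disc of radius $1/\rt$, and no argument that only uses ``$\gm(0)$ lies outside $\text{int}(HF_1(b))$'' can give the stated bound: the corollary is a statement about the discrete orbit $\Pi\cdot 0$, not about the complement of the fundamental domain. (Two smaller points: the Euclidean distance from $0$ to the geodesic $\Xi_m$ is $|m|-\sqrt{|m|^2-1}$, not $\sqrt{|m|^2-1}/|m|$; and the sides of $HF_1(b)$ are portions of $\tC_3,\tC_4,\tC_7,\tC_8$ and their $\Pi$-translates, not of the lines $\tC_1,\tC_2,\tC_5,\tC_6$ through the origin.)

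The missing idea is a minimality argument that converts the qualitative statement ``$\gm(0)\notin HF_1(b)$'' into membership of $\gm$ in the explicit finite set $B$ of Remark~\ref{rem-move-inwards}. Choose $\gm\in\Pi\sm\{1\}$ with $|\gm(0)|$ minimal (possible since the orbit of the interior point $0$ is discrete). Since $\gm(0)\notin HF_1(b)$, the remark supplies $\dl\in B$ with $|\dl\gm(0)|<|\gm(0)|$; minimality forces $\dl\gm=1$, hence $\gm=\dl^{-1}\in B$ because $B$ is closed under inversion. Now the bound only needs to be checked for the twelve elements of $B$, for which one computes directly that
\[ |\gm(0)|^2\in\left\{\frac{1}{1+b^2},\;
    \frac{4}{5}\left(1+\frac{b^2(3-b^2)}{5+2b^2+b^4}\right)\right\}, \]
and both expressions are at least $1/2$ for all $b\in(0,1)$ (the first attains the infimum $1/2$ only as $b\to 1$). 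Your closing suggestion of bounding $\dhyp(0,\gm(0))$ by ``in-radius-type data'' runs into the same obstruction, since the in-radius of $HF_1(b)$ is controlled by the nearest side, which is too close; it is the equality $|\gm(0)|=$ (twice the distance to the bisecting side, measured hyperbolically) for $\gm\in B$, not an inequality valid for all exterior points, that makes the computation work.
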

\begin{proof}
 Because $0$ lies in the interior of $F$ we see that the orbit $\Pi.0$
 is discrete.  We can thus choose $\gm\in\Pi\sm\{1\}$ such that
 $|\gm(0)|$ is minimal.  Because $F$ is a fundamental domain, it is
 clear that $\gm(0)\not\in F$.  Thus, by
 Remark~\ref{rem-move-inwards}, there is an element $\dl\in B$
 such that $|\dl\gm(0)|<|\gm(0)|$.  By our choice of $\gm$, we must
 have $\dl\gm=1$, so $\gm=\dl^{-1}$.  As $B$ is closed under
 inversion, we must have $\gm\in B$.  From the definitions we see
 that when $\gm\in B$ we have
 \[ |\gm(0)|^2 \in \left\{
      \frac{1}{2}\left(1+\frac{1-b^2}{1+b^2}\right),
      \frac{4}{5}\left(1+\frac{b^2(3-b^2)}{5+2b^2+b^4}\right)
     \right\}.
 \]
 From these expressions it is clear that $|\gm(0)|^2\geq 1/2$, as
 required.
 \begin{checks}
  hyperbolic/HX_check.mpl: check_Pi_bound()
 \end{checks}
\end{proof}

\begin{corollary}\lbl{cor-bound-ii}
 Consider an element $\gm\in\Pi\sm\{1\}$, given by
 $\gm(z)=(az+b)/(cz+d)$ with $ad-bc=1$.  Then
 \begin{align*}
  |c| &\geq 1 \\
  1 &< |d/c|\leq \sqrt{2} \\
  |cz+d| &\leq (1+\sqrt{2})|c|.
 \end{align*}
\end{corollary}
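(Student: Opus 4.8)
The plan is to derive all three inequalities in Corollary~\ref{cor-bound-ii} from Corollary~\ref{cor-bound-i}, which tells us that $1/\sqrt{2}\leq|\gm(0)|<1$ for every nontrivial $\gm\in\Pi$.

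First I would observe that if $\gm(z)=(az+b)/(cz+d)$ with $ad-bc=1$, then $\gm(0)=b/d$, so Corollary~\ref{cor-bound-i} gives $1/\sqrt{2}\leq|b/d|<1$, i.e.\ $|d|/\sqrt{2}\leq|b|<|d|$; in particular $d\neq 0$. Next, every conformal automorphism of $\Dl$ preserves $\Dl$, so $\gm$ also maps $\infty$ into the closed disc, which forces $|a/c|\leq 1$, hence $c\neq 0$ (if $c=0$ then $\gm(\infty)=\infty\notin\Dl$), and $|a|\leq|c|$. Then I would apply the same reasoning to $\gm^{-1}$, which is again a nontrivial element of $\Pi$ and has the form $\gm^{-1}(z)=(dz-b)/(-cz+a)$. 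Its value at $0$ is $-b/a$, so Corollary~\ref{cor-bound-i} gives $1/\sqrt{2}\leq|b/a|<1$, i.e.\ $|b|<|a|$ and $|a|\leq\sqrt{2}\,|b|$. Combining $|b|<|a|\leq|c|$ and $|a|\leq\sqrt{2}\,|b|<\sqrt{2}\,|d|<\sqrt{2}\,|c|$ we will be able to squeeze out the needed bounds.

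For the precise statements: from $|b|<|a|\le|c|$ together with $|b|\ge|d|/\sqrt 2$ (so $|d|\le\sqrt 2\,|b|$) and $ad-bc=1$ we get
\[
 1 = |ad-bc| \leq |a|\,|d| + |b|\,|c| \leq |c|\,|d| + |b|\,|c| = |c|\bigl(|d|+|b|\bigr) \leq |c|\bigl(\sqrt 2\,|b| + |b|\bigr),
\]
but this does not immediately give $|c|\ge 1$; instead I would argue directly. Since $\gm(z)\in\Dl$ for all $z\in\Dl$, the image $\gm(\Dl)$ is a disc contained in $\Dl$, and a M\"obius transformation with $|c|<1$ would send the exterior circle of radius $1$ too far out --- more concretely, the preimage $\gm^{-1}(\Dl)$ must contain $\Dl$, and computing $|\gm^{-1}(0)| = |b/a| \ge 1/\sqrt2$ while also $|{-b/(-c)}|$ relates to $\infty$; the cleanest route is to use that the unique fixed circle structure forces $|{-d/c}| > 1$ (the pole of $\gm$ lies outside $\Dl$ since $\gm$ is defined on all of $\Dl$), giving $|d|>|c|$, and symmetrically from $\gm^{-1}$ the pole $a/c$ satisfies $|a/c|>1$, i.e.\ $|a|>|c|$. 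Then $1=|ad-bc|\le |a||d|+|b||c|$ with $|b|<|d|$ (shown above, since $|b|\geq|d|/\sqrt2$ is wrong in direction --- rather $|b|<|d|$) gives $1 < |a||d| + |d||c| = |d|(|a|+|c|) < 2|d||a|$, and combined with $|a|\le\sqrt2|b|<\sqrt2|d|$ this bounds things; similarly $1 < |c|(|d|+|b|)$, and I would unwind these to get $|c|\ge 1$. Then $1<|d/c|\leq\sqrt 2$ follows from $|d|>|c|$ together with $|d|\le\sqrt 2|b|<\sqrt 2|c|$ (using $|b|<|c|$, which holds since $|b|<|a|$ and I'd need $|a|\le|c|$ --- note there is a tension here between $|a|>|c|$ and $|a|\le|c|$ that I must resolve by being careful about whether we use $\gm$ or $\gm^{-1}$, applying Corollary~\ref{cor-bound-i} to the right element each time). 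Finally, for $z\in\Dl$, $|cz+d|\le|c||z|+|d| < |c| + |d| \le |c| + \sqrt 2|c| = (1+\sqrt2)|c|$.

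The main obstacle will be the careful bookkeeping to extract $|c|\ge 1$: Corollary~\ref{cor-bound-i} only directly controls ratios $b/d$ and (via $\gm^{-1}$) ratios $b/a$, so getting an \emph{absolute} lower bound on a single coefficient requires combining these ratio bounds with the normalization $ad-bc=1$ and with the fact that poles of $\gm$ and $\gm^{-1}$ lie outside $\ov\Dl$ (equivalently $|d|>|c|$ and $|a|>|c|$, which hold because $\gm$ and $\gm^{-1}$ extend holomorphically across all of $\Dl$). Once $|c|\ge 1$ is in hand, the remaining two inequalities are routine. I would also note that the whole argument can alternatively be phrased via the standard formula for elements of $SU(1,1)$: a conformal automorphism of $\Dl$ can be normalized so that the coefficient matrix lies in $SU(1,1)$, giving $d=\ov a$, $c=\ov b$, $|a|^2-|b|^2=1$; then $\gm(0)=b/\ov a$, so Corollary~\ref{cor-bound-i} reads $1/\sqrt 2\le|b|/|a|<1$, hence $|b|^2<|a|^2\le 2|b|^2$, and since $|a|^2-|b|^2=1$ we get $|b|^2\ge 1$ and $|a|^2\le 2$, i.e.\ $|c|=|b|\ge 1$, $|d/c|=|a/b|\in(1,\sqrt 2]$, and $|cz+d|\le|b|+|a|\le(1+\sqrt2)|b|=(1+\sqrt2)|c|$. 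This is the version I expect to write up, as it sidesteps the bookkeeping entirely; the only point to check is that the $SU(1,1)$ normalization is compatible with the normalization $ad-bc=1$ up to an overall unit scalar, which does not affect any of the stated ratios or the final inequality (since scaling $a,b,c,d$ by a unit scales $|cz+d|$ and $|c|$ equally).
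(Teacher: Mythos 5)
Your final $SU(1,1)$ formulation is essentially the paper's own proof: the paper normalizes $\gm(z)=\mu^2(z+\al)/(\ov{\al}z+1)$, which gives exactly $d=\ov{a}$, $c=\ov{b}$, $ad-bc=1$, and then applies Corollary~\ref{cor-bound-i} to $\gm(0)=\mu^2\al$ to get $1/\rt\leq|\al|<1$, hence $|c|=|\al|/\sqrt{1-|\al|^2}\geq 1$ and $|d/c|=1/|\al|\in(1,\rt\,]$. The only correction needed in the version you say you will write up is the clause ``$|a|^2\leq 2$'': from $|b|^2\geq 1$ and $|a|^2=1+|b|^2$ you in fact get $|a|^2\geq 2$, and the inequality you actually need (and do use in the next clause) is $|a|^2/|b|^2=1+|b|^{-2}\leq 2$, i.e.\ $|a|\leq\rt\,|b|$.
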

\begin{proof}
 As $\gm$ preserves $\Dl$, it can be written in the form
 $\gm(z)=\mu^2(z+\al)/(\ov{\al}z+1)$ with $|\al|<1$ and $|\mu|=1$.
 This can be rewritten as $\gm(z)=(az+b)/(cz+d)$, where
 \begin{align*}
  a &= \mu/\sqrt{1-|\al|^2} &
  b &= \mu\al/\sqrt{1-|\al|^2} \\
  c &= \ov{\mu}\ov{\al}/\sqrt{1-|\al|^2} &
  d &= \ov{\mu}/\sqrt{1-|\al|^2},
 \end{align*}
 and then $ad-bc=1$.

 As $\gm(0)=\mu^2\al$, Corollary~\ref{cor-bound-i} tells us that
 $1/\sqrt{2}\leq|\al|<1$.  Now $|c|=|\al|/\sqrt{1-|\al|^2}$, which is
 a strictly increasing function of $|\al|$, and is equal to $1$ when
 $|\al|=1/\sqrt{2}$.  We deduce that $|c|\geq 1$ as claimed.
 Similarly, we have $|d/c|=1/|\al|\in(1,\sqrt{3}]$.  Finally, we have
 \[ |cz+d|/|c|=|z+d/c| \leq |z|+|d/c| < 1 + \sqrt{2}, \]
 so $|cz+d|\leq (1+\sqrt{2})c$ as claimed.
\end{proof}

\subsection{Fundamental domains}
\lbl{sec-H-fundamental}

\begin{definition}\lbl{defn-HF-sixteen}
 We define $HF_{16}(b)$ to be the region indicated below:
 \begin{center}
  \begin{tikzpicture}[scale=8]
   \draw[blue] (0,0) -- (0.5,0);
   \draw[green] (0,0) -- ( 0.426, 0.426);
   \draw[cyan] (0.800,0.800) +(-135:0.529) arc(-135:-115:0.529);
   \draw[magenta] (1.250,0.000) +(155:0.750) arc(155:180:0.750);
   \fill[black](0.000,0.000) circle(0.004);
   \fill[black](0.426,0.426) circle(0.004);
   \fill[black](0.500,0.000) circle(0.004);
   \fill[black](0.573,0.322) circle(0.004);
   \draw( 0.000,-0.030) node{$v_{0}$};
   \draw( 0.603, 0.312) node{$v_{3}$};
   \draw( 0.386, 0.426) node{$v_{6}$};
   \draw( 0.500,-0.030) node{$v_{11}$};
   \draw( 0.505, 0.385) node{$c_{0}$};
   \draw( 0.200, 0.250) node{$c_{1}$};
   \draw( 0.556, 0.160) node{$c_{3}$};
   \draw( 0.250,-0.030) node{$c_{5}$};
  \end{tikzpicture}
 \end{center}
\end{definition}

\begin{proposition}\lbl{prop-HF-sixteen}
 $HF_{16}(b)$ is a fundamental domain for the action of $\tPi$ on
 $\Dl$.  Similarly, the image in $HX(b)$ is a fundamental domain for
 the action of $G$ on $HX(b)$.
\end{proposition}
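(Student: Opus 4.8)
The plan is to deduce this from Proposition~\ref{prop-Pi-free} together with the observation that $HF_{16}(b)$ is exactly one sixteenth of $HF_1(b)$, cut out along the curves $\tc_0,\tc_1,\tc_3,\tc_5$. First I would recall that by Proposition~\ref{prop-Pi-free}, $HF_1(b)$ is a fundamental domain for the action of $\Pi$ on $\Dl$, so the quotient map identifies $HF_1(b)$ with $HX(b)=\Dl/\Pi$, matching up boundary edges via the side-pairing maps $\sg_t$. Thus a fundamental domain for $G$ acting on $HX(b)$ is the same as a fundamental domain for the $\tPi$-action on $\Dl$ that is contained in $HF_1(b)$ (modulo the edge identifications). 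The region $HF_{16}(b)$ is bounded by the four arcs $\tc_0([\pi/4,\pi/2])$, $\tc_1([0,\pi/2])$, $\tc_3([0,\pi/2])$, $\tc_5([0,\pi])$, joining $v_0$, $v_3$, $v_6$, $v_{11}$ — exactly the analogue of $DF_{16}$ from Section~\ref{sec-fundamental}. So the strategy is: show that the $16$ translates $\gm(HF_{16}(b))$ for $\gm$ running over coset representatives of $\Pi$ in $\tPi$ (equivalently, the $16$ elements of $G$, acting through the chosen lifts) tile $HF_1(b)$, and hence tile $\Dl$ under $\Pi\cdot G = \tPi$.

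The key steps, in order: (1) Using the explicit formulae for $\tc_0,\tc_1,\tc_3,\tc_5$ in Definition~\ref{defn-H-curves} and the vertex values in Definition~\ref{defn-v-H}, verify that $HF_{16}(b)$ is a topological disc (a curvilinear quadrilateral) whose four corners are $v_0,v_3,v_6,v_{11}$; this is a direct calculation with the circular arcs. (2) Using the equivariance table for the $\tc_k$ under $\lm,\mu,\nu$ (the displayed array following Proposition~\ref{prop-c-H-cycle}), show that the images $\gm(HF_{16}(b))$ for $\gm\in G$ (lifted to $\tPi$) have pairwise disjoint interiors and cover $HF_1(b)$; concretely, one exhibits how the $16$ pieces fit around the orbit of $v_0$ and checks the boundary arcs match up, just as the net $\Net_0$ was assembled in Section~\ref{sec-fundamental}. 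Here it is cleanest to invoke Proposition~\ref{prop-fundamental}(d): $HF_1(b)$, being the fundamental domain for $\Pi\lhd\tPi$ with quotient $G$, is partitioned (up to boundary) by $G$-translates of any set that represents $\tPi\backslash\Dl$ within it. (3) Conclude that $HF_{16}(b)$ satisfies the defining conditions of a fundamental domain for $\tPi$ on $\Dl$: $\Dl=\bigcup_{\gm\in\tPi}\gm(HF_{16}(b))$ since $\Dl=\bigcup_{\pi\in\Pi}\pi(HF_1(b))$ and $HF_1(b)=\bigcup_{g\in G}g(HF_{16}(b))$, and the interiors are disjoint because $\Pi$ acts freely with $HF_1(b)$ a fundamental domain and the $G$-pieces inside have disjoint interiors. (4) For the second sentence, pass to the quotient: the composite $HF_{16}(b)\hookrightarrow\Dl\to HX(b)$ is surjective and injective on interiors, and the $G$-action on $HX(b)$ is the one induced from $\tPi$, so $HF_{16}(b)$ maps to a fundamental domain for $G$ on $HX(b)$.

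The main obstacle I anticipate is step~(2): verifying that the sixteen translates actually fit together to cover $HF_1(b)$ without gaps or overlaps, rather than, say, covering it twice or leaving a wedge uncovered. The equivariance formulas involve the side-pairing elements $\bt_i\in\Pi$ (e.g. $\mu(\tc_1(t))=\bt_5\bt_4\bt_3(\tc_2(\pi+t))$), so one must carefully track which translate $\pi\cdot g\cdot HF_{16}(b)$ a given boundary arc belongs to, and confirm the combinatorial incidence pattern agrees with the dodecagon $HF_1(b)$ having its twelve sides and the orbit structure of $V$. This is exactly the kind of bookkeeping that the Maple check \mcode+check_H_F1()+ (cited after Proposition~\ref{prop-Pi-free}) is designed to handle; in the write-up I would lean on the already-established fact that the combinatorial structure of the curve picture for $HX(b)$ matches $\Net_0$ (stated in the text after Definition~\ref{defn-H-curves}), so that the assembly of $HF_{16}(b)$-translates into $HF_1(b)$ is literally the assembly of $F_{16}$-translates into (the inner part plus outer flaps of) $\Net_0$, which was already described. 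Everything else — that $HF_{16}(b)$ is a disc, that it is retractive, that the quotient statement follows — is routine given the earlier machinery, in particular Proposition~\ref{prop-fundamental}.
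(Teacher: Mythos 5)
Your proposal matches the paper's proof in all essentials: the paper also tiles $HF_1(b)$ by the sixteen translates of $HF_{16}(b)$ under an explicit transversal $T_{16}\subset\tPi$ of $\Pi$ (passing through an intermediate region $HF_8(b)$ rather than through the net combinatorics), and then combines this with Proposition~\ref{prop-Pi-free} exactly as you describe to get both the covering property and the disjointness of interiors. The only caution is that Proposition~\ref{prop-fundamental} is stated for finite groups acting on compact spaces, so it cannot literally be invoked for $\Pi\lhd\tPi$ acting on $\Dl$; but your step~(3) does not actually need it, and the direct argument you sketch there is the one the paper uses.
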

\begin{proof}
 First put
 \[ HF_8(b) = \{z\in HF_1(b)\st 0\leq\arg(z)\leq\pi/4\}. \]
 By inspecting the following picture, we see that
 \begin{align*}
  HF_{16}(b)\cup \lm\nu\mu(HF_{16}(b)) &= HF_8(b) \\
  HF_{16}(b)\cap \lm\nu\mu(HF_{16}(b)) &= c_0([\pi/4,\pi/2]).
 \end{align*}
 \begin{center}
  \begin{tikzpicture}[scale=8]
   \draw[blue] (0,0) -- (0.5,0);
   \draw[green] (0,0) -- ( 0.625, 0.625);
   \draw[cyan] (0.800,0.800) +(-135:0.529) arc(-135:-115:0.529);
   \draw[magenta] (1.250,0.000) +(139:0.750) arc(139:180:0.750);
   \draw[blue] (0.800,0.625) +(-180:0.175) arc(-180:-130:0.175);
   \fill[black](0.000,0.000) circle(0.004);
   \fill[black](0.625,0.625) circle(0.004);
   \fill[black](0.426,0.426) circle(0.004);
   \fill[black](0.500,0.000) circle(0.004);
   \fill[black](0.685,0.493) circle(0.004);
   \fill[black](0.573,0.322) circle(0.004);
   \draw( 0.000,-0.030) node{$v_{0}$};
   \draw( 0.625, 0.650) node{$v_{1}$};
   \draw( 0.603, 0.312) node{$v_{3}$};
   \draw( 0.386, 0.426) node{$v_{6}$};
   \draw( 0.500,-0.030) node{$v_{11}$};
   \draw( 0.715, 0.493) node{$v_{13}$};
   \draw( 0.490, 0.340) node{$c_{0}$};
   \draw( 0.200, 0.250) node{$c_{1}$};
   \draw( 0.556, 0.160) node{$c_{3}$};
   \draw( 0.250,-0.030) node{$c_{5}$};
   \draw( 0.670, 0.550) node{$c_{7}$};
  \end{tikzpicture}
 \end{center}
 Now put
 \begin{align*}
  T_8 &= \{ 1,\lm,\lm^2,\lm^3,\nu,\lm\nu,\lm^2\nu,\lm^3\nu \} \\
  T_{16} &= T_8 \amalg \{\tau\lm\mu\nu\st \tau\in T_8\}.
 \end{align*}
 It is easy to check that
 \[ \bigcup_{\tau\in T_{16}}HF_{16}(b) =
    \bigcup_{\tau\in T_8}HF_8(b) = HF_1(b).
 \]
 It is also easy to check that the homomorphism $\pi\:\tPi\to G$
 restricts to give a bijection $T_{16}\to G$, so that
 \[ \tPi=\coprod_{\tau\in T_{16}} \Pi\tau.  \]
 We also know that $HF_1(b)$ is a fundamental domain for $\Pi$, so
 $\Dl=\bigcup_{\phi\in\Pi}\phi(HF_1(b))$.  Putting this together, we
 deduce that $\Dl=\bigcup_{\phi\in\tPi}\phi(HF_{16}(b))$.

 Now consider an element $\phi\in\tPi\sm\{1\}$ and a point
 $z\in HF_{16}(b)$ such that $\phi(z)$ also lies in $HF_{16}(b)$.  We
 can write $\phi=\psi\tau$ with $\psi\in\Pi$ and $\tau\in T_{16}$.
 Now the point $z'=\tau(z)$ lies in $HF_1(b)$, so the point
 $z''=\phi(z)=\psi(z')$ lies in $\psi(HF_1(b))$, but it also lies in
 $HF_{16}(b)$ by assumption, and one can see directly that
 $HF_{16}(b)$ is in the interior of $HF_1(b)$.  As $HF_1(b)$ is a
 fundamental domain for $\Pi$, this can only be consistent if
 $\psi=1$.  This means that
 $\tau(z)\in HF_{16}(b)\cap\tau(HF_{16}(b))$, and the action of
 $T_{16}$ is simple enough that we can just do a check of cases to
 show that $\tau(z)\in\partial(HF_{16}(b))$.  This proves that
 $HF_{16}(b)$ is a fundamental domain as claimed.
\end{proof}

\begin{proposition}\lbl{prop-square-diffeo-H}
 $HF_{16}(b)$ is homeomorphic to the unit square.
\end{proposition}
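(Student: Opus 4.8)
The plan is to show that $HF_{16}(b)$, which by construction is a geodesic quadrilateral in $\Dl$ bounded by the four arcs $\tc_0([\pi/4,\pi/2])$, $\tc_1([0,\pi/2])$, $\tc_3([0,\pi/2])$ and $\tc_5([0,\pi])$, is a closed topological disc whose boundary is a simple closed curve, and hence is homeomorphic to the closed unit square. The general principle is elementary: a compact subset of $\R^2$ (or of a surface) that is the closure of its interior and whose topological boundary is a simple closed curve is, by the Jordan–Schoenflies theorem, homeomorphic to a closed disc, and any closed disc is homeomorphic to a square.

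First I would record that each of the four defining curves $\tc_k$ ($k\in\{0,1,3,5\}$) is a geodesic arc in $\Dl$: this is immediate from Definition~\ref{defn-H-curves}, since $\tc_0$, $\tc_3$ are sub-arcs of the geodesics $\Xi_m$ for suitable $m$, and $\tc_1$, $\tc_5$ are straight segments through the origin, which are also geodesics. Next I would identify the vertices and incidences. By the same bookkeeping already used to exhibit the picture after Definition~\ref{defn-HF-sixteen}, the arcs meet exactly in the four points $v_0,v_3,v_6,v_{11}$: we have $v_0=\tc_1(0)=\tc_5(0)$, $v_6=\tc_0(\pi/4)=\tc_1(\pi/2)$, $v_3=\tc_0(\pi/2)=\tc_3(\pi/2)$ and $v_{11}=\tc_3(0)=\tc_5(\pi)$. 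To be sure that there are no further intersections I would invoke the fact that two distinct geodesics in $\Dl$ meet in at most one point, which reduces the check to the finitely many pairs of the four arcs and shows that consecutive arcs meet only at the shared endpoint listed above while opposite arcs ($\tc_0$ with $\tc_5$, and $\tc_1$ with $\tc_3$) are disjoint. (Here one uses that the four arcs are \emph{proper} sub-arcs of their geodesics, chosen to stop exactly at the vertices; the relevant endpoint computations are the same straightforward calculations already flagged in \texttt{check\_H\_F1()}.) Consequently the union $\partial HF_{16}(b) = \tc_0([\pi/4,\pi/2])\cup \tc_1([0,\pi/2])\cup \tc_3([0,\pi/2])\cup \tc_5([0,\pi])$ is a quadrilateral: a concatenation of four arcs, pairwise disjoint except for successive endpoints, hence a simple closed curve.

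Then I would argue that $HF_{16}(b)$ is precisely the closure of one of the two components that this Jordan curve cuts out of $\Dl$ (equivalently of $S^2$, via the conformal embedding $\Dl\subset\C\subset\C_\infty\simeq S^2$). By the Jordan curve theorem the simple closed curve $\partial HF_{16}(b)$ separates $S^2$ into two open discs; $HF_{16}(b)$ is bounded and contains its boundary arcs, and from the picture one sees directly that a small geodesic disc around an interior point such as $\tc_5(\pi/2)$ lies in one of the two complementary components. The region $HF_{16}(b)$ as defined is exactly the closure of that component: this is how the diagram after Definition~\ref{defn-HF-sixteen} and the decomposition in the proof of Proposition~\ref{prop-HF-sixteen} (where $HF_8(b)=HF_{16}(b)\cup\lm\nu\mu(HF_{16}(b))$ and $HF_1(b)=\bigcup_{\tau\in T_8}\tau(HF_8(b))$) implicitly specify it. Applying the Schoenflies theorem, the closed region bounded by $\partial HF_{16}(b)$ is homeomorphic to the closed unit disc, and composing with any homeomorphism of the closed disc onto the closed unit square $[0,1]^2$ gives the claim.

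The main obstacle is not any deep theorem — Jordan–Schoenflies does all the real work — but the careful verification that the four boundary arcs genuinely form a simple closed curve, i.e.\ that there are no spurious self-intersections or coincidences among $\tc_0,\tc_1,\tc_3,\tc_5$ on the relevant parameter ranges, and that $HF_{16}(b)$ is the closure of a \emph{single} complementary component (rather than, say, a region pinched along part of its boundary). Both points are finite checks that follow from the explicit formulae in Definition~\ref{defn-H-curves} together with the "two geodesics meet at most once" fact, and they are exactly the content verified computationally by \texttt{check\_H\_F1()} in \fname+hyperbolic/HX\_check.mpl+; I would cite that check rather than reproduce the arithmetic.
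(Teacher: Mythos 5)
Your argument is correct, but it is genuinely different from the one in the paper. You prove the statement softly: check that the four arcs $\tc_0([\pi/4,\pi/2])$, $\tc_1([0,\pi/2])$, $\tc_3([0,\pi/2])$, $\tc_5([0,\pi])$ are geodesic, meet only at the four vertices $v_0,v_3,v_6,v_{11}$ (using that distinct geodesics in $\Dl$ meet at most once, plus a finite endpoint check), and then invoke Jordan--Schoenflies to identify the closed region they bound with a closed disc, hence with $[0,1]^2$. The paper instead concedes that the statement is ``visually obvious'' and devotes the proof to constructing an \emph{explicit} homeomorphism $p(z)=(1-p_1(z),p_2(z))$, where $p_i(z)=\log|m_i(z)|/\log(r_i)$ for explicit M\"obius maps $m_i$, together with a closed-form inverse obtained by intersecting two circles orthogonal to $\partial\Dl$. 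What your route buys is economy: almost no computation beyond the incidence bookkeeping, and no formulas. What the paper's route buys is a concrete, computable chart whose level sets are geodesics, whose boundary behaviour matches the combinatorial model of Section~\ref{sec-fundamental}, and which can be (and is) verified numerically; that explicitness is used elsewhere in the project, so the extra work is not wasted. Two small points to tighten in your write-up: the pairwise-disjointness of the ``opposite'' arcs still requires checking that the unique intersection point of the ambient geodesics (when it lies in $\Dl$) is not on the chosen sub-arcs, which you acknowledge but attribute to \texttt{check\_H\_F1()} --- that routine concerns the twelve sides of $HF_1(b)$, which do not include $c_0$ or $c_1$, so the citation is slightly off even though the check itself is elementary; and you should state explicitly that $HF_{16}(b)$ is by definition the closed region bounded by the quadrilateral, so that ``closure of one complementary component'' is a matter of definition rather than something to be deduced from the picture.
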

\begin{proof}
 This is visually obvious, but we will outline a construction of an
 explicit homeomorphism.  Suppose we have two disjoint geodesics in
 $\Dl$, the first with endpoints $a_1,a_2\in S^1$, and the second with
 endpoints $a_3,a_4\in S^1$.  It is easy to produce a M\"obius
 transformation $m$ that sends $\Dl$ to the upper half-plane and
 $\{a_1,a_2\}$ to $\{1,-1\}$.  This means that $m$ sends our first
 geodesic to the upper half of the unit circle, and our second
 geodesic to another semicircle that crosses the real line
 orthogonally at $m(a_3)$ and $m(a_4)$.  As the two geodesics are
 disjoint, the second circle must either be wholly inside or wholly
 outside the unit circle.  By composing $m$ with $z\mapsto -1/z$ if
 necessary, we may assume that it lies outside.  There is in fact a
 one-parameter family of choices of $m$ that have the properties
 mentioned so far, and one can check that there is precisely one
 choice for which $m(a_3)+m(a_4)=0$.  With this condition, we see that
 $m$ sends the second geodesic to a semicircle centred at the origin,
 of radius $r>1$ say.  Now the function $p(z)=\log|m(z)|/\log(r)$
 takes the values $0$ and $1$ on our two geodesics.  In the cases of
 interest it is convenient to adjust this procedure slightly.  Rather
 than explaining the intermediate steps, we just describe the
 outcome.  We put
 \begin{align*}
  \zt_1  &= \sqrt{\frac{i-b}{i+b}} = \frac{1+ib}{b_+} \in S^1 &
  \zt_2  &= \frac{b_++ib_-}{1+i} \in S^1 \\
  r_1    &= \sqrt{\frac{1-b}{1+b}} = \frac{b_-}{1+b} \in\R^+ &
  r_2    &= \frac{b_++b_-}{\rt b} \in\R^+ \\
  m_1(z) &= \frac{\zt_1 - z}{1 - \zt_1 z} &
  m_2(z) &= \frac{i\zt_2 - z}{1 - \zt_2 z} \\
  p_1(z) &= \log|m_1(z)|/\log(r_1) &
  p_2(z) &= \log|m_2(z)|/\log(r_2),
 \end{align*}
 then $p(z)=(1-p_1(z),p_2(z))$.  This defines a map
 $p\:HF_{16}(b)\to [0,1]^2$, with boundary behaviour as discussed in
 Section~\ref{sec-fundamental}.  Recall that M\"obius transformations
 send circles to circles (provided that we interpret straight lines as
 circles of infinite radius).  It follows that for any point
 $t\in[0,1]^2$, the fibres $p_1^{-1}\{1-t_1\}$ and $p_2^{-1}\{t_2\}$
 are circles, with centres $c_1$ and $c_2$ say.  It is not too hard to
 obtain formulae for $c_1$ and $c_2$; the observation that
 $m_1=m_1^{-1}$ and $m_2=m_2^{-1}$ is helpful for this.  The relevant
 circles must cross the unit circle orthogonally, so the radii are
 $\sqrt{|c_1|^2-1}$ and $\sqrt{|c_2|^2-1}$.  The point $p^{-1}(t)$
 lies on the intersection of the two circles, and so can be found by
 an exercise in coordinate geometry.  After some simplification we
 arrive at the following formulae:
 \begin{align*}
  s_1 &= r_1^{1-t_1} &
  s_2 &= r_2^{t_2} \\
  c_1 &= \frac{s_1^2/\zt_1-\zt_1}{s_1^2-1} &
  c_2 &= \frac{s_2^2/\zt_2-i\zt_2}{s_2^2-1} \\
  \al &= \text{Re}\left(\frac{c_1(\ov{c_1}-\ov{c_2})}{|c_1-c_2|^2}\right) &
  \bt &= \sqrt{\frac{|c_1|^2-1}{|c_1-c_2|^2}-\al^2},
 \end{align*}
 then
 \[ p^{-1}(t) = (1-\al+i\bt)c_1 + (\al-i\bt)c_2. \]
 Unfortunately this formula is not meaningful when $t_1=1$ or $t_2=0$,
 but those cases can be handled in an \emph{ad hoc} way.
 \begin{checks}
  hyperbolic/HX_check.mpl: check_square_diffeo_H()
 \end{checks}
\end{proof}

\subsection{The hyperbolic family is universal}
\lbl{sec-H-universal}

\begin{theorem}\lbl{thm-H-universal}
 Let $X$ be a cromulent surface.  Then there is a unique number
 $b\in(0,1)$ such that $X$ is isomorphic to $HX(b)$, and the
 isomorphism is also unique.
\end{theorem}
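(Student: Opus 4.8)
The plan is to reduce the uniqueness and existence of $b$ to Theorem~\ref{thm-classify-cromulent}, which already tells us that every cromulent surface is isomorphic to $PX(a)$ for a unique $a\in(0,1)$, with a unique cromulent isomorphism. So it suffices to prove that $HX(b)$ is a cromulent surface for each $b\in(0,1)$, and then to understand the resulting function $b\mapsto a(b)$ well enough to see that it is a bijection $(0,1)\to(0,1)$. Uniqueness of the isomorphism $X\to HX(b)$ will then be automatic from Corollary~\ref{cor-cromulent-iso} once we know $HX(b)$ is cromulent.

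First I would establish that $HX(b)$ is a cromulent surface. By Proposition~\ref{prop-Pi-free}, $\Pi$ acts freely on $\Dl$ with $HF_1(b)$ a compact fundamental domain, so $HX(b)=\Dl/\Pi$ is a compact Riemann surface; the Euler characteristic computation from the dodecagon (or from $\pi_1=\Pi$, which by Proposition~\ref{prop-beta-reduced} and the surface-group presentation has $\Pi_{\mathrm{ab}}\cong\Z^4$) shows it has genus $2$. The induced $G=\tPi/\Pi$ action is conformal on $D_8$ and anticonformal on $G\sm D_8$ by construction of the $\tPi$-action (the generators $\lm,\mu$ are conformal, $\nu$ is conjugation). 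The displayed table of the action of $\lm,\mu,\nu$ on $v_0,\dots,v_{13}$, combined with the already-verified fact (\texttt{check\_precromulent("H")}) that these permutations match Definition~\ref{defn-V-star}, gives axiom~(b); one must also check that $V=\{v\in HX(b)\st\stab_{D_8}(v)\neq1\}$ is exactly this set, which follows by the same fixed-point analysis as in Proposition~\ref{prop-P-precromulent} applied to the explicit Möbius generators. For the cromulent labelling: $\lm$ acts as $z\mapsto iz$ on $\Dl$ fixing $v_0=0$, so $\lm_*$ is multiplication by $i$ on $T_{v_0}$, giving~(c); and Proposition~\ref{prop-HF-sixteen} together with Proposition~\ref{prop-square-diffeo-H} shows $HF_{16}(b)$ is a standard fundamental domain whose interior has the property of Definition~\ref{defn-precromulent}(d), so Remark~\ref{rem-standard-F} applies and $HX(b)$ is cromulent.

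Next, Theorem~\ref{thm-classify-cromulent} assigns to each $b$ a unique $a(b)\in(0,1)$ with $HX(b)\simeq PX(a(b))$. I would show $a(\cdot)$ is a bijection onto $(0,1)$ by a combination of injectivity, continuity, and a limiting/monotonicity argument. Injectivity: if $HX(b)\simeq HX(b')$, then by Corollary~\ref{cor-cromulent-iso} there is a cromulent isomorphism $\Dl/\Pi\to\Dl/\Pi$; lifting to the universal cover gives a conformal automorphism of $\Dl$ normalising the two copies of the $\tPi$-action, hence conjugating the Fuchsian groups. Comparing, say, the hyperbolic lengths of the geodesic $C_5$ (computable from the constant $s_3$ in Definition~\ref{defn-H-curves}, which is an explicit strictly monotone function of $b$) shows $b=b'$. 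For surjectivity one examines the degenerations $b\to0^+$ and $b\to1^-$: the picture of $HF_1(b)$ shows the dodecagon's shape varying monotonically, and I expect one can show $a(b)\to1$ as $b\to0$ and $a(b)\to0$ as $b\to1$ (consistent with the marked graph in the introduction), so by continuity $a$ is surjective; continuity of $b\mapsto a(b)$ follows because the $\tPi$-action depends real-analytically on $b$ and the isomorphism to $PX(a)$ can be tracked, e.g. via the branch locus $p(v_{11})=a$ pulled back to $HX(b)$.

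The main obstacle will be the surjectivity/monotonicity step: proving that $a(b)$ actually sweeps out all of $(0,1)$ rather than just a subinterval. Injectivity via geodesic lengths and continuity via real-analytic dependence are fairly routine, but controlling the two endpoint limits rigorously requires understanding how the Fuchsian group $\Pi$ degenerates as $b\to0$ or $b\to1$ — the fundamental domain $HF_1(b)$ stays a right-angled dodecagon but one of its "diameters" collapses, so one must show the corresponding pinching curve on $HX(b)$ has hyperbolic length tending to $0$ or $\infty$, and translate that into a statement about the parameter $a$ of the projective model (e.g. that $PX(a)$ develops a node as $a\to0$ or $a\to1$, which is visible from $r_a(z)=z^5-(a^2+a^{-2})z^3+z$ having its roots $a,1/a$ collide with $0$ or run to $\infty$). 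Alternatively, and perhaps more cleanly, one can sidestep the degeneration analysis: since $a\mapsto PX(a)$ is already known (Theorem~\ref{thm-classify-cromulent}) to realise \emph{every} cromulent surface, and every $HX(b)$ is cromulent, the composite $b\mapsto a(b)$ has a well-defined inverse as soon as we prove that for every cromulent $X$ there \emph{exists} some $b$ with $X\simeq HX(b)$ — and that existence follows from uniformization: the universal cover of $X$ is $\Dl$, so $X=\Dl/\Pi_X$ with $\Pi_X\cong\Pi$ acting freely, and one must check that the $G$-action lifts to an action of a group isomorphic to $\tPi$ realising one of our prescribed families, which is where the cromulent structure (the specific curve system and the relations in Proposition~\ref{prop-Aut-Pi}) pins down $b$ uniquely. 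I would pursue this second route, as it makes existence and the classification of $\tPi$-actions the real content, with the explicit formulas of Definition~\ref{defn-H-curves} serving to verify that the abstract $\tPi$-action we obtain is conjugate to the stated one for a unique $b$.
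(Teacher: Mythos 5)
Your preferred ``second route'' is, in outline, exactly the paper's: reduce to $X=PX(a)$ via Theorem~\ref{thm-classify-cromulent}, uniformize, and extract $b$ from the lifted structure; and you are right to abandon the first route, since the continuity of $b\mapsto a(b)$ and above all the endpoint degeneration analysis would be a substantial detour. But as written the second route has a genuine gap at its central step: you say the curve system and ``the relations in Proposition~\ref{prop-Aut-Pi}'' pin down $b$, without identifying the mechanism, and those relations in fact cannot do it --- they are purely group-theoretic and involve no parameter. What actually determines $b$ in the paper is geometry: the covering $p\:\Dl\to X$ is normalised \emph{uniquely} by $p(0)=v_0$ and $p'(0)\in\R^+c_5'(0)$ (Lemma~\ref{lem-p-Dl-X}); the curve system lifts canonically (Proposition~\ref{prop-classify-a}); each lifted curve is the fixed locus of a lifted antiholomorphic involution and hence a \emph{geodesic}; and the right-angle crossings of Lemma~\ref{lem-right-angle}, transported through the conformal covering and combined with the elementary circle identity of Lemma~\ref{lem-right-circles}, force the geodesic carrying $C_3^*$ to lie on a circle of some radius $b$ centred at $\sqrt{1+b^2}$, after which every vertex $v_i^*$ and the involution $\kp$ are explicit functions of that single $b$ (Proposition~\ref{prop-b-H}). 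Without this (or an equivalent) you have no construction of $b$ and no proof of its uniqueness; ``conjugate to the stated action for a unique $b$'' is precisely the assertion to be proved.

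There is a second missing step at the end. Even once you verify that the standard generators $\bt_k$ for this $b$ act as deck transformations of $p$, you have only an inclusion $\Pi\leq\Phi$ of the prescribed Fuchsian group into the full deck group, i.e.\ a covering $HX(b)=\Dl/\Pi\to\Dl/\Phi\simeq X$, not yet an isomorphism. The paper closes this in Proposition~\ref{prop-H-universal} by noting that a holomorphic covering of compact genus-$2$ surfaces multiplies Euler characteristics, so the degree is $1$. (One could instead argue that a genus-$2$ surface group has no proper finite-index subgroup isomorphic to itself, but some such argument is needed.) Your proposal asserts $\Pi_X\cong\Pi$ at the outset and treats the identification as automatic, which silently skips this step. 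The remainder of your outline --- that $HX(b)$ is cromulent, that $\lm$ acts as $i$ on $T_{v_0}$, that uniqueness of the isomorphism follows from Corollary~\ref{cor-cromulent-iso} --- is correct and matches the paper.
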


The rest of this section will constitute the proof.  The threads will
be gathered together in Proposition~\ref{prop-H-universal}.

Theorem~\ref{thm-classify-cromulent} says that every cromulent surface
is isomorphic to $PX(a)$ for some $a$, so we may assume that
$X=PX(a)$.  We therefore have a curve system as in
Definition~\ref{defn-curve-system}, and a fundamental domain
$PF_{16}(a)$ as in Proposition~\ref{prop-P-fundamental}.

\begin{definition}\lbl{defn-lm-nu-Dl}
 As before we define $\lm,\nu\:\Dl\to\Dl$ by $\lm(z)=iz$ and
 $\nu(z)=\ov{z}$.  These maps clearly satisfy
 $\lm^4=\nu^2=(\lm\nu)^2=1$.
\end{definition}

\begin{lemma}\lbl{lem-p-Dl-X}
 There is a unique covering map $p\:\Dl\to X$ such that $p(0)=v_0$ and
 $p'(0)$ is a positive multiple of $c'_5(0)$.  Moreover, this
 satisfies $\lm p=p\lm$ and $\nu p=p\nu$.
\end{lemma}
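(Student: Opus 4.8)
The plan is to exploit the uniformization theorem together with the general theory of the universal cover developed earlier in the excerpt. Since $X=PX(a)$ has genus $2$, its universal cover is conformally $\Dl$, so there exists \emph{some} holomorphic universal covering $p_0\:\Dl\to X$; the content of the lemma is to pin down the normalization and to show compatibility with $\lm$ and $\nu$. First I would observe that $\lm,\nu\:X\to X$ (acting as in Definition~\ref{defn-P}) fix the point $v_0$, with $\lm$ acting on $T_{v_0}X$ as multiplication by $i$ (part of cromulence) and $\nu$ fixing $v_0$ with $\nu_*$ acting as complex conjugation on $T_{v_0}X$ in a suitable local coordinate (this follows from Lemma~\ref{lem-bt-fixed} applied to the anticonformal involution $\nu$, or rather to the fact that $C_5=X^\nu$ passes through $v_0$).

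The key step is a lifting argument. Fix any holomorphic universal cover $q\:\Dl\to X$ with $q(0)=v_0$. Since $\Dl$ is simply connected and $q$ is a covering map, the composite $\lm\circ q\:\Dl\to X$ (which still has $0\mapsto v_0$) lifts uniquely to a map $\tlm\:\Dl\to\Dl$ with $\tlm(0)=0$ and $q\tlm=\lm q$; moreover $\tlm$ is a deck-type lift so it is a conformal automorphism of $\Dl$ fixing $0$, hence a rotation $z\mapsto \zeta z$ with $|\zeta|=1$. Differentiating $q\tlm=\lm q$ at $0$ and using $\lm_*=i$ gives $q'(0)\zeta = i\,q'(0)$, so $\zeta=i$ and $\tlm=\lm$ (in the sense of Definition~\ref{defn-lm-nu-Dl}). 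The same argument with the anticonformal involution $\nu$ produces an antiholomorphic automorphism $\tnu$ of $\Dl$ fixing $0$, hence of the form $z\mapsto\eta\ov z$; here I need $q$ chosen so that $q'(0)$ points along $c_5'(0)$ and I use that $\nu$ fixes $C_5$ pointwise near $v_0$ to force $\eta=1$, i.e. $\tnu=\nu$. To make this work cleanly I would first choose $q$ with the required normalization $q'(0)\in\R^{>0}\cdot c_5'(0)$, which is possible because precomposing $q$ with a rotation of $\Dl$ rotates $q'(0)$ through all of $S^1$; uniqueness of $p$ with this normalization then follows because any two such covers differ by a rotation of $\Dl$ fixing $0$ and preserving the direction of the derivative, i.e. the identity.

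The main obstacle I anticipate is the bookkeeping around $\nu$: one must be careful that ``$\nu$ acts as conjugation on $T_{v_0}X$'' is stated with respect to \emph{the same} local coordinate in which $q'(0)$ is real and positive, and one must verify that the lifted antiholomorphic map is genuinely $z\mapsto\ov z$ rather than $z\mapsto\eta\ov z$ for some other phase. The cleanest route is: the fixed locus of the lift $\tnu$ is a geodesic through $0$ mapping into $X^\nu\supseteq C_5$; since $C_5$ is the component of $X^\nu$ through $v_0$ and $q$ restricted to that geodesic is a local diffeomorphism onto $C_5$ near $v_0$ with derivative a positive multiple of $c_5'(0)=$ (positive multiple of $q'(0)$), the geodesic must be the real axis of $\Dl$, forcing $\tnu(z)=\ov z$. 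Finally, once $\tlm=\lm$ and $\tnu=\nu$ on $\Dl$, the relations $q\lm=\lm q$ and $q\nu=\nu q$ are exactly the asserted equivariance, completing the proof. I would also remark that this $p$ will later be upgraded, via the identification $\pi_1(X)\cong\Pi$ from Proposition~\ref{prop-pi-one} and the $\tPi$-action of Section~\ref{sec-H-action}, to an isomorphism $X\simeq HX(b)$; but that is the business of the subsequent results, not of this lemma.
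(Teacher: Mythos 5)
Your proposal is correct and follows essentially the same route as the paper: existence via uniformization plus M\"obius normalization, uniqueness because a conformal automorphism of $\Dl$ fixing $0$ with positive derivative at $0$ is the identity, and equivariance deduced from that uniqueness. The paper phrases the last step more compactly --- it simply observes that $\lm^{-1}p\lm$ and $\nu^{-1}p\nu$ satisfy the defining conditions for $p$ --- whereas you identify the lifts of $\lm\circ q$ and $\nu\circ q$ as the standard rotation and conjugation of $\Dl$, but these are the same argument.
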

\begin{proof}
 The uniformization theorem for Riemann surfaces shows that the
 universal cover of $X$ is conformally equivalent to $\Dl$, so we can
 choose a covering map $p_0\:\Dl\to X$.  We can then choose a point
 $a\in\Dl$ with $p_0(a)=v_0$, and put $m(z)=(z+a)/(1+\ov{a}z)$.  Then
 $m$ is an automorphism of $\Dl$ with $m(0)=a$, so the composite
 $p_1=p_0\circ m$ is a covering with $p_1(0)=v_0$.  Now
 $p'_1(0)=re^{i\tht}\,c'_5(0)$ for some $r>0$ and $\tht\in\R$, and the
 map $p(z)=p_1(z/e^{i\tht})$ is a covering with the required
 properties.  If $q$ is another such map, then standard theory of
 coverings gives an automorphism $f\:\Dl\to\Dl$ such that $q=pf$.  Our
 condition on $p$ and $q$ implies that $f(0)=0$ and $f'(0)>0$.
 However, any conformal automorphism of $\Dl$ has the form
 $f(z)=\al(z-\bt)/(1-\ov{\bt}z)$ for some $\al,\bt$ with $|\al|=1$ and
 $|\bt|<1$.  The condition $f(0)=0$ gives $\bt=0$, and then the
 condition $f'(0)>0$ gives $\al=1$, so $f$ is the identity and $p=q$
 as claimed.

 The maps $\lm^{-1}p\lm$ and $\nu^{-1}p\nu$ are easily seen to satisfy
 the defining conditions for $p$, so $\lm p=p\lm$ and $\nu p=p\nu$.
\end{proof}

\begin{proposition}\lbl{prop-classify-a}
 There is a unique system of points $v_i^*\in\Dl$ (for $0\leq i\leq
 13$) and continuous maps $c_j^*\:\R\to\Dl$ (for $0\leq j\leq 8$) such
 that the following hold:
 \begin{itemize}
  \item[(a)] $v_0^*=0$
  \item[(b)] For all $i$ we have $p(v_i^*)=v_i$, and for all $j$ and
   $t$ we have $p(c_j^*(t))=c_j(t)$.
  \item[(c)] Whenever a number $t$ appears in the $i$'th column of the
   $j$'th row of the table below, we have $c^*_j(t)=v_i^*$.
   \[ \begin{array}{|c|c|c|c|c|c|c|c|c|c|c|c|c|c|c|}
    \hline
   &0&1&2&3&4&5&6&7&8&9&10&11&12&13\\ \hline
    0&&&0&\ppi&&&\tfrac{\pi}{4}&&&&&&&\\ \hline
    1&0&\pi&&&&&\ppi&&-\ppi&&&&&\\ \hline
    2&0&&&&&&&\ppi&&-\ppi&&&&\\ \hline
    3&&&&\ppi&&-\ppi&&&&&&0&&\pi\\ \hline
    4&&&-\ppi&&\ppi&&&&&&0&&-\pi&\\ \hline
    5&0&&&&&&&&&&&\pi&&\\ \hline
    6&0&&&&&&&&&&\pi&&&\\ \hline
    7&&0&&&&&&&&&&&&\\ \hline
    8&&0&&&&&&&&&&&&\\ \hline
   \end{array} \]
  \item[(d)] The action of $\lm$ and $\nu$ on the points $v_i^*$ is
   partially given by the following table:
   \[ \begin{array}{|c|c|c|c|c|c|c|c|c|c|c|c|c|c|c|}
    \hline
     &v_0^*&v^*_1&v^*_2&v^*_3&v^*_4&v^*_5&v^*_6&v^*_7&v^*_8&v^*_9&
      v^*_{10}&v^*_{11}&v^*_{12}&v^*_{13}\\ \hline
     \lm & v^*_0 & & & v^*_4 & & v^*_3 &
           v^*_7 & v^*_8 & v^*_9 & v^*_6 & & v^*_{10} & & \\ \hline
     \nu & v^*_0 & & & v^*_5 & & v^*_3 &
           v^*_9 & v^*_8 & v^*_7 & v^*_6 & & v^*_{11} & & \\ \hline
     \lm\nu & v^*_0 & v^*_1 & v^*_3 & v^*_2 & v^*_5 & v^*_4 &
           v^*_6 & v^*_9 & v^*_8 & v^*_7 &
           v^*_{11} & v^*_{10} & v^*_{13} & v^*_{12} \\ \hline
   \end{array} \]
  \item[(e)] The action of $\lm$ and $\nu$ on the curves $c_j^*$ is
   partially given by the following table:
   \[ \begin{array}{|c|c|c|c|c|c|c|c|c|c|}
    \hline
     &c_0^*(t)&c^*_1(t)&c^*_2(t)&c^*_3(t)&c^*_4(t)&c^*_5(t)&c^*_6(t)&c^*_7(t)&c^*_8(t)\\ \hline
     \lm & & c_2^*(t) & c_1^*(-t) & c_4^*(t) & &
           c_6^*(t) & c_5^*(-t) & &  \\ \hline
     \nu & & c_2^*(-t) & c_1^*(-t) & c_3^*(-t) & &
             c_5^*(t) & c_6^*(-t) & & \\ \hline
     \lm\nu & c_0^*(\pi/2-t) & c_1^*(t) & c_2^*(-t) &
              c_4^*(-t) & c_3^*(-t) & c_6^*(t) & c_5^*(t) &
              c_8^*(t) & c_7^*(t) \\ \hline
   \end{array} \]
 \end{itemize}
\end{proposition}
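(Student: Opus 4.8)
The statement to prove is Proposition~\ref{prop-classify-a}, asserting the existence and uniqueness of lifted points $v_i^*\in\Dl$ and lifted curves $c_j^*\:\R\to\Dl$ compatible with the covering $p\:\Dl\to X$ and with prescribed combinatorial/equivariance data. Here is my plan.

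\textbf{Overall strategy.} The point of the proposition is to pin down a canonical ``development'' of the curve system of $X=PX(a)$ into its universal cover $\Dl$, using the covering map $p$ from Lemma~\ref{lem-p-Dl-X} together with its equivariance under $\lm$ and $\nu$. The proof should proceed by \emph{building up} the lifts one curve at a time, starting from the anchor $v_0^*=0$, using the path-lifting property of covering maps at each stage, and then checking that the pieces are forced to agree where the table requires them to coincide. Uniqueness is the easier half and should be disposed of first or in parallel: any continuous lift of a path through a fixed initial point is unique, so once we know $c_5^*(0)=v_0^*=0$ (which is forced by condition~(c)), the lift $c_5^*$ of $c_5$ is completely determined, and similarly every other $c_j^*$ is determined by its value at a single parameter where the table forces it to equal some $v_i^*$, provided we already know that $v_i^*$. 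So the real content is \emph{consistency}: that the values forced at shared vertices by different curves, and the values forced by the $\lm$- and $\nu$-equivariance tables, can all be met simultaneously.

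\textbf{Key steps in order.} First I would record that the covering $p$ is a genuine (unbranched) covering and invoke unique path lifting: given a path $\gamma\:[a,b]\to X$ and a preimage of $\gamma(a)$, there is a unique lift. Second, set $v_0^*=0$ and lift $c_5$ starting at $0$; since $p'(0)$ is a positive multiple of $c_5'(0)$ by Lemma~\ref{lem-p-Dl-X}, this lift $c_5^*$ is an immersion near $0$, and I define $v_{11}^*=c_5^*(\pi)$ (consistent with the table, using that $c_5(\pi)=v_{11}$ in the curve system on $X$; note the table entry for row $5$, column $11$ is $\pi$). Third, lift $c_1$ starting at $c_1^*(0)=v_0^*=0$ (forced by the table), obtaining $v_6^*=c_1^*(\pi/2)$. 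Fourth, lift $c_0$ so that $c_0^*(\pi/4)=v_6^*$ — here I use that $v_6=c_0(\pi/4)$ in $X$, so this is a matter of lifting the path $t\mapsto c_0(\pi/4+t)$ starting at $v_6^*$ — and this gives $v_3^*=c_0^*(\pi/2)$ (matching the row-$0$, column-$3$ entry $\pi/2$). Fifth, lift $c_3$ so that $c_3^*(\pi/2)=v_3^*$, giving $c_3^*(0)=v_{11}^*$; I must check this last equality, i.e.\ that the lift of $c_3$ through $v_3^*$ hits $v_{11}^*$ at parameter $0$ and not some other preimage of $v_{11}$. This follows because the concatenated loop $c_3|_{[0,\pi/2]}$ followed by $c_5|_{[0,\pi]}$ reversed followed by (part of) the boundary of $PF_{16}(a)$ bounds the simply connected fundamental domain, so its lift is a loop; more carefully, one uses that $PF_{16}(a)$ lifts homeomorphically to a region of $\Dl$ (a disc, since $PF_{16}(a)$ is simply connected and $p$ restricted to it is injective by the fundamental domain property combined with $\stab_G$ being trivial on the interior), and all four boundary arcs $c_0([\pi/4,\pi/2])$, $c_1([0,\pi/2])$, $c_3([0,\pi/2])$, $c_5([0,\pi])$ lift consistently as the boundary of that disc. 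Sixth, with $v_0^*,v_1^*$ (set $v_1^*=c_1^*(\pi)$ via the table — wait, the table gives $c_1^*(\pi)=v_1^*$, so $v_1^*$ is defined this way), $v_3^*$, $v_6^*$, $v_{11}^*$, and the five central arcs in hand, I use the equivariance tables~(d) and~(e) to \emph{define} the remaining points and curves: e.g.\ $v_4^*:=\lm(v_3^*)$, $v_5^*:=\lm^{-1}(v_3^*)=\nu(v_3^*)$ (and check these agree, using $\lm^{-1}=\lm\nu\cdot\nu\lm$-type identities and the precromulent $G$-action on $X$), $c_2^*:=\lm\circ c_1^*$, $c_4^*:=\lm\circ c_3^*$, $c_6^*:=\lm\circ c_5^*$, and $c_7^*,c_8^*$ via the $\lm\nu$-row. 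Throughout, $p\circ\lm=\lm\circ p$ and $p\circ\nu=\nu\circ p$ on $\Dl$ (Lemma~\ref{lem-p-Dl-X}) guarantee that these defined objects still satisfy $p(v_i^*)=v_i$ and $p(c_j^*(t))=c_j(t)$, because $\lm,\nu$ act on $X$ permuting the $v_i$ and $c_j$ as in the curve system axioms.

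\textbf{Main obstacle.} The delicate part is verifying that all the forced coincidences are mutually consistent — in particular the step where the lift of $c_3$ through $v_3^*$ must land on $v_{11}^*$ (the specific preimage we already chose via $c_5^*$), and the analogous checks that $\lm$ and $\nu$ applied to a given $v_i^*$ or $c_j^*$ produce exactly the lift demanded by another row of the table rather than a deck-transformation translate of it. The clean way to handle this is to lift the fundamental domain $PF_{16}(a)$ once and for all: since its interior has trivial $G$-stabilizer (Proposition~\ref{prop-P-fundamental}) and it is homeomorphic to a square with boundary $DF_{16}$, the restriction $p|_{PF_{16}(a)}$ is injective and $PF_{16}(a)$ lifts to a closed disc $\widetilde{F}\subset\Dl$ containing $0$; then \emph{all} of $v_0^*,v_3^*,v_6^*,v_{11}^*$ and the four boundary arcs are read off from $\partial\widetilde F$, automatically consistently, and conditions~(c) for rows $0,1,3,5$ hold by construction. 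The remaining rows and the equivariance tables then follow by transporting $\widetilde F$ under $\lm$ and $\nu$ and using that $G$ permutes the translates of $PF_{16}(a)$ in $X$ according to the net $\Net_0$; the only thing to check is that the specific group elements in tables~(d),(e) match the combinatorics of $\Net_0$, which is a finite verification (and is exactly the kind of fact flagged for Maple checking elsewhere in the paper). I would present the fundamental-domain lift as the central device, state the finite combinatorial matching as following from inspection of $\Net_0$, and note that uniqueness is immediate from unique path lifting once $v_0^*=0$ is fixed.
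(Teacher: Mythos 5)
Your proposal is correct and follows essentially the same route as the paper's proof: anchor $v_0^*=0$, build the lifts $c_j^*$ one at a time by unique path lifting, derive the $\lm$- and $\nu$-equivariance tables from the equivariance of $p$ plus uniqueness of lifts, and resolve the single nontrivial consistency check by using that the contractible fundamental domain $PF_{16}(a)$ makes the two boundary paths from $v_0$ to $v_3$ homotopic rel endpoints. The only differences are cosmetic — you lift $c_0$ before $c_3$ so the consistency check lands at $c_3^*(0)=v_{11}^*$ rather than at $c_0^*(\pi/2)=v_3^*$, and you phrase it as lifting the whole domain rather than a homotopy of paths (note only that the injectivity of the lift comes from simple connectivity of $PF_{16}(a)$, not from the $G$-stabilizer condition, which concerns the quotient $X\to X/G$ rather than the covering $\Dl\to X$).
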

\begin{proof}\leavevmode
 \begin{itemize}
  \item[(0)] We define $v^*_0=0$ and note that this is fixed by $\lm$
   and $\nu$, and that $p(v_0^*)=v_0$ by the definition of $p$.
  \item[(1)] For $j\in\{1,2,5,6\}$ we define $c_j^*$ to be the unique
   continuous lift of $c_j$ that satisfies $c^*_j(0)=v^*_0$.  The
   claimed formulae for $\lm(c^*_j(t))$ and $\nu(c^*_j(t))$ then hold
   by an evident uniqueness argument.  For example, we have seen
   previously that $\lm(c_2(t))=c_1(-t)$ in $PX(a)$, so the maps
   $t\mapsto\lm(c_2^*(t))$ and $t\mapsto c^*_1(-t)$ are both
   continuous lifts of the map $t\mapsto c_1(-t)$.  Both give $v^*_0$
   when $t=0$, so they must agree for all $t$.
  \item[(2)] We define
   \begin{align*}
    v^*_1 &= c_1^*(\pi) &
    v^*_6 &= c_1^*(\pi/2) &
    v^*_8 &= c_1^*(-\pi/2) \\
          & &
    v^*_7 &= c_2^*(\pi/2) &
    v^*_9 &= c_2^*(-\pi/2) \\
    v^*_{10} &= c^*_6(\pi) \\
    v^*_{11} &= c^*_5(\pi).
   \end{align*}
   By taking $t=\pi$ or $t=\pm\pi/2$ in~(1) we see that $\lm$ and
   $\nu$ act on $\{v^*_1,v^*_6,v^*_7,v^*_8,v^*_9,v^*_{10},v^*_{11}\}$
   as indicated in~(d).  Because the maps $c_j\:\R\to PX(a)$ form a
   curve system, we see that $p(v^*_i)=v_i$ in all these cases.
  \item[(3)] For $j\in\{3,4,7,8\}$ we let $c^*_j$ denote the unique
   lift of $c_j(t)$ satisfying the following initial condition:
   \[ c_3^*(0) = v_{11}^* \hspace{4em}
      c_4^*(0) = v_{10}^* \hspace{4em}
      c_7^*(0) = c_8^*(0) = v_1^*.
   \]
   The claimed formulae for $\lm(c^*_j(t))$ and $\nu(c^*_j(t))$ then
   hold by an evident uniqueness argument.
  \item[(4)] We define
   \begin{align*}
    v^*_{13} &= c_3^*(\pi) &
    v^*_3 &= c_3^*(\pi/2) &
    v^*_5 &= c_3^*(-\pi/2) \\
    v^*_{12} &= c_4^*(-\pi) &
    v^*_4 &= c_4^*(\pi/2) &
    v^*_2 &= c_4^*(-\pi/2).
   \end{align*}
   By taking $t=\pi$ or $t=\pm\pi/2$ in~(3) we see that $\lm$ and
   $\nu$ act on $\{v^*_2,v^*_3,v^*_4,v^*_5,v^*_{12},v^*_{13}\}$
   as indicated in~(d).  Because the maps $c_j\:\R\to PX(a)$ form a
   curve system, we see that $p(v^*_i)=v_i$ in all these cases.
  \item[(5)] We let $c^*_0$ denote the unique lift of $c_0$ satisfying
   $c^*_0(\pi/4)=v^*_6$.   By the usual uniqueness argument, so have
   $\lm\nu(c^*_0(t))=c^*_0(\pi/2-t)$.
  \item[(6)] Now all of~(a), (b) and~(c) is true by construction
   except for the identities $c^*_0(0)=v^*_2$ and
   $c^*_0(\pi/2)=v^*_3$.  All parts of~(d) and~(e) have also been
   established.  For the remaining facts, we consider the fundamental
   domain $PF_{16}(a)$ in $PX(a)$.  We have two paths in $PF_{16}(a)$
   from $v_0$ to $v_3$: one given by $c_5([0,\pi])$ followed by
   $c_3([0,\pi/2])$, and the other by $c_1([0,\pi/2])$ followed by
   $c_0([\pi/4,\pi/2])$.  If we lift the first path starting with
   $v^*_0$ then the endpoint is $c^*_3(\pi/2)=v^*_3$, and if we lift
   the second then the endpoint is $c_0^*(\pi/2)$.  Now $PF_{16}(a)$
   is homeomorphic to a square and so is contractible.  In particular,
   our two paths are homotopic relative to the endpoints, and it
   follows that the two lifts have the same endpoints, so
   $c_0^*(\pi/2)=v_3$ as claimed.  By applying the map $\lm\nu$ we
   deduce that $c_0^*(0)=v_2$.
 \end{itemize}
\end{proof}

\begin{proposition}
 For $0\leq j\leq 8$ there is an antiholomorphic involution
 $\tht_j\:\Dl\to\Dl$ such that $c_j^*$ gives a diffeomorphism from
 $\R$ to the geodesic $C^*_j=\{z\in\Dl\st\tht_j(z)=z\}$.
 Specifically, for $j\in\{1,2,5,6\}$ we have
 \begin{align*}
  \tht_1 &= \lm\nu   & C^*_1 &= (-1,1).e^{ i\pi/4} \\
  \tht_2 &= \lm^3\nu & C^*_2 &= (-1,1).e^{-i\pi/4} \\
  \tht_5 &= \nu      & C^*_5 &= (-1,1) \\
  \tht_6 &= \lm^2\nu & C^*_6 &= (-1,1).i.
 \end{align*}
\end{proposition}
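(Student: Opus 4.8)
The plan is to handle the cases $j\in\{1,2,5,6\}$ first, where everything is pinned down by symmetry, and then bootstrap to the remaining $j\in\{0,3,4,7,8\}$ using the group action and the fundamental domain. For the first four curves, start from the observation in Proposition~\ref{prop-classify-a}(e) that $\lm\nu$ acts on $c^*_5$ by $c^*_5(t)\mapsto c^*_6(t)$, wait---more to the point, that each of $c^*_1,c^*_2,c^*_5,c^*_6$ is invariant (as a set, up to the obvious reparametrisation) under one of the anticonformal involutions $\lm\nu,\lm^3\nu,\nu,\lm^2\nu$ of $\Dl$. Concretely: from part~(e) we read off $\nu(c^*_5(t))=c^*_5(t)$, so the image $C^*_5=c^*_5(\R)$ is contained in $\Dl^{\ip{\nu}}=(-1,1)$. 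Since $c^*_5$ is a lift of the embedding $c_5\:\R/2\pi\Z\to PX(a)$ along the covering $p$, and since $p$ restricted to a fundamental domain is injective, $c^*_5$ is itself injective on a period's worth of $\R$; but $(-1,1)$ is simply connected, so in fact $c^*_5$ must be a proper injection of all of $\R$ onto an open sub-interval, hence a diffeomorphism $\R\to C^*_5$ once we check $C^*_5$ is the whole interval. That last point follows because $c^*_5(0)=v^*_0=0$ by Proposition~\ref{prop-classify-a}(a) and $c^*_5(\pi)=v^*_{11}$, and the curve $c_5$ in $PX(a)$ together with its $G$-translates $c_6,c_7,c_8$ forms the boundary structure of a pair-of-pants decomposition; alternatively, and more cleanly, one argues that a geodesic lift of a nontrivial free-homotopy-closed curve on a hyperbolic surface is a complete geodesic, and the only complete geodesic through $0$ fixed by $\nu$ is $(-1,1)$.

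For $j=6$ apply $\lm$ to the $j=5$ case: part~(e) gives $\lm(c^*_5(t))=c^*_6(t)$ and $\lm((-1,1))=(-1,1).i$, and $\lm$ conjugates $\nu$ to $\lm^2\nu$, so $\tht_6=\lm^2\nu$ and $C^*_6=(-1,1).i$. For $j=1$, part~(e) gives $\nu(c^*_1(t))=c^*_1(-t)$ and also $\lm\nu(c^*_1(t))=c^*_1(t)$; the second of these says $C^*_1\subseteq\Dl^{\ip{\lm\nu}}$. The fixed set of $z\mapsto i\bar z$ in $\Dl$ is the diagonal line $\{re^{i\pi/4}\st -1<r<1\}$, so $\tht_1=\lm\nu$ and $C^*_1=(-1,1).e^{i\pi/4}$; the same diffeomorphism argument as before (now using $c^*_1(0)=v^*_0=0$ and $c^*_1(\pi)=v^*_1$, plus simple-connectivity of the diagonal) shows $c^*_1$ carries $\R$ diffeomorphically onto this geodesic. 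Case $j=2$ is $\lm$ applied to $j=1$, giving $\tht_2=\lm^3\nu$ and $C^*_2=(-1,1).e^{-i\pi/4}$.

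For the remaining indices $j\in\{0,3,4,7,8\}$ the statement as displayed only asserts existence of \emph{some} antiholomorphic involution $\tht_j$ (it only writes out $\tht_j$ explicitly for $j\in\{1,2,5,6\}$), so the work is: (i) produce the involution, (ii) identify $C^*_j$ with its fixed geodesic, (iii) check $c^*_j$ is a diffeomorphism onto it. For (i), recall from Remark~\ref{rem-Ck-circle} / Corollary~\ref{cor-fixed-circles} that $c_j(\R)=C_j\subseteq PX(a)$ is a component of the fixed set of an explicit anticonformal involution $\al_j\in G$ (e.g.\ $\mu\nu$ for $j=0$, $\lm^2\nu$ for $j=3$, $\nu$ for $j=4$, $\nu$ for $j=7$, $\lm^2\nu$ for $j=8$). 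Lift: the covering transformation group is $\Pi$, and $\al_j$ lifts to an anticonformal diffeomorphism $\tilde\al_j\:\Dl\to\Dl$ covering $\al_j$; since $c^*_j(t_0)$ is a fixed point of $\al_j$ upstairs for the appropriate basepoint $t_0$ (the vertex value recorded in Proposition~\ref{prop-classify-a}(c)), we may choose the lift so that $\tilde\al_j$ fixes $c^*_j(t_0)$, and then $\tilde\al_j^2$ is a covering transformation fixing a point, hence trivial, so $\tilde\al_j$ is an involution; set $\tht_j=\tilde\al_j$. For (ii)--(iii): $c^*_j$ is a lift of a geodesic arc (each $c_k$ in $PX(a)$ is a fixed-point curve of an isometric involution of the hyperbolic metric, hence a geodesic), so $C^*_j=c^*_j(\R)$ is a complete geodesic in $\Dl$ contained in $\Dl^{\ip{\tht_j}}$; a complete geodesic equals a connected component of the fixed set of any anticonformal involution fixing it pointwise, and $c^*_j$ parametrises it injectively and properly by the covering-injectivity argument, so it is a diffeomorphism $\R\to C^*_j$.

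The main obstacle will be step~(iii) in the second group of cases---specifically, verifying that $c^*_j$ is \emph{injective} on all of $\R$ (not merely on one period) and that its image is a \emph{complete} geodesic rather than a geodesic segment that accumulates on itself. The clean way around this is to not reason curve-by-curve but to invoke, once and for all, that a lift to the universal cover of a closed geodesic representing an infinite-order element of $\pi_1$ is a properly embedded complete geodesic (standard hyperbolic geometry: the deck transformation corresponding to that element is a hyperbolic isometry whose axis is exactly this lift). One must check each $c_k$ in $PX(a)$ is not null-homotopic, which is immediate from Proposition~\ref{prop-homology} since the homology classes $\psi(c_k)$ are all nonzero; then periodicity of $c^*_k$ together with properness of the embedding forces $c^*_k\:\R\to C^*_k$ to be a diffeomorphism. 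Everything else is bookkeeping with the explicit involutions in $G$ and their fixed sets in $\Dl$, which are listed in Section~\ref{sec-H-curves}.
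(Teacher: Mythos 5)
Your construction of $\tht_j$ for the cases not listed explicitly is the same as the paper's: lift the anticonformal involution $\rho_j\in G$ fixing $C_j$ pointwise to a map $\tht_j$ of $\Dl$ that fixes $c^*_j(t_0)$, observe that $\tht_j^2$ is a deck transformation with a fixed point and hence trivial, and for $j\in\{1,2,5,6\}$ identify $\tht_j$ and its fixed line from the equivariance in Proposition~\ref{prop-classify-a}(e). Where you diverge is the step showing that $c^*_j$ maps $\R$ \emph{onto} the whole fixed geodesic. The paper notes that the fixed set of $\tht_j$ is a complete geodesic (by the classification of antiholomorphic involutions of $\Dl$), shows that $p$ restricted to it is a covering of the circle $C_j$, and concludes by uniqueness of universal coverings. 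You instead appeal to the fact that a lift of an essential closed geodesic is the axis of the corresponding hyperbolic deck transformation.

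That is where the gap is. You justify essentiality by saying it is "immediate from Proposition~\ref{prop-homology} since the homology classes $\psi(c_k)$ are all nonzero" --- but $\psi(c_0)=(0,0,0,0)$; the curve $C_0$ is separating, so for $j=0$ your criterion gives nothing. (You also implicitly use that every nontrivial element of $\Pi$ is hyperbolic; that is true because $\Pi$ is torsion-free and cocompact, but it is part of the same appeal.) The gap is patchable: one can argue separately that $C_0$, being a separating simple closed curve neither of whose complementary pieces is a disc, is still homotopically essential; but it is cleaner to drop the axis argument and run the covering-space argument uniformly, which requires no nontriviality input and in fact delivers it as a by-product. Your first attempt at surjectivity for $j=5$ ("$(-1,1)$ is simply connected, so $c^*_5$ must be a proper injection of all of $\R$\,\dots") is a non sequitur, but you correctly discard it in favour of the later argument, so the only substantive defect is the $j=0$ case above.
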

\begin{proof}
 Put $C_j=c_j(\R)\subset PX(a)$.  We have seen previously that in each
 case there is an antiholomorphic involution $\rho_j\in G$ such that
 $\rho_j(c_j(t))=c_j(t)$ for all $t\in\R$, and in fact $C_j$ is a
 connected component of the set
 $PX(a)^{\rho_j}=\{z\in PX(a)\st\rho_j(z)=z\}$.  Moreover, $C_j$ is
 diffeomorphic to $S^1$ and the map $c_j\:\R\to C_j$ is a universal
 covering.

 Next, standard covering theory shows that there is a
 unique continuous map $\tht_j\:\Dl\to\Dl$ with $p\tht_j=\rho_jp$ and
 $\tht_j(c^*_j(0))=c_j^*(0)$.  As $p$ is a holomorphic covering, the
 equation $p\tht_j=\rho_jp$ implies that $\tht_j$ is antiholomorphic.
 The map $\tht_j^2$ covers $\rho_j^2=1$ and fixes $c_j^*(0)$; it
 follows that $\tht_j^2=1$.  Now $c^*_j$ and $\rho_j\circ c^*_j$ are
 both lifts of $c_j$ with the same value at $t=0$, so they must be the
 same, so $c_j^*(\R)\sse C_j^*$.  We previously classified the
 antiholomorphic involutions on $\Dl$, and using that classification
 we see that $C_j^*$ is a geodesic in $\Dl$ and is diffeomorphic to
 $\R$.  It follows that $p(C_j^*)$ is a connected subset of
 $PX(a)^{\rho_j}$ containing $p(c^*_j(0))=c_j(0)$, so
 $p(C_j^*)\sse C_j$.  Now $p$ is a proper map with nonzero complex
 derivative everywhere in $\Dl$.  It follows that $p\:C_j^*\to C_j$ is
 also proper with nonzero real derivative everywhere.  This means that
 $p\:C_j^*\to C_j$ is another universal covering, and by the
 uniqueness of universal coverings, we see that $c^*_j\:\R\to C_j^*$
 must be a diffeomorphism.

 For the case $j=5$, we have seen that $\nu(c_5^*(t))=c_5^*(t)$ for
 all $t$ and it follows that $\tht_5=\nu$.  We also have
 $\nu(z)=\ov{z}$ so $C_5^*=(-1,1)$.  The cases $j\in\{1,2,6\}$ are
 similar.
\end{proof}

\begin{lemma}\lbl{lem-Dl-involutions}
 For any point $v\in\Dl$, there is a unique holomorphic involution on
 $\Dl$ that fixes $v$.
\end{lemma}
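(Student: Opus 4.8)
The plan is to reduce to the case $v=0$ by a change of coordinates, where the assertion is an immediate consequence of the standard description of the conformal automorphisms of $\Dl$ recalled at the start of Section~\ref{sec-H-action}.

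First I would introduce the conformal automorphism $m(z)=(z+v)/(1+\ov{v}z)$ of $\Dl$, which satisfies $m(0)=v$ and has inverse $m^{-1}(w)=(w-v)/(1-\ov{v}w)$. Conjugation by $m$ then sets up a bijection between holomorphic involutions of $\Dl$ fixing $v$ and holomorphic involutions of $\Dl$ fixing $0$: if $f$ fixes $v$ then $g=m^{-1}fm$ is a holomorphic automorphism with $g(0)=m^{-1}(f(m(0)))=m^{-1}(f(v))=m^{-1}(v)=0$ and $g^2=m^{-1}f^2m=1$, and $g$ is nontrivial precisely when $f$ is; conversely, given such a $g$ the map $f=mgm^{-1}$ fixes $v$ and is a nontrivial holomorphic involution. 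So it suffices to show there is a unique holomorphic involution of $\Dl$ fixing $0$.

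For that, recall that every conformal automorphism of $\Dl$ has the form $z\mapsto\lm(z-\al)/(1-\ov{\al}z)$ with $|\lm|=1$ and $|\al|<1$. The condition that it fix $0$ forces $\al=0$, so it is the rotation $z\mapsto\lm z$, whose square is $z\mapsto\lm^2z$. This equals the identity exactly when $\lm^2=1$, i.e.\ $\lm=\pm 1$; the value $\lm=1$ gives the identity, which is excluded, so the only holomorphic involution of $\Dl$ fixing $0$ is $z\mapsto -z$. Transporting back through $m$, we conclude that $f(z)=m(-m^{-1}(z))$ is the unique holomorphic involution of $\Dl$ fixing $v$: it fixes $v$ because $m^{-1}(v)=0$, it has order two because $z\mapsto-z$ does, and uniqueness follows from the uniqueness in the case $v=0$.

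There is no real obstacle here; the only thing to be careful about is the elementary bookkeeping that conjugation by $m$ preserves the properties of being a nontrivial holomorphic automorphism, of squaring to the identity, and of fixing the relevant point. This lemma is entirely standard (it also follows at once from the Schwarz lemma); it is included only to pin down conventions for the construction of the curves $C_k$ that follows.
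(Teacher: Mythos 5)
Your proof is correct and follows essentially the same route as the paper's: reduce to $v=0$ by transitivity of $\Aut(\Dl)$, then use the standard form $z\mapsto\lm(z-\al)/(1-\ov{\al}z)$ of conformal automorphisms to conclude that the only involution fixing $0$ is $z\mapsto-z$. The only difference is that you spell out the conjugating M\"obius map explicitly, which the paper leaves implicit.
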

\begin{proof}
 As $\Aut(\Dl)$ acts transitively on $\Dl$, we may assume that $v=0$.
 In this case the map $z\mapsto -z$ is a holomorphic involution that
 fixes $v$.  Let $\phi$ be any other holomorphic involution that
 fixes $v$.  As $\phi$ is an automorphism of $\Dl$ we have
 $\phi(z)=\lm(z-\al)/(1-\ov{\al}z)$ for some $\lm,\al$ with $|\lm|=1$
 and $|\al|<1$.  As $\phi$ fixes $v=0$ we have $\al=0$, so
 $\phi(z)=\lm z$.  As $\phi$ is an involution we must have $\lm=-1$.
\end{proof}

\begin{proposition}\lbl{prop-Dl-kp}
 Let $\kp$ be the unique holomorphic involution on $\Dl$ that fixes
 $v^*_6$.  Then $p\kp=\lm\mu p$, and the action on the points $v^*_i$
 is partially given by the following table:
 \[ \begin{array}{|c|c|c|c|c|c|c|c|c|c|c|c|c|c|c|}
    \hline
     &v_0^*&v^*_1&v^*_2&v^*_3&v^*_4&v^*_5&v^*_6&v^*_7&v^*_8&v^*_9&
      v^*_{10}&v^*_{11}&v^*_{12}&v^*_{13}\\ \hline
     \kp & v^*_1 & v^*_0 & v^*_3 & v^*_2 & & &
      v^*_6 & & & & v^*_{13} & v^*_{12} & v^*_{11}& v^*_{10}\\ \hline
 \end{array} \]
 Also, the action on the curves $c^*_j$ is partially given by the
 following table:
 \[ \begin{array}{|c|c|c|c|c|c|c|c|c|c|}
    \hline
     &c_0^*(t)&c^*_1(t)&c^*_2(t)&c^*_3(t)&c^*_4(t)&
      c^*_5(t)&c^*_6(t)&c^*_7(t)&c^*_8(t)\\ \hline
     \kp & c_0^*(\pi/2-t) & c_1^*(\pi-t) & &
           c_4^*(t-\pi) & c_3^*(t+\pi) &
           c_8^*(t) & c_7^*(t) & c_6^*(t) & c_5^*(t) \\ \hline
 \end{array} \]
\end{proposition}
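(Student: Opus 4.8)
The plan is first to establish the intertwining relation $p\kp=\lm\mu p$ using covering-space theory together with the uniqueness clause of Lemma~\ref{lem-Dl-involutions}, and then to read off the two tables by tracking how $\kp$ permutes the lifted vertices $v_i^*$ and lifted curves $c_j^*$, exactly as in the proof of Proposition~\ref{prop-classify-a}. For the first step I would argue as follows. Since $\stab_G(v_6)=\ip{\lm\mu,\lm\nu}$ (Definition~\ref{defn-precromulent}), the element $\lm\mu$ is a conformal automorphism of $X=PX(a)$ fixing $v_6=p(v_6^*)$. As $\Dl$ is simply connected, the holomorphic map $\lm\mu\circ p\:\Dl\to X$ lifts through the covering $p$ of Lemma~\ref{lem-p-Dl-X}, and among all lifts there is a unique one, say $f\:\Dl\to\Dl$, with $f(v_6^*)=v_6^*$; by construction $pf=\lm\mu p$, and $f$ is holomorphic because locally it is a composite of $p$, of $\lm\mu$, and of a local holomorphic section of $p$. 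Using the relation $(\lm\mu)^2=1$ from Definition~\ref{defn-G} we get $pf^2=\lm\mu pf=(\lm\mu)^2p=p$, so $f^2$ is a deck transformation fixing $v_6^*$ and hence is the identity. Thus $f$ is a holomorphic involution of $\Dl$ fixing $v_6^*$, so $f=\kp$ by Lemma~\ref{lem-Dl-involutions}, giving $p\kp=\lm\mu p$.

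For the tables, the relation $p\kp=\lm\mu p$ forces $\kp(v_i^*)$ to be a lift of $\lm\mu(v_i)$ and $t\mapsto\kp(c_j^*(t))$ to be a continuous lift of $t\mapsto\lm\mu(c_j(t))$; since any continuous lift of a map out of a connected space is pinned down by its value at a single point, it is enough to compute $\lm\mu\circ c_j$ in $X$ and then match one value. From Definition~\ref{defn-curve-system}(c), which holds for $PX(a)$ by Proposition~\ref{prop-P-curves}, one computes, as maps $\R\to X$,
\begin{align*}
 \lm\mu(c_0(t)) &= c_0(\ppi-t), &
 \lm\mu(c_1(t)) &= c_1(\pi-t), \\
 \lm\mu(c_3(t)) &= c_4(t+\pi) = c_4(t-\pi), &
 \lm\mu(c_4(t)) &= c_3(t+\pi), \\
 \lm\mu(c_5(t)) &= c_8(t), &
 \lm\mu(c_6(t)) &= c_7(t), \\
 \lm\mu(c_7(t)) &= c_6(t), &
 \lm\mu(c_8(t)) &= c_5(t).
\end{align*}
Since $\kp(v_6^*)=v_6^*$ and $v_6^*=c_0^*(\qpi)=c_1^*(\ppi)$ by Proposition~\ref{prop-classify-a}(c), the lifts $t\mapsto\kp(c_0^*(t))$ and $t\mapsto c_0^*(\ppi-t)$ agree at $t=\qpi$, and $t\mapsto\kp(c_1^*(t))$ and $t\mapsto c_1^*(\pi-t)$ agree at $t=\ppi$; hence $\kp(c_0^*(t))=c_0^*(\ppi-t)$ and $\kp(c_1^*(t))=c_1^*(\pi-t)$ identically, and evaluating at the marked parameters of Proposition~\ref{prop-classify-a}(c) gives $\kp(v_0^*)=v_1^*$, $\kp(v_1^*)=v_0^*$, $\kp(v_2^*)=v_3^*$, $\kp(v_3^*)=v_2^*$.

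Plugging these values into the lifts for $j\in\{5,6,7,8\}$ at $t=0$, where $c_j^*(0)\in\{v_0^*,v_1^*\}$, identifies $\kp\circ c_5^*,\kp\circ c_6^*,\kp\circ c_7^*,\kp\circ c_8^*$ with $c_8^*,c_7^*,c_6^*,c_5^*$ respectively. For $c_3^*$ I would note that $t\mapsto\kp(c_3^*(t))$ and $t\mapsto c_4^*(t-\pi)$ are both lifts of $t\mapsto c_4(t-\pi)$ and that they agree at $t=\ppi$, since $\kp(c_3^*(\ppi))=\kp(v_3^*)=v_2^*$ and $c_4^*(\ppi-\pi)=c_4^*(-\ppi)=v_2^*$ by Proposition~\ref{prop-classify-a}(4); hence $\kp(c_3^*(t))=c_4^*(t-\pi)$, and at $t=0$ this yields $\kp(v_{11}^*)=c_4^*(-\pi)=v_{12}^*$. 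Comparing $t\mapsto\kp(c_4^*(t))$ with $t\mapsto c_3^*(t+\pi)$ at $t=-\ppi$ in the same way gives $\kp(c_4^*(t))=c_3^*(t+\pi)$ and $\kp(v_{10}^*)=c_3^*(\pi)=v_{13}^*$, and the entries $\kp(v_{12}^*)=v_{11}^*$, $\kp(v_{13}^*)=v_{10}^*$ then follow from $\kp^2=1$. The step I expect to be most delicate is precisely this last one: because $c_3^*,c_4^*,c_7^*,c_8^*$ are not $2\pi$-periodic, distinct lifts of the same curve of $X$ can differ by a nontrivial deck transformation, so a naive comparison at $t=0$ would be circular, and one must instead anchor the comparison at $t=\pm\ppi$, where the previously established values $\kp(v_3^*)=v_2^*$ and $\kp(v_2^*)=v_3^*$ make both lifts manifestly agree there. (All of the resulting coincidences can in any case be confirmed numerically, as with the checks attached to Proposition~\ref{prop-classify-a}.)
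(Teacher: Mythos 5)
Your proposal is correct and follows essentially the same route as the paper: construct the lift of $\lm\mu\circ p$ fixing $v_6^*$, identify it with $\kp$ via uniqueness of holomorphic involutions, and then determine the tables by comparing lifts of $\lm\mu\circ c_j$ anchored at points where $\kp$ is already known (the paper anchors $\kp\circ c_3^*$ against $c_4^*(t-\pi)$ at the same point $v_2^*$ that you use). The only cosmetic differences are that the paper obtains $\kp(c_4^*(t))=c_3^*(t+\pi)$ from $\kp^2=1$ rather than by a second direct comparison, and your explicit warning about the non-periodicity of the lifts makes precise a point the paper leaves implicit.
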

\begin{proof}
 Given a point $z\in\Dl$, we let $u$ be any path in $\Dl$ from $v_6^*$
 to $z$ in $\Dl$.  We recall that $\lm\mu(v_6)=v_6$, so the path
 $\lm\mu p u$ starts at $p(v^*_6)$, so there is a unique lifting $u'$
 of $\lm\mu p u$ with $u'(0)=v^*_6$.  We define $\kp(z)=u'(1)$.
 Standard covering theory shows that this is independent of the choice
 of $u$ and gives the unique continuous map $\kp\:\Dl\to\Dl$ with
 $\kp(v^*_6)=v^*_6$ and $p\kp=\lm\mu p$.  As $p$ is a holomorphic
 covering and holomorphy can be checked locally it follows that $\kp$
 is holomorphic.  The map $\kp^2$ covers $(\lm\mu)^2=1$ and fixes
 $v^*_6$; it follows that $\kp^2=1$.  Thus, $\kp$ is the unique
 holomorphic involution on $\Dl$ that fixes $v^*_6$.

 The curves $\kp(c_0^*(t))$ and $c_0^*(\pi/2-t)$ both lift
 $c_0(\pi/2-t)$ and pass through $v^*_6$ at $t=\pi/4$, so they are the
 same.  Taking $t\in\{0,\pi/2\}$ we deduce that $\kp$ exchanges
 $v^*_2$ and $v^*_3$.  Essentially the same argument gives
 $\kp(c^*_1(t))=c^*_1(\pi-t)$, and shows that $\kp$ exchanges $v^*_0$
 and $v^*_1$.  Now that we know the action on the point
 $v^*_3=c^*_3(\pi/2)$ we can see that the curves $\kp(c^*_3(t))$ and
 $c^*_4(t-\pi)$ both lift $c_4(t-\pi)$ and pass through $v^*_2$ at
 $t=\pi/2$ so they are the same.  As $\kp^2=1$ we can also deduce that
 $\kp(c^*_4(t))=c^*_3(t+\pi)$.  By taking $t=0$ or $t=\pm\pi$ we
 deduce that $\kp$ exchanges $v^*_{10}$ and $v^*_{13}$, and also
 exchanges $v^*_{11}$ and $v^*_{12}$.  This just leaves the action on
 $c^*_j(t)$ for $j\in\{5,6,7,8\}$, which can be checked in the same
 way using $v^*_0$ and $v^*_1$ as basepoints.
\end{proof}

\begin{lemma}\lbl{lem-v-eleven-sign}
 $v^*_{11}$ is a positive real number, and $v^*_6$ is a positive
 multiple of $e^{i\pi/4}=(1+i)/\rt$.
\end{lemma}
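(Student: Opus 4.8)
The plan is to determine the direction in which each lifted curve $c^*_5$ and $c^*_1$ emanates from $v^*_0 = 0$, then invoke the preceding proposition, which says that $c^*_5$ is a diffeomorphism $\R \to C^*_5 = (-1,1)$ and $c^*_1$ a diffeomorphism $\R \to C^*_1 = (-1,1)e^{i\pi/4}$. In particular $c^*_5$ takes values in the real interval and $c^*_1$ in the segment $\R e^{i\pi/4}$, so $(c^*_5)'(0) \in \R$ and $(c^*_1)'(0) \in \R e^{i\pi/4}$; and since a diffeomorphism of $\R$ onto an open interval has nowhere-vanishing derivative and is therefore strictly monotone, $c^*_5((0,\infty))$ equals $(0,1)$ or $(-1,0)$ according to the sign of $(c^*_5)'(0)$, while $c^*_1((0,\infty))$ equals $(0,1)e^{i\pi/4}$ or $(-1,0)e^{i\pi/4}$ according to the sign of $(c^*_1)'(0)/e^{i\pi/4}$. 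As $v^*_{11} = c^*_5(\pi)$ and $v^*_6 = c^*_1(\pi/2)$ (from the proof of Proposition~\ref{prop-classify-a}), it suffices to show $(c^*_5)'(0) > 0$ and $(c^*_1)'(0)/e^{i\pi/4} > 0$.

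To compute these derivatives I would differentiate the identities $p \circ c^*_j = c_j$ at $t = 0$, using $c^*_j(0) = v^*_0 = 0$, which gives $(c^*_j)'(0) = c'_j(0)/p'(0)$. By the normalisation of $p$ in Lemma~\ref{lem-p-Dl-X} we have $p'(0) = r\,c'_5(0)$ for some $r > 0$; hence $(c^*_5)'(0) = 1/r > 0$ at once, and $(c^*_1)'(0) = c'_1(0)/(r\,c'_5(0))$. So the whole problem reduces to showing that the ratio $c'_1(0)/c'_5(0)$ of tangent vectors at $v_0 \in PX(a)$ has argument $\pi/4$.

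For that last point I would use Remark~\ref{rem-elliptic-group}. The map $\phi^+$ is unramified at $v_0$ (because $\mu(v_0) = v_1 \neq v_0$, so $v_0$ is not a branch point of the quotient map $PX(a) \to PX(a)/\ip{\mu}$), so $\zeta = j^{-1}(-)_1 \circ \phi^+$ is a holomorphic local coordinate at $v_0$; and Remark~\ref{rem-elliptic-group} states that in this coordinate $c_5(t) = \sqrt{a}\,t + O(t^2)$ and $c_1(t) = e^{i\pi/4}\,t + O(t^2)$. Therefore $d\zeta(c'_5(0)) = \sqrt{a}$ and $d\zeta(c'_1(0)) = e^{i\pi/4}$, so $c'_1(0)/c'_5(0) = e^{i\pi/4}/\sqrt{a}$ and $(c^*_1)'(0) = e^{i\pi/4}/(r\sqrt{a})$, which is a positive real multiple of $e^{i\pi/4}$ as required. (Alternatively, the same first-order expansions can be read off directly from the explicit formulas in Definition~\ref{defn-P-curves} together with the local parameter $w$ of Remark~\ref{rem-P-parameter}.)

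I do not expect a real obstacle. The only point needing a little care is the first paragraph's observation that the sign (respectively, the complex direction) of $(c^*_j)'(0)$ controls which half of the segment $c^*_j((0,\infty))$ occupies; this follows from strict monotonicity of a diffeomorphism of $\R$ onto an interval together with continuity, and is entirely elementary. Everything else is a one-line differentiation plus citation of results already established.
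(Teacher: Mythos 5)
Your proposal is correct and follows essentially the same route as the paper: both arguments use that $c^*_5$ and $c^*_1$ are diffeomorphisms onto the known geodesics through $0$, compute $(c^*_j)'(0)=c'_j(0)/p'(0)$ with $p'(0)$ a positive multiple of $c'_5(0)$, and reduce everything to showing $c'_1(0)$ is a positive multiple of $e^{i\pi/4}c'_5(0)$. The only (cosmetic) difference is that you obtain the first-order expansions of $c_1$ and $c_5$ at $v_0$ via Remark~\ref{rem-elliptic-group}, whereas the paper expands the formulas of Definition~\ref{defn-P-curves} directly in projective coordinates — a route you yourself note as an alternative.
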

\begin{proof}
 We have seen that $c^*_5$ gives a diffeomorphism from $\R$ to
 $(-1,1)$.  We also have $p(c^*_5(t))=c_5(t)$ so
 $(c_5^*)'(0)=c'_5(0)/p'(0)$, and this is a positive real number by
 the definition of $p$.  As $c^*_5$ is a diffeomorphism the derivative
 cannot change sign, so it is a strictly increasing map.  It follows
 that the point $v^*_{11}=c^*_5(\pi)$ lies on the positive real axis
 as claimed.

 Next, we also know that $c^*_1$ gives a diffeomorphism from $\R$
 to $(-1,1).e^{i\pi/4}$.  By examining the formula in
 Definition~\ref{defn-P-curves} we see that to first order in $t$ we
 have
 \begin{align*}
  c_1(t) &= [t e^{i\pi/4}/2:1:0:0] \\
  c_5(t) &= [t \sqrt{a}/2:1:0:0],
 \end{align*}
 so $c'_1(0)$ is a positive multiple of $e^{i\pi/4}\,c'_5(0)$.  Using
 this we see that $c^*_5$ must carry $(0,\infty)$ to
 $(0,1).e^{i\pi/4}$.  In particular, the point $v^*_6=c^*_1(\pi/2)$ is
 a positive multiple of $e^{i\pi/4}$ as claimed.
\end{proof}

\begin{lemma}\lbl{lem-right-circles}
 Suppose that two circles in $\R^2$ meet at right angles.  Let $r_1$
 and $r_2$ be the radii, and let $d$ be the distance between the
 centres; then $d^2=r_1^2+r_2^2$.
\end{lemma}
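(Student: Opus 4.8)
The plan is to reduce the claim to the Pythagorean theorem applied to the triangle formed by the two centres and a point of intersection. First I would fix a point $P$ lying on both circles, which exists by hypothesis. The circle of radius $r_i$ centred at $c_i$ has, at the point $P$, a well-defined tangent line, and that tangent line is orthogonal to the segment $c_iP$ (a standard fact: the radius to a point of a circle is perpendicular to the tangent there). By definition, the two circles ``meet at right angles'' at $P$ means precisely that their tangent lines at $P$ are perpendicular.

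Next I would translate this orthogonality of tangent lines into orthogonality of the radial segments: since the tangent at $P$ to circle $i$ is perpendicular to $c_iP$, and the two tangent lines are perpendicular to each other, the two segments $c_1P$ and $c_2P$ are themselves perpendicular. (Concretely, if $u_i$ is the unit vector along $c_iP$, then the tangent directions are $u_1^\perp$ and $u_2^\perp$, and $u_1^\perp\cdot u_2^\perp = u_1\cdot u_2$, so the tangents are orthogonal iff $u_1\cdot u_2=0$.) Thus the triangle with vertices $c_1$, $P$, $c_2$ has a right angle at $P$.

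Finally I would apply the Pythagorean theorem to this right triangle: the legs have lengths $|c_1P| = r_1$ and $|c_2P| = r_2$, and the hypotenuse has length $|c_1c_2| = d$, so $d^2 = r_1^2 + r_2^2$, as claimed. There is essentially no obstacle here; the only thing to be slightly careful about is spelling out that ``meeting at right angles'' refers to the angle between the tangent lines (equivalently between the radii), which is exactly the criterion $\mathrm{Re}(\overline{m}m')=1$ used earlier for the geodesics $\Xi_m$, but for the elementary Euclidean statement the argument above suffices.
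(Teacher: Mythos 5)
Your proof is correct, and it is exactly the standard argument the paper has in mind: the paper's own proof consists of the single word ``Elementary.'' Reducing ``the circles meet at right angles'' to ``the radii to an intersection point are perpendicular'' and then applying Pythagoras to the triangle $c_1Pc_2$ is the intended reasoning, and your care in identifying the angle between the circles with the angle between the tangents (equivalently the radii) is the only point that needed spelling out.
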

\begin{proof}
 Elementary.
\end{proof}

\begin{proposition}\lbl{prop-b-H}
 There is a unique number $b\in(0,1)$ such that
 \begin{align*}
  v^*_0 &= 0 &
  v^*_1 &= \frac{1+i}{2}b_+ \\
  v^*_2 &= \frac{b\,b_--b_+}{i-b^2} &
  v^*_5 &= -i \, v^*_2 \\
  v^*_3 &= \frac{b\,b_--b_+}{ib^2-1} &
  v^*_4 &= i \, v^*_3 \\
  v^*_6 &= \frac{1+i}{\rt}\;\frac{\rt-b_-}{b_+} &
  v^*_7 &= i \, v^*_6 \\
  v^*_8 &= -v^*_6 &
  v^*_9 &= -i\, v^*_6 \\
  v^*_{10} &= i(b_+-b) &
  v^*_{11} &= b_+-b \\
  v^*_{12} &= (b+b_+)\frac{i+(i+2)b^2}{(b+b_+)^2+b^2} &
  v^*_{13} &= i\,\ov{v^*_{12}}
 \end{align*}
 (where $b_{\pm}=\sqrt{1\pm b^2}$ as before).  Moreover, the map $\kp$
 is given by
 \[ \kp(z) = \frac{b_+i - (1+i)z}{1+i - b_+z}. \]
\end{proposition}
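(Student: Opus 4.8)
The plan is to show that all the data produced so far --- the points $v_i^*$, the curves $c_j^*$ and the involution $\kp$ --- is rigidly determined, and to read off a closed form for it in terms of a single real parameter, which I will then match with the formulas of Definition~\ref{defn-v-H}. By Lemma~\ref{lem-v-eleven-sign} we may write $v_6^* = t\,e^{i\pi/4}$ and $v_{11}^* = s$ with $t,s\in(0,1)$. The heart of the argument is to identify the geodesics $C_0^*$ and $C_3^*$ explicitly. By Lemma~\ref{lem-right-angle}(b), $C_0^*$ meets $C_1^* = (-1,1)e^{i\pi/4}$ orthogonally at $v_6^*$, and by Lemma~\ref{lem-right-angle}(c), $C_3^*$ meets $C_5^* = (-1,1)$ orthogonally at $v_{11}^*$; since $C_0^*$ and $C_3^*$ are themselves geodesics of $\Dl$ (hence Euclidean circles orthogonal to $S^1$), a short computation --- using that a circle centred on a line meets that line orthogonally, and that orthogonality to $S^1$ pins down the radius --- gives
\[ C_3^* = \{z\st|z-\rho_3|=\sqrt{\rho_3^2-1}\}, \qquad
   C_0^* = \{z\st|z-\rho_0 e^{i\pi/4}|=\sqrt{\rho_0^2-1}\}, \]
where $\rho_3 = (1+s^2)/(2s) > 1$ and $\rho_0 = (1+t^2)/(2t) > 1$.

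Next I would feed in the third right-angle condition. By Lemma~\ref{lem-right-angle}(a) the circles $C_0^*$ and $C_3^*$ are orthogonal, so Lemma~\ref{lem-right-circles} gives $|\rho_0 e^{i\pi/4}-\rho_3|^2 = (\rho_0^2-1)+(\rho_3^2-1)$; expanding the left side and cancelling $\rho_0^2+\rho_3^2$, this collapses to the single clean relation $\rho_0\rho_3 = \rt$. In particular $\rho_3 = \rt/\rho_0 < \rt$, so $\rho_3\in(1,\rt)$, and I may define $b\in(0,1)$ by $b_+ := \rho_3$ (so that $b = \sqrt{b_+^2-1}$ and $b_{\pm} = \sqrt{1\pm b^2}$). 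Then $C_3^*$ is the circle of radius $b$ about $b_+$, and since $v_{11}^*$ is the point of this circle nearest $0$ we get $v_{11}^* = b_+ - b$. Likewise $\rho_0 = \rt/b_+$ forces $C_0^*$ to be the circle of radius $b_-/b_+$ about $\tfrac{1+i}{b_+}$, and $v_6^*$, being the nearest point of $C_0^*$ to $0$, equals $\tfrac{1+i}{\rt}\cdot\tfrac{\rt-b_-}{b_+}$, as claimed. (No separate argument for $b<1$ is needed: it falls out of $\rho_0>1$.)

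It then remains to recover the other vertices and the formula for $\kp$, which is routine. The unique holomorphic involution of $\Dl$ fixing $v_6^*$ (Lemma~\ref{lem-Dl-involutions}, Proposition~\ref{prop-Dl-kp}) is obtained by conjugating $z\mapsto -z$ by the automorphism carrying $v_6^*$ to $0$, namely $\kp(z) = (2v_6^* - (1+|v_6^*|^2)z)/((1+|v_6^*|^2) - 2\ov{v_6^*}\,z)$; substituting the value of $v_6^*$ just found and simplifying (using $1+|v_6^*|^2 = 2\rt\,|v_6^*|/b_+$) yields $\kp(z) = (b_+ i - (1+i)z)/(1+i - b_+ z)$. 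From this, $v_1^* = \kp(v_0^*) = \kp(0) = \tfrac{1+i}{2}b_+$; then $v_7^*,v_8^*,v_9^*$ come from $v_k^* = i\,v_{k-1}^*$ (the action of $\lm$), $v_{10}^* = i\,v_{11}^*$, $v_{12}^* = \kp(v_{11}^*)$, $v_{13}^* = \kp(v_{10}^*)$; $v_3^*$ is the intersection point of the two circles $C_0^*$ and $C_3^*$ lying in $\Dl$ (solved explicitly); and $v_4^* = i\,v_3^*$, $v_5^* = \nu(v_3^*) = \ov{v_3^*}$, $v_2^* = \lm\nu(v_3^*) = i\,\ov{v_3^*}$ via the tables in Propositions~\ref{prop-classify-a} and~\ref{prop-Dl-kp}. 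Simplifying each (the identity $(b+b_+)(b_+-b)=1$ is useful) gives precisely the entries of Definition~\ref{defn-v-H}, together with $v_{13}^* = i\,\ov{v_{12}^*}$. Uniqueness of $b$ is then immediate, since $v_{11}^* = b_+ - b = \sqrt{1+b^2}-b$ is a strictly decreasing function of $b$ on $(0,1)$. The only genuine content is in the first two paragraphs; the main obstacle is the bookkeeping in identifying $C_0^*$ and $C_3^*$ and recognising that Lemma~\ref{lem-right-angle}(a) collapses to $\rho_0\rho_3=\rt$ --- once that is in hand, the closed forms (and the constraint $b\in(0,1)$) follow, and the rest is algebra that can be checked against the Maple code.
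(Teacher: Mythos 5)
Your proposal is correct and follows essentially the same route as the paper: identify $C_0^*$ and $C_3^*$ as circles orthogonal to $S^1$ centred on the rays through $v_6^*$ and $v_{11}^*$, use the orthogonality of $C_0^*$ with $C_3^*$ to pin down the parameter (your relation $\rho_0\rho_3=\rt$ is exactly the paper's $(1+b^2)(1+c^2)=2$ in different coordinates, and both yield $b<1$ for free), and then recover the remaining vertices and $\kp$ from the group action. The only cosmetic differences are that you derive $\kp$ by conjugating $z\mapsto -z$ and solve for $v_3^*$ as a circle intersection, where the paper simply verifies the closed-form answers.
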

\begin{proof}
 We will use the curves $C_j^*\subset\Dl$ for $j\in\{0,1,3,5\}$.  We
 have already seen that $C_5^*=(-1,1)$ and $C_1^*=(-1,1).e^{i\pi/4}$.
 The set $C_3^*$ is a geodesic in $\Dl$ that does not pass through the
 origin, so it is the intersection of $\Dl$ with a circle centred
 outside $\Dl$ that crosses $\partial\Dl$ at right angles.  (This is a
 standard fact of hyperbolic geometry.)  We let $b$ denote the radius
 of $C_3^*$ (so $b>0$).

 We next claim that
 \begin{itemize}
  \item[(a)] The curves $C_0^*$ and $C_3^*$ cross at right angles at $v_3^*$
  \item[(b)] The curves $C_0^*$ and $C_1^*$ cross at right angles at $v_6^*$
  \item[(c)] The curves $C_3^*$ and $C_5^*$ cross at right angles at $v_{11}^*$.
 \end{itemize}
 Indeed, we see from Proposition~\ref{prop-classify-a} that the
 indicated curves cross at the indicated points, and that in all
 relevant cases we have $p(c_i^*(t))=c_i(t)$ and $p(v_j^*)=v_j$ in
 $PX(a)$.  As $p$ is a holomorphic covering it preserves angles, so
 the claim follows from Lemma~\ref{lem-right-angle}.

 As $C^*_3$ meets the curve $C^*_5=(-1,1)$ at right angles at the
 point $v^*_{11}>0$, we see that the centre of $C^*_3$ must be on the
 positive real axis.  As $C^*_3$ also meets $\partial\Dl$
 orthogonally, Lemma~\ref{lem-right-circles} shows that the centre is
 $\sqrt{1+b^2}=b_+$.  It then follows that $v^*_{11}=b_+-b$.

 Now put $\om=e^{i\pi/4}=(1+i)/\rt$.  By a similar argument,
 there is a number $c>0$ such that $\tC_0$ is a circular arc with
 centre $\sqrt{1+c^2}\om$ and radius $c$, and we have
 $v^*_6=(\sqrt{1+c^2}-c)\om$.

 As $\tC_0$ and $\tC_3$ meet at right angles, we must have
 \[ b^2+c^2 = |\sqrt{1+c^2}\om - \sqrt{1+b^2}|^2 =
     \left(\frac{\sqrt{1+c^2}}{\rt} - \sqrt{1+b^2}\right)^2 +
     \left(\frac{\sqrt{1+c^2}}{\rt}\right)^2.
 \]
 After some manipulation this gives $c^2=(1-b^2)/(1+b^2)$.  This
 ensures that $b<1$, and we can take square roots to get $c=b_-/b_+$.
 We also get $\sqrt{1+c^2}=\rt/b_+$ and so
 \[ v^*_6 = (\sqrt{1+c^2}-c)\om =
     \frac{1+i}{\rt}\;\frac{\rt-b_-}{b_+}
 \]
 as claimed.

 Now put $w=(b_+-b\,b_-)/(1-ib^2)$.  One can check that
 \[ 1 - |w|^2 = 2b\,b_-\,(b_+-b\,b_-)/(1+b^4) > 0, \]
 so $w\in\Dl$.  Long but fairly straightforward calculations also show
 that $|w-b_+|^2=b^2$ and $|w-\sqrt{1+c^2}\om|^2=c^2$, so
 $w\in\tC_3\cap\tC_0$.  It is standard that distinct geodesics in
 $\Dl$ meet in only one place, so we must have $v^*_3=w$.

 Next, if we define
 \[ \kp^*(z) = \frac{b_+i - (1+i)z}{1+i - b_+z}, \]
 a straightforward calculation shows that this is a holomorphic
 involution on $\Dl$ that fixes $v^*_6$.  However,
 Lemma~\ref{lem-Dl-involutions} shows that there is only one such
 involution, so $\kp$ must be the same as $\kp^*$

 Above we have established formulae for $v^*_0$, $v^*_3$, $v^*_6$ and
 $v^*_{11}$ in terms of $b$.  Propositions~\ref{prop-classify-a}
 and~\ref{prop-Dl-kp} also give
 \begin{align*}
  v^*_1 &= \kp(v^*_0) &
  v^*_2 &= \kp(v^*_3) &
  v^*_4 &= \lm(v^*_3) &
  v^*_5 &= \nu(v^*_3) \\
  v^*_7 &= \lm(v^*_6) &
  v^*_8 &= \lm^2(v^*_6) &
  v^*_9 &= \lm^3(v^*_6) \\
  v^*_{10} &= \lm(v_{11}) &
  v^*_{12} &= \kp(v_{11}) &
  v^*_{13} &= \lm\nu(v_{12}).
 \end{align*}
 Using these we can deduce the stated formulae for all points
 $v^*_i$.
\end{proof}

From now on we use the value of $b$ coming from the previous
Proposition.  This gives maps $\lm,\mu,\nu,\bt_0,\dotsc,\bt_7$
generating a group $\Pi$ as in Section~\ref{sec-H}.  The maps
$\lm$ and $\nu$ from that section are of course the same as the ones
we have been using already in this section.

\begin{definition}\lbl{defn-tPhi}
 We say that a conformal or anticonformal automorphism
 $\phi\:\Dl\to\Dl$ is \emph{$G$-compatible} if there is an element
 $\phi_1\in G$ such that the following diagram commutes:
 \[ \xymatrix{
   \Dl \ar[d]_p \ar[r]^\phi & \Dl \ar[d]^p \\
   X \ar[r]_{\phi_1} & X.
 } \]
 We write $\tPhi$ for the group of all $G$-compatible automorphisms,
 and note that the construction $\phi\mapsto\phi_1$ gives a
 homomorphism $\tPhi\to G$.  We write $\Phi$ for the kernel, which is
 just the group of automorphisms $\phi$ satisfying $p\phi=p$, or in
 other words deck transformations.  Standard covering theory shows
 that $p$ induces a conformal isomorphism $\Dl/\Phi\to PX(a)$.
\end{definition}

\begin{proposition}\lbl{prop-tPhi-gens}
 The maps $\lm$, $\mu$, $\nu$ and $\kp$ are elements of $\tPhi$, with
 $\pi(\lm)=\lm$ and $\pi(\mu)=\mu$ and $\pi(\nu)=\nu$ and
 $\pi(\kp)=\lm\mu$.
\end{proposition}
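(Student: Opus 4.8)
The plan is to verify, for each of the four maps $\lm,\mu,\nu,\kp\:\Dl\to\Dl$, that it lies in $\tPhi$ — i.e. that it is a conformal or anticonformal automorphism of $\Dl$ for which there is an element $g\in G$ with $p\circ(\text{map}) = g\circ p$ — and then to record which element $g$ it projects to. For $\lm$ and $\nu$ the work is essentially already done: Lemma~\ref{lem-p-Dl-X} states precisely that $p\lm = \lm p$ and $p\nu = \nu p$ (with $\lm(z)=iz$, $\nu(z)=\ov z$ on $\Dl$), and these are manifestly a conformal and an anticonformal automorphism of $\Dl$ respectively. So $\lm,\nu\in\tPhi$ with $\pi(\lm)=\lm$, $\pi(\nu)=\nu$ immediately.

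For $\kp$, the content is Proposition~\ref{prop-Dl-kp}: $\kp$ was defined there as the unique holomorphic involution of $\Dl$ fixing $v_6^*$, and it was shown that $p\kp = \lm\mu\, p$. Hence $\kp\in\tPhi$ with $\pi(\kp)=\lm\mu$, with nothing further to prove. (Proposition~\ref{prop-b-H} additionally gives the explicit Möbius formula for $\kp$, which is not needed here but confirms it is a genuine automorphism of $\Dl$.)

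The only case requiring a new argument is $\mu$. Here I would run the same covering-theoretic construction used for $\kp$ in Proposition~\ref{prop-Dl-kp}: since $\mu(v_0)=v_1$ in $X$ and $v_1 = p(v_1^*)$, pick a path $u$ in $\Dl$ from $v_0^* = 0$ to an arbitrary point $z$, lift the path $\mu\, p\, u$ (which starts at $p(v_1^*)=v_1$) to the unique path $u'$ in $\Dl$ with $u'(0)=v_1^*$, and set $\mu(z):=u'(1)$. Standard covering theory makes this well-defined and gives the unique continuous map $\Dl\to\Dl$ with $\mu(v_0^*)=v_1^*$ and $p\mu = \mu p$; since $p$ is a holomorphic covering and $\mu\:X\to X$ is anticonformal, $\mu\:\Dl\to\Dl$ is anticonformal. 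One then checks $\mu^2=1$ exactly as for $\kp$ (it covers $\mu^2=1_X$ and, after tracking $\mu(v_1^*)$, fixes $v_0^*$, forcing $\mu^2 = 1$). The remaining point is that this covering-theoretic $\mu$ coincides with the map $z\mapsto (b_+z - b^2 - i)/((b^2-i)z - b_+)$ from Section~\ref{sec-H}: this is a routine identification, done by checking that the explicit Möbius map is an anticonformal automorphism of $\Dl$ sending $v_0^*=0$ to $v_1^* = \tfrac{1+i}{2}b_+$ (using the formula for $v_1^*$ from Proposition~\ref{prop-b-H}) and satisfying $p\mu = \mu p$ on generators, whence by uniqueness the two agree. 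Either way, $\mu\in\tPhi$ with $\pi(\mu)=\mu$.

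The main obstacle, such as it is, is purely bookkeeping in the $\mu$ case: making sure the covering-theoretic $\mu$ is literally the Möbius transformation named in Definition~\ref{defn-tPhi}'s ambient setup (so that later statements about $\Pi = \langle\bt_k\rangle$ and $\tPi$ line up), and confirming the projection is $\mu\in G$ and not some $\mu$-times-deck-transformation — but this is pinned down by the normalisation $\mu(v_0^*)=v_1^*$ together with the explicit vertex formula, so there is no real difficulty. I would present the proof as: dispatch $\lm,\nu$ by citing Lemma~\ref{lem-p-Dl-X}; dispatch $\kp$ by citing Proposition~\ref{prop-Dl-kp}; and give the short covering-space construction for $\mu$ followed by the one-line uniqueness identification with the Section~\ref{sec-H} formula.
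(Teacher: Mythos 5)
Your handling of $\lm$, $\nu$ (via Lemma~\ref{lem-p-Dl-X}) and $\kp$ (via Proposition~\ref{prop-Dl-kp}) matches the paper, which indeed dismisses these three cases as ``clear by construction.'' The problem is in your $\mu$ argument, and it is twofold. First, a factual slip: $\mu\in D_8$, so it acts \emph{conformally} on $X$ (Definition~\ref{defn-precromulent}(a)), and its lift to $\Dl$ is holomorphic — consistent with the explicit formula $\mu(z)=(b_+z-b^2-i)/((b^2-i)z-b_+)$, which is a M\"obius map in $z$, not $\ov{z}$. Second, and more seriously, your identification of the covering-theoretic lift with that explicit formula is circular. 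A lift of $\mu p$ through $p$ is pinned down by its value at one point, so uniqueness lets you conclude two maps agree only once you know \emph{both} satisfy $p\circ f=\mu\circ p$. For the covering-theoretic lift this holds by construction; for the explicit M\"obius map it is exactly the statement you are trying to prove, and you cannot verify it ``on generators'' because $p$ is not explicitly known. Checking only that the explicit map sends $0$ to $v_1^*$ does not suffice — any number of automorphisms of $\Dl$ do that.

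The paper closes this gap by exploiting a fixed point rather than the orbit $v_0\mapsto v_1$: since $\mu(v_2)=v_2$ in $X$, the argument of Proposition~\ref{prop-Dl-kp} produces a lift $\mu'$ with $p\mu'=\mu p$ that is a \emph{holomorphic involution fixing $v_2^*$} (its square covers $\mu^2=1$ and fixes $v_2^*$, hence is the identity). Lemma~\ref{lem-Dl-involutions} says there is only one holomorphic involution of $\Dl$ fixing a given point, and a direct computation — involving no reference to $p$ at all — shows the explicit formula from Section~\ref{sec-H} is a holomorphic involution with $\mu(v_2^*)=v_2^*$. Uniqueness then forces the two to coincide. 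If you want to keep your normalisation at $v_0^*$ instead, you would have to pin down the derivative of the lift at $0$ as well (as in the characterisation of $p$ itself in Lemma~\ref{lem-p-Dl-X}), which is considerably messier; the fixed-point route is the right one.
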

\begin{proof}
 The claims for $\lm$, $\nu$ and $\kp$ are clear by construction.  By
 the same argument that we used in Proposition~\ref{prop-Dl-kp}, if we
 let $\mu'$ denote the unique holomorphic involution on $\Dl$ that
 fixes $v_2$, then $p\mu'=\mu p$, so $\mu'\in\tPhi$ with
 $\pi(\mu')=\mu$.  However, if we define
 \[ \mu(z) = \frac{b_+z-b^2-i}{(b^2-i)z-b_+} \]
 as in Section~\ref{sec-H}, then straightforward calculation
 shows that $\mu$ is a holomorphic involution with $\mu(v^*_2)=v^*_2$,
 so $\mu$ is the same as $\mu'$.
\end{proof}

\begin{proposition}\lbl{prop-Pi-in-Phi}
 We have $\Pi\leq\Phi$ and $\tPi\leq\tPhi$.
\end{proposition}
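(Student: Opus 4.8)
The plan is to show $\Pi \leq \Phi$ first, and then deduce $\tPi \leq \tPhi$ by combining this with Proposition~\ref{prop-tPhi-gens}. For the first inclusion, recall that $\Pi$ is generated by $\bt_0,\dotsc,\bt_7$ (Definition~\ref{defn-Pi}), and that $\Phi$ is precisely the group of deck transformations of $p\:\Dl\to X$, i.e.\ automorphisms $\phi$ with $p\phi = p$. So it suffices to check that $p\bt_k = p$ for each $k\in\Z/8$. Since each $\bt_k$ is an explicit M\"obius transformation (from the formulae in Section~\ref{sec-H}, specialised to the value of $b$ produced by Proposition~\ref{prop-b-H}), and since two holomorphic maps out of the connected surface $\Dl$ that agree to first order at a point must be equal, it is enough to verify that $p\bt_k$ and $p$ have the same value and the same derivative at a single convenient point, say $0 = v^*_0$.

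To carry this out I would use the path-lifting description of the curves $c^*_j$ in Proposition~\ref{prop-classify-a} together with the periodicity relations for the hyperbolic curves. Concretely, Proposition~\ref{prop-c-H-cycle} gives identities of the form $\tc_k(t+2\pi) = w_k(\tc_k(t))$ where $w_k\in\Pi$ is an explicit word in the $\bt_j$ (for instance $\tc_5(t+2\pi) = \bt_0(\tc_5(t))$). On the other hand, in the present section we have the lifted curves $c^*_j\:\R\to\Dl$ characterised by $p\,c^*_j = c_j$ and fixed initial conditions, and these are $2\pi$-periodic downstairs since $c_j\:\R\to X$ is $2\pi$-periodic. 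Comparing, the map $t\mapsto c^*_j(t+2\pi)$ is a lift of $c_j$, and by the same uniqueness-of-lifts argument used throughout Proposition~\ref{prop-classify-a} it must coincide with a $\Pi$-translate of $c^*_j$; matching with the $w_k$ from Proposition~\ref{prop-c-H-cycle} identifies which translate. This shows that the deck group $\Phi$ contains each $w_k$, and since these words together with their conjugates generate $\Pi$ (cf.\ the runs in Remark~\ref{rem-runs}, or more simply the presentation of $\Pi$ by the $\bt_k$), we get $\Pi\leq\Phi$. Alternatively — and this is cleaner — one simply notes that $\pi_1(X,v_0)\simeq\Pi$ by Proposition~\ref{prop-pi-one} via exactly the loops $\bt_i$, that the deck group of the universal cover $p$ is canonically $\pi_1(X,v_0)$, and that the identification of $\Pi$ with the deck group sends the abstract generator $\bt_i$ to the class of the loop $\bt_i$; one then checks that the hyperbolic M\"obius transformation called $\bt_i$ in Section~\ref{sec-H} is the deck transformation corresponding to that loop, by tracking the lift of $\bt_i$ starting at $v^*_0$ and using the explicit piecewise formula for $\bt_1$ from the definition before Proposition~\ref{prop-pi-one} together with the curve data in Proposition~\ref{prop-classify-a}.

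For the second inclusion, $\tPi$ is generated by $\lm,\mu,\nu$ and the $\bt_k$ (Definition~\ref{defn-tPi}). We have just shown $\bt_k\in\Pi\leq\Phi\leq\tPhi$. Proposition~\ref{prop-tPhi-gens} shows $\lm,\mu,\nu\in\tPhi$ (with the expected images in $G$). Since $\tPhi$ is a group containing all the generators of $\tPi$, and the defining relations of $\tPi$ in Definition~\ref{defn-tPi} are relations that the corresponding automorphisms of $\Dl$ satisfy (this was checked by direct calculation when the $\tPi$-action on $\Dl$ was defined in Section~\ref{sec-H}, and $\kp$ was identified with the map $\mu$ only in Proposition~\ref{prop-tPhi-gens}, while $\lm,\nu$ are literally the same maps), the assignment $\lm\mapsto\lm$, $\mu\mapsto\mu$, $\nu\mapsto\nu$, $\bt_k\mapsto\bt_k$ extends to a homomorphism $\tPi\to\tPhi$ whose image contains the generators, hence is all of $\tPi$ viewed inside $\tPhi$. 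Thus $\tPi\leq\tPhi$.

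The main obstacle is the bookkeeping in the first step: one must be careful that the element of $\Pi$ named $\bt_i$ in the hyperbolic picture (an explicit M\"obius transformation) really is the deck transformation attached to the homotopy class named $\bt_i$ in $\pi_1(X,v_0)$, and not some other element of $\Pi$. The safest route is the lift-tracking computation: lift the loop $\bt_i$ (as defined just before Proposition~\ref{prop-pi-one}, a concatenation of arcs of the $c_k$) through $p$ starting at $v^*_0$, use Proposition~\ref{prop-classify-a} and Proposition~\ref{prop-c-H-cycle} to follow the endpoint, and confirm it equals $\bt_i(v^*_0)$ with matching derivative; this is routine but must be done with care, and is exactly the sort of thing the \mcode+check_c_H_monodromy()+ and \mcode+check_v_H()+ routines verify. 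Everything else is formal group theory.
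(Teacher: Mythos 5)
Your second half (deducing $\tPi\leq\tPhi$ from $\Pi\leq\Phi$ together with Proposition~\ref{prop-tPhi-gens}) is exactly what the paper does. For the first half the paper takes a shorter route than yours: it verifies \emph{only} that $\bt_0\in\Phi$, by observing that $\bt_0$ carries the known lift $\lm^2(v^*_{11})=b-b_+$ of $v_{11}$ to the known lift $v^*_{11}=b_+-b$ while preserving the direction of $c_5$, so that $p\bt_0$ and $p$ satisfy the same uniqueness characterisation of the covering map. It then gets $\bt_2,\bt_4,\bt_6$ by conjugating with $\lm$ (using that $\Phi$ is normal in $\tPhi$), and gets the odd generators for free from group theory: $\lm,\mu\in\tPhi$ with $(\lm\mu)^2=\bt_7\bt_6$ as automorphisms of $\Dl$, while $(\lm\mu)^2=1$ in $G$, so $\bt_7\bt_6\in\Phi$ and hence $\bt_7\in\Phi$. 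This sidesteps entirely the direct verification for $\bt_1,\bt_3,\bt_5,\bt_7$, which is where your plan concentrates all the labour.

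Two steps in your first half need repair. First, "two holomorphic maps out of $\Dl$ that agree to first order at a point must be equal" is false in general ($z$ and $z+z^2$); what saves you is that $p$ and $p\bt_k$ are both covering maps, so one writes $p\bt_k=pg$ for an automorphism $g$ fixing the chosen point with $g'=1$ there, whence $g=1$. Second, and more seriously, the claim that $t\mapsto c^*_j(t+2\pi)$ "must coincide with a $\Pi$-translate of $c^*_j$" is circular: uniqueness of lifts only tells you it is a $\Phi$-translate, and identifying that deck transformation with the specific word $w_j\in\Pi$ from Proposition~\ref{prop-c-H-cycle} requires knowing that the reparametrisation relating $c^*_j$ to $\tc_j$ is compatible with the two notions of $2\pi$-periodicity --- which is essentially the statement $w_j\in\Phi$ that you are trying to prove (both maps translate along the same geodesic, but a priori by different amounts). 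Your fallback, the explicit lift-tracking of each loop $\bt_i$ through its five constituent arcs, does work, but it is not mere bookkeeping: it is the entire content of the proposition for the odd-index generators, and the paper's relation trick exists precisely to avoid it.
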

\begin{proof}
 We first claim that $\bt_0\in\Phi$.  Note that $\lm^2(v_{11})=v_{11}$ in
 $PX(a)$, so the points $v^*_{11}=b_+-b$ and $\lm^2(v^*_{11})=b-b_+$
 have the same image under $p$.  Recall that
 $\bt_0(z)=(b_+z+1)/(z+b_+)$; this implies that
 $\bt_0(\lm^2(v^*_{11}))=v^*_{11}$, and that $\bt_0$ restricts to give
 a strictly increasing automorphism of $(-1,1)$.

 We originally introduced $p$ as the unique holomorphic covering map
 $\Dl\to PX(a)$ such that $p(0)=v_0$ and $p'(0)$ is a positive
 multiple of $c'_5(0)$.  However, the same line of argument shows that
 $p$ is also the unique holomorphic covering map $\Dl\to PX(a)$ such
 that $p(\lm^2(v^*_{11}))=v_{11}$ and $p'(\lm^2(v^*_{11}))$ is a
 positive multiple of $c'_5(-\pi)$.  The composite $p\bt_0$ has these
 properties, so $p\bt_0=p$, so $\bt_0\in\Phi$ as claimed.

 It is also clear that $\Phi$ is normal in $\tPhi$ and $\lm\in\tPhi$
 so the conjugates $\bt_{2k}=\lm^k\bt_0\lm^{-k}$ also lie in $\Phi$.

 We now recall that the elements $\lm,\mu\in\tPi$ lie in $\tPhi$ and
 satisfy $(\lm\mu)^2=\bt_7\bt_6$.  As $(\lm\mu)^2=1$ in $G$ we can
 deduce that $\bt_7\bt_6\in\Phi$.  We saw above that $\bt_6\in\Phi$,
 so $\bt_7\in\Phi$.  Using $\lm^k\bt_7\lm^{-k}=\bt_{7+2k}$ we deduce
 that $\bt_j\in\Phi$ for all $j$, so $\Pi\leq\Phi$.  As
 $\lm,\mu,\nu\in\tPhi$ it also follows that $\tPi\leq\tPhi$.
\end{proof}

\begin{proposition}\lbl{prop-H-universal}
 We have $\Pi=\Phi$ and $\tPi=\tPhi$, and the map $p\:\Dl\to PX(a)$
 induces an isomorphism $\ov{p}\:HX(b)=\Dl/\Pi\to PX(a)$ of cromulent
 surfaces.
\end{proposition}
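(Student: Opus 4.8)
The plan is to deduce everything from the inclusions $\Pi\leq\Phi$ and $\tPi\leq\tPhi$ already established in Proposition~\ref{prop-Pi-in-Phi}, by showing the relevant indices are $1$. First I would record that, since $\Pi$ acts freely and properly discontinuously on the simply connected surface $\Dl$ with the compact fundamental domain $HF_1(b)$ (Proposition~\ref{prop-Pi-free}), the quotient $HX(b)=\Dl/\Pi$ is a closed surface whose universal cover is $\Dl$, so $\pi_1(HX(b))\cong\Pi$; it is orientable because $\Pi$ consists of conformal automorphisms of $\Dl$. Hence $H_1(HX(b))\cong\Pi_{\mathrm{ab}}\cong\Z^4$, so $HX(b)$ has genus two and $\chi(HX(b))=-2$. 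On the other hand $p$ induces a conformal isomorphism $\Dl/\Phi\to PX(a)$ (Definition~\ref{defn-tPhi}), and $PX(a)$ also has $\chi=-2$. Since $\Pi\leq\Phi$, the map $\Dl/\Pi\to\Dl/\Phi$ is a covering of degree $[\Phi:\Pi]$, and multiplicativity of the Euler characteristic gives $-2=-2\,[\Phi:\Pi]$, so $\Phi=\Pi$. (Alternatively one could compare hyperbolic areas: $\mathrm{vol}(\Dl/\Pi)$ is the area of the right-angled hyperbolic dodecagon $HF_1(b)$, namely $10\pi-12\cdot\tfrac{\pi}{2}=4\pi$, while $\mathrm{vol}(\Dl/\Phi)=\mathrm{Area}(PX(a))=4\pi$ by Gauss--Bonnet for the metric of curvature $-1$ pulled back along $p$.)

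Next I would deduce $\tPi=\tPhi$. The homomorphism $\pi\:\tPhi\to G$ has kernel $\Phi$, and by Proposition~\ref{prop-tPhi-gens} its image contains $\lm,\mu,\nu$, hence all of $G$; thus $\tPhi/\Phi\cong G$. Since $\Phi=\Pi$ and $\tPi/\Pi\cong G$ by Proposition~\ref{prop-tPi}, the subgroup $\tPi/\Pi$ of $\tPhi/\Pi$ has the same finite order $16$ as $\tPhi/\Pi$, so $\tPi=\tPhi$.

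It remains to check that $\ov p\:HX(b)=\Dl/\Pi=\Dl/\Phi\to PX(a)$ is a cromulent isomorphism. It is a conformal isomorphism by the previous paragraph together with Definition~\ref{defn-tPhi}. For $G$-equivariance: the action of $g\in G$ on $\Dl/\Pi$ is $[z]\mapsto[\tilde g(z)]$ for any lift $\tilde g\in\tPi\subseteq\tPhi$, and by definition of $\tPhi$ we have $p\tilde g=\pi(\tilde g)\,p$; since $\pi$ restricted to $\tPi$ agrees with the quotient map $\tPi\to\tPi/\Pi=G$ on the generators $\lm,\mu,\nu,\bt_k$ (using Proposition~\ref{prop-tPhi-gens} and $\bt_k\in\Pi=\ker\pi$), we get $\pi(\tilde g)=g$, so $\ov p(g\cdot[z])=g\cdot\ov p([z])$. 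For the labelling: Proposition~\ref{prop-b-H} shows that for the chosen value of $b$ the points $v_i\in\Dl$ of Definition~\ref{defn-v-H} coincide with the lifted points $v_i^*$ of Proposition~\ref{prop-classify-a}, and $p(v_i^*)=v_i$ in $PX(a)$ by Proposition~\ref{prop-classify-a}(b); hence $\ov p$ carries the $i$-th labelling point of $HX(b)$ to $v_i\in PX(a)$. Since $PX(a)$ is cromulent (Proposition~\ref{prop-P-fundamental}) and $\ov p$ is a $G$-equivariant conformal isomorphism matching the labellings, it follows that $HX(b)$ is cromulent and that $\ov p$ is a cromulent isomorphism, as required.

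The main obstacle is not any single hard step but making the identifications line up: in particular, that the $G$-action on $PX(a)$ built into its cromulent structure is exactly the one transported along $\pi\:\tPhi\to G$ (handled on generators by Proposition~\ref{prop-tPhi-gens}), and that the abstractly defined labelling points of $HX(b)$ in Definition~\ref{defn-v-H} are the same as those obtained by lifting the curve system from $PX(a)$ (handled by Proposition~\ref{prop-b-H}). The genuinely substantive inputs — freeness of the $\Pi$-action with its dodecagonal fundamental domain, and the explicit formula for $b$ — were proved earlier, so the present argument is essentially a bookkeeping assembly of them, plus the Euler-characteristic count that forces $[\Phi:\Pi]=1$.
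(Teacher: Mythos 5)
Your proof is correct and follows essentially the same route as the paper: factor $p$ through $\Dl/\Pi\to\Dl/\Phi$, observe this is a holomorphic covering of compact surfaces, and force the degree to be $1$ by comparing Euler characteristics (both equal to $-2$). You supply some details the paper leaves implicit — deriving $\chi(HX(b))=-2$ from $\Pi_{\mathrm{ab}}\cong\Z^4$, the index argument for $\tPi=\tPhi$, and the explicit equivariance and labelling checks — but the core argument is identical.
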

\begin{proof}
 As $\Pi\leq\Phi$, we can factor the map $p$ as
 \[ \Dl \xra{q} HX(b) = \Dl/\Pi \xra{\ov{p}} \Dl/\Phi \simeq PX(a).
 \]
 As $p$ and $q$ are holomorphic coverings, we see that $\ov{p}$ is
 also a holomorphic covering.  We have also seen that both $HX(b)$ and
 $PX(a)$ are compact, so $\ov{p}$ has degree $d<\infty$ say.  It
 follows (by choosing compatible triangulations, say) that
 $\chi(HX(b))=d\;\chi(PX(a))$ (where $\chi$ denotes the Euler
 characteristic).  However, both $HX(b)$ and $PX(a)$ have genus $g=2$
 and therefore Euler characteristic $2-2g=-2$, so we must have $d=1$,
 so $\ov{p}$ is an isomorphism of Riemann surfaces.  By construction
 it is $G$-equivariant and sends $v_i$ to $v_i$ so it is an
 isomorphism of cromulent surfaces.
\end{proof}

\section{Relating the projective and hyperbolic families}
\lbl{sec-P-H}

Recall that the projective and algebraic families are both universal,
so for each $a\in(0,1)$ there is a unique $b\in (0,1)$ such that
$PX(a)$ is isomorphic (in a unique way) to $HX(b)$, and
\emph{vice-versa}.  In this section we will give two different methods
for calculating this correspondence.  The first method starts with $b$
and calculates $a$, and the second works in the opposite direction.

\subsection{Preliminaries}
\lbl{sec-P-H-prelim}

The space $HX(b)$ is by definition a quotient of $\Dl$.  We have an
isomorphism $HX(b)\to PX(a)$, and an isomorphism
$PX(a)/\ip{\lm^2}\to\C_\infty$ sending $j(w,z)$ to $z$.  Composing
these maps gives a map $p\:\Dl\to\C_\infty$.  Our main task is to
calculate this map.

In Definition~\ref{defn-v-H} we defined certain points $v_i\in\Dl$
(depending on $b$), which become the labelled points in $HX(b)$.  In
this section, we will write $v_{Hi}$ for these points.  Similarly, we
will write $c_{Hj}$ for the curves $\tc_j\:\R\to\Dl$ defined in
Definition~\ref{defn-H-curves}.  Moreover, the images in
$\C_\infty$ of the points $v_i\in PX(a)$ and the curves
$c_j\:\R\to P(a)$ will be denoted by $v_{Ci}$ and $c_{Cj}$.  We are
primarily interested in the corners of the fundamental domain, which
we can tabulate as follows:
\begin{align*}
 v_{H0}  &= 0 &
 v_{C0}  &= 0 \\
 v_{H3}  &= \frac{b\,b_--b_+}{ib^2-1} &
 v_{C3}  &= 1 \\
 v_{H6}  &= \frac{1+i}{\rt}\;\frac{\rt-b_-}{b_+} &
 v_{C6}  &= i \\
 v_{H11} &= b_+-b &
 v_{C11} &= a.
\end{align*}

Because the map $HX(b)\to PX(a)$ is cromulent, we have
$p(v_{Hi})=v_{Ci}$ and $p(c_{Hj}(\R))=c_{Cj}(\R)$.  In particular, we
have $v_{H0}=0$ and $v_{C0}=0$, so $p(0)=0$.  Using equivariance with
respect to $\lm$, we also see that $p(iz)=-p(z)$, so $p(-z)=p(z)$, and
it follows that $p'(0)=0$.  This makes it inconvenient to work with
$p$ itself.  Instead, we will work with a certain map of the form
$p_1=\phi p\psi$, where $\phi\in\Aut(\C_\infty)$ and
$\psi\in\Aut(\Dl)$.  This will be arranged so that $p_1(0)=0$ and
$p'_1(0)>0$.  Details are as follows:
\begin{definition}\lbl{defn-schwarz-phi}
 We define $\phi\in\Aut(\C_\infty)$ and $\psi\in\Aut(\Dl)$ by
 \begin{align*}
  \phi(z) &= \frac{i-z}{i+z} &
  \phi^{-1}(z) &= i \frac{1-z}{1+z} \\
  \psi(z) &= \frac{1+i}{\rt}
    \frac{\rt-b_- - b_+z}{b_+ (b_--\rt)z} &
  \psi^{-1}(z) &= -\frac{\rt-b_--(1-i)b_+z/\rt}{
                          (1-i)(1-b_-/\rt)z-b_+}.
 \end{align*}
 We then define $p_1=\phi\circ p\circ\psi\:\Dl\to\C_\infty$.  We also
 put $v_{HSi}=\psi^{-1}(v_{Hi})\in\Dl$ and
 $v_{PSi}=\phi(v_{Ci})\in\C_\infty$, so that $p_1(v_{HSi})=v_{PSi}$.
 We define curves $c_{HSj}$ and $c_{PSj}$ in the same way.  Finally,
 we put
 \[ t = b^2\frac{\rt b_+ - 2bb_-}{1-b^2+2b^4}
    \hspace{6em}
    s = \frac{(b_+b_--\rt)(b-b^3)}{1-b^2+2b^4}.
 \]
\end{definition}
\begin{remark}
 In Maple, the maps $\phi$ and $\psi$ are \mcode+schwarz_phi+ and
 \mcode+schwarz_psi+, and the inverse maps are \mcode+schwarz_phi_inv+ and
 \mcode+schwarz_psi_inv+.  Maple notation for $v_{HSi}$ and
 $c_{HSj}(t)$ is \mcode+v_HS[i]+ and \mcode+c_HS[j](t)+, and similarly
 for $v_{PSi}$ and $c_{PSj}(t)$.  Maple notation for $t$ and $s$ is
 \mcode+t_schwarz+ and \mcode+s_schwarz+.  All of this is in
 \fname+hyperbolic/schwarz.mpl+.
\end{remark}

By direct calculation, we have
\begin{align*}
 v_{HS0}  &= \frac{\rt-b_-}{b_+} &
 v_{PS0}  &= 1 \\
 v_{HS3}  &= i \frac{b_-}{b_++\rt b} &
 v_{PS3}  &= i \\
 v_{HS6}  &= 0 &
 v_{PS6}  &= 0 \\
 v_{HS11} &= t+is &
 v_{PS11} &= \frac{i-a}{i+a}
\end{align*}
(where $t$ and $s$ are as in Definition~\ref{defn-schwarz-phi}).
\begin{checks}
 hyperbolic/schwarz_check.mpl: check_schwarz()
\end{checks}

\begin{lemma}\lbl{lem-psi-edges}
 $\psi^{-1}$ acts as follows on the edges of $HF_{16}$:
 \begin{itemize}
  \item $\psi^{-1}(C_0)=(-1,1).i$
  \item $\psi^{-1}(C_1)=(-1,1)$
  \item $\psi^{-1}(C_3)$ is the intersection of $\Dl$ with the circle
   of radius $\rt b/b_-$ centred at $ib_+/b_-$
  \item $\psi^{-1}(C_5)$ is the intersection of $\Dl$ with the circle
   of radius $\rt b_-/b_+$ centred at $(\rt+b_-)/b_+$.
 \end{itemize}
\end{lemma}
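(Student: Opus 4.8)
The plan is to use the single structural fact that does all the work: $\psi^{-1}$, being a conformal automorphism of $\Dl$, is an isometry of the hyperbolic metric, so it carries hyperbolic geodesics to hyperbolic geodesics. Each of $C_0,C_1,C_3,C_5$ is the fixed locus of an anticonformal involution and hence a geodesic (as recalled in Section~\ref{sec-H-curves}), so $\psi^{-1}(C_j)$ is again a geodesic; and a geodesic of $\Dl$ is completely determined by its unordered pair of ideal endpoints on $S^1$. Thus the proof reduces to three steps: (i) write down the ideal endpoints of each $C_j$; (ii) apply the explicit M\"obius map $\psi^{-1}$ from Definition~\ref{defn-schwarz-phi} to those endpoints; (iii) recognise the geodesic carrying the resulting endpoints.

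For step~(i): from Definition~\ref{defn-H-curves} we have $\tc_1(t)=e^{i\pi/4}\tanh(ts_1/\pi)$ and $\tc_5(t)=\tanh(ts_3/\pi)$, so $C_1$ and $C_5$ are the Euclidean diameters with endpoints $\pm e^{i\pi/4}$ and $\pm 1$, while $C_0=\Xi_{(1+i)/b_+}$ and $C_3=\Xi_{b_+}$, whose endpoints are $(1\pm id)/\ov m$ with $d=\sqrt{|m|^2-1}$ by the description of $\Xi_m$ in Section~\ref{sec-H-curves} (here $d=b_-/b_+$ for $C_0$ and $d=b$ for $C_3$). For step~(iii): if the two endpoint images are antipodal the geodesic is a diameter through $0$; otherwise it is the arc inside $\Dl$ of the unique circle through the two points that meets $S^1$ orthogonally, and that circle's centre $c$ and radius $r$ are pinned down by the two points together with the orthogonality relation $|c|^2=1+r^2$ (Lemma~\ref{lem-right-circles}). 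It is convenient to cross-check, and in the diameter cases to fix orientation, using the already-tabulated images $\psi^{-1}(v_{H0})=v_{HS0}=(\rt-b_-)/b_+$, $\psi^{-1}(v_{H3})=v_{HS3}$, $\psi^{-1}(v_{H6})=v_{HS6}=0$ and $\psi^{-1}(v_{H11})=v_{HS11}=t+is$: since $v_{H6}\in C_0\cap C_1$, both $\psi^{-1}(C_0)$ and $\psi^{-1}(C_1)$ pass through $0$ and so are diameters; the real point $v_{HS0}$ lies on $\psi^{-1}(C_1)$, forcing $\psi^{-1}(C_1)=(-1,1)$; and orthogonality of $C_0$ and $C_1$ at $v_6$ then gives $\psi^{-1}(C_0)=(-1,1)i$. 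For $C_3$ and $C_5$ the endpoint computations (with $v_{HS3}$, $v_{HS11}$, $v_{HS0}$ serving as sanity checks) identify the two circles.

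The only substantive difficulty is the algebra in step~(ii): the images under $\psi^{-1}$ of the relevant endpoint values are messy expressions in $b_\pm$ and $\rt$, and one must simplify them far enough to read off that the orthogonal circles have exactly the stated centres and radii. This is routine but lengthy — precisely the kind of verification carried out by the accompanying Maple code — and the two points that need a little care are keeping the nested radicals under control and correctly treating the degenerate (diameter) cases, where the relevant ``circle'' has infinite radius.
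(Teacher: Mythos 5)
Your proposal is correct and takes essentially the same approach as the paper: both arguments observe that $\psi^{-1}$ is a hyperbolic isometry, so each $\psi^{-1}(C_k)$ and each claimed target set is a geodesic, and then pin the geodesic down by two incidence conditions. The only difference is which two points are used — the paper checks that the claimed set and $\psi^{-1}(C_k)$ both contain two of the tabulated points $\psi^{-1}(v_i)$ for $i\in\{0,3,6,11\}$, whereas you mostly work with ideal endpoints (reverting to the vertex images for the diameter cases); this is a cosmetic variation, not a different method.
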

\begin{proof}
 Let $A_k$ denote the set that is claimed to be equal to
 $\psi^{-1}(C_k)$.  In each case, it is easy to check that $A_k$ is a
 geodesic.  We also know that $C_k$ is a geodesic and $\psi$ is an
 isometry, so $\psi^{-1}(C_k)$ is also a geodesic.  Because of this,
 it will suffice to check that $|A_k\cap\psi^{-1}(C_k)|\geq 2$, which
 we can do by considering the points
 $\{\psi^{-1}(v_i)\st i\in\{0,3,6,11\}\}$.
 \begin{checks}
  hyperbolic/schwarz_check.mpl: check_schwarz()
 \end{checks}
\end{proof}

We can illustrate the maps $p$, $p_1$, $\phi$ and $\psi$ as follows.
\begin{center}
 \begin{tikzpicture}
%  \draw[gray] (-1,-9) rectangle(17,6);
  \path[use as bounding box] (-0.5,-9) rectangle(16,6);
  \draw[->] ( 2.0,-2.5) -- ( 2.0,-1.0);
  \draw[->] (11.6,-1.0) -- (11.6,-2.5);
  \draw[->] ( 5.0, 1.5) -- ( 7.0, 1.5);
  \draw[->] ( 5.5,-5.5) -- ( 7.5,-5.5);
  \draw ( 2.0,-1.75) node[anchor=east] {$\psi$};
  \draw (11.6,-1.75) node[anchor=west] {$\phi$};
  \draw ( 6.0, 1.50) node[anchor=south] {$p$};
  \draw ( 6.5,-5.55) node[anchor=north] {$p_1$};
  \begin{scope}[scale=6]
   \draw[blue] (0,0) -- (0.500,0);
   \draw[blue] (0,0) -- (0,0.500);
   \draw[green] (0,0) -- ( 0.625, 0.625);
   \draw[cyan] (0.800,0.800) (0.426,0.426) arc(225:245:0.529);
   \draw[cyan] (0.800,0.800) (0.426,0.426) arc(225:205:0.529);
   \draw[magenta] (1.250,0.000) (0.500,0.000) arc(180:139:0.750);
   \draw[magenta] (0.000,1.250) (0.000,0.500) arc(270:311:0.750);
   \draw[blue] (0.800,0.625) (0.625,0.625) arc( 180:230:0.175);
   \draw[blue] (0.625,0.800) (0.625,0.625) arc( 270:220:0.175);
   \fill[black](0.000,0.000) circle(0.006);
   \fill[black](0.625,0.625) circle(0.006);
   \fill[black](0.322,0.573) circle(0.006);
   \fill[black](0.426,0.426) circle(0.006);
   \fill[black](0.000,0.500) circle(0.006);
   \fill[black](0.500,0.000) circle(0.006);
   \fill[black](0.685,0.493) circle(0.006);
   \fill[black](0.493,0.685) circle(0.006);
   \fill[black](0.573,0.322) circle(0.006);
   \draw( 0.000,-0.030) node{$\ss 0$};
   \draw( 0.650, 0.620) node{$\ss 1$};
   \draw( 0.292, 0.585) node{$\ss 2$};
   \draw( 0.613, 0.312) node{$\ss 3$};
   \draw( 0.386, 0.426) node{$\ss 6$};
   \draw(-0.040, 0.480) node{$\ss 10$};
   \draw( 0.493,-0.030) node{$\ss 11$};
   \draw( 0.500, 0.715) node{$\ss 12$};
   \draw( 0.715, 0.493) node{$\ss 13$};
  \end{scope}
  \begin{scope}[scale=4,xshift=0.5cm,yshift=-1.4cm]
   \draw[magenta] (0.000, 1.890) +(-105:1.604) arc(-105: -75:1.604);
   \draw[magenta] (0.000,-1.890) +( 105:1.604) arc( 105:  75:1.604);
   \draw[blue] ( 1.131, 0.529) +(225:0.748) arc(225:195:0.748);
   \draw[blue] ( 1.131,-0.529) +(135:0.748) arc(135:165:0.748);
   \draw[blue] (-1.131, 0.529) +(315:0.748) arc(315:345:0.748);
   \draw[blue] (-1.131,-0.529) +( 45:0.748) arc( 45: 15:0.748);
   \draw[green] (-0.602,0.000) -- ( 0.602,0.000);
   \draw[cyan] (0.000,-0.286) -- (0.000, 0.286);
   \fill[black]( 0.000, 0.000) circle(0.01);
   \fill[black]( 0.000, 0.286) circle(0.01);
   \fill[black]( 0.000,-0.286) circle(0.01);
   \fill[black]( 0.602, 0.000) circle(0.01);
   \fill[black](-0.602, 0.000) circle(0.01);
   \fill[black]( 0.408, 0.339) circle(0.01);
   \fill[black]( 0.408,-0.339) circle(0.01);
   \fill[black](-0.408, 0.339) circle(0.01);
   \fill[black](-0.408,-0.339) circle(0.01);
   \draw( 0.650, 0.000) node{$\ss 0$};
   \draw(-0.650, 0.000) node{$\ss 1$};
   \draw( 0.000,-0.340) node{$\ss 2$};
   \draw( 0.000, 0.340) node{$\ss 3$};
   \draw( 0.030, 0.030) node{$\ss 6$};
   \draw( 0.420,-0.390) node{$\ss 10$};
   \draw( 0.420, 0.390) node{$\ss 11$};
   \draw(-0.420,-0.390) node{$\ss 12$};
   \draw(-0.420, 0.390) node{$\ss 13$};
  \end{scope}
  \begin{scope}[scale=2.5,xshift=4.5cm]
   \draw[magenta] (-1.333,0.000) -- (-0.750,0.000);
   \draw[magenta] ( 1.333,0.000) -- ( 0.750,0.000);
   \draw[blue] (-1.667,0.000) -- (-1.333,0.000);
   \draw[blue] (-0.750,0.000) -- ( 0.750,0.000);
   \draw[blue] ( 1.667,0.000) -- ( 1.333,0.000);
   \draw[green] (0.000,0.000) -- ( 0.000,1.667);
   \draw[cyan] (0.000,0.000) (1.000,0.000) arc(0:180:1.000);
   \fill[black](-1.333,0.000) circle(0.015);
   \fill[black](-1.000,0.000) circle(0.015);
   \fill[black](-0.750,0.000) circle(0.015);
   \fill[black]( 0.000,0.000) circle(0.015);
   \fill[black]( 0.750,0.000) circle(0.015);
   \fill[black]( 1.000,0.000) circle(0.015);
   \fill[black]( 1.333,0.000) circle(0.015);
   \fill[black]( 0.000,1.000) circle(0.015);
   \draw (-1.333,-0.100) node{$\ss 12$};
   \draw (-1.000,-0.100) node{$\ss 2$};
   \draw (-0.750,-0.100) node{$\ss 10$};
   \draw ( 0.000,-0.100) node{$\ss 0$};
   \draw ( 0.750,-0.100) node{$\ss 11$};
   \draw ( 1.000,-0.100) node{$\ss 3$};
   \draw ( 1.333,-0.100) node{$\ss 13$};
   \draw ( 0.100, 1.100) node{$\ss 6$};
  \end{scope}
  \begin{scope}[scale=2,xshift=5.75cm,yshift=-2.7cm]
   \draw[magenta] ( 0.000, 0.000) ( 23:1.000) arc( 23:157:1.000);
   \draw[magenta] ( 0.000, 0.000) (203:1.000) arc(203:337:1.000);
   \draw[blue]    ( 0.000, 0.000) (157:1.000) arc(157:203:1.000);
   \draw[blue]    ( 0.000, 0.000) (337:1.000) arc(337:383:1.000);
   \draw[green]   (-1.000, 0.000) -- ( 1.000, 0.000);
   \draw[cyan]    ( 0.000,-1.000) -- ( 0.000, 1.000);
   \fill[black] ( 0.000, 0.000) circle(0.02);
   \fill[black] ( 0.000, 1.000) circle(0.02);
   \fill[black] ( 0.000,-1.000) circle(0.02);
   \fill[black] (-1.000, 0.000) circle(0.02);
   \fill[black] ( 1.000, 0.000) circle(0.02);
   \fill[black] ( 157:1.000) circle(0.02);
   \fill[black] ( 203:1.000) circle(0.02);
   \fill[black] ( 337:1.000) circle(0.02);
   \fill[black] ( 383:1.000) circle(0.02);
   \draw( 1.090, 0.000) node{$\ss 0$};
   \draw(-1.090, 0.000) node{$\ss 1$};
   \draw( 0.000, 1.090) node{$\ss 3$};
   \draw( 0.000,-1.090) node{$\ss 2$};
   \draw( 0.050, 0.050) node{$\ss 6$};
   \draw( 337:1.120) node{$\ss 10$};
   \draw( 383:1.120) node{$\ss 11$};
   \draw( 203:1.120) node{$\ss 12$};
   \draw( 157:1.120) node{$\ss 13$};
  \end{scope}
 \end{tikzpicture}
\end{center}

\begin{lemma}\lbl{lem-p-one-props}
 $p_1(-z)=-p_1(z)$ and $p_1(\ov{z})=\ov{p_1(z)}$.  Thus, $p_1(z)$ has
 a Taylor series $\sum_ia_iz^{2i+1}$ with $a_i\in\R$.
\end{lemma}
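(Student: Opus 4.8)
The plan is to deduce both symmetry properties of $p_1$ from the corresponding symmetry properties of $p$ together with the specific algebraic form of the auxiliary M\"obius transformations $\phi$ and $\psi$. Recall that $p_1 = \phi\circ p\circ\psi$, where $p\:\Dl\to\C_\infty$ is the composite of the cromulent isomorphism $HX(b)\to PX(a)$ with the quotient map $PX(a)\to PX(a)/\ip{\lm^2}\simeq\C_\infty$. By the equivariance established in Section~\ref{sec-H-universal} (and recorded again at the start of Section~\ref{sec-P-H}), $p$ intertwines the action of $\lm$ on $\Dl$ (namely $z\mapsto iz$) with the action of $\lm$ on $\C_\infty$ (namely $z\mapsto -z$), so $p(iz)=-p(z)$, and hence $p(-z)=p(i^2z)=p(z)$. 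Similarly $p$ intertwines $\nu$ on $\Dl$ ($z\mapsto\ov z$) with $\nu$ on $\C_\infty$ ($z\mapsto\ov z$), so $p(\ov z)=\ov{p(z)}$.

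Next I would check two conjugacy facts about $\phi$ and $\psi$. First, for the statement $p_1(-z)=-p_1(z)$: I claim $\psi$ conjugates the map $z\mapsto -z$ to the map $z\mapsto iz$ (i.e. $\psi(iz)$ relates to $\psi(z)$ by a rotation), and $\phi$ conjugates $z\mapsto z$ (the trivial action induced on $\C_\infty$ after squaring) in a way compatible with $z\mapsto -z$ on the target. More precisely, one verifies directly from the explicit formulas in Definition~\ref{defn-schwarz-phi} that $\psi^{-1}$ carries the $\lm$-action $z\mapsto iz$ on $\Dl$ to the action $z\mapsto -z$ (this is consistent with $v_{HS6}=0$ being the $\lm$-fixed point used to normalise, and with $\psi^{-1}(C_0)=(-1,1).i$, $\psi^{-1}(C_1)=(-1,1)$ from Lemma~\ref{lem-psi-edges}, which are swapped by $z\mapsto iz$ but the relevant curves under $\lm\cdot$ match up). Then $p_1(-z)=\phi(p(\psi(-z)))=\phi(p(i^{\pm1}\psi(z)\cdot(\text{rotation})))$; tracking the rotations through $p$ (which turns multiplication by $i$ into multiplication by $-1$) and through $\phi$ (which is an involution sending $0\leftrightarrow 1$, $\infty\leftrightarrow -1$ roughly, and in particular conjugates $z\mapsto -z$ to itself up to the normalisation) yields $p_1(-z)=-p_1(z)$. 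I would carry this out as a short explicit computation rather than appealing to abstract equivariance, since the normalising M\"obius factors must be pinned down exactly.

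For the statement $p_1(\ov z)=\ov{p_1(z)}$: here the key observation is that both $\phi$ and $\psi$ in Definition~\ref{defn-schwarz-phi} have the property of being "real" in the appropriate sense — one checks from the formulas that $\psi(\ov z)=\ov{\psi(z)}$ and $\phi(\ov z)=\ov{\phi(z)}$ (equivalently, both maps commute with complex conjugation, which is visible because all the coefficients appearing, built from $b$, $b_+$, $b_-$ and $\sqrt 2$, are real, modulo the factor $(1+i)/\rt$ in $\psi$ which must be checked to interact correctly — in fact the factor $\psi^{-1}(C_1)=(-1,1)$ being conjugation-invariant confirms $\psi$ commutes with conjugation). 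Granting that, $p_1(\ov z)=\phi(p(\psi(\ov z)))=\phi(p(\ov{\psi(z)}))=\phi(\ov{p(\psi(z))})=\ov{\phi(p(\psi(z)))}=\ov{p_1(z)}$, using $p(\ov w)=\ov{p(w)}$ in the middle step. The final sentence of the lemma, about the Taylor series, is then immediate: $p_1$ is holomorphic near $0$ with $p_1(0)=0$ (since $v_{HS6}=0$ maps to $v_{PS6}=0$), so it has a Taylor expansion $\sum_{i\ge 0}a_iz^i$; the relation $p_1(-z)=-p_1(z)$ forces $a_i=0$ for even $i$, giving the form $\sum_i a_iz^{2i+1}$, and $p_1(\ov z)=\ov{p_1(z)}$ forces each $a_i\in\R$.

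The main obstacle, such as it is, is purely computational: verifying that the explicitly-given M\"obius transformations $\psi$ and $\phi$ really do conjugate the $\lm$- and $\nu$-actions as claimed, since $\psi$ involves the complex constant $(1+i)/\rt$ and the interaction of this with conjugation and with the $i\leftrightarrow -1$ behaviour of $p$ is where sign errors could creep in. I expect this to be handled by direct substitution using the inverse formulas already tabulated in Definition~\ref{defn-schwarz-phi}, cross-checked against the vertex images $v_{HSi}\mapsto v_{PSi}$ and the edge computations in Lemma~\ref{lem-psi-edges}; there is a corresponding Maple check (\texttt{check\_schwarz()}) that can be invoked. No deep input is required beyond the equivariance of $p$, which was established in Proposition~\ref{prop-H-universal}.
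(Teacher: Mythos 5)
Your overall strategy — transport symmetries of $p$ through the normalising maps $\phi$ and $\psi$ — is the same as the paper's, but the specific symmetries you chose are the wrong ones, and the conjugation identities you assert for $\phi$ and $\psi$ are false. For the odd symmetry you work with $\lm$, claiming that $\psi^{-1}\lm\psi$ is $z\mapsto -z$. It cannot be: $\psi$ is normalised so that $\psi(0)=v_{H6}$, and $\lm$ does not fix $v_{H6}$ (recall $\stab_G(v_6)=\ip{\lm\mu,\lm\nu}$, and $\lm(v_6)=v_7$), so $\psi^{-1}\lm\psi$ does not even fix the origin. The correct element is the holomorphic involution of $\Dl$ fixing $v_{H6}$, which is $\bt_0\lm\mu\in\tPi$ — note that the deck transformation $\bt_0$ is unavoidable here, since $(\lm\mu)^2=\bt_7\bt_6\neq 1$ in $\tPi$; your argument never leaves the $G$-level and so cannot see this. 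Its $\psi$-conjugate fixes $0$ and is therefore $z\mapsto -z$, while its image $\lm\mu$ acts on $\C_\infty$ by $z\mapsto -1/z$, and a direct computation gives $\phi(-1/\phi^{-1}(z))=-z$. Your claim that $\phi$ conjugates $z\mapsto -z$ to itself is also wrong: $\phi(-\phi^{-1}(z))=1/z$.

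The conjugation symmetry has the same defect. You assert $\psi(\ov{z})=\ov{\psi(z)}$ and $\phi(\ov{z})=\ov{\phi(z)}$; neither holds. For $\phi$ one computes $\ov{\phi(z)}=1/\phi(\ov{z})$, and for $\psi$ the evidence you cite actually refutes your claim: Lemma~\ref{lem-psi-edges} says $\psi^{-1}(C_1)=(-1,1)$, but $C_1$ is the \emph{diagonal} $(-1,1)e^{i\pi/4}$ — the fixed locus of $\lm\nu\colon z\mapsto i\ov{z}$ — not the real axis (the fixed locus of $\nu$ is $C_5$, which $\psi^{-1}$ sends to a circle centred at $(\rt+b_-)/b_+$). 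So the correct antiholomorphic involution to use is $\lm\nu$, which fixes both $v_{H0}$ and $v_{H6}$; its $\psi$-conjugate fixes $v_{HS0},v_{HS6}\in\R$ and hence equals $z\mapsto\ov{z}$, and on the target $\phi$ conjugates $z\mapsto -\ov{z}$ to $z\mapsto\ov{z}$. Your deduction of the Taylor-series statement from the two symmetries is fine once the symmetries themselves are established correctly.
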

\begin{proof}
 We know that $p$ is $\tPi$-equivariant, so $p\bt_0\lm\mu=\lm\mu p$.
 Thus, if we put $\pi=\psi^{-1}\bt_0\lm\mu\psi\in\Aut(\Dl)$ and
 $\pi'=\phi\lm\mu\phi^{-1}\in\Aut(\C_\infty)$, we have
 $p_1\pi=\pi'p_1$.  Direct calculation
 shows that $\bt_0\lm\mu$ is the holomorphic involution fixing $v_{H6}$, so
 $\pi$ is the holomorphic involution fixing $v_{HS6}=0$, or in other
 words $\pi(z)=-z$.  Direct calculation also gives $\pi'(z)=-z$, so
 $p_1(-z)=-p_1(z)$ as claimed.

 Similarly, we have $p\lm\nu=\lm\nu p$.  Thus, if we put
 $\xi=\psi^{-1}\lm\nu\psi\in\Aut(\Dl)$ and
 $\xi'=\phi\lm\nu\phi^{-1}\in\Aut(\C_\infty)$, we have
 $p_1\xi=\xi'p_1$.  Here $\lm\nu$ is the antiholomorphic involution
 of $\Dl$ that fixes $v_{H0}$ and $v_{H6}$, so $\xi$ is the antiholomorphic
 involution of $\Dl$ that fixes $v_{HS0}$ and $v_{HS6}$, which gives
 $\xi(z)=\ov{z}$.  Direct calculation also gives $\xi'(z)=\ov{z}$, so
 $p_1(\ov{z})=\ov{p_1(z)}$ as claimed.
\end{proof}

\begin{lemma}\lbl{lem-p-one-poles}
 The set of poles of $p_1$ is $\psi^{-1}(\Pi.\{v_{H1},v_{H9}\})$, and
 all these poles are simple.  Moreover, the points $\pm ib_-/b_+$ are
 poles, and the corresponding residues are equal and are real.
\end{lemma}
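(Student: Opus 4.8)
\textbf{Proof plan for Lemma~\ref{lem-p-one-poles}.}
The map $p_1=\phi\circ p\circ\psi$, where $p\:\Dl\to\C_\infty$ is the composite of the isomorphism $HX(b)\to PX(a)$ with the branched cover $PX(a)\to PX(a)/\ip{\lm^2}\simeq\C_\infty$, and $p$ factors through $q\:\Dl\to HX(b)=\Dl/\Pi$. A point $z\in\Dl$ is a pole of $p_1$ precisely when $p(\psi(z))=\infty$ in $\C_\infty$, i.e. when $\psi(z)$ maps under $p$ to the point $v_{C1}=\infty$ of $\C_\infty$. Under the map $p\:\Dl\to\C_\infty$ we have $p^{-1}(\infty)=q^{-1}(\{v_{H1},v_{H9}\})$, since $v_{C1}$ is fixed by $\lm^2$ and the preimage of $\infty$ in $PX(a)$ consists of the two points $v_1$ and $v_9$ (recall $\mu$ gives the isomorphism $PX(a)\sm\{v_1\}\simeq PX(a)\sm\{v_0\}$, so $v_1=[0:0:0:0:1]$ lies over $z=\infty$, and by the discussion in Remark~\ref{rem-p-extra} we also have $p(v_9)=-i\neq\infty$ — so in fact I first need to double-check which labelled points of $PX(a)$ lie over $\infty\in\C_\infty$). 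Granting the correct identification of that fibre, $q^{-1}$ of it is the $\Pi$-orbit $\Pi.\{v_{H1},v_{H9}\}$ in $\Dl$, and applying $\psi^{-1}$ gives the claimed pole set $\psi^{-1}(\Pi.\{v_{H1},v_{H9}\})$.

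The next step is to show the poles are simple. Since $p$ is a local biholomorphism at every point of $\Dl$ (it is a holomorphic covering), and $\psi,\phi$ are M\"obius transformations hence local biholomorphisms, $p_1$ is a local biholomorphism at each pole; composing with a chart $w\mapsto 1/w$ at $\infty\in\C_\infty$ shows that $p_1$ has a simple pole there. More carefully: near a pole $z_0$, write $p_1$ in a chart at $\infty$ as $1/p_1$; this is holomorphic near $z_0$ with a simple zero (nonzero derivative), so $p_1$ has a simple pole.

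For the final assertions I would argue as follows. First, to see that $\pm ib_-/b_+$ are poles: Lemma~\ref{lem-psi-edges} tells us $\psi^{-1}(C_3)$ is the circle of radius $\rt b/b_-$ about $ib_+/b_-$, and one checks by direct calculation that $\pm ib_-/b_+$ lie on this circle, hence on $\psi^{-1}(C_3)$; alternatively, and more to the point, the points $v_{H1}$ and $v_{H1.1}=iv_{H1}=\bt_2\bt_1(v_{H1})$ (from Definition~\ref{defn-v-H-extra}) are endpoints of arcs of $\psi^{-1}(C_0)=(-1,1).i$ after applying $\psi^{-1}$, and a short computation identifies $\psi^{-1}(v_{H1})$ and a suitable $\Pi$-translate with $\pm ib_-/b_+$. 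I expect the cleanest route is to verify directly that $\psi(ib_-/b_+)$ and $\psi(-ib_-/b_+)$ lie in the orbit $\Pi.\{v_{H1},v_{H9}\}$, using the explicit formula for $\psi$ and the known $\Pi$-witnesses in the tables $v\_action\_witness\_H$. Then equality of the two residues follows from the symmetry $p_1(-z)=-p_1(z)$ of Lemma~\ref{lem-p-one-props}: if $p_1(z)=r/(z-z_0)+O(1)$ near $z_0=ib_-/b_+$, then $p_1(z)=-p_1(-z)=-\bigl(r/(-z-z_0)+O(1)\bigr)=r/(z+z_0)+O(1)$ near $-z_0$, so the residue at $-z_0$ is also $r$. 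Reality of $r$ follows from $p_1(\ov z)=\ov{p_1(z)}$: since $\ov{z_0}=-z_0$ for $z_0$ purely imaginary, taking conjugates in $p_1(z)=r/(z-z_0)+\cdots$ gives $p_1(\ov z)=\ov r/(\ov z+z_0)+\cdots$, i.e. the residue at $-z_0$ equals $\ov r$; combined with the previous computation that it equals $r$, we get $r\in\R$.

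The main obstacle, I expect, is the bookkeeping in the first step: correctly identifying which labelled points of $PX(a)$ (equivalently of $\Dl$) lie over $\infty\in\C_\infty$, and then chasing this through the isomorphism $HX(b)\simeq PX(a)$ (Proposition~\ref{prop-H-universal}) to pin down the orbit $\Pi.\{v_{H1},v_{H9}\}$ in $\Dl$ and its image under $\psi^{-1}$. Everything after that — simplicity, the location $\pm ib_-/b_+$, and the residue statements — is either a routine local-biholomorphism argument or a direct consequence of the two functional equations in Lemma~\ref{lem-p-one-props}, together with one explicit M\"obius computation to confirm $\psi(\pm ib_-/b_+)$ lands in the right $\Pi$-orbit.
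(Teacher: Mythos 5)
There is a genuine gap at the very first step. You identify the poles of $p_1$ with the preimage of $\infty$ under $p\circ\psi$, but $p_1=\phi\circ p\circ\psi$, and $\phi(z)=(i-z)/(i+z)$ sends $-i$, not $\infty$, to $\infty$. So the poles of $p_1$ are the points where $p(\psi(z))=-i$, and by Remark~\ref{rem-p-extra} the fibre of $p$ over $-i$ in $PX(a)$ is $\{v_7,v_9\}$, not $\{v_1\}$ (indeed $p(v_1)=\infty$, so $p_1$ takes the finite value $\phi(\infty)=-1$ on the orbit of $v_{H1}$). The paper's proof runs exactly this chase: $\phi^{-1}\{\infty\}=\{-i\}$, then $p^{-1}\{-i\}=\{v_{P7},v_{P9}\}$, then pulls back through the cromulent isomorphism to get $\Pi\{v_{H7},v_{H9}\}$. (The occurrence of $v_{H1}$ in the statement of the lemma is evidently a typo for $v_{H7}$ — the tension you noticed when you observed $p(v_9)=-i\neq\infty$ is resolved by remembering the outer M\"obius map $\phi$, not by re-examining which points lie over $\infty$.)

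This error propagates into your simplicity argument. The map $p\:\Dl\to\C_\infty$ is \emph{not} a local biholomorphism everywhere: the factor $PX(a)\to PX(a)/\ip{\lm^2}\simeq\C_\infty$ is a branched double cover, branched exactly at the $\lm^2$-fixed points $v_0,v_1,v_{10},\dotsc,v_{13}$. If the poles really sat over $v_1$ they would be \emph{double} poles, since $v_1$ is a branch point. The correct reason the poles are simple is precisely that $v_7$ and $v_9$ are not fixed by $\lm^2$ and hence are not branch points, which is what the paper says. Your treatment of the residues at the end (using $p_1(-z)=-p_1(z)$ to transfer the residue from $i b_-/b_+$ to $-ib_-/b_+$, and $p_1(\ov z)=\ov{p_1(z)}$ to force reality) is correct and matches the paper; and your plan to verify $\pm ib_-/b_+$ are poles by an explicit M\"obius computation is also what the paper does, except that the relevant identities are $\psi^{-1}\bt_0(v_{H7})=ib_-/b_+$ and $\psi^{-1}\bt_2(v_{H9})=-ib_-/b_+$, so the computation must target the orbit of $v_{H7}$, not $v_{H1}$.
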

\begin{proof}
 We are interested in the preimage of $\infty$ under the composite
 \[ \xymatrix{
     \Dl             \ar[r]^{m_1=\psi}_{\simeq} &
     \Dl             \ar[r]^(0.3){m_2}          &
     \Dl/\Pi = HX(b) \ar[r]^(0.6){m_3}_(0.6){\simeq}      &
     PX(a)           \ar[r]^{m_4}               &
     \C_\infty       \ar[r]^{m_5=\phi}_{\simeq} &
     \C_\infty.
 } \]
 First, we have $m_5^{-1}\{\infty\}=\{\phi^{-1}(\infty)\}=\{-i\}$.  The map
 $m_4\:PX(a)\to\C_\infty$ induces a bijection
 \[ PX(a)/\ip{\lm^2}\to\C_\infty, \]
 and it sends both $v_{P7}$ and $v_{P9}$ to $-i$, so
 $m_4^{-1}\{-i\}=\{v_{P7},v_{P9}\}$.  As $m_3$ is a cromulent
 isomorphism, it follows that
 \[ (m_3m_2)^{-1}\{v_{P7},v_{P9}\}=\Pi\{v_{H7},v_{H9}\}. \]
 It follows easily that the set of poles is as claimed.  The points
 $v_7$ and $v_9$ in $PX(a)$ are not fixed by $\lm^2$, so they are not
 branch points for the map $PX(a)\to\C_\infty$; it follows that all
 the poles are simple.  Now put $\al=b_-/b_+\in\R$.  After unwinding
 the definitions and performing some algebraic simplification we find
 that $\psi^{-1}\bt_0(v_{H7})=i\al$ and
 $\psi^{-1}\bt_2(v_{H9})=-i\al$, which shows that the points
 $\pm i\al$ are poles.  This means that there are constants
 $r_1,r_2\in\C$ and a meromorphic function $q(z)$ on $\Dl$ such that
 $q$ is holomorphic at $\pm i\al$ and
 \[ p_1(z) = \frac{r_1}{z-i\al} + \frac{r_2}{z+i\al} + q(z). \]
 Moreover, the triple $(r_1,r_2,q(z))$ is characterised uniquely by
 these properties.  Next, recall that $p_1(z)=-p_1(-z)$.  This shows that
 $(r_1,r_2,q(z))=(r_2,r_1,-q(-z))$.  Similarly, the fact that
 $p_1(z)=\ov{p_1(\ov{z})}$ shows that
 $(r_1,r_2,q(z))=(\ov{r_2},\ov{r_1},\ov{q(\ov{z})})$.  This means that
 $r_1=r_2\in\R$ as claimed.
\end{proof}

In the case $b=0.75$, the poles can be illustrated as shown below.
The inner dotted circle (with radius $0.6$) is the smallest circle
centred at the origin that contains $\psi^{-1}(HF_4)$.  The only two
poles inside this circle are $\pm ib_-/b_+$; these are shown as solid
red dots.  A further $22$ poles are also shown; they are on or outside
the outer dashed circle, which has radius $0.8$.  All remaining poles
are even closer to the unit circle.
\begin{center}
 \begin{tikzpicture}
  \begin{scope}[scale=3]
   \draw[black] circle(1);
   \draw[black,dotted] circle(0.6);
   \draw[black,dashed] circle(0.8);
   \draw[magenta] (0.000, 1.890) +(-105:1.604) arc(-105: -75:1.604);
   \draw[magenta] (0.000,-1.890) +( 105:1.604) arc( 105:  75:1.604);
   \draw[blue] ( 1.131, 0.529) +(225:0.748) arc(225:195:0.748);
   \draw[blue] ( 1.131,-0.529) +(135:0.748) arc(135:165:0.748);
   \draw[blue] (-1.131, 0.529) +(315:0.748) arc(315:345:0.748);
   \draw[blue] (-1.131,-0.529) +( 45:0.748) arc( 45: 15:0.748);
   \draw[green] (-0.602,0.000) -- ( 0.602,0.000);
   \draw[cyan] (0.000,-0.286) -- (0.000, 0.286);
   \fill[red] ( 0.0000, -0.5292) circle(0.015);
   \fill[red] ( 0.0000,  0.5292) circle(0.015);
   \draw[red] ( 0.0000, -0.9433) circle(0.015);
   \draw[red] ( 0.0000,  0.9433) circle(0.015);
   \draw[red] ( 0.7252, -0.3392) circle(0.015);
   \draw[red] ( 0.5222, -0.7734) circle(0.015);
   \draw[red] ( 0.9283,  0.0950) circle(0.015);
   \draw[red] ( 0.6684,  0.6509) circle(0.015);
   \draw[red] ( 0.2616, -0.8956) circle(0.015);
   \draw[red] (-0.2616, -0.8956) circle(0.015);
   \draw[red] (-0.7252, -0.3392) circle(0.015);
   \draw[red] (-0.5222, -0.7734) circle(0.015);
   \draw[red] (-0.6684,  0.6509) circle(0.015);
   \draw[red] (-0.9283,  0.0950) circle(0.015);
   \draw[red] ( 0.7252,  0.3392) circle(0.015);
   \draw[red] ( 0.5222,  0.7734) circle(0.015);
   \draw[red] ( 0.9283, -0.0950) circle(0.015);
   \draw[red] ( 0.2616,  0.8956) circle(0.015);
   \draw[red] (-0.7252,  0.3392) circle(0.015);
   \draw[red] ( 0.6684, -0.6509) circle(0.015);
   \draw[red] (-0.6684, -0.6509) circle(0.015);
   \draw[red] (-0.9283, -0.0945) circle(0.015);
   \draw[red] (-0.2616,  0.8956) circle(0.015);
   \draw[red] (-0.5222,  0.7734) circle(0.015);
  \end{scope}
 \end{tikzpicture}
\end{center}

\begin{lemma}\lbl{lem-p-one-branches}
 $p'_1(v_{HS0})=p'_1(v_{HS11})=0$.
\end{lemma}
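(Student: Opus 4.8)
The plan is to identify the points $v_{HS0}$ and $v_{HS11}$ as the $p_1$-images of ramification points of the branched cover $PX(a)\to\C_\infty$, and to deduce that $p_1$ has vanishing derivative there. Recall that $p_1 = \phi\circ p\circ\psi$, where $p\:\Dl\to\C_\infty$ factors as the holomorphic covering $\Dl\to HX(b)$, the cromulent isomorphism $HX(b)\xra{\sim}PX(a)$, and the quotient map $PX(a)\to PX(a)/\ip{\lm^2}\xra{\sim}\C_\infty$ from Remark~\ref{rem-P-quotient}. The key point is that $\psi(v_{HS0}) = v_{H0}$, which maps under the chain of isomorphisms to $v_{P0}$, and similarly $\psi(v_{HS11}) = v_{H11}$ maps to $v_{P11}$; and both $v_{P0} = v_0$ and $v_{P11} = v_{11}$ are fixed points of $\lm^2$ acting on $PX(a)$ (indeed $PX(a)^{\ip{\lm^2}} = \{v_0,v_1,v_{10},v_{11},v_{12},v_{13}\}$, as shown in the proof of Proposition~\ref{prop-P-precromulent}).

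First I would set up the local picture at $v_{HS0}$. Since $v_{HS0} = \psi^{-1}(v_{H0})$ and $\psi$ is a conformal automorphism of $\Dl$, it suffices to show that $\phi\circ p$ has vanishing derivative at $v_{H0}$. Now $v_{H0} = 0$ is a point of $\Dl$ lying over $v_0 \in PX(a)$; since $\lm^2$ fixes $v_0$, Lemma~\ref{lem-al-fixed} tells us that $\lm^2$ acts as $-1$ on the cotangent space $T^*_{v_0}PX(a)$, so the quotient map $PX(a)\to PX(a)/\ip{\lm^2}=\C_\infty$ is genuinely ramified at $v_0$, with local behaviour $z\mapsto z^2$ in suitable coordinates. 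Composing with the holomorphic covering $\Dl\to PX(a)$ (which is a local conformal isomorphism near $0$) and the conformal chart $\phi$ at $v_{C0}$, we see that $\phi p$ near $0$ looks like $w\mapsto w^2$, hence $(\phi p)'(0) = 0$, hence $p_1'(v_{HS0}) = 0$.

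The argument at $v_{HS11}$ is formally identical: $v_{HS11} = \psi^{-1}(v_{H11})$ lies over $v_{11}\in PX(a)$, which is also fixed by $\lm^2$, so again the quotient map is ramified there and $\phi p$ vanishes to order $\geq 2$ at $v_{H11}$, whence $p_1'(v_{HS11}) = 0$ after composing with $\psi$. (One might alternatively derive these facts from the symmetry properties already recorded: Lemma~\ref{lem-p-one-props} gives $p_1(-z) = -p_1(z)$ and $p_1(\ov z) = \ov{p_1(z)}$, and similar equivariance under the holomorphic involutions fixing $v_{HS0}$ and $v_{HS11}$ — which correspond under $\psi$ to the $G$-elements fixing $v_{H0}$ and $v_{H11}$, namely elements of $\stab_G(v_0)$ and $\stab_G(v_{11})$ containing $\lm^2$ — would force $p_1$ to be invariant under $z\mapsto 2v_{HSi}-z + O$, and hence ramified there; but the cleanest route is the direct ramification argument above.)

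The main obstacle, such as it is, is bookkeeping: one must be careful that the conjugates $\psi^{-1}(\text{stabiliser elements})$ really are the local involutions fixing $v_{HS0}$ and $v_{HS11}$, and that the chart $\phi$ is unramified (it is a M\"obius transformation, hence a global conformal isomorphism, so this is automatic). There is no analytic difficulty — everything reduces to the standard local normal form $z\mapsto z^2$ for a holomorphic map at a ramification point of index $2$, which is exactly the content of Lemma~\ref{lem-al-fixed} together with Remark~\ref{rem-smooth-branch}. I expect the whole proof to be a short paragraph once the identification of $v_{HS0},v_{HS11}$ with preimages of $\lm^2$-fixed points is made explicit.
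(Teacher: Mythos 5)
Your proof is correct and is essentially the paper's argument: the paper's entire proof is the observation that $HX(b)\to PX(a)\to\C_\infty$ is $\lm^2$-invariant and that $v_0$ and $v_{11}$ are fixed by $\lm^2$, which is exactly the ramification argument you spell out in detail (the M\"obius conjugation by $\phi$ and $\psi$ being harmless, as you note).
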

\begin{proof}
 This follows from the fact that the map $HX(b)\to PX(a)\to\C_\infty$
 is $\lm^2$-invariant, and $v_0$ and $v_{11}$ are fixed by $\lm^2$ in
 $HX(b)$.
\end{proof}

\subsection{Finding \texorpdfstring{$a$}{a} from \texorpdfstring{$b$}{b}}
\lbl{sec-a-from-b}

In this section, we describe an algorithm to calculate $a$ from $b$.
This algorithm is implemented by the methods of the class
\mcode+H_to_P_map+, which is defined in the file
\fname+hyperbolic/H_to_P.mpl+.  In more detail, if we want to
take $b=0.75$, we can enter
\begin{mcodeblock}
   HP := `new/H_to_P_map`():
   HP["set_a_H",0.75]:
   HP["make_samples"]:
   HP["find_p1"]:
   HP["a_P"];
   HP["p1"](z);
   HP["err"];
\end{mcodeblock}
The \mcode+find_p1+ method takes about 22 seconds on a fairly capable
PC.  The line \mcode+HP["a_P"]+ returns the value of $a$, which is
about $0.1816$.  The line \mcode+HP["p1"](z)+ returns a rational
function of $z$, with poles only at the points $\pm ib_-/b_+$
mentioned in Lemma~\ref{lem-p-one-poles}.  When restricted to
$\psi^{-1}(HF_4)$, this is a good approximation to $p_1(z)$.  The line
\mcode+HP["err"]+ returns a measure of the quality of approximation,
which is about $7\tm 10^{-11}$ in this case.  It could be improved by
increasing the degree of polynomials and the number of sample points
used in the algorithm; the code has options for this.  The code also
has methods to generate various different visualisations of the
behaviour of $p_1$, and to analyse the errors in more detailed ways.

If one wants to perform the above calculation for several different
values of $b$, and to compare the results with those obtained by the
method of Section~\ref{sec-b-from-a}, then we can instead use the
class \mcode+HP_table+, defined in \fname+hyperbolic/HP_table.mpl+.
For example, we can enter the following:
\begin{mcodeblock}
   HPT := `new/HP_table`():
   HPT["add_a_H",0.75];
   HPT["add_a_H",0.76];
   HPT["add_a_H",0.77];
\end{mcodeblock}
This will perform the above calculation for the valuse $b=0.75$,
$b=0.76$ and $b=0.77$.  The object of class \mcode+H_to_P_map+ for
$b=0.75$ can then be retrieved as \mcode+HPT["H_to_P_maps"][0.75]+.
After calcuating a sufficient range of values of $b$, one can enter
\mcode+HPT["set_spline"]+ and then \mcode+HPT["full_plot"]+ to
generate a plot of $a$ against $b$.

Alternatively (as discussed in Section~\ref{sec-build}), one can read
the file \fname+build_data.mpl+ and execute
\begin{mcodeblock}
   build_data["HP_table"](); 
\end{mcodeblock}
to perform all calculations for
$b=0.06$ to $b=0.94$ in steps of $0.02$, and to do various other
related work.  Here one may wish to enter 
\mcode+infolevel[genus2] := 7;+ before starting the calculation; this
will instruct Maple to print various progress reports as it proceeds.

\begin{lemma}
 There is a unique sequence of polynomials
 $p_{10}(z),p_{11}(z),p_{12}(z)$ and $p_{14}(z)$, such that:
 \begin{itemize}
  \item[(a)] All the polynomials are odd, with real coefficients.
  \item[(b)] The polynomials $p_{10}(z)$ to $p_{12}(z)$ have degree
   $13$.
  \item[(c)] The polynomial $p_{14}(z)$ has degree $15$, and has the
   form $p_{14}(z)=z+O(z^3)$.
  \item[(d)] For all $k$ we have
   $p'_{1k}(v_{HS0})=p'_{1k}(v_{HS11})=0$.
  \item[(e)] Values at $v_{HS0}$, $v_{HS3}$ and $v_{HS11}$ are as
   follows:
   \[ \renewcommand{\arraystretch}{1.5}
      \begin{array}{|c|c|c|c|} \hline
              & v_0 & v_3 & v_{11} \\ \hline
       p_{10} & 1   & i   & 0      \\ \hline
       p_{11} & 0   & 0   & 1      \\ \hline
       p_{12} & 0   & 0   & i      \\ \hline
       p_{14} & 0   & 0   & 0      \\ \hline
      \end{array}
   \]
 \end{itemize}
\end{lemma}
\begin{proof}
 Put $\al=v_{HS0}\in\R$ and $\bt=v_{HS3}/i\in\R$ and $\gm=v_{HS11}$.
 Let $F$ denote the space of all odd polynomials of degree at most
 $13$, and note that this has dimension $7$.  Note also that for
 $f\in F$ we automatically have $f(\R)\sse\R$ and $f(i\R)\sse i\R$ and
 $f'(i\R)\sse\R$, and the roots and their multiplicities are invariant
 under the maps $z\mapsto -z$ and $z\mapsto\ov{z}$.  Using this, we
 see that the only possibility for $p_{14}(z)$ is the polynomial
 \[ p_{14}(z) =
     z(1-z^2/\al^2)^2(1+z^2/\bt^2)(1-z^2/\gm^2)^2(1-z^2/\ov{\gm}^2)^2.
 \]
 Next, we can define $\ep\:F\to\R^7$ by
 \[ \ep(f) = (f(\al),\;
              f(i\bt)/i,\;
              \text{Re}(f(\gm)),\;
              \text{Im}(f(\gm)),\;
              f'(\al),
              \text{Re}(f'(\gm)),\;
              \text{Im}(f'(\gm))).
 \]
 If $\ep(f)=0$ then $f$ must be divisible by $p_{14}(z)$, but we can
 then compare degrees to see that $f=0$.  This means that $\ep$ is an
 injective linear map between spaces of dimension $7$, so it is an
 isomorphism.  The polynomials $p_{10}(z)$, $p_{11}(z)$ and
 $p_{12}(z)$ can be obtained by applying $\ep^{-1}$ to suitable
 vectors in $\R^7$.
\end{proof}

\begin{definition}
 We put
 \[ p_{15}(z) =
     (z-ib_-/b_+)(z+ib_-/b_+) = z^2 + \frac{1-b^2}{1+b^2},
 \]
 and then $p_{13}(z)=p_{14}(z)/p_{15}(z)$.  Then, given $a\in\R^d$, we
 put
 \[ P(a)(z) = p_{10}(z) + \sum_{i=1}^3 a_ip_{1i}(z) +
     p_{14}(z)\sum_{i=4}^da_iz^{2(i-4)}.
 \]
\end{definition}

We now choose $\al\in(0,1)$ which we believe is a reasonable
approximation to the required value of $a$.  It would not be too
harmful to just take $\al=1/2$.  Alternatively, the code defines a
polynomial \mcode+f(t)=schwarz_b_approx(t)+ of degree ten, which is a
good approximation to the function $a\mapsto b$; we can thus find
$\al$ by solving $f(\al)=b$ numerically.  We then put
$z_0=(i-\al)/(i+\al)$, which is the value of $v_{PS11}$ corresponding
to $a=\al$.

Now consider the map $p_1(z)$.  Let $a_1$ and $a_2$ be the real and
imaginary parts of $p_1(v_{HS11})$, and let $a_3$ be the unique
real constant such $p_1(z)-a_3p_{13}(z)$ has residue zero at
$ib_-/b_+$.  We then find that the function
$p_1(z)-p_{10}(z)-\sum_{i=1}^3a_ip_{1i}(z)$ is holomorphic on a disc
$\Dl'$ centred at $0$ that includes all of $HF_4$.  Moreover, it is
odd, with real Taylor coefficients.  By considering it order of
vanishing at the various points $v_{HSj}$, we see that is the product
of $p_{14}(z)$ with an even function that is also holomorphic on $\Dl'$.
This means that when $d$ is sufficiently large, $p_1(z)$ can be well
approximated by $P(a)(z)$ for some $a\in\R^d$.  To find $a$, we note
that
\[ p_1(C_{HS3}\cup C_{HS5})=C_{PS3}\cup C_{PS5} = S^1. \]
We therefore choose a reasonably large number $n$ (say $n=200$) and a
list of closely spaced points $s=(s_1,\dotsc,s_n)$ lying in
$\psi^{-1}(HF_4\cap(C_{H3}\cup C_{H5}))$.  We then define
$\eta\:\R^d\to\R^n$ by
\[ \eta(a)_j = |P(a)(s_j)|^2 - 1. \]
It is not hard to see that this has the form
\[ \eta(a)_j = |(Ma+c)_j|^2 - 1 \]
for some matrix $M\in M_{nd}(\C)$ and some vector $c\in\C^n$ which can
be precomputed.  Using this, we get
\[ \frac{\partial}{\partial a_k}\eta(a)_j =
    2\text{Re}(\ov{M_{jk}}(Ma+c)_j).
\]
This makes it easy to minimise $\|\eta(a)\|^2$ by an iterative process.

In the case $b\simeq 0.80053190489236$ which is relevant for
uniformising $EX^*$, we have taken $d=50$ and $n=300$, and have ended
up with errors $|P(a)(s_j)|^2-1$ that are less than $10^{-23}$.

Calculations using the above method give the following graph of $a$ as
a function of $b$ (with the marked point representing $EX^*$):
\begin{center}
 \begin{tikzpicture}[scale=4]
  \draw[black,->] (-0.05,0) -- (1.05,0);
  \draw[black,->] (0,-0.05) -- (0,1.05);
  \draw[black] (1,-0.05) -- (1,0);
  \draw[black] (-0.05,1) -- (0,1);
  \draw ( 0.00,-0.05) node[anchor=north] {$0$};
  \draw ( 1.00,-0.05) node[anchor=north] {$1$};
  \draw (-0.05, 0.00) node[anchor=east ] {$0$};
  \draw (-0.05, 1.00) node[anchor=east ] {$1$};
  \draw ( 1.05, 0.00) node[anchor=west ] {$b$};
  \draw ( 0.00, 1.05) node[anchor=south] {$a$};
 \draw[red] plot[smooth] coordinates{ (0.000,1.000) (0.060,1.000) (0.080,1.000) (0.100,1.000) (0.120,1.000) (0.140,1.000) (0.160,1.000) (0.180,1.000) (0.200,1.000) (0.220,1.000) (0.240,0.999) (0.260,0.997) (0.280,0.994) (0.300,0.990) (0.320,0.983) (0.340,0.974) (0.360,0.961) (0.380,0.944) (0.400,0.923) (0.420,0.898) (0.440,0.869) (0.460,0.835) (0.480,0.798) (0.500,0.757) (0.520,0.713) (0.540,0.667) (0.560,0.620) (0.580,0.571) (0.600,0.521) (0.620,0.472) (0.640,0.423) (0.660,0.375) (0.680,0.328) (0.700,0.283) (0.720,0.241) (0.740,0.201) (0.760,0.163) (0.780,0.129) (0.800,0.099) (0.820,0.073) (0.840,0.050) (0.860,0.032) (0.880,0.019) (0.900,0.009) (0.920,0.004) (0.940,0.001) (1.000,0.000) };
  \fill[black] (0.801,0.098) circle(0.015);
 \end{tikzpicture}
\end{center}
The middle section of this graph can be obtained by the methods of
this section, or those of Section~\ref{sec-b-from-a}.  However, the
methods of Section~\ref{sec-b-from-a} behave poorly when $(a,b)$
approaches $(1,0)$.

An optimist might hope for an explicit formula for the above graph,
perhaps involving the elliptic integrals from
Section~\ref{sec-P}, the constants $s_k$ in
Definition~\ref{defn-H-curves}, or other standard special
functions or quantities found elsewhere in this document.  We have
performed a fairly extensive experimental search for such formulae,
but without success.  Of course one can find polynomials of high
degree fitting the graphs to any desired accuracy, but the answers are
not illuminating.  It might be helpful if we had a meaningful
interpretation of the endpoints $(a,b)=(1,0)$ and $(a,b)=(0,1)$,
perhaps in terms of a Deligne-Mumford compactification of an
appropriate moduli space of stable curves, but we have not
investigated this seriously.

\subsection{Recollections on the Schwarzian derivative}
\lbl{sec-schwarz}

We now start to discuss our second method, where we are given
$a\in(0,1)$ and we try to find $b\in (0,1)$ such that
$PX(a)\simeq HX(b)$.  This method is based on the Schwarzian
derivative, whose definition and properties we now recall.

\begin{definition}
 Let $f$ be a nonconstant meromorphic function on a connected domain
 $U\sse\C$.  The Schwarzian derivative $S(f)$ is defined by
 \[ S(f) = (f''/f')' - \tfrac{1}{2} (f''/f')^2
         = f'''/f' - \tfrac{3}{2} (f''/f')^2.
 \]
\end{definition}

\begin{proposition}\lbl{prop-vanishing-schwarzian}
 We have $S(f)=0$ if and only if there are constants $a,b,c,d\in\C$
 with $ad-bc\neq 0$ and $f(z)=(az+b)/(cz+d)$ on $U$ (or in other
 words, $f$ is a M\"obius function).
\end{proposition}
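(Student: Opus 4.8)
The plan is to prove both implications of Proposition~\ref{prop-vanishing-schwarzian}, starting with the easy direction. First I would verify directly that if $f(z) = (az+b)/(cz+d)$ with $ad-bc \neq 0$, then $S(f) = 0$. This is a routine computation: one computes $f'(z) = (ad-bc)/(cz+d)^2$, then $f''/f' = -2c/(cz+d)$, and then plugging into the formula $S(f) = (f''/f')' - \tfrac{1}{2}(f''/f')^2$ gives $2c^2/(cz+d)^2 - 2c^2/(cz+d)^2 = 0$. (One should note that $f$ is nonconstant precisely because $ad-bc \neq 0$, so the Schwarzian is defined.)

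For the converse, suppose $S(f) = 0$ on the connected domain $U$. The key observation is that the equation $S(f) = 0$ can be integrated step by step. Writing $g = f''/f'$, the hypothesis says $g' = \tfrac{1}{2}g^2$, which is a separable first-order ODE. Away from zeros of $g$ one gets $(1/g)' = -g'/g^2 = -\tfrac12$, so $1/g = -\tfrac12 z + \text{const}$, hence $g(z) = f''(z)/f'(z) = 2/(\gamma z + \delta)$ for suitable constants $\gamma, \delta$ (not both zero); by continuity and the identity principle this holds on all of $U$, including where $g$ vanishes (there $f''=0$ and one can absorb that case, e.g. $\gamma = 0$). Integrating again, $\log f'(z) = -2\log(\gamma z + \delta) + \text{const}$ when $\gamma \neq 0$, so $f'(z) = \kappa/(\gamma z + \delta)^2$ for some constant $\kappa \neq 0$; integrating once more yields $f(z) = -\kappa/(\gamma(\gamma z + \delta)) + \text{const}$, which is visibly a M\"obius function of $z$, and a short check confirms the relevant determinant is nonzero (it is a nonzero multiple of $\kappa$). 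In the degenerate case $\gamma = 0$ one gets $f'' = 0$, so $f(z) = \alpha z + \beta$ with $\alpha \neq 0$, which is also M\"obius.

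I would organize the write-up so that the converse is handled on a simply connected subdomain or by working locally and then invoking connectedness and the identity principle to globalize: since $f$ is meromorphic and the M\"obius form is determined by finitely many derivatives at a single point, two meromorphic functions on a connected domain that agree to high order at one point agree everywhere. The main obstacle, such as it is, is purely bookkeeping: one must be careful that $f$ is allowed to take the value $\infty$ (it is meromorphic, not holomorphic), so the manipulations with $f'$, $f''/f'$, etc.\ should be carried out on the open dense subset where $f$ is finite and $f' \neq 0$, and then the conclusion extended by continuity/meromorphy. There is nothing deep here; the proposition is a standard fact, and the proof is essentially a guided integration of a third-order ODE together with an appeal to the identity principle to pass from a local statement to a global one on the connected domain $U$.
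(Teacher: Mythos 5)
Your proposal is correct and follows essentially the same route as the paper's proof: a direct computation for the forward implication, and for the converse the reduction to the ODE $g'=g^2/2$ for $g=f''/f'$, split into the cases $g\equiv 0$ and $g\not\equiv 0$, followed by two further integrations. The only cosmetic difference is that you integrate $f''/f'=-2/(z+\delta)$ via a logarithm, whereas the paper observes directly that $(z+\delta)^2f'(z)$ has zero derivative; your extra care about the meromorphic/identity-principle bookkeeping is a welcome refinement of what the paper leaves implicit.
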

\begin{proof}
 This is standard.  One can check by direct calculation that
 functions $f(z)=(az+b)/(cz+d)$ have $S(f)=0$.  Conversely, suppose
 that $f$ is meromorphic on $U$ with $S(f)=0$.  Then the function
 $g=f''/f'$ satisfies $g'=g^2/2$.  If $g=0$ then we deduce that
 $f''=0$ so $f$ has the form $f(z)=az+b$ for some $a$ and $b$, as
 required.  Otherwise, we can consider the meromorphic function $1/g$
 and we deduce that $(1/g)'=-g'/g^2=-1/2$, which gives
 $g(z)=-2/(z+\dl)$ for some constant $\dl$, or in other words
 $(z+\dl)f''(z)=-2f'(z)$.  From this it follows that the function
 $(z+\dl)^2f'(z)$ has zero derivative and so is constant.  This gives
 that $f'(z)=\gm/(z+\dl)^2$ and then $f(z)=\bt-\gm/(z+\dl)$ for some
 constants $\bt$ and $\dl$, and this can evidently be rewritten in the
 form $(az+b)/(cz+d)$.
\end{proof}

\begin{proposition}
 Given meromorphic functions $V\xra{g}U\xra{f}\C$ we have
 \[ S(f\circ g) = (S(f)\circ g) \cdot (g')^2 + S(g). \]
 (We call this the \emph{Schwarzian chain rule}.)  In particular, if
 $f$ is a M\"obius function then $S(f\circ g)=S(g)$.
\end{proposition}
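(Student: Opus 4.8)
The plan is to establish the Schwarzian chain rule by a direct computation, exploiting the fact that both sides are local expressions built from derivatives. Since $g$ is meromorphic and nonconstant, it is locally invertible away from a discrete set, and all the quantities in sight are meromorphic, so it suffices to verify the identity on the open dense set where $g'$ is nonzero and finite; the identity then extends to all of $V$ by the identity principle. On this set I would write $h = f\circ g$ and compute $h'$, $h''$, $h'''$ via the ordinary chain rule: $h' = (f'\circ g)\,g'$, then $h'' = (f''\circ g)(g')^2 + (f'\circ g)\,g''$, and $h''' = (f'''\circ g)(g')^3 + 3(f''\circ g)\,g'g'' + (f'\circ g)\,g'''$.

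Next I would form the two ingredients of $S(h) = h'''/h' - \tfrac32 (h''/h')^2$. Dividing through, $h''/h' = (f''\circ g)/(f'\circ g)\cdot g' + g''/g'$ and $h'''/h' = (f'''\circ g)/(f'\circ g)\cdot (g')^2 + 3(f''\circ g)/(f'\circ g)\cdot g'' + g'''/g'$. Substituting these into the definition of $S(h)$ and expanding the square $\tfrac32(h''/h')^2$, the cross term $2\cdot\tfrac32\cdot (f''\circ g)/(f'\circ g)\cdot g'\cdot g''/g' = 3(f''\circ g)/(f'\circ g)\cdot g''$ cancels exactly against the corresponding term in $h'''/h'$. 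What remains regroups into $\bigl[(f'''\circ g)/(f'\circ g) - \tfrac32((f''\circ g)/(f'\circ g))^2\bigr](g')^2 + \bigl[g'''/g' - \tfrac32(g''/g')^2\bigr]$, which is precisely $(S(f)\circ g)(g')^2 + S(g)$.

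Finally, for the last sentence, if $f$ is a M\"obius function then $S(f) = 0$ by Proposition~\ref{prop-vanishing-schwarzian}, so $S(f)\circ g = 0$ as well, and the chain rule collapses to $S(f\circ g) = S(g)$.

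There is no real obstacle here; the only thing requiring care is bookkeeping in the expansion, in particular confirming that the single cross term is the one that cancels. I would present the computation compactly, perhaps noting that it is enough to check it for the generating transformations or to rely on the already-verified fact that M\"obius functions have vanishing Schwarzian together with the fact that $S$ is determined by its values under pre- and post-composition — but the brute-force derivative calculation is short enough to give directly, and is the cleanest route.
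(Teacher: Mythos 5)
Your proof is correct and follows exactly the route the paper takes: it states that one should express the first three derivatives of $f\circ g$ via the ordinary chain rule and then finish by pure algebra, which is precisely the computation you carry out (and your cancellation of the cross term is right). The final reduction for M\"obius $f$ via the vanishing of $S(f)$ also matches the paper.
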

\begin{proof}
 This is also standard.  We can use the ordinary chain rule repeatedly to
 express the first three derivatives of $f\circ g$ in terms of those
 of $f$ and $g$, and the rest is pure algebra.
\end{proof}

We next recall the relationship between the Schwarzian derivative and
certain types of second order linear differential equations.  Again,
almost all of this is well-known, but we give a self-contained account
to serve as a convenient basis for discussing some additional points
that are less standard.

\begin{proposition}\lbl{prop-schwarzian-solutions}
 Let $U$ be a simply connected open subset of $\C$, and let $s$ be a
 holomorphic function on $U$.  Put
 \begin{align*}
  F &= \{ \text{ meromorphic functions $f$ on $U$
                  such that $f''+\half sf=0$} \} \\
  G &= \{ \text{ nonconstant meromorphic functions $g$ on $U$
                  such that $S(g)=s$} \}.
 \end{align*}
 Then
 \begin{itemize}
  \item[(a)] Every function in $F$ is actually holomorphic.
  \item[(b)] For any $z_0\in U$ and any $u_0,u_1\in\C$ there is a
   unique function $f\in F$ such that
   $f(z)=u_0+u_1(z-z_0)+O((z-z_0)^2)$.  In particular, $F$ has
   dimension two over $\C$.
  \item[(c)] $G$ is precisely the set of functions of the form
   $f_1/f_0$, where $f_0$ and $f_1$ are linearly independent elements
   of $F$.
  \item[(d)] For any $z_0\in U$ and any $v_0,v_1,v_2\in\C$ with
   $v_1\neq 0$ there is a unique function $g\in G$ such that
   $g(z)=v_0+v_1(z-z_0)+v_2(z-z_0)^2+O((z-z_0)^3)$.
 \end{itemize}
\end{proposition}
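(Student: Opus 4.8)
The plan is to prove the four claims in the order (a), (b), (c), (d), since each relies on the previous ones. For part (a), consider $f \in F$, so $f'' = -\tfrac{1}{2}sf$ with $s$ holomorphic on $U$. Near a hypothetical pole of $f$ of order $n \geq 1$, the function $f''$ would have a pole of order $n+2$, whereas $-\tfrac{1}{2}sf$ has a pole of order at most $n$ (since $s$ is holomorphic), giving a contradiction. Hence $f$ has no poles and is holomorphic.

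For part (b), this is the standard existence-uniqueness theorem for linear ODEs with holomorphic coefficients: since $U$ is simply connected and $s$ is holomorphic, a power-series (Cauchy majorant) argument shows that for each $z_0 \in U$ the initial value problem $f'' + \tfrac{1}{2}sf = 0$, $f(z_0) = u_0$, $f'(z_0) = u_1$ has a unique holomorphic solution, which extends to all of $U$ by analytic continuation along paths (well-defined by simple connectivity and the monodromy theorem). The map $f \mapsto (f(z_0), f'(z_0))$ is then a linear isomorphism $F \to \C^2$, so $\dim_\C F = 2$.

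For part (c), I would proceed in two directions. Given linearly independent $f_0, f_1 \in F$, set $g = f_1/f_0$; one computes that the Wronskian $W = f_0 f_1' - f_1 f_0'$ is constant (its derivative is $f_0 f_1'' - f_1 f_0'' = -\tfrac{1}{2}s(f_0 f_1 - f_1 f_0) = 0$) and nonzero by linear independence, so $g' = W/f_0^2$ is not identically zero, hence $g$ is nonconstant. A direct (if slightly tedious) computation using $f_0'' = -\tfrac{1}{2}sf_0$ and $f_1'' = -\tfrac{1}{2}sf_1$ then yields $S(g) = s$; the cleanest route is to note $g' = W/f_0^2$, so $g''/g' = -2f_0'/f_0$, and then $S(g) = (g''/g')' - \tfrac{1}{2}(g''/g')^2 = -2(f_0''/f_0) + 2(f_0'/f_0)^2 - 2(f_0'/f_0)^2 = -2f_0''/f_0 = s$. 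Conversely, given $g \in G$ with $S(g) = s$, I pick any local holomorphic branch of $f_0 = (g')^{-1/2}$ (possible locally since $g'$ is a nonvanishing holomorphic function away from poles of $g$; near poles of $g$ one works with $1/g$ instead, or observes $g$ is locally a composition of a M\"obius map with something regular, using the Schwarzian chain rule and the invariance $S(m \circ g) = S(g)$ for M\"obius $m$). One checks $f_0'' + \tfrac{1}{2}sf_0 = 0$, so $f_0 \in F$, and then $f_1 = g f_0$ also lies in $F$; since $g = f_1/f_0$ is nonconstant, $f_0$ and $f_1$ are linearly independent. The main subtlety here is handling the poles of $g$ and the choice of square-root branch globally, which is where simple connectivity of $U$ is used again.

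For part (d), combine (b) and (c). Given $z_0$ and $v_0, v_1, v_2$ with $v_1 \neq 0$, I seek $g \in G$ with the prescribed $2$-jet. Write $g = f_1/f_0$ for a basis $f_0, f_1$ of $F$; the conditions on the jet of $g$ translate, via the relations $g(z_0) = f_1(z_0)/f_0(z_0)$, $g'(z_0) = W/f_0(z_0)^2$ (with $W$ the constant Wronskian), and the analogous second-order relation, into conditions determining $f_0(z_0)$, $f_0'(z_0)$, $f_1(z_0)$, $f_1'(z_0)$ uniquely up to the overall scaling $(f_0, f_1) \mapsto (\lambda f_0, \lambda f_1)$ that does not change $g$. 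Explicitly, normalize $f_0(z_0) = 1$; then $f_1(z_0) = v_0$, and from $g'(z_0) = v_1$ one gets $f_0'(z_0)$ in terms of $f_1'(z_0) - v_0 f_0'(z_0) = v_1$, wait — more carefully, $g' = (f_1' f_0 - f_1 f_0')/f_0^2$, so $v_1 = f_1'(z_0) - v_0 f_0'(z_0)$; differentiating once more and using $f_i'' = -\tfrac{1}{2}s(z_0)f_i$ expresses $v_2$ in terms of $f_0'(z_0)$ (and the already-determined data), and since $v_1 \neq 0$ this equation solves uniquely for $f_0'(z_0)$, hence for $f_1'(z_0)$. By part (b), these initial conditions determine $f_0, f_1 \in F$ uniquely, and they are automatically linearly independent since $g'(z_0) = v_1 \neq 0$ forces the Wronskian to be nonzero. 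So $g$ exists and is unique. I expect the main obstacle to be the global square-root branch issue in the converse half of (c); everything else is routine ODE theory and algebra.
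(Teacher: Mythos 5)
Your proposal is correct in outline, and parts (a) and (b) coincide with the paper's argument (the same pole-order count for (a), and the same power-series-plus-analytic-continuation argument for (b)). Where you genuinely diverge is in the converse half of (c) and in (d). For the converse of (c) the paper does not construct $f_0=(g')^{-1/2}$ at all: it first proves a transitivity lemma (Lemma~\ref{lem-G-transitive}) saying that any two elements of $G$ differ by post-composition with a M\"obius map --- proved purely locally, on a small disc where $h$ is a conformal isomorphism, via the Schwarzian chain rule --- and then, given any basis $e_0,e_1$ of $F$, writes $g=m\circ(e_1/e_0)$ and absorbs $m$ into a change of basis. This completely sidesteps the issues your route must confront: to take a global branch of $(g')^{-1/2}$ you need to know that $g'$ is nonvanishing away from the poles of $g$, that all poles of $g$ are simple (so that $(g')^{-1}$ is holomorphic with zeros of even order, whence a single-valued square root exists on the simply connected $U$), and you assert these without proof. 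They are true --- a zero of $g'$ of order $k\geq 1$ forces $S(g)$ to have a double pole with leading coefficient $-k(k+2)/2$, and a pole of $g$ of order $n\geq 2$ does the same after replacing $g$ by $1/g$ --- but these short computations are a necessary part of your argument and should be written out. Your version of (d) is also different from the paper's: you solve directly for the initial data $f_0(z_0),f_0'(z_0),f_1(z_0),f_1'(z_0)$ up to common scaling, whereas the paper (Lemma~\ref{lem-mobius-straighten} and Corollary~\ref{cor-mobius-straighten}) first straightens an arbitrary $g_0\in G$ to $z-z_0+O((z-z_0)^3)$ by a M\"obius map and then post-composes with the explicit M\"obius map hitting the target jet, with uniqueness again coming from the transitivity lemma. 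Both routes to (d) are fine; yours is more computational, the paper's reuses its M\"obius machinery and yields the explicit normal form that is needed later (e.g.\ in Proposition~\ref{prop-image-circle}).
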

\begin{proof}
 \begin{itemize}
  \item[(a)] This is Lemma~\ref{lem-F-holomorphic}.
  \item[(b)] This is Corollary~\ref{cor-F-disc}.
  \item[(c)] Combine Lemma~\ref{lem-quotient-schwarzian} and
   Corollary~\ref{cor-quotient-schwarzian}.
 \end{itemize}
\end{proof}

\begin{lemma}\lbl{lem-F-holomorphic}
 Every function in $F$ is holomorphic.
\end{lemma}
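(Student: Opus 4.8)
The plan is a purely local pole-order argument at each putative pole of a function $f\in F$. Since $f$ is meromorphic on $U$, its poles are isolated, so it suffices to rule out a pole at an arbitrary point $z_0\in U$. Suppose for contradiction that $f$ has a pole of order $n\geq 1$ at $z_0$, so that near $z_0$ we may write $f(z)=\sum_{k=-n}^{\infty}a_k(z-z_0)^k$ with $a_{-n}\neq 0$. The first step is to differentiate termwise: $f''(z)=\sum_{k=-n}^{\infty}k(k-1)a_k(z-z_0)^{k-2}$, whose lowest-order term is $(-n)(-n-1)a_{-n}(z-z_0)^{-n-2}=n(n+1)a_{-n}(z-z_0)^{-n-2}$, and the coefficient $n(n+1)a_{-n}$ is nonzero because $n\geq 1$ and $a_{-n}\neq 0$.

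The second step is to observe that, because $s$ is holomorphic on $U$ (this is the hypothesis that does the work), the product $sf$ is meromorphic at $z_0$ with a pole of order \emph{at most} $n$; in particular strictly less than $n+2$. Hence in the Laurent expansion of $f''+\tfrac12 sf$ about $z_0$, the coefficient of $(z-z_0)^{-n-2}$ comes entirely from $f''$ and equals $n(n+1)a_{-n}\neq 0$. This contradicts the defining relation $f''+\tfrac12 sf=0$, which forces every Laurent coefficient to vanish. Therefore $f$ has no poles in $U$, i.e.\ $f$ is holomorphic, which is the assertion of the lemma.

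There is no real obstacle here: the argument is a one-line pole-order count, and the only point requiring any attention is making explicit that holomorphy of $s$ keeps the pole of $sf$ from reaching order $n+2$, so that it cannot cancel the leading singular term of $f''$. (One could phrase the same computation by multiplying through by $(z-z_0)^{n+2}$ and evaluating at $z_0$, but the Laurent-coefficient comparison is cleanest.) This lemma is then exactly part~(a) of Proposition~\ref{prop-schwarzian-solutions}, and it will feed into the subsequent existence-and-uniqueness statements for $F$ (via the standard theory of linear ODEs with holomorphic coefficients on a simply connected domain) and for $G$.
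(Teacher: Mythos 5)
Your argument is correct and is essentially identical to the paper's proof: both compare pole orders at a putative pole $z_0$, noting that $f''$ would have a pole of order $n+2$ while $\tfrac12 sf$ has a pole of order at most $n$, contradicting $f''+\tfrac12 sf=0$. Your version merely spells out the leading Laurent coefficient $n(n+1)a_{-n}$ explicitly.
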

\begin{proof}
 Consider an element $f\in F$ and a point $z_0\in U$.  If $f$ is not
 holomorphic at $z_0$, then it must have a pole of order $d>0$ at
 $z_0$, so $f''$ has a pole of order $d+2$, whereas $\half sf$ has a
 pole of order at most $d$ (or is holomorphic).  This contradicts the
 equation $f''+\half sf=0$.
\end{proof}

\begin{lemma}\lbl{lem-F-disc}
 Let $U$ be the open disc centred at $z_0$ with radius $r>0$, and let
 $s$ be a holomorphic function on $U$.  Define $F$ and $G$ as above.
 Then for any $u_0,u_1\in\C$ there is a unique function $f\in F$ such
 that $f(z)=u_0+u_1(z-z_0)+O((z-z_0)^2)$.  In particular, $F$ has
 dimension two over $\C$.
\end{lemma}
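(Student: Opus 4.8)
The plan is to solve the equation $f''+\half sf=0$ by power series. After a translation we may assume $z_0=0$, so that $U=\{z\st|z|<r\}$ and $s$ has a Taylor expansion $s(z)=\sum_{k\geq0}s_kz^k$ converging on $U$. By Lemma~\ref{lem-F-holomorphic}, every $f\in F$ is holomorphic on $U$, hence has a Taylor expansion $f(z)=\sum_{n\geq0}a_nz^n$; substituting into $f''+\half sf=0$ and comparing coefficients of $z^n$ gives
\[ (n+1)(n+2)\,a_{n+2} \;=\; -\half\sum_{k=0}^n s_k\,a_{n-k} \qquad(n\geq0). \]
This recurrence determines $a_2,a_3,\dots$ uniquely from $(a_0,a_1)$, so every element of $F$ is determined by the pair $(f(0),f'(0))$; in particular the map $F\to\C^2$, $f\mapsto(f(0),f'(0))$, is injective, and there is at most one $f\in F$ with $f(z)=u_0+u_1z+O(z^2)$. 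Conversely, setting $a_0=u_0$, $a_1=u_1$ and defining $a_{n+2}$ by the recurrence produces a formal power series; if this series converges on $U$ then its sum is holomorphic on $U$, the recurrence is exactly the statement that it satisfies $f''+\half sf=0$, and so it is the required element of $F$. Thus the lemma reduces to the claim that the formal series has radius of convergence at least $r$; once this is known, $f\mapsto(f(0),f'(0))$ is a linear bijection $F\to\C^2$, which gives $\dim_\C F=2$.

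To prove convergence I would use the method of majorants. Fix $\rho$ with $0<\rho<r$ and put $M=\sup_{|z|=\rho}|s(z)|$; Cauchy's estimates give $|s_k|\leq M\rho^{-k}$ for all $k$. Define nonnegative reals $b_n$ by $b_0=|u_0|$, $b_1=|u_1|$ and $(n+1)(n+2)b_{n+2}=\tfrac{M}{2}\sum_{k=0}^n\rho^{-k}b_{n-k}$. An easy induction gives $|a_n|\leq b_n$ for all $n$, so it suffices to show $\sum b_nz^n$ converges for $|z|<\rho$. Writing $\beta_n=\rho^nb_n$ and $B_n=\beta_0+\cdots+\beta_n$, the recurrence becomes $(n+1)(n+2)\beta_{n+2}=\tfrac{M\rho^2}{2}B_n$, whence $B_{n+2}\leq B_{n+1}\bigl(1+\tfrac{M\rho^2}{2(n+1)(n+2)}\bigr)$; since $\sum_n\tfrac{1}{(n+1)(n+2)}<\infty$, the sequence $(B_n)$ is bounded, hence so is $(\beta_n)$, hence $\limsup_n|b_n|^{1/n}\leq 1/\rho$ and $\sum b_nz^n$ converges for $|z|<\rho$. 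As $\rho<r$ was arbitrary, $f=\sum a_nz^n$ is holomorphic on all of $U$, as needed. (Alternatively one can bypass power series and run Picard iteration on the integral equation $f(z)=u_0+u_1z-\half\int_0^z(z-w)s(w)f(w)\,dw$: on a closed subdisc $\overline{D(0,\rho)}\subset U$ the standard estimate $|f_{m+1}(z)-f_m(z)|\leq P(M/2)^m|z|^{2m}/(2m)!$, with $P=\sup_{\overline{D(0,\rho)}}|f_1-f_0|$ and $M=\sup_{\overline{D(0,\rho)}}|s|$, forces uniform convergence of the iterates to a holomorphic solution.)

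The only genuinely non-formal point is this convergence estimate — the rest (the recurrence, uniqueness, and the identification of $F$ with the space of initial data $\C^2$) is bookkeeping. Even there the subtlety is mild: one must simply confirm that the majorant (or Picard) series converges on the \emph{whole} disc of radius $r$, not just on some smaller concentric disc, which is what the crude induction above is arranged to guarantee.
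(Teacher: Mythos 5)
Your proof is correct and follows essentially the same route as the paper: expand everything in power series about $z_0$, observe that the recurrence determines $a_2,a_3,\dots$ uniquely from $(u_0,u_1)$, and check that the resulting series converges on all of $U$. The only difference is that where the paper merely asserts that the coefficients $a_k$ grow at a rate comparable to the $s_k$, you supply a genuine majorant argument — which is a welcome improvement, since the paper's phrasing ("same radius of convergence") is loose and your estimate correctly establishes the right statement, namely that the radius of convergence of $f$ is at least $r$.
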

\begin{proof}
 We can expand $s$ as a power series, say $s(z)=\sum_ks_k(z-z_0)^k$.
 Put $a_0=u_0$ and $a_1=u_1$, then define $a_k$ recursively for $k>1$
 by
 \[ a_k = \frac{-1}{2k(k-1)}\sum_{j=0}^{k-2}a_js_{k-2-j}. \] It is
 then easy to see that the formal power series
 $f_0(z)=\sum_ka_k(z-z_0)^k$ is the unique one with $f''_0+\half
 sf_0=0$ and $f_0(z)=u_0+u_1(z-z_0)+O((z-z_0)^2)$.  Moreover, the
 coefficients $a_k$ grow at a rate comparable to that of the
 coefficients $s_k$, so they have the same radius of convergence.
 Thus, the above expression defines a holomorphic function on $U$.
\end{proof}

\begin{corollary}\lbl{cor-F-disc}
 Claim~(b) in Proposition~\ref{prop-schwarzian-solutions} holds for
 an arbitrary simply connected domain $U$.
\end{corollary}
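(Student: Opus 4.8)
Corollary \ref{cor-F-disc} asserts that claim (b) in Proposition \ref{prop-schwarzian-solutions} holds not just when $U$ is a disc (which is Lemma \ref{lem-F-disc}), but for an arbitrary simply connected domain $U$. The plan is a standard analytic continuation argument, exploiting the fact that the defining equation $f''+\tfrac12 sf=0$ is linear with holomorphic coefficients, so solutions propagate without developing singularities or multivaluedness on a simply connected domain.

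First I would fix $z_0\in U$ and prescribed initial data $u_0,u_1\in\C$. Cover $U$ by open discs contained in $U$; on each such disc, Lemma \ref{lem-F-disc} produces a unique local holomorphic solution with any prescribed value and first derivative at the centre (or, by re-expanding, at any interior point). The key point is that on an overlap of two such discs the two local solutions must agree: both satisfy the same second-order linear ODE and, at any common point, one can match their values and first derivatives, and then uniqueness in Lemma \ref{lem-F-disc} (applied on a small disc around that common point) forces them to coincide on a neighbourhood, hence on the whole overlap by the identity principle since $U$ is connected. This gives a well-defined germ that can be continued along any path in $U$ starting at $z_0$.

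Next I would invoke the monodromy theorem: because $U$ is simply connected, analytic continuation of this germ along any two homotopic paths yields the same result, so the continuation defines a single-valued holomorphic function $f$ on all of $U$. By construction $f$ satisfies $f''+\tfrac12 sf=0$ locally, hence globally, so $f\in F$, and it has the prescribed Taylor expansion $f(z)=u_0+u_1(z-z_0)+O((z-z_0)^2)$ at $z_0$. For uniqueness, if $f,\tilde f\in F$ share the same $u_0,u_1$, then $f-\tilde f$ lies in $F$ and vanishes to second order at $z_0$; restricting to a small disc about $z_0$ and applying the uniqueness clause of Lemma \ref{lem-F-disc} shows $f-\tilde f=0$ near $z_0$, hence everywhere on $U$ by the identity principle. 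Since the map $f\mapsto(u_0,u_1)=(f(z_0),f'(z_0))$ is then a linear bijection $F\to\C^2$, we conclude $\dim_\C F=2$.

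I do not expect a serious obstacle here: the only subtlety is making sure the local uniqueness from Lemma \ref{lem-F-disc} is correctly leveraged to show coherence of the continued germ, which is routine once one notes that having a pole is impossible (Lemma \ref{lem-F-holomorphic}), so all continuations stay holomorphic. The mild care needed is that Lemma \ref{lem-F-disc} as stated prescribes data at the centre of the disc, so when gluing I would always re-centre a small sub-disc at the overlap point before applying it.
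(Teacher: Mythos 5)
Your proposal is correct and follows essentially the same route as the paper: local existence and uniqueness on discs (Lemma~\ref{lem-F-disc}) propagated by analytic continuation along paths, with simple connectivity guaranteeing path-independence. The only cosmetic difference is that you invoke the monodromy theorem by name, whereas the paper verifies the homotopy-invariance of the continuation by hand via small homotopies; the content is identical.
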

\begin{proof}
 This follows by analytic continuation.  In more detail, for any
 $z\in U$ we can choose a path $\gm$ from $z_0$ to $z$ in $U$.  We can
 then choose closely spaced points $\gm(t_i)$ and radii $r_i>0$ such
 that $t_0=1$ and $t_N=1$ and the disc $U_i$ of radius $r_i$ centred
 at $\gm(t_i)$ is contained in $U$ and contains $\gm(t_{i+1})$.  We
 then let $f_0$ be the unique holomorphic function on $U_0$ with
 $f''_0+\half sf_0=0$ and $f_0(z)=u_0+u_1(z-z_0)+O((z-z_0)^2)$.  Using
 this as a starting point, we let $f_k$ denote the unique holomorphic
 function on $U_k$ that satisfies $f''_k+\half sf_k=0$ and agrees with
 $f_{k-1}$ to second order at $\gm(t_k)$.  We then define
 $f(z)=f_N(z)$.  It is easy to check that this does not depend on the
 precise choice of points $t_i$, nor does it change if we move $\gm$
 by a small homotopy fixing the endpoints.  As $U$ is simply connected
 any two paths from $z_0$ to $z$ are homotopic relative to endpoints,
 and any homotopy can be broken down into small homotopies.  It
 follows that $f(z)$ is independent of all choices, and it defines an
 element of $F$ with the required behaviour near $z_0$.  Any other
 such element will agree with $f$ at least on $U_0$, but then it must
 agree everywhere on $U$ by analytic continuation.
\end{proof}

\begin{lemma}\lbl{lem-quotient-schwarzian}
 Suppose that $f_0$ and $f_1$ are linearly independent elements of
 $F$, so the quotient $g=f_1/f_0$ is a nonconstant meromorphic
 function.  Then $S(g)=s$.
\end{lemma}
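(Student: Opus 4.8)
The plan is to run the classical computation relating a second order linear ODE to its Schwarzian, being slightly careful about the meromorphic (as opposed to holomorphic) nature of $g$. First I would record the Wronskian: set $W=f_0f_1'-f_1f_0'$. By Lemma~\ref{lem-F-holomorphic}, $f_0$ and $f_1$ are holomorphic on $U$, so $W$ is holomorphic, and differentiating gives
\[
 W' = f_0f_1'' - f_1f_0'' = f_0\bigl(-\half sf_1\bigr) - f_1\bigl(-\half sf_0\bigr) = 0,
\]
so $W$ is a constant. Since $f_0$ and $f_1$ are linearly independent over $\C$, this constant is nonzero; in particular $f_0$ is not identically zero, so $g=f_1/f_0$ is a well-defined nonconstant meromorphic function, and $S(g)$ makes sense.

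Next I would work on the open set $U_0=\{z\in U\st f_0(z)\neq 0\}$, which is dense in $U$. There we have $g'=(f_0f_1'-f_1f_0')/f_0^2 = W/f_0^2$, so $g'$ is holomorphic and nowhere zero on $U_0$, and
\[
 \frac{g''}{g'} = (\log g')' = \bigl(\log W - 2\log f_0\bigr)' = -2\,\frac{f_0'}{f_0}.
\]
Using $f_0'' = -\half s f_0$, i.e. $f_0''/f_0 = -\half s$, we get
\[
 \Bigl(\frac{f_0'}{f_0}\Bigr)' = \frac{f_0''}{f_0} - \Bigl(\frac{f_0'}{f_0}\Bigr)^2 = -\half s - \Bigl(\frac{f_0'}{f_0}\Bigr)^2,
\]
hence $(g''/g')' = s + 2(f_0'/f_0)^2$ while $\half(g''/g')^2 = 2(f_0'/f_0)^2$. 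Subtracting gives $S(g) = (g''/g')' - \half(g''/g')^2 = s$ on $U_0$.

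Finally I would extend the identity from $U_0$ to all of $U$: both $S(g)$ and $s$ are meromorphic on $U$ (the latter holomorphic) and they agree on the dense open set $U_0$, so they agree wherever $S(g)$ is defined, which is the content of the lemma. I expect no genuine obstacle here; the only point requiring a word of care is the bookkeeping of poles of $g$ (equivalently zeros of $f_0$), handled by this last density argument, and the verification that $g$ is nonconstant, handled by $W\neq 0$.
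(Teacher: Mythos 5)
Your proof is correct and follows essentially the same route as the paper's: compute the Wronskian $W=f_0f_1'-f_0'f_1$, show it is a nonzero constant, derive $g''/g'=-2f_0'/f_0$, and substitute the ODE into the definition of $S(g)$. You simply make explicit the final algebra and the extension from $\{f_0\neq 0\}$ to all of $U$, which the paper leaves implicit.
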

\begin{proof}
 Put $W=f_0f'_1-f'_0f_1$ (the Wronskian of $f_0$ and $f_1$).  This is
 easily seen to satisfy $W'=0$, so it is constant.  The quotient
 $f=f_1/f_0$ has $f'=W/f_0^2$, so $f''=-2Wf'_0/f_0^3$, so
 $f''/f'=-2f'_0/f_0$.  From this we find that $S(f)=s$ as claimed.
\end{proof}

\begin{lemma}\lbl{lem-G-transitive}
 If $g,h\in G$ then $g=m\circ h$ for some M\"obius function $m$.
\end{lemma}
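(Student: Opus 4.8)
The plan is to deduce this immediately from part~(c) of Proposition~\ref{prop-schwarzian-solutions}, which identifies $G$ with the set of quotients $f_1/f_0$ where $f_0,f_1$ are linearly independent elements of the two-dimensional space $F$ of solutions of $f''+\tfrac12 sf=0$. So first I would write $g=f_1/f_0$ and $h=h_1/h_0$, with $(f_0,f_1)$ and $(h_0,h_1)$ each a pair of linearly independent elements of $F$. Since $\dim_\C F=2$, both pairs are bases of $F$, so there is a matrix $\bbm a & b \\ c & d \ebm\in GL_2(\C)$ with
\[ f_0 = a\,h_0 + b\,h_1, \qquad f_1 = c\,h_0 + d\,h_1. \]

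The rest is an elementary computation. Dividing numerator and denominator by $h_0$ (which is legitimate as an identity of meromorphic functions) gives
\[ g = \frac{f_1}{f_0}
     = \frac{c\,h_0 + d\,h_1}{a\,h_0 + b\,h_1}
     = \frac{c + d\,(h_1/h_0)}{a + b\,(h_1/h_0)}
     = \frac{d\,h + c}{b\,h + a}
     = m\circ h, \]
where $m(z)=(dz+c)/(bz+a)$. This $m$ is a genuine M\"obius function in the sense of Proposition~\ref{prop-vanishing-schwarzian}, because its determinant $da-cb$ equals $ad-bc$, which is nonzero since the change-of-basis matrix is invertible. The only point that warrants a word of care is that $h_0$ may vanish at some points of $U$, so that $h$ has poles there; but the displayed chain of identities holds on the dense open set where $h_0\neq 0$, and both sides are meromorphic on all of $U$, so $g=m\circ h$ everywhere by the identity principle.

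There is no real obstacle here: the entire content is Proposition~\ref{prop-schwarzian-solutions}(c), and what remains is the observation that two bases of the two-dimensional space $F$ differ by an element of $GL_2(\C)$, which is exactly what encodes the ``transitivity'' of the M\"obius action asserted by the lemma.
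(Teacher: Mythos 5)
Your argument has a circularity problem relative to the paper's logical structure. You invoke part~(c) of Proposition~\ref{prop-schwarzian-solutions} to write $g=f_1/f_0$ and $h=h_1/h_0$ with $f_i,h_i\in F$. But the nontrivial direction of~(c) — that \emph{every} element of $G$ arises as such a quotient — is Corollary~\ref{cor-quotient-schwarzian}, and the paper proves that corollary \emph{from} Lemma~\ref{lem-G-transitive}: it takes a fixed basis $e_0,e_1$ of $F$, sets $h=e_1/e_0$, and uses the transitivity lemma to write an arbitrary $g$ as $(ah+b)/(ch+d)$, whence $g=(ae_1+be_0)/(ce_1+de_0)$. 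Your change-of-basis computation is exactly this argument run in reverse, so as written the proof assumes what it is ultimately used to establish. (The only part of~(c) available at this point is Lemma~\ref{lem-quotient-schwarzian}, the easy direction that quotients of independent solutions lie in $G$.)

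The missing ingredient is a way to compare $g$ and $h$ directly, without first representing both as quotients of solutions. The paper does this locally: choose a small disc $V$ on which $h$ is a conformal isomorphism onto $h(V)$, set $m=g\circ h^{-1}$ there, and apply the Schwarzian chain rule $S(g)=(S(m)\circ h)\cdot(h')^2+S(h)$; since $S(g)=S(h)=s$ and $h'\neq 0$ on $V$, this forces $S(m)=0$, so $m$ is M\"obius by Proposition~\ref{prop-vanishing-schwarzian}, and $g=m\circ h$ extends from $V$ to all of $U$ by the identity principle. Your route could in principle be repaired by giving an independent proof that every $g\in G$ is a quotient of two solutions (e.g.\ via $f_0=(g')^{-1/2}$, $f_1=g\,(g')^{-1/2}$), but that requires extra care with poles of $g$ and branches of the square root, none of which is supplied; the linear-algebra step itself, including the determinant check and the identity-principle remark, is fine.
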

\begin{proof}
 As $h$ is nonconstant and meromorphic, we can choose a small disc
 $V\sse U$ such that $h$ is holomorphic on $V$ and $h'$ is nonzero
 everywhere in $V$.  After shrinking $V$ if necessary, we can then
 assume that the map $h\:V\to h(V)$ is a conformal isomorphism.  Put
 $m=g\circ h^{-1}$, which is meromorphic on $h(V)$ and satisfies
 $g=m\circ h$.  The Schwarzian chain rule gives
 $S(g)=(S(m)\circ h)\cdot(h')^2+S(h)$ on $V$.  However, $S(g)=S(h)=s$
 and $h'$ is nowhere zero, so $S(m)\circ h=0$ on $V$, so $S(m)=0$ on
 $h(V)$.  It follows that $m$ is a M\"obius function.  The functions
 $g$ and $m\circ h$ are both meromorphic and they agree on a disc so
 they must agree everywhere in $U$.
\end{proof}

\begin{corollary}\lbl{cor-quotient-schwarzian}
 Every element $g\in G$ can be written as $g=f_1/f_0$ for some
 linearly independent pair of elements $f_0,f_1\in F$.
\end{corollary}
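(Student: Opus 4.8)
The plan is to run the argument used for Lemma~\ref{lem-G-transitive} in reverse, starting from a known element of $G$ of the desired form. First I would invoke Lemma~\ref{lem-F-disc} (or rather Corollary~\ref{cor-F-disc}) to produce linearly independent $f_0,f_1\in F$: for a fixed $z_0\in U$, take $f_0$ to be the element of $F$ with $f_0(z)=1+O((z-z_0)^2)$ and $f_1$ the element with $f_1(z)=(z-z_0)+O((z-z_0)^2)$. These are linearly independent because their $1$-jets at $z_0$ are linearly independent. Then $g_0=f_1/f_0$ is a nonconstant meromorphic function, and by Lemma~\ref{lem-quotient-schwarzian} we have $S(g_0)=s$, so $g_0\in G$.

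Now given an arbitrary $g\in G$, Lemma~\ref{lem-G-transitive} supplies a M\"obius function $m$ with $g=m\circ g_0$. Writing $m(w)=(aw+b)/(cw+d)$ with $ad-bc\neq 0$, we then have
\[ g = \frac{a g_0 + b}{c g_0 + d} = \frac{a f_1 + b f_0}{c f_1 + d f_0}. \]
Set $h_0 = c f_1 + d f_0$ and $h_1 = a f_1 + b f_0$; these lie in $F$ because $F$ is a $\C$-vector space, and they are linearly independent because the change-of-basis matrix $\left[\begin{smallmatrix} b & a \\ d & c\end{smallmatrix}\right]$ has determinant $bc-ad\neq 0$. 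Thus $g = h_1/h_0$ with $h_0,h_1$ a linearly independent pair in $F$, which is exactly the required form.

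I do not expect any serious obstacle here: the result is essentially a bookkeeping corollary that repackages Lemma~\ref{lem-G-transitive} and Lemma~\ref{lem-quotient-schwarzian}. The one point that needs a moment's care is the nonvanishing of the relevant $2\times 2$ determinant (so that the new pair is still linearly independent), but this is immediate from $ad-bc\neq 0$ together with bilinearity of the determinant. A minor alternative, if one prefers to avoid explicitly exhibiting $g_0$, is: by Lemma~\ref{lem-G-transitive} applied to $g$ and any fixed $g_0=f_1/f_0\in G$ as above, one always reduces to the computation just displayed; so the corollary holds for every $g\in G$.
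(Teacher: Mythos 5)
Your proof is correct and is essentially the same as the paper's: both start from a basis $e_0,e_1$ of $F$ (you construct a specific one via Corollary~\ref{cor-F-disc}, the paper takes an arbitrary one), form the quotient $h=e_1/e_0\in G$ via Lemma~\ref{lem-quotient-schwarzian}, and then use Lemma~\ref{lem-G-transitive} plus the nonvanishing determinant to rewrite $g$ as a quotient of two independent linear combinations.
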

\begin{proof}
 Let $e_0$ and $e_1$ be any basis for $F$, and put $h=e_1/e_0$.
 Lemma~\ref{lem-quotient-schwarzian} tells us that $h\in G$, so
 Lemma~\ref{lem-G-transitive} tells us that $g=(ah+b)/(ch+d)$ for some
 $a,b,c,d$ with $ad-bc\neq 0$.  This means that the functions
 $f_1=ae_1+be_0$ and $f_0=ce_1+de_0$ are linearly independent elements
 of $F$ with $g=f_1/f_0$.
\end{proof}

\begin{lemma}\lbl{lem-mobius-straighten}
 Suppose that $f$ is holomorphic at $z_0$, with $f'(z_0)\neq 0$.  Then
 there is a unique M\"obius function $m$ such that
 $m(f(z))=z-z_0+O((z-z_0)^3)$.  Specifically, if
 \[ f(z) = u_0 + u_1(z-z_0) + u_2(z-z_0)^2 + O((z-z_0)^3) \]
 then
 \[ m(z) = \frac{u_1(z-u_0)}{u_2(z-u_0)+u_1^2}. \]
\end{lemma}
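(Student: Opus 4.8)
The plan is to prove Lemma~\ref{lem-mobius-straighten} by a direct construction: exhibit the M\"obius function $m$ with the claimed formula, verify that it has the required property $m(f(z)) = z - z_0 + O((z-z_0)^3)$, and then argue uniqueness via Proposition~\ref{prop-vanishing-schwarzian}. Without loss of generality I would translate so that $z_0 = 0$; this keeps the algebra cleaner and the final answer can be untranslated at the end.

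First I would address existence. With $f(z) = u_0 + u_1 z + u_2 z^2 + O(z^3)$ and $u_1 \neq 0$, set $w = f(z) - u_0 = u_1 z + u_2 z^2 + O(z^3)$, so that $z = w/u_1 - (u_2/u_1^3) w^2 + O(w^3)$ by inverting the power series. The candidate is
\[ m(z) = \frac{u_1(z - u_0)}{u_2(z-u_0) + u_1^2}, \]
which is indeed a M\"obius function because its determinant is $u_1 \cdot u_1^2 - 0 \cdot (-u_0 u_1) = u_1^3 \neq 0$. Substituting $z \mapsto f(z)$, i.e. writing $z - u_0 = w$, gives
\[ m(f(z)) = \frac{u_1 w}{u_2 w + u_1^2} = \frac{w}{u_1} \cdot \frac{1}{1 + u_2 w/u_1^2} = \frac{w}{u_1}\left(1 - \frac{u_2 w}{u_1^2} + O(w^2)\right) = \frac{w}{u_1} - \frac{u_2 w^2}{u_1^3} + O(w^3). \]
Comparing with the inverted series for $z$ above, this equals $z + O(z^3)$ (after restoring $z_0$, it equals $z - z_0 + O((z-z_0)^3)$), as required. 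This is a routine computation that I would not belabour further.

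For uniqueness, suppose $m_1$ and $m_2$ are both M\"obius functions with $m_i(f(z)) = z - z_0 + O((z-z_0)^3)$. Then $h = m_2 \circ m_1^{-1}$ is a M\"obius function, and since $m_1^{-1}$ is holomorphic and nonconstant near $f(z_0) = u_0$ with nonzero derivative, we can write, for $\zeta$ near $u_0$, $m_1^{-1}(\zeta) = z$ where $z$ is the point with $f(z) = \zeta$; then $h(m_1(f(z))) = m_2(f(z))$, so $h$ agrees with the identity to second order at $0$ (in the local coordinate $z - z_0$). A M\"obius function $h$ with $h(\zeta) = \zeta + O(\zeta^3)$ near $\zeta = 0$ must satisfy $h(0) = 0$, $h'(0) = 1$, $h''(0) = 0$; writing $h(\zeta) = (a\zeta + b)/(c\zeta + d)$ these three conditions force $b = 0$, $ad = d^2$ hence $a = d$, and then $h''(0) = -2ac/d^2 = 0$ forces $c = 0$, so $h$ is the identity and $m_1 = m_2$. (Alternatively one can invoke that the $3$-jet of a M\"obius transformation at a point determines it, which is essentially the transitivity statement used for $p$ in Definition~\ref{defn-p}.)

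The only mild subtlety — and the closest thing to an obstacle — is bookkeeping the composition of power series in the right order and keeping track of the translation by $z_0$; there is no genuine difficulty, since everything reduces to the first three Taylor coefficients. I expect the writeup to be short.
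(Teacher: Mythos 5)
Your proof is correct and follows essentially the same route as the paper: a direct power-series verification that the displayed $m$ works, followed by the observation that the composite of one solution with the inverse of another is a M\"obius map agreeing with the identity to second order at $0$, whence the conditions $h(0)=0$, $h'(0)=1$, $h''(0)=0$ force it to be the identity. The only differences are cosmetic (the paper writes $k=n\circ m^{-1}$ with $m$ the explicit formula, and leaves the existence computation as "straightforward to check").
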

\begin{proof}
 If we define $m(z)$ as above, then it is straightforward to check
 that $m(f(z))=z-z_0+O((z-z_0)^3)$.  If $n$ is another M\"obius
 function with $n(f(z))=z-z_0+O((z-z_0)^3)$ then the function
 $k=n\circ m^{-1}$ must have the form $k(z)=(az+b)/(cz+d)$ for some
 $a,b,c,d$, but also $k(z)=z+O(z^3)$.  In particular, we have
 $k(0)=0$, which gives $b=0$.  We then have $k'(0)=1$, which gives
 $a=d$.  After cancelling we may assume that $a=d=1$.  Finally, we
 have $k''(0)=0$, so $c=0$, so $k$ is the identity as required.
\end{proof}

\begin{corollary}\lbl{cor-mobius-straighten}
 For any $z_0\in U$ and any $v_0,v_1,v_2\in\C$ with $v_1\neq 0$ there
 is a unique function $g\in G$ such that
 $g(z)=v_0+v_1(z-z_0)+v_2(z-z_0)^2+O((z-z_0)^3)$.
\end{corollary}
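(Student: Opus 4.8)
The plan is to bootstrap from the structure already established: the two-dimensional space $F$ together with its prescribed-$2$-jet solutions (Corollary~\ref{cor-F-disc}, i.e.\ part~(b) of Proposition~\ref{prop-schwarzian-solutions}), the fact that a quotient of two linearly independent members of $F$ lies in $G$ (Lemma~\ref{lem-quotient-schwarzian}), the transitivity statement of Lemma~\ref{lem-G-transitive}, and the Schwarzian chain rule. The admissible jet data $(v_0,v_1,v_2)$ with $v_1\neq0$ form a three-parameter family, one more than $\dim F=2$; the extra parameter will be supplied by post-composition with a M\"obius function, exactly as in Lemma~\ref{lem-mobius-straighten}.

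For existence, I would first use Corollary~\ref{cor-F-disc} to choose $f_0,f_1\in F$ with $f_0(z)=1+O((z-z_0)^2)$ and $f_1(z)=(z-z_0)+O((z-z_0)^2)$. Their $1$-jets at $z_0$ are independent, so $f_0,f_1$ are linearly independent and $h=f_1/f_0\in G$ by Lemma~\ref{lem-quotient-schwarzian}; since $f_0(z_0)=1\neq0$, the function $h$ is holomorphic at $z_0$ with $h(z)=(z-z_0)+h_2(z-z_0)^2+O((z-z_0)^3)$ for some $h_2\in\C$. Next I would write down the M\"obius function $n$ with $n(w)=v_0+v_1w+(v_2-v_1h_2)w^2+O(w^3)$ near $w=0$; writing $n(w)=(aw+b)/(cw+d)$, the three jet conditions reduce to a linear system that is solvable (with nonzero determinant, so that $n$ is genuinely M\"obius) precisely because $v_1\neq0$. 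Then $g:=n\circ h$ is meromorphic on $U$, holomorphic near $z_0$ with $3$-jet $v_0+v_1(z-z_0)+v_2(z-z_0)^2+O((z-z_0)^3)$ by a short expansion (the $h_2$-term of $h$ is absorbed by the adjusted quadratic coefficient of $n$), nonconstant since $g'(z_0)=v_1\neq0$, and has $S(g)=S(h)=s$ by the Schwarzian chain rule since $n$ is M\"obius. Hence $g\in G$ with the required behaviour at $z_0$.

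For uniqueness, suppose $g_1,g_2\in G$ both have the prescribed $3$-jet at $z_0$. Lemma~\ref{lem-G-transitive} supplies a M\"obius function $m$ with $g_2=m\circ g_1$. Setting $w=z-z_0$, writing $g_1(z_0+w)=v_0+v_1w+v_2w^2+O(w^3)$, and expanding $m$ about $v_0$, the identity $m\circ g_1=g_2$ together with $v_1\neq0$ forces $m(v_0)=v_0$, $m'(v_0)=1$ and $m''(v_0)=0$; a M\"obius function with this $3$-jet is the identity (translate so that $v_0=0$, then repeat the elementary argument from the uniqueness half of Lemma~\ref{lem-mobius-straighten}: $b=0$, then $a=d$, then $c=0$). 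Therefore $g_1=g_2$.

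I do not expect a genuine obstacle here; the content is essentially bookkeeping, since all the substantive input — solvability of $f''+\tfrac12 sf=0$ with prescribed $2$-jet, and the fact that two elements of $G$ differ by a M\"obius post-composition — is already in hand. The one place that needs a little care is tracking the $3$-jets through the compositions $n\circ h$ and $m\circ g_1$, and ensuring that the base function $h$ is holomorphic at $z_0$ with $h'(z_0)\neq0$ so that post-composition by a M\"obius function can hit any admissible target jet; both points are handled by the explicit choice of $f_0$ and $f_1$ above.
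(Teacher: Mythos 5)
Your proof is correct and follows essentially the same route as the paper: build $g_0=f_1/f_0$ from the normalised basis of $F$ supplied by Corollary~\ref{cor-F-disc}, then post-compose with a M\"obius map to hit the prescribed $3$-jet (you merge into one M\"obius map what the paper does in two steps via Lemma~\ref{lem-mobius-straighten} followed by an explicit $m_2$). Your uniqueness argument via Lemma~\ref{lem-G-transitive} is also the intended one; the paper simply leaves that half implicit.
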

\begin{proof}
 It will be harmless, and notationally convenient, to assume that
 $z_0=0$.  By Corollary~\ref{cor-F-disc} we can choose
 $f_0,f_1\in F$ with $f_0(z)=1+O(z^2)$ and $f_1(z)=z+O(z^2)$.  The
 function $g_0=f_1/f_0$ now lies in $G$ and has $g'_0(0)\neq 0$, so we
 can find a M\"obius function $m_1$ such that the function
 $g_1(z)=m_1(g_0(z))$ satisfies $g_1(z)=z+O(z^3)$.  We then put
 \[ m_2(z)=(v_0v_1-(v_1^2-v_0v_2)z)/(v_1-v_2z) \]
 and $g(z)=m_2(g_1(z))$.
\end{proof}

The next result refers to circles in $\C_\infty$.  Here we regard
straight lines as circles of infinite radius.  With this convention,
it is well-known that M\"obius functions send circles to circles.
\begin{proposition}\lbl{prop-image-circle}
 Suppose that a real interval $(a,b)$ is contained in $U$, and that
 $f'(t)\neq 0$ for all $t\in(a,b)$, so $S(f)$ is holomorphic on
 $(a,b)$.  Then the following are equivalent:
 \begin{itemize}
  \item[(a)] $f((a,b))$ is contained in some circle
   $C\subset\C_\infty$.
  \item[(b)] $S(f)$ is real on $(a,b)$
 \end{itemize}
\end{proposition}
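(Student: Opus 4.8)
The plan is to prove each implication by reducing to the ``model'' situation of a function taking values in the extended real line $\R_\infty$, exploiting the fact that M\"obius functions preserve both circles in $\C_\infty$ and (via the Schwarzian chain rule) the Schwarzian derivative.

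For (a) $\Rightarrow$ (b) I would argue as follows. Given a circle $C\subset\C_\infty$ with $f((a,b))\subseteq C$, choose a M\"obius function $n$ with $n(C)=\R_\infty$ (send three points of $C$ to $0$, $1$, $\infty$), and set $g=n\circ f$. Then $g$ is meromorphic near $(a,b)$, with $g'(t)=n'(f(t))\,f'(t)\neq 0$ there, and $g((a,b))\subseteq\R_\infty$. Since the poles of $g$ are isolated, there is an open subinterval $I\subseteq(a,b)$ on which $g$ is finite, and there $g$ is real-valued and holomorphic; the Schwarz reflection principle then gives $g(\overline{z})=\overline{g(z)}$ near $I$, and differentiating yields $\overline{g^{(k)}(\overline{z})}=g^{(k)}(z)$ for all $k$, hence $S(g)(\overline{z})=\overline{S(g)(z)}$, so $S(g)$ is real on $I$. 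By analytic continuation $S(g)$ is real on all of $(a,b)$, and the Schwarzian chain rule applied to the M\"obius factor $n$ gives $S(f)=S(n\circ f)=S(g)$, so $S(f)$ is real on $(a,b)$.

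For (b) $\Rightarrow$ (a) I would first fix $z_0\in(a,b)$ and a disc $U$ centred at $z_0$ on which $s=S(f)$ is holomorphic. Since $s$ is real on $(a,b)$, its Taylor coefficients at $z_0$ are real, so Lemma~\ref{lem-F-disc}, applied with the real initial data $(1,0)$ and $(0,1)$, produces solutions $g_0,g_1$ of $g''+\half sg=0$ with $g_0=1+O((z-z_0)^2)$ and $g_1=(z-z_0)+O((z-z_0)^2)$ whose Taylor coefficients are all real; thus $g_0$ and $g_1$ are real-valued on $U\cap\R$. By Proposition~\ref{prop-schwarzian-solutions} the quotient $h=g_1/g_0$ lies in $G$, so $S(h)=s=S(f)$, and $h$ is $\R_\infty$-valued on $U\cap(a,b)$. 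Since $f$ is a nonconstant meromorphic function on $U$ with $S(f)=s$, Lemma~\ref{lem-G-transitive} yields a M\"obius function $m$ with $f=m\circ h$ on $U$, so $f$ takes values on the circle $C'=m(\R_\infty)$ throughout $U\cap(a,b)$. I would then cover $(a,b)$ by overlapping discs of this kind: on each we get such a circle, and on an overlap the non-constant restriction of $f$ meets both circles in infinitely many points, forcing them to coincide; hence $f((a,b))$ lies on a single circle, which is (a).

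The only genuinely substantive step is the construction, in the second part, of a fundamental system of solutions of $f''+\half sf=0$ that is real on $(a,b)$ --- which is exactly what Lemma~\ref{lem-F-disc} delivers once one notes that the recursion there has real coefficients. The reflection argument, the Schwarzian chain rule, and the propagation of the ``lies on a circle'' property along the interval are all routine.
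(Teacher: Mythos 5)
Your argument for (a) $\Rightarrow$ (b) is essentially the paper's: compose with a M\"obius map sending the circle to $\R_\infty$ and use the Schwarzian chain rule (you are a bit more careful than the paper about poles of $n\circ f$ and about why reality of $g$ on an interval forces reality of $S(g)$, which is a small improvement rather than a gap). For (b) $\Rightarrow$ (a), however, you take a genuinely different route. The paper first normalises $f$ by a M\"obius map $m$ via Lemma~\ref{lem-mobius-straighten} so that $g=m\circ f$ agrees with $t-t_0$ to second order, then compares $g$ with its reflection $h(z)=\ov{g(\ov{z})}$: both have Schwarzian $s$, so they differ by a M\"obius map $n$, and the third-order normalisation forces $n=\mathrm{id}$, giving $g=h$ and hence $g$ real on $(a,b)$ in one stroke. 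You instead observe that the recursion in Lemma~\ref{lem-F-disc} has real coefficients when $s$ is real on the axis, build a real fundamental system of $g''+\half sg=0$ near each point of $(a,b)$, get a local solution $h=g_1/g_0$ of $S(h)=s$ that is $\R_\infty$-valued on the real axis, apply Lemma~\ref{lem-G-transitive} to conclude $f=m\circ h$ lands locally on the circle $m(\R_\infty)$, and then patch, using that two distinct circles in $\C_\infty$ meet in at most two points while $f$ of an overlap is infinite. Both arguments are correct; the paper's reflection trick delivers the conclusion globally without patching but implicitly needs the ambient domain to be conjugation-symmetric, whereas your argument is purely local and sidesteps that issue at the cost of the (routine) connectedness step at the end.
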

\begin{proof}
 Suppose that~(a) holds.  We can choose a M\"obius function $m$ such
 that $m(C)=\R$, and put $g=m\circ f$, so $g((a,b))\sse\R$.  From the
 definition of $S(g)$ it is clear that $S(g)$ is real on $(a,b)$, but
 the Schwarzian chain rule shows that $S(f)=S(g)$, so~(b) holds.

 Conversely, suppose that the function $s=S(f)$ is real on $(a,b)$.
 Choose a point $t_0\in(a,b)$.  Lemma~\ref{lem-mobius-straighten}
 gives us a M\"obius function $m$ such that the function $g=m\circ f$
 has $g(t)=(t-t_0)+O((t-t_0)^3)$.  Note also that $S(g)$ is again
 equal to $s$.  Now put $h(z)=\ov{g(\ov{z})}$, and note that this is
 again meromorphic.  Using power series representations we can see
 that $S(h)=r$, where $r(z)=\ov{s(\ov{z})}$.  Now $r$ is also
 holomorphic, and agrees with $s$ on $(a,b)$, so it must agree with
 $s$ everywhere in $U$.  This means that $g$ and $h$ are both elements
 of $G$, so Lemma~\ref{lem-G-transitive} gives us a M\"obius
 function $n$ with $g=n\circ h$.  However, both $g(t)$ and $h(t)$ are
 of the form $(t-t_0)+O((t-t_0)^3)$.  It follows that $n(z)=z+O(z^3)$,
 and thus that $n$ is the identity, so $g=h$.  This means that
 $g((a,b))\sse\R$, so $f((a,b))\sse m^{-1}(\R)$.  Moreover, as $m$ is
 a M\"obius function, the set $m^{-1}(\R)$ is a circle as required.
\end{proof}

\begin{proposition}
 Let $U$ be the disc of radius $r$ centred at $z_0$, and let $s(z)$ be
 a function that is holomorphic on $U\sm\{z_0\}$ and has Laurent
 expansion $\sum_{k\geq -2}s_k(z-z_0)^k$ with $s_{-2}=3/8$.  Let $U'$
 be obtained from $U$ by removing a line segment from $z_0$ to the
 edge, and let $\xi(z)$ be a holomorphic branch of $(z-z_0)^{1/4}$ on
 $U'$.  Let $F'$ be the space of holomorphic solutions of
 $f''+\half sf=0$ on $U'$.  Then
 \begin{itemize}
  \item There is a unique holomorphic function $e_0$ on $U$ such that
   $e_0(0)=1$ and the function $f_0=e_0\xi$ lies in $F'$.
  \item There is a unique holomorphic function $e_1$ on $U$ such that
   $e_1(0)=1$ and the function $f_1=e_1\xi^3$ lies in $F'$.
 \end{itemize}
\end{proposition}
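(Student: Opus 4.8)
The plan is to run the classical Frobenius method at the regular singular point $z_0$. Translating so that $z_0=0$, write $s(z)=\tfrac{3}{8}z^{-2}+\tilde s(z)$, where $\tilde s$ has Laurent expansion $\sum_{k\geq -1}s_kz^k$ on the punctured disc; since $\tilde s$ has at worst a simple pole at $0$, the function $z\tilde s(z)=\sum_{j\geq 0}s_{j-1}z^j$ extends holomorphically to all of $U$. First I would substitute $f=\xi g$ into $f''+\half sf=0$. Using $\xi^4=z$, so that $\xi'=\tfrac14\xi^{-3}$ and $\xi''=-\tfrac{3}{16}\xi^{-7}$, a short computation gives
\[ f''+\half sf = \xi g'' + \tfrac12\xi^{-3}g' + \half\tilde s\,\xi g, \]
the point being that the term $\xi''g=-\tfrac{3}{16}\xi^{-7}g$ cancels exactly against the term $\half\cdot\tfrac38 z^{-2}\cdot\xi g=\tfrac{3}{16}\xi^{-7}g$ (here using $z^{-2}\xi=\xi^{-7}$) --- this cancellation is precisely where the hypothesis $s_{-2}=3/8$ is used. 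Dividing by $\xi$ gives $g''+\tfrac{1}{2z}g'+\half\tilde s\,g=0$, and multiplying by $2z$ turns this into
\[ 2z\,g'' + g' + \big(z\tilde s(z)\big)g = 0, \]
an equation whose coefficients are holomorphic on all of $U$.

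Next I would seek a holomorphic solution $g(z)=\sum_{k\geq 0}a_kz^k$. Substituting and reading off the coefficient of $z^m$ gives the recursion
\[ (m+1)(2m+1)\,a_{m+1} = -\sum_{j=0}^{m}s_{j-1}\,a_{m-j} \qquad(m\geq 0). \]
The essential observation is that $(m+1)(2m+1)$ never vanishes for $m\geq 0$: equivalently, writing $k=m+1$, the shifted indicial polynomial $k\mapsto k(k-\tfrac12)$ has no zero at a positive integer, because the two indicial roots $\tfrac14$ and $\tfrac34$ differ by $\tfrac12$, which is not an integer. Hence there is no obstruction and no logarithmic term, and the recursion determines $a_1,a_2,\dots$ uniquely once $a_0$ is fixed. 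Taking $a_0=1$ gives the desired $e_0:=g$ with $e_0(0)=1$, and $f_0=e_0\xi$ then lies in $F'$ by the computation above (the derived identity shows $f_0''+\half sf_0=\xi\cdot(2z)^{-1}\cdot(2z g''+g'+(z\tilde s)g)=0$ on $U'$); uniqueness is automatic, since any holomorphic $e_0$ with $e_0\xi\in F'$ must have Taylor coefficients satisfying this recursion with $a_0=1$. For $e_1$ one repeats the argument with $f=\xi^3 g$: the same cancellation now leaves $2z\,g''+3g'+(z\tilde s)g=0$, the recursion coefficient becomes $(m+1)(2m+3)$ (equivalently $k(k+\tfrac12)$ with $k=m+1$, again nonvanishing for $k\geq 1$), and $a_0=1$ produces $e_1$.

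The one step that calls for genuine care --- though it is entirely standard --- is verifying that the formal series $\sum a_kz^k$ converges on all of $U$ rather than merely on some smaller disc. I would handle this by a majorant estimate: given $0<\rho<\rho'<r$, holomorphy of $z\tilde s$ on $U$ yields coefficient bounds $|s_{j-1}|\leq M\rho'^{-j}$, and then an induction on $k$ using the recursion --- in which the quadratically growing denominator $(m+1)(2m+1)$ works in our favour --- gives $|a_k|\leq C\rho'^{-k}$, so the radius of convergence is at least $\rho'$; letting $\rho'\uparrow r$ and noting that $\rho$ was arbitrary shows that $e_0$ (and likewise $e_1$) is holomorphic on all of $U$. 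Finally, although it is not part of the statement, I would remark that $f_0$ and $f_1$ are linearly independent over $\C$, since they have distinct leading exponents $\tfrac14$ and $\tfrac34$ at $z_0$; thus $\{f_0,f_1\}$ is a basis for the two-dimensional solution space $F'$.
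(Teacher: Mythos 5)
Your proof is correct and follows essentially the same route as the paper: the Frobenius substitution $f=\xi g$ (resp.\ $f=\xi^3 g$), the exact cancellation of the $z^{-2}$ term forced by $s_{-2}=3/8$, the resulting recursion with nonvanishing coefficient $(m+1)(2m+1)$ (resp.\ $(m+1)(2m+3)$), and a convergence estimate. Your majorant argument for convergence on all of $U$ is in fact somewhat more careful than the paper's one-line remark that the coefficients $a_k$ grow comparably to the $s_k$.
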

\begin{proof}
 It will be harmless, and notationally convenient, to assume that
 $z_0=0$.

 By assumption we have $\xi(z)^4=z$, so $4\xi^3\xi'=1$, so
 $\xi'(z)=1/(4\xi(z)^3)=\xi(z)/(4z)$.

 Consider a function of the form $f=e\xi$, where
 $e(z)=\sum_{k\geq 0}a_kz^k$ (and we take $a_k=0$ for $k<0$).  We then have
 \[ f'(z)=e'(z)\xi(z)+e(z)\xi'(z)=(e'(z)+\tfrac{1}{4}e(z)z^{-1})\xi(z). \]
 By a similar argument, we have
 \[ f''(z) =
     (e''(z) + \tfrac{1}{2}e'(z)z^{-1} -
     \tfrac{3}{16}e(z)z^{-2})\xi(z).
 \]
 Thus, the equation $f''+\half sf=0$ is equivalent to
 \[ e''(z) + \half z^{-1} e'(z) + \half t(z) e(z) = 0, \]
 where
 \[ t(z) = s(z) - \tfrac{3}{8} z^{-2} = \sum_{k\geq -1}s_kz^k. \]
 Looking at the coefficient of $z^k$, we get
 \[ (k+\tfrac{3}{2})(k+2) a_{k+2} +
      \half\sum_{j=0}^{k+1}s_{k-j}a_j = 0.
 \]
 For $k\leq -2$ this is trivially satisfied, and for $k\geq -1$ it can be
 rearranged to express $a_{k+2}$ in terms of $a_0,\dotsc,a_{k+1}$.  It
 follows that there is a unique power series solution with $a_0=1$.
 The rate of growth of the coefficients $a_k$ is comparable with that
 of the coefficients $s_k$, so the series $\sum_ka_kz^k$ converges to
 give the claimed function $e_0(z)$.

 Now consider instead a function of the form $f=e\xi^3$.  We find in
 the same way that
 \[ f''(z) =
     (e''(z) + \tfrac{3}{2}e'(z)z^{-1} -
     \tfrac{3}{16}e(z)z^{-2})\xi(z)^3,
 \]
 and thus that the equation $f''+\half sf=0$ is equivalent to
 \[ e''(z) + \tfrac{3}{2} z^{-1} e'(z) + \half t(z) e(z) = 0, \]
 or
 \[ (k+\tfrac{5}{2})(k+2) a_{k+2} +
      \half\sum_{j=0}^{k+1}s_{k-j}a_j = 0.
 \]
 Just as in the previous case, there is a unique solution, as
 claimed.  The functions $f_0$ and $f_1$ are linearly independent (as
 we can see by considering their rate of growth near $z=0$), and we
 have seen that $F'$ has dimension two, so they must form a basis.
\end{proof}

\subsection{Application to cromulent surfaces}
\lbl{sec-b-from-a}

For each $a\in(0,1)$ we have shown that there is a unique $b\in(0,1)$
such that $PX(a)\simeq HX(b)$ as cromulent surfaces, and in fact there
is a unique cromulent isomorphism $HX(b)\to PX(a)$.  As in
Section~\ref{sec-P-H-prelim}, we write $p$ for the canonical map
\[ \Dl \to \Dl/\Pi = HX(b) \to PX(a) \to
    PX(a)/\ip{\lm}^2 \to \C_\infty,
\]
and we also consider the map $p_1=\phi p\psi\:\Dl\to\C_\infty$.

In this section, we describe an algorithm to calculate $b$ from $a$.
This algorithm is implemented by the methods of the class
\mcode+P_to_H_map+, which is defined in the file
\fname+hyperbolic/P_to_H.mpl+.  In more detail, if we want to
take $a=0.1$, we can enter
\begin{mcodeblock}
   PH := `new/P_to_H_map`():
   PH["set_a_P",0.1]:
   PH["add_charts"]:
   PH["find_p1_inv"];
   PH["a_H"];
   PH["p1_inv"](z);
   PH["err"];
\end{mcodeblock}
The line \mcode+PH["a_H"]+ returns the value of $b$, which is
about $0.7994$.  The line \mcode+PH["p1_inv"](z)+ returns a polynomial
in $z$, which is a good approximation to $p_1^{-1}(z)$.  The line
\mcode+HP["err"]+ returns a measure of the quality of approximation,
which is about $10^{-21}$ in this case.

If one wants to perform the above calculation for several different
values of $a$, and to compare the results with those obtained by the
method of Section~\ref{sec-a-from-b}, then we can instead use the
class \mcode+HP_table+, defined in \fname+hyperbolic/HP_table.mpl+.
For example, we can enter the following:
\begin{mcodeblock}
   HPT := `new/HP_table`():
   HPT["add_a_P",0.1];
   HPT["add_a_P",0.2];
   HPT["add_a_P",0.3];
\end{mcodeblock}
This will perform the above calculation for the valuse $a=0.1$,
$a=0.2$ and $a=0.3$.  The object of class \mcode+P_to_H_map+ for
$a=0.1$ can then be retrieved as \mcode+HPT["P_to_H_maps"][0.1]+.

The function \mcode+build_data["HP_table"]()+ (defined in
\fname+build_data.mpl+) does the calculation for $a$ from $0.06$ to
$0.94$ in steps of $0.02$ (as well as implementing the method of
Section~\ref{sec-a-from-b} and performing various other work).

We can think of the inverse of $p$ as giving a multivalued function
$p^{-1}\:\C_\infty\to\Dl$.  It is a key point that the Schwarzian
derivative $S(p^{-1})$ is single-valued, and in fact is a rational
function whose properties we can understand quite explicitly.  To
explain this, we will use the following definitions
\begin{definition}\lbl{defn-schwarz-s}
 We put
 \begin{align*}
  B &= \{v_{Ci}\st i\in\{0,1,10,11,12,13\}\} \\
    &= \{0,\infty,a,-a,1/a,-1/a\} \subset\C_\infty \\
  B_0 &= B\sm\{\infty\} = \{0,a,-a,1/a,-1/a\} \\
  r_a(z) &= \prod_{u\in B_0}(z-u) = z^5-(a^2+a^{-2})z^3+z \\
  s_0(z) &= \frac{-3z^3/2}{r_a(z)} +
             \frac{3}{8}\sum_{u\in B_0}\frac{1}{(z-u)^2} \\
  s_1(z) &= \frac{z}{r_a(z)}.
 \end{align*}
\end{definition}
We have seen $r_a(z)$ before; the definition is repeated for ease of
reference and to display the connection with $B$ and $B_0$.  Recall
also that $B$ is the set of critical values of $p$, so $p$
restricts to give a covering map $\Dl\sm p^{-1}(B)\to\C_\infty\sm B$.
For any sufficiently small connected open set
$U\subset\C_\infty\sm B$, we can choose a holomorphic map $f\:U\to\Dl$
with $pf=1_U$ (so $f$ is a local branch of $p^{-1}$); we then define
$S(p^{-1})_U=S(f)$.  This is independent of the choice of $f$, because
any other choice has the form $m\circ f$ for some M\"obius map
$m\in\ip{\Pi,\lm^2}$, and the Schwarzian chain rule gives
$S(m\circ f)=S(f)$.  Given this invariance, it is clear that
$S(p^{-1})_U=S(p^{-1})_V$ on $U\cap V$.  We can therefore patch these
local functions together to get a meromorphic function $S(p^{-1})$ on
$\C_\infty\sm B$.

\begin{proposition}\lbl{prop-S-p-inv}
 There is a constant $d\in\R$ such that $S(p^{-1})=s_0+ds_1$ (where
 $s_0$ and $s_1$ are as in Definition~\ref{defn-schwarz-s}).
\end{proposition}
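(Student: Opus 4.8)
The plan is to identify the Schwarzian $S(p^{-1})$ by exploiting three pieces of structure: its behaviour near the critical values in $B$, its behaviour away from $B$, and its reality along the edges of the fundamental domain. First I would note that $S(p^{-1})$ is \emph{a priori} only meromorphic on $\C_\infty\sm B$, but I claim it extends to a rational function on all of $\C_\infty$ with poles only in $B$. Indeed, near a point $u\in B_0$ with finite value, $p$ is a double branched cover (each such point is fixed by $\lm^2$ but is not a vertex with larger isotropy in the relevant sense), so a local branch of $p^{-1}$ looks like $z\mapsto c(z-u)^{1/2}+O((z-u)^{3/2})$ composed with a M\"obius map; the Schwarzian of $(z-u)^{1/2}$ is exactly $\tfrac{3}{8}(z-u)^{-2}$, and the Schwarzian chain rule together with the fact that $S$ of a M\"obius map is zero shows that $S(p^{-1})$ has a double pole at $u$ with principal part $\tfrac{3}{8}(z-u)^{-2}$ plus at worst a simple pole. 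At $u=\infty$ a similar local analysis (using the coordinate $1/z$ and the fact that $\mu$ interchanges $0$ and $\infty$, so the branching at $\infty$ matches that at $0$) shows $S(p^{-1})$ is holomorphic at $\infty$ and in fact vanishes there to suitable order. This pins down all the poles.

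Next I would subtract off the known singular part: consider $S(p^{-1})-s_0$, where $s_0$ is the rational function in Definition~\ref{defn-schwarz-s}. By construction $s_0$ has exactly the double poles $\tfrac{3}{8}(z-u)^{-2}$ at the points of $B_0$ and the extra term $-\tfrac{3}{2}z^3/r_a(z)$ contributes compatible simple-pole data; one checks that $s_0$ has the correct leading behaviour at $\infty$. Therefore $S(p^{-1})-s_0$ is a rational function with at worst simple poles on $B_0$, holomorphic (and vanishing appropriately) at $\infty$, and holomorphic everywhere else. The space of such rational functions is finite-dimensional — spanned by functions of the form $z/r_a(z)$ and a few others — and a dimension count using the order of vanishing forced at $\infty$ shows that the only possibility is a scalar multiple of $s_1(z)=z/r_a(z)$. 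Here the equivariance of $p$ under $\lm$ (which acts as $z\mapsto -z$ on $\C_\infty$) forces $S(p^{-1})$ to be even, which kills the even candidates and leaves precisely the one-parameter family $s_0+d\,s_1$. This gives $S(p^{-1})=s_0+d\,s_1$ for some $d\in\C$.

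Finally, to see that $d$ is real, I would invoke Proposition~\ref{prop-image-circle}. The curves $c_3$ and $c_5$ (and indeed all the $c_k$) map under $p$ into circles in $\C_\infty$: by Lemma~\ref{lem-P-pc} their images are arcs of $S^1$ or of $\R_\infty$. Hence along a real-analytic arc of the corresponding parameter, a local branch of $p^{-1}$ carries a piece of a circle into a piece of a circle (namely an edge of the hyperbolic fundamental domain, which is a geodesic, lying in a circle orthogonal to $\partial\Dl$), so by Proposition~\ref{prop-image-circle} the Schwarzian $S(p^{-1})=S(f)$ is real along that arc. Choosing the arc to lie along the real axis in the $z$-coordinate (e.g.\ the image of $c_3$ or $c_5$, which by Lemma~\ref{lem-P-pc} lies in $\R$), both $s_0$ and $s_1$ are manifestly real there, so reality of $s_0+d\,s_1$ on a real interval forces $d\in\R$.

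The main obstacle I anticipate is the careful local computation at the branch points — in particular getting the \emph{coefficient} $3/8$ and verifying that no pole of order higher than two appears, and checking the behaviour at $\infty$ where the coordinate change and the $\mu$-symmetry must be combined correctly. The rest (the dimension count and the reality argument) is routine once the singularity structure is nailed down.
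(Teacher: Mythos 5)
Your proposal is correct and follows essentially the same route as the paper: the $\tfrac{3}{8}(z-u)^{-2}$ principal parts at the branch points, the $\lm$- and $\mu$-equivariance to control the remaining rational function, and a reality argument for $d$. The only cosmetic differences are that the paper deduces reality directly from the $\nu$-equivariance $p(\ov{z})=\ov{p(z)}$ rather than via Proposition~\ref{prop-image-circle}, and packages your dimension count as a Liouville-type argument for the function $d(z)=(s(z)-s_0(z))r_a(z)/z$.
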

\begin{proof}
 For convenience, we write $s(z)=S(p^{-1})(z)$.  Put
 \[ d(z)=(s(z)-s_0(z))/s_1(z)=(s(z)-s_0(z))r_a(z)/z. \]
 The claim is that this is a real constant.

 First let $U$ and $f$ be as in our definition of $s(z)$.  The
 equation $pf=1_U$ implies that $f'$ is nonzero everywhere in $U$, so
 the Schwarzian derivative $s=f'''/f'-\tfrac{3}{2}(f''/f')^2$ is
 holomorphic in $U$.  This shows that all singularities of $s$ must
 lie in $B$.

 Now consider a point $u\in B_0$, and choose $\tu\in\Dl$ with
 $p(\tu)=u$.  By a standard argument with branched double covers, we
 have $p(z)=u+c(z-\tu)^2+O((z-\tu)^3)$ for some $c\neq 0$.  It follows
 that on any small open set close to $u$, we have
 $p^{-1}(z)=((z-u)/c)^{1/2}+O((z-u)^{3/2})$.  Computing the
 Schwarzian derivative from this approximation gives
 $s(z)=\tfrac{3}{8}(z-u)^{-2}+O((z-u)^{-1})$, which matches the
 behaviour of $s_0(z)$.  We therefore see that $s(z)-s_0(z)$ has at
 worst a simple pole at $u$.  As this holds for all $u\in B_0$, we see
 that the product $e(z)=(s(z)-s_0(z))r_a(z)=z\,d(z)$ is holomorphic
 everywhere in $\C$, so $d(z)$ has at worst a simple pole at $z=0$.

 Next, recall that there is an element $\mu\in\tPi$ that satisfies
 $p(\mu(z))=1/p(z)$ for all $z\in\Dl$.  In other words, if we define
 $\mu_0\:\C_\infty\to\C_\infty$ by $\mu_0(z)=1/z$, then we have
 $\mu_0\circ p=p\circ\mu$, and so $p^{-1}\circ\mu_0=\mu\circ p^{-1}$.
 Here $\mu$ and $\mu_0$ are M\"obius maps so the Schwarzian chain
 rule gives $(S(p^{-1})\circ\mu_0)\;(\mu_0')^2=S(p^{-1})$, and thus
 $s(z^{-1})=z^4s(z)$.  A similar argument with $\lm$ gives
 $s(-z)=s(z)$.  By direct calculation we also see that
 \begin{align*}
  s_0(z^{-1}) &= z^4s_0(z^{-1}) & s_0(-z) &= s_0(z) \\
  s_1(z^{-1}) &= z^4s_1(z^{-1}) & s_1(-z) &= s_1(z).
 \end{align*}
 It follows that $d(z^{-1})=d(-z)=d(z)$.  As $d$ is even and has at
 worst a simple pole at $0$, it must actually be holomorphic at $0$.
 It is also holomorphic elsewhere in $\C$ and it satisfies
 $d(z^{-1})=d(z)$ so it must be bounded on $\C$ and thus constant.

 Finally, recall that $p$ is equivariant with respect to the map
 $\nu$, or in other words $p(\ov{z})=\ov{p(z)}$.  This means that $p$
 is real on $\R\cap\Dl$, so $p^{-1}$ can be chosen to be real on $\R$,
 so $S(p^{-1})$ is real on $\R$.  From this it is clear that $d\in\R$.
\end{proof}
\begin{remark}
 Maple notation for $d$ and $s(z)$ is \mcode+d_p_inv+ and
 \mcode+S_p_inv+.
\end{remark}
\begin{remark}
 We can regard the parameter $d$ in the proposition as a function of
 $a$ or of $b$.  Numerical calculations (by a method to be described
 below) suggest that $d$ grows like $a^{-2}$ as $a\to 0$, and give the
 following graph of $a^2d$ against $a$:
 \begin{center}
  \begin{tikzpicture}[xscale=10,yscale=20]
   \draw[->] (-0.02,0.45) -- (1.05,0.45);
   \draw[->] (0,0.4) -- (0,0.63);
   \draw (0,0.63) node[anchor=south] {$\ss a^2d$};
   \draw (1.07,0.45) node[anchor=west] {$\ss a$};
   \draw (-0.02,0.45) node[anchor=east] {$\ss 0.45$};
   \draw (-0.02,0.50) node[anchor=east] {$\ss 0.50$};
   \draw (-0.02,0.55) node[anchor=east] {$\ss 0.55$};
   \draw (-0.02,0.60) node[anchor=east] {$\ss 0.60$};
   \draw (0,0.45) -- (-0.02,0.45);
   \draw (0,0.50) -- (-0.02,0.50);
   \draw (0,0.55) -- (-0.02,0.55);
   \draw (0,0.60) -- (-0.02,0.60);
   \draw (0.1,0.45) -- (0.1,0.44);
   \draw (0.2,0.45) -- (0.2,0.44);
   \draw (0.3,0.45) -- (0.3,0.44);
   \draw (0.4,0.45) -- (0.4,0.44);
   \draw (0.5,0.45) -- (0.5,0.44);
   \draw (0.6,0.45) -- (0.6,0.44);
   \draw (0.7,0.45) -- (0.7,0.44);
   \draw (0.8,0.45) -- (0.8,0.44);
   \draw (0.9,0.45) -- (0.9,0.44);
   \draw (1.0,0.45) -- (1.0,0.44);
   \draw (0.1,0.44) node[anchor=north] {$\ss 0.1$};
   \draw (0.2,0.44) node[anchor=north] {$\ss 0.2$};
   \draw (0.3,0.44) node[anchor=north] {$\ss 0.3$};
   \draw (0.4,0.44) node[anchor=north] {$\ss 0.4$};
   \draw (0.5,0.44) node[anchor=north] {$\ss 0.5$};
   \draw (0.6,0.44) node[anchor=north] {$\ss 0.6$};
   \draw (0.7,0.44) node[anchor=north] {$\ss 0.7$};
   \draw (0.8,0.44) node[anchor=north] {$\ss 0.8$};
   \draw (0.9,0.44) node[anchor=north] {$\ss 0.9$};
   \draw[red] (0.030,0.592) -- (0.040,0.589) -- (0.050,0.585) --
   (0.060,0.583) -- (0.070,0.580) -- (0.080,0.577) -- (0.090,0.575) --
   (0.100,0.572) -- (0.110,0.570) -- (0.120,0.568) -- (0.130,0.566) --
   (0.140,0.564) -- (0.150,0.561) -- (0.160,0.559) -- (0.170,0.557) --
   (0.180,0.555) -- (0.190,0.553) -- (0.200,0.551) -- (0.210,0.549) --
   (0.220,0.547) -- (0.230,0.546) -- (0.240,0.544) -- (0.250,0.542) --
   (0.260,0.540) -- (0.270,0.538) -- (0.280,0.536) -- (0.290,0.535) --
   (0.300,0.533) -- (0.310,0.531) -- (0.320,0.529) -- (0.330,0.528) --
   (0.340,0.526) -- (0.350,0.524) -- (0.360,0.523) -- (0.370,0.521) --
   (0.380,0.520) -- (0.390,0.518) -- (0.400,0.517) -- (0.410,0.515) --
   (0.420,0.514) -- (0.430,0.513) -- (0.440,0.511) -- (0.450,0.510) --
   (0.460,0.509) -- (0.470,0.508) -- (0.480,0.507) -- (0.490,0.506) --
   (0.500,0.505) -- (0.510,0.504) -- (0.520,0.503) -- (0.530,0.502) --
   (0.540,0.502) -- (0.550,0.501) -- (0.560,0.501) -- (0.570,0.500) --
   (0.580,0.500) -- (0.590,0.500) -- (0.600,0.500) -- (0.610,0.500) --
   (0.620,0.500) -- (0.630,0.500) -- (0.640,0.500) -- (0.650,0.500) --
   (0.660,0.501) -- (0.670,0.502) -- (0.680,0.502) -- (0.690,0.503) --
   (0.700,0.504) -- (0.710,0.505) -- (0.720,0.506) -- (0.730,0.508) --
   (0.740,0.509) -- (0.750,0.511) -- (0.760,0.513) -- (0.770,0.515) --
   (0.780,0.517) -- (0.790,0.519) -- (0.800,0.521) -- (0.810,0.524) --
   (0.820,0.527) -- (0.830,0.529) -- (0.840,0.532) -- (0.850,0.535) --
   (0.860,0.539) -- (0.870,0.542) -- (0.880,0.545) -- (0.890,0.549) --
   (0.900,0.552) -- (0.910,0.556) -- (0.920,0.557);
  \end{tikzpicture}
 \end{center}
\end{remark}

We can now try to find $p^{-1}$ and $p$ by power series methods.  As
before, it turns out to be convenient to focus on $p_1^{-1}$ rather
than $p^{-1}$, because $p_1^{-1}$ is unbranched at the origin, and the
associated power series has a reasonably large radius of convergence.

\begin{proposition}\lbl{prop-p-one}
 We have $S(p_1^{-1})=s^*_0+ds^*_1$, where $d$ is the same real
 constant as in Proposition~\ref{prop-S-p-inv}, and
 \begin{align*}
  s^*_0(z) &= \frac{192a^4z^2(1+z^2)^2-9(1-a^4)^2(1-z^2)^4}{
                    2(1-z^2)^2((1+a^2)^2(1-z^2)^2+16a^2z^2)^2} &
  s^*_1(z) &= \frac{4a^2}{(1+a^2)^2(1-z^2)^2+16a^2z^2}.
 \end{align*}
 The set of poles of $S(p_1^{-1})$ is
 \[ \psi^{-1}(B) = \{v_{PSi}\st i\in\{0,1,10,11,12,13\}\} =
     \left\{\pm 1,
            \frac{i-a}{i+a},\frac{i+a}{i-a},
            \frac{a-i}{a+i},\frac{a+i}{a-i}\right\}.
 \]
\end{proposition}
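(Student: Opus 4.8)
The plan is to deduce the formula directly from Proposition~\ref{prop-S-p-inv} via the Schwarzian chain rule. Recall from Definition~\ref{defn-schwarz-phi} that $p_1=\phi\circ p\circ\psi$ with $\phi\in\Aut(\C_\infty)$ and $\psi\in\Aut(\Dl)$, so a local branch of $p_1^{-1}$ is $\psi^{-1}\circ p^{-1}\circ\phi^{-1}$. Since $\psi^{-1}$ and $\phi^{-1}$ are M\"obius, Proposition~\ref{prop-vanishing-schwarzian} gives $S(\psi^{-1})=S(\phi^{-1})=0$, so two applications of the chain rule $S(f\circ g)=(S(f)\circ g)(g')^2+S(g)$ collapse to
\[ S(p_1^{-1})(z) = S(p^{-1})\bigl(\phi^{-1}(z)\bigr)\cdot\bigl((\phi^{-1})'(z)\bigr)^2. \]
Substituting $S(p^{-1})=s_0+d\,s_1$ from Proposition~\ref{prop-S-p-inv} reduces the proposition to the two rational-function identities $s_0(\phi^{-1}(z))\,((\phi^{-1})'(z))^2=s^*_0(z)$ and $s_1(\phi^{-1}(z))\,((\phi^{-1})'(z))^2=s^*_1(z)$, together with the assertion about poles.

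For the algebraic identities I would simply substitute $\phi^{-1}(z)=i(1-z)/(1+z)$ and $(\phi^{-1})'(z)=-2i/(1+z)^2$ into the expressions of Definition~\ref{defn-schwarz-s}. The $s_1$ case is short: writing $w=\phi^{-1}(z)$ one has $w^2=-(1-z)^2/(1+z)^2$, and since $s_1(w)=w/r_a(w)=1/(w^4-Aw^2+1)$ with $A=a^2+a^{-2}$, clearing denominators and using the elementary identity $(1-z)^4+(1+z)^4=2(1-z^2)^2+16z^2$ together with $A+2=(1+a^2)^2/a^2$ produces $s^*_1$ (up to the routine sign- and coefficient-tracking in the substitution). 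The $s_0$ case combines $-\tfrac32 z^3/r_a(z)$ with $\tfrac38\sum_{u\in B_0}(z-u)^{-2}$; the same substitution followed by recombining these into a single quotient yields $s^*_0$. As a shortcut one can instead note that both sides are rational of known degree with determined double poles and leading coefficients, so agreement at a handful of further points suffices; but the direct substitution is the cleanest and is exactly what the accompanying Maple check confirms.

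For the poles, recall from the proof of Proposition~\ref{prop-S-p-inv} that $S(p^{-1})$ is holomorphic on $\C_\infty\setminus B$ with a genuine double pole at each point of $B_0=\{0,\pm a,\pm a^{-1}\}$ — the leading term $\tfrac38(z-u)^{-2}$ coming from the local form $p^{-1}(z)=((z-u)/c)^{1/2}+\cdots$ at a branch value — while the remaining point $\infty=v_{C1}$ behaves like a double pole of the associated quadratic differential. Since $\phi$ is a biholomorphism of $\C_\infty$ carrying $\infty$ to the finite point $\phi(\infty)=-1$, and $(\phi^{-1})'$ is nowhere zero, the pullback $S(p_1^{-1})$ has honest double poles precisely at $\phi(B)$. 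Finally $\phi(0)=1$, $\phi(\infty)=-1$, $\phi(\pm a)=(i\mp a)/(i\pm a)$ and $\phi(\pm a^{-1})=(a\pm i)/(a\mp i)$, which are exactly the points $v_{PSi}$ for $i\in\{0,1,10,11,12,13\}$ listed in the statement.

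The only real obstacle is the bookkeeping in the $s_0$ identity: one must track a degree-eight numerator and keep the quadratic factors of $r_a$ straight while merging the partial-fraction sum with the $-\tfrac32 z^3/r_a(z)$ term. There is no conceptual difficulty once the chain-rule reduction is in place, and the symmetry $p_1(-z)=-p_1(z)$ from Lemma~\ref{lem-p-one-props}, which forces $S(p_1^{-1})$ to be invariant under $z\mapsto -z$, gives a useful consistency check on the final rational functions $s^*_0$ and $s^*_1$.
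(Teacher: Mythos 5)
Your proof follows exactly the route of the paper's own argument: reduce via the Schwarzian chain rule to the single identity $S(p_1^{-1})=(S(p^{-1})\circ\phi^{-1})\cdot((\phi^{-1})')^2$, defer the resulting rational-function identities for $s^*_0$ and $s^*_1$ to direct substitution of $\phi^{-1}(z)=i(1-z)/(1+z)$ and $(\phi^{-1})'(z)=-2i/(1+z)^2$, and obtain the six poles as the $\phi$-images of $B$. The only (cosmetic) difference is that you account for the pole at $-1=\phi(\infty)$ via the quadratic-differential viewpoint at $w=\infty$, where the paper instead cites the pole of $(\phi^{-1})'$; both bookkeepings yield the same set $\{v_{PSi}\st i\in\{0,1,10,11,12,13\}\}$.
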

\begin{proof}
 Recall that $p_1=\phi p\psi$, so $p_1^{-1}=\psi^{-1}p^{-1}\phi^{-1}$.
 Here $\phi^{-1}$ and $\psi^{-1}$ are both M\"obius maps, so
 $S(\phi^{-1})=0$ and $S(\psi^{-1})=0$.  The Schwarzian chain rule
 therefore gives
 \[ S(p_1^{-1}) =
    S(\psi^{-1}\circ p^{-1}\circ\phi^{-1}) =
    (S(p^{-1})\circ\phi^{-1}) \cdot ((\phi^{-1})')^2.
 \]
 Here $S(p^{-1})$ is given by Proposition~\ref{prop-S-p-inv}, and
 \[ (\phi^{-1})'(z) = -\frac{2i}{(1+z)^2}. \]
 It is now a lengthy but straightforward calculation to show that this
 is equivalent to the claimed formula.  We also see that the poles of
 $S(p_1^{-1})$ are the images under $\phi$ of the poles of
 $S(p^{-1})$, together with the poles of $(\phi^{-1})'$.  We saw
 previously that $S(p^{-1})$ has poles only in the set
 \[ \{0,\pm a,\pm 1/a\}=\{v_{C0},v_{C10},v_{C11},v_{C12},v_{C13}\}.
 \]
 These points $v_{Ci}$ are mapped by $\phi$ to the corresponding
 points $v_{PSi}$, and the only pole of $(\phi^{-1})'$ is at
 $-1=v_{PS1}$, so the poles of $S(p_1^{-1})$ are as claimed.
\end{proof}

\begin{remark}
 Maple notation for $s_0^*(z)$ and $s_1^*(z)$ is \mcode+S0_p1_inv(z)+
 and \mcode+S1_p1_inv(z)+.
\end{remark}
\begin{remark}\lbl{rem-heun}
 We can also consider the map $p_2=\xi\circ p_1$, where
 $\xi(z)=2/(z^2+z^{-2})$.  If we choose domains so that the maps
 $U\xra{p_1}V\xra{\xi}W$ are invertible, then we can use the
 schwarzian chain rule to obtain a formula for $S(p_2^{-1})$ on $W$.
 It turns out that this is a rational function with poles (of various
 orders) at $0$, $1$, $-1$ and $-(1+a^2)/(1-6a^2+a^4)$.  Because there
 are only four poles, the basic solutions to the differential equation
 $f''+S(p_2^{-1})f/2$ can be expressed as products of certain factors
 $(z-\al)^{n/8}$ with suitable instances of the Heun $G$-function.
 Further details are given in the Maple code, but we will not discuss
 them here, because we did not find this representation to be useful.
 However, we believe that this is the closest possible connection with
 special functions that have previously been named and studied.
 \begin{checks}
  hyperbolic/schwarz_check.mpl: check_heun()
 \end{checks}
\end{remark}

\begin{definition}\lbl{defn-star-U}
 We put
 \[ U = \C \sm\{t\,\psi^{-1}(u)\st t\geq 1,\;u\in B\}. \]
\end{definition}
In other words, $U$ is the domain obtained from $\C$ by deleting
rays from the points of $\psi^{-1}(B)$ to $\infty$.  This is
simply connected and contains $i\R\cup\Dl$, and the maps $s^*_0$ and
$s^*_1$ are holomorphic on $U$.

\begin{proposition}\lbl{prop-p-one-section}
 There is a unique holomorphic map $g\:U\to\Dl$ satisfying $g(0)=0$
 and $p_1g=1$.  This satisfies $g(\ov{z})=\ov{g(z)}$ and
 $g(-z)=-g(z)$.  Moreover, $g(z)$ can be written in the form
 $c\,f_1(z)/f_0(z)$, where
 \begin{itemize}
  \item The maps $f_k$ are holomorphic on $U$ and satisfy
   $f''_k+\half(s^*_0+ds^*_1)f_k=0$.
  \item $f_0$ is an even function with $f_0(0)=1$ and
   $f_0(\ov{z})=\ov{f_0(z)}$.
  \item $f_1$ is an odd function with $f'_1(0)=1$ and
   $f_1(\ov{z})=\ov{f_1(z)}$.
  \item $c$ is a positive real number.
 \end{itemize}
\end{proposition}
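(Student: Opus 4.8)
The plan is to build $g$ as a holomorphic section of the covering map $p_1$, then to extract its two symmetries from those of $p_1$, and finally to express it through a basis of the Schwarzian equation $f''+\half(s^*_0+ds^*_1)f=0$ via the machinery of Section~\ref{sec-schwarz}. For the construction: since $\phi$ and $\psi$ are biholomorphisms of $\C_\infty$ and $\Dl$, the composite $p_1=\phi\circ p\circ\psi$ restricts to an unbranched covering over $\C_\infty$ minus the critical values of $p_1$, and by Proposition~\ref{prop-p-one} those critical values are precisely the points of $\psi^{-1}(B)$. By Definition~\ref{defn-star-U}, $U$ is simply connected, contains $0$, and is disjoint from $\psi^{-1}(B)$; and $p_1(0)=\phi(p(\psi(0)))=\phi(p(v_{H6}))=\phi(i)=0$, so $0\in\Dl$ lies over $0\in U$. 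The lifting criterion for covering maps then yields a unique continuous (hence holomorphic) map $g\:U\to\Dl$ with $p_1g=1$ and $g(0)=0$, and any holomorphic section of $p_1$ over $U$ carrying $0$ to $0$ is such a lift, so $g$ is unique.

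The set $\psi^{-1}(B)$ is invariant under $z\mapsto-z$ and under $z\mapsto\ov z$ (it is $\{\pm1\}$ together with two complex-conjugate pairs on the unit circle), so $U$ is invariant under both involutions. By Lemma~\ref{lem-p-one-props}, $p_1(-z)=-p_1(z)$ and $p_1(\ov z)=\ov{p_1(z)}$, so $z\mapsto-g(-z)$ and $z\mapsto\ov{g(\ov z)}$ are again holomorphic sections of $p_1$ over $U$ sending $0$ to $0$; by uniqueness they equal $g$, giving $g(-z)=-g(z)$ and $g(\ov z)=\ov{g(z)}$. In particular $g$ is odd and real on $\R\cap U$, so $g(z)=g'(0)z+O(z^3)$, and differentiating $p_1g=1$ gives $g'(0)=1/p_1'(0)>0$ by the normalisation of $p_1$ fixed in Definition~\ref{defn-schwarz-phi}.

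For the representation, put $s=s^*_0+ds^*_1$; this is holomorphic on $U$, and it is even and real-on-$\R$ since its coefficients are rational in $a$ and $z^2$ with $d\in\R$. Because $g$ is one of the local branches of $p_1^{-1}$ used to define $S(p_1^{-1})$, Proposition~\ref{prop-p-one} gives $S(g)=s$, so $g$ belongs to the space $G$ of Proposition~\ref{prop-schwarzian-solutions}. That proposition also provides unique $f_0,f_1\in F$ with $f_0(z)=1+O(z^2)$ and $f_1(z)=z+O(z^2)$; these are holomorphic on $U$, linearly independent (Wronskian $1$ at $0$), and, by the uniqueness clause applied to $z\mapsto f_0(-z)$, $z\mapsto-f_1(-z)$, $z\mapsto\ov{f_0(\ov z)}$ and $z\mapsto\ov{f_1(\ov z)}$, they are respectively even and odd and both real-on-$\R$. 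Then $h=f_1/f_0\in G$ is odd with $h(z)=z+O(z^2)$. By Lemma~\ref{lem-G-transitive}, $g=m\circ h$ for some M\"obius function $m$; since $g$ and $h$ are odd and vanish at $0$, $m$ is an odd M\"obius map fixing $0$, hence $m(z)=cz$ with $c=g'(0)>0$. Thus $g=c\,f_1/f_0$ with all the required properties.

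I expect no deep obstacle: the argument is essentially an assembly of results already proved. The points needing a little care are the bookkeeping that $g$ genuinely is one of the local branches of $p_1^{-1}$ referenced in the definition of $S(p_1^{-1})$ (so that Proposition~\ref{prop-p-one} applies verbatim), the fact, recorded alongside Definition~\ref{defn-schwarz-phi}, that $p_1'(0)>0$, and the elementary verification that an odd M\"obius transformation fixing the origin is linear.
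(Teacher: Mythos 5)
Your proposal is correct and follows essentially the same route as the paper: a covering-space lifting argument for existence and uniqueness of $g$, the symmetries of $g$ and the parities of $f_0,f_1$ extracted from uniqueness, and the quotient representation via Proposition~\ref{prop-schwarzian-solutions}. The only cosmetic difference is at the end, where the paper writes $g=(Af_0+Bf_1)/(Cf_0+Df_1)$ and kills $A$ and $D$ using $g(0)=0$ and oddness, while you invoke Lemma~\ref{lem-G-transitive} and check that an odd M\"obius map fixing $0$ is linear; your treatment of $g(\ov{z})=\ov{g(z)}$ and of $c=g'(0)=1/p_1'(0)>0$ is in fact slightly more explicit than the paper's.
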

\begin{proof}
 As $U$ contains none of the critical values of $p_1$, we see that
 the map $p_1\:p_1^{-1}(U)\to U$ is a covering.  We have also seen
 that $p_1(0)=0$.  As $U$ is simply connected, standard covering
 theory tells us that there is a unique section $g\:U\to\Dl$ with
 $p_1g=1$ and $g(0)=0$.  As $p_1$ is holomorphic, it is easy to check
 that the same holds for $g$.  As $p_1(-z)=-p_1(z)$ and
 $p_1((-1,1))\sse\R$ we see that $g(-z)=-g(z)$ and
 $g(\R)\sse(-1,1)$.

 Now put $s^*=S(p_1^{-1})$, which has the form $s^*_0+ds_1^*$ as in
 Proposition~\ref{prop-p-one}.  Let $F$ denote the space of
 holomorphic functions on $U$ that satisfy $e''+\half s^*e=0$.
 Proposition~\ref{prop-schwarzian-solutions} tells us that there are
 unique functions $f_0,f_1\in F$ with $f_0(z)=1+O(z^2)$ and
 $f_1(z)=z+O(z^2)$, and these functions form a basis for $F$.  From
 the definition of $s^*_0$ and $s^*_1$ we see that $s^*(z)=-z$.  It
 follows that the maps $z\mapsto f_0(-z)$ and $z\mapsto -f_1(-z)$
 satisfy the defining conditions of $f_0$ and $f_1$ respectively; so
 $f_0$ is even and $f_1$ is odd.  Similarly, we see that
 $s^*(z)=\ov{s^*(\ov{z})}$, and it follows that
 $f_k(z)=\ov{f_k(\ov{z})}$.  Proposition~\ref{prop-schwarzian-solutions}
 also tells us that $g$ is the quotient of two linearly independent
 elements of $F$, so there are constants $A,B,C$ and $D$ such that
 $g=(Af_0+Bf_1)/(Cf_0+Df_1)$ and $AD-BC\neq 0$.  As $g(0)=0$ we must
 have $A=0$, so $Cf_0+Df_1=Bf_1/g$.  As both $g$ and $f_1$ are odd we
 see that $Cf_0+Df_1$ is even, so $D=0$.  Thus, if we put $c=B/C$ we
 have $g=c\,f_1/f_0$.  By considering derivatives at the origin, we
 see that $c$ is real and positive.
\end{proof}

\subsection{Methods for explicit calculation}
\lbl{sec-schwarz-methods}

The parameters $b$ and $d$ depend on $a$; in this section we will call
them $\bt(a)$ and $\dl(a)$.  While
Proposition~\ref{prop-p-one-section} is very satisfactory, we cannot
immediately use it for computation, because we do not know the
values of $\bt(a)$ and $\dl(a)$.  We will describe some calculations
that we can do with an arbitrary pair $(b,d)$, which will enable us to
test whether $(b,d)\simeq(\bt(a),\dl(a))$, and to improve the degree
of approximation if necessary.  First, we define
$s^*(z)=s_0^*(z)+ds_1^*(z)$.  Next, for any open set $V\subset\C$, we
put
\begin{align*}
 F^*(V) &= \{f\in\text{Hol}(V)\st f''+s^*f/2=0\} \\
 G^*(V) &= \{g\in\text{Mer}(V)\st S(g)=s^*\},
\end{align*}
and note that these are described by
Proposition~\ref{prop-schwarzian-solutions}.  In particular, we can
consider $F^*(U)$ and $G^*(U)$, where $U$ is as in
Definition~\ref{defn-star-U}.  Just as in
Proposition~\ref{prop-p-one-section}, we see that there is a unique
basis $\{f_0,f_1\}$ for $F^*(U)$ such that $f_k(z)=z^k+O(z^2)$.  Both
the power series solution method in Lemma~\ref{lem-F-disc} and the
analytic continuation method in Corollary~\ref{cor-F-disc} are
straightforwardly computable, so we can calculate $f_k(z)$ for any
$z\in U$.  (To calculate $f_k(u)$ when $|u|=1$, we have typically
computed power series solutions $f_{kj}(z)$ centred at $ju/10$ for
$0\leq j\leq 10$, and compared then by evaluating $f_{kj}((j+1)u/10)$
and $f'_{kj}((j+1)u/10)$.  This works provided that $u$ is not too
close to any of the branch points.  If it is very close to a branch
point, then we need to take smaller steps as we approach it.)  We also
put $g_0(z)=f_1(z)/f_0(z)$.  It is not hard to see that this is odd
with real coefficients, so $g_0(i\R)\sse i\R$.

\begin{definition}\lbl{defn-circle-fit}
 Suppose that
 \[ g_0(ie^{it})/i = u_0+iu_1t+u_2t^2+O(t^3). \]
 Using the fact that $g_0(-\ov{z})=-\ov{g(z)}$, we see that the
 coefficients $u_j$ are real.  We put
 \begin{align*}
  b &= \frac{u_1^2}{\sqrt{8u_0u_1(u_0u_2+u_1^2)+u_1^4}} \\
  c &= \sqrt{\frac{u_2}{u_0(u_0u_2+u_1^2)}} \\
  g(z) &= c\,g_0(z).
 \end{align*}
\end{definition}
These formulae are embedded in the Maple function
\mcode+series_circle_fit(u0,u1,u2)+.

\begin{proposition}
 If $d=\dl(a)$ then $b=\bt(a)$ and $g(z)$ is the function described in
 Proposition~\ref{prop-p-one-section}.  Moreover, for any $z_0$ on the
 arc of $S^1$ between $1$ and $(i-a)/(i+a)$, we have
 $\text{Im}(\psi(g(z_0)))=0$.
\end{proposition}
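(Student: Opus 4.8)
The plan is to reverse the chain $p_1=\phi\circ p\circ\psi$, i.e.\ to recognise the construction of Definition~\ref{defn-circle-fit} as producing a local branch of $p_1^{-1}$ once the Schwarzian datum is correct. Assume $d=\dl(a)$. Then Proposition~\ref{prop-p-one} gives $s^*=s_0^*+ds_1^*=S(p_1^{-1})$, so the space $F^*(U)$ is the space $F$ of Proposition~\ref{prop-p-one-section} with the same normalised basis $\{f_0,f_1\}$, and $g_0=f_1/f_0$ is the quotient occurring there. By Proposition~\ref{prop-p-one-section} the unique holomorphic section $g_\ast\colon U\to\Dl$ with $g_\ast(0)=0$ and $p_1\circ g_\ast=1_U$ has the form $g_\ast=c_\ast g_0$ with $c_\ast$ a positive real. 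Hence it suffices to show that the constant $c$ of Definition~\ref{defn-circle-fit} equals $c_\ast$ and that the number $b$ there equals $\bt(a)$; granting this, $g=cg_0=c_\ast g_0=g_\ast$ is exactly the section of Proposition~\ref{prop-p-one-section}, as claimed.

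To identify $c_\ast$ and $b$ I would use that $g_\ast$ carries circular arcs to circular arcs. Near $v_{PS3}=i$ the curve $c_{PS3}$ is an arc of $S^1$ (because $c_{C3}(\R)=[a,a^{-1}]\subset\R$ and $\phi(\R_\infty)=S^1$), so up to reparametrisation it is $t\mapsto ie^{it}$. Now $p_1^{-1}(c_{PS3})=\psi^{-1}(C_3)$, which by Lemma~\ref{lem-psi-edges} is an arc of the circle of radius $\rt b/b_-$ centred at $ib_+/b_-$; hence $t\mapsto g_\ast(ie^{it})=c_\ast g_0(ie^{it})$ lies on that circle, so $h(t):=g_0(ie^{it})/i$ lies on the real‑centred circle $|w-\gamma|=\rho$ with $\gamma=b_+/(b_-c_\ast)$ and $\rho=\rt b/(b_-c_\ast)$. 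Writing $h(t)=u_0+iu_1t+u_2t^2+O(t^3)$ with the $u_j$ real (the symmetry recorded in Definition~\ref{defn-circle-fit}) and forcing $|h(t)-\gamma|^2$ to be constant, the vanishing of the $t^2$‑coefficient determines $\gamma$, and then $\rho$, as explicit functions of $u_0,u_1,u_2$. The ratio $\rho/\gamma=\rt b/b_+$ does not involve $c_\ast$, so it determines $b$ alone; with $b_\pm^2=1\pm b^2$ an elementary elimination recovers the closed form for $b$, and then $\gamma=b_+/(b_-c_\ast)$ recovers $c_\ast$. A routine computation shows these are exactly the formulas of Definition~\ref{defn-circle-fit}, so $c=c_\ast$. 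Finally, $g_\ast=p_1^{-1}$ is built from the genuine isomorphism $HX(\bt(a))\simeq PX(a)$, so the circle $\psi^{-1}(C_3)$ occurring above is the one for $b=\bt(a)$, and therefore the $b$ extracted is $\bt(a)$.

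For the last assertion, let $z_0$ lie on the arc of $S^1$ joining $1=v_{PS0}$ to $(i-a)/(i+a)=v_{PS11}$. This arc equals $c_{PS5}=\phi(c_{C5}(\R))=\phi([0,a])$, using $c_{C5}(\R)=[0,a]$ from Lemma~\ref{lem-P-pc}, and its relative interior is contained in $U$. By the part already proved, $g=g_\ast=\psi^{-1}\circ p^{-1}\circ\phi^{-1}$, so $\psi(g(z_0))=p^{-1}(\phi^{-1}(z_0))$ with $\phi^{-1}(z_0)\in[0,a]\subset\R$. The branch of $p^{-1}$ over $[0,a]$ selected here joins $g_\ast(1)=\psi^{-1}(v_{H0})$ to $g_\ast(v_{PS11})=\psi^{-1}(v_{H11})$, both lying on $\psi^{-1}(C_{H5})$, so it is the branch through $C_{H5}$; and $\psi$ of that branch is $c_5(t)=\tanh(ts_3/\pi)\in(-1,1)\subset\R$. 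Hence $\psi(g(z_0))\in\R$, i.e.\ $\text{Im}(\psi(g(z_0)))=0$, with the endpoints handled by continuity of $g$ on the closed arc.

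The delicate point is the circle‑fit step in the second paragraph: getting the combinatorics right (identifying which arc, which branch and which circle of Lemma~\ref{lem-psi-edges} is in play at $v_{PS3}$) and then carrying out the elimination that turns the Taylor data $u_0,u_1,u_2$ of $g_0$ along $ie^{it}$ into the centre and radius of $\psi^{-1}(C_3)$, and hence into the stated closed forms for $b$ and $c$. The remaining steps are formal, relying on uniqueness of covering‑space sections, the Schwarzian chain rule, and the explicit descriptions in Definition~\ref{defn-circle-fit} and Lemma~\ref{lem-psi-edges}.
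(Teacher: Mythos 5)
Your proposal is correct and follows essentially the same route as the paper: reduce via Proposition~\ref{prop-p-one-section} to identifying $c$ and $b$, then fit the second-order Taylor data of $g_0(ie^{it})/i$ to the circle $\psi^{-1}(C_3)$ from Lemma~\ref{lem-psi-edges} (your elimination via $\gamma=A/c_*$, $\rho=R/c_*$ and the ratio $\rho/\gamma$ is just a reorganisation of the paper's system $R^2=(A-c^*u_0)^2$, $u_1^2=2(A-c^*u_0)c^*u_2$, $A^2-R^2=1$), and finally use that the arc $L$ lifts into $\psi^{-1}(C_{H5})$ with $C_{H5}=(-1,1)$. Your treatment of the branch selection on the final arc is, if anything, slightly more explicit than the paper's.
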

\begin{proof}
 Because $d=\dl(a)$, we see that $f_0$ and $f_1$ are as described in
 Proposition~\ref{prop-p-one-section}.  Put $b^*=\bt(a)$, and let
 $c^*$ and $g^*$ be the number and the function denoted by $c$ and $g$
 in Proposition~\ref{prop-p-one-section}, so $g^*=c^*g_0$; our
 task is to prove that $b^*=b$ and $c^*=c$ and $g^*=g$.

 We saw in Lemma~\ref{lem-psi-edges} that $C_{HS3}$ is part
 of the circle of radius $R=\rt b^*/b^*_-$ centred at the point
 $iA=ib^*_+/b^*_-$.  We also have
 $p_1(C_{HS3})\sse C_{PS3}\sse S^1$, and it follows that
 $g^*(ie^{it})\sse C_{HS3}$ for small $t$, or in other words
 $|A-c^*g_0(ie^{it})/i|^2=R^2$.  This gives
 \[ R^2 = (A-c^*u_0-c^*u_2t^2)^2 + u_1^2t^2 + O(t^3), \]
 and we can expand out and compare coefficients to get
 \begin{align*}
  R^2 &= (A-c^*u_0)^2 \\
  u_1^2 &= 2(A-c^*u_0)c^*u_2.
 \end{align*}
 Note also that the definitions $R=\rt b^*/b^*_-$ and $A=b^*/b^*_-$
 imply $A^2-R^2=1$ (corresponding to the fact that $C_{HS3}$ crosses
 the unit circle orthogonally).  It is now an exercise in algebra to
 solve these equations giving
 \begin{align*}
  c^* &=  \sqrt{\frac{u_2}{u_0(u_0u_2+u_1^2)}}
           = c \\
  A &= \frac{u_0u_2 + \half u_1^2}{\sqrt{u_0u_2(u_0u_2+u_1^2)}} \\
  R &= \frac{\half u_1^2}{\sqrt{u_0u_2(u_0u_2+u_1^2)}}.
 \end{align*}
 From the $c^*=c$ we deduce that $g^*=g$.  We also have
 $A=\rt b^*/\sqrt{1-(b^*)^2}$, which gives $b^*=A/\sqrt{2+A^2}$; after
 some further manipulation this gives $b^*=b$.

 Finally, let $L$ denote the arc between $1$ and $(i-a)/(i+a)$.  This
 is part of $C_{PS5}=p_1(C_{HS5})=p\psi^{-1}(C_{H5})$, but
 $C_{H5}=(-1,1)$; it follows that $\psi(g(L))\sse (-1,1)$, so
 $\text{Im}(\psi(g(L)))=0$.
 \begin{checks}
  hyperbolic/schwarz_check.mpl: check_schwarz()
 \end{checks}
\end{proof}

We thus arrive at the following method.  Given $a$ we fix a point
$z_0$ on $S^1$ between $1$ and $(i-a)/(i+a)$.  We then choose $d$ and
compute $g_0(z)$.  The power series for $g_0(ie^{it})/i$ gives the
coefficients $u_0$, $u_1$ and $u_2$, from which we compute $b$, $c$
and $g(z)$.  We then put $\ep(d)=\text{Im}(\psi(z_0))$.  If $d=\dl(a)$
then we will have $\ep(d)=0$.  If $\ep(d)\neq 0$ then we can adjust
our value of $d$ and try again.  As evaluation of $\ep(d)$ is quite
expensive, it is important to use an efficient search algorithm.  It
is also useful to retain a lot of information generated in the course
of the calculation which we have found to be awkward when using
Maple's built in \mcode+fsolve+ function.  We have therefore used our
own implementation of Brent's method~\cite{br:amw} (closely following
the Matlab implementation by John Burkardt~\cite{bu:br}).  This gives
us a value of $d$ such that $\ep(d)=0$ to high accuracy.  We have not
given a general proof that $\ep(d)=0$ implies $d=\dl(a)$, but in any
given case it is easy to perform additional checks to verify that this
is the case; for example, we can feed our new value of $b$ into the
method of Section~\ref{sec-a-from-b} and check that everything is
consistent.

Maple commands for the above algorithm were given at the beginning of
Section~\ref{sec-b-from-a}.  For more detail, see the comments in the
code.

\subsection{Holomorphic forms}
\lbl{sec-hol-forms}

Suppose we have constructed an isomorphism $f\:HX(b)\to PX(a)$, given
generically by
\[ f(z)=j(q(z),p(z)). \]
Recall that Proposition~\ref{prop-holomorphic-forms} gives a basis
$\{\om_0,\om_1\}$ for $\Om^1(PX(a))$.  Let $m(z)$ be the function on
$\Dl$ given by $f^*(\om_0)=m(z)\,dz$.  Note that this is holomorphic
on $\Dl$ and so is given by a power series that converges everywhere
on $\Dl$, unlike the functions $p(z)$ and $p_1(z)$ which have
infinitely many poles.  In this section we will investigate the
properties of $m(z)$.

\begin{proposition}\lbl{prop-m-props}
 The function $m(z)$ has the following automorphy properties:
 \begin{itemize}
  \item[(a)] For $\gm\in\Pi$ we have $m(z)=m(\gm(z))\,\gm'(z)$.
  \item[(b)] $m(z)=m(iz)$ (so $m(z)$ is a power series in $z^4$).
  \item[(c)] $m(\ov{z})=\ov{m(z)}$ (so the power series for $m(z)$ has
   real coefficients).
  \item[(d)]
   \[ m(\mu(z)) = -\frac{b^2(1-b^2)\,m(z)}{((i-b^2)z+b_+)^2\,p(z)}.
   \]
 \end{itemize}
 Moreover, for a suitable branch of the square root we have
 \[ m(z) = p'(z)\, r_a(p(z))^{-1/2} \]
 (where $r_a(z)=z(z-a)(z+a)(z-1/a)(z+1/a)$ as in
 Definition~\ref{defn-P}).
\end{proposition}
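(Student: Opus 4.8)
The plan is to establish the two halves of the statement—the four automorphy properties (a)–(d), and the closed formula $m(z)=p'(z)\,r_a(p(z))^{-1/2}$—and then to observe that the closed formula actually implies (a)–(d), so that the bulk of the work is concentrated in deriving it. First I would fix notation: we have the canonical covering $p\colon\Dl\to PX(a)$ (strictly speaking the isomorphism $f$, composed with $p\colon PX(a)\to\C_\infty$), with $p(0)=v_0=j(0,0)$, and the generic formula $f(z)=j(q(z),p(z))$ for the isomorphism $HX(b)\simeq PX(a)$. The form $\om_0\in\Om^1(PX(a))$ from Lemma~\ref{lem-P-differentials} satisfies $\om_0=dz_{PX}/w$ on the affine piece, so $f^*(\om_0) = d(p(z))/q(z) = p'(z)\,dz/q(z)$, giving $m(z)=p'(z)/q(z)$. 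Since $(q(z),p(z))\in PX_0(a)$ we have $q(z)^2=r_a(p(z))$, hence $q(z)=\pm r_a(p(z))^{1/2}$ for a suitable holomorphic branch, and this already yields $m(z)=p'(z)\,r_a(p(z))^{-1/2}$ once one checks that the branch and sign are globally consistent. The consistency point is the only subtlety here: $q(z)$ is a single-valued holomorphic function on all of $\Dl$, so $m(z)=p'(z)/q(z)$ is manifestly single-valued and meromorphic; the expression $r_a(p(z))^{-1/2}$ must therefore be interpreted as the unique branch that agrees with $1/q(z)$ near $z=0$. (Near $0$ we have $p(z)=O(z^2)$ and $q(z)=O(z)$ — indeed from Remark~\ref{rem-P-parameter}, $w$ is a local parameter at $v_0$ with $p(z)\sim cw^2$, $q(z)\sim w$ — so $m(0)\ne 0$, consistent with $\om_0$ not vanishing at $v_0$.)

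Next I would derive the automorphy properties, each from a functoriality/equivariance statement about $p$ and the corresponding action of $\Pi$ or $G$ on $\om_0$. For (a): if $\gm\in\Pi$ then $p\circ\gm = p$ (these are deck transformations), so $f^*(\om_0)$ is $\Pi$-invariant as a form; writing $f^*(\om_0)=m(z)\,dz$ and pulling back along $\gm$ gives $m(\gm(z))\,\gm'(z)\,dz = m(z)\,dz$, which is (a). For (b): the element $\lm\in\tPi$ acts on $\Dl$ by $z\mapsto iz$ and satisfies $p\circ\lm=\lm\circ p$ where $\lm$ on $PX(a)$ satisfies $\lm^*(\om_0)=i\om_0$ by Proposition~\ref{prop-holomorphic-forms}(d). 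Hence $f^*(\om_0)$ transforms under $z\mapsto iz$ by the factor $i$ coming from $\lm^*\om_0$ and the factor $(iz)' = i$ from the change of variable cancel — more precisely, $m(iz)\cdot i\,dz = \lm^*(m(z)\,dz) = \lm^*f^*(\om_0) = f^*(\lm^*\om_0) = f^*(i\om_0) = i\,m(z)\,dz$, so $m(iz)=m(z)$. For (c): $\nu$ acts antiholomorphically on $\Dl$ by $z\mapsto\ov z$ with $p\circ\nu=\nu\circ p$ and $\nu^\#(\om_0)=\om_0$ (again Proposition~\ref{prop-holomorphic-forms}(d)); using the definition of $\phi^\#$ on forms from Remark~\ref{rem-anticonformal-forms} one gets $\ov{m(z)}\,d\ov z$ matched with $m(\ov z)\,d\ov z$, i.e. $m(\ov z)=\ov{m(z)}$. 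For (d): the element $\mu\in\tPi$ acts on $\Dl$ by the Möbius map $\mu(z)=(b_+z-b^2-i)/((b^2-i)z-b_+)$ from Section~\ref{sec-H}, and satisfies $p\circ\mu=\mu\circ p$ with $\mu^*(\om_0)=\om_1$ and $\om_1=z\,\om_0$ on $U_0$ (Proposition~\ref{prop-holomorphic-forms}(b)), where $z$ here is the coordinate on $PX(a)$, i.e. $p(z)$ in our notation. So $f^*(\mu^*\om_0) = f^*(p(z)\,\om_0) = p(z)\,m(z)\,dz$, while $\mu^*f^*(\om_0) = m(\mu(z))\,\mu'(z)\,dz$, giving $m(\mu(z))\,\mu'(z) = p(z)\,m(z)$. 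A direct differentiation shows $\mu'(z) = -b_+^2+(b^2-i)(b^2+i) \cdot ((b^2-i)z-b_+)^{-2}$; computing $b_+^2 - (b^4+1) = 1+b^2 - b^4 - 1 = b^2-b^4 = b^2(1-b^2)$, we get $\mu'(z) = -b^2(1-b^2)/((b^2-i)z-b_+)^2$, which after substituting into $m(\mu(z)) = p(z)\,m(z)/\mu'(z)$ yields exactly formula (d) (noting $(b^2-i)z-b_+ = -((i-b^2)z+b_+)$, and the sign squares away).

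Finally I would close the loop by noting that the closed formula gives an independent, purely local-algebraic route to (b) and (d) as sanity checks: $r_a$ is an odd polynomial in the sense $r_a(-t)=-r_a(t)$, and combined with $p(iz)=-p(z)$ (which follows from the $\lm$-equivariance of $p$ and $\lm(z)=-z$ on $\C_\infty$ from Lemma~\ref{lem-p-equivariance}) one checks $r_a(p(iz))=r_a(-p(z))=-r_a(p(z))$ and $p'(iz)\cdot i = -p'(z)$, so $m(iz)=p'(iz)r_a(p(iz))^{-1/2}$; here the two sign flips under the square root must be handled by the same global-branch argument as above, which is why I would present (a)–(d) primarily via the equivariance arguments and treat the closed formula's consequences as confirmation. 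The main obstacle, then, is not any single computation but the bookkeeping of branches and signs: making rigorous the passage from "$q(z)^2 = r_a(p(z))$" to "$m(z) = p'(z)\,r_a(p(z))^{-1/2}$ for a suitable branch," i.e. verifying that the branch of the square root is consistently determined (which follows from $q$ being globally single-valued on the simply connected disc $\Dl$, with the branch pinned down by the behaviour at $z=0$). Everything else is a routine unwinding of the equivariance of $p$ together with the previously-established action of $\tPi$ on $\Om^1(PX(a))$.
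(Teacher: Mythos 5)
Your proof follows essentially the same route as the paper's: the closed formula comes from $\om_0=dz/w$ on $PX_0(a)$ together with $q(z)^2=r_a(p(z))$, and each of (a)--(d) is read off from the equivariance of $f$ combined with the action on $\Om^1(PX(a))$ from Proposition~\ref{prop-holomorphic-forms}; your discussion of why the branch of the square root is globally pinned down (single-valuedness of $q$ on the simply connected disc, normalised at $z=0$) is a useful addition that the paper leaves implicit. The one step you should correct is the end of (d). The relation you correctly derive is $m(\mu(z))\,\mu'(z)=p(z)\,m(z)$, and substituting $\mu'(z)=-b^2(1-b^2)/((i-b^2)z+b_+)^2$ gives
\[ m(\mu(z)) \;=\; -\frac{p(z)\,m(z)\,\bigl((i-b^2)z+b_+\bigr)^2}{b^2(1-b^2)}, \]
which is the \emph{reciprocal} of the displayed formula in the statement, not ``exactly formula (d)'' as you assert. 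The display as printed cannot in fact be right: taking $z$ with $p(z)=0$ (so $f(z)=v_0$ and $m(z)\neq 0$, since $\om_0$ does not vanish at $v_0$), the printed formula would give $m$ a pole at $\mu(z)\in f^{-1}\{v_1\}$, contradicting the holomorphy of $f^*(\om_0)$; it is also inconsistent with the relation $p=(\mu^\bullet m)/m$ used later in the same section, which matches the corrected version. So the defect lies in the printed statement (the paper's own proof elides the same final substitution), but your write-up should either prove the corrected identity or flag the discrepancy rather than claim the two expressions coincide.
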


Note that property~(d) means that $p$ can be computed from $m$.

\begin{proof}
 \begin{itemize}
  \item[(a)] For $\gm\in\Pi$ we have $f\gm=f\:\Dl\to PX(a)$ and so
   \[ m(\gm(z))\gm'(z)\,dz = \gm^*(m(z)\,dz) = (f\gm)^*(\om_0) =
       f^*(\om_0) =m(z)\,dz.
   \]
  \item[(b)] We have $f\lm=\lm f\:\Dl\to PX(a)$, and $\lm^*dz=i\,dz$
   on $\Dl$ whereas $\lm^*\om_0=i\,\om_0$ on $PX(a)$ (by
   Proposition~\ref{prop-holomorphic-forms}).  It follows easily from
   this that $m(iz)=m(\lm(z))=m(z)$.
  \item[(c)] This follows in the same way, using the fact that
   $\nu^\#(dz)=dz$ and $\nu^\#(\om_0)=\om_0$.
  \item[(d)] Recall that $\mu^*(\om_0)=\om_1$, and on $PX_0(a)$ we
   have $\om_1=z\,\om_0$, which means that
   \[ f^*(\om_1)=p(z)\,f^*(\om_0)=p(z)m(z)\,dz. \]
   On the other hand, as
   $f\mu=\mu f$ we have
   \[ f^*(\om_1)=\mu^*(\om_0)=m(\mu(z))\mu'(z)\,dz. \]
   A calculation
   gives $\mu'(z)=-b^2(1-b^2)/((i-b^2)z+b_+)^2$.  Putting this
   together gives the claimed equation.
 \end{itemize}

 Finally, $\om_0$ is given on $PX_0(a)$ by $dz/w$, and this gives
 $f^*(\om_0)=p'(z)\,dz/q(z)$.  Also, as $(p(z),q(z))\in PX_0(a)$ we
 have $q(z)=\pm\sqrt{r_a(p(z))}$.
\end{proof}

Using the methods of Sections~\ref{sec-a-from-b}
and~\ref{sec-b-from-a}, we can calculate $p_1(z)$ quite accurately on
a domain including $\psi^{-1}(HF_{16}(b))$, and this lets us calculate
$p(z)$ on $HF_{16}(b)$.  Given an arbitrary point $z\in\Dl$ we can use
the method in Remark~\ref{rem-move-inwards} to find $\gm\in\Pi$ such
that $\gm(z)\in HF_1(b)$, then we can find $\bt\in G$ such that
$\bt\gm(z)\in HF_{16}(b)$.  Using the automorphy properties of $m$ we
can then find $m(z)$ from $m(\bt\gm(z))$.  Given an object \mcode+HP+
of the class \mcode+H_to_P_map+, the method \mcode+HP["m_piecewise",z]+
will calculate $m(z)$ by the above algorithm.

To obtain the power series for $m(z)$, it is best to calculate $m(s)$
for all $s$ in some finite subset $S\subset\C$, and then find a
polynomial $m_0(z)=\sum_{i=0}^da_iz^{4i}$ which minimises
$\sum_s\|m_0(s)-m(s)\|^2$.  As this objective function depends
quadratically on the coefficients $a_i\in\R$, the minimisation problem
reduces easily to a matrix calculation.  We have generally taken
\[ S = \{re^{k\pi i/(2n)}\st 0\leq k<n\} \]
for some radius $r\in(0,1)$ (say $r=0.95$) and some integer $n>0$ (say
$n=400$).  Note that the relation $m(iz)=m(z)$ makes it natural to
consider only sample points in the first quadrant, and the maximum
principle of complex analysis makes it reasonable to consider only
sample points on the boundary of the region where we want our
approximation to be accurate.  This algorithm is implemented by the
method \mcode+HP["find_m_series",r,n,d]+.

The following plot shows the curve $m(0.93e^{it})$ with the parameters
$a$ and $b$ that are relevant for $EX^*$.  (The real axis is drawn
vertically.)
\begin{center}
 \begin{tikzpicture}[scale=.4]
  \draw[gray]  (-13, -7) rectangle (13,13);
  \draw[gray]  (-13,  0) -- (13,0);
  \draw[gray]  (  0, -7) -- (0,13);
  \fill[black] ( 13,  0) circle(0.15);
  \fill[black] (-13,  0) circle(0.15);
  \fill[black] (  0, -7) circle(0.15);
  \fill[black] (  0, 13) circle(0.15);
  \draw        ( 13,  0) node[anchor=west ] {$-13i$};
  \draw        (-13,  0) node[anchor=east ] {$ 13i$};
  \draw        (  0, -8) node[anchor=north] {$   8$};
  \draw        (  0, 13) node[anchor=south] {$  13$};
  \draw[smooth,red] (-0.4839,11.0507) -- (-0.9605,10.9726) -- (-1.4223,10.8445) -- (-1.8627,10.6692) -- (-2.2754,10.4508) -- (-2.6550,10.1940) -- (-2.9970,9.9047) -- (-3.2979,9.5889) -- (-3.5556,9.2533) -- (-3.7687,8.9046) -- (-3.9375,8.5493) -- (-4.0629,8.1938) -- (-4.1471,7.8438) -- (-4.1929,7.5043) -- (-4.2037,7.1796) -- (-4.1836,6.8732) -- (-4.1365,6.5876) -- (-4.0667,6.3245) -- (-3.9780,6.0850) -- (-3.8742,5.8694) -- (-3.7588,5.6775) -- (-3.6348,5.5088) -- (-3.5047,5.3624) -- (-3.3708,5.2373) -- (-3.2349,5.1324) -- (-3.0986,5.0466) -- (-2.9630,4.9788) -- (-2.8292,4.9280) -- (-2.6979,4.8931) -- (-2.5700,4.8735) -- (-2.4459,4.8684) -- (-2.3261,4.8771) -- (-2.2111,4.8991) -- (-2.1013,4.9340) -- (-1.9969,4.9814) -- (-1.8985,5.0411) -- (-1.8063,5.1130) -- (-1.7208,5.1971) -- (-1.6426,5.2935) -- (-1.5724,5.4024) -- (-1.5109,5.5240) -- (-1.4593,5.6585) -- (-1.4187,5.8063) -- (-1.3908,5.9673) -- (-1.3773,6.1417) -- (-1.3803,6.3291) -- (-1.4019,6.5291) -- (-1.4445,6.7406) -- (-1.5108,6.9624) -- (-1.6032,7.1926) -- (-1.7244,7.4287) -- (-1.8764,7.6677) -- (-2.0614,7.9060) -- (-2.2807,8.1394) -- (-2.5351,8.3635) -- (-2.8247,8.5732) -- (-3.1487,8.7635) -- (-3.5052,8.9294) -- (-3.8917,9.0661) -- (-4.3048,9.1693) -- (-4.7403,9.2355) -- (-5.1934,9.2615) -- (-5.6592,9.2455) -- (-6.1325,9.1862) -- (-6.6082,9.0832) -- (-7.0814,8.9369) -- (-7.5475,8.7482) -- (-8.0024,8.5185) -- (-8.4422,8.2497) -- (-8.8635,7.9438) -- (-9.2632,7.6030) -- (-9.6386,7.2297) -- (-9.9870,6.8264) -- (-10.3063,6.3958) -- (-10.5944,5.9408) -- (-10.8495,5.4642) -- (-11.0703,4.9693) -- (-11.2557,4.4593) -- (-11.4051,3.9373) -- (-11.5182,3.4065) -- (-11.5952,2.8698) -- (-11.6365,2.3300) -- (-11.6427,1.7895) -- (-11.6145,1.2503) -- (-11.5524,0.7142) -- (-11.4571,0.1829) -- (-11.3286,-0.3423) -- (-11.1671,-0.8598) -- (-10.9722,-1.3680) -- (-10.7435,-1.8649) -- (-10.4806,-2.3479) -- (-10.1829,-2.8143) -- (-9.8506,-3.2605) -- (-9.4839,-3.6826) -- (-9.0841,-4.0762) -- (-8.6531,-4.4369) -- (-8.1938,-4.7602) -- (-7.7101,-5.0420) -- (-7.2068,-5.2784) -- (-6.6896,-5.4667) -- (-6.1648,-5.6050) -- (-5.6391,-5.6924) -- (-5.1193,-5.7295) -- (-4.6121,-5.7181) -- (-4.1237,-5.6610) -- (-3.6598,-5.5623) -- (-3.2248,-5.4268) -- (-2.8225,-5.2601) -- (-2.4552,-5.0679) -- (-2.1243,-4.8562) -- (-1.8298,-4.6308) -- (-1.5710,-4.3971) -- (-1.3462,-4.1600) -- (-1.1532,-3.9236) -- (-0.9892,-3.6914) -- (-0.8511,-3.4662) -- (-0.7359,-3.2498) -- (-0.6405,-3.0436) -- (-0.5619,-2.8483) -- (-0.4974,-2.6642) -- (-0.4447,-2.4909) -- (-0.4016,-2.3279) -- (-0.3667,-2.1747) -- (-0.3385,-2.0302) -- (-0.3160,-1.8938) -- (-0.2988,-1.7645) -- (-0.2863,-1.6416) -- (-0.2784,-1.5247) -- (-0.2749,-1.4133) -- (-0.2759,-1.3073) -- (-0.2813,-1.2066) -- (-0.2909,-1.1112) -- (-0.3046,-1.0214) -- (-0.3221,-0.9372) -- (-0.3432,-0.8588) -- (-0.3673,-0.7863) -- (-0.3942,-0.7196) -- (-0.4236,-0.6588) -- (-0.4551,-0.6039) -- (-0.4885,-0.5548) -- (-0.5238,-0.5116) -- (-0.5608,-0.4745) -- (-0.5993,-0.4438) -- (-0.6391,-0.4200) -- (-0.6797,-0.4036) -- (-0.7206,-0.3952) -- (-0.7608,-0.3955) -- (-0.7990,-0.4050) -- (-0.8337,-0.4239) -- (-0.8632,-0.4524) -- (-0.8855,-0.4898) -- (-0.8985,-0.5353) -- (-0.9004,-0.5872) -- (-0.8895,-0.6436) -- (-0.8647,-0.7018) -- (-0.8252,-0.7587) -- (-0.7714,-0.8113) -- (-0.7041,-0.8562) -- (-0.6252,-0.8906) -- (-0.5372,-0.9120) -- (-0.4431,-0.9185) -- (-0.3466,-0.9093) -- (-0.2512,-0.8842) -- (-0.1603,-0.8441) -- (-0.0770,-0.7906) -- (-0.0039,-0.7260) -- (0.0573,-0.6530) -- (0.1054,-0.5747) -- (0.1402,-0.4939) -- (0.1620,-0.4136) -- (0.1717,-0.3362) -- (0.1707,-0.2641) -- (0.1607,-0.1988) -- (0.1437,-0.1418) -- (0.1216,-0.0940) -- (0.0966,-0.0557) -- (0.0709,-0.0270) -- (0.0464,-0.0077) -- (0.0251,0.0032) -- (0.0088,0.0066) -- (-0.0012,0.0039) -- (-0.0036,-0.0032) -- (0.0025,-0.0128) -- (0.0173,-0.0229) -- (0.0409,-0.0316) -- (0.0730,-0.0368) -- (0.1127,-0.0367) -- (0.1591,-0.0297) -- (0.2108,-0.0143) -- (0.2663,0.0105) -- (0.3241,0.0458) -- (0.3823,0.0923) -- (0.4392,0.1504) -- (0.4929,0.2206) -- (0.5413,0.3029) -- (0.5823,0.3972) -- (0.6136,0.5032) -- (0.6325,0.6202) -- (0.6366,0.7469) -- (0.6230,0.8816) -- (0.5894,1.0217) -- (0.5333,1.1642) -- (0.4530,1.3050) -- (0.3473,1.4398) -- (0.2163,1.5633) -- (0.0607,1.6704) -- (-0.1172,1.7555) -- (-0.3138,1.8138) -- (-0.5244,1.8408) -- (-0.7433,1.8332) -- (-0.9638,1.7888) -- (-1.1787,1.7071) -- (-1.3806,1.5890) -- (-1.5627,1.4372) -- (-1.7184,1.2559) -- (-1.8425,1.0505) -- (-1.9310,0.8277) -- (-1.9814,0.5947) -- (-1.9931,0.3591) -- (-1.9668,0.1281) -- (-1.9051,-0.0912) -- (-1.8118,-0.2929) -- (-1.6917,-0.4722) -- (-1.5503,-0.6255) -- (-1.3937,-0.7508) -- (-1.2275,-0.8473) -- (-1.0574,-0.9155) -- (-0.8882,-0.9568) -- (-0.7242,-0.9736) -- (-0.5687,-0.9686) -- (-0.4240,-0.9451) -- (-0.2918,-0.9061) -- (-0.1729,-0.8547) -- (-0.0678,-0.7934) -- (0.0236,-0.7247) -- (0.1016,-0.6506) -- (0.1666,-0.5727) -- (0.2190,-0.4923) -- (0.2592,-0.4108) -- (0.2875,-0.3292) -- (0.3039,-0.2486) -- (0.3086,-0.1702) -- (0.3019,-0.0955) -- (0.2840,-0.0259) -- (0.2555,0.0369) -- (0.2175,0.0913) -- (0.1710,0.1356) -- (0.1179,0.1684) -- (0.0602,0.1886) -- (0.0000,0.1954) -- (-0.0602,0.1886) -- (-0.1179,0.1684) -- (-0.1710,0.1356) -- (-0.2175,0.0913) -- (-0.2555,0.0369) -- (-0.2840,-0.0259) -- (-0.3019,-0.0955) -- (-0.3086,-0.1702) -- (-0.3039,-0.2486) -- (-0.2875,-0.3292) -- (-0.2592,-0.4108) -- (-0.2190,-0.4923) -- (-0.1666,-0.5727) -- (-0.1016,-0.6506) -- (-0.0236,-0.7247) -- (0.0678,-0.7934) -- (0.1729,-0.8547) -- (0.2918,-0.9061) -- (0.4240,-0.9451) -- (0.5687,-0.9686) -- (0.7242,-0.9736) -- (0.8882,-0.9568) -- (1.0574,-0.9155) -- (1.2275,-0.8473) -- (1.3937,-0.7508) -- (1.5503,-0.6255) -- (1.6917,-0.4722) -- (1.8118,-0.2929) -- (1.9051,-0.0912) -- (1.9668,0.1281) -- (1.9931,0.3591) -- (1.9814,0.5947) -- (1.9310,0.8277) -- (1.8425,1.0505) -- (1.7184,1.2559) -- (1.5627,1.4372) -- (1.3806,1.5890) -- (1.1787,1.7071) -- (0.9638,1.7888) -- (0.7433,1.8332) -- (0.5244,1.8408) -- (0.3138,1.8138) -- (0.1172,1.7555) -- (-0.0607,1.6704) -- (-0.2163,1.5633) -- (-0.3473,1.4398) -- (-0.4530,1.3050) -- (-0.5333,1.1642) -- (-0.5894,1.0217) -- (-0.6230,0.8816) -- (-0.6366,0.7469) -- (-0.6325,0.6202) -- (-0.6136,0.5032) -- (-0.5823,0.3972) -- (-0.5413,0.3029) -- (-0.4929,0.2206) -- (-0.4392,0.1504) -- (-0.3823,0.0923) -- (-0.3241,0.0458) -- (-0.2663,0.0105) -- (-0.2108,-0.0143) -- (-0.1591,-0.0297) -- (-0.1127,-0.0367) -- (-0.0730,-0.0368) -- (-0.0409,-0.0316) -- (-0.0173,-0.0229) -- (-0.0025,-0.0128) -- (0.0036,-0.0032) -- (0.0012,0.0039) -- (-0.0088,0.0066) -- (-0.0251,0.0032) -- (-0.0464,-0.0077) -- (-0.0709,-0.0270) -- (-0.0966,-0.0557) -- (-0.1216,-0.0940) -- (-0.1437,-0.1418) -- (-0.1607,-0.1988) -- (-0.1707,-0.2641) -- (-0.1717,-0.3362) -- (-0.1620,-0.4136) -- (-0.1402,-0.4939) -- (-0.1054,-0.5747) -- (-0.0573,-0.6530) -- (0.0039,-0.7260) -- (0.0770,-0.7906) -- (0.1603,-0.8441) -- (0.2512,-0.8842) -- (0.3466,-0.9093) -- (0.4431,-0.9185) -- (0.5372,-0.9120) -- (0.6252,-0.8906) -- (0.7041,-0.8562) -- (0.7714,-0.8113) -- (0.8252,-0.7587) -- (0.8647,-0.7018) -- (0.8895,-0.6436) -- (0.9004,-0.5872) -- (0.8985,-0.5353) -- (0.8855,-0.4898) -- (0.8632,-0.4524) -- (0.8337,-0.4239) -- (0.7990,-0.4050) -- (0.7608,-0.3955) -- (0.7206,-0.3952) -- (0.6797,-0.4036) -- (0.6391,-0.4200) -- (0.5993,-0.4438) -- (0.5608,-0.4745) -- (0.5238,-0.5116) -- (0.4885,-0.5548) -- (0.4551,-0.6039) -- (0.4236,-0.6588) -- (0.3942,-0.7196) -- (0.3673,-0.7863) -- (0.3432,-0.8588) -- (0.3221,-0.9372) -- (0.3046,-1.0214) -- (0.2909,-1.1112) -- (0.2813,-1.2066) -- (0.2759,-1.3073) -- (0.2749,-1.4133) -- (0.2784,-1.5247) -- (0.2863,-1.6416) -- (0.2988,-1.7645) -- (0.3160,-1.8938) -- (0.3385,-2.0302) -- (0.3667,-2.1747) -- (0.4016,-2.3279) -- (0.4447,-2.4909) -- (0.4974,-2.6642) -- (0.5619,-2.8483) -- (0.6405,-3.0436) -- (0.7359,-3.2498) -- (0.8511,-3.4662) -- (0.9892,-3.6914) -- (1.1532,-3.9236) -- (1.3462,-4.1600) -- (1.5710,-4.3971) -- (1.8298,-4.6308) -- (2.1243,-4.8562) -- (2.4552,-5.0679) -- (2.8225,-5.2601) -- (3.2248,-5.4268) -- (3.6598,-5.5623) -- (4.1237,-5.6610) -- (4.6121,-5.7181) -- (5.1193,-5.7295) -- (5.6391,-5.6924) -- (6.1648,-5.6050) -- (6.6896,-5.4667) -- (7.2068,-5.2784) -- (7.7101,-5.0420) -- (8.1938,-4.7602) -- (8.6531,-4.4369) -- (9.0841,-4.0762) -- (9.4839,-3.6826) -- (9.8506,-3.2605) -- (10.1829,-2.8143) -- (10.4806,-2.3479) -- (10.7435,-1.8649) -- (10.9722,-1.3680) -- (11.1671,-0.8598) -- (11.3286,-0.3423) -- (11.4571,0.1829) -- (11.5524,0.7142) -- (11.6145,1.2503) -- (11.6427,1.7895) -- (11.6365,2.3300) -- (11.5952,2.8698) -- (11.5182,3.4065) -- (11.4051,3.9373) -- (11.2557,4.4593) -- (11.0703,4.9693) -- (10.8495,5.4642) -- (10.5944,5.9408) -- (10.3063,6.3958) -- (9.9870,6.8264) -- (9.6386,7.2297) -- (9.2632,7.6030) -- (8.8635,7.9438) -- (8.4422,8.2497) -- (8.0024,8.5185) -- (7.5475,8.7482) -- (7.0814,8.9369) -- (6.6082,9.0832) -- (6.1325,9.1862) -- (5.6592,9.2455) -- (5.1934,9.2615) -- (4.7403,9.2355) -- (4.3048,9.1693) -- (3.8917,9.0661) -- (3.5052,8.9294) -- (3.1487,8.7635) -- (2.8247,8.5732) -- (2.5351,8.3635) -- (2.2807,8.1394) -- (2.0614,7.9060) -- (1.8764,7.6677) -- (1.7244,7.4287) -- (1.6032,7.1926) -- (1.5108,6.9624) -- (1.4445,6.7406) -- (1.4019,6.5291) -- (1.3803,6.3291) -- (1.3773,6.1417) -- (1.3908,5.9673) -- (1.4187,5.8063) -- (1.4593,5.6585) -- (1.5109,5.5240) -- (1.5724,5.4024) -- (1.6426,5.2935) -- (1.7208,5.1971) -- (1.8063,5.1130) -- (1.8985,5.0411) -- (1.9969,4.9814) -- (2.1013,4.9340) -- (2.2111,4.8991) -- (2.3261,4.8771) -- (2.4459,4.8684) -- (2.5700,4.8735) -- (2.6979,4.8931) -- (2.8292,4.9280) -- (2.9630,4.9788) -- (3.0986,5.0466) -- (3.2349,5.1324) -- (3.3708,5.2373) -- (3.5047,5.3624) -- (3.6348,5.5088) -- (3.7588,5.6775) -- (3.8742,5.8694) -- (3.9780,6.0850) -- (4.0667,6.3245) -- (4.1365,6.5876) -- (4.1836,6.8732) -- (4.2037,7.1796) -- (4.1929,7.5043) -- (4.1471,7.8438) -- (4.0629,8.1938) -- (3.9375,8.5493) -- (3.7687,8.9046) -- (3.5556,9.2533) -- (3.2979,9.5889) -- (2.9970,9.9047) -- (2.6550,10.1940) -- (2.2754,10.4508) -- (1.8627,10.6692) -- (1.4223,10.8445) -- (0.9605,10.9726) -- (0.4839,11.0507) -- (0.0000,11.0769) --  cycle;
 \end{tikzpicture}
\end{center}

We will mention one other approach to the calculation of $m(z)$, but
we will not go into great detail because we have not found it to be
computationally efficient.

\begin{definition}\lbl{defn-tensor-sections}
 Put
 \[ A_k = \{f\in\Hol(\Dl)\st f(z) = f(\gm(z)) \gm'(z)^k
     \text{ for all } \gm\in\Pi \text{ and } z\in\Dl
    \}.
 \]
 Multiplication by $dz^{\ot k}$ identifies $A_k$ with the space of
 holomorphic sections of the $k$'th tensor power of the cotangent
 bundle of $HX(b)$.  In particular, we have $A_1=\Om^1(HX(b))$.
\end{definition}

\begin{lemma}\lbl{lem-riemann-roch}
 \[ \dim_\C(A_k) = \begin{cases}
     0 & \text{ if } k < 0 \\
     1 & \text{ if } k = 0 \\
     2 & \text{ if } k = 1 \\
     2k-1 & \text{ if } k > 1.
    \end{cases}
 \]
\end{lemma}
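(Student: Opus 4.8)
The plan is to exploit the identification of $A_k$ with the space $H^0(HX(b),K^{\otimes k})$ of holomorphic sections of the $k$'th tensor power of the canonical line bundle $K=\Om^1_{HX(b)}$, which is exactly what Definition~\ref{defn-tensor-sections} records, and then to read off the dimensions from the Riemann--Roch theorem together with Serre duality. Recall that $HX(b)$ is a compact Riemann surface of genus $g=2$, so $\deg K=2g-2=2$ and hence $\deg K^{\otimes k}=2k$. Riemann--Roch says that for any holomorphic line bundle $L$ on $HX(b)$ we have
\[ \dim_\C H^0(L)-\dim_\C H^1(L)=\deg(L)-g+1=\deg(L)-1, \]
and Serre duality gives $H^1(L)\cong H^0(K\otimes L^{-1})^*$.

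First I would dispose of the easy ranges. If $k<0$ then $\deg K^{\otimes k}=2k<0$; since a nonzero holomorphic section of a line bundle of degree $d$ has exactly $d$ zeros counted with multiplicity, there is no such section, so $\dim A_k=0$. If $k=0$ then $K^{\otimes 0}=\CO$ and its global holomorphic sections are the constants, so $\dim A_0=1$. If $k=1$ then $\dim A_1=\dim_\C\Om^1(HX(b))=g=2$ by the definition of the genus; this is consistent with the explicit basis $\{\om_0,\om_1\}$ of $\Om^1(PX(a))$ from Proposition~\ref{prop-holomorphic-forms}, transported to $HX(b)$ along the isomorphism of Theorem~\ref{thm-H-universal}.

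The remaining case is $k>1$. Here Riemann--Roch gives $\dim A_k-\dim_\C H^1(K^{\otimes k})=2k-1$, and Serre duality identifies $H^1(K^{\otimes k})$ with the dual of $H^0(K\otimes K^{-k})=H^0(K^{\otimes(1-k)})$; since $1-k<0$ the degree $2(1-k)$ is negative, so this space vanishes by the argument already used for negative $k$, and therefore $\dim A_k=2k-1$. There is no genuine obstacle here: the only point requiring care is the bookkeeping at $k=1$, where Riemann--Roch alone yields only $\dim H^0(K)-\dim H^1(K)=1$ and one must separately note $H^1(K)\cong H^0(\CO)^*\cong\C$ (equivalently, invoke the definition of genus) to get $\dim A_1=2$. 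As an alternative that avoids Riemann--Roch, one could work directly with the hyperelliptic model $w^2=r_a(z)$ of $PX(a)\simeq HX(b)$ from Definition~\ref{defn-P}: the form $\om_0=dz/w$ is holomorphic and vanishes only at the single point over $z=\infty$, where it has a double zero, so a section of $K^{\otimes k}$ is $f\cdot\om_0^{\otimes k}$ with $f$ a rational function whose only pole is at that point and of order at most $2k$, and counting the monomials $1,z,\dots,z^k$ and $w,zw,\dots$ of pole order at most $2k$ recovers the stated dimensions. I would present the Riemann--Roch version as the main proof, since it is shorter.
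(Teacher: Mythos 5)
Your proof is correct and follows essentially the same route as the paper: Riemann--Roch (in the form involving $H^0(\Om^1\otimes\CL^*)$, i.e.\ with Serre duality built in), vanishing of sections of negative-degree bundles for $k<0$ and for the dual term when $k>1$, and the known values at $k=0,1$. The only cosmetic difference is that you quote $\deg K=2g-2$ directly, whereas the paper extracts $\deg(\Om^1)=2$ from the $n=1$ case of Riemann--Roch.
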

\begin{proof}
 This is a standard consequence of the Riemann-Roch theorem
 (see~\cite[Section IV.1]{ha:ag}, for example).  In more
 detail, that theorem tells us that for any line bundle $\CL$ over a
 compact Riemann surface $Z$ of genus $g$, we have
 \[ \dim(H^0(Z;\CL)) - \dim(H^0(Z;\Om^1\ot\CL^*)) =
     \deg(\CL) + 1 - g.
 \]
 Now take $Z=HX(b)$ (so $g=2$) and put
 $f(n)=\dim(H^0(HX(b);(\Om^1)^{\ot n}))$ for $n\in\Z$.  We have seen
 that $\{\om_0,\om_1\}$ is a basis for $H^0(HX(b);\Om^1)$ over $\C$,
 so $f(1)=2$.  On any compact Riemann surface, the only holomorphic
 $\C$-valued functions are constant, so $f(0)=1$.  The Riemann-Roch
 theorem gives $f(n)-f(1-n)=n\,\deg(\Om^1)-1$.  Taking $n=1$ gives
 $\deg(\Om^1)=2$, so we get $f(n)-f(1-n)=2n-1$.  Moreover, as
 $\deg(\Om^1)>0$ we have $f(n)=0$ for $n<0$.  Thus, when $n>1$ we have
 $f(1-n)=0$ and so $f(n)=2n-1$ as required.
\end{proof}

\begin{definition}\lbl{defn-automorphic-series}
 For $j\geq 0$ and $k\geq 2$ we put
 \[ p_{jk}(z) = \sum_{\gm\in\Pi} \gm(z)^j\gm'(z)^k. \]
\end{definition}

It is a standard theorem that the above series is absolutely uniformly
convergent on compact subsets of $\Dl$.  We will give a proof that
includes explicit bounds in the case of interest.

\begin{definition}\lbl{defn-automorphic-area}
 For any subset $A\sse\Pi$ we put
 \[ a(A) =
     \text{area}\left(\bigcup_{\gm\in A}\gm(HF_1(b))\right) =
     \sum_{\gm\in A}\text{area}(\gm(HF_1(b))).
 \]
 (Here areas are defined in terms of the Euclidean metric on $\Dl$,
 not the hyperbolic metric.)  We also write
 $a(\gm)=a(\{\gm\})=\text{area}(\gm(HF_1(b)))$.  In particular,
 $a(1)$ is just the area of $HF_1(b)$.
\end{definition}

\begin{remark}\lbl{rem-automorphic-area}
 We can find $a(1)$ as follows.  It is easy to see that $a(1)$ is $8$
 times the area of $HF_8(b)$.  The boundary of this region consists
 of straight lines joining $v_0=0$ to $v_1$ and $v_{11}$, together
 with an arc of the circle $C_3$ joining $v_{11}$ to $v_{13}$, and an
 arc of the circle $C_7$ joining $v_1$ to $v_{13}$.  Recall that $C_3$
 has centre $a_3=b_+$ and radius $r_3=\sqrt{|a_3|^2-1}=b$, whereas
 $C_7$ has centre $a_7=b_+^{-1}+\half ib_+$ and radius
 $r_7=\sqrt{|a_7|^2-1}=\half b_-^2b_+^{-1}$.  One can check that
 $v_{11}=a_3-r_3$ and $v_1=a_7-r_7$ and
 \[ v_{13} = a_3 - r_3 e^{-i\tht} = a_7 - i\,r_7 e^{-i\tht} \]
 where $\tht=2\arctan(b(b_+-b))$.  Using this, it is easy to
 parameterise the boundary of $HF_8(b)$ and use Green's theorem (in the
 form $\text{area}(D)=-\oint_{\partial D}y\,dx$) to calculate the
 area.  We eventually arrive at the following formula:
 \[ a(1) =
  3+b^2
  - \frac{2bb_-^2}{b_+} - \frac{b_-^4}{b_+^2}\frac{\pi}{2}
  - \frac{3b^4+6b^2-1}{1+b^2}\tht.
 \]
 The graph is as follows:
 \begin{center}
  \begin{tikzpicture}[scale=6]
   \draw[->] (-0.03,0) -- (1.05,0);
   \draw[->] (0,-0.03) -- (0,0.55);
   \draw[smooth,red] (0.000,0.455) -- (0.100,0.464) -- (0.200,0.480) -- (0.300,0.493) -- (0.400,0.498) -- (0.500,0.491) -- (0.600,0.472) -- (0.700,0.440) -- (0.800,0.396) -- (0.900,0.340) -- (1.000,0.273);
   \draw (0.2,0) -- (0.2,-0.02);
   \draw (0.4,0) -- (0.4,-0.02);
   \draw (0.6,0) -- (0.6,-0.02);
   \draw (0.8,0) -- (0.8,-0.02);
   \draw (1.0,0) -- (1.0,-0.03);
   \draw (0,0.1) -- (-0.02,0.1);
   \draw (0,0.2) -- (-0.02,0.2);
   \draw (0,0.3) -- (-0.02,0.3);
   \draw (0,0.4) -- (-0.02,0.4);
   \draw (0,0.5) -- (-0.03,0.5);
   \draw (1.05,0) node[anchor=west] {$b$};
   \draw (0.8,0.47) node {$a(1)$};
   \draw (0,-0.03) node[anchor=north] {$0$};
   \draw (1,-0.03) node[anchor=north] {$1$};
   \draw (-0.03,0.0) node[anchor=east] {$0$};
   \draw (-0.03,0.5) node[anchor=east] {$\pi/2$};
  \end{tikzpicture}
 \end{center}
 \begin{checks}
  hyperbolic/HX_check.mpl: check_F1_area()
 \end{checks}
\end{remark}

\begin{proposition}\lbl{prop-automorphic-conv}
 Fix $k\geq 2$ and $\dl>0$, and let $K$ be the closed disc of radius
 $1-\dl$ centred at the origin.  Then the series defining $p_{jk}(z)$
 converges absolutely and uniformly on $K$.  More precisely, for any
 subset $A\sse\Pi$ and any $z\in K$ we have
 \[ \left|p_{jk}(z)-\sum_{\gm\in A}\gm(z)^j\gm'(z)^k\right|
     \leq \sum_{\gm\in\Pi\sm A} |\gm'(z)|^k
     \leq\dl^{-2k}a(\Pi\sm A)/a(1) = \dl^{-2k}(\pi - a(A))/a(1).
 \]
\end{proposition}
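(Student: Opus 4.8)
The plan is to establish the displayed chain from right to left. The final equality is immediate: the translates $\gm(HF_1(b))$ ($\gm\in\Pi$) tile $\Dl$ by Proposition~\ref{prop-Pi-free}, so their areas sum to $\text{area}(\Dl)=\pi$, giving $a(\Pi)=\pi$ and hence $a(\Pi\sm A)=\pi-a(A)$ for every $A$. The first inequality is routine once we know the series converges absolutely: every $\gm\in\Pi$ preserves $\Dl$, so $|\gm(z)|<1$ and thus $|\gm(z)^j\gm'(z)^k|\le|\gm'(z)|^k$, whence $|p_{jk}(z)-\sum_{\gm\in A}\gm(z)^j\gm'(z)^k|\le\sum_{\gm\in\Pi\sm A}|\gm'(z)|^k$. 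So everything reduces to the middle estimate $\sum_{\gm\in\Pi\sm A}|\gm'(z)|^k\le\delta^{-2k}a(\Pi\sm A)/a(1)$, which will also supply the required absolute and uniform convergence.

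To prove the middle estimate I first obtain a pointwise bound. Write an arbitrary $\gm\in\Pi$ in the standard form $\gm(w)=\mu^2(w+\al)/(1+\bar\al w)$ with $|\mu|=1$ and $|\al|<1$, so that $|\gm'(w)|=(1-|\al|^2)/|1+\bar\al w|^2$ and in particular $|\gm'(0)|=1-|\al|^2\le 1$. If $z\in K$ then $|1+\bar\al z|\ge 1-|\al|\,|z|\ge 1-|z|\ge\delta$, so $|\gm'(z)|\le\delta^{-2}(1-|\al|^2)=\delta^{-2}|\gm'(0)|$; since $k\ge 2$ and $|\gm'(0)|\le 1$, this gives $|\gm'(z)|^k\le\delta^{-2k}|\gm'(0)|^2$. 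Summing over $\gm\in\Pi\sm A$, it now suffices to prove $\sum_{\gm\in\Pi\sm A}|\gm'(0)|^2\le a(\Pi\sm A)/a(1)$.

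The heart of the matter is the termwise comparison $|\gm'(0)|^2\,a(1)\le\text{area}(\gm(F))$, where $F=HF_1(b)$. I will use that $F$ is invariant under $w\mapsto iw$: indeed $F=\bigcup_{\tau\in T_8}\tau(HF_8(b))$ from the proof of Proposition~\ref{prop-HF-sixteen}, and $\lm T_8=T_8$ because $\lm^4=1$. Substituting $w\mapsto i^nw$ and averaging,
\[ \text{area}(\gm(F))=\int_F|\gm'(w)|^2\,dA(w)
   =\int_F\tfrac14\sum_{n=0}^3|\gm'(i^nw)|^2\,dA(w), \]
and the integrand here equals $(1-|\al|^2)^2\cdot\tfrac14\sum_{n=0}^3|1+i^n\bar\al w|^{-4}$. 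Granting the elementary inequality $\tfrac14\sum_{n=0}^3|1+i^nu|^{-4}\ge 1$ for all $u$ with $|u|<1$ (proved below, applied with $u=\bar\al w$, which has $|u|<1$), the integrand is at least $(1-|\al|^2)^2=|\gm'(0)|^2$, so $\text{area}(\gm(F))\ge|\gm'(0)|^2\,\text{area}(F)=|\gm'(0)|^2\,a(1)$. Summing over any finite $B\sse\Pi$ and using that the $\gm(F)$ have disjoint interiors inside $\Dl$, $\sum_{\gm\in B}|\gm'(0)|^2\le\text{area}\bigl(\bigcup_{\gm\in B}\gm(F)\bigr)/a(1)=a(B)/a(1)\le\pi/a(1)$; letting $B$ increase to $\Pi\sm A$ gives $\sum_{\gm\in\Pi\sm A}|\gm'(0)|^2\le a(\Pi\sm A)/a(1)$, in particular convergence. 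Combining this with the pointwise bound and the first paragraph yields the proposition, and uniformity on $K$ is automatic since the final bound is independent of $z$.

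The main obstacle is the scalar inequality $\tfrac14\sum_{n=0}^3|1+i^nu|^{-4}\ge 1$. I would prove it by expanding $|1+i^nu|^{-4}=\bigl|(1+i^nu)^{-2}\bigr|^2$ as a power series in $u$ and averaging over $n$: orthogonality of $\{i^{nm}\}_n$ kills every cross term except those of equal exponent modulo $4$, giving $\tfrac14\sum_{n=0}^3|1+i^nu|^{-4}=\sum_{r=0}^3|g_r(u)|^2$ with $g_r(u)=\sum_{m\equiv r\,(4)}(m+1)u^m$ a rational function of $u^4$ (for instance $g_0(u)=(1+3u^4)/(1-u^4)^2$). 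Writing $\tau=|u|^2\in(0,1)$ and $c=\cos(4\arg u)\in[-1,1]$, the inequality $\sum_r|g_r(u)|^2\ge 1$ becomes a polynomial inequality in $\tau$ and $c$; the $c$-dependence is concave with vertex outside $[-1,1]$, so the minimum is at $c=-1$, where it reduces to $4\tau-\tau^2+8\tau^3-3\tau^4+4\tau^5-3\tau^6-\tau^8>0$, which is manifest for $\tau\in(0,1)$ (group as $(4\tau-\tau^2)+(8\tau^3-3\tau^4)+\tau^5(4-3\tau-\tau^3)$). A secondary, routine point is confirming the $\lm$-invariance of $HF_1(b)$ and disposing of the measure-zero overlaps of the tiles. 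If one only wants absolute and uniform convergence on $K$ without the sharp constant $1/a(1)$, one can skip the scalar inequality and replace the termwise comparison by the cruder $|\gm'(0)|^2\le\text{area}(\gm(F))/(\pi\rho^2)$, valid whenever the Euclidean disc $D(0,\rho)$ lies in $F$ — for instance $\rho=\rt-1$ by Corollary~\ref{cor-bound-i}.
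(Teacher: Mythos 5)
Your proof is correct, and while it shares the paper's overall skeleton (a pointwise bound $|\gm'(z)|^k\leq\dl^{-2k}\cdot(\text{quantity depending only on }\gm)$, followed by comparing that quantity with $a(\gm)/a(1)$ and invoking the tiling $\sum_\gm a(\gm)=\pi$), the key comparison lemma is established by a genuinely different and in fact sharper argument. The paper writes $\gm(z)=(az+b)/(cz+d)$ with $ad-bc=1$, controls $|\gm'(z)|^k$ by $\dl^{-2k}|c|^{-4}$ using $|c|\geq 1$ and $|d/c|\geq 1$ from Corollary~\ref{cor-bound-ii}, and then compares $|c|^{-4}$ with $a(\gm)/a(1)$ via the crude pointwise bound $|cw+d|\leq(\rt+1)|c|$ on $HF_1(b)$; note that this last step, as written there, actually only yields $|c|^{-4}a(1)\leq(\rt+1)^4a(\gm)$, so the paper's route carries a harmless but nonzero constant that the text silently drops. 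You instead work with $|\gm'(0)|^2=(1-|\al|^2)^2$, whose pointwise comparison with $|\gm'(z)|^k$ needs only $|\al|<1$ (no appeal to Corollaries~\ref{cor-bound-i} or~\ref{cor-bound-ii}), and you prove the termwise bound $|\gm'(0)|^2a(1)\leq a(\gm)$ exactly, by exploiting the invariance of $HF_1(b)$ under $w\mapsto iw$ to average $|\gm'|^2$ over the four rotates and reducing to the scalar inequality $\tfrac14\sum_{n=0}^3|1+i^nu|^{-4}\geq 1$ for $|u|<1$. That inequality is true: pairing the $n$ and $n+2$ terms gives $\tfrac12\bigl(f(x)+f(4t^2-x)\bigr)$ with $f(x)=(A^2+x)/(A^2-x)^2$, $A=1+t^2$, $x=4t^2\cos^2(\arg u)$, and convexity of $f$ puts the minimum at $x=2t^2$, where the claim reduces to $4-\tau-\tau^3\geq 0$ for $\tau=|u|^2\in(0,1)$; this is consistent with your expansion via orthogonality of $\{i^{nm}\}_n$ even if your displayed polynomial at $c=-1$ looks like it came from clearing a different denominator. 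The trade-off: the paper's argument is shorter and reuses its existing bounds on $\Pi$, while yours needs the extra averaging lemma but is self-contained at that step and delivers the constant $\dl^{-2k}/a(1)$ exactly as stated in the proposition.
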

\begin{proof}
 Put $F=HF_1(b)$.  Note that the images $\{\gm(F)\st\gm\in\Pi\}$
 cover $\Dl$, and any two of these images have intersection of measure
 zero, so
 \[ \sum_{\gm\in\Pi}a(\gm)=\text{area}(\Dl)=\pi. \]
 Next, if we write $\gm(z)$ in the form $(az+b)/(cz+d)$ with
 $ad-bc=1$, then we find that $\gm'(z)=(cz+d)^{-2}$, so the Jacobian
 determinant is $|cz+d|^{-4}$.  It follows that
 $a(\gm)=\iint_{F}|cz+d|^{-4}$.   Corollary~\ref{cor-bound-ii} tells
 us that $|cz+d|\leq(\sqrt{2}+1)|c|$, so
 $|c|^{-4}\leq(\sqrt{2}+1)^4|cz+d|^{-4}$, so $|c|^{-4}a(1)\leq a(\gm)$.

 Now let $z$ be any point in $K$.  Clearly $|\gm(z)^j|\leq 1$.  We have
 $|cz+d|^{-2k}=|c|^{-2k}|z+d/c|^{-2k}$.  Corollary~\ref{cor-bound-ii}
 also gives $|d/c|\geq 1$ so for $z\in K$ we have
 $|z+d/c|^{-2k}\leq\dl^{-2k}$.  The same result also gives $|c|\geq 1$
 and $k\geq 2$ by assumption so
 $|c|^{-2k}\leq|c|^{-4}\leq a(\gm)/a(1)$.  Putting this together gives
 $|\gm(z)^j\gm'(z)^k|\leq\dl^{-2k}a(\gm)/a(1)$.  Taking the sum over $\gm$
 gives
 \[ \sum_{\gm\not\in A}|\gm'(z)|^{-k}\leq\dl^{-2k}a(\Pi\sm A)/a(1). \]
\end{proof}

\begin{corollary}\lbl{cor-automorphic-series}
 $p_{jk}\in A_k$ for all $j$ and $k$.
\end{corollary}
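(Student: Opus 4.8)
The plan is to obtain Corollary~\ref{cor-automorphic-series} as an essentially immediate consequence of Proposition~\ref{prop-automorphic-conv} together with a reindexing argument. First I would settle holomorphy: any compact subset of $\Dl$ lies inside a closed disc $K$ of radius $1-\dl$ for a suitable $\dl>0$, and Proposition~\ref{prop-automorphic-conv} shows that the partial sums of the series defining $p_{jk}$ converge uniformly on $K$. Each partial sum is a holomorphic function on $\Dl$ (a finite sum of products of the holomorphic functions $\gm(z)^j$ and $\gm'(z)^k$, noting that each $\gm\in\Pi$ is a M\"obius map whose pole lies outside $\ov{\Dl}$, so $\gm'$ is holomorphic and nonvanishing on $\Dl$), and a uniform limit of holomorphic functions is holomorphic. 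Since $K$ can be taken arbitrarily large, this shows $p_{jk}\in\Hol(\Dl)$, placing it in the ambient space of $A_k$.

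Next I would verify the automorphy relation $p_{jk}(z)=p_{jk}(\gm(z))\,\gm'(z)^k$ for every $\gm\in\Pi$. Fix $\dl\in\Pi$ and $z\in\Dl$. The ordinary chain rule gives $(\eta\circ\dl)(z)=\eta(\dl(z))$ and $(\eta\circ\dl)'(z)=\eta'(\dl(z))\,\dl'(z)$ for each $\eta\in\Pi$, so
\[
 p_{jk}(\dl(z))\,\dl'(z)^k
  = \sum_{\eta\in\Pi}\eta(\dl(z))^j\,\eta'(\dl(z))^k\,\dl'(z)^k
  = \sum_{\eta\in\Pi}(\eta\circ\dl)(z)^j\,\bigl((\eta\circ\dl)'(z)\bigr)^k.
\]
Because $\Pi$ is a group under composition, the map $\eta\mapsto\eta\circ\dl$ is a bijection of $\Pi$, and the absolute convergence asserted in Proposition~\ref{prop-automorphic-conv} licenses us to reindex the sum by this bijection without changing its value. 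After renaming the dummy variable the sum becomes $\sum_{\eta\in\Pi}\eta(z)^j\,\eta'(z)^k=p_{jk}(z)$, which is exactly the defining identity for membership in $A_k$. Hence $p_{jk}\in A_k$ for all $j\geq 0$ and $k\geq 2$.

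The argument is routine, and I do not expect a genuine obstacle: the one point that warrants a moment's care is the justification for rearranging the series term by term, and this is precisely what the absolute convergence clause of Proposition~\ref{prop-automorphic-conv} supplies. That same clause also confines the discussion to $k\geq 2$, which is exactly the range in which $p_{jk}$ was defined in Definition~\ref{defn-automorphic-series}, so there is no missing case to address.
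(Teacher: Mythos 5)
Your proposal is correct and follows essentially the same route as the paper: the chain-rule identity $(\gm\circ\dl)'(z)=\gm'(\dl(z))\dl'(z)$ followed by reindexing the sum over $\Pi$ by $\gm\mapsto\gm\circ\dl$, with the rearrangement licensed by the absolute uniform convergence from Proposition~\ref{prop-automorphic-conv}. Your explicit remark on holomorphy (uniform limits of holomorphic partial sums) is a small addition the paper leaves implicit, but it is not a different argument.
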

\begin{proof}
 The proposition shows that the series for $p_{jk}(z)$ is absolutely
 uniformly convergent on compact subsets of $\Dl$.  This validates the
 following manipulation:
 \begin{align*}
  p_{jk}(\dl(z))\dl'(z)^k &=
   \sum_{\gm\in\Pi} (\gm\dl)(z)^j \gm'(\dl(z))^k\dl'(z)^k \\
   &= \sum_{\gm\in\Pi} (\gm\dl)(z)^j (\gm\dl)'(z)^k \\
   &= \sum_{\ep\in\Pi} \ep(z)^j\ep'(z)^k = p_{jk}(z).
 \end{align*}
\end{proof}

Next, the isomorphism $A_2=\Gamma(HX(b);\Om^{\ot 2})$ gives rise to an
action of $G$ on $A_2$; we will write $\gm^\bullet f$ for the action
of $\gm$ on $f$.  Note that the functions $m(z)$ and
\[ n(z) = (\mu^\bullet m)(z) = m(\mu(z))\mu'(z) \]
give a basis for $A_1$.  It follows that the functions $m^2$, $n^2$
and $mn$ are linearly independent in $A_2$, and $A_2$ has dimension
$3$ by Lemma~\ref{lem-riemann-roch}, so the indicated elements must
give a basis.  In particular, we have $p_{02}=a_0m^2+a_1n^2+a_2mn$ for
some constants $a_i$.  One can check that $p_{02}$, $m^2$ and $n^2$
are fixed by $\lm$ whereas $mn$ is negated, so $a_2=0$.  Using the
automorphy properties of $m$ together with the relation
$\bt_6\mu(v_1)=v_0$ one can check that $m(v_1)=n(v_0)=0$ and
\[ n(v_1)/m(v_0) = (\bt_6\mu)'(v_1) = 2i/(1-b^2). \]
From this we obtain $a_1=a^*a_0$, where
\[ a^* = -\tfrac{1}{4}(1-b^2) p_{02}(v_1)/p_{02}(v_0). \]
From this it follows that $p_{02}-a^*\mu^\bullet p_{02}$ is a constant
multiple of $m^2$.  Thus, if we can calculate $p_{02}$ effectively,
then we can recover the function $m$ up to a constant multiplier,
without using the methods in Sections~\ref{sec-a-from-b}
and~\ref{sec-b-from-a}.  We can then find the map $p$ from the
relation $p=(\mu^\bullet m)/m$, noting that the unknown constant
cancels out.

However, in practice we need an extremely large number of terms to
calculate $p_{02}$ accurately, so this is not an efficient approach.
Various tactics are available to streamline the calculation, but they
are not sufficient to change the conclusion.

The above algorithm is implemented by the class
\mcode+automorphy_system+, which is declared in the file
\fname+hyperbolic/automorphic.mpl+.  In more detail, we can enter the
following:
\begin{mcodeblock}
   AS := `new/automorphy_system`();
   AS["a_H"] := a_H0;
   AS["poly_deg"] := 100;
   AS["band_number"] := 4;
   AS["set_p0_series",2]:
   AS["set_m_series"]:
   AS["m_series"](z);
\end{mcodeblock}
The last line will give a polynomial $m^*(z)$ of degree $100$ which
approximates $m(z)/m(0)$.  It is based on a calculation of $p_{02}(z)$
obtained by summing over a certain subset of $\Pi$.  More
specifically, we can put $B_0=\{1\}$ and
\[ B_1 = \{\gm\in\Pi \st \gm(HF_1(b))\cap HF_1(b)\neq\emptyset\}, \]
then we can define $B_n=B_1.B_{n-1}$ recursively.  There are $25$
elements in $B_1$, and it is not hard to list them explicitly, and
then to give an algorithm which enumerates $B_n$ for all $n$.  The
line
\begin{mcodeblock}
   AS["band_number"] := 4;
\end{mcodeblock}
specifies that sums should be taken over the set $B_4$, which has
$156772$ elements.  We find that $|m(0)m^*(z)-m(z)|\leq 10^{-5}$ for
$|z|\leq 0.5$, but the error grows to about $10^{-3}$ when $|z|=0.65$,
and becomes very large when $|z|>0.8$.

\section{The embedded family}
\lbl{sec-E}

\subsection{Geometry behind the definition}
\lbl{sec-E-geometry}

\begin{definition}\lbl{defn-X}
 Fix $a\in (0,1)$.  For $x=(x_1,x_2,x_3,x_4)\in\R^4$ we put
 \begin{align*}
  \rho(x) &= x_1^2+x_2^2+x_3^2+x_4^2 = \sum_i x_i^2 \\
  f_1(x) &= 2 x_2^2 + (x_4-1-x_3/a)^2 \\
  f_2(x) &= 2 x_1^2 + (x_4-1+x_3/a)^2 \\
  f(x)   &= f_1(x)f_2(x) \\
  EX(a)  &= \{x\in\R^4\st \rho(x)=1 \text{ and } f(x)=f(-x) \}.
 \end{align*}
\end{definition}
Straightforward algebra shows that this is the same as the definition
in the Introduction.

Now put
\[ \Om^+_1 = \{x\in S^3 \st f_1(x)=0\}. \]
Recall that $f_1(x)=2 x_2^2 + (x_4-1-x_3/a)^2$, and a sum of two real
squares can only be zero if the individual terms are zero.  It follows that
\begin{align*}
 \Om^+_1 &= \{x\in S^3 \st x_2=0 \text{ and } x_4=1+x_3/a\} \\
  &= \{(x_1,0,x_3,1+x_3/a)\in\R^4 \st x_1^2+x_3^2+(1+x_3/a)^2=1\}.
\end{align*}
This is the intersection of $S^3$ with a two-dimensional affine
subspace of $\R^4$, so it is a circle.  This circle passes through
the point $(0,0,0,1)$, which corresponds to infinity under the
stereographic projection map $s\:S^3\to\R^3\cup\{\infty\}$ that we
defined in the Introduction.  This means that the image $s(\Om^+_1)$
is a ``circle through $\infty$'', or in other
words a straight line.  In fact, one can check that
\[ s(\Om^+_1) = \{(x,y,z)\in\R^3\st y=0,z=-a\}. \]
Similarly, the set
\[ \Om^+_2 = \{x\in S^3 \st f_2(x)=0\} \]
is another circle in $S^3$, with stereographic projection
\[ s(\Om^+_2) = \{(x,y,z)\in\R^3\st x=0,z=+a\}. \]
\begin{checks}
 embedded/geometry_check.mpl: check_omega()
\end{checks}
Note that the lines $s(\Om^+_1)$ and $s(\Om^+_2)$ are at right angles
to each other, but they do not touch except at $\infty$.  We can also
put
\[ \Om^+ = \{x\in S^3\st f(x)=0\} = \{x\in S^3\st f_1(x)f_2(x)=0\}
    = \Om^+_1\cup\Om^+_2.
\]

Now recall that the definition of $X$ also involves $f(-x)$, so we
should study the sets $\Om^-_i=\{x\in S^3\st f_i(-x)=0\}$ and
$\Om^-=\{x\in S^3\st f(-x)=0\}=\Om^-_1\cup\Om^-_2$.  The sets
$\Om_i^-$ are again circles in $S^3$, but they do not pass through
$(0,0,0,1)$ so their stereographic projections are circles rather than
straight lines.  In fact one can check that
\begin{align*}
 s(\Om^-_1) &= \{(x,0,z)\in\R^3\st x^2+(z-1/(2a))^2=1/(2a)^2\} \\
  &= \text{ the circle of radius $1/(2a)$ in the $(x,z)$-plane centred at $(0,0,1/(2a))$} \\
 s(\Om^-_2) &= \{(0,y,z)\in\R^3\st y^2+(z+1/(2a))^2=1/(2a)^2\} \\
  &= \text{ the circle of radius $1/(2a)$ in the $(y,z)$-plane centred at $(0,0,-1/(2a))$}.
\end{align*}
\begin{checks}
 embedded/geometry_check.mpl: check_omega()
\end{checks}
The picture for $a=1/\rt$ is as follows:
\[ \includegraphics[scale=0.25,clip=true,
     trim=6cm 6cm 6cm 6cm]{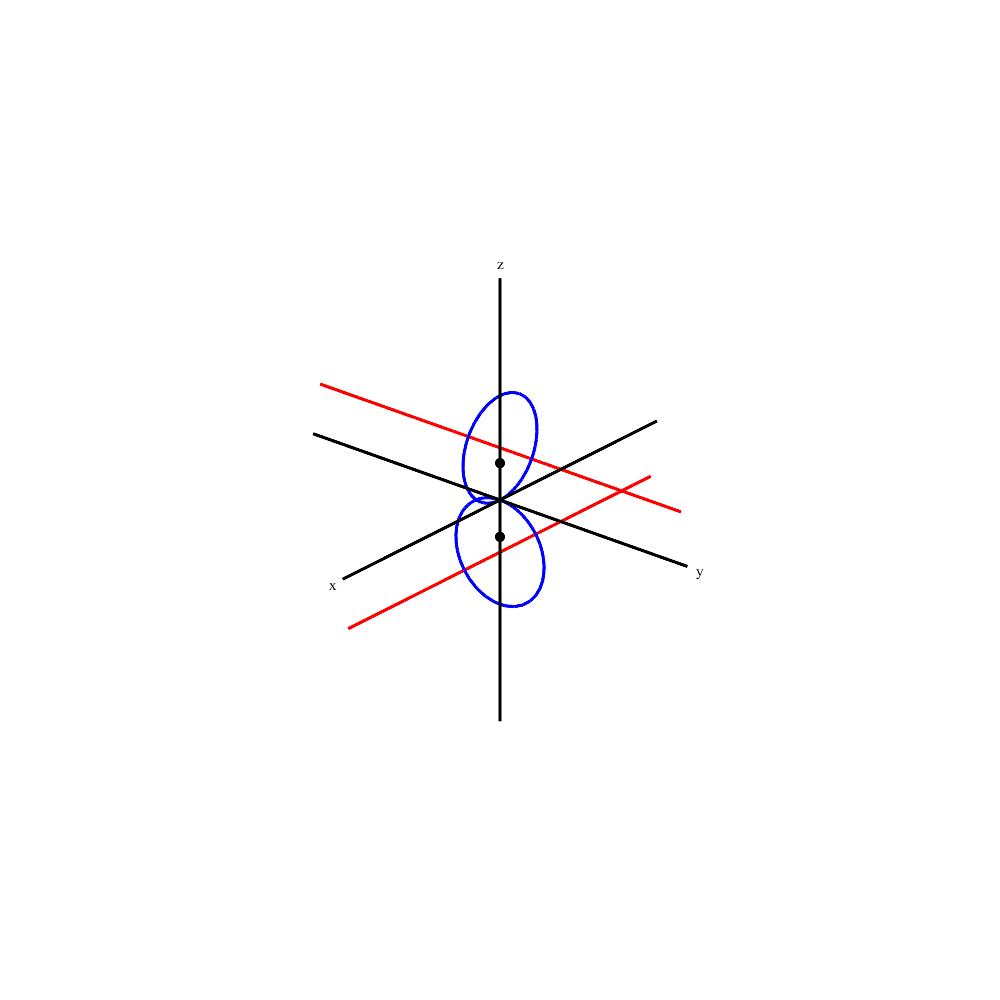} \]
(The set $s(\Omega^+)$ is shown in red, and the set $s(\Omega^-)$ is
shown in blue.)

On $\Om^+$ we have $f(x)=0$ and $f(-x)>0$, whereas on $\Om^-$ we have
$f(x)>0$ and $f(-x)=0$.  We defined $EX(a)=\{x\in S^3\st f(x)=f(-x)\}$, and
we now see that this fits between $\Om^+$ and $\Om^-$.  This can be
displayed as follows:
\[ \includegraphics[scale=0.25,clip=true,
       trim=6cm 6cm 6cm 6cm]{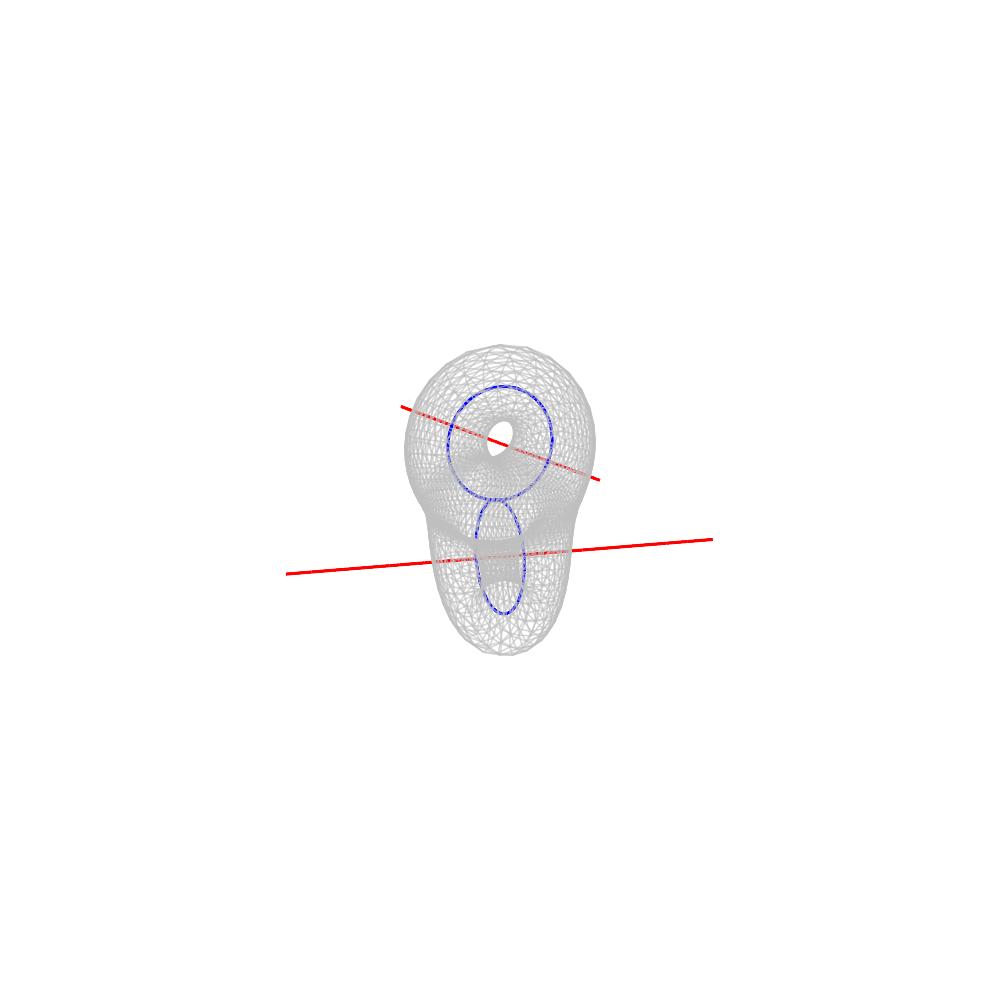}
\]

It will be convenient to describe $EX(a)$ using the functions $g$ and
$g_0$ given below.
\begin{definition}\lbl{defn-g}
 We put
 \begin{align*}
  g_0(x) &= (f(x)-f(-x))/8 + x_4(\rho(x)-1) \\
         &= ((1+a^{-2})x_3^2-2)x_4+a^{-1}(x_1^2-x_2^2)x_3 \\
  g(x) &= (f(x)-f(-x))/8 - x_4(\rho(x)-1) \\
       &= (a^{-2}-1)x_3^2x_4-2(x_1^2+x_2^2)x_4-2x_4^3+a^{-1}(x_1^2-x_2^2)x_3.
 \end{align*}
 (The advantage of $g_0(x)$ is that it has few terms, and the advantage
 of $g(x)$ is that it is a homogeneous cubic.)
 \begin{checks}
  embedded/geometry_check.mpl: check_g()
 \end{checks}

 It is now straightforward to check that
 \[ EX(a) = \{x\in S^3 \st g_0(x)=0\} = \{x\in S^3\st g(x)=0\}. \]
 We put
 \begin{align*}
  \tA &= \R[x_1,x_2,x_3,x_4] \\
  A   &= \CO_{EX(a)} = \tA/(\rho(x)-1,g(x)),
 \end{align*}
 so $A$ is the ring of polynomial functions on $EX(a)$.
\end{definition}
\begin{remark}
 Maple notation for the parameter $a$ is $a_E$.  The global variable
 \mcode+a_E0+ is set to $1/\rt$, and \mcode+a_E1+ is a 100 digit
 approximation to that.  Elements of $\R^4$ are represented in Maple
 as lists of length $4$.  The functions $\rho(x)$, $f_1(x)$, $f_2(x)$,
 $f(x)$, $g_0(x)$ and $g(x)$ are \mcode+rho(x)+, \mcode+f_1(x)+,
 \mcode+f_2(x)+, \mcode+f(x)+, \mcode+g_0(x)+ and \mcode+g(x)+.  The
 functions obtained from these by setting $a=1/\rt$ are \mcode+f_10(x)+,
 \mcode+f_20(x)+, \mcode+f0(x)+, \mcode+g_00(x)+ and \mcode+g0(x)+.
 Note in particular the difference between \mcode+g_0(x)+ and
 \mcode+g0(x)+.
\end{remark}

\begin{proposition}\lbl{prop-X-smooth}
 The space $EX(a)$ is a compact smooth oriented embedded submanifold in
 $S^3$.
\end{proposition}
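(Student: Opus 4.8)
The plan is to verify compactness, smoothness, and orientability separately, using the standard regular-value / implicit function machinery. Compactness is immediate: $EX(a)$ is defined as the intersection of $S^3$ (which is compact) with the zero set of the continuous function $g$, hence it is a closed subset of a compact space. The bulk of the work is showing that $EX(a)$ is a smooth submanifold, and for this I would show that $EX(a)$ is a regular level set. Consider the map $\Phi\:\R^4\to\R^2$ given by $\Phi(x)=(\rho(x)-1,\,g(x))$, so that $EX(a)=\Phi^{-1}(0,0)$. It suffices to prove that the Jacobian matrix $D\Phi(x)$ has rank $2$ at every point $x\in EX(a)$; then the implicit function theorem gives $EX(a)$ the structure of a smooth embedded $2$-manifold in $\R^4$, and since it lies in $S^3$ it is an embedded submanifold of $S^3$ as well. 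The two rows of $D\Phi$ are $\nabla\rho(x)=2x$ and $\nabla g(x)$; these fail to be independent precisely when $\nabla g(x)=\lm x$ for some $\lm\in\R$ (using $x\neq 0$ on $S^3$). So the key step is to show that the system
\[
  \rho(x)=1,\qquad g(x)=0,\qquad \nabla g(x)=\lm x
\]
has no real solutions for any $\lm$. Since $g$ is a homogeneous cubic, Euler's relation gives $x\cdot\nabla g(x)=3g(x)=0$, so any such solution forces $\lm=\lm(x\cdot x)=0$, i.e.\ we need only rule out critical points of $g$ on $S^3$ lying in $\{g=0\}$, equivalently common real zeros of $g$ and all of $\partial g/\partial x_i$ on $S^3$. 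This is a concrete computation: one forms the ideal generated by $\rho-1$, $g$, and $\partial_1 g,\dots,\partial_4 g$ in $\R[x_1,\dots,x_4]$ and checks (e.g.\ by a Gröbner basis calculation, of the kind the Maple code performs elsewhere) that it contains $1$, or argues more directly. Actually it is cleaner to work projectively first: show the complex projective variety $\{[x]\in\C P^3 : g(x)=\partial_1 g(x)=\dots=\partial_4 g(x)=0\}$ is empty, i.e.\ the cubic surface $\{g=0\}$ in $\C P^3$ is smooth; then a fortiori $g$ has no real critical point on $S^3$ in its zero set. (Note the excerpt only claims singularity of a \emph{different} complexified variety $CEX(a)$ at $a=1/\rt$; that is the affine quadric-plus-cubic intersection in $\C^4$, not the projective cubic surface $\{g=0\}$, so there is no conflict.)

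For orientability, once $EX(a)$ is known to be a smooth surface one can invoke the general principle used throughout the introduction: $EX(a)$ inherits a Riemannian metric from $S^3$, and since it is a smoothly embedded closed surface in the oriented $3$-manifold $S^3$, a choice of (co-)orientation makes it orientable. Concretely, at each $x\in EX(a)$ the tangent space $T_xEX(a)$ is the orthogonal complement inside $T_xS^3$ of the vector $\mathrm{pr}_{T_xS^3}\nabla g(x)$, which is nonzero by the rank computation above; this gives a nowhere-vanishing normal field to $EX(a)$ inside $S^3$, hence an orientation. Alternatively, and perhaps more in the spirit of the paper, one uses that $EX(a)$ will later be identified with a cromulent surface and in particular a Riemann surface, which is automatically oriented; but it is cleanest to give the elementary argument here and defer the cromulent structure.

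The main obstacle is the verification that $\{g=0\}\subset\C P^3$ is a smooth cubic surface, i.e.\ that $g$ and its four partials have no common zero in $\C P^3$. This is a finite algebraic check but not entirely trivial by hand because $g$ is a genuine cubic in four variables with several mixed terms; I would carry it out by the elimination/Gröbner method (matching the \texttt{embedded/geometry\_check.mpl} style of computation), or by exploiting the $G$-symmetry of $g$ to reduce the number of cases. Everything else — compactness, the implicit function theorem step, Euler's relation, and the orientation argument — is routine once this nondegeneracy is in hand.
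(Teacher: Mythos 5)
Your reduction to the statement ``$\nabla g$ has no real zero on $S^3\cap\{g=0\}$'' is correct and matches the paper (Euler's relation kills the Lagrange multiplier exactly as you say), and your compactness and orientation arguments are fine. The gap is in how you propose to verify the nondegeneracy. Both of your suggested verifications — showing that the ideal $(\rho-1,g,\partial_1g,\dots,\partial_4g)$ contains $1$, and showing that the projective cubic $\{g=0\}\subset\C P^3$ is smooth — are complex-algebraic criteria, and both are \emph{false} in the central case $a=1/\rt$. The paper shows (Section on associated complex varieties) that at $w=(i\rt,0,\rt,1)$ one has $\rho(w)=1$, $g(w)=0$ and $\partial_ig(w)=0$ for all $i$; in particular $[w]$ is a singular point of the projective cubic $\{g=0\}$, which for $a=1/\rt$ is isomorphic to the singular Cayley cubic. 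So your parenthetical dismissal is mistaken: the singularity of $CEX(1/\rt)$ is not a ``different'' phenomenon — the very same point witnesses the failure of both of your checks, since all four partials of $g$ vanish there. Your ``a fortiori'' step therefore cannot be carried out for $EX^*=EX(1/\rt)$, which is the surface the whole paper is about.

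What is needed, and what the paper supplies, is a genuinely \emph{real} argument. The paper exhibits the polynomial identity
\[ \frac{1-a^2}{16}\bigl(n(x)_1^2+n(x)_2^2\bigr) +
     \frac{a}{2}(x_1^2-x_2^2)\,n(x)_3 -
     \frac{1}{4}(x_1^2+x_2^2)\,n(x)_4 =
      x_1^4+x_2^4+\bigl(\tfrac{5}{2}-a^2\bigr)(x_1^2+x_2^2)x_4^2,
\]
where $n=\nabla g$. If $n(x)=0$ then the right-hand side vanishes; since $a\in(0,1)$ all its terms are nonnegative, forcing $x_1=x_2=0$, after which the remaining components of $n(x)=0$ give $x_3=x_4=0$. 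Thus the only real zero of $\nabla g$ is the origin, which does not lie on $S^3$. You should replace your complex-smoothness step with an argument of this sum-of-squares type (or a direct real case analysis); the rest of your proof then goes through.
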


We will use the following notation:
\begin{definition}\lbl{defn-nx}
 We put
 \begin{align*}
  n(x) &= (\nabla g)_x =
           (\partial g/\partial x_1,\dotsc,\partial g/\partial x_4) \\
   &= ( 2x_1(x_3/a-2x_4),\;
       -2x_2(x_3/a+2x_4), \\
   &\qquad
       2(a^{-2}-1)x_3x_4+a^{-1}(x_1^2-x_2^2),\;
       (a^{-2}-1)x_3^2-6x_4^2-2(x_1^2+x_2^2)).
 \end{align*}
 (This is \mcode+dg(x)+ in Maple.)
\end{definition}
\begin{proof}
 Define $p\:\R^4\to\R^2$ by $p(x)=(\rho(x)-1,g(x))$, so
 $EX(a)=p^{-1}\{0\}$.  We must first show that $0$ is a regular value
 of $p$, or equivalently that the gradients of $\rho-1$ and $g$ are
 linearly independent at every point in $EX(a)$.  Note here that the
 gradient of $\rho-1$ at $x$ is just $2x$, which is certainly nonzero
 at all points in $EX(a)$.  Moreover, as $g$ is a homogeneous cubic
 function we have $(2x).n(x)=6g(x)$, which is zero on $EX(a)$, so the
 two gradients are orthogonal.  It will thus be enough to show that
 $n(x)$ is nonzero everywhere on $EX(a)$.  By direct expansion one can
 check that
 \[ \frac{1-a^2}{16}(n(x)_1^2+n(x)_2^2) +
     \frac{a}{2}(x_1^2-x_2^2)n(x)_3 -
     \frac{1}{4}(x_1^2+x_2^2)n(x)_4 =
      x_1^4+x_2^4+(\tfrac{5}{2}-a^2)(x_1^2+x_2^2)x_4^2.
 \]
 \begin{checks}
  embedded/geometry_check.mpl: check_g()
  embedded/EX_check.mpl: check_smoothness()
 \end{checks}
 If $n(x)=0$ then the left hand side vanishes so the right hand side
 must also vanish.  After noting that $a\in (0,1)$ so
 $\frac{5}{2}-a^2>0$, it follows that $x_1=x_2=0$.  After substituting
 this back into the equations for $n(x)$, we see that $x_3x_4=0$ and
 $(a^{-2}-1)x_3^2-6x_4^2=0$, which easily implies that $x_3=x_4=0$.
 Thus, the only place in $\R^4$ where $n(x)=0$ is the origin, so in
 particular there are no points in $EX(a)$ with this property.

 This completes the proof that $0$ is a regular value of $p$, which
 implies in a standard way that $EX(a)$ is a smooth closed submanifold
 of $\R^4$.  It is compact because it is closed in $S^3$.  Now let
 $\om_k$ denote the standard volume form in $\Lm^k(\R^k)$.  Standard
 exterior algebra now tells us that for each $x\in EX(a)$ there is a
 unique element $\al_x\in\Lm^2(T_xEX(a))<\Lm^2(\R^4)$ such that
 $\al_x\wedge p^*(\om_2)=\om_4$.  These forms give a smooth, nowhere
 vanishing section of $\Lm^2(T)$ over $EX(a)$, and thus an orientation
 of $EX(a)$.
\end{proof}

\begin{remark}\lbl{rem-move-to-X}
 The above considerations also give an efficient practical method to
 compute points on $X$ numerically.  For any $y\in\R^4$, we define
 \[ \sg(y) =
     \frac{1}{\|y\|}\left(y - \frac{g(y)}{\|n(y)\|^2}n(y)\right).
 \]
 One can show that if the distance from $y$ to $X$ is of order
 $\ep\ll 1$, then the distance from $\sg(y)$ to $X$ is of order
 $\ep^2$.  This implies that the sequence $(\sg^k(x))_{k\geq 0}$
 converges rapidly to a point $\sg^\infty(x)\in X$.  This is
 implemented by the Maple function \mcode+move_to_X(x)+.
\end{remark}

Next, we use the metric and the orientation to give an almost complex
structure on $X$.  Explicitly, for each $x\in X$ there is a unique
isometric linear map $J_x\:T_xX\to T_xX$ such that $\ip{u,J_xu}=0$ for
all $u$, and $u\wedge J_xu$ is a positive multiple of $\al_x$ for all
$u\neq 0$.  This satisfies $J_x^2=-1$ and so gives a complex structure
on $T_xX$.  All this can be verified easily after choosing an oriented
orthonormal basis.  As mentioned in the introduction, this can be
integrated to give a complex structure on $X$.  In more detail, if
$U\sse X$ is open and $q\:U\to\C$ is a smooth map, we say that $q$ is
\emph{holomorphic} (or \emph{conformal}) if for each $x\in U$, the
derivative $Dq_x\:T_xX\to\C$ is $\C$-linear.  It is a nontrivial fact
that for each $x\in X$ there is an open neighbourhood $U$ of $X$ and
an injective holomorphic function $q\:U\to\C$ such that
$Dq_y\:T_yX\to\C$ is an isomorphism for all $y\in U$.  This has been
known at least since the work of Korn and Lichtenstein in~1916.  A
modern proof is given in~\cite{deka:srt}.  As we mentioned in the
introduction, it can also be seen as a special case of the
Newlander-Nirenberg theorem.  It is clear that the transition map
between any pair of holomorphic charts as above, is a function on an
open domain in $\C$ that is holomorphic in the traditional sense.  We
can thus use these charts to regard $X$ as a Riemann surface.

\subsection{The group action}
\lbl{sec-E-G}

We define linear isometric maps $\lm,\mu,\nu:\R^4\to\R^4$ as follows:
\begin{align*}
 \lm(x_1,x_2,x_3,x_4) &= (   -x_2,\pp x_1,\pp x_3,   -x_4) \\
 \mu(x_1,x_2,x_3,x_4) &= (\pp x_1,   -x_2,   -x_3,   -x_4) \\
 \nu(x_1,x_2,x_3,x_4) &= (\pp x_1,   -x_2,\pp x_3,\pp x_4).
\end{align*}
Maple notation for $g.x$ is \mcode+act_R4[g](x)+.

It is straightforward to check that
\[ \lm^4=\mu^2=\nu^2=(\mu\nu)^2=(\mu\lm)^2=(\nu\lm)^2=1, \]
so we have an isometric action of our group $G$ on $\R^4$.
\begin{checks}
 embedded/EX_check.mpl: check_R4_action()
\end{checks}
All three of the generators have determinant $-1$ and so reverse the
orientation of $\R^4$.  Note also that $\lm^2\mu\nu(x)=-x$.

Directly from the definitions we see that
\begin{align*}
 \rho(\lm(x)) &= \rho(\mu(x))=\rho(\nu(x))=\rho(x) \\
 g(\lm(x))    &= g(\mu(x))=-g(x) \\
 g(\nu(x))    &= g(x)
\end{align*}
This means that $\lm$, $\mu$ and $\nu$ send $X$ to itself.
\begin{checks}
 embedded/EX_check.mpl: check_symmetry()
\end{checks}

\begin{remark}\lbl{rem-heegard}
 If we put $D_+=\{x\in S^3\st g(x)\geq 0\}$ and
 $D_-=\{x\in S^3\st g(x)\leq 0\}$ then we have $D_+\cup D_-=S^3$ and
 $D_+\cap D_-=EX(a)$, and the map $\mu$ gives a homeomorphism between
 $D_+$ and $D_-$.  Moreover, $D_+$ and $D_-$ are handlebodies, with
 the figure eight curve $\Om^-$ as a deformation retract of $D_+$, and
 $\Om^+$ as a deformation retract of $D_-$.  This gives an unusually
 explicit Heegaard splitting of $S^3$ along $EX(a)$.
\end{remark}

Recall that the orientation form $\al_x\in\Lm^2(T_xX)$ is
characterised by the equation $\al_x\wedge (\rho-1,g)^*(\om_2)=\om_4$.
As $\lm$ preserves $\rho-1$ and changes the sign of $g$ and $\om_4$,
we deduce that $\lm^*\al=\al$, so $\lm$ preserves orientation.  As the
complex structure on $EX(a)$ is determined by the metric and the
orientation, it follows that $\lm$ is a conformal automorphism of
$EX(a)$.  By the same kind of logic, the map $\mu$ preserves
orientation and is conformal, whereas $\nu$ reverses orientation and
is anticonformal.

\subsection{Isotropy}
\lbl{sec-E-isotropy}

We now define points $v_i\in\R^4$ as follows:
\begin{align*}
 v_{ 0} &= (\pp 0,\pp 0,\pp 1,\pp 0) &
 v_{ 6} &= (\pp 1,\pp 1,\pp 0,\pp 0)/\rt \\
 v_{ 1} &= (\pp 0,\pp 0,   -1,\pp 0) &
 v_{ 7} &= (   -1,\pp 1,\pp 0,\pp 0)/\rt \\
 v_{ 2} &= (\pp 1,\pp 0,\pp 0,\pp 0) &
 v_{ 8} &= (   -1,   -1,\pp 0,\pp 0)/\rt \\
 v_{ 3} &= (\pp 0,\pp 1,\pp 0,\pp 0) &
 v_{ 9} &= (\pp 1,   -1,\pp 0,\pp 0)/\rt \\
 v_{ 4} &= (   -1,\pp 0,\pp 0,\pp 0) &
 v_{10} &= (0,0,\pp\sqrt{\frac{2a^2}{1+a^2}},\pp\sqrt{\frac{1-a^2}{1+a^2}}) \\
 v_{ 5} &= (\pp 0,-1,\pp 0,\pp 0) &
 v_{11} &= (0,0,\pp\sqrt{\frac{2a^2}{1+a^2}},  -\sqrt{\frac{1-a^2}{1+a^2}}) \\
 &&
 v_{12} &= (0,0,  -\sqrt{\frac{2a^2}{1+a^2}},  -\sqrt{\frac{1-a^2}{1+a^2}}) \\
 &&
 v_{13} &= (0,0,  -\sqrt{\frac{2a^2}{1+a^2}},\pp\sqrt{\frac{1-a^2}{1+a^2}})
\end{align*}
These are \mcode+v_E[i]+ in Maple.

Routine calculation shows that these points all lie in $X$, and that
the action of $G$ permutes them, according to the following rules:
\begin{align*}
 \lm &\mapsto (2\;3\;4\;5)\;(6\;7\;8\;9)\;(10\;11)\;(12\;13) \\
 \mu &\mapsto (0\;1)\;(3\;5)\;(6\;9)\;(7\;8)\;(10\;12)\;(11\;13) \\
 \nu &\mapsto (3\;5)\;(6\;9)\;(7\;8).
\end{align*}
(For example, the cycle $(2\;3\;4\;5)$ in $\lm$ means that
$\lm(v_2)=v_3$, $\lm(v_3)=v_4$, $\lm(v_4)=v_5$ and $\lm(v_5)=v_2$.)
This almost shows that we have a precromulent surface, except that we
need to check that there are no further points with nontrivial
stabiliser in $D_8$.

\begin{proposition}\lbl{prop-no-more-isotropy}
 If $x\in EX(a)\sm\{v_0,\dotsc,v_{13}\}$ then $\stab_{D_8}(x)=1$.
\end{proposition}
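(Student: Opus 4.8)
The plan is to run through the nontrivial elements of $D_8$ one conjugacy class at a time, compute the fixed subspace in $\R^4$ of a representative, and then cut that subspace down using the two defining equations $\rho(x)=1$ and $g_0(x)=0$ of $EX(a)$. First I would reduce the number of cases. If $x\in EX(a)$ has $\stab_{D_8}(x)\neq 1$, then $\gm(x)=x$ for some $\gm\in D_8\sm\{1\}$, so $x$ lies in $EX(a)\cap(\R^4)^\gm$, where $(\R^4)^\gm=\{y\in\R^4\st\gm(y)=y\}$ is a linear subspace. Since $EX(a)$ is $G$-invariant (Section~\ref{sec-E-G}), the fourteen points $v_0,\dots,v_{13}$ are permuted by $G$ (Section~\ref{sec-E-isotropy}), and $(\R^4)^{\dl\gm\dl^{-1}}=\dl\bigl((\R^4)^\gm\bigr)$ for $\dl\in G$, it suffices to prove $EX(a)\cap(\R^4)^\gm\sse\{v_0,\dots,v_{13}\}$ for just one representative $\gm$ of each of the four nontrivial conjugacy classes of $D_8$, namely $\lm$, $\lm^2$, $\mu$ and $\lm\mu$.

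Next I would carry out the four computations using the explicit linear maps from Section~\ref{sec-E-G} and the short form $g_0(x)=((1+a^{-2})x_3^2-2)x_4+a^{-1}(x_1^2-x_2^2)x_3$ of the defining cubic. For $\gm=\lm$ the fixed subspace is the line $\{(0,0,x_3,0)\}$, which meets $S^3$ only at $(0,0,\pm 1,0)=v_0,v_1$. For $\gm=\mu$ it is the line $\{(x_1,0,0,0)\}$, meeting $S^3$ only at $(\pm 1,0,0,0)=v_2,v_4$. For $\gm=\lm\mu$ it is the plane $\{(t,t,0,s)\}$, on which $g_0=-2s$, so $g_0=0$ forces $s=0$ and then $2t^2=1$, giving $v_6=(1,1,0,0)/\rt$ and $v_8=(-1,-1,0,0)/\rt$. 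For $\gm=\lm^2$ it is the plane $\{(0,0,x_3,x_4)\}$, on which $g_0=((1+a^{-2})x_3^2-2)x_4$; hence $g_0=0$ forces either $x_4=0$ (whence $x_3=\pm 1$, giving $v_0,v_1$), or $x_3^2=2a^2/(1+a^2)$ and therefore $x_4^2=1-x_3^2=(1-a^2)/(1+a^2)>0$, and the four sign choices yield exactly $v_{10},v_{11},v_{12},v_{13}$. In every case $EX(a)\cap(\R^4)^\gm\sse\{v_0,\dots,v_{13}\}$, so by the conjugacy reduction the same holds for all $\gm\in D_8\sm\{1\}$, which is the contrapositive of the Proposition.

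I do not expect any serious obstacle here: each of the four cases is elementary linear algebra followed by solving at most one quadratic. The only step requiring a little care is $\gm=\lm^2$, where $(\R^4)^{\lm^2}\cap S^3$ is a whole circle rather than a finite set, so the cubic $g_0$ genuinely has to be used to pare it down to six points; one must then recognise that $x_3^2=2a^2/(1+a^2)$, $x_4^2=(1-a^2)/(1+a^2)$ reproduce precisely the coordinates assigned to $v_{10},\dots,v_{13}$ in Section~\ref{sec-E-isotropy}, and the hypothesis $a\in(0,1)$ is exactly what guarantees $x_4$ is real here. Together with the permutation computation already recorded in Section~\ref{sec-E-isotropy}, this finishes the verification that $EX(a)$, with its $G$-action and labelling, is a precromulent surface.
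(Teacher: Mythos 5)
Your proposal is correct and follows essentially the same route as the paper: compute the fixed subspace of each nontrivial element of $D_8$ and cut it down with $\rho(x)=1$ and $g_0(x)=0$. The only cosmetic differences are that you organise the cases by conjugacy classes (the paper handles $\mu$, $\lm^2\mu$, $\lm\mu$, $\lm^3\mu$ individually and disposes of $\lm,\lm^3$ by observing their fixed points are fixed by $\lm^2$, whereas you compute the $\lm$-fixed line directly), and the substantive computations for $\lm^2$ and $\lm\mu$ agree with the paper's.
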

\begin{proof}
 Consider for example a point $x\in EX(a)$ with $\lm^2(x)=x$, or in other
 words $(-x_1,-x_2,x_3,x_4)=(x_1,x_2,x_3,x_4)$ or $x_1=x_2=0$.  The
 equations for $EX(a)$ become $x_3^2+x_4^2=1$ and
 $x_4((a^{-2}-1)x_3^2-2x_4^3)=0$.  If $x_4=0$ we must have
 $x=(0,0,\pm 1,0)$, so $x=v_0$ or $x=v_1$.  Otherwise we must have
 $(a^{-2}-1)x_3^2=2x_4^2$.  In conjunction with $x_3^2+x_4^2=1$ this gives
 $x_3^2=2a^2/(1+a^2)$ and $x_4^2=(1-a^2)/(1+a^2)$ so
 $x\in\{v_{10},v_{11},v_{12},v_{13}\}$.  Note that if $x$ is fixed by
 $\lm$ or $\lm^3=\lm^{-1}$ then it is certainly fixed by $\lm^2$, so
 again it is one of the $v_i$.

 Similarly:
 \begin{itemize}
  \item We have $\mu(x)=x$ iff $x_2=x_3=x_4=0$, and $\lm^2\mu(x)=x$
   iff $x_1=x_3=x_4=0$.  In these cases it is clear that
   $x\in\{v_2,v_3,v_4,v_5\}$.
  \item We have $\lm\mu(x)=x$ iff $x_1=x_2$ and $x_3=0$.  In this
   context we have $g_0(x)=-2x_4$, so we must also have $x_4=0$, which
   makes it clear that $x\in\{v_6,v_8\}$.
  \item A similar argument shows that if $\lm^3\mu(x)=x$ then
   $x\in\{v_7,v_9\}$.
 \end{itemize}
 \begin{checks}
  embedded/EX_check.mpl: check_fixed_points()
 \end{checks}
\end{proof}

\begin{remark}\lbl{rem-tangent}
 It will be useful to understand the tangent space $T_{v_0}X$ more
 explicitly.  The formula in Definition~\ref{defn-nx} shows that the
 gradient of $g$ at $v_0$ is
 \[ n(v_0) = n(e_3) = (0,0,0,a^{-2}-1) = (a^{-2}-1)e_4. \]
 On the other hand, we have $\nabla(\rho-1)_{v_0}=2v_0=2e_3$, so
 \[ (\rho-1,g)^*(\om_2) =
     \nabla(\rho-1)_{v_0}\wedge\nabla(g)_{v_0} =
      2(a^{-2}-1)e_3\wedge e_4.
 \]
 The tangent space is the orthogonal complement to $v_0$ and $n(v_0)$,
 so it is spanned by the basis vectors $e_1$ and $e_2$ (which are
 orthonormal).  The orientation form $\al_{v_0}$ must therefore be
 $(e_1\wedge e_2)/(2(a^{-2}-1))$, which is a positive multiple of
 $e_1\wedge e_2$, so $J_{v_0}(e_1)=e_2$.  In other words, the map
 $(x+iy)\mapsto(x,y,0,0)$ gives an isometric $\C$-linear isomorphism
 $\C\to T_{v_0}X$.  It is also clear from this that $\lm$ acts on
 $T_{v_0}EX(a)$ as multiplication by $i$.
\end{remark}

\subsection{Associated complex varieties}
\lbl{sec-E-complex}

For $x\in\C^4$ we again put $\rho(x)=\sum_jx_j^2$ (not
$\sum_j|x_j|^2$).  We then put
\begin{align*}
 CEX(a)  &= \{x\in\C^4 \st g(x)=0,\;\rho(x)=1\} \\
 PEX(a)  &= \{[x]\in\C P^3\st g(x)=0\} \\
 PEX'(a) &= \{[x]\in\C P^3\st g(x)=0,\;\rho(x)\neq 0\}.
\end{align*}
It is clear that $CEX(a)$ is an affine variety, and $PEX(a)$ is a
projective variety, and $PEX'(a)$ is a quasiprojective open subvariety
of $PEX(a)$.  The map $x\mapsto [x]$ gives a double covering
$CEX(a)\to PEX'(a)$.  We can identify $EX(a)$ with the set of real
points in $CEX(a)$.

\begin{proposition}
 If $a\neq 1/\rt$, then $CEX(a)$, $PEX(a)$ and $PEX'(a)$ are all
 smooth.
\end{proposition}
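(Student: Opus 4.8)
The plan is to reduce everything to a Jacobian/gradient computation for the cubic $g$, exactly as in Proposition~\ref{prop-X-smooth}, but now over $\C$ instead of $\R$. First I would treat $PEX(a)=\{[x]\in\C P^3\st g(x)=0\}$. Since $g$ is a homogeneous cubic, this projective hypersurface is smooth precisely when the only solution of $\nabla g=0$ in $\C^4$ is the origin; equivalently, the affine cone $\{x\in\C^4\st g(x)=0\}$ is smooth away from $0$. So the heart of the matter is to show that the complex variety $\{n(x)=0\}\subset\C^4$ (with $n=\nabla g$ as in Definition~\ref{defn-nx}) reduces to $\{0\}$ when $a\neq 1/\rt$. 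Once $PEX(a)$ is smooth, $PEX'(a)$ is smooth as an open subvariety, and $CEX(a)$ is smooth because $CEX(a)\to PEX'(a)$ is an étale double cover (the map $x\mapsto[x]$ is a local isomorphism on the locus $\rho(x)\neq 0$, and $\rho$ never vanishes on $CEX(a)$ by definition since $\rho=1$ there).

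For the gradient computation I would start from the identity already recorded in the proof of Proposition~\ref{prop-X-smooth}:
\[
 \frac{1-a^2}{16}(n_1^2+n_2^2) + \frac{a}{2}(x_1^2-x_2^2)n_3 - \frac14(x_1^2+x_2^2)n_4 = x_1^4+x_2^4+(\tfrac52-a^2)(x_1^2+x_2^2)x_4^2.
\]
This is a polynomial identity in $\Q[a][x_1,\dots,x_4]$, hence valid over $\C$. If $n(x)=0$ then the right-hand side vanishes, so $x_1^4+x_2^4 = -(\tfrac52-a^2)(x_1^2+x_2^2)x_4^2$. Over $\R$ this forced $x_1=x_2=0$ because all terms had a sign; over $\C$ that argument fails, so I expect to need an additional relation. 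The tactic I would pursue is to look for a \emph{second} polynomial combination of $n_1,\dots,n_4$ (with coefficients polynomial in the $x_i$ and $a$) whose value on $\{n=0\}$ is another expression in $x_1,x_2,x_4$ only; combining the two should pin down $x_1,x_2,x_4$ up to the bad locus. Concretely, I would compute a Gröbner basis of the ideal $(n_1,n_2,n_3,n_4)\subset\Q(a)[x_1,x_2,x_3,x_4]$ and check that its radical is the maximal ideal $(x_1,x_2,x_3,x_4)$ whenever $a\neq 1/\rt$ — equivalently, that the saturation of $(n_1,\dots,n_4)$ with respect to $(x_1,\dots,x_4)$ is the unit ideal after inverting the resultant-type factor that vanishes only at $a^2=1/2$. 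This is the step the Maple check \texttt{embedded/EX\_check.mpl: check\_smoothness()} (together with the singular-point computation producing the eight points in the orbit of $(i\rt,0,\rt,1)$ at $a=1/\rt$) is presumably set up to verify. The main obstacle is therefore purely this elimination-theory computation: isolating the exact factor of $a$ for which the cone acquires singularities, showing it is exactly $a^2-1/2$ (up to units), and confirming that for all other $a\in(0,1)$ — indeed all $a\neq 1/\rt$ in $\C^\times$ in an appropriate sense — the singular locus of the cone is just the vertex.

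Finally I would assemble the pieces: for $a\neq 1/\rt$, the cone over $PEX(a)$ is smooth outside $0$, so $PEX(a)$ is a smooth projective surface (a cubic surface in $\C P^3$), $PEX'(a)=PEX(a)\cap\{\rho\neq 0\}$ is smooth as a Zariski-open subset, and $CEX(a)$ is smooth since it maps to $PEX'(a)$ by an unramified double cover and smoothness is local and is preserved and reflected by étale maps. I would remark that this is consistent with the genus-$2$ statement: $EX(a)$ is the real locus, and the eight singular points of $CEX(1/\rt)$ in the $G$-orbit of $(i\rt,0,\rt,1)$ are not real, so $EX(1/\rt)=EX^*$ itself remains a smooth real surface even though its complexification degenerates.
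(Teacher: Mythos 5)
Your overall framework is exactly the paper's: reduce to $PEX(a)$ (since $PEX'(a)$ is open in it and $CEX(a)$ is an unramified double cover of $PEX'(a)$), and then show that the homogeneous cubic $g$ has no critical points on $\C^4\sm\{0\}$ when $a\neq 1/\rt$. The reduction steps and the smoothness criterion for a projective hypersurface are all correct.

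The gap is that the one step carrying the actual content of the proposition --- verifying that $n_1=\dotsb=n_4=0$ forces $x=0$ unless $2a^2=1$ --- is not carried out. You correctly observe that the positivity argument from Proposition~\ref{prop-X-smooth} breaks over $\C$, but you then replace it with a plan (``compute a Gr\"obner basis of $(n_1,\dotsc,n_4)$ over $\Q(a)$ and check that its saturation is the irrelevant ideal away from $a^2=1/2$'') rather than with an argument; as written, the claim that the only bad locus is $a^2=1/2$ is asserted, not proved. The computation is in fact elementary and needs no elimination machinery: work on the affine charts $x_i=1$. On $x_1=1$, the equation $n_1=0$ gives $x_3=2ax_4$; substituting into $n_2=0$ gives $x_2x_4=0$; the branch $x_2\neq 0$ forces $x_4=x_3=0$ and then $n_3=0$ fails; so $x_2=0$, and then $n_3=n_4=0$ reduce to $2a^2=1$ together with $2x_4^2=-1$, which is impossible precisely because $a\neq\pm 1/\rt$ (the hypothesis $a\in(0,1)$ rules out $a=-1/\rt$). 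The chart $x_2=1$ is symmetric, and the charts $x_3=1$, $x_4=1$ reduce to the previous ones after first showing $x_1=x_2=0$ there. If you supply this (or actually exhibit the Gr\"obner-basis output), the proof is complete; without it, the central assertion remains unverified.
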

\begin{proof}
 As $PEX'(a)$ is open in $PEX(a)$ and $CEX(a)$ is a double cover of
 $PEX'(a)$, it will suffice to treat the case of $PEX(a)$.  The
 partial derivatives of $g$ are
 \begin{align*}
  n_1 = \partial g/\partial x_1 &= \pp 2x_1(x_3/a - 2x_4) \\
  n_2 = \partial g/\partial x_2 &=    -2x_2(x_3/a + 2x_4) \\
  n_3 = \partial g/\partial x_3 &= 2(a^{-2}-1)x_3x_4 + (x_1^2-x_2^2)/a \\
  n_4 = \partial g/\partial x_4 &= (a^{-2}-1)x_3^2 - 2x_1^2 - 2x_2^2 - 6x_4^2.
 \end{align*}
 Put
 \[ U_i = \{x\in\C^4\st x_i=1,\; n_1=n_2=n_3=n_4=0\}. \]
 By well-known arguments, it will suffice to check that the sets $U_i$
 are all empty.  Consider a point $x\in U_1$.  The equation $n_1=0$
 gives $x_3=2ax_4$, and we can substitute this in $n_2=0$ to get
 $x_2x_4=0$.  If $x_2\neq 0$ then this gives $x_4=0$ so $x_3=0$, and
 the relation $n_3=0$ gives a contradiction.  We must therefore have
 $x_2=0$.  Putting $x_1=1$ and $x_2=0$ and $x_3=4ax_4$ in $n_3=n_4=0$
 we get $2a^2=1$ and $2x_4^2=-1$.  We are assuming explicitly that
 $a\neq 1/\rt$ and implicitly that $a\in(0,1)$, so this is
 impossible.  We conclude that $U_1=\emptyset$, and a similar argument
 gives $U_2=\emptyset$.  Now consider $x\in U_3$.  If we had
 $x_1\neq 0$ then $x/x_1$ would be in $U_1$, which is impossible.
 Thus $x_1=0$, and similarly $x_2=0$, and $x_3=1$ by assumption.
 Substituting this into $n_3=n_4=0$ gives a contradiction.  A similar
 argument works for $U_4$.
 \begin{checks}
  embedded/EX_check.mpl: check_PEX_smoothness()
 \end{checks}
\end{proof}

\begin{proposition}
 None of the surfaces $CEX(1/\rt)$, $PEX(1/\rt)$ or $PEX'(1/\rt)$ is
 smooth.  Moreover, $PEX(1/\rt)$ is isomorphic to the singular Cayley
 cubic with equation
 \[ X_1X_2X_3 + X_1X_2X_4 + X_1X_3X_4 + X_2X_3X_4 = 0. \]
\end{proposition}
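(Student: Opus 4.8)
The plan is to treat the two assertions separately: the non-smoothness is quick (and is essentially the failure point of the preceding proposition), while the identification of the projective type requires putting $g$ into a normal form using its singular points. First I would exhibit an explicit singular point. Take $q=(i\rt,0,\rt,1)$, as flagged in the Introduction. A direct check at $a=1/\rt$ gives $\rho(q)=1$, $g(q)=0$ and $n(q)=(\nabla g)_q=0$; this is exactly where the argument of the preceding proposition stalls, since there the emptiness of $U_1$ used $a\neq 1/\rt$, whereas at $a=1/\rt$ the condition $2a^2=1$ holds and, after rescaling, $U_1$ produces precisely $[q]$. Because $g$ and $\nabla g$ vanish at $q$, the point $[q]$ is a singular point of the cubic surface $PEX(1/\rt)=V(g)$, and since $\rho(q)=1\neq 0$ it lies in the open subvariety $PEX'(1/\rt)$, so that too is singular. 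The map $CEX(a)\to PEX'(a)$ is étale (at any $x$ with $\rho(x)\neq 0$ the line $\C x$ is complementary to the tangent space of $\{\rho=1\}$, so the differential of $x\mapsto[x]$ is an isomorphism there), hence smoothness transfers in both directions and $CEX(1/\rt)$ is singular at $\pm q$. Finally $G$ acts on all three varieties by linear isometries preserving $\rho$ and $V(g)$, so the whole $G$-orbit of $q$ consists of singular points: eight points in $\C^4$, and four in $\C P^3$ since $\lm^2\mu\nu$ acts as $-\mathrm{id}$.

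For the Cayley cubic I would use those four singular points in $\C P^3$, namely $q_1=[i\rt:0:\rt:1]$ and $q_k=\lm^{k-1}q_1$ for $k=2,3,4$ (the other four orbit representatives coincide with these projectively). A short determinant computation shows the corresponding vectors are linearly independent, so the $q_k$ are in general position. Let $M$ be the $4\tm 4$ matrix with these vectors as columns, and substitute $x=My$; then $\tilde g(y):=g(My)$ is a cubic cutting out $L\bigl(PEX(1/\rt)\bigr)$ for the projective isomorphism $L=M^{-1}$, and $\tilde g$ is singular at each coordinate point $e_k=L(q_k)$ because $g$ and $\nabla g$ vanish along the line $\C q_k$ (being homogeneous of degrees $3$ and $2$). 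A cubic singular at all four coordinate points has no $y_i^3$ or $y_i^2y_j$ monomials, hence lies in the four-dimensional span of $y_2y_3y_4,\ y_1y_3y_4,\ y_1y_2y_4,\ y_1y_2y_3$; write $\tilde g=c_1y_2y_3y_4+c_2y_1y_3y_4+c_3y_1y_2y_4+c_4y_1y_2y_3$. If all $c_k\neq 0$, the diagonal rescaling $y_i=c_iz_i$ turns $\tilde g$ into a nonzero multiple of $z_1z_2z_3+z_1z_2z_4+z_1z_3z_4+z_2z_3z_4$, which is the asserted isomorphism; and since the Cayley cubic is visibly singular, this in fact re-proves (and strengthens, by exhibiting exactly four nodes) the first assertion.

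The main obstacle is verifying that $c_1,c_2,c_3,c_4$ are all nonzero; equivalently, that $\tilde g$, and hence $g$, is an irreducible cubic, since if some $c_k=0$ then $y_k\mid\tilde g$ and $V(g)$ would be reducible with a one-dimensional singular locus rather than four isolated nodes. In practice I would substitute the explicit $M$ with $a=1/\rt$ and simply read off the four coefficients; the remainder of the argument is formal, together with the classical normalisation of a four-nodal cubic. One could instead bypass the coordinate change entirely and invoke the classical fact that an irreducible cubic surface in $\C P^3$ with exactly four nodes is projectively equivalent to the Cayley cubic, but exhibiting $M$ explicitly is more self-contained and is presumably what the accompanying Maple checks verify.
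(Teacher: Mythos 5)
Your proof of non-smoothness is the same as the paper's: both exhibit the point $(i\rt,0,\rt,1)=(\sqrt{-2},0,\rt,1)$, check that $\rho=1$, $g=0$ and $\nabla g=0$ there, and propagate along the $G$-orbit (eight points in $\C^4$, four in $\C P^3$ since $\lm^2\mu\nu=-\mathrm{id}$). Your treatment of the Cayley-cubic identification, however, is genuinely different. The paper simply writes down the explicit linear forms
\[ X_{1,2} = x_4+x_3/\rt\pm x_1\sqrt{-2}, \qquad X_{3,4}=x_4-x_3/\rt\pm x_2\sqrt{-2}, \]
and verifies by expansion that $\sum_{i<j<k}X_iX_jX_k=-2g(x)$; this is a one-line check but the formulas are pulled out of a hat. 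You instead reconstruct the normal form from the geometry: the four projective singular points are in general position, a cubic singular at the four coordinate points can contain only the monomials $y_iy_jy_k$ with distinct indices, and a diagonal rescaling then yields the symmetric Cayley equation. Your route is more conceptual and explains \emph{why} such a coordinate change exists, at the cost of one deferred computation — checking that all four coefficients $c_k$ are nonzero (equivalently that $g$ is irreducible, so that no coordinate hyperplane divides $\tilde g$). You correctly flag this and it is a finite substitution, so there is no gap, but note that the paper's direct identity subsumes that check automatically (all four $X_i$ visibly appear). Your observation that the singularity at $a=1/\rt$ is exactly the degenerate case of the Jacobian analysis in the preceding proposition, and your explicit étale-transfer argument between $CEX$ and $PEX'$, are both slightly more detailed than what the paper records.
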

\begin{remark}
 Although the isomorphism with the Cayley cubic is interesting, it
 interacts poorly with the underlying real structure, and so is not
 too helpful for studying the cromulent surface $EX(1/\rt)$.
\end{remark}
\begin{proof}
 At the point $w=(\sqrt{-2},0,\rt,1)$ we find that $\rho(w)=1$ and
 $g(w)=0$ and all partial derivatives of $g$ also vanish.  It follows
 that $w$ is a singular point in $CEX(1/\rt)$ and that $[w]$ is a
 singular point in $PEX'(1/\rt)\subset PEX(1/\rt)$.  The same holds
 for all points in the $G$-orbit of $w$.  (One can check that
 $\nu(w)=w$ and $\lm^2\mu\nu(w)=-w$ and $|G.w|=8$ and $|G.[w]|=4$.)

 Now put
 \begin{align*}
  X_1 &= x_4 + x_3/\rt + x_1\sqrt{-2} &
  X_2 &= x_4 + x_3/\rt - x_1\sqrt{-2} \\
  X_3 &= x_4 - x_3/\rt + x_2\sqrt{-2} &
  X_4 &= x_4 - x_3/\rt - x_2\sqrt{-2}.
 \end{align*}
 We find that
 \[ X_1X_2X_3 + X_1X_2X_4 + X_1X_3X_4 + X_2X_3X_4 = -2g(x), \]
 so this gives an isomorphism with the Cayley cubic.
 \begin{checks}
  embedded/roothalf/cayley_surface_check.mpl: check_cayley_surface();
 \end{checks}
\end{proof}

\subsection{The ring of functions}
\lbl{sec-E-functions}

\begin{definition}\lbl{defn-yzu}
 We put
 \begin{align*}
  y_1 &= x_3   & y_2 &= (x_2^2 - x_1^2 - (a^{-1}+a)x_3x_4)/(2a) \\
  z_1 &= y_1^2 & z_2 &= y_2^2
 \end{align*}
 and
 \begin{align*}
  u_1 &= (1-2ay_2)/2 - \half(y_2-a)(y_2-a^{-1})y_1^2 \\
  u_2 &= (1+2ay_2)/2 - \half(y_2+a)(y_2+a^{-1})y_1^2 \\
  u_3 &= 4u_1u_2
       = (1-z_1-z_1z_2)^2 - z_2((a+a^{-1})z_1-2a)^2 \\
  u_4 &= u_1+u_2 = 1 - z_1 - z_1z_2
 \end{align*}
\end{definition}
\begin{checks}
 embedded/invariants_check.mpl: check_invariants()
\end{checks}

\begin{proposition}\lbl{prop-OX-basis}
 The ring $A$ of polynomial functions on $EX(a)$ can be described as
 \[ A = \R[y_1,y_2][x_1,x_2]/(x_1^2-u_1,x_2^2-u_2), \]
 with $x_3=y_1$ and $x_4=-y_1y_2$.  The set
 \[ M = \{x_1^ix_2^jy_1^ky_2^l\st i,j,k,l\in\{0,1\}\} \]
 is a basis for $A$ over the subring $\R[z_1,z_2]$.
\end{proposition}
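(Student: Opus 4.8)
The plan is to identify $A$ with the ring
$B:=\R[y_1,y_2][X_1,X_2]/(X_1^2-u_1,\,X_2^2-u_2)$ by exhibiting mutually inverse ring homomorphisms, and then to read off the claimed basis from the obvious structure of $B$ as an iterated quadratic extension. As a first step I would record two elementary relations in $A$. The relation $x_3=y_1$ is a definition. For $x_4=-y_1y_2$, observe that in $A$ we have $\rho(x)-1=0$ and $g(x)=0$, and $g_0(x)=g(x)+2x_4(\rho(x)-1)$, so $g_0(x)=0$ in $A$ as well; writing this out as $((1+a^{-2})x_3^2-2)x_4+a^{-1}(x_1^2-x_2^2)x_3=0$ and substituting into the identity $-y_1y_2=\tfrac{1}{2a}(x_1^2-x_2^2)x_3+\half(1+a^{-2})x_3^2x_4$ yields $-y_1y_2=x_4$ after a one-line cancellation.

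Next I would check $x_1^2=u_1$ and $x_2^2=u_2$ in $A$. It suffices to verify two identities in $\R[y_1,y_2]$ (with coefficients in $\Q(a)$): that $u_1+u_2=u_4=1-z_1-z_1z_2$ and that $u_1-u_2=-2ay_2+(a+a^{-1})y_1^2y_2$; both are immediate from the defining formulas for $u_1,u_2$. Granting them, the relation $\rho(x)=1$ gives $x_1^2+x_2^2=1-x_3^2-x_4^2=1-z_1-z_1z_2=u_1+u_2$, while the definition of $y_2$ together with $x_3x_4=y_1(-y_1y_2)=-y_1^2y_2$ gives $x_1^2-x_2^2=-2ay_2+(a+a^{-1})y_1^2y_2=u_1-u_2$; adding and subtracting gives $x_i^2=u_i$.

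With this, I would construct the isomorphism. Since $x_1,x_2,x_3=y_1,x_4=-y_1y_2$ all lie in the subring of $A$ generated by $y_1,y_2,x_1,x_2$, there is a surjection $\R[y_1,y_2][X_1,X_2]\to A$ with $X_i\mapsto x_i$; by the previous paragraph it annihilates $X_1^2-u_1$ and $X_2^2-u_2$, hence descends to a surjection $\bar\psi\colon B\to A$. In the other direction, the polynomial ring $\tA=\R[x_1,x_2,x_3,x_4]$ maps to $B$ by $x_1\mapsto X_1$, $x_2\mapsto X_2$, $x_3\mapsto y_1$, $x_4\mapsto -y_1y_2$; using $X_i^2=u_i$ in $B$ and the identity $u_1+u_2=1-z_1-z_1z_2$ one gets $\rho(x)-1\mapsto0$, and a single polynomial substitution (using both $u_1+u_2$ and $u_1-u_2$) shows $g(x)\mapsto0$, so this factors through a homomorphism $\chi\colon A\to B$. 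Then $\chi\bar\psi$ and $\bar\psi\chi$ are checked to be identities on generators: $\chi\bar\psi$ fixes $X_1,X_2,y_1$ trivially and fixes $y_2$ because $\tfrac{1}{2a}\bigl(X_2^2-X_1^2+(a+a^{-1})y_1^2y_2\bigr)=y_2$ in $B$ by the $u_1-u_2$ identity; and $\bar\psi\chi$ fixes $x_1,x_2,x_3$ trivially and fixes $x_4$ by the relation $x_4=-y_1y_2$ from the first paragraph. Hence $A\cong B$, which is the first assertion. For the basis: since $u_1\in\R[y_1,y_2]$, the ring $\R[y_1,y_2][X_1]/(X_1^2-u_1)$ is free of rank two over $\R[y_1,y_2]$ on $\{1,X_1\}$, and adjoining $X_2$ with $X_2^2=u_2$ doubles the rank again, so $B$ is free over $\R[y_1,y_2]$ on $\{1,X_1,X_2,X_1X_2\}$; meanwhile $\R[y_1,y_2]$ is free over $\R[z_1,z_2]=\R[y_1^2,y_2^2]$ on $\{1,y_1,y_2,y_1y_2\}$. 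Composing, $B$ is free over $\R[z_1,z_2]$ on the sixteen monomials $X_1^iX_2^jy_1^ky_2^l$ with $i,j,k,l\in\{0,1\}$, which transport under the isomorphism to the set $M$.

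The argument is structurally trivial — a pair of inverse ring maps plus free-module bookkeeping — so the main obstacle is purely computational: everything rests on the polynomial identities $u_1+u_2=1-z_1-z_1z_2$, $u_1-u_2=-2ay_2+(a+a^{-1})y_1^2y_2$, and above all on the verification that $g$ vanishes after the substitution $x\mapsto(X_1,X_2,y_1,-y_1y_2)$ modulo $(X_1^2-u_1,X_2^2-u_2)$. These are precisely the facts recorded in the Maple check \texttt{check\_invariants()}; each is elementary, but keeping the interplay of $a$ and $a^{-1}$ straight is where care is required.
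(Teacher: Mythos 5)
Your proposal is correct and follows essentially the same route as the paper: derive $x_3=y_1$, $x_4=-y_1y_2$ from $g_0=0$, obtain $x_i^2=u_i$ by adding and subtracting the relations coming from $\rho=1$ and the definition of $y_2$, and then read the basis off the evident free-module tower over $\R[z_1,z_2]$. The only difference is that you spell out the mutually inverse homomorphisms (in particular checking that $\rho-1$ and $g$ die in $B$, so the presentation has no extra relations), a point the paper's proof passes over in silence; this is a worthwhile bit of extra care but not a different argument.
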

\begin{proof}
 We have $x_3=y_1$ by definition, and the relation $g_0(x)=0$ can
 easily be rearranged as $x_4=-y_1y_2$.  We also have
 \begin{align*}
  x_1^2+x_2^2 &= 1-x_3^2-x_4^2 = 1-y_1^2-y_1^2y_2^2 \\
  x_1^2-x_2^2 &= -2ay_2-(a+a^{-1})x_3x_4
               = -2ay_2+(a+a^{-1})y_1^2y_2.
 \end{align*}
 Adding these equations gives $x_1^2=u_1$, and subtracting them gives
 $x_2^2=u_2$.  We now see that $A=\R[y_1,y_2]/(x_i^2-u_i)$.  It is
 clear from this that $\{1,x_1,x_2,x_1x_2\}$ is a basis for $A$ over
 $\R[y_1,y_2]$, and $\{1,y_1,y_2,y_1y_2\}$ is a basis for
 $\R[y_1,y_2]$ over $\R[z_1,z_2]$, so $M$ is a basis for $A$ over
 $\R[z_1,z_2]$.
\end{proof}
\begin{remark}
 The global symbols \mcode+x+, \mcode+y+, \mcode+z+ (and various
 others) are protected (by a call to the \mcode+protect+ command in
 the file \fname+Rn.mpl+).  This prevents the user from assigning
 values to these symbols, which is necessary in for our use of
 Gr\"obner bases to work properly.  Variables such as \mcode+x0+ or
 \mcode+X+ can be used instead of \mcode+x+ for storing points in
 $\R^4$.  The variable \mcode+xx+ is set equal to
 \mcode+[x[1],x[2],x[3],x[4]]+ (a list of length $4$, whose entries
 are unassigned symbols).  Similarly, \mcode+yy+ is
 \mcode+[y[1],y[2]]+, and \mcode+zz+ is \mcode+[z[1],z[2]]+.  Note
 although $z_1=x_3^2$, the symbol \mcode+z[1]+ does not have the value
 \mcode+x[3]^2+.  Instead, there is another variable \mcode+zx[1]+
 with the value \mcode+x[3]^2+.  If we have an expression \mcode+m+
 involving \mcode+z[1]+ and \mcode+z[2]+, we can convert it to an
 expression in \mcode+x[1]+ to \mcode+x[4]+ using the syntax
 \mcode+subs({z[1]=zx[1],z[2]=zx[2]},m)+ or
 \mcode+eval(subs({z=zx},m))+.  Some esoteric features of Maple mean
 that the second form will not work correctly without \mcode+eval()+.
 Similarly, there are variables \mcode+yx[i]+ which contain
 expressions for $y_i$ in terms of $x_j$, and variables \mcode+uy[1]+,
 \mcode+uy[2]+, \mcode+uz[3]+ and \mcode+uz[4]+ which contain
 expressions for $u_i$ in terms of $y_j$ or $z_j$.  We can also regard
 the rule $x\mapsto y$ as giving a function $\R^4\to\R^2$.  Maple
 notation for this function is \mcode+y_proj(x)+, and
 \mcode+z_proj(x)+ is similar.
\end{remark}
\begin{remark}
 The functions \mcode+NF_x+, \mcode+NF_y+ and \mcode+NF_z+ can be used
 to simplify elements of the ring $A$, by reducing them modulo a
 suitable Gr\"obner basis.  The function \mcode+NF_x+ will convert any
 expression to one that involves only the variables $x_i$, whereas
 \mcode+NF_y+ converts $x$'s to $y$'s as far as possible, and
 similarly for \mcode+NF_z+.
\end{remark}

It is straightforward to check that $G$ acts on $A$ as follows:
\begin{align*}
 \lm^*(x_1) &=    -x_2 & \mu^*(x_1) &= \pp x_1 & \nu^*(x_1) &= \pp x_1 \\
 \lm^*(x_2) &= \pp x_1 & \mu^*(x_2) &=    -x_2 & \nu^*(x_2) &=    -x_2 \\
 \lm^*(x_3) &= \pp x_3 & \mu^*(x_3) &=    -x_3 & \nu^*(x_3) &= \pp x_3 \\
 \lm^*(x_4) &=    -x_4 & \mu^*(x_4) &=    -x_4 & \nu^*(x_4) &= \pp x_4 \\
 \lm^*(y_1) &= \pp y_1 & \mu^*(y_1) &=    -y_1 & \nu^*(y_1) &= \pp y_1 \\
 \lm^*(y_2) &=    -y_2 & \mu^*(y_2) &= \pp y_2 & \nu^*(y_2) &= \pp y_2 \\
 \lm^*(z_1) &= \pp z_1 & \mu^*(z_1) &= \pp z_1 & \nu^*(z_1) &= \pp z_1 \\
 \lm^*(z_2) &= \pp z_2 & \mu^*(z_2) &= \pp z_2 & \nu^*(z_2) &= \pp z_2.
\end{align*}
In particular, the group acts as the identity on $\R[z_1,z_2]$, and
permutes the set $M\cup(-M)$.  Some of this can also be expressed in
terms of the characters listed in Proposition~\ref{prop-characters}:
\begin{align*}
 \gm^*(x_3) &= \chi_2(\gm) x_3 &
 \gm^*(x_4) &= \chi_3(\gm) x_4 \\
 \gm^*(y_1) &= \chi_2(\gm) y_1 &
 \gm^*(y_2) &= \chi_1(\gm) y_2 \\
 \gm^*(x_1x_2) &= \chi_4(\gm)x_1x_2.
\end{align*}

This makes it easy to analyse the invariants for various subgroups of
$G$.  The most important cases are as follows:
\begin{proposition}\lbl{prop-invariants}
 $A^G=\R[z_1,z_2]$ and $A^{\ip{\lm^2,\nu}}=\R[y_1,y_2]$.
\end{proposition}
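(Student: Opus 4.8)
The plan is to use the explicit basis for $A$ from Proposition~\ref{prop-OX-basis} together with the action of $G$ on the generators computed immediately above the statement. Recall that $M=\{x_1^ix_2^jy_1^ky_2^l\st i,j,k,l\in\{0,1\}\}$ is a basis for $A$ over $\R[z_1,z_2]$, and the group fixes $z_1$ and $z_2$ pointwise. Therefore, to compute $A^H$ for any subgroup $H\leq G$, I would write an arbitrary element as $\sum_{m\in M}c_m\,m$ with $c_m\in\R[z_1,z_2]$, apply the generators of $H$, and use the fact that the $G$-action permutes $M\cup(-M)$ (so it permutes the rank-one $\R[z_1,z_2]$-submodules spanned by the elements of $M$, up to sign) to see exactly which coefficient combinations survive.

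For $A^G$: first I would record how $\lm^*$, $\mu^*$, $\nu^*$ act on each basis element $m\in M$. From the displayed action table, $x_1\leftrightarrow -x_2$ under $\lm$ (more precisely $\lm^*x_1=-x_2$, $\lm^*x_2=x_1$), $y_1\mapsto\chi_2(\gm)y_1$ and $y_2\mapsto\chi_1(\gm)y_2$, and $x_1x_2\mapsto\chi_4(\gm)x_1x_2$. So on the eight basis elements of the form $y_1^ky_2^l$ and $x_1x_2y_1^ky_2^l$, the group acts by a sign character, and the only way a $\R$-linear combination of these can be $G$-invariant is for the coefficient to vanish whenever the character is nontrivial. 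Checking the characters: $y_1$ has character $\chi_2\neq 1$, $y_2$ has $\chi_1\neq 1$, $y_1y_2$ has $\chi_1\chi_2\neq 1$, $x_1x_2$ has $\chi_4\neq 1$, and similarly $x_1x_2y_1$, $x_1x_2y_2$, $x_1x_2y_1y_2$ all carry nontrivial characters. This kills seven of the eight; only the coefficient of $1$ may be nonzero. For the remaining four basis elements $x_1$, $x_2$, $x_1y_1$, $x_1y_2$, etc.\ (those involving exactly one of $x_1,x_2$), the element $\lm$ moves $x_1$ to $-x_2$ and hence shuffles these four submodules among themselves in a way with no nonzero fixed vector — a short direct check. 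Hence $A^G=\R[z_1,z_2]$.

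For $A^{\ip{\lm^2,\nu}}$: here $\lm^2=(\lm)^2$ acts by $\lm^{*2}x_1=-x_1$, $\lm^{*2}x_2=-x_2$ (and fixes $x_3,x_4$ hence $y_1,y_2$), so $\lm^2$-invariance forces the coefficients of all basis elements involving exactly one of $x_1,x_2$ to vanish, leaving the span of $\{1,y_1,y_2,y_1y_2,x_1x_2,x_1x_2y_1,x_1x_2y_2,x_1x_2y_1y_2\}$ over $\R[z_1,z_2]$. Then imposing $\nu$-invariance: $\nu^*x_1=x_1$, $\nu^*x_2=-x_2$, $\nu^*y_1=y_1$, $\nu^*y_2=y_2$, so $\nu^*(x_1x_2)=-x_1x_2$; this kills the four basis elements containing $x_1x_2$, leaving exactly the span of $\{1,y_1,y_2,y_1y_2\}$ over $\R[z_1,z_2]$, which is $\R[y_1,y_2]$. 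Conversely $\R[y_1,y_2]\subseteq A^{\ip{\lm^2,\nu}}$ is immediate from the action table, since $\lm^2$ and $\nu$ fix $y_1$ and $y_2$.

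The bookkeeping of the permutation action on $M$ is the only real content, and it is entirely routine given the character computations; I expect no genuine obstacle. The one point to be slightly careful about is that $\lm^*$ does not simply multiply basis elements by scalars (it mixes $x_1$ and $x_2$), so the argument for the four ``odd'' basis elements must be phrased as: $\ip{\lm}$ acts on the rank-four free $\R[z_1,z_2]$-module spanned by $x_1,x_2,x_1y_1y_2,x_2y_1y_2$ (and similarly the one spanned by $x_1y_1,x_1y_2,\dots$), and one checks directly that this action, being a faithful permutation-with-signs representation of a cyclic group of order $4$, has no nonzero invariants. This can be confirmed by the Maple check \texttt{embedded/invariants\_check.mpl: check\_invariants()} already cited after Definition~\ref{defn-yzu}.
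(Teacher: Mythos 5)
Your proof is correct and follows essentially the same strategy as the paper: decompose $A$ as a free module over the invariant subring and track the sign/permutation action on the monomial basis. The paper organizes it slightly more economically — it treats $\ip{\lm^2,\nu}$ first using the rank-four basis $\{1,x_1,x_2,x_1x_2\}$ over $\R[y_1,y_2]$, which kills the coefficients of $x_1$ and $x_2$ immediately via $\lm^2$, and then imposes $\lm,\mu$ on $\R[y_1,y_2]$ written over $\R[z_1,z_2]$, thereby avoiding your separate argument about the $C_4$-action on the eight monomials containing exactly one of $x_1,x_2$ — but the content is the same.
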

\begin{proof}
 Any element $a\in A$ can be written uniquely as
 $a=a_0+a_1x_1+a_2x_2+a_3x_1x_2$ with $a_0,\dotsc,a_3\in\R[y_1,y_2]$.
 We then find that
 \begin{align*}
  (\lm^2)^*(a) &= a_0 - a_1x_1 - a_2x_2 + a_3x_1x_2 \\
  \nu^*(a)     &= a_0 + a_1x_1 - a_2x_2 - a_3x_1x_2,
 \end{align*}
 so $a$ is invariant under $\ip{\lm^2,\nu}$ if and only if
 $a_1=a_2=a_3=0$ and $a=a_0\in\R[y_1,y_2]$.  If this holds, we can
 write $a$ uniquely as $b_0+b_1y_1+b_2y_2+b_3y_1y_2$ with
 $b_0,\dotsc,b_3\in\R[z_1,z_2]$.  We then find that
 \begin{align*}
  \lm^*(a) &= b_0 + b_1y_1 - b_2y_2 - b_3y_1y_2 \\
  \mu^*(a) &= b_0 - b_1y_1 + b_2y_2 - b_3y_1y_2,
 \end{align*}
 so $a$ is invariant under all of $G$ if and only if $b_1=b_2=b_3=0$
 and $a=b_0\in\R[z_1,z_2]$.
\end{proof}

\subsection{The curve system}
\lbl{sec-E-curves}

In this section we will construct a curve system for $EX(a)$.

\begin{definition}\lbl{defn-slices}
 We put $X_k=\{x\in EX(a)\st x_k=0\}$.
\end{definition}

\begin{proposition}\lbl{prop-slices-a}
 The fixed points of antiholomorphic elements of $G$ are as follows:
 \begin{align*}
  EX(a)^{\mu\nu}   &= \{x\in EX(a)\st x_3=x_4=0\} = X_3\cap X_4 \\
  EX(a)^{\lm\nu}   &= \{x\in EX(a)\st x_1=x_2,\;x_4=0\} \sse X_4 \\
  EX(a)^{\lm^3\nu} &= \{x\in EX(a)\st x_1=-x_2,\;x_4=0\} \sse X_4 \\
  EX(a)^{\lm^2\nu} &= \{x\in EX(a)\st x_1=0\} = X_1 \\
  EX(a)^\nu        &= \{x\in EX(a)\st x_2=0\} = X_2.
 \end{align*}
\end{proposition}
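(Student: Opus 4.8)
The plan is to reduce the proposition to a short, purely linear computation. Recall from Section~\ref{sec-E-G} that $G$ acts on $EX(a)$ through the restriction of an isometric linear action on $\R^4$ that preserves both $S^3$ and the cubic $g$; hence for $\gm\in G$ the fixed set of $\gm$ on the Riemann surface $EX(a)$ is exactly $\{x\in EX(a)\st\gm(x)=x\}$, with the equation read in $\R^4$. The five elements appearing in the statement, namely $\nu$, $\lm^2\nu$, $\mu\nu$, $\lm\nu$ and $\lm^3\nu$, all lie in $G\sm D_8$ and so act anticonformally (each is a conformal element of $D_8$ composed with the anticonformal map $\nu$), which is what is meant by ``antiholomorphic elements'' here; this point is already recorded in Section~\ref{sec-E-G}.

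First I would write each of these elements as a signed coordinate permutation. From the generator formulas for $\lm,\mu,\nu$ one computes $\lm^2(x)=(-x_1,-x_2,x_3,x_4)$ and $\lm^3(x)=(x_2,-x_1,x_3,-x_4)$, and therefore
\[
 \nu(x)=(x_1,-x_2,x_3,x_4),\qquad
 \lm^2\nu(x)=(-x_1,x_2,x_3,x_4),\qquad
 \mu\nu(x)=(x_1,x_2,-x_3,-x_4),
\]
\[
 \lm\nu(x)=(x_2,x_1,x_3,-x_4),\qquad
 \lm^3\nu(x)=(-x_2,-x_1,x_3,-x_4).
\]
Solving $\gm(x)=x$ is then immediate in each case: for $\nu$ it forces $x_2=0$; for $\lm^2\nu$ it forces $x_1=0$; for $\mu\nu$ it forces $x_3=x_4=0$; for $\lm\nu$ it forces $x_1=x_2$ together with $x_4=-x_4$, i.e.\ $x_4=0$; and for $\lm^3\nu$ it forces $x_1=-x_2$ and $x_4=0$. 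Intersecting these linear loci with $EX(a)$, and recalling that $X_k=\{x\in EX(a)\st x_k=0\}$, gives the five displayed identities; the inclusions $EX(a)^{\lm\nu},\,EX(a)^{\lm^3\nu}\sse X_4$ come from the common condition $x_4=0$.

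There is no genuine obstacle beyond careful bookkeeping with composition order and signs: the argument amounts to identifying the $(+1)$-eigenspace of five $4\times 4$ signed permutation matrices and intersecting with $EX(a)$. For completeness I would add one sentence noting that the remaining three anticonformal elements are not listed because $\lm^2\mu\nu$ acts as $x\mapsto -x$ and so has empty fixed set on $S^3$, while $\lm\mu\nu$ and $\lm^3\mu\nu$ are conjugate in $G$ and their fixed sets are carried to one another by the group action; none of these is needed for the statement or for the curve system constructed in Section~\ref{sec-E-curves}.
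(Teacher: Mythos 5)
Your proof is correct and is exactly the paper's argument (the paper's proof reads ``Immediate from formulae for the action of the relevant group elements on $\R^4$''): you simply write out each antiholomorphic element as a signed coordinate permutation and solve $\gm(x)=x$. All five computed formulas and the resulting fixed loci check out against the definitions of $\lm$, $\mu$, $\nu$ in Section~\ref{sec-E-G}.
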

\begin{proof}
 Immediate from formulae for the action of the relevant group elements
 on $\R^4$.
\end{proof}

\begin{lemma}\lbl{lem-a-order}
 Put $a^*=\sqrt{(a^{-2}-1)/2}$, so $2at^2+a-a^{-1}=0$ iff $t=\pm a^*$.
 Then:
 \begin{itemize}
  \item If $0<a<1/\rt$ we have
   \[ -a^* < 0 < a < \frac{1}{2a} < a^* < \frac{1}{a}.  \]
  \item If $a=1/\rt$ we have
   \[ -a^* < 0 < a = \frac{1}{2a} = a^* < \frac{1}{a}. \]
  \item If $1/\rt<a<1$ we have
   \[ -a^* < 0 < a^* < \frac{1}{2a} < a < \frac{1}{a}. \]
 \end{itemize}
\end{lemma}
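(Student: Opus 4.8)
\textbf{Proof proposal for Lemma~\ref{lem-a-order}.}

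The plan is to reduce every comparison among the six quantities $-a^*$, $0$, $a$, $1/(2a)$, $a^*$, $1/a$ to the sign of the single expression $2a^2-1$, which is negative, zero or positive exactly in the three cases of the statement. First I would record the preliminary facts: since $a\in(0,1)$ we have $a^{-2}-1>0$, so $a^*=\sqrt{(a^{-2}-1)/2}$ is a well-defined positive real with $a^{*2}=(a^{-2}-1)/2$; and the equation $2at^2+a-a^{-1}=0$ is equivalent to $t^2=(a^{-2}-1)/2=a^{*2}$, i.e.\ $t=\pm a^*$, which justifies the remark at the start of the lemma. It is then immediate that $-a^*<0<a$ and, since $a<1$, that $a<1/a$ and $1/(2a)<1/a$; also $a^{*2}-a^{-2}=-(a^{-2}+1)/2<0$, so $a^*<1/a$ for all $a$. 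These inequalities hold in all three cases.

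The remaining three comparisons are each handled by clearing denominators (all quantities involved are positive, so this is harmless) and factoring:
\begin{align*}
 4a^2\bigl(a^2-\tfrac{1}{4a^2}\bigr) &= 4a^4-1 = (2a^2-1)(2a^2+1), \\
 4a^2\bigl(a^{*2}-\tfrac{1}{4a^2}\bigr) &= 2(1-a^2)-1 = -(2a^2-1), \\
 2a^2\bigl(a^{*2}-a^2\bigr) &= 1-a^2-2a^4 = -(2a^2-1)(a^2+1).
\end{align*}
Thus $a\lessgtr 1/(2a)$, $a^*\gtrless 1/(2a)$ and $a^*\gtrless a$ according as $2a^2-1\lessgtr 0$, and all three become equalities when $2a^2=1$. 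Combining these with the universally valid inequalities from the first paragraph gives, when $0<a<1/\rt$, the chain $-a^*<0<a<1/(2a)<a^*<1/a$; when $a=1/\rt$, the chain $-a^*<0<a=1/(2a)=a^*<1/a$ (here $a^*=a=1/(2a)=1/\rt$ and $1/a=\rt$); and when $1/\rt<a<1$, the chain $-a^*<0<a^*<1/(2a)<a<1/a$. This is exactly the assertion of the lemma.

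There is no real obstacle here: the whole argument is elementary algebra with inequalities. The only point needing care is the bookkeeping — making sure that in each of the three regimes the five strict (or, in the borderline case, partly non-strict) inequalities assemble into the single displayed chain without gaps — but this is purely mechanical once the three sign computations above are in hand.
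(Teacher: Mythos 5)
Your proof is correct, and the paper itself dismisses this lemma with the single word ``Straightforward,'' so your elementary sign analysis via $2a^2-1$ is exactly the kind of argument the author had in mind. All three clearing-denominator identities check out, and the case bookkeeping is complete.
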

\begin{proof}
 Straightforward.
\end{proof}

\begin{definition}\lbl{defn-T-alg}
 For all $a\in(0,1)$ we put
 \begin{align*}
  T_{\alg}^- &= [-a^*,0] \\
  T_{\alg}^+ &= [\min(1/(2a),a^*),\max(1/(2a),a^*)]
    = \begin{cases}
       [1/(2a),a^*] & \text{ if } 0 < a \leq 1/\rt \\
       [a^*,1/(2a)] & \text{ if } 1/\rt \leq a < 1.
      \end{cases} \\
  T_{\alg} &= T_{\alg}^- \cup T_{\alg}^+
 \end{align*}
 Note that $1/a$ is never in $T_{\alg}$, and $a$ is in $T_{\alg}$ if
 and only if $a=1/\rt$.  Note also that $T_{\alg}^+$ and
 $T_{\alg}^-$ are nonempty disjoint closed sets.
\end{definition}

\begin{proposition}\lbl{prop-T-alg}
 Consider a point $x\in X_1$ (so $x_1=0$), and define
 $y_1=x_3$ and $y_2=(x_2^2-(a+a^{-1})x_3x_4)/(2a)$ as in
 Definition~\ref{defn-yzu}.  Then
 \begin{align*}
  y_1^2(y_2-a)(y_2-a^{-1}) &= -2a(y_2-1/(2a)) \tag{A} \\
  x_2^2(y_2-a)(y_2-a^{-1}) &= 2ay_2(y_2^2-(a^*)^2) \tag{B} \\
  \left(y_1^2-\frac{2}{a^{-2}+1}\right)(y_2-a)(y_2-a^{-1})
   &= \frac{2}{a^{-2}+1}((a^*)^2-y_2^2) \tag{C}
 \end{align*}
 Thus $y_2\in T_{\alg}^+\amalg T_{\alg}^-$, and if
 $y_2\in T_{\alg}^-$ then
 \[ y_1 \in \left[-1,-\sqrt{\frac{2}{a^{-2}+1}}\right] \amalg
            \left[\sqrt{\frac{2}{a^{-2}+1}},1\right].
 \]
\end{proposition}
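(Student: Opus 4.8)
The plan is to establish the three identities~(A), (B), (C) by direct substitution, and then to read off the location of $y_2$ and $y_1$ from them using only the sign constraints $x_2^2\ge 0$ and $0\le y_1^2\le 1$, together with the ordering of the numbers $a$, $1/(2a)$, $a^*$, $1/a$ recorded in Lemma~\ref{lem-a-order}.

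For the identities I would start from Proposition~\ref{prop-OX-basis}: on $EX(a)$ one has $x_3=y_1$, $x_4=-y_1y_2$ and $x_1^2=u_1$, $x_2^2=u_2$. Since $x\in X_1$ means $x_1=0$, the relation $x_1^2=u_1=(1-2ay_2)/2-\half(y_2-a)(y_2-a^{-1})y_1^2$ rearranges immediately to~(A). Next, $\rho(x)=1$ together with $x_1=0$, $x_3=y_1$, $x_4=-y_1y_2$ gives $x_2^2=1-y_1^2(1+y_2^2)$; multiplying by $(y_2-a)(y_2-a^{-1})$ and using~(A) to replace $y_1^2(y_2-a)(y_2-a^{-1})$ by $1-2ay_2$, a short expansion produces~(B). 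Likewise $(y_1^2-\tfrac{2}{a^{-2}+1})(y_2-a)(y_2-a^{-1})=(1-2ay_2)-\tfrac{2}{a^{-2}+1}(y_2-a)(y_2-a^{-1})$ by~(A), and~(C) follows once one notes the cancellation $\tfrac{2}{a^{-2}+1}(a+a^{-1})=2a$. These are routine computations, and are covered by the accompanying Maple checks.

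The deduction then runs as follows. Put $P=(y_2-a)(y_2-a^{-1})$; since $0<a<a^{-1}$, $P$ is positive for $y_2\notin[a,a^{-1}]$ and negative for $y_2\in(a,a^{-1})$. I would split into the cases $P>0$, $P<0$, $P=0$. In the first two cases,~(A) with $y_1^2\ge 0$ gives that $1-2ay_2$ is $\ge 0$ when $P>0$ and $\le 0$ when $P<0$, placing $y_2$ on a definite side of $1/(2a)$; similarly~(B) with $x_2^2\ge 0$ gives that $y_2(y_2^2-(a^*)^2)$ is $\ge 0$ when $P>0$ and $\le 0$ when $P<0$, placing $y_2$ in $[-a^*,0]\cup[a^*,\infty)$ or in its complement $(-\infty,-a^*]\cup[0,a^*]$. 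Intersecting these two half-line conditions with the region defining the case, and invoking Lemma~\ref{lem-a-order} to track how $a$, $1/(2a)$, $a^*$ are ordered (the order changes at $a=1/\rt$), confines $y_2$ to $T_{\alg}^-\cup T_{\alg}^+$ in each regime. For $P=0$ one checks separately that $y_2=a^{-1}$ is impossible by~(B) (the two sides become $0$ and $a^{-2}+1$), while $y_2=a$ forces $a=1/\rt$ by~(A), and then $y_2=a=1/(2a)=a^*$ lies in the (degenerate, one-point) set $T_{\alg}^+$. Disjointness of $T_{\alg}^-$ and $T_{\alg}^+$ is part of Definition~\ref{defn-T-alg}, giving $y_2\in T_{\alg}^+\amalg T_{\alg}^-$. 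Finally, if $y_2\in T_{\alg}^-=[-a^*,0]$ then $y_2\le 0<a<a^{-1}$, so $P>0$, and the right-hand side of~(C) equals $\tfrac{2}{a^{-2}+1}((a^*)^2-y_2^2)\ge 0$; dividing by $P$ gives $y_1^2\ge \tfrac{2}{a^{-2}+1}$, and since $y_1^2=x_3^2\le\rho(x)=1$ we conclude $y_1\in[-1,-\sqrt{2/(a^{-2}+1)}]\cup[\sqrt{2/(a^{-2}+1)},1]$.

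The part that needs care is the case analysis: although everything is elementary, the admissible $y_2$-interval coming from each of~(A) and~(B) must be intersected correctly, and the qualitative picture of where $a$, $1/(2a)$ and $a^*$ sit relative to one another differs in the three regimes $a<1/\rt$, $a=1/\rt$, $a>1/\rt$ of Lemma~\ref{lem-a-order}; one must verify that $y_2\in T_{\alg}^+\amalg T_{\alg}^-$ holds uniformly, including the degenerate case where $T_{\alg}^+$ collapses to a single point. By contrast, the verification of~(A)--(C) and the final extraction of the bound on $y_1$ are mechanical.
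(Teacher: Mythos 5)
Your proof is correct and takes essentially the same route as the paper: (A) comes from $u_1=0$, (B) and (C) from combining (A) with the expansion of $(y_2-a)(y_2-a^{-1})$, and the containments follow from sign considerations together with Lemma~\ref{lem-a-order} and $y_1^2=x_3^2\leq 1$. If anything your case analysis is the more complete one, since the paper's text attributes the conclusion $y_2\in T_{\alg}$ to the signs in~(A) alone, whereas (as your argument makes explicit) one genuinely needs the sign information from~(B) as well, and the degenerate case $y_2=a=1/\rt$ deserves the separate mention you give it.
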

\begin{proof}
 Recall from Proposition~\ref{prop-OX-basis} that $x_1^2=u_1$, where
 \[ u_1 = (1-2ay_2)/2 - \half(y_2-a)(y_2-a^{-1})y_1^2. \]
 Here $x_1=0$ so $u_1=0$, and this can be rearranged to give
 equation~(A).  Next, as $x_1=0$ and $x_3=y_1$ and $x_4=-y_1y_2$
 we have $x_2^2=1-x_3^2-x_4^2=1-(1+y_2^2)y_1^2$.  We can thus take the
 equation
 \[ (y_2-a)(y_2-a^{-1})=y_2^2-(a+a^{-1})y_2+1 \tag{D} \]
 and subtract $(1+y_2^2)$ times equation~(A) to get equation~(B).
 Alternatively, we can subtract $2/(a^{-2}+1)$ times~(D) from~(A) and
 rearrange slightly to get~(C).  Now consider the signs of the left
 and right hand sides of~(A), bearing in mind Lemma~\ref{lem-a-order};
 it follows that we must have $y_2\in T_{\alg}$.  Suppose in fact that
 $y_2\in T_{\alg}^-$, so $-a^*\leq y_2\leq 0$.  On the left hand side
 of~(C) the factors $y_2-a^{\pm 1}$ are both strictly negative, and
 on the right hand side $(a^*)^2-y_2^2\geq 0$.  It therefore follows
 from~(D) that $y_1^2\geq 2/(a^{-2}-1)$.  On the other hand, we also
 have $y_1^2=x_3^2=1-x_2^2-x_4^2\leq 1$.  The relation
 \[ y_1 \in \left[-1,-\sqrt{\frac{2}{a^{-2}+1}}\right] \amalg
            \left[\sqrt{\frac{2}{a^{-2}+1}},1\right]
 \]
 is now clear.
\end{proof}
\begin{remark}\lbl{rem-T-alg}
 We have $\lm(X_1)=\lm^{-1}(X_1)=X_2$ and $\lm^*(y_1)=y_1$ and
 $\lm^*(y_2)=-y_2$.  Using this we deduce that when $x\in X_2$ we have
 $-y_2\in T_{\alg}$, and if $-y_2\in T_{\alg}^-$ we again have
 \[ y_1 \in \left[-1,-\sqrt{\frac{2}{a^{-2}+1}}\right] \amalg
            \left[\sqrt{\frac{2}{a^{-2}+1}},1\right].
 \]
\end{remark}

\begin{definition}\lbl{defn-C-star}
 We put
 \begin{align*}
  C_0^* &= \{(\cos(t),\;\sin(t),\;0,\;0)\st t\in\R\} \\
  C_1^* &= \{(\pp\sin(t)/\rt,\;\sin(t)/\rt,\;\cos(t),\;0)\st t\in\R\} \\
  C_2^* &= \{(  -\sin(t)/\rt,\;\sin(t)/\rt,\;\cos(t),\;0)\st t\in\R\} \\
  C_3^* &= \{x\in X_1\;|\hspace{0.75em} \pp y_2\in T_{\alg}^+\} \\
  C_4^* &= \{x\in X_2\;|\hspace{0.30em}    -y_2\in T_{\alg}^+\} \\
  C_5^* &= \{x\in X_2\;|\hspace{0.30em}    -y_2\in T_{\alg}^-,\; y_1\geq 0\} \\
  C_6^* &= \{x\in X_1\;|\hspace{0.75em} \pp y_2\in T_{\alg}^-,\; y_1\geq 0\} \\
  C_7^* &= \{x\in X_2\;|\hspace{0.30em}    -y_2\in T_{\alg}^-,\; y_1\leq 0\} \\
  C_8^* &= \{x\in X_1\;|\hspace{0.75em} \pp y_2\in T_{\alg}^-,\; y_1\leq 0\}.
 \end{align*}
\end{definition}
It is straightforward to check that $C_k=C_k^*$ for $k\in\{0,1,2\}$.
Using Proposition~\ref{prop-T-alg} and Remark~\ref{rem-T-alg} we see
that
\begin{align*}
 X^{\lm^2\nu} &= X_1 = C_3^* \amalg C_6^* \amalg C_8^* \\
 X^\nu        &= X_2 = C_4^* \amalg C_5^* \amalg C_7^*.
\end{align*}
Next, recall that $C_3$ is the component of $X^{\lm^2\nu}=X_1$
containing $v_{11}$.  One can check that $y_2=a^*$ at $v_{11}$, so
$v_{11}\in C_3^*$.  By connectivity, it follows that
$C_3\sse C_3^*$.  The same line of argument shows that
$C_k\sse C_k^*$ for all $k\in\{3,\dotsc,8\}$, and $C_k$ will be the
same as $C_k^*$ if $C_k^*$ is connected.  To prove that this holds
we need to parameterise $C_k^*$.  We will do this in two different
ways.

\begin{definition}\lbl{defn-c-alg}
 We put
 \[ c_{\alg}(t) =
   \left(0,
         \sqrt{\frac{(2at^2+a-a^{-1})t}{(t-a)(t-a^{-1})}},
         \sqrt{\frac{1-2at}{(t-a)(t-a^{-1})}},
         -t\sqrt{\frac{1-2at}{(t-a)(t-a^{-1})}}
   \right).
 \]
 One can check using Lemma~\ref{lem-a-order} that this defines a
 continuous map $c_{\alg}\:T_{\alg}\to\R^4$ except in
 the case $a=1/\rt$, when the domain of $c_{\alg}$ is
 $T_{\alg}\sm\{a\}=T_{\alg}^-$.
\end{definition}
The map $c_{\alg}(t)$ is defined in \fname+embedded/extra_curves.mpl+
as \mcode+c_algebraic(t)+.

We will show that the map $c_{\alg}$ is essentially inverse to
$y_2\:X_1\to T_{\alg}$.  The formula is forced by the identities in
Proposition~\ref{prop-T-alg}.  A precise statement of the key property
is as follows.

\begin{proposition}\lbl{prop-c-alg}
 The image of $c_{\alg}$ is contained in $X_1$, and we have
 $y_2(c_{\alg}(t))=t$.  Conversely, consider a point $x\in X_1$, so
 $y_2\in T_{\alg}$.  Except in the case where $y_2=a=1/\rt$,
 there is an element $\gm\in\{1,\mu,\nu,\mu\nu\}$ such that
 $x=\gm(c_{\alg}(y_2))$.  If $x_2,x_3\geq 0$ then we can take $\gm=1$.
\end{proposition}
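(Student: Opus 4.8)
The plan is to treat the two assertions of Proposition~\ref{prop-c-alg} separately, both reducing to explicit polynomial identities. \emph{First assertion.} I would write $c_{\alg}(t)=(0,x_2,x_3,x_4)$ with $x_3^2=(1-2at)/((t-a)(t-a^{-1}))$, $x_2^2=t(2at^2+a-a^{-1})/((t-a)(t-a^{-1}))$ and $x_4=-tx_3$, and substitute into the two equations defining $EX(a)$. For $\rho$, the numerator over the common denominator $(t-a)(t-a^{-1})$ is $t(2at^2+a-a^{-1})+(1-2at)(1+t^2)$, which expands and collapses to $t^2-(a+a^{-1})t+1=(t-a)(t-a^{-1})$, so $\rho(c_{\alg}(t))=1$; since the first coordinate is $0$ this already puts the point in $X_1$ once $g_0$ vanishes too. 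With $x_1=0$ and $x_4=-tx_3$ one has $g_0(c_{\alg}(t))=-x_3\bigl(t((1+a^{-2})x_3^2-2)+a^{-1}x_2^2\bigr)$, and putting the bracket over $(t-a)(t-a^{-1})$ its numerator collapses to $0$. Finally $y_2(c_{\alg}(t))=(x_2^2+(a+a^{-1})tx_3^2)/(2a)$, whose numerator collapses to $2at(t-a)(t-a^{-1})$, giving $y_2(c_{\alg}(t))=t$. That the radicands are nonnegative on $T_{\alg}$ — and on $T_{\alg}^-$ when $a=1/\rt$ — is already recorded after Definition~\ref{defn-c-alg} via Lemma~\ref{lem-a-order}. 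All of these are routine expansions.

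\emph{Converse.} Given $x\in X_1$, Proposition~\ref{prop-T-alg} gives $y_2\in T_{\alg}$, and Proposition~\ref{prop-OX-basis} (applied with $x_1=0$) gives $x_3=y_1$ and $x_4=-y_1y_2=-x_3y_2$, so $x$ is determined by the pair $(x_2,x_3)$. Outside the excluded case we have $y_2\neq a$, and also $y_2\neq a^{-1}$ since $a^{-1}\notin T_{\alg}$, hence $(y_2-a)(y_2-a^{-1})\neq 0$; then identities (A) and (B) of Proposition~\ref{prop-T-alg} express $x_3^2$ and $x_2^2$ in terms of $y_2$ alone, and these are precisely the radicands appearing in $c_{\alg}(y_2)$. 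So $x$ and $c_{\alg}(y_2)$ have the same first coordinate $0$, the same $|x_2|$, the same $|x_3|$, and in each case $x_4=-y_2x_3$ is forced by $x_3$; thus $x$ differs from $c_{\alg}(y_2)$ only in the signs of the second and third coordinates. The four elements $\{1,\mu,\nu,\mu\nu\}$ all fix the first coordinate, keep $y_2$ fixed (as $\mu^*y_2=\nu^*y_2=y_2$) and preserve the relation $x_4=-y_2x_3$, and they realise the four independent sign changes on $(x_2,x_3)$: $\mu$ negates both, $\nu$ negates $x_2$, $\mu\nu$ negates $x_3$, and $1$ negates neither. Hence some $\gm$ in this set carries $c_{\alg}(y_2)$ to $x$, and since $c_{\alg}(y_2)$ already has $x_2,x_3\geq 0$, if $x$ also has $x_2,x_3\geq 0$ we may take $\gm=1$.

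I do not expect a serious obstacle here. The only care needed is the bookkeeping of the degenerate configurations — where $x_2$ or $x_3$ vanishes, so that more than one $\gm$ works (harmless), or where $y_2$ sits at an endpoint of $T_{\alg}$ and one of $1-2at$, $2at^2+a-a^{-1}$, or $t$ vanishes, making a coordinate of $c_{\alg}(y_2)$ zero (still fine). The bulk of the work is the mechanical verification of the polynomial identities in the first paragraph, which I would discharge by direct expansion, these being exactly the sort of identities verified in the accompanying Maple code.
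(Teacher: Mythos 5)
Your proof is correct and follows essentially the same route as the paper's: verify the defining equations of $X_1$ by direct expansion, then use the identities of Proposition~\ref{prop-T-alg} to pin down $x_2^2$ and $x_3^2$ from $y_2$ and adjust signs with $\{1,\mu,\nu,\mu\nu\}$. The only (harmless) difference is that you compute $y_2(c_{\alg}(t))=t$ directly from the full formula for $y_2$, whereas the paper uses $y_2=-x_4/x_3$ and recovers the point $t=1/(2a)$ by continuity.
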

\begin{proof}
 First, it is straightforward to check that $\rho(c_{\alg}(t))=1$ and
 $g_0(c_{\alg}(t))=0$ so the image of $c_{\alg}$ is contained in
 $X_1$.  When $x_3\neq 0$ we have $y_2=-x_4/x_3$, and using this we
 see that $y_2(c_{\alg}(t))=t$ except possibly when $t=1/(2a)$, but
 that case can be recovered by continuity.

 For the converse, consider a point $x\in X_1$, and suppose we are not
 in the exceptional case where $y_2=a=1/\rt$, so
 $(y_2-a)(y_2-a^{-1})\neq 0$ and the identities
 in Proposition~\ref{prop-T-alg} can be rearranged to give formulae
 for $x_2^2$ and $x_3^2$ in terms of $y_2$.  Recall also
 that $x_4=-x_3y_2$.  Using this, we see that the point
 $x'=c_{\alg}(y_2)$ satisfies $x_2=\pm x'_2$ and $x_3=\pm x'_3$.
 Recall also that
 \begin{align*}
  \mu(0,x_2,x_3,x_4) &= (0,   -x_2,   -x_3,   -x_4) \\
  \nu(0,x_2,x_3,x_4) &= (0,   -x_2,\pp x_3,\pp x_4),
 \end{align*}
 and that $y_2$ is invariant under $\mu$ and $\nu$.  The claim now
 follows easily.
\end{proof}

It would be possible to prove $C_k=C_k^*$ using only $c_{\alg}$, with
a slight digression to cover the case $a=1/\rt$, but we prefer to
use a full curve system instead.

\begin{definition}\lbl{defn-E-curves}
 We define maps $c_k\:\R\to\R^4$ as follows.
 \begin{align*}
  c_0(t) &= (\cos(t),\sin(t),0,0) \\
  c_1(t) &= (\sin(t)/\rt,\sin(t)/\rt,\cos(t),0) \\
  c_2(t) &= \lm(c_1(t)) \\
  p_3(t) &= (1+a^2)\sin(t)^2 +
            \sqrt{(1+a^2)(1-a^2+2a^2\sin(t)^2)+(1-a^2)^2\cos(t)^4} \\
  c_3(t) &= \left(0,
             \sqrt{\frac{2(1-a^2)+4a^2\sin(t)^2}{p_3(t)}}\sin(t),\right.\\
         &\qquad \left.\sqrt{\frac{2}{1+a^{-2}}}\cos(t),
             \sqrt{\frac{2}{1+a^{-2}}}\frac{a^{-1}-a+2a\sin(t)^2}{p_3(t)}\cos(t)\right)\\
  c_4(t) &= \lm(c_3(t)) \\
  \tau_5(t) &= -\sqrt{(a^{-2}-1)/2}\sin(t/2)^2 \\
  p_5(t) &= (\tau_5(t)-a)(\tau_5(t)-a^{-1}) \\
  c_5(t) &= \left(
         \frac{(a^{-2}-1)^{3/4}}{2^{5/4}}\sqrt{\frac{a(1+\sin(t/2)^2)}{p_5(t)}}\sin(t),
         0, \sqrt{\frac{1-2a\tau_5(t)}{p_5(t)}},
         -\tau_5(t)\sqrt{\frac{1-2a\tau_5(t)}{p_5(t)}}
           \right) \\
  c_6(t) &= \lm(c_5(t)); \quad
  c_7(t)  = \mu(c_5(t)); \quad
  c_8(t)  = \lm(\mu(c_5(t))).
 \end{align*}
 Maple notation for these is \mcode+c_E[k](t)+.
\end{definition}
\begin{remark}
 We have written $p_3(t)$ in a form which makes it clear that it is
 always strictly positive, and using this it is not hard to see
 that $c_3(t)$ is well-defined.  It is also clear that
 $\tau_5(t)\leq 0$ and so $p_5(t)\geq 1$, which in turn implies that
 $c_5(t)$ is well-defined.  The map $c_5$ is essentially
 $c_{\alg}\circ\tau_5$ except that $c_{\alg}$ involves nonnegative
 square roots of certain quantities, whereas $c_5$ uses different
 branches of these roots that are sometimes negative.  A similar
 approach would be possible for $c_3$ but would run into problems as
 $a$ passes through $1/\rt$.
\end{remark}

\begin{proposition}\lbl{prop-E-curves}
 The images of the above maps lie in $EX(a)$, and they give a curve
 system for $EX(a)$.  Moreover, we have $c_k(\R)=C_k=C_k^*$ for all
 $k$.
\end{proposition}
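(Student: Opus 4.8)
The plan is to establish the three assertions in turn: that each $c_k(\R)$ lies in $EX(a)$, that the family $(c_k)_{k=0}^8$ satisfies the axioms of Definition~\ref{defn-curve-system}, and that $c_k(\R)=C_k=C_k^*$. Throughout, the relations $c_2=\lm c_1$, $c_4=\lm c_3$, $c_6=\lm c_5$, $c_7=\mu c_5$, $c_8=\lm\mu c_5$, together with the fact (proved in Section~\ref{sec-E-G}) that $G$ preserves $EX(a)$ and acts through the curve-system symmetries, reduce nearly every verification to the four generating curves $c_0,c_1,c_3,c_5$.

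First I would check containment in $EX(a)$: it suffices to substitute the explicit formulae for $c_0(t),c_1(t),c_3(t),c_5(t)$ into $\rho$ and $g_0$ and simplify to $1$ and $0$ respectively; for $c_3$ and $c_5$ this uses the identities already recorded for $p_3$ and $p_5$ (in particular that they are strictly positive, so the square roots are real). The same positivity shows each of $c_0,\dots,c_8$ is real-analytic, and the trigonometric form of the definitions makes $2\pi$-periodicity (and the fact that the minimal period is exactly $2\pi$) immediate. For the embedding clause of axiom~(a) I would argue as in Lemma~\ref{lem-P-curves-a}: reduce to $c_0,c_1,c_3,c_5$, recover the parameter $t$ modulo $2\pi$ from a well-chosen coordinate or invariant together with the signs of $\sin t$ and $\cos t$ (for $c_3$, from $x_3=y_1$ and the invariant $y_2$; for $c_5$, similarly), which gives injectivity, and check $c_k'(t)\neq 0$ directly, paying attention to the points $t\in\pi\Z$ where some coordinates vanish. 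Axiom~(c) is a finite list of identities between linear images of the generating curves and reparametrisations of them; each is verified by a one-line computation with the explicit matrices for $\lm,\mu,\nu$ on $\R^4$. The nonempty-box part of axiom~(b) is checked by evaluating each $c_k$ at the angles listed in the table and comparing with the coordinates of the $v_i$ from Section~\ref{sec-E-isotropy}.

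By Proposition~\ref{prop-empty-boxes}, the only remaining input for ``curve system'' is the disjointness of $c_3(\R)$, $c_6(\R)$ and $c_8(\R)$; this I would obtain simultaneously with the ``moreover'' clause. The plan for the latter is: show $c_k(\R)=C_k^*$ for each $k$, whence $C_k^*$, being a continuous image of $\R$, is connected, and therefore (by the criterion noted before the statement, using $C_k\sse C_k^*$ already proved) $C_k=C_k^*$; combining these gives $c_k(\R)=C_k=C_k^*$, and the disjointness of $c_3(\R),c_6(\R),c_8(\R)$ then follows from the decomposition $EX(a)^{\lm^2\nu}=X_1=C_3^*\amalg C_6^*\amalg C_8^*$ established in Section~\ref{sec-E-curves}. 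For $k\in\{0,1,2\}$ the identity $c_k(\R)=C_k^*$ is immediate ($c_0(\R)$ is the unit circle in the $x_1x_2$-plane, and $c_1,c_2$ are handled by their defining formulae). For $k=3$: $c_3(\R)\sse X_1$ is visible from the vanishing first coordinate; Proposition~\ref{prop-T-alg} then puts $y_2(c_3(t))$ in $T_{\alg}^+\amalg T_{\alg}^-$ for every $t$, and since $t\mapsto y_2(c_3(t))$ is continuous and $T_{\alg}^\pm$ are disjoint closed sets, its image lies entirely in $T_{\alg}^+$ once we check a single value. Surjectivity of $y_2\circ c_3$ onto the interval $T_{\alg}^+$ follows by evaluating at the two endpoints (the interval degenerates to a point when $a=1/\rt$), and then for each admissible value of $y_2$ the four points $c_3(t),c_3(t+\pi),c_3(-t),c_3(\pi-t)$ — which by axiom~(c) are precisely the $\{1,\mu,\nu,\mu\nu\}$-orbit of $c_3(t)$, all sharing the same $y_2$ since $\mu,\nu$ fix $y_2$ on $X_1$ — exhaust the $y_2$-fibre over that value by Proposition~\ref{prop-c-alg}. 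Hence $c_3(\R)=C_3^*$, and the same argument with $X_2$ in place of $X_1$ (using $c_5$ and the sign conditions $y_1\geq 0$, $y_1\leq 0$) handles $c_5,c_6,c_7,c_8$.

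The main obstacle is this last step for $c_3$ and $c_5$: matching the explicit trigonometric parametrisations to the abstractly defined sets $C_k^*$. The bookkeeping of which combination of signs of $\sin t$ and $\cos t$ corresponds to which element of $\{1,\mu,\nu,\mu\nu\}$, and the verification that $c_3$ (unlike $c_{\alg}$) remains well-defined and surjective as $a$ passes through $1/\rt$, require genuine care; by contrast, the containment in $EX(a)$, the analyticity and periodicity, and axioms~(b) and~(c) are all mechanical computations that can be delegated to the accompanying code.
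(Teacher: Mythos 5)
Your proposal is correct and follows essentially the same route as the paper: reduce everything to the generating curves $c_0,c_1,c_3,c_5$ via the group action, verify containment, axioms (a)–(c) and the nonempty boxes by direct computation, and then identify $c_k(\R)=C_k=C_k^*$ for $k\geq 3$ by showing $y_2\circ c_k$ sweeps out $T_{\alg}^{\pm}$ and invoking Proposition~\ref{prop-c-alg} together with the fact that $\mu$ and $\nu$ act on $c_k$ as reparametrisations, finishing with Proposition~\ref{prop-empty-boxes}. The only cosmetic difference is that the paper gets $c_3(\R)\sse C_3^*$ from connectivity of $C_3$ as the component of $v_{11}$, whereas you get it from continuity of $y_2\circ c_3$ and the disjointness of the closed sets $T_{\alg}^{\pm}$; both are sound.
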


\begin{proof}
 We first need to check that $c_k(\R)\sse EX(a)$, or equivalently
 $\rho(c_k(t))=1$ and $g_0(c_k(t))=0$.  This is easy for
 $k\in\{0,1,2\}$.  The algebra is harder for $k\in\{3,5\}$ but there
 is no conceptual difficulty, it is just a lengthy exercise in
 trigonometric simplification.  The remaining cases $k\in\{4,6,7,8\}$
 follow from the cases $k\in\{3,5\}$ using the group action.

 Next, it is elementary to check all the group transformation
 equations in axiom~(c) of Definition~\ref{defn-curve-system}, and to
 check all the identities $c_i(\tht)=v_j$ corresponding to the
 nonempty boxes in axiom~(b).

 We now address axiom~(a).  The cases $k\in\{0,1,2\}$ are again easy,
 and the cases $k\in\{4,6,7,8\}$ will follow from $k\in\{3,5\}$ by
 symmetry.  Suppose that $c_3(t)=c_3(u)$.  By looking at the third
 component, we see that $\cos(t)=\cos(u)$, and thus that
 $\sin(t)=\pm\sin(u)$, and thus that $p_3(t)=p_3(u)$.  After recalling
 that $p_3>0$ and inspecting the second component we deduce that
 $\sin(t)=\sin(u)$, so $t-u\in 2\pi\Z$.  Similarly, if $c'_3(t)=0$
 then by looking at the third component we see that $\sin(t)=0$, so
 $t=n\pi$ for some $n\in\Z$.  This means that the functions $\cos$ and
 $\sin^2$ are both constant to first order near $t$, so the same is
 true of $p_3$.  By inspecting the second component of $c_3$ we deduce
 that $\sin$ must also be constant to first order, so $\cos(t)=0$,
 which is impossible.  This proves all claims for $c_3$.

 For $c_5$, we first observe that the map $y_2\:EX(a)\to\R$ is
 invariant under $\mu$ and $\nu$ and satisfies
 $y_2(c_5(t))=-\tau_5(t)$.  Thus, if $c_5(t)=c_5(u)$ then
 $\tau_5(t)=\tau_5(u)$, which easily gives $\cos(t)=\cos(u)$.  Given
 this, the rest of the argument is essentially the same as for $c_3$.
 This completes the proof of axiom~(a).

 Now note that $c_3(\R)$ is contained in $X^{\lm^2\nu}=X_1$ and is
 connected and contains $c_3(0)=v_{11}$, but $C_3$ is defined to be
 the component of $v_{11}$ in $X^{\lm^2\nu}$, so
 $c_3(\R)\sse C_3\sse C_3^*$.  Moreover, $y_2(c_3(\R))$ is a connected
 subset of $T_{\alg}^+$ containing both of the endpoints
 $a^*=y_2(c_3(0))$ and $1/(2a)=y_2(c_3(\pi/2))$, so
 $y_2(c_3(\R))=T_{\alg}^+$.  Thus, if $x\in C_3^*$ then there exists
 $t\in\R$ with $y_2(c_3(t))=y_2(x)$, and it follows that
 $x=\gm(c_3(t))$ for some $\gm\in\{1,\mu,\nu,\mu\nu\}$.  As
 $\mu(c_3(t))=c_3(t+\pi)$ and $\nu(c_3(t))=c_3(-t)$ we deduce that
 $x\in c_3(\R)$.  In conclusion, we have $c_3(\R)=C_3=C_3^*$.  Similar
 arguments give $c_k(\R)=C_k=C_k^*$ for all $k\in\{3,\dotsc,8\}$.
 Recall also that the sets $C_3^*$, $C_6^*$ and $C_8^*$ are disjoint
 (immediately from the definitions).
 Proposition~\ref{prop-empty-boxes} therefore guarantees that we have
 a curve system.
\end{proof}

\begin{proposition}\lbl{prop-slices}
 $X_3=C_0$ and $X_4=C_0\cup C_1\cup C_2$.
\end{proposition}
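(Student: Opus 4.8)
The plan is to argue directly from the defining equation, using the description $EX(a)=\{x\in S^3\st g_0(x)=0\}$ from Definition~\ref{defn-g}, where
\[ g_0(x) = ((1+a^{-2})x_3^2-2)x_4 + a^{-1}(x_1^2-x_2^2)x_3, \]
together with the explicit parametrisations $C_0=C_0^*$, $C_1=C_1^*$, $C_2=C_2^*$ recorded in Definition~\ref{defn-C-star} and validated in Proposition~\ref{prop-E-curves}.

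First I would treat $X_3$. If $x\in EX(a)$ with $x_3=0$, then the second term of $g_0$ vanishes and the first collapses to $-2x_4$, so $g_0(x)=0$ forces $x_4=0$; then $\rho(x)=1$ gives $x_1^2+x_2^2=1$. Thus $X_3=\{(x_1,x_2,0,0)\st x_1^2+x_2^2=1\}$, which is exactly $C_0^*=C_0$ (every such point is $(\cos t,\sin t,0,0)$ for a suitable $t$, since $(x_1,x_2)$ lies on the unit circle). The reverse inclusion $C_0\sse X_3$ is immediate from the parametric form. This gives $X_3=C_0$.

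Next I would treat $X_4$. If $x\in EX(a)$ with $x_4=0$, then $g_0(x)=a^{-1}(x_1^2-x_2^2)x_3$, so $g_0(x)=0$ forces $(x_1^2-x_2^2)x_3=0$, i.e.\ $x_3=0$ or $x_1=x_2$ or $x_1=-x_2$. In the case $x_3=0$ we are back in $X_3=C_0$. In the case $x_1=x_2$ (with $x_4=0$), the constraint $\rho(x)=1$ reads $2x_1^2+x_3^2=1$, so $(\sqrt2\,x_1,x_3)$ lies on the unit circle and hence equals $(\sin t,\cos t)$ for some $t\in\R$; this shows $x=(\sin t/\rt,\sin t/\rt,\cos t,0)\in C_1^*=C_1$. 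The case $x_1=-x_2$ is identical with a sign change and yields $x\in C_2^*=C_2$. Conversely, each of $C_0$, $C_1$, $C_2$ visibly consists of points with fourth coordinate zero that satisfy $\rho=1$ and $g_0=0$ (or one may simply invoke $c_k(\R)\sse EX(a)$ from Proposition~\ref{prop-E-curves}), so $C_0\cup C_1\cup C_2\sse X_4$. Combining the inclusions gives $X_4=C_0\cup C_1\cup C_2$.

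There is no real obstacle here: the argument is an elementary case analysis on the factorisation of $g_0$ when one of $x_3,x_4$ vanishes. The only point requiring a moment's care is the surjectivity of the trigonometric parametrisations onto the full solution loci (e.g.\ that $\{(x_1,x_1,x_3,0)\st 2x_1^2+x_3^2=1\}$ is all of $C_1$), which follows from the fact that $t\mapsto(\sin t,\cos t)$ covers the unit circle. One could alternatively derive $X_4=C_0\amalg(C_1\cup C_2)\sm\{v_2,v_4\}$-type decompositions from Proposition~\ref{prop-slices-a}, but the direct computation above is cleaner and self-contained.
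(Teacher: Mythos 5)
Your proof is correct and follows essentially the same route as the paper: a direct case analysis on the vanishing of $g_0$ when $x_3=0$ or $x_4=0$, combined with the explicit parametrisations $C_k=C_k^*$ for $k\in\{0,1,2\}$. The paper phrases the first step via the rearranged relation $x_4=-x_3y_2$ rather than substituting into $g_0$ directly, but this is the same computation.
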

\begin{proof}
 First, it is straightforward to check that $X_3\supseteq C_0$ and
 $X_4\supseteq C_0\cup C_1\cup C_2$.  For the converse, suppose that
 $x\in X_3$.  Then the relation $x_4=-y_1y_2=-x_3y_2$ shows that
 $x_4$ vanishes as well as $x_3$, and we also have $\|x\|=1$ so
 $x\in C_0$.  Suppose instead that $x\in X_4\sm X_3$.  As
 $x_4=0\neq x_3$ the relation $g_0(x)=0$ becomes
 $2(x_1-x_2)(x_1+x_2)x_3=0$ and we can divide by $x_3$ to get
 $x_1=\pm x_2$.  We also have $\|x\|=1$ and it follows easily that
 $x\in C_1\cup C_2$.
\end{proof}

\begin{remark}\lbl{rem-curves-roothalf}
 The formulae for $c_3$ and $c_5$ simplify significantly in the case $a=1/\rt$:
 \begin{align*}
  c_3(t) &= \left(0,\sin(t),\sqrt{2/3}\cos(t),-\sqrt{1/3}\cos(t)\right) \\
  c_5(t) &= \left(-\sin(t),0,2^{3/2},\cos(t)-1\right)/\sqrt{10-2\cos(t)}.
 \end{align*}
 The following picture shows all the curves $c_k(t)$ in that case.
 \[ \includegraphics[scale=0.25,clip=true,
      trim=6cm 6cm 6cm 6cm]{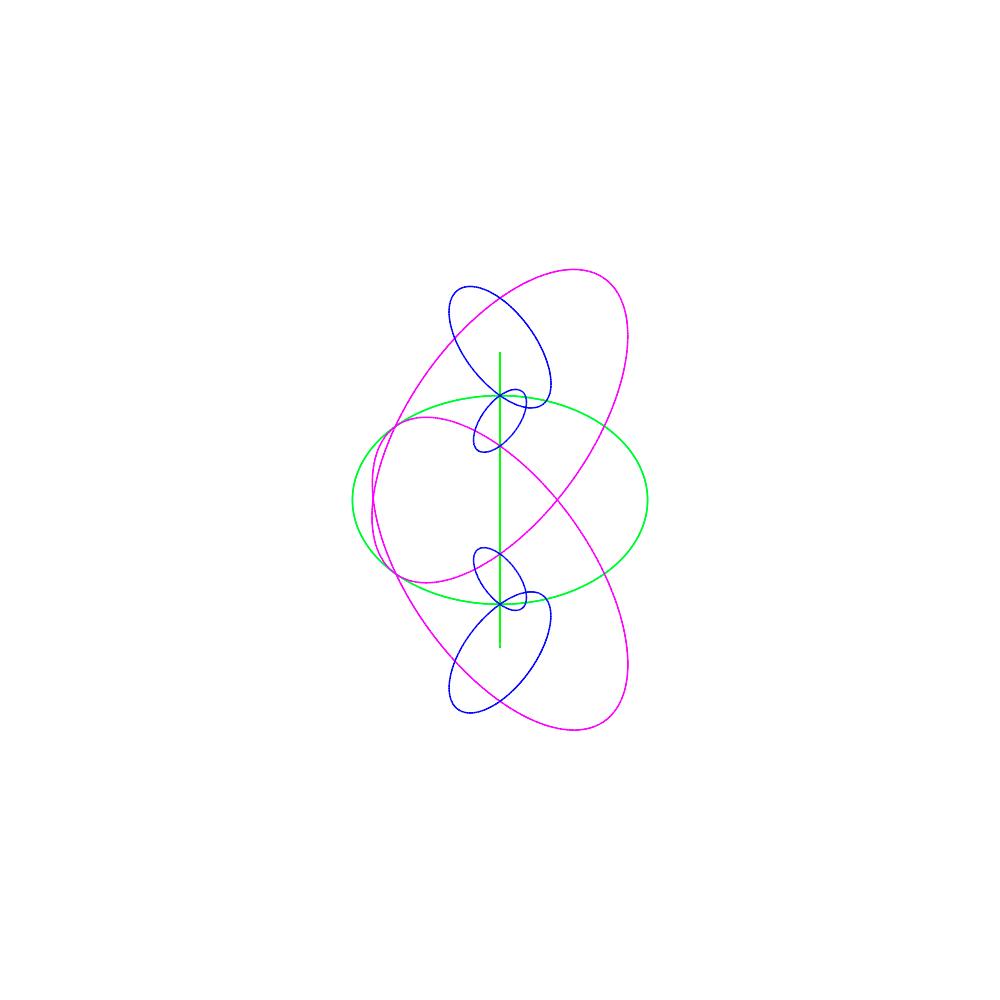}
 \]
\end{remark}

\subsection{Fundamental domains}
\lbl{sec-E-fundamental}

In this section we define and study retractive fundamental domains in
$EX(a)$ for certain subgroups of $G$.  It is convenient to start with
an easy case:
\begin{definition}\lbl{defn-F-two}
 We put $H_2=\{1,\lm^2\nu\}$ and recall that $C_2=\{1,\lm^2\}$, noting
 also that
 \begin{align*}
  \lm^2(x)    &= (-x_1,\;-x_2,\;x_3,\;x_4) \\
  \lm^2\nu(x) &= (-x_1,\;\pp x_2,\;x_3,\;x_4).
 \end{align*}
 We then put $F_2=\{x\in EX(a)\st x_1\geq 0\}$, and define
 $r_2\:EX(a)\to F_2$ by
 \[ r_2(x) = (|x_1|,\;x_2,\;x_3,\;x_4). \]
\end{definition}
\begin{proposition}\lbl{prop-F-two}
 $F_2$ is a retractive fundamental domain for $H_2$, with retraction
 $r_2$.  Moreover, $F_2$ is also a non-retractive fundamental domain
 for $C_2$.
\end{proposition}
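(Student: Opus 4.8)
The plan is to treat the two groups separately, in both cases using that $\lm^2\nu$ and $\lm^2$ act on $\R^4$ by the \emph{same} rule on the first coordinate, namely $x_1\mapsto-x_1$, and that both preserve $EX(a)$ (Section~\ref{sec-E-G}).

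First I would dispose of $H_2=\{1,\lm^2\nu\}$. Since $\lm^2\nu$ is an involution of $EX(a)$ that negates $x_1$, we get $\lm^2\nu(F_2)=\{x\in EX(a)\st x_1\leq 0\}$, so $F_2\cup\lm^2\nu(F_2)=EX(a)$ at once, while $\text{int}(F_2)\cap\lm^2\nu(F_2)\sse\{x\in EX(a)\st x_1=0\}=X_1$ by Proposition~\ref{prop-slices-a}. To see this intersection is empty it is enough to check that no $p\in X_1$ lies in $\text{int}(F_2)$. By Proposition~\ref{prop-slices-a} we have $X_1=EX(a)^{\ip{\lm^2\nu}}$, so Proposition~\ref{prop-chart} provides a local conformal parameter at $p$ in which $\lm^2\nu$ is complex conjugation and $X_1$ is the real axis; since $x_1$ vanishes on $EX(a)$ exactly along $X_1$ and satisfies $x_1\circ\lm^2\nu=-x_1$, the function $x_1$ has opposite nonzero signs on the two half-discs bounded by $X_1$, so every neighbourhood of $p$ meets $\{x_1<0\}$. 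Hence $\text{int}(F_2)\sse\{x_1>0\}$, and Definition~\ref{defn-fundamental}(a) holds for $H_2$. For the retraction: $r_2$ is continuous on $\R^4$, and since $\rho$ and $g_0$ involve $x_1$ only through $x_1^2$ we have $\rho(r_2(x))=\rho(x)$ and $g_0(r_2(x))=g_0(x)$, so $r_2$ carries $EX(a)$ into $EX(a)$ and manifestly into $F_2$; it fixes $F_2$ pointwise because $|x_1|=x_1$ there; and $r_2(\lm^2\nu(x))=r_2(x)$ because $\lm^2\nu$ only changes the sign of $x_1$. So $F_2$ is a retractive fundamental domain for $H_2$.

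For $C_2=\{1,\lm^2\}$ the identical computation gives $\lm^2(F_2)=\{x\in EX(a)\st x_1\leq 0\}$, so exactly as above $F_2\cup\lm^2(F_2)=EX(a)$ and $\text{int}(F_2)\cap\lm^2(F_2)\sse\text{int}(F_2)\cap X_1=\emptyset$; hence $F_2$ is a (plain) fundamental domain for $C_2$. To see that it is not retractive, suppose it were. Then Proposition~\ref{prop-fundamental}(b), with $H=C_2$ and $\gm=\lm^2$, would force
\[ F_2\cap\lm^2(F_2)=\{x\in F_2\st\lm^2(x)=x\}. \]
But the left side equals $X_1=C_3\amalg C_6\amalg C_8$, a disjoint union of three circles (by Definition~\ref{defn-C-star} and Proposition~\ref{prop-E-curves}) and hence infinite, whereas the right side is contained in $EX(a)^{\ip{\lm^2}}=\{v_0,v_1,v_{10},v_{11},v_{12},v_{13}\}$ — the fixed set computed in the proof of Proposition~\ref{prop-no-more-isotropy} — which is finite. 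This contradiction finishes the proof.

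The only step that is not purely formal is the claim $\text{int}(F_2)\cap X_1=\emptyset$, i.e.\ that $EX(a)$ lies on both sides of its intersection with the hyperplane $\{x_1=0\}$; this is immediate from the standard local normal form for an anticonformal involution near one of its fixed points (Proposition~\ref{prop-chart}).
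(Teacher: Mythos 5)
Your proof is correct; the paper itself dismisses this proposition with the single word ``Clear,'' so there is no argument to compare against, and what you have written is a sound filling-in of the details. In particular you correctly isolate and handle the only non-formal point, namely that $\text{int}(F_2)$ misses $X_1$ because the anticonformal involution $\lm^2\nu$ forces $x_1$ to change sign across its fixed curve, and your non-retractivity argument (infinite overlap $F_2\cap\lm^2(F_2)=X_1$ versus the finite fixed set of $\lm^2$, via Proposition~\ref{prop-fundamental}(b)) is exactly the intended reason the domain cannot be retractive for $C_2$.
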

\begin{proof}
 Clear.
\end{proof}

We will see later that $F_2$ is homeomorphic to a disc with two holes.

\begin{definition}\lbl{defn-F-four}
 We put $H_4=\{1,\lm^2,\nu,\lm^2\nu\}$, recalling that
 \begin{align*}
  \lm^2(x)    &= (-x_1,\;-x_2,\;x_3,\;x_4) \\
  \nu(x)      &= (\pp x_1,\;-x_2,\;x_3,\;x_4) \\
  \lm^2\nu(x) &= (-x_1,\;\pp x_2,\;x_3,\;x_4).
 \end{align*}
 We then put $F_4=\{x\in EX(a)\st x_1,x_2\geq 0\}$, and define
 $r_4\:EX(a)\to F_4$ by
 \[ r_4(x) = (|x_1|,\;|x_2|,\;x_3,x_4). \]
 We also put
 \[ F_4^* = \{y\in\R^2\st u_1,u_2\geq 0\}, \]
 where $u_1$ and $u_2$ are defined in terms of $y_1$ and $y_2$ as in
 Definition~\ref{defn-yzu}:
 \begin{align*}
  u_1 &= (1-2ay_2)/2 - \half(y_2-a)(y_2-a^{-1})y_1^2 \\
  u_2 &= (1+2ay_2)/2 - \half(y_2+a)(y_2+a^{-1})y_1^2.
 \end{align*}
\end{definition}

\begin{proposition}\lbl{prop-F-four}
 $F_4$ is a retractive fundamental domain for $H_4$, with retraction
 $r_4$.  Moreover, there is a map $p_4\:EX(a)\to F_4^*$ given by
 \[ p_4(x) = (x_3,(x_2^2 - x_1^2 - (a^{-1}+a)x_3x_4)/(2a))=(y_1,y_2), \]
 and a map $s_4\:F_4^*\to F_4$ given by
 \[ s_4(y) = (\sqrt{u_1},\sqrt{u_2},y_1,-y_1y_2), \]
 and these satisfy $p_4s_4=1$ and $s_4p_4=r_4$.  Thus, $p_4$ restricts
 to give a homeomorphism $F_4\to F_4^*$ with inverse $s_4$.
\end{proposition}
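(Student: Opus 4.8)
The strategy is to lean on the algebraic description of $A=\CO_{EX(a)}$ from Proposition~\ref{prop-OX-basis}: on $EX(a)$ one has $x_1^2=u_1$, $x_2^2=u_2$, $x_3=y_1$ and $x_4=-y_1y_2$. Granting that, the fundamental-domain assertions are essentially formal and the homeomorphism assertions reduce to two short polynomial identities.

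First I would dispose of the claim that $F_4$ is a retractive fundamental domain for $H_4$. The map $r_4$ is continuous, and it sends $EX(a)$ into $EX(a)$ because $\rho$ and $g_0$ involve $x_1,x_2$ only through $x_1^2,x_2^2$; hence $\rho(r_4(x))=\rho(x)=1$ and $g_0(r_4(x))=g_0(x)=0$. Since $|x_i|=x_i$ for $x_i\geq0$, $r_4$ is a retraction onto $F_4$, and since every element of $H_4=\{1,\lm^2,\nu,\lm^2\nu\}$ alters only the signs of $x_1$ and $x_2$ we get $r_4\circ\gamma=r_4$ for all $\gamma\in H_4$. For the covering property, given $x$ one applies the element of $H_4$ that flips exactly the negative ones among $x_1,x_2$, landing in $F_4$. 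For the interior condition I would use that $X_1=EX(a)^{\lm^2\nu}$ and $X_2=EX(a)^{\nu}$ (Proposition~\ref{prop-slices-a}) are closed $1$-dimensional submanifolds by Corollary~\ref{cor-fixed-circles}, hence nowhere dense in the surface $EX(a)$: if $x\in\mathrm{int}(F_4)$ had $x_1=0$ then, since $\lm^2\nu(x)=x$, the open set $U\cap\lm^2\nu(U)$ (for $U$ a small neighbourhood of $x$ inside $F_4$) would be a nonempty open subset of $\{x_1=0\}=X_1$, which is impossible; likewise for $x_2$. Thus $\mathrm{int}(F_4)\subseteq\{x_1>0,\,x_2>0\}$, which is disjoint from $\lm^2(F_4)\subseteq\{x_1\le0,x_2\le0\}$, from $\nu(F_4)\subseteq\{x_2\le0\}$, and from $\lm^2\nu(F_4)\subseteq\{x_1\le0\}$. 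This checks all of Definition~\ref{defn-fundamental}.

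For the last sentence I would argue as follows. By Proposition~\ref{prop-OX-basis}, $p_4(x)=(y_1,y_2)$ has $u_1=x_1^2\geq0$ and $u_2=x_2^2\geq0$, so $p_4$ maps into $F_4^*$; and $s_4(y)$ lies in $EX(a)$ because $u_1+u_2=u_4=1-y_1^2-y_1^2y_2^2$ (Definition~\ref{defn-yzu}) gives $\rho(s_4(y))=1$, while substituting $u_1-u_2=-2ay_2+(a+a^{-1})y_1^2y_2$ into $g_0(\sqrt{u_1},\sqrt{u_2},y_1,-y_1y_2)$ gives $0$; the nonnegative square roots then put $s_4(y)$ in $F_4$. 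The identities $p_4s_4=1$ and $s_4p_4=r_4$ are each a one-line computation: for the former, plug $s_4(y)$ into the formula for $p_4$ and use the expression for $u_2-u_1$ again; for the latter, plug $p_4(x)$ into $s_4$ and use $\sqrt{u_1}=|x_1|$, $\sqrt{u_2}=|x_2|$, $y_1=x_3$, $-y_1y_2=x_4$ on $EX(a)$. Since $r_4$ restricts to the identity on $F_4$, $p_4|_{F_4}$ and $s_4$ are mutually inverse; both are continuous (polynomials, and the square root on $[0,\infty)$), so $p_4$ restricts to the asserted homeomorphism $F_4\to F_4^*$. The only step that is not pure bookkeeping is the interior-disjointness clause, which genuinely uses that $X_1$ and $X_2$ are nowhere-dense fixed loci of anticonformal involutions rather than being established by algebra; I expect this to be the main (and fairly minor) obstacle.
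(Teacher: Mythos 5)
Your proof is correct and follows essentially the same route as the paper: the paper simply declares the retractive-fundamental-domain part ``clear'' and, like you, derives everything else from the algebraic description $x_i^2=u_i$, $x_3=y_1$, $x_4=-y_1y_2$ of Proposition~\ref{prop-OX-basis}. Your careful treatment of the interior-disjointness condition via the nowhere-density of the fixed loci $X_1$ and $X_2$ is a detail the paper omits, and it is sound.
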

Maple notation for $p_4(x)$ and $s_4(t)$ is \mcode+y_proj(x)+ and
\mcode+y_lift(t)+.
\begin{proof}
 It is clear that $F_4$ is a retractive fundamental domain for $H_4$, with
 retraction $r_4$.  Recall that the ring of functions on $EX(a)$ is
 generated by $y_1$, $y_2$, $x_1$ and $x_2$, with $x_i^2=u_i$ for
 $i=1,2$ and $x_3=y_1$ and $x_4=-y_1y_2$.  It follows that $u_1$ and
 $u_2$ are nonnegative as functions on $EX(a)$, or equivalently that
 $p_4(EX(a))\sse F_4^*$.  It also follows that the stated formula gives a
 well-defined map $s_4\:F_4^*\to F_4$, and it is straightforward to
 check that $p_4s_4=1$ and $s_4p_4=r_4$.
 \begin{checks}
  embedded/EX_check.mpl: check_E_F4()
 \end{checks}
\end{proof}

The following picture shows $F_4$ together with the curves
$c_3,\dotsc,c_8$.
\[ \includegraphics[scale=0.35,clip=true,
     trim=6cm 6cm 6cm 7cm]{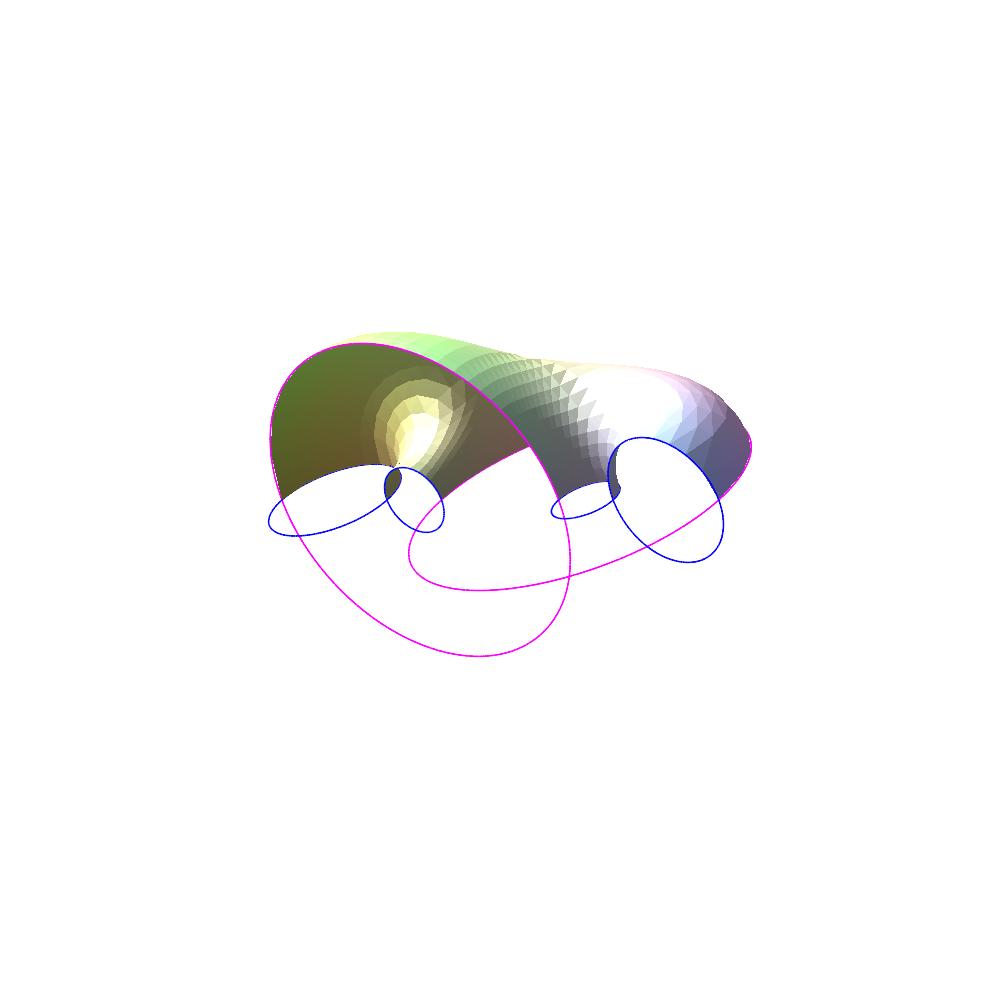}
\]

Formulae for the action of $p_4$ on some of the curves $c_i$ are as follows:
\begin{align*}
 p_4(c_{\alg}(t)) &=
  \left(\sqrt{\frac{1-2at}{(t-a)(t-a^{-1})}},t\right) \\
 p_4(c_0(t)) &= (0,-\cos(2t)/(2a)) \\
 p_4(c_1(t)) = p_4(c_2(t)) &= (\cos(t),0).
\end{align*}
Formulae for the remaining curves are not illuminating.

The following picture shows the set $F_4^*$ for three different values
of the parameter $a$, together with the images of the points $v_i$
under the map $p_4$.
\begin{center}
 \begin{tikzpicture}[scale=1.5]
  \begin{scope}
 \draw[cyan] plot[smooth] coordinates{ (0.000,-1.000) (0.000,-0.309) (0.000,0.809) (0.000,0.809) (0.000,-0.309) (0.000,-1.000) (0.000,-0.309) (0.000,0.809) (0.000,0.809) (0.000,-0.309) (0.000,-1.000) };
 \draw[green] plot[smooth] coordinates{ (1.000,0.000) (0.809,0.000) (0.309,0.000) (-0.309,0.000) (-0.809,0.000) (-1.000,0.000) (-0.809,0.000) (-0.309,0.000) (0.309,0.000) (0.809,0.000) (1.000,0.000) };
 \draw[green] plot[smooth] coordinates{ (1.000,0.000) (0.809,0.000) (0.309,0.000) (-0.309,0.000) (-0.809,0.000) (-1.000,0.000) (-0.809,0.000) (-0.309,0.000) (0.309,0.000) (0.809,0.000) (1.000,0.000) };
 \draw[magenta] plot[smooth] coordinates{ (0.632,1.225) (0.512,1.144) (0.195,1.019) (-0.195,1.019) (-0.512,1.144) (-0.632,1.225) (-0.512,1.144) (-0.195,1.019) (0.195,1.019) (0.512,1.144) (0.632,1.225) };
 \draw[magenta] plot[smooth] coordinates{ (0.632,-1.225) (0.512,-1.144) (0.195,-1.019) (-0.195,-1.019) (-0.512,-1.144) (-0.632,-1.225) (-0.512,-1.144) (-0.195,-1.019) (0.195,-1.019) (0.512,-1.144) (0.632,-1.225) };
 \draw[blue] plot[smooth] coordinates{ (1.000,0.000) (0.925,0.117) (0.798,0.423) (0.703,0.802) (0.649,1.108) (0.632,1.225) (0.649,1.108) (0.703,0.802) (0.798,0.423) (0.925,0.117) (1.000,0.000) };
 \draw[blue] plot[smooth] coordinates{ (1.000,0.000) (0.925,-0.117) (0.798,-0.423) (0.703,-0.802) (0.649,-1.108) (0.632,-1.225) (0.649,-1.108) (0.703,-0.802) (0.798,-0.423) (0.925,-0.117) (1.000,-0.000) };
 \draw[blue] plot[smooth] coordinates{ (-1.000,0.000) (-0.925,0.117) (-0.798,0.423) (-0.703,0.802) (-0.649,1.108) (-0.632,1.225) (-0.649,1.108) (-0.703,0.802) (-0.798,0.423) (-0.925,0.117) (-1.000,0.000) };
 \draw[blue] plot[smooth] coordinates{ (-1.000,0.000) (-0.925,-0.117) (-0.798,-0.423) (-0.703,-0.802) (-0.649,-1.108) (-0.632,-1.225) (-0.649,-1.108) (-0.703,-0.802) (-0.798,-0.423) (-0.925,-0.117) (-1.000,-0.000) };
 \fill (1.000,0.000) circle(0.015);
 \fill (-1.000,0.000) circle(0.015);
 \fill (0.000,-1.000) circle(0.015);
 \fill (0.000,1.000) circle(0.015);
 \fill (0.000,-1.000) circle(0.015);
 \fill (0.000,1.000) circle(0.015);
 \fill (0.000,0.000) circle(0.015);
 \fill (0.000,0.000) circle(0.015);
 \fill (0.000,0.000) circle(0.015);
 \fill (0.000,0.000) circle(0.015);
 \fill (0.632,-1.225) circle(0.015);
 \fill (0.632,1.225) circle(0.015);
 \fill (-0.632,-1.225) circle(0.015);
 \fill (-0.632,1.225) circle(0.015);
 \draw (0.000,0.000) node[anchor=north] {$v_{6},v_{7},v_{8},v_{9}$};
 \draw (-1.000,0.000) node[anchor=east] {$v_{1}$};
 \draw (0.000,-1.000) node[anchor=north] {$v_{2},v_{4}$};
 \draw (1.000,0.000) node[anchor=west] {$v_{0}$};
 \draw (-0.632,1.225) node[anchor=south] {$v_{13}$};
 \draw (0.632,1.225) node[anchor=south] {$v_{11}$};
 \draw (0.000,1.000) node[anchor=south] {$v_{3},v_{5}$};
 \draw (-0.632,-1.225) node[anchor=north] {$v_{12}$};
 \draw (0.632,-1.225) node[anchor=north] {$v_{10}$};
   \draw (0,-1.7) node {$a=1/2$};
  \end{scope}
  \begin{scope}[xshift=3.5cm]
 \draw[cyan] plot[smooth] coordinates{ (0.000,-0.707) (0.000,-0.219) (0.000,0.572) (0.000,0.572) (0.000,-0.219) (0.000,-0.707) (0.000,-0.219) (0.000,0.572) (0.000,0.572) (0.000,-0.219) (0.000,-0.707) };
 \draw[green] plot[smooth] coordinates{ (1.000,0.000) (0.809,0.000) (0.309,0.000) (-0.309,0.000) (-0.809,0.000) (-1.000,0.000) (-0.809,0.000) (-0.309,0.000) (0.309,0.000) (0.809,0.000) (1.000,0.000) };
 \draw[green] plot[smooth] coordinates{ (1.000,0.000) (0.809,0.000) (0.309,0.000) (-0.309,0.000) (-0.809,0.000) (-1.000,0.000) (-0.809,0.000) (-0.309,0.000) (0.309,0.000) (0.809,0.000) (1.000,0.000) };
 \draw[magenta] plot[smooth] coordinates{ (0.816,0.707) (0.661,0.707) (0.252,0.707) (-0.252,0.707) (-0.661,0.707) (-0.816,0.707) (-0.661,0.707) (-0.252,0.707) (0.252,0.707) (0.661,0.707) (0.816,0.707) };
 \draw[magenta] plot[smooth] coordinates{ (0.816,-0.707) (0.661,-0.707) (0.252,-0.707) (-0.252,-0.707) (-0.661,-0.707) (-0.816,-0.707) (-0.661,-0.707) (-0.252,-0.707) (0.252,-0.707) (0.661,-0.707) (0.816,-0.707) };
 \draw[blue] plot[smooth] coordinates{ (1.000,0.000) (0.977,0.068) (0.923,0.244) (0.868,0.463) (0.830,0.640) (0.816,0.707) (0.830,0.640) (0.868,0.463) (0.923,0.244) (0.977,0.068) (1.000,0.000) };
 \draw[blue] plot[smooth] coordinates{ (1.000,0.000) (0.977,-0.068) (0.923,-0.244) (0.868,-0.463) (0.830,-0.640) (0.816,-0.707) (0.830,-0.640) (0.868,-0.463) (0.923,-0.244) (0.977,-0.068) (1.000,-0.000) };
 \draw[blue] plot[smooth] coordinates{ (-1.000,0.000) (-0.977,0.068) (-0.923,0.244) (-0.868,0.463) (-0.830,0.640) (-0.816,0.707) (-0.830,0.640) (-0.868,0.463) (-0.923,0.244) (-0.977,0.068) (-1.000,0.000) };
 \draw[blue] plot[smooth] coordinates{ (-1.000,0.000) (-0.977,-0.068) (-0.923,-0.244) (-0.868,-0.463) (-0.830,-0.640) (-0.816,-0.707) (-0.830,-0.640) (-0.868,-0.463) (-0.923,-0.244) (-0.977,-0.068) (-1.000,-0.000) };
 \fill (1.000,0.000) circle(0.015);
 \fill (-1.000,0.000) circle(0.015);
 \fill (0.000,-0.707) circle(0.015);
 \fill (0.000,0.707) circle(0.015);
 \fill (0.000,-0.707) circle(0.015);
 \fill (0.000,0.707) circle(0.015);
 \fill (0.000,0.000) circle(0.015);
 \fill (0.000,0.000) circle(0.015);
 \fill (0.000,0.000) circle(0.015);
 \fill (0.000,0.000) circle(0.015);
 \fill (0.816,-0.707) circle(0.015);
 \fill (0.816,0.707) circle(0.015);
 \fill (-0.816,-0.707) circle(0.015);
 \fill (-0.816,0.707) circle(0.015);
 \draw (0.000,0.000) node[anchor=north] {$v_{6},v_{7},v_{8},v_{9}$};
 \draw (-0.816,-0.707) node[anchor=north] {$v_{12}$};
 \draw (-1.000,0.000) node[anchor=east] {$v_{1}$};
 \draw (0.816,-0.707) node[anchor=north] {$v_{10}$};
 \draw (1.000,0.000) node[anchor=west] {$v_{0}$};
 \draw (0.000,-0.707) node[anchor=north] {$v_{2},v_{4}$};
 \draw (0.816,0.707) node[anchor=south] {$v_{11}$};
 \draw (0.000,0.707) node[anchor=south] {$v_{3},v_{5}$};
 \draw (-0.816,0.707) node[anchor=south] {$v_{13}$};
   \draw (0,-1.7) node {$a=1/\rt$};
  \end{scope}
  \begin{scope}[xshift=7.0cm]
 \draw[cyan] plot[smooth] coordinates{ (0.000,-0.625) (0.000,-0.193) (0.000,0.506) (0.000,0.506) (0.000,-0.193) (0.000,-0.625) (0.000,-0.193) (0.000,0.506) (0.000,0.506) (0.000,-0.193) (0.000,-0.625) };
 \draw[green] plot[smooth] coordinates{ (1.000,0.000) (0.809,0.000) (0.309,0.000) (-0.309,0.000) (-0.809,0.000) (-1.000,0.000) (-0.809,0.000) (-0.309,0.000) (0.309,0.000) (0.809,0.000) (1.000,0.000) };
 \draw[green] plot[smooth] coordinates{ (1.000,0.000) (0.809,0.000) (0.309,0.000) (-0.309,0.000) (-0.809,0.000) (-1.000,0.000) (-0.809,0.000) (-0.309,0.000) (0.309,0.000) (0.809,0.000) (1.000,0.000) };
 \draw[magenta] plot[smooth] coordinates{ (0.883,0.530) (0.715,0.577) (0.273,0.620) (-0.273,0.620) (-0.715,0.577) (-0.883,0.530) (-0.715,0.577) (-0.273,0.620) (0.273,0.620) (0.715,0.577) (0.883,0.530) };
 \draw[magenta] plot[smooth] coordinates{ (0.883,-0.530) (0.715,-0.577) (0.273,-0.620) (-0.273,-0.620) (-0.715,-0.577) (-0.883,-0.530) (-0.715,-0.577) (-0.273,-0.620) (0.273,-0.620) (0.715,-0.577) (0.883,-0.530) };
 \draw[blue] plot[smooth] coordinates{ (1.000,0.000) (0.988,0.051) (0.958,0.183) (0.921,0.347) (0.894,0.480) (0.883,0.530) (0.894,0.480) (0.921,0.347) (0.958,0.183) (0.988,0.051) (1.000,0.000) };
 \draw[blue] plot[smooth] coordinates{ (1.000,0.000) (0.988,-0.051) (0.958,-0.183) (0.921,-0.347) (0.894,-0.480) (0.883,-0.530) (0.894,-0.480) (0.921,-0.347) (0.958,-0.183) (0.988,-0.051) (1.000,-0.000) };
 \draw[blue] plot[smooth] coordinates{ (-1.000,0.000) (-0.988,0.051) (-0.958,0.183) (-0.921,0.347) (-0.894,0.480) (-0.883,0.530) (-0.894,0.480) (-0.921,0.347) (-0.958,0.183) (-0.988,0.051) (-1.000,0.000) };
 \draw[blue] plot[smooth] coordinates{ (-1.000,0.000) (-0.988,-0.051) (-0.958,-0.183) (-0.921,-0.347) (-0.894,-0.480) (-0.883,-0.530) (-0.894,-0.480) (-0.921,-0.347) (-0.958,-0.183) (-0.988,-0.051) (-1.000,-0.000) };
 \fill (1.000,0.000) circle(0.015);
 \fill (-1.000,0.000) circle(0.015);
 \fill (0.000,-0.625) circle(0.015);
 \fill (0.000,0.625) circle(0.015);
 \fill (0.000,-0.625) circle(0.015);
 \fill (0.000,0.625) circle(0.015);
 \fill (0.000,0.000) circle(0.015);
 \fill (0.000,0.000) circle(0.015);
 \fill (0.000,0.000) circle(0.015);
 \fill (0.000,0.000) circle(0.015);
 \fill (0.883,-0.530) circle(0.015);
 \fill (0.883,0.530) circle(0.015);
 \fill (-0.883,-0.530) circle(0.015);
 \fill (-0.883,0.530) circle(0.015);
 \draw (0.000,0.625) node[anchor=south] {$v_{3},v_{5}$};
 \draw (0.883,0.530) node[anchor=south] {$v_{11}$};
 \draw (0.000,0.000) node[anchor=north] {$v_{6},v_{7},v_{8},v_{9}$};
 \draw (0.000,-0.625) node[anchor=north] {$v_{2},v_{4}$};
 \draw (-0.883,-0.530) node[anchor=north] {$v_{12}$};
 \draw (0.883,-0.530) node[anchor=north] {$v_{10}$};
 \draw (1.000,0.000) node[anchor=west] {$v_{0}$};
 \draw (-1.000,0.000) node[anchor=east] {$v_{1}$};
 \draw (-0.883,0.530) node[anchor=south] {$v_{13}$};
   \draw (0,-1.7) node {$a=4/5$};
  \end{scope}
 \end{tikzpicture}
\end{center}

Here is a more detailed version for the case $a=1/\rt$:
\begin{center}
\begin{tikzpicture}[scale=4]
 \draw[cyan] plot[smooth] coordinates{ (0.000,-0.707) (0.000,-0.572) (0.000,-0.219) (0.000,0.219) (0.000,0.572) (0.000,0.707) (0.000,0.572) (0.000,0.219) (0.000,-0.219) (0.000,-0.572) (0.000,-0.707) (0.000,-0.572) (0.000,-0.219) (0.000,0.219) (0.000,0.572) (0.000,0.707) (0.000,0.572) (0.000,0.219) (0.000,-0.219) (0.000,-0.572) (0.000,-0.707) };
 \draw[green] plot[smooth] coordinates{ (1.000,0.000) (0.951,0.000) (0.809,0.000) (0.588,0.000) (0.309,0.000) (-0.000,0.000) (-0.309,0.000) (-0.588,0.000) (-0.809,0.000) (-0.951,0.000) (-1.000,0.000) (-0.951,0.000) (-0.809,0.000) (-0.588,0.000) (-0.309,0.000) (0.000,0.000) (0.309,0.000) (0.588,0.000) (0.809,0.000) (0.951,0.000) (1.000,0.000) };
 \draw[green] plot[smooth] coordinates{ (1.000,0.000) (0.951,0.000) (0.809,0.000) (0.588,0.000) (0.309,0.000) (-0.000,0.000) (-0.309,0.000) (-0.588,0.000) (-0.809,0.000) (-0.951,0.000) (-1.000,0.000) (-0.951,0.000) (-0.809,0.000) (-0.588,0.000) (-0.309,0.000) (0.000,0.000) (0.309,0.000) (0.588,0.000) (0.809,0.000) (0.951,0.000) (1.000,0.000) };
 \draw[magenta] plot[smooth] coordinates{ (0.816,0.707) (0.777,0.707) (0.661,0.707) (0.480,0.707) (0.252,0.707) (-0.000,0.707) (-0.252,0.707) (-0.480,0.707) (-0.661,0.707) (-0.777,0.707) (-0.816,0.707) (-0.777,0.707) (-0.661,0.707) (-0.480,0.707) (-0.252,0.707) (0.000,0.707) (0.252,0.707) (0.480,0.707) (0.661,0.707) (0.777,0.707) (0.816,0.707) };
 \draw[magenta] plot[smooth] coordinates{ (0.816,-0.707) (0.777,-0.707) (0.661,-0.707) (0.480,-0.707) (0.252,-0.707) (-0.000,-0.707) (-0.252,-0.707) (-0.480,-0.707) (-0.661,-0.707) (-0.777,-0.707) (-0.816,-0.707) (-0.777,-0.707) (-0.661,-0.707) (-0.480,-0.707) (-0.252,-0.707) (0.000,-0.707) (0.252,-0.707) (0.480,-0.707) (0.661,-0.707) (0.777,-0.707) (0.816,-0.707) };
 \draw[blue] plot[smooth] coordinates{ (1.000,0.000) (0.994,0.017) (0.977,0.068) (0.952,0.146) (0.923,0.244) (0.894,0.354) (0.868,0.463) (0.846,0.561) (0.830,0.640) (0.820,0.690) (0.816,0.707) (0.820,0.690) (0.830,0.640) (0.846,0.561) (0.868,0.463) (0.894,0.354) (0.923,0.244) (0.952,0.146) (0.977,0.068) (0.994,0.017) (1.000,0.000) };
 \draw[blue] plot[smooth] coordinates{ (1.000,0.000) (0.994,-0.017) (0.977,-0.068) (0.952,-0.146) (0.923,-0.244) (0.894,-0.354) (0.868,-0.463) (0.846,-0.561) (0.830,-0.640) (0.820,-0.690) (0.816,-0.707) (0.820,-0.690) (0.830,-0.640) (0.846,-0.561) (0.868,-0.463) (0.894,-0.354) (0.923,-0.244) (0.952,-0.146) (0.977,-0.068) (0.994,-0.017) (1.000,-0.000) };
 \draw[blue] plot[smooth] coordinates{ (-1.000,0.000) (-0.994,0.017) (-0.977,0.068) (-0.952,0.146) (-0.923,0.244) (-0.894,0.354) (-0.868,0.463) (-0.846,0.561) (-0.830,0.640) (-0.820,0.690) (-0.816,0.707) (-0.820,0.690) (-0.830,0.640) (-0.846,0.561) (-0.868,0.463) (-0.894,0.354) (-0.923,0.244) (-0.952,0.146) (-0.977,0.068) (-0.994,0.017) (-1.000,0.000) };
 \draw[blue] plot[smooth] coordinates{ (-1.000,0.000) (-0.994,-0.017) (-0.977,-0.068) (-0.952,-0.146) (-0.923,-0.244) (-0.894,-0.354) (-0.868,-0.463) (-0.846,-0.561) (-0.830,-0.640) (-0.820,-0.690) (-0.816,-0.707) (-0.820,-0.690) (-0.830,-0.640) (-0.846,-0.561) (-0.868,-0.463) (-0.894,-0.354) (-0.923,-0.244) (-0.952,-0.146) (-0.977,-0.068) (-0.994,-0.017) (-1.000,-0.000) };
 \fill (1.000,0.000) circle(0.010);
 \fill (-1.000,0.000) circle(0.010);
 \fill (0.000,-0.707) circle(0.010);
 \fill (0.000,0.707) circle(0.010);
 \fill (0.000,-0.707) circle(0.010);
 \fill (0.000,0.707) circle(0.010);
 \fill (0.000,0.000) circle(0.010);
 \fill (0.000,0.000) circle(0.010);
 \fill (0.000,0.000) circle(0.010);
 \fill (0.000,0.000) circle(0.010);
 \fill (0.816,-0.707) circle(0.010);
 \fill (0.816,0.707) circle(0.010);
 \fill (-0.816,-0.707) circle(0.010);
 \fill (-0.816,0.707) circle(0.010);
 \draw (0.000,-0.707) node[anchor=north] {$v_{2},v_{4}$};
 \draw (-0.816,0.707) node[anchor=south] {$v_{13}$};
 \draw (0.000,0.000) node[anchor=north] {$v_{6},v_{7},v_{8},v_{9}$};
 \draw (-0.816,-0.707) node[anchor=north] {$v_{12}$};
 \draw (0.000,0.707) node[anchor=south] {$v_{3},v_{5}$};
 \draw (1.000,0.000) node[anchor=west] {$v_{0}$};
 \draw (-1.000,0.000) node[anchor=east] {$v_{1}$};
 \draw (0.816,0.707) node[anchor=south] {$v_{11}$};
 \draw (0.816,-0.707) node[anchor=north] {$v_{10}$};
 \draw (0.000,-0.354) node[anchor=east] {$c_{0}$};
 \draw (-0.666,0.000) node[anchor=north] {$c_{1}$};
 \draw (-0.505,0.000) node[anchor=north] {$c_{2}$};
 \draw (-0.412,0.707) node[anchor=south] {$c_{3}$};
 \draw (-0.412,-0.707) node[anchor=north] {$c_{4}$};
 \draw (0.901,0.329) node[anchor=west] {$c_{5}$};
 \draw (0.901,-0.329) node[anchor=west] {$c_{6}$};
 \draw (-0.901,0.329) node[anchor=east] {$c_{7}$};
 \draw (-0.901,-0.329) node[anchor=east] {$c_{8}$};
\end{tikzpicture}
\end{center}
The above picture shows the images of all the points $v_i$, but it is
also useful to restrict attention to those that lie in $F_4$:
\begin{center}
\begin{tikzpicture}[scale=4]
 \draw[cyan] plot[smooth] coordinates{ (0.000,-0.707) (0.000,-0.672) (0.000,-0.572) (0.000,-0.416) (0.000,-0.219) (0.000,0.000) (0.000,0.219) (0.000,0.416) (0.000,0.572) (0.000,0.672) (0.000,0.707) };
 \draw[green] plot[smooth] coordinates{ (1.000,0.000) (0.951,0.000) (0.809,0.000) (0.588,0.000) (0.309,0.000) (-0.000,0.000) (-0.309,0.000) (-0.588,0.000) (-0.809,0.000) (-0.951,0.000) (-1.000,0.000) };
 \draw[magenta] plot[smooth] coordinates{ (0.816,0.707) (0.777,0.707) (0.661,0.707) (0.480,0.707) (0.252,0.707) (-0.000,0.707) (-0.252,0.707) (-0.480,0.707) (-0.661,0.707) (-0.777,0.707) (-0.816,0.707) };
 \draw[magenta] plot[smooth] coordinates{ (-0.816,-0.707) (-0.777,-0.707) (-0.661,-0.707) (-0.480,-0.707) (-0.252,-0.707) (0.000,-0.707) (0.252,-0.707) (0.480,-0.707) (0.661,-0.707) (0.777,-0.707) (0.816,-0.707) };
 \draw[blue] plot[smooth] coordinates{ (1.000,0.000) (0.994,0.017) (0.977,0.068) (0.952,0.146) (0.923,0.244) (0.894,0.354) (0.868,0.463) (0.846,0.561) (0.830,0.640) (0.820,0.690) (0.816,0.707) };
 \draw[blue] plot[smooth] coordinates{ (1.000,0.000) (0.994,-0.017) (0.977,-0.068) (0.952,-0.146) (0.923,-0.244) (0.894,-0.354) (0.868,-0.463) (0.846,-0.561) (0.830,-0.640) (0.820,-0.690) (0.816,-0.707) };
 \draw[blue] plot[smooth] coordinates{ (-1.000,0.000) (-0.994,0.017) (-0.977,0.068) (-0.952,0.146) (-0.923,0.244) (-0.894,0.354) (-0.868,0.463) (-0.846,0.561) (-0.830,0.640) (-0.820,0.690) (-0.816,0.707) };
 \draw[blue] plot[smooth] coordinates{ (-1.000,0.000) (-0.994,-0.017) (-0.977,-0.068) (-0.952,-0.146) (-0.923,-0.244) (-0.894,-0.354) (-0.868,-0.463) (-0.846,-0.561) (-0.830,-0.640) (-0.820,-0.690) (-0.816,-0.707) };
 \draw[cyan,arrows={-angle 90}] (0.0000,-0.5000) -- (0.0000,-0.4899);
 \draw[cyan,arrows={-angle 90}] (0.0000,0.5000) -- (0.0000,0.5099);
 \draw[green,arrows={-angle 90}] (0.7071,0.0000) -- (0.7000,0.0000);
 \draw[green,arrows={-angle 90}] (-0.7071,0.0000) -- (-0.7141,0.0000);
 \draw[magenta,arrows={-angle 90}] (0.5774,0.7071) -- (0.5715,0.7071);
 \draw[magenta,arrows={-angle 90}] (-0.5774,0.7071) -- (-0.5831,0.7071);
 \draw[magenta,arrows={-angle 90}] (-0.5774,-0.7071) -- (-0.5715,-0.7071);
 \draw[magenta,arrows={-angle 90}] (0.5774,-0.7071) -- (0.5831,-0.7071);
 \draw[blue,arrows={-angle 90}] (0.8944,0.3536) -- (0.8935,0.3571);
 \draw[blue,arrows={-angle 90}] (0.8944,-0.3536) -- (0.8935,-0.3571);
 \draw[blue,arrows={-angle 90}] (-0.8944,0.3536) -- (-0.8935,0.3571);
 \draw[blue,arrows={-angle 90}] (-0.8944,-0.3536) -- (-0.8935,-0.3571);
 \fill (1.000,0.000) circle(0.010);
 \fill (-1.000,0.000) circle(0.010);
 \fill (0.000,-0.707) circle(0.010);
 \fill (0.000,0.707) circle(0.010);
 \fill (0.000,0.000) circle(0.010);
 \fill (0.816,-0.707) circle(0.010);
 \fill (0.816,0.707) circle(0.010);
 \fill (-0.816,-0.707) circle(0.010);
 \fill (-0.816,0.707) circle(0.010);
 \draw (0.000,-0.707) node[anchor=north] {$v_{2}$};
 \draw (-0.816,0.707) node[anchor=south] {$v_{13}$};
 \draw (0.000,0.000) node[anchor=north] {$v_{6}$};
 \draw (-0.816,-0.707) node[anchor=north] {$v_{12}$};
 \draw (0.000,0.707) node[anchor=south] {$v_{3}$};
 \draw (1.000,0.000) node[anchor=west] {$v_{0}$};
 \draw (-1.000,0.000) node[anchor=east] {$v_{1}$};
 \draw (0.816,0.707) node[anchor=south] {$v_{11}$};
 \draw (0.816,-0.707) node[anchor=north] {$v_{10}$};
 \draw (0.000,-0.354) node[anchor=east] {$c_{0}(0\dotsb\ppi)$};
 \draw (-0.589,0.000) node[anchor=north] {$c_{1}(0\dotsb\pi)$};
 \draw (-0.412,0.707) node[anchor=south] {$c_{3}(0\dotsb\pi)$};
 \draw (-0.412,-0.707) node[anchor=north] {$c_{4}(\pi\dotsb2\pi)$};
 \draw (0.901,0.329) node[anchor=west] {$c_{5}(0\dotsb\pi)$};
 \draw (0.901,-0.329) node[anchor=west] {$c_{6}(0\dotsb\pi)$};
 \draw (-0.901,0.329) node[anchor=east] {$c_{7}(0\dotsb\pi)$};
 \draw (-0.901,-0.329) node[anchor=east] {$c_{8}(0\dotsb\pi)$};
\end{tikzpicture}
\end{center}
The annotation $c_1(0\dotsb\pi)$ indicates that $c_1$ sends the
interval $[0,\pi]$ to $F_4$.  For values $t\in(\pi,2\pi)$, the
point $c_1(t)$ lies outside $F_4$, but it has the same $H_4$-orbit as
some point $c_1(t')$ with $t'\in[0,\pi]$, so $p_4c_1(t)$ will still
lie on the middle horizontal line in the above diagram.  The other
annotations should be interpreted in the same way.

We now define a retractive fundamental domain for the full group $G$.
\begin{definition}\lbl{defn-F-sixteen}
 We put
 \begin{align*}
  F_{16} &= \{x\in EX(a) \st x_1,x_2,y_1,y_2\geq 0\} \\
  r_{16}(x) &=
   \begin{cases}
    (|x_1|,|x_2|,|x_3|,-|x_4|) & \text{ if } y_2\geq 0 \\
    (|x_2|,|x_1|,|x_3|,-|x_4|) & \text{ if } y_2\leq 0.
   \end{cases} \\
  F_{16}^* &= \{(z_1,z_2)\in\R^2\st z_1,z_2,u_3,u_4\geq 0\}
 \end{align*}
 Here $y_1,y_2,z_1,z_2,u_3$ and $u_4$ are as in
 Definition~\ref{defn-yzu}.  Note that if $y_2=0$ then
 $x_4=-y_1y_2=0$ so the relation $y_2=0$ becomes $x_2^2=x_1^2$, so
 $|x_1|=|x_2|$; this shows that $r_{16}$ is well-defined.
\end{definition}
\begin{proposition}\lbl{prop-F-sixteen}
 $F_{16}$ is a retractive fundamental domain for $G$, with retraction
 $r_{16}$.  Moreover, there is a map $p_{16}\:EX(a)\to F_{16}^*$ given by
 \[ p_{16}(x) = (x_3^2,(x_2^2 - x_1^2 - (a^{-1}+a)x_3x_4)^2/(4a^2))
              = (z_1,z_2),
 \]
 and a map $s_{16}\:F_{16}^*\to F_{16}$ given by
 \[ s_{16}(z_1,z_2) = s_4(\sqrt{z_1},\sqrt{z_2}) \]
 and these satisfy $p_{16}s_{16}=1$ and $s_{16}p_{16}=r_{16}$.  Thus,
 $p_{16}$ restricts to give a homeomorphism $F_{16}\to F_{16}^*$ with
 inverse $s_{16}$.
\end{proposition}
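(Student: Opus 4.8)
The plan is to bootstrap from Proposition~\ref{prop-F-four}, which already disposes of the subgroup $H_4=\{1,\lm^2,\nu,\lm^2\nu\}$, and then to divide out the residual quotient $G/H_4$. First I would record that $H_4$ is normal in $G$ — for instance $\lm\nu\lm^{-1}=\lm^2\nu\in H_4$, while $\lm^2$ and $\mu\nu$ are central — with $G/H_4$ generated by the images of $\lm$ and $\mu$ and isomorphic to $C_2\tm C_2$ (since $\lm^2\in H_4$, $\mu^2=1$, $(\lm\mu)^2=1$). Proposition~\ref{prop-F-four} identifies $EX(a)/H_4$ with $F_4$ and $F_4$ with $F_4^*$ via the mutually inverse homeomorphisms $p_4$ and $s_4$. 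Transporting the residual $C_2\tm C_2$-action to $F_4^*$ through $p_4$, the formulae $\lm^*(y_1)=y_1$, $\lm^*(y_2)=-y_2$, $\mu^*(y_1)=-y_1$, $\mu^*(y_2)=y_2$ show that $\lm$ acts on $F_4^*$ by $(y_1,y_2)\mapsto(y_1,-y_2)$ and $\mu$ by $(y_1,y_2)\mapsto(-y_1,y_2)$. That these really preserve $F_4^*$ is immediate from Definition~\ref{defn-yzu}: replacing $y_1$ by $-y_1$ leaves $u_1$ and $u_2$ unchanged, and replacing $y_2$ by $-y_2$ interchanges $u_1$ and $u_2$, so in either case the conditions $u_1,u_2\geq 0$ survive. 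The set $G_4^*:=\{(y_1,y_2)\in F_4^*\st y_1,y_2\geq 0\}$ is then visibly a retractive fundamental domain for this $C_2\tm C_2$-action, with retraction $(y_1,y_2)\mapsto(|y_1|,|y_2|)$.

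Next I would assemble these. Put $F_{16}:=s_4(G_4^*)\sse F_4\sse EX(a)$ and $r:=s_4\circ(|\cdot|,|\cdot|)\circ p_4\colon EX(a)\to F_{16}$; a routine "composition of fundamental domains" argument (Proposition~\ref{prop-F-four} for the $H_4$-layer, the previous paragraph for the $G/H_4$-layer) shows that $F_{16}$ is a retractive fundamental domain for $G$ with retraction $r$. It then remains to match $F_{16}$, $r$, $p_{16}$, $s_{16}$ with the objects of Definition~\ref{defn-F-sixteen}. Since $s_4(y_1,y_2)=(\sqrt{u_1},\sqrt{u_2},y_1,-y_1y_2)$, the image $F_{16}=s_4(G_4^*)$ is precisely $\{x\in EX(a)\st x_1,x_2,y_1,y_2\geq 0\}$. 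For the retraction one uses the on-shell identities $u_1=x_1^2$, $u_2=x_2^2$ (Proposition~\ref{prop-OX-basis}) together with $u_1(y_1,-y_2)=u_2(y_1,y_2)$: when $y_2\geq 0$ one obtains $r(x)=(|x_1|,|x_2|,|x_3|,-|x_4|)$ (here $x_4=-x_3y_2$, so $-|x_4|=-|x_3|y_2=x_4$ on $F_{16}$), and when $y_2\leq 0$ one obtains $r(x)=(|x_2|,|x_1|,|x_3|,-|x_4|)$; this is exactly $r_{16}$. Continuity of $r_{16}$ across the locus $y_2=0$ is automatic, since there $x_4=0$ forces $x_1^2=x_2^2$ and the two branches agree.

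Finally, for the homeomorphism $p_{16}\colon F_{16}\to F_{16}^*$: by definition $p_{16}(x)=(x_3^2,((x_2^2-x_1^2-(a^{-1}+a)x_3x_4)/(2a))^2)=(y_1^2,y_2^2)$, so $p_{16}=q\circ p_4$ where $q(y_1,y_2)=(y_1^2,y_2^2)$. The restriction $p_4\colon F_{16}\to G_4^*$ is a homeomorphism by Proposition~\ref{prop-F-four}, and $q$ restricts to a homeomorphism of $G_4^*$ onto $\{(z_1,z_2)\st z_1,z_2\geq 0,\ u_1(\sqrt{z_1},\sqrt{z_2}),u_2(\sqrt{z_1},\sqrt{z_2})\geq 0\}$; on the closed first quadrant the pair $u_1,u_2\geq 0$ is equivalent to $u_1u_2\geq 0$ and $u_1+u_2\geq 0$, i.e.\ to $u_3\geq 0$ and $u_4\geq 0$, and $u_3,u_4$ genuinely are polynomials in $(z_1,z_2)$ by the formulae in Definition~\ref{defn-yzu}, so this set is $F_{16}^*$. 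Hence $p_{16}|_{F_{16}}$ is a homeomorphism onto $F_{16}^*$ with inverse $s_{16}=s_4\circ(\sqrt{\cdot},\sqrt{\cdot})$, and the identities $p_{16}s_{16}=1$ and $s_{16}p_{16}=r_{16}$ fall out of $p_4s_4=1$, $s_4p_4=r_4$, the mutual inverseness of $q$ and $(\sqrt{\cdot},\sqrt{\cdot})$ on the first quadrant, and the identification $r=r_{16}$.

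The part I expect to be most delicate is not any one computation but the organisation of signs: tracking how the sign-flips and the $x_1\leftrightarrow x_2$ transposition built into $r_{16}$ correspond to elements of $G$, in particular getting the sign of the $x_4$-coordinate right through the on-shell relation $x_4=-x_3y_2$, and confirming that every $G$-orbit meets $F_{16}$ — equivalently, that $G$ realises every diagonal sign pattern $(s_1,s_2,s_3,s_4)$ with $s_3=s_4$ (these make up the order-$8$ subgroup $\ip{\lm^2,\mu,\nu}$) together with the transposition supplied by $\lm$. If one prefers to avoid the bootstrap, the same ingredients give a direct argument: check that $r_{16}$ is a continuous $G$-invariant retraction onto $F_{16}$ with $r_{16}(x)\in Gx$ for all $x$, and then deduce $\text{int}(F_{16})\cap\gm(F_{16})=\emptyset$ for $\gm\neq 1$ from Proposition~\ref{prop-fundamental}(b) together with the fact that interior points of $F_{16}$ — those with all $x_i\neq 0$ and $y_1,y_2>0$ — have trivial stabiliser in $G$.
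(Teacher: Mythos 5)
Your argument reaches the same computational core as the paper's proof but organises it differently. The paper verifies the fundamental-domain axioms for $F_{16}$ directly: it checks by hand that $r_{16}$ is a retraction with $r_{16}(\gm(x))=r_{16}(x)$, moves an arbitrary point into $F_{16}$ by first applying an element of $\ip{\lm^2,\nu}$ and then one of $1,\lm\mu,\lm\nu,\mu\nu$, identifies the boundary of $F_{16}$ explicitly with the locus $E$ where one of $x_1,x_2,y_1,y_2$ vanishes, and finally runs the same ``$u_1,u_2\geq 0$ iff $u_3,u_4\geq 0$'' case-check that you use for $s_{16}$. Your factorisation through the normal subgroup $H_4$, with $G/H_4\simeq C_2\tm C_2$ acting by sign changes on the $(y_1,y_2)$-plane and the quadrant $G_4^*$ as fundamental domain for the residual action, is a cleaner conceptual packaging of the same data; the identifications $F_{16}=s_4(G_4^*)$, $r=r_{16}$ (including the sign of the fourth coordinate via $x_4=-y_1y_2$ and the swap $u_1(y_1,-y_2)=u_2(y_1,y_2)$) and $p_{16}=q\circ p_4$ all check out. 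What the paper's direct route buys is an explicit description of $\partial F_{16}$ in terms of the curve system, which it reuses immediately afterwards; what your route buys is that the only genuinely two-dimensional verification is the one already done in Proposition~\ref{prop-F-four}.

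The one step you have not actually supplied is the axiom $\text{int}(F_{16})\cap\gm(F_{16})=\emptyset$ for $\gm\neq 1$. Your ``routine composition'' does not deliver this for free: one needs to know that $\text{int}(F_{16})$ avoids the locus where $x_1x_2y_1y_2=0$ (equivalently, that $p_4$ carries it into the open quadrant), and your alternative argument simply asserts that the interior of $F_{16}$ is the strict-inequality locus. That assertion is true but is exactly the point the paper spends a paragraph on, exhibiting for each point of $E$ a tangent vector along which one exits $F_{16}$. You can either import that computation, or bypass it entirely with a soft argument: the covering property together with the retraction identities gives $F_{16}\cap\gm(F_{16})\sse EX(a)^\gm$ (this is the proof of Proposition~\ref{prop-fundamental}(b), which uses only covering and retraction, so there is no circularity); for $\gm\neq 1$ the fixed set $EX(a)^\gm$ has empty interior, being finite or a union of circles by Proposition~\ref{prop-no-more-isotropy} and Corollary~\ref{cor-fixed-circles}; and if some $x$ lay in $\text{int}(F_{16})\cap\gm(F_{16})$ then $\gm(x)=x$, so the nonempty open set $\text{int}(F_{16})\cap\gm(\text{int}(F_{16}))$ would be forced into $EX(a)^\gm$, a contradiction. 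With that inserted, your proof is complete.
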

Maple notation for $p_{16}(x)$ and $s_{16}(t)$ is \mcode+z_proj(x)+ and
\mcode+z_lift(t)+.
\begin{proof}
 First, a straightforward check of cases shows that
 $r_{16}(EX(a))\sse F_{16}$ and that $r_{16}$ is the identity on $F_{16}$,
 so $r_{16}$ is a retraction.  We also claim that
 $r_{16}(\gm(x))=r_{16}(x)$ for all $x\in EX(a)$ and $\gm\in G$.  If
 $\gm\in\ip{\lm^2,\mu,\nu}$ then $|\gm(x)_i|=|x_i|$ for all $i$ and
 $y_2(\gm(x))=y_2(x)$ so everything is easy.  This just leaves the
 case $\gm=\lm$.  Here $\gm^*$ exchanges $|x_1|$ and $|x_2|$, and
 changes the sign of $y_2$, so we again have
 $r_{16}(\gm(x))=r_{16}(x)$.  It follows that $r_{16}$ induces a
 surjective map $EX(a)/G\to F_{16}$.

 We now show that any point $x\in EX(a)$ can be moved into $F_{16}$
 by the action of $G$.  First, after applying an element of $H_4$ we may
 assume that $x\in F_4$, so $x_1,x_2\geq 0$.  Now note that
 \begin{align*}
  \lm\mu(x) &= (x_2,x_1,   -x_3,\pp x_4) & y_2(\lm\mu(x)) &= -y_2(x) \\
  \lm\nu(x) &= (x_2,x_1,\pp x_3,   -x_4) & y_2(\lm\nu(x)) &= -y_2(x) \\
  \mu\nu(x) &= (x_1,x_2,   -x_3,   -x_4) & y_2(\mu\nu(x)) &= \pp y_2(x).
 \end{align*}
 We can thus apply one of the maps $1,\lm\mu,\lm\nu,\mu\nu$ to move
 into $F_{16}$, as required.  Note also that if $\gm(x)=a\in F_{16}$
 then we can apply $r_{16}$ to deduce that $a=r_{16}(x)$, so
 $x=\gm^{-1}(r_{16}(x))$.  It follows that $r_{16}(x)=r_{16}(x')$ iff
 $Gx=Gx'$, so the induced map $EX(a)/G\to F_{16}$ is a bijective
 retraction and therefore a homeomorphism.

 Now put
 \[ E = \{x\in F_{16}\st x_1=0 \text{ or } x_2 = 0 \text{ or }
          y_1 = 0 \text{ or } y_2 = 0\}.
 \]
 The set $F_{16}\sm E$ is defined by strict inequalities and so is
 contained in the interior of $F_{16}$.  To understand the structure
 of $E$, it is helpful to recall that $c_k(\R)=C_k=C_k^*$ for all $k$,
 where $C_k^*$ was defined in Definition~\ref{defn-C-star}.  Using
 this, we see that
 \[ E =
    c_0([\tfrac{\pi}{4},\ppi]) \cup
    c_1([0,\ppi]) \cup
    c_3([0,\ppi]) \cup
    c_5([0,\pi]).
 \]
 Using this, we can show that arbitrarily close to every point in $E$
 there are points outside $F_{16}$.  For example, if
 $x=c_0(t)=(\cos(t),\sin(t),0,0)$ with $\pi/4\leq t\leq\pi/2$ then the
 vector $(0,0,-1,-\cos(2t))$ is tangent to $EX(a)$ at $x$, and after
 moving a small distance in this direction we reach the region where
 $y_1=x_3<0$.  Similar arguments work for the other cases, so we see
 that $E$ is precisely the boundary of $F_{16}$.

 Now consider a point $x\in F_{16}$ and an element $\gm\in G\sm\{1\}$
 such that $\gm(x)$ also lies in $F_{16}$.  By applying $r_{16}$, we
 see that $\gm(x)=x$.  In Section~\ref{sec-E-curves} we
 discussed the fixed sets $EX(a)^\gm$ for all orientation-reversing
 elements of $G$, and in particular we saw that $EX(a)^\gm$ is always
 contained in $\bigcup_{i=0}^8C_i$.  Thus, if $\gm$ reverses
 orientation then $x\in E$.  On the other hand, if $\gm$ preserves
 orientation then Proposition~\ref{prop-no-more-isotropy} tells us
 that $x=v_i$ for some $i$ with $0\leq i\leq 13$.  The only points of
 this type lying in $F_{16}$ are $v_0,v_3,v_6$ and $v_{11}$, but we
 have
 \begin{align*}
  v_0 &= c_1(0) = c_5(0) \\
  v_3 &= c_0(\pi/2) = c_3(\pi/2) \\
  v_6 &= c_0(\pi/4) = c_1(\pi/2) \\
  v_{11} &= c_3(\pi/2) = c_5(\pi)
 \end{align*}
 so these points are also in $E$.  This proves that
 $\text{int}(F_{16})\cap\gm(F_{16})=\emptyset$.  We conclude that
 $F_{16}$ is a retractive fundamental domain, as claimed.

 We now need to show that $p_{16}(EX(a))\sse F_{16}^*$, or equivalently
 that $z_1,z_2,u_3,u_4\geq 0$ as functions on $EX(a)$.  This is clear from
 the identities $z_i=y_i^2$ and $u_3=4u_1u_2=4x_1^2x_2^2$ and
 $u_4=u_1+u_2=x_1^2+x_2^2$.

 Now suppose we start with a point $z\in F_{16}^*$, and define
 $y_i=\sqrt{z_i}$ for $i=1,2$, and then
 \begin{align*}
  u_1 &= \half(1-y_2) - \half(y_2-a)(y_2-a^{-1})y_1^2 \\
  u_2 &= \half(1+y_2) - \half(y_2+a)(y_2+a^{-1})y_1^2.
 \end{align*}
 We find that $u_1+u_2=1-z_1-z_1z_2$ and
 $4u_1u_2=(1-z_1-z_1z_2)^2 - z_2((a+a^{-1})z_1-2a)^2$; these are the
 quantities $u_3$ and $u_4$ that are assumed to be nonnegative by the
 definition of $F_{16}^*$.  It follows from this by a check of cases
 that both $u_1$ and $u_2$ are nonnegative, so $y\in F_4^*$ and
 the point $x=s_4(y)=(\sqrt{u_1},\sqrt{u_2},y_1,-y_1y_2)$ is a
 well-defined element of $F_4$.  It is clear by construction that
 in fact $x\in F_{16}$, so we have a well-defined map
 $s_{16}\:F_{16}^*\to F_{16}$ with the claimed properties.
 \begin{checks}
  embedded/EX_check.mpl: check_E_F16()
 \end{checks}
\end{proof}

The following picture shows the set $F_{16}^*$ (where $a=1/\rt$)
together with the images under $p_{16}$ of the curves $c_i(t)$ (for
$0\leq i\leq 8$) and the points $v_j$ (for $0\leq j\leq 13$).
\begin{center}
\begin{tikzpicture}[scale=8]
 \draw[cyan] plot[smooth] coordinates{ (0.000,0.500) (0.000,0.327) (0.000,0.048) (0.000,0.048) (0.000,0.327) (0.000,0.500) (0.000,0.327) (0.000,0.048) (0.000,0.048) (0.000,0.327) (0.000,0.500) (0.000,0.327) (0.000,0.048) (0.000,0.048) (0.000,0.327) (0.000,0.500) (0.000,0.327) (0.000,0.048) (0.000,0.048) (0.000,0.327) (0.000,0.500) };
 \draw[green] plot[smooth] coordinates{ (1.000,0.000) (0.905,0.000) (0.655,0.000) (0.345,0.000) (0.095,0.000) (0.000,0.000) (0.095,0.000) (0.345,0.000) (0.655,0.000) (0.905,0.000) (1.000,0.000) (0.905,0.000) (0.655,0.000) (0.345,0.000) (0.095,0.000) (0.000,0.000) (0.095,0.000) (0.345,0.000) (0.655,0.000) (0.905,0.000) (1.000,0.000) };
 \draw[green] plot[smooth] coordinates{ (1.000,0.000) (0.905,0.000) (0.655,0.000) (0.345,0.000) (0.095,0.000) (0.000,0.000) (0.095,0.000) (0.345,0.000) (0.655,0.000) (0.905,0.000) (1.000,0.000) (0.905,0.000) (0.655,0.000) (0.345,0.000) (0.095,0.000) (0.000,0.000) (0.095,0.000) (0.345,0.000) (0.655,0.000) (0.905,0.000) (1.000,0.000) };
 \draw[magenta] plot[smooth] coordinates{ (0.667,0.500) (0.603,0.500) (0.436,0.500) (0.230,0.500) (0.064,0.500) (0.000,0.500) (0.064,0.500) (0.230,0.500) (0.436,0.500) (0.603,0.500) (0.667,0.500) (0.603,0.500) (0.436,0.500) (0.230,0.500) (0.064,0.500) (0.000,0.500) (0.064,0.500) (0.230,0.500) (0.436,0.500) (0.603,0.500) (0.667,0.500) };
 \draw[magenta] plot[smooth] coordinates{ (0.667,0.500) (0.603,0.500) (0.436,0.500) (0.230,0.500) (0.064,0.500) (0.000,0.500) (0.064,0.500) (0.230,0.500) (0.436,0.500) (0.603,0.500) (0.667,0.500) (0.603,0.500) (0.436,0.500) (0.230,0.500) (0.064,0.500) (0.000,0.500) (0.064,0.500) (0.230,0.500) (0.436,0.500) (0.603,0.500) (0.667,0.500) };
 \draw[blue] plot[smooth] coordinates{ (1.000,0.000) (0.988,0.000) (0.954,0.005) (0.907,0.021) (0.853,0.060) (0.800,0.125) (0.753,0.214) (0.716,0.315) (0.689,0.409) (0.672,0.476) (0.667,0.500) (0.672,0.476) (0.689,0.409) (0.716,0.315) (0.753,0.214) (0.800,0.125) (0.853,0.060) (0.907,0.021) (0.954,0.005) (0.988,0.000) (1.000,0.000) };
 \draw[blue] plot[smooth] coordinates{ (1.000,0.000) (0.988,0.000) (0.954,0.005) (0.907,0.021) (0.853,0.060) (0.800,0.125) (0.753,0.214) (0.716,0.315) (0.689,0.409) (0.672,0.476) (0.667,0.500) (0.672,0.476) (0.689,0.409) (0.716,0.315) (0.753,0.214) (0.800,0.125) (0.853,0.060) (0.907,0.021) (0.954,0.005) (0.988,0.000) (1.000,0.000) };
 \draw[blue] plot[smooth] coordinates{ (1.000,0.000) (0.988,0.000) (0.954,0.005) (0.907,0.021) (0.853,0.060) (0.800,0.125) (0.753,0.214) (0.716,0.315) (0.689,0.409) (0.672,0.476) (0.667,0.500) (0.672,0.476) (0.689,0.409) (0.716,0.315) (0.753,0.214) (0.800,0.125) (0.853,0.060) (0.907,0.021) (0.954,0.005) (0.988,0.000) (1.000,0.000) };
 \draw[blue] plot[smooth] coordinates{ (1.000,0.000) (0.988,0.000) (0.954,0.005) (0.907,0.021) (0.853,0.060) (0.800,0.125) (0.753,0.214) (0.716,0.315) (0.689,0.409) (0.672,0.476) (0.667,0.500) (0.672,0.476) (0.689,0.409) (0.716,0.315) (0.753,0.214) (0.800,0.125) (0.853,0.060) (0.907,0.021) (0.954,0.005) (0.988,0.000) (1.000,0.000) };
 \fill (1.000,0.000) circle(0.005);
 \fill (1.000,0.000) circle(0.005);
 \fill (0.000,0.500) circle(0.005);
 \fill (0.000,0.500) circle(0.005);
 \fill (0.000,0.500) circle(0.005);
 \fill (0.000,0.500) circle(0.005);
 \fill (0.000,0.000) circle(0.005);
 \fill (0.000,0.000) circle(0.005);
 \fill (0.000,0.000) circle(0.005);
 \fill (0.000,0.000) circle(0.005);
 \fill (0.667,0.500) circle(0.005);
 \fill (0.667,0.500) circle(0.005);
 \fill (0.667,0.500) circle(0.005);
 \fill (0.667,0.500) circle(0.005);
 \draw (0.000,0.000) node[anchor=north] {$v_{6},v_{7},v_{8},v_{9}$};
 \draw (0.000,0.500) node[anchor=south] {$v_{2},v_{3},v_{4},v_{5}$};
 \draw (1.000,0.000) node[anchor=north] {$v_{0},v_{1}$};
 \draw (0.667,0.500) node[anchor=south] {$v_{10},v_{11},v_{12},v_{13}$};
 \draw (0.000,0.272) node[anchor=east] {$c_{0}$};
 \draw (0.444,0.000) node[anchor=north] {$c_{1}$};
 \draw (0.544,0.000) node[anchor=north] {$c_{2}$};
 \draw (0.296,0.500) node[anchor=south east] {$c_{3}$};
 \draw (0.362,0.500) node[anchor=south east] {$c_{4}$};
 \draw (0.765,0.188) node[anchor=west] {$c_{5}$};
 \draw (0.745,0.235) node[anchor=west] {$c_{6}$};
 \draw (0.727,0.283) node[anchor=west] {$c_{7}$};
 \draw (0.711,0.331) node[anchor=west] {$c_{8}$};
\end{tikzpicture}
\end{center}

If we show only the curves and vertices lying in $F_{16}$, we obtain the
following picture:
\begin{center}
\begin{tikzpicture}[scale=8]
 \draw[cyan] plot[smooth] coordinates{ (0.000,0.000) (0.000,0.012) (0.000,0.048) (0.000,0.103) (0.000,0.173) (0.000,0.250) (0.000,0.327) (0.000,0.397) (0.000,0.452) (0.000,0.488) (0.000,0.500) };
 \draw[green] plot[smooth] coordinates{ (1.000,0.000) (0.976,0.000) (0.905,0.000) (0.794,0.000) (0.655,0.000) (0.500,0.000) (0.345,0.000) (0.206,0.000) (0.095,0.000) (0.024,0.000) (0.000,0.000) };
 \draw[magenta] plot[smooth] coordinates{ (0.667,0.500) (0.650,0.500) (0.603,0.500) (0.529,0.500) (0.436,0.500) (0.333,0.500) (0.230,0.500) (0.137,0.500) (0.064,0.500) (0.016,0.500) (0.000,0.500) };
 \draw[blue] plot[smooth] coordinates{ (1.000,0.000) (0.988,0.000) (0.954,0.005) (0.907,0.021) (0.853,0.060) (0.800,0.125) (0.753,0.214) (0.716,0.315) (0.689,0.409) (0.672,0.476) (0.667,0.500) };
 \draw[cyan,arrows={-angle 90}] (0.0000,0.2500) -- (0.0000,0.2600);
 \draw[green,arrows={-angle 90}] (0.5000,0.0000) -- (0.4900,0.0000);
 \draw[magenta,arrows={-angle 90}] (0.3333,0.5000) -- (0.3267,0.5000);
 \draw[blue,arrows={-angle 90}] (0.7385,0.2507) -- (0.7373,0.2539);
 \fill (1.000,0.000) circle(0.005);
 \fill (0.000,0.500) circle(0.005);
 \fill (0.000,0.000) circle(0.005);
 \fill (0.667,0.500) circle(0.005);
 \draw (0.000,0.000) node[anchor=north] {$v_{6}$};
 \draw (0.000,0.500) node[anchor=south] {$v_{3}$};
 \draw (1.000,0.000) node[anchor=north west] {$v_{0}$};
 \draw (0.667,0.500) node[anchor=south] {$v_{11}$};
 \draw (0.000,0.250) node[anchor=east] {$c_{0}(\qpi\dotsb\ppi)$};
 \draw (0.500,0.000) node[anchor=north] {$c_{1}(0\dotsb\ppi)$};
 \draw (0.333,0.500) node[anchor=south] {$c_{3}(0\dotsb\ppi)$};
 \draw (0.739,0.251) node[anchor=west] {$c_{5}(0\dotsb\pi)$};
\end{tikzpicture}
\end{center}

It is clear from these pictures that $F_{16}^*$ is homeomorphic to the
unit square.  It is convenient to have an explicit homeomorphism,
which is provided by the following result.  (Later we will give other
homeomorphisms that are specific to the case $a=1/\rt$, and have
better properties.)

\begin{proposition}\lbl{prop-square}
 Put
 \begin{align*}
  w_1 &= \frac{z_1(1+(a+a^{-1})\sqrt{z_2}+z_2)}{1+2a\sqrt{z_2}} &
  w_2 &=
   \sqrt{z_2}\frac{2a(1-z_1)+z_1\sqrt{z_2}}{1-z_1+(a^{-1}-a)z_1\sqrt{z_2}},
 \end{align*}
 with the convention that $w_2=0$ at points where
 $1-z_1+(a^{-1}-a)z_1\sqrt{z_2}=0$.  Then the map
 $q\:(z_1,z_2)\mapsto(w_1,w_2)$ gives a homeomorphism $F_{16}^*\to[0,1]^2$.
\end{proposition}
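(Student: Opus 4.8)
The plan is to show that $q$ is a continuous bijection between the compact Hausdorff space $F_{16}^*$ and the compact Hausdorff space $[0,1]^2$; once continuity and bijectivity are established, the inverse is automatically continuous, so $q$ is a homeomorphism. First I would pin down the geometry of $F_{16}^*$ more concretely. By definition $F_{16}^*=\{(z_1,z_2)\in\R^2\st z_1,z_2,u_3,u_4\geq 0\}$, where $u_4=1-z_1-z_1z_2$ and $u_3=(1-z_1-z_1z_2)^2-z_2((a+a^{-1})z_1-2a)^2$. I would parametrise using $y_2=\sqrt{z_2}\in[0,?]$ and then solve the constraint $u_1,u_2\geq 0$ (recall $u_1u_2=u_3/4$ and $u_1+u_2=u_4$, and a check of cases in the proof of Proposition~\ref{prop-F-sixteen} showed $u_3,u_4\geq 0$ is equivalent to $u_1,u_2\geq 0$). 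From $u_1\geq 0$ and $u_2\geq 0$, treating these as linear inequalities in $z_1=y_1^2$, I get for each fixed $y_2$ an interval of allowed values of $z_1$, namely $0\leq z_1\leq z_1^{\max}(y_2)$ where $z_1^{\max}$ is the smaller of $\frac{1-2ay_2}{(y_2-a)(y_2-a^{-1})}$ and its sign-flipped analogue — this is exactly the structure already visible in Proposition~\ref{prop-T-alg}. Thus $F_{16}^*$ fibres over an interval of $y_2$-values with interval fibres, confirming it is topologically a square, and giving me explicit coordinates to work with.

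Next I would analyse the two coordinate functions of $q$ separately. For $w_1=\dfrac{z_1(1+(a+a^{-1})\sqrt{z_2}+z_2)}{1+2a\sqrt{z_2}}$: fixing $z_2$ (equivalently fixing $y_2=\sqrt{z_2}$), this is a linear increasing function of $z_1$ sending $0\mapsto 0$. I would check that when $z_1$ attains its maximum $z_1^{\max}(y_2)$ — i.e. when $u_1=0$ or $u_2=0$, meaning the point lies on $\phi^{-1}$ of the boundary piece corresponding to $x_1=0$ or $x_2=0$ — the value of $w_1$ is exactly $1$. This is the key algebraic identity to verify: substituting $z_1=\frac{1-2ay_2}{(y_2-a)(y_2-a^{-1})}$ (the $u_1=0$ case) into the formula for $w_1$ should give $1$ identically in $y_2$, and similarly for the $u_2=0$ case. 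Granting this, for each fixed $z_2$ the map $z_1\mapsto w_1$ is an increasing bijection $[0,z_1^{\max}(y_2)]\to[0,1]$. For $w_2=\sqrt{z_2}\dfrac{2a(1-z_1)+z_1\sqrt{z_2}}{1-z_1+(a^{-1}-a)z_1\sqrt{z_2}}$: I would show that along a fibre $w_1=\text{const}$, the coordinate $w_2$ runs monotonically over $[0,1]$. The natural way is to observe that the boundary components of $F_{16}^*$ are the images under $p_{16}$ of the four curves $c_0,c_1,c_3,c_5$ bounding $F_{16}$, whose $p_{16}$-images are computed (or computable) from the formulae in Section~\ref{sec-E-curves}: e.g. $p_4(c_1(t))=(\cos t,0)$ gives $z_2=0$ on that edge, and on the $c_3$-edge $y_2$ ranges over $T^+_{\alg}$; I would verify that $q$ maps the edge $c_1$ to $\{w_2=0\}$, the edge $c_3$ (or rather the portion of $\partial F_{16}^*$ it covers) to $\{w_2=1\}$, the edge $c_0$ to $\{w_1=0\}$, and the edge $c_5$ to $\{w_1=1\}$, matching the four sides of $[0,1]^2$.

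With those boundary identifications in hand, the cleanest route to bijectivity is to exhibit the inverse explicitly. Solving $w_1=\dfrac{z_1(1+(a+a^{-1})y_2+y_2^2)}{1+2ay_2}$ and $w_2=y_2\dfrac{2a(1-z_1)+z_1 y_2}{1-z_1+(a^{-1}-a)z_1 y_2}$ for $(z_1,y_2)$: the first equation is linear in $z_1$ given $y_2$, so $z_1=\dfrac{w_1(1+2ay_2)}{1+(a+a^{-1})y_2+y_2^2}$; substituting into the second gives a relation between $w_2$ and $y_2$ alone (after clearing denominators, a low-degree polynomial equation in $y_2$ with coefficients in $w_1,w_2,a$), which I would solve and then check has a unique root $y_2\in[0,\infty)$ with the corresponding $z_1$ lying in $[0,z_1^{\max}(y_2)]$ whenever $(w_1,w_2)\in[0,1]^2$. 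The map $(w_1,w_2)\mapsto(z_1,z_2=y_2^2)$ so obtained is manifestly continuous (away from the codimension-one loci where a denominator vanishes, handled by the stated $w_2=0$ convention and continuity), and it is a two-sided inverse to $q$ by construction.

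The main obstacle I anticipate is purely computational bookkeeping rather than conceptual: verifying the two boundary identities (that $w_1=1$ exactly on $\{u_1=0\}\cup\{u_2=0\}$ and $w_2=1$ exactly on the $c_3$-edge of $F_{16}^*$) and then solving the substituted equation for $y_2$ cleanly enough to see uniqueness of the relevant root. These are finite algebraic manipulations — the kind the Maple check \texttt{check\_square} in \texttt{embedded/EX\_check.mpl} would presumably certify — but laying them out by hand so that the monotonicity and the correct branch of the square root are transparent will take some care. A secondary subtlety is the degenerate locus $1-z_1+(a^{-1}-a)z_1\sqrt{z_2}=0$ where $w_2$ is defined by fiat to be $0$; I would need to confirm that $q$ extends continuously there (i.e. that $z_2\to 0$ as that denominator $\to 0$ within $F_{16}^*$, which follows from $u_2\geq 0$) so that the piecewise definition genuinely gives a continuous map on all of $F_{16}^*$.
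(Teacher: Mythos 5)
Your closing argument---solve the first equation linearly for $z_1$ in terms of $w_1$ and $y_2=\sqrt{z_2}$, substitute into the second to get a one-variable equation for $y_2$, show it has a unique nonnegative root, check that the resulting point lies in $F_{16}^*$, and finish with the compact-Hausdorff trick---is exactly the paper's proof; the paper gets uniqueness of the root by writing the substituted expression as a ratio $p(y_2)=p_0(y_2)/p_1(y_2)$ of polynomials with positive coefficients and exhibiting $p'$ with manifestly positive numerator coefficients, so that $p$ increases strictly from $0$ to $\infty$ and the intermediate value theorem applies. However, the ``key algebraic identity'' you propose to verify in your middle paragraph is false. Since $1+(a+a^{-1})\sqrt{z_2}+z_2=(\sqrt{z_2}+a)(\sqrt{z_2}+a^{-1})$, one has
\[ 1-w_1=\frac{2u_2}{1+2a\sqrt{z_2}}, \qquad
   1-w_2=\frac{2u_1}{1-z_1+(a^{-1}-a)z_1\sqrt{z_2}}, \]
so $w_1=1$ characterises the locus $u_2=0$ (the $c_5$ edge, where $x_2=0$), while $w_2=1$ characterises $u_1=0$ (the $c_3$ edge, where $x_1=0$); substituting the $u_1=0$ value of $z_1$ into $w_1$ does \emph{not} give $1$ identically (the difference of the two sides of the would-be identity is a nonzero multiple of $2(a^{-1}-a)y_2-4ay_2^3$). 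Your own boundary assignment two sentences later---$c_3\mapsto\{w_2=1\}$, $c_5\mapsto\{w_1=1\}$---is the correct one and contradicts that identity. Relatedly, the fibre description $0\le z_1\le z_1^{\max}(y_2)$ fails for general $a$: at $z_1=0$ one has $u_1=\tfrac12(1-2a y_2)$, which is negative when $y_2>1/(2a)$, and such $y_2$ do occur in $F_{16}^*$ whenever $a<1/\rt$ (e.g.\ $a=1/2$), so those fibres do not start at $z_1=0$. Neither error infects your final paragraph, which does not rely on the fibrewise monotonicity, so the proof goes through once that intermediate discussion is repaired or simply dropped.
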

\begin{proof}
 In view of Proposition~\ref{prop-F-sixteen} we can identify
 $F_{16}^*$ with $F_{16}\sse EX(a)$, so we have nonnegative functions
 $y_1,y_2,u_1,u_2$ as discussed previously and $\sqrt{z_2}=y_2$.

 It is clear that $w_1$ is continuous and nonnegative.  Next,
 note that the functions $1-z_1=u_4+z_1z_2$ and $z_1\sqrt{z_2}$ are
 nonnegative on $F_{16}^*$, and the only place where they both vanish
 is $(1,0)$.  Away from that point we deduce that $w_2$ is continuous
 with $0\leq w_2\leq\sqrt{z_2}$, and these inequalities imply that
 $w_2$ is continuous at the exceptional point as well.

 One can also check that $1-w_1=2u_2/(1+2ay_2)\geq 0$ and
 $1-w_2=2u_1/(1-y_1^2+(a^{-1}-a)y_1^2y_2)\geq 0$ so
 $w_1,w_2\leq 1$.  Thus, we have a well-defined and continuous map
 $q\:F_{16}^*\to [0,1]^2$.

 Now suppose we start with a point $w\in[0,1]^2$.  We will assume that
 $w_1<1$; the case $w_1=1$ requires only minor modifications and is
 left to the reader.  Consider the functions
 \begin{align*}
  p_0(s) &= s((1+w_1)s^2+(a^{-1}+a+((2a)^{-1}-2a)w_1)s+(1-w_1)) \\
  p_1(s) &= ((2a)^{-1}+(a^{-1}-a)w_1)s^2+
             \half((a^{-2}-1)(w_1+1)+2(1-w_1))s+(2a)^{-1}(1-w_1) \\
  p(s) &= p_0(s)/p_1(s).
 \end{align*}
 All coefficients of powers of $s$ in $p_0(s)$ and $p_1(s)$
 are strictly positive, and $p_0(s)$ is cubic whereas $p_1(s)$ is
 quadratic.  It follows that $p$ defines a continuous function
 $[0,\infty)\to[0,\infty)$, with $p(0)=0$ and $p(s)\to\infty$ as
 $s\to\infty$.  We claim that this is strictly increasing.  Indeed, by
 standard algebra one can check that
 $p'(s)=p_1(s)^{-2}\sum_{i=0}^4m_is^i$, where
 \begin{align*}
  m_0 &= \half a^{-1}(1-w_1)^2 \\
  m_1 &= (1-w_1)((\tfrac{3}{2}a^{-2}-1)w_1+(a^{-2}+1)(1-w_1)) \\
  m_2 &= \half a^{-3}(1-a^2)(1-w_1) +
         a^{-3}(1-a^2)(\tfrac{3}{2}-a^2)w_1^2 + \\ & \qquad
         \tfrac{1}{4}a^{-3}(1+2a^2)(5-a^2)w_1(1-w_1) +
         \half a^{-1}(5+a^2)(1-w_1)^2 \\
  m_3 &= a^{-2}(1+w_1)(2(1-a^2)w_1+(1+a^2)(1-w_1)) \\
  m_4 &= \half a^{-1}(1+w_1)(1+2(1-a^2)w_1).
 \end{align*}
 We have written these coefficients in a form that makes it clear that
 they are positive.  It follows that $p'(s)>0$ for $s\geq 0$, as
 claimed.  It follows that there is a unique number $y_2\geq 0$ with
 $p(y_2)=w_2$.  We put
 \[ y_1 = \sqrt{\frac{w_1(1+2ay_2)}{(y_2+a)(y_2+a^{-1})}} \]
 and $y=(y_1,y_2)\in [0,\infty)^2$.  We then define $u_1$ and $u_2$ in
 terms of $y$ in the usual way.  If we substitute the above value for
 $y_1$, then straightforward algebra gives
 \begin{align*}
  u_1 &= (1-p(y_2))
         \frac{((1-a^2)w_1+\half)y_2^2+(\half(1-w_1)(a+a^{-1})+w_1(a+a^{-1}))y_2+(1-w_1)/2}{(y_2+a)(y_2+a^{-1})} \\
  u_2 &= (1+2ay_2)(1-w_1)/2.
 \end{align*}
 After recalling that $y_2\geq 0$ and $p(y_2)=w_2\in [0,1]$ it follows that
 $u_1,u_2\geq 0$, so $y$ lies in $F_4^*$ and the point
 $z=(y_1^2,y_2^2)$ lies in $F_{16}^*$.  Now note that $y_2\geq 0$ and put
 \begin{align*}
  w'_1 &= \frac{z_1(1+(a+a^{-1})\sqrt{z_2}+z_2)}{1+2a\sqrt{z_2}} &
  w'_2 &=
   \sqrt{z_2}\frac{2a(1-z_1)+z_1\sqrt{z_2}}{1-z_1+(a^{-1}-a)z_1\sqrt{z_2}},
 \end{align*}
 so $q(z)=(w'_1,w'_2)$.  If we substitute in our definition of $y_1$
 and simplify we get $w'_1=w_1$ and $w'_2=p(y_2)$, but $p(y_2)=w_2$,
 so $q(z)=w$.  We leave it to the reader to check that all steps in this
 construction are forced, so $z$ is the unique point in $F_{16}^*$ with
 $q(z)=w$.  This means that $q\:F_{16}^*\to [0,1]^2$ is a continuous
 bijection between compact Hausdorff spaces, so it is a
 homeomorphism.
 \begin{checks}
  embedded/invariants_check.mpl: check_invariants()
 \end{checks}
\end{proof}

The following picture shows the images under $q\circ p_{16}$ of the
curves $c_i(t)$ and the points $v_j$.
\begin{center}
\begin{tikzpicture}[scale=6]
 \draw[cyan] plot[smooth] coordinates{ (0.000,0.000) (0.000,0.156) (0.000,0.309) (0.000,0.454) (0.000,0.588) (0.000,0.707) (0.000,0.809) (0.000,0.891) (0.000,0.951) (0.000,0.988) (0.000,1.000) };
 \draw[green] plot[smooth] coordinates{ (1.000,0.000) (0.976,0.000) (0.905,0.000) (0.794,0.000) (0.655,0.000) (0.500,0.000) (0.345,0.000) (0.206,0.000) (0.095,0.000) (0.024,0.000) (0.000,0.000) };
 \draw[magenta] plot[smooth] coordinates{ (1.000,1.000) (0.977,1.000) (0.911,1.000) (0.805,1.000) (0.668,1.000) (0.514,1.000) (0.357,1.000) (0.214,1.000) (0.099,1.000) (0.026,1.000) (0.000,1.000) };
 \draw[blue] plot[smooth] coordinates{ (1.000,0.000) (1.000,0.025) (1.000,0.097) (1.000,0.208) (1.000,0.348) (1.000,0.502) (1.000,0.656) (1.000,0.795) (1.000,0.905) (1.000,0.976) (1.000,1.000) };
 \draw[cyan,arrows={-angle 90}] (0.0000,0.5403) -- (0.0000,0.5234);
 \draw[green,arrows={-angle 90}] (0.5000,0.0000) -- (0.4900,0.0000);
 \draw[magenta,arrows={-angle 90}] (0.5138,1.0000) -- (0.5037,1.0000);
 \draw[blue,arrows={-angle 90}] (1.0000,0.5665) -- (1.0000,0.5715);
 \fill (1.000,0.000) circle(0.005);
 \fill (0.000,1.000) circle(0.005);
 \fill (0.000,0.000) circle(0.005);
 \fill (1.000,1.000) circle(0.005);
 \draw (1.000,1.000) node[anchor=south] {$v_{11}$};
 \draw (0.000,1.000) node[anchor=south] {$v_{3}$};
 \draw (1.000,0.000) node[anchor=north west] {$v_{0}$};
 \draw (0.000,0.000) node[anchor=north] {$v_{6}$};
 \draw (0.000,0.540) node[anchor=east] {$c_{0}(\qpi\dotsb\ppi)$};
 \draw (0.500,0.000) node[anchor=north] {$c_{1}(0\dotsb\ppi)$};
 \draw (0.514,1.000) node[anchor=south] {$c_{3}(0\dotsb\ppi)$};
 \draw (1.000,0.567) node[anchor=west] {$c_{5}(0\dotsb\pi)$};
\end{tikzpicture}
\end{center}

Some relevant formulae are as follows:
\begin{align*}
 qp_{16}(v_i) &= (1,0)      && \text{ for } 0\leq i \leq 1 \\
 qp_{16}(v_i) &= (0,1)      && \text{ for } 2\leq i \leq 5 \\
 qp_{16}(v_i) &= (0,0)      && \text{ for } 6\leq i \leq 9 \\
 qp_{16}(v_i) &= (1,1)      && \text{ for } 10\leq i \leq 13
\end{align*}
\begin{align*}
 qp_{16}(c_0(t)) &=\! (0,\;|\cos(2t)|) \\
 qp_{16}(c_1(t))=qp_{16}(c_2(t)) &=\! (\cos(t)^2,\;0) \\
 qp_{16}(c_{\alg}(t)) &=\! \begin{cases}
    \left(a^{-1}\frac{(1-2at)(t+a)(t+a^{-1})}{(1+2at)(t-a)(t-a^{-1})},\;1\right)
     & \text{ if } t\in T_{\alg}^+ \\
    \left(1,\;-at\frac{3-2a^2-4at}{2-2a^2-(3a+2a^3)t}\right)
     & \text{ if } t\in T_{\alg}^-.
   \end{cases}
\end{align*}

\begin{proposition}\lbl{prop-E-cromulent}
 $EX(a)$ is a cromulent surface.
\end{proposition}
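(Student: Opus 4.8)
The plan is to verify the four conditions in Definition~\ref{defn-precromulent} together with the two extra conditions (c) and (d) of that definition required for cromulence, using the structural results already established for $EX(a)$. Most of the work has in fact been done: Proposition~\ref{prop-X-smooth} shows $EX(a)$ is a compact oriented surface, and the discussion in Section~\ref{sec-E-geometry} explains why it carries a canonical complex structure, making it a compact Riemann surface. To compute the genus, I would invoke Lemma~\ref{lem-chi-quotient}: we have an action of $D_8$ on $EX(a)$, the set $V=\{v_0,\dots,v_{13}\}$ realises $V^*$ as a $G$-set by the explicit permutation computation in Section~\ref{sec-E-isotropy}, and Proposition~\ref{prop-no-more-isotropy} guarantees there are no further points with nontrivial $D_8$-stabiliser, so $V$ is exactly the set appearing in the definition. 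Applying $\chi(X/H)=|V/H|-16/|H|$ with $H=D_8$ gives $\chi(EX(a))=4-2=2$, hence $-2=2-2g$ and $g=2$. (Alternatively one could quote the genus directly from the curve system via the net $\Net_0$, but the Euler characteristic route is cleaner.)

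Next I would assemble condition~(a): the computation at the end of Section~\ref{sec-E-G} (using the fact that $\lm^*$ preserves $\rho-1$, negates $g$ and negates $\om_4$, while $\mu$ and $\nu$ behave analogously) shows that $\lm$ and $\mu$ are orientation-preserving hence conformal, and $\nu$ is orientation-reversing hence anticonformal; this propagates to all of $D_8$ acting conformally and all of $G\setminus D_8$ acting anticonformally. Condition~(b) is precisely the content of Section~\ref{sec-E-isotropy}: the permutation action on $\{v_0,\dots,v_{13}\}$ matches Definition~\ref{defn-V-star}, and Proposition~\ref{prop-no-more-isotropy} shows $V=\{v\in EX(a)\st\stab_{D_8}(v)\neq 1\}$. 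Together these establish that $EX(a)$ is a precromulent surface, with the listing $v_0,\dots,v_{13}$ already a precromulent labelling.

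For cromulence I would verify conditions~(c) and~(d). Condition~(c) is Remark~\ref{rem-tangent}: the explicit basis computation for $T_{v_0}EX(a)$ shows $\lm$ acts as multiplication by $i$ there. For condition~(d), the most economical route is via Remark~\ref{rem-standard-F}, which says that a labelled precromulent surface equipped with a curve system, a standard fundamental domain, and the property $\lm_*=i$ on $T_{v_0}X$ is automatically cromulent. Proposition~\ref{prop-E-curves} provides the curve system (the maps $c_k$ of Definition~\ref{defn-E-curves}), Proposition~\ref{prop-F-sixteen} shows $F_{16}$ is a retractive fundamental domain for $G$, and Proposition~\ref{prop-square} (or the simpler case-specific homeomorphisms) shows $F_{16}\cong[0,1]^2$; one checks from the pictures / the curve formulae in Proposition~\ref{prop-F-sixteen} that the boundary of $F_{16}$ is $DF_{16}$, i.e. $F_{16}$ is a standard fundamental domain in the sense of Definition~\ref{defn-standard-F}. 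Combined with condition~(c), Remark~\ref{rem-standard-F} then yields condition~(d) and hence cromulence.

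\textbf{Main obstacle.} None of the ingredients is individually hard, since they have all been prepared in earlier sections; the real labour — already discharged — was in Propositions~\ref{prop-X-smooth}, \ref{prop-no-more-isotropy}, \ref{prop-E-curves}, \ref{prop-F-sixteen} and \ref{prop-square}. The only genuine bookkeeping point in the present proof is confirming that the boundary $DF_{16}$ of the fundamental domain, as defined abstractly in terms of the curves $c_k$ in Section~\ref{sec-fundamental}, coincides with the boundary of the concrete region $F_{16}\subset EX(a)$; this is exactly the identity
\[
 \partial F_{16} =
  c_0([\tfrac{\pi}{4},\tfrac{\pi}{2}]) \cup
  c_1([0,\tfrac{\pi}{2}]) \cup
  c_3([0,\tfrac{\pi}{2}]) \cup
  c_5([0,\pi])
\]
which is extracted in the proof of Proposition~\ref{prop-F-sixteen} (the description of the set $E$ there). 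So the proof is essentially an assembly: cite Proposition~\ref{prop-X-smooth} and Lemma~\ref{lem-chi-quotient} for "genus $2$ Riemann surface", cite Sections~\ref{sec-E-G} and~\ref{sec-E-isotropy} plus Proposition~\ref{prop-no-more-isotropy} for precromulence, and cite Remark~\ref{rem-tangent}, Proposition~\ref{prop-E-curves}, Proposition~\ref{prop-F-sixteen} and Remark~\ref{rem-standard-F} for cromulence.
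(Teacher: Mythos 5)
Your overall strategy is the same as the paper's: the published proof is a one-line assembly, citing Section~\ref{sec-E-G} for condition~(a), Section~\ref{sec-E-isotropy} for conditions~(b) and~(c), and taking $F'$ to be the interior of $F_{16}$ for condition~(d). Your route to condition~(d) via Proposition~\ref{prop-E-curves}, Proposition~\ref{prop-F-sixteen} and Remark~\ref{rem-standard-F} is a correct (and more explicit) version of the same step, and your treatment of (a), (b), (c) matches the paper.

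There is, however, one genuine flaw: your argument for the genus is circular. Lemma~\ref{lem-chi-quotient} is stated for a precromulent surface $X$, and its proof uses $\chi(X)=2-2g=-2$ as an \emph{input} — that is exactly where the constant $-16$ in the formula $\chi(X/H)=|V/H|-16/|H|$ comes from. So you cannot invoke it to \emph{establish} that $EX(a)$ has genus $2$; you would be assuming what you want to prove. (Your application of the formula is also garbled: with $H=D_8$ it computes $\chi(EX(a)/D_8)=4-2=2$, not $\chi(EX(a))$, and the step "hence $-2=2-2g$" does not follow from "$\chi=2$"; the case $H=\{1\}$ gives $\chi(X)=14-16=-2$, but again only under the genus-$2$ hypothesis.) To repair this, compute $\chi(EX(a))$ independently: for instance, once Proposition~\ref{prop-F-sixteen} gives the standard fundamental domain, the net $\Net_0$ yields a CW structure with one $0$-cell, four $1$-cells and one $2$-cell, so $\chi=-2$ and $g=2$; alternatively, the Heegaard splitting of Remark~\ref{rem-heegard}, in which $EX(a)$ bounds two handlebodies retracting onto the figure-eight curves $\Om^{\pm}$, identifies $EX(a)$ as a genus-$2$ surface directly.
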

\begin{proof}
 Condition~(a) in Definition~\ref{defn-precromulent} is proved in
 Section~\ref{sec-E-G}, and conditions~(b) and~(c) are
 proved in Section~\ref{sec-E-isotropy}.  For condition~(d),
 we can take $F'$ to be the interior of $F_{16}$.
\end{proof}

\subsection{Additional points and curves}
\lbl{sec-extra-curves}

The surface $EX(a)$ has some additional points and curves that are
not part of the precromulent structure but are nevertheless useful for
some purposes (such as our analysis of torus quotients of $EX^*$ in
Section~\ref{sec-torus-quotients}).  All claims in this section are checked as
follows:
\begin{checks}
 embedded/extra_vertices_check.mpl: check_extra_vertices()
 embedded/extra_curves_check.mpl: check_extra_curves()
\end{checks}

\begin{definition}\lbl{defn-c-nine}
 We put
 \[ c_9(t) = \left(
     \sqrt{\frac{1-a^2}{2(1+a^2)}}\sin(t),\;
     \sqrt{\frac{1-a^2}{2(1+a^2)}}\sin(t),\;
     \sqrt{\frac{2a^2}{1+a^2}},\;
     -\sqrt{\frac{1-a^2}{1+a^2}}\cos(t)
    \right),
 \]
 then
 \[ c_{10}(t) = \lm(c_9(t)) \qquad
    c_{11}(t) = \mu(c_9(t)) \qquad
    c_{12}(t) = \lm\mu(c_9(t)).
 \]
\end{definition}

It is straightforward to check that $c_9$ lands in $EX(a)$, so the
same is true for $c_{10}$, $c_{11}$ and $c_{12}$.  One can also check
that $\lm^2c_9(t)=c_9(-t)$ and $\lm\nu(c_9(t))=c_9(\pi-t)$.  It
follows that we cannot get anything interestingly new by applying
further group elements to the above curves.  In the case $a=1/\rt$, we
have
\[ c_9(t) = \left(
             \frac{\sin(t)}{\sqrt{6}},\;
             \frac{\sin(t)}{\sqrt{6}},\;
             \sqrt{\frac{2}{3}},\;
             -\frac{\cos(t)}{\sqrt{3}}
            \right).
\]

\begin{definition}\lbl{defn-c-thirteen}
 We put
 \[\begin{split}
  c_{13}(t) = &\left(
     \frac{\cos(t)}{\rt}\left(1-\frac{\sin(t)}{\sqrt{2/(1-a^2)-\cos(t)^2}}\right),\quad
     \frac{\cos(t)}{\rt}\left(1+\frac{\sin(t)}{\sqrt{2/(1-a^2)-\cos(t)^2}}\right),
     \right.\\ & \qquad\left.
     \frac{\rt a}{\sqrt{1+a^2}}\,\sin(t),\quad
     \frac{-\rt\sin(t)^2}{\sqrt{1+a^2}\sqrt{2/(1-a^2)-\cos(t)^2}}
    \right),
 \end{split}\]
 then
 \[
    c_{14}(t) = \lm(c_{13}(t)) \qquad
    c_{15}(t) = \nu(c_{13}(t)) \qquad
    c_{16}(t) = \lm\nu(c_{13}(t)).
 \]
\end{definition}
In the case $a=1/\rt$ this becomes
\[ c_{13}(t) = \left(
     \frac{\cos(t)}{\rt}\left(1-\frac{\sin(t)}{\sqrt{4-\cos(t)^2}}\right),
     \frac{\cos(t)}{\rt}\left(1+\frac{\sin(t)}{\sqrt{4-\cos(t)^2}}\right),
     \sqrt{2/3}\,\sin(t),
     \frac{-2\sin(t)^2}{\sqrt{3}\sqrt{4-\cos(t)^2}}
    \right).
\]
It is straightforward to check that $c_{13}$ lands in $EX(a)$, so the
same is true for $c_{14}$, $c_{15}$ and $c_{16}$.  One can also check
that $\lm\mu(c_{13}(t))=c_{13}(-t)$ and
$\lm^2(c_{13}(t))=c_{13}(\pi-t)$.  It again follows that we cannot get
anything interestingly new by applying further group elements.

\begin{definition}\lbl{defn-c-seventeen}
 If $a\leq 1/\rt$, we put
 \[ c_{17}(t) = \left(
     \sqrt{\frac{1-2a^2}{3(1+2a^2)}}\sin(t),\;
     -\cos(t),\;
     -\frac{4a}{\sqrt{6(1+2a^2)}}\sin(t),\;
     \frac{2}{\sqrt{6(1+2a^2)}}\sin(t)
    \right),
 \]
 then
 \[ c_{18}(t) = \lm(c_{13}(t)) \qquad
    c_{19}(t) = \lm^2(c_{13}(t)) \qquad
    c_{20}(t) = \lm^3(c_{13}(t)).
 \]
\end{definition}
One can again check that this produces curves in $EX(a)$ satisfying
$\nu c_{17}(t)=c_{17}(\pi-t)$ and $\lm^2\mu(c_{17}(t))=c_{17}(-t)$.
In the case $a=1/\rt$, we find that
$c_{17}(t)=c_3(-\tfrac{\pi}{2}-t)$, and similarly $c_{18}$, $c_{19}$
and $c_{20}$ are just reparametrisations of the lower numbered curves.

Note that for $k\in\{0,1,2,17,\dotsc,20\}$, the image $C_k=c_k(\R)$ is
a great circle.  The homogeneous polynomial $g(x)$ defines a cubic
surface in the projective space $\mathbb{P}^3$, which is smooth except
when $a=1/\rt$.  A famous theorem of Cayley and Salmon says that any
smooth cubic surface contains precisely 27 linearly embedded copies of
$\mathbb{P}^1$ (when counted with appropriate multiplicities);
see~\cite[Theorem 9.1.13]{do:cag} for a modern treatment.  In our
case, the above great circles give seven copies of $\mathbb{P}^1$.
One can check that the remaining copies come in ten complex conjugate
pairs, and so do not correspond to great circles in the real variety
$EX(a)$.  In the case $a=1/\rt$ everything degenerates and we have
only five great circles and two additional conjugate pairs of
$\mathbb{P}^1$'s.  Some or all of these must have multiplicity greater
than one, but we have not investigated this.
\begin{checks}
 embedded/cayley_check.mpl: check_cayley()
\end{checks}

For $k\in\{9,\dotsc,12\}$ the image is again the intersection of $S^3$
with a two-dimensional subspace of $\R^4$, but in these cases it is an
affine subspace rather than a vector subspace.  We suspect that again
there are no more curves of this type contained in $EX(a)$, but we
have not proved this.

\begin{remark}
 The curves $c_i(t)$ for $0\leq i\leq 16$ are represented in Maple as
 \mcode+c_E[i](t)+.  However, for $17\leq i\leq 20$, the curves
 $c_i(t)$ are not defined when $a>1/\rt$, and this makes it
 inconvenient to use the same framework.  Instead, these curves are
 represented in Maple by the functions \mcode+c_cayley[j](t)+ for
 $1\leq j\leq 4$.
\end{remark}

We next introduce some additional points $v_i$ for $14\leq i\leq 45$.
For this, it is convenient to enumerate the elements of $G$ as
follows:
\begin{align*}
 \gm_0    &= 1           & \gm_1    &= \lm &
 \gm_2    &= \lm^2       & \gm_3    &= \lm^3 \\
 \gm_4    &= \mu         & \gm_5    &= \lm\mu &
 \gm_6    &= \lm^2\mu    & \gm_7    &= \lm^3\mu \\
 \gm_8    &= \nu         & \gm_9    &= \lm\nu &
 \gm_{10} &= \lm^2\nu    & \gm_{11} &= \lm^3\nu \\
 \gm_{12} &= \mu\nu      & \gm_{13} &= \lm\mu\nu &
 \gm_{14} &= \lm^2\mu\nu & \gm_{15} &= \lm^3\mu\nu.
\end{align*}

\begin{definition}
 We put
 \[
  v_{14}  = \left(\sqrt{\frac{1-a^2}{2(1+a^2)}},\;
                  \sqrt{\frac{1-a^2}{2(1+a^2)}},\;
                  \sqrt{\frac{2a^2}{1+a^2}},\;
                  0\right),
 \]
 then $v_{14+i}=\gm_i(v_{14})$ for $0\leq i<8$.  If
 $a\leq 1/\rt$ we also put
 \begin{align*}
  v_{22} &= \left(
             \sqrt{\frac{1-2a^2}{3(1+2a^2)}},\;
             0,\;
             \sqrt{\frac{8a^2}{3(1+2a^2)}},\;
             -\sqrt{\frac{2}{3(1+2a^2)}}
            \right) \\
  v_{30} &= \left(
             \sqrt{\frac{1-2a^2}{4(1+a^2)}},\;
             \sqrt{\frac{1-2a^2}{4(1+a^2)}},\;
             \sqrt{\frac{2a^2}{1+a^2}},\;
             \sqrt{\frac{1}{2(1+a^2)}}
            \right),
 \end{align*}
 then
 \begin{align*}
  v_{22+i} &= \gm_i(v_{22}) && (0\leq i<8) \\
  v_{30+i} &= \gm_i(v_{30}) && (0\leq i<16).
 \end{align*}
\end{definition}

Straightforward calculations show that these points lie in $EX(a)$.
We have $\lm\nu(v_{14})=v_{14}$ and $\nu(v_{22})=v_{22}$, which
implies that $\{v_0,\dotsc,v_{45}\}$ is closed under the action of
$G$.  One can check that
\begin{align*}
 C_1\cap C_9    &= \{v_{14},v_{16}\} \\
 C_5\cap C_{19} &= \{v_{22}\} \\
 C_9\cap C_{20} &= \{v_{30}\}.
\end{align*}
This is the main justification for considering these extra points.

\subsection{Charts}
\lbl{sec-E-charts}

In this section we discuss three different kinds of charts for $EX(a)$.

Our first construction is simple and works at every point of $EX(a)$.  It
only gives an approximate chart, but that is sufficient for many
purposes.

\begin{definition}\lbl{defn-quadratic-chart}
 Let $x$ be a point in $EX(a)$, and let $T$ be the tangent space to
 $EX(a)$ at $x$.  We define $\phi\:T\to\R^4$ by
 \[ \phi(t) = \left(1 - \frac{\|t\|^2}{2}\right)x + t
               - \frac{n(t).x}{\|n(x)\|^2}n(x).
 \]
 (Here $n(x)$ is the gradient of $g$, as in Definition~\ref{defn-nx}.)
 We call this the \emph{quadratic approximate chart} at $x$.  Maple
 notation for $\phi(t)$ is \mcode+quadratic_chart(x,t)+ (defined in
 \fname+embedded/geometry.mpl+).
\end{definition}

\begin{proposition}\lbl{prop-quadratic-chart}
 We have
 \begin{align*}
  \phi(t) &= x + t + O(\|t\|^2) \\
  g(\phi(t)) &= O(\|t\|^3) \\
  \rho(\phi(t)) &= 1 + O(\|t\|^4).
 \end{align*}
\end{proposition}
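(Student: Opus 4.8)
The plan is to expand $\phi(t)$ in powers of $\|t\|$ and check each claim directly, exploiting the defining properties $\rho(x)=1$, $g(x)=0$, $n(x)=(\nabla g)_x$, and the fact that $t$ is tangent to $EX(a)$ at $x$, i.e. $t.x=0$ and $t.n(x)=0$. (The latter holds because the gradient of $\rho-1$ at $x$ is $2x$ and the gradient of $g$ is $n(x)$, and $T=\ker(Dp_x)$ where $p=(\rho-1,g)$.) Note also the Euler relation $x.n(x)=3g(x)=0$ used in Proposition~\ref{prop-X-smooth}, so the correction term $\tfrac{n(t).x}{\|n(x)\|^2}n(x)$ has a well-defined coefficient.

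First I would record the linear approximation. Since the coefficient of $x$ in $\phi(t)$ is $1-\|t\|^2/2$ and the last term is quadratic in $t$ (the map $t\mapsto n(t)$ is linear because $g$ is a homogeneous cubic, hence $n$ is homogeneous quadratic — wait, more carefully: $n(x)=\nabla g$ has components quadratic in $x$, so $n(\,\cdot\,)$ is a quadratic map, and $n(t).x$ is quadratic in $t$), we get $\phi(t)=x+t+O(\|t\|^2)$ immediately, giving the first claim. For the second claim I would Taylor-expand $g$ about $x$: writing $\phi(t)=x+h$ with $h=t+O(\|t\|^2)$, we have $g(x+h)=g(x)+n(x).h+\tfrac12 h^T(\mathrm{Hess}\,g)_x h+\tfrac16(\cdots)$. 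The constant term vanishes since $g(x)=0$. The linear term is $n(x).h = n(x).t + n(x).(O(\|t\|^2))=0+O(\|t\|^2)$, since $n(x).t=0$ by tangency; but this still looks like $O(\|t\|^2)$, not $O(\|t\|^3)$, so I would need to be more precise: the quadratic correction term in $\phi$ is designed exactly to cancel the quadratic part of $g(x+t)$. Indeed $g(x+t)=\tfrac12 t^T(\mathrm{Hess}\,g)_x t+O(\|t\|^3)$, and one must check that $n(x).\big(-\tfrac{\|t\|^2}{2}x-\tfrac{n(t).x}{\|n(x)\|^2}n(x)\big)$ together with the Hessian term of $g$ at $x$ along $t$ cancels to order $2$. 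Here $n(x).x=0$ kills the first piece, and $n(x).\big(-\tfrac{n(t).x}{\|n(x)\|^2}n(x)\big)=-n(t).x$; so the order-$2$ part of $g(\phi(t))$ is $\tfrac12 t^T(\mathrm{Hess}\,g)_x t - n(t).x$. Now $n(t).x=\sum_i x_i\,\partial_i g(t)$, and since $g$ is a homogeneous cubic, $\partial_i g$ is a homogeneous quadratic, so by Euler's relation applied to the quadratic form $\partial_i g$ one has $t.\nabla(\partial_i g)(t)=2\partial_i g(t)$; also $(\mathrm{Hess}\,g)_x$ evaluated on $t$ twice equals $\sum_{i,j}t_i t_j\partial_i\partial_j g(x)$, and by homogeneity $\partial_i\partial_j g(x)=\partial_i\partial_j g$ is linear, so $\sum_{i,j}t_it_j\partial_i\partial_j g(x)$ relates to $n(t).x$ by a symmetry-of-mixed-partials argument: $\sum_i x_i\partial_i g(t)=\sum_i x_i\sum_{j,k}\tfrac12\partial_i\partial_j\partial_k g\, t_jt_k$, while $\tfrac12\sum_{j,k}t_jt_k\partial_j\partial_k g(x)=\tfrac12\sum_{j,k}t_jt_k\sum_i x_i\partial_i\partial_j\partial_k g$, and these are visibly equal since the third partials are constants symmetric in all indices. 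Hence the order-$2$ part of $g(\phi(t))$ vanishes and $g(\phi(t))=O(\|t\|^3)$.

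For the third claim I would compute $\rho(\phi(t))=\phi(t).\phi(t)$ by expanding the three terms. Using $x.x=1$, $x.t=0$, $x.n(x)=0$, $t.n(x)=0$, the cross terms between $x$ and $t$ vanish, the cross term between $x$ and the $x$-coefficient gives $(1-\|t\|^2/2)^2=1-\|t\|^2+\|t\|^4/4$, the term $t.t=\|t\|^2$ cancels the $-\|t\|^2$, the cross term $2(1-\|t\|^2/2)x.\big(-\tfrac{n(t).x}{\|n(x)\|^2}n(x)\big)$ vanishes because $x.n(x)=0$, the cross term $2t.\big(-\tfrac{n(t).x}{\|n(x)\|^2}n(x)\big)$ vanishes because $t.n(x)=0$, and the remaining pieces — $\|t\|^4/4$ from the $x$-coefficient squared, and $\big(\tfrac{n(t).x}{\|n(x)\|^2}\big)^2\|n(x)\|^2$ from the last term squared — are both $O(\|t\|^4)$. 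Therefore $\rho(\phi(t))=1+O(\|t\|^4)$.

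The main obstacle is the order-$3$ bookkeeping in the $g$-estimate: one must be careful that the quadratic correction term in the definition of $\phi$ indeed annihilates the full degree-$2$ part of $g(\phi(t))$ (not merely some of it), which requires the symmetry identity relating $n(t).x$ to $t^T(\mathrm{Hess}\,g)_x t$ via the constancy and total symmetry of the third partial derivatives of the cubic $g$. Once that identity is in hand, everything else is routine expansion using orthogonality relations, and I would present those computations compactly rather than term by term. I would also remark that the estimates are uniform on compact subsets of the tangent space, which is all that is needed, and cite the Maple check \mcode{check_smoothness()} / the geometry checks if a numerical sanity test is desired.
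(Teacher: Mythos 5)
Your proposal is correct and follows essentially the same route as the paper: the linear and $\rho$ estimates via orthogonality of $x$, $n(x)$, $t$, and the $g$ estimate by exploiting the cubic homogeneity of $g$ to show that the quadratic correction term $-\tfrac{n(t).x}{\|n(x)\|^2}n(x)$ exactly cancels the degree-two part of $g(x+t)$. The paper organizes the last step via the polarization identity $g(a+b)=g(a)+n(a).b+n(b).a+g(b)$ rather than the Hessian and symmetric third partials, but these are the same computation.
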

\begin{proof}
 As the map $n\:\R^4\to\R^4$ is homogeneous quadratic, we see that
 $\phi(t)=x+t+O(\|t\|^2)$.  Next, as $x$, $n(x)$ and $t$ are mutually
 orthogonal we have
 \begin{align*}
  \rho(\phi(t)) &=
   \left(1 - \frac{\|t\|^2}{2}\right)^2 + \|t\|^2
               + \left(\frac{n(t).x}{\|n(x)\|^2}\right)^2\|n(x)\|^2 \\
   &= 1 + \frac{\|t\|^4}{4} + \frac{(n(t).x)^2}{\|n(x)\|^2}.
 \end{align*}
 Using the fact that $n$ is homogeneous quadratic again, we see that
 this is $1+O(\|t\|^4)$.

 Next, using the fact that $g$ is homogeneous cubic we find that
 \[ g(a+b) = g(a)+n(a).b + n(b).a + g(b). \]
 We apply this with $a=\left(1 - \frac{\|t\|^2}{2}\right)x$ and
 $b=t-\frac{n(t).x}{\|n(x)\|^2}n(x)$, neglecting terms of order
 $\|t\|^3$ everywhere.  As $g(x)=0$ we have $g(a)=0$.  Moreover, $b$
 is $O(\|t\|)$, so $g(b)$ is negligible, and when calculating $n(a).b$
 we can neglect terms in $n(a)$ that are quadratic in $t$.  This
 leaves $n(a)\simeq n(x)$, and $n(x)$ is normal to $t$, so
 \[ n(x).b \simeq -\frac{n(t).x}{\|n(x)\|^2} n(x).n(x) =
     -n(t).x.
 \]
 Similarly, as $n$ is quadratic, we can neglect terms in $b$ that are
 $O(\|t\|^2)$ when calculating $n(b)$.  This gives $n(b)\simeq n(t)$
 and so $n(b).a\simeq n(t).a\simeq n(t).x$.  Altogether, we have
 \[ g(\phi(t)) = g(a) + n(a).b + n(b).a + g(b) \simeq
     0 -n(t).x + n(t).x + 0 = 0.
 \]
 \begin{checks}
  embedded/geometry_check.mpl: check_quadratic_chart()
 \end{checks}
\end{proof}

Next, recall from Section~\ref{sec-holomorphic-curves} that each of
the maps $c_k\:\R\to EX(a)$ can be extended in a canonical way to give
a holomorphic map $\tc_k$ defined on a neighbourhood of $\R$ in $\C$.

The file \fname+embedded/annular_charts.mpl+ gives a formula for
$\tc_0(t+iu)$ modulo $u^4$, and formulae for $\tc_1(t+iu)$ and
$\tc_2(t+iu)$ modulo $u^3$, but we will not reproduce them here.
Given a fixed value of $a$ and $t_0\in\R$ it is also not hard to
compute power series for $\tc_k(t_0+t+iu)$ to reasonably high order,
and similar methods can be used to produce series for conformal charts
centred at points that do not lie on any of the curves $C_k$.  We
postpone a more detailed discussion to
Section~\ref{sec-roothalf-charts}, where we focus on the case
$a=1/\rt$.

We next describe a different class of charts that will be useful for
triangulating $EX(a)$.  It is inspired by the definition of barycentric
coordinates for spherical triangles described in~\cite{labese:sbc}.
Related code is in the file \fname+embedded/barycentric.mpl+.

\begin{definition}\lbl{defn-barycentric}
 Let $a_0$, $a_1$ and $a_2$ be distinct points on $EX(a)$.  For any
 $x\in EX(a)$ we let $n(x)$ denote the gradient of $g$ at $EX(a)$, and we put
 \[ \widetilde{p}(x) =
     (\det(x,n(x),a_1,a_2),\;
      \det(x,n(x),a_2,a_0),\;
      \det(x,n(x),a_0,a_1)) \in \R^3.
 \]
 If $\sum_i\widetilde{p}(x)_i\neq 0$, we put
 \[ p(x) = \widetilde{p}(x)/\sum_i\widetilde{p}(x)_i. \]
 This clearly lies in the set $\R^3_1=\{t\in\R^3\st\sum_it_i=1\}$,
 which contains the simplex $\Dl_2$.  If we need to emphasise the
 dependence on the points $a_i$, we will write $p_{a_0,a_1,a_2}(x)$
 rather than $p(x)$.  We call the components of $p(x)$ the
 \emph{barycentric coordinates} of $x$ (with respect to the $a_i$).
\end{definition}

\begin{definition}
 For $x\in EX(a)$ we put $T'_xEX(a)=x+T_xEX(a)\subset\R^4$, and call this the
 \emph{affine tangent space} to $EX(a)$ at $x$.  In a small neighbourhood
 of $x$, this is of course a good approximation to $EX(a)$ itself.  We
 write $\pi'_x(y)$ for the closest point in $T'_xEX(a)$ to $y$.  This can
 be computed as
 \[ \pi'_x(y) = x+y-\ip{y,x}x-\ip{y,n(x)}n(x)/\|n(x)\|^2. \]
\end{definition}

The next result motivates the term ``barycentric coordinates''.
\begin{lemma}
 If $p(x)$ is defined, then it is the unique element $t\in\R^3_1$ such
 that $x=\sum_it_i\pi'_x(a_i)$.
\end{lemma}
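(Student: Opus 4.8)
The plan is to verify that $t = p(x)$ satisfies the stated equation, and then to prove uniqueness. Throughout, $x$ is fixed with $p(x)$ defined, so $\sum_i \tp(x)_i \neq 0$.

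First I would establish the affine-combination identity. Write $b_i = \pi'_x(a_i) \in T'_x EX(a)$. Each $b_i$ has the form $b_i = x + w_i$ with $w_i \in T_x EX(a)$, so $\sum_i t_i b_i = (\sum_i t_i) x + \sum_i t_i w_i$; since $t \in \R^3_1$ this equals $x + \sum_i t_i w_i$, which lies in $T'_x EX(a)$. Thus any affine combination $\sum_i t_i b_i$ with $t \in \R^3_1$ automatically lands in the affine tangent plane, and the equation $x = \sum_i t_i b_i$ is equivalent to $\sum_i t_i w_i = 0$, i.e. to a linear relation among the three vectors $w_i \in T_x EX(a)$ in a two-dimensional space. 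The key computational point is that $\det(x, n(x), b_i, b_j)$ can be evaluated: since $\pi'_x$ subtracts off components along $x$ and $n(x)$, and the determinant $\det(x, n(x), -, -)$ kills exactly those components, we get $\det(x, n(x), b_i, b_j) = \det(x, n(x), a_i, a_j)$. Hence $\tp(x)$, as defined via the $a_i$, is the same as the analogous vector defined via the $b_i$: writing $\tp(x) = (D_{12}, D_{20}, D_{01})$ with $D_{ij} = \det(x, n(x), b_i, b_j)$.

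Next I would check that $t = p(x)$ solves $\sum_i t_i w_i = 0$. Fixing an oriented orthonormal basis of $T_x EX(a)$ identifies it with $\R^2$; under this identification $w_i \mapsto \bar w_i \in \R^2$, and for $u, v \in T_x EX(a)$ we have $\det(x, n(x), u, v) = \|x\|\,\|n(x)\| \cdot \det_{\R^2}(\bar u, \bar v)$ up to a fixed nonzero scalar $\kappa$ (using orthogonality of $x$ and $n(x)$; recall $x \in S^3$ and $n(x) \neq 0$ by Proposition~\ref{prop-X-smooth}). So $D_{ij} = \kappa \det_{\R^2}(\bar w_i, \bar w_j)$, and $\tp(x)$ is, up to the scalar $\kappa$, the vector of $2\times 2$ determinants $(\det(\bar w_1,\bar w_2), \det(\bar w_2,\bar w_0), \det(\bar w_0,\bar w_1))$. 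It is an elementary and standard fact that for any three vectors $\bar w_0, \bar w_1, \bar w_2$ in $\R^2$ one has $\det(\bar w_1,\bar w_2)\,\bar w_0 + \det(\bar w_2,\bar w_0)\,\bar w_1 + \det(\bar w_0,\bar w_1)\,\bar w_2 = 0$ (a Cramer-type identity, provable by expanding in a basis or by noting the left side is linear and alternating in a way that forces vanishing). Dividing by $\sum_i \tp(x)_i = \kappa \sum_i \det(\bar w_j, \bar w_k)$, which is assumed nonzero, gives $\sum_i p(x)_i \bar w_i = 0$, hence $\sum_i p(x)_i w_i = 0$, hence $x = \sum_i p(x)_i b_i$.

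Finally, uniqueness. Suppose $t, t' \in \R^3_1$ both satisfy $x = \sum_i t_i b_i = \sum_i t'_i b_i$. Subtracting, $\sum_i (t_i - t'_i) w_i = 0$ with $\sum_i (t_i - t'_i) = 0$. If the three vectors $w_0, w_1, w_2$ spanned $T_x EX(a)$ (equivalently, if $b_0, b_1, b_2$ are affinely independent in $T'_x EX(a)$), then the only vector in the kernel of $(s_0,s_1,s_2) \mapsto \sum s_i w_i$ that also satisfies $\sum s_i = 0$ is zero, giving $t = t'$. That the $w_i$ span is exactly the condition $\sum_i \tp(x)_i = \kappa \sum_i \det(\bar w_j,\bar w_k) \neq 0$: if the $w_i$ were contained in a line (or were all zero), every $2\times 2$ determinant $\det(\bar w_j,\bar w_k)$ would vanish, contradicting the hypothesis that $p(x)$ is defined. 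So the nonvanishing hypothesis simultaneously guarantees existence of $p(x)$, the affine-independence of the $b_i$, and uniqueness.

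The main obstacle I anticipate is purely bookkeeping rather than conceptual: carefully justifying the reduction $\det(x, n(x), b_i, b_j) = \det(x, n(x), a_i, a_j)$ and tracking the fixed nonzero scalar $\kappa$ relating the $4\times 4$ determinant restricted to $\{x, n(x)\}^\perp$ with the $2\times 2$ determinant on $T_x EX(a)$ — this uses that $x \perp n(x)$ (which follows from $g$ being homogeneous of degree $3$, as in the proof of Proposition~\ref{prop-X-smooth}) and that $x, n(x)$ are both nonzero on $EX(a)$. Once that linear-algebra dictionary is set up, the Cramer identity in $\R^2$ does all the real work.
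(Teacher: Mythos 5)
Your proof is correct, and it reorganizes the argument in a way that differs from the paper's. The paper works entirely in $\R^4$: it observes that the five vectors $a_0,a_1,a_2,x,n(x)$ must satisfy a linear relation $\sum_i s_ia_i+s_3x+s_4n(x)=0$, applies the functionals $y\mapsto\det(x,n(x),a_i,y)$ to identify $(s_0,s_1,s_2)$ as a scalar multiple of $\tp(x)$, and then reads off both existence and uniqueness from the fact that this relation (modulo $\mathrm{span}(x,n(x))$) is essentially unique and has nonzero coordinate sum. You instead push everything down into the two-dimensional tangent plane: after the reduction $\det(x,n(x),b_i,b_j)=\det(x,n(x),a_i,a_j)$ (which is valid, since $b_i-a_i\in\mathrm{span}(x,n(x))$), the components of $\tp(x)$ become, up to one fixed nonzero scalar, the $2\times 2$ determinants of the displacement vectors $\bar w_i$, and the planar Cramer identity $\det(\bar w_1,\bar w_2)\bar w_0+\det(\bar w_2,\bar w_0)\bar w_1+\det(\bar w_0,\bar w_1)\bar w_2=0$ hands you existence directly; uniqueness then follows from the observation that the kernel of $s\mapsto\sum_is_i\bar w_i$ is one-dimensional and spanned by a vector of nonzero coordinate sum. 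The two proofs rest on the same linear algebra (the $\tp(x)_i$ are the coefficients of the unique dependence), but yours makes the existence step constructive and explicit via the Cramer identity, at the cost of the extra bookkeeping identifying the $4\times 4$ determinants with planar ones; the paper's version avoids choosing a basis of the tangent space but produces the dependence non-constructively. Both correctly locate the role of the hypothesis $\sum_i\tp(x)_i\neq 0$ as simultaneously guaranteeing affine independence of the projected points and pinning down the coefficient vector.
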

\begin{proof}
 First, we write $\tu=\widetilde{p}(x)$, so
 \begin{align*}
  \tu_0 &= \det(x,n(x),a_1,a_2) \\
  \tu_1 &= \det(x,n(x),a_2,a_0) \\
  \tu_2 &= \det(x,n(x),a_0,a_1).
 \end{align*}
 As $p(x)$ is assumed to be defined, we must have
 $\tu_0+\tu_1+\tu_2\neq 0$, and in particular
 $(\tu_0,\tu_1,\tu_2)\neq(0,0,0)$.

 Next, the list $a_0,a_1,a_2,x,n(x)$ consists of five vectors in
 $\R^4$, so there must be a linear relation
 \[ s_0a_0 + s_1a_1 + s_2a_2 + s_3x + s_4n(x) = 0 \]
 where not all of the coefficients $s_i$ are zero.  As $x$ and $n(x)$
 are nonzero and orthogonal, it follows easily that
 $(s_0,s_1,s_2)\neq(0,0,0)$.

 Now apply the map $y\mapsto\det(x,n(x),a_0,y)$ to our linear
 relation.  Only the second and third terms contribute anything, and
 we deduce that $s_1\tu_2-s_2\tu_1=0$.  Similarly, we can apply the
 map $y\mapsto\det(x,n(x),a_1,y)$ to see that $s_0\tu_2-s_2\tu_0=0$,
 and we can apply the map $y\mapsto\det(x,n(x),a_2,y)$ to see that
 $s_0\tu_1-s_1\tu_0=0$.  We have already seen that the vectors
 $(\tu_0,\tu_1,\tu_2)$ and $(s_0,s_1,s_2)$ are nonzero, and the above
 relations imply that they are unit multiples of each other.  It
 follows that the vector $u=p(x)=\tu/\sum_i\tu_i$ is also a unit
 multiple of $(s_0,s_1,s_2)$.  The definition of the coefficients
 $s_i$ implies that
 \[ s_0a_0+s_1a_1+s_2a_2\in\text{span}(x,n(x)) = (T_xEX(a))^\perp, \]
 and we now see that $\sum_iu_ia_i\in(T_xEX(a))^\perp$ as well.  From this
 it follows easily that $x=\sum_iu_i\pi'_x(a_i)$ as claimed.

 All that is left is to check that the numbers $u_i$ are uniquely
 characterised by the above property.  Suppose there is another
 vector $u'\in\R^3_1$ with $x=\sum_iu'_i\pi'_x(a_i)$.  It
 follows that the vector $r=u'-u$ has $\sum_ir_i=0$ and
 $\sum_ir_ia_i\in(T_xEX(a))^\perp=\text{span}(x,n(x))$, so there exist
 scalars $r_3,r_4$ such that
 \[ r_0a_0 + r_1a_1 + r_2a_2 + r_3x + r_4n(x) = 0. \]
 Just as before we deduce that $r_i\tu_j-r_j\tu_i=0$ for all
 $i,j\in\{0,1,2\}$, so $(r_0,r_1,r_2)$ is a multiple of
 $(\tu_0,\tu_1,\tu_2)$.  As $\sum_ir_i=0$ but $\sum_i\tu_i\neq 0$, we
 see that the multiplier must be zero, so $u'=u$ as claimed.
\end{proof}

From the above characterisation, we can show that barycentric
coordinates for adjacent triangles are equal on the shared edge, in
the following sense.

\begin{corollary}\lbl{cor-edge-barycentric}
 Suppose we have points $a_0,a_1,b,b',x\in EX(a)$ such that the vectors
 $u=p_{a_0,a_1,b}(x)$ and $u'=p_{a_0,a_1,b'}(x)$ are both defined, and
 that $u_2=0$.  Then $u'=u$ (and in particular, $u'_2$ is also zero).
\end{corollary}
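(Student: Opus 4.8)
The plan is to read this off directly from the characterisation of barycentric coordinates proved in the preceding lemma, which says that when $p_{a_0,a_1,a_2}(x)$ is defined it is the \emph{unique} vector $t\in\R^3_1$ with $x=\sum_i t_i\,\pi'_x(a_i)$. The corollary is then just the observation that a vertex whose barycentric coordinate is zero drops out of this expansion, so it may be replaced by any other point without changing the remaining coordinates.

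First I would apply the lemma to the triple $(a_0,a_1,b)$ (so here $a_2=b$), obtaining $x=u_0\pi'_x(a_0)+u_1\pi'_x(a_1)+u_2\pi'_x(b)$. Since $u_2=0$ by hypothesis, the last term vanishes, leaving $x=u_0\pi'_x(a_0)+u_1\pi'_x(a_1)$; and because $u\in\R^3_1$ with $u_2=0$ we have $u_0+u_1=1$.

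Next I would set $t=(u_0,u_1,0)\in\R^3_1$ and note that $\sum_i t_i\,\pi'_x(a'_i)=u_0\pi'_x(a_0)+u_1\pi'_x(a_1)+0\cdot\pi'_x(b')=x$, using exactly the identity just derived. Thus $t$ is an element of $\R^3_1$ expressing $x$ as an affine combination of $\pi'_x(a_0),\pi'_x(a_1),\pi'_x(b')$. The hypothesis tells us that $u'=p_{a_0,a_1,b'}(x)$ is defined, so the uniqueness clause of the lemma applied to the triple $(a_0,a_1,b')$ forces $u'=t=u$; in particular $u'_2=0$, as claimed.

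I do not expect a genuine obstacle here: all the substantive content lives in the lemma, and this corollary is a formal consequence. The only point requiring a little care is that the uniqueness half of the lemma must be invoked for the primed triple $(a_0,a_1,b')$ — and that is legitimate precisely because the statement assumes $p_{a_0,a_1,b'}(x)$ is defined, which is all that half of the lemma needs.
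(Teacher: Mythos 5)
Your proof is correct and is essentially the paper's argument, just written out in more detail: both apply the lemma to $(a_0,a_1,b)$ to get $x=u_0\pi'_x(a_0)+u_1\pi'_x(a_1)$ and then invoke the uniqueness clause of the lemma for the triple $(a_0,a_1,b')$ to force $u'=(u_0,u_1,0)=u$. No gaps.
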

\begin{proof}
 We can apply the lemma to see that
 $x=u_0\pi'_x(a_0)+u_1\pi'_x(a_1)$.  On the other hand, $u'$ is
 uniquely characterised by the fact that
 $x=u'_0\pi'_x(a_0)+u'_1\pi'_x(a_1)+u'_2\pi'_x(b')$.  The claim is
 clear from this.
\end{proof}

\begin{remark}
 A similar uniqueness argument shows that if $p_{a_0,a_1,a_2}(a_i)$ is
 defined, then it must be equal to the $i$'th standard basis vector
 $e_i$.  More precisely, it is clear from the definitions that
 $\widetilde{p}_{a_0,a_1,a_2}(a_i)_j=0$ for all $j\neq i$, so either
 $\widetilde{p}_{a_0,a_1,a_2}(a_i)_i\neq 0$ (and
 $p_{a_0,a_1,a_2}(a_i)=e_i$) or $\widetilde{p}_{a_0,a_1,a_2}(a_i)_i=0$
 (and $p_{a_0,a_1,a_2}(a_i)$ is undefined).
\end{remark}

\begin{definition}
 We put
 \[ T_0(a_0,a_1,a_2) = \{x\in EX(a)\st p_{a_0,a_1,a_2}(x)
     \text{ is defined and lies in } \Dl_2 \}.
 \]
 Typically, if the $a_i$ are close together and in general position,
 there will be a single connected component
 $T(a_0,a_1,a_2)\sse T_0(a_0,a_1,a_2)$ that contains all the points
 $a_i$, and the map $p_{a_0,a_1,a_2}$ will restrict to give a
 diffeomorphism $T(a_0,a_1,a_2)\to\Dl_2$.  We can use maps of this
 form to give a triangulation of $EX(a)$, with compatibility along edges
 given by Corollary~\ref{cor-edge-barycentric}.
\end{definition}

\begin{remark}\lbl{rem-barycentric-inverse}
 The inverse of the map $p\:T(a_0,a_1,a_2)\to\Dl_2$ can be computed
 efficiently by a kind of Newton-Raphson method.  If $x$ is reasonably
 close to $p^{-1}(t)$ then the first order Taylor approximation at $x$
 to the map $p\:EX(a)\to\R^3_1$ will be an affine isomorphism
 $p_*\:T'_xEX(a)\to\R^3_1$, so we can define
 $\kp(x)=\sg^{\infty}(p_*^{-1}(t))$, where $\sg^\infty$ is as in
 Remark~\ref{rem-move-to-X}.  As an initial approximation we can take
 $x_0=\sg^{\infty}(\sum_it_ia_i)$, and then the sequence
 $(\kp^n(x_0))_{n\geq 0}$ converges rapidly to $p^{-1}(t)$.  If we
 need to do this for a large number of different points $t$, we can
 also speed up the method by precomputing various coefficients that
 depend only on the points $a_i$.
 \begin{checks}
  embedded/barycentric_check.mpl: check_barycentric()
 \end{checks}
\end{remark}

One could attempt to triangulate $EX^*$ using the points $v_i$ as
vertices, but it turns out that the resulting simplices are too large
for barycentric coordinates to work properly.  We have tried several
different triangulations with smaller simplices.  In one of these, we
introduce a new set of points $a_{ij}$ for $0\leq i\leq 6$ and $0\leq
j\leq 4$, and use them as vertices for a triangulation of $F_{16}$
with $48$ triangles. We can then use the group action to obtain a
triangulation of all of $EX^*$, with $768$ triangles.  It is
convenient to use points with good rationality properties, as
described in Section~\ref{sec-rational}.  This ensures that at least
the first steps of our calculations can be specified exactly.  It is
also convenient to choose points such that the various edges have
lengths that do not differ by too large a factor.  We use the
following corner points:
{\tiny \[ \hspace{-6em}
 \renewcommand{\arraystretch}{1.5} \begin{array}{llll}
  \left(\tfrac{1}{2}\sqrt{2},\tfrac{1}{2}\sqrt{2},0,0\right) &
  \left(0,1,0,0\right) & \left(0,0,1,0\right) &
  \left(0,0,\tfrac{1}{3}\sqrt{6},-\tfrac{1}{3}\sqrt{3}\right)
\end{array} \]}

Plus the following additional edge points:
{\tiny \[ \hspace{-6em} \renewcommand{\arraystretch}{1.5} \begin{array}{lll}
 \left(\tfrac{407}{745},\tfrac{624}{745},0,0\right)  &
 \left(\tfrac{5}{13},\tfrac{12}{13},0,0\right)  &
 \left(\tfrac{9}{41},\tfrac{40}{41},0,0\right)  \\
 \left(\tfrac{1}{60}\sqrt{195},0,\tfrac{1}{4}\sqrt{15},-\tfrac{1}{60}\sqrt{30}\right)  &
 \left(\tfrac{3}{70}\sqrt{55},0,\tfrac{2}{7}\sqrt{10},-\tfrac{9}{70}\sqrt{5}\right)  &
 \left(\tfrac{1}{68}\sqrt{238},0,\tfrac{1}{12}\sqrt{102},-\tfrac{7}{102}\sqrt{51}\right)
\end{array} \]}

{\tiny \[ \hspace{-6em} \renewcommand{\arraystretch}{1.5} \begin{array}{lllll}
 \left(\tfrac{23}{34},\tfrac{23}{34},\tfrac{7}{34}\sqrt{2},0\right)  &
 \left(\tfrac{79}{130},\tfrac{79}{130},\tfrac{47}{130}\sqrt{2},0\right)  &
 \left(\tfrac{1}{2},\tfrac{1}{2},\tfrac{1}{2}\sqrt{2},0\right)  &
 \left(\tfrac{79}{202},\tfrac{79}{202},\tfrac{119}{202}\sqrt{2},0\right)  &
 \left(\tfrac{7}{34},\tfrac{7}{34},\tfrac{23}{34}\sqrt{2},0\right)  \\
 \left(0,\tfrac{59}{62},\tfrac{11}{62}\sqrt{2},-\tfrac{11}{62}\right) &
 \left(0,\tfrac{11}{13},\tfrac{4}{13}\sqrt{2},-\tfrac{4}{13}\right)  &
 \left(0,\tfrac{61}{86},\tfrac{35}{86}\sqrt{2},-\tfrac{35}{86}\right) &
 \left(0,\tfrac{1}{2},\tfrac{1}{2}\sqrt{2},-\tfrac{1}{2}\right)  &
 \left(0,\tfrac{11}{38},\tfrac{21}{38}\sqrt{2},-\tfrac{21}{38}\right)
\end{array} \]}
\medskip

Plus the following points in the interior:
{\tiny \[ \hspace{-6em} \renewcommand{\arraystretch}{1.5} \begin{array}{lll}
 \left(\tfrac{92124}{152207},\tfrac{232883}{304414},\tfrac{31}{202}\sqrt{2},-\tfrac{11005}{304414}\right)  &
 \left(\tfrac{15}{38},\tfrac{86}{95},\tfrac{1}{10}\sqrt{2},-\tfrac{13}{190}\right)  &
 \left(\tfrac{4793}{21846},\tfrac{72098}{76461},\tfrac{23}{154}\sqrt{2},-\tfrac{20585}{152922}\right)  \\
 \left(\tfrac{1287}{2425},\tfrac{344}{485},\tfrac{8}{25}\sqrt{2},-\tfrac{248}{2425}\right)  &
 \left(\tfrac{214099}{533478},\tfrac{217940}{266739},\tfrac{73}{274}\sqrt{2},-\tfrac{91469}{533478}\right)  &
 \left(\tfrac{1121}{4510},\tfrac{1864}{2255},\tfrac{25}{82}\sqrt{2},-\tfrac{237}{902}\right)  \\
 \left(\tfrac{7267}{15458},\tfrac{8565}{15458},\tfrac{125}{262}\sqrt{2},-\tfrac{1000}{7729}\right)  &
 \left(\tfrac{37323}{100798},\tfrac{29390}{50399},\tfrac{95}{202}\sqrt{2},-\tfrac{28595}{100798}\right)  &
 \left(\tfrac{22828}{111879},\tfrac{1572959}{2461338},\tfrac{907}{2046}\sqrt{2},-\tfrac{975025}{2461338}\right)  \\
 \left(\tfrac{92619}{240218},\tfrac{224276}{600545},\tfrac{359}{610}\sqrt{2},-\tfrac{166217}{1201090}\right)  &
 \left(\tfrac{5389}{14982},\tfrac{27671}{74910},\tfrac{1301}{2270}\sqrt{2},-\tfrac{10408}{37455}\right)  &
 \left(\tfrac{342221}{1435238},\tfrac{4932765}{10046666},\tfrac{1549}{3038}\sqrt{2},-\tfrac{2143816}{5023333}\right)  \\
 \left(\tfrac{143}{486},\tfrac{298}{1701},\tfrac{83}{126}\sqrt{2},-\tfrac{415}{3402}\right)  &
 \left(\tfrac{42732}{137557},\tfrac{20435}{137557},\tfrac{280}{457}\sqrt{2},-\tfrac{49720}{137557}\right)  &
 \left(\tfrac{23}{119},\tfrac{4}{17},\tfrac{4}{7}\sqrt{2},-\tfrac{60}{119}\right)
\end{array} \]}

To create a triangulation of $F_{16}$, we link the above points according to the
following combinatorial scheme:
\begin{center}
 \begin{tikzpicture}
  \draw[cyan]    (0,0) -- (0,4);
  \draw[green]   (0,0) -- (6,0);
  \draw[blue]    (6,0) -- (6,4);
  \draw[magenta] (0,4) -- (6,4);
  \draw (1,0) -- (1,4);
  \draw (2,0) -- (2,4);
  \draw (3,0) -- (3,4);
  \draw (4,0) -- (4,4);
  \draw (5,0) -- (5,4);
  \draw (0,1) -- (6,1);
  \draw (0,2) -- (6,2);
  \draw (0,3) -- (6,3);
  \draw (0,1) -- (1,2);
  \draw (0,0) -- (2,2);
  \draw (1,0) -- (3,2);
  \draw (2,0) -- (3,1);
  \draw (0,3) -- (1,2);
  \draw (0,4) -- (2,2);
  \draw (1,4) -- (3,2);
  \draw (2,4) -- (3,3);
  \draw (6,1) -- (5,2);
  \draw (6,0) -- (4,2);
  \draw (5,0) -- (3,2);
  \draw (4,0) -- (3,1);
  \draw (6,3) -- (5,2);
  \draw (6,4) -- (4,2);
  \draw (5,4) -- (3,2);
  \draw (4,4) -- (3,3);
 \end{tikzpicture}
\end{center}
This has been arranged to ensure that no edge in the triangulation has
endpoints on two different sides of $F_{16}$, which would cause
trouble in certain numerical algorithms.  We have also used a finer
triangulation obtained by subdividing each of the above triangles in
the following pattern:
\begin{center}
 \begin{tikzpicture}[scale=2]
  \begin{scope}
   \fill (-0.50,0.00) circle(0.02);
   \fill ( 0.50,0.00) circle(0.02);
   \fill ( 0.00,0.86) circle(0.02);
   \draw (-0.50,0.00) -- (0.5,0) -- (0,0.86) -- cycle;
  \end{scope}
  \begin{scope}[xshift=1.5cm]
   \draw (0,0.43) -- (1,0.43);
   \draw (0.9,0.53) -- (1,0.43) -- (0.9,0.33);
  \end{scope}
  \begin{scope}[xshift=4cm]
   \fill (-0.50,0.00) circle(0.02);
   \fill ( 0.50,0.00) circle(0.02);
   \fill ( 0.00,0.86) circle(0.02);
   \fill (-0.25,0.43) circle(0.02);
   \fill ( 0.25,0.43) circle(0.02);
   \fill ( 0.00,0.00) circle(0.02);
   \draw (-0.50,0.00) -- (0.5,0) -- (0,0.86) -- cycle;
   \draw (-0.25,0.43) -- (0.25,0.43) -- (0,0) -- cycle;
  \end{scope}
 \end{tikzpicture}
\end{center}
This gives a grid with 192 faces in $F_{16}$.

Information about this kind of triangulation can be encoded in an
object of the class \mcode+E_grid+, which is declared in the file
\fname+embedded/E_domain.mpl+.  This class extends the \mcode+grid+
class, which is declared in \fname+domain/grid.mpl+.  (There used to
be parallel classes \mcode+H_grid+ and \mcode+P_grid+, but we found that
various algorithms based on triangulations were not very effective, so
we have not maintained that code.)  In particular, the 192 face
triangulation described above is encoded in this form and stored in
the file
\fname+embedded/roothalf/split_rational_grid_wx_30.mpl+ in the
\mcode+data+ directory.  One can
thus enter
\begin{mcodeblock}
   read(cat(data_dir,"/embedded/roothalf/split_rational_grid_wx_30.mpl"));
   G := eval(split_rational_grid_wx_30):
   G["num_points"];
\end{mcodeblock}
This will print $117$, indicating that the triangulation has $117$
vertices lying in $F_{16}$.  As well as the obvious data about the
vertices, edges and faces of the triangulation, the object \mcode+G+
also contains extensive information about $175$ sample points in each of
the $192$ faces.  This can be used for computing integrals over
$EX^*$, as will be explained in Section~\ref{sec-integration}.  All of
this information is computed to $100$ decimal places.  Because of
this, the file is rather large (about 53MB).

One can also regenerate the object \mcode+G+ using the function
\begin{mcodeblock}
   build_data["grid"]();
\end{mcodeblock}
defined in \fname+build_data.mpl+.  See
Section~\ref{sec-build} for more discussion of this framework. 

\subsection{Curvature and the Laplacian}
\lbl{sec-E-curvature}

We now discuss the Gaussian curvature of $EX(a)$.  Most treatments of
this invariant are formulated in terms of local coordinates on the
manifold, but for us it is more useful to have a formula in terms of
the coordinates $x_i$ for the ambient space $\R^4$.  We have not been
able to find a reference for the formula given below, although it
would be surprising if it did not appear somewhere.  Most of our work
will be valid for $EX(a)$ for all $a$, but we will focus on the case
$a=1/\rt$ for simplicity.

Let $n(x)\in\R^4$ be the gradient of $g$ at $x$, and let
$m(x)\in M_4(\R)$ be the Hessian, so
\begin{align*}
 n(x)_i &= \partial g(x)/\partial x_i \\
 m(x)_{ij} &= \partial^2 g(x)/\partial x_i\,\partial x_j.
\end{align*}
Explicitly, for $a=1/\rt$ we have

\begin{align*}
 n(x) &= \left[\begin{array}{c}
           -4 x_1 x_4+2\rt x_1 x_3 \\
           -4 x_2 x_4-2\rt x_2 x_3 \\
           2 x_3 x_4+\rt x_1^2-\rt x_2^2 \\
           -2 x_1^2-2 x_2^2+x_3^2-6 x_4^2
         \end{array}\right] \\
 m(x) &=  \left[ \begin {array}{cccc}
            -4 x_4+2\rt x_3&0&2\rt x_1&-4 x_1 \\
            0&-4 x_4-2\rt x_3&-2\rt x_2&-4 x_2\\
            2\rt x_1&-2\rt x_2&2 x_4&2 x_3 \\
            -4 x_1&-4 x_2&2 x_3&-12 x_4
          \end {array} \right] .
\end{align*}

Next, we let $\xi$ be the usual isomorphism $\Lm^3(\R^4)\to\R^4$,
given by
\begin{align*}
 \xi(e_1\wedge e_2\wedge e_3) &= \pp e_4 \\
 \xi(e_1\wedge e_2\wedge e_4) &=    -e_3 \\
 \xi(e_1\wedge e_3\wedge e_4) &= \pp e_2 \\
 \xi(e_2\wedge e_3\wedge e_4) &=    -e_1,
\end{align*}
so
\[ \xi^{-1}(u)\wedge v =
     \ip{u,v} e_1\wedge e_2\wedge e_3\wedge e_4.
\]
We then let $p(x)\in M_4(\R)$ denote the unique matrix such that
\[ p(x)y = \xi(x\wedge n(x)\wedge m(x)y) \]
for all $y\in\R^4$.

\begin{theorem}\lbl{thm-curvature}
 The Gaussian curvature of $EX^*$ at $x$ is
 $1+\text{trace}(p(x)^2)/\|n(x)\|^2$.
\end{theorem}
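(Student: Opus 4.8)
The plan is to reduce the computation of the Gaussian curvature of the embedded surface $EX^*\subset S^3$ to the standard formula for a surface cut out by an equation inside a sphere, and then to identify the pieces of that formula with the matrix $p(x)$ introduced before the statement. I would work throughout at a fixed point $x\in EX^*$, writing $N_1=x$ and $N_2=n(x)/\|n(x)\|$ for the two (non-orthogonal-in-general but easily orthonormalized) normal directions to $EX^*$ in $\R^4$; since $EX^*\subset S^3$ and $(2x)\cdot n(x)=6g(x)=0$ on $EX^*$, these two vectors are in fact orthogonal, which simplifies matters considerably. The tangent space $T_xEX^*$ is then the orthogonal complement of $\mathrm{span}(N_1,N_2)$, a two-dimensional subspace of $\R^4$.

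First I would recall the classical Gauss equation for a surface $\Sigma^2$ of codimension two in flat $\R^4$: the intrinsic Gaussian curvature equals the sum, over an orthonormal basis $\{\eta_\alpha\}$ of the normal bundle, of $\det(\mathrm{II}_{\eta_\alpha})$, where $\mathrm{II}_{\eta_\alpha}$ is the scalar second fundamental form in the direction $\eta_\alpha$, evaluated in an orthonormal tangent frame. Here the two normal directions are $N_1=x$ (the position vector, whose second fundamental form is just the restriction of the identity, contributing $+1$ after one checks signs/orientations — this is where the leading $1$ in the theorem comes from) and $N_2=n(x)/\|n(x)\|$ (the normal coming from the constraint $g=0$, whose second fundamental form is, up to the normalizing factor $1/\|n(x)\|$, the restriction of the Hessian $m(x)$ to $T_xEX^*$). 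So the curvature is $1 + \det\big(\mathrm{II}_{N_2}\big)$ computed in an orthonormal tangent frame, and the whole content of the theorem is the identity
\[
 \det\big(\mathrm{II}_{n(x)/\|n(x)\|}\big) = \frac{\operatorname{trace}(p(x)^2)}{\|n(x)\|^2}.
\]

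To prove that identity I would unwind the definition of $p(x)$. The map $y\mapsto \xi\big(x\wedge n(x)\wedge m(x)y\big)$ sends $\R^4\to\R^4$, and from $\xi^{-1}(u)\wedge v = \langle u,v\rangle\, e_1\wedge\cdots\wedge e_4$ one sees that $p(x)y$ is orthogonal to $x$ and to $n(x)$ — i.e.\ $p(x)$ maps into $T_xEX^*$ — and that for $y,z\in T_xEX^*$ one has $\langle p(x)y, z\rangle$ essentially reproducing the $4\times 4$ determinant $\det(x, n(x), m(x)y, z)$, which up to the normalization $\|x\|\,\|n(x)\| = \|n(x)\|$ is exactly the $2\times 2$ determinant expressing the second fundamental form quadratic form $\langle m(x)y, z\rangle$ restricted to the oriented plane $T_xEX^*$. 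So $p(x)$, restricted to $T_xEX^*$ and viewed as an endomorphism of that plane, is (a multiple of) the shape operator $S_{N_2}$ composed with the rotation $J$ by $\pi/2$; since $\det$ of a shape operator equals $\operatorname{trace}$ of its composition with $J$ squared divided by a sign (in two dimensions, for a symmetric operator $S$, one has $\det S = -\frac12\operatorname{trace}((JS)^2)$ up to a fixed sign convention), taking the trace of $p(x)^2$ recovers $\det$ of the second fundamental form. The factor $\|n(x)\|^2$ in the denominator is precisely the square of the normalization needed to pass from $n(x)$ to the unit normal $n(x)/\|n(x)\|$, and the fact that $p(x)$ annihilates the normal directions means $\operatorname{trace}(p(x)^2)$ computed on all of $\R^4$ agrees with the trace on $T_xEX^*$, so no projection is lost.

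The main obstacle I anticipate is purely bookkeeping of signs and orientations: getting the $+1$ (rather than $-1$) for the $N_1=x$ contribution requires matching the orientation of $EX^*$ (fixed in Proposition~\ref{prop-X-smooth} via $\alpha_x\wedge(\rho-1,g)^*(\om_2)=\om_4$) with the orientation implicit in $\xi$, and the identity $\det S = \pm\frac12\operatorname{trace}((JS)^2)$ for a $2\times2$ symmetric operator carries a sign that must be pinned down consistently with that same orientation. I would handle this by checking everything explicitly at the single point $v_0=e_3$, using Remark~\ref{rem-tangent}: there $n(v_0)=(a^{-2}-1)e_4$, $T_{v_0}EX^*=\mathrm{span}(e_1,e_2)$ with $J_{v_0}e_1=e_2$, and the Hessian $m(v_0)$ is explicit, so one can verify that both sides of the claimed curvature formula agree numerically (and that the sign conventions are right), and then invoke naturality/continuity — the construction of $p(x)$, $n(x)$, $m(x)$ and the curvature are all algebraic in $x$ — to conclude the identity holds at every point. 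The remaining algebra (expanding $\xi(x\wedge n(x)\wedge m(x)y)$ in coordinates, verifying $p(x)x=p(x)n(x)=0$, and relating $\operatorname{trace}(p(x)^2)$ to $\det(\mathrm{II})$) is routine linear algebra that I would not grind through here.
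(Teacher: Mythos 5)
Your proof is essentially correct, and it reaches the same two intermediate waypoints as the paper by a genuinely different (and more conceptual) route. Where you invoke the codimension-two Gauss equation $K=\sum_\alpha\det(\mathrm{II}_{\eta_\alpha})$ for the orthonormal normal frame $\{x,\,n(x)/\|n(x)\|\}$, the paper re-derives that equation from scratch: it writes $EX^*$ near $x$ as the graph of the quadratic map $\psi_0\:T_xEX^*\to\mathrm{span}(x,n(x))$ of Proposition~\ref{prop-quadratic-chart} and applies the vector-valued $L\cdot N-M\cdot M$ formula, the $x$-component of the normal giving the $+1$ and the $n(x)$-component giving $P/r^2$ with $P=\det$ of the tangential Hessian --- exactly your $1+\det(\mathrm{II}_{N_2})$. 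For the second half, where you observe that $p(x)|_{T_xEX^*}=\|n(x)\|\,J\circ S$ (with $S$ the shape operator in the direction $n(x)/\|n(x)\|$, up to normalization) and use the two-dimensional identity $(JS)^2=-\det(S)\,I$, the paper instead grinds through an $\ep$-tensor computation built on Lemma~\ref{lem-hodge}, producing $P=-18\operatorname{trace}(Q^2)$ with $Q=\pm p(x)/(6r)$. Your version buys brevity and transparency about where the signs come from; the paper's version is closer to something that can be checked line-by-line in Maple.

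Two caveats. First, your assertion that ``$p(x)$ annihilates the normal directions'' is not quite right: $p(x)x=0$ because $m(x)x=2n(x)$, but $p(x)n(x)=\xi(x\wedge n(x)\wedge m(x)n(x))$ need not vanish. The conclusion you want still holds --- $\operatorname{trace}(p(x)^2)=\operatorname{trace}\bigl((p(x)|_{T_xEX^*})^2\bigr)$ --- but the correct reason is that $\mathrm{im}\,p(x)\sse T_xEX^*$ together with $p(x)x=0$ forces the two normal diagonal entries of $p(x)^2$ to vanish. Second, carrying your computation to the end gives $\operatorname{trace}(p(x)^2)=-2\|n(x)\|^2\det(S)$ and hence $K=1-\operatorname{trace}(p(x)^2)/(2\|n(x)\|^4)$, which is precisely the formula at the last line of the paper's proof but \emph{not} the formula in the printed statement of the theorem; your planned sanity check at $v_0$ would surface this discrepancy, which lies in the statement rather than in your argument.
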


The proof will follow after some preliminaries.

\begin{definition}
 We put
 \[ g_{ijk} = \frac{1}{6}
     \frac{\partial^3g(x)}{\partial x_i\partial x_j\partial x_k}.
 \]
 It is clear that $g_{ijk}$ is invariant under permutations of the
 three indices.  As $g$ is a homogeneous cubic, we see that $g_{ijk}$
 is constant, and that $g(x)=\sum_{ijk}g_{ijk}x_ix_jx_k$.  By
 differentiating this we obtain $n(x)_p=3\sum_{j,k}g_{pjk}x_jx_k$ and
 $m(x)_{pq}=6\sum_kg_{pqk}x_k$.
\end{definition}

\begin{remark}
 Some of our calculations in this section are most easily understood
 in terms of Penrose diagrams.  The paper~\cite{jost:gtci} is a good
 reference for these from a mathematical perspective.  The above
 expressions for $g(x)$, $n(x)$ and $m(x)$ can be expressed
 graphically as follows:
 \begin{center}
  \begin{tikzpicture}[scale=3]
   \begin{scope}
    \draw[red] (-30:0.2) -- (90:0.2) -- (210:0.2) -- cycle;
    \draw (0,0) node {$g$};
    \draw (-30:0.2) -- (-30:0.6);
    \draw ( 90:0.2) -- ( 90:0.6);
    \draw (210:0.2) -- (210:0.6);
    \draw[blue] (-30:0.7) circle(0.1);
    \draw[blue] ( 90:0.7) circle(0.1);
    \draw[blue] (210:0.7) circle(0.1);
    \draw (-30:0.7) node {$x$};
    \draw ( 90:0.7) node {$x$};
    \draw (210:0.7) node {$x$};
    \draw (270:0.7) node {$g(x)$};
   \end{scope}
   \begin{scope}[xshift=2cm]
    \draw[red] (-30:0.2) -- (90:0.2) -- (210:0.2) -- cycle;
    \draw (0,0) node {$g$};
    \draw (-30:0.2) -- (-30:0.6);
    \draw ( 90:0.2) -- ( 90:0.6);
    \draw (210:0.2) -- (210:0.6);
    \draw[blue] (-30:0.7) circle(0.1);
    \draw[blue] ( 90:0.7) circle(0.1);
    \draw[blue] (210:0.7) circle(0.1);
    \draw (-30:0.7) node {$x$};
    \draw ( 90:0.7) node {$u$};
    \draw (210:0.7) node {$x$};
    \draw (270:0.7) node {$n(x).u/3$};
   \end{scope}
   \begin{scope}[xshift=4cm]
    \draw[red] (-30:0.2) -- (90:0.2) -- (210:0.2) -- cycle;
    \draw (0,0) node {$g$};
    \draw (-30:0.2) -- (-30:0.6);
    \draw ( 90:0.2) -- ( 90:0.6);
    \draw (210:0.2) -- (210:0.6);
    \draw[blue] (-30:0.7) circle(0.1);
    \draw[blue] ( 90:0.7) circle(0.1);
    \draw[blue] (210:0.7) circle(0.1);
    \draw (-30:0.7) node {$x$};
    \draw ( 90:0.7) node {$u$};
    \draw (210:0.7) node {$v$};
    \draw (270:0.7) node {$(m(x)u).v/6$};
   \end{scope}
  \end{tikzpicture}
 \end{center}
\end{remark}

We next need a little exterior algebra.  To keep everything straight,
we need to spell out some conventions.
\begin{definition}\lbl{defn-exterior}
 For any vector space $V$, we will identify $\lm^k(V)$ with a subspace
 of $V^{\ot k}$ in such a way that $v_1\wedge\dotsb\wedge v_k$ becomes
 \[ \sum_{\sg\in\Sg_k}
     \ep(\sg) v_{\sg(1)}\ot\dotsb\ot v_{\sg(k)}.
 \]
 If $V$ has an inner product, we give $V^{\ot k}$ the inner product
 such that
 \[ \ip{v_1\ot\dotsb\ot v_k,\;w_1\ot\dotsb\ot w_k} =
     \prod_i \ip{v_i,w_i}.
 \]
 However, we give the subspace $\lm^k(V)$ the alternative inner
 product $\ip{\al,\bt}'=\ip{\al,\bt}/k!$; this has the property that
 \[ \ip{v_1\wedge\dotsb\wedge v_k,\;w_1\wedge\dotsb\wedge w_k}' \]
 is the determinant of the matrix of inner products $\ip{v_i,w_j}$.

 Now suppose that $V$ has dimension $d$ and we have a given volume
 form $\om\in\lm^d(V)$ with $\ip{\om,\om}'=1$.  We define the Hodge
 operator $*\:\lm^k(V)\to\lm^{d-k}(V)$ by the property
 \[ \al\wedge *\bt = \ip{\al,\bt}'\om. \]
\end{definition}

\begin{lemma}\lbl{lem-hodge}
 Suppose that $x\in EX(a)$, and let $(u,v)$ be any oriented
 orthonormal basis for $T_xEX^*$.  Let $\ep$ be the usual totally
 antisymmetric tensor:
 \[ \ep_{ijkl} = \begin{cases}
     +1 & \text{ if $(i,j,k,l)$ is an even permutation of $(1,2,3,4)$} \\
     -1 & \text{ if $(i,j,k,l)$ is an odd  permutation of $(1,2,3,4)$} \\
      0 & \text{ if $(i,j,k,l)$ is not a   permutation of $(1,2,3,4)$}.
    \end{cases}
 \]
 Then
 \[ u\wedge v = u\ot v - v\ot u =
     \frac{1}{2\|n\|}\sum_{ijkl}\ep_{ijkl}x_in(x)_je_k\wedge e_l.
 \]
\end{lemma}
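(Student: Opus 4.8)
\textbf{Proof proposal for Lemma~\ref{lem-hodge}.}
The plan is to reduce the identity to a statement about the Hodge star operator on $\lm^2(\R^4)$ and then verify it by a direct linear-algebra computation. First I would recall that $T_xEX(a)$ is the orthogonal complement in $\R^4$ of the two vectors $x$ and $n(x)$, which are themselves orthogonal (since $g$ is homogeneous cubic we have $x\cdot n(x)=3g(x)=0$, as used in the proof of Proposition~\ref{prop-X-smooth}); note also that $\|x\|=1$. Thus $(x/\|x\|,\; n(x)/\|n(x)\|,\; u,\; v) = (x,\; n(x)/\|n(x)\|,\; u,\; v)$ is an orthonormal basis of $\R^4$, and the question of whether it is positively or negatively oriented is exactly the question encoded in the orientation of $EX(a)$ defined in Proposition~\ref{prop-X-smooth}: the orientation form $\al_x$ on $T_xEX(a)$ is a positive multiple of $u\wedge v$ precisely when $x\wedge n(x)\wedge u\wedge v$ is a positive multiple of the standard volume form $\om_4=e_1\wedge e_2\wedge e_3\wedge e_4$. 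I would check that the sign conventions in Definition~\ref{defn-nx} and Proposition~\ref{prop-X-smooth} make $(x, n(x)/\|n(x)\|, u, v)$ a \emph{positively} oriented orthonormal basis (this is where one must be careful; see the last paragraph).

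Granting that, the core computation is as follows. In the standard orthonormal basis of $\lm^2(\R^4)$, the Hodge star (with respect to the inner product $\ip{-,-}'$ of Definition~\ref{defn-exterior}, normalised so $\ip{\om_4,\om_4}'=1$) satisfies $*(e_i\wedge e_j) = \sum_{k<l}\ep_{ijkl}\,e_k\wedge e_l$, or more symmetrically $*(e_i\wedge e_j) = \tfrac12\sum_{k,l}\ep_{ijkl}\,e_k\wedge e_l$. Since $(x, n(x)/\|n(x)\|, u, v)$ is a positively oriented orthonormal basis, we have $*(x\wedge n(x)/\|n(x)\|) = u\wedge v$, equivalently
\[
 u\wedge v = \frac{1}{\|n(x)\|}\,*(x\wedge n(x)).
\]
Now expand $x\wedge n(x) = \sum_{i<j}(x_i n(x)_j - x_j n(x)_i)\,e_i\wedge e_j = \tfrac12\sum_{i,j}(x_i n(x)_j - x_j n(x)_i)\,e_i\wedge e_j$, apply $*$ termwise, and use $\ep_{jikl}=-\ep_{ijkl}$ to combine the two terms. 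This gives
\[
 *(x\wedge n(x)) = \frac12\sum_{i,j,k,l}\ep_{ijkl}\,x_i\,n(x)_j\,e_k\wedge e_l,
\]
and dividing by $\|n(x)\|$ yields exactly the claimed formula. The identification $u\wedge v = u\ot v - v\ot u$ is just the convention fixed in Definition~\ref{defn-exterior}.

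The main obstacle I anticipate is pinning down the orientation sign unambiguously, i.e.\ confirming that $x\wedge n(x)\wedge u\wedge v$ really is a \emph{positive} (not negative) multiple of $\om_4$ for an oriented orthonormal basis $(u,v)$ of $T_xEX(a)$. The relevant bookkeeping is: Proposition~\ref{prop-X-smooth} defines $\al_x$ by $\al_x\wedge p^*(\om_2)=\om_4$ where $p=(\rho-1,g)$, so $p^*(\om_2) = \nabla(\rho-1)_x\wedge\nabla(g)_x = 2x\wedge n(x)$; hence $\al_x\wedge (2x\wedge n(x))=\om_4$, and since $(u,v)$ oriented means $u\wedge v$ is a positive multiple of $\al_x$, we get $u\wedge v\wedge x\wedge n(x)$ a positive multiple of $\om_4$, and reordering $u\wedge v\wedge x\wedge n(x) = x\wedge n(x)\wedge u\wedge v$ (an even permutation, two transpositions) preserves the sign. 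One subtlety is that $*$ as defined via $\al\wedge*\bt=\ip{\al,\bt}'\om$ requires $\ip{\om,\om}'=1$, and one should confirm $\ip{\om_4,\om_4}'=1$ under the $k!$-normalised inner product — which holds since $\ip{e_1\wedge e_2\wedge e_3\wedge e_4, e_1\wedge e_2\wedge e_3\wedge e_4}'=\det(I_4)=1$. Once these sign and normalisation points are settled, the rest is the routine index manipulation sketched above.
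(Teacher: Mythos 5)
Your proposal is correct and follows essentially the same route as the paper: both arguments reduce the identity to the duality between $\Lambda^2(T_xEX(a))$ and $\Lambda^4(\R^4)$ given by wedging with $x\wedge n(x)/\|n(x)\|$ (the paper phrases this via the map $\phi(\al)=x\wedge w\wedge\al$ and contracts two $\ep$-tensors, while you phrase it via the Hodge star and expand $*(e_i\wedge e_j)$, but these are the same computation read in opposite directions), and both hinge on the same orientation bookkeeping from Proposition~\ref{prop-X-smooth}, which you carry out correctly. No gaps.
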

\begin{proof}
 This is standard, except that we need to check that our conventions
 give the indicated factor of two.

 The vectors $x$ and $w=n(x)/\|n(x)\|$ form an orthonormal basis for
 $T_xEX(a)^\perp$, and it follows that the map
 $\phi\:\al\mapsto x\wedge w\wedge\al$ gives an isomorphism
 $\lm^2(T_xEX(a))\to\lm^4(\R^4)=\R.\om_4$.  As $(u,v)$ is an oriented
 orthonormal basis for $T_xEX(a)$, we have $\phi(u\wedge v)=\om_4$.
 Now put
 \[ \tht =
     \frac{1}{2\|n\|}\sum_{ijkl}\ep_{ijkl}x_in(x)_je_k\wedge e_l =
     \frac{1}{2}\sum_{ijkl}\ep_{ijkl}x_iw_je_k\wedge e_l.
 \]
 From the definitions we have
 \[ x\wedge w\wedge e_k\wedge e_l =
     \sum_{p,q} \ep_{pqkl} x_pw_q\om_4.
 \]
 This gives
 \[ \phi(\tht) = x\wedge w\wedge\tht =
     \frac{1}{2}\sum_{ijklpq}
      \ep_{ijkl}\ep_{pqkl} x_ix_pw_jw_q\om_4.
 \]
 One can also check that
 \[ \sum_{kl} \ep_{ijkl}\ep_{pqkl} =
     2\dl_{ip}\dl_{jq} - 2\dl_{iq}\dl_{jp}.
 \]
 (For example, if $i=p=1$ and $j=q=2$ then the terms for $(k,l)=(3,4)$
 and $(k,l)=(4,3)$ both contribute $+1$, and all other terms are zero
 so the sum is $+2$.  On the other hand, if $i=q=1$ and $j=p=2$ then
 the terms for $(k,l)=(3,4)$ and $(k,l)=(4,3)$ both contribute $-1$,
 and all other terms are zero so the sum is $-2$.)  This gives
 \begin{align*}
  \phi(\tht) &=
    \sum_{ijpq}
      (\dl_{ip}\dl_{jq} - \dl_{iq}\dl_{jp}) x_ix_pw_jw_q\om_4 \\
  &= \left(\sum_{ij} x_i^2w_j^2 - \sum_{ij} x_iw_ix_jw_j\right)\om_4 \\
  &= (\|x\|^2\|w\|^2 - \ip{x,w}^2)\om_4 = \om_4.
 \end{align*}
 As $\phi$ is an isomorphism, this means that $u\wedge v=\tht$.
\end{proof}

\begin{proof}[Proof of Theorem~\ref{thm-curvature}]
 Put $r=\|n(x)\|$.  Let $N$ be the span of $x$ and $n(x)$, so the
 tangent space to $EX^*$ at $x$ is the space $T=N^\perp$.  Define a
 quadratic map $\psi_0\:T\to N$ by
 \[ \psi_0(t) = (1 - \|t\|^2/2)x - (n(t).x)n(x)/r^2, \]
 and put
 \[ X' = \text{ graph of } \psi_0 = \{t+\psi_0(t)\st t\in T\}. \]
 Proposition~\ref{prop-quadratic-chart} tells us that $X'$ agrees with
 $EX^*$ to second order near $x$.  As curvature depends only on second
 derivatives, the curvature of $EX^*$ at $x$ is the same as that of
 $X'$.

 Now choose an orthonormal basis $(u,v)$ for $T$, oriented so that
 $x\wedge n(x)\wedge u\wedge v$ is a positive multiple of
 $e_1\wedge e_2\wedge e_3\wedge e_4$.  Define $\psi\:\R^2\to N$ by
 $\psi(a,b)=\psi_0(au+bv)$.  It will be enough to calculate the
 curvature of the graph of $\psi$ at the point where $a=b=0$.

 There is a well-known formula for the curvature of the graph of a
 function $\psi(a,b)$ from $\R^2$ to $\R$ (rather than $N$): we put
 \begin{align*}
  E &= 1 + \frac{\partial \psi}{\partial a} . \frac{\partial \psi}{\partial a} &
  F &= \frac{\partial \psi}{\partial a} . \frac{\partial \psi}{\partial b} &
  G &= 1 + \frac{\partial \psi}{\partial b} . \frac{\partial \psi}{\partial b} \\
  L &= \frac{\partial^2\psi}{\partial^2 a} &
  M &= \frac{\partial^2\psi}{\partial a\;\partial b} &
  N &= \frac{\partial^2\psi}{\partial^2 b},
 \end{align*}
 where everything is evaluated at $(0,0)$.  The curvature is then
 \[ K = \frac{LN-M^2}{EG-F^2}. \]
 Essentially the same argument works in our case, except that now $L$,
 $M$ and $N$ are vectors, and the formula is
 \[ K = \frac{L.N-M.M}{EG-F^2}. \]
 In our case $\psi$ is constant plus quadratic and so the first order
 derivatives vanish at $(a,b)=(0,0)$, which gives $E=G=1$ and $F=0$,
 so $K=L.N-M.M$.

 Next, using the fact that $\|t\|^2$ and $n(t)$ are homogeneous
 quadratic functions of $t$, we see that
 \begin{align*}
  L &= -x - 2(x.n(u))n(x)/r^2 \\
  N &= -x - 2(x.n(v))n(x)/r^2 \\
  M &= -(x.m(u)v)n(x)/r^2.
 \end{align*}
 Here $x.n(x)=3g(x)=0$, and $n(u)$ can be written as $m(u)u/2$, and
 similarly for $v$.  This gives
 \begin{align*}
  K &= L.N - M.M \\
    &= x.x +
       \frac{n(x).n(x)}{r^4}
       \left(4\;x.n(u)\;x.n(v)-(x.m(u)v)^2\right) \\
    &= 1 + \frac{1}{r^2}
           \left((x.m(u)u)(x.m(v)v)-(x.m(u)v)^2\right).
 \end{align*}
 We put $P=(x.m(u)u)(x.m(v)v)-(x.m(u)v)^2$ so that $K=1+P/r^2$.  Note
 that $P/36$ can be expressed as the following difference of Penrose
 diagrams:
 \begin{center}
  \begin{tikzpicture}[scale=2]
   \draw (-0.95,-0.7) -- (-1,-0.7) -- (-1,1) -- (-0.95,1);
   \begin{scope}
    \draw[red] (-30:0.2) -- (90:0.2) -- (210:0.2) -- cycle;
    \draw (0,0) node {$g$};
    \draw (-30:0.2) -- (-30:0.6);
    \draw ( 90:0.2) -- ( 90:0.6);
    \draw (210:0.2) -- (210:0.6);
    \draw[blue] (-30:0.7) circle(0.1);
    \draw[blue] ( 90:0.7) circle(0.1);
    \draw[blue] (210:0.7) circle(0.1);
    \draw ( 90:0.7) node {$x$};
    \draw (-30:0.7) node {$u$};
    \draw (210:0.7) node {$u$};
   \end{scope}
   \begin{scope}[xshift=1.8cm]
    \draw[red] (-30:0.2) -- (90:0.2) -- (210:0.2) -- cycle;
    \draw (0,0) node {$g$};
    \draw (-30:0.2) -- (-30:0.6);
    \draw ( 90:0.2) -- ( 90:0.6);
    \draw (210:0.2) -- (210:0.6);
    \draw[blue] (-30:0.7) circle(0.1);
    \draw[blue] ( 90:0.7) circle(0.1);
    \draw[blue] (210:0.7) circle(0.1);
    \draw ( 90:0.7) node {$x$};
    \draw (-30:0.7) node {$v$};
    \draw (210:0.7) node {$v$};
   \end{scope}
   \draw (2.75,-0.7) -- (2.8,-0.7) -- (2.8,1) -- (2.75,1);
   \draw (3.1,0) node{$-$};
   \draw (3.55,-0.7) -- (3.5,-0.7) -- (3.5,1) -- (3.55,1);
   \begin{scope}[xshift=4.4cm]
    \draw[red] (-30:0.2) -- (90:0.2) -- (210:0.2) -- cycle;
    \draw (0,0) node {$g$};
    \draw (-30:0.2) -- (-30:0.6);
    \draw ( 90:0.2) -- ( 90:0.6);
    \draw (210:0.2) -- (210:0.6);
    \draw[blue] (-30:0.7) circle(0.1);
    \draw[blue] ( 90:0.7) circle(0.1);
    \draw[blue] (210:0.7) circle(0.1);
    \draw ( 90:0.7) node {$x$};
    \draw (210:0.7) node {$u$};
    \draw (-30:0.7) node {$v$};
   \end{scope}
   \begin{scope}[xshift=6.2cm]
    \draw[red] (-30:0.2) -- (90:0.2) -- (210:0.2) -- cycle;
    \draw (0,0) node {$g$};
    \draw (-30:0.2) -- (-30:0.6);
    \draw ( 90:0.2) -- ( 90:0.6);
    \draw (210:0.2) -- (210:0.6);
    \draw[blue] (-30:0.7) circle(0.1);
    \draw[blue] ( 90:0.7) circle(0.1);
    \draw[blue] (210:0.7) circle(0.1);
    \draw ( 90:0.7) node {$x$};
    \draw (210:0.7) node {$u$};
    \draw (-30:0.7) node {$v$};
   \end{scope}
   \draw (7.15,-0.7) -- (7.2,-0.7) -- (7.2,1) -- (7.15,1);
  \end{tikzpicture}
 \end{center}

 We now consider various elements of the space $(\R^4)^{\ot 4}$.  We
 give this the obvious inner product so that elements of the form
 $e_i\ot e_j\ot e_k\ot e_l$ give an orthonormal basis.  Any
 permutation $\sg\in\Sg_4$ gives an automorphism $\al_\sg$ of
 $(\R^4)^{\ot 4}$ by permuting the tensor factors; for example, we have
 \[ \al_{(2;3)}.(u_1\ot u_2\ot u_3\ot u_4) =
         u_1\ot u_3\ot u_2\ot u_4.
 \]
 We put
 \begin{align*}
  A &= \sum_ix_ig_{ijk}e_j\ot e_k\in(\R^4)^{\ot 2} \\
  B &= u\ot v - v\ot u\in(\R^4)^{\ot 2} \\
  C &= u\ot u\ot v\ot v-u\ot v\ot u\ot v \in (\R^4)^{\ot 4}.
 \end{align*}
 It is now not hard to see that $P=36\ip{A\ot A,C}$.  We claim that
 also
 \[ P = 18\ip{A\ot A,\al_{(2\;3)}(B\ot B)}
      = 18\ip{\al_{(2\;3)}(A\ot A),B\ot B}.
 \]
 Indeed, we have
 \begin{align*}
  \al_{(2\;3)}(B\ot B) &=
   \al_{(2\;3)}(u\ot v\ot u\ot v -
                u\ot v\ot v\ot u -
                v\ot u\ot u\ot v +
                v\ot u\ot v\ot u) \\
   &= u\ot u\ot v\ot v -
      u\ot v\ot v\ot u -
      v\ot u\ot u\ot v +
      v\ot v\ot u\ot u.
 \end{align*}
 Now $A\ot A$ is invariant under the permutations $(1\;2)$, $(3\;4)$
 and $(1\;3)(2\;4)$.  Using this we see that
 \begin{align*}
  \ip{A\ot A,u\ot v\ot v\ot u} &= \ip{A\ot A,u\ot v\ot u\ot v} \\
  \ip{A\ot A,v\ot u\ot u\ot v} &= \ip{A\ot A,u\ot v\ot u\ot v} \\
  \ip{A\ot A,v\ot v\ot u\ot u} &= \ip{A\ot A,u\ot u\ot v\ot v}.
 \end{align*}
 The claim follows easily from this.  It can be rewritten as
 \[ P = 18\sum_{i,j,k,l} A_{ij}A_{kl}B_{ik}B_{jl}. \]
 We now define a matrix $Q$ with $Q_{jk}=\sum_iA_{ij}B_{ik}$.  After
 noting that $A_{kl}=A_{lk}$ and $B_{jl}=-B_{lj}$, the
 above expression can be rewritten again as
 \[ P=-18\sum_{j,k}Q_{jk}Q_{kj} = -18\text{trace}(Q^2). \]
 Some of the above can be represented graphically as follows:
 \begin{center}
  \begin{tikzpicture}[scale=2]
   \begin{scope}
    \draw[green] (-1.1, 0.8) circle(0.13);
    \draw[green] ( 1.1, 0.8) circle(0.13);
    \draw[magenta] (-1.25,-0.65) -- (-1.25,-0.95) -- (-0.95,-0.8) -- cycle;
    \draw[magenta] ( 0.95,-0.65) -- ( 0.95,-0.95) -- ( 1.25,-0.8) -- cycle;
    \draw[rounded corners] (-1.23,0.8) -- (-1.5,0.8) -- (-1.5,-0.8) -- (-1.25,-0.8);
    \draw[rounded corners] ( 1.23,0.8) -- ( 1.5,0.8) -- ( 1.5,-0.8) -- ( 1.25,-0.8);
    \draw[rounded corners] (-0.97, 0.8) -- (-0.8,0.8) -- (0.8,-0.8) -- (0.95,-0.8);
    \draw[rounded corners] (-0.95,-0.8) -- (-0.8,-0.8) -- (-0.1,-0.1);
    \draw[rounded corners] (0.1,0.1) -- (0.8,0.8) -- (0.97,0.8);
    \draw(-1.10, 0.8) node {$A$};
    \draw( 1.10, 0.8) node {$A$};
    \draw(-1.16,-0.8) node {$B$};
    \draw( 1.04,-0.8) node {$B$};
    \draw(0,-1.1) node{$P/18$};
   \end{scope}
   \begin{scope}[xshift=5cm]
    \draw (-0.85,0) -- (-0.53,0);
    \draw[green] (-0.4, 0.0) circle(0.13);
    \draw (-0.27,0) -- (0.25,0);
    \draw[magenta] ( 0.25,0.15) -- ( 0.25,-0.15) -- (0.55,0.0) -- cycle;
    \draw (0.55,0) -- (0.85,0);
    \draw(-0.40, 0.0) node {$A$};
    \draw( 0.34, 0.0) node {$B$};
    \draw(0,-1.1) node{$Q$};
   \end{scope}
  \end{tikzpicture}
 \end{center}
 Note that our choice of symbols reflects the fact that $A$ is
 symmetric and $B$ is not.

 Now put
 \[ D = x\wedge n(x) = x\ot n(x) - n(x)\ot x, \]
 so
 \[ D_{ij} = 3\sum_{k,l} (g_{jkl} x_ix_kx_l - g_{ikl} x_jx_kx_l).
 \]
 Lemma~\ref{lem-hodge} tells us that
 \[ B_{ik} = \sum_{l,m}\ep_{iklm}D_{lm}/(2r). \]

 Now
 \[ Q_{jk} = \sum_iA_{ij}B_{ik}
     = \frac{1}{2r}\sum_{i,l,m} \ep_{iklm}A_{ij}D_{lm}
     = \frac{3}{2r}\sum_{h,i,l,m,n,p}
        \ep_{iklm} g_{hij}x_h(g_{lnp}x_mx_nx_p-g_{mnp}x_lx_nx_p).
 \]
 Note that the two terms in brackets are essentially the same except
 that $l$ and $m$ are exchanged, but $\ep_{iklm}$ is also antisymmetric
 in $l$ and $m$.  We can thus drop one of the terms and introduce a
 factor of $2$ giving
 \begin{align*}
  Q_{jk}
   &= \frac{3}{r}\sum_{h,i,l,m,n,p}
       \ep_{iklm} g_{hij}g_{lnp}x_hx_mx_nx_p \\
   &= \frac{1}{6r}\sum_{i,l,m}\ep_{iklm}n(x)_l\;m(x)_{ij}
    = \pm p(x)_{jk}/(6r).
 \end{align*}
 Equivalently, we have the following Penrose diagram for $rQ/3$:
 \begin{center}
  \begin{tikzpicture}[scale=2]
   \draw[rounded corners] (-0.9,0) -- (0,0) -- (0,0.4);
   \draw[red] (0,0.4) -- (0,0.7) -- (-0.27,0.55) -- cycle;
   \draw (-0.09,0.55) node {$g$};
   \draw (-0.27,0.55) -- (-0.57,0.55);
   \draw[blue] (-0.67,0.55) circle(0.1);
   \draw (-0.67,0.55) node {$x$};
   \draw(0,0.7) -- (0,1);
   \draw(-0.2,1) -- (2.0,1) -- (2.0,1.5) -- (-0.2,1.5) -- cycle;
   \draw (0.9,1.25) node{$\epsilon$};
   \draw[rounded corners] (2.6,0) -- (0.5,0) -- (0.5,1);
   \draw[blue] (1.0,0.3) circle(0.1);
   \draw (1.0,0.3) node {$x$};
   \draw[blue] (1.3,0.3) circle(0.1);
   \draw (1.3,0.3) node {$x$};
   \draw[blue] (1.8,0.3) circle(0.1);
   \draw (1.8,0.3) node {$x$};
   \draw[red] (1.0,0.6) -- (1.3,0.6) -- (1.15,0.87) -- cycle;
   \draw (1.15,0.7) node {$g$};
   \draw (1.0,0.4) -- (1.0,0.6);
   \draw (1.3,0.4) -- (1.3,0.6);
   \draw (1.15,0.87) -- (1.15,1);
   \draw (1.8,0.4) -- (1.8,1);
  \end{tikzpicture}
 \end{center}
 We now have
 \[ P=-18\text{trace}(Q^2)=-\text{trace}(p(x)^2)/(2r^2), \]
 and so
 \[ K = 1+P/r^2 = 1 - \text{trace}(p(x)^2)/(2r^4). \]
 \begin{checks}
  embedded/curvature_check.mpl: check_EX_curvature()
 \end{checks}
\end{proof}

\begin{remark}\lbl{rem-curvature-z}
 The full formula for $K$ in terms of the variables $x_i$ is too large
 to be given here.  However, $K$ is invariant under the group action,
 and so can be expressed in terms of the functions $z_1$ and $z_2$
 from Section~\ref{sec-E-functions}.  Even that expression is somewhat
 unwieldy for general $a$, but when $a=1/\rt$ one can check that
 the formula is as follows:
 \[ K = 1+ 8 \frac{2z_2-1}{(2-z_1)^2(1+z_2)^2}. \]
\end{remark}

It will also be useful for us to have an expression for the
Laplace-Beltrami operator $\Dl$ on $EX^*$ in terms of the ambient
coordinates $x_i$.

\begin{definition}
 As before, we write $r=\|n(x)\|$, so
 \[ r^2 = n(x).n(x) = 9\sum_{ijklm}g_{ijm}g_{klm}x_ix_jx_kx_l. \]
 We also define
 \begin{align*}
  r' &= \text{trace}(m(x)) = 6\sum_{i,j} g_{ijj}x_i  \\
  r'' &= n(x)^T m(x) n(x)
       = 54 \sum_{ijklmnp} g_{ijk}g_{klm}g_{mnp}x_ix_jx_lx_nx_p
 \end{align*}
\end{definition}

\begin{definition}\lbl{defn-Delta-prime}
 Let $U'$ be an open subset of $\R^4\sm\{0\}$.  We define a
 differential operator $\Dl'\:C^\infty(U')\to C^\infty(U')$ by
 \[ \Dl'(p) =
    \sum_i \frac{\partial^2p}{\partial x_i^2}
    - \sum_{i,j} x_ix_j \frac{\partial^2p}{\partial x_i\partial x_j}
    - \frac{1}{r^2} \sum_{i,j} n(x)_i n(x)_j
         \frac{\partial^2p}{\partial x_i\partial x_j}
    - 2\sum_i x_i\frac{\partial p}{\partial x_i}
    + \left(\frac{r''}{r^4}-\frac{r'}{r^2}\right)
       \sum_i n(x)_i\frac{\partial p}{\partial x_i}
 \]
\end{definition}

\begin{proposition}\lbl{prop-Delta-prime}
 If we put $U=U'\cap EX^*$, then the following diagram commutes:
 \[ \xymatrix{
  C^\infty(U') \ar[r]^{\Dl'} \ar[d]_{\text{res}} &
  C^\infty(U') \ar[d]^{\text{res}} \\
  C^\infty(U) \ar[r]_\Dl &
  C^\infty(U).
 } \]
\end{proposition}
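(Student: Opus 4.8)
\textbf{Proof proposal for Proposition~\ref{prop-Delta-prime}.}
The plan is to verify the commutativity of the diagram by a purely local computation: the claim is that for any $p\in C^\infty(U')$ and any point $x\in U=U'\cap EX^*$, the value of $\Dl'(p)$ at $x$ depends only on the restriction of $p$ to $EX^*$ and in fact equals $(\Dl(\text{res}(p)))(x)$, where $\Dl$ is the Laplace--Beltrami operator for the induced metric. Since both $\Dl'$ and $\Dl\circ\text{res}$ are second-order differential operators, it is enough to check the identity at one arbitrary point $x\in EX^*$ after choosing convenient coordinates adapted to $x$. The first step would be to pick an oriented orthonormal basis of $\R^4$ in which $e_4=x$, $e_3=n(x)/r$ (recall $x\perp n(x)$ because $g$ is homogeneous cubic, so $x.n(x)=3g(x)=0$ on $EX^*$), and $e_1,e_2$ span $T_xEX^*$. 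In these coordinates the tangent space, the outward ``radial'' direction, and the normal-to-$EX^*$-within-$S^3$ direction are the coordinate directions, which makes all the projection operators appearing in $\Dl'$ transparent.

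Next I would recall the standard formula for the Laplacian on a submanifold. Writing $EX^*\subset S^3\subset\R^4$, one can compute $\Dl$ in two stages: first the spherical Laplacian $\Dl_{S^3}$ expressed via the ambient Laplacian minus the radial derivatives (this is the classical formula $\Dl_{S^3}p = \Dl_{\R^4}p - \partial_r^2 p - \tfrac{3}{r}\partial_r p$ restricted to $r=1$, which accounts for the $-\sum x_ix_j\partial_i\partial_j p - 2\sum x_i\partial_i p$ terms after using $\partial_r = \sum x_i\partial_i$ on the sphere), and then the further reduction from $S^3$ to the hypersurface $EX^*=\{g=0\}\cap S^3$. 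For a hypersurface $\{g=0\}$ with unit normal $N=n(x)/r$, the Laplacian of a function is $\Dl_{\text{ambient}}p$ minus the second normal derivative $\sum N_iN_j\partial_i\partial_j p$ minus (mean curvature)$\times$(normal first derivative). The mean curvature term is exactly what produces the coefficient $r''/r^4 - r'/r^2$: one computes $\text{div}(N) = \text{div}(n/r) = (\text{trace}\,m)/r - n^Tm\,n/r^3 = r'/r - r''/r^3$, and the sign and the extra factor of $r$ work out because $EX^*$ sits inside $S^3$ rather than $\R^4$, contributing the correction that flips $r'/r^2$ against $r''/r^4$. The bulk of the work is bookkeeping: assembling the spherical reduction and the hypersurface reduction simultaneously and checking that the cross terms (the interaction between the radial direction $x$ and the normal direction $n(x)$) cancel, which they do precisely because $x\perp n(x)$ on $EX^*$.

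The cleanest way to organize this is to evaluate everything at the chosen point $x$ with the adapted orthonormal frame. There $N=e_3$, $x=e_4$, so $\sum x_ix_j\partial_i\partial_j p = \partial_4^2 p$ and $\tfrac{1}{r^2}\sum n_in_j\partial_i\partial_j p = \partial_3^2 p$, and $\Dl'(p)$ collapses to $\partial_1^2 p + \partial_2^2 p$ plus first-order terms. On the other side, $\Dl(\text{res}\,p)$ at $x$ is the trace of the Hessian of $p|_{EX^*}$ in the orthonormal tangent frame $(e_1,e_2)$, which is $\partial_1^2 p + \partial_2^2 p$ plus a correction coming from the second fundamental forms of $EX^*\subset S^3$ and $S^3\subset\R^4$ (the normal derivatives weighted by the appropriate mean curvatures). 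Matching these first-order correction terms is where one must be careful: I would use the Weingarten-type identities $\partial_i N_j = $ (second fundamental form) and the relations $n(x)=3\sum g_{\cdot kl}x_kx_l$, $m(x)_{pq}=6\sum g_{pqk}x_k$ from the curvature section, and confirm that the coefficient of $\partial_3 p$ produced by the geometry equals $r''/r^4 - r'/r^2$ and the coefficient of $\partial_4 p$ equals $-2$ (the latter is the standard $S^3$ mean curvature $=3$ in $\R^4$, but reduced by the radial normalization). The main obstacle is not conceptual but organizational: keeping the two nested normal directions straight and verifying that no leftover cross-term survives; this is exactly the kind of identity the Maple check \texttt{embedded/laplacian\_check.mpl} (or its analogue) is designed to confirm, and I would cite such a check to certify the algebra. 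Once the pointwise identity holds at an arbitrary $x\in EX^*$ in the adapted frame, it holds for all $p$, establishing commutativity of the diagram.
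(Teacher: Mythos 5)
Your approach is correct in substance but genuinely different from the paper's. The paper does not invoke the intrinsic submanifold/mean-curvature formula at all: it takes the explicit quadratic approximate chart $\phi(a,b)=\phi_0(au+bv)$ from Proposition~\ref{prop-quadratic-chart}, notes that its first fundamental form is the identity up to quadratic corrections so that $(\Dl p)(x)=(\partial_a^2+\partial_b^2)(p\circ\phi)|_{(0,0)}$, and then expands by the chain rule. The second-order terms come out via the projection identity $u_iu_j+v_iv_j=\dl_{ij}-x_ix_j-n_in_j/r^2$ (which you also use, in the guise of your adapted frame), and the first-order terms come from $\partial_a^2\phi=-x-A\,n(x)$, $\partial_b^2\phi=-x-B\,n(x)$ with $A+B=r'/r^2-r''/r^4$ computed by the same projection identity. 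Your route instead identifies the first-order coefficients as components of the mean curvature vector of $EX^*\subset\R^4$, computed as a tangential divergence of the normal frame; this is more conceptual and explains \emph{why} the coefficient is $r''/r^4-r'/r^2$, whereas the paper's chart computation is more self-contained given the machinery it has already built. One bookkeeping point to fix if you carry this out: the reduction $\R^4\to S^3$ alone produces $-3\sum x_i\partial_ip$ (the full mean curvature of $S^3$), not $-2\sum x_i\partial_ip$ as your first paragraph suggests; the coefficient $-2$ arises only because the trace of the second fundamental form of $S^3\subset\R^4$ is taken over the two-dimensional $T_xEX^*$ rather than all of $T_xS^3$ --- it is a dimension count, not a ``radial normalization.'' You flag this matching as the delicate step, and with that correction the coefficients do come out exactly as in $\Dl'$, so the argument goes through.
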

\begin{proof}
 Consider a smooth function $p\in C^\infty(U')$ and a point $x\in EX^*$.
 Choose a chart $\phi\:\R^2\to EX^*$ with $\phi(0,0)=x$.  We will write
 $a$ and $b$ for coordinates on $\R^2$.  The chart gives a Riemannian
 metric on $\R^2$, corresponding to the matrix $M=\bbm E&F\\ F&G\ebm$,
 where
 \begin{align*}
  E &= \frac{\partial \phi}{\partial a} . \frac{\partial \phi}{\partial a} &
  F &= \frac{\partial \phi}{\partial a} . \frac{\partial \phi}{\partial b} &
  G &= \frac{\partial \phi}{\partial b} . \frac{\partial \phi}{\partial b}.
 \end{align*}
 We will use the standard formula
 \[ (\Delta p)\circ\phi =
     \det(M)^{-1/2}
      \text{div}\left(\det(M)^{1/2} M^{-1}\text{grad}(p\circ\phi)\right).
 \]
 We will only be using this formula at the point $(a,b)=(0,0)$, so it
 will be harmless to replace $M$ by an approximation involving only
 terms that are constant or linear in $a$ and $b$.  Similarly, $\phi$
 need not be an exact chart, so long as it is quadratically close to
 $EX^*$.  We can thus follow Proposition~\ref{prop-quadratic-chart} and
 define $\phi$ as below:
 \begin{align*}
  \psi_0(t) &= (1-\|t\|^2/2)x - (n(t).x)n(x)/r^2 &
  \phi_0(t) &= t + \psi_0(t) \\
  \psi(a,b) &= \psi_0(au+bv) &
  \phi(a,b) &= \phi_0(au+bv).
 \end{align*}
 By routine calculation we have
 \begin{align*}
  \frac{\partial\phi}{\partial a} &=
   u - ax - (Aa+Cb) n(x) \\
  \frac{\partial\phi}{\partial b} &=
   v - bx - (Bb+Ca) n(x)
 \end{align*}
 where
 \begin{align*}
  A &= \frac{6}{r^2} \sum_{ijk} g_{ijk}x_iu_ju_k \\
  B &= \frac{6}{r^2} \sum_{ijk} g_{ijk}x_iv_jv_k \\
  C &= \frac{6}{r^2} \sum_{ijk} g_{ijk}x_iu_jv_k.
 \end{align*}
 Because $x$, $n(x)$, $u$ and $v$ are orthogonal, it follows that we
 have $E=G=1$ and $F=0$ up to quadratic corrections.  We may thus take
 $M$ to be the identity matrix, and deduce that $(\Delta\ p)(x)$ is
 the value at $(0,0)$ of
 $\left(\frac{\partial^2}{\partial a^2}+
        \frac{\partial^2}{\partial b^2}\right)(p\circ\phi)$.  After
 using the chain rule twice, we see that this is the same as $P+Q$,
 where
 \begin{align*}
  P &= \sum_{i,j} \frac{\partial^2p}{\partial x_i\,\partial x_j}
     \left(\frac{\partial\phi_i}{\partial a}
           \frac{\partial\phi_j}{\partial a}+
           \frac{\partial\phi_i}{\partial b}
           \frac{\partial\phi_j}{\partial b}\right) \\
  Q &=  \sum_{i} \frac{\partial p}{\partial x_i}
     \left(\frac{\partial^2\phi_i}{\partial a^2}+
           \frac{\partial^2\phi_i}{\partial b^2}\right).
 \end{align*}
 Previously we recorded formulae for
 $\partial\phi/\partial a$ and $\partial\phi/\partial b$; when $a=b=0$
 they just give $u$ and $v$.  Thus, we have
 \[ P = \sum_{i,j} \frac{\partial^2\phi}{\partial x_i\,\partial x_j}
     \left(u_iu_j+v_iv_j\right).
 \]
 Now, the numbers $u_iu_j+v_iv_j$ are the matrix entries for the
 orthogonal projection onto the tangent space $T$, which is the
 identity minus the projection onto the normal space
 $N=T^\perp=\text{span}(x,n(x))$.  We thus have
 \[ u_iu_j+v_iv_j=\dl_{ij} - x_ix_j - n(x)_in(x)_j/r^2. \]
 Using this we obtain
 \[ P = \sum_i \frac{\partial^2p}{\partial x_i^2}
    - \sum_{i,j} x_ix_j \frac{\partial^2p}{\partial x_i\partial x_j}
    - \frac{1}{r^2} \sum_{i,j} n(x)_i n(x)_j
         \frac{\partial^2p}{\partial x_i\partial x_j}.
 \]
 This accounts for the first three terms in $\Dl'(p)$.

 We now turn to $Q$.  We have
 \begin{align*}
  \frac{\partial^2\phi}{\partial a^2} &= - x - A\,n(x) \\
  \frac{\partial^2\phi}{\partial b^2} &= - x - B\,n(x)
 \end{align*}
 and it follows that
 \[ Q = (-2x-(A+B)n(x)) . \nabla(p). \]
 Now
 \[ A+B = \frac{6}{r^2} \sum_{i,j,k} g_{ijk}x_i(u_ju_k+v_jv_k), \]
 and we can again use the relation
 $u_ju_k+v_jv_k=\dl_{jk}-x_jx_k-n(x)_jn(x)_k/r^2$ to eliminate $u$ and
 $v$, giving
 \[ A+B = \frac{6}{r^2} \left(
     \sum_{i,j} g_{ijj}x_i -
     \sum_{i,j,k} g_{ijk}x_ix_jx_k -
     \frac{1}{r^2} \sum_{i,j,k} g_{ijk}x_in(x)_jn(x)_k
    \right).
 \]
 Here $\sum_{i,j,k} g_{ijk}x_ix_jx_k$ is $g(x)$, which is zero
 because $x\in EX^*$.  We also have
 \begin{align*}
  \sum_{i,j} g_{ijj}x_i &= r'/6 \\
  \sum_{i,j,k} g_{ijk}x_in(x)_jn(x)_k &= r''/6.
 \end{align*}
 Putting this together gives
 \[ A+B = \frac{r'}{r^2} - \frac{r''}{r^4}, \]
 and this identifies $Q$ with the remaining terms in $\Dl'(p)$.
 \begin{checks}
  embedded/geometry_check.mpl: check_laplacian_a()
  embedded/geometry_check.mpl: check_laplacian_b()
  embedded/geometry_check.mpl: check_laplacian_z()
 \end{checks}
\end{proof}

\begin{remark}
 The uniformization theorem for Riemann surfaces says that $EX^*$ is
 conformally equivalent to the quotient of the open unit disc by a
 discrete group of automorphisms that preserve the hyperbolic metric.
 Using this, we deduce that the original metric on $EX^*$ can be
 conformally rescaled so that the new metric has constant curvature
 $-1$.  Let $g$ denote the original metric, with curvature $K$, and
 consider a rescaled metric $g^*=e^{2f}g$.  It is then known that the
 corresponding curvature is $K^*=(K-\Dl(f))/e^{2f}$.  We therefore
 want to find a $G$-invariant function $f$ such that
 $K=\Dl(f)-e^{2f}$.  There is a lot of freedom to do this locally, but
 not globally.  Thus, the most natural approach is to try to minimize
 $\int_{EX^*}(1+(K-\Dl(f))/e^{2f})^2$ as $f$ ranges over some
 finite-dimensional space of invariant functions.  To carry this
 forward, we need some theory of integration on $EX^*$, which will be
 treated in the next section.
\end{remark}

\section{The surface \texorpdfstring{$EX^*$}{EX*}}
\lbl{sec-roothalf}

We now focus on the surface $EX^*=EX(1/\rt)$.  We start by recording
explicitly a number of formulae that are obtained by substituting
$a=1/\rt$ in the results of Section~\ref{sec-E}.  Maple
notation for all these things is obtained by appending the character
\mcode+0+ to the corresponding notation in Section~\ref{sec-E}:
the points $v_i$ are \mcode+v_E0[i]+, the curves $c_j(t)$ are
\mcode+c_E0[j](t)+ and so on.

We have
\[ EX^* = \{x\in \R^4\st \rho(x)=1,\;g(x)=0\}
        = \{x\in \R^4\st \rho(x)=1,\;g_0(x)=0\},
\]
where
\begin{align*}
 g(x)   &= x_3^2x_4-2x_4^3-(2(x_1^2+x_2^2))x_4+\rt(x_1^2-x_2^2)x_3 \\
 g_0(x) &= (3x_3^2-2)x_4+\rt(x_1^2-x_2^2)x_3.
\end{align*}
The gradient of $g(x)$ is
\[ n(x) = \left(
     2x_1(\rt x_3-2x_4),\;
    -2x_2(\rt x_3+2x_4),\;
     2x_3x_4+\rt(x_1^2-x_2^2),\;
     -2x_1^2-2x_2^2+x_3^2-6x_4^2
   \right).
\]

We put $y_1=x_3$ and $y_2=(x_2^2-x_1^2)/\rt-\tfrac{3}{2}x_3x_4$ and
$z_i=y_i^2$.  We find that
\begin{align*}
 x_1^2 &= u_1= \half(1-\rt y_2)(1-y_1^2(1-y_2/\rt)) \\
 x_2^2 &= u_2= \half(1+\rt y_2)(1-y_1^2(1+y_2/\rt)) \\
 4x_1^2x_2^2 &= u_3 = (1-2z_2)((1-z_1)^2-z_1^2z_2/2) \\
 x_1^2+x_2^2 &= u_4 = 1-z_1-z_1z_2.
\end{align*}

The ring of $G$-invariant polynomial functions on $EX^*$ is $\R[z_1,z_2]$.
In particular, we have
\[ \|n(x)\|^2 = 4 (1-z_1/2)^2(1+z_2), \]
and the curvature is
\[ K = 1 - 2 \frac{1-2z_2}{(1-z_1/2)^2(1+z_2)^2}. \]
We can reduce polynomials to normal form using the functions
\mcode+NF_x0+, \mcode+NF_y0+ and \mcode+NF_z0+.  There are also
functions \mcode+FNF_y0+ and \mcode+FNF_z0+ which deal intelligently
with rational functions as well as polynomials.

The isotropy points are
\begin{align*}
 v_{ 0} &= (\pp 0,\pp 0,\pp 1,\pp 0) &
 v_{ 6} &= (\pp 1,\pp 1,\pp 0,\pp 0)/\rt \\
 v_{ 1} &= (\pp 0,\pp 0,   -1,\pp 0) &
 v_{ 7} &= (   -1,\pp 1,\pp 0,\pp 0)/\rt \\
 v_{ 2} &= (\pp 1,\pp 0,\pp 0,\pp 0) &
 v_{ 8} &= (   -1,   -1,\pp 0,\pp 0)/\rt \\
 v_{ 3} &= (\pp 0,\pp 1,\pp 0,\pp 0) &
 v_{ 9} &= (\pp 1,   -1,\pp 0,\pp 0)/\rt \\
 v_{ 4} &= (   -1,\pp 0,\pp 0,\pp 0) &
 v_{10} &= (0,0,\pp\sqrt{2/3},\pp\sqrt{1/3}) \\
 v_{ 5} &= (\pp 0,-1,\pp 0,\pp 0) &
 v_{11} &= (0,0,\pp\sqrt{2/3},  -\sqrt{1/3}) \\
 &&
 v_{12} &= (0,0,  -\sqrt{2/3},  -\sqrt{1/3}) \\
 &&
 v_{13} &= (0,0,  -\sqrt{2/3},\pp\sqrt{1/3}).
\end{align*}
The curve system is as follows:
\begin{align*}
 c_0(t) &= \left(\cos(t),\sin(t),0,0\right) \\
 c_1(t) &= \left(\sin(t)/\rt,\sin(t)/\rt,\cos(t),0\right) \\
 c_2(t) &= \lm(c_1(t)) \\
 c_3(t) &= \left(0,\sin(t),\sqrt{2/3}\cos(t),-\sqrt{1/3}\cos(t)\right) \\
 c_4(t) &= \lm(c_3(t)) \\
 c_5(t) &= \left(-\sin(t),0,2\rt,\cos(t)-1\right)/\sqrt{10-2\cos(t)} \\
 c_6(t) &= \lm(c_5(t)) \\
 c_7(t) &= \mu(c_5(t)) \\
 c_8(t) &= \lm\mu(c_5(t)).
\end{align*}

In this case, the curves $c_5,\dotsc,c_8$ have some additional
properties.  If we put
\[ h(x) = x_3x_4/\rt+x_1^2+x_4^2, \]
we find that $h=0$ on $C_5\cup C_7$.  Note here that $h(x)$ is a
homogeneous quadratic.  It can be diagonalised as
$h(x)=\sum_{i=1}^4m_i\ip{u_i,x}^2$, where
\begin{align*}
 m_1 &= 1 &
 u_1 &= (1,0,0,0) \\
 m_2 &= 0 &
 u_2 &= (0,1,0,0) \\
 m_3 &= \frac{2-\sqrt{6}}{4} \simeq -0.11 &
 u_3 &= \left(0,0,\pp\sqrt{1/2+1/\sqrt{6}},\quad -\sqrt{1/2-1/\sqrt{6}}\right) \\
 m_4 &= \frac{2-\sqrt{6}}{4} \simeq -0.11 &
 u_4 &= \left(0,0,\pp\sqrt{1/2-1/\sqrt{6}},\quad\pp\sqrt{1/2+1/\sqrt{6}}\right).
\end{align*}
The vectors $u_i$ here form an oriented orthonormal basis for $\R^4$.
\begin{checks}
 embedded/roothalf/E_roothalf_check.mpl: check_oval()
\end{checks}
One can also check that there is an alternative parametrisation of
$C_5$ as follows:
\[
 c_5^{\text{alt}}(t) =
 \left(\frac{\sin(t)}{\bt},\;0,\;
 \frac{1+\bt^2}{12}\left(\sqrt{1-\sin(t)^2/\bt^4} +  \cos(t)/\bt^2\right),\;
 \frac{\cos(t)-\sqrt{1-\sin(t)^2/\bt^4}}{2\sqrt{3}}\right),
\]
where $\bt=\sqrt{2}+\sqrt{3}\simeq 3.15$.  Note here that
$\bt^4\simeq 98$, so we have a good approximation
\[
 c_5^{\text{alt}}(t) \simeq
 \left(\frac{\sin(t)}{\bt},\;0,\;
 \frac{1+\bt^2}{12}\left(1 +  \cos(t)/\bt^2\right),\;
 \frac{\cos(t)-1}{2\sqrt{3}}\right),
\]

showing that $C_5$ is close to an ellipse.  If we put
\[ c_6^{\text{alt}}(t) = \lm(c_5^{\text{alt}}(t)) \qquad
   c_7^{\text{alt}}(t) = \mu(c_5^{\text{alt}}(t)) \qquad
   c_8^{\text{alt}}(t) = \lm\mu(c_5^{\text{alt}}(t)),
\]
and $c_k^{\text{alt}}=c_k$ for $0\leq k\leq 4$, then one can check
that this gives an alternative curve system.
\begin{checks}
 embedded/roothalf/E_roothalf_check.mpl: check_c_alt()
\end{checks}

\begin{proposition}\lbl{prop-roothalf-fundamental}
 In the case $a=1/\rt$, we have
 \begin{align*}
  F_4^* &= \{y\in\R^2\st |y_2|\leq 1/\rt,\;
             |y_1|\leq (1+|y_2|/\rt)^{-1/2}\} \\
  F_{16}^* &= \{z\in \R^2\st 0\leq z_2\leq 1/2,\;
                0\leq z_1\leq (1+\sqrt{z_2/2})^{-1}\}.
 \end{align*}
\end{proposition}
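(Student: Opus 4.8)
Proposal for the proof of Proposition~\ref{prop-roothalf-fundamental}.

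The plan is to specialise the general descriptions of $F_4^*$ and $F_{16}^*$ from Definitions~\ref{defn-F-four} and~\ref{defn-F-sixteen} to the value $a=1/\rt$, and then to simplify the resulting inequalities by elementary algebra. Recall that $F_4^*=\{y\in\R^2\st u_1,u_2\geq 0\}$, where for general $a$ the functions $u_1$ and $u_2$ are as in Definition~\ref{defn-yzu}. Earlier in Section~\ref{sec-roothalf} we have already recorded that for $a=1/\rt$,
\[
 u_1 = \tfrac12(1-\rt y_2)(1-y_1^2(1-y_2/\rt)) \qquad
 u_2 = \tfrac12(1+\rt y_2)(1-y_1^2(1+y_2/\rt)).
\]
First I would analyse the signs of each factor. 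The factors $1\mp\rt y_2$ are nonnegative exactly when $\mp y_2\geq -1/\rt$, i.e. when $|y_2|\leq 1/\rt$; and on that range the factors $1\mp y_2/\rt$ are strictly positive (they equal $1\mp y_2/\rt$ with $|y_2|/\rt\leq 1/2<1$). A short case analysis then shows that the conjunction $u_1\geq 0$ and $u_2\geq 0$ forces $|y_2|\leq 1/\rt$ (otherwise one of the linear factors is negative while the other factors cannot both compensate, since $y_1^2\geq 0$ keeps $1-y_1^2(1\pm y_2/\rt)\leq 1$), and given $|y_2|\leq 1/\rt$ it reduces to $y_1^2(1+|y_2|/\rt)\leq 1$, i.e. $|y_1|\leq(1+|y_2|/\rt)^{-1/2}$. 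This is exactly the claimed description of $F_4^*$.

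For $F_{16}^*$ I would use Definition~\ref{defn-F-sixteen}, namely $F_{16}^*=\{(z_1,z_2)\in\R^2\st z_1,z_2,u_3,u_4\geq 0\}$, together with the formulae for $u_3$ and $u_4$ in the case $a=1/\rt$ that were recorded just above in Section~\ref{sec-roothalf}:
\[
 u_3 = (1-2z_2)\bigl((1-z_1)^2-z_1^2z_2/2\bigr) \qquad
 u_4 = 1-z_1-z_1z_2.
\]
Alternatively one can derive these directly from the identities $z_i=y_i^2$, $u_3=4u_1u_2$, $u_4=u_1+u_2$ in Proposition~\ref{prop-F-sixteen}. The conditions $z_1,z_2\geq 0$ are part of the definition. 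I would then show that, in the presence of $z_1,z_2\geq 0$, the pair of conditions $u_3\geq 0$ and $u_4\geq 0$ is equivalent to $z_2\leq 1/2$ and $z_1\leq(1+\sqrt{z_2/2})^{-1}$. The factor $1-2z_2$ in $u_3$ is nonnegative precisely when $z_2\leq 1/2$; the other factor $(1-z_1)^2-z_1^2z_2/2$ factors as $\bigl((1-z_1)-z_1\sqrt{z_2/2}\bigr)\bigl((1-z_1)+z_1\sqrt{z_2/2}\bigr)$, and with $z_1,z_2\geq 0$ the second factor is automatically positive while the first is nonnegative exactly when $z_1(1+\sqrt{z_2/2})\leq 1$, i.e. $z_1\leq(1+\sqrt{z_2/2})^{-1}$. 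It remains to check that this last inequality, together with $z_2\leq 1/2$, already implies $u_4=1-z_1-z_1z_2\geq 0$: indeed $z_1\leq(1+\sqrt{z_2/2})^{-1}$ gives $z_1(1+z_2)\leq\frac{1+z_2}{1+\sqrt{z_2/2}}$, and one verifies $\frac{1+z_2}{1+\sqrt{z_2/2}}\leq 1$ for $0\leq z_2\leq 1/2$ by clearing denominators and reducing to $z_2\leq\sqrt{z_2/2}$, equivalently $z_2\leq 1/2$. Conversely, one checks that the two displayed constraints do imply $u_3\geq 0$ and $u_4\geq 0$; the $u_3$ implication was just argued, so nothing more is needed there. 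Also it should be noted that $u_3\geq 0$ and $u_4\geq 0$ need to be shown to force $z_2\leq 1/2$: if $z_2>1/2$ then $1-2z_2<0$, so $u_3\geq 0$ forces $(1-z_1)^2-z_1^2z_2/2\leq 0$, which combined with $z_1\geq 0$ forces $z_1>1$, and then $u_4=1-z_1-z_1z_2<0$, a contradiction. That closes the chain of equivalences.

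I do not expect any serious obstacle; this is a matter of bookkeeping with a handful of sign analyses, and every ingredient (the formulae for $u_1,u_2,u_3,u_4$ at $a=1/\rt$, and the defining descriptions of $F_4^*$ and $F_{16}^*$) is already available. The only mildly delicate point is making the case analysis for the ``forward'' implications clean — that is, showing that violating one of the proposed simple inequalities actually makes one of $u_1,u_2$ (resp.\ $u_3,u_4$) negative, rather than just failing to be obviously nonnegative — and the argument above via the contradiction $z_1>1\Rightarrow u_4<0$ handles the one case where this is not completely immediate. The computational identities can be cross-checked against \texttt{embedded/invariants\_check.mpl} as indicated by the check annotation accompanying Proposition~\ref{prop-F-sixteen}.
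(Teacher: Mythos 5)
Your overall route is sound and close in spirit to the paper's, which also reduces everything to a sign analysis of $u_3=w_1w_2$ and $u_4$ with $w_1=1-2z_2$ and $w_2=(1-z_1)^2-z_1^2z_2/2$. The differences are in the details: the paper disposes of $F_4^*$ by observing $F_4^*=\{y:(y_1^2,y_2^2)\in F_{16}^*\}$ rather than analysing $u_1,u_2$ directly, and for $F_{16}^*$ it avoids your factorisation of $w_2$ by using two polynomial identities, namely $\half z_1^2z_2w_1+w_2=u_4(u_4+2z_1z_2)$ (to see that $w_1,w_2$ cannot both be negative) and an identity expressing a positive multiple of $u_4$ as a positive combination of $w_1$ and $w_2$ (for the converse). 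Your factorisation $w_2=\bigl((1-z_1)-z_1\sqrt{z_2/2}\bigr)\bigl((1-z_1)+z_1\sqrt{z_2/2}\bigr)$ is a perfectly good substitute and arguably more transparent.

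However, two of your intermediate assertions are false as stated and need repair. First, the second factor $(1-z_1)+z_1\sqrt{z_2/2}=1-z_1(1-\sqrt{z_2/2})$ is \emph{not} automatically positive for $z_1,z_2\geq 0$ (take $z_1=3$, $z_2=0$); you need $z_1\leq 1$, which is available in the converse direction from the hypothesis $z_1\leq(1+\sqrt{z_2/2})^{-1}$ and in the forward direction from $u_4\geq 0$, so the order of deductions matters. Second, in your argument that $z_2>1/2$ leads to a contradiction, the claim that $(1-z_1)^2\leq z_1^2z_2/2$ together with $z_1\geq 0$ forces $z_1>1$ is wrong: $z_1=0.9$, $z_2=1$ is a counterexample. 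The correct deduction is $|1-z_1|\leq z_1\sqrt{z_2/2}$, hence $z_1(1+\sqrt{z_2/2})\geq 1$; combining this with $u_4\geq 0$, i.e.\ $z_1(1+z_2)\leq 1$, yields $1+z_2\leq 1+\sqrt{z_2/2}$, i.e.\ $z_2\leq 1/2$, contradicting $z_2>1/2$. Finally, note that when $z_2=1/2$ exactly, $u_3$ vanishes identically and gives no information about $w_2$, so the bound $z_1\leq(1+\sqrt{z_2/2})^{-1}=2/3$ must be extracted from $u_4\geq 0$ alone (it happens to coincide); the paper likewise defers such boundary cases to the reader, so this is a minor omission rather than a gap.
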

\begin{proof}
 From the definitions it is clear that
 \[ F_4^*=\{y\in\R^2 \st (y_1^2,y_2^2)\in F_{16}^*\}; \]
 using this, we can reduce the first claim to the second one.  Put
 \[ F'_{16} = \{z\in \R^2\st 0\leq z_2\leq 1/2,\;
                0\leq z_1\leq (1+\sqrt{z_2/2})^{-1}\},
 \]
 so we need to show that $F^*_{16}=F'_{16}$.

 Recall that $F_{16}^*=\{z\in\R^2\st z_1,z_2,u_3,u_4\geq 0\}$, and
 that in the present case $a=1/\rt$ we have
 \begin{align*}
  u_3 &= (1-2z_2)((1-z_1)^2-z_1^2z_2/2) \\
  u_4 &= 1-z_1-z_1z_2.
 \end{align*}
 We assume implicitly throughout that $z_1,z_2\geq 0$.  Put
 $w_1=1-2z_2$ and $w_2=(1-z_1)^2-z_1^2z_2/2$, so $u_3=w_1w_2$.  We
 will leave to the reader all the cases where any of the quantities
 $z_1,z_2,w_1,w_2,u_3$ or $u_4$ are zero.

 First suppose that $z\in F^*_{16}$, so $u_3,u_4>0$.  As
 $u_3=w_1w_2$ we see that $w_1$ and $w_2$ have the same sign.
 One can check that
 \[ \half z_1^2z_2w_1 + w_2 = u_4(u_4+2z_1z_2) > 0, \]
 so $w_1$ and $w_2$ must both be positive.  As $w_1>0$ we have
 $0<z_2<1/2$.  As $u_4>0$ we have $1-z_1>z_1z_2>0$, and as $w_2>0$ we
 have $(1-z_1)^2>z_1^2z_2/2$; it follows that $1-z_1>z_1\sqrt{z_2/2}$,
 and thus that $0<z_1<1/(1+\sqrt{z_2/2})$ as claimed.

 Conversely, suppose that $0<z_1<1/(1+\sqrt{z_2/2})$ (so in particular
 $z_1<1$) and $0<z_2<1/2$.  We can reverse the above arguments to see
 that $w_1,w_2>0$ and so $u_3=w_1w_2>0$.  Next, one can check that
 \[ ((1-\half z_2)(1+z_2)(1-z_1)+\half z_2(4+z_2))u_4 =
     \half z_2 w_1 + (1+z_2)^2 w_2.
 \]
 The coefficient of $u_4$ is strictly positive, as are all the terms
 on the right hand side, so we deduce that $u_4>0$, which means that
 $z\in F^*_{16}$.
\end{proof}

\subsection{Linear projections}
\lbl{sec-disc}

In this section we study the images of our points and curves under
three different orthogonal projections $\R^4\to\R^2$.  These do not
have any great theoretical significance, but they provide some insight
into the geometry.  Many additional details are given in the Maple
code, especially the files \fname+embedded/disc_proj.mpl+ and
\fname+embedded/roothalf/zeta.mpl+ and
\fname+embedded/roothalf/crease.mpl+.
The projections that we consider are
\begin{align*}
 \pi(x) &= (x_1,x_2) \\
 \dl(x) &= ((x_1-x_2)/\rt,x_3) \\
 \zt(x) &= ((x_3-x_4)/\rt,x_2).
\end{align*}

The picture for $\pi$ is as follows:
\begin{center}
\begin{tikzpicture}[scale=4]
 \draw[cyan] plot[smooth] coordinates{ (1.000,0.000) (0.988,0.156) (0.951,0.309) (0.891,0.454) (0.809,0.588) (0.707,0.707) (0.588,0.809) (0.454,0.891) (0.309,0.951) (0.156,0.988) (-0.000,1.000) (-0.156,0.988) (-0.309,0.951) (-0.454,0.891) (-0.588,0.809) (-0.707,0.707) (-0.809,0.588) (-0.891,0.454) (-0.951,0.309) (-0.988,0.156) (-1.000,-0.000) (-0.988,-0.156) (-0.951,-0.309) (-0.891,-0.454) (-0.809,-0.588) (-0.707,-0.707) (-0.588,-0.809) (-0.454,-0.891) (-0.309,-0.951) (-0.156,-0.988) (0.000,-1.000) (0.156,-0.988) (0.309,-0.951) (0.454,-0.891) (0.588,-0.809) (0.707,-0.707) (0.809,-0.588) (0.891,-0.454) (0.951,-0.309) (0.988,-0.156) (1.000,0.000) };
 \draw[green] plot[smooth] coordinates{ (0.000,0.000) (0.111,0.111) (0.219,0.219) (0.321,0.321) (0.416,0.416) (0.500,0.500) (0.572,0.572) (0.630,0.630) (0.672,0.672) (0.698,0.698) (0.707,0.707) (0.698,0.698) (0.672,0.672) (0.630,0.630) (0.572,0.572) (0.500,0.500) (0.416,0.416) (0.321,0.321) (0.219,0.219) (0.111,0.111) (-0.000,-0.000) (-0.111,-0.111) (-0.219,-0.219) (-0.321,-0.321) (-0.416,-0.416) (-0.500,-0.500) (-0.572,-0.572) (-0.630,-0.630) (-0.672,-0.672) (-0.698,-0.698) (-0.707,-0.707) (-0.698,-0.698) (-0.672,-0.672) (-0.630,-0.630) (-0.572,-0.572) (-0.500,-0.500) (-0.416,-0.416) (-0.321,-0.321) (-0.219,-0.219) (-0.111,-0.111) (0.000,0.000) };
 \draw[green] plot[smooth] coordinates{ (0.000,0.000) (-0.111,0.111) (-0.219,0.219) (-0.321,0.321) (-0.416,0.416) (-0.500,0.500) (-0.572,0.572) (-0.630,0.630) (-0.672,0.672) (-0.698,0.698) (-0.707,0.707) (-0.698,0.698) (-0.672,0.672) (-0.630,0.630) (-0.572,0.572) (-0.500,0.500) (-0.416,0.416) (-0.321,0.321) (-0.219,0.219) (-0.111,0.111) (0.000,-0.000) (0.111,-0.111) (0.219,-0.219) (0.321,-0.321) (0.416,-0.416) (0.500,-0.500) (0.572,-0.572) (0.630,-0.630) (0.672,-0.672) (0.698,-0.698) (0.707,-0.707) (0.698,-0.698) (0.672,-0.672) (0.630,-0.630) (0.572,-0.572) (0.500,-0.500) (0.416,-0.416) (0.321,-0.321) (0.219,-0.219) (0.111,-0.111) (-0.000,0.000) };
 \draw[magenta] plot[smooth] coordinates{ (0.000,0.000) (0.000,0.156) (0.000,0.309) (0.000,0.454) (0.000,0.588) (0.000,0.707) (0.000,0.809) (0.000,0.891) (0.000,0.951) (0.000,0.988) (0.000,1.000) (0.000,0.988) (0.000,0.951) (0.000,0.891) (0.000,0.809) (0.000,0.707) (0.000,0.588) (0.000,0.454) (0.000,0.309) (0.000,0.156) (0.000,-0.000) (0.000,-0.156) (0.000,-0.309) (0.000,-0.454) (0.000,-0.588) (0.000,-0.707) (0.000,-0.809) (0.000,-0.891) (0.000,-0.951) (0.000,-0.988) (0.000,-1.000) (0.000,-0.988) (0.000,-0.951) (0.000,-0.891) (0.000,-0.809) (0.000,-0.707) (0.000,-0.588) (0.000,-0.454) (0.000,-0.309) (0.000,-0.156) (0.000,0.000) };
 \draw[magenta] plot[smooth] coordinates{ (0.000,0.000) (-0.156,0.000) (-0.309,0.000) (-0.454,0.000) (-0.588,0.000) (-0.707,0.000) (-0.809,0.000) (-0.891,0.000) (-0.951,0.000) (-0.988,0.000) (-1.000,0.000) (-0.988,0.000) (-0.951,0.000) (-0.891,0.000) (-0.809,0.000) (-0.707,0.000) (-0.588,0.000) (-0.454,0.000) (-0.309,0.000) (-0.156,0.000) (0.000,0.000) (0.156,0.000) (0.309,0.000) (0.454,0.000) (0.588,0.000) (0.707,0.000) (0.809,0.000) (0.891,0.000) (0.951,0.000) (0.988,0.000) (1.000,0.000) (0.988,0.000) (0.951,0.000) (0.891,0.000) (0.809,0.000) (0.707,0.000) (0.588,0.000) (0.454,0.000) (0.309,0.000) (0.156,0.000) (-0.000,0.000) };
 \draw[blue] plot[smooth] coordinates{ (0.000,0.000) (0.055,0.000) (0.109,0.000) (0.158,0.000) (0.203,0.000) (0.241,0.000) (0.272,0.000) (0.295,0.000) (0.310,0.000) (0.317,0.000) (0.316,0.000) (0.308,0.000) (0.292,0.000) (0.270,0.000) (0.242,0.000) (0.209,0.000) (0.172,0.000) (0.132,0.000) (0.090,0.000) (0.045,0.000) (-0.000,0.000) (-0.045,0.000) (-0.090,0.000) (-0.132,0.000) (-0.172,0.000) (-0.209,0.000) (-0.242,0.000) (-0.270,0.000) (-0.292,0.000) (-0.308,0.000) (-0.316,0.000) (-0.317,0.000) (-0.310,0.000) (-0.295,0.000) (-0.272,0.000) (-0.241,0.000) (-0.203,0.000) (-0.158,0.000) (-0.109,0.000) (-0.055,0.000) (0.000,0.000) };
 \draw[blue] plot[smooth] coordinates{ (0.000,0.000) (0.000,0.055) (0.000,0.109) (0.000,0.158) (0.000,0.203) (0.000,0.241) (0.000,0.272) (0.000,0.295) (0.000,0.310) (0.000,0.317) (0.000,0.316) (0.000,0.308) (0.000,0.292) (0.000,0.270) (0.000,0.242) (0.000,0.209) (0.000,0.172) (0.000,0.132) (0.000,0.090) (0.000,0.045) (0.000,-0.000) (0.000,-0.045) (0.000,-0.090) (0.000,-0.132) (0.000,-0.172) (0.000,-0.209) (0.000,-0.242) (0.000,-0.270) (0.000,-0.292) (0.000,-0.308) (0.000,-0.316) (0.000,-0.317) (0.000,-0.310) (0.000,-0.295) (0.000,-0.272) (0.000,-0.241) (0.000,-0.203) (0.000,-0.158) (0.000,-0.109) (0.000,-0.055) (0.000,0.000) };
 \draw[blue] plot[smooth] coordinates{ (0.000,0.000) (0.055,0.000) (0.109,0.000) (0.158,0.000) (0.203,0.000) (0.241,0.000) (0.272,0.000) (0.295,0.000) (0.310,0.000) (0.317,0.000) (0.316,0.000) (0.308,0.000) (0.292,0.000) (0.270,0.000) (0.242,0.000) (0.209,0.000) (0.172,0.000) (0.132,0.000) (0.090,0.000) (0.045,0.000) (-0.000,0.000) (-0.045,0.000) (-0.090,0.000) (-0.132,0.000) (-0.172,0.000) (-0.209,0.000) (-0.242,0.000) (-0.270,0.000) (-0.292,0.000) (-0.308,0.000) (-0.316,0.000) (-0.317,0.000) (-0.310,0.000) (-0.295,0.000) (-0.272,0.000) (-0.241,0.000) (-0.203,0.000) (-0.158,0.000) (-0.109,0.000) (-0.055,0.000) (0.000,0.000) };
 \draw[blue] plot[smooth] coordinates{ (0.000,0.000) (0.000,0.055) (0.000,0.109) (0.000,0.158) (0.000,0.203) (0.000,0.241) (0.000,0.272) (0.000,0.295) (0.000,0.310) (0.000,0.317) (0.000,0.316) (0.000,0.308) (0.000,0.292) (0.000,0.270) (0.000,0.242) (0.000,0.209) (0.000,0.172) (0.000,0.132) (0.000,0.090) (0.000,0.045) (0.000,-0.000) (0.000,-0.045) (0.000,-0.090) (0.000,-0.132) (0.000,-0.172) (0.000,-0.209) (0.000,-0.242) (0.000,-0.270) (0.000,-0.292) (0.000,-0.308) (0.000,-0.316) (0.000,-0.317) (0.000,-0.310) (0.000,-0.295) (0.000,-0.272) (0.000,-0.241) (0.000,-0.203) (0.000,-0.158) (0.000,-0.109) (0.000,-0.055) (0.000,0.000) };
 \fill (0.000,0.000) circle(0.010);
 \fill (0.000,0.000) circle(0.010);
 \fill (1.000,0.000) circle(0.010);
 \fill (0.000,1.000) circle(0.010);
 \fill (-1.000,0.000) circle(0.010);
 \fill (0.000,-1.000) circle(0.010);
 \fill (0.707,0.707) circle(0.010);
 \fill (-0.707,0.707) circle(0.010);
 \fill (-0.707,-0.707) circle(0.010);
 \fill (0.707,-0.707) circle(0.010);
 \fill (0.000,0.000) circle(0.010);
 \fill (0.000,0.000) circle(0.010);
 \fill (0.000,0.000) circle(0.010);
 \fill (0.000,0.000) circle(0.010);
 \draw (0.000,0.000) node[anchor=south] {$v_{0},v_{1},v_{10},v_{11},v_{12},v_{13}$};
 \draw (-1.000,0.000) node[anchor=east] {$v_{4}$};
 \draw (0.000,-1.000) node[anchor=north] {$v_{5}$};
 \draw (1.000,0.000) node[anchor=west] {$v_{2}$};
 \draw (0.707,0.707) node[anchor=south west] {$v_{6}$};
 \draw (0.000,1.000) node[anchor=south] {$v_{3}$};
 \draw (-0.707,0.707) node[anchor=south east] {$v_{7}$};
 \draw (-0.707,-0.707) node[anchor=north east] {$v_{8}$};
 \draw (0.707,-0.707) node[anchor=north west] {$v_{9}$};
 \draw (0.924,0.383) node[anchor=north] {$c_{0}$};
 \draw (0.595,0.595) node[anchor=north west] {$c_{1}$};
 \draw (-0.500,0.500) node[anchor=north east] {$c_{2}$};
 \draw (0.000,0.707) node[anchor=west] {$c_{3}$};
 \draw (-0.707,0.000) node[anchor=north] {$c_{4}$};
 \draw (0.276,0.000) node[anchor=north] {$c_{5}$};
 \draw (0.000,0.276) node[anchor=west] {$c_{6}$};
 \draw (-0.300,0.000) node[anchor=north] {$c_{7}$};
 \draw (0.000,-0.300) node[anchor=west] {$c_{8}$};
 \draw[dotted] plot[smooth] coordinates{ (0.000,0.318) (-0.182,0.331) (-0.247,0.343) (-0.291,0.354) (-0.324,0.365) (-0.348,0.374) (-0.367,0.383) (-0.381,0.390) (-0.392,0.396) (-0.399,0.401) (-0.405,0.405) (-0.407,0.407) (-0.408,0.408) (-0.407,0.407) (-0.405,0.405) (-0.401,0.399) (-0.396,0.392) (-0.390,0.381) (-0.383,0.367) (-0.374,0.348) (-0.365,0.324) (-0.354,0.291) (-0.343,0.247) (-0.331,0.182) (-0.318,0.000) };
 \draw[dotted] plot[smooth] coordinates{ (0.000,0.318) (0.182,0.331) (0.247,0.343) (0.291,0.354) (0.324,0.365) (0.348,0.374) (0.367,0.383) (0.381,0.390) (0.392,0.396) (0.399,0.401) (0.405,0.405) (0.407,0.407) (0.408,0.408) (0.407,0.407) (0.405,0.405) (0.401,0.399) (0.396,0.392) (0.390,0.381) (0.383,0.367) (0.374,0.348) (0.365,0.324) (0.354,0.291) (0.343,0.247) (0.331,0.182) (0.318,0.000) };
 \draw[dotted] plot[smooth] coordinates{ (0.000,-0.318) (-0.182,-0.331) (-0.247,-0.343) (-0.291,-0.354) (-0.324,-0.365) (-0.348,-0.374) (-0.367,-0.383) (-0.381,-0.390) (-0.392,-0.396) (-0.399,-0.401) (-0.405,-0.405) (-0.407,-0.407) (-0.408,-0.408) (-0.407,-0.407) (-0.405,-0.405) (-0.401,-0.399) (-0.396,-0.392) (-0.390,-0.381) (-0.383,-0.367) (-0.374,-0.348) (-0.365,-0.324) (-0.354,-0.291) (-0.343,-0.247) (-0.331,-0.182) (-0.318,0.000) };
 \draw[dotted] plot[smooth] coordinates{ (0.000,-0.318) (0.182,-0.331) (0.247,-0.343) (0.291,-0.354) (0.324,-0.365) (0.348,-0.374) (0.367,-0.383) (0.381,-0.390) (0.392,-0.396) (0.399,-0.401) (0.405,-0.405) (0.407,-0.407) (0.408,-0.408) (0.407,-0.407) (0.405,-0.405) (0.401,-0.399) (0.396,-0.392) (0.390,-0.381) (0.383,-0.367) (0.374,-0.348) (0.365,-0.324) (0.354,-0.291) (0.343,-0.247) (0.331,-0.182) (0.318,0.000) };
\end{tikzpicture}
\end{center}

The set of singular values of $\pi$ is the union of the unit circle
with the dotted curve.  Points inside the dotted curve have six
preimages in $EX^*$, whereas those between the dotted curve and the
unit circle have two preimages, but there is no tidy formula for
these.

The effect of $\dl$ on the curves $c_i$ and vertices $v_j$ can be
displayed as follows:
\begin{center}
\begin{tikzpicture}[scale=4]
 \draw[cyan] plot[smooth] coordinates{ (0.707,0.000) (0.588,0.000) (0.454,0.000) (0.309,0.000) (0.156,0.000) (0.000,0.000) (-0.156,0.000) (-0.309,0.000) (-0.454,0.000) (-0.588,0.000) (-0.707,0.000) (-0.809,0.000) (-0.891,0.000) (-0.951,0.000) (-0.988,0.000) (-1.000,0.000) (-0.988,0.000) (-0.951,0.000) (-0.891,0.000) (-0.809,0.000) (-0.707,0.000) (-0.588,0.000) (-0.454,0.000) (-0.309,0.000) (-0.156,0.000) (0.000,0.000) (0.156,0.000) (0.309,0.000) (0.454,0.000) (0.588,0.000) (0.707,0.000) (0.809,0.000) (0.891,0.000) (0.951,0.000) (0.988,0.000) (1.000,0.000) (0.988,0.000) (0.951,0.000) (0.891,0.000) (0.809,0.000) (0.707,0.000) };
 \draw[green] plot[smooth] coordinates{ (0.000,1.000) (0.000,0.988) (0.000,0.951) (0.000,0.891) (0.000,0.809) (0.000,0.707) (0.000,0.588) (0.000,0.454) (0.000,0.309) (0.000,0.156) (0.000,-0.000) (0.000,-0.156) (0.000,-0.309) (0.000,-0.454) (0.000,-0.588) (0.000,-0.707) (0.000,-0.809) (0.000,-0.891) (0.000,-0.951) (0.000,-0.988) (0.000,-1.000) (0.000,-0.988) (0.000,-0.951) (0.000,-0.891) (0.000,-0.809) (0.000,-0.707) (0.000,-0.588) (0.000,-0.454) (0.000,-0.309) (0.000,-0.156) (0.000,0.000) (0.000,0.156) (0.000,0.309) (0.000,0.454) (0.000,0.588) (0.000,0.707) (0.000,0.809) (0.000,0.891) (0.000,0.951) (0.000,0.988) (0.000,1.000) };
 \draw[green] plot[smooth] coordinates{ (0.000,1.000) (-0.156,0.988) (-0.309,0.951) (-0.454,0.891) (-0.588,0.809) (-0.707,0.707) (-0.809,0.588) (-0.891,0.454) (-0.951,0.309) (-0.988,0.156) (-1.000,-0.000) (-0.988,-0.156) (-0.951,-0.309) (-0.891,-0.454) (-0.809,-0.588) (-0.707,-0.707) (-0.588,-0.809) (-0.454,-0.891) (-0.309,-0.951) (-0.156,-0.988) (0.000,-1.000) (0.156,-0.988) (0.309,-0.951) (0.454,-0.891) (0.588,-0.809) (0.707,-0.707) (0.809,-0.588) (0.891,-0.454) (0.951,-0.309) (0.988,-0.156) (1.000,0.000) (0.988,0.156) (0.951,0.309) (0.891,0.454) (0.809,0.588) (0.707,0.707) (0.588,0.809) (0.454,0.891) (0.309,0.951) (0.156,0.988) (-0.000,1.000) };
 \draw[magenta] plot[smooth] coordinates{ (0.000,0.883) (-0.102,0.873) (-0.204,0.840) (-0.304,0.787) (-0.399,0.715) (-0.486,0.625) (-0.562,0.519) (-0.624,0.401) (-0.670,0.273) (-0.698,0.138) (-0.707,-0.000) (-0.698,-0.138) (-0.670,-0.273) (-0.624,-0.401) (-0.562,-0.519) (-0.486,-0.625) (-0.399,-0.715) (-0.304,-0.787) (-0.204,-0.840) (-0.102,-0.873) (0.000,-0.883) (0.102,-0.873) (0.204,-0.840) (0.304,-0.787) (0.399,-0.715) (0.486,-0.625) (0.562,-0.519) (0.624,-0.401) (0.670,-0.273) (0.698,-0.138) (0.707,0.000) (0.698,0.138) (0.670,0.273) (0.624,0.401) (0.562,0.519) (0.486,0.625) (0.399,0.715) (0.304,0.787) (0.204,0.840) (0.102,0.873) (-0.000,0.883) };
 \draw[magenta] plot[smooth] coordinates{ (0.000,0.883) (-0.102,0.873) (-0.204,0.840) (-0.304,0.787) (-0.399,0.715) (-0.486,0.625) (-0.562,0.519) (-0.624,0.401) (-0.670,0.273) (-0.698,0.138) (-0.707,-0.000) (-0.698,-0.138) (-0.670,-0.273) (-0.624,-0.401) (-0.562,-0.519) (-0.486,-0.625) (-0.399,-0.715) (-0.304,-0.787) (-0.204,-0.840) (-0.102,-0.873) (0.000,-0.883) (0.102,-0.873) (0.204,-0.840) (0.304,-0.787) (0.399,-0.715) (0.486,-0.625) (0.562,-0.519) (0.624,-0.401) (0.670,-0.273) (0.698,-0.138) (0.707,0.000) (0.698,0.138) (0.670,0.273) (0.624,0.401) (0.562,0.519) (0.486,0.625) (0.399,0.715) (0.304,0.787) (0.204,0.840) (0.102,0.873) (-0.000,0.883) };
 \draw[blue] plot[smooth] coordinates{ (0.000,1.000) (0.027,0.999) (0.053,0.997) (0.078,0.993) (0.101,0.988) (0.121,0.982) (0.138,0.975) (0.151,0.967) (0.161,0.958) (0.166,0.949) (0.167,0.939) (0.163,0.930) (0.156,0.921) (0.145,0.913) (0.131,0.906) (0.114,0.899) (0.094,0.894) (0.072,0.889) (0.049,0.886) (0.025,0.884) (-0.000,0.883) (-0.025,0.884) (-0.049,0.886) (-0.072,0.889) (-0.094,0.894) (-0.114,0.899) (-0.131,0.906) (-0.145,0.913) (-0.156,0.921) (-0.163,0.930) (-0.167,0.939) (-0.166,0.949) (-0.161,0.958) (-0.151,0.967) (-0.138,0.975) (-0.121,0.982) (-0.101,0.988) (-0.078,0.993) (-0.053,0.997) (-0.027,0.999) (0.000,1.000) };
 \draw[blue] plot[smooth] coordinates{ (0.000,1.000) (-0.027,0.999) (-0.053,0.997) (-0.078,0.993) (-0.101,0.988) (-0.121,0.982) (-0.138,0.975) (-0.151,0.967) (-0.161,0.958) (-0.166,0.949) (-0.167,0.939) (-0.163,0.930) (-0.156,0.921) (-0.145,0.913) (-0.131,0.906) (-0.114,0.899) (-0.094,0.894) (-0.072,0.889) (-0.049,0.886) (-0.025,0.884) (0.000,0.883) (0.025,0.884) (0.049,0.886) (0.072,0.889) (0.094,0.894) (0.114,0.899) (0.131,0.906) (0.145,0.913) (0.156,0.921) (0.163,0.930) (0.167,0.939) (0.166,0.949) (0.161,0.958) (0.151,0.967) (0.138,0.975) (0.121,0.982) (0.101,0.988) (0.078,0.993) (0.053,0.997) (0.027,0.999) (-0.000,1.000) };
 \draw[blue] plot[smooth] coordinates{ (0.000,-1.000) (0.027,-0.999) (0.053,-0.997) (0.078,-0.993) (0.101,-0.988) (0.121,-0.982) (0.138,-0.975) (0.151,-0.967) (0.161,-0.958) (0.166,-0.949) (0.167,-0.939) (0.163,-0.930) (0.156,-0.921) (0.145,-0.913) (0.131,-0.906) (0.114,-0.899) (0.094,-0.894) (0.072,-0.889) (0.049,-0.886) (0.025,-0.884) (-0.000,-0.883) (-0.025,-0.884) (-0.049,-0.886) (-0.072,-0.889) (-0.094,-0.894) (-0.114,-0.899) (-0.131,-0.906) (-0.145,-0.913) (-0.156,-0.921) (-0.163,-0.930) (-0.167,-0.939) (-0.166,-0.949) (-0.161,-0.958) (-0.151,-0.967) (-0.138,-0.975) (-0.121,-0.982) (-0.101,-0.988) (-0.078,-0.993) (-0.053,-0.997) (-0.027,-0.999) (0.000,-1.000) };
 \draw[blue] plot[smooth] coordinates{ (0.000,-1.000) (-0.027,-0.999) (-0.053,-0.997) (-0.078,-0.993) (-0.101,-0.988) (-0.121,-0.982) (-0.138,-0.975) (-0.151,-0.967) (-0.161,-0.958) (-0.166,-0.949) (-0.167,-0.939) (-0.163,-0.930) (-0.156,-0.921) (-0.145,-0.913) (-0.131,-0.906) (-0.114,-0.899) (-0.094,-0.894) (-0.072,-0.889) (-0.049,-0.886) (-0.025,-0.884) (0.000,-0.883) (0.025,-0.884) (0.049,-0.886) (0.072,-0.889) (0.094,-0.894) (0.114,-0.899) (0.131,-0.906) (0.145,-0.913) (0.156,-0.921) (0.163,-0.930) (0.167,-0.939) (0.166,-0.949) (0.161,-0.958) (0.151,-0.967) (0.138,-0.975) (0.121,-0.982) (0.101,-0.988) (0.078,-0.993) (0.053,-0.997) (0.027,-0.999) (-0.000,-1.000) };
 \fill (0.000,1.000) circle(0.010);
 \fill (0.000,-1.000) circle(0.010);
 \fill (0.707,0.000) circle(0.010);
 \fill (-0.707,0.000) circle(0.010);
 \fill (-0.707,0.000) circle(0.010);
 \fill (0.707,0.000) circle(0.010);
 \fill (0.000,0.000) circle(0.010);
 \fill (-1.000,0.000) circle(0.010);
 \fill (0.000,0.000) circle(0.010);
 \fill (1.000,0.000) circle(0.010);
 \fill (0.000,0.883) circle(0.010);
 \fill (0.000,0.883) circle(0.010);
 \fill (0.000,-0.883) circle(0.010);
 \fill (0.000,-0.883) circle(0.010);
 \draw (0.000,1.000) node[anchor=south] {$v_{0}$};
 \draw (0.000,-1.000) node[anchor=north] {$v_{1}$};
 \draw (0.000,0.000) node[anchor=north] {$v_{6},v_{8}$};
 \draw (0.000,0.883) node[anchor=north] {$v_{10},v_{11}$};
 \draw (-0.707,0.000) node[anchor=north] {$v_{3},v_{4}$};
 \draw (1.000,0.000) node[anchor=west] {$v_{9}$};
 \draw (0.707,0.000) node[anchor=north] {$v_{2},v_{5}$};
 \draw (-1.000,0.000) node[anchor=east] {$v_{7}$};
 \draw (0.000,-0.883) node[anchor=south] {$v_{12},v_{13}$};
 \draw (0.383,0.000) node[anchor=north] {$c_{0}$};
 \draw (0.000,0.540) node[anchor=west] {$c_{1}$};
 \draw (-0.707,0.707) node[anchor=south east] {$c_{2}$};
 \draw (-0.486,0.625) node[anchor=north west] {$c_{3}$};
 \draw (-0.581,0.488) node[anchor=north west] {$c_{4}$};
 \draw (0.148,0.915) node[anchor=west] {$c_{5}$};
 \draw (-0.148,0.915) node[anchor=east] {$c_{6}$};
 \draw (0.148,-0.915) node[anchor=west] {$c_{7}$};
 \draw (-0.148,-0.915) node[anchor=east] {$c_{8}$};
\end{tikzpicture}
\end{center}
Here again the image of $\dl$ is the full unit disc.  For most points
in the disc, the preimage consists of two points that are exchanged by
the action of $\lm^{-1}\nu$, and one can give a nice formula for these
points.

For the map $\zt$, we have the following picture:
\begin{center}
\begin{tikzpicture}[scale=4]
 \draw[cyan] plot[smooth] coordinates{ (0.000,0.000) (0.000,0.156) (0.000,0.309) (0.000,0.454) (0.000,0.588) (0.000,0.707) (0.000,0.809) (0.000,0.891) (0.000,0.951) (0.000,0.988) (0.000,1.000) (0.000,0.988) (0.000,0.951) (0.000,0.891) (0.000,0.809) (0.000,0.707) (0.000,0.588) (0.000,0.454) (0.000,0.309) (0.000,0.156) (0.000,-0.000) (0.000,-0.156) (0.000,-0.309) (0.000,-0.454) (0.000,-0.588) (0.000,-0.707) (0.000,-0.809) (0.000,-0.891) (0.000,-0.951) (0.000,-0.988) (0.000,-1.000) (0.000,-0.988) (0.000,-0.951) (0.000,-0.891) (0.000,-0.809) (0.000,-0.707) (0.000,-0.588) (0.000,-0.454) (0.000,-0.309) (0.000,-0.156) (0.000,0.000) };
 \draw[green] plot[smooth] coordinates{ (0.707,0.000) (0.698,0.111) (0.672,0.219) (0.630,0.321) (0.572,0.416) (0.500,0.500) (0.416,0.572) (0.321,0.630) (0.219,0.672) (0.111,0.698) (-0.000,0.707) (-0.111,0.698) (-0.219,0.672) (-0.321,0.630) (-0.416,0.572) (-0.500,0.500) (-0.572,0.416) (-0.630,0.321) (-0.672,0.219) (-0.698,0.111) (-0.707,-0.000) (-0.698,-0.111) (-0.672,-0.219) (-0.630,-0.321) (-0.572,-0.416) (-0.500,-0.500) (-0.416,-0.572) (-0.321,-0.630) (-0.219,-0.672) (-0.111,-0.698) (0.000,-0.707) (0.111,-0.698) (0.219,-0.672) (0.321,-0.630) (0.416,-0.572) (0.500,-0.500) (0.572,-0.416) (0.630,-0.321) (0.672,-0.219) (0.698,-0.111) (0.707,0.000) };
 \draw[green] plot[smooth] coordinates{ (0.707,0.000) (0.698,0.111) (0.672,0.219) (0.630,0.321) (0.572,0.416) (0.500,0.500) (0.416,0.572) (0.321,0.630) (0.219,0.672) (0.111,0.698) (-0.000,0.707) (-0.111,0.698) (-0.219,0.672) (-0.321,0.630) (-0.416,0.572) (-0.500,0.500) (-0.572,0.416) (-0.630,0.321) (-0.672,0.219) (-0.698,0.111) (-0.707,-0.000) (-0.698,-0.111) (-0.672,-0.219) (-0.630,-0.321) (-0.572,-0.416) (-0.500,-0.500) (-0.416,-0.572) (-0.321,-0.630) (-0.219,-0.672) (-0.111,-0.698) (0.000,-0.707) (0.111,-0.698) (0.219,-0.672) (0.321,-0.630) (0.416,-0.572) (0.500,-0.500) (0.572,-0.416) (0.630,-0.321) (0.672,-0.219) (0.698,-0.111) (0.707,0.000) };
 \draw[magenta] plot[smooth] coordinates{ (0.986,0.000) (0.973,0.156) (0.937,0.309) (0.878,0.454) (0.797,0.588) (0.697,0.707) (0.579,0.809) (0.447,0.891) (0.305,0.951) (0.154,0.988) (-0.000,1.000) (-0.154,0.988) (-0.305,0.951) (-0.447,0.891) (-0.579,0.809) (-0.697,0.707) (-0.797,0.588) (-0.878,0.454) (-0.937,0.309) (-0.973,0.156) (-0.986,-0.000) (-0.973,-0.156) (-0.937,-0.309) (-0.878,-0.454) (-0.797,-0.588) (-0.697,-0.707) (-0.579,-0.809) (-0.447,-0.891) (-0.305,-0.951) (-0.154,-0.988) (0.000,-1.000) (0.154,-0.988) (0.305,-0.951) (0.447,-0.891) (0.579,-0.809) (0.697,-0.707) (0.797,-0.588) (0.878,-0.454) (0.937,-0.309) (0.973,-0.156) (0.986,0.000) };
 \draw[magenta] plot[smooth] coordinates{ (0.169,0.000) (0.167,0.000) (0.161,0.000) (0.151,0.000) (0.137,0.000) (0.120,0.000) (0.099,0.000) (0.077,0.000) (0.052,0.000) (0.026,0.000) (-0.000,0.000) (-0.026,0.000) (-0.052,0.000) (-0.077,0.000) (-0.099,0.000) (-0.120,0.000) (-0.137,0.000) (-0.151,0.000) (-0.161,0.000) (-0.167,0.000) (-0.169,0.000) (-0.167,0.000) (-0.161,0.000) (-0.151,0.000) (-0.137,0.000) (-0.120,0.000) (-0.099,0.000) (-0.077,0.000) (-0.052,0.000) (-0.026,0.000) (0.000,0.000) (0.026,0.000) (0.052,0.000) (0.077,0.000) (0.099,0.000) (0.120,0.000) (0.137,0.000) (0.151,0.000) (0.161,0.000) (0.167,0.000) (0.169,0.000) };
  \filldraw[draw=blue,fill=gray!20] plot[smooth] coordinates{ (0.707,0.000) (0.709,0.000) (0.715,0.000) (0.725,0.000) (0.737,0.000) (0.753,0.000) (0.771,0.000) (0.791,0.000) (0.812,0.000) (0.834,0.000) (0.856,0.000) (0.877,0.000) (0.898,0.000) (0.917,0.000) (0.934,0.000) (0.949,0.000) (0.962,0.000) (0.972,0.000) (0.980,0.000) (0.984,0.000) (0.986,0.000) (0.984,0.000) (0.980,0.000) (0.972,0.000) (0.962,0.000) (0.949,0.000) (0.934,0.000) (0.917,0.000) (0.898,0.000) (0.877,0.000) (0.856,0.000) (0.834,0.000) (0.812,0.000) (0.791,0.000) (0.771,0.000) (0.753,0.000) (0.737,0.000) (0.725,0.000) (0.715,0.000) (0.709,0.000) (0.707,0.000) };
  \filldraw[draw=blue,fill=gray!20] plot[smooth] coordinates{ (0.707,0.000) (0.703,0.055) (0.691,0.109) (0.671,0.158) (0.644,0.203) (0.612,0.241) (0.575,0.272) (0.535,0.295) (0.493,0.310) (0.451,0.317) (0.409,0.316) (0.368,0.308) (0.330,0.292) (0.294,0.270) (0.262,0.242) (0.235,0.209) (0.211,0.172) (0.193,0.132) (0.180,0.090) (0.172,0.045) (0.169,-0.000) (0.172,-0.045) (0.180,-0.090) (0.193,-0.132) (0.211,-0.172) (0.235,-0.209) (0.262,-0.242) (0.294,-0.270) (0.330,-0.292) (0.368,-0.308) (0.409,-0.316) (0.451,-0.317) (0.493,-0.310) (0.535,-0.295) (0.575,-0.272) (0.612,-0.241) (0.644,-0.203) (0.671,-0.158) (0.691,-0.109) (0.703,-0.055) (0.707,0.000) };
  \filldraw[draw=blue,fill=gray!20] plot[smooth] coordinates{ (-0.707,0.000) (-0.709,0.000) (-0.715,0.000) (-0.725,0.000) (-0.737,0.000) (-0.753,0.000) (-0.771,0.000) (-0.791,0.000) (-0.812,0.000) (-0.834,0.000) (-0.856,0.000) (-0.877,0.000) (-0.898,0.000) (-0.917,0.000) (-0.934,0.000) (-0.949,0.000) (-0.962,0.000) (-0.972,0.000) (-0.980,0.000) (-0.984,0.000) (-0.986,0.000) (-0.984,0.000) (-0.980,0.000) (-0.972,0.000) (-0.962,0.000) (-0.949,0.000) (-0.934,0.000) (-0.917,0.000) (-0.898,0.000) (-0.877,0.000) (-0.856,0.000) (-0.834,0.000) (-0.812,0.000) (-0.791,0.000) (-0.771,0.000) (-0.753,0.000) (-0.737,0.000) (-0.725,0.000) (-0.715,0.000) (-0.709,0.000) (-0.707,0.000) };
  \filldraw[draw=blue,fill=gray!20] plot[smooth] coordinates{ (-0.707,0.000) (-0.703,0.055) (-0.691,0.109) (-0.671,0.158) (-0.644,0.203) (-0.612,0.241) (-0.575,0.272) (-0.535,0.295) (-0.493,0.310) (-0.451,0.317) (-0.409,0.316) (-0.368,0.308) (-0.330,0.292) (-0.294,0.270) (-0.262,0.242) (-0.235,0.209) (-0.211,0.172) (-0.193,0.132) (-0.180,0.090) (-0.172,0.045) (-0.169,-0.000) (-0.172,-0.045) (-0.180,-0.090) (-0.193,-0.132) (-0.211,-0.172) (-0.235,-0.209) (-0.262,-0.242) (-0.294,-0.270) (-0.330,-0.292) (-0.368,-0.308) (-0.409,-0.316) (-0.451,-0.317) (-0.493,-0.310) (-0.535,-0.295) (-0.575,-0.272) (-0.612,-0.241) (-0.644,-0.203) (-0.671,-0.158) (-0.691,-0.109) (-0.703,-0.055) (-0.707,0.000) };
 \fill (0.707,0.000) circle(0.010);
 \fill (-0.707,0.000) circle(0.010);
 \fill (0.000,0.000) circle(0.010);
 \fill (0.000,1.000) circle(0.010);
 \fill (0.000,0.000) circle(0.010);
 \fill (0.000,-1.000) circle(0.010);
 \fill (0.000,0.707) circle(0.010);
 \fill (0.000,0.707) circle(0.010);
 \fill (0.000,-0.707) circle(0.010);
 \fill (0.000,-0.707) circle(0.010);
 \fill (0.169,0.000) circle(0.010);
 \fill (0.986,0.000) circle(0.010);
 \fill (-0.169,0.000) circle(0.010);
 \fill (-0.986,0.000) circle(0.010);
 \draw (0.000,0.707) node[anchor=south] {$v_{6},v_{7}$};
 \draw (-0.169,0.000) node[anchor=east] {$v_{12}$};
 \draw (0.000,0.000) node[anchor=south] {$v_{2},v_{4}$};
 \draw (0.707,0.000) node[anchor=east] {$v_{0}$};
 \draw (0.986,0.000) node[anchor=west] {$v_{11}$};
 \draw (-0.707,0.000) node[anchor=west] {$v_{1}$};
 \draw (0.000,-0.707) node[anchor=north] {$v_{8},v_{9}$};
 \draw (0.000,-1.000) node[anchor=north] {$v_{5}$};
 \draw (-0.986,0.000) node[anchor=east] {$v_{13}$};
 \draw (0.169,0.000) node[anchor=west] {$v_{10}$};
 \draw (0.000,1.000) node[anchor=south] {$v_{3}$};
 \draw (0.000,0.383) node[anchor=west] {$c_{0}$};
 \draw (0.500,0.500) node[anchor=south] {$c_{1}$};
 \draw (0.500,0.500) node[anchor=east] {$c_{2}$};
 \draw (0.697,0.707) node[anchor=west] {$c_{3}$};
 \draw (0.120,0.000) node[anchor=north] {$c_{4}$};
 \draw (0.856,0.000) node[anchor=north] {$c_{5}$};
 \draw (0.409,0.316) node[anchor=north] {$c_{6}$};
 \draw (-0.856,0.000) node[anchor=north] {$c_{7}$};
 \draw (-0.409,0.316) node[anchor=north] {$c_{8}$};
\end{tikzpicture}
\end{center}
In this case, the shaded regions are not part of the image, so the
image is homeomorphic to a disc with two holes.  It can be identified
with $EX^*/\ip{\lm^2\nu}$, and is combinatorially equivalent to the
space $\Net_4^+$ discussed in Section~\ref{sec-fundamental}.

\subsection{Homeomorphisms with the square}
\lbl{sec-roothalf-square}

\begin{proposition}
 There is a homeomorphism $\tau\:EX^*/G\to [0,1]^2$ given by
 \[ \tau(x) = (2z_1-z_1^2+\half z_1^2z_2,\;2z_2). \]
 Moreover, we can define a homeomorphism $\tau^*\:[0,1]^2\to F_{16}$ by
 \[ \tau^*(t_1,t_2) = \left(
     \sqrt{\frac{(1-\sqrt{t_2})(\sqrt{t_3}+\sqrt{t_2})}{2(2+\sqrt{t_2})}},\;
     \sqrt{\frac{(1+\sqrt{t_2})(\sqrt{t_3}-\sqrt{t_2})}{2(2-\sqrt{t_2})}},\;
     \rt\sqrt{\frac{2-\sqrt{t_3}}{4-t_2}},\;
     -\sqrt{t_2}\sqrt{\frac{2-\sqrt{t_3}}{4-t_2}}
    \right),
 \]
 where $t_3=t_1t_2+4(1-t_1)$.  This is inverse to $\tau|_{F_{16}}$.
\end{proposition}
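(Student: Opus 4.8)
`\textbf{Proof strategy.}` The plan is to identify $\tau$ with the composite of the homeomorphism $p_{16}\colon F_{16}\to F_{16}^*$ from Proposition~\ref{prop-F-sixteen} (or rather its inverse, since we are mapping $EX^*/G\simeq F_{16}$ to the square), followed by an explicit homeomorphism $F_{16}^*\to[0,1]^2$, and then to check that $\tau^*$ is a two-sided inverse. First I would recall from Proposition~\ref{prop-roothalf-fundamental} that in the case $a=1/\rt$ the region $F_{16}^*$ is $\{z\in\R^2\st 0\leq z_2\leq 1/2,\ 0\leq z_1\leq (1+\sqrt{z_2/2})^{-1}\}$, and that by Proposition~\ref{prop-F-sixteen} the map $p_{16}(x)=(z_1,z_2)$ gives a homeomorphism $F_{16}\to F_{16}^*$ with inverse $s_{16}$. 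So it suffices to show that the map $\sigma\colon F_{16}^*\to[0,1]^2$ defined by $\sigma(z_1,z_2)=(2z_1-z_1^2+\tfrac12 z_1^2z_2,\ 2z_2)$ is a homeomorphism, and that $\tau^*=s_{16}\circ\sigma^{-1}$ works out to the stated formula; then $\tau=\sigma\circ p_{16}$ on $F_{16}$, which is exactly the claimed formula since $z_i=y_i^2$ and $z_1=x_3^2$, $z_2=((x_2^2-x_1^2)/\rt-\tfrac32 x_3x_4)^2$ as recorded in the $a=1/\rt$ formulae at the start of Section~\ref{sec-roothalf}.

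`\textbf{The core computation.}` The second coordinate of $\sigma$ is just $z_2\mapsto 2z_2$, an affine bijection $[0,1/2]\to[0,1]$, so everything reduces to analysing the first coordinate as a function of $z_1$ for each fixed $z_2\in[0,1/2]$. Writing $t_2=2z_2$, so $z_2=t_2/2$ and $\sqrt{z_2/2}=\sqrt{t_2}/2$, the admissible range for $z_1$ is $[0,1/(1+\sqrt{t_2}/2)]=[0,2/(2+\sqrt{t_2})]$. On this interval I would show that $h(z_1)=2z_1-z_1^2+\tfrac12 z_1^2 z_2$ is strictly increasing: $h'(z_1)=2-2z_1+z_1z_2=2-z_1(2-z_2)$, and since $z_1\leq 2/(2+\sqrt{t_2})<1$ and $z_2\leq 1/2$ we get $h'(z_1)\geq 2-(2-z_2)>0$. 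Then I check the endpoint values: $h(0)=0$, and at $z_1=2/(2+\sqrt{t_2})$ a direct algebraic simplification (clearing the denominator $(2+\sqrt{t_2})^2$) should give exactly $1$. Hence $h$ maps the admissible $z_1$-interval bijectively and monotonically onto $[0,1]$, so $\sigma$ is a continuous bijection between compact Hausdorff spaces and therefore a homeomorphism.

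`\textbf{Inverting and matching $\tau^*$.}` Given $(t_1,t_2)\in[0,1]^2$, one recovers $z_2=t_2/2$ and then solves $h(z_1)=t_1$, i.e. the quadratic $(1-z_2)z_1^2-2z_1+t_1=0$ (assuming $z_2<1$, which always holds here), taking the root in $[0,2/(2+\sqrt{t_2})]$; this gives $z_1=(1-\sqrt{1-t_1(1-z_2)})/(1-z_2)$. Rather than grinding through this directly, I would introduce the auxiliary quantity $t_3=t_1t_2+4(1-t_1)$ appearing in the statement and verify by substitution that $z_1=2(2-\sqrt{t_3})/(4-t_2)$ solves the quadratic and lies in range. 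Then applying $s_{16}(z_1,z_2)=s_4(\sqrt{z_1},\sqrt{z_2})=(\sqrt{u_1},\sqrt{u_2},y_1,-y_1y_2)$ with $y_1=\sqrt{z_1}$, $y_2=\sqrt{z_2}=\sqrt{t_2}/2$ and $u_1,u_2$ given by the $a=1/\rt$ formulae $u_1=\tfrac12(1-\rt y_2)(1-y_1^2(1-y_2/\rt))$, $u_2=\tfrac12(1+\rt y_2)(1-y_1^2(1+y_2/\rt))$, one substitutes and simplifies; the factor $\sqrt{t_3}\pm\sqrt{t_2}$ and the normalising denominators $2\pm\sqrt{t_2}$ and $4-t_2$ in the stated formula for $\tau^*$ are precisely what emerges. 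Finally, since $\tau^*$ is manifestly continuous on $[0,1]^2$ and I will have exhibited it as $s_{16}\circ\sigma^{-1}$, it is automatically a homeomorphism inverse to $\tau|_{F_{16}}=\sigma\circ p_{16}$.

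`\textbf{Main obstacle.}` The only real work is the algebraic bookkeeping: verifying that $h$ attains the value $1$ exactly at the right endpoint, that $z_1=2(2-\sqrt{t_3})/(4-t_2)$ is indeed the correct root and stays in range for all $(t_1,t_2)\in[0,1]^2$ (in particular that $\sqrt{t_3}\geq\sqrt{t_2}$, needed so that $u_2\geq 0$, and $\sqrt{t_3}\leq 2$, needed so $z_1\geq 0$), and that substituting everything into $s_4$ reproduces the stated quadruple. None of this is conceptually hard — it is the same style of check-of-cases argument used in the proof of Proposition~\ref{prop-square} for general $a$ — and it can be verified mechanically; the corresponding Maple check is presumably \texttt{check\_invariants()} or a companion in \fname+embedded/roothalf/E_roothalf_check.mpl+. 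I would present the monotonicity of $h$ and the endpoint evaluations in detail and leave the routine substitution identities to the reader or the code.
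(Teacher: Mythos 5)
Your proposal is correct and follows essentially the same route as the paper: factor $\tau$ as $\sigma\circ p_{16}$ using Proposition~\ref{prop-F-sixteen}, reduce to the one-variable statement that $z_1\mapsto 2z_1-z_1^2+\half z_1^2z_2$ maps $[0,(1+\sqrt{z_2/2})^{-1}]$ bijectively onto $[0,1]$, invoke compactness, and verify the formula for $\tau^*$ by algebraic substitution (the paper simply checks $\tau\tau^*=1$ directly rather than deriving $\tau^*$ as $s_{16}\circ\sigma^{-1}$, but the computation is the same). Two harmless slips to tidy up: when $t_2=0$ the upper endpoint of the $z_1$-interval is exactly $1$ and $h'$ vanishes there (so argue $h'>0$ on the interior), and your quadratic should read $(1-z_2/2)z_1^2-2z_1+t_1=0$, though your final expression $z_1=2(2-\sqrt{t_3})/(4-t_2)$ is the correct root of the correct equation.
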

Maple notation for $\tau(x)$ and $\tau^*(t)$ is \mcode+t_proj(x)+ and
\mcode+t_lift(t)+.
\begin{proof}
 First, recall that the map $p_{16}\:x\mapsto z$ gives a homeomorphism
 from $F_{16}\simeq EX^*/G$ to the set
 \[ F^*_{16} = \{z\in \R^2\st 0\leq z_2\leq 1/2,\;
                0\leq z_1\leq (1+\sqrt{z_2/2})^{-1}\}.
 \]
 We have $\tau=\sg\circ p_{16}$, where
 \[ \sg(z) = (2z_1-z_1^2+\half z_1^2z_2,\;2z_2). \]
 An elementary exercise shows that for fixed $z_2\in[0,1/2]$, the map
 $z_1\mapsto 2z_1-z_1^2+\half z_1^2z_2$ gives a bijection from the
 interval $[0,(1+\sqrt{z_2/2})^{-1}]$ to $[0,1]$.  This implies that
 $\sg$ gives a continuous bijection from $F^*_{16}$ to $[0,1]^2$.  It
 follows that $\tau$ gives a continuous bijection from $F_{16}$ to
 $[0,1]^2$.  All the spaces involved are compact and Hausdorff, so
 continuous bijections are homeomorphisms.

 Next, it is clear that the definition of $\tau^*$ involves only
 strictly positive denominators, and square roots of nonnegative
 quantities, so it gives a well-defined and continuous map from
 $[0,1]^2$ to $\R^4$.  Routine simplification gives
 $\rho(\tau^*(t))=1$ and $g(\tau^*(t))=0$, so $\tau^*(t)\in EX^*$.
 Recall also that $F_{16}=\{x\in EX^*\st x_1,x_2,y_1,y_2\geq 0\}$,
 where $y_1=x_3$ and $y_2$ is given generically by $-x_4/x_3$.  This
 makes it clear that the image of $\tau^*$ is contained in $F_{16}$.
 It is now easy to check that $\tau\tau^*=1$, so $\tau^*$ is the
 inverse of $\tau$.
 \begin{checks}
  embedded/roothalf/E_roothalf_check.mpl: check_t_proj()
 \end{checks}
\end{proof}

The map $\tau$ is clearly smooth, but the inverse map fails to be
smooth on the boundary of $[0,1]^2$.  This is a necessary consequence
of the fact that $\tau$ comes from a $G$-invariant smooth function
defined on all of $EX^*$, but it is often awkward.  For example, we
can try to use $\tau$ to convert integrals over $F_{16}$ to integrals
over $[0,1]^2$, but the singular boundary behaviour makes it difficult
to obtain accurate results, even with adaptive quadrature methods.  We
will therefore describe a different map, which has a different set of
advantages and disadvantages.

\begin{definition}
 We define $\dl\:EX^*\to\R^2$ by
 \begin{align*}
  \al_0(x) &= x_3 - x_4/\rt + x_1^2 + x_4^2 + x_3(x_4 - x_2)/\rt \\
  \al_1(x) &= (\tfrac{3}{\sqrt{8}}-1)x_1+x_2-x_3-\rt x_4 \\
  \al_2(x) &= x_1 - \tfrac{3}{4}\sqrt{3}x_3 +
               (3-\tfrac{3}{4}\sqrt{6})x_4 \\
  \dl(x)   &=
   \left(x_3\al_0(x) - x_2^2x_4,\; (x_2-x_1)\al_1(x) + x_4\al_2(x)\right).
 \end{align*}
\end{definition}

\begin{proposition}\lbl{prop-square-diffeo}
 The map $\dl$ gives a diffeomorphism $F_{16}\to[0,1]^2$, which satisfies
 \[ \dl(C_0)\sse 0 \tm\R \hspace{3em}
    \dl(C_1)\sse \R\tm 0 \hspace{3em}
    \dl(C_3)\sse \R\tm 1 \hspace{3em}
    \dl(C_5)\sse 1 \tm\R.
 \]
\end{proposition}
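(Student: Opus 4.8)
\textbf{Proposal for the proof of Proposition~\ref{prop-square-diffeo}.}
The plan is to verify that $\dl$ lands in $[0,1]^2$, that it restricts correctly on the four boundary curves $C_0,C_1,C_3,C_5$, and that it is a diffeomorphism onto $[0,1]^2$. The route will closely parallel the proof of Proposition~\ref{prop-square-diffeo-H} for the hyperbolic family and the proof given for the map $\tau$ above: construct the map explicitly, show it maps $F_{16}$ continuously into the square, identify its behaviour on the boundary, and then promote a continuous bijection to a diffeomorphism using compactness together with a nonvanishing-Jacobian check.

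First I would handle the boundary curves, since these both motivate the specific coefficients appearing in $\al_0,\al_1,\al_2$ and give the cleanest partial information. Recall from Proposition~\ref{prop-slices-a} and Definition~\ref{defn-C-star} (specialised to $a=1/\rt$, as recorded at the start of Section~\ref{sec-roothalf}) that $C_0\subset X_3\cap X_4$, that $C_1\subset X_4$ with $x_1=x_2$, that $C_3\subset X_1$, and that $C_5\subset X_2$ with the extra relation $h(x)=x_3x_4/\rt+x_1^2+x_4^2=0$ observed in Section~\ref{sec-roothalf}. On $C_0$ we have $x_3=x_4=0$, so the first coordinate of $\dl$ is manifestly zero; on $C_1$ we have $x_1=x_2$ and $x_4=0$, so $(x_2-x_1)\al_1(x)+x_4\al_2(x)=0$ and the second coordinate vanishes. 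For $C_3$ (where $x_1=0$) and $C_5$ (where $x_2=0$ and $h(x)=0$) the claims are that the relevant coordinate is constantly $1$; I would substitute the explicit parametrisations $c_3(t)$ and $c_5(t)$ from the start of Section~\ref{sec-roothalf} and verify the resulting trigonometric identities. This is a finite, mechanical computation; the observation $\al_0(x)=x_3-x_4/\rt+h(x)+x_3(x_4-x_2)/\rt$ shows that $\al_0$ simplifies substantially on $C_5$, which is presumably why it was written in that form.

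Next I would show $\dl(F_{16})\subseteq[0,1]^2$. The natural tactic, mirroring the proof of Proposition~\ref{prop-square}, is to express the two coordinates of $\dl$ in terms of the $G$-invariant coordinates where possible, or at least in terms of $y_1,y_2,x_1,x_2$ with the relations $x_i^2=u_i$, and then factor each of $\dl(x)_k$ and $1-\dl(x)_k$ into a product of quantities that are visibly nonnegative on $F_{16}$ (using $x_1,x_2,y_1,y_2\geq 0$, $u_1,u_2\geq 0$, $0\leq z_2\leq 1/2$, $0\leq z_1\leq(1+\sqrt{z_2/2})^{-1}$ from Proposition~\ref{prop-roothalf-fundamental}). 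This is the step I expect to be the main obstacle: unlike the coordinate $\tau$, the map $\dl$ is \emph{not} $G$-invariant — each coordinate mixes $x_1,x_2$ and $x_4$ in a way that is only defined on $F_{16}$ — so one cannot simply descend to $\R[z_1,z_2]$, and the inequalities $0\le\dl(x)_k\le 1$ must be extracted from explicit, somewhat unpleasant algebra in four variables subject to two polynomial constraints. I would use the relations $x_3=y_1$, $x_4=-y_1y_2$, $x_1=\sqrt{u_1}$, $x_2=\sqrt{u_2}$ to reduce to a statement purely about $(y_1,y_2)\in F_4^*$, then invoke the description of $F_4^*$ in Proposition~\ref{prop-roothalf-fundamental}. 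If a clean factorisation proves elusive I would fall back on a boundary-plus-monotonicity argument: the pictures in Section~\ref{sec-disc} suggest each coordinate is monotone along the relevant family of slices of $F_{16}$, so one could fix one coordinate and show the other sweeps $[0,1]$.

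Finally, to conclude that $\dl$ is a diffeomorphism: $\dl$ is polynomial hence smooth; $F_{16}\cong EX^*/G$ is compact Hausdorff (Proposition~\ref{prop-F-sixteen}) and $[0,1]^2$ is Hausdorff, so once I show $\dl\colon F_{16}\to[0,1]^2$ is a continuous bijection it is automatically a homeomorphism. Injectivity and surjectivity I would obtain either by exhibiting an explicit continuous inverse (as was done for $\tau^*$ and for $q$ in Proposition~\ref{prop-square}), or — if an explicit inverse is too cumbersome — by the boundary-degree argument: $\dl$ restricted to $\partial F_{16}$ is, by the boundary computations above together with the corner values $\dl(v_0),\dl(v_3),\dl(v_6),\dl(v_{11})$, a homeomorphism onto $\partial([0,1]^2)$ of degree $\pm1$, so $\dl\colon F_{16}\to[0,1]^2$ has degree $\pm1$ and is surjective, and then local injectivity (from a nonvanishing Jacobian determinant, checked by a single symbolic computation on $F_{16}$, which is flagged as \texttt{embedded/roothalf/E\_roothalf\_check.mpl}) upgrades this to global injectivity on the interior. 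Smoothness of the inverse then follows from the inverse function theorem since $D\dl$ is everywhere nonsingular. The boundary inclusions $\dl(C_k)\subseteq$ (edge) are exactly the boundary computations already carried out in the first step.
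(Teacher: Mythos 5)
Your overall strategy is sound and your endgame coincides with the paper's: verify the boundary behaviour, check that the Jacobian of $\dl$ is strictly positive on $F_{16}$, show that $\dl$ restricts to a homeomorphism $\partial F_{16}\to\partial[0,1]^2$, and finish with a degree argument in relative $H_2$ showing each point of $(0,1)^2$ has exactly one preimage. Two points of comparison are worth making. First, your proposed second step --- proving $\dl(F_{16})\subseteq[0,1]^2$ directly by factoring $\dl(x)_k$ and $1-\dl(x)_k$ into nonnegative quantities --- is the step you rightly flag as the main obstacle, but the paper simply omits it: once $\dl(\partial F_{16})=\partial[0,1]^2$ and $j(\dl)>0$ are known, the degree argument delivers bijectivity onto $[0,1]^2$ (and hence the containment) for free, so you should drop that step rather than fight the algebra. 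Note also that $\dl$ is not $G$-invariant, so there is no reduction to $\R[z_1,z_2]$ available, which is exactly why the factorisation route is unpromising. Second, the paper establishes $j(\dl)>0.1$ on $F_{16}$ by numerical evaluation and plotting (the minimum is about $0.108$, attained near a point of $C_1$), not by a symbolic identity, and it uses the nonvanishing of the Jacobian again to see that $\dl_2$ restricted to each edge of $F_{16}$ has no critical points, hence maps that edge diffeomorphically onto $[0,1]$; your sketch needs this last point to justify that the boundary restriction is a homeomorphism rather than merely surjective onto each edge. Finally, be careful with the phrase ``local injectivity upgrades to global injectivity'': that implication is false in general, and the correct statement (which the paper uses) is that for a degree-one map with everywhere positive Jacobian the preimage of an interior regular value consists of exactly one point, since the preimages all count with sign $+1$.
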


In order to prove this, we will need to consider the Jacobian of
$\dl$.  It will be convenient to formulate the required discussion
more generally.
\begin{definition}
 Consider a map $f\:EX^*\to\R^2$, and a point $a\in EX^*$.  Choose an
 oriented orthonormal basis $(u,v)$ for $T_aEX^*$, giving vectors
 $f_*(u),f_*(v)\in\R^2$.  It is easy to see that the determinant
 $\det(f_*(u),f_*(v))$ is independent of the choice of $(u,v)$. We
 write $j(f)(a)$ for this determinant, and we call $j(f)$ the
 \emph{Jacobian} of $f$.
\end{definition}

\begin{lemma}\lbl{lem-jacobian}
 Suppose that there are functions $f_1,f_2$ defined on some
 neighbourhood of $EX^*$ in $\R^4$, such that $f(x)=(f_1(x),f_2(x))$.
 Put
 \[ \tj(f) =
     \det\left(x,n,\nabla f_1,\nabla f_2\right)
 \]
 (where as usual $n=\nabla g$).  Then $j(f)=\tj(f)/\|n\|$.
\end{lemma}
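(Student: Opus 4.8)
\textbf{Proof proposal for Lemma~\ref{lem-jacobian}.} The plan is to compute both sides in a carefully chosen orthonormal frame and check they agree. First I would fix a point $x\in EX^*$ and choose an oriented orthonormal basis $(u,v)$ for $T_xEX^*$; then the vectors $x$, $n=\nabla g$ (suitably normalised) together with $u$ and $v$ form an orthonormal basis for $\R^4$. The orientation conventions set up in Section~\ref{sec-E-G} and used in the proof of Proposition~\ref{prop-X-smooth} tell us precisely which orientation to take: the orientation form $\al_x\in\Lm^2(T_xEX^*)$ is characterised by $\al_x\wedge(\rho-1,g)^*(\om_2)=\om_4$, and $(\rho-1,g)^*(\om_2)=2x\wedge n$, so $(u,v)$ should be chosen so that $x\wedge n\wedge u\wedge v$ is a positive multiple of $e_1\wedge e_2\wedge e_3\wedge e_4$. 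This is the same convention already invoked in the proof of Theorem~\ref{thm-curvature}, so it is consistent with the rest of the section.

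Next I would expand $\det(x,n,\nabla f_1,\nabla f_2)$ using the fact that $\{x/\|x\|,n/\|n\|,u,v\}=\{x,n/\|n\|,u,v\}$ is an orthonormal basis (recall $\|x\|=1$ on $EX^*$). Writing each $\nabla f_i$ in this basis, only the $u$ and $v$ components survive when we take the determinant against $x$ and $n$: the $x$- and $n$-components of $\nabla f_i$ contribute rows proportional to existing rows. Concretely, $\det(x,n,\nabla f_1,\nabla f_2) = \|n\|\,\det(x,n/\|n\|,\nabla f_1,\nabla f_2)$, and since $x\wedge(n/\|n\|)\wedge u\wedge v=\om_4$ (exactly as computed in Lemma~\ref{lem-hodge}), the four-dimensional determinant picks out $\det\bigl(\langle\nabla f_1,u\rangle,\langle\nabla f_1,v\rangle;\langle\nabla f_2,u\rangle,\langle\nabla f_2,v\rangle\bigr)$. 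The key observation is that, for $i=1,2$, the restriction of $f_i$ to $EX^*$ has derivative at $x$ given by $t\mapsto\langle\nabla f_i,t\rangle$ for $t\in T_xEX^*$ (this is just the chain rule, since $f_i$ is defined on a neighbourhood in $\R^4$ and its differential restricts to the tangent space). Hence $f_*(u)=(\langle\nabla f_1,u\rangle,\langle\nabla f_2,u\rangle)$ and similarly for $v$, so that $2\times 2$ determinant is exactly $j(f)(x)$ by definition. Putting this together gives $\tj(f)=\|n\|\,j(f)$, i.e. $j(f)=\tj(f)/\|n\|$.

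The one point requiring a little care is the elimination of the $x$- and $n$-components of $\nabla f_i$ from the determinant. Since $x$ and $n$ already appear as the first two rows, adding any multiple of them to the third or fourth row does not change the determinant, so we may replace each $\nabla f_i$ by its orthogonal projection onto $T_xEX^*=\mathrm{span}(u,v)$ without affecting $\det(x,n,\nabla f_1,\nabla f_2)$. After this replacement the computation in the previous paragraph goes through verbatim. I expect this step — keeping the bookkeeping of which orientation and normalisation is in force, and confirming that the sign works out rather than picking up a spurious $-1$ — to be the only mild obstacle; everything else is routine multilinear algebra already rehearsed in Lemma~\ref{lem-hodge} and the proof of Theorem~\ref{thm-curvature}. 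A final sanity check would be to verify the formula on a simple example, e.g. taking $f_1,f_2$ to be two of the coordinate functions $x_i$ at a point where the tangent plane is a coordinate plane (such as $v_0$, where by Remark~\ref{rem-tangent} the tangent space is spanned by $e_1,e_2$).
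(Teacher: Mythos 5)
Your proposal is correct and follows essentially the same route as the paper: fix an oriented orthonormal frame $(x,n/\|n\|,u,v)$, expand $\nabla f_1,\nabla f_2$ in it, observe that the $x$- and $n$-components drop out of the determinant, and identify the surviving $2\times 2$ minor with $j(f)$ via $f_*(u)=(\langle\nabla f_1,u\rangle,\langle\nabla f_2,u\rangle)$. The only cosmetic difference is that you phrase the elimination of the normal components as a row operation/orthogonal projection, whereas the paper simply writes out the $4\times 4$ matrix in the orthonormal basis and reads off the block determinant.
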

\begin{proof}
 Fix a point $a\in EX^*$, and choose an oriented orthonormal basis
 $(u,v)$ for the corresponding tangent space, as before.  We write
 $n$ for $n(a)$, and $w_i$ for the value of $\nabla f_i$ at $a$.  Now
 $(a,n/\|n\|,u,v)$ is an oriented orthonormal basis for $\R^4$, so
 we can write
 \[ w_i = \al_i a + \bt_i n/\|n\| + \gm_i u + \dl_i v \]
 for some scalars $\al_i,\dotsc,\dl_i$.  We have
 \begin{align*}
  f_*(u) &= (u.w_1,\;u.w_2) = (\gm_1,\gm_2) \\
  f_*(v) &= (v.w_1,\;v.w_2) = (\dl_1,\dl_2),
 \end{align*}
 so $j(f)(a)=\gm_1\dl_2-\gm_2\dl_1$.  We need to show that this is the
 same as
 \[ D = \det(a,n/\|n\|,w_1,w_2). \]
 Because $(a,n/\|n\|,u,v)$ is an oriented orthonormal basis for
 $\R^4$, we find that
 \[ D = \det
    \bbm
     1     & 0     & 0     & 0     \\
     0     & 1     & 0     & 0     \\
     \al_1 & \bt_1 & \dl_1 & \gm_1 \\
     \al_2 & \bt_2 & \dl_2 & \gm_2
    \ebm = \gm_1\dl_2-\gm_2\dl_1
 \]
 as required.
\end{proof}

\begin{proof}[Proof of Proposition~\ref{prop-square-diffeo}]
 Put $h(x)=x_3x_4/\rt+x_1^2+x_4^2$.  As we have noted previously, this
 vanishes on $C_5$.  By direct expansion of polynomials, we find the
 following:
 \begin{itemize}
  \item If $x=(x_1,x_2,0,0)$ then $\dl(x)_1=0$.
  \item If $x=(x_1,x_1,x_3,x_4)$ then $\dl(x)_2=0$.
  \item If $x=(0,x_2,x_3,-x_3/\rt)$ then $\dl(x)_2=\rho(x)$.
  \item If $x=(x_1,0,x_3,x_4)$ then $\dl(x)_1=\rho(x)-(1-x_3)h(x)$.
 \end{itemize}
 We can compare this with the definitions of the curves $c_k(t)$ for
 $k\in\{0,1,3,5\}$, remembering that $\rho(x)=1$ for $x\in EX^*$; we
 find that the images $\dl(C_k)$ are as claimed.  It is also
 straightforward to check that $\dl$ sends $v_6$, $v_0$, $v_3$ and
 $v_{11}$ to $(0,0)$, $(1,0)$, $(0,1)$ and $(1,1)$ respectively.

 Next, we can use Lemma~\ref{lem-jacobian} to obtain a formula for the
 Jacobian $j(\dl)(x)$.  By numerical evaluation and plotting, we find
 that $j(f)>0.1$ everywhere in $F_{16}$.  (More precisely, the minimum
 is approximately $0.1079$, attained at a point $c_1(t_0)$ for some
 $t_0$ with $0<|t_0-5\pi/16|<0.002$.)

 Now put $E_0=C_0\cap F_{16}$, and note that $\dl_2$ gives a map from
 $E_0$ to $\R$ with $\dl_2(v_6)=0$ and $\dl_2(v_3)=1$.  As $j(\dl)>0$
 on $F_{16}$ we see that this restricted map has no critical points,
 so it must give a diffeomorphism $E_0\to[0,1]$.  By applying the same
 line of argument to the other edges of $F_{16}$, we find that $\dl$
 gives a bijection $\partial F_{16}\to\partial [0,1]^2$.

 Now consider a point $b\in (0,1)^2$, and put
 $A=\{x\in F_{16}\st\dl(x)=b\}$.  As
 $\dl(\partial F_{16})=\partial[0,1]^2$, we see that
 $A\cap\partial F_{16}=\emptyset$.  Using the fact that $j(\dl)>0$ on
 $F_{16}$, we see that $A$ is discrete in $F_{16}$, and therefore
 finite, say $A=\{a_1,\dotsc,a_n\}$.  The fact that $j(\dl)>0$ on
 $F_{16}$ also means that $\dl$ gives an orientation preserving
 homeomorphism from some neighbourhood of $a_i$ to some neighbourhood
 of $b$, and therefore induces an isomorphism
 \[ H_2(F_{16},F_{16}\sm\{a_i\})\to H_2(\R^2,\R^2\sm\{b\})\simeq\Z \]
 of homology groups.  We also have a commutative diagram
 \[ \xymatrix{
  H_2(F_{16},\partial F_{16}) \ar[r]\ar[d]_{\dl_*} &
  H_2(F_{16},F_{16}\sm A) \ar[d]^{\dl_*} \\
  H_2(\R^2,\partial[0,1]^2) \ar[r]_\simeq &
  H_2(\R^2,\R^2\sm\{b\})
 } \]
 Because $F_{16}$ and $\R^2$ are contractible, and
 $\dl\:\partial F_{16}\to\partial [0,1]^2$ is a homeomorphism, we see
 that the left hand map is an isomorphism.  Standard methods also show
 that the bottom map is an isomorphism, with both groups being
 isomorphic to $\Z$.  On the other hand, $H_2(F_{16},A^c)$ splits as
 the sum of the groups $H_2(F_{16},F_{16}\sm\{a_i\})\simeq\Z$
 indexed by the points of $A$, and the map
 \[ H_2(F_{16},F_{16}\sm A)\to
     \bigoplus_{i=1}^n (F_{16},F_{16}\sm\{a_i\})
 \]
 is just the diagonal map $\Z\to\Z^n$.  We have seen that $\dl_*$ acts
 as the identity on each summand, and this can only be consistent if
 $n=1$.  It follows that $\dl$ gives a bijection $F_{16}\to [0,1]^2$,
 as claimed.
 \begin{checks}
  embedded/roothalf/square_diffeo_check.mpl: check_square_diffeo_E0()
 \end{checks}
\end{proof}

\subsection{Charts}
\lbl{sec-roothalf-charts}

Recall from Section~\ref{sec-holomorphic-curves} that each of
the maps $c_k\:\R\to EX^*$ can be extended in a canonical way to give
a holomorphic map defined on a neighbourhood of $\R$ in $\C$.  There
is no case where we know a closed formula for such a holomorphic
extension.  However, it is not too hard to calculate high order power
series approximations.  For example, we have found a map
$c_0^*\:\C\to\R^4$ such that
\begin{itemize}
 \item Each component $c_0^*(t+iu)_n$ (for $1\leq n\leq 4$) is a
  polynomial of total degree at most $44$ in $t$ and $u$, with
  coefficients in $\Q(\rt,\sqrt{3})$.
 \item The polynomials $\rho(c_0^*(t+iu))-1$ and $g(c_0^*(t+iu))$ lie
  in $(t,u)^{45}$, so $c_0^*(t+iu)$ lies very close to $EX^*$ when
  $(t,u)$ is small.
 \item If we put $a=\partial c_0^*(t+iu)/\partial t$ and
  $b=\partial c_0^*(t+iu)/\partial u$ then $\ip{a,b}$ and
  $\ip{a,a}-\ip{b,b}$ lie in $(t,u)^{44}$, so $c_0^*$ is very close to
  being conformal.
 \item $c^*_0|_{\R}$ is the 44th order Taylor approximation to $c_0$
  at $t=0$.
\end{itemize}
These conditions imply that $c^*_0(z)$ agrees with the holomorphic
extension $\tc_0(z)$ to order 44 at $z=0$.

Calculations of this kind are implemented by methods of the class
\mcode+E_chart+, which is defined in the file
\fname+embedded/roothalf/E_atlas.mpl+.  Specifically, we can
calculate the above chart as follows:
\begin{mcodeblock}
   C := `new/E_chart`():
   C["curve_set_exact",0,0]:
   C["curve_set_degree_exact",45]:
   C["p"]([t,u]);
\end{mcodeblock}

Proposition~\ref{prop-chart} can also be used to define charts at
points that do not lie on the curves $C_k$:
\begin{proposition}\lbl{prop-frame-chart}
 Suppose that $a\in EX^*$, and that $(u,v)$ is an oriented orthonormal
 basis for the tangent space $T_aEX^*$.  Then there is a unique local
 conformal chart $\phi$ with $u.\phi(t)=t$ and $v.\phi(t)=0$ for
 small $t\in\R$.
\end{proposition}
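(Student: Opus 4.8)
\textbf{Proof plan for Proposition~\ref{prop-frame-chart}.}
The statement asks for a conformal chart $\phi$ near a point $a\in EX^*$ whose real-axis behaviour is pinned down by the prescribed oriented orthonormal frame $(u,v)$ for $T_aEX^*$: namely $u\cdot\phi(t)=t$ and $v\cdot\phi(t)=0$ to first order in real $t$. The natural approach is to reduce this to Proposition~\ref{prop-chart}, which produces a unique germ of an analytic map $\C\to X$ extending a given real-analytic curve with nonvanishing derivative, and dually a unique local conformal parameter. So the plan is: first construct a real-analytic curve $c\:\R\to EX^*$ with $c(0)=a$ whose velocity $c'(0)$ is a positive multiple of $u$ (equivalently, so that the frame $(c'(0)/\|c'(0)\|, J_a(c'(0)/\|c'(0)\|))$ agrees with $(u,v)$, using that $(u,v)$ is oriented orthonormal and $J_a$ is the almost complex structure, so $J_a u = v$); then apply Proposition~\ref{prop-chart} to $c$ to get the holomorphic germ $\phi_0\:\C\to EX^*$ with $\phi_0(t)=c(t)$ for small real $t$; then rescale so that the real-axis image is parametrised at unit speed in the $u$-direction.

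In more detail: the quadratic approximate chart of Definition~\ref{defn-quadratic-chart} gives an explicit map $\psi\:T_aEX^*\to\R^4$ with $\psi(sw)=a+sw+O(s^2)$, $g(\psi(sw))=O(s^3)$, $\rho(\psi(sw))=1+O(s^4)$. Composing with the retraction $\sg^\infty$ of Remark~\ref{rem-move-to-X} (which converges and is real-analytic near $EX^*$), we obtain a genuine real-analytic curve $c(s)=\sg^\infty(\psi(su))$ landing in $EX^*$ with $c(0)=a$ and $c'(0)=u\neq 0$. Then Proposition~\ref{prop-chart} yields the unique analytic germ $\phi_0$ extending $c$, and also a unique local conformal parameter $z$ at $a$ with $z(c(t))=t$; setting $\phi=\phi_0$ gives a conformal chart. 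The condition $u\cdot\phi(t)=t+O(t^2)$ holds because $\phi(t)=c(t)=a+tu+O(t^2)$ and $u\cdot a=0$ (as $u\in T_aEX^*=a^\perp\cap n(a)^\perp$), while $v\cdot\phi(t)=v\cdot c(t)=O(t^2)$ since $v\cdot u=0$. To get the stated normalisation to \emph{all} orders (i.e.\ $u\cdot\phi(t)=t$ exactly and $v\cdot\phi(t)=0$ exactly for small real $t$), one argues that $\phi$ restricted to small real $t$ is a real-analytic embedded arc through $a$ tangent to $u$; its orthogonal projection onto the affine line $a+\R u$ is a real-analytic diffeomorphism $t\mapsto u\cdot\phi(t)$ of a neighbourhood of $0$ fixing $0$ with derivative $1$, so reparametrising $\phi$ by its (real-analytic, hence analytically continued) inverse produces a chart with $u\cdot\phi(t)=t$; and then $v\cdot\phi(t)=0$ is forced because the arc lies in $EX^*$ whose tangent at $a$ is $\R u\oplus\R v$ — more precisely, one shows the arc is characterised exactly as in Proposition~\ref{prop-chart}'s treatment of antiholomorphic involutions, by noting that any two real-analytic arcs in $EX^*$ through $a$ with the same projection to $a+\R u$ must coincide.

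For uniqueness: any two conformal charts $\phi,\phi'$ with the stated property have $\phi'=\phi\circ h$ for a biholomorphic germ $h$ of $(\C,0)$; the normalisation conditions force $h(t)\in\R$ and $u\cdot\phi(h(t))=t$ for small real $t$, but $u\cdot\phi(t)=t$ as well, so $h(t)=t$ on a real interval, hence $h=\mathrm{id}$ by the identity principle. The main obstacle I anticipate is the bookkeeping needed to upgrade the first-order normalisation coming directly from $c'(0)=u$ to the exact (all-orders-in-real-$t$) normalisation demanded by the statement — this is where one must be careful that the reparametrisation is real-analytic and that the ``$v$-component vanishes'' follows from the arc lying in $EX^*$ rather than from the chart construction. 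Everything else is a routine assembly of Proposition~\ref{prop-chart}, Proposition~\ref{prop-quadratic-chart}, and Remark~\ref{rem-move-to-X}, together with the fact (established in Section~\ref{sec-E-G} and Remark~\ref{rem-tangent}) that $J_a$ is the rotation carrying an oriented orthonormal frame to the next.
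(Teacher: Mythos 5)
Your overall strategy (build a real-analytic curve through $a$, then invoke Proposition~\ref{prop-chart}) is the right one, and your uniqueness argument is essentially sound, but the existence half has a genuine gap at exactly the point you flagged as the ``main obstacle''. The curve $c(s)=\sg^\infty(\psi(su))$ only satisfies the normalisation to first order: $v.c(s)=O(s^2)$, not $v.c(s)=0$. Reparametrising in $t$ can repair the condition $u.\phi(t)=t$, because that condition only constrains how the arc is parametrised; but $v.\phi(t)=0$ is a condition on the \emph{image} of the arc, namely that it lie in the one-dimensional locus $\{x\in EX^*\st v.x=0\}$ near $a$, and no reparametrisation will move $\sg^\infty(\psi(su))$ into that locus. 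The claim you lean on to close this gap --- that any two real-analytic arcs in $EX^*$ through $a$ with the same projection to $a+\R u$ must coincide --- is false: $EX^*$ is two-dimensional, the level sets of $x\mapsto u.x$ near $a$ are one-dimensional, and there are many real-analytic arcs through $a$ crossing those level sets at the prescribed rate while wandering in the $v$-direction (your own curve $c$ is one of them). There is also no antiholomorphic involution in play here whose fixed locus would single out the arc, so the analogy with that part of Proposition~\ref{prop-chart} does not apply.

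The fix is to build the exact normalisation into the curve from the start rather than trying to recover it afterwards. Define $\pi\:EX^*\to\C$ by $\pi(x)=u.x+iv.x$; since $(u,v)$ is an orthonormal basis of $T_aEX^*$, the differential of $\pi$ at $a$ is an isomorphism, so $\pi$ is a local real-analytic diffeomorphism, and one sets $c(t)=\pi^{-1}(t)$ for small real $t$. This $c$ satisfies $u.c(t)=t$ and $v.c(t)=0$ exactly, and $c'(0)=u\neq 0$, so Proposition~\ref{prop-chart} applies directly and no reparametrisation is needed. (This is the paper's proof.) With this construction your uniqueness argument goes through: if $\phi,\phi'$ both satisfy the normalisation, then for small real $t$ both $\phi(t)$ and $\phi'(t)$ are the unique point near $a$ with $\pi$-value $t$, so they agree on a real interval and hence as germs by the Identity Principle; this is cleaner than arguing that the transition map $h$ preserves $\R$, which is the step your version leaves unjustified.
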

\begin{proof}
 We can define a real analytic map $\pi\:EX^*\to\C$ by
 $\pi(x)=u.x+iv.x$ (so $\pi(a)=0$).  This induces an isomorphism
 $T_aEX^*\to\C$, so it is locally invertible, and we can define
 $c(t)=\pi^{-1}(t)$ for small $t\in\R$.  Proposition~\ref{prop-chart}
 now gives a holomorphic map $\phi$ that is defined on a neighbourhood
 of $0$ in $\C$ and extends $c$.
\end{proof}

It is again fairly straightforward to find polynomial approximations
to $\phi$, of any desired order.  We can start with
$\phi_1(t+is)=a+tu+sv$.  Suppose we have defined $\phi_d$ of degree
$d$ such that
\begin{align*}
 \rho(\phi_d(t+is)) &= 1 \pmod{(s,t)^{d+1}} \\
 g(\phi_d(t+is)) &= 0 \pmod{(s,t)^{d+1}} \\
 \ip{u,\phi_d(t)} &= t \pmod{(s,t)^{d+1}} \\
 \ip{v,\phi_d(t)} &= 0 \pmod{(s,t)^{d+1}} \\
 \ip{\partial_t\phi_d(t+is),\partial_u\phi_d(t+is)}
  &= 0 \pmod{(s,t)^d} \\
 \ip{\partial_t\phi_d(t+is),\partial_t\phi_d(t+is)} -
 \ip{\partial_u\phi_d(t+is),\partial_u\phi_d(t+is)}
  &= 0 \pmod{(s,t)^d}.
\end{align*}
(Note that we assume a lower degree of accuracy in the last two
conditions, which is natural because they involve a derivative.)  We
then take
\[ p_{d+1}(t+is)=p_d(t+is) + \sum_{j=0}^{d+1}\al_jt^js^{d+1-j}, \]
where $\al_j\in\R^4$.  It is not hard to see that $p_{d+1}$ satisfies
the required conditions with one more degree of accuracy iff the
coefficients $\al_j$ satisfy a certain system of inhomogeneous linear
equations.  The abstract theory tells us that these equations must be
uniquely solvable, and of course that is easily verified in any
explicit computation.  We have implemented a version of this using
numerical approximations for the power series coefficients.  (As
usual, we work with 100 digit precision by default.)  If \mcode+x0+ is a
point in $EX^*$, we can enter the following to find a chart of
polynomial degree $20$:
\begin{mcodeblock}
   C := `new/E_chart`():
   C["centre_set_numeric",x0]:
   C["centre_set_degree_numeric",20]:
   C["p"]([t,u]);
\end{mcodeblock}

We have found charts centred at many different points of $EX^*$.  The
real problem is to patch them together by some kind of analytic
continuation.  The only way we have succeeded in doing this is via a
hyperbolic rescaling of the metric, as we will discuss in
Section~\ref{sec-rescaling}.  (We have attempted various more direct
approaches to numerical analytic continuation, but the results were
not robust, and the literature suggests that we should not expect
otherwise.)

\subsection{Torus quotients}
\lbl{sec-torus-quotients}

We saw in Section~\ref{sec-quotients} that for any cromulent surface
$X$, the quotients $X/\ip{\mu}$ and $X/\ip{\lm\mu}$ are tori.  In
Section~\ref{sec-ellquot}, we gave a detailed analysis of these
quotients for the projective family.  In the present section, we study
$EX^*/\ip{\mu}$ and $EX^*/\ip{\lm\mu}$.  If we were very optimistic we
might hope for explicit conformal isomorphisms between these quotients
and suitable elliptic curves, but we have not achieved that.  However,
we will write down reasonably simple formulae for homeomorphisms from
$EX^*/\ip{\mu}$ and $EX^*/\ip{\lm\mu}$ to $S^1\tm S^1$, which have all
the expected equivariance properties and homological properties, and
which do not deviate too far from being conformal.

\begin{definition}\lbl{defn-AR}
 We recall from Proposition~\ref{prop-roothalf-fundamental} that
 $|y_1|\leq 1$ and $|y_2|\leq 1/\rt$ on $EX^*$, so we can define functions
 $r_1,r_2\:EX^*\to\R^+$ by $r_1=\sqrt{1-y_2/\rt}$ and
 $r_2=\sqrt{1+y_2/\rt}$.  We write $AR$ for the extension of
 $A=\CO_{EX^*}$ generated by $r_1$ and $r_2$, and note that $AR$ is
 freely generated by the set
 \[ \{x_1^ix_2^jr_1^kr_2^l\st 0\leq i,j,k,l\leq 1\} \]
 as a module over $\R[y_1,y_2]$.  We also write $KR$ for the field of
 fractions of $AR$, which is freely generated by the same set as a
 module over $\R(y_1,y_2)$.
\end{definition}

\begin{remark}\lbl{rem-KR-subfields}
 The field $KR$ has automorphisms $\al_1$ and $\al_2$ which act as the
 identity on the subring $A$ and satisfy
 \begin{align*}
  \al_1(r_1) &=    -r_1 & \al_1(r_2) &= \pp r_2 \\
  \al_2(r_1) &= \pp r_1 & \al_2(r_2) &= -r_2.
 \end{align*}
 The group $G'=\ip{G,\al_1,\al_2}$ has order $64$, and it acts on
 $KR$.  Some of the work in this section and the following section can
 be interpreted in terms of the Galois theory of this action.  There
 is code related to this in the files
 \fname+embedded/roothalf/group64.mpl+ and
 \fname+embedded/roothalf/KR_subfields.mpl+.
\end{remark}

\begin{definition}\lbl{defn-E-to-TTC}
 We define $\tau_1\:EX^*\to S^1\subset\C$ by
 \[ \tau_1(x) =
     \frac{y_1(1-y_2/\rt)-1/\rt+ix_1}{(1-y_1/\rt)r_1}
 \]
 A straightforward calculation, using the relations in
 Section~\ref{sec-E-functions}, shows that $|\tau_1(x)|^2=1$.

 We then define $\tau_i\:EX^*\to S^1$ for $2\leq i\leq 4$ by
 \[ \tau_2 = \tau_1\lm^{-1} \hspace{4em}
    \tau_3 = \tau_1\mu^{-1} \hspace{4em}
    \tau_4 = \tau_1(\lm\mu)^{-1}.
 \]
 We define $q\:EX^*\to(S^1)^4$ by
 \[ q(x) = (\tau_1(x),\tau_2(x),\tau_3(x),\tau_4(x)). \]
\end{definition}
\begin{remark}
 Although we have not succeeded in formulating a precise theorem in
 this direction, extensive experimental investigation suggests that
 the map $\tau_1$ is much simpler and better behaved than any other
 map in the same homotopy class.
\end{remark}

\begin{remark}\lbl{rem-q-denom}
 We can define a homeomorphism $S^1\to\R_\infty$ by
 $x+iy\mapsto(1-x)/y$, or equivalently
 $e^{i\tht}\mapsto\tan(\tht/2)$.  Composing $\tau_1$ with this gives
 the map $\tau_1^*\:EX^*\to\R_\infty$ with formula
 \[ \tau_1^*(x) = (1/\rt+r_1)(1-r_1y_1)/x_1. \]
 More explicitly, if $x\in EX^*$ is such that the numerator and
 denominator of the above fraction are not both zero, then the
 fraction can be interpreted in an obvious way as an element of
 $\R_\infty$, and that element is $\tau_1^*(x)$.  However, for
 $x\in C_6$, the numerator and denominator both vanish, so we can only
 evaluate $\tau_1^*(x)$ by first simplifying $\tau_1(x)$, or by taking
 a limit over nearby points.  For some purposes it is convenient to
 work with $\tau_1^*$ instead of $\tau_1$, but these kinds of
 degenerate cases cause significant trouble.  We also put
 \[ q^*(x) = (\tau_1^*(x),\tau_2^*(x),\tau_3^*(x),\tau_4^*(x))
     \in (\R_\infty)^4.
 \]
\end{remark}
\begin{remark}
 In Maple, we also need to distinguish explicitly between the circle
 in $\C$ and the circle in $\R^2$, and thus between the $4$-torus in
 $\C^4$ and the $4$-torus in $\R^8$.  We thus have two versions of
 $q$, namely \mcode+E_to_TTC+ (with values in $\C^4$) and
 \mcode+E_to_TT+ (with values in $\R^8$).  We also have
 \mcode+E_to_TTP+, corresponding to $q^*$.  There are functions
 \mcode+TT_to_TTC+ and so on, which convert between these
 representations.
\end{remark}

\begin{proposition}\lbl{prop-E-to-TTC}
 The map $q\:EX^*\to(S^1)^4$ is equivariant with
 respect to the $G$-action on $(S^1)^4$ given by
 \begin{align*}
  \lm(z) &= (\ov{z_2},z_1,\ov{z_4},z_3) \\
  \mu(z) &= (z_3,\ov{z_4},z_1,\ov{z_2}) \\
  \nu(z) &= (z_1,\ov{z_2},z_3,\ov{z_4}).
 \end{align*}
 Moreover, the induced map
 \[ q_* \: H_1EX^* \to H_1((S^1)^4) = \Z^4 \]
 is the same as the isomorphism $\psi$ from
 Proposition~\ref{prop-homology}.
\end{proposition}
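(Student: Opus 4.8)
The plan is to verify the equivariance claim by a direct but organized computation, and then to deduce the homological claim by reducing everything to the action on specific curves. For equivariance, I would first observe that $\tau_2,\tau_3,\tau_4$ are defined by postcomposing $\tau_1$ with group translations, so several of the required identities hold essentially by definition. Specifically, writing $\tau_1\circ\gm^{-1}$ for each of the three relevant $\gm$, the definitions of $\tau_2,\tau_3,\tau_4$ immediately give a group of relations of the form $\tau_i(\gm(x))=\tau_{\gm\cdot i}(\ldots)$ once one knows how $\gm$ acts on the index set. The genuine computational content is therefore: (i) the single identity $|\tau_1(x)|^2=1$ on $EX^*$, which follows by substituting the relations of Section~\ref{sec-E-functions} (the defining relations $x_1^2=u_1$, $x_2^2=u_2$, etc., together with $r_1^2=1-y_2/\rt$, $r_2^2=1+y_2/\rt$); and (ii) the precise effect of $\nu$ (say) on $\tau_1$, from which the effect of $\mu$ and $\lm$ on all four coordinates can be bootstrapped using the relations among $\tau_1,\ldots,\tau_4$ and the group relations in $G$. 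For $\nu$: from Section~\ref{sec-E-functions} we have $\nu^*(x_1)=x_1$, $\nu^*(y_1)=y_1$, $\nu^*(y_2)=y_2$, so $\nu^*(r_i)=r_i$, and plugging these into the formula for $\tau_1$ gives $\tau_1(\nu(x))=\overline{\tau_1(x)}$ directly (since only $x_1$ appears, and it appears in the imaginary part with a sign that flips under conjugation — wait, $x_1$ is fixed, so actually one checks that $\nu$ fixes $\tau_1$; I would recompute carefully, but the point is it is a finite substitution). Then $\tau_2(\nu(x))=\tau_1(\lm^{-1}\nu(x))$, and using $\lm^{-1}\nu = \nu\lm$ (from $(\lm\nu)^2=1$, i.e. $\nu\lm\nu=\lm^{-1}$, so $\lm^{-1}\nu=\nu\lm$) one reduces to applying $\nu$ to $\tau_1(\lm(x))$; iterating gives all entries. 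The same bootstrapping handles $\lm$ and $\mu$.

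For the homological statement, the strategy is to use that $H_1(EX^*)$ is generated by the classes $[c_k]$, and indeed Proposition~\ref{prop-homology} exhibits $\{[c_5],[c_6],[c_7],[c_8]\}$ as a basis with $\psi$ sending $[c_5]\mapsto(1,0,0,0)$, etc. So it suffices to compute $q_*[c_k]$ for $k=5,6,7,8$ (and ideally check consistency on $c_0,\ldots,c_4$). For this I would restrict $q$ to each curve $c_k\:\R/2\pi\Z\to EX^*$ and compute the winding number of each coordinate $\tau_i\circ c_k\:\R/2\pi\Z\to S^1$. The key simplification is that $q$ is $G$-equivariant (just proved) and the curves $c_6,c_7,c_8$ are $G$-translates of $c_5$ (namely $c_6=\lm c_5$, $c_7=\mu c_5$, $c_8=\lm\mu c_5$, from Definition~\ref{defn-E-curves}); since the $G$-action on $(S^1)^4$ is by coordinate permutations composed with conjugations (i.e. it acts on $H_1=\Z^4$ by signed permutation matrices), the four vectors $q_*[c_5],q_*[c_6],q_*[c_7],q_*[c_8]$ are related by those signed permutations. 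Hence I really only need to compute the winding numbers of $\tau_1\circ c_5,\ldots,\tau_4\circ c_5$ explicitly. Using the explicit formula $c_5(t)=(-\sin t,0,2\rt,\cos t-1)/\sqrt{10-2\cos t}$ from the $a=1/\rt$ list, one has $x_1,x_3,x_4$ as explicit trigonometric functions and $y_1=x_3$, $y_2=(x_2^2-x_1^2)/\rt-\tfrac32 x_3x_4$; substituting into $\tau_1$ gives an explicit unit-complex-valued function of $t$ whose winding number is read off (it should be $+1$ for $\tau_1$ and $0$ for the others, matching $\psi(c_5)=(1,0,0,0)$). Then equivariance propagates this to $c_6,c_7,c_8$, and one checks the answers against $\psi(c_6),\psi(c_7),\psi(c_8)$ from Proposition~\ref{prop-homology}. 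Since $\{[c_5],\ldots,[c_8]\}$ is a basis and $q_*$ agrees with $\psi$ on it, $q_*=\psi$.

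The main obstacle will be the bookkeeping in step (ii) of the equivariance argument — keeping the conjugations straight when composing $\tau_1$ with various $\gm^{-1}$, because the formula for $\tau_1$ mixes $x_1$ (which transforms by a character under some elements) with $y_1,y_2,r_1,r_2$ (which transform differently), so a sloppy sign will propagate. A secondary obstacle is confirming the winding-number computation for $\tau_1\circ c_5$ rigorously rather than just numerically: one wants to exhibit the path in $S^1$ explicitly and show it is homotopic to $e^{it}$, e.g. by checking that the real or imaginary part has exactly the right number of sign changes with the right orientation, or by computing $\tfrac{1}{2\pi i}\oint d\log\tau_1$ in closed form using the elementary integral for $\tau_1^*(c_5(t))=(1/\rt+r_1)(1-r_1 y_1)/x_1$ (the $\R_\infty$-valued version from Remark~\ref{rem-q-denom} may be more convenient here, counting signed crossings of $\infty$). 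Everything else is finite substitution into polynomial relations already recorded in Section~\ref{sec-E-functions}, and can be discharged (as the Maple-check annotation indicates) mechanically.
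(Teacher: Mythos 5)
Your proposal follows essentially the same route as the paper: equivariance is verified by computing the effect of generators on $\tau_1$ (the paper checks $\tau_1(\nu(x))=\tau_1(x)$ and $\tau_1(\lm^2(x))=\ov{\tau_1(x)}$ and bootstraps via the definitions of $\tau_2,\tau_3,\tau_4$ and the structure of $G$), and the homological claim is reduced to winding numbers on the basis $\{[c_5],\dots,[c_8]\}$, propagated by the group action exactly as you describe. The only cosmetic difference is in certifying the winding numbers: the paper bounds $|\tau_1(c_k(t))-e^{it}|$ (resp.\ $|\tau_1(c_k(t))\mp 1|$) strictly below $2$, so a straight-line homotopy in $\C^{\tm}$ gives the winding number, rather than your sign-counting or $\oint d\log\tau_1$ computation.
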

\begin{proof}
 First, using the formulae
 \begin{align*}
  \nu(x) &= (x_1,-x_2,x_3,x_4) \\
  \lm^2(x) &= (-x_1,-x_2,x_3,x_4)
 \end{align*}
 We see that $\tau_1(\nu(x))=\tau_1(x)$
 and $\tau_1(\lm^2(x))=\ov{\tau_1(x)}=\tau_1(x)^{-1}$.  Using this and
 the structure of $G$ we deduce that $q$ is equivariant.  Next, recall
 that the classes $\{[c_k]\st 5\leq k\leq 8\}$ give a basis for
 $H_1EX^*$, whereas the inclusions of the axes give a basis
 $\{e_k\st 1\leq k\leq 4\}$ for $H_1((S^1)^4)$.  Recall also that if
 $u,v\:S^1\to S^1$ have $|u-v|<2$ everywhere, then $u$ and $v$ are
 homotopic (by a straight line homotopy) in $\C^\tm$, so $u$ and $v$
 have the same winding numbers.  By simplification and plotting, one
 can check that
 \begin{align*}
  |\tau_1(c_5(t))-e^{it}| &\leq 0.14 \\
  |\tau_1(c_6(t))-1| &= 0 \\
  |\tau_1(c_7(t))+1| &\leq 0.23 \\
  |\tau_1(c_8(t))+1| &= 0.
 \end{align*}
 It follows that the winding numbers of $\tau_1$ composed with
 $c_5,\dotsc,c_8$ are $1,0,0,0$.  Using the group action, we deduce
 that $q_*[c_5]=e_1$, and then that $q_*[c_{4+k}]=e_k$ for
 $1\leq k\leq 4$.  This proves that $q_*\:H_1EX^*\to H_1((S^1)^4)$ is
 an isomorphism.
 \begin{checks}
  embedded/roothalf/torus_quotients_check.mpl: check_torus_T()
 \end{checks}
\end{proof}

\begin{proposition}\lbl{prop-q-inj}
 The map $q\:EX^*\to(S^1)^4$ is injective.
\end{proposition}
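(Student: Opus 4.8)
The plan is to show that $q$ separates points by a combination of global $G$-equivariance and an explicit local analysis on the fundamental domain $F_{16}$. Since $F_{16}$ is a retractive fundamental domain for $G$ (Proposition~\ref{prop-F-sixteen}), every point of $EX^*$ is $G$-equivalent to a point of $F_{16}$, so it suffices to prove two things: (i) if $x,x'\in F_{16}$ and $q(x)=q(x')$ then $x=x'$; and (ii) if $x\in F_{16}$, $\gm\in G\sm\{1\}$, and $q(\gm(x))=q(x)$, then in fact $\gm(x)=x$ (so that $\gm(x)$ and $x$ are genuinely the same point, not merely identified by $q$). Part~(ii) will follow from the equivariance formulae in Proposition~\ref{prop-E-to-TTC}: the equation $q(\gm(x))=q(x)$ forces $\gm$ (acting on $(S^1)^4$ by the stated formulae) to fix the point $q(x)$, and one checks that for each $\gm\neq 1$ the fixed locus of $\gm$ on $(S^1)^4$ is a proper subtorus, whose preimage under $q$ meets $F_{16}$ only in points where $x$ actually lies in the relevant fixed set $EX^*{}^\gm$ — and on those sets, which are contained in $\bigcup_k C_k$ by Proposition~\ref{prop-slices-a}, one verifies $\gm(x)=x$ directly using the explicit parametrisations.

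For part~(i), the key is to recover the ambient coordinates $x_1,x_2,x_3,x_4$ from $q(x)$ when $x\in F_{16}$. First I would use $\tau_1$ and $\tau_3=\tau_1\mu^{-1}$ to recover the $G$-invariants $z_1,z_2$ (equivalently $y_1,y_2$, which are nonnegative on $F_{16}$). Concretely, from the formula
\[
 \tau_1(x) = \frac{y_1(1-y_2/\rt)-1/\rt+ix_1}{(1-y_1/\rt)r_1}
\]
and the analogous formula for $\tau_3(x)=\tau_1(\mu(x))$, one solves for $x_1^2$ and $x_2^2$ (hence for $u_1,u_2$, hence for $y_1,y_2$ via the identities in Section~\ref{sec-E-functions}); the point is that the real and imaginary parts of $\tau_1(x)$ together with $|\tau_1(x)|=1$ give two real equations, and $\mu$ swaps the roles of $x_1$ and $x_2$ while fixing $y_1,y_2$, so $\tau_1$ and $\tau_3$ together determine $(y_1,y_2)$ and $(x_1^2,x_2^2)$. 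Since $x_1,x_2,x_3\geq 0$ on $F_{16}$ and $x_4=-x_3y_2$ with $x_3=y_1$, knowing $(y_1,y_2,x_1^2,x_2^2)$ pins down $x$ uniquely. The one subtlety is the degenerate locus where the denominator $(1-y_1/\rt)r_1$ vanishes or where the numerator and denominator both vanish (as flagged in Remark~\ref{rem-q-denom} for points on $C_6$); there I would instead use a different coordinate among $\tau_1,\dotsc,\tau_4$ whose formula is nondegenerate at that point — the four curves $C_5,C_6,C_7,C_8$ are permuted by $G$, and at any given boundary point at least one of the four maps $\tau_i$ is regular.

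The main obstacle I anticipate is precisely this bookkeeping of degenerate cases: making sure that at every point of $F_{16}$ — including the corners $v_0,v_3,v_6,v_{11}$ and the boundary arcs — some explicit algebraic combination of the four coordinates $\tau_i(x)$ recovers $x$, and checking that the recovery is genuinely single-valued (no spurious sign ambiguities once the constraints $x_1,x_2,x_3\geq 0$, $x_4=-x_3y_2$, $\rho(x)=1$, $g_0(x)=0$ are imposed). This is a finite but somewhat intricate case analysis; all the individual identities are polynomial and can be verified by the methods already used throughout Section~\ref{sec-E} (reduction modulo the Gröbner basis defining $EX^*$), so I would expect it to go through, with the Maple check \mcode+check_torus_T()+ or a companion routine confirming the algebra. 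The rest — equivariance, the winding-number computation underlying part~(ii)'s identification of fixed loci — is already in hand from Proposition~\ref{prop-E-to-TTC}.
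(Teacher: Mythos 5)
Your strategy — reduce to the fundamental domain via equivariance, then recover coordinates locally — is genuinely different from the paper's, but as written it has a logical gap in the reduction, and the substantive algebra is deferred rather than done. The gap: writing $a=\al(x)$ and $b=\bt(x')$ with $x,x'\in F_{16}$, the hypothesis $q(a)=q(b)$ becomes, via Proposition~\ref{prop-E-to-TTC}, $q(x)=\gm\cdot q(x')$ with $\gm=\al^{-1}\bt$. Your~(i) covers the case $\gm=1$ and your~(ii) covers the case $x=x'$, but the mixed case $\gm\neq 1$ and $x\neq x'$ is covered by neither. To close it you would need~(i) strengthened to injectivity of $x\mapsto G\cdot q(x)$ on $F_{16}$, i.e.\ you must recover the invariants $z_1,z_2$ from $q(x)$ by functions that are themselves $G$-invariant on $(S^1)^4$; your proposed recipe (solving for $x_1^2,x_2^2$ from $\tau_1,\tau_3$ alone and then invoking the sign conventions of $F_{16}$) is not manifestly of that form. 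Part~(ii) also asserts, for each of the fifteen nontrivial $\gm$, that $\stab_G(q(x))=\stab_G(x)$; this is checkable but is real work that the sketch does not carry out.

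The paper avoids all of this by never leaving the global setting. The degenerate-case bookkeeping you identify as the main obstacle is exactly what Remark~\ref{rem-q-inj} calls ``unpleasant,'' and the actual proof is engineered so that it never arises: it exhibits an explicit chain of identities expressing $z_1$, $z_2$, $r_1r_2$, $r_1+r_2$, $y_1=x_3$, $x_4$, $y_2$, $r_1$, $r_2$, and finally $x_1$ and $x_2$ themselves as continuous functions of $q(x)$ valid on all of $EX^*$, with every denominator bounded away from zero (using $0\leq z_1\leq 1$, $0\leq z_2\leq 1/2$, so $r_1,r_2>0$) and every square root taken of a strictly positive function. The signs of $x_1$ and $x_2$ come for free from $x_1=\operatorname{Im}(\tau_1)\,r_1(1-y_1/\rt)$ and $x_2=\operatorname{Im}(\tau_2)\,r_2(1-y_1/\rt)$, whose second factors are positive. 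Once each $x_i$ is a single-valued function of $q(x)$ globally, injectivity is immediate, with no fundamental domain, no equivariance argument, and no case analysis. If you notice that the imaginary parts of $\tau_1,\tau_2$ hand you the signed coordinates, you can drop the restriction to $F_{16}$ entirely, which simultaneously dissolves the reduction gap; what remains is to produce a globally nondegenerate recovery of $y_1,y_2$, which is precisely the content of the paper's chain of identities.
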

\begin{proof}
 Let $Q$ denote the image of the map
 \[ q^*\:C((S^1)^4,\R) \to C(EX^*,\R), \]
 and let $Q^+$ denote the set of strictly positive functions in $Q$.
 Note that if $f\in Q^+$ we have $f=f_0\circ q$ for some
 $f_0\in C((S^1)^4,\R)$, and by compactness there exists constant
 $\ep>0$ such that $f\geq\ep$.  If we put
 $f_1=\max(\ep,f_0)\in C((S^1)^4,\R)$ then we still have
 $f=f_1\circ q$, and from this it is clear that the functions
 $1/f=(1/f_1)\circ q$ and $\sqrt{f}=\sqrt{f_1}\circ q$ also lie in
 $Q^+$.

 We regard $x_1,\dotsc,x_4$ and $y_1,y_2,z_1,z_2,r_1,r_2$ as functions
 on $EX^*$; we need to show that they lie in $Q$.  We write
 \[ q(x) = (u_1+iu_2,u_3+iu_4,u_5+iu_6,u_7+iu_8); \]
 this defines elements $u_1,\dotsc,u_8\in Q$.  The argument can be
 summarised by the following list of equations.
 \[
  \begin{array}{rclcl}
   a_1 &=& (2-u_1u_5-u_2u_6-u_3u_7-u_4u_8)/4 &=& z_1/(2-z_1) \\
   z_1 &=& 2a_1/(1+a_1) \in Q \\
   a_2 &=& (u_1^2+u_3^2+u_5^2+u_7^2)/4 \\
   a_3 &=& (2a_1+a_2)/((1+a_1)(1+2a_1)) &=& 1/(2-z_2) \\
   z_2 &=& 2 - 1/a_3 \in Q \\
   r_1r_2 &=& \sqrt{1-z_2/2} \in Q^+ \\
   r_1+r_2 &=& \sqrt{2(1+r_1r_2)} \in Q^+ \\
   a_4 &=& 1-z_2+r_1r_2 \in Q^+ \\
   a_5 &=& (u_1+u_3-u_5-u_7)(r_1+r_2)r_1r_2/(2(1+a_1)a_4) &=& y_1 = x_3\\
   a_6 &=& (u_3u_5-u_1u_7)r_1r_2(1-z_1/2)/2 &=& -y_1y_2 = x_4 \\
   a_7 &=& (1-z_2/2)((u_1^2-u_3^2)(1-y_1/\rt)^2+(u_5^2-u_7^2)(1+y_1/\rt)^2) \\
   a_8 &=& (u_1u_5+u_2u_6-u_3u_7-u_4u_8)/(1+a_1) \\
   a_9 &=& (a_7+a_8)/(\rt(1+z_1z_2)) &=& y_2 \\
   r_1 &=& \sqrt{1-y_2/\rt} \in Q^+ \\
   r_2 &=& \sqrt{1+y_2/\rt} \in Q^+ \\
   a_{10} &=& u_2r_1(1-y_1/\rt) &=& x_1 \\
   a_{11} &=& u_4r_2(1-y_1/\rt) &=& x_2.
  \end{array}
 \]

 The equations with $a_i$ on the left are definitions.  In each case
 they define $a_i$ in terms of functions that are already known to lie
 in $Q$, so $a_i\in Q$.  All other equations are claims that can be
 verified by straightforward (but sometimes lengthy) calculation in
 the ring $AR$.  Along the way, we need to verify that certain
 denominators are strictly positive.  By applying the Cauchy-Schwartz
 inequality to the unit vectors $(u_{2i-1},u_{2i})$, we see that $a_1$
 takes values in $[0,1]$ (as does $a_2$); this validates the
 definition of $a_3$.  We know that $0\leq z_1=y_1^2\leq 1$ and
 $0\leq z_2=y_2^2\leq 1/2$, and thus that $r_1,r_2>0$; this validates
 all other denominators.  We also see that $r_1+r_2>0$, and it is
 straightforward to check that $(r_1+r_2)^2=2(1+r_1r_2)$, so
 $r_1+r_2=\sqrt{2(1+r_1r_2)}$.  At the end of the chain of equations we
 have seen that the functions $x_i$ all lie in $Q$, and this clearly
 implies that $q$ is injective.
 \begin{checks}
  embedded/roothalf/torus_quotients_check.mpl: check_torus_T()
 \end{checks}
\end{proof}

\begin{remark}\lbl{rem-q-inj}
 We can give simpler formulae if we are willing to use denominators
 that sometimes vanish.  Generically, one can check that
 \begin{align*}
  x_1 &= -\rt u_2u_6/(u_1u_6+u_2u_5) \\
  x_2 &= -\rt u_4u_8/(u_3u_8+u_4u_7) \\
  x_3 &= \rt(u_2-u_6)/(u_2+u_6) = \rt(u_4-u_8)/(u_4+u_8) \\
  y_2 &= \rt(\al-\bt)/(\al+\bt),
 \end{align*}
 where
 \begin{align*}
  \al &= (u_1-u_5)^2(1-u_3u_7-u_4u_8)^2 \\
  \bt &= (u_3-u_7)^2(1-u_1u_5-u_2u_6)^2.
 \end{align*}
 One can then check that $q$ is injective by doing some additional
 work to cover the cases where one or more of the above denominators
 are zero.  However, this is unpleasant.
\end{remark}
\begin{remark}
 The proof of Proposition~\ref{prop-q-inj} implicitly gives a map from
 the image of $q$ back to $EX^*$.  This is implemented in Maple as
 \mcode+TTC_to_E+.  The simpler function defined in
 Remark~\ref{rem-q-inj} is \mcode+TTC_to_E_generic+.
\end{remark}

\begin{definition}\lbl{defn-qp-qm}
 We define $q_+,q_-\:EX^*\to S^1\tm S^1$ by
 \begin{align*}
  q_+(x) &= (  -\tau_1(x)\tau_3(x),\;  -\tau_2(x)\tau_4(x)^{-1}) \\
  q_-(x) &= (  -\tau_1(x)\tau_4(x),\;  -\tau_2(x)\tau_3(x))
 \end{align*}
\end{definition}
In Maple these are \mcode+E_to_TCp+ and \mcode+E_to_TCm+.

In terms of the variables $x_1,x_2,y_1,y_2$ one can check that
\begin{align*}
 q_+(x)_1 &= \frac{2ix_1 + y_1^2(1-\rt y_2)/\rt - y_2(1-y_1^2/2)}{
                   \rt(1-y_1^2/2)(1-y_2/\rt)} \\
 q_+(x)_2 &= \frac{2ix_2y_1 + y_1^2(1+\rt y_2) - (1-y_1^2/2)}{
                   1-y_1^2/2} \\
 q_-(x)_1 &= -\frac{(ix_1+y_1(1-y_2/\rt) - 1/\rt)
                    (ix_2-y_1(1+y_2/\rt) - 1/\rt)}{
                    (1-y_1^2/2)\sqrt{1-y_2^2/2}} \\
 q_-(x)_2 &= -\frac{(ix_1-y_1(1-y_2/\rt) - 1/\rt)
                    (ix_2+y_1(1+y_2/\rt) - 1/\rt)}{
                    (1-y_1^2/2)\sqrt{1-y_2^2/2}}.
\end{align*}

We will prove the following result:

\begin{proposition}\lbl{prop-qp-qm}
 The map $q_+$ induces a homeomorphism $EX^*/\ip{\mu}\to(S^1)^2$,
 and the map $q_-$ induces a homeomorphism $EX^*/\ip{\lm\mu}\to(S^1)^2$.
\end{proposition}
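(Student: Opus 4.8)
The plan is to reduce the statement about $q_+$ (and then $q_-$ by an entirely parallel argument) to three ingredients: $\mu$-invariance of $q_+$, surjectivity onto $(S^1)^2$, and injectivity of the induced map on $EX^*/\ip{\mu}$. All three should follow from Propositions~\ref{prop-E-to-TTC} and~\ref{prop-q-inj} together with routine computations in the ring $AR$ of Definition~\ref{defn-AR}.

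First I would check $\mu$-invariance. From Proposition~\ref{prop-E-to-TTC} the action of $\mu$ on $(S^1)^4$ is $\mu(z)=(z_3,\ov{z_4},z_1,\ov{z_2})$, so $\mu$ sends $\tau_1\mapsto\tau_3$, $\tau_2\mapsto\tau_4^{-1}$, $\tau_3\mapsto\tau_1$, $\tau_4\mapsto\tau_2^{-1}$. Hence $q_+(\mu(x))=(-\tau_3\tau_1,\;-\tau_4^{-1}\tau_2)=(-\tau_1\tau_3,\;-\tau_2\tau_4^{-1})=q_+(x)$, so $q_+$ factors through $EX^*/\ip{\mu}$. (The same computation with $\lm\mu(z)=(\ov{z_4},z_3,\ov{z_2},z_1)$ shows $q_-$ is $\lm\mu$-invariant.) Next, surjectivity: since $EX^*/\ip{\mu}$ and $(S^1)^2$ are both compact connected oriented surfaces, it suffices to show the induced map has nonzero degree, and for this I would compute the induced homomorphism on $H_1$. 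Proposition~\ref{prop-E-to-TTC} identifies $q_*\:H_1EX^*\to H_1((S^1)^4)=\Z^4$ with the isomorphism $\psi$ of Proposition~\ref{prop-homology}. The maps $z\mapsto -z_1z_3$ and $z\mapsto -z_2z_4^{-1}$ on $(S^1)^4$ induce on $H_1$ the homomorphisms $n\mapsto n_1+n_3$ and $n\mapsto n_2-n_4$, which is exactly the map $\tht_+$ of Proposition~\ref{prop-elliptic-homology}. That proposition tells us $\tht_+\psi^{-1}$ is (up to the isomorphism $\psi_+$) the surjection $(q_+)_*\:H_1(X)\twoheadrightarrow H_1(X/\ip{\mu})$, so the map $EX^*/\ip{\mu}\to(S^1)^2$ induces a surjection, hence an isomorphism, on $H_1$; in particular it has degree $\pm1$ and is surjective.

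For injectivity I would argue as in the proof of Proposition~\ref{prop-q-inj}. Let $Q_+$ denote the image of $q_+^*\:C((S^1)^2,\R)\to C(EX^*,\R)$ and $Q_+^+$ the strictly positive functions in it; as before, $Q_+^+$ is closed under reciprocals and square roots. The task is to show that the $\mu$-invariant subring of $C(EX^*,\R)$ generated by $x_1,\dots,x_4$ — equivalently, a generating set for the functions on $EX^*/\ip{\mu}$, for instance $x_1^2, x_2, x_3, x_4$ together with $x_1x_2$ (inspecting the action $\mu^*(x_1)=x_1,\mu^*(x_2)=-x_2,\mu^*(x_3)=-x_3,\mu^*(x_4)=-x_4$ from Section~\ref{sec-E-functions}) — all lie in $Q_+$. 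Writing $q_+(x)=(w_1+iw_2,\,w_3+iw_4)$ with $w_i\in Q_+$, I would produce an explicit chain of identities, analogous to the $a_i$-chain in Proposition~\ref{prop-q-inj}, recovering $z_1=y_1^2$, $z_2=y_2^2$, $r_1r_2=\sqrt{1-z_2/2}$, $x_3=y_1$, $x_4=-y_1y_2$, $y_2$, $r_1,r_2$, and finally $x_1^2=u_1$ and $x_2=u_4 r_2(1-y_1/\rt)$ etc.; each step is a polynomial identity in $AR$ plus a positivity check on a denominator (using $0\le z_1\le 1$, $0\le z_2\le1/2$). The $\mu$-invariant functions determine a point of $EX^*$ up to the $\mu$-action, so this shows the induced map $EX^*/\ip{\mu}\to(S^1)^2$ is injective; being a continuous bijection of compact Hausdorff spaces it is a homeomorphism. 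The argument for $q_-$ is word-for-word the same, using $\lm\mu$ in place of $\mu$, $\tht_-$ in place of $\tht_+$, and the displayed formulae for $q_-(x)_1,q_-(x)_2$ in terms of $x_1,x_2,y_1,y_2$.

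The main obstacle is the injectivity step: unlike the homological and equivariance parts, which are immediate from the quoted propositions, building the explicit chain of algebraic identities that expresses $x_1,\dots,x_4$ in terms of the components of $q_+$ (and checking the requisite positivity of denominators) is a genuine computation with no shortcut, exactly as in Proposition~\ref{prop-q-inj}; in practice one would verify it with the Maple routine \mcode{check_torus_T()}. A minor subtlety to watch is that one must use the honest maps $\tau_i$ (valued in $S^1$) rather than the $\R_\infty$-valued $\tau_i^*$, since the latter have denominators that vanish on $C_6$ and its translates, which would make the chain of identities degenerate at those points.
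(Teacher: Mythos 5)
Your handling of $\mu$-invariance and of surjectivity (via the induced map on $H_1$ plus degree theory) is sound, and isolating surjectivity through $\tht_+\circ\psi$ is a nice observation. But the heart of the proposition is injectivity of the induced map, and there your proposal has a genuine gap. You propose to imitate Proposition~\ref{prop-q-inj} by showing that the $\mu$-invariant functions lie in the image of $q_+^*$, but you defer the entire chain of identities --- which is the whole content of the argument --- and you misidentify what must be recovered: since $\mu^*(x_2)=-x_2$, $\mu^*(x_3)=-x_3$, $\mu^*(x_4)=-x_4$, the functions $x_2,x_3,x_4,x_1x_2$ in your proposed generating set are \emph{anti}-invariant, not invariant. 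The invariant ring is $\R[z_1,y_2]\{1,\;x_1,\;x_2y_1,\;x_1x_2y_1\}$, generated by $x_1$, $y_2$, $z_1=x_3^2$ and $x_2y_1=x_2x_3$, and these would have to be recovered from only the two circle-valued components of $q_+$ (four real functions subject to two relations), a much more delicate inversion than in Proposition~\ref{prop-q-inj}, where all eight components of $q$ are available. Nothing in the proposal exhibits this inversion or the attendant positivity checks. (There is also a slip in the equivariance check for $q_-$: $\lm\mu$ acts on $(S^1)^4$ by $z\mapsto(z_4,z_3,z_2,z_1)$, not $(\ov{z_4},z_3,\ov{z_2},z_1)$; with your formula $q_-$ would fail to be $\lm\mu$-invariant.)

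The paper's proof avoids this computation entirely by using the three results established immediately beforehand. Propositions~\ref{prop-qp-J} and~\ref{prop-qm-J} show that the Jacobian of $q_{\pm}$ is strictly positive away from the fixed points of $\mu$ (respectively $\lm\mu$), so for any target value $u$ other than the images of those fixed points the fibre $q_+^{-1}\{u\}$ is finite of cardinality equal to the degree of $q_+$; Proposition~\ref{prop-q-preimages} gives $q_+^{-1}\{(1,1)\}=\{v_0,v_1\}$, so the degree is two, whence every such fibre is a single free $\mu$-orbit; and the same proposition identifies the two exceptional fibres as the singleton orbits $\{v_2\}$ and $\{v_4\}$. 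This yields bijectivity of the induced map directly, with no new algebra. If you wish to keep your function-theoretic route you must actually construct and verify the inversion chain for the correct set of invariants; otherwise the Jacobian/degree argument is the intended and far shorter path.
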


We can get most of the way by a fairly straightforward argument.  We
will show that the Jacobian of $q_+$ (suitably interpreted) is
strictly positive away from the fixed points of $\mu$, and that the
Jacobian of $q_-$ is strictly positive away from the fixed points of
$\lm\mu$.  If the Jacobian of $q_+$ was strictly positive everywhere,
we would be able to conclude that $q_+$ was a covering map, and
everything would follow quite easily from the general theory of
coverings.  In reality we have only a branched covering, and we do not
have complex structures with respect to which $q_+$ is conformal, so
we cannot use the analytic theory of branched coverings.  We will need
some digressions to deal with this.

We first define the version of the Jacobian which we will use.
\begin{definition}
 Suppose we have a smooth map $u\:EX^*\to S^1$.  We then have a real
 vector field
 \[ D(u)=(\nabla u)/(iu) = \nabla(\text{arg}(u)) \]
 on $EX^*$.  Now suppose we have a smooth map $u\:EX^*\to(S^1)^2$.  We
 then define $\tj(u)\:EX^*\to\R$ by
 $\tj(u)(x)=\det(x,n(x),D(u_1),D(u_2))$, and we call this the
 \emph{Jacobian} of $u$.
\end{definition}

\begin{remark}\lbl{rem-jacobian-formula}
 By a tiny adaptation of Lemma~\ref{lem-jacobian}, we see that the
 induced map $u_*\:T_xEX^*\to T_{u(x)}(S^1)^2$ is an isomorphism
 provided that $\tj(u)(x)\neq 0$, and that it preserves orientations
 provided that $\tj(u)(x)>0$.  If we have an expression for $u_i$ as a
 function of the variables $x_i$, then we can calculate $\nabla u_i$
 by taking the vector of partial derivatives, and then projecting it
 orthogonally into the tangent space.  However, this orthogonal
 projection will just alter our vector by multiples of $x$ and $n(x)$,
 and this will leave the determinant $\tj(u)(x)$ unchanged.  Thus, we
 can just work with the original vector of partial derivatives.
\end{remark}

\begin{remark}
 Suppose that $u=v+iw$ with $v^2+w^2=1$, and put
 $u^*=(1-v)/w\:EX^*\to\R_\infty$, as in Remark~\ref{rem-q-denom}.
 Differentiating the relation $v^2+w^2=1$ gives
 $v\nabla(v)+w\nabla(w)=0$.  Using this one can check that
 \[ D(u) = \frac{\nabla u}{iu} =
     \frac{2\nabla u^*}{1+(u^*)^2}.
 \]
 This form is sometimes easier to use.
\end{remark}

\begin{proposition}\lbl{prop-qp-J}
 The Jacobian of $q_+$ is
 \[ \frac{4\rt(1-x_1^2)}{(1-y_1^2/2)(1-y_2/\rt)}. \]
 This is zero at the points $v_2=(1,0,0,0)$ and $v_4=-v_2$
 (which are precisely the fixed points of $\mu$).  It is strictly
 positive everywhere else in $EX^*$.
\end{proposition}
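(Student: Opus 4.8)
The plan is to verify the stated formula for the Jacobian by direct computation, and then to read off its zero locus and sign from that formula. The key tool is Remark~\ref{rem-jacobian-formula}, which tells us that $\tj(q_+)(x)=\det(x,n(x),\nabla(q_+)_1,\nabla(q_+)_2)$, where the gradients may be taken componentwise in the ambient coordinates $x_1,\dots,x_4$ without first projecting onto the tangent space, since the extra terms proportional to $x$ and $n(x)$ do not change the determinant. Moreover, since each $(q_+)_i$ is a map to $S^1\subset\C$, it is most convenient to use the alternative form $D((q_+)_i)=2\nabla((q_+)_i^*)/(1+((q_+)_i^*)^2)$ from the remark preceding Proposition~\ref{prop-qp-J}, where $(q_+)_i^*$ is the corresponding map to $\R_\infty$. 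So the first concrete step is to write $(q_+)_1$ and $(q_+)_2$ as explicit rational functions of $x_1,x_2,y_1,y_2$ — these are exactly the formulae displayed just before Proposition~\ref{prop-qp-qm} — pass to their $\R_\infty$-versions, and compute $D((q_+)_1)$ and $D((q_+)_2)$ as vectors in $\R^4$ (remembering that $y_1=x_3$ and $y_2=(x_2^2-x_1^2)/\rt - \tfrac32 x_3 x_4$, so that the chain rule introduces terms in all four ambient variables).

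Next I would form the $4\times 4$ determinant $\det(x,n(x),D((q_+)_1),D((q_+)_2))$ and reduce it modulo the ideal $(\rho(x)-1,g(x))$ using the normal-form machinery (\mcode{NF_x0}, or better \mcode{NF_y0}/\mcode{FNF_y0} for rational expressions) from Section~\ref{sec-E-functions}. The claim is that this collapses to $4\rt(1-x_1^2)/((1-y_1^2/2)(1-y_2/\rt))$. Two remarks make the bookkeeping manageable: the expression must be $G$-invariant only up to the $\mu$-action under which $q_+$ is equivariant, so it need not reduce to a function of $z_1,z_2$ alone, but it should be expressible via $x_1$, $y_1$, $y_2$; and the denominators $1-y_1^2/2$ and $1-y_2/\rt$ are exactly the quantities appearing in the denominators of $(q_+)_1$ and $(q_+)_2$, so they are manifestly nonzero on $EX^*$ by Proposition~\ref{prop-roothalf-fundamental} (which gives $|y_1|\le 1$ and $|y_2|\le 1/\rt$, with the relevant strict inequalities holding off a measure-zero set; in fact $1-y_1^2/2\ge 1/2>0$ and $1-y_2/\rt\ge 1-1/2>0$ everywhere). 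This is the step I expect to be the main obstacle: it is a lengthy exercise in polynomial algebra in four variables with a fairly intricate rational integrand, and without Maple one would be reduced to careful hand computation; the paper's \mcode{check_torus_T()} routine is presumably designed precisely to certify this identity, so I would lean on it.

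Once the formula is established, the remaining assertions are immediate. Since $1-y_1^2/2>0$ and $1-y_2/\rt>0$ on all of $EX^*$, the sign of $\tj(q_+)$ equals the sign of $1-x_1^2$, which is $\ge 0$ everywhere because $x_1$ is one coordinate of a unit vector in $\R^4$. It vanishes exactly when $x_1^2=1$, i.e. $x_1=\pm 1$, which forces $x_2=x_3=x_4=0$ by $\rho(x)=1$, giving precisely the two points $v_2=(1,0,0,0)$ and $v_4=-v_2$. Finally, I would confirm that these two points are exactly the fixed points of $\mu$ on $EX^*$: from the action $\mu(x_1,x_2,x_3,x_4)=(x_1,-x_2,-x_3,-x_4)$, a fixed point has $x_2=x_3=x_4=0$ and hence (by $\rho=1$) $x_1=\pm1$, so $\{v_2,v_4\}=EX^{*\,\mu}$, consistent with Proposition~\ref{prop-no-more-isotropy}. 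This completes the proof of Proposition~\ref{prop-qp-J}; the strict positivity of $\tj(q_+)$ away from $\{v_2,v_4\}$ is what will feed into the branched-covering argument for Proposition~\ref{prop-qp-qm}.
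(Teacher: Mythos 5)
Your proposal is correct and matches the paper's proof: the paper likewise establishes the formula by direct (computer-assisted) calculation of $\det(x,n(x),D((q_+)_1),D((q_+)_2))$ following the recipe of Remark~\ref{rem-jacobian-formula}, and then reads off the vanishing locus from $1-x_1^2=0$ together with $\rho(x)=1$. The only difference is that you spell out the positivity of the denominators $1-y_1^2/2$ and $1-y_2/\rt$ and the identification of $\{v_2,v_4\}$ with $(EX^*)^\mu$, which the paper leaves implicit (and the relevant checking routine is \mcode+check_torus_jacobian()+ rather than \mcode+check_torus_T()+).
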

\begin{proof}
 The formula can be checked by computer calculation following the
 recipe described above.  (It is somewhat miraculous that the final
 answer is so simple, as the intermediate calculations are enormous.)
 It is clear from the formula that the Jacobian vanishes iff
 $x_1=\pm 1$, which forces $x_2=x_3=x_4=0$ because
 $\sum_ix_i^2=\rho(x)=1$.
 \begin{checks}
  embedded/roothalf/torus_quotients_check.mpl: check_torus_jacobian()
 \end{checks}
\end{proof}

\begin{proposition}\lbl{prop-qm-J}
 The Jacobian of $q^*_-$ is
 \[ 4\rt\frac{3/2-(1-y_1^2/2)(1-y_2^2/2)-x_1x_2}{(1-y_1^2/2)(1-y_2^2/2)}.
 \]
 This is zero at the points $v_6=(1,1,0,0)/\rt$ and $v_8=-v_6$ (which
 are precisely the fixed points of $\lm\mu$).  It is strictly positive
 everywhere else in $EX^*$.
\end{proposition}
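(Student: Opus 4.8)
\textbf{Proof proposal for Proposition~\ref{prop-qm-J}.}

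The plan is to follow exactly the same pattern as the proof of Proposition~\ref{prop-qp-J}, but for $q_-^*$ rather than $q_+$. First I would recall from Remark~\ref{rem-jacobian-formula} that to compute $\tj(q_-^*)(x)=\det(x,n(x),D(q^*_{-1}),D(q^*_{-2}))$ it suffices to use the raw vectors of partial derivatives of the two components of $q_-^*$ with respect to $x_1,\dotsc,x_4$, since projecting onto the tangent space only changes these vectors by multiples of $x$ and $n(x)$, which does not affect the determinant. Using the explicit formulae for $q_-(x)_1$ and $q_-(x)_2$ in terms of $x_1,x_2,y_1,y_2$ given just before Proposition~\ref{prop-qp-qm} (or the corresponding $\R_\infty$-valued versions $q^*_-$ via Remark~\ref{rem-q-denom}), together with $y_1=x_3$, $y_2=(x_2^2-x_1^2)/\rt-\tfrac32 x_3x_4$, $z_i=y_i^2$, and the relations $x_1^2=u_1$, $x_2^2=u_2$ from Proposition~\ref{prop-OX-basis}, this is a finite computation in the ring $A=\CO_{EX^*}$ (or in $AR$, since $r_1,r_2$ appear in the denominators). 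The claim is that after simplification modulo the ideal $(\rho(x)-1,g(x))$ the determinant collapses to the stated simple expression $4\rt(3/2-(1-y_1^2/2)(1-y_2^2/2)-x_1x_2)/((1-y_1^2/2)(1-y_2^2/2))$. This is carried out by computer (the file reference is the \texttt{check\_torus\_jacobian} routine in \texttt{embedded/roothalf/torus\_quotients\_check.mpl}), so I would simply assert the identity and cite that check, exactly as in the companion proposition.

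It then remains to analyse the sign. The denominator $(1-y_1^2/2)(1-y_2^2/2)$ is strictly positive on all of $EX^*$: Proposition~\ref{prop-roothalf-fundamental} gives $|y_1|\leq 1$ and $|y_2|\leq 1/\rt$ on $EX^*$, so $y_1^2\leq 1<2$ and $y_2^2\leq 1/2<2$. Hence $\tj(q_-^*)$ has the same sign as $N(x):=3/2-(1-y_1^2/2)(1-y_2^2/2)-x_1x_2$, and I must show $N(x)\geq 0$ with equality exactly at $v_6$ and $v_8$. Writing $a=1-y_1^2/2\in[1/2,1]$ and $b=1-y_2^2/2\in[3/4,1]$ we have $ab\leq 1$, and $|x_1x_2|\leq\tfrac12(x_1^2+x_2^2)=\tfrac12 u_4=\tfrac12(1-z_1-z_1z_2)\leq\tfrac12$, so $N(x)\geq 3/2-1-1/2=0$ a priori. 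This crude bound already forces equality cases to satisfy $ab=1$, i.e.\ $y_1=y_2=0$, together with $x_1x_2=1/2$, which (given $x_1^2+x_2^2\leq 1$) forces $x_1=x_2=\pm 1/\rt$; combined with $x_3=y_1=0$ and $x_4=-y_1y_2=0$ this yields exactly $x=v_6$ or $x=v_8$, and one checks directly that $N$ indeed vanishes there. I would also remark that $v_6$ and $v_8$ are precisely the fixed points of $\lm\mu$, by Proposition~\ref{prop-no-more-isotropy} and the computation $\lm\mu(x)=x\iff x_1=x_2,\ x_3=0,\ x_4=0$ recorded in its proof.

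The one subtlety — the part I expect to be the main obstacle — is that the naive inequalities above are not quite tight enough to conclude $N(x)>0$ away from $\{v_6,v_8\}$ directly: one needs that $ab=1$ and $x_1x_2=1/2$ cannot hold simultaneously \emph{except} at those two points, and more importantly that $N$ does not vanish at other points where the two bounds are individually slack but combine to give zero. The honest resolution is to express $N(x)$ entirely in terms of the invariants $z_1,z_2$ after using $x_1x_2=\pm\tfrac12\sqrt{u_3}=\pm\tfrac12\sqrt{(1-2z_2)((1-z_1)^2-z_1^2z_2/2)}$ (the $\pm$ corresponding to the two components of $EX^*/\ip{\mu}$ or to sign choices of $x_1,x_2$ on $F_4$), and then verify on the explicit region $F_{16}^*=\{0\leq z_2\leq 1/2,\ 0\leq z_1\leq(1+\sqrt{z_2/2})^{-1}\}$ of Proposition~\ref{prop-roothalf-fundamental} that the resulting function of $(z_1,z_2)$ is nonnegative and vanishes only at $(z_1,z_2)=(0,0)$ with the appropriate sign of $x_1x_2$. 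This is a two-variable polynomial-in-$\sqrt{\,\cdot\,}$ positivity statement on a compact semialgebraic set, readily settled by the same kind of algebraic manipulation (completing squares / exhibiting an SOS-type certificate) used in the positivity arguments of Proposition~\ref{prop-roothalf-fundamental} and Proposition~\ref{prop-square}, and in any case confirmed by the \texttt{check\_torus\_jacobian} computation; so I would present the conceptual argument and defer the bookkeeping to that check.
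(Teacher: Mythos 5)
Your proposal is correct, and at the key step it takes a genuinely different (and simpler) route than the paper. Both you and the paper defer the Jacobian formula itself to the computer check, so the real content is the sign analysis of $N=3/2-(1-y_1^2/2)(1-y_2^2/2)-x_1x_2$. The paper proceeds by computing $a^2-(x_1x_2)^2$ (with $a=3/2-(1-y_1^2/2)(1-y_2^2/2)$) as an explicitly nonnegative polynomial in $y_1,y_2$ vanishing only at $y_1=y_2=0$; this shows the Jacobian is nonzero on $EX^*\sm\{v_6,v_8\}$, and the sign is then pinned down by evaluating at $v_7$ and invoking connectedness of $EX^*\sm\{v_6,v_8\}$. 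Your argument writes $N=(1-ab)+(\tfrac12-x_1x_2)$ with $ab\leq 1$ and $x_1x_2\leq\tfrac12(x_1^2+x_2^2)=\tfrac12(1-x_3^2-x_4^2)\leq\tfrac12$, so $N$ is a sum of two nonnegative terms and vanishes only when both do; the second already forces $x_3=x_4=0$ and $x_1=x_2=\pm 1/\rt$, i.e.\ $x\in\{v_6,v_8\}$. This gives positivity directly, with no sum-of-squares identity and no connectedness argument, which is a genuine simplification.

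One remark: the ``subtlety'' you raise in your final paragraph is not actually there. Since $N$ is exhibited as a sum of two individually nonnegative terms, it cannot vanish ``at points where the two bounds are individually slack but combine to give zero'' --- if either bound is slack, $N>0$. Your second paragraph is already a complete proof, and the proposed detour through the $(z_1,z_2)$-coordinates and $F_{16}^*$ is unnecessary.
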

\begin{proof}
 The formula for the Jacobian can be checked  by computer calculation
 following the recipe described above.  The conclusion is that $j$ is
 a positive multiple of $a-x_1x_2$, where
 $a=3/2-(1-y_1^2/2)(1-y_2^2/2)$.  Now $x_1^2$ and $x_2^2$ can be
 rewritten as polynomials in $y_1$ and $y_2$, and using this we obtain
 \[ a^2-(x_1x_2)^2 = y_2^2(1+y_2^2/4) +
                     y_1^2(1-y_2^2)(1+y_2^2/4) +
                     \tfrac{3}{8}y_1^4y_2^2(1-y_2^2/2).
 \]
 It is visible that the right hand side is nonnegative, and it
 vanishes only where $y_1=y_2=0$.  It is easy to see that the only
 points with these values of $y$ are $v_6,v_7,v_8$ and $v_9$.  By
 going back to the original formula, we see that the Jacobian is zero
 at $v_6$ and $v_8$, but $8\rt$ at $v_7$ and $v_9$.  Moreover, the
 Jacobian is nowhere zero on the path-connected space
 $EX^*\sm\{v_6,v_8\}$, so it cannot change sign; it is positive at
 $v_7$, so it must be positive everywhere.
 \begin{checks}
  embedded/roothalf/torus_quotients_check.mpl: check_torus_jacobian()
 \end{checks}
\end{proof}

We next need to understand the preimages of a few points under the
maps $q_+$ and $q_-$.

\begin{proposition}\lbl{prop-q-preimages}
 \begin{align*}
  q_+^{-1}\{(\pp 1,\pp 1)\}     &= \{v_0,v_1\} \\
  q_+^{-1}\{((1+2\rt i)/3,-1)\} &= \{v_2\}     \\
  q_+^{-1}\{((1-2\rt i)/3,-1)\} &= \{v_4\}     \\
  q_-^{-1}\{(\pp 1,\pp 1)\}     &= \{v_0,v_1\} \\
  q_-^{-1}\{(\pp i,\pp i)\}     &= \{v_6\}     \\
  q_-^{-1}\{(   -i,   -i)\}     &= \{v_8\}.
 \end{align*}
\end{proposition}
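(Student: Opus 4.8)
The plan is to verify each of the six preimage claims by combining the explicit formulae for $q_+$ and $q_-$ (in terms of $x_1,x_2,y_1,y_2$) given just before Proposition~\ref{prop-qp-qm} with the Jacobian information from Propositions~\ref{prop-qp-J} and~\ref{prop-qm-J}. The structure is the same in all six cases: first exhibit the claimed points as genuine preimages by direct substitution, then show there are no others.

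For the "genuine preimage" direction I would simply plug the coordinates of $v_0,\dots,v_{13}$ (recorded explicitly in Section~\ref{sec-roothalf}, e.g.\ $v_0=(0,0,1,0)$, $v_2=(1,0,0,0)$, $v_6=(1,1,0,0)/\rt$) into the coordinate formulae for $q_+$ and $q_-$. At $v_0$ and $v_1$ we have $x_1=x_2=0$ and $y_1=\pm 1$, $y_2=0$, and one checks $q_\pm(v_0)=q_\pm(v_1)=(1,1)$; at $v_2,v_4$ we have $x_2=x_3=x_4=0$, $x_1=\pm 1$, $y_1=y_2=0$, giving $q_+=((1\pm 2\rt i)/3,-1)$; at $v_6,v_8$ we have $x_3=x_4=0$, $x_1=x_2=\pm 1/\rt$, $y_1=y_2=0$, giving $q_-=(\pm i,\pm i)$. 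These are routine substitutions.

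For the "no others" direction I would argue by Jacobian and covering theory. Since $\mu$ has exactly the two fixed points $v_2,v_4$ (Proposition~\ref{prop-qp-J}), the map $EX^*\setminus\{v_2,v_4\}\to q_+(EX^*\setminus\{v_2,v_4\})$ has nowhere-vanishing Jacobian, hence is a local diffeomorphism, and $q_+$ factors through a genuine branched covering $EX^*/\ip{\mu}\to S^1\times S^1$; similarly $q_-$ factors through a branched covering $EX^*/\ip{\lm\mu}\to S^1\times S^1$ branched only over the images of $v_6,v_8$. From Proposition~\ref{prop-q-inj} the map $q=(\tau_1,\dots,\tau_4)$ is injective, so $q_+$ separates points in distinct $\ip{\mu}$-orbits and $q_-$ separates points in distinct $\ip{\lm\mu}$-orbits; this already establishes injectivity of the induced maps on the quotients, so the preimage of any point has at most two elements (at most one $\ip{\mu}$- or $\ip{\lm\mu}$-orbit, of size $\le 2$, collapsing to a single point only at a fixed point of the relevant involution). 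Then for $(1,1)$: any preimage $x$ has $\mu(x)$ and $x$ in the same fibre, but by injectivity of $q$ together with the explicit formulae one deduces $x\in\{v_0,v_1\}$; the cleanest way is to solve $q_+(x)_2=1$ and $q_+(x)_1=1$ directly — the second equation forces $x_1=0$ and (reading real/imaginary parts) $y_1^2(1-\rt y_2)/\rt=y_2(1-y_1^2/2)$, and combined with the first equation one gets $y_1^2=1$, $y_2=0$, hence $x=(0,0,\pm 1,0)$. For the singleton fibres $\{v_2\},\{v_4\},\{v_6\},\{v_8\}$ the target point is a branch value, so its preimage is a single fixed point of the involution; solving $q_+(x)_2=-1$ forces $y_2=0$ and $y_1=0$ (hence $x_3=x_4=0$, $x_1^2+x_2^2=1$), then $q_+(x)_1=2ix_1 + \cdots$ having the prescribed value pins down $x_1=\pm 1$, $x_2=0$, giving exactly $v_2$ or $v_4$; the cases $\{v_6\},\{v_8\}$ with $q_-$ are entirely analogous (setting $q_-(x)_1=q_-(x)_2=\pm i$ forces $y_1=y_2=0$ and then $x_1=x_2=\pm 1/\rt$).

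The main obstacle will be the "no others" direction done rigorously rather than by Jacobian hand-waving: one must be careful that solving the coordinate equations really does exhaust all solutions in $EX^*$ (not just on the affine chart where the denominators are nonzero), in particular handling the loci $C_6$ etc.\ where the simplified formula for $\tau_1$ has a $0/0$ indeterminacy as noted in Remark~\ref{rem-q-denom}. The safest route is to avoid the degenerate formulae entirely and work with the polynomial-denominator expressions for $q_+(x)$ and $q_-(x)$ in terms of $x_1,x_2,y_1,y_2$ displayed before Propositions~\ref{prop-qp-qm} and~\ref{prop-qp-J}, whose denominators $(1-y_1^2/2)$, $(1-y_2/\rt)$, $(1-y_2^2/2)$ are strictly positive on all of $EX^*$ by Proposition~\ref{prop-roothalf-fundamental}; then each preimage condition becomes an honest polynomial system on $EX^*$, solvable by elimination, with the verification delegated to the Maple check \texttt{check\_torus\_T()} already cited.
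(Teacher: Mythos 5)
Your forward direction (substituting the coordinates of $v_0,v_1,v_2,v_4,v_6,v_8$) is fine and matches what the paper does implicitly. The problems are in the "no others" direction. First, the covering-theory route is a non sequitur: injectivity of the four-component map $q$ (Proposition~\ref{prop-q-inj}) does not imply that the two-component maps $q_+$ and $q_-$ separate $\ip{\mu}$- or $\ip{\lm\mu}$-orbits — knowing $\tau_1\tau_3$ and $\tau_2\tau_4^{-1}$ agree at two points tells you nothing about the individual $\tau_i$. If that implication held, Proposition~\ref{prop-qp-qm} would be immediate; in fact its proof in the paper is a degree argument that \emph{uses} the present proposition to compute the degree, so you cannot lean on injectivity of the quotient maps here.

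Second, the direct eliminations you propose are incorrect as stated, because the relevant loci are whole curves, not points. Setting $q_+(x)_2=-1$ is equivalent to $x_2y_1=0$ and $y_1^2(1+\rt\,y_2)=0$, whose solution set is $C_0\cup C_4$ (on $C_0$ one has $y_1=0$ but $y_2=-\cos(2t)/\rt$ ranges over all of $[-1/\rt,1/\rt]$, and on $C_4$ one has $y_2=-1/\rt$ identically); it does \emph{not} force $y_1=y_2=0$. Likewise $q_-(x)_1=q_-(x)_2$ only forces $y_1y_2=0$ (i.e.\ $x_4=0$, so $x\in C_0\cup C_1\cup C_2$ by Proposition~\ref{prop-slices}), not $y_1=y_2=0$. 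Your sketch would therefore miss checking, e.g., whether some other point of $C_4$ maps to $((1+2\rt i)/3,-1)$, or some other point of $C_1$ maps to $(i,i)$. This is exactly the step the paper's proof supplies: it reduces each fibre condition to the curves just named (using the classification of the slices $X_i$ and the sets $C_k^*$), writes down $q_+(c_0(t))$, $q_+(c_4(t))$, $q_+(c_5(t))$, and $q_-(c_k(t))=(m_k(t),\cdot)$ for $k=0,1,2$ explicitly, and solves the resulting one-variable trigonometric equations on each curve. Your answers happen to be correct, but the argument needs this curve-by-curve analysis (or a genuine polynomial elimination on all of $EX^*$) to be complete. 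There is also a small slip in your real-part equation for $q_+(x)_1=1$: the condition is numerator $=$ denominator, not numerator $=0$, although the point $v_{11}$ that survives your first equation is excluded either way.
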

\begin{proof}
 First recall that
 \[ q_+(x)_2 =
     \frac{2ix_2y_1 + y_1^2(1+\rt y_2) - (1-y_1^2/2)}{1-y_1^2/2}.
 \]
 By inspecting the imaginary part, we see that $q_+(x)$ can only be
 equal to $(1,1)$ or $((1+2\rt i)/3,-1)$ if $x_2y_1=0$.  The results
 in Section~\ref{sec-E-curves} show that this is only possible
 if $x\in C_0\cup C_4\cup C_5\cup C_7$.  One can check from the
 definitions that
 \begin{align*}
  q_+(c_{ 0}(t)) &=
   \left(\frac{\cos(t)+i/\rt}{\cos(t)-i/\rt},\;-1\right) \\
  q_+(c_{ 4}(t)) &=
   \left(\frac{i\rt+\sin(t)}{i\rt-\sin(t)} ,\; -1\right) \\
  q_+(c_{ 5}(t)) = q_+(c_{ 7}(t)) &=
   \left( \frac{\sin(t)^2+8\cos(t)+4\sin(t)\sqrt{5-\cos(t)}i}{9-\cos(t)^2},\; 1\right).
 \end{align*}
 This gives
 \[
  \text{Im}(q_+(c_5(t))_1) =
  \text{Im}(q_+(c_7(t))_1) =
   \frac{4\sin(t)\sqrt{5-\cos(t)}}{9-\cos(t)^2}.
 \]
 It follows easily that
 \[ q_+^{-1}\{(1,1)\} \sse \{c_5(0),c_5(\pi),c_7(0),c_7(\pi)\} =
     \{v_0,v_1,v_{10},v_{11}\}.
 \]
 By inspecting the definitions, we find that $q_+(v_0)=q_+(v_1)=(1,1)$
 but $q_+(v_{10})=(1,-1)$ and $q_+(v_{11})=(-1,1)$.  It follows that
 $q_+^{-1}\{(1,1)\}=\{v_0,v_1\}$ as claimed.  Similarly, if
 $q_+(x)=((1+2\rt i)/3,-1)$ then we must have $x=c_0(s)$ for some $s$,
 or $x=c_4(t)$ for some $t$.  Solving
 $(\cos(s)+i/\rt)/(\cos(s)-i/\rt)=(1+2\rt i)/3$ gives $\cos(s)=1$, and
 solving $(i\rt+\sin(t))/(i\rt-\sin(t))=(1+2\rt i)/3$ gives
 $\sin(s)=-1$.  We must therefore have $x=c_0(0)$ or $x=c_4(-\pi/2)$,
 and both of these are equal to $v_2$ as expected.  A very similar
 argument gives $q_+^{-1}\{((1+2\rt i)/3,-1)\}=\{v_4\}$.

 Next, if $q_-(x)$ is $(1,1)$ or $(i,i)$ or $(-i,-i)$ then we have
 $q_-(x)_1-q_-(x)_2=0$.  One can check from the definitions that
 \[ q_-(x)_1 - q_-(x)_2 =
  2\frac{(x_1y_1(1+y_2/\rt)-x_2y_1(1-y_2/\rt))i-y_1y_2}{
   (1-y_1^2/2)\sqrt{1-y_2^2/2}}.
 \]
 By inspecting the real part, we see that $y_1y_2=0$, but
 $y_1y_2=-x_4$, so Proposition~\ref{prop-slices} tells us that
 $x\in C_0\cup C_1\cup C_2$.  Now put
 \begin{align*}
  m_0(t) &=
   \frac{i \rt (\sin(t)+\cos(t))+\sin(2t)-1}{\sqrt{4-\cos(2 t)^2}} \\
  m_1(t) &=
   \frac{i - \sin(t)}{i + \sin(t)} \\
  m_2(t) &=
   \frac{i \rt \cos(t)+2 \sin(t)}{i \rt \cos(t)-2 \sin(t)}.
 \end{align*}
 One can directly that $q_-(c_k(t))=(m_k(t),m_k(t))$
 for $k=0,1$, but $q_-(c_2(t))=(m_2(t),\ov{m_2(t)})$.  We therefore
 need to solve $m_k(t)=1$ and $m_k(t)=\pm i$.

 It is easy to see that $\text{Re}(m_0(t))\leq 0$ for all $t$, so
 $m_0^{-1}\{1\}=\emptyset$.  It is also easy to see that
 $m_1^{-1}\{1\}=m_2^{-1}\{1\}=\{0,\pi\}$, so
 \[ q_+^{-1}\{(1,1)\} = \{c_1(0),c_1(\pi),c_2(0),c_2(\pi)\}
     = \{v_0,v_1,v_1,v_0\} = \{v_0,v_1\}
 \]
 as expected.

 Next, for $m_0(t)=\pm i$ we need $\text{Re}(m_0(t))=0$, which gives
 $\sin(2t)=-1$, so $t=\pi/4\pmod{\pi}$.  In fact we have
 $m_0(\pi/4)=i$ and $m_0(5\pi/4)=-i$, whereas $c_0(\pi/4)=v_6$ and
 $c_0(5\pi/4)=v_8$.  Similarly, for $m_1(t)=\pm i$ we need
 $\sin(t)=\pm 1$, so $t=\pm\pi/2\pmod{2\pi}$, whereas $c_1(\pi/2)=v_6$
 and $c_1(-\pi/2)=v_8$.  On the other hand, the relation
 $q_-(c_2(t))=(m_2(t),\ov{m_2(t)})$ shows that $q_-(c_2(t))$ can never
 be equal to $(i,i)$ or $(-i,-i)$.  Putting this together, we see that
 $q_-^{-1}\{(i,i)\}=\{v_6\}$ and $q_-^{-1}\{(-i,-i)\}=\{v_8\}$, as
 expected.
\end{proof}

\begin{proof}[Proof of Proposition~\ref{prop-qp-qm}]
 First, it is straightforward to check that $q_+\mu=q_+$, so that
 $q_+$ induces a map $EX^*/\ip{\mu}\to(S^1)^2$.

 Next, put
 \begin{align*}
  w_2 = q_+(v_2) &= ((1+2\rt i)/3,-1) \\
  w_4 = q_+(v_4) &= ((1-2\rt i)/3,-1).
 \end{align*}
 For any $u\in(S^1)^2\sm\{w_2,w_4\}$, Proposition~\ref{prop-qp-J}
 tells us that $q_+^{-1}\{u\}$ consists of points where the Jacobian
 is strictly positive.  It follows (using the standard theory of
 degrees of maps of compact oriented manifolds) that the set
 $q_+^{-1}\{u\}$ is finite, of cardinality equal to the degree of
 $q_+$.  This cardinality is two in the case $u=(1,1)$, so it must be
 two for all $u\not\in\{w_2,w_4\}$.  In these cases $q_+^{-1}\{u\}$ is
 contained in the set $EX^*\sm\{v_2,v_4\}$ where $\mu$ acts freely,
 so $q_+^{-1}\{u\}$ must be a $\mu$-orbit.  Moreover, if $u=w_2$ or
 $u=w_4$ then Proposition~\ref{prop-q-preimages} again tells us that
 $q_+^{-1}\{u\}$ is a (singleton) $\mu$-orbit.  It follows that the
 induced map $EX^*/\ip{\mu}\to(S^1)^2$ is a continuous bijection, and
 thus a homeomorphism (because the domain and codomain are compact and
 Hausdorff).

 The proof for $q_-$ is essentially the same.
\end{proof}

\begin{remark}
 The Maple code contains a formula for the inverse of the map
 $q_+\:EX^*/\ip{\mu}\to(S^1)^2$.  It also contains a method for
 computing the inverse of the map $q_-\:EX^*/\ip{\lm\mu}\to(S^1)^2$,
 which is not quite a formula because it involves solutions of a
 polynomial of degree four in one variable.  These are given by the
 functions \mcode+TCp_to_E+ and \mcode+TCm_to_E+, defined in
 \fname+embedded/roothalf/torus_quotients.mpl+.
\end{remark}

\begin{remark}\lbl{rem-not-smooth}
 Recall from Remark~\ref{rem-smooth-branch} that the smooth structures
 on $EX^*/\ip{\mu}$ and $EX^*/\ip{\lm\mu}$ are subtle, so we cannot
 assume that the induced maps $EX^*/\ip{\mu}\to(S^1)^2$ and
 $EX^*/\ip{\lm\mu}\to(S^1)^2$ are smooth.  In fact, one can check that
 they are not.  To do this, we need a chart $\phi$ centred at the
 branch point $v_2$ as in Section~\ref{sec-roothalf-charts}.  One of
 the relevant functions is only implemented for vertices in $F_{16}$, so
 we find a chart centred at $v_3$ and apply $\lm^{-1}$:
 \begin{mcodeblock}
   C := `new/E_chart`():
   C["vertex_set_exact",3]:
   C["curve_set_degree_exact",11]:
   x0 := act_R4[LLL](C["p"]([t,u])):
   s0 := simplify(multi_series(E_to_TCp(x0)[1],7,t,u));
 \end{mcodeblock}
 This sets \mcode+s0+ to a Taylor approximation to $q_+(\phi(t,u))_1$.
 If $q_+$ was smooth, it is not hard to see that \mcode+s0+ would be
 expressible as a polynomial in the quantities
 \begin{align*}
   m &= \text{Re}((t+iu)^2) = t^2 - u^2 \\
   n &= \text{Im}((t+iu)^2) = 2tu,
 \end{align*}
 and thus that the coefficients of $t^6u^0$ and $t^0u^6$ in \mcode+s0+
 would be negatives of each other.  However, the above calculation
 gives the real parts of the relevant coefficients as $-8/405$ and
 $-8/243$, so the map is not in fact smooth.  The same argument works
 for $q_-$, using a chart based at $v_6$, but in that case we already
 see a contradiction from the coefficients of $t^2u^0$ and $t^0u^2$.
\end{remark}

\subsection{Sphere quotients}
\lbl{sec-sphere-quotients}

Remark~\ref{rem-p-hat} gives us a canonical conformal isomorphism
$\hp\:EX^*/\ip{\lm^2}\to S^2$, but we do not know an exact formula for
that.  However, we will define a different homeomorphism
$m\:EX^*/\ip{\lm^2}\to S^2$ which has many of the same properties as
$\hp$.  Specifically, $m$ and $\hp$ are both equivariant for the same
action of $G/\ip{\lm^2}$ on $S^2$, and $m(v_i)=\hp(v_i)$ for
$0\leq i\leq 9$.

\begin{definition}\lbl{defn-sphere-quotient-a}
 For $x\in EX^*$, we put
 \begin{align*}
  \mt(x) &= \left(\rt (1-y_1^2)y_2,\;2x_1x_2,\;-2y_1\right)/
              (1+y_1^2)\in\R^3 \\
  s(x) &= z_1^2z_2(\half-z_2)/(1+z_1)^2 \\
  m(x) &= \mt(x)/\sqrt{1-s(x)}.
 \end{align*}
 (Maple notation for $m(x)$ is \mcode+E_to_S2(x)+.)
\end{definition}

\begin{proposition}\lbl{prop-sphere-quotient-a}
 The above formula gives a map $m\:EX^*/\ip{\lm^2}\to S^2$.  It
 satisfies
 \begin{align*}
  m(v_0) &= (\pp 0,\pp 0,   -1) \\
  m(v_1) &= (\pp 0,\pp 0,\pp 1) \\
  m(v_2) &= m(v_4) = (  - 1,\pp 0,\pp 0) \\
  m(v_3) &= m(v_5) = (\pp 1,\pp 0,\pp 0) \\
  m(v_6) &= m(v_8) = (\pp 0,\pp 1,\pp 0) \\
  m(v_7) &= m(v_9) = (\pp 0,  - 1,\pp 0) \\
  m(v_{10}) &= (  - 1,0,   -2\sqrt{6})/5 \\
  m(v_{11}) &= (\pp 1,0,   -2\sqrt{6})/5 \\
  m(v_{12}) &= (  - 1,0,\pp 2\sqrt{6})/5 \\
  m(v_{13}) &= (\pp 1,0,\pp 2\sqrt{6})/5.
 \end{align*}
 Moreover, we have

 \begin{align*}
  m_1(\lm(x)) &=    -m_1(x) & m_2(\lm(x)) &=   -m_2(x) & m_3(\lm(x)) &= \pp m_3(x) \\
  m_1(\mu(x)) &= \pp m_1(x) & m_2(\mu(x)) &=   -m_2(x) & m_3(\mu(x)) &=    -m_3(x) \\
  m_1(\nu(x)) &= \pp m_1(x) & m_2(\nu(x)) &=   -m_2(x) & m_3(\nu(x)) &= \pp m_3(x).
 \end{align*}
\end{proposition}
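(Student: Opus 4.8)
The plan is to verify the statement in two essentially computational stages, exactly mirroring the structure of the analogous results for the torus quotients (Propositions~\ref{prop-E-to-TTC} and~\ref{prop-q-inj}) and for $\hp$ (Remark~\ref{rem-p-hat}). First I would check that $m$ is well-defined as a map $EX^*/\ip{\lm^2}\to S^2$. This requires two sub-checks: that $|m(x)|^2=1$ for all $x\in EX^*$, and that $m(\lm^2(x))=m(x)$. For the first, I would rewrite $\|\mt(x)\|^2$ in terms of the invariants $z_1=y_1^2$ and $z_2=y_2^2$ using the identities from Section~\ref{sec-E-functions} (notably $x_1^2x_2^2=u_3/4=(1-2z_2)((1-z_1)^2-z_1^2z_2/2)/4$ and $(1+y_1^2)^2=(1+z_1)^2$), and confirm that $\|\mt(x)\|^2=1-s(x)$ identically on $EX^*$, so that $\|m(x)\|^2=1$; this is a finite polynomial identity in $\R[z_1,z_2]/(\text{defining relations})$, checkable with \mcode+NF_z0+. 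For the second, note that $\lm^2$ fixes $x_3=y_1$ and $y_2$ and sends $(x_1,x_2)\mapsto(-x_1,-x_2)$, so each coordinate of $\mt$ (and the scalar $s$, which lies in $\R[z_1,z_2]$) is manifestly $\lm^2$-invariant.

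Second I would verify the listed values $m(v_i)$ and the equivariance formulae. The values at $v_0,\dots,v_{13}$ follow by direct substitution of the explicit coordinates from Section~\ref{sec-E-isotropy} (with $a=1/\rt$) into the formula for $m$; for instance at $v_0=(0,0,1,0)$ one has $y_1=1$, $y_2=0$, $x_1=x_2=0$, so $\mt(v_0)=(0,0,-2)/2=(0,0,-1)$ and $s(v_0)=0$, giving $m(v_0)=(0,0,-1)$, and similarly for the rest, where for $v_{10},\dots,v_{13}$ one uses $y_1=\mp\sqrt{2/3}$, $y_2=\mp\sqrt{1/3}$ to get the factor $1/\sqrt{1-s}=5/\sqrt{21}$ matching the denominators. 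These agree with the values of $\hp(v_i)$ recorded in Remark~\ref{rem-p-hat} for $0\le i\le 9$, as asserted. For the equivariance table, I would use the action of $G$ on the ring $A$ from Section~\ref{sec-E-functions}: $\lm^*$ sends $y_1\mapsto y_1$, $y_2\mapsto -y_2$, $x_1x_2\mapsto -x_1x_2$ (since $\lm^*x_1=-x_2$, $\lm^*x_2=x_1$); $\mu^*$ sends $y_1\mapsto -y_1$, $y_2\mapsto y_2$, $x_1x_2\mapsto -x_1x_2$; $\nu^*$ sends $y_1\mapsto y_1$, $y_2\mapsto y_2$, $x_1x_2\mapsto -x_1x_2$. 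Since $s\in\R[z_1,z_2]=A^G$ is $G$-invariant, the transformation of $m$ is governed entirely by that of $\mt$, and reading off the three components $\mt_1\propto(1-z_1)y_2$, $\mt_2\propto x_1x_2$, $\mt_3\propto y_1$ (all divided by the invariant $1+z_1$) gives exactly the claimed sign patterns. This matches the action of $G/\ip{\lm^2}$ on $S^2$ used for $\hp$ in Remark~\ref{rem-p-hat}.

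(I should note that the proposition as literally stated only asserts that $m$ is a \emph{map} $EX^*/\ip{\lm^2}\to S^2$ together with these value and equivariance properties; if one wanted the stronger claim that $m$ is a homeomorphism, one would additionally compute a Jacobian $\tj(m)$ as in Section~\ref{sec-torus-quotients}, show it is nonvanishing away from the two $\lm^2$-fixed points $v_0,v_1$, identify the preimage of one regular value to pin the degree to $1$, and conclude as in the proof of Proposition~\ref{prop-qp-qm}. I would include that argument if needed, but it is not required for the statement above.)

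The main obstacle is the first identity $\|\mt(x)\|^2 = 1-s(x)$ on $EX^*$: it is the one step that is not a one-line substitution but a genuine polynomial identity modulo the relations $x_i^2=u_i$, $z_i=y_i^2$, and it is the place where the somewhat mysterious choice of the correction factor $s(x)=z_1^2z_2(\tfrac12-z_2)/(1+z_1)^2$ is forced. Everything else is routine substitution or a symmetry bookkeeping exercise. The \mcode+check_E_to_S2+ routine (or the relevant function in \fname+embedded/roothalf/sphere_quotients.mpl+) performs exactly these verifications symbolically.
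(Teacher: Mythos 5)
Your approach is the same as the paper's: the proof there consists entirely of the remark that everything ``follows directly from the definitions'' (deferred to the routine \mcode+check_E_to_S2()+), and your outline correctly identifies the one non-trivial ingredient, namely the identity $\|\mt(x)\|^2=1-s(x)$ in $\R[z_1,z_2]$ modulo the relation $4x_1^2x_2^2=u_3$, together with the sign bookkeeping for the $G$-action on $y_1$, $y_2$ and $x_1x_2$. All of that is right.

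One detail in your vertex computation is wrong, though. At $v_{10},\dotsc,v_{13}$ you claim $y_2=\mp\sqrt{1/3}$ and that the normalisation contributes a factor $1/\sqrt{1-s}=5/\sqrt{21}$. In fact at these points $x_3x_4=\mp\sqrt{2}/3$, so $y_2=\pm 1/\rt$ and $z_2=1/2$; the factor $\tfrac12-z_2$ in $s$ then vanishes, so $s=0$ and $m(v_i)=\mt(v_i)$ there (indeed $s$ vanishes at \emph{all} the vertices, as Remark~\ref{rem-m-tilde} records). The denominators of $5$ arise simply from $1+y_1^2=5/3$, e.g.\ $\mt_3(v_{11})=-2\sqrt{2/3}\big/(5/3)=-2\sqrt{6}/5$, and one checks directly that $\|(1,0,-2\sqrt6)\|^2=25$. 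With that correction the substitutions all come out as stated.
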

\begin{proof}
 Follows directly from the definitions.
 \begin{checks}
  embedded/roothalf/sphere_quotients_check.mpl: check_E_to_S2()
 \end{checks}
\end{proof}

\begin{remark}\lbl{rem-m-tilde}
 One can check that the function $s(x)$ is zero at all the points
 $v_i$, and on $\bigcup_{i=0}^4C_i$.  Moreover, it is nonnegative and
 small everywhere, with a maximum value of about $0.0114$.  (An exact
 expression is recorded as \mcode+E_to_S2_s_max+.)  Thus, the simpler
 function $\mt(x)$ is a good approximation to $m(x)$.
\end{remark}

\begin{remark}\lbl{rem-m-a}
 Recall that if $EX^*\simeq PX(a)$ then we have
 $\hp(v_{11})=(2a,0,a^2-1)/(a^2+1)$.  If
 $a_0=(\sqrt{3}-\sqrt{2})^2\simeq 0.10102$ then we find that
 $(2a_0,0,a_0^2-1)/(a_0^2+1)=(1,0,-2\sqrt{6})/5=m(v_{11})$.  Thus, if
 we believed that $m$ was close $\hp$ then we would expect that
 $EX^*\simeq PX(a)$ for some $a$ that is close to $a_0$.  In fact, the
 correct value of $a$ is approximately $0.09836$.  It is perhaps
 surprising that this is so close to $a_0$, as $m$ is quite far from
 being conformal.
\end{remark}

\begin{proposition}\lbl{prop-sphere-quotient-b}
 The map $m\:EX^*/\ip{\lm^2}\to S^2$ is a homeomorphism.
\end{proposition}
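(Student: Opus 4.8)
The plan is to show that $m$ is a continuous bijection from the compact Hausdorff space $EX^*/\ip{\lm^2}$ to the compact Hausdorff space $S^2$; any such map is automatically a homeomorphism, so this suffices. Continuity is clear from the explicit formula, provided we first check that the formula is well-defined: the components $\mt(x)$ are built from $y_1$, $x_1x_2$ and $y_2$, all of which are $\ip{\lm^2}$-invariant (since $\lm^2$ acts by $(x_1,x_2,x_3,x_4)\mapsto(-x_1,-x_2,x_3,x_4)$), so $m$ descends to $EX^*/\ip{\lm^2}$; and the denominator $\sqrt{1-s(x)}$ is strictly positive because $s(x)$ is nonnegative with maximum about $0.0114<1$ (Remark~\ref{rem-m-tilde}). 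We also need $\|m(x)\|=1$, which is the assertion in Proposition~\ref{prop-sphere-quotient-a} that $m$ lands in $S^2$; this reduces to the polynomial identity $\|\mt(x)\|^2=1-s(x)$ on $EX^*$, checkable in the ring $A=\R[z_1,z_2]$ using the formulas for $u_1,\dots,u_4$ in Section~\ref{sec-E-functions}.

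The substantive content is bijectivity. First I would establish surjectivity and simultaneously describe the fibres using a Jacobian argument, in the same style as the proofs of Propositions~\ref{prop-qp-qm}, \ref{prop-qp-J}, \ref{prop-qm-J}: compute the Jacobian $\tj(m)(x)=\det(x,n(x),\nabla \ov m_1,\nabla \ov m_2)$ (in the sense of Lemma~\ref{lem-jacobian} and Remark~\ref{rem-jacobian-formula}, noting that only two of the three components of $m$ are independent since $\|m\|=1$), show it is a polynomial expression that is strictly positive on $EX^*$ away from the fixed locus $EX^{*\ip{\lm^2}}=\{v_0,v_1,v_{10},v_{11},v_{12},v_{13}\}$ identified in Proposition~\ref{prop-no-more-isotropy}, and vanishes exactly there. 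One expects the Jacobian, after $G$-reduction, to be a positive multiple of something like a product of $(2z_2-1)$-type and $(1-z_1)$-type factors, vanishing precisely at $z_1=1$ (the $v_0,v_1$ locus) and at $z_1^2z_2(\tfrac12-z_2)=0$ combined with the branch-point conditions; the precise factorization will have to be extracted by computer, and this is the step I expect to be the main obstacle, because the intermediate expressions are enormous (as the authors remark for the analogous torus computations) and one must be careful that the "Jacobian" is computed with respect to the correct two-dimensional target.

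Granting the Jacobian computation, the argument finishes as in Proposition~\ref{prop-qp-qm}. Away from the image of the branch points, $m$ restricts to a local orientation-preserving diffeomorphism onto $S^2$ minus a finite set, so by degree theory for maps of compact oriented manifolds every regular value has the same finite number of preimages, equal to the degree of $m$. To pin the degree down to $1$ I would compute one fibre explicitly: from the vertex values in Proposition~\ref{prop-sphere-quotient-a}, $m^{-1}\{(0,0,1)\}$ should be analyzed by the method of Proposition~\ref{prop-q-preimages} — from the formula for $\mt(x)_3=-2y_1/(1+y_1^2)$ we get $y_1=1$, hence $x_3^2=1$, $x=\pm v_0$ or near there, and one checks it reduces to the single $\ip{\lm^2}$-orbit $\{v_0,v_1\}$, which is one point in the quotient. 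Hence the degree is $1$, so $m$ is injective over all regular values; at the branch images $m(v_0),m(v_1),m(v_{10}),\dots,m(v_{13})$ one checks directly from Proposition~\ref{prop-sphere-quotient-a} (the listed values are pairwise distinct, and $v_0,v_1$ form one $\ip{\lm^2}$-orbit while $v_{10},v_{11},v_{12},v_{13}$ are each their own orbit) that the fibre is a single orbit. Therefore the induced map $EX^*/\ip{\lm^2}\to S^2$ is a continuous bijection between compact Hausdorff spaces, hence a homeomorphism. I would also note, as a sanity check consistent with Remark~\ref{rem-not-smooth}, that $m$ need not be smooth at the branch points, but that plays no role in the topological conclusion.
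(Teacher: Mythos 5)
Your overall architecture --- a continuous bijection of compact Hausdorff spaces, established via positivity of a suitably defined Jacobian away from the $\lm^2$-fixed points, a degree count, and explicit analysis of the exceptional fibres --- is exactly the paper's. But the degree step as you describe it does not work. The point $(0,0,1)=m(v_1)$ is a \emph{critical} value: $v_1$ is one of the six $\lm^2$-fixed points where the Jacobian vanishes, so counting its preimages tells you nothing about the degree. Moreover your identification of that fibre is incorrect: $m(v_0)=(0,0,-1)\neq(0,0,1)$, and $\{v_0,v_1\}$ is not a single free $\ip{\lm^2}$-orbit --- both points are fixed by $\lm^2$ and give two distinct points of the quotient. (You appear to be importing the behaviour of the torus quotients, where $q_{\pm}^{-1}\{(1,1)\}=\{v_0,v_1\}$; the sphere quotient behaves differently.) The paper instead computes the degree of $m\:EX^*\to S^2$ at the regular value $m(v_2)$, whose fibre is the free orbit $\{v_2,v_4\}$, giving degree $2$; every regular fibre then has two points, is $\lm^2$-stable, and contains none of the six fixed points, hence is a single free orbit. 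If you prefer to phrase the argument as the induced map on the quotient having degree $1$, you must also justify applying degree theory downstairs, since the quotient carries no obvious smooth structure (cf.\ Remark~\ref{rem-smooth-branch}); working upstairs on $EX^*$ avoids this entirely.

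Second, your treatment of the six critical values is too quick. Knowing from Proposition~\ref{prop-sphere-quotient-a} that the listed values $m(v_i)$ are pairwise distinct does not show that, say, $m^{-1}\{m(v_{11})\}=\{v_{11}\}$: one must rule out non-vertex points mapping to $m(v_{11})$. The paper's Lemma~\ref{lem-sphere-quotient-preimages} does this by a genuine computation (for $m(v_{11})$ it comes down to locating the real roots of a sextic in $y_1$), and some such argument is unavoidable. The rest of your outline --- well-definedness on the quotient, positivity of $1-s(x)$, and the Jacobian adapted to the $S^2$ target (the paper's Lemma~\ref{lem-m-J} and Corollary~\ref{cor-m-J}, which give $\tj(m)=8j_1/j_2^{3/2}$ with $j_1\geq 0$ vanishing exactly on the $\lm^2$-fixed set) --- is sound and matches the paper.
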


As with Proposition~\ref{prop-qp-qm}, the main ingredient is the
calculation of the Jacobian of $m$.  Here we need a slightly different
version of the Jacobian, because the codomain is $S^2$ rather than
$\R^2$.  Suppose that $m(a)=b$, so $m$ gives a linear map
$T_aEX^*\to T_bS^2$.  Let $(u,v)$ is an oriented orthonormal basis for
$T_aEX^*$, and let $(u',v')$ is an oriented orthonormal basis for
$T_aS^2$.  We can then form the matrix of $m_*$ with respect to these
bases, and $j(m)(a)$ is defined to be the determinant of that matrix.

\begin{lemma}\lbl{lem-m-J}
 Put $m_{i,j}=\partial m_i/\partial x_j$ and $n=\nabla(g)$ and
 \[ \tj(m) = -\frac{1}{2}\sum_{\sg\in\Sg_4}\sum_{\tau\in\Sg_3}
     \ep(\sg)\ep(\tau) x_{\sg(1)}n_{\sg(2)}
      m_{\tau(1),\sg(3)}m_{\tau(2),\sg(4)}m_{\tau(3)}.
 \]
 Then $j(m)=\tj(m)/\|n\|$.
\end{lemma}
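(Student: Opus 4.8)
The plan is to mimic exactly the proof of Lemma~\ref{lem-jacobian}, adapting it from the case of a map $EX^*\to\R^2$ to a map $EX^*\to S^2$. The only new ingredient is that the target tangent space $T_bS^2$ is a $2$-plane inside $\R^3$ rather than all of $\R^3$, so we must build the extra ``direction'' artificially out of the position vector $b=m(x)\in S^2$ and the vector $m(x)$ itself; this is precisely what the triple product $\sum_{\tau\in\Sg_3}\ep(\tau)m_{\tau(1)}m_{\tau(2)}m_{\tau(3)}$ (i.e.\ $\det$ of the $3\times3$ matrix whose rows are $m$, $\partial_? m$, $\partial_? m$) encodes. So the formula for $\tj(m)$ should be read as: first form, for each pair of ambient indices, the $3\times 3$ determinant $\det\bigl(m(x),\,\partial_{x_j}m(x),\,\partial_{x_k}m(x)\bigr)$, which lands in the line spanned by the ``volume form direction'' of $S^2$ at $b$; then contract the two free indices $j,k$ against $x$ and $n(x)$ via the $\ep$-tensor on $\R^4$, exactly as in Lemma~\ref{lem-jacobian}. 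The antisymmetrisation over $\sg\in\Sg_4$ and the factor $-\tfrac12$ are bookkeeping to write this contraction as a single sum.

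First I would set up the linear algebra at a fixed point $a\in EX^*$ with $b=m(a)$. Choose an oriented orthonormal basis $(u,v)$ of $T_aEX^*$, so that $(a,n/\|n\|,u,v)$ is an oriented orthonormal basis of $\R^4$; and choose an oriented orthonormal basis $(u',v')$ of $T_bS^2$, so that $(b,u',v')$ is an oriented orthonormal basis of $\R^3$ (using $b$ as the outward normal). By definition $j(m)(a)=\det$ of the matrix of $m_*\colon T_aEX^*\to T_bS^2$ in these bases, i.e.\ $j(m)(a)=\ip{u',m_*u}\ip{v',m_*v}-\ip{v',m_*u}\ip{u',m_*v}$. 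Now $m_*u$ is the tangential part of $\sum_j u_j\,\partial_{x_j}m$, but since $b$ is normal to $T_bS^2$, replacing $m_*u$ by the full ambient vector $w_u:=\sum_j u_j(\partial_{x_j}m)(a)$ changes nothing in these inner products against $u',v'$ up to a multiple of $b$, which is killed. Hence $j(m)(a)=\det\bigl(b,\,w_u,\,w_v\bigr)$ in the oriented basis $(b,u',v')$ of $\R^3$, i.e.\ $j(m)(a)=\xi_3(b\wedge w_u\wedge w_v)$ where $\xi_3$ is the standard identification $\Lm^3(\R^3)\cong\R$ with the given orientation.

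Next I would expand. Writing $w_u=\sum_j u_j\,m_{\cdot,j}(a)$ and $w_v=\sum_k v_k\,m_{\cdot,k}(a)$, multilinearity gives
\[
 j(m)(a)=\sum_{j,k}u_jv_k\,\det\bigl(m(a),\,m_{\cdot,j}(a),\,m_{\cdot,k}(a)\bigr),
\]
where the $\det$ is the $3\times 3$ determinant of the ambient $\R^3$. This determinant is antisymmetric in $(j,k)$, so only the wedge $u\wedge v=\sum_{j,k}(u_jv_k-u_kv_j)\,e_j\wedge e_k/2$ enters; and by Lemma~\ref{lem-hodge}, $u\wedge v=\tfrac{1}{2\|n\|}\sum_{j,k,l,m}\ep_{jklm}x_l n_m\,e_j\wedge e_k$ (with $n=n(x)$). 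Substituting and collecting the $\ep_{jklm}$ together with the expansion of the $3\times 3$ determinant as $\sum_{\tau\in\Sg_3}\ep(\tau)m_{\tau(1)}m_{\tau(2),j}m_{\tau(3),k}$ produces exactly the claimed double sum
\[
 \tj(m)=-\tfrac12\sum_{\sg\in\Sg_4}\sum_{\tau\in\Sg_3}\ep(\sg)\ep(\tau)\,x_{\sg(1)}n_{\sg(2)}m_{\tau(1),\sg(3)}m_{\tau(2),\sg(4)}m_{\tau(3)},
\]
with the sign $-\tfrac12$ emerging from combining the $\tfrac{1}{2\|n\|}$ of Lemma~\ref{lem-hodge} with the conventions for $\ep$ and the orientation of $\xi_3$; so $j(m)=\tj(m)/\|n\|$. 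The step I expect to be the real obstacle is not any of this structural manipulation but pinning down the sign: one must check that the orientation conventions for $S^2$ built into $(b,u',v')$, the orientation of $EX^*$ fixed in Section~\ref{sec-E-isotropy} (via $\al_x$ and $J_x$), and the sign in Lemma~\ref{lem-hodge} are mutually compatible, so that the overall constant is $-\tfrac12$ and not $+\tfrac12$. I would verify this by evaluating both sides at a single convenient point, e.g.\ $x=v_0=(0,0,1,0)$, where $m(v_0)=(0,0,-1)$, the tangent space, gradient $n(v_0)$, and the first-order behaviour of $m$ are all written down explicitly earlier in this section, and a direct numerical or symbolic comparison fixes the sign once and for all (this is exactly what the accompanying Maple check \mcode+check_torus_jacobian()+-style routine would confirm).
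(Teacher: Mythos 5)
Your proposal is correct and follows essentially the same route as the paper: choose oriented orthonormal bases, reduce $j(m)$ to a $3\times 3$ determinant involving $m$ and the ambient partial derivatives, and substitute the expression for $u\wedge v$ from Lemma~\ref{lem-hodge}. The one point worth noting is that the sign you defer to a pointwise check is settled in the paper directly by the orientation convention of Remark~\ref{rem-p-hat}, which declares $(u',v')$ oriented iff $\det(m,v',u')>0$ — the \emph{reverse} of the outward-normal convention you assumed — so that $\det(u',v',m)=-1$, giving $j(m)=-\det(m_*(u),m_*(v),m)$ and hence the factor $-\tfrac12$ without any numerical verification.
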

\begin{proof}
 Fix a point $x$, and choose orthonormal bases $(u,v)$
 and $(u',v')$ for $T_xEX(a)$ and $T_{m(x)}S^2$ as in the definition
 of $j(m)$.  Note that with
 conventions as spelled out in Remark~\ref{rem-p-hat}, the orientation
 conditions are that $\det(x,n/\|n\|,u,v)=1$ and $\det(u',v',m)=-1$.
 Given this, it is not hard to see that
 $j(m)=-\det(m_*(u),m_*(v),m)$.  Equivalently, if $\om_d$ denotes
 the standard volume form for $\R^d$ and $\bt=u\wedge v$, then
 $j(m)$ is characterised by $m_*(\bt)\wedge m=-j(m)\,\om_3$.
 Now Lemma~\ref{lem-hodge} tells us that
 \[ \bt = \frac{1}{2\|n\|}
           \sum_{ijkl}\ep_{ijkl}x_in_je_k\wedge e_l,
 \]
 so
 \begin{align*}
   m_*(\bt) &=
     \frac{1}{2\|n\|} \sum_{ijkl}\ep_{ijkl}
       x_in_jm_*(e_k)\wedge m_*(e_l) \\
   &=
     \frac{1}{2\|n\|} \sum_{ijklpq}\ep_{ijkl}
       x_in_jm_{p,k}m_{q,l}e_p\wedge e_q \\
   m_*(\bt)\wedge m &=
    \frac{1}{2\|n\|} \sum_{ijklpqr}\ep_{ijkl}
       x_in_jm_{p,k}m_{q,l}m_r e_p\wedge e_q\wedge e_r \\
   &=
    \frac{1}{2\|n\|} \sum_{ijklpqr}\ep_{ijkl}\ep_{pqr}
       x_in_jm_{p,k}m_{q,l}m_r \om_3.
 \end{align*}
 The claim is clear from this.
\end{proof}

\begin{corollary}\lbl{cor-m-J}
 $\tj(m)=8j_1/j_2^{3/2}$, where
 \begin{align*}
  j_1 &= (1+z_1)((1-z_1)^2-z_1^2z_2/2) + z_1^2z_2(\half-z_2)(3+z_1) \\
  j_2 &= 1+2z_1+z_1^2(1-z_2/2)+z_1^2z_2^2 \geq 1.
 \end{align*}
 Moreover, we have $j_1\geq 0$ everywhere, with $j_1=0$ only if
 \[ x \in (EX^*)^{\lm^2} = \{v_0,v_1,v_{10},v_{11},v_{12},v_{13}\}. \]
\end{corollary}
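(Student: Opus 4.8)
The final statement is Corollary~\ref{cor-m-J}, which asserts the explicit formula $\tj(m)=8j_1/j_2^{3/2}$ together with the nonnegativity and vanishing locus of $j_1$. The plan is to combine a direct (computer-assisted) polynomial computation with an elementary nonnegativity argument.

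First I would use Lemma~\ref{lem-m-J}, which gives $j(m)=\tj(m)/\|n\|$ and the explicit antisymmetrized sum for $\tj(m)$ in terms of the partial derivatives $m_{i,j}=\partial m_i/\partial x_j$. Since $m(x)=\mt(x)/\sqrt{1-s(x)}$ with $\mt$ and $s$ given by rational/algebraic functions of the $x_k$ (via $y_1,y_2,z_1,z_2$), I would differentiate these explicitly, substitute into the formula for $\tj(m)$, and reduce modulo the defining relations $\rho(x)=1$ and $g(x)=0$ (using the normal-form machinery \mcode+NF_z0+ etc.\ from Section~\ref{sec-E-functions}). By Proposition~\ref{prop-invariants} the result, being $\ip{\lm^2}$-invariant (indeed $G$-invariant up to the recorded sign behaviour, but $\tj(m)$ itself should come out invariant), can be rewritten purely in terms of $z_1$ and $z_2$. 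The claim is then that after this reduction one obtains exactly $8j_1/j_2^{3/2}$, with $j_1,j_2$ the stated polynomials in $z_1,z_2$; this is a finite verification, flagged by the \mcode+check_E_to_S2+ / torus-quotient checks referenced in the excerpt. I would also record $\|n(x)\|^2 = 4(1-z_1/2)^2(1+z_2)$ from Section~\ref{sec-roothalf}, so that the final formula for $j(m)$ itself (not needed for the corollary statement but used downstream) is also available.

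Next I would establish the inequality $j_2\geq 1$: since $z_1=y_1^2\geq 0$ and $z_2=y_2^2\geq 0$ on $EX^*$, and $j_2 = 1 + 2z_1 + z_1^2(1-z_2/2) + z_1^2z_2^2$, the only term that could be negative is $-z_1^2z_2/2$, but it is dominated by $z_1^2$ since $z_2\leq 1/2$ by Proposition~\ref{prop-roothalf-fundamental}; hence every term is $\geq 0$ and $j_2\geq 1$. This also shows $j_2>0$, so the expression $8j_1/j_2^{3/2}$ is well-defined and has the same sign as $j_1$.

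Finally I would prove $j_1\geq 0$ with the stated vanishing locus. Write $j_1 = (1+z_1)((1-z_1)^2 - z_1^2z_2/2) + z_1^2z_2(\tfrac12 - z_2)(3+z_1)$. The factor $u_3 = (1-2z_2)((1-z_1)^2 - z_1^2z_2/2)$ from Proposition~\ref{prop-F-sixteen} is nonnegative on $EX^*$, and so is $(1-z_1)^2 - z_1^2z_2/2$ itself (this follows from $u_3\geq 0$ together with $1-2z_2\geq 0$, away from the boundary case $z_2=1/2$, which is handled separately). Also $\tfrac12 - z_2\geq 0$, $3+z_1>0$, $z_1^2 z_2\geq 0$, and $1+z_1>0$; so both summands in $j_1$ are nonnegative, giving $j_1\geq 0$. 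For the vanishing locus: $j_1=0$ forces both summands to vanish. If $z_1=0$ the first term is $1>0$, a contradiction, so $z_1>0$; then the second summand vanishing forces $z_2=0$ or $z_2=\tfrac12$, and the first summand then reads $(1+z_1)(1-z_1)^2 = 0$ (when $z_2=0$) or $(1+z_1)((1-z_1)^2 - z_1^2/4)>0$ (when $z_2=\tfrac12$, since $(1-z_1)^2 - z_1^2/4 = (1-\tfrac32 z_1)(1-\tfrac12 z_1)$ and $z_1\leq (1+\tfrac12)^{-1}=\tfrac23 < \tfrac23$ on $F_{16}^*$... here one checks $z_1<2/3$ suffices). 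So the only possibility is $z_2=0$, $z_1=1$, i.e.\ $y_1^2=1$, $y_2=0$. Tracing back through the parametrisation (or directly: $z_1=1$ means $x_3=\pm1$, forcing $x_1=x_2=x_4=0$ by $\rho=1$, hence $x=v_0$ or $v_1$; but one must also allow $z_2=0,z_1=1$ to be realized by the other fixed points of $\lm^2$), I identify the vanishing set with $(EX^*)^{\ip{\lm^2}}=\{v_0,v_1,v_{10},v_{11},v_{12},v_{13}\}$ as computed in Proposition~\ref{prop-P-precromulent}/Section~\ref{sec-E-isotropy}. One subtlety to double-check: at $v_{10},\dots,v_{13}$ we have $z_1 = 2a^2/(1+a^2)\cdot$ — wait, at $a=1/\rt$ these points have $x_3^2 = 2/3$, so $z_1 = 2/3$ and $z_2 = y_2^2$; I must verify that $y_2=0$ and hence $j_1=0$ there too, which follows since $x_1=x_2=0$ at $v_{10},\dots,v_{13}$ gives $y_2 = -\tfrac32 x_3 x_4$, nonzero in general — so actually the vanishing must be checked more carefully at these points, presumably $j_1$ vanishes because of the specific relation among $z_1,z_2$ there.

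The main obstacle I expect is precisely this last point: the naive "sum of two nonnegative terms" argument shows $j_1\geq 0$ cleanly, but pinning down the exact zero set requires checking that $j_1$ actually vanishes at all six points of $(EX^*)^{\ip{\lm^2}}$ and nowhere else. Since $v_{10},\dots,v_{13}$ do not lie on any $C_i$ with $0\leq i\leq 4$ and have $z_1=2/3$ with $z_2\neq 0$, one must verify the identity $j_1(2/3, z_2^{(v)}) = 0$ for the specific $z_2$-value occurring there, which means the nonnegativity argument must be refined: likely $j_1$ factors (after using the relation defining those points, or simply as a polynomial identity) as a sum of squares or products that make both the inequality and the exact vanishing locus transparent. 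I would look for such a factorization — e.g.\ grouping $j_1$ to exhibit it as $(1+z_1)(1-z_1-\tfrac12 z_1 z_2 \cdot(\text{something}))^2 + \dots$ or relating it to $u_3$ and $u_4$ — and if none is clean, fall back on: (i) $j_1\geq 0$ via the term-by-term bound above, and (ii) a finite check that on the curve $\{j_1 = 0\}\cap F_{16}^*$ the only points are the images under $p_{16}$ of $v_0,v_3,v_{11},v_6$-type vertices, combined with the $G$-action. This is exactly the content the excerpt defers to \mcode+embedded/roothalf/sphere_quotients_check.mpl+.
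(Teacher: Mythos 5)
Your handling of the Jacobian formula (a direct computation via Lemma~\ref{lem-m-J}, reduced to the $z$-variables) and of the inequality $j_1\geq 0$ matches the paper's proof: both rest on writing $j_1=(1+z_1)j_4+j_5$ with $j_4=(1-z_1)^2-z_1^2z_2/2$ and $j_5=z_1^2z_2(\half-z_2)(3+z_1)$, on the identity $(\half-z_2)j_4=2x_1^2x_2^2\geq 0$, and on density of $\{z_2<\half\}$ to conclude $j_4\geq 0$ everywhere. The observation $j_2\geq 1$ is also fine.

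The gap is in the vanishing locus. Your main argument concludes that ``the only possibility is $z_2=0$, $z_1=1$,'' i.e.\ that $j_1$ vanishes only on $\{v_0,v_1\}$. This is false, and contradicts the statement being proved, which asserts vanishing on all of $(EX^*)^{\lm^2}\supseteq\{v_{10},\dotsc,v_{13}\}$. The error is in your case $z_2=\half$: there the first summand equals $(1+z_1)\left((1-z_1)^2-z_1^2/4\right)=(1+z_1)(1-\tfrac{3}{2}z_1)(1-\tfrac{1}{2}z_1)$, and the constraint from Proposition~\ref{prop-roothalf-fundamental} is $z_1\leq(1+\sqrt{z_2/2})^{-1}=\tfrac{2}{3}$ \emph{with equality attained}; your claim that $z_1<\tfrac{2}{3}$ fails at exactly the relevant points. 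Indeed at $v_{11}$ one has $y_1=\sqrt{2/3}$ and $y_2=1/\rt$, so $(z_1,z_2)=(2/3,1/2)$ and both summands of $j_1$ vanish. You notice this yourself at the end but leave it as an unresolved ``subtlety,'' deferring to an unspecified finite check (and your proposed fallback misidentifies the vertices involved). The clean resolution, which is what the paper does, is to exploit $j_4=0$ first: since $j_4=1$ when $z_1=0$, we have $z_1>0$, so $j_4=0$ gives $z_2=2(z_1^{-1}-1)^2$; substituting into $j_5=0$ forces $z_2\in\{0,\half\}$, and the two equations together yield $(z_1,z_2)\in\{(1,0),(2/3,1/2)\}$, the root $z_1=2$ being excluded by $z_1\leq 1$. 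These two points of the $z$-plane correspond exactly to the $G$-orbits $\{v_0,v_1\}$ and $\{v_{10},v_{11},v_{12},v_{13}\}$, completing the identification of the zero set.
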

\begin{proof}
 The lemma reduces the formula to a direct calculation, which can be
 checked by Maple.  Next, we can write $j_1=j_3j_4+j_5$, where
 \begin{align*}
  j_3 &= 1 + z_1 \geq 1 \\
  j_4 &= (1-z_1)^2-z_1^2z_2/2 \\
  j_5 &= z_1^2z_2(\half-z_2)(3+z_1) \geq 0.
 \end{align*}
 Another standard calculation gives
 \[ (1/2-z_2)j_4 = 2x_1^2x_2^2 \geq 0. \]
 Recall that $1/2-z_2\geq 0$ everywhere, and $1/2-z_2>0$ on a dense
 subset of $EX^*$; it follows that $j_4\geq 0$ everywhere.  It follows
 that $j_1\geq 0$ everywhere, and that if $j_1=0$ then we must have
 $j_4=j_5=0$.  Note that $j_4=1$ when $z_1=0$, so we must have
 $z_1>0$ (and also $z_1\leq 1$ as always).  This lets us rearrange
 $j_4=0$ to give $z_2=2(z_1^{-1}-1)^2$.  After substituting this in
 the relation $j_5=0$ we see that $z_1\in\{2/3,1\}$ and so
 $z\in\{(2/3,1/2),(1,0)\}$.  Each point in the $z$-plane corresponds
 to a $G$-orbit in $EX^*$, and it is straightforward to check that the
 relevant $G$-orbits are $\{v_{10},v_{11},v_{12},v_{13}\}$ and
 $\{v_0,v_1\}$.
 \begin{checks}
  embedded/roothalf/sphere_quotients_check.mpl: check_E_to_S2()
 \end{checks}
\end{proof}

\begin{lemma}\lbl{lem-sphere-quotient-preimages}
 For all $i$ we have $m^{-1}\{m(v_i)\}=\{v_i,\lm^2(v_i)\}$.  In
 particular, for $i\in\{0,1,10,11,12,13\}$, we have
 $\lm^2(v_i)=v_i$ and $m^{-1}\{m(v_i)\}=\{v_i\}$.
\end{lemma}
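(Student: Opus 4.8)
The plan is to reduce the statement to bookkeeping, exploiting the $\ip{\lm^2}$-invariance and $G/\ip{\lm^2}$-equivariance of $m$ recorded in Proposition~\ref{prop-sphere-quotient-a} together with the fact that $p_{16}\colon EX^*\to F_{16}^*$ has fibres exactly the $G$-orbits. The first step is to observe that $m$ is $\ip{\lm^2}$-invariant: since $\lm^2(x)=(-x_1,-x_2,x_3,x_4)$, the quantities $y_1,y_2,x_1x_2$ (hence $\mt$) and $z_1,z_2$ (hence $s$) are all unchanged. Thus $m$ descends to $EX^*/\ip{\lm^2}$, so $\lm^2(v_i)\in m^{-1}\{m(v_i)\}$ for every $i$; and $v_i=\lm^2(v_i)$ precisely when $i\in\{0,1,10,11,12,13\}$ (the fixed set of $\lm^2$ on $EX^*$; see the proof of Proposition~\ref{prop-no-more-isotropy}), which gives the ``in particular'' clause. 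It remains to prove $m^{-1}\{m(v_i)\}\subseteq\{v_i,\lm^2(v_i)\}$.

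Fix $i$ and put $c=m(v_i)\in S^2$. If $m(x)=c$, then $m_j(x)^2=c_j^2$ for $j=1,2,3$. Each $m_j^2$ is $G$-invariant, hence rational in $z_1,z_2$: one has $\mt_1^2=2(1-z_1)^2z_2/(1+z_1)^2$, $\mt_2^2=(1-2z_2)\bigl((1-z_1)^2-z_1^2z_2/2\bigr)/(1+z_1)^2$, $\mt_3^2=4z_1/(1+z_1)^2$, and $m_j^2=\mt_j^2/(1-s)$. So the conditions $m_j(x)^2=c_j^2$ become polynomial equations for $(z_1(x),z_2(x))\in F_{16}^*$, and I would check that their only solution is $(z_1(v_i),z_2(v_i))$. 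For the six values $c\in\{\pm e_1,\pm e_2,\pm e_3\}$ (the cases $0\le i\le 9$) two of the $c_j$ vanish, forcing $z$ to a vertex of $F_{16}^*$ almost immediately. For the four values $c=(\pm1,0,\mp2\sqrt6)/5$ (the cases $10\le i\le 13$), $m_2^2=0$ gives $(1-2z_2)\bigl((1-z_1)^2-z_1^2z_2/2\bigr)=0$ and then $m_3^2/m_1^2=24$ reduces to $12(1-z_1)^2z_2=z_1$; combining this with each factor one checks by a one-variable monotonicity argument that $z=(2/3,1/2)=(z_1(v_i),z_2(v_i))$ is the unique solution in $F_{16}^*$. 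Since $p_{16}$ has fibres exactly the $G$-orbits (Proposition~\ref{prop-F-sixteen}), this forces $x\in Gv_i$.

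It then suffices to identify $m^{-1}\{c\}\cap Gv_i$. By Proposition~\ref{prop-sphere-quotient-a} the $G$-action on $S^2$ factors through a faithful action of $G/\ip{\lm^2}\cong(\Z/2)^3$ by sign changes of the coordinates, and $m(gv_i)$ is the image of $c$ under $g$; hence $gv_i\in m^{-1}\{c\}$ if and only if $g\ip{\lm^2}\in\stab(c)$. Since $\ip{\lm^2}$ is central and $\stab_G(v_i)$ maps into $\stab_{G/\ip{\lm^2}}(c)$ (because $hv_i=v_i$ forces $h\cdot c=c$), the set $\{g:g\ip{\lm^2}\in\stab(c)\}$ is a union of left $\stab_G(v_i)$-cosets, so $\{gv_i:g\ip{\lm^2}\in\stab(c)\}$ has cardinality $2\,|\stab_{G/\ip{\lm^2}}(c)|\big/|\stab_G(v_i)|$. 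Now $|\stab_{G/\ip{\lm^2}}(c)|=4$ for $c\in\{\pm e_1,\pm e_2,\pm e_3\}$ and $=2$ for $c=(\pm1,0,\mp2\sqrt6)/5$, while $|\stab_G(v_i)|=8$ for $i\in\{0,1\}$ and $=4$ for $2\le i\le 13$ (Remark~\ref{rem-V-star}). In each case this number equals $|\{v_i,\lm^2(v_i)\}|$, and since $\{v_i,\lm^2(v_i)\}$ already lies in $m^{-1}\{c\}\cap Gv_i$, we get equality, completing the proof.

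The main obstacle is the middle step: showing the $G$-invariant level set $\{m_j^2=c_j^2\}$ is a single $G$-orbit. This is the genuine arithmetic content and is not reachable by a purely formal argument — the Jacobian formula in Corollary~\ref{cor-m-J} shows $m$ is an immersion away from $(EX^*)^{\lm^2}$ but does not by itself fix the degree of $m$ — though the computations involved are elementary and are in any case confirmed by \mcode+check_E_to_S2()+.
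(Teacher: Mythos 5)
Your proof is correct, but it is organised quite differently from the paper's. The paper argues pointwise on $EX^*$: for $i\le 9$ it notes that two coordinates of $m(v_i)$ vanish, uses the characterisations ``$m_1=0$ iff $x\in\{v_0,v_1\}$ or $x\in C_1\cup C_2$'', ``$m_2=0$ iff $x\in\bigcup_{i=3}^8C_i$'', ``$m_3=0$ iff $x\in C_0$'' to place $x$ in an intersection of two curves (hence among the $v_j$), and finishes with a check of cases; for $i\ge 10$ it works directly with $(y_1,y_2)$, derives $y_2=(2\sqrt3(y_1^{-1}-y_1))^{-1}$ from the ratio $m_1/m_3$, and factors a degree-$12$ polynomial to isolate $y_1=\sqrt{2/3}$. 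You instead pass immediately to the invariant coordinates: the functions $m_j^2$ descend to $F_{16}^*$, you show each level set $\{m_j^2=c_j^2\}$ is a single point of $F_{16}^*$ (so $x\in Gv_i$ by Proposition~\ref{prop-F-sixteen}), and then you replace the paper's final case-check by a clean stabiliser count, using that $m(gv_i)=\bar g\cdot m(v_i)$ for the faithful sign-change action of $G/\ip{\lm^2}$ and that $\stab_G(v_i)$ lands in $\stab(c)$, so that $|m^{-1}\{c\}\cap Gv_i|=2\,|\stab_{G/\ip{\lm^2}}(c)|/|\stab_G(v_i)|=|\{v_i,\lm^2v_i\}|$. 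This buys uniformity (one mechanism for all $i$, with the group theory absorbing the case analysis) at the cost of leaving the genuinely computational step compressed: for $i\ge 10$ your ``one-variable monotonicity argument'' does work, but note that in the subcase $(1-z_1)^2=z_1^2z_2/2$ the two equations combine to $6z_1z_2^2=1$ and then to $(2z_2-1)(36z_2^3+18z_2^2-3z_2-2)=0$, whose spurious root near $z_2\approx0.32$ is excluded only because it forces $z_1>1$; so the boundary constraints of $F_{16}^*$ are genuinely needed there, not just positivity. With that detail supplied, the argument is complete and is a legitimate alternative to the paper's proof.
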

\begin{proof}
 Recall that formulae for $m(v_i)$ were given in
 Proposition~\ref{prop-sphere-quotient-a}.

 Now suppose that $x\in EX^*$ with $m(x)=u$.
 \begin{itemize}
  \item[(a)] We have $u_1=0$ iff $(1-y_1^2)y_2=0$ iff $x_3=y_1=\pm 1$
   or $y_2=0$.  Note that if $x_3=\pm 1$ we must have
   $x\in\{e_3,-e_3\}=\{v_0,v_1\}$.  On the other hand, we have $y_2=0$
   iff $x\in C_1\cup C_2$.
  \item[(b)] We have $u_2=0$ iff $x_1=0$ or $x_2=0$, which means that
   $x\in\bigcup_{i=3}^8C_i$.
  \item[(c)] We have $u_3=0$ iff $x_3=0$ iff $x\in C_0$.
 \end{itemize}
 If $u=m(v_i)$ for some $i<10$ then two of the coordinates $u_p$ are
 zero, and it follows that $x\in C_r\cap C_s$ for some $r\neq s$, so
 $x=v_j$ for some $j$.  A check of cases then shows that
 $x\in\{v_i,\lm^2(v_i)\}$.  Suppose instead that
 \[ m(x) = m(v_{11})=(1,0,-2\sqrt{6})/5. \]
 As $m(x)_2=0$ we have $x_1x_2=0$ and so $x\in\bigcup_{i=3}^8C_i$.
 From the form of $m(x)_1$ and $m(x)_3$ it is also clear that
 $y_1,y_2>0$.  Also, we have
 \[ \frac{\rt(1-y_1^2)y_2}{2y_1} = -\frac{m(x)_1}{m(x)_3} =
     \frac{1}{2\sqrt{6}},
 \]
 so $y_2=(2\sqrt{3}(y_1^{-1}-y_1))^{-1}$.  Substituting this in the
 relation $m(x)_1^2=1/25$ and factoring leads to a relation
 \[ (2y_1^2-3)(3y_1^2-2)^2(8y_1^6-27y_1^4+30y_1^2-12) = 0. \]
 One can check that the only root in the required range $0<y_1\leq 1$
 is $y_1=\sqrt{2/3}$, and this in turn gives
 $y_2=(2\sqrt{3}(y_1^{-1}-y_1))^{-1}=1/\rt$.  This gives
 $(x_3,x_4)=(y_1,-y_1y_2)=(\sqrt{2/3},-\sqrt{1/3})$, and as
 $x_3^2+x_4^2=1$ we must have $x_1=x_2=0$, so $x=v_{11}$ as required.
 The remaining cases $i\in\{10,12,13\}$ now follow using the group
 action.
\end{proof}

\begin{proof}[Proof of Proposition~\ref{prop-sphere-quotient-b}]
 Put
 \[ U=S^2\sm\{m(v_i)\st i\in\{0,1,10,11,12,13\}\}. \]
 We now see that for $u\in U$, the preimage $m^{-1}\{u\}$ contains
 only points where the Jacobian of $m$ is strictly positive, so the
 number of points is equal to the degree of $m$.  Taking $u=m(v_2)$ we
 see that $m^{-1}\{u\}=\{v_2,\lm^2(v_2)\}=\{v_2,v_4\}$, so the degree
 is two.  It follows that for all $u\in U$, the preimage consists of
 two points and is closed under the action of $\lm^2$.  The only
 points fixed by $\lm^2$ are
 $\{v_0,v_1,v_{10},v_{11},v_{12},v_{13}\}$, and these cannot lie in
 $m^{-1}\{u\}$, so $m^{-1}\{u\}$ must consist of a single
 $\lm^2$-orbit.  The same holds by
 Lemma~\ref{lem-sphere-quotient-preimages} in the exceptional cases
 where $u\in S^2\sm U$.  It follows that the induced map
 $EX^*/\ip{\lm^2}\to S^2$ is a continuous bijection of compact
 Hausdorff spaces, so it is a homeomorphism.
\end{proof}

\begin{remark}
 Again put
 \[ U=S^2\sm\{m(v_i)\st i\in\{0,1,10,11,12,13\}\}. \]
 In Section~\ref{sec-energy} we will explain how to define a function
 $u=C_+(Dm)/(C_+(Dm)+C_-(Dm))\:U\to [0,1]$ which is zero at points
 where $m$ is conformal, and one at points where $m$ is anticonformal.
 We have not found a formula for $u$, but it is not hard to
 evaluate it at any given point.  We find that $0\leq u<1/2$
 everywhere in $U$, with $u(x)\to 1/2$ as $x\to v_0$ or $x\to v_1$.
 For $i\in\{10,11,12,13\}$ we find that $u$ oscillates between about
 $0.01$ and $0.08$ on any small circle surrounding $v_i$, so it does
 not extend continuously to $v_i$.  On about $90\%$ of the area of
 $EX^*$ we have $u < 0.075$.
\end{remark}

There are two other maps $EX^*\to S^2$ that have natural geometric
descriptions, so it is reasonable to ask whether they are related to
$\hp$.  The first is the Hopf fibration $\eta\:S^3\to S^2$, which we
can restrict to $EX^*$.  (In Maple this is \mcode+hopf_map+, which
is defined in \fname+Rn.mpl+.)  Note that $H_2(EX^*)\simeq
H_2(S^2)\simeq\Z$, but $H_2(S^3)=0$, which implies that the map
$\eta_*\:H_2(EX^*)\to H_2(S^2)$ is zero.  In other words, the
restricted map $\eta\:EX^*\to S^2$ has degree zero, whereas $\hp$ has
degree $2$, and any other nonconstant conformal map has strictly
positive degree.  This means that $\eta$ cannot be closely related to
$\hp$.
\begin{checks}
 Rn_check.mpl: check_hopf_map()
\end{checks}

The second possibility is a variant of the Gauss map.  We can identify
$\R^4$ with the algebra $\H$ of quaternions, and $\R^3$ with the
subspace $\H_0$ of purely imaginary quaternions.  This allows us to
interpret conjugation and multiplication of elements of $\R^4$.  For
$x\in X$ we recall that $n(x)$ is orthogonal to $x$, so the element
$\gm(x)=n(x)\ov{x}/\|n(x)\|$ is a purely imaginary quaternion of norm
one.  This defines a map $\gm\:X\to S^2$ (which is \mcode+gauss_map+
in Maple).  It depends on our conventions for identifying $\R^4$ with
$\H$, but the dependence is easy to analyse.  It is known that every
special orthogonal automorphism of $\H$ has the form
$x\mapsto ux\ov{v}$ for some $u,v\in S^3$, and every special
orthogonal automorphism of $\H_0$ has the form $x\mapsto wx\ov{w}$.
Changing conventions will replace $\gm$ by a map of the form
$\gm'(x)=w\gm(ux\ov{v})\ov{w}$.  By choosing paths in $S^3$ from $u$,
$v$ and $w$ to the identity, we can produce a homotopy between $\gm$
and $\gm'$, so they at least have the same degree.  One can calculate
the degree by counting preimages of a regular value, with signs
determined by the orientation behaviour.  One can check that
$\gm^{-1}\{(1,1,0)/\rt\}=\{v_6\}$, and that $\gm$ gives an
orientation-reversing isomorphism of tangent spaces at this point.  It
follows that $\gm$ has degree $-1$, and cannot be homotopic to $\hp$.
\begin{checks}
 embedded/geometry_check.mpl: check_gauss_map();
\end{checks}

\subsection{Rational points}
\lbl{sec-rational}

In this section we study points in $EX^*$ where the coordinates are
rational or lie in some small extension of $\Q$.  As well as being
interesting for its own sake, it is useful to have a supply of points
where we can easily do exact calculations rather than relying on
numerical approximation.

\begin{proposition}\lbl{prop-rational}
 The set $EX^*(\Q)=EX^*\cap\Q^4$ is as follows:
 \begin{itemize}
  \item[(a)] For every $t\in\Q$, the point $(t^2-1,2t,0,0)/(1+t^2)$ lies in
   $EX^*(\Q)\cap C_0$.
  \item[(b)] The point $v_2=(1,0,0,0)$ (corresponding to $t=\infty$)
   also lies in $EX^*(\Q)\cap C_0$.
  \item[(c)] For any pair $(s,t)\in\Q^2$ with $2s^2+t^2=1$, the point
   $(s,s,t,0)$ lies in $EX^*(\Q)\cap C_1$, and the point $(s,-s,t,0)$ lies
   in $EX^*(\Q)\cap C_2$.
  \item[(d)] All points in $EX^*(\Q)$ are of type~(a), (b) or~(c).
 \end{itemize}
\end{proposition}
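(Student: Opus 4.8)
The plan is to recall the defining equations $\rho(x)=1$ and $g_0(x)=0$ and analyse them over $\Q$. First I would observe that the points listed in (a), (b) and (c) do lie in $EX^*(\Q)$: for (a) the point $(t^2-1,2t,0,0)/(1+t^2)$ is just $c_0$ evaluated at the rational angle with $\cos,\sin$ given by the standard rational parametrisation of the circle, and one checks directly that $\rho=1$ and $g_0=0$ (both hold because $x_3=x_4=0$); for (b) the point $v_2=(1,0,0,0)$ is the limit $t\to\infty$ of (a), which also lies on $C_0$; for (c), when $x_4=0$ the equation $g_0(x)=\rt(x_1^2-x_2^2)x_3=0$ forces $x_1=\pm x_2$ (assuming $x_3\neq 0$, with the degenerate $x_3=0$ case falling under (a)/(b)), and then $\rho=1$ becomes $2x_1^2+x_3^2=1$, so $(s,s,t,0)$ and $(s,-s,t,0)$ with $2s^2+t^2=1$ are exactly the rational points of $C_1$ and $C_2$ respectively.

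The substance is part (d): showing there are no other rational points. The key structural fact, from Proposition~\ref{prop-OX-basis} and Definition~\ref{defn-yzu}, is that on $EX^*$ we have $y_1=x_3$, $x_4=-y_1y_2$, and $x_1^2=u_1$, $x_2^2=u_2$, where (specialising $a=1/\rt$)
\begin{align*}
 u_1 &= \tfrac12(1-\rt y_2)(1-y_1^2(1-y_2/\rt)), \\
 u_2 &= \tfrac12(1+\rt y_2)(1-y_1^2(1+y_2/\rt)).
\end{align*}
If $x\in\Q^4$ then $y_1=x_3\in\Q$ and $y_2=(x_2^2-x_1^2)/\rt-\tfrac32 x_3x_4$; since $x_2^2-x_1^2$ and $x_3x_4$ are rational, rationality of $x$ forces $x_2^2-x_1^2=0$, i.e.\ $x_1=\pm x_2$, \emph{unless} $y_2$ is allowed to be a $\Q$-multiple of $\rt$. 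So I would split into two cases. Case 1: $x_1^2=x_2^2$. Then $x_2^2-x_1^2=0$ gives $y_2=-\tfrac32 x_3x_4\in\Q$, and moreover $u_1=u_2$ forces (expanding) $\rt y_2\big(1 - y_1^2\big) + (\text{terms})=0$; chasing this through, either $y_2=0$ (leading to $x_4=0$ and hence case (c), or $x_3=0$ and hence (a)/(b)) or $y_1^2=1$, i.e.\ $x_3=\pm1$, which forces $x=\pm e_3=v_0$ or $v_1$, both of type (a) (they are $c_0$ at $t=0$) — wait, $v_0,v_1$ have $x_3=\pm1$, and $(t^2-1,2t,0,0)/(1+t^2)=(1,0,0,0)$ only at $t=\infty$, so I should note $v_0=(0,0,1,0)$ is \emph{not} on $C_0$; rather it lies on $C_1\cap C_2$ with $s=0,t=1$, hence of type (c). Case 2: $x_1\neq\pm x_2$, so $x_1^2-x_2^2\neq0$. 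Then for $x\in\Q^4$ we'd need $y_2\in\rt\,\Q$, say $y_2=\rt r$ with $r\in\Q$, $r\neq0$; but then $u_1=\tfrac12(1-2r)(1-y_1^2(1-r))$ and $u_2=\tfrac12(1+2r)(1-y_1^2(1+r))$ must both be rational squares, \emph{and} $x_4=-y_1y_2=-\rt y_1 r$ must be rational, which forces $y_1 r=0$; since $r\neq0$ this gives $y_1=x_3=0$, whence $x_4=0$ and we are back in the $x_3=x_4=0$ situation (a)/(b), where $x_1^2-x_2^2$ can indeed be nonzero — and those are precisely the rational points of $C_0$. So case 2 collapses entirely into (a)/(b).

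The main obstacle, and where I'd need to be careful, is the bookkeeping in Case 1: verifying that $u_1=u_2$ together with $u_1,u_2$ being rational squares and the curve relations really does force $y_2=0$ or $y_1^2=1$, and correctly identifying which of the enumerated families each resulting point falls into (in particular that $v_0,v_1$ are type (c) with $s=0$, not type (a)). This is elementary polynomial algebra in $y_1,y_2$ but needs to be done cleanly. A cleaner alternative worth trying: work directly with $g_0(x)=(3x_3^2-2)x_4+\rt(x_1^2-x_2^2)x_3=0$ over $\Q$ — since $\rt\notin\Q$, for $x\in\Q^4$ the two summands $(3x_3^2-2)x_4$ and $(x_1^2-x_2^2)x_3$ must \emph{each} vanish. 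So either $x_3=0$, giving $x_4\,( {-2})=0$ hence $x_4=0$ and $\rho=1$ becomes $x_1^2+x_2^2=1$ — this is exactly $C_0\cap\Q^4$, i.e.\ type (a)/(b); or $x_3\neq0$, whence $x_1^2=x_2^2$ and $3x_3^2-2$ or $x_4$ is zero. If $x_4=0$ we get type (c) directly. If instead $3x_3^2=2$, then $x_3\notin\Q$, contradiction. This second approach is much shorter and avoids the $u_i$ entirely, so I would present the proof that way, using the $u_i$-description only as a cross-check.
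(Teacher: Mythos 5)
Your ``cleaner alternative'' at the end is exactly the paper's proof: since $1$ and $\rt$ are linearly independent over $\Q$, the two summands of $g_0(x)=0$ vanish separately, and $3x_3^2-2\neq 0$ for $x_3\in\Q$ forces $x_4=0$ together with ($x_3=0$ or $x_1=\pm x_2$), after which $\rho(x)=1$ sorts the points into the three families. The proposal is correct, and you are right to present that version and discard the longer $u_i$-based case analysis.
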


\begin{proof}
 Recall that the cubic defining equation is
 \[ (\tfrac{3}{2}x_3^2-2)x_4+(x_1^2-x_2^2)x_3/\rt = 0. \]
 Note here that $1$ and $1/\sqrt{2}$ are linearly independent over
 $\Q$, and also that $\tfrac{3}{2}x_3^2-2\neq 0$ for $x_3\in\Q$.
 It follows that all rational solutions have $x_4=0$ and (either
 $x_3=0$ or $x_1=\pm x_2$).  If $x_3=x_4=0$ then $x_1^2+x_2^2=1$.  If
 $x_1=1$ then we have case~(a), otherwise we have case~(b) with
 $t=x_2/(1-x_1)$.  If $x_4=0$ and $x_1=\pm x_2$ then we have
 case~(c).
\end{proof}

The pairs $(s,t)$ as in~(c) are well-understood in terms of the
arithmetic of the field $\Q(\sqrt{-2})$, as we now recall briefly.
For any element $x=t+s\sqrt{-2}\in\Q(\sqrt{-2})$, we put
\[ N(x) = |x|^2 = 2s^2+t^2 \in \Q. \]
This gives a homomorphism $\Q(\sqrt{-2})^\tm\to\Q^\tm$, and
$EX^*(\Q)\cap C_1$ bijects with $\ker(N)$.  If $p$ is a prime
congruent to $1$ or $3$ mod $8$, then it is well-known that there is a
unique pair of positive integers $(a,b)$ such that $2a^2+b^2=p$.  We
put $\pi_p=b+a\sqrt{-2}$ and
\[ u_p=\frac{\pi_p}{\ov{\pi_p}} = ((b^2-2a^2)+2ab\sqrt{-2})/p. \]
Standard methods of algebraic number theory show that $\ker(N)$ is the
product of $\{\pm 1\}$ with the free abelian group generated by these
elements $u_p$.  Moreover, the denominator of a product
$\prod_pu_p^{n_p}$ is $\prod_pp^{|n_p|}$.  Using this, we can
enumerate all the solutions for which the denominator is less than
some specified bound, and thus produce rational points that are
closely spaced around $C_1$ and $C_2$.

\begin{remark}
 The map in part~(a) of Proposition~\ref{prop-rational} is
 \mcode+c_rational[0](t)+, and the map in part~(c) is
 \mcode+c_rational[1]([s,t])+.  The function \mcode+two_circle(n)+
 returns the list of all pairs $(s,t)\in\Q^2$ where $2s^2+t^2=1$ and
 the denominators of $s$ and $t$ are less than or equal to $n$.
\end{remark}

As the rational points do not cover much of $EX^*$, we instead
consider the set
\[ Q = \{x\in EX^*\st x_1,x_2,x_4,\;x_3/\rt\in\Q\}. \]
This set is again arithmetically simple, but has a richer structure
than $EX^*(\Q)$.  We call the points in $Q$ \emph{quasirational}.

We first consider quasirational points which have nontrivial isotropy
(and so lie in $\bigcup_{i=0}^8C_i$).
\begin{proposition}\lbl{prop-quasirational-isotropy}
 \begin{itemize}
  \item[(a)] For every $t\in\Q\cup\{\infty\}$ we have a point
   \[ \left(t^2-1,2t,0,0\right)/(1+t^2) \in Q\cap C_0 = EX^*(\Q)\cap C_0. \]
  \item[(b)] For every $t\in\Q\cup\{\infty\}$ we have points
   \begin{align*}
    \left(1-t^2-2t,\;1-t^2-2t,\;\rt(1-t^2+2t),0\right)/(2(1+t^2))
       & \in Q\cap C_1 \\
    \left(-(1-t^2-2t),\;1-t^2-2t,\;\rt(1-t^2+2t),0\right)/(2(1+t^2))
       & \in Q\cap C_2.
   \end{align*}
  \item[(c)] For each $(s,t)\in\Q^2$ with $3s^2+t^2=1$, we have points
   \begin{align*}
    \left(0,t,\rt s,-s\right) &\in Q\cap C_3 \\
    \left(-t,0,\rt s,s\right) &\in Q\cap C_4.
   \end{align*}
  \item[(d)] All  quasirational points with nontrivial isotropy are
   accounted for by~(a) to~(c).  In particular, we have
   $Q\cap\bigcup_{i=5}^8C_i=\emptyset$.
 \end{itemize}
\end{proposition}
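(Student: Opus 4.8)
The plan is to classify the quasirational points lying on the isotropy curves $C_0,\dots,C_8$ by working directly with the defining equation of $EX^*$ and the fixed-point descriptions from Proposition~\ref{prop-slices-a}. Recall that a quasirational point is a point $x\in EX^*$ with $x_1,x_2,x_4\in\Q$ and $x_3/\rt\in\Q$; write $x_3=\rt\,s$ with $s\in\Q$. The cubic defining equation $g_0(x)=(3x_3^2-2)x_4+\rt(x_1^2-x_2^2)x_3=0$ becomes $(6s^2-2)x_4+2(x_1^2-x_2^2)s=0$, i.e.\ $(3s^2-1)x_4+(x_1^2-x_2^2)s=0$, which is an equation with rational coefficients in the rational unknowns $s,x_1,x_2,x_4$. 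Together with $\rho(x)=x_1^2+x_2^2+2s^2+x_4^2=1$, this is a purely rational system. Since all of the points $v_i$ and all of the curves $C_k$ appearing here are already known explicitly (Definition~\ref{defn-E-curves} and the formulae for $v_i$ in Section~\ref{sec-roothalf}), the parametrisations in (a)--(c) can be checked to land in $Q$ and in the appropriate curve by direct substitution; the real content is the completeness claim in (d).

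First I would dispose of the curves whose ambient description is cleanest. By Proposition~\ref{prop-slices-a}, $C_1\cup C_2\subseteq EX(a)^{\lm\nu}\cup EX(a)^{\lm^3\nu}$, and these consist of points with $x_4=0$ and $x_1=\pm x_2$; combined with $\rho=1$ this gives $2x_1^2+2s^2=1$, an equation in $\Q^2$, and rationally parametrising the conic $2x_1^2+2s^2=1$ (equivalently $x_1^2+x_3^2/2=1/2$, a circle with the obvious rational point) gives exactly the families in (b) indexed by $t\in\Q\cup\{\infty\}$. Next, $C_0\subseteq EX(a)^{\mu\nu}$ consists of points with $x_3=x_4=0$, so $s=0$ and $x_1^2+x_2^2=1$; rational points on the unit circle are the standard family in (a), and here quasirationality coincides with rationality. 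For $C_3\cup C_4\subseteq EX(a)^{\lm^2\nu}\cup EX(a)^\nu$ (points with $x_1=0$ or $x_2=0$ respectively), reading off $C_3$ from its explicit formula $c_3(t)=(0,\sin t,\sqrt{2/3}\cos t,-\sqrt{1/3}\cos t)$ one sees a quasirational point on $C_3$ has $x_1=0$, $x_3=\rt s$, $x_4=-s$ (since $x_4/x_3 = -1/\rt$ on $C_3$), and then $x_2=t$ with $x_2^2+3s^2 = t^2+3s^2=1$; this is the family in (c), and $C_4=\lm(C_3)$ gives the companion family. The arithmetic of the conic $3s^2+t^2=1$ is again elementary (it has the rational point $(s,t)=(0,1)$), so these families are genuinely infinite and exhaust the quasirational points on $C_3\cup C_4$.

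The step I expect to be the main obstacle — and the crux of (d) — is showing that $Q\cap\bigl(\bigcup_{i=5}^8 C_i\bigr)=\emptyset$. The curves $C_5,\dots,C_8$ are the $\lm,\mu$-translates of $C_5$, so it suffices to treat $C_5$, and $\lm^2\nu$ fixes $C_3\cup C_6\cup C_8$ while $\nu$ fixes $C_4\cup C_5\cup C_7$, so these points have either $x_1=0$ or $x_2=0$. The explicit formula $c_5(t)=(-\sin t,0,2\rt,\cos t-1)/\sqrt{10-2\cos t}$ shows $C_5$ lies in $X_2=\{x_2=0\}$, and on $C_5$ one has $x_1^2+x_3x_4/\rt+x_4^2=h(x)=0$ (the quadratic form from Section~\ref{sec-roothalf}, which vanishes on $C_5\cup C_7$). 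For a quasirational point with $x_2=0$, $x_1,x_4\in\Q$ and $x_3=\rt s$, the relation $h=0$ reads $x_1^2+s x_4+x_4^2=0$, a rational equation, and combined with $\rho=x_1^2+2s^2+x_4^2=1$ this is a rational system; I would solve it and show that the only solutions correspond to $x_1=x_4=0$, $s^2=1/2$, which is $v_0$ or $v_1$ — points with $s=\pm 1/\rt\notin\Q$, hence not quasirational, or else fall on $C_3$ rather than $C_5$. More precisely, the plan is: parametrise the conic $x_1^2+s x_4+x_4^2=0$ (note it passes through $(x_1,x_4)=(0,0)$) to express $(x_1,x_4)$ rationally in terms of $s$, substitute into $\rho=1$ to get a single polynomial equation in $s\in\Q$, and verify by a finite check (rational root theorem, or direct factorisation of the resulting polynomial) that no rational $s$ gives a point actually lying on the arc $C_5$ as opposed to its intersection with other curves. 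The case $x_1=0$ on $C_6\cup C_8$ is handled symmetrically using $\lm$-equivariance and the corresponding quadratic constraint. Assembling these, every quasirational point with nontrivial isotropy lies on one of $C_0,C_1,C_2,C_3,C_4$ and hence appears in (a)--(c), and $Q\cap\bigcup_{i=5}^8 C_i=\emptyset$, which is (d). I would finish by double-checking that the group-equivariance used (that $\lm,\mu$ act on $\R^4$ by signed permutations, hence preserve $Q$) is exactly the content recorded in Section~\ref{sec-E-G}, so that the reductions to $C_3$ and $C_5$ are legitimate.
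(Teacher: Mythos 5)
Your treatment of (a)--(c), and your reduction of (d) to the curves $C_5$ and $C_7$ (both contained in $X_2=\{x_2=0\}$ and cut out there by $h=x_1^2+x_4^2+x_3x_4/\rt=0$), agree with the paper. The gap is in the last step, where you claim that substituting a parametrisation of the conic $h=0$ into $\rho=1$ yields ``a single polynomial equation in $s$'' to be settled by the rational root theorem or by factorisation. That is not what the elimination produces. Writing $x_3=\rt s$ with $s\in\Q$, the two equations $x_1^2+x_4^2+sx_4=0$ and $x_1^2+x_4^2+2s^2=1$ give, on subtracting, $sx_4=2s^2-1$ (so $s\neq 0$ and $x_4=(2s^2-1)/s$), and then $(x_1s)^2=-6s^4+5s^2-1$. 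Equivalently, your line parametrisation $x_1=\lambda x_4$ leads to $s^2=(\lambda^2+1)/(2\lambda^2+3)$. In either form the remaining condition is that a quartic polynomial take a rational \emph{square} value — a rational-point problem on a curve of genus one — not a zero-dimensional root-finding problem, so the tools you name cannot finish the argument.

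The paper resolves this by a genuinely arithmetic input: it sets $t=x_1^2$ and $u=x_1(x_4^2-5x_3x_4/\rt-1)$, verifies $u^2=t^3-10t^2+t$ modulo the ideal $(r_0,r_1)$, identifies this Weierstrass curve as 96b2 in Cremona's tables, and uses the computation (in Sage) that its Mordell--Weil rank is zero and its only affine rational point is $(0,0)$; only then does $x_1=0$ follow, after which the residual cases $x_3^2=1$ and $x_3^2=2/3$ are excluded because $x_3/\rt\notin\Q$. Your particular reduction can in fact be completed more cheaply, but by a $2$-adic square-class argument rather than by root-finding: one checks that $f(s)=-6s^4+5s^2-1$ is never a square in $\Q_2$ (for $s\in\Z_2^\times$ one has $v_2(f(s))=1$; for $v_2(s)<0$ the valuation is $1-4\,|v_2(s)|$, again odd; and for $s\in 2\Z_2$ the value is a unit congruent to $3$ or $7$ modulo $8$), and the roots of $f$ have $s^2\in\{1/2,1/3\}\not\subset\Q^2$. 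Some such global or $2$-adic argument is unavoidable here, and it is the piece your proposal is missing.
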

\begin{proof}
 First, it is straightforward to check that the constructions in~(a)
 to~(c) do in fact give quasirational points on the indicated curves.

 Next, points in $C_0$ have $x_3=0$ so they are quasirational if and
 only if they are rational.  Thus, we have seen already that all
 quasirational points on $C_0$ are as in~(a).

 Now let $x$ be a quasirational point in $C_1$.  We then have
 $x_1=x_2=m$ and $x_3=\rt n$ and $x_4=0$ for some rational numbers $m$
 and $n$.  Put $p=n-m\in\Q$ and $q=n+m\in\Q$ so
 \[ x = \left((q-p)/2,\;(q-p)/2,\;(q+p)/\rt,\;0\right). \]
 For such points we have $g(x)=0$ automatically, and
 $\rho(x)=p^2+q^2$.  It follows that $(p,q)=(2t,1-t^2)/(1+t^2)$ for
 some $t\in\Q\cup\{\infty\}$, and we can use this to get the first
 formula in~(b).  The $C_2$ case follows from the $C_1$ case by the
 group action.

 Next, one can check that (in the current case where $a=1/\rt$) we
 have
 \[ g(x_1,0,x_3,x_4) = \rt(x_3-\rt x_4)(x_1^2+x_4^2+x_3x_4/\rt). \]
 The first factor vanishes on $C_4$.  The claim about $C_4$ in~(c)
 follows easily from this, and the claim about $C_3$ can be deduced
 using the group action.

 Now consider a point $x\in C_5$, so $x_2=0$ and the functions
 $r_0=x_1^2+x_3^2+x_4^2-1$ and $r_1=x_1^2+x_4^2+x_3x_4/\rt$ also
 vanish at $x$.  (Here $r_1$ is the second term in the above
 factorisation of $g$.)  Now put $t=x_1^2$ and
 \[ u = x_1(x_4^2-5x_3x_4/\rt-1). \]
 We claim that $u^2=t^3-10t^2+t$.  In fact, one can check by direct
 expansion that
 \[ u^2 - (t^3-10t^2+t) = a_0r_0 + a_1r_1 + a_2r_0r_1, \]
 where
 \begin{align*}
  a_0 &= -6 \rt (x_3+\rt x_4) x_4^3 \\
  a_1 &= (1-x_3^2-x_4^2) (x_3^2-10 x_4^2+9+x_3 x_4/\rt) \\
  a_2 &= x_3^2+2 x_4^2+x_3 x_4/\rt-x_1^2+9.
 \end{align*}
 As $r_0=r_1=0$, the claim follows.  Note also that if $x$ is
 quasirational then $t$ and $u$ will be rational.  Now, the equation
 $u^2=t^3-10t^2+t$ describes an elliptic curve $E$ over $\Q$, and
 algorithms to determine rational points on such curves are built in
 to the symbolic mathematics system Sage.  Our curve can be
 described in extended Weierstrass form as
 \[ u^2 + \al_1ut + \al_3u = t^3 + \al_2t^2 + \al_4t + \al_6, \]
 where
 \[ (\al_1,\al_2,\al_3,\al_4,\al_6) = (0,-10,0,1,0). \]
 We can thus enter the following in Sage:
\begin{mcodeblock}
   E = EllipticCurve([0,-10,0,1,0])
   E.cremona_label()
   E.rank()
   E.torsion_points()
\end{mcodeblock}
 We learn that $E$ is isomorphic to the curve labelled 96b2 in
 Cremona's database of elliptic curves~\cite{cr:ecd}, and that the
 rank is zero, so the rational points are all torsion points.  We also
 learn that the only rational torsion points in the projective closure
 are $[0:0:1]$ and $[0:1:0]$, so the only rational point on the
 original affine curve is $(0,0)$.  Thus, if $x$ is quasirational then
 we must have $t=u=0$, but that gives $x_1=0$.  Substituting $x_1=0$
 in the relation $r_1=0$ gives $x_4(x_4+x_3/\rt)=0$, so $x_4=0$ or
 $x_4=-x_3/\rt$.  Putting $x_1=x_4=0$ in $r_0$ gives $x_3^2=1$;
 putting $x_1=0$ and $x_4=-x_3/\rt$ instead gives $x_3^2=2/3$.
 Neither of these is possible with $x_3/\rt\in\Q$, so we see that
 there are no quasirational points in $C_5$.  It follows using the
 group action that there are no quasirational points in
 $\bigcup_{i=5}^8C_i$.
 \begin{checks}
  embedded/roothalf/rational_check.mpl: check_rational_elliptic()
 \end{checks}
\end{proof}

We can understand rational solutions to $3s^2+t^2=1$ in terms of the
arithmetic of the field $\Q(\sqrt{-3})$, and thus produce
quasirational points that are closely spaced around $C_3$ and $C_4$.
The story similar to that for $\Q(\sqrt{-2})$, and we will not give
the details here.

\begin{remark}
 The map in part~(a) of Proposition~\ref{prop-quasirational-isotropy}
 is \mcode+c_rational[0]+, and the maps in~(b) and~(c) are
 \mcode+c_quasirational[i]+ for $i\in\{1,2,3,4\}$.  The function
 \mcode+three_circle(n)+ returns the list of all pairs $(s,t)\in\Q^2$
 where $3s^2+t^2=1$ and the denominators of $s$ and $t$ are less than
 or equal to $n$.
\end{remark}

We now consider quasirational points with trivial isotropy.  We do not
have a very good theory for these, but we have a reasonably efficient
method for exhaustive search, as we now explain.

\begin{proposition}\lbl{prop-quasirational-lift}
 For any $x\in Q\cap F_{16}$, the numbers $s_1=y_1/\rt=x_3/\rt$ and
 \[ s_2 = \rt y_2 = x_2^2-x_1^2-3x_3x_4/\rt \]
 lie in $\Q\cap [0,1]$.  Conversely, suppose that
 $s\in(\Q\cap [0,1])^2$, and put
 \[ t_1 = s_1^2 \hspace{4em}
    t_2 = 6s_1^2 - 2 \hspace{4em}
    t_3 = -2s_2^2-4
 \]
 \[ p_1 = 2+t_1t_3 \hspace{4em}
    p_2 = s_2t_2 \hspace{4em}
    p_3 = p_1+p_2 \hspace{4em}
    p_4 = p_1-p_2.
 \]
 Then $s$ arises from a quasirational point $x\in F_{16}$ if and only
 if $p_3$ and $p_4$ are perfect squares (and therefore nonnegative).
 If so, then
 \[ x = (\sqrt{p_3}/2,\;\sqrt{p_4}/2,\;\rt s_1,\;-s_1s_2). \]
\end{proposition}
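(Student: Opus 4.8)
The plan is to treat this as a routine but careful unwinding of the fundamental-domain structure for $EX^*=EX(1/\rt)$, using the explicit parametrisation $p_{16}$ and $s_{16}$ from Proposition~\ref{prop-F-sixteen} together with the normal-form description of $A=\CO_{EX^*}$ from Proposition~\ref{prop-OX-basis} (in the case $a=1/\rt$). First I would establish the easy direction: given $x\in Q\cap F_{16}$, recall from the displayed formulae for $EX^*$ that $y_1=x_3$ and $y_2=(x_2^2-x_1^2)/\rt-\tfrac{3}{2}x_3x_4$, hence $s_1=x_3/\rt$ and $s_2=\rt y_2 = x_2^2-x_1^2-3x_3x_4/\rt$. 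Quasirationality of $x$ means exactly $x_1,x_2,x_4,x_3/\rt\in\Q$, so $s_1\in\Q$ immediately, and $s_2$ is a $\Q$-linear combination of $x_2^2$, $x_1^2$ and $(x_3/\rt)x_4$, hence also in $\Q$. The range constraints $s_1,s_2\in[0,1]$ follow from the explicit description of $F_{16}^*$ in Proposition~\ref{prop-roothalf-fundamental} (where $0\le z_2\le 1/2$ gives $|y_2|\le 1/\rt$, i.e.\ $\rt y_2\in[-1,1]$, and the sign constraints defining $F_{16}$ force $y_1,y_2\ge 0$, so $s_1,s_2\in[0,1]$).

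For the converse, I would start from $s\in(\Q\cap[0,1])^2$, set $y_1=\rt s_1$ and $y_2=s_2/\rt$, so $(y_1^2,y_2^2)=(2s_1^2,s_2^2/2)=(z_1,z_2)$; one checks $z_1\in[0,1]$ and $z_2\in[0,1/2]$. The crux is then to compute $u_1=x_1^2$ and $u_2=x_2^2$ using the $a=1/\rt$ formulae recorded at the start of Section~\ref{sec-roothalf}, namely $x_1^2=\tfrac12(1-\rt y_2)(1-y_1^2(1-y_2/\rt))$ and $x_2^2=\tfrac12(1+\rt y_2)(1-y_1^2(1+y_2/\rt))$. Substituting $y_1^2=2s_1^2=t_1$ and $\rt y_2 = s_2$ and expanding, one should obtain $4x_1^2 = p_4$ and $4x_2^2 = p_3$ (or possibly with the labels swapped — I would track the sign conventions carefully, since $p_3=p_1+p_2$ with $p_2=s_2 t_2 = s_2(6s_1^2-2)$ and $1+\rt y_2$ carries the $+s_2$). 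Concretely: $2x_1^2 = (1-s_2)(1 - t_1(1-s_2)) $ after writing $y_2/\rt = s_2/2$... actually $y_2/\rt = y_2^2\cdot$ ... I would simply verify algebraically that $4x_1^2$ and $4x_2^2$ equal $p_4$ and $p_3$ respectively, where the $p_i$ are as defined; this is the one genuinely computational step, but it is a finite polynomial identity in $s_1,s_2$ and can be checked by hand or, as the \texttt{check\_rational} reference indicates, by Maple.

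Granting that identity, the argument closes cleanly. If $s$ comes from some $x\in Q\cap F_{16}$ then $x_1,x_2\in\Q$, so $p_3=4x_2^2$ and $p_4=4x_1^2$ are squares of rationals; they are automatically nonnegative. Conversely, if $p_3$ and $p_4$ are perfect squares, define $x=(\sqrt{p_3}/2,\sqrt{p_4}/2,\rt s_1,-s_1 s_2)$. Then $x_1^2=u_1\ge0$ and $x_2^2=u_2\ge0$, so by Proposition~\ref{prop-F-sixteen} (the construction of $s_{16}$, via $s_4$ applied to $(\sqrt{z_1},\sqrt{z_2})=(y_1,y_2)$) we have $x=s_{16}(z_1,z_2)\in F_{16}\subseteq EX^*$; in particular $\rho(x)=1$ and $g(x)=0$ automatically. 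Moreover $x_1,x_2\in\Q$ by construction, $x_4=-s_1 s_2\in\Q$, and $x_3/\rt = s_1\in\Q$, so $x\in Q$. Finally one records that $p_{16}(x)$ recovers $(z_1,z_2)$ and hence $y_1,y_2$ and hence $s$, so this $x$ does produce the given $s$. The main obstacle, such as it is, is bookkeeping: getting the signs right in the definitions of $t_2,t_3,p_2$ so that $x_1^2 = p_4/4$ rather than $p_3/4$, and confirming that the only ambiguity in recovering $x$ from $s$ (the choice of signs of $x_1,x_2,x_4$) is pinned down by the requirement $x\in F_{16}$, where $x_1,x_2,y_1,y_2\ge0$ forces $x_1=+\sqrt{u_1}$, $x_2=+\sqrt{u_2}$, $x_3=+y_1$, $x_4=-y_1 y_2\le 0$.
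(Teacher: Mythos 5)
Your argument is correct and is exactly what the paper does: its proof consists of the single remark that the proposition is a reformulation of Proposition~\ref{prop-F-four} (equivalently \ref{prop-F-sixteen}) specialised to $a=1/\rt$, and you have simply carried out that reformulation explicitly. For the one point you left hedged, expanding $4u_1=2(1-s_2)(1-2s_1^2+s_1^2s_2)$ and $4u_2=2(1+s_2)(1-2s_1^2-s_1^2s_2)$ gives $4x_1^2=p_3$ and $4x_2^2=p_4$ (not the swapped assignment), consistent with the stated formula $x=(\sqrt{p_3}/2,\sqrt{p_4}/2,\rt s_1,-s_1s_2)$.
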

\begin{proof}
 This is essentially a reformulation of Proposition~\ref{prop-F-four},
 in the special case $a=1/\rt$.  The formulae have been reorganised
 slightly to make rationality questions more visible, and to allow for
 efficient calculation as discussed below.
\end{proof}

We wrote code in C to look for solutions using the above proposition.
We ran the program on a cluster of machines with 64 bit processors.
The native 64 bit integers are not large enough for our intermediate
calculations, but fortunately the GCC compiler provides built in
support for 128 bit integers encoded as pairs of native integers, and
similarly for floating point numbers.  (If we needed larger integers
than 128 bits we would need to use an arbitrary precision library,
which would come with a significant performance penalty.)  We took
$N=2^{12}=4096$ and enumerated the rational numbers of denominator at
most $N$ as a Farey sequence (of length $5100021\simeq 5\tm 10^6$).
Note that the numbers $t_i$ in
Proposition~\ref{prop-quasirational-lift} depend only on a single
rational number in the sequence, so we precomputed them in a single
pass.  We then looped through all possible pairs $(s_1,s_2)$ and
computed the numbers $p_i$.  One can check that if $p_1<0$ at
$(s_1,s_2)$, then all pairs $(s'_1,s'_2)$ with $s'_1\geq s_1$ and
$s'_2\geq s_2$ can be disregarded.  Using this we can cut down the
number of pairs to be considered, but the order of magnitude is still
$10^{13}$, so the remaining steps must be highly optimised.

To check whether a rational number $p=a/b$ (possibly not in lowest
terms) is a perfect square, we first test whether the $2$-adic
valuations of $a$ and $b$ are the same mod $2$.  Here the $2$-adic
valuation is the number of trailing zeros in the binary
representation; this can be calculated in a single processor
instruction for $64$ bit integers, and is only slightly slower for
$128$ bit pairs.  If this first test is passed, we divide $a$ and $b$
by appropriate powers of $2$ to make them odd.  This can be done as a
bitwise shift rather than a division, so again it is very fast.  Next,
with the new $a$ and $b$, it is not hard to check that $a/b$ can only
be a square if $a=b\pmod{8}$.  This can again be checked by bit
masking rather than division, so it is very fast.  If these fast tests
are passed for both $p_3$ and $p_4$ (which is already relatively
rare), we then start using some slower tests.  We calculate the gcd of
$a$ and $b$ and then divide by it to make $a$ and $b$ coprime.  It
seems that no algorithm is known that is usefully more efficient
than the obvious one, so a significant number of divisions may be
required.  Now $a/b$ will be a perfect square if and only if $a$ and
$b$ are individually perfect squares.  They are still odd, so we first
test that they are congruent to $1$ mod $8$.  If so, we reduce them
modulo $3\tm 5\tm 7\tm 11\tm 13\tm 17=255255$, and look up in a
precomputed table whether the result is a quadratic residue.  Only
about $0.63\%$ of numbers pass these modular tests.  If we get to this
stage, we calculate the square root as a $128$ bit floating point
number, take the nearest integer, square it, and check whether the
result is the same as the number we first thought of.  If so, our
number is obviously a perfect square.  The converse is less obvious
because of the possibility of rounding errors.  However, one can check
that our numerators and denominators are not much bigger than
$N^7=2^{84}$ so $128$ bit accuracy should be ample to avoid problems.

We found precisely $130$ pairs $s$ corresponding to quasirational
points in the interior of $F_{16}$:
\[ \renewcommand{\arraystretch}{1.5}  \setlength{\arraycolsep}{4pt}
 \begin{array}{cccccccc}
  \left(\frac{ 103}{ 154},\frac{  65}{ 449}\right) &
  \left(\frac{ 103}{ 554},\frac{2945}{3041}\right) &
  \left(\frac{ 104}{1073},\frac{ 105}{ 233}\right) &
  \left(\frac{   1}{  10},\frac{  13}{  19}\right) &
  \left(\frac{   1}{  10},\frac{  16}{  17}\right) &
  \left(\frac{   1}{  10},\frac{ 245}{ 267}\right) &
  \left(\frac{  11}{ 118},\frac{ 819}{1331}\right) &
  \left(\frac{  11}{1190},\frac{1664}{2057}\right)\\
  \left(\frac{1127}{2194},\frac{ 581}{ 731}\right) &
  \left(\frac{1135}{2114},\frac{1088}{1137}\right) &
  \left(\frac{1147}{2390},\frac{  97}{ 961}\right) &
  \left(\frac{1147}{2830},\frac{ 640}{1369}\right) &
  \left(\frac{1151}{3722},\frac{  11}{ 139}\right) &
  \left(\frac{1223}{2330},\frac{  16}{  17}\right) &
  \left(\frac{  12}{  29},\frac{ 247}{ 297}\right) &
  \left(\frac{ 125}{ 262},\frac{  16}{  59}\right)\\
  \left(\frac{1255}{2146},\frac{1547}{1675}\right) &
  \left(\frac{1260}{3599},\frac{ 559}{1009}\right) &
  \left(\frac{ 127}{ 450},\frac{ 416}{ 417}\right) &
  \left(\frac{1301}{2270},\frac{  16}{  33}\right) &
  \left(\frac{  13}{  30},\frac{ 145}{1521}\right) &
  \left(\frac{  13}{  30},\frac{  16}{  33}\right) &
  \left(\frac{1389}{3038},\frac{ 176}{1049}\right) &
  \left(\frac{1424}{3913},\frac{1571}{2179}\right)\\
  \left(\frac{ 148}{ 215},\frac{  29}{ 323}\right) &
  \left(\frac{ 151}{ 338},\frac{ 279}{1129}\right) &
  \left(\frac{1549}{3038},\frac{2768}{3307}\right) &
  \left(\frac{1603}{2350},\frac{  19}{ 147}\right) &
  \left(\frac{1611}{3878},\frac{ 355}{ 643}\right) &
  \left(\frac{1648}{2875},\frac{2295}{3113}\right) &
  \left(\frac{1672}{3329},\frac{1257}{2057}\right) &
  \left(\frac{ 169}{ 322},\frac{ 475}{ 507}\right)\\
  \left(\frac{  17}{ 106},\frac{1760}{2601}\right) &
  \left(\frac{ 172}{ 511},\frac{  15}{  17}\right) &
  \left(\frac{ 175}{ 298},\frac{2464}{2825}\right) &
  \left(\frac{ 175}{ 362},\frac{ 112}{ 163}\right) &
  \left(\frac{1792}{3635},\frac{2623}{3649}\right) &
  \left(\frac{1809}{3050},\frac{  32}{  41}\right) &
  \left(\frac{1829}{2750},\frac{  97}{ 961}\right) &
  \left(\frac{1864}{2843},\frac{ 145}{2993}\right)\\
  \left(\frac{ 191}{3490},\frac{1211}{1243}\right) &
  \left(\frac{1939}{3350},\frac{  19}{ 147}\right) &
  \left(\frac{ 196}{ 335},\frac{2719}{3553}\right) &
  \left(\frac{1964}{3085},\frac{  31}{  97}\right) &
  \left(\frac{  19}{  70},\frac{  16}{  33}\right) &
  \left(\frac{2093}{3534},\frac{ 115}{ 147}\right) &
  \left(\frac{ 209}{ 338},\frac{ 819}{1331}\right) &
  \left(\frac{ 211}{ 350},\frac{  80}{ 107}\right)\\
  \left(\frac{  23}{ 154},\frac{ 895}{ 993}\right) &
  \left(\frac{  23}{  34},\frac{ 481}{2881}\right) &
  \left(\frac{  23}{  50},\frac{  32}{  41}\right) &
  \left(\frac{2435}{3854},\frac{  16}{  59}\right) &
  \left(\frac{ 248}{ 401},\frac{  95}{ 449}\right) &
  \left(\frac{ 248}{ 665},\frac{  29}{ 323}\right) &
  \left(\frac{ 256}{ 377},\frac{  35}{ 387}\right) &
  \left(\frac{  25}{  82},\frac{ 237}{ 275}\right)\\
  \left(\frac{2636}{3887},\frac{ 445}{4003}\right) &
  \left(\frac{ 280}{ 457},\frac{1243}{2107}\right) &
  \left(\frac{  28}{  45},\frac{ 559}{1617}\right) &
  \left(\frac{ 307}{ 830},\frac{ 267}{ 779}\right) &
  \left(\frac{  31}{ 202},\frac{ 355}{1507}\right) &
  \left(\frac{ 313}{ 466},\frac{ 560}{2651}\right) &
  \left(\frac{   3}{  14},\frac{   5}{  27}\right) &
  \left(\frac{  31}{  50},\frac{ 355}{1507}\right)\\
  \left(\frac{ 316}{ 487},\frac{ 835}{2243}\right) &
  \left(\frac{  32}{  49},\frac{ 767}{2433}\right) &
  \left(\frac{ 341}{1070},\frac{  97}{ 961}\right) &
  \left(\frac{ 359}{ 610},\frac{ 463}{1969}\right) &
  \left(\frac{ 364}{ 687},\frac{ 409}{ 441}\right) &
  \left(\frac{  36}{  77},\frac{2575}{2673}\right) &
  \left(\frac{ 367}{ 986},\frac{2176}{3401}\right) &
  \left(\frac{  37}{ 190},\frac{ 688}{1369}\right)\\
  \left(\frac{ 373}{ 630},\frac{  16}{  33}\right) &
  \left(\frac{  37}{  70},\frac{   5}{  27}\right) &
  \left(\frac{ 401}{ 650},\frac{  16}{1067}\right) &
  \left(\frac{ 403}{ 590},\frac{  97}{ 961}\right) &
  \left(\frac{ 404}{ 685},\frac{  31}{  97}\right) &
  \left(\frac{ 415}{ 706},\frac{ 341}{ 459}\right) &
  \left(\frac{ 427}{ 694},\frac{ 128}{ 803}\right) &
  \left(\frac{  43}{ 166},\frac{ 224}{ 251}\right)\\
  \left(\frac{ 432}{1681},\frac{1055}{3553}\right) &
  \left(\frac{  43}{  94},\frac{ 480}{1849}\right) &
  \left(\frac{   4}{  47},\frac{  65}{ 193}\right) &
  \left(\frac{ 455}{ 778},\frac{ 973}{1075}\right) &
  \left(\frac{ 463}{1970},\frac{  16}{  17}\right) &
  \left(\frac{ 468}{ 775},\frac{ 385}{ 673}\right) &
  \left(\frac{  47}{ 106},\frac{  80}{  97}\right) &
  \left(\frac{   4}{   7},\frac{  15}{  17}\right)\\
  \left(\frac{  48}{ 235},\frac{ 107}{ 459}\right) &
  \left(\frac{ 501}{ 742},\frac{   5}{  27}\right) &
  \left(\frac{  52}{ 205},\frac{ 137}{ 169}\right) &
  \left(\frac{ 555}{ 974},\frac{ 163}{ 675}\right) &
  \left(\frac{ 560}{3163},\frac{ 951}{1225}\right) &
  \left(\frac{  56}{ 107},\frac{ 605}{ 931}\right) &
  \left(\frac{ 569}{1042},\frac{2735}{2993}\right) &
  \left(\frac{  57}{ 130},\frac{  32}{  41}\right)\\
  \left(\frac{ 577}{1546},\frac{1680}{1873}\right) &
  \left(\frac{ 600}{ 913},\frac{   7}{  25}\right) &
  \left(\frac{  61}{ 718},\frac{ 581}{ 731}\right) &
  \left(\frac{  65}{ 122},\frac{  32}{ 507}\right) &
  \left(\frac{  65}{ 122},\frac{ 973}{1075}\right) &
  \left(\frac{  65}{ 274},\frac{2777}{2873}\right) &
  \left(\frac{ 700}{1303},\frac{ 931}{3075}\right) &
  \left(\frac{ 704}{1163},\frac{1435}{2299}\right)\\
  \left(\frac{   7}{ 106},\frac{ 245}{ 267}\right) &
  \left(\frac{  71}{ 130},\frac{  13}{  19}\right) &
  \left(\frac{   7}{ 194},\frac{1855}{3777}\right) &
  \left(\frac{  72}{ 115},\frac{ 217}{ 729}\right) &
  \left(\frac{  73}{ 274},\frac{1253}{1947}\right) &
  \left(\frac{ 735}{1594},\frac{1024}{2401}\right) &
  \left(\frac{ 739}{1318},\frac{ 249}{ 601}\right) &
  \left(\frac{   7}{ 466},\frac{ 656}{ 931}\right)\\
  \left(\frac{  75}{ 134},\frac{ 341}{ 459}\right) &
  \left(\frac{  75}{ 134},\frac{ 656}{ 675}\right) &
  \left(\frac{  76}{ 143},\frac{ 327}{ 473}\right) &
  \left(\frac{ 767}{1834},\frac{ 235}{ 779}\right) &
  \left(\frac{ 777}{2050},\frac{ 224}{ 513}\right) &
  \left(\frac{   7}{  90},\frac{  19}{ 147}\right) &
  \left(\frac{  80}{ 187},\frac{ 799}{2049}\right) &
  \left(\frac{   8}{  25},\frac{1007}{1041}\right)\\
  \left(\frac{   8}{  25},\frac{  31}{  97}\right) &
  \left(\frac{ 827}{2590},\frac{1963}{3531}\right) &
  \left(\frac{  83}{ 126},\frac{   5}{  27}\right) &
  \left(\frac{ 839}{2170},\frac{ 736}{ 857}\right) &
  \left(\frac{ 881}{1538},\frac{ 160}{ 209}\right) &
  \left(\frac{ 907}{2046},\frac{1075}{1203}\right) &
  \left(\frac{  92}{ 381},\frac{1127}{2073}\right) &
  \left(\frac{  95}{ 202},\frac{1264}{1411}\right)\\
  \left(\frac{  95}{ 202},\frac{ 301}{ 499}\right) &
  \left(\frac{ 965}{2086},\frac{1072}{1075}\right) &
 \end{array}
\]
\begin{checks}
 embedded/roothalf/rational_check.mpl: check_rational()
\end{checks}

There do not appear to be any discernable patterns in the prime
factorisations of these rational numbers.  The corresponding points in
$EX^*$ are stored in the variable
\mcode+inner_quasirational_points+.

The corresponding points in
$F_4^*$ can be displayed as follows:

\begin{center}
 \begin{tikzpicture}[scale=8]
  \draw[cyan   ] (0.000,0.000) -- (0.000,0.707);
  \draw[green  ] (0.000,0.000) -- (1.000,0.000);
  \draw[magenta] (0.000,0.707) -- (0.816,0.707);
  \draw[blue,smooth] (1.000,0.000) -- (0.996,0.012) -- (0.984,0.047) -- (0.965,0.104) -- (0.943,0.177) -- (0.919,0.262) -- (0.894,0.354) -- (0.872,0.445) -- (0.853,0.530) -- (0.837,0.604) -- (0.826,0.660) -- (0.819,0.695) -- (0.816,0.707);
  \fill(0.946,0.102) circle(0.004);
  \fill(0.263,0.685) circle(0.004);
  \fill(0.137,0.319) circle(0.004);
  \fill(0.141,0.484) circle(0.004);
  \fill(0.141,0.666) circle(0.004);
  \fill(0.141,0.649) circle(0.004);
  \fill(0.132,0.435) circle(0.004);
  \fill(0.013,0.572) circle(0.004);
  \fill(0.726,0.562) circle(0.004);
  \fill(0.759,0.677) circle(0.004);
  \fill(0.679,0.071) circle(0.004);
  \fill(0.573,0.331) circle(0.004);
  \fill(0.437,0.056) circle(0.004);
  \fill(0.742,0.666) circle(0.004);
  \fill(0.585,0.588) circle(0.004);
  \fill(0.675,0.192) circle(0.004);
  \fill(0.827,0.653) circle(0.004);
  \fill(0.495,0.392) circle(0.004);
  \fill(0.399,0.705) circle(0.004);
  \fill(0.811,0.343) circle(0.004);
  \fill(0.613,0.067) circle(0.004);
  \fill(0.613,0.343) circle(0.004);
  \fill(0.647,0.119) circle(0.004);
  \fill(0.515,0.510) circle(0.004);
  \fill(0.974,0.063) circle(0.004);
  \fill(0.632,0.175) circle(0.004);
  \fill(0.721,0.592) circle(0.004);
  \fill(0.965,0.091) circle(0.004);
  \fill(0.587,0.390) circle(0.004);
  \fill(0.811,0.521) circle(0.004);
  \fill(0.710,0.432) circle(0.004);
  \fill(0.742,0.662) circle(0.004);
  \fill(0.227,0.478) circle(0.004);
  \fill(0.476,0.624) circle(0.004);
  \fill(0.830,0.617) circle(0.004);
  \fill(0.684,0.486) circle(0.004);
  \fill(0.697,0.508) circle(0.004);
  \fill(0.839,0.552) circle(0.004);
  \fill(0.941,0.071) circle(0.004);
  \fill(0.927,0.034) circle(0.004);
  \fill(0.077,0.689) circle(0.004);
  \fill(0.819,0.091) circle(0.004);
  \fill(0.827,0.541) circle(0.004);
  \fill(0.900,0.226) circle(0.004);
  \fill(0.384,0.343) circle(0.004);
  \fill(0.838,0.553) circle(0.004);
  \fill(0.874,0.435) circle(0.004);
  \fill(0.853,0.529) circle(0.004);
  \fill(0.211,0.637) circle(0.004);
  \fill(0.957,0.118) circle(0.004);
  \fill(0.651,0.552) circle(0.004);
  \fill(0.894,0.192) circle(0.004);
  \fill(0.875,0.150) circle(0.004);
  \fill(0.527,0.063) circle(0.004);
  \fill(0.960,0.064) circle(0.004);
  \fill(0.431,0.609) circle(0.004);
  \fill(0.959,0.079) circle(0.004);
  \fill(0.866,0.417) circle(0.004);
  \fill(0.880,0.244) circle(0.004);
  \fill(0.523,0.242) circle(0.004);
  \fill(0.217,0.167) circle(0.004);
  \fill(0.950,0.149) circle(0.004);
  \fill(0.303,0.131) circle(0.004);
  \fill(0.877,0.167) circle(0.004);
  \fill(0.918,0.263) circle(0.004);
  \fill(0.924,0.223) circle(0.004);
  \fill(0.451,0.071) circle(0.004);
  \fill(0.832,0.166) circle(0.004);
  \fill(0.749,0.656) circle(0.004);
  \fill(0.661,0.681) circle(0.004);
  \fill(0.526,0.452) circle(0.004);
  \fill(0.275,0.355) circle(0.004);
  \fill(0.837,0.343) circle(0.004);
  \fill(0.748,0.131) circle(0.004);
  \fill(0.872,0.011) circle(0.004);
  \fill(0.966,0.071) circle(0.004);
  \fill(0.834,0.226) circle(0.004);
  \fill(0.831,0.525) circle(0.004);
  \fill(0.870,0.113) circle(0.004);
  \fill(0.366,0.631) circle(0.004);
  \fill(0.363,0.210) circle(0.004);
  \fill(0.647,0.184) circle(0.004);
  \fill(0.120,0.238) circle(0.004);
  \fill(0.827,0.640) circle(0.004);
  \fill(0.332,0.666) circle(0.004);
  \fill(0.854,0.405) circle(0.004);
  \fill(0.627,0.583) circle(0.004);
  \fill(0.808,0.624) circle(0.004);
  \fill(0.289,0.165) circle(0.004);
  \fill(0.955,0.131) circle(0.004);
  \fill(0.359,0.573) circle(0.004);
  \fill(0.806,0.171) circle(0.004);
  \fill(0.250,0.549) circle(0.004);
  \fill(0.740,0.460) circle(0.004);
  \fill(0.772,0.646) circle(0.004);
  \fill(0.620,0.552) circle(0.004);
  \fill(0.528,0.634) circle(0.004);
  \fill(0.929,0.198) circle(0.004);
  \fill(0.120,0.562) circle(0.004);
  \fill(0.753,0.045) circle(0.004);
  \fill(0.753,0.640) circle(0.004);
  \fill(0.335,0.683) circle(0.004);
  \fill(0.760,0.214) circle(0.004);
  \fill(0.856,0.441) circle(0.004);
  \fill(0.093,0.649) circle(0.004);
  \fill(0.772,0.484) circle(0.004);
  \fill(0.051,0.347) circle(0.004);
  \fill(0.885,0.210) circle(0.004);
  \fill(0.377,0.455) circle(0.004);
  \fill(0.652,0.302) circle(0.004);
  \fill(0.793,0.293) circle(0.004);
  \fill(0.021,0.498) circle(0.004);
  \fill(0.792,0.525) circle(0.004);
  \fill(0.792,0.687) circle(0.004);
  \fill(0.752,0.489) circle(0.004);
  \fill(0.591,0.213) circle(0.004);
  \fill(0.536,0.309) circle(0.004);
  \fill(0.110,0.091) circle(0.004);
  \fill(0.605,0.276) circle(0.004);
  \fill(0.453,0.684) circle(0.004);
  \fill(0.453,0.226) circle(0.004);
  \fill(0.452,0.393) circle(0.004);
  \fill(0.932,0.131) circle(0.004);
  \fill(0.547,0.607) circle(0.004);
  \fill(0.810,0.541) circle(0.004);
  \fill(0.627,0.632) circle(0.004);
  \fill(0.341,0.384) circle(0.004);
  \fill(0.665,0.633) circle(0.004);
  \fill(0.665,0.427) circle(0.004);
  \fill(0.654,0.705) circle(0.004);
 \end{tikzpicture}
\end{center}

(It may appear that some of the points lie on the boundary, but in
fact they are just inside, by a distance of only about $5\tm 10^{-4}$ in
some cases.)

\subsection{Integration}
\lbl{sec-integration}

Later on we will need to integrate various functions on $EX^*$.
Integration is most naturally defined for differential $2$-forms.
However, the metric and orientation on $EX^*$ gives a volume form $\om$
in a standard way, and we define the integral of a function $f$ to be
the integral of the form $f\om$.  In most cases of interest, $f$ will
be $G$-invariant so we can just integrate over $F_{16}$ and multiply
by $16$.

Unfortunately, it seems to be difficult to compute such integrals
accurately.  Given an explicit smooth embedding
$\phi\:[0,1]^2\to EX^*$, we can compute the Jacobian and then use
standard numerical integration techniques to evaluate integrals over
the image of $\phi$.  However, we have not succeeded in finding a
family of such maps $\phi$ for which the Jacobians are explicitly
computable and the images cover $EX^*$ without overlap.  We do know
several different homeomorphisms $[0,1]^2\to F_{16}$ that are
diffeomorphisms away from the boundary, but it seems that the singular
boundary behaviour destroys any possibility of using these maps for
accurate integration.  The best that we can do along these lines is to
construct a barycentric triangulation of $EX^*$ as in
Section~\ref{sec-E-charts}.  This gives us a decomposition of $EX^*$
into triangles $T$ and diffeomorphisms $\phi\:T\to\Dl_2$ where $\phi$
and its Jacobian are simple and explicit, but $\phi^{-1}$ is not.
Nonetheless, we can compute $\phi^{-1}(a)$ and associated quantities
for a large number of points $a\in T$.  This leads to an approximate
integration rule of the form $I(f)=\sum_{i=1}^nw_i\,f(a_i)$ for some
points $a_1,\dotsc,a_n\in EX^*$ and weights $w_i\in\R$.  We have
carried out this process and obtained a rule for which we believe that
$|I(f)-\int_{EX^*}f|/\|f\|_2$ is at most $10^{-25}$ or so for typical
functions that we need to consider.  The basis for this estimate will
be explained after we have discussed some more theoretical ideas.
Unfortunately, for this rule the number $n=33600$ of sample points is
very large, so we cannot compute integrals quickly.  We will also
discuss a method that gives less accurate integrals much more easily.

\subsubsection{A characterisation of the integration functional}
\lbl{sec-int-props}

\begin{definition}\lbl{defn-stokes}
 For any smooth one-form $\al=\sum_{j=1}^4u_j\,dx_j$ on $\R^4$, we
 define a function $D(\al)\:EX^*\to\R$ by
 \[ D(\al) =
     \|n\|^{-1} \sum_{ijkl} \ep_{ijkl}
      \frac{\partial u_j}{\partial x_i} x_k n_l.
 \]
 Here $n$ is the gradient of the function $g$ in the definition of
 $EX^*$, and $\ep$ is the totally antisymmetric tensor.  The operator
 $D$ is called \mcode+stokes+ in Maple; the definition is in
 \fname+embedded/roothalf/forms.mpl+.
\end{definition}

\begin{lemma}\lbl{lem-stokes}
 For any smooth one-form $\al$ on $\R^4$, we have
 \[ d(\al|_{EX^*})=(d\al)|_{EX^*}=D(\al)\om. \]
 It follows that $D(\al)$ depends only on $\al|_{EX^*}$, and that
 $\int_{EX^*}D(\al)\om=0$.
\end{lemma}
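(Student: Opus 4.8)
The plan is to verify the identity $(d\al)|_{EX^*} = D(\al)\,\om$ by a direct local computation, and then read off the remaining assertions. First I would recall the relevant standard facts: the one-form $\al = \sum_j u_j\,dx_j$ on $\R^4$ has exterior derivative $d\al = \sum_{i<j}(\partial u_j/\partial x_i - \partial u_i/\partial x_j)\,dx_i\wedge dx_j$, and restriction of forms commutes with $d$, so $d(\al|_{EX^*}) = (d\al)|_{EX^*}$; this is the first equality. The substance is the second equality, which identifies the restricted two-form with $D(\al)$ times the Riemannian volume form $\om$.

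To prove $(d\al)|_{EX^*} = D(\al)\,\om$ it suffices to work pointwise: fix $x\in EX^*$ and an oriented orthonormal basis $(u,v)$ of $T_xEX^*$, and evaluate both sides on $u\wedge v$. On the left, $(d\al)|_{EX^*}(u\wedge v) = \sum_{i,j}(\partial u_j/\partial x_i)\,u_i v_j$ (the antisymmetrisation of the coefficient matrix is automatic once one contracts with the antisymmetric tensor $u\wedge v$). On the right, $\om(u\wedge v) = 1$ by definition of the Riemannian volume form, so I must show $\sum_{i,j}(\partial u_j/\partial x_i)\,u_iv_j = D(\al)(x)$. For this I would invoke Lemma~\ref{lem-hodge}, which gives the explicit expansion
\[ u\wedge v = \frac{1}{2\|n\|}\sum_{ijkl}\ep_{ijkl}\,x_i\,n_j\,e_k\wedge e_l. \]
Contracting the bilinear form with matrix $(\partial u_j/\partial x_i)$ against $e_k\wedge e_l$ picks out $\partial u_l/\partial x_k - \partial u_k/\partial x_l$; substituting and using the antisymmetry of $\ep$ in $(k,l)$ to drop the factor of two, together with a relabelling of the dummy indices, yields exactly the formula for $D(\al)$ in Definition~\ref{defn-stokes}. (One small bookkeeping point: the indices $(i,j,k,l)$ in the lemma's statement of $u\wedge v$ play the role of $(k,l,i,j)$ in the definition of $D$; I would just rename to match, and check that the sign works out using the orientation convention fixed in Section~\ref{sec-E-geometry} and Lemma~\ref{lem-hodge}.)

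Once the displayed identity is established, the remaining claims are immediate. Since $D(\al)\,\om = d(\al|_{EX^*})$ and the right-hand side depends only on $\al|_{EX^*}$, so does $D(\al)$; and since $EX^*$ is a closed oriented surface, Stokes's theorem gives $\int_{EX^*}D(\al)\,\om = \int_{EX^*}d(\al|_{EX^*}) = 0$. I expect the main obstacle to be purely clerical rather than conceptual: keeping the index conventions, the placement of the factor $\|n\|^{-1}$, and the sign from the orientation straight between Lemma~\ref{lem-hodge} and Definition~\ref{defn-stokes}. A sanity check against the Maple routine \mcode{stokes} (referenced in \fname{embedded/roothalf/forms.mpl}) would confirm the constant and sign.
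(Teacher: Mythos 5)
Your proof is correct, and the identity you need does come out right: contracting $\sum_{k,l}\partial u_l/\partial x_k$ against the expansion of $u\wedge v$ from Lemma~\ref{lem-hodge} gives $\|n\|^{-1}\sum_{ijkl}\ep_{ijkl}(\partial u_l/\partial x_k)x_in_j$, which equals $D(\al)$ after the relabelling $(i,j,k,l)\mapsto(k,l,i,j)$ (an even permutation, so no sign appears). The route is slightly different from the paper's, though the two computations are dual to one another. You evaluate both two-forms on the tangent bivector $u\wedge v$, whose ambient coordinates you import from Lemma~\ref{lem-hodge}; the paper instead wedges both two-forms with the \emph{normal} bivector $\bt_x=x\wedge(n/\|n\|)$, observes that this is an isometric isomorphism $\Lm^2T_xEX^*\to\Lm^4\R^4$ sending $\om_x$ to the standard volume form, and reads off $d\al\wedge\bt_x=D(\al)\,\ep$ directly from the definition of $D$. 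The paper's version is self-contained and avoids choosing a basis of the tangent space, at the cost of one orientation check; yours outsources the orientation bookkeeping to Lemma~\ref{lem-hodge} (where it was already done) at the cost of an extra index relabelling. Either way the remaining claims follow exactly as you say, from $d(\al|_{EX^*})=(d\al)|_{EX^*}$ and Stokes's theorem on the closed surface $EX^*$.
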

\begin{proof}
 We will freely use the metric to identify one-forms with vectors, so
 $dx_i$ becomes the $i$'th basis vector $e_i$.  We then have
 $d\al=\sum_{i,j}\frac{\partial u_j}{\partial x_i}e_i\wedge e_j$.

 Now consider a point $x\in EX^*$.  The vectors $x$ and $n/\|n\|$ form
 an orthonormal basis for $(T_xEX^*)^\perp$, so multiplication by the
 form $\bt_x=x\wedge(n/\|n\|)$ gives an isometric isomorphism from
 $\Lm^2T_xEX^*$ to $\Lm^4\R^4$.  Thus, if we put
 $\ep=e_1\wedge\dotsb\wedge e_4$, we must have
 $\om_x\wedge\bt_x=\pm\ep$, and a glance at our orientation
 conventions shows that the sign is positive.  However, it is clear
 from the definitions that $d\al\wedge\bt_x=D(\al)\ep$, so we must
 have $(d\al)|_{EX^*}=D(\al)\om$ as claimed.  It is standard that
 $(d\al)|_{EX^*}=d(\al|_{EX^*})$ so we can apply Stokes's Theorem to
 $\al|_{EX^*}$ to see that $\int_{EX^*}D(\al)\om=0$.
\end{proof}

\begin{definition}\lbl{defn-antiinvariant}
 We say that a one-form $\al$ is \emph{antiinvariant} if
 $\lm^*(\al)=\mu^*(\al)=\al$ and $\nu^*(\al)=-\al$.  Equivalently, we
 must have $\gm^*(\al)=\chi_7(\gm)\al$ for all $\gm\in G$, where
 $\chi_7$ is as in Proposition~\ref{prop-characters}, so
 $\gm^*(\al)=\al$ whenever $\gm$ preserves orientation, and
 $\gm^*(\al)=-\al$ whenever $\gm$ reverses orientation.
\end{definition}

\begin{proposition}\lbl{prop-integration}
 The integration functional $I\:C^\infty(EX^*)\to\R$ is the unique
 $\R$-linear map with the following properties.
 \begin{itemize}
  \item[(a)] For all $f\in C^\infty(EX^*)$ and all $\gm\in G$ we
   have $I(\gm^*f)=I(f)$.
  \item[(b)] For the curvature map $K$ we have $I(K)=-4\pi$.
  \item[(c)] For all antiinvariant one-forms $\al$ we have
   $I(D(\al))=0$.
 \end{itemize}
\end{proposition}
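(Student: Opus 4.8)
The plan is to prove that $I$ satisfies (a), (b) and (c), and then that these three properties pin $I$ down uniquely. The properties themselves are easy: (a) holds because every $\gm\in G$ is an isometry of $EX^*$ (it preserves the conformal structure and, up to sign, the orientation), so it preserves the volume form $\om$ up to sign; in fact for orientation-preserving $\gm$ we have $\gm^*\om=\om$ and for orientation-reversing $\gm$ we have $\gm^*\om=-\om$, but in either case $\int_{EX^*}\gm^*(f\om)=\int_{EX^*}f\om$ by the change-of-variables formula (the sign from $\gm^*\om$ is cancelled by the sign from $\gm$ reversing orientation). Property (b) is the Gauss--Bonnet theorem: $\int_{EX^*}K\,\om = 2\pi\chi(EX^*) = 2\pi(2-2g) = -4\pi$ since $g=2$. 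Property (c) is exactly the content of Lemma~\ref{lem-stokes} (specialised to antiinvariant $\al$, though the lemma in fact gives it for all $\al$).

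The substance is the uniqueness claim. Suppose $I'\:C^\infty(EX^*)\to\R$ is $\R$-linear and satisfies (a), (b), (c); I want $I'=I$. The strategy is to show that properties (a) and (c) together force $I'$ to be a scalar multiple of $I$ (equivalently, that the space of $G$-invariant linear functionals annihilating all $D(\al)$ with $\al$ antiinvariant is one-dimensional), and then (b) fixes the scalar. For the first step, note that by Stokes/Hodge theory the $G$-invariant functionals that annihilate $\{D(\al)\st\al\text{ antiinvariant}\}$ correspond to $G$-invariant functionals on the quotient of $C^\infty(EX^*)$ by the subspace $W = \{D(\al)\st\al\text{ antiinvariant}\}$. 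The key point is to identify $W$ with the kernel of $I$ on the subspace of $G$-invariant functions, or more precisely to show that every $G$-invariant smooth function $f$ with $I(f)=0$ lies in the span of $W$ together with coboundaries. Concretely: for a $G$-invariant function $f$ with $\int_{EX^*}f\,\om=0$, solve the equation $\Delta h = f$ on $EX^*$ (possible precisely because $f$ integrates to zero, by standard elliptic theory / Hodge theory on the compact surface). Then $f\,\om = \Delta h\,\om = d(*dh)$, so $f = D(\al)$ where $\al$ corresponds to $*dh$. One then needs $\al$ to be antiinvariant: since $f$ and the metric are $G$-invariant, $h$ can be chosen $G$-invariant (average over $G$), and then one checks that $*d$ applied to a $G$-invariant function produces an antiinvariant one-form --- this is a local computation using that orientation-reversing elements act on $\Om^1$ by the character $\chi_7$ (an orientation-reversing isometry sends $*dh$ to $-*d(h\circ\gm) = -*dh$), matching Definition~\ref{defn-antiinvariant}. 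Therefore $f\in W$, so any $I'$ satisfying (a) and (c) agrees with a multiple of $I$ on the $G$-invariant functions; but by (a), $I'$ is determined by its restriction to $G$-invariant functions (replace $f$ by its $G$-average), so $I' = cI$ for some constant $c$.

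Finally, to nail down $c$: apply both $I$ and $I'$ to the curvature function $K$, which is $G$-invariant by Remark~\ref{rem-curvature-z}. Property (b) gives $I'(K) = -4\pi = I(K)$, and since $I(K)\neq 0$ we conclude $c=1$, hence $I'=I$.

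\textbf{Main obstacle.} I expect the hard part to be the uniqueness direction, specifically the claim that $W = \{D(\al)\st\al\text{ antiinvariant}\}$ is all of $\{f \text{ } G\text{-invariant} : I(f)=0\}$ (modulo the averaging reduction). This rests on (i) solvability of $\Delta h = f$ for mean-zero $f$ on the compact Riemann surface $EX^*$ --- standard, but one should cite it carefully; (ii) the ability to choose $h$ $G$-invariant, which follows from uniqueness of $h$ up to a constant plus $G$-invariance of the data; and (iii) the precise equivariance bookkeeping showing $*dh$ is antiinvariant in the sense of Definition~\ref{defn-antiinvariant}, which requires checking the action of orientation-reversing elements on one-forms really does introduce the sign $-1$ (i.e. that $\nu^*$ acts by $-1$ on $*d$(invariant function) but $\lm^*,\mu^*$ act by $+1$). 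This last point is where the characters $\chi_7$ and the Hodge star interact, and it is the step most likely to need a careful local-coordinate argument rather than a one-line appeal to generalities.
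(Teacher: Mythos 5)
Your proof is correct, and the existence part (change of variables for (a), Gauss--Bonnet for (b), Lemma~\ref{lem-stokes} for (c)) matches the paper. The uniqueness argument, however, takes a genuinely different route. The paper first upgrades (c) to the stronger statement that $I'(D(\al))=0$ for \emph{all} one-forms $\al$: since $D(\gm^*\al)=\chi_7(\gm)\gm^*(D(\al))$, the averaged form $\al'=|G|^{-1}\sum_\gm\chi_7(\gm)\gm^*(\al)$ is antiinvariant and (a) gives $I'(D(\al))=I'(D(\al'))=0$. It then quotes the de Rham theorem: two-forms modulo exact forms give $H^2(EX^*;\R)\simeq\R$, integration is a nonzero functional on this line, so any functional killing all exact two-forms is a multiple of integration, and (b) fixes the multiple. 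You instead average the \emph{function} rather than the form, reduce to $G$-invariant $f$ with $\int f\,\om=0$, solve $\Delta h=f$ by Hodge theory, and verify directly that $*dh$ is antiinvariant via the interaction of the Hodge star with orientation-reversing isometries. Both arguments are sound; what you flag as the delicate equivariance bookkeeping for $*dh$ is exactly the step the paper sidesteps by averaging on the form side, where the sign $\chi_7(\gm)$ appears automatically from the division by $\om$ in the definition of $D$. The trade-off is that your route needs solvability of the Poisson equation (a harder analytic input than the de Rham isomorphism, though of course closely related), while the paper's route needs nothing beyond de Rham; on the other hand your argument produces an explicit antiinvariant primitive for every mean-zero invariant function, which is more information than the paper extracts.
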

\begin{proof}
 First consider the following property:
 \begin{itemize}
  \item[(d)] For all one-forms $\al$ we have $I(D(\al))=0$.
 \end{itemize}
 This clearly implies~(c), and in fact~(a) and~(c) imply~(d).  To see
 this, note that the operator $D$ involves division by $\om$; because
 of this, it satisfies $D(\gm^*(\al))=\chi_7(\gm)\gm^*(D(\al))$.
 Thus, for any one-form $\al$, the form
 $\al'=|G|^{-1}\sum_\gm\chi_7(\gm)\gm^*(\al)$ is antiinvariant, and
 if~(a) holds then $I(D(\al))=I(D(\al'))$, so if~(c) holds then
 $I(D(\al))=0$.

 For the integration functional property~(a) is clear, and~(b) is the
 Gauss-Bonnet theorem, and~(d) follows from Lemma~\ref{lem-stokes}.

 Now let $I'$ be another functional with properties~(a), (b) and (c),
 and therefore also~(d).  The de Rham Theorem tells us that the space
 of two-forms on $EX^*$ modulo the image of $d$ is isomorphic to the
 cohomology group $H^2(EX^*;\R)$, which has dimension one.  Integration
 gives a well-defined and nontrivial map from this quotient to $\R$,
 which is therefore an isomorphism.  It is clear from this that $I'$
 must be equal to $I$.
\end{proof}

To use the above proposition, we need to have a good understanding of
the antiinvariant forms.
\begin{definition}
 We write $\Xi$ for the set of antiinvariant polynomial $1$-forms on
 $EX^*$.  This is a module over the ring $B=A^G=\R[z_1,z_2]$.  We will
 also consider the ring $B'=B[(2-z_1)^{-1}]$ and the module
 $\Xi'=B'\ot_B\Xi$.
\end{definition}

Recall that $0\leq z_1\leq 1$ on $EX^*$, so $2-z_1$ is everywhere
positive.  It follows that $B'$ can still be regarded as a ring of
real analytic functions on $EX^*$.

\begin{proposition}\lbl{prop-alpha-basis}
 $\Xi'$ is freely generated over $B'$ by the following forms:
 \begin{align*}
  \al_1 &= y_1(x_2\,dx_1-x_1\,dx_2) \\
  \al_2 &= z_1(1+z_2)y_1y_2(x_2\,dx_1+x_1\,dx_2)
            -2z_1y_2x_1x_2\,dx_3 + 2(1+z_1z_2)x_1x_2\,dx_4.
 \end{align*}
\end{proposition}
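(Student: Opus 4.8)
The plan is to identify $\Xi$ with the $\chi_7$-isotypic part of a module of K\"ahler differentials, produce an explicit $B'$-basis of a free module containing $\Xi'$, and finish with one finite determinant computation. Since $A=\tA/(\rho-1,g)$ with $\tA=\R[x_1,\dots,x_4]$, the conormal sequence identifies $\Om^1(EX^*)=\Om^1_{A/\R}$ with $A^4/(A\,d\rho+A\,dg)$, where $G$ acts on $dx_1,\dots,dx_4$ through the signed permutation matrices of $\lm,\mu,\nu$ and on $A$ through $\gm^*$; here $d\rho=2\sum_i x_i\,dx_i$ is $G$-invariant and $dg=\sum_i n_i\,dx_i$ transforms by $\chi_3$ (the character of $g$, equivalently of $x_4$). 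By Definition~\ref{defn-antiinvariant} an antiinvariant form is exactly a $\chi_7$-covariant element, so $\Xi=(\Om^1_{A/\R})_{\chi_7}$. The idempotent $\tfrac1{16}\sum_{g\in G}\chi_7(g)^{-1}g$ lives over $B$ and commutes with localisation and with quotients; since $0\le z_1\le 1$ on $EX^*$ the element $2-z_1$ is invertible in the ring of real analytic functions, so writing $A'=A[(2-z_1)^{-1}]$ we obtain
\[
 \Xi' = (A'^4)_{\chi_7}\big/\bigl((A'\,d\rho)_{\chi_7}+(A'\,dg)_{\chi_7}\bigr).
\]

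Next I would read off the $G$-module structure of $A$ over $B=\R[z_1,z_2]$ from the $B$-basis $M$ of Proposition~\ref{prop-OX-basis}, grouping its elements by how $G$ acts (using Proposition~\ref{prop-characters}): $\{1,y_1,y_2,y_1y_2\}$ carry $\chi_0,\chi_2,\chi_1,\chi_3$; both $\{x_1,x_2\}$ and $\{y_2x_1,y_2x_2\}$ carry $\chi_8$; both $\{y_1x_1,y_1x_2\}$ and $\{y_1y_2x_1,y_1y_2x_2\}$ carry $\chi_9$ (multiplying a copy of $\chi_8$ by $y_1$ or $y_1y_2$, whose characters are $-1$ at $\mu\nu$, turns $\chi_8$ into $\chi_9$); and multiplication by $x_1x_2$ ($\chi_4$) permutes the one-dimensional pieces. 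Thus $A_{\chi_4}=B\cdot x_1x_2$, $A_{\chi_5}=B\cdot y_2x_1x_2$, $A_{\chi_7}=B\cdot y_1y_2x_1x_2$, and the two copies of $\chi_9$ in $A$ are the $B$-spans of $\{y_1x_1,y_1x_2\}$ and $\{y_1y_2x_1,y_1y_2x_2\}$.

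On the cotangent side $\ip{dx_1,dx_2}$ carries $\chi_8$, $\ip{dx_3}$ carries $\chi_2$, $\ip{dx_4}$ carries $\chi_3$, so $(A'^4)_{\chi_7}=(A'\ot\ip{dx_1,dx_2})_{\chi_7}\oplus A'_{\chi_5}\,dx_3\oplus A'_{\chi_4}\,dx_4$. Since $\chi_8\ot\chi_8=\chi_0\oplus\chi_1\oplus\chi_4\oplus\chi_5$ contains no $\chi_7$ while $\chi_9\ot\chi_8=\chi_2\oplus\chi_3\oplus\chi_6\oplus\chi_7$ contains it once, only the two $\chi_9$-summands of $A$ feed the first term, through the covariants $y_1(x_1\,dx_2-x_2\,dx_1)$ and $y_1y_2(x_1\,dx_2+x_2\,dx_1)$ (note $\chi_7=\chi_2\chi_5=\chi_3\chi_4$, and $x_1\,dx_2\mp x_2\,dx_1$ carry $\chi_5,\chi_4$). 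Hence $(A'^4)_{\chi_7}$ is $B'$-free of rank four on
\[
 \omega_1=y_1(x_1dx_2-x_2dx_1),\quad \omega_2=y_1y_2(x_1dx_2+x_2dx_1),\quad \omega_3=y_2x_1x_2\,dx_3,\quad \omega_4=x_1x_2\,dx_4,
\]
and one checks directly that $\al_1=-\omega_1$ and $\al_2=z_1(1+z_2)\omega_2-2z_1\omega_3+2(1+z_1z_2)\omega_4$, so $\al_1,\al_2\in\Xi'$.

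It then remains to handle the two relations. By the previous paragraph $(A'\,d\rho)_{\chi_7}$ is the $B'$-span of $y_1y_2x_1x_2\,d\rho=2y_1y_2x_1x_2(x_1dx_1+x_2dx_2+y_1dx_3-y_1y_2dx_4)$ (using $x_3=y_1$, $x_4=-y_1y_2$), and $(A'\,dg)_{\chi_7}$ is the $B'$-span of $x_1x_2\,dg=x_1x_2\sum_i n_i\,dx_i$, with the $n_i$ as recorded for $a=1/\rt$. Reducing both by means of $x_1^2=u_1$, $x_2^2=u_2$, $y_1^2=z_1$, $y_2^2=z_2$ expresses them as $B'$-combinations $r_1,r_2$ of $\omega_1,\dots,\omega_4$; let $M$ be the $4\times4$ matrix over $B'$ whose rows are $r_1$, $r_2$, $(-1,0,0,0)$ and $(0,\,z_1(1+z_2),\,-2z_1,\,2(1+z_1z_2))$. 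The one real computation — which I expect to be the main obstacle, though it is Maple-checkable — is that $\det M$ is a nonzero constant times a power of $2-z_1$, hence a unit in $B'$. Granting this, $r_1,r_2,\al_1,\al_2$ form a $B'$-basis of $(A'^4)_{\chi_7}$, so $B'r_1+B'r_2$ is a free direct summand of $(A'^4)_{\chi_7}$ with complement $B'\al_1\oplus B'\al_2$, and therefore $\Xi'=(A'^4)_{\chi_7}/(B'r_1+B'r_2)$ is free of rank two on $\al_1,\al_2$. This fits with soft considerations: $2-z_1=2-x_3^2$ vanishes exactly at the eight points where $CEX(1/\rt)$ is singular, so $A'_{\C}$ is regular of dimension two, $\Om^1_{A'/\R}$ is $A'$-projective of rank two, hence $B'$-projective, and its $\chi_7$-summand is $B'$-projective of rank two — so the relation module can fail to be a direct summand only along $z_1=2$, which is precisely what is removed.
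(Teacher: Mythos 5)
Your proposal is correct and follows the same skeleton as the paper's proof: present $\Xi$ as the $\chi_7$-isotypic part of the free module on $dx_1,\dotsc,dx_4$ modulo the $\chi_7$-part of the relations coming from $d\rho$ and $dg$, exhibit a rank-four basis upstairs and a rank-two generating set of the relations, and reduce everything to the invertibility of one $4\times 4$ determinant over $B'$. The genuine difference is in how the two bases are found. The paper computes the projections $\Pi(m\,dx_i)$ and $\Pi(m\,\tht_i)$ for all $m$ in the monomial basis $M$ by machine and observes that the nonzero outputs are constant multiples of $\bt_1,\dotsc,\bt_4$ and of $x_1x_2\,\tht_1$, $x_1x_2y_1y_2\,\tht_2$. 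You instead decompose $A$ over $B$ into isotypic pieces (your sorting of $M$ by character is correct, e.g.\ $\chi_2(\mu\nu)=\chi_3(\mu\nu)=-1$ does convert $\chi_8$ into $\chi_9$, and $\chi_9\ot\chi_8$ contains $\chi_7$ exactly once while $\chi_8\ot\chi_8$ does not), which identifies the same four generators $\om_1,\dotsc,\om_4=\mp\bt_1,\bt_2,\bt_3,\bt_4$ and the same two relation generators a priori, with no search. This is cleaner and explains \emph{why} the computer output has the shape it does. Conversely, the paper records the outcome of the final computation, $\det(P)=2-z_1$, whereas you only flag it as the remaining Maple-checkable step and (correctly) predict that it is a unit in $B'$; your matrix is the paper's $P$ up to row permutation, scaling and one sign, so your prediction is confirmed. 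Your closing remark relating $2-z_1$ to the singular locus of $CEX(1/\rt)$ is a nice consistency check not present in the paper.
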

\begin{proof}
 First, let $\Om^*$ denote the free module over $A$ on generators
 $dx_1,\dotsc,dx_4$.  Let $\Tht$ be the submodule generated by the
 elements
 \begin{align*}
  \tht_1 &= dg_0 \\
  \tht_2 &= \half d(\rho-1) = \sum_ix_i\,dx_i;
 \end{align*}
 then $\Om=\Om^*/\Tht$.  There is an evident action of $G$ on $\Om^*$
 that preserves $\Tht$ and is compatible with the standard action on
 $\Om$.

 Next, for any $\R[G]$-module $V$ we define $\Pi\:V\to V$ by
 \[ \Pi(v)=|G|^{-1}\sum_{\gm\in G} \chi_7(\gm)\gm^*(v). \]
 It is standard that $\Pi$ is $B$-linear with $\Pi^2=\Pi$, and the
 image of $\Pi$ is the subspace
 \[ V[\chi_7] = \{v\in V\st \gm^*(v)  = \chi_7(\gm)v
                   \text{ for all } \gm\in G\}.
 \]
 We thus have $\Xi=\Om[\chi_7]=\Pi(\Om^*)/\Pi(\Tht)$.

 Now put
 \[ M = \{x_1^ix_2^jy_1^ky_2^l\st 0\leq i,j,k,l \leq 1\}, \]
 and recall that this is a basis for $A=\CO_{EX^*}$ over $B$.  It follows
 that the group $\Om^*[\chi_7]=\Pi(\Om^*)$ is generated by elements of the
 form $\Pi(m\,dx_i)$ with $m\in M$ and $1\leq i\leq 4$.  A computer
 calculation shows that every nonzero element of this form is a
 constant multiple of one of the following forms:
 \begin{align*}
  \bt_1 &= y_1(x_2\,dx_1 - x_1\,dx_2) \\
  \bt_2 &= y_1y_2(x_2\,dx_1 + x_1\,dx_2) \\
  \bt_3 &= x_1x_2y_2\,dx_3 \\
  \bt_4 &= x_1x_2\,dx_4.
 \end{align*}
 Thus, these form a basis for $\Om^*[\chi_7]$ over $B$.

 Similarly, $\Tht[\chi_7]$ is generated by elements of the form
 $\Pi(m\,\tht_i)$ with $m\in M$ and $i\in\{1,2\}$.  Another computer
 calculation shows that every nonzero element of this form is a
 constant multiple of one of the following forms:
 \begin{align*}
  \al_3 &= x_1x_2\tht_1 \\
  \al_4 &= x_1x_2y_1y_2\tht_2.
 \end{align*}
 Thus, these give a basis for $\Tht[\chi_7]$ over $B$.

 Now consider the matrix
 \[ P = \bbm
     1                  & 0                & 0     & 0            \\
     0                  & z_1(1+z_2)       & -2z_1 & 2(1+z_1z_2)  \\
     \rt(1-z_1)(1-2z_2) & z_1(1-2z_2)      & z_1-2 & -2+3z_1-4z_1z_2 \\
     (3z_1-2)z_2/(2\rt) & (1-z_1-z_1z_2)/2 & z_1   & -z_1z_2
    \ebm.
 \]
 By straightforward calculation in $A$, we have
 $\al_i=\sum_{j=1}^4P_{ij}\bt_j$ for $i=1,\dotsc,4$.  We also have
 $\det(P)=2-z_1$, which means that $\al_1,\dotsc,\al_4$ give a basis
 for $A'\ot_A\Om^*[\chi_7]$.  It follows that $\al_1$ and $\al_2$ give a
 basis for $A'\ot_A\Om[\chi_7]$, as claimed.
 \begin{checks}
  embedded/roothalf/forms_check.mpl: check_forms()
 \end{checks}
\end{proof}
The forms $\al_i$ and $\bt_j$ are \mcode+alpha_form[i]+ and
\mcode+beta_form[j]+ in Maple.  The matrix $P$ is \mcode+alpha_beta+.

\begin{proposition}\lbl{prop-d-alpha}
 The forms $\al_i$ satisfy
 \begin{align*}
  d\al_1 &= (9z_1^2 z_2-9 z_1^2+2z_1z_2+9z_1-2)\|n\|^{-1}\om \\
  d\al_2 &= (45 z_1^3 z_2^2-45 z_1^3 z_2+78 z_1^2 z_2-20 z_1 z_2^2
                -12 z_1^2-28 z_1 z_2+12 z_1-8 z_2)/\rt \|n\|^{-1}\om\\
  dz_1\wedge\al_1 &= 2 z_1 (3 z_1-2) (z_1 z_2-z_1+1)\|n\|^{-1}\om \\
  dz_1\wedge\al_2 &= \rt z_1
                        (9 z_1^3 z_2^2-9 z_1^3 z_2-12 z_1^2 z_2^2+
                         26 z_1^2 z_2+4 z_1 z_2^2-4 z_1^2-24 z_1 z_2+
                         8 z_1+8 z_2-4)\|n\|^{-1}\om \\
  dz_2\wedge\al_1 &= 4 z_1 z_2 (2 z_2-1)\|n\|^{-1}\om \\
  dz_2\wedge\al_2 &= 2 \rt (3 z_1^2 z_2-2 z_1 z_2+2 z_1-2) z_2 (2 z_2-1)\|n\|^{-1}\om,
 \end{align*}
 where
 \[ \|n\|=\left(\sum_i(\partial g/\partial x_i)^2\right)^{1/2} =
      (2-z_1)\sqrt{1+z_2}.
 \]
\end{proposition}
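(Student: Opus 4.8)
The plan is to reduce all six identities to the computation of a $G$-invariant polynomial function on $EX^*$, which by Proposition~\ref{prop-invariants} must lie in $\R[z_1,z_2]$, and then to carry out that (routine) computation. The first step is to observe that every form occurring in the statement is the restriction to $EX^*$ of a polynomial form on $\R^4$. Substituting $y_1=x_3$, $z_1=x_3^2$, $y_2=(x_2^2-x_1^2)/\rt-\tfrac32 x_3x_4$ and $z_2=y_2^2$ into the formulae of Proposition~\ref{prop-alpha-basis} exhibits $\al_1$ and $\al_2$ as restrictions of explicit polynomial $1$-forms $\tilde\al_1,\tilde\al_2$ on $\R^4$; likewise $z_1=(x_3^2)|_{EX^*}$ and $z_2$ is the restriction of $Z_2=((x_2^2-x_1^2)/\rt-\tfrac32 x_3x_4)^2$. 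Since restriction commutes with $d$, we have $d\al_i=(d\tilde\al_i)|_{EX^*}$ and $dz_j\wedge\al_i=(dZ_j\wedge\tilde\al_i)|_{EX^*}$, so in every case we are restricting a polynomial $2$-form on $\R^4$ to $EX^*$ and must express the result as a multiple of $\om$.

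For that I would reuse the calculation already made in the proof of Lemma~\ref{lem-stokes}: for any polynomial $2$-form $\bt=\tfrac12\sum_{ij}b_{ij}\,dx_i\wedge dx_j$ on $\R^4$ (with $b_{ij}$ antisymmetric), one has $\bt|_{EX^*}=h\,\om$ where $\|n\|\,h=\tfrac12\sum_{ijkl}\ep_{ijkl}b_{ij}x_kn_l$; the point is that wedging $\bt|_{EX^*}$ with $x\wedge(n/\|n\|)$ and using $\om_x\wedge x\wedge(n_x/\|n\|)=e_1\wedge\dotsb\wedge e_4$ (with the conventions of Definition~\ref{defn-exterior}) produces exactly the stated scalar. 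Applied to $\bt=d\tilde\al_i$ this is precisely $d\al_i=D(\tilde\al_i)\,\om$ as in Lemma~\ref{lem-stokes}; applied to $\bt=dZ_j\wedge\tilde\al_i$ it gives $dz_j\wedge\al_i=h_{ij}\,\om$ for a function $h_{ij}$ with $\|n\|\,h_{ij}$ the restriction to $EX^*$ of an explicit polynomial on $\R^4$. In all six cases, then, the quantity $\|n\|\cdot(\text{coefficient of }\om)$ lies in $A=\CO_{EX^*}$.

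The one step that requires thought is that these six elements of $A$ are $G$-invariant, which is what allows us to rewrite them in terms of $z_1$ and $z_2$. The forms $\al_1$ and $\al_2$ are antiinvariant (they are constant multiples of $\bt_1$, respectively of a $B'$-combination of the $\bt_j$, all of which lie in $\Om[\chi_7]$ by the proof of Proposition~\ref{prop-alpha-basis}), so $\gm^*(d\al_i)=\chi_7(\gm)\,d\al_i$; and $z_j$ is $G$-invariant, so $\gm^*(dz_j\wedge\al_i)=\chi_7(\gm)\,dz_j\wedge\al_i$. On the other hand $G$ acts by isometries which preserve orientation on $D_8$ and reverse it on $G\sm D_8$, so $\gm^*\om=\chi_7(\gm)\,\om$; and $\|n\|$ is $G$-invariant because $g\circ\gm=\pm g$ and $\gm$ is an isometry. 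Comparing the two ways of transforming $d\al_i$ and $dz_j\wedge\al_i$, the scalar $\|n\|\cdot(\text{coefficient of }\om)$ is fixed by every $\gm\in G$, so by Proposition~\ref{prop-invariants} (which gives $A^G=\R[z_1,z_2]$) it is a polynomial in $z_1$ and $z_2$.

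It then only remains to pin down these six polynomials. This is a mechanical elimination: substitute the polynomial expressions for $\tilde\al_i$, $Z_j$ and $n=\nabla g$ (in the case $a=1/\rt$) into the formula for $\|n\|\,h$, reduce the resulting element of $\tA=\R[x_1,\dots,x_4]$ modulo a Gr\"obner basis for the ideal $(\rho-1,g)$, and rewrite the answer in terms of $z_1,z_2$ using the normal-form routine \mcode+NF_z0+. Dividing through by $\|n\|=(2-z_1)\sqrt{1+z_2}$ — which follows from the identity $\|n\|^2=(2-z_1)^2(1+z_2)$ recorded earlier together with $2-z_1>0$ on $EX^*$ — yields the six displayed formulae; the arithmetic is long but entirely routine and is verified by \mcode+check_forms()+ in \fname+embedded/roothalf/forms_check.mpl+. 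Thus the only genuine content is the invariance argument of the previous paragraph, and I would expect the real labour (rather than any real obstacle) to lie in the Gr\"obner-basis reduction, which is best handed to the computer.
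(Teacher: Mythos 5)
Your proof is correct, but it takes a genuinely different route from the one in the paper. The paper's proof passes to the field of fractions $K$ of $A$, observes that $dx_i=\tfrac{x_i}{2u_i}du_i$ so that $K\ot_A\Om^1$ is free on $dy_1,dy_2$, and then verifies all six identities by comparing coefficients of $dy_1\wedge dy_2$ in the rank-one module $K\ot_A\Om^2$ (with $\om$ itself also rewritten in that basis). You instead stay in the ambient $\R^4$: you lift $\al_1,\al_2,z_1,z_2$ to polynomial data, reuse the wedge-with-$x\wedge(n/\|n\|)$ computation from Lemma~\ref{lem-stokes} to extract the coefficient of $\om$ (correctly noting that the argument there applies to arbitrary $2$-forms, not just exact ones), and then reduce modulo the ideal $(\rho-1,g)$. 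The trade-offs are real: the paper's method avoids any appeal to equivariance but requires inverting $x_1,x_2,y_1$ and separately expressing $\om$ in the $dy$ basis, whereas your method works entirely with polynomials and the already-built \mcode+stokes+ machinery. Your $\chi_7$-equivariance argument is a genuine bonus — it explains \emph{a priori} why all six answers must lie in $\R[z_1,z_2]$, a fact the paper obtains only as an outcome of the computation — and it is correct as stated, since $\chi_7$ is exactly the orientation character, the $\al_i$ lie in $\Om[\chi_7]$ by Proposition~\ref{prop-alpha-basis}, and $\|n\|$ is $G$-invariant. One small remark: your final "dividing through by $\|n\|$" is unnecessary — the polynomial your procedure computes is already $\|n\|\cdot(\text{coefficient of }\om)$, which is precisely what the displayed formulae assert.
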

\begin{proof}
 The ring $A=\CO_{EX^*}$ is an integral domain, with field of fractions
 \[ K = \R(y_1,y_2)[x_1,x_2]/(x_1^2-u_1,x_2^2-u_2). \]
 It will suffice to verify the above identities in $K\ot_A\Om^2$,
 which is the exterior square over $K$ of the space $K\ot_A\Om^1$.
 From the above description of $K$, we have
 \[ dx_i = \frac{1}{2x_i} du_i = \frac{x_i}{2u_i} du_i \in
      K\{dy_1,\;dy_2\}.
 \]
 Using this, it is not hard to see that $K\ot_A\Om^1$ is freely
 generated over $K$ by $dy_1$ and $dy_2$.  After rewriting everything
 in terms of this basis, all the above equations become
 straightforward.
 \begin{checks}
  embedded/roothalf/forms_check.mpl: check_forms()
 \end{checks}
\end{proof}
Equations for rewriting forms in terms of $dy_1$ and $dy_2$ are in the
list \mcode+forms_to_y+.  The functions $(d\al_i)/\om$ are
\mcode+D_alpha[i]+, and the functions $(dz_i\wedge\al_j)/\om$ are
\mcode+dz_cross_alpha[i,j]+.

Using Proposition~\ref{prop-d-alpha}, we can calculate
$D(f_1\al_1+f_2\al_2)$ for any invariant smooth functions $f_1$ and
$f_2$.  This is implemented in Maple as
\mcode+stokes_alpha([f1,f2])+, and it gives us a supply of functions $f$ with
$\int_{EX^*}f=0$.  Given an approximate integration functional $I$, we
can test the accuracy of $I$ by evaluating $I(f)/\sqrt{I(f^2)}$ for
these functions $f$.

\subsubsection{Integration over triangles}

We next discuss integration over $2$-simplices.  Given any continuous
function $f$ on $\Dl_2$, we write
\[ \int_{\Dl_2} f =
    2\int_{t_1=0}^1\int_{t_2=0}^{1-t_1} f(t_1,t_2,1-t_1-t_2)
      \,dt_2\,dt_1.
\]
In other words, this is the integral with respect to the ordinary
Lebesgue measure on $\Dl_2$ normalised in such a way that the total
area of $\Dl_2$ is one.  A standard exercise shows that
\[ \int_{\Dl_2} t_1^it_2^jt_3^k = 2\frac{i!\,j!\,k!}{(i+j+k+2)!}. \]
By an \emph{$n$'th order quadrature rule} we mean a pair $Q=(a,w)$
with $a\in(\Dl_2)^n$ and $w\in\R^n$.  Given any function $f$ on
$\Dl_2$ we put $I_Q(f)=\sum_i w_i\,f(a_i)$.  Given $n$ distinct points
$a_i$ in $\Dl_2$, and an $n$-dimensional space of functions $P$, there
will typically be a unique vector $w$ of weights such that
$I_{(a,w)}(f)=\int_{\Dl_2}f$ for all $f\in P$, which can be found by
solving a system of linear equations.  It may or may not be
the case that $I_{(a,w)}(f)$ is close to $\int_{\Dl_2}f$ for functions
$f$ not lying in $P$.  In particular, if we take the points $a_i$ to
form a regularly spaced grid, then $I_{(a,w)}(f)$ is a rather poor
approximation to $\int_{\Dl_2}f$ for general $f$.  This was a surprise
to the author, but is apparently well-known to numerical analysts.  It
is better to allow the points $a_i$ to vary as well as the weights
$w_i$.  A naive dimension count then suggests that for any
$3n$-dimensional space $P$ there should be a unique $n$'th order
quadrature rule $Q$ that agrees with integration on $P$, but one has
to solve a complex system of nonlinear equations to find $Q$.  Things
become somewhat simpler if $P$ is preserved by the action of the
symmetric group $\Sg_3$ on $P$.  Dunavant~\cite{du:hde} developed an
elegant theory for this case, which made it tractable to solve the
relevant equations by computer.  He found a quadrature rule of order
$73$ that integrates all polynomials of degree at most $20$ exactly.
It appears that rules obtained in this way have much better behaviour
outside the space $P$ on which they are exact by construction.

Later, Wandzurat and Xiao~\cite{waxi:sqr} gave a rule of order $175$
that is exact to degree 30, and Xiao and Gimbutas gave a rule that is
exact to degree 50.  However, we have not been able to get correct
answers from this last rule, so either we are misunderstanding the
conventions or there is some kind of transcription error in the tables
in the paper.  We have therefore used the Wandzurat-Xiao rule
instead.

Rules as above are represented in Maple by instances of the class
\mcode+triangle_quadrature_rule+, which is declared in the file
\fname+quadrature/quadrature.mpl+.  In the \fname+quadrature+
subdirectory of the \mcode+data+ directory there is a file
\mcode+wandzurat_xiao_30.mpl+.  Reading this file creates an object
representing the Wandzurat-Xiao rule, and assigns it to the variable
\mcode+wandzurat_xiao_30+.  There is also another file
\mcode+dunavant_19.mpl+ which implements the Dunavant rule (which is
less accurate but faster).

Next, recall that in Section~\ref{sec-E-charts} we discussed a
triangulation of the fundamental domain $F_{16}$ using certain
barycentric coordinate maps $p:T\xra{\simeq}\Dl_2$ for certain subsets
$T\sse X$.  For each point $a_i$ in our quadrature rule, we can use
Remark~\ref{rem-barycentric-inverse} to find $a'_i=p^{-1}(a_i)\in T$.
The components of $p$ are rational functions in the
coordinates $x_i$, so it is straightforward to differentiate them and
calculate the Jacobian of $p$ at $a'_i$, say $u_i$.  If $w_i$ is the
$i$'th weight of our quadrature rule, then $\sum_iw_iu_i^{-1}f(a_i)$
is a good approximation to $\int_Tf$, and we can take the sum over all
simplices to get a functional $J(f)$ approximating $\int_{F_{16}}f$.

The Gauss-Bonnet theorem says that $J$ of the curvature function $K$
should be $(-4\pi)/|G|=-\pi/4$.  With our $192$-simplex triangulation
we in fact have $|J(K)+\pi/4|<10^{-27}$.  Similarly, for a function
$f$ of the form $D(z_1^iz_2^j\al_k)$ we would ideally have $J(f)=0$,
and in fact we have $|J(f)|\leq 3\tm 10^{-27}\sqrt{J(f^2)}$ provided
that $i+j\leq 10$.  Subdivision makes a big difference here; with the
original $48$-simplex triangulation, errors are larger by a factor of
$10^8$ or so.

\subsubsection{Faster integration on \texorpdfstring{$EX^*$}{EX*}}

After constructing a quadrature rule for $EX^*$, we can tabulate the
integrals of monomials $z_1^iz_2^j$, which then gives a fast way of
integrating arbitrary invariant polynomials.  It is also useful to
extend this slightly and include monomials $z_1^iz_2^j\|n\|^{-k}$ for
$0\leq k\leq 4$ say.  This is enough for some purposes, but in other
cases we need to integrate more general functions (such as
exponentials of polynomials) which cannot easily be expressed as
linear combinations of some standard basis.  It is therefore desirable
to have an approximate integration functional of the form
$I(f)=\sum_{i=1}^n w_if(a_i)$ where $n$ is not too large, but the
approximation is reasonably accurate.

Suppose we fix $n$, and choose an $n$-dimensional subspace $P$ of
smooth invariant functions on $EX^*$.  Let $p_1,\dotsc,p_n$ be a basis
for $P$.  For any $n$-tuple of points $a_i\in EX^*$, we let
$\dl(\un{a})$ denote the determinant of the matrix with entries
$p_i(a_j)$.  The literature on other quadrature problems suggests that
we should aim to choose $\un{a}$ so that $|\dl(\un{a})|$ is as large
as possible.  Note that this problem is independent of the choice of
basis $\{p_i\}$, because a different choice would just change $\dl$ by
a constant factor.  With the obvious kind of monomial bases, it works
out that $\dl(\un{a})$ is always extremely small, but it can be
increased by many orders of magnitude if we choose the points $a_i$
appropriately.

In our largest calculation of this kind, we took $P$ to be the span of
$256$ monomials in $z_1$ and $z_2$, including all monomials of degree
at most $21$, plus some monomials of degree $22$.  For a randomly chosen
set of $256$ points it is typical that $\log_{10}|\dl|<-2800$ or
so, but by numerical optimization we found a set with
$\log_{10}|\dl|\simeq -2539$.

We next want to choose the weights $w_i$.  One option is to set these
weights such that $I(p_j)=\int_{EX^*}p_j$ for all $j$, which can be
done by solving a system of linear equations.  We then find that some
of the weights are negative.  This is an undesirable feature, leading
to reduced accuracy when integrating functions outside the space $P$.
We therefore extended our list of monomials to include all $300$
monomials of degree at most $23$, and then chose the weights $w_i$ to
minimise $\sum_j(I(p_j)-\int p_j)^2$ subject to the constraints
$w_i\geq 0$.  (This was done using Maple's \texttt{LSSolve()}
command.)  It turns out that there are 18 indices $i$ such that
$w_i=0$, so we really only use 238 sample points.  With these weights
we have $|I(K)+\pi/4|\simeq 10^{-17.1}$, and if $f=D(z_1^iz_2^j\al_k)$
with $i+j\leq 10$ then $|I(f)|\leq 10^{-14.5}\sqrt{I(f^2)}$.

Quadrature rules as above are represented by instances of the class
\mcode+E_quadrature_rule+, which is declared in the file
\fname+embedded/E_quadrature.mpl+.  The specific rule described above
is stored in the file \fname+quadrature_frobenius_256a.m+ in the
directory \fname+data/embedded/roothalf+.  After reading that file,
one can enter the following to integrate $1/(1+z_1)$ (for example):
\begin{mcodeblock}
 Q := eval(quadrature_frobenius_256a):
 Q["int_z",1/(1+z[1])];
\end{mcodeblock}
One can test the accuracy of \mcode+Q+ (as described above) using the
methods \mcode+Q["curvature_error"]+ and
\mcode+Q["stokes_error",10]+.  Various other methods are documented in
the code.

One can regenerate the object \mcode+Q+ using the function
\mcode+build_data["E_quadrature_rule"]()+ defined in the file
\fname+build_data.mpl+.  However, there is not a very compelling
reason to do this, as we can check that \mcode+Q+ has the desired
properties without needing to regenerate it.  Also, the process is
very slow, and may take several days to run. 

\section{Classifying \texorpdfstring{$EX^*$}{EX*}}
\lbl{sec-classify-roothalf}

Theorems~\ref{thm-classify-cromulent} and~\ref{thm-H-universal} tell
us that there are cromulent isomorphisms
$HX(b)\xra{q}EX^*\xra{r}PX(a)$ for suitable values $a,b\in(0,1)$.
In this section, we discuss numerical methods that enable us to
calculate approximations to $a$, $b$, $q$ and $r$.  Note that the
methods of Section~\ref{sec-P-H} allow us to compute $a$ and $r$ from
$b$ and $q$, or \emph{vice versa}.  We have tried several different
approaches.  The most successful will be described first, in
Section~\ref{sec-rescaling}.  We will then outline one other approach
in Section~\ref{sec-energy}.

Our current estimates are $a\simeq 0.0983562$ and $b\simeq 0.8005319$.
We have some reason to hope that all the quoted digits are accurate,
but we have not performed a rigorous error analysis.

\subsection{Hyperbolic rescaling}
\lbl{sec-rescaling}

Theorem~\ref{thm-H-universal} tells us that there is a conformal
covering map $q\:\Dl\to EX^*$, which induces an isomorphism
$HX(b)\to EX^*$ for some $b$.  Let $g$ denote the Riemannian metric
that $EX^*$ inherits from $\R^4$, and let $g_{\hyp}$ denote the
standard hyperbolic metric $ds^2=4|dz|^2/(1-|z|^2)^2$ on $\Dl$.  Recall
that Remark~\ref{rem-curvature-z} gives a formula for the Gaussian
curvature $K(g)$, and it is a standard fact that
$K(g_{\hyp})=-1$.

\begin{proposition}\lbl{prop-rescaling}
 There is a unique real analytic function $f$ on $EX^*$ such that
 $K(e^{2f}g)=-1$.  Moreover, this function is $G$-invariant, and it
 satisfies $q^*(e^{2f}g)=g_{\hyp}$.  The curves
 $C_0,\dotsc,C_8\subset EX^*$ are geodesics with respect to the metric
 $e^{2f}g$.
\end{proposition}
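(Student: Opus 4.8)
The plan is to prove Proposition~\ref{prop-rescaling} in three stages: existence and uniqueness of $f$ on $EX^*$, $G$-invariance, and the pullback/geodesic statements.

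For existence and uniqueness, I would invoke the uniformization theorem for Riemann surfaces in the form that was already quoted in the surrounding discussion: since $EX^*$ is a compact Riemann surface of genus $2$, its universal cover is conformally the disc $\Dl$, and this gives a conformal covering $q\:\Dl\to EX^*$ realizing $EX^*$ as $\Dl/\Pi$ for a Fuchsian group $\Pi$ acting by hyperbolic isometries (this is essentially Theorem~\ref{thm-H-universal}). The hyperbolic metric $g_{\hyp}$ on $\Dl$ is $\Pi$-invariant because $\Pi$ acts by isometries, so it descends to a metric $g_{-1}$ on $EX^*$ with constant curvature $-1$. Both $g_{-1}$ and the embedded metric $g$ are conformal (they induce the same complex structure), so $g_{-1}=e^{2f}g$ for a unique smooth real function $f$; real-analyticity of $f$ follows because both metrics are real-analytic in any holomorphic chart and the conformal factor is a ratio of real-analytic densities. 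Uniqueness of $f$ is then automatic: if $e^{2f_1}g$ and $e^{2f_2}g$ both have curvature $-1$, then $e^{2(f_2-f_1)}(e^{2f_1}g)=e^{2f_2}g$, and the standard formula $K(e^{2u}g_0)=(K(g_0)-\Delta_{g_0}u)/e^{2u}$ (where $\Delta_{g_0}$ is the Laplace–Beltrami operator of $g_0$) forces $\Delta(u)=0$ for $u=f_2-f_1$ relative to the curvature-$(-1)$ metric, hence $u$ is constant, and then $e^{2u}=1$ so $u=0$. By construction $q^*(e^{2f}g)=q^*(g_{-1})=g_{\hyp}$.

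For $G$-invariance, I would argue that for each $\gm\in G$, the metric $\gm^*(e^{2f}g)$ still has constant curvature $-1$: indeed $\gm$ is a conformal or anticonformal automorphism of $EX^*$ (part~(a) of the definition of a precromulent surface, verified for $EX(a)$ in Section~\ref{sec-E-G}), so it is an isometry of any metric up to the conformal rescaling it induces, and in particular it preserves the property of having constant Gaussian curvature $-1$ — curvature is intrinsic and is preserved by isometries and by orientation-reversing isometries alike, and conformal pullback of a constant-curvature metric by a conformal/anticonformal map is again constant-curvature. Writing $\gm^*(e^{2f}g)=e^{2f'}g$ where $f'=(f\circ\gm)+\tfrac12\log J_\gm$ with $J_\gm$ the conformal distortion of $\gm$ relative to $g$, uniqueness forces $f'=f$; but since $\gm$ is an isometry of $g$ (all three generators $\lm,\mu,\nu$ act by linear isometries of $\R^4$, hence by isometries of the induced metric on $EX^*\subset S^3$), we have $J_\gm\equiv 1$, so $f\circ\gm=f$. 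Thus $f$ is $G$-invariant, and by Remark~\ref{rem-curvature-z} it can be written as a function of $z_1,z_2$.

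Finally, for the geodesic claim: each curve $C_k$ is the fixed set (or a component of the fixed set) of an anticonformal involution of $EX^*$ lying in $G$ — this is Proposition~\ref{prop-slices-a} together with Definition~\ref{defn-precromulent-C}. An anticonformal involution $\tau$ preserves the metric $e^{2f}g$ (by $G$-invariance of $f$ and the fact that $\tau$ is an isometry of $g$), so its fixed-point set is a totally geodesic submanifold of $(EX^*,e^{2f}g)$; in dimension two, a connected totally geodesic submanifold is a geodesic. Hence each $C_k$ is a geodesic of $e^{2f}g$. (Alternatively, and perhaps more cleanly, one lifts $C_k$ to $\Dl$: its preimages under $q$ are fixed sets of anticonformal involutions of $\Dl$, which by the classification recalled in Section~\ref{sec-H-curves} are hyperbolic geodesics, and $q$ is a local isometry for $(e^{2f}g)$ and $g_{\hyp}$.) The main obstacle is largely bookkeeping rather than deep mathematics: one must be careful that the conformal structure on $EX^*$ used in the uniformization step is the \emph{same} one coming from the embedding that makes $\lm,\mu$ conformal and $\nu$ anticonformal, so that the $G$-action interacts correctly with $q$; this is exactly the content of Theorem~\ref{thm-H-universal}, which identifies $EX^*$ with some $HX(b)$ as a cromulent surface, and I would lean on that identification to make the covering $q$ and its equivariance properties precise.
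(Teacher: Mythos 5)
Your overall strategy---uniformize, descend $g_{\hyp}$ along the covering $q\:\Dl\to EX^*$, and read off the conformal factor---is the same as the paper's, and your treatment of existence, of $G$-invariance, and of the geodesic property is sound. The genuine problem is in the uniqueness step. If $e^{2f_1}g$ and $e^{2f_2}g$ both have curvature $-1$ and $u=f_2-f_1$, then applying the rescaling formula to $g_0=e^{2f_1}g$ gives $-1=(-1-\Dl_{g_0}u)e^{-2u}$, i.e.\ $\Dl_{g_0}u=e^{2u}-1$. This does \emph{not} ``force $\Dl(u)=0$'': that assertion is circular, since $\Dl u=0$ combined with the displayed equation would already yield $e^{2u}=1$, i.e.\ $u=0$, which is the thing to be proved. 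A constant conformal factor $e^{2c}$ has $\Dl u=0$ but changes curvature $-1$ to $-e^{-2c}$, so equality of curvatures genuinely produces the nonlinear equation $\Dl u=e^{2u}-1$ and nothing weaker; as written, your argument would also ``prove'' that any two conformal metrics with the same curvature function differ by a constant factor, which is false.

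Closing the gap requires one more idea, and either of the standard ones works. Maximum principle: at a maximum of $u$ one has $\Dl u\leq 0$, hence $e^{2u}\leq 1$ and $u\leq 0$ there, hence $u\leq 0$ everywhere; symmetrically $u\geq 0$ from the minimum, so $u\equiv 0$. The paper instead multiplies $e^{2u}-1-\Dl u=0$ by $u$ and integrates by parts to get $\int_{EX^*}\bigl(u(e^{2u}-1)+\|\nabla u\|^2\bigr)=0$, and since $u(e^{2u}-1)\geq 0$ with equality only where $u=0$, this forces $u\equiv 0$. With uniqueness repaired, the rest of your proposal goes through: your derivation of $G$-invariance from uniqueness is a legitimate variant of the paper's argument (which instead gets it directly from the $\tPi$-equivariance of $q$ and the $\tPi$-invariance of both $q^*(g)$ and $g_{\hyp}$), and both of your arguments for the geodesic claim---fixed sets of isometric anticonformal involutions are totally geodesic, or lifting the $C_k$ to hyperbolic geodesics in $\Dl$---are valid; the paper uses the lifting argument.
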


\begin{remark}
 Note here that when we multiply the metric tensor $g$ by $e^{2f}$,
 this multiplies lengths by $e^f$, and areas by $e^{2f}$.
\end{remark}

The proof depends on the following formula:
\begin{lemma}\lbl{lem-rescaled-curvature}
 Let $Z$ be a smooth oriented surface equipped with a Riemannian
 metric $g$.  Let $\Delta=\Dl_g$ denote the associated Laplacian
 operator, and let $K(g)$ denote the Gaussian curvature.  Then for any
 smooth function $f$ on $Z$, we have
 \[ K(e^{2f}g) = (K(g) - \Dl(f))/e^{2f}. \]
\end{lemma}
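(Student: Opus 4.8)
The plan is to reduce the identity to the classical formula for the Gaussian curvature of a conformally flat metric in dimension two, by passing to isothermal coordinates. Since the asserted equation is local on $Z$, it suffices to verify it in a neighbourhood of an arbitrary point, and around any point of a smooth oriented Riemannian surface there is an isothermal chart: coordinates $(x,y)$ in which $g = e^{2\phi}(dx^2+dy^2)$ for some smooth real-valued $\phi$. This is the same analytic input (due to Korn and Lichtenstein) that we invoked in Section~\ref{sec-E-geometry} to construct the complex structure on $EX(a)$. So first I would fix such a chart and reduce to the conformally flat case.

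Next I would recall the two standard facts needed in such a chart, writing $\Delta_0 = \partial_x^2 + \partial_y^2$ for the flat Laplacian. First, the Laplace--Beltrami operator acts on functions by $\Delta_g = e^{-2\phi}\Delta_0$; this follows from the general formula $\Delta_g = \det(g)^{-1/2}\sum_{i,j}\partial_i(\det(g)^{1/2}g^{ij}\partial_j)$ (used in the proof of Proposition~\ref{prop-Delta-prime}) upon substituting $g_{ij}=e^{2\phi}\delta_{ij}$. Second, Liouville's formula $K(g) = -e^{-2\phi}\Delta_0\phi$ for the Gaussian curvature of $e^{2\phi}(dx^2+dy^2)$; this is verified by a direct computation of the single independent component of the curvature tensor (or via the Brioschi formula) for a conformally flat metric. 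I expect this curvature computation to be the main calculational obstacle, although it is entirely routine and can simply be cited as classical; the only real care needed is in bookkeeping the conformal factors.

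Given these two facts, the lemma is a one-line computation: the metric $e^{2f}g$ is again conformally flat in the same chart, with conformal factor $e^{2(f+\phi)}$, so Liouville's formula and linearity of $\Delta_0$ give
\[
K(e^{2f}g) = -e^{-2(f+\phi)}\Delta_0(f+\phi)
  = e^{-2f}\bigl(-e^{-2\phi}\Delta_0\phi - e^{-2\phi}\Delta_0 f\bigr)
  = e^{-2f}\bigl(K(g) - \Delta_g f\bigr),
\]
where in the last step one re-identifies $-e^{-2\phi}\Delta_0\phi$ with $K(g)$ and $e^{-2\phi}\Delta_0 f$ with $\Delta_g f$. Since the left- and right-hand sides are both defined independently of the chart, this establishes $K(e^{2f}g) = (K(g)-\Delta(f))/e^{2f}$ on all of $Z$, as required.
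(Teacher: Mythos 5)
Your proof is correct. The paper itself gives no argument for this lemma and simply refers the reader to Chapter V of Schoen--Yau, so there is no internal proof to compare against; your isothermal-coordinates argument is precisely the standard way to fill in that citation. The two ingredients you rely on --- existence of isothermal charts (Korn--Lichtenstein, already invoked elsewhere in the paper) and the pair of formulae $K(e^{2\phi}g_0)=-e^{-2\phi}\Delta_0\phi$, $\Delta_{e^{2\phi}g_0}=e^{-2\phi}\Delta_0$ for a conformally flat metric --- are both classical and correctly stated, and the final computation
\[ K(e^{2f}g)=-e^{-2(f+\phi)}\Delta_0(f+\phi)=e^{-2f}\bigl(K(g)-\Delta_g f\bigr) \]
is exactly right. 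The only caveat worth stating explicitly (which you do) is that both sides of the identity are globally defined, chart-independent functions, so the local verification suffices.
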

\begin{proof}
 See Chapter~V of~\cite{scya:ldg}, for example.
\end{proof}

\begin{proof}[Proof of Proposition~\ref{prop-rescaling}]
 Because $q$ is a conformal covering, we see that $q^*(g)$ is a
 positive multiple of $g_{\hyp}$, say
 $q^*(g)=e^{-2\tf}g_{\hyp}$ for some real analytic function
 $\tf$ on $\Dl$.  Note that $\tPi$ acts isometrically on $\Dl$, and
 also (via $\tP/\Pi=G$) on $EX^*$, and $q$ is equivariant for these
 actions.  It follows that $\tf$ is invariant under $\tPi$, so it has
 the form $\tf=q^*(f)$ for some $G$-invariant real analytic function
 $f$ on $EX^*$.  Now put $g_1=e^{2f}g$.  We have
 $q^*K(g_1)=K(e^{2\tf}q^*(g))=K(g_{\hyp})=-1$, but $q$ is
 surjective so $K(g_1)=-1$ as required.  Now $q$ is a local isometry
 from $(\Dl,g_{\hyp})$ to $(EX^*,g_1)$, and it carries the
 geodesics $C_i\subset\Dl$ to the curves $C_i\subset EX^*$, so the
 latter must also be geodesic.

 Now suppose we have a function $u$ with $K(e^{2u}g_1)=-1$; we
 claim that $u=0$.  In the proof we will use the gradient operator
 $\nabla$, the Laplacian operator $\Dl$, and the integration operator
 $\int_{EX^*}$; these are all defined using the metric $g_1$.  Using
 Lemma~\ref{lem-rescaled-curvature} we obtain $e^{2u}-1-\Dl(u)=0$.
 It is a standard fact that for any functions $a,b\in C^\infty(EX^*)$
 we have
 \[ \int_{EX^*} a\Dl(b) = -\int_{EX^*} \ip{\nabla(a),\nabla(b)}. \]
 Using this, we get
 \[ \int_{EX^*}\left(u(e^{2u}-1)+\|\nabla(u)\|^2\right)
    = \int_{EX^*}\left(u(e^{2u}-1+\Delta(u))\right) = 0.
 \]
 By considering the cases $u\geq 0$ and $u\leq 0$ separately, we see
 that $u(e^{2u}-1)\geq 0$, with equality only where $u=0$.  In view of
 this, the above integral formula shows that $u=0$ everywhere.  This
 shows that $f$ is uniquely characterised by the fact that
 $K(e^{2f}g)=-1$.
\end{proof}

To go further, we will need to discuss the curves $c_k\:\R\to\Dl$ as
well as the curves $c_k\:\R\to EX^*$.  We will distinguish between
them by writing $c_{Hk}$ for the former, and $c_{Ek}$ for the latter.
Similarly, we will write $v_{Hj}$ and $v_{Ej}$ for the usual points in
$\Dl$ and $EX^*$.

\begin{corollary}\lbl{cor-side-lengths}
 Let $f$ be the unique function such that $K(e^{2f}g)=-1$, and put
 \begin{align*}
  L_0 &= \int_{\pi/4}^{\pi/2} e^{f(c_{E0}(t))}\|c'_{E0}(t)\|\,dt \\
  L_1 &= \int_{0}^{\pi/2} e^{f(c_{E1}(t))}\|c'_{E1}(t)\|\,dt \\
  L_3 &= \int_{0}^{\pi/2} e^{f(c_{E3}(t))}\|c'_{E3}(t)\|\,dt \\
  L_5 &= \int_{0}^{\pi} e^{f(c_{E5}(t))}\|c'_{E5}(t)\|\,dt.
 \end{align*}
 If $b$ is the parameter such that $EX^*\simeq HX(b)$, and the points
 $v_{Hi}\in\Dl$ are defined in terms of $b$ as in
 Definition~\ref{defn-v-H}, then we have
 \begin{align*}
  L_0 &= \dhyp(v_{H3},v_{H6})  = 2\arctanh((b_+-\rt b)/b_-) \\
  L_1 &= \dhyp(v_{H0},v_{H6})  = 2\arctanh((\rt-b_-)/b_+) \\
  L_3 &= \dhyp(v_{H3},v_{H11}) = 2\arctanh((1-b_-)/b) \\
  L_5 &= \dhyp(v_{H0},v_{H11}) = 2\arctanh(b_+-b).
 \end{align*}
\end{corollary}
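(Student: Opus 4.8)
The plan is to use the conformal covering map $q\:\Dl\to EX^*$ from Theorem~\ref{thm-H-universal} and the rescaling function $f$ from Proposition~\ref{prop-rescaling} to transport length computations between the two metrics. The key observation is that $q^*(e^{2f}g)=g_{\hyp}$, so $q$ is a local isometry from $(\Dl,g_{\hyp})$ to $(EX^*,e^{2f}g)$, and moreover (by the discussion preceding the statement) $q$ is a cromulent isomorphism, hence sends the hyperbolic curve $c_{Hk}$ to the embedded curve $c_{Ek}$ and the point $v_{Hj}$ to $v_{Ej}$. Therefore the length of the arc $c_{Ek}([\al,\bt])$ measured in the rescaled metric $e^{2f}g$ equals the length of $c_{Hk}([\al,\bt])$ measured in $g_{\hyp}$.

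First I would make this precise: fix a curve index $k\in\{0,1,3,5\}$ and a parameter interval $[\al,\bt]$ as in the statement. By Definition~\ref{defn-v-H} (applied in both families via the isomorphism $q$), the endpoints of these arcs are the indicated pairs of vertices; for instance $c_{H5}(0)=v_{H0}$ and $c_{H5}(\pi)=v_{H11}$, and likewise for the others as recorded in the $v$-on-$c$ table. Since $q$ carries $c_{Hk}$ to $c_{Ek}$ and is a local isometry for the metrics in question, the change-of-variables formula for arc length gives
\[
 \int_\al^\bt e^{f(c_{Ek}(t))}\,\|c'_{Ek}(t)\|_g\,dt
 \;=\;
 \int_\al^\bt \|c'_{Hk}(t)\|_{g_{\hyp}}\,dt
 \;=\;
 \text{length}_{g_{\hyp}}\bigl(c_{Hk}([\al,\bt])\bigr).
\]
Here $\|\cdot\|_g$ is the Euclidean norm on $\R^4$ restricted to $EX^*$, which is exactly what appears in the definition of $L_k$. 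So each $L_k$ equals the hyperbolic length of the corresponding arc of $c_{Hk}$.

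Next I would identify that hyperbolic length with a hyperbolic distance. For each of $k\in\{0,1,3,5\}$, the curve $c_{Hk}$ is (by the construction in Section~\ref{sec-H-curves}) an isometric parametrisation of a geodesic $\tC_k$ in $\Dl$ — indeed the functions $\tc_k$ are built from the isometric parametrisations $\om_m$ of the geodesics $\Xi_m$ and from $\tanh$-type parametrisations of straight-line geodesics, each composed with an affine rescaling $t\mapsto t\,s_j/\pi$. An isometric parametrisation of a geodesic has the property that the length of any subarc equals the distance between its endpoints. Hence $L_k=\dhyp(v_{Hi},v_{Hj})$ for the appropriate endpoint pair: $L_0=\dhyp(v_{H3},v_{H6})$, $L_1=\dhyp(v_{H0},v_{H6})$, $L_3=\dhyp(v_{H3},v_{H11})$, $L_5=\dhyp(v_{H0},v_{H11})$. (One small point to check: the arc traversed by $c_{Hk}$ on $[\al,\bt]$ is the \emph{minimising} subarc between its endpoints, i.e.\ the parametrisation does not backtrack. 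This follows because on these intervals the relevant monotone factor — $\tanh$, or $\om_m$ composed with an affine map — is strictly monotone, so $c_{Hk}$ restricted to $[\al,\bt]$ is an embedding onto a geodesic segment.)

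Finally I would evaluate $\dhyp(v_{Hi},v_{Hj})$ using the explicit formula $\dhyp(z,w)=2\arctanh|(z-w)/(1-\bar z w)|$ from Section~\ref{sec-H-action} together with the explicit coordinates in Definition~\ref{defn-v-H}. For the two cases with $v_{H0}=0$, namely $L_1$ and $L_5$, this is immediate: $\dhyp(0,v_{H6})=2\arctanh|v_{H6}|=2\arctanh((\rt-b_-)/b_+)$ since $|e^{i\pi/4}|=1$, and $\dhyp(0,v_{H11})=2\arctanh(b_+-b)$ since $v_{H11}=b_+-b$ is already real positive. For $L_0=\dhyp(v_{H3},v_{H6})$ and $L_3=\dhyp(v_{H3},v_{H11})$ one plugs in $v_{H3}=(b\,b_--b_+)/(ib^2-1)$, $v_{H6}=\frac{1+i}{\rt}\,\frac{\rt-b_-}{b_+}$, $v_{H11}=b_+-b$, and simplifies $|(v_{Hi}-v_{Hj})/(1-\overline{v_{Hi}}v_{Hj})|$; using $b_+^2-b_-^2=2b^2$, $b_+^2+b_-^2=2$, $b_+^2=1+b^2$, $b_-^2=1-b^2$, this should collapse to $(b_+-\rt b)/b_-$ and $(1-b_-)/b$ respectively, matching the claim.

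The main obstacle is the last step: the algebraic simplification of the two Möbius-distance expressions $\dhyp(v_{H3},v_{H6})$ and $\dhyp(v_{H3},v_{H11})$ down to the stated closed forms. The quantities $v_{H3}$ and $v_{H6}$ have genuinely messy complex denominators, and verifying that $|(v_{H3}-v_{H6})/(1-\overline{v_{H3}}v_{H6})|=(b_+-\rt b)/b_-$ requires a careful (though in principle routine) computation with $b_\pm$; this is exactly the kind of identity that is verified mechanically by the Maple check \texttt{check\_hyperbolic\_metric()} and the associated curve-length checks. Everything upstream — the isometry property of $q$, the change of variables, and the geodesic-arc-length identity — is conceptually clean and relies only on results already established (Theorem~\ref{thm-H-universal}, Proposition~\ref{prop-rescaling}, and the constructions of Section~\ref{sec-H-curves}).
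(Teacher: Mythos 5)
Your proposal is correct and follows essentially the same route as the paper: use that $q$ is a local isometry from $(\Dl,g_{\hyp})$ to $(EX^*,e^{2f}g)$ identifying the geodesic arcs and their endpoints, so each $L_k$ is the hyperbolic distance between the corresponding vertices, then evaluate via $\dhyp(z,w)=2\arctanh|(z-w)/(1-\ov{z}w)|$ and simplify. The only caveat is that $q\circ c_{Hk}$ is a reparametrisation of $c_{Ek}$ (the paper later writes $q(c_{Hk}(t))=c_{Ek}(u_k(t))$), so your displayed integral identity holds because arc length is parametrisation-invariant and $u_k$ fixes the interval endpoints, not by a literal pointwise match of integrands.
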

\begin{proof}
 The domain $HF_{16}(b)\subset\Dl$ is a hyperbolic polygon, with
 geodesic edges, and vertices $v_{H0},v_{H3},v_{H6}$ and $v_{H11}$.
 The map $q$ is a local isometry, which sends $v_{Hi}$ to $v_{Ei}$.
 It follows that the geodesic distance from $v_{Hi}$ to $v_{Hj}$ in
 $\Dl$ is the same as the geodesic distance from $v_{Ei}$ to $v_{Ej}$
 in $EX^*$.  As the curves $C_i\subset EX^*$ are geodesic, the
 distances in $EX^*$ are given by the indicated integrals.

 The distances in $\Dl$ are given by the standard formula
 $\dhyp(z,w)=2\arctanh(m(z,w))$, where 
 \[ m(z,w) = \left|\frac{z-w}{1-\ov{z}w}\right|. \]
 We therefore need to show that $m(v_{H3},v_{H6})=(b_+-\rt b)/b_-$ and
 so on.  It is not hard to see that $(b_+-\rt b)/b_-\geq 0$, so it
 will suffice to show that $m(v_{H3},v_{H6})^2=((b_+-\rt b)/b_-)^2$,
 and this can be done by straightforward algebraic manipulation.  The
 same method works for the other three cases.
 \begin{checks}
  hyperbolic/HX_check.mpl: check_side_lengths()
 \end{checks}
\end{proof}

We now describe an algorithm based on the above results.  The full
algorithm can be carried out by executing the function
\mcode+build_data["EH_atlas",0]()+ defined in \mcode+build_data.mpl+,
which in turn invokes the functions \mcode+build_data["EH_atlas",i]()+
for $i$ from $1$ to $4$, each of which carries out a subset of the
steps described below.  All of these steps are implemented by methods
of the class \mcode+EH_atlas+ (which is declared in the file
\fname+embedded/roothalf/EH_atlas.mpl+) and some other related
classes.  We can enter 
\begin{mcodeblock}
 EHA := `new/EH_atlas`():
\end{mcodeblock}
to create a new object of the required type.

First, we need a quadrature rule $Q(f)=\sum_iw_if(a_i)$,
which is intended to approximate $\int_{EX^*}f$ when $f$ is
$G$-invariant.  It is not important for this approximation to be
accurate, we just want the seminorm $\|f\|_Q=\sqrt{Q(f^2)}$ to be a
reasonable measure of the size of $f$, at least for the functions $f$
that arise in our calculations.  After constructing a suitable
instance \mcode+Q+ of the class \mcode+E_quadrature_rule+, we can
enter
\begin{mcodeblock}
 EHA["quadrature_rule"] := eval(Q):
\end{mcodeblock}
to attach the quadrature rule to the atlas.

Next, we need to choose a finite-dimensional subspace $F$ of
invariant, real analytic functions on $EX^*$, in which we will search
for an approximation to the rescaling function $f$.  Two possibilities
are as follows:
\begin{align*}
 F_{\text{poly}}(d) &=
  \{\text{ polynomials } p(z_1,z_2)
     \text{ of total degree at most } d\} \\
 F_{\text{pade}}(d) &=
  \left\{\text{ rational functions } \frac{p(z_1,z_2)}{q(z_1,z_2)},\;
     \max(\deg(p),\deg(q)) \leq d,\; q(0,0)=1\right\}.
\end{align*}
We have found that $F_{\text{pade}}$ works better than
$F_{\text{poly}}$ even if the degrees are chosen so that the total
number of parameters is the same.  We do not fully understand why,
although it is reminiscent of the situation with Pad\'e approximations
in one variable.

One can search through the above spaces using one of the following
methods:
\begin{mcodeblock}
 EHA["find_rescale_poly",d];
 EHA["find_rescale_poly_alt",d];
 EHA["find_rescale_pade",d];
 EHA["find_rescale_pade_alt",d];
\end{mcodeblock}
It is best to call these methods repeatedly starting with $d=2$.  A
measure of success is stored in the field
\mcode+EHA["rescaling_error"]+, and this should decrease on each
iteration.  When this measure has settled down, one can increase $d$
by one.

The first two of the above methods use $F_{\text{poly}}(d)$, and the
second two use $F_{\text{pade}}(d)$.  The alternative methods
\mcode+find_rescale_poly+ and \mcode+find_rescale_pade+ are coded
using a very direct translation of the mathematical problem that we
seek to solve, so it is easy to check their correctness.  The methods
\mcode+find_rescale_poly_alt+ and \mcode+find_rescale_pade_alt+ are
harder to understand but much more efficient: they precompute various
vectors and matrices, and thereby convert the problem to linear
algebra, as far as possible.  The polynomial case works as follows:
\begin{itemize}
 \item We enumerate the monomials of degree at most $d$ as
  $m_1,\dotsc,m_r$.
 \item The sample points for the quadrature rule are $u_1,\dotsc,u_n$,
  with weights $w_i\geq 0$.  We set \mcode+rweights+ to be the vector
  with entries $\sqrt{w_i}$, and we set \mcode+kpoints+ to be the
  vector whose entries are the values of the curvature at the points
  $u_i$ (computed using Remark~\ref{rem-curvature-z}).  We also set
  \mcode+ones+ to be the vector of length $n$ whose entries are all
  one.
 \item Similarly, we set \mcode+mpoints+ and \mcode+lpoints+ to be
  matrices with entries $m_i(u_j)$ and $\Delta(m_i)(u_j)$ (computed
  using Proposition~\ref{prop-Delta-prime}).
 \item Now if $f=\sum_ja_jm_j$, then the values of $f$ and
  $K(g)-\Delta(f)$ are given by \mcode+mpoints.a+ and
  \mcode+kpoints-lpoints.a+.  This makes it easy to compute the
  objective function $FT(a)=Q(((K(g)-\Delta(f))/e^{2f}+1)^2)$.
 \item More precisely, $FT(a)$ is $\sum_iF_i(a)^2$, where $F_i(a)$ is
  $\sqrt{w_i}$ times the value of $(K(g)-\Delta(f))/e^{2f}+1$ at
  $u_i$.  This is useful, because there are special algorithms (used
  by Maple's \mcode+LSSolve+ command) for minimising a sum of
  squares.
 \item The above framework gives an efficient method for calculating
  the vector $F(a)$ and also the matrix of derivatives
  $\partial F_i(a)/\partial a_j$.  These are the ingredients that we
  need in order to use the \mcode+LSSolve+ command with the
  \mcode+objectivejacobian+ option.
\end{itemize}

The rational case is more complicated.  Here we have $f=g/h$, and the
values of $f$ and $\Delta(f)$ do not depend linearly on the
coefficients of $g$ and $h$.  However, one can construct linear
differential operators $P_i$ and $Q_j$ such that
\[ \Delta(f) =
    \frac{P_1(g)}{h} +
    \frac{P_2(g)Q_2(h)+P_3(g)Q_3(h)+P_4(g)Q_4(h)}{h^2} +
    P_5(g)\frac{Q_2(h)Q_5(h)+Q_3(h)Q_6(h)}{h^3}.
\]
(This is just a version of the quotient rule for second derivatives.)
One can again precompute the values $P_i(m_k)(u_l)$ and
$Q_j(m_k)(u_l)$ and thereby streamline the calculation of the
objective function, and of its derivatives with respect to the
coefficients of $g$ and $h$.

After using these methods to find the rescaling function
$f$, we can enter \mcode+EHA["log_rescale_z"](z)+ to see $f$ as an
expression in $z_1$ and $z_2$, or \mcode+EHA["log_rescale_x"](x)+ to
see it as an expression in $x_1,\dotsc,x_4$.

Once we have found $f$, we define lengths $L_i$ by the integrals
specified in Corollary~\ref{cor-side-lengths}.  We then use numerical
methods to find $b_5$ such that
$L_5=2\arctanh(2b_5^2+1-2b_5\sqrt{1+b_5^2})$, and similarly for $b_3$,
$b_1$ and $b_0$, using the formulae in
Corollary~\ref{cor-side-lengths}.  If our approximations are good,
then $b_0,b_1,b_3$ and $b_5$ should all be close to the parameter $b$
such that $EX^*\simeq HX(b)$.  We can thus get an imperfect measure of
the accuracy of our approximations from the differences $|b_i-b_j|$;
these are at most $10^{-7.4}$ in our best attempt.

The above algorithm is implemented by the method
\mcode+EHA["find_a_H"]+.  The length $L_k$ is stored as
\mcode+EHA["curve_lengths"][k]+, and $b_k$ is stored as
\mcode+EHA["curve_a_H_estimates"][k]+.  The average of these is
\mcode+EHA["a_H"]+, and the maximum discrepancy between them is
\mcode+EHA["a_H_discrepancy"]+.

Having found $b$, we can construct an isomorphism $HX(b)\to PX(a)$ by
the methods of Sections~\ref{sec-a-from-b} and~\ref{sec-b-from-a}.
This gives objects of class \mcode+H_to_P_map+ and
\mcode+P_to_H_map+.  These can be assigned to the fields
\mcode+EHA["H_to_P_map"]+ and \mcode+EHA["P_to_H_map"]+, in order to
keep everything packaged together in a single object.  These steps are
not included in the function \mcode+build_data["EH_atlas",0]()+, but
are instead in the functions \mcode+build_data["H_to_P_map"]()+ and
\mcode+build_data["P_to_H_map"]()+.

We next want to approximate the map $q\:\Dl\to EX^*$.
\begin{remark}\lbl{rem-H-to-E-method}
 The broad outline of our method is as follows:
 \begin{itemize}
  \item[(a)] It is not hard to see that $q(c_{kH}(t))=c_{kE}(u_k(t))$
   for a certain function $u_k(t)$, and to find Fourier series
   approximations to $u_k(t)-t$ by numerical integration of arc lengths.
  \item[(b)] Given a point $a\in EX^*$, it is not too hard to find a
   polynomial map $p_a\:\Dl\to\R^4$ which satisfies $p_a(0)=a$ and is a
   good approximation to an isometry $\Dl\to EX^*$, at least if we
   consider points close to the origin in $\Dl$.  (We will call these
   maps \emph{hyperbolic charts}.)  If $a\in C_k$ for some $k$ then we
   can exploit information from~(a) to find $p_a$; otherwise we use a
   more general method with power series.  Experiment suggests that
   our value of $p_a(z)$ can only be trusted for fairly small values
   of $|z|$, perhaps $|z|<0.1$ or so.
  \item[(c)] We then show that there exists a M\"obius map
   $m_a\in\Aut(\Dl)$ such that $p_a(z)\simeq q(m_a(z))$ for small $z$.
   Equivalently, for $w$ close to $m_a(0)$ we have
   $q(w)\simeq p_a(m_a^{-1}(w))$.  Thus, to find $q$, we should try to
   find $p_a$ and $m_a$ for a reasonable supply of points
   $a\in F_{16}$.  Methods for $p_a$ were discussed above, but we still
   need to consider $m_a$.
  \item[(d)] Given nearby points $a,b\in EX^*$, we can find
   $p_a^{-1}(b)$ and $p_b^{-1}(a)$ numerically, and the values
   $\dhyp(0,p_a^{-1}(b))$ and $\dhyp(p_b^{-1}(a),0)$ give two different
   estimates for the geodesic distance between $a$ and $b$.  (The
   discrepancy between them gives a check on the accuracy of our
   methods.)
  \item[(e)] We now choose a reasonably fine grid of points
   $a_1,\dotsc,a_N$ in $F_{16}$, and try to find the points
   $\bt_i=q^{-1}(a_i)\in HF_{16}(b)$.  For any points $a_i$ that lie in
   $\partial F_{16}$, we can do this using~(a).  For the remaining
   points, we note that when $a_i$ is a neighbour of $a_j$, we can
   estimate the geodesic distance between them as in~(d), and we should
   then have $\dhyp(\bt_i,\bt_j)=d(a_i,a_j)$.  We therefore choose the
   points $\bt_i$ to minimize a suitable measure of the overall
   discrepancy between the lengths $\dhyp(\bt_i,\bt_j)$ and $d(a_i,a_j)$.
  \item[(f)] We now need the M\"obius maps $m_i$ such that
   $p_i=qm_i$.  It is not hard to see that these must have the form
   \[ m_i(z)=\lm_i\frac{z+\ov{\lm_i}\bt_i}{1-\lm_i\ov{\bt_i}z}
      \hspace{4em}
      m_i^{-1}(w) = \ov{\lm_i}\frac{w-\bt_i}{1-\ov{\bt_i}w}.
   \]
   If $a_j$ is adjacent to $a_i$ then we find that $m_i^{-1}(\bt_j)$
   should be equal to $p_i^{-1}(a_j)$.  We can again calculate
   $p_i^{-1}(a_j)$ numerically, and this gives
   \[ \lm_i = (\bt_j-\bt_i)/(1 - \ov{\bt_i}\bt_j)/p_i^{-1}(a_j). \]
   We can perform this calculation for every $j$ such that $a_j$ is
   adjacent to $a_i$, and then take a kind of average to get a final
   estimate for $\lm_i$.  (Of course, in the averaging process we
   impose the constraint $|\lm_i|=1$.)
  \item[(g)] Now given a point $z\in\Dl$, we can approximate $q(z)$ as
   follows: we find $\gm\in\tPi$ such that $\gm(z)\in F_{16}$, then
   find $i$ such that $\gm(z)$ is as close as possible to $\bt_i$,
   then take $q(z)=\gm^{-1}(p_i(m_i^{-1}(\gm(z))))$ (using the action
   of $\tPi$ on $EX^*$ via $\tPi/\Pi=G$).  We can use this method to
   calculate $q(z)$ for a large sample of points $z\in\Dl$, and then
   use numerical techniques to find an approximation to $q(x+iy)$ using
   rational functions in $x$ and $y$.
 \end{itemize}
\end{remark}

We now discuss the above points~(a) to~(e) in more detail.

First, as $q$ gives a cromulent isomorphism $HX(b)\to EX^*$, we must
have $q(v_{Hi})=v_{Ei}$ for all $i$, and $q(c_{Hk}(\R))=C_{Ek}$.  As
$q\circ c_{Hk}\:\R\to C_{Ek}$ and $c_{Ek}\:\R\to C_{Ek}$ are both
$2\pi$-periodic universal coverings, it is not hard to see
that we must have $q(c_{Hk}(t))=c_{Ek}(u_k(t))$ for some strictly
increasing diffeomorphism $u_k\:\R\to\R$ with
$u_k(t+2\pi)=u_k(t)+2\pi$.  This in turn means that the function
$u_k(t)-t$ is $2\pi$-periodic, so it can be represented by a
Fourier series.  The maps $c_k\:\R\to\Dl$ were defined so as to have
constant speed with respect to the hyperbolic metric; let that speed
be $s_k$.  As $q$ is locally isometric, we can differentiate the
relation $q(c_{Hk}(u_k^{-1}(t)))=c_{Ek}(t)$ to get
\[ \frac{d}{dt}u_k^{-1}(t) =
     s_k^{-1}\|c'_{Ek}(t)\|e^{f(c_{Ek}(t))}.
\]
We can integrate this numerically to find a Fourier series
approximation to $u_k^{-1}(t)-t$.  From this we can obtain a Fourier
approximation to $u_k(t)-t$, and thus approximate formulae for
$q(c_{Hk}(t))$.  To give an idea of the size of the dominant terms, we
have
\begin{align*}
 u_0(t) - t &\simeq  0.017 \sin(4t) \\
 u_1(t) - t &\simeq -0.169 \sin(2t) + 0.010 \sin(4t) - 0.001 \sin(6t) \\
 u_3(t) - t &\simeq -0.074 \sin(2t) + 0.002 \sin(4t) \\
 u_5(t) - t &\simeq -0.362 \sin(t) + 0.026\sin(2t) - 0.001\sin(3t).
\end{align*}
These are calculated by the method \mcode+EHA["find_u",d]+ (where $d$
controls the number of terms in the various Fourier series).  After
invoking this method, one can calculate $u_k(t)$ and $u_k^{-1}(t)$ as
\mcode+EHA["u"][k](t)+ and \mcode+EHA["u_inv"][k](t)+.

We next discuss point~(b) in Remark~\ref{rem-H-to-E-method}.

\begin{proposition}\lbl{prop-local-isometry}
 Let $Z$ be an oriented surface with a Riemannian metric of curvature
 $-1$.  Let $a$ be a point in $Z$, and let $v$ be a nonzero tangent
 vector at $a$.  Then there is a unique germ of an oriented local
 isometry $p\:\Dl\to Z$ such that $p(0)=a$ and $p'(0)\in\R^+v$.
\end{proposition}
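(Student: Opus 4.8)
The plan is to deduce this from the classical fact that a metric of constant curvature $-1$ acquires a universal normal form in geodesic polar coordinates, so that a neighbourhood of $0$ in $(\Dl,g_{\hyp})$ and a neighbourhood of $a$ in $Z$ are locally isometric, with the isometry pinned down by a point and a frame. Recall that on $\Dl$ the hyperbolic metric has an exponential map $\exp_0\:T_0\Dl\to\Dl$ (in fact a global diffeomorphism, though I only need a small ball), and that $Z$ has an exponential map $\exp_a$ which is a diffeomorphism from a small ball in $T_aZ$ onto a normal neighbourhood of $a$. Writing a curvature $-1$ metric in geodesic polar coordinates $(\rho,\phi)$ as $d\rho^2+G^2\,d\phi^2$ with $G(0,\cdot)=0$ and $G_\rho(0,\cdot)=1$, the Gauss formula gives $K=-G_{\rho\rho}/G$, so $K\equiv-1$ forces $G_{\rho\rho}=G$ and hence $G=\sinh\rho$. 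Thus near $0$ in $\Dl$ and near $a$ in $Z$ the metric is $d\rho^2+\sinh^2(\rho)\,d\phi^2$ in geodesic polar coordinates centred at the point in question.

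For existence I would let $A\:T_0\Dl\to T_aZ$ be the unique orientation-preserving linear isometry with $A(\partial_x)\in\R^+v$ (there is exactly one: the orientation-preserving isometries of an inner-product plane form a single rotation orbit, and the length of $A(\partial_x)$ is determined by the isometry condition, so the direction of $v$ fixes $A$). Since $A$ carries the standard polar coordinates on $T_0\Dl$ to the standard polar coordinates on $T_aZ$, the composite $p=\exp_a\circ A\circ\exp_0^{-1}$, defined on a small disc about $0$, pulls the normal form on $Z$ back to the same normal form on $\Dl$, so $p$ is a local isometry. It is orientation-preserving because $A$ is and both exponential maps are orientation-preserving diffeomorphisms (each has identity derivative at the origin). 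Finally $p(0)=a$ and $dp_0=A$, so $p'(0)=A(\partial_x)\in\R^+v$.

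For uniqueness, suppose $p_1$ and $p_2$ are two germs with the stated properties. Both $dp_{1,0}$ and $dp_{2,0}$ are orientation-preserving linear isometries sending $\partial_x$ to the same positive multiple of $v$ (the multiple being fixed by the isometry condition), so $dp_{1,0}=dp_{2,0}=:A$. Then $\psi=p_2^{-1}\circ p_1$ is a well-defined germ at $0$ (since $dp_{2,0}$ is invertible), it is a local isometry of $(\Dl,g_{\hyp})$, and $\psi(0)=0$, $d\psi_0=\mathrm{id}$. A local isometry commutes with the exponential map, so for small $w\in T_0\Dl$ we get $\psi(\exp_0(w))=\exp_0(d\psi_0(w))=\exp_0(w)$; hence $\psi$ is the identity near $0$ and $p_1=p_2$ as germs. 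The only non-elementary ingredient is the polar normal form in the first paragraph, which is standard Riemannian geometry; everything else is routine, so I do not anticipate a real obstacle.
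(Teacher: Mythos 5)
Your proof is correct and follows essentially the same route as the paper: the uniqueness step (reduce to a local isometry germ of $\Dl$ fixing $0$ with identity derivative, then use that isometries commute with the exponential map) is exactly the paper's argument. The only difference is that for existence the paper simply cites Riemann's local uniformization theorem, whereas you supply its standard proof via the $d\rho^2+\sinh^2(\rho)\,d\phi^2$ normal form in geodesic polar coordinates; that is a valid and self-contained substitute.
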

\begin{proof}
 Local existence of $p$ is a theorem of Riemann; a convenient
 reference is~\cite[Theorem 2.4.11]{wo:scc}.  For uniqueness, it will
 suffice to prove the following: if $u$ is a germ of an oriented local
 isometry $\Dl\to\Dl$ with $u(0)=0$ and $u'(0)>0$, then $u$ is the
 identity.  It is clear that $u$ must act as a rotation on the tangent
 space $T_0\Dl$, so the condition $u'(0)>0$ forces $u'(0)=1$.  The
 exponential map $\exp\:T_0\Dl\to\Dl$ is characterised by its metric
 properties, so it commutes with $u$, and $T_0u=1$ so $u=1$.
\end{proof}

\begin{corollary}\lbl{cor-local-isometry}
 Let $p\:U\to EX^*$ be a hyperbolic chart (where $U$ is a disc around
 $0$ in $\Dl$).  Then there is a M\"obius map $m\in\Aut(\Dl)$ such
 that $p=qm$ on $U$.
\end{corollary}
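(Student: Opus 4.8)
The plan is to exploit the uniformization covering $q\colon\Dl\to EX^*$ from Theorem~\ref{thm-H-universal} together with the rescaled metric of Proposition~\ref{prop-rescaling}, and then invoke the uniqueness clause of Proposition~\ref{prop-local-isometry}. First I would recall that $q$ is a conformal covering which, by Proposition~\ref{prop-rescaling}, becomes a local isometry $(\Dl,g_{\hyp})\to(EX^*,e^{2f}g)$; this is the metric on $EX^*$ with respect to which the hyperbolic charts are defined, so from now on $EX^*$ carries that metric of curvature $-1$. The hyperbolic chart $p\colon U\to EX^*$ is by construction an oriented local isometry with $p(0)=a$ for some $a\in EX^*$, and its derivative $p'(0)$ picks out a nonzero tangent vector $v\in T_aEX^*$.

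Next I would lift through $q$. Choose a point $\tilde a\in\Dl$ with $q(\tilde a)=a$. Since $q$ is a covering and $U$ is simply connected, $p$ lifts to a map $\tilde p\colon U\to\Dl$ with $q\tilde p=p$ and $\tilde p(0)=\tilde a$; because $q$ is a local isometry and $p$ is a local isometry, $\tilde p$ is also an oriented local isometry, and in particular a local biholomorphism near $0$. Now precompose with a M\"obius map: let $m_0\in\Aut(\Dl)$ be the automorphism with $m_0(0)=\tilde a$ and $m_0'(0)>0$ (this is $z\mapsto\mu\,(z+\tilde a)/(1+\bar{\tilde a}z)$ for the appropriate unimodular $\mu$ chosen to make the derivative positive, or more simply one fixes the rotation parameter afterwards). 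Then $\tilde p\circ m_0\colon U'\to\Dl$ (on a possibly smaller disc $U'$ around $0$) is an oriented local isometry fixing $0$, but its derivative at $0$ need only be a rotation; composing with one further rotation $r\in\Aut(\Dl)$ I can arrange $(\tilde p\circ m_0\circ r)'(0)>0$. Set $m=m_0\circ r\in\Aut(\Dl)$.

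The last step is to compare $\tilde p\circ m$ with the identity via Proposition~\ref{prop-local-isometry}. Both $\tilde p\circ m$ and $\mathrm{id}_\Dl$ are germs of oriented local isometries $\Dl\to\Dl$ sending $0$ to $0$ with positive derivative at $0$; the uniqueness assertion proved in Proposition~\ref{prop-local-isometry} (the displayed claim that such a germ must be the identity) forces $\tilde p\circ m=\mathrm{id}$ near $0$, i.e.\ $\tilde p=m^{-1}$ near $0$. Applying $q$ gives $p=q\tilde p=q m^{-1}$ near $0$; replacing $m^{-1}$ by $m$ (relabelling the automorphism) yields $p=qm$ on a disc around $0$, and since $p$, $q$ and $m$ are all holomorphic and $U$ is connected, the identity extends to all of $U$ by analytic continuation. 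I do not expect any serious obstacle here: the only delicate point is bookkeeping the two M\"obius adjustments (translation to $\tilde a$, then rotation) so that the hypotheses of Proposition~\ref{prop-local-isometry} are literally met, and observing that the composite lift is again an \emph{oriented} local isometry — both are routine once the uniformization covering and the curvature $-1$ rescaling are in place.
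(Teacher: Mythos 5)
Your argument is essentially the paper's: lift $p$ through the covering $q$ to an oriented local isometry of $\Dl$ (using simple connectivity of $U$ and the fact that $q$ is a local isometry for the rescaled metric), then identify that lift with a M\"obius map via the uniqueness clause of Proposition~\ref{prop-local-isometry}. The one flaw is a composition-direction slip: with $\tilde p(0)=\tilde a$ and $m_0(0)=\tilde a$, the map $\tilde p\circ m_0$ sends $0$ to $\tilde p(\tilde a)$, not to $0$, so it does not satisfy the hypotheses of the uniqueness clause; you should instead post-compose, i.e.\ consider $r^{-1}\circ m_0^{-1}\circ\tilde p$, which fixes $0$ with positive derivative, whence $\tilde p=m_0\circ r$ and $p=q\circ(m_0\circ r)$ directly (your final formula $p=qm^{-1}$ with $m=m_0\circ r$ is inconsistent with $m_0(0)=\tilde a$). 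The paper avoids this bookkeeping by writing down the explicit M\"obius map $m_1$ agreeing with the lift $m$ at $0$ to first order up to a positive scalar and concluding $m=m_1$ from uniqueness; the two routes are otherwise identical.
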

\begin{proof}
 As $q\:\Dl\to EX^*$ is a covering map and $U$ is simply connected, we
 can choose $m\:U\to\Dl$ such that $p=qm$.  As both $p$ and $q$ are
 orientation-preserving isometries, the same is true of $m$.  Now put
 $\al=m(0)\in\Dl$, and let $\lm$ denote the unit complex number such
 that $m'(0)$ is a positive multiple of $\ov{\lm}$.  Put
 $m_1(z)=(z+\lm\al)/(\lm+\ov{\al}z)$ (so
 $m_1^{-1}(z)=(z-\al)/(1-\ov{\al}z)$).  We find that $m$ and $m_1$ are
 both orientation-preserving isometries of $U$ into $\Dl$ such that
 $m'(0)$ and $m'_1(0)$ are positive multiples of each other.  It
 follows (by the uniqueness clause in the Proposition) that $m=m_1$.
\end{proof}

Charts $p$ as above are represented by instances of the class
\mcode+EH_chart+, which is declared in the file
\fname+embedded/roothalf/EH_atlas.mpl+.  It extends the class
\mcode+E_chart+, which was discussed in
Section~\ref{sec-roothalf-charts}.  The algorithm to make a chart
isometric is actually coded in the \mcode+isometrize+ method of the
\mcode+E_chart+ class; this is invoked automatically by the methods
that initialize instances of the \mcode+EH_chart+ class.  In more
detail, the algorithm is as follows.  We start with an approximate
polynomial conformal chart $p_0\:\C\to EX^*$ as in
Proposition~\ref{prop-frame-chart}, which can be constructed by
methods of the \mcode+E_chart+ class.  It is then not hard to show
that there are unique numbers $a_1\in\R^+$ and $a_2\in\C$ such that
the map $p_2(z)=p_0(a_1z+a_2z^2)$ is isometric to first order.  We can
then try to find $a_3$ such that the map $p_3(z)=p_2(z+a_3z^3)$ is
isometric to second order.  This involves solving a system of
inhomogeneous linear equations for the real and imaginary parts of
$a_3$.  As the curvature of $g_1$ is not exactly equal to $-1$, these
equations will not usually be solvable.  However, we can choose $a_3$
to minimize the mean square error in these equations, and then proceed
to find coefficients $a_4$, $a_5$ and so in in the same way.

As mentioned previously, there a different method that is available
for charts centred on one of the curves $C_k$.  Suppose that
$a=c_{Ek}(t_0)$, and that we have found a good approximation to the
conformal chart $p_0$ with $p_0(t)=c_{Ek}(t_0+t))$ for small $t\in\R$
(as discussed in Section~\ref{sec-roothalf-charts}).  Put
$t_1=u_k^{-1}(t_0)$.  The function $u_k\:\R\to\R$ is real analytic, so
it can be extended (using power series, for example) to give a
holomorphic function on a neighbourhood of the point
$t_1=u_k^{-1}(t_0)$.  Now put
\begin{align*}
 \lm &= c'_{Hk}(t_1)/|c'_{Hk}(t_1)|\in S^1 \\
 \al &= -c_{Hk}(t_1)/\lm \\
 \bt &= -\lm\al \\
 m(z) &= \lm(z-\al)/(1-\ov{\al}z),
\end{align*}
so $m\in\Aut(\Dl)$ with $m(0)=c_{Hk}(t_1)$ and
$m'(0)\in\R^+.c'_{Hk}(t_1)$.  Finally, recall that $s_k$ denotes the
(constant) speed of the map $c_{Hk}\:\R\to\Dl$.

\begin{proposition}\lbl{prop-curve-chart}
 With notation as above, the map
 \[ p(z) = p_0(u_k(t_1+2s_k^{-1}\arctanh(z))-t_0) \]
 is a hyperbolic chart at $a$.  More specifically, we have
 $p(z)=q(m(z))$.  In particular, we have $q^{-1}p(0)=m(0)=-\lm\al=\bt$.
\end{proposition}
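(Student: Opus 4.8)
The plan is to set $P = q\circ m$ and prove directly that $P$ equals the map defined by the displayed formula; the assertion $q^{-1}p(0)=m(0)=-\lm\al=\bt$ is then immediate, since $m(0)=\lm(0-\al)/(1-0)=-\lm\al$ and, because $\al=-c_{Hk}(t_1)/\lm$ by construction, this equals $c_{Hk}(t_1)=\bt$. Throughout I take $p_0$ to be the exact conformal chart germ at $a$ furnished by Proposition~\ref{prop-chart} applied to $t\mapsto c_{Ek}(t_0+t)$ (which has nonzero derivative at $0$), so that $p_0(t)=c_{Ek}(t_0+t)$ for small real $t$; in the numerical implementation one uses a polynomial approximation instead, but the proposition is a statement about the exact objects.

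First I would check that $P=q\circ m$ is a hyperbolic chart at $a$, i.e.\ a germ of orientation-preserving local isometry from $(\Dl,g_{\hyp})$ to $(EX^*,e^{2f}g)$ carrying $0$ to $a$. Indeed $m\in\Aut(\Dl)$ is an orientation-preserving isometry of $(\Dl,g_{\hyp})$, and by Proposition~\ref{prop-rescaling} the covering $q\:\Dl\to EX^*$ is an orientation-preserving local isometry from $(\Dl,g_{\hyp})$ to $(EX^*,e^{2f}g)$; hence the composite $P$ is a local isometry of the required type, and $P(0)=q(m(0))=q(c_{Hk}(t_1))=c_{Ek}(u_k(t_1))=c_{Ek}(t_0)=a$, using $q\circ c_{Hk}=c_{Ek}\circ u_k$ and $u_k(t_1)=t_0$.

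The core of the argument is a comparison of parametrised geodesics. The real axis is a hyperbolic geodesic in $\Dl$, and as recorded in Section~\ref{sec-H-curves} the map $s\mapsto\tanh(s/2)$ is its unit-speed arc-length parametrisation; since $m$ is an isometry, $s\mapsto m(\tanh(s/2))$ is therefore the unit-speed geodesic of $\Dl$ with initial point $m(0)=c_{Hk}(t_1)=\bt$ and initial velocity pointing in the direction of $m'(0)=\lm(1-|\al|^2)$, i.e.\ of $\lm=c'_{Hk}(t_1)/|c'_{Hk}(t_1)|$. On the other hand, $\tC_k=c_{Hk}(\R)$ is a geodesic segment which $c_{Hk}$ traverses with constant hyperbolic speed $s_k=|c'_{Hk}(t_1)|$, so $s\mapsto c_{Hk}(t_1+s/s_k)$ is the unit-speed geodesic through $c_{Hk}(t_1)$ with the same unit initial velocity. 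By uniqueness of geodesics with prescribed initial data in the (complete) hyperbolic disc, the two geodesics coincide: $m(\tanh(s/2))=c_{Hk}(t_1+s/s_k)$ for all $s\in\R$. Substituting $z=\tanh(s/2)$, so that $s=2\arctanh(z)$, gives $m(z)=c_{Hk}(t_1+2s_k^{-1}\arctanh(z))$ for real $z$ near $0$.

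Finally, for real $z$ near $0$ I would compute
\[
 P(z)=q(m(z))=q\bigl(c_{Hk}(t_1+2s_k^{-1}\arctanh z)\bigr)
      =c_{Ek}\bigl(u_k(t_1+2s_k^{-1}\arctanh z)\bigr)
      =p_0\bigl(u_k(t_1+2s_k^{-1}\arctanh z)-t_0\bigr),
\]
using $q\circ c_{Hk}=c_{Ek}\circ u_k$ and $p_0(t)=c_{Ek}(t_0+t)$. The right-hand side is a composition of maps holomorphic near the relevant points — $\arctanh$ near $0$, the holomorphic extension of $u_k$ near $t_1$, and $p_0$ near $0$, with the values chaining correctly at $z=0$ because $2s_k^{-1}\arctanh(0)=0$ and $u_k(t_1)=t_0$ — so it defines a holomorphic function on a neighbourhood of $0$ in $\C$, and $P=q\circ m$ is also holomorphic there. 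Since the two agree on a real interval about $0$, the identity principle forces agreement on a full complex neighbourhood of $0$, which is precisely the claimed identity $p(z)=q(m(z))$. I expect the main obstacle to be the bookkeeping in the geodesic-comparison step: one must verify carefully that the base point $c_{Hk}(t_1)$, the direction $\lm$, and the speed $s_k$ built into the definitions of $\lm$ and $\al$ line up exactly with the standard parametrisation $\tanh(s/2)$, so that inverting the hyperbolic tangent reproduces exactly the factor $2s_k^{-1}\arctanh(z)$ in the statement; the rest is routine.
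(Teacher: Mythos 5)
Your proof is correct and takes essentially the same route as the paper's: identify $m(\tanh(s/2))$ with the unit-speed reparametrisation of the geodesic $c_{Hk}$ through $c_{Hk}(t_1)$, deduce $p(z)=q(m(z))$ for small real $z$ using $q\circ c_{Hk}=c_{Ek}\circ u_k$, and extend to complex $z$ by the identity principle. (One incidental slip: the parenthetical $s_k=|c'_{Hk}(t_1)|$ conflates the hyperbolic speed with the Euclidean norm of the derivative, but nothing in the argument depends on that identification.)
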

\begin{proof}
 Note that $m^{-1}(c_{Hk}(\R))$ is a geodesic in $\Dl$ which is
 tangent to $\R$ at $0$; it follows that
 $m^{-1}(c_{Hk}(\R))=(-1,1)$.  The standard speed one parametrisation
 of $(-1,1)$ is $t\mapsto\tanh(t/2)$.  It follows that
 $c_{Hk}(t_1+t)=m(\tanh(s_kt/2))$.

 We now claim that $p(z)=q(m(z))$.  Both $p(z)$ and $q(m(z))$ are
 holomorphic, so it will suffice to prove this for small real values
 of $z$.  When $z$ is real we see that
 $u_k(t_1+2s_k^{-1}\arctanh(z))-t_0$ is also real, so
 \begin{align*}
  p(z) &= c_{Ek}(u_k(t_1+2s_k^{-1}\arctanh(z))) \\
       &= q(c_{Hk}(t_1+2s_k^{-1}\arctanh(z))) = q(m(t))
 \end{align*}
 as required.
\end{proof}

For $j\in\{0,3,6,11\}$ one can add an isometric chart centred at the
point $v_j$ to the atlas using the method
\mcode+EHA["add_vertex_chart",j]+.  Now suppose that
$k\in\{0,1,3,5\}$, so the set $c_k^{-1}(F_{16})$ is a closed interval
$[a,b]$ for some $a$ and $b$.  Then the method
\mcode+EHA["add_curve_chart",k,t]+ adds an isometric chart centred at
$a+t(b-a)$ (so the natural domain for $t$ is $[0,1]$).  Finally, for a
point $x_0$ in the interior of $F_{16}$, we can use the method
\mcode+EHA["add_centre_chart",x0]+ to add a chart centred at $x_0$.
The function \mcode+build_data["EH_atlas",2]()+ adds a total of 119
charts to the atlas created by \mcode+build_data["EH_atlas",1]()+.
They are chosen so that the centres form an approximately equilateral
triangular grid with respect to the rescaled hyperbolic metric.  (To
make everything fit, some triangles on the edge of $F_{16}$ have to
deviate strongly from being equilateral, but the ones in the interior
are quite regular.)  The corresponding points in $HF_{16}(b)$ can be
displayed as follows: 
\begin{center}
 \begin{tikzpicture}[scale=12]
  \draw[maplegreen] (0.000,0.000) -- (0.020,0.020);
  \draw[maplegreen] (0.000,0.000) -- (0.032,0.032);
  \draw[mapleblue] (0.000,0.000) -- (0.034,0.000);
  \draw[maplecyan] (0.568,0.364) -- (0.547,0.375);
  \draw[maplemagenta] (0.568,0.364) -- (0.557,0.341);
  \draw[maplegrey] (0.568,0.364) -- (0.550,0.346);
  \draw[maplecyan] (0.450,0.450) -- (0.467,0.434);
  \draw[maplegreen] (0.450,0.450) -- (0.422,0.422);
  \draw[maplegrey] (0.450,0.450) -- (0.434,0.418);
  \draw[maplemagenta] (0.480,-0.000) -- (0.481,0.023);
  \draw[mapleblue] (0.480,-0.000) -- (0.470,0.000);
  \draw[maplecyan] (0.467,0.434) -- (0.493,0.412);
  \draw[maplegrey] (0.467,0.434) -- (0.434,0.418);
  \draw[maplegrey] (0.467,0.434) -- (0.463,0.398);
  \draw[maplecyan] (0.493,0.412) -- (0.519,0.393);
  \draw[maplegrey] (0.493,0.412) -- (0.463,0.398);
  \draw[maplegrey] (0.493,0.412) -- (0.492,0.380);
  \draw[maplecyan] (0.519,0.393) -- (0.547,0.375);
  \draw[maplegrey] (0.519,0.393) -- (0.492,0.380);
  \draw[maplegrey] (0.519,0.393) -- (0.521,0.362);
  \draw[maplegrey] (0.547,0.375) -- (0.521,0.362);
  \draw[maplegrey] (0.547,0.375) -- (0.550,0.346);
  \draw[maplegreen] (0.422,0.422) -- (0.403,0.403);
  \draw[maplegrey] (0.422,0.422) -- (0.434,0.418);
  \draw[maplegreen] (0.352,0.352) -- (0.378,0.378);
  \draw[maplegreen] (0.352,0.352) -- (0.331,0.331);
  \draw[maplegrey] (0.352,0.352) -- (0.365,0.350);
  \draw[maplegreen] (0.378,0.378) -- (0.403,0.403);
  \draw[maplegrey] (0.378,0.378) -- (0.365,0.350);
  \draw[maplegrey] (0.378,0.378) -- (0.399,0.367);
  \draw[maplegrey] (0.403,0.403) -- (0.434,0.418);
  \draw[maplegrey] (0.403,0.403) -- (0.399,0.367);
  \draw[maplegrey] (0.403,0.403) -- (0.432,0.383);
  \draw[maplegreen] (0.275,0.275) -- (0.304,0.304);
  \draw[maplegreen] (0.275,0.275) -- (0.253,0.253);
  \draw[maplegrey] (0.275,0.275) -- (0.291,0.273);
  \draw[maplegreen] (0.304,0.304) -- (0.331,0.331);
  \draw[maplegrey] (0.304,0.304) -- (0.291,0.273);
  \draw[maplegrey] (0.304,0.304) -- (0.327,0.293);
  \draw[maplegrey] (0.331,0.331) -- (0.365,0.350);
  \draw[maplegrey] (0.331,0.331) -- (0.327,0.293);
  \draw[maplegrey] (0.331,0.331) -- (0.363,0.311);
  \draw[maplegreen] (0.191,0.191) -- (0.223,0.223);
  \draw[maplegreen] (0.191,0.191) -- (0.166,0.166);
  \draw[maplegrey] (0.191,0.191) -- (0.211,0.186);
  \draw[maplegreen] (0.223,0.223) -- (0.253,0.253);
  \draw[maplegrey] (0.223,0.223) -- (0.211,0.186);
  \draw[maplegrey] (0.223,0.223) -- (0.251,0.209);
  \draw[maplegrey] (0.253,0.253) -- (0.291,0.273);
  \draw[maplegrey] (0.253,0.253) -- (0.251,0.209);
  \draw[maplegrey] (0.253,0.253) -- (0.289,0.231);
  \draw[maplegreen] (0.020,0.020) -- (0.059,0.059);
  \draw[maplegreen] (0.020,0.020) -- (0.032,0.032);
  \draw[maplegreen] (0.059,0.059) -- (0.097,0.097);
  \draw[maplegreen] (0.059,0.059) -- (0.032,0.032);
  \draw[maplegrey] (0.059,0.059) -- (0.080,0.060);
  \draw[maplegreen] (0.097,0.097) -- (0.132,0.132);
  \draw[maplegrey] (0.097,0.097) -- (0.080,0.060);
  \draw[maplegrey] (0.097,0.097) -- (0.126,0.088);
  \draw[maplegreen] (0.132,0.132) -- (0.166,0.166);
  \draw[maplegrey] (0.132,0.132) -- (0.126,0.088);
  \draw[maplegrey] (0.132,0.132) -- (0.169,0.114);
  \draw[maplegrey] (0.166,0.166) -- (0.211,0.186);
  \draw[maplegrey] (0.166,0.166) -- (0.169,0.114);
  \draw[maplegrey] (0.166,0.166) -- (0.211,0.139);
  \draw[maplegrey] (0.032,0.032) -- (0.034,0.000);
  \draw[maplegrey] (0.032,0.032) -- (0.082,0.000);
  \draw[maplegrey] (0.032,0.032) -- (0.080,0.060);
  \draw[maplemagenta] (0.545,0.316) -- (0.557,0.341);
  \draw[maplemagenta] (0.545,0.316) -- (0.533,0.287);
  \draw[maplegrey] (0.545,0.316) -- (0.521,0.329);
  \draw[maplegrey] (0.545,0.316) -- (0.550,0.346);
  \draw[maplegrey] (0.545,0.316) -- (0.523,0.294);
  \draw[maplegrey] (0.557,0.341) -- (0.550,0.346);
  \draw[maplemagenta] (0.524,0.260) -- (0.533,0.287);
  \draw[maplemagenta] (0.524,0.260) -- (0.514,0.228);
  \draw[maplegrey] (0.524,0.260) -- (0.493,0.275);
  \draw[maplegrey] (0.524,0.260) -- (0.523,0.294);
  \draw[maplegrey] (0.524,0.260) -- (0.496,0.239);
  \draw[maplegrey] (0.533,0.287) -- (0.523,0.294);
  \draw[maplemagenta] (0.506,0.200) -- (0.514,0.228);
  \draw[maplemagenta] (0.506,0.200) -- (0.498,0.166);
  \draw[maplegrey] (0.506,0.200) -- (0.466,0.217);
  \draw[maplegrey] (0.506,0.200) -- (0.496,0.239);
  \draw[maplegrey] (0.506,0.200) -- (0.470,0.179);
  \draw[maplegrey] (0.514,0.228) -- (0.496,0.239);
  \draw[maplemagenta] (0.498,0.166) -- (0.491,0.130);
  \draw[maplegrey] (0.498,0.166) -- (0.470,0.179);
  \draw[maplegrey] (0.498,0.166) -- (0.475,0.141);
  \draw[maplemagenta] (0.487,0.101) -- (0.491,0.130);
  \draw[maplemagenta] (0.487,0.101) -- (0.483,0.063);
  \draw[maplegrey] (0.487,0.101) -- (0.444,0.115);
  \draw[maplegrey] (0.487,0.101) -- (0.475,0.141);
  \draw[maplegrey] (0.487,0.101) -- (0.451,0.075);
  \draw[maplegrey] (0.491,0.130) -- (0.475,0.141);
  \draw[maplemagenta] (0.483,0.063) -- (0.481,0.023);
  \draw[maplegrey] (0.483,0.063) -- (0.451,0.075);
  \draw[maplegrey] (0.483,0.063) -- (0.460,0.034);
  \draw[maplegrey] (0.481,0.023) -- (0.470,0.000);
  \draw[maplegrey] (0.481,0.023) -- (0.460,0.034);
  \draw[mapleblue] (0.034,0.000) -- (0.082,0.000);
  \draw[mapleblue] (0.082,0.000) -- (0.131,0.000);
  \draw[maplegrey] (0.082,0.000) -- (0.080,0.060);
  \draw[maplegrey] (0.082,0.000) -- (0.128,0.035);
  \draw[mapleblue] (0.131,0.000) -- (0.175,0.000);
  \draw[maplegrey] (0.131,0.000) -- (0.128,0.035);
  \draw[mapleblue] (0.175,0.000) -- (0.222,0.000);
  \draw[maplegrey] (0.175,0.000) -- (0.128,0.035);
  \draw[maplegrey] (0.175,0.000) -- (0.171,0.063);
  \draw[maplegrey] (0.175,0.000) -- (0.217,0.042);
  \draw[mapleblue] (0.222,0.000) -- (0.269,0.000);
  \draw[maplegrey] (0.222,0.000) -- (0.217,0.042);
  \draw[maplegrey] (0.222,0.000) -- (0.262,0.022);
  \draw[mapleblue] (0.269,0.000) -- (0.307,0.000);
  \draw[maplegrey] (0.269,0.000) -- (0.262,0.022);
  \draw[mapleblue] (0.307,0.000) -- (0.351,0.000);
  \draw[maplegrey] (0.307,0.000) -- (0.262,0.022);
  \draw[maplegrey] (0.307,0.000) -- (0.300,0.051);
  \draw[maplegrey] (0.307,0.000) -- (0.343,0.034);
  \draw[mapleblue] (0.351,0.000) -- (0.395,0.000);
  \draw[maplegrey] (0.351,0.000) -- (0.343,0.034);
  \draw[maplegrey] (0.351,0.000) -- (0.386,0.018);
  \draw[mapleblue] (0.395,0.000) -- (0.428,0.000);
  \draw[maplegrey] (0.395,0.000) -- (0.386,0.018);
  \draw[mapleblue] (0.428,0.000) -- (0.470,0.000);
  \draw[maplegrey] (0.428,0.000) -- (0.386,0.018);
  \draw[maplegrey] (0.428,0.000) -- (0.419,0.047);
  \draw[maplegrey] (0.428,0.000) -- (0.460,0.034);
  \draw[maplegrey] (0.470,0.000) -- (0.460,0.034);
  \draw[maplegrey] (0.434,0.418) -- (0.432,0.383);
  \draw[maplegrey] (0.434,0.418) -- (0.463,0.398);
  \draw[maplegrey] (0.365,0.350) -- (0.399,0.367);
  \draw[maplegrey] (0.365,0.350) -- (0.363,0.311);
  \draw[maplegrey] (0.365,0.350) -- (0.397,0.329);
  \draw[maplegrey] (0.399,0.367) -- (0.432,0.383);
  \draw[maplegrey] (0.399,0.367) -- (0.397,0.329);
  \draw[maplegrey] (0.399,0.367) -- (0.430,0.347);
  \draw[maplegrey] (0.432,0.383) -- (0.463,0.398);
  \draw[maplegrey] (0.432,0.383) -- (0.430,0.347);
  \draw[maplegrey] (0.432,0.383) -- (0.461,0.363);
  \draw[maplegrey] (0.463,0.398) -- (0.461,0.363);
  \draw[maplegrey] (0.463,0.398) -- (0.492,0.380);
  \draw[maplegrey] (0.291,0.273) -- (0.327,0.293);
  \draw[maplegrey] (0.291,0.273) -- (0.289,0.231);
  \draw[maplegrey] (0.291,0.273) -- (0.326,0.251);
  \draw[maplegrey] (0.327,0.293) -- (0.363,0.311);
  \draw[maplegrey] (0.327,0.293) -- (0.326,0.251);
  \draw[maplegrey] (0.327,0.293) -- (0.362,0.272);
  \draw[maplegrey] (0.363,0.311) -- (0.397,0.329);
  \draw[maplegrey] (0.363,0.311) -- (0.362,0.272);
  \draw[maplegrey] (0.363,0.311) -- (0.396,0.291);
  \draw[maplegrey] (0.397,0.329) -- (0.430,0.347);
  \draw[maplegrey] (0.397,0.329) -- (0.396,0.291);
  \draw[maplegrey] (0.397,0.329) -- (0.429,0.310);
  \draw[maplegrey] (0.430,0.347) -- (0.461,0.363);
  \draw[maplegrey] (0.430,0.347) -- (0.429,0.310);
  \draw[maplegrey] (0.430,0.347) -- (0.461,0.328);
  \draw[maplegrey] (0.461,0.363) -- (0.492,0.380);
  \draw[maplegrey] (0.461,0.363) -- (0.461,0.328);
  \draw[maplegrey] (0.461,0.363) -- (0.491,0.345);
  \draw[maplegrey] (0.492,0.380) -- (0.491,0.345);
  \draw[maplegrey] (0.492,0.380) -- (0.521,0.362);
  \draw[maplegrey] (0.211,0.186) -- (0.251,0.209);
  \draw[maplegrey] (0.211,0.186) -- (0.211,0.139);
  \draw[maplegrey] (0.211,0.186) -- (0.251,0.163);
  \draw[maplegrey] (0.251,0.209) -- (0.289,0.231);
  \draw[maplegrey] (0.251,0.209) -- (0.251,0.163);
  \draw[maplegrey] (0.251,0.209) -- (0.290,0.187);
  \draw[maplegrey] (0.289,0.231) -- (0.326,0.251);
  \draw[maplegrey] (0.289,0.231) -- (0.290,0.187);
  \draw[maplegrey] (0.289,0.231) -- (0.327,0.209);
  \draw[maplegrey] (0.326,0.251) -- (0.362,0.272);
  \draw[maplegrey] (0.326,0.251) -- (0.327,0.209);
  \draw[maplegrey] (0.326,0.251) -- (0.362,0.231);
  \draw[maplegrey] (0.362,0.272) -- (0.396,0.291);
  \draw[maplegrey] (0.362,0.272) -- (0.362,0.231);
  \draw[maplegrey] (0.362,0.272) -- (0.397,0.252);
  \draw[maplegrey] (0.396,0.291) -- (0.429,0.310);
  \draw[maplegrey] (0.396,0.291) -- (0.397,0.252);
  \draw[maplegrey] (0.396,0.291) -- (0.429,0.272);
  \draw[maplegrey] (0.429,0.310) -- (0.461,0.328);
  \draw[maplegrey] (0.429,0.310) -- (0.429,0.272);
  \draw[maplegrey] (0.429,0.310) -- (0.461,0.292);
  \draw[maplegrey] (0.461,0.328) -- (0.491,0.345);
  \draw[maplegrey] (0.461,0.328) -- (0.461,0.292);
  \draw[maplegrey] (0.461,0.328) -- (0.492,0.310);
  \draw[maplegrey] (0.491,0.345) -- (0.521,0.362);
  \draw[maplegrey] (0.491,0.345) -- (0.492,0.310);
  \draw[maplegrey] (0.491,0.345) -- (0.521,0.329);
  \draw[maplegrey] (0.521,0.362) -- (0.521,0.329);
  \draw[maplegrey] (0.521,0.362) -- (0.550,0.346);
  \draw[maplegrey] (0.080,0.060) -- (0.126,0.088);
  \draw[maplegrey] (0.080,0.060) -- (0.128,0.035);
  \draw[maplegrey] (0.126,0.088) -- (0.169,0.114);
  \draw[maplegrey] (0.126,0.088) -- (0.128,0.035);
  \draw[maplegrey] (0.126,0.088) -- (0.171,0.063);
  \draw[maplegrey] (0.169,0.114) -- (0.211,0.139);
  \draw[maplegrey] (0.169,0.114) -- (0.171,0.063);
  \draw[maplegrey] (0.169,0.114) -- (0.213,0.091);
  \draw[maplegrey] (0.211,0.139) -- (0.251,0.163);
  \draw[maplegrey] (0.211,0.139) -- (0.213,0.091);
  \draw[maplegrey] (0.211,0.139) -- (0.253,0.117);
  \draw[maplegrey] (0.251,0.163) -- (0.290,0.187);
  \draw[maplegrey] (0.251,0.163) -- (0.253,0.117);
  \draw[maplegrey] (0.251,0.163) -- (0.292,0.142);
  \draw[maplegrey] (0.290,0.187) -- (0.327,0.209);
  \draw[maplegrey] (0.290,0.187) -- (0.292,0.142);
  \draw[maplegrey] (0.290,0.187) -- (0.328,0.167);
  \draw[maplegrey] (0.327,0.209) -- (0.362,0.231);
  \draw[maplegrey] (0.327,0.209) -- (0.328,0.167);
  \draw[maplegrey] (0.327,0.209) -- (0.364,0.190);
  \draw[maplegrey] (0.362,0.231) -- (0.397,0.252);
  \draw[maplegrey] (0.362,0.231) -- (0.364,0.190);
  \draw[maplegrey] (0.362,0.231) -- (0.398,0.212);
  \draw[maplegrey] (0.397,0.252) -- (0.429,0.272);
  \draw[maplegrey] (0.397,0.252) -- (0.398,0.212);
  \draw[maplegrey] (0.397,0.252) -- (0.431,0.234);
  \draw[maplegrey] (0.429,0.272) -- (0.461,0.292);
  \draw[maplegrey] (0.429,0.272) -- (0.431,0.234);
  \draw[maplegrey] (0.429,0.272) -- (0.463,0.255);
  \draw[maplegrey] (0.461,0.292) -- (0.492,0.310);
  \draw[maplegrey] (0.461,0.292) -- (0.463,0.255);
  \draw[maplegrey] (0.461,0.292) -- (0.493,0.275);
  \draw[maplegrey] (0.492,0.310) -- (0.521,0.329);
  \draw[maplegrey] (0.492,0.310) -- (0.493,0.275);
  \draw[maplegrey] (0.492,0.310) -- (0.523,0.294);
  \draw[maplegrey] (0.521,0.329) -- (0.550,0.346);
  \draw[maplegrey] (0.521,0.329) -- (0.523,0.294);
  \draw[maplegrey] (0.128,0.035) -- (0.171,0.063);
  \draw[maplegrey] (0.171,0.063) -- (0.213,0.091);
  \draw[maplegrey] (0.171,0.063) -- (0.217,0.042);
  \draw[maplegrey] (0.213,0.091) -- (0.253,0.117);
  \draw[maplegrey] (0.213,0.091) -- (0.217,0.042);
  \draw[maplegrey] (0.213,0.091) -- (0.257,0.070);
  \draw[maplegrey] (0.253,0.117) -- (0.292,0.142);
  \draw[maplegrey] (0.253,0.117) -- (0.257,0.070);
  \draw[maplegrey] (0.253,0.117) -- (0.295,0.097);
  \draw[maplegrey] (0.292,0.142) -- (0.328,0.167);
  \draw[maplegrey] (0.292,0.142) -- (0.295,0.097);
  \draw[maplegrey] (0.292,0.142) -- (0.332,0.123);
  \draw[maplegrey] (0.328,0.167) -- (0.364,0.190);
  \draw[maplegrey] (0.328,0.167) -- (0.332,0.123);
  \draw[maplegrey] (0.328,0.167) -- (0.367,0.148);
  \draw[maplegrey] (0.364,0.190) -- (0.398,0.212);
  \draw[maplegrey] (0.364,0.190) -- (0.367,0.148);
  \draw[maplegrey] (0.364,0.190) -- (0.401,0.172);
  \draw[maplegrey] (0.398,0.212) -- (0.431,0.234);
  \draw[maplegrey] (0.398,0.212) -- (0.401,0.172);
  \draw[maplegrey] (0.398,0.212) -- (0.434,0.195);
  \draw[maplegrey] (0.431,0.234) -- (0.463,0.255);
  \draw[maplegrey] (0.431,0.234) -- (0.434,0.195);
  \draw[maplegrey] (0.431,0.234) -- (0.466,0.217);
  \draw[maplegrey] (0.463,0.255) -- (0.493,0.275);
  \draw[maplegrey] (0.463,0.255) -- (0.466,0.217);
  \draw[maplegrey] (0.463,0.255) -- (0.496,0.239);
  \draw[maplegrey] (0.493,0.275) -- (0.523,0.294);
  \draw[maplegrey] (0.493,0.275) -- (0.496,0.239);
  \draw[maplegrey] (0.217,0.042) -- (0.257,0.070);
  \draw[maplegrey] (0.217,0.042) -- (0.262,0.022);
  \draw[maplegrey] (0.257,0.070) -- (0.295,0.097);
  \draw[maplegrey] (0.257,0.070) -- (0.262,0.022);
  \draw[maplegrey] (0.257,0.070) -- (0.300,0.051);
  \draw[maplegrey] (0.295,0.097) -- (0.332,0.123);
  \draw[maplegrey] (0.295,0.097) -- (0.300,0.051);
  \draw[maplegrey] (0.295,0.097) -- (0.337,0.079);
  \draw[maplegrey] (0.332,0.123) -- (0.367,0.148);
  \draw[maplegrey] (0.332,0.123) -- (0.337,0.079);
  \draw[maplegrey] (0.332,0.123) -- (0.372,0.105);
  \draw[maplegrey] (0.367,0.148) -- (0.401,0.172);
  \draw[maplegrey] (0.367,0.148) -- (0.372,0.105);
  \draw[maplegrey] (0.367,0.148) -- (0.406,0.131);
  \draw[maplegrey] (0.401,0.172) -- (0.434,0.195);
  \draw[maplegrey] (0.401,0.172) -- (0.406,0.131);
  \draw[maplegrey] (0.401,0.172) -- (0.438,0.156);
  \draw[maplegrey] (0.434,0.195) -- (0.466,0.217);
  \draw[maplegrey] (0.434,0.195) -- (0.438,0.156);
  \draw[maplegrey] (0.434,0.195) -- (0.470,0.179);
  \draw[maplegrey] (0.466,0.217) -- (0.496,0.239);
  \draw[maplegrey] (0.466,0.217) -- (0.470,0.179);
  \draw[maplegrey] (0.262,0.022) -- (0.300,0.051);
  \draw[maplegrey] (0.300,0.051) -- (0.337,0.079);
  \draw[maplegrey] (0.300,0.051) -- (0.343,0.034);
  \draw[maplegrey] (0.337,0.079) -- (0.372,0.105);
  \draw[maplegrey] (0.337,0.079) -- (0.343,0.034);
  \draw[maplegrey] (0.337,0.079) -- (0.378,0.062);
  \draw[maplegrey] (0.372,0.105) -- (0.406,0.131);
  \draw[maplegrey] (0.372,0.105) -- (0.378,0.062);
  \draw[maplegrey] (0.372,0.105) -- (0.411,0.089);
  \draw[maplegrey] (0.406,0.131) -- (0.438,0.156);
  \draw[maplegrey] (0.406,0.131) -- (0.411,0.089);
  \draw[maplegrey] (0.406,0.131) -- (0.444,0.115);
  \draw[maplegrey] (0.438,0.156) -- (0.470,0.179);
  \draw[maplegrey] (0.438,0.156) -- (0.444,0.115);
  \draw[maplegrey] (0.438,0.156) -- (0.475,0.141);
  \draw[maplegrey] (0.470,0.179) -- (0.475,0.141);
  \draw[maplegrey] (0.343,0.034) -- (0.378,0.062);
  \draw[maplegrey] (0.343,0.034) -- (0.386,0.018);
  \draw[maplegrey] (0.378,0.062) -- (0.411,0.089);
  \draw[maplegrey] (0.378,0.062) -- (0.386,0.018);
  \draw[maplegrey] (0.378,0.062) -- (0.419,0.047);
  \draw[maplegrey] (0.411,0.089) -- (0.444,0.115);
  \draw[maplegrey] (0.411,0.089) -- (0.419,0.047);
  \draw[maplegrey] (0.411,0.089) -- (0.451,0.075);
  \draw[maplegrey] (0.444,0.115) -- (0.475,0.141);
  \draw[maplegrey] (0.444,0.115) -- (0.451,0.075);
  \draw[maplegrey] (0.386,0.018) -- (0.419,0.047);
  \draw[maplegrey] (0.419,0.047) -- (0.451,0.075);
  \draw[maplegrey] (0.419,0.047) -- (0.460,0.034);
  \draw[maplegrey] (0.451,0.075) -- (0.460,0.034);
 \end{tikzpicture}
\end{center}

All the charts are based on polynomials of degree $16$.  We find that,
on the disc of radius $0.1$, the entries in $p^*(g_1)-g_{\hyp}$ are of
absolute value less than $10^{-6}$.

We next need to record the combinatorial structure of the above grid.
If there is an edge joining the centre of chart $i$ to the centre of
chart $j$, we need to invoke the method \mcode+EHA["add_edge",i,j]+.
(Charts are numbered from $0$, in the order that they were added to
the atlas.)  This is also done by the function
\mcode+build_data["EH_atlas",2]()+.

Next, for each chart $p_i$, we need to find the point
$\bt_i=q^{-1}p_i(0)\in\Dl$.  For charts that are centred on one of the
curves $C_k$, this is given by Proposition~\ref{prop-curve-chart}.
For the remaining charts, it is useful to start with a crude
approximation, obtained by applying an essentially arbitrary
diffeomorphism between the fundamental domains for $EX^*$ and
$HX(b)$.  This is done using the method
\mcode+EHA["set_beta_approx"]+.  We then invoke the method
\mcode+EHA["set_edge_lengths"]+.  This calculates various quantities
for each edge $(i,j)$.  In particular, it calculates the average of
$d_{\hyp}(0,q_i^{-1}(q_j(0)))$ and $d_{\hyp}(0,q_j^{-1}(q_i(0)))$,
which is an estimate of the hyperbolic distance in $EX^*$ between the
centres $q_i(0)$ and $q_j(0)$.  Each edge is actually represented by
an object \mcode+E+ of class \mcode+EH_atlas_edge+, and this distance
is stored as \mcode+E["EH_length"]+.  On the other hand,
\mcode+E["H_length"]+ is set equal to $d_{\hyp}(\bt_i,\bt_j)$, using the
approximate values of $\bt_i$ and $\bt_j$, which may be quite
inaccurate at this stage.  One can then invoke the method
\mcode+EHA["optimize_beta"]+ to adjust the values of $\bt_i$ so as
optimize the match between the edge lengths measured in $\Dl$ and in
$EX^*$.  The same method also calculates appropriate values for the
parameters $\lm_i$, and thus also for the M\"obius maps $m_i$.

Now all the maps $p_im_i^{-1}$ are approximations to $q$, and it is
useful to test how well they agree with each other.  The method
\mcode+EHA["make_H_samples",N]+ sets \mcode+EHA["H_samples"]+ to be
the list of all numbers $z=(s+it)/N$ (with $s,t\in\Z$) that lie in
$HF_{16}(b)$.  The method \mcode+EHA["max_patching_error",r]+ then
does the following.  For each point $z_i$ in \mcode+EHA["H_samples"]+,
it looks for charts $p_j$ where $|z_i-\bt_j|<r$.  Let $k_i$ be the
number of such charts.  For each such chart, the method
calculates $x_{ij}=p_jm_j^{-1}(z_i)\in EX^*$.  These points should
all be the same, so we let $d_i$ denote the maximum euclidean
distance between any two of them.  The return value of the method is
a triple $(z_i,m_i,d_i)$, where $d_i$ is maximal.  If we take
$r=0.12$, we find that $m_i\geq 3$ and $d_i<10^{-10.4}$ for all $i$.
Thus, for an arbitrary point $z\in HF_{16}(b)$, it is safe to
calculate $q(z)$ as $p_jm_j^{-1}(z)$, where $j$ is chosen to minimize
$d_{\hyp}(z,\bt_j)$.  We can then extend this over all of $\Dl$ by
using the group action, as discussed earlier.  This is implemented by
the methods \mcode+EHA["q",[x,y]]+ or \mcode-EHA["q_c",x+I*y]-.

We now want to find a function given by a single formula which is a
good approximation to $q$ on a reasonably large part of $\Dl$, such as
the disc of radius $0.9$ centred at the origin.  An obvious approach
would be to approximate $q(x+iy)_k$ (for $1\leq k\leq 4$) by a
polynomial or rational function in $x$ and $y$.  This is implemented
by the methods \mcode+set_q_approx_poly+ and \mcode+set_q_approx_pade+
of the class \mcode+EH_atlas+.  However, results from this
approach are poor.  The approximating polynomials have extremely large
coefficients (of different signs), even though $|q(x+iy)_k|\leq 1$,
and the errors are fairly large even if we use polynomials or rational
functions of high degree.  It is better to consider the Fourier series
on circles of fixed radius.  To understand how this works, we first
recall that $q$ is equivariant with respect to $\ip{\lm,\nu}$, which
gives
\begin{align*}
 q_1(re^{i(\tht+\pi/2)}) &=    -q_2(re^{i\tht}) &
 q_1(re^{-i\tht})        &= \pp q_1(re^{i\tht}) \\
 q_2(re^{i(\tht+\pi/2)}) &= \pp q_1(re^{i\tht}) &
 q_2(re^{-i\tht})        &=    -q_2(re^{i\tht}) \\
 q_3(re^{i(\tht+\pi/2)}) &= \pp q_3(re^{i\tht}) &
 q_3(re^{-i\tht})        &= \pp q_3(re^{i\tht}) \\
 q_4(re^{i(\tht+\pi/2)}) &=    -q_4(re^{i\tht}) &
 q_4(re^{-i\tht})        &= \pp q_4(re^{i\tht}).
\end{align*}
From this it follows that there are functions $a_{k,m}(r)$ such that
\begin{align*}
 q_1(r e^{i\tht}) &= \sum_m a_{1,m}(r) \cos((2m+1)\tht) \\
 q_2(r e^{i\tht}) &= \sum_m (-1)^m a_{1,m}(r) \sin((2m+1)\tht) \\
 q_3(r e^{i\tht}) &= \sum_m a_{3,m}(r) \cos(4m\tht) \\
 q_4(r e^{i\tht}) &= \sum_m a_{4,m}(r) \cos((4m+2)\tht).
\end{align*}
We can find approximations to the coefficients $a_{j,l}(r)$ by fixing
$k\geq 0$, then calculating $q(re^{2\pi ij/2^k})$ for $0\leq j<2^k$, then
taking a discrete Fourier transform.  This algorithm is implemented by the
method \mcode+EHA["set_q_approx_fourier",r_max,m,k]+, which calculates
Fourier coefficients for \mcode+m+ different radii, the largest being
\mcode+r_max+.

It seems experimentally that for $r\leq 0.9$ we have $|a_{j,m}(r)|\leq
2$ for all $j$ and $m$, and that $|a_{j,m}(r)|$ decreases quite
rapidly with $m$.  Thus, for a fixed value of $r$, the above
representation is quite satisfactory.  Now fix $k$ and $m$, and
consider $a_{k,m}(r)$ as a function of $r$.  The following picture
shows a typical sample of these functions, for $0\leq r\leq 0.9$.
\begin{center}
 \begin{tikzpicture}[scale=8]
  \draw[->] (-0.05, 0.00) -- (0.95,0.00);
  \draw[->] ( 0.00,-0.30) -- (0.00,0.30);
 \draw[smooth,red]  (0.028,0.000) -- (0.057,0.000) -- (0.085,0.000) -- (0.113,0.000) -- (0.141,0.000) -- (0.169,0.000) -- (0.196,0.000) -- (0.224,0.000) -- (0.251,0.000) -- (0.278,0.000) -- (0.305,0.000) -- (0.331,0.000) -- (0.357,0.000) -- (0.383,-0.000) -- (0.409,-0.000) -- (0.434,-0.000) -- (0.458,0.001) -- (0.482,0.003) -- (0.506,0.007) -- (0.529,0.012) -- (0.552,0.021) -- (0.574,0.032) -- (0.595,0.046) -- (0.616,0.063) -- (0.636,0.081) -- (0.656,0.102) -- (0.675,0.124) -- (0.693,0.145) -- (0.711,0.166) -- (0.728,0.184) -- (0.744,0.199) -- (0.760,0.211) -- (0.775,0.217) -- (0.789,0.218) -- (0.802,0.214) -- (0.814,0.205) -- (0.826,0.192) -- (0.837,0.176) -- (0.847,0.159) -- (0.856,0.141) -- (0.864,0.123) -- (0.872,0.107) -- (0.878,0.093) -- (0.884,0.080) -- (0.889,0.069) -- (0.893,0.061) -- (0.896,0.054) -- (0.898,0.049) -- (0.900,0.046) -- (0.900,0.046) ;
 \draw[smooth,red]  (0.028,-0.000) -- (0.057,-0.000) -- (0.085,-0.000) -- (0.113,-0.000) -- (0.141,-0.000) -- (0.169,-0.000) -- (0.196,-0.000) -- (0.224,-0.000) -- (0.251,-0.000) -- (0.278,-0.000) -- (0.305,-0.000) -- (0.331,-0.000) -- (0.357,-0.000) -- (0.383,-0.000) -- (0.409,-0.000) -- (0.434,-0.001) -- (0.458,-0.001) -- (0.482,-0.001) -- (0.506,-0.001) -- (0.529,-0.001) -- (0.552,0.000) -- (0.574,0.002) -- (0.595,0.005) -- (0.616,0.009) -- (0.636,0.015) -- (0.656,0.024) -- (0.675,0.034) -- (0.693,0.048) -- (0.711,0.064) -- (0.728,0.083) -- (0.744,0.103) -- (0.760,0.125) -- (0.775,0.146) -- (0.789,0.166) -- (0.802,0.184) -- (0.814,0.198) -- (0.826,0.208) -- (0.837,0.214) -- (0.847,0.217) -- (0.856,0.218) -- (0.864,0.216) -- (0.872,0.214) -- (0.878,0.211) -- (0.884,0.208) -- (0.889,0.205) -- (0.893,0.203) -- (0.896,0.201) -- (0.898,0.200) -- (0.900,0.199) -- (0.900,0.198) ;
 \draw[smooth,red]  (0.028,0.000) -- (0.057,0.000) -- (0.085,-0.000) -- (0.113,-0.000) -- (0.141,0.000) -- (0.169,0.000) -- (0.196,0.000) -- (0.224,0.000) -- (0.251,0.000) -- (0.278,-0.000) -- (0.305,-0.000) -- (0.331,-0.000) -- (0.357,-0.000) -- (0.383,-0.000) -- (0.409,-0.000) -- (0.434,-0.000) -- (0.458,-0.000) -- (0.482,-0.000) -- (0.506,-0.001) -- (0.529,-0.001) -- (0.552,-0.002) -- (0.574,-0.003) -- (0.595,-0.004) -- (0.616,-0.006) -- (0.636,-0.009) -- (0.656,-0.012) -- (0.675,-0.017) -- (0.693,-0.022) -- (0.711,-0.028) -- (0.728,-0.036) -- (0.744,-0.044) -- (0.760,-0.055) -- (0.775,-0.068) -- (0.789,-0.082) -- (0.802,-0.099) -- (0.814,-0.117) -- (0.826,-0.136) -- (0.837,-0.156) -- (0.847,-0.174) -- (0.856,-0.191) -- (0.864,-0.207) -- (0.872,-0.220) -- (0.878,-0.232) -- (0.884,-0.242) -- (0.889,-0.250) -- (0.893,-0.256) -- (0.896,-0.261) -- (0.898,-0.265) -- (0.900,-0.267) -- (0.900,-0.267) ;
 \draw[smooth,red]  (0.028,-0.000) -- (0.057,-0.000) -- (0.085,0.000) -- (0.113,-0.000) -- (0.141,-0.000) -- (0.169,0.000) -- (0.196,-0.000) -- (0.224,-0.000) -- (0.251,-0.000) -- (0.278,-0.000) -- (0.305,-0.000) -- (0.331,-0.000) -- (0.357,-0.000) -- (0.383,0.000) -- (0.409,0.000) -- (0.434,0.000) -- (0.458,0.000) -- (0.482,0.000) -- (0.506,0.000) -- (0.529,0.000) -- (0.552,0.000) -- (0.574,0.000) -- (0.595,0.000) -- (0.616,0.001) -- (0.636,0.001) -- (0.656,0.002) -- (0.675,0.002) -- (0.693,0.003) -- (0.711,0.003) -- (0.728,0.003) -- (0.744,0.002) -- (0.760,0.000) -- (0.775,-0.003) -- (0.789,-0.009) -- (0.802,-0.016) -- (0.814,-0.023) -- (0.826,-0.032) -- (0.837,-0.041) -- (0.847,-0.049) -- (0.856,-0.057) -- (0.864,-0.064) -- (0.872,-0.070) -- (0.878,-0.075) -- (0.884,-0.081) -- (0.889,-0.085) -- (0.893,-0.089) -- (0.896,-0.093) -- (0.898,-0.095) -- (0.900,-0.097) -- (0.900,-0.098) ;
 \draw ( 0.900, 0.046) node[anchor=west] {$a_{1,3}$};
 \draw ( 0.900, 0.198) node[anchor=west] {$a_{1,5}$};
 \draw ( 0.900,-0.267) node[anchor=west] {$a_{1,8}$};
 \draw ( 0.900,-0.098) node[anchor=west] {$a_{1,10}$};
 \draw (-0.020, 0.200) -- ( 0.000, 0.200);
 \draw (-0.020, 0.200) node[anchor=east] {$0.2$};
 \draw ( 0.900, 0.000) -- ( 0.900,-0.020);
 \end{tikzpicture}
\end{center}
These functions are extremely flat for small values of $r$, and grow
to modest size as $r$ approaches $0.9$.  They cannot easily be
approximated by polynomials or rational functions in $r$.  We have
tried various transformations, such as using the variable
$s=1/\log(1/r)$ instead of $r$, but none of these have yielded
compelling results.  It would be of interest to have a better
theoretical understanding of the asymptotics of the functions
$a_{k,m}(r)$, but so far we have not achieved that.  However,
approximation by cubic splines (which are also calculated by the
\mcode+set_q_approx_fourier+ method) is quite effective.  An
approximation to $q(x+iy)$ using these splines can be calculated by
invoking \mcode+EHA["q_fourier",[x,y]]+ or
\mcode-EHA["q_fourier_c",x+I*y]-.

The following pictures show the images of $q(0.8e^{it})$ under the
linear projections $\pi,\dl,\zt\:EX^*\to\R^2$ that were discussed in
Section~\ref{sec-disc}:
\[ \includegraphics[scale=0.12]{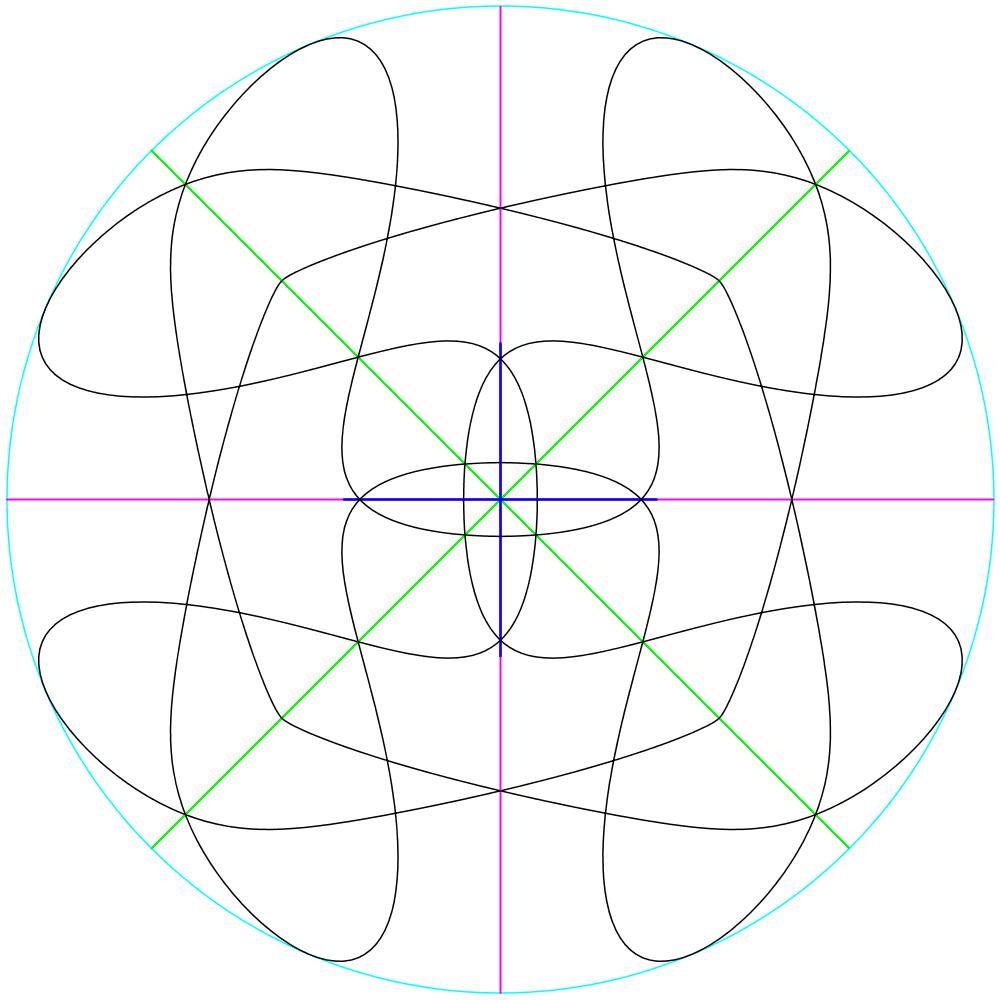} \hspace{4em}
   \includegraphics[scale=0.12]{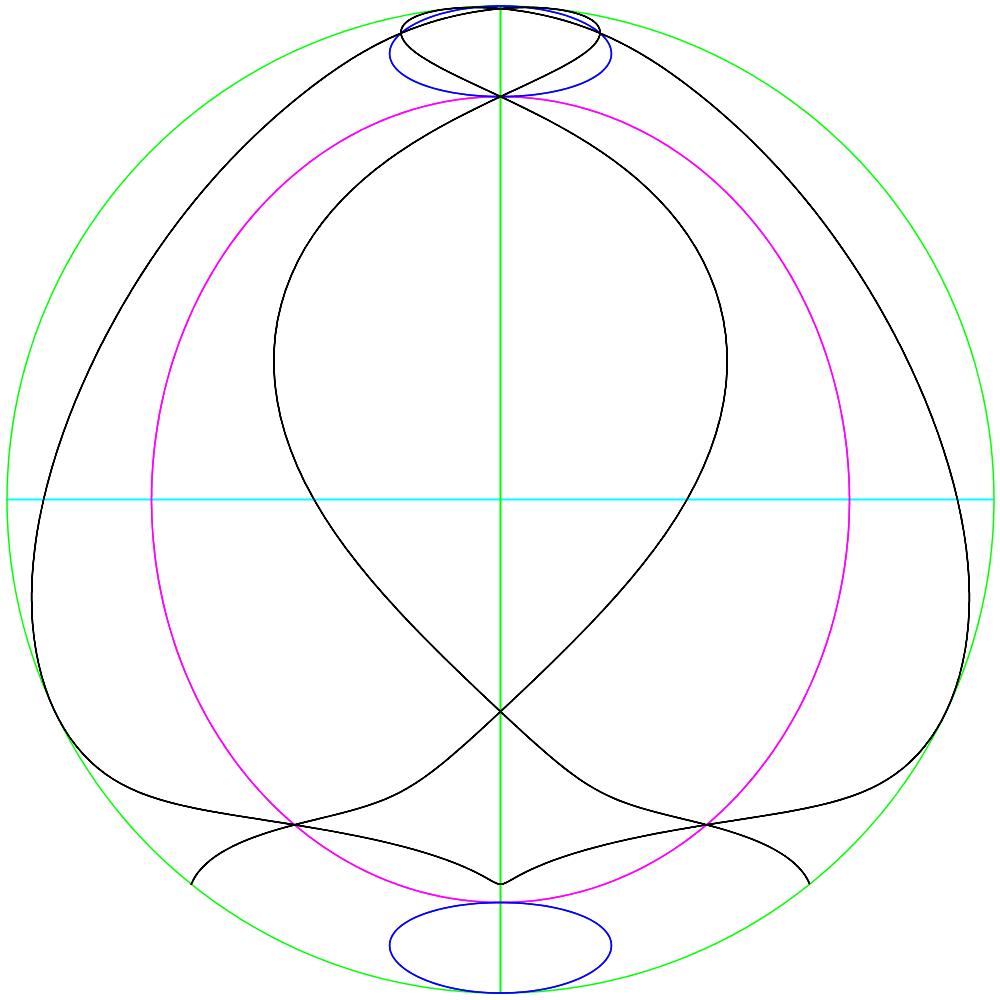}    \hspace{4em}
   \includegraphics[scale=0.12]{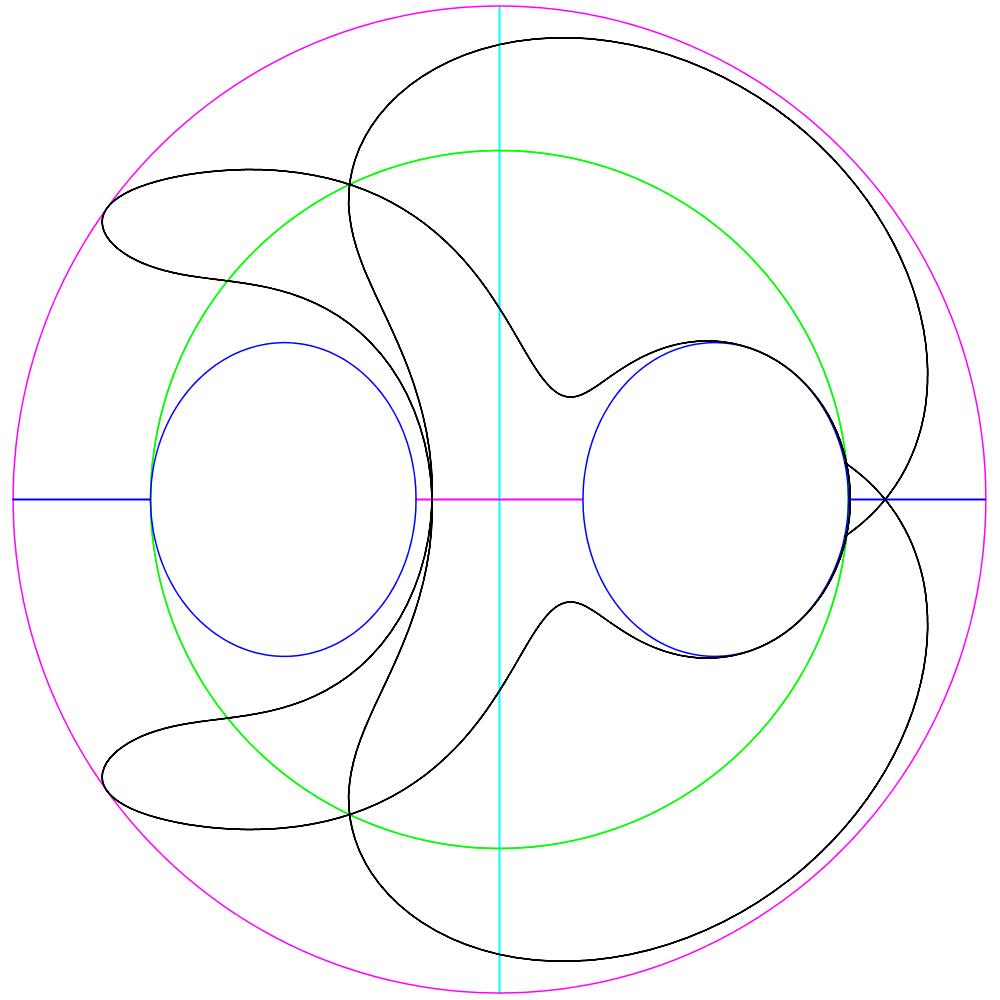}
\]
The following picture shows the image under the map $p_4\:x\mapsto y$
from Proposition~\ref{prop-F-four}:
\[ \includegraphics[scale=0.2]{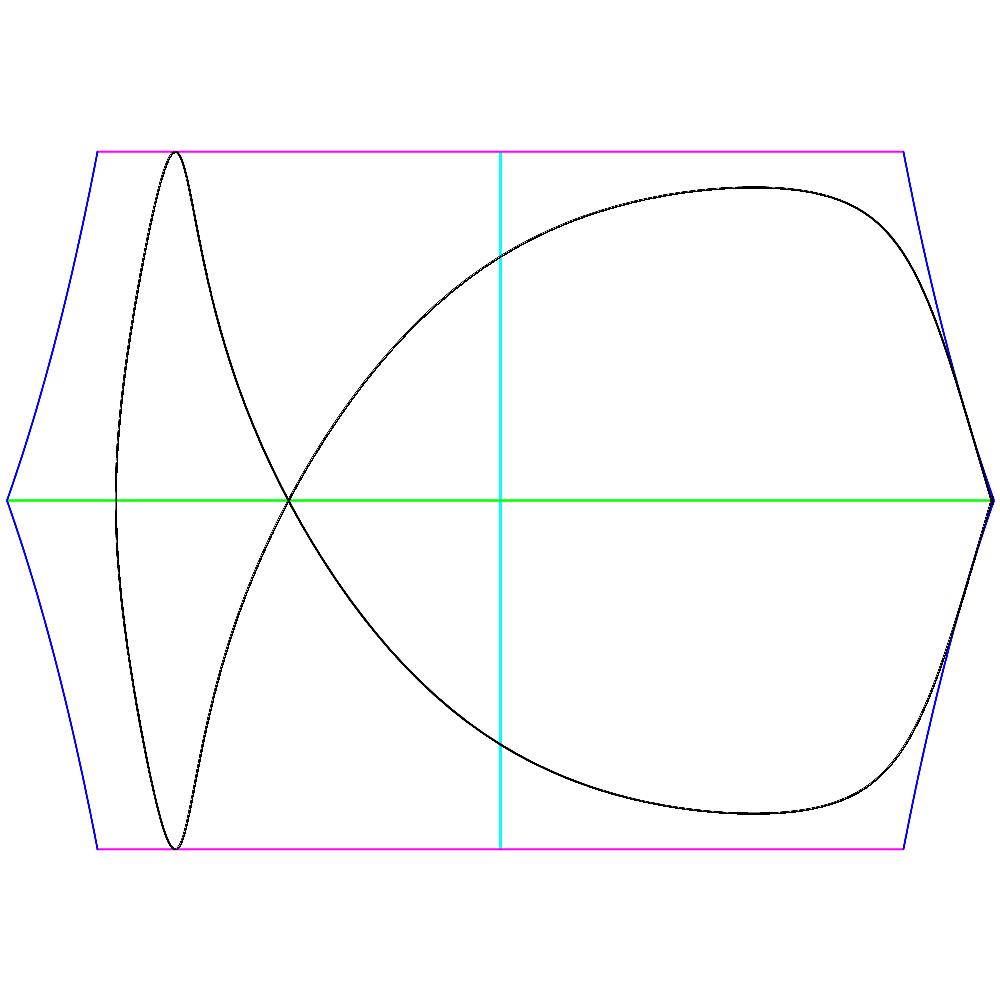} \]

Finally, we discuss the canonical conformal map $\hp\:EX^*\to S^2$.
The transformation properties of $\hp$ were discussed in
Remark~\ref{rem-p-hat}.  By comparing these with the transformation
properties of the basis in Proposition~\ref{prop-OX-basis}, we see
that $\hp(x)_i$ must be given by some $G$-invariant function
$u_i(z_1,z_2)$ multiplied by $p^*(x)_i$, where 
\[ p^*(x) = \left(\rt \,y_2,\; 2x_1x_2,\; -x_3) \right). \]
Remark~\ref{rem-p-hat} also records the values of $\hp(v_i)$ for
$0\leq i\leq 9$; these are equivalent to the conditions
\[ u_1(0,1/2) = u_2(0,0) = u_3(1,0) = 1. \] 
It turns out that the functions $u_i$ can be approximated effectively
by rational functions, using the method that we will now explain.  We
have already seen how to produce a list of points $a^\Dl_i$ in
$HF_{16}(b)$ and calculate the images $a^E_i=q(a^\Dl_i)$ under the map
$q\:\Dl\to EX^*$.  By applying the map $x\mapsto z$ to these, we
obtain points $a^Z_i\in F^*_{16}\subset\R^2$.  We have also seen how
to calculate the isomorphism $HX(b)\to PX(a)$.  By combining this with
the projection $PX(a)\to\C_\infty$ and the stereographic projection
map $\C_\infty\to S^2$ we obtain points $a_i^S\in S^2$.  We must have
$\hp(a^E_i)=a^S_i$, and using this, we can find the values
$a^U_{ij}=u_j(a^Z_i)=\hp(a^E_i)_j/p^*(a^E_i)_j$ for $1\leq j\leq 3$.
Our problem is thus to find rational functions $u_j^R$ such that
$u_j^R(a^Z_i)$ is close to $a^U_{ij}$.  In principle this should hold
for all $i$, but we have found it best to discard the cases where
$|p^*(a^E_i)_j|<10^{-3}$, in order to avoid numerical instability. 

We have found the following general approach to be effective.
\begin{method}
 Suppose we have a finite set $S=\{s_1,\dotsc,s_n\}$, and a function
 $f\:S\to\R$.  We want to find an approximation $f\simeq g_1/g_2$,
 where $g_1$ and $g_2$ lie in some vector space $V\leq\Map(S,\R)$.  If
 $n$ is large then it is not very tractable to minimize
 $\|f-g_1/g_2\|^2$, because this is nonlinear in the coefficients of
 $g_2$.  However, it will often be adequate to minimize $\|fg_2-g_1\|^2$
 subject to a positive definite quadratic constraint on the size of
 $g_2$.  To do this, let $\{v_1,\dotsc,v_m\}$ be a basis for $V$.  Let
 $M_1$ be the matrix of values $v_j(s_i)$, and let $M_2$ be the matrix
 of values $f(s_i)v_j(s_i)$.  We find the QR decompositions
 $M_i=Q_iR_i$, and put $N=Q_1^TQ_2$.  We let $a_2$ denote a singular
 vector for $N$ of maximal singular value with $\|a_2\|=1$, and put
 $a_1=Na_2$.  We then put $b_i=R_i^{-1}a_i$ and
 $g_i=\sum_jb_{ij}v_j$.  The function $g_1/g_2$ is then the desired
 approximation.
\end{method}

The above algorithm is implemented by the \mcode+find_p+ method of the
class \mcode+E_to_S_map+, which is declared in
\fname+embedded/roothalf/E_to_S.mpl+.  This method must be passed an
object of class \mcode+EH_atlas+, which encodes information about the
points $a^\Dl_i$ and the maps $\Dl\to EX^*$ and $\Dl\to\C_\infty$.  We
have used this method to find rational approximations $u_j^R$ where the
numerator and denominator have total degree eight in $z_1$ and $z_2$.
It turns out that these functions are quite tame, as shown in the
graphs below.
\[ \includegraphics[scale=0.16]{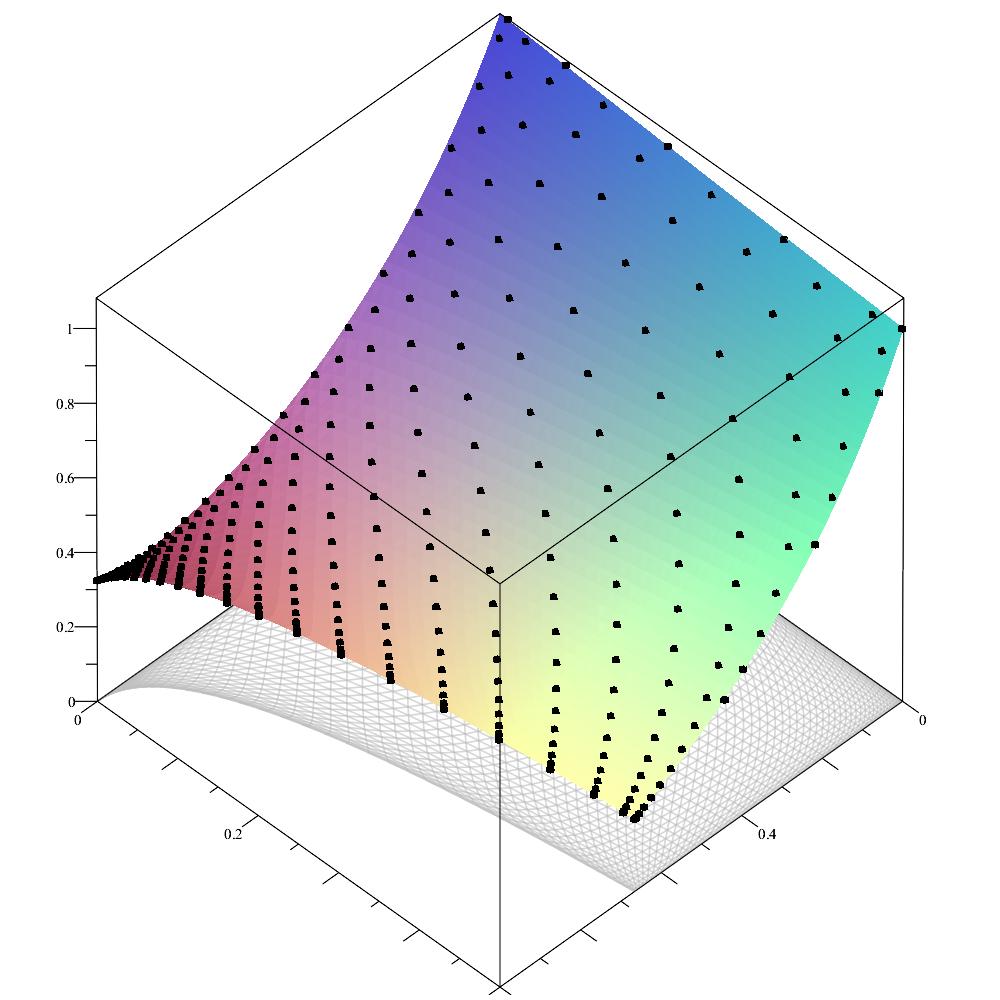}
   \includegraphics[scale=0.16]{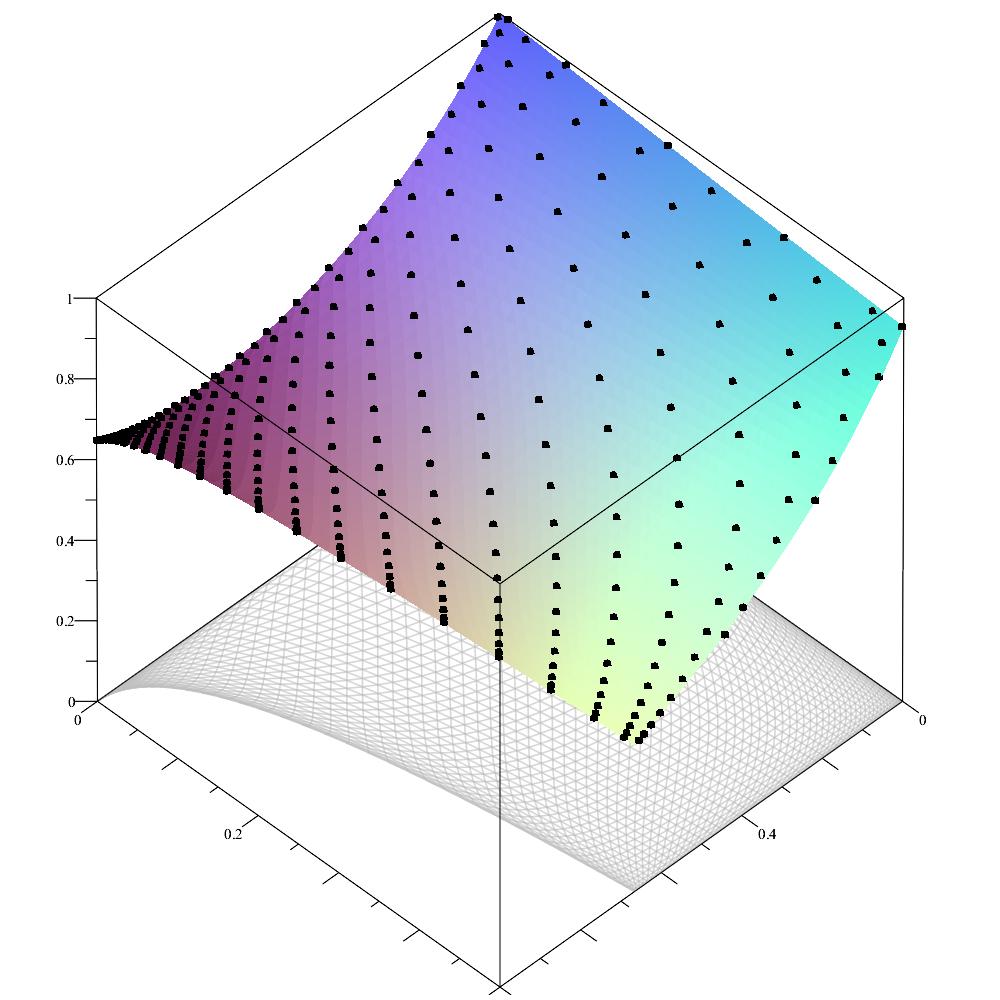}
   \includegraphics[scale=0.16]{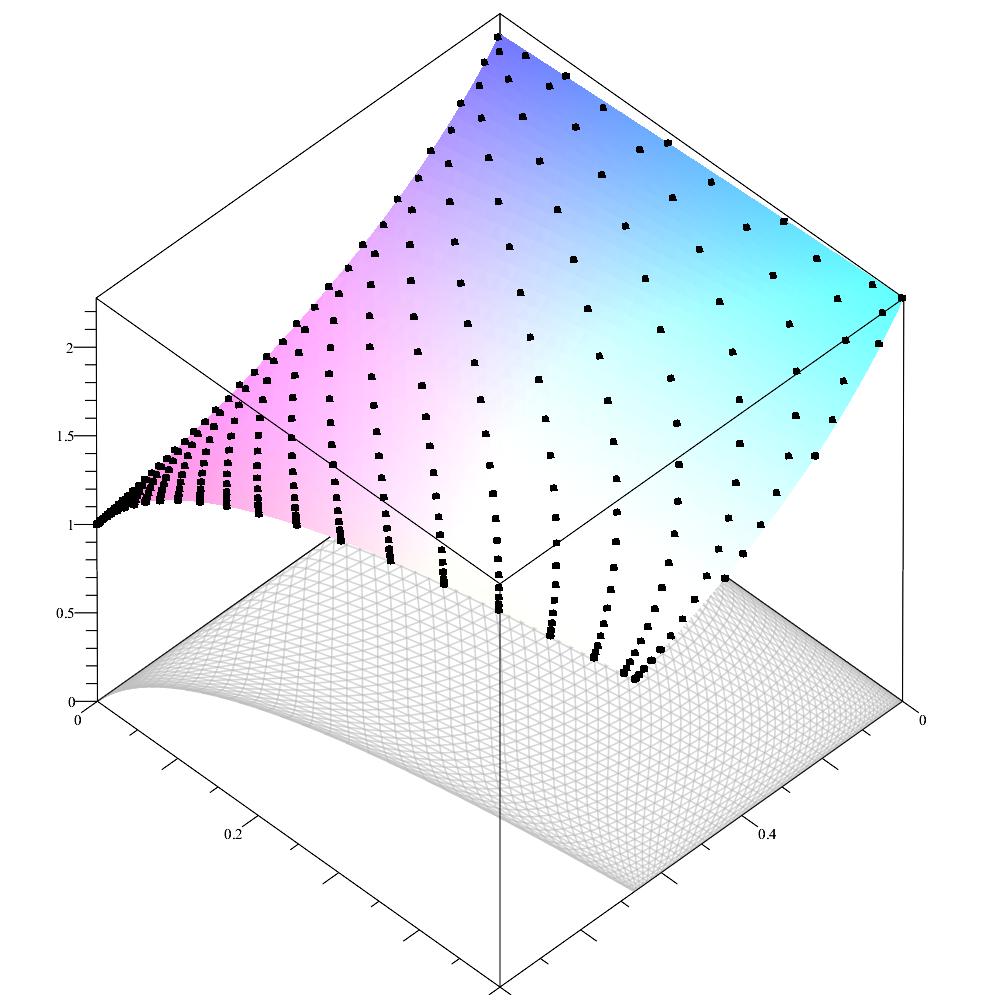} 
\]
All values lie between about $0.3$ and $2.3$.  All coefficients in the
numerators and denominators have absolute value at most one, and they
appear to decrease quite rapidly with the total degree of the
corresponding monomials.  The full calculation can be carried out using
the function \mcode+build_data["E_to_S_map"]()+, which is defined in
\fname+build_data.mpl+. 

\subsection{Energy minimisation}
\lbl{sec-energy}

As explained in Remark~\ref{rem-p-hat}, there is a canonical conformal
map $\hp\:EX^*\to S^2$.  If we can find $\hp$, then all other
information can easily be derived from that.  One approach is to start
with the map $\hp\:EX^*\to S^2$ from
Definition~\ref{defn-sphere-quotient-a}.  This has the right
equivariance properties and the right homotopy class, but it is not
conformal.  We can hope to adjust it by a numerical minimisation
algorithm to make it conformal.  For this, we need to recall the
theory of Dirichlet energy.

\begin{definition}
 Given matrices $P,Q\in M_2(\R)$ we put
 \[ \ip{P,Q} = \sum_{i,j=1}^2 P_{ij}Q_{ij} = \text{trace}(P^TQ). \]
 This is an inner product, with associated norm
 $\|P\|^2=\sum_{i,j}P_{ij}^2$.  We also put
 \begin{align*}
  C_+(P) &= (P_{11}-P_{22})^2 + (P_{12}+P_{21})^2 \\
  C_-(P) &= (P_{11}+P_{22})^2 + (P_{12}-P_{21})^2.
 \end{align*}
\end{definition}
\begin{remark}
 It is clear that $C_+(P)=0$ iff $P=\bbm a&b\\ -b&a\ebm$ for some
 $a,b\in\R$, or in other words $P$ is a conformal matrix.  Similarly,
 we have $C_-(P)=0$ iff $P$ is anticonformal.
\end{remark}
\begin{lemma}
 For all $P\in M_2(\R)$ we have
 \begin{align*}
  \|P\|^2 &= C_+(P) + 2\det(P) \geq 2\det(P) \\
  \|P\|^2 &= C_-(P) - 2\det(P) \geq -2\det(P),
 \end{align*}
 so $\|P\|^2\geq 2|\det(P)|$.
\end{lemma}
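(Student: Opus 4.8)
The plan is to verify both displayed identities by direct expansion in terms of the matrix entries, and then read off the inequalities from the fact that $C_+$ and $C_-$ are manifestly sums of squares.

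First I would write $P=\bbm a&b\\ c&d\ebm$, so that $\|P\|^2=a^2+b^2+c^2+d^2$ and $\det(P)=ad-bc$. Expanding the definition gives $C_+(P)=(a-d)^2+(b+c)^2=a^2+b^2+c^2+d^2-2ad+2bc$, hence $C_+(P)+2\det(P)=a^2+b^2+c^2+d^2=\|P\|^2$. Similarly $C_-(P)=(a+d)^2+(b-c)^2=a^2+b^2+c^2+d^2+2ad-2bc$, so $C_-(P)-2\det(P)=\|P\|^2$. This establishes the two equalities.

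For the inequalities, note that $C_+(P)$ and $C_-(P)$ are each a sum of two real squares and so are nonnegative; substituting into the identities just proved gives $\|P\|^2\geq 2\det(P)$ and $\|P\|^2\geq -2\det(P)$ respectively, and combining these yields $\|P\|^2\geq 2|\det(P)|$.

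There is essentially no obstacle here: the whole argument is a two-line algebraic identity, and the only thing to be careful about is matching the sign conventions in the definitions of $C_+$ and $C_-$ (the $+$ sign pairs with the off-diagonal sum $b+c$ and the diagonal difference $a-d$, and vice versa for $C_-$), which the expansion above does automatically.
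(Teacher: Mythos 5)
Your proof is correct and matches the paper's argument, which simply states that the equalities are direct calculations and that $C_+(P),C_-(P)\geq 0$; you have just written out the expansion explicitly. Nothing to add.
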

\begin{proof}
 The equalities are direct calculations, and it is clear that
 $C_+(P)\geq 0$ and $C_-(P)\geq 0$.
\end{proof}

\begin{corollary}\lbl{cor-invariants}
 For $A,B\in SO(2)$ we have $\|APB\|^2=\|P\|^2$ and
 $C_{\pm}(APB)=C_{\pm}(P)$ and $\det(APB)=\det(P)$.
\end{corollary}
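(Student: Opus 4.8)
The plan is to reduce all three assertions to two elementary facts — invariance of the determinant and of the Frobenius norm under multiplication by rotations — and then to obtain the statement about $C_{\pm}$ for free from the preceding Lemma. First I would dispose of the determinant: since $\det$ is multiplicative and $\det(A)=\det(B)=1$ for $A,B\in SO(2)$, we have $\det(APB)=\det(A)\det(P)\det(B)=\det(P)$.

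Next I would treat the norm, using the identity $\|M\|^2=\operatorname{trace}(M^TM)$ recorded in the definition. Then $\|APB\|^2=\operatorname{trace}\bigl((APB)^T(APB)\bigr)=\operatorname{trace}(B^TP^TA^TAPB)$, and since $A\in SO(2)$ gives $A^TA=I$ this equals $\operatorname{trace}(B^TP^TPB)$. Cyclicity of the trace together with $BB^T=I$ then yields $\operatorname{trace}(B^TP^TPB)=\operatorname{trace}(P^TPBB^T)=\operatorname{trace}(P^TP)=\|P\|^2$, so $\|APB\|^2=\|P\|^2$.

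Finally, the claims $C_{\pm}(APB)=C_{\pm}(P)$ require no further computation. The Lemma immediately above states that $\|Q\|^2=C_+(Q)+2\det(Q)$ and $\|Q\|^2=C_-(Q)-2\det(Q)$ for every $Q\in M_2(\R)$, hence $C_+(Q)=\|Q\|^2-2\det(Q)$ and $C_-(Q)=\|Q\|^2+2\det(Q)$; in particular $C_{\pm}(Q)$ is a function of the pair $(\|Q\|^2,\det(Q))$. Applying this with $Q=APB$ and invoking the two invariances just established gives $C_{\pm}(APB)=C_{\pm}(P)$. There is no real obstacle here; the only point worth care is that one should route the $C_{\pm}$ statement through the Lemma rather than re-deriving it by expanding $APB$ entrywise with $A=R_\theta$ and $B=R_\phi$, which is more cumbersome. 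One may also note in passing that the argument uses the hypotheses on $A$ and on $B$ independently, so $\|AP\|^2=\|PB\|^2=\|P\|^2$ and likewise for $\det$ and $C_{\pm}$.
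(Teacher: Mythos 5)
Your proposal is correct and follows essentially the same route as the paper: multiplicativity of $\det$, the trace identity with $A^TA=B^TB=I$ and cyclicity for the norm, and then $C_{\pm}(Q)=\|Q\|^2\mp 2\det(Q)$ from the preceding Lemma to transfer invariance to $C_{\pm}$. Nothing to add.
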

\begin{proof}
 We have $\det(A)=\det(B)=1$ so
 $\det(APB)=\det(A)\det(P)\det(B)=\det(P)$.  We also have
 $A^TA=B^TB=1$ and $\text{trace}(XY)=\text{trace}(YX)$ so
 \[ \|APB\|^2 = \text{trace}(B^TP^TA^TAPB)
     = \text{trace}(B^TP^TPB)
     = \text{trace}(BB^TP^TP)
     = \text{trace}(P^TP) = \|P\|^2.
 \]
 We can now use $C_{\pm}(P)=\|P\|^2\mp 2\det(P)$ to deduce that
 $C_{\pm}(APB)=C_{\pm}(P)$.
\end{proof}

\begin{definition}
 Let $V$ and $W$ be oriented two-dimensional inner product spaces over
 $\R$, and let $\phi\:V\to W$ be a linear map.  We then choose
 oriented orthonormal bases $v_1,v_2$ for $V$ and $w_1,w_2$ for $W$,
 and let $P$ be the matrix such that
 \begin{align*}
  \phi(v_1) &= P_{11}w_1 + P_{21}w_2 \\
  \phi(v_2) &= P_{12}w_1 + P_{22}w_2.
 \end{align*}
 We then put $\|\phi\|^2=\|P\|^2$ and $\det(\phi)=\det(P)$ and
 $C_{\pm}(\phi)=C_{\pm}(P)$.  This is independent of the choice of
 bases, by Corollary~\ref{cor-invariants}.
\end{definition}

\begin{definition}\lbl{defn-energy}
 Let $X$ and $Y$ be connected oriented smooth closed surfaces with
 given Riemannian metrics.  We give them the measures derived from the
 metric in the usual way.  Consider a smooth map $f\:X\to Y$.  For
 each $x\in X$ we have a linear map $D_xf\:T_xX\to T_{f(x)}Y$ between
 oriented two-dimensional inner product spaces, so we can define
 $\|D_xf\|^2$ and $\det(D_xf)$ and $C_{\pm}(D_xf)$.  The
 \emph{Dirichlet energy} of $f$ is the integral over $X$ of the
 scalar-valued function $x\mapsto\half\|D_xf\|^2$.  We write this as
 $E(f)=\int_X\half\|Df\|^2$.  We also define the \emph{area} of $f$ to
 be $A(f)=\int_X\det(Df)$.
\end{definition}

\begin{remark}
 In terms of differential forms, we can let $\om_X$ and $\om_Y$ denote
 the volume forms for $X$ and $Y$, and then
 $f^*(\om_Y)=\det(Df)\om_X$.  We can regard $\om_X$ as a generator of
 the de Rham cohomology group $H^2(X)$, and similarly for $Y$.
 Integration gives an isomorphism from each of these cohomology groups
 to the reals.  From this point of view it is clear that $A(f)$
 depends only on the homotopy class of $f$.  Moreover, if $f$ is an
 orientation-preserving diffeomorphism then the standard
 change-of-variables formula shows that $A(f)$ is just the area of
 $Y$.
\end{remark}

\begin{proposition}\lbl{prop-dirichlet}
 For any $f\:X\to Y$ as above, we have $E(f)\geq A(f)$, with equality
 iff $f$ is conformal.
\end{proposition}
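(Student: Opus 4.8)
The plan is to deduce the global inequality from a pointwise one and then integrate, the equality case being handled by a continuity-plus-nonnegativity argument. The essential input is the elementary lemma stated just above, which gives $\|P\|^2=C_+(P)+2\det(P)$ for every $P\in M_2(\R)$, together with the facts that $C_+(P)\geq 0$ always and that $C_+(P)=0$ exactly when $P$ is a conformal matrix. Fix $x\in X$, choose oriented orthonormal bases of $T_xX$ and $T_{f(x)}Y$, and let $P$ be the matrix of $D_xf$ with respect to these bases; since $\|D_xf\|^2$, $\det(D_xf)$ and $C_\pm(D_xf)$ were defined so as to be independent of the choice of such bases, the lemma yields
\[ \tfrac12\|D_xf\|^2 \;=\; \det(D_xf) + \tfrac12\,C_+(D_xf) \;\geq\; \det(D_xf), \]
with equality at the point $x$ if and only if $D_xf$ is a conformal linear map.

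Next I would integrate this inequality over $X$ with respect to the Riemannian measure. By Definition~\ref{defn-energy} the left-hand side integrates to $E(f)$ and the right-hand side to $A(f)$, so $E(f)\geq A(f)$, and moreover
\[ E(f)-A(f) \;=\; \int_X \tfrac12\,C_+(D_xf). \]
The integrand $x\mapsto\tfrac12 C_+(D_xf)$ is continuous (it is a polynomial in the entries of $Df$ computed in any smooth local orthonormal frames) and everywhere nonnegative, so the integral vanishes precisely when $C_+(D_xf)=0$ for all $x\in X$, i.e.\ precisely when $D_xf$ is conformal at every point. The last remaining step is to observe that this is exactly the notion of conformality used in the paper: the condition $C_+(D_xf)=0$ says that $D_xf$ commutes with the rotation operators $J$ on the two tangent spaces, which is to say that $D_xf$ is $\C$-linear for the complex structures induced by the metrics and orientations.

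There is no serious obstacle here; the argument is essentially textbook. The only points that require a little care are bookkeeping rather than mathematics: first, confirming the continuity of $x\mapsto C_+(D_xf)$ so that the equality case of the integral inequality is clean; and second, tracking the orientation conventions, since it is $C_+$ (and not $C_-$) that appears precisely because we are comparing $E(f)$ with $A(f)=\int_X\det(Df)$ rather than with $-\int_X\det(Df)$ — with the opposite orientation choices one would instead obtain $E(f)\geq -A(f)$, with equality exactly for \emph{anticonformal} maps.
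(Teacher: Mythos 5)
Your proof is correct and is exactly the paper's argument, expanded: the paper's proof consists of the single line "This is clear from the identity $\|Df\|^2=2\det(Df)+C_+(Df)$," and your integration of that pointwise identity together with the nonnegativity and continuity of $C_+(Df)$ is precisely what that line leaves implicit.
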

\begin{proof}
 This is clear from the identity $\|Df\|^2=2\det(Df)+C_+(Df)$.
\end{proof}

One way to exploit this is via a discretised version.  We triangulate
the fundamental domain $F_{16}\subset EX^*$, then use the group action
to obtain an equivariant triangulation of $EX^*$, with vertex set
$K_0$ say.  Taking the convex hulls of vertices of simplices gives a
piecewise linear surface $X'\subset\R^4$ that lies close to $EX^*$.
If we have a map $f\:K_0\to S^2$, then we can extend it linearly to
give a map $f'\:X'\to\R^3$, which will usually land in $\R^3\sm\{0\}$.
There is an obvious retraction of $\R^3\sm\{0\}$ onto $S^2$, and one
can also construct a map from $EX^*$ to $X'$ that is close to the
identity.  After composing with these we get a map $f''\:EX^*\to S^2$.
One could attempt to minimise $E(f'')$, but that is analytically
intractable.  However, a slight modification of
Definition~\ref{defn-energy} defines a quantity $E'(f')$ that is
analogous to $E(f)$, and we can attempt to minimise that instead.  We
find that the rate of convergence is slow, and the resulting
approximation is inaccurate close to the points $v_{10},\dotsc,v_{13}$
where the equivariance properties force the derivative of $\hp$ to be
zero.  One can improve the accuracy of the method by subdividing the
triangulation, but this makes everything much slower.  One could also
use an approximation scheme that is better than linear interpolation,
perhaps based on the various types of splines that are popular in
computer graphics.  However, we did not find an approach of this type
that worked well for our purposes.

We could also avoid discretisation, and instead attempt to minimise
the energy over some finite-dimensional space $M$ of maps
$EX^*\to S^2$.  This should ideally be chosen so that it is easy to
calculate $\|Df\|^2$ for $f\in M$, together with the derivatives of
$\|Df\|^2$ with respect to suitable coordinates on $M$.  We have not
found a space $M$ for which this works nicely.

\section{Overview of the Maple code}
\lbl{sec-maple}

\subsection{Directory structure}

The main directory for this project has subdirectories as follows:
\begin{itemize}
 \item \fname+latex:+ \LaTeX code for this document.  The subdirectory
  \fname+tikz_includes+ contains some files that were generated by
  Maple and are included in the main \LaTeX document by \mcode+\input+
  commands.  The code in the file \fname+maple/plots.mpl+ is relevant
  here.
 \item \fname+images:+ Image files (with extensions \mcode+.png+ or
  \mcode+.jpg+), all generated by Maple.  The code in the file
  \fname+maple/plots.mpl+ is relevant here.
 \item \fname+plots:+ Image files (with extensions \mcode+.m+) in
  Maple's internal format for plots.  The code in the file
  \fname+maple/plots.mpl+ is relevant here.
 \item \fname+maple:+ This contains Maple code in various
  subdirectories, which will be described in more detail below.  The
  files contain plain text, and have extension \fname+.mpl+.  Many
  files occur in pairs like \fname+projective/ellquot.mpl+ (which
  defines various functions related to elliptic curve quotients of
  $PX(a)$) an \fname+projective/ellquot_check.mpl+ (which defines
  procedures to check various assertions about those functions).
 \item \fname+doc:+ This contains various kinds of documentation of the 
  Maple code, in HTML format.  Some of the Maple code is object
  oriented (as will be discussed in Section~\ref{sec-oo-maple} below)
  and some is not.  For the object oriented code, there is
  automatically generated documentation of classes, fields and
  methods, similar to the standard javadoc framework for Java code.
  For the remaining code, there is an index of definitions of all
  defined symbols, with links to the defining files.
 \item \fname+worksheets:+ This contains Maple worksheets, with
  extension \fname+.mw+.  They all start with the following block:
  \begin{mcodeblock}
   restart;
   interface(quiet=true):
   olddir := currentdir("../maple"):
   read("genus2.mpl"):
   currentdir(olddir):
   interface(quiet=false):
  \end{mcodeblock}
  Executing this block will read in the file \fname+genus2.mpl+, which
  will in turn read in many other files from the \fname+maple+
  directory and its subdirectories.  We have mostly used worksheets
  for development, and have moved code to the \fname+.mpl+ files when
  it has become stable.
 \item \fname+data:+ This contains files generated by Maple recording
  the results of certain complex calculations.  There is a hierarchy
  of subdirectories parallel to those in the \fname+maple+ directory.
  Some are plain text files with extension \fname+.mpl+, but most are
  in Maple's internal format and have extension \fname+.m+.  See
  Section~\ref{sec-build} for information about how to recalculate
  these results.
\end{itemize}

The subdirectories of the \fname+maple+ directory are as follows:
\begin{itemize}
 \item The top level directory contains some code about the general
  theory of precromulent surfaces (not tied to any of the three
  families), and some general utility code.
 \item \fname+projective+: for code related to the projective family.
 \item \fname+hyperbolic+: for code related to the hyperbolic family,
  and code for isomorphisms between hyperbolic and projective surfaces.
 \item \fname+embedded+: for code related to the embedded family.
 \item \fname+embedded/roothalf+: for code related to the special case
  $EX^*=EX(1/\rt)$.
 \item \fname+domain:+ for object oriented code dealing with
  triangulations of cromulent surfaces; this can be specialised to
  each of the three families.  (At an earlier stage, we planned to do
  various substantial calculations using triangulations, but we
  eventually switched to different methods.)
 \item \fname+quadrature:+ object oriented code for quadrature rules
  on triangles.
\end{itemize}

\subsection{Checks}
\lbl{sec-checks}

Maple code that checks the correctness of various assertions is
contained in files whose names end with \fname+_check+.  Unlike the
other Maple files, these are not loaded automatically by the standard
block at the top of the worksheets.  One can read an individual file
by entering a command like
\begin{mcodeblock}
   read("../maple/projective/ellquot_check.mpl"):
\end{mcodeblock}
Alternatively, one can enter
\begin{mcodeblock}
   read("../maple/check_all.mpl"):
\end{mcodeblock}
to load and run all possible checks (which takes a long time).  The
global variable \mcode+checklist+ contains a list of all the checking
functions that have been loaded.  One can execute all of them by
invoking the \mcode+check_all()+ function.  These functions will
usually stop running if they encounter an assertion that fails, but
this can be prevented by setting the global variable
\mcode+assert_stop_on_fail+ to \mcode+false+.  Each checking function
will print its name when it starts to run.  Most functions will check
a large number of assertions; if the global variable
\mcode+assert_verbosely+ is set to \mcode+true+, then a brief identifier
will be printed for each assertion.  (This happens after Maple has
done the work of checking the assertion, but before it prints an error
message if the assertion has failed.) The general framework for all
this is set up by the files \fname+util.mpl+ and \fname+checks.mpl+ in
the top Maple directory.  The basic claim that the embedded,
projective and hyperbolic families are precromulent has some special
features; see the function \mcode+check_precromulent()+ and associated
comments in the file \fname+cromulent.mpl+.

One might ask about the reliability, rigour and completeness of these
checks.

First, we should explain that almost all checks cover assertions that
are claimed to be exact.  For the parts of the code that involve
numerical approximation, we have also performed many checks, but we
have not encapsulated them in a systematic framework.

\begin{itemize}
 \item[(a)] Some claims are of the form $u=0$, where $u$ is a constant
  expression.  Usually $u$ will be built from rational numbers by
  algebraic operations and by extraction of square roots of positive
  quantities.  In some cases we evaluate trigonometric functions and
  rational multiples of $\pi$, and there are a few examples involving
  roots of polynomials of degree greater than two.  In many cases, we
  can just use the command \mcode+simplify(u)+ and the result will be
  zero.  In some cases we need to use a more complicated command like
\begin{mcodeblock}
   simplify(factor(expand(rationalize(u))))
\end{mcodeblock}
  We have been willing to assume that if a procedure like this returns
  zero, then $u$ is genuinely equal to zero.  One can check this by
  numerical evaluation of $u$.  We have used 100 digit precision by
  default, and have not found any examples where the symbolic
  simplification functions seem to be incorrect.
 \item[(b)] Some other claims are of the form $u=0$, where $u$ is an
  expression involving several constants and variables, which may be
  subject to certain constraints.  The constants are of the type
  discussed in~(a).  Variables may be constrained to be real (using
  a command like \mcode+assume(t::real)+) or to lie in the unit
  interval (using a command like \mcode+assume(a_H>0 and a_H<1)+).
  Most expressions are built using algebraic operations, square roots
  and logarithms of quantities that can be shown to be positive,
  trigonometric functions and exponentials, and extraction of real
  parts of complex numbers.  There are also some derivatives and
  integrals.  There are obvious algorithms to deal with most of these
  things, and it seems unlikely that any bugs would have escaped
  detection.  However, there are some expressions where Maple
  implicitly uses a significant amount of logic to determine that
  various terms are positive, and uses this to justify manipulations
  with roots and logarithms.  We do not know what algorithms are used
  for this, but we have not detected any problems.  All the relevant
  symbolic simplifications can be tested by numerical evaluation,
  either by plotting or by setting parameters to randomly chosen values.
 \item[(c)] There is another kind of constraint that we did not
  mention under~(b): variables can be subject to certain polynomial
  relations, which can be encoded using a Gr\"obner basis for the
  corresponding ideal.  We only have examples where the polynomials
  have coefficients in $\Q(\rt)$, which is easy to handle.  Often,
  we only need to check expressions of the form $u=0$, where $u$ is a
  rational function of the constrained variables.  There are
  very standard algorithms for working with Gr\"obner bases, and it is
  highly unlikely that there could be any problems with expressions of
  this type.  It is also common to have expressions $u$ that involve
  square roots of polynomials that can be shown to be positive, and
  the roots are sometimes nested.  The algorithms for this case are
  not quite as standard, but again we have detected no problems.
 \item[(d)] A few expressions involve more sophisticated functions
  such as elliptic integrals and the Weierstrass $\wp$ function.  It
  is here that we encountered the only significant bug that we have
  seen: Maple's numerical evaluation of $\wp'(z)$ is incorrect for
  certain ranges of arguments.  On the other hand, when we first
  started working with $\wp'(z)$, it immediately became clear that
  something was causing inconsistent results, although it took time to
  locate the precise source of trouble.  This raised our confidence
  that other bugs would also quickly become visible.
 \item[(e)] There are a few cases where we have an indirect reason to
  know that an expression $u$ should be zero, but we have not been
  able to persuade Maple to simplify $u$ to zero.  In these cases, we
  have written checking functions that rely completely on numerical
  methods, by evaluating $u$ to 100 decimal places at various points
  in the parameter space.  Alternatively, if $u$ is a function on the
  surface $EX^*$, we can evaluate $u$ exactly at all the quasirational
  points of $EX^*$ (as discussed in Section~\ref{sec-rational}).
 \item[(f)] As well as the kinds of claims discussed in~(a) to~(e), we
  have various claims about more combinatorial structures, such as the
  groups $G$, $\Pi$ and $\tPi$.  For these we have mostly written our
  own code, both to implement the definitions and to check the claimed
  properties.  Thus, very little is hidden in the internals of Maple,
  and the sceptical reader can inspect all the relevant code.
\end{itemize}

\subsection{Object oriented Maple}
\lbl{sec-oo-maple}

For some of our work, it is natural to use an object oriented style of
programming.  For example, it is natural to have a class whose objects
represent conformal charts on $EX^*$, and another class for atlases,
and a class for quadrature rules, and so on.  We have described a
complex algorithm for calculating the canonical covering
$\Dl\to EX^*$, and it is natural to implement the steps in this
algorithm as methods of various classes.  Maple does not natively
support object oriented programming, but we have implemented our own
framework using Maple's system of tables with user-defined indexing
functions.  Our framework was in fact developed some years ago for a
rather different project, and adapted slightly for our current
purposes.  The relevant code is in the file \fname+class.mpl+ in the
top Maple directory.  Typical syntax is as follows:
\begin{itemize}
\item \mcode+Q := `new/E_quadrature_rule`();+ sets $Q$ to be a new
 quadrature rule on $F_{16}\subset EX^*$.
\item \mcode+Q["int_z",z[1]];+ given a quadrature rule $Q$,
 returns the estimated integral of $z_1$ over $F_{16}$.
\item \mcode+Q["curvature_error"];+ given a quadrature rule $Q$,
 returns the difference between the estimated integral of the
 curvature and the correct value of $-\pi/4$.
\item \mcode+A["num_charts"];+ given an atlas $A$ on $EX^*$, returns
 the number of charts.
\end{itemize}
Classes are declared using the \mcode+`Class/Declare`+ function.

There is one notable place where we have chosen not to use the above
framework.  We have a lot of parallel structures for our three
families of cromulent surfaces, for example the functions
\mcode+c_E[k](t)+, \mcode+c_H[k](t)+ and \mcode+c_P[k](t)+ which encode
the three curve systems.  In some respects it would be natural to
encapsulate these using a system of classes.  However, that would lead
to unwieldy notation for objects that we need to use extremely
frequently, so we chose to avoid it.

\subsection{Building the data}
\lbl{sec-build}

This project involves some numerical computations, the results of
which are stored in the \fname+data+ directory and its
subdirectories.  The file \fname+build_data.mpl+ (in the top Maple
directory) defines various functions that can be used to perform these
calculations.  For example, the function
\mcode+build_data["HP_table"]()+ can be used to perform all the
calculations described in Section~\ref{sec-P-H}, relating the
projective and hyperbolic families.  The result is encoded as an
object of the class \mcode+HP_table+ (declared in
\fname+hyperbolic/HP_table.mpl+).  It can be saved in an appropriate
place using the function \mcode+save_data["HP_table"]()+, and then
reloaded later using the function \mcode+load_data["HP_table"]()+.

The functions \mcode+build_data["all"]()+, \mcode+save_data["all"]()+
and \mcode+load_data["all"]()+ work in the obvious way, building,
saving or loading all of the required data.  A full build of all data
will take several days of computer time, at least.  However, one can
enter \mcode+set_toy_version(true)+ before invoking
\mcode+build_data["all"]+.  This will cause Maple to do all
calculations to lower accuracy, and finish in an hour or two.  In this
context, results will be saved to or loaded from the
\fname+data_toy+ directory instead of the \fname+data+ directory.

The build process will generate a fairly large number of messages
about the progress of the calculation.  One can reduce the volume by
setting \mcode+infolevel[genus2]+ to a number less than the default
value of $7$, before invoking \mcode+build_data["all"]+.

\end{document}